  \def\th@plain{
    \thm@notefont{}
    \itshape
  }
  \def\th@definition{
    \thm@notefont{}
    \normalfont
  }
  \DeclareMathAlphabet{\mathsfit}{T1}{\sfdefault}{\mddefault}{\sldefault}
  \SetMathAlphabet{\mathsfit}{bold}{T1}{\sfdefault}{\bfdefault}{\sldefault}
\newcommand{\subtitle}[1]{%
  \posttitle{%
    \par\end{center}
    \begin{center}\large#1\end{center}
    \vskip0.5em}%
}
\newcounter{indexcount}
\newcommand{\threedigit}[1]{\ifnum #1<100 0\fi\ifnum #1<10 0\fi#1}
\newcommand{\symindex}[1]{%
  \ifcsname\detokenize{SYM@@#1}\endcsname
    \index[symbols]{\csname\detokenize{SYM@@#1}\endcsname @#1}%
  \else
    \stepcounter{indexcount}%
    \expandafter\xdef\csname SYM@@\detokenize{#1}\endcsname{%
      \expandafter\threedigit\expandafter{\romannumeral-`Q\theindexcount}%
    }%
    \index[symbols]{\threedigit{\theindexcount}@\unexpanded{\unexpanded{#1}}}%
  \fi
}
\newtheorem{thm}{Theorem}[section]
\newtheorem{cor}[thm]{Corollary}
\newtheorem{prop}[thm]{Proposition}
\newtheorem{lemma}[thm]{Lemma}
\theoremstyle{definition}
\newtheorem{defn}[thm]{Definition}
\newtheorem{rmk}[thm]{Remark}
\newtheorem{expl}[thm]{Example}
\numberwithin{equation}{section}
\providecommand{\ang}[1]{\left\langle{#1}\right\rangle}
\providecommand{\flr}[1]{\left\lfloor{#1}\right\rfloor}
\DeclareMathOperator{\AF}{\mathrm{AF}}
\DeclareMathOperator{\ante}{ante}
\DeclareMathOperator{\opbfE}{{\bf E}}
\DeclareMathOperator{\bbP}{\mathbb{P}}
\DeclareMathOperator{\bbQ}{\mathbb{Q}}
\DeclareMathOperator{\bnd}{bnd}
\DeclareMathOperator{\bTh}{\mathbf{Th}}
\DeclareMathOperator{\card}{card}
\DeclareMathOperator{\con}{con}
\DeclareMathOperator{\dom}{dom}
\DeclareMathOperator{\free}{free}
\DeclareMathOperator{\fP}{\mathfrak{P}}
\DeclareMathOperator{\len}{len}
\DeclareMathOperator{\olbbP}{\overline{\mathbb{P}}}
\DeclareMathOperator{\olbbQ}{\overline{\mathbb{Q}}}
\DeclareMathOperator{\olnu}{\overline{\nu}}
\DeclareMathOperator{\opmu}{\mu}
\DeclareMathOperator{\opnu}{\nu}
\DeclareMathOperator{\oprho}{\rho}
\DeclareMathOperator{\rk}{rk}
\DeclareMathOperator{\Sf}{Sf}
\DeclareMathOperator{\sym}{sym}
\DeclareMathOperator{\Th}{\mathit{Th}}
\DeclareMathOperator{\var}{var}
\def\bdot{\mathbin{\bm{\cdot}}}
\def\beq{\mathrel{\bm{=}}}
\def\bin{\mathrel{\bm{\in}}}
\def\bsub{\mathrel{\bm{\subseteq}}}
\def\cao{\mathrel{\hookrightarrow}}
\def\nbeq{\mathrel{\not\bm{=}}}
\def\nbin{\mathrel{\not\bm{\in}}}
\def\nrtri{\mathrel{\ntriangleright}}
\def\ntDash{\mathrel{|\hspace{-0.48em}\not\equiv}}
\def\rtri{\mathrel{\vartriangleright}}
\def\tri{\mathbin{\triangle}}
\def\tDash{\mathrel{|\hspace{-0.48em}\equiv}}
\def\wdash{\mathrel{|\hspace{-0.48em}\sim}}
\def\dhl{\!\downharpoonleft}
\def\emp{\emptyset}
\def\ge{\geqslant}
\def\le{\leqslant}
\def\pa{\partial}
\def\tot{\leftrightarrow}
\def\ol{\overline}
\def\ts{\textstyle}
\def\ul{\underline}
\def\wh{\widehat}
\def\wt{\widetilde}
\def\al{\alpha}
\def\be{\beta}
\def\ga{\gamma}
\def\Ga{\Gamma}
\def\de{\delta}
\def\De{\Delta}
\def\ep{\varepsilon}
\def\ze{\zeta}
\def\th{\theta}
\def\ka{\kappa}
\def\la{\lambda}
\def\La{\Lambda}
\def\si{\sigma}
\def\Si{\Sigma}
\def\ph{\varphi}
\def\om{\omega}
\def\Om{\Omega}
\def\bfi{{\bf i}}
\def\bfL{{\bf L}}
\def\bfP{{\bf P}}
\def\bfr{{\bf r}}
\def\bfs{{\bf s}}
\def\bft{{\bf t}}
\def\B{\bm{B}}
\def\bemp{\bm{\emp}}
\def\bnu{\bm{\nu}}
\def\bom{\bm{\om}}
\def\bv{\bm{v}}
\def\bw{\bm{w}}
\def\bx{\bm{x}}
\def\bD{\mathbb{D}}
\def\bL{\mathbb{L}}
\def\bN{\mathbb{N}}
\def\bQ{\mathbb{Q}}
\def\bR{\mathbb{R}}
\def\bS{\mathbb{S}}
\def\bT{\mathbb{T}}
\def\bZ{\mathbb{Z}}
\def\cA{\mathcal{A}}
\def\cB{\mathcal{B}}
\def\cC{\mathcal{C}}
\def\cD{\mathcal{D}}
\def\cE{\mathcal{E}}
\def\cF{\mathcal{F}}
\def\cG{\mathcal{G}}
\def\cL{\mathcal{L}}
\def\cN{\mathcal{N}}
\def\cR{\mathcal{R}}
\def\cS{\mathcal{S}}
\def\cT{\mathcal{T}}
\def\cX{\mathcal{X}}
\def\fI{\mathfrak{I}}
\def\ch{\mathrm{ch}}
\def\fin{\mathrm{fin}}
\def\img{\mathrm{Img}}
\def\ins{\mathrm{ins}}
\def\IS{\mathrm{IS}}
\def\lm{\mathrm{lim}}
\def\midp{\mathrm{mid}}
\def\param{\mathrm{par}}
\def\rd{\mathrm{rd}}
\def\seg{\mathrm{seg}}
\def\tria{\mathrm{tri}}
\def\sD{\mathscr{D}}
\def\sP{\mathscr{P}}
\def\sQ{\mathscr{Q}}
\def\A{\mathsf{A}}
\def\AC{\mathsf{AC}}
\def\AE{\mathsf{AE}}
\def\AF{\mathsf{AF}}
\def\AI{\mathsf{AI}}
\def\AP{\mathsf{AP}}
\def\AR{\mathsf{AR}}
\def\AS{\mathsf{AS}}
\def\AU{\mathsf{AU}}
\def\E{\mathsf{E}}
\def\ISPA{\mathsf{IS}}
\def\P{\mathsf{P}}
\def\PA{\mathsf{PA}}
\def\s{\mathsf{s}}
\def\t{\mathsf{t}}
\def\ZFC{\mathsf{ZFC}}
\def\Taut{\mathsfit{Taut}}
\def\Var{\mathsfit{Var}}
\def\S{\mathtt{S}}
\begin{document}

\frontmatter

\title{The Principles of Probability}
\subtitle{%
  From Formal Logic to Measure Theory\\
  to the Principle of Indifference%
}
\author{Jason Swanson}
\date{August 2, 2024}
\maketitle

\pdfbookmark[1]{Contents}{toc}
\shorttoc{Contents}{1}

\cleardoublepage
\pdfbookmark[1]{Detailed Contents}{toc (detailed)}
\tableofcontents


\chapter{Preface}

In classical logic, we formalize deductive arguments. The conclusion of a
deductive argument is known with certainty, provided its premises are true.
Inductive arguments, on the other hand, are those whose conclusions are known only with some degree of plausibility. An argument in a courtroom, for example, is inductive. Its conclusion, at best, is only known ``beyond a reasonable doubt.''

We present herein a formal system of inductive logic. The system contains deductive logic as a special case. It also uses a language that allows for countable conjunctions. When we restrict our attention to deductive arguments using only finite conjunctions, we obtain ordinary first-order logic. That is, first-order logic is embedded within this system. In particular, the system is capable of expressing the usual set theory of Zermelo and Fraenkel, and as such, can express any statement of modern mathematics.

This system of inductive logic gives rise to a probability calculus that is in complete agreement with modern, mathematical probability theory. In particular, the inductive statements in the formal language of this system can be interpreted in probability spaces. Moreover, any probability space, together with any collection of random variables, can be mapped in a natural way to such an interpretive model.

Inductive logic, however, is more expressive than ordinary probability theory. There are probabilistic ideas that are expressible in this system which cannot be formulated using only probability spaces and random variables. An example of such an idea is the principle of indifference, a heuristic notion originating with Laplace. Roughly speaking, it says that if we are ``equally ignorant'' about two possibilities, then we should assign them the same probability. The principle of indifference has no rigorous formulation in ordinary probability theory. It exists only as a heuristic. Moreover, its use has a history of being problematic and prone to apparent paradoxes. In our system of inductive logic, however, we provide a rigorous formulation of this principle, and illustrate its use through a number of typical examples.

The material herein makes use of (mostly) basic facts from mathematical logic
and measure theory. We assume the reader is already familiar with the
fundamentals of measure-theoretic probability theory. On the other hand, an
effort has been made to accommodate readers with no familiarity in logic.
The logical notions that we use are presented in a way that is
mostly self-contained. Where this is not possible, explicit references to the literature are provided.

\mainmatter


\chapter{Introduction}

\begin{quote}
  Strictly speaking, all our knowledge outside mathematics and demonstrative
  logic (which is, in fact, a branch of mathematics) consists of conjectures.
  \vspace{-.1in}
  \begin{flushright}
    ---George Polya, 1954 \cite{Polya1954}
  \end{flushright}
\end{quote}

\section{Deductive vs.~inductive reasoning}

Newton's laws of motion were conjectures that Einstein showed us were false. But
Einstein's theory of relativity is also a conjecture that may one day be
overturned. Physical laws, in general, are all conjecture. Each experimental
confirmation makes them more plausible, but they can never be established with
complete certainty.

In the courtroom, the prosecuting attorney must prove the guilt of the
defendant, not with complete certainty, but only beyond a reasonable doubt. If
the law required the prosecutor to achieve complete certainty, then everyone
would be acquitted, because this would be impossible. Strictly speaking, then,
we imprison people on the basis of conjecture.

Likewise, the conclusions of the historian, the economist, the chemist, and the
medical researcher are all conjecture. None of them can establish their results
with complete certainty.

And yet, many of these results have been shown to be so plausible that no one
seriously doubts them. To borrow an example from Laplace \cite{LaPlace1814}, the
sun will rise tomorrow. I cannot say this with complete certainty, but the
degree of plausibility of this fact is so high, that my human mind cannot even
perceive the sliver of doubt that is there.

The process by which these conjectures obtain varying degrees of plausibility is
not the logic of the mathematician. A mathematical proof either establishes
complete certainty, or it fails to say anything at all. The logic of mathematics
is \emph{deductive reasoning}. The logic of everything else is \emph{inductive
reasoning}, or as Polya calls it in \cite{Polya1954}, ``plausible reasoning.''

Deductive reasoning is governed by rules. Over the course of human history, we
have uncovered and formalized these rules, and today we have complete systems of
deductive logic that codify this form of argumentation. The study of deductive
logic, or mathematical logic, is presently a mature and sophisticated
subdiscipline of mathematics that has had profound impacts in areas ranging
from computer science to philosophy to mathematics itself.

Inductive reasoning also has rules. For example, if Hypotheses $A$ implies
Hypothesis $B$, and Hypothesis $B$ is found to be true, then the plausibility of
Hypothesis $A$ increases. This rule is the basis for empirical science. If the
theory of relativity predicts something about Mercury, and the prediction is
confirmed by experiment, then the theory of relativity is made more plausible by
this discovery. In \cite{Polya1954a}, Polya calls this the ``fundamental
inductive pattern.''

But unlike its deductive counterpart, inductive reasoning has not been
formalized in any universally accepted way. Since the time of Laplace,
probability theory has seemed like the most promising candidate to formalize
inductive reasoning. For example, in probability theory it is a provable fact
that if $A$ implies $B$, and if $P(A)$ and $P(B)$ are neither $0$ nor $1$, then
$P (A \mid B) > P (A)$. This is a formalization of Polya's fundamental inductive
pattern.

Attempts to formalize inductive reasoning with probability can be traced at
least back to Boole in 1854 \cite{Boole1854}. But the recognition that we need
some kind of probabilistic logic goes all the way back to Leibniz in the 17th
century. Since then, mathematicians, philosophers, and physicists have all
contributed to this endeavor. See, for instance,
\cite{Keynes1921,Wittgenstein1922,Reichenbach1949,Carnap1950,Krauss1966,Nilsson1986,Jaynes2003}.
For a survey of the history of these efforts, see \cite{Hailperin1996}.

In the meantime, over the last 90 years, modern probability---by which we mean
measure-theoretic probability theory---has grown into a powerful and hugely
successful discipline. It started with Kolmogorov in 1933 \cite{Kolmogorov1956}
and, today, enjoys phenomenal success in all its areas of applications. Advances
have been made in physics, finance, engineering, meteorology,
telecommunications, biology, astronomy, artificial intelligence, and more. Time
has shown us that if we want to do quantified inductive reasoning, there is no
better tool than modern probability.

Perhaps, then, we should turn the effort on its head. As presented above, we
have been regarding inductive reasoning as the fundamental concept, and
probability as a tool by which to formalize it. Instead, we might consider
modern probability to be the fundamental concept, and seek out the inductive
logic that it represents. To clarify what this might mean, let us look to its
analogue in deductive logic, or more specifically, first-order logic.

\section{The two sides of logic}

First-order logic is the usual logic of sentences that involve quantifiers and
predicates. We may approach first-order logic in one of two ways. We may
consider it through a syntactic calculus, or we may consider it through
semantics and meaning. When viewed as a calculus, the central idea is the
``proof.'' A proof is a sequence of sentences whose construction follows a given
set of rules, and which terminates in the sentence that is being proved. A
sentence $\ph$ is derivable from $X$, written $X \vdash \ph$, if there exists
a proof of $\ph$ from $X$.

When viewed via semantics, the central idea is the ``structure.'' A structure is
a set with distinguished constants, functions, and relations. A group is a
structure with an identity and a group operation. A tree is a structure with a
root and an edge relation. Even a committee of senators with a chair and two
subcommittees is a structure. Formal sentences can be interpreted in a
structure, and once interpreted, the sentence is either true or false. We say
that $\ph$ is a consequence of $X$, written $X \vDash \ph$ if $\ph$ is true in
every structure where $X$ is true.

A priori, these two approaches to first-order logic have nothing to do with one
another. And yet, thanks to G\"odel's completeness theorem, we know that they
exactly coincide. That is, $X \vdash \ph$ if and only if $X \vDash \ph$. In
other words, first-order logic \emph{is} the logic of structures. By analogy, if
we want an inductive logic that \emph{is} the logic of probability, then we
should build a complete inductive logic that has modern, measure-theoretic
probability as its semantics.

That is exactly what we will do in this manuscript. We will construct inductive
logic so that, like first-order logic, it has a calculus and a semantics. The
calculus will be based on nine rules of inductive inference. It will allow us to
derive probabilities without any sample spaces or measure theory. All that
matters in the calculus are the logical relations between the sentences, and the
rules of inductive inference. On the semantic side, statements are interpreted
in what we call an ``inductive model,'' which is a probability measure on a set
of structures. We will establish completeness, showing that the calculus and the
semantics coincide. And we will show that the whole of measure-theoretic
probability theory is properly embedded in the semantic side of inductive logic.
That is, any probability space, together with a set of its random variables, can
be mapped to an inductive model in a way that gives each outcome, event, and
random variable a logical interpretation.

\section{The nature of probability}

What, then, is probability? Or more precisely, what body of ideas should the
word ``probability'' refer to? As a mathematician, it is tempting to say that
probability is simply measure-theoretic probability, the branch of mathematics
built on Kolmogorov's formalism. But to paraphrase Terence Tao \cite{Tao2015},
probability spaces are used to model probabilistic concepts. They are not the
concepts themselves. As such, and in light of the work done here, we might say
that probability is the logic of inductive reasoning. It is an abstract mode of
logical reasoning that is reflected in two parallel systems: a calculus of
inductive inference, and a semantic system of interpretation. It is within this
semantic system that measure-theoretic probability resides.

Another way to assess the nature of probability is to look at the people who
study it and ask what they actually do. In other words, what is a probabilist?
The traditional view is simple. Probability is a branch of mathematics, and a
probabilist is a mathematician who specializes in it. A probabilist is just one
of many mathematical specialists, each of whom is classified according to the
kinds of structures they study. A group theorist, for example, studies
structures satisfying the axioms of group theory, that is, groups. A
probabilist, therefore, studies structures satisfying Kolmogorov's axioms, that
is, probability spaces.

This view, however, misses something important. A probabilist typically
specializes in a particular class of random variables and stochastic processes.
They do not simply study probability spaces. A person who studies pure
probability spaces, without any random variables, would be better described as a
measure theorist. A probabilist, on the other hand, studies what we might call
``modern probability models,'' which are probability spaces equipped with a
collection of random variables. As noted above, every modern probability model
is an inductive model. Hence, probabilists study inductive models, and any given
probabilist will study a particular class of inductive models.

To clarify the situation, let us note that probability spaces are to sets as
inductive models are to structures. That is, we have four kinds of object (sets,
structures, probability spaces, and inductive models) that all stand in relation
to one another. The simplest object is the set. We may extend the notion of the
set in two directions. On the one hand, if we add a probability measure to it,
we obtain a probability space. On the other hand, if we add constants,
functions, and relations, we obtain a structure, which we use to interpret
deductive logic. An inductive model, which we use to interpret inductive logic,
can be obtained from either a structure or a probability space. By definition,
if we take a set of structures and add a probability measure, we have an
inductive model. Or we can start with a probability space and add a particular
collection of random variables. In doing so, we obtain a modern probability
model, which is embedded in the collection of inductive models.

These four kinds of objects are studied by different kinds of mathematicians.
Sets are studied by set theorists. Probability spaces, without any random
variables, are studied by measure theorists. Structures come in many varieties.
For example, graphs are studied by graph theorists. Likewise, inductive models
come in many varieties. An example is random graphs. A person who studies random
graphs would be called a probabilist.

But a probabilist who studies random graphs is as much a graph theorist as they
are a probabilist. Just as mathematicians are categorized according to the kinds
of structures they study, probabilists are categorized according to the kinds of
inductive models they study. There are probabilists who study stochastic PDEs,
random matrices, stochastic control theory, and so on. And like their
deterministic counterparts, any given probabilist will typically only specialize
in one such area. The word ``mathematician'' is an umbrella term that includes
many different areas. Likewise, the word ``probabilist'' is also an umbrella
term that includes all these same areas, but seen through the lens of
probability and inductive reasoning. The picture that emerges from this view is
that probability is not just a branch of mathematics. It is a different logical
paradigm with which to study other branches of mathematics.

\section{Potential areas of application}

As an extension of formal deductive logic, inductive logic has many
potential areas of application. Computer science, for instance, is rooted in
mathematical logic. Hence, any probabilistic extension of computer science is, in
some way, connected to a probabilistic extension of logic. It follows that
inductive logic could be relevant to any such areas. Examples might include
quantum computing and artificial intelligence.

Inductive logic could also be applicable to philosophy. It is connected to the
philosophy of science through Polya's fundamental inductive pattern. It is also
highly relevant to the philosophical interpretations of probability, and through
these, to epistemology. For instance, we can use inductive logic to formalize
the principle of indifference. This principle is the heuristic notion that we
ought to assign equal probabilities to sentences about which we are equally
ignorant. This principle is intuitively self-evident, but historically
problematic. It leads to apparent paradoxes and is without any mathematically
rigorous formulation. We will have more to say about the principle of
indifference later.

Formal deductive logic can also be used to analyze philosophical arguments. As
such, inductive logic can be used to analyze philosophical arguments that are
probabilistic in nature. Examples include the doomsday argument, the simulation
hypothesis, responses to the so-called Sleeping Beauty problem, and arguments
surrounding superintelligence and the technological singularity.

Inductive logic can be applied to mathematics itself, as well as statistics. Its
relevance to probability and Bayesian statistics is obvious. Outside of
probability, it is relevant wherever probabilistic methods are used. For
example, in graph theory and combinatorics, probabilistic methods are often used
to establish existence theorems and asymptotic results. At the foundations of
mathematics, it offers us a new tool for working with undecidable sentences.
Given a set of axioms and an undecidable sentence which they can neither prove
nor disprove, deductive logic allow us to explore the theories obtained by
either including or excluding that sentence from our axioms. With inductive
logic, we may choose to postulate a probability for the undecidable sentence,
and explore the probabilistic statements that follow from this assumption. 

There are many potential areas of applications in physics. One obvious candidate
is quantum mechanics, particularly its interpretations. The interpretations of
quantum mechanics, such as the many-worlds and the Bohmian interpretations,
cannot be decided upon through experiment, since all of their predictions are
based on the common mathematical framework of quantum mechanics. The problem of
deciding which one is correct or most useful is a philosophical problem.
Inductive logic gives us a framework for axiomatizing these interpretations, and
thereby more thoroughly analyzing their structure and consequences.

Besides quantum mechanics, inductive logic might also be applicable to
statistical mechanics. This is particularly true since statistical mechanics is
rooted in the principle of indifference. Its fundamental postulate is that, a
priori, the microstates of an isolated system occur with equal probability.
Inductive logic, therefore, through the principle of indifference, touches upon
the foundations of statistical mechanics.

The study of noisy dynamical systems is an area of physics where modern
probability has found
great success. In applications, the use of stochastic ordinary and partial
differential equations is widespread. On the other hand, we can axiomatize classical (deterministic) mechanics in
an extension of first-order logic that allows countable conjunctions. This
extended language is exactly the one in which we will build inductive logic. We
can therefore add probability and uncertainty directly into such an
axiomatization. Does doing so lead us to the modern theory of stochastic
dynamical systems? If not, how is it related to that theory? Determining this
could provide insight into both inductive logic and the modern usage of
stochastic differential equations.

\section{The principle of indifference}\label{S:intro-PoI}

The principle of indifference is the heuristic idea that if we are equally
ignorant about two propositions, then we ought to assign them the same
probability. This idea originated with Laplace, and is at the heart of what is
now called the classical interpretation of probability.
\begin{quote}
  The theory of chance consists in reducing all the events of the same kind to a
  certain number of cases equally possible, that is to say, to such as we may be
  equally undecided about in regard to their existence, and in determining the
  number of cases favorable to the event whose probability is sought. The ratio
  of this number to that of all the cases possible is the measure of this
  probability.
  \vspace{-.1in}
  \begin{flushright}
    ---Pierre-Simon Laplace, 1814 \cite{LaPlace1814}
  \end{flushright}
\end{quote}
One of the most famous descriptions of the principle is due to Keynes.
\begin{quote}
  The Principle of Indifference asserts that if there is no \emph{known} reason
  for predicating of our subject one rather than another of several
  alternatives, then relatively to such knowledge the assertions of each of
  these alternatives have an \emph{equal} probability. Thus \emph{equal}
  probabilities must be assigned to each of several arguments, if there is an
  absence of positive ground for assigning \emph{unequal} ones.\hfil\\
  \hspace*{10pt} This rule, as it stands, may lead to paradoxical and even
  contradictory conclusions.
  \vspace{-.1in}
  \begin{flushright}
    ---John Maynard Keynes, 1921 \cite{Keynes1921}
  \end{flushright}
\end{quote}
It is difficult to overstate the importance of the principle of indifference.
Not only is it at the heart of major areas of science, such as statistical
mechanics. It is also central to our understanding of what probability is.
Philosophically, it forms the basis of the classical interpretation of
probability. But beyond philosophy, it is the everyday intuition of the common
person as to why a balanced die is fair or why it is important to thoroughly
shuffle a deck of cards. And yet, as Keynes rightly points out, it has a
problematic history. Even rather elementary applications of this principle can
quickly lead to nonsensical results and apparent paradoxes. This is due, in no
small part, to the fact that, for centuries, the principle of indifference has
eluded attempts to make it rigorous. Without rigor, there are no precise
conditions that can tell us whether our attempts to use it are legitimate. A
notable formulation of the principle is given by Edwin T. Jaynes's
\cite{Jaynes2003}, in a book posthumously published in 2003. He used this
formulation as the basis of his maximum entropy principle, which plays a key
role in statistical mechanics. But even Jaynes's formulation is non-rigorous.
Moreover, there is no formulation of this principle in modern, measure-theoretic
probability theory.

Within inductive logic, however, we will be able to formulate the principle of
indifference. We will show that our formulation is a faithful representation of
the principle. It is not simply an ad hoc condition to which we affix the name.
Being mathematically rigorous, our formulation is as free from paradoxes as any
proven mathematical theorem.

Moreover, we will see that the principle of indifference cannot be formulated
using only the axioms of Kolmogorov. Its formulation requires the structure of
inductive logic, both its syntactic structure and the semantic structures
embedded in its models. As such, it exemplifies the fact that inductive logic is
strictly broader than any theory of probability that is based on measure theory
alone.

\section{A philosophical aside}

This book is, without question, a work of mathematics. It consists primarily of
definitions, theorems, and proofs, with occasional intuitive prose to tie it
together. On the other hand, it is hard not to see it as a work of philosophy,
having something to say about the interpretation of probability.

According to \cite{Hajek2019}, interpretations of probability generally address
the questions:
\begin{enumerate}[(1)]
  \item What kinds of things, metaphysically, are probabilities?
  \item What makes probability statements true or false?
\end{enumerate}
To use inductive logic to answer these questions, we must first look to the
definition of an inductive statement. An inductive statement is a triple, $(X,
\ph, p)$, where $X$ is a set of sentences (in a formal language) called the
``antecedent,'' $\ph$ is a sentence that we call the ``consequent,'' and $p$ is
a real number satisfying $0 \le p \le 1$ which we call the ``probability.''
Intuitively, we can think of $ (X, \ph, p)$ as asserting that $X$ partially
entails $\ph$, and that $p$ is the degree of this partial entailment. To answer
the first question, then, a probability is a relationship between $X$ and $\ph$.
We might interpret this relationship as being logical, evidential, or purely
subjective. Any such interpretation has no bearing on the inductive logical
properties of $(X, \ph, p)$.

Inductive logic is mostly unconcerned with the second question. Probabilities
express relative likelihoods, given a set of sentences. We take it for
granted that the sense of these likelihoods is understood. Our primary concern
is the logical relationships between inductive statements. That is, how can we
reason from hypotheses, which are themselves inductive statements, to
an inductive conclusion.

It seems, then, that inductive logic hardly qualifies as an interpretation of
probability. It does, however, make assumptions that rule out certain
interpretations. It assumes that probabilities are relationships between
sentences and that all probabilities are conditional. Inductive logic,
therefore, is incompatible with any physical interpretations of probability,
such as those based on frequencies or propensities. Beyond that, though, it
appears to leave room for a range of evidential interpretations.

\section{Constructing inductive logic}\label{S:construct-sketch}

In this section, we give a big-picture overview of how inductive logic is
constructed. It is assumed that the reader is already familiar with the basics
of measure-theoretic probability theory.

The set of sentences and formulas that we consider is one that allows for
countable conjunctions and disjunctions. This set is usually denoted in the
literature by $\cL_{\bom_1, \bom}$, though we denote it simply by $\cL$. In the
language $\cL$, there is a well-understood deductive calculus (see, for
instance, \cite{Karp1964, Keisler1971}). That is, there is a well-established
derivability relation $\vdash$, where $X \vdash \ph$ indicates that $\ph \in
\cL$ can be derived from $X \subseteq \cL$. Our first task is to extend $\vdash$
to inductive statements. We do this by defining a set of rules for inductive
inference, and writing $P \vdash (X, \ph, p)$ to mean that we can use these
rules of inference to derive $(X, \ph, p)$ from the set of inductive statements
$P$. We also define an inductive theory, which is a set that is closed under
inductive inference. Finally, we adopt familiar shorthand, writing $P(\ph \mid
X) = p$ to mean that $ (X, \ph, p) \in P$. We say that $P(\ph \mid X)$ exists if
$(X, \ph, p) \in P$ for some $p$.

Let us refer to the left-hand side of the turnstile symbol, $\vdash$, as the
premises of the derivation $P \vdash (X, \ph, p)$. From what we have described
so far, our premises can only include statements of the form $P(\ph \mid X) =
p$. We may wish, however, to include premises of the form $P(\ph \mid X) > 0$
or $P(\ph \wedge \psi \mid X) = P(\ph \mid X)P(\psi \mid X)$. To this end, we
generalize inductive derivability by defining inductive conditions, typically
denoted by calligraphic letters such as $\cC$. Inductive conditions formalize
generic assumptions we might make about an inductive theory. After defining
inductive conditions, we extend inductive derivability from $P \vdash (X, \ph,
p)$ to $\cC \vdash (X, \ph, p)$.

We then turn our attention to the semantics of inductive logic. More
specifically, we define a relation, $\vDash$, called the consequence relation.
The derivability relation, $\vdash$, is concerned only with the syntax of
sentences, formulas, and inductive statements. The consequence relation, on the
other hand, is concerned with their interpretations.

To give interpretations to sentences, we must introduce models. For us, a model,
or an inductive model, is a probability space, $(\Om, \Si, \bbP)$, where $\Om$
is a set of structures. A structure is a set, together with some distinguished
constants, functions, and relations. For example, $(\bN, 1, +, <)$ is a
structure. Structures are used to interpret the sentences in $\cL$. We write
$\om \tDash \ph$ to mean that, in the structure $\om$, the interpretation of
$\ph$ is a true statement. We can think of an inductive model as a weighted
collection of structures, where the weights represent relative likelihoods.

Suppose $\sP = (\Om, \Si, \bbP)$ is a model and $\ph$ is a sentence. Then we
define the set $\ph_\Om = \{\om \in \Om \mid \om \tDash \ph\}$. We say that
$\sP$ satisfies $\ph$, written $\sP \vDash \ph$, if $\olbbP \ph_\Om = 1$, where
$\olbbP$ is the measure-theoretic completion of $\bbP$. For sets of sentences,
we take $\sP \vDash X$ to mean that $\sP \vDash \ph$ for all $\ph \in X$. We say
that $X$ and $X'$ are semantically equivalent if, for every model $\sP$, we have
$\sP \vDash X$ if and only if $\sP \vDash X'$.

More generally, we write $\sP \vDash (X, \ph, p)$ to mean there exists a set of
sentences $Y$ and a sentence $\psi$ such that $\sP \vDash Y$, the sets $Y \cup
\{\psi\}$ and $X$ are semantically equivalent, and
\[
  \frac{\olbbP \ph_\Om \cap \psi_\Om}{\olbbP \psi_\Om} = p.
\]
For sets of inductive statements, we take $\sP \vDash P$ to mean $\sP \vDash (X,
\ph, p)$ for all $(X, \ph, p) \in P$. A set $P$ is called satisfiable if $\sP
\vDash P$ for some model $\sP$.

Having defined satisfiability, we turn to the consequence relation. We define $X
\vDash \ph$ to mean that $\sP \vDash \ph$ whenever $\sP \vDash X$, and we define
$P \vDash (X, \ph, p)$ to mean that $P$ and $(X, \ph, p)$ satisfy certain
connectivity requirements and that $\sP \vDash (X, \ph, p)$ whenever $\sP \vDash
P$. We then proceed to prove soundness and completeness, meaning that the two
relations, $\vdash$ and $\vDash$, are identical. In other words, we prove that
$X \vdash \ph$ if and only if $X \vDash \ph$, and also that $P \vdash (X, \ph,
p)$ if and only if $P \vDash (X, \ph, p)$.

\section{Inductive logic is natural}

In mathematics, a definition is neither correct nor incorrect. It simply is.
Nevertheless, we understand that there is a metamathematical sense in which a
definition can be ``right'' or ``wrong.'' Does it capture the intuitive idea it
alleges to capture? Does it ``fit'' well with existing notions? Does it provide
a sense of mathematical ``unity'' or ``elegance?''

In constructing inductive logic, we have made several choices. We chose to work
in the infinitary language, $\cL_{\bom_1, \bom}$. We chose the rules of
inductive inference that characterize the derivability relation, $\vdash$. And
we chose a particular semantic interpretation in order to arrive at the
consequence relation, $\vDash$. Were these the ``right'' choices? Are they
natural? Or are they ad hoc choices whose only purpose is to force the
conclusions we are aiming for? These are metamathematical questions, and so, for
the most part, their answers are left to the judgment of the reader. But we
certainly believe they are natural, and present here two comments which are
related to this question.

The first comment concerns the rules of inductive inference used in defining
$\vdash$. There are rules that govern probabilities $0$ and $1$, and relate them
to deductive inference. There is a continuity rule, needed only in infinite
systems. Otherwise, the two main operative rules are the familiar ones from
elementary probability: the addition rule and the multiplication rule. We make
no effort in this work to justify their adoption. Inductive logic may be seen as
simply investigating their consequences, or as implying that they are
self-evident. That said, in the paragraph after next, we will make one comment
in their defense. Beyond that, the interested reader can consult \cite{Cox1946}
for an attempt to justify these elementary principles.

The second comment concerns $\vDash$. We essentially have four interconnected
relations: deductive $\vdash$, inductive $\vdash$, deductive $\vDash$, and
inductive $\vDash$. For deductive $\vDash$, we prove both completeness ($X
\vDash \ph$ implies $X \vdash \ph$) and $\si$-compactness ($X \vDash \ph$
implies $X_0 \vDash \ph$ for some countable $X_0 \subseteq X$). As mentioned
earlier, deductive $\vdash$ is the usual relation used in $\cL_{\bom_1,
\bom}$. Deductive $\vDash$, however, is not. The usual consequence relation in
$\cL_{\bom_1, \bom}$ is the more straightforward $X \tDash \ph$, meaning $\om
\tDash X$ implies $\om \tDash \ph$. It is well-known that $\tDash$ is neither
complete, nor $\si$-compact. For this reason, one might regard $\cL_{\bom_1,
\bom}$ as being deficient in some important ways. Our primary purpose for
introducing $\vDash$ is to model inductive reasoning. As an unintended but
welcome consequence, we find that $\vDash$ corrects these deficiencies. In this
sense, then, we might view $\vDash$ as the ``right'' semantic notion for $\cL_
{\bom_1, \bom}$, and also see $\cL_{\bom_1, \bom}$ as the ``right'' language in
which to build a  probabilistic logic.

Turning this argument on its head, if deductive $\vDash$ is the ``right''
semantic relation to pair with deductive $\vdash$, and inductive $\vDash$
follows from deductive $\vDash$, then our definition of inductive $\vdash$ is
forced on us, if we want $P \vDash (X, \ph, p)$ and $P \vdash (X, \ph, p)$ to be
equivalent. In other words, we are compelled to adopt the addition and
multiplication rules, as well as all the other rules in the definition of
$\vdash$.

\section{Outline of the book}\label{S:outline}

After presenting background material in Chapter \ref{Ch:background}, we proceed
to the construction of inductive logic. In Chapters \ref{Ch:prop-calc} and
\ref{Ch:prop-models}, we follow the sketch presented in Section
\ref{S:construct-sketch} to construct a trimmed-down version of inductive logic
in a propositional language without quantifiers or variables. The propositional
version is capable of representing any probability space, but it does not
explicitly represent any random variables.

In Chapter \ref{Ch:pred-logic}, we repeat the construction for the predicate
language $\cL$. Here, we are able to establish the connection between inductive
logic and random variables. In the formal language $\cL$, constants, functions,
and relations are represented by what are called extralogical symbols. If $\s$
is an extralogical symbol and $\om$ is a structure, then we can identify $\s$
with an actual constant, function, or relation in $\om$, which we denote by
$\s^\om$. The map $\om \mapsto \s^\om$ is the starting point for connecting
inductive logic to random variables.

Almost immediately, though, we face an obstacle. In probability theory, we are
accustomed to having two very distinct kinds of objects: random variables and
constants. In $\bbP \{X > 0\}$, for example, we can be quite certain that only
thing random is $X$. In the formal language, $\cL$, however, $X$, $>$, and $0$
are just symbols. When we interpret them in a structure, we are faced with $\bbP
\{X^\om >^\om 0^\om\}$. Hence, not only might $0$ be random, the inequality
relation itself could be random!

These considerations lead us to an idea we call the relativity of randomness. To
illustrate this, consider a simple system representing a coin flip. We might
have just three extralogical symbols, $h$, $t$, and $c$, where we think of $h$
and $t$ as the sides of the coin, and $c$ as the result. To be sure, there are
models that match our intuition. That is, there are models in which $h^\om$ and
$t^\om$ are fixed, and $c^\om$ is random. But there are also models in which
$h^\om$ and $t^\om$ are random, while $c^\om$ is fixed. There are even models in
which all three are random. In general, using model isomorphisms, we can force
certain sets of extralogical symbols to be nonrandom. There are limits though.
In the coin-flip example, there are no models in which all three symbols are
fixed.

We show that in any inductive theory rich enough to contain the natural numbers,
we can use model isomorphisms to force the natural numbers to be nonrandom. We
call this the natural frame of reference.

Chapter \ref{Ch:real-ind-ths} is concerned with constructing inductive theories
that make statements about real numbers. We do this by constructing the real
numbers, in the usual way, in axiomatic set theory. We are then able to make
formal inductive statements about not only real numbers, but about any
mathematical objects whatsoever. In this context, we illustrate how all the
usual results of probability theory can be formulated in inductive logic. This
includes the law of large numbers, the central limit theorem, and conditional
expectation.

The principle of indifference is the topic of Chapter \ref{Ch:PoI}. We formulate
it as an inductive condition. To describe it, let $L$ be the set of extralogical
symbols in our language and consider a bijection $\pi: L \to L$ that preserves
the type and arity of the symbols. For instance, if $r$ is a binary relation
symbol, then so is $r^\pi$. Given a sentence $\ph$, let $\ph^\pi$ be the
sentence obtained from $\ph$ by replacing every instance of $\s$ with $\s^\pi$.
Similarly, let $X^\pi = \{\ph^\pi \mid \ph \in X\}$. We say that $X$ is
invariant under $\pi$ if $X$ and $X^\pi$ are logically equivalent.

In the deductive calculus, if we take a proof of $\ph$ from $X$ and transform
each step of that proof by $\pi$, then we obtain a proof of $\ph^\pi$ from
$X^\pi$. In other words, $X \vdash \ph$ if and only if $X^\pi \vdash \ph^\pi$.
The principle of indifference is the natural extension of this to the inductive
calculus.

Let $P$ be an inductive theory. Then $P$ satisfies the principle of indifference
if $P(\ph^\pi \mid X^\pi) = P(\ph \mid X)$. In particular, if $X$ is invariant
under $\pi$, then, given $X$, the sentences $\ph$ and $\ph^\pi$ should be
assigned the same probability.

After formulating the principle of indifference, we present several examples,
beginning with simple discrete examples and continuing to examples involving
intervals and circles. Our final example is an analysis of Bertrand's paradox, a
famous counterintuitive illustration of the principle of indifference.


\chapter{Background}\label{Ch:background}

It is assumed that the reader is familiar with the basics of measure-theoretic
probability theory. In Section \ref{S:meas-spaces}, we introduce some basic
concepts from measure theory. The reader should already be familiar with almost
everything in that section. We introduce them only to establish notation, cite
some lesser known results, and establish new definitions that we will need
later.

Familiarity with mathematical logic would be helpful but is not required. The
concepts from logic that we use are all presented as we need them, in a mostly
self-contained way.

We will adopt the convention that $0$ is a natural number. However, we will use
the notation $\bN = \{1, 2, 3, \ldots\}$ and $\bN_0 = \{0, 1, 2, 3, \ldots\}$.
  \symindex{$\bN$}%
  \symindex{$\bN_0$}%
In other words, $\bN$ is the set of positive integers, and $\bN_0$ is the set of
natural numbers.

\section{Ordinal and cardinal numbers}

\subsection{Ordinal numbers}\label{S:ordinals}

For further details about ordinal and cardinal numbers, see any basic text on
set theory, such as \cite{Cenzer2020} or \cite{Devlin1993}.

An \emph{ordinal (number)}
  \index{ordinal}%
is a well-ordered set, $(\al, <)$, such that if $x \in \al$, then $x = \{y \in
\al: y < x\}$. If $\al$ is an ordinal and $x, y \in \al$, then $x < y$ if and
only if $x \in y$ if and only if $x \subset y$. (Here, we use $\subseteq$ for
subset and $\subset$ for proper subset.) Moreover, every $x \in \al$ is itself
an ordinal. Thus, an ordinal is a set of ordinals that is well-ordered by $\in$.

The collection of all ordinals is not a set. (If it were, then it would be an
ordinal that is an element of itself, which, as we note below, is impossible.)
It is, nonetheless, ``well-ordered'' in a certain sense. More specifically, if
$\al$ and $\be$ are ordinals, then $\al \in \be$ if and only if $\al \subset
\be$, so that $\in$ has the properties of a strict partial order. Also, for any
two ordinals, $\al$ and $\be$, exactly one of the following is true: $\al \in
\be$, $\al = \be$, or $\be \in \al$. If $\al$ and $\be$ are ordinals, we will
write $\al < \be$ for $\al \in \be$. Finally, every nonempty collection of
ordinals has a smallest element. This last fact can be made rigorous in
axiomatic set theory as a so-called ``theorem schema,'' but for our present
purposes, we do not need such a formalization.

From these facts, it follows that there is a smallest ordinal. The smallest
ordinal is $\emp$, since there is no ordinal with $\al \in \emp$. The collection
of nonempty ordinals has a smallest element, meaning there is a second smallest
ordinal. The second smallest ordinal is $\{\emp\}$, since there is no ordinal
with $\emp \subset \al \subset\{\emp\}$.

If $\al$ is an ordinal, let $s(\al)$ denote the ordinal $\al \cup \{\al\}$. We
call $s(\al)$ the successor of $\al$. Note that there is no $\be$ with $\al
\subset \be \subset s(\al)$. Also note that $s(\emp) = \{\emp\}$. From here, we
see that the third smallest ordinal is $s(\{\emp\}) = \{\emp, \{\emp\}\}$, the
fourth smallest ordinal is $\{\emp, \{\emp\}, \{\emp, \{\emp\}\}\}$, and so on.

For each $n \in \bN_0$, define the ordinal $n'$ by setting $0' = \emp$ and $(n +
1)' = s(n')$. Let $\bN_0' = \{n' \mid n \in \bN_0\}$. The map $n \mapsto n'$ is
a bijection from $\bN_0$ to $\bN_0'$ which preserves the natural ordering on
$\bN_0$. That is, $m < n$ if and only if $m' < n'$.

It is straightforward to verify that $\bN_0'$ itself is an ordinal. Since
$\bN_0'$ is an infinite set and every $\al \in \bN_0'$ is a finite set, it
follows that $\bN_0'$ is the smallest infinite ordinal. The usual notation for
$\bN_0'$ is $\bom$.
  \symindex{$\bN_0'$}%
  \symindex{$\bom$}%
We will use this notation sparingly, since it conflicts with the usual usage of
$\om$ in probability theory. The reader must generally rely on context to see
the distinction, although to make things easier, we will use bold font when
using $\bom$ to denote $\bN_0'$. As we will discuss further in Section
\ref{S:std-arith}, we will sometimes find it useful to identify $n$ with $n'$,
giving us a representation of each natural number as a set.

Every well-ordered set is order isomorphic to a unique ordinal. In particular,
no two distinct ordinals are order isomorphic. If $\al = s(\be)$ for some
ordinal $\be$, then $\al$ is called a \emph{successor ordinal}.
  \index{ordinal!successor ---}%
Otherwise, $\al$ is called a \emph{limit ordinal}.
  \index{ordinal!limit ---}%
An ordinal $\al$ is a limit ordinal if and only if $\al = \bigcup_{\xi < \al}
\xi$, and it is a successor ordinal if and only if $\al = s(\bigcup_{\xi < \al}
\xi)$. The smallest limit ordinal is $0'$ and the second smallest limit ordinal
is $\bom$.

If $\al$ is an ordinal, then an \emph{$\al$-sequence}
  \index{a_alpha-sequence@$\al$-sequence}%
is a function whose domain is $\al$, typically denoted by $\ang{x_\xi \mid \xi <
\al}$. Such a sequence is said to have length $\al$. If $\al = n'$, then these
are $n$-tuples. If $\al = \bom$, then these are ordinary sequences indexed by
$\bN_0$.

\subsection{Transfinite induction and recursion}
  \index{induction!transfinite ---}%

Let $\al$ be an ordinal and let $S$ be a set. The principle of transfinite
induction says that if $0' \in S$, and $\be \subset S$ implies $\be \in S$ for
all $\be < \al$, then $\al \subseteq S$. The principle of transfinite recursion
states that if $G: \bigcup_{\xi < \al} S^\xi \to S$, then there exists a unique
sequence $\ang{x_\xi \mid \xi < \al}$ such that $x_\be = G(\ang{x_\xi \mid \xi <
\be})$ for all $\be < \al$. In proofs that use transfinite induction or
recursion, the inductive/recursive step is typically broken into cases according
to whether $\al$ is a successor ordinal or a limit ordinal.

\subsection{Ordinal arithmetic}
  \index{ordinal!arithmetic}%

We define ordinal addition recursively by
\begin{align*}
  \al + 0' &= \al,\\
  \al + s(\be) &= s(\al + \be), \text{ and}\\
  \al + \be &= \ts{\bigcup_{\xi < \be}(\al + \xi)},
    \text{ if $\be > 0'$ is a limit ordinal}.
\end{align*}
Note that $\al + 1' = s(\al)$. Ordinal addition is associative, but not
commutative. For instance, $1' + \bom = \bom$, but $\bom + 1' > \bom$. Ordinal
subtraction on the left is always possible. That is, if $\al \le \be$, then
there exists a unique ordinal $\ga$ such that $\al + \ga = \be$. Consequently,
ordinal addition is left-cancellative, meaning that $\al + \be = \al + \ga$
implies $\be = \ga$. Moreover, it is strictly increasing in the right argument,
meaning that $\al + \be < \al + \ga$ if and only if $\be < \ga$. These facts can
be used, for instance, to prove that the function $f: \ga \to (\be + \ga)
\setminus \be$ given by $f(\xi) = \be + \xi$ is a bijection.

We define ordinal multiplication recursively by
\begin{align*}
  \al \cdot 0' &= 0',\\
  \al \cdot s(\be) &= (\al \cdot \be) + \al,\text{ and}\\
  \al \cdot \be &= \ts{\bigcup_{\xi<\be}(\al \cdot \xi)},
    \text{ if $\be > 0'$ is a limit ordinal}.
\end{align*}
Ordinal multiplication is associative, but not commutative. For instance, $2'
\cdot \bom = \bom$, but $\bom \cdot 2' = \bom + \bom > \bom$. Multiplication is
left distributive over addition, but not right distributive. For instance, $1'
\cdot \bom + 1' \cdot \bom = \bom + \bom > \bom$.

When restricted to $\bN_0'$, ordinal addition and multiplication agree with
ordinary addition and multiplication. That is, $(m + n)' = m' + n'$ and $(m
\cdot n)' = m' \cdot n'$.

\subsection{Cardinal numbers}

If $X$ and $Y$ are sets, we write $X \sim Y$ to mean there exists a bijection
$f: X \to Y$. Every set can be well-ordered, and every well-ordered set is
isomorphic to a unique ordinal. Thus, for every set $X$, there exists an
ordinal $\al$ such that $X \sim \al$. This $\al$, however, is not unique, since
a set can be well-ordered in multiple ways.

We define the \emph{cardinality}
  \index{cardinality}%
of a set $X$ to be the smallest ordinal $\al$ such that $X \sim \al$, and we
write $|X| = \al$ or $\card(X) = \al$. A \emph{cardinal (number)}
  \index{cardinal}%
is an ordinal $\al$ that is the cardinality of some set. Since $|\al + 1| =
|\al|$ whenever $\al$ is infinite, it follows that every infinite cardinal
number is a limit ordinal.

Every finite ordinal is a cardinal, and $\bom$ is a cardinal. No other countably
infinite ordinal besides $\bom$ is a cardinal. Since uncountable sets exist, we
know that uncountable cardinals exist. Let $\bom_1$ be the first uncountable
cardinal, which is also the first uncountable ordinal.
  \symindex{$\bom_1$}%

For any set $X$, we have $|\fP X| > |X|$, where $\fP X$ is the power set of $X$.
  \index{power set}%
  \symindex{$\fP$}%
Hence, for any cardinal number $\ka$, we have $|\fP \ka| > \ka$, which means
that the collection of cardinal numbers greater than $\ka$ is nonempty. Let
$\ka^+$ be the smallest cardinal greater than $\ka$. In particular, $\bom_1 =
\bom^+$. Any cardinal that is of the form $\ka^+$ for some $\ka$ is called a
\emph{successor cardinal}.
  \index{cardinal!successor ---}%
Otherwise, it is called a \emph{limit cardinal}.
  \index{cardinal!limit ---}%

Note that $\bom_1$ is the set of all countable ordinals. Every countable
sequence of countable ordinals has a countable upper bound. More specifically,
if $\al$ is an ordinal with $\al < \bom_1$, and $\ang{\be_\xi \mid \xi < \al}$
is an $\al$-sequence of ordinals with $\be_\xi < \bom_1$ for all $\xi$, then
there exists an ordinal $\ga < \bom_1$ such that $\be_\xi \le \ga$ for all
$\xi$. In fact, we may take $\ga = \bigcup_{\xi < \al} \be_\xi$. Since a
countable union of countable sets is countable, it follows that $\ga < \bom_1$.

More generally, a cardinal $\ka$ is called \emph{regular} if, whenever $\al <
\ka$ and $\ang{\be_\xi \mid \xi < \al}$ is an $\al$-sequence of ordinals with
$\be_\xi < \ka$ for all $\xi$, we have $\bigcup_{\xi < \al} \be_\xi < \ka$.
Since a finite union of finite sets is finite, $\bom$ is regular. Since a
countable union of countable sets is countable, $\bom_1$ is regular.

A cardinal number $\ka$ is called a \emph{strong limit cardinal} if, for all
cardinals $\la < \ka$, we have $|\fP \la| < \ka$. Since $|\fP \ka| \ge \ka^+$, a
strong limit cardinal is always a limit cardinal. Since the power set of a
finite set is finite, $\bom$ is a strong limit cardinal. However, the power set
of a countable set can be uncountable, so $\bom_1$ is not a strong limit
cardinal.

The cardinal $\bom$ is both regular and a strong limit cardinal. We cannot
``reach'' or ``access'' $\bom$ from the finite sets that lie below it, using the
operations of union and power set. A cardinal $\ka$ is called \emph{strongly
inaccessible}
  \index{cardinal!strongly inaccessible ---}%
if it uncountable, regular, and a strong limit cardinal.

\section{Boolean algebras}

A \emph{Boolean algebra}
  \index{Boolean!algebra}%
is a partially ordered set $(B, \le)$ such that
\begin{enumerate}[(i)]
  \item $x \vee y := \sup \{x, y\}$ exists for all $x, y \in B$,
  \item $x \wedge y := \inf \{x, y\}$ exists for all $x, y \in B$,
  \item $x \wedge (y \vee z) = (x \wedge y) \vee (x \wedge z)$ for all $x, y, z
        \in B$,
  \item $x \vee (y \wedge z) = (x \vee y) \wedge (x \vee z)$ for all $x, y, z
        \in B$,
  \item there exists $0, 1 \in B$ such that $0 \le x \le 1$ for all $x \in B$,
        and
  \item for all $x \in B$, there exists $\neg x \in B$ such that $x \wedge \neg
        x = 0$ and $x \vee \neg x = 1$.
\end{enumerate}
It can be shown that in a Boolean algebra, $\neg x$ is unique. If
\begin{enumerate}[(i)]
  \setcounter{enumi}{6}
  \item $\bigvee_{x \in C} x := \sup C$ exists for all countable $C \subseteq
        B$, and
  \item $\bigwedge_{x \in C} x := \inf C$ exists for all countable $C \subseteq
        B$,
\end{enumerate}
then $B$ is a \emph{Boolean $\si$-algebra}.
  \index{Boolean!s_sigma-algebra@$\si$-algebra}%
The smallest Boolean algebra is the \emph{degenerate} Boolean algebra with only
one element, in which $0 = 1$. The smallest nondegenerate Boolean algebra is $\B
= \{0, 1\}$
  \symindex{$\B$}%
with the usual meaning of $\le$. The Boolean algebra $\B$ is also
clearly a Boolean $\si$-algebra.

Let $B$ be a Boolean $\si$-algebra and $N \subseteq B$. Then $N$ is a
\emph{$\si$-ideal} of $B$ if $0 \in N$, $\bigvee_{x \in C} x \in N$ whenever $C
\subseteq N$ is countable, and $x \in N$ implies $y \in N$ for all $y \le x$.

If $N$ is a $\si$-ideal of $B$ and $x, y \in B$, we say $x \equiv y \mod N$ if $
(x \wedge \neg y) \vee (\neg x \wedge y) \in N$. The Boolean operations of $B$
determine Boolean operations on the set of equivalence classes, $B/N$, making
$B/N$ into a Boolean $\si$-algebra.

A \emph{Boolean measure} on a Boolean $\si$-algebra $B$ is a function $m: B \to
[0, \infty]$ such that $m (x) = 0$ if and only if $x = 0$, and $m(\bigvee_n x_n)
= \sum_n m(x_n)$ whenever $i \ne j$ implies $x_i \wedge x_j = 0$. A
\emph{Boolean measure space}
  \index{Boolean!measure space}%
is a pair $(B, m)$ where $B$ is a Boolean $\si$-algebra and $m$ is a Boolean
measure on $B$.

Let $(B, m)$ and $(B', m')$ be Boolean measure spaces. A \emph{homomorphism}
from $(B, m)$ to $(B', m')$ is a function $g: B \to B'$ such that $x \le y$
implies $g(x) \le g(y)$, and $m'(g(x)) = m(x)$. The function g is an
\emph{isomorphism} if it is a bijection. In that case, $g^{-1}$ is an
isomorphism from $(B', m')$ to $(B, m)$. If an isomorphism from $(B, m)$ to
$(B', m')$ exists, then we say that $(B, m)$ and $(B', m')$ are
\emph{isomorphic}.

\section{Measure spaces}\label{S:meas-spaces}

Let $\Om$ be a nonempty set and $\Si$ a collection of subsets of $\Om$. Then
$\Si$ is a \emph{$\si$-algebra (of sets) on $\Om$}
  \index{s_sigma-algebra@$\si$-algebra}%
if $\Si$ is nonempty and closed under complements and countable unions. In this
case, we call $(\Om, \Si)$ a \emph{measurable space}. A set $A \in \Si$ is
called a \emph{measurable set}. Note that a $\si$-algebra is a Boolean
$\si$-algebra when it is equipped with the partial order $\subseteq$.

The intersection of any family of $\si$-algebras is a $\si$-algebra. If $\cE$ is
any collection of subsets of $\Om$, then $\si(\cE)$ denotes the smallest
$\si$-algebra containing $\cE$, and is called the \emph{$\si$-algebra generated
by $\cE$}.
  \symindex{$\si(\cE)$}%
A \emph{measure} on $(\Om, \Si)$ is a function $\mu: \Si \to [0, \infty]$ such
that $\opmu \emp = 0$ and $\opmu \bigcup_1^\infty A_n = \sum_1^\infty \opmu A_n
$ whenever $\{A_n\} \subseteq \Si$ is a pairwise disjoint sequence of measurable
sets. In this case, $(\Om, \Si, \mu)$ is a \emph{measure space}.
  \index{measure space}%
A \emph{measure subspace} of $(\Om, \Si, \mu)$ is a measure space $(\Om, \Si',
\nu)$, where $\Si' \subseteq \Si$ and $\nu = \mu|_{\Si'}$. If $\opmu \Om <
\infty$, then $\mu$ is a \emph{finite measure}. If $\opmu \Om = 1$, then $\mu$
is a \emph{probability measure} and $(\Om, \Si, \mu)$ is a \emph{probability
space}. Any measure subspace of a probability one is also a \emph{probability
space}.
  \index{probability!space}%
In a probability space, it is customary to call the elements $\om \in \Om$ 
\emph{outcomes}, and the measurable sets $A \in \Si$ \emph{events}.

Let $(\Om, \Si, \mu)$ be a measure space. A \emph{$\mu$-null set} (or just a
\emph{null set})
  \index{null set}%
is a set $A \in \Si$ with $\opmu A = 0$. The collection of all null sets is
denoted by $\cN_\mu$. If $A$ and $B$ are subsets of $\Om$, then we write $A = B$
\emph{$\mu$-almost everywhere},
  \index{almost everywhere}%
if $A \tri B$ is a subset of a null set, where
\[
  A \tri B = (A \cap B^c) \cup (A^c \cap B)
\]
is the \emph{symmetric difference}. We will usually abbreviate this as $A = B$
$\mu$-a.e., or if the measure is understood, as just $A = B$ a.e. If $\mu$ is a
probability measure, then we instead write $A = B$ \emph{$\mu$-almost surely},
  \index{almost surely}%
abbreviated as $A = B$ $\mu$-a.s.~or $A = B$ a.s. We also write $A \subseteq B$
a.e.~if $A \cap B^c$ is a subset of a null set. More generally, if $f$ and $g$
are functions with domain $\Om$, then we write $f = g$ a.e.~if there exists $N
\in \cN_\mu$ such that $f(\om) = g(\om)$ for all $\om \in N^c$.

\subsection{Generating \texorpdfstring{$\si$}{sigma}-algebras}
\label{S:gen-sig-alg}

Let $\Om$ be a nonempty set and $\cE$ a collection of subsets of $\Om$. Then
$\si(\cE)$ can be constructed from $\cE$ in an iterative fashion using
transfinite recursion. Let $\cE_0 = \cE$. For an ordinal $\al < \bom_1$, let
\[
  \cE_\al' = \cE_\al \cup \{V^c \mid V \in \cE_\al\},
\]
and
\[
  \ts{
    \cE_{\al + 1} = \cE_\al' \cup \{
      \bigcap \cD \mid \cD \subseteq \cE_\al' \text{ is nonempty and countable}
    \}.
  }
\]
Here, countable means finite or countably infinite. If $\al$ is a limit ordinal,
define $\cE_\al = \bigcup_{\xi < \al} \cE_\xi$.

By transfinite induction, $\cE_\al \subseteq \si(\cE)$ for all $\al < \bom_1$,
so that $\bigcup_{\al < \bom_1} \cE_\al \subseteq \si(\cE)$. Clearly,
$\bigcup_{\al < \bom_1} \cE_\al$ is nonempty and closed under complements. Since
every countable sequence of countable ordinals has a countable upper bound, it
is also closed under countable intersections. Therefore, $\bigcup_{\al < \bom_1}
\cE_\al$ is a $\si$-algebra containing $\cE$, which gives $\si(\cE) \subseteq
\bigcup_{\al < \bom_1} \cE_\al$. This shows that $\si(\cE) = \bigcup_{\al <
\bom_1} \cE_\al$.

For each $V \in \si(\cE)$, we define the \emph{rank of $V$ (with respect to
$\cE$)},
  \index{rank of a measurable set}%
which we denote by $\rk V$, to be the smallest $\al < \bom_1$ such that $V \in
\cE_\al$. Note that $\rk V$ is always a successor ordinal.

\subsection{Complete measure spaces}

Given a measure $\mu$ on $(\Om, \Si)$, we define the associated \emph{outer
measure} by
\[
  \mu^* A = \inf\{\mu B: A \subseteq B \text{ and } B \in \Si\}.
\]
Note that $\mu^* A$ is defined for every $A \subseteq \Om$. Similarly, we
define the \emph{inner measure} by
\[
  \mu_* A = \sup\{\mu B: B \subseteq A \text{ and } B \in \Si\}.
\]
For any $A \subseteq \Om$, we have $\mu_* A \le \mu^* A$.

A \emph{negligible set}
  \index{negligible set}%
is a (not necessarily measurable) subset of a null set. A negligible set that is
also measurable is necessarily a null set. A measure space is called
\emph{complete}
  \index{measure space!complete ---}%
if every negligible set is measurable. A probability space is complete if and
only if every superset of a set of measure 1 is measurable. In a complete
measure space, if $A$ is measurable and $A = B$ a.e., then $B$ is measurable.

If $(\Om, \Si, \mu)$ is a measure space, then
\[
  \ol \Si = \{A \cup B: A \in \Si \text{ and $B$ is negligible}\}
\]
is a $\si$-algebra called the \emph{completion of $\Si$ with respect to $\mu$}.
  \index{s_sigma-algebra@$\si$-algebra!completion of a ---}%
There is a unique measure $\ol \mu$ on $(\Om, \ol \Si)$ that agrees with $\mu$
on $\Si$ and makes $(\Om, \ol \Si, \ol \mu)$ into a complete measure space. The
measure $\ol \mu$ is called the \emph{completion of $\mu$}.
  \index{measure space!completion of a ---}%
Note that a set is $\ol \mu$-null if and only if it is a subset of a $\mu$-null
set. Also, if $A \subseteq \Om$ and $\mu^* A < \infty$, then $A \in \ol \Si$ if
and only if $\mu_* A = \mu^* A$ (see, for instance, \cite[Proposition
1.5.5]{Cohn2013}).

Let $(\Om, \Si, \mu)$ be a measure space and let $A \subseteq \Om$. Then
\[
  \si(\Si \cup \{A\}) = \{(B \cap A) \cup (C \cap A^c): B, C \in \Si\}.
\]
Suppose $\mu$ is a finite measure and let $\al \ge 0$ satisfy $\mu_* A \le \al
\le \mu^* A$. Then there exists a measure $\nu$ on $(\Om, \si(\Si \cup \{A\}))$
such that $\nu|_\Si = \mu$ and $\opnu A = \al$ (see, for instance, \cite
[Exercise 1.5.12]{Cohn2013} or \cite[Theorem 1.12.14]{Bogachev2007}).

\subsection{Dynkin systems}

Let $\Om$ be a nonempty set and $\De$ a collection of subsets of $\Om$. Then
$\De$ is a \emph{Dynkin system}, or \emph{$\la$-system}, if
  \index{l_lambda-system@$\la$-system}%
  \index{Dynkin system|see {$\la$-system}}%
\begin{enumerate}[(i)]
  \item $\Om \in \De$,
  \item if $A, B \in \De$ with $A \subseteq B$, then $B \setminus A \in \De$,
        and
  \item If $\{A_n\} \subseteq \De$ with $A_n \subseteq A_{n + 1}$, then
        $\bigcup_n A_n \in \De$.
\end{enumerate}
Equivalently, one can define $\De$ to be a Dynkin system if it is nonempty and
satisfies
\begin{enumerate}[(i)$'$]
  \item if $A \in \De$, then $A^c \in \De$
  \item if $\{A_n\} \subseteq \De$ are pairwise disjoint, then $\bigcup_n A_n
        \in \De$.
\end{enumerate}
Every $\si$-algebra is a Dynkin system. Conversely, a Dynkin system that is
closed under (finite) intersections is a $\si$-algebra.

The intersection of any family of Dynkin systems is a Dynkin system.  If $\cE$
is any collection of subsets of $\Om$, then there is a smallest Dynkin system
containing $\cE$, called the \emph{Dynkin system generated by $\cE$}.

If $\cE$ is a collection of subsets of $\Om$, then $\cE$ is a
\emph{$\pi$-system}
  \index{p_pi-system@$\pi$-system}%
if $A, B \in \cE$ implies $A \cap B \in \cE$. Dynkin's $\pi$-$\la$ theorem
  \index{p_pi-lambda theorem@$\pi$-$\la$ theorem}%
states that if $\De$ is a Dynkin system, $\cE$ is a $\pi$-system, and $\cE
\subseteq \De$, then $\si(\cE) \subseteq \De$.

Let $\De$ be a Dynkin system on $\Om$ and let $B \in \De$. Define $\De|_B = \{A
\subseteq \Om: A \cap B \in \De\}$. Then $\De|_B$ is a Dynkin system on $\Om$
called the \emph{restriction of $\De$ to $B$}. If $\De$ is a $\si$-algebra, then
$\De|_B$ is a $\si$-algebra.

\subsection{Measurable functions and pushforwards}

Let $(\Om, \Si, \mu)$ be a measure space and $(S, \Ga)$ a measurable space. A
function $f: \Om \to S$ is \emph{measurable}, or \emph{$(\Si, \Ga)$-measurable},
  \index{measurable function}%
if $U \in \Ga$ implies $f^{-1}(U) \in \Si$.

If $A \subseteq \Om$, then $1_A$ denotes the \emph{indicator function} of $A$,
  \index{indicator function}%
  \symindex{$1_A$, $1_A(\om)$}
and is defined by
\[
  1_A(\om) = \begin{cases}
    1 &\text{if $\om \in A$},\\
    0 &\text{if $\om \notin A$}.
  \end{cases}
\]
The function $1_A$ can be regarded as taking values in the measurable space $
(S, \Ga)$, where $S = \{0, 1\}$ and $\Ga = \fP S$. In that case, if $A \subseteq
\Om$, then $A$ is measurable if and only if $1_A$ is measurable.

Suppose $f$ is a measurable function from a measure space $(\Om, \Si, \mu)$ to a
measurable space $(S, \Ga)$. Since $f^{-1}(\emp) = \emp$, $f^{-1}(U^c) = f^ {-1}
(U)^c$, and $f^{-1}(\bigcup_n U_n) = \bigcup_n f^{-1}(U_n)$, it follows that
$\mu \circ f^{-1}$ is a measure on $(S, \Ga)$, called the \emph{pushforward} of
$\mu$.
  \index{pushforward}%

\subsection{Measure space isomorphisms}

Let $(\Om, \Si, \mu)$ be a measure space and $\cN_\mu$ the collection of null
sets. The set $\cN_\mu$ is a $\si$-ideal of $\Si$, and $A = B$ a.e.~if and only
if $A = B \mod \cN_\mu$. The equivalence classes modulo $\cN_\mu$ are $[A]_\mu =
\{B \in \Si: A = B \text{ a.e.}\}$, and the set of equivalence classes,
$\Si/\cN_\mu$, is a Boolean $\si$-algebra. If we define $m_\mu: \Si/\cN_\mu \to
[0, \infty]$ by $m_\mu([A]_\mu) = \mu(A)$, then $m_\mu$ is well-defined and is a
Boolean measure on $\Si/\cN_\mu$. We call $(\Si/\cN_\mu, m_\mu)$ the 
\emph{Boolean measure space corresponding to $(\Om, \Si, \mu)$}. We say that two
measure spaces are \emph{isomorphic}
  \index{measure space!isomorphism}%
if their corresponding Boolean measure spaces are isomorphic.

Let $(\Om, \Si, \mu)$ and $(S, \Ga, \nu)$ be measure spaces, let $f: \Om \to S$
be measurable, and assume $\nu = \mu \circ f^{-1}$. Note that $\mu \, f^{-1}(U)
\tri f^ {-1}(V) = \nu \, U \tri V$. Hence, $U = V$ $\nu$-a.e.~if and only if $f^
{-1}(U) = f^{-1}(V)$ $\mu$-a.e. It follows, therefore, that $f$ determines an
injective function $g: \Ga/\cN_\nu \to \Si/\cN_\mu$ given by $g([U]_\nu) = [f^
{-1}(U)]_\mu$. This function is, in fact, a homomorphism from $(\Ga/\cN_\nu,
m_\nu)$ to $(\Si/\cN_\mu, m_\mu)$. Hence, $g$ is an isomorphism if and only if
it is surjective, that is, if and only if
\[
  \text{
    $[A]_\mu \in \Si/\cN_\mu$ implies
      $g([U]_\nu) = [A]_\mu$ for some $[U]_\nu \in \Ga/\cN_\nu$.
  }
\]
We can rewrite this in terms of $f$ to state the following. For two measure
spaces, $(\Om, \Si, \mu)$ and $(S, \Ga, \nu)$, to be isomorphic, it suffices
that there exists a measurable function $f: \Om \to S$ such that $\nu = \mu
\circ f^{-1}$, and
\[
  \text{
    for all $A \in \Si$,
      there exists $U \in \Ga$ such that $f^{-1}(U) = A$ $\mu$-a.e.
  }
\]
If such an $f$ exists, we say that $f$ \emph{induces an isomorphism} from $(\Om,
\Si, \mu)$ to $(S, \Ga, \nu)$. Note that in this case, $f$ also induces an
isomorphism from $(\Om, \ol \Si, \ol \mu)$ to $(S, \ol \Ga, \ol \nu)$.

Two measure spaces, $(\Om, \Si, \mu)$ and $(S, \Ga, \nu)$, are \emph{pointwise
isomorphic} if there exists a measurable bijection $f: \Om \to S$ such that $f^
{-1}$ is measurable and $\nu = \mu \circ f^{-1}$. In that case, $f$ is called a
\emph{pointwise isomorphism}. It is straightforward to verify that if $f$ is a
pointwise isomorphism from $(\Om, \Si, \mu)$ to $(S, \Ga, \nu)$, then it is also
a pointwise isomorphism from $(\Om, \ol \Si, \ol \mu)$ to $(S, \ol \Ga, \ol
\nu)$. In other words, if two measure spaces are pointwise isomorphic, then so
are their completions.

Note that pointwise isomorphic measure spaces are isomorphic, and a pointwise
isomorphism induces an isomorphism. The \emph{pointwise isomorphism class} of $
(\Om, \Si, \mu)$ is the collection of all measure spaces that are pointwise
isomorphic to $(\Om, \Si, \mu)$.

Let $(\Om, \Si, \mu)$ be a measure space and let $S$ be a set. Let $h: \Om \to
S$ be a function. Define $\Ga = \{A \subseteq S \mid h^{-1}(A) \in \Si\}$ and
define $\nu = \mu \circ h^{-1}$. Then $(S, \Ga, \nu)$ is a measure space and $h$
is a measurable function from $\Om$ to $S$. We call $(S, \Ga, \nu)$ the
\emph{measure space image of $(\Om, \Si, \mu)$ under $h$}.
  \index{measure space!image}%
If $h$ is a bijection, then $h$ is a pointwise isomorphism from $(\Om, \Si,
\mu)$ to $(S, \Ga, \nu)$.

\section{Structures}

Let $A$ be a set. If $f$ is a function with domain $A$ and $a \in A$, then the
value of $f$ at $a$ will be denoted variously by $f(a)$, $fa$ or $a^f$. Note
that $\emp$ is a function. In fact, $\emp$ is the unique function with domain
$\emp$.

If $n \in \bN$, then $A^n$ is the set of $n$-tuples, $\vec a = \ang{a_1, \ldots,
a_n}$. We let $A^0 = \{\emp\}$. If $f$ is a function with domain $A^n$, we write
$f \vec a = f(a_1, \ldots, a_n)$.

For $n \ge 1$, an \emph{$n$-ary relation} (or \emph{predicate}) is a subset of
$R \subseteq A^n$. We write $R \vec a$ for $\vec a \in R$ and $\neg R \vec a$
for $\vec a \notin R$. For $n \ge 0$, an \emph{$n$-ary operation} is a function
$f: A^n \to A$. A $0$-ary operation is a function $f: \{\emp\} \to A$, and is
uniquely determined by $c = f(\emp) \in A$. In this case, we identify $f$ with
$c$. A $0$-ary operation is also called a \emph{constant}.

An \emph{extralogical signature} is a set $L$ of symbols. Each symbol in $L$ is
called an \emph{extralogical symbol},
  \index{extralogical signature}%
  \index{extralogical symbol}%
and has both a \emph{type} and an \emph{arity}. The possible types are
\emph{relation symbols} and \emph{function (or operation) symbols}. The arity is
a nonnegative integer. Relation symbols may have an arity $n \ge 1$. Function
symbols may have an arity $n \ge 0$. A $0$-ary function symbol is also called a
\emph{constant symbol}.

When referring to symbols in $L$, we will adopt the convention that, unless
otherwise stated, $c$ will denote a constant symbol, $r$ a relation symbol, and
$f$ a function symbol with arity $n \ge 1$.

Let $L$ be an extralogical signature and let $A$ be a set. For each symbol
$\s \in L$, let $\s^\om$ be a relation such that
\begin{enumerate}[(i)]
  \item if $\s$ is an $n$-ary relation symbol, then $\s^\om \subseteq A^n$ is an
        $n$-ary relation on $A$, and
  \item if $\s$ is an $n$-ary function symbol, then $\s^\om: A^n \to A$ is an
        $n$-ary function.
\end{enumerate}
Let $L^\om = \{\s^\om \mid \s \in L\}$ and $\om = (A, L^\om)$. Then $\om$ is an
\emph{$L$-structure}. The set $A$ is called the \emph{domain} of $\om$. The
structure $\om$ is called \emph{finite} or \emph{infinite} if $A$ is finite or
infinite, respectively.
  \index{structure}%
  \index{structure!domain of a ---}%

Let $L$ be an extralogical signature, $\nu = (B, L^\nu)$ an $L$-structure, and
$A \subseteq B$. Suppose that for all functions $f^\nu \in L^\nu$, the subset
$A$ is \emph{closed} under $f^\nu$, meaning that $\vec a \in A^n$ implies $f^\nu
\vec a \in A$. Let $f^\om = f^\nu|_{A^n}$, $r^\om = r^\nu \cap A^n$, and $L^\om
= \{\s^\om \mid \s \in L\}$. The $\om = (A, L^\om)$ is a structure. We call
$\om$ a \emph{substructure} of $\nu$, and write $\om \subseteq \nu$.

Let $L$ be an extralogical signature, $\om = (A, L^\om)$ an $L$-structure,
and $L_0 \subseteq L$. Define $L^{\om_0} = \{\s^\om \mid \s \in L_0\}$ and
$\om_0 = (A, L^{\om_0})$. Then $\om_0$ is an $L_0$-structure. We call
$\om_0$ the \emph{$L_0$-reduct of $\om$}, and we call $\om$ an 
\emph{$L$-expansion of $\om_0$}.
  \index{structure!reduct of a ---}%
  \index{structure!expansion of a ---}%
  \index{reduct|see {structure}}%
  \index{expansion|see {structure}}%

\subsection{Structure homomorphisms}

Let $\om = (A, L^\om)$ and $\nu = (B, L^\nu)$ be $L$-structures and let $g: A
\to B$. We will abuse notation and also write $g: \om \to \nu$. For $\vec a \in
A^n$, we write $g \vec a$ for $\ang{ga_1, \ldots, ga_n}$. Assume that
\begin{enumerate}[(i)]
  \item $g f^\om \vec a = f^\nu g \vec a$ for all function symbols $f \in L$,
  \item $g c^\om = c^\nu$ for all constant symbols $c \in L$, and
  \item $r^\om \vec a$ implies $r^\nu g \vec a$ for all relation symbols $r \in
        L$.
\end{enumerate}
Then $g$ is a \emph{homomorphism}. A \emph{strong homomorphism} is a
homomorphism such that $r^\nu g \vec a$ implies $r^\om \vec b$ for some $\vec b
\in g^{-1} g \vec a$. An \emph{embedding} is an injective strong homomorphism,
an \emph{isomorphism} is a bijective strong homomorphism, and an
\emph{automorphism} is an isomorphism from $\om$ to $\om$. If $g$ is an
isomorphism, then $r^\om \vec a$ if and only if $r^\nu g \vec a$. We say that
$\om$ and $\nu$ are \emph{isomorphic}, written $\om \simeq \nu$, if there is an
isomorphism $g: \om \to \nu$.
 \index{structure!isomorphism}%

Let $\om = (A, L^\om)$ be a structure, let $B$ be a set with the same
cardinality as $A$, and let $g: A \to B$ be any bijection. Define the
$L$-structure $\nu$ with domain $B$ by
\begin{enumerate}[(i)$'$]
  \item $f^\nu \vec b = g f^\om g^{-1} \vec b$
        for all function symbols $f \in L$,
  \item $c^\nu = g c^\om$ for all constant symbols $c \in L$, and
  \item $r^\nu \vec b$ if and only if $r^\om g^{-1} \vec b$ for all relation
        symbols $r \in L$.
\end{enumerate}
Then $g$ is an isomorphism from $\om$ to $\nu$, and we call $\nu$ the 
\emph{isomorphic image of $\om$ under $g$}.
  \index{structure!isomorphic image of a ---}

\subsection{The standard structure of arithmetic}\label{S:std-arith}

Consider the extralogical signature $L = \{\ul 0, \S, +, \bdot, <\}$, where $\ul
0$ is a constant symbol, $\S$ is a unary function symbol, $+$ and $\bdot$ are
binary function symbols, and $<$ is a binary relation symbol.

We can define the structure $\cN = (\bN_0, L^\cN)$
  \symindex{$\cN$}%
as follows. Let $\ul 0^\cN = 0$, and let $\S^\cN$ be the successor function, $n
\mapsto n + 1$. Define $+^\cN$ and ${\bdot \,}^\cN$ to be ordinary addition and
multiplication in $\bN_0$, and $<^\cN$ to be the ordinary less--than relation on
$\bN_0$. For $+$, $\bdot$, and $<$, we will need to rely on context to
distinguish between the extralogical symbols and their ordinary meanings. To
make matters worse, we will also sometimes use $\S$ to denote the function $n
\mapsto n + 1$, so that context is also required to determine whether $\S$
denotes a symbol or a function.

The structure $\cN = (\bN_0, 0, \S, +, \bdot, <)$ is called the \emph{standard
structure of arithmetic}.
  \index{standard structure of arithmetic}%
Sometimes, we will use this phrase to refer to the same structure, but with $<$
omitted.

Now consider a different structure $\cN' = (\bN_0', L^{\cN'})$,
  \symindex{$\cN'$}%
defined as follows. Let $0^{\cN'} = \emp$. Let $\S^{\cN'} = s$, where $s(\al) =
\al \cup \{\al\}$ is the successor function on ordinals. Define $+^\cN$ and $
{\bdot \,}^\cN$ to be ordinal addition and ordinal multiplication, and let
${<^{\cN'}} = {\in}$. It can be shown that the function $n \mapsto n'$, defined
in Section \ref{S:ordinals}, is an isomorphism from $\cN$ to $\cN'$. We may
therefore sometimes identify $\cN$ and $\cN'$, thinking of the natural numbers
as being identical to the finite ordinals.

\subsection{Factor structures}

Let $\om = (A, L^\om)$ be a structure and let $\approx$ be an equivalence
relation on $A$. We write $\vec a \approx \vec b$ to mean that $a_i \approx b_i$
for all $i$. Suppose that for all function symbols $f \in L$, we have $\vec a
\approx \vec b$ implies $f^\om \vec a = f^\om \vec b$. Then $\approx$ is a
\emph{congruence (relation) in $\om$}.

Let $\om$ and $\nu$ be structures and $g: \om \to \nu$ a homomorphism. Define
${\approx_g} \subseteq A^2$ by $a \approx_g b$ if and only if $ga = gb$. Then
$\approx_g$ is a congruence in $\om$ called the \emph{kernel of $g$}. Let $A' =
A/{\approx}$ be the set of equivalence classes under $\approx$. Let $a/{\approx}$
denote the equivalence class of $a$ and write $\vec a/{\approx} = 
\ang{a_1/{\approx}, \ldots, a_n/{\approx}}$.

If $f \in L$ is a function symbol, define $f^{\om'}: (A')^n \to A'$ by $f^
{\om'}(\vec a/{\approx}) = (f^\om \vec a)/{\approx}$. If $r \in L$ is a relation
symbol, define $r^{\om'} \subseteq (A')^n$ by $r^{\om'}(\vec a/{\approx})$ if
and only if $r^\om \vec b$ for some $\vec b \approx \vec a$. It can be shown
that both $f^{\om'}$ and $r^{\om'}$ are well-defined. We also define $c^{\om'} =
c^\om/{\approx}$. Then $\om' = (A', L^{\om'})$ is a structure, denoted by
$\om/{\approx}$, and called the \emph{factor structure of $\om$ modulo
$\approx$}.

Let $\approx$ be a congruence in an $L$-structure $\om$. According to the
homomorphism theorem (see, for example, \cite[Section 2.1]{Rautenberg2010}), the
map $a \mapsto a/{\approx}$ is a strong homomorphism from $\om$ onto $\om/
{\approx}$, which we call the \emph{canonical homomorphism}.

Conversely, let $\om$ and $\nu$ be $L$-structures and $g: \om \to \nu$ a
surjective strong homomorphism. Let $\approx$ be the kernel of $g$. Let $k$ be
the canonical homomorphism from $\om \to \om/{\approx}$ and let $\iota$ denote
the map $a/{\approx} \mapsto ga$. Also according to the homomorphism theorem,
the map $\iota$ is an isomorphism from $\om/{\approx}$ to $\nu$, and $g = \iota
\circ k$.

\subsection{Direct products of structures}

Let $L$ be an extralogical signature and let $\ang{\om_i \mid i \in I}$ be an
indexed collection of $L$-structures. We let $B = \prod_{i \in I} A_i$ and adopt
the notation $a = \ang{a_i \mid i \in I} \in B$, $\vec a = \ang{a^1, \ldots,
a^n} \in B^n$, and $\vec a_i = \ang{a_i^1, \ldots, a_i^n} \in A_i^n$. For
symbols in $L$, we define relations and operations on $B$ by $r^\nu \vec a$ if
and only if $r^{\om_i} \vec a_i$ for all $i \in I$, $f^\nu \vec a = \ang{f^
{\om_i} \vec a_i \mid i \in I}$, and $c^\nu = \ang{c^{\om_i} \mid i \in I}$.
Then $\nu = (B, L^\nu)$ is a structure called the \emph{direct product} of
$\ang{\om_i \mid i \in I}$, and denoted by $\prod_{i \in I} \om_i$.

If $\om_i = \om$ for all $i \in I$, then $\nu$ is called the \emph{direct power}
and is denoted by $\om^I$. If $I = \{1, \ldots, n\}$, then we denote $\prod_{i
\in I} \om_i$ by $\om_1 \times \cdots \times \om_n$, and we denote $\om^I$ by
$\om^n$. Note that $a \mapsto \ang{a \mid i \in I}$ is an embedding from $\om$
to $\om^I$.

\section{Strings}

Let $\A$ be a nonempty set, which we will call an \emph{alphabet}. Formally, it
does not matter what the elements of $\A$ are, but in this context, we will
refer to the elements of $\A$ as \emph{symbols}. The set of \emph{(finite)
strings}
  \index{string}%
over $\A$ is the set, $\cS = \bigcup_{n = 0}^\infty \A^n$. A string in $\A^n$ is
said to have \emph{length} $n$. The unique string of length $0$ is $\emp$, and
we call this the \emph{empty string}. A string of length $1$ is called an
\emph{atomic} string.

Strings are written without brackets or commas, so that we write $\s_1 \cdots
\s_n$, rather than $\ang{\s_1, \ldots, \s_n}$. Let $\xi = \s_1 \cdots \s_n$ and
$\eta = \s_{n + 1} \cdots \s_{n + m}$ be strings of lengths $n$ and $m$,
respectively. The \emph{concatenation} of $\xi$ and $\eta$, denoted by $\xi
\eta$, is the string $\s_1 \cdots \s_{n + m}$.

If $\xi = \xi_1 \eta \xi_2$, where $\eta \ne \emp$, then $\eta$ is called a 
\emph{segment} (or \emph{substring}) of $\xi$. If $\eta \ne \xi$, then $\eta$ is
a \emph{proper} segment (or substring) of $\xi$. If $\xi_1 = \emp$, then $\eta$
is an \emph{initial} segment (or substring) of $\xi$. If $\xi_2 = \emp$, then
$\eta$ is a \emph{terminal} segment (or substring) of $\xi$.


\chapter{Propositional Calculus}\label{Ch:prop-calc}

In this chapter, we develop a calculus of inductive inference for sentences in a
formal logical language. The language we focus on, denoted by $\cF$, is
propositional. The sentences (or formulas) of $\cF$ consist of primitive
propositional variables connected by negation and conjunction. Using negation
and conjunction, we can define other logical connectives, such as disjunction
and material implication. Our language $\cF$, however, is not the usual
propositional language one finds in basic logic textbooks. Our language is
infinitary, in the sense that we allow countable conjunctions. That is, if
$\ph_n$ is a formula of $\cF$ for every $n$, then so is $\bigwedge_1^\infty
\ph_n$.

The calculus of deductive inference in $\cF$ is well understood. The study of
infinitary languages dates back to the papers of Scott and Tarski
\cite{Scott1958, Tarski1958} and to the dissertation \cite{Karp1959} and book
\cite{Karp1964} of Carol Karp. Deductive calculus in $\cF$ is represented by a
derivability relation, $\vdash$. If $\ph$ is a formula of $\cF$ and $X$ is a set
of formulas, then $X \vdash \ph$ denotes the fact that $\ph$ can be derived from
$X$ using the rules of deductive inference. In Section \ref{S:formulas}, we
define $\cF$ and $\vdash$, and establish important facts about them. We then
present the notion of a deductive theory, which is a set of formulas that is
closed under deductive inference. An important point, noted in Remark
\ref{R:theory-first}, is that one can define deductive theories without any
reference to derivability, and then define derivability in terms of theories.
This is the approach we take in the development of our inductive calculus.

An inductive statement is a triple, $(X, \ph, p)$, where $X$ is a set of
formulas, $\ph$ is a formula, and $p \in [0, 1]$. Intuitively, an inductive
statement can be thought of as asserting that $\ph$ is partially entailed by $X$
with degree $p$. In an inductive statement, $X$ is called the antecedent, $\ph$
is called the consequent, and $p$ is called the probability. We typically use a
letter such as $P$ to denote sets of inductive statements. We write $P(\ph \mid
X) = p$ to mean that $(X, \ph, p) \in P$ and refer to $p$ as the probability of
$\ph$ given $X$. The ultimate goal of this chapter is to develop a calculus by
which we can take a given set of inductive statements $P$, and infer a new
inductive statement $(X, \ph, p)$. When such an inference is possible, we will
write $P \vdash (X, \ph, p)$, thereby extending the use of the turnstile symbol
$\vdash$ from formulas to inductive statements.

As mentioned above, we take an indirect route to defining $\vdash$. We first
define an inductive theory. Intuitively, if $P$ is an inductive theory, and if
it is possible to infer $(X, \ph, p)$ from $P$, then $(X, \ph, p)$ is already an
element of $P$. The bulk of this chapter is devoted to defining the notion of an
inductive theory. Once this is done, we define $\vdash$ in terms of inductive
theories.

In Sections \ref{S:entire} and \ref{S:closed}, we present our rules of inductive
inference. There are nine of them altogether, which we denote by (R1)--(R9).
Rules (R1)--(R4) describe the connection between inductive and deductive
inference. Rules (R5)--(R7) are the core rules of inductive inference: the
addition, multiplication, and continuity rules. Rule (R8) says that we can use
uniqueness to make inferences: if there is a unique way to assign a probability
without violating (R1)--(R7), then we may infer that probability. Rule (R9) says
that we may freely add formulas of probability one to our antecedents.

To define an inductive theory, we must first define what it means for a set of
inductive statements to be closed under the rules of inference. We do this in
tiers. An admissible set is one that is closed under (R1), an entire set is
closed under (R1)--(R7), a semi-closed set is closed under (R1)--(R8), and a
closed set is closed under (R1)--(R9).

In Section \ref{S:entire}, we define admissible and entire sets. We then prove
several theorems about entire sets, such as inclusion-exclusion, Bayes' theorem,
and countable additivity. Section \ref{S:closed} begins with the definition of
semi-closed and closed sets. We then turn our attention to a notion that has no
analogue in the deductive calculus. The set of inductive statements is much
larger than the set of formulas. As such, it is possible to have inductive
statements that are so far apart, in a certain sense, that they can never be
related to one another via the rules of inductive inference. That is, no chain
of inductive reasoning could possibly include both statements. Such statements
would be trivially incapable of producing a contradiction. Yet they are also
incapable of meaningfully contributing to a common argument. Because of this, a
closed set could potentially contain components that bear no logical connection
to one another. In Section \ref{S:closed}, we make this notion of connectivity
precise. We then define an inductive theory to be a closed set that is also
connected. Section \ref{S:closed} concludes by using the definition of an
inductive theory to define the inductive derivability relation, $P \vdash (X,
\ph, p)$.

The theorems in Section \ref{S:closed} are presented without proof, so that the
reader can see a complete overview of the development. Their proofs are
presented in Sections \ref{S:lifts} and \ref{S:ind-cond}. In Section 
\ref{S:lifts}, we prove that inductive theories exist and are well-defined. The
proof is primarily constructive, showing how to build up an inductive theory
from more basic elements. We then say that a set of inductive statements is
consistent if it can be extended to an inductive theory. In Section 
\ref{S:ind-cond}, we prove that every consistent set can be uniquely extended to
an inductive theory.

Section \ref{S:ind-cond} concludes with an important generalization of inductive
derivability. So far we have only discussed a process of inference whereby we
take a set of inductive statements, $P$, and use them to derive a new inductive
statement, $(X, \ph, p)$. In an inferential argument such as this, the inductive
statements in $P$ are our hypotheses. Since every inductive statement has the
form $P(\ph \mid X) = p$, it follows that every one of our hypotheses must have
this form as well. In Section \ref{S:ind-cond}, we allow for a broader class of
hypotheses. By introducing what we call inductive conditions, we are able to use
hypotheses such as $P(\ph \mid X) > p$ or $P(\ph \wedge \psi \mid X, \psi) = P
(\ph \mid X)$.

\section{Formulas and deductive inference}\label{S:formulas}

\subsection{Propositional formulas}

Let $PV$
  \symindex{$PV$}%
be a nonempty set whose elements we call \emph{propositional variables}.
  \index{variable!propositional ---}%
We define an alphabet, $\A = PV \cup \{\neg, \bigwedge\}$. We will define the
set of formulas so that a formula is a finite tuple, where each element in the
tuple is either a symbol from our alphabet, a formula, or a countable set of
formulas.

Let $S_0 = \{\ang{p} \mid p \in PV\}$. For an ordinal $\al<\bom_1$, let
\[
  S_\al' = S_\al \cup \{\ang{\neg, \ph} \mid \ph \in S_\al\}.
\]
When writing tuples such as these, we will typically omit the commas and angled
brackets, so that, for instance, $\neg \ph = \ang{\neg, \ph}$. In particular,
for $\bfr \in PV$, we identify $\ang{\bfr}$ and $\bfr$ so that we may write $S_0
= PV$.

We then define
\[
  S_{\al + 1} = S_\al' \cup \{
    \ang{\ts{\bigwedge}, \Phi} \mid
    	\Phi \subseteq S_\al' \text{ is nonempty and countable}
  \}.
\]
Here, countable means finite or countably infinite. As above, we will typically
write $\bigwedge \Phi$ as shorthand for ordered pairs of this type.

In the case that $\al$ is a limit ordinal, we define $S_\al = \bigcup_{\xi <
\al} S_\xi$. Finally, we define $\cF = \cF_{\bom_1} = \bigcup_{\al < \bom_1}
S_\al$.
  \symindex{$\cF$}%
Note that $S_\al \subseteq S_\be$ whenever $\al < \be$. An element $\ph \in \cF$
is called a \emph{(propositional) formula} or \emph{sentence}.
  \index{formula}%
  \index{sentence}%
A formula may also be called a \emph{deductive statement}, in contrast to
inductive statements, to be defined later. The set $\cF$ depends on the choice
of $PV$. We will rarely need to emphasize this fact, but when we do, we will
write $\cF(PV)$ instead of $\cF$.

Let $\cF_\fin$
  \symindex{$\cF_\fin$}%
denote the smallest subset of $\cF$ that satisfies
\begin{enumerate}[(i)]
  \item $PV \subseteq \cF_\fin$,
  \item if $\ph \in \cF_\fin$, then $\neg \ph \in \cF_\fin$, and
  \item if $\Phi \subseteq \cF_\fin$ is nonempty and finite, then $\bigwedge
        \Phi \in \cF_\fin$.
\end{enumerate}
Formulas in $\cF_\fin$ are said to be \emph{finitary}. The set $\cF_\fin$ is, in
fact, the set of formulas used in finitary propositional logic. Or rather, it is
one of several equivalent definitions of the finitary propositional language.
The reader can consult any introductory text on mathematical logic for the basic
properties of $\cF_\fin$ and its corresponding syntax and semantics. When
necessary, we will cite \cite{Rautenberg2010} for this purpose.

\begin{thm}[The principle of formula induction]\label{T:prop-form-ind}
    \index{induction!formula ---}%
  The set of formulas, $\cF$, is the smallest set that satisfies the
  following:
  \begin{enumerate}[(i)]
    \item $PV \subseteq \cF$,
    \item if $\ph \in \cF$, then $\neg \ph \in \cF$, and
    \item if $\Phi \subseteq \cF$ is nonempty and countable, then $\bigwedge
    			\Phi \in \cF$.
  \end{enumerate}
\end{thm}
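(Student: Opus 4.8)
The plan is to prove the two inclusions implicit in the word ``smallest'': first, that $\cF$ itself satisfies properties (i)--(iii), and second, that every set $T$ satisfying (i)--(iii) must contain $\cF$. Together these establish that $\cF$ is the least set closed under the three operations.

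For the first part, I would verify (i)--(iii) directly from the construction $\cF = \bigcup_{\al < \bom_1} S_\al$. Property (i) is immediate, since $PV = S_0 \subseteq \cF$ under the identification of $\ang{\bfr}$ with $\bfr$. For (ii), if $\ph \in \cF$ then $\ph \in S_\al$ for some $\al < \bom_1$, whence $\neg\ph \in S_\al' \subseteq S_{\al + 1} \subseteq \cF$; here I would note that $\al + 1 < \bom_1$ because $\bom_1$, being an infinite cardinal, is a limit ordinal. Property (iii) is the delicate one. Given a nonempty countable $\Phi \subseteq \cF$, each $\ph \in \Phi$ lies in some $S_{\al_\ph}$ with $\al_\ph < \bom_1$, and I must produce a single ordinal $\be < \bom_1$ with $\Phi \subseteq S_\be$. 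This is exactly where I invoke the background fact that every countable sequence of countable ordinals has a countable upper bound (equivalently, the regularity of $\bom_1$): taking $\be = \bigcup_\ph \al_\ph$ gives $\be < \bom_1$ and $\Phi \subseteq S_\be \subseteq S_\be'$, so $\bigwedge\Phi \in S_{\be + 1} \subseteq \cF$.

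For the second part, let $T$ be any set satisfying (i)--(iii). I would show $S_\al \subseteq T$ for every $\al < \bom_1$ by transfinite induction on $\al$, splitting into the base, successor, and limit cases as usual. The base case $S_0 = PV \subseteq T$ is property (i). For the successor case, assuming $S_\al \subseteq T$, property (ii) gives $\neg\ph \in T$ for each $\ph \in S_\al$, so $S_\al' \subseteq T$; then any nonempty countable $\Phi \subseteq S_\al'$ is a nonempty countable subset of $T$, so property (iii) yields $\bigwedge\Phi \in T$, whence $S_{\al + 1} \subseteq T$. For the limit case, $S_\al = \bigcup_{\xi < \al} S_\xi \subseteq T$ follows at once from the inductive hypotheses $S_\xi \subseteq T$. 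Transfinite induction then gives $\cF = \bigcup_{\al < \bom_1} S_\al \subseteq T$, completing the proof.

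The main obstacle is the verification of (iii) for $\cF$ in the first part: one must confirm that the countably many stages witnessing the members of $\Phi$ can be bounded below $\bom_1$. Everything else is a routine unwinding of the recursive definition, but this step is precisely what forces the truncation of the construction at $\bom_1$ rather than at some smaller ordinal, and it is the reason the first uncountable cardinal is the correct place to stop.
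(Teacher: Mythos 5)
Your proof is correct and follows essentially the same route as the paper's: you verify (i)--(iii) directly from the construction $\cF = \bigcup_{\al < \bom_1} S_\al$, with the countable-upper-bound property (regularity) of $\bom_1$ doing the key work in (iii), and you establish minimality by transfinite induction showing $S_\al \subseteq T$ for every $\al < \bom_1$. The only difference is that you spell out the base, successor, and limit cases of that induction, which the paper dismisses as routine.
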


\begin{proof}
  Property (i) follows since $PV = S_0 \subseteq \cF$. Suppose $\ph \in \cF$.
  Then there exists $\al < \bom_1$ such that $\ph \in S_\al$. Thus, $\neg \ph
  \in S_\al' \subseteq \cF$, proving property (ii). Finally, suppose $\Phi
  \subseteq \cF$ is nonempty and countable. Enumerate $\Phi$ as $\Phi =
  \{\ph_n\}_{n < \al}$, where $0 < \al \le \bom$. For each $n < \al$, choose
  $\al_n < \bom_1$ such that $\ph_n \in S_{\al_n}$. Choose $\be < \bom_1$ such
  that $\al_n \le \be$ for all $n < \al$. It follows that $\Phi$ is a nonempty,
  countable subset of $S_\be \subseteq S_\be'$, and so $\bigwedge \Phi \in S_
  {\be + 1} \subseteq \cF$, proving (iii).

  Now let $S$ be any set that satisfies these three properties. Using
  transfinite induction, it is easy to verify that $S_\al \subseteq S$ for all
  ordinals $\al$. Thus, $\cF = \bigcup_{\al < \bom_1} S_\al \subseteq S$.
\end{proof}

We may sometimes write $\bigwedge_{\ph \in \Phi} \ph$ for $\bigwedge \Phi$. If
$\Phi = \{\ph_n\}_{n \in \bN}$, we may write $\bigwedge_{n = 1}^\infty \ph_n$
for $\bigwedge \Phi$. If $\Phi \subseteq \cF$, then we use the notation $\neg
\Phi = \{\neg \ph: \ph \in \Phi\}$. As shorthand, we define
\begin{align*}
  \ts{\bigvee} \Phi &= \neg \ts{\bigwedge} \neg \Phi
    \text{ for $\Phi$ nonempty and countable},\\
  (\ph \wedge \psi) &= \ts{\bigwedge} \{\ph, \psi\},\\
  (\ph \vee \psi) &= \ts{\bigvee} \{\ph, \psi\},\\
  (\ph \to \psi) &= (\neg \ph \vee \psi), \text{ and}\\
  (\ph \tot \psi) &= (\ph \to \psi) \wedge (\psi \to \ph).
\end{align*}
We fix an arbitrary $\bfr_0 \in PV$ and define $\bot = (\bfr_0 \wedge \neg
\bfr_0)$ and $\top = \neg \bot$. The symbols, $\bot$ and $\top$, are called
\emph{falsum} and \emph{verum}, respectively. We adopt the convention that
$\bigwedge \emp = \top$ and $\bigvee \emp = \bot$.
  \index{falsum}%
  \index{verum}%
  \symindex{$\bot$ (in $\cF$)}%
  \symindex{$\top$ (in $\cF$)}%

As another form of shorthand, we may also sometimes omit outer parentheses in
formulas, and occasionally inner parentheses with the understanding that $\to$
associates right to left, other symbols associate left to right, and that
formulas obey the order of operations, $\neg$, $\wedge$, $\vee$, $\to$, $\tot$.

Finally, we may occasionally use the notation $\ph^x$, where $x$ is an element
of the Boolean algebra $\B = \{0, 1\}$, to mean $\ph^1 = \ph$ and $\ph^0 = \neg
\ph$.
  \symindex{$\ph^1, \ph^0$}%

Given $\ph \in \cF$, we define the set of \emph{subformulas} of $\ph$, denoted
by $\Sf \ph$,
  \index{subformula}%
by formula recursion. Namely, $\Sf \bfr = \{\bfr\}$ for $\bfr \in PV$, $\Sf \neg
\ph = \{\neg \ph\} \cup \Sf \ph$, and $\Sf \bigwedge \Phi = \{\bigwedge \Phi\}
\cup \bigcup_{\ph \in \Phi} \Sf \ph$. It follows by formula induction that $\Sf
\ph$ is countable for every $\ph \in \cF$. Also, the set of propositional
variables that appear in a formula $\ph$ is simply $PV \cap \Sf \ph$. In
particular, each formula $\ph$ makes use of only countably many propositional
variables.

\begin{rmk}
  The construction presented here is analogous to the one suggested in 
  \cite{Keisler1971} for formulas in infinitary predicate logic. An alternative
  construction builds formulas out of countably long sequences of symbols in an
  alphabet. In this case, one must deal precisely with the notion of
  concatenation for such strings, such as the concatenation of countably many
  strings, each of which may itself be countably long. All of this is detailed
  in \cite{Karp1964}.
\end{rmk}

\subsection{A calculus of natural deduction}\label{S:prop-nat-ded}

Given a relation $\vdash$ from $\fP \cF$ to $\cF$, we write $X \vdash Y$ to
mean $X \vdash \ph$ for all $\ph \in Y$. A comma-separated list on either side
of the turnstile, $\vdash$, refers to a union, and isolated formulas refer to
the singleton set that contains them. For example, $X, Y, \ph \vdash \psi, \ze$
means $X \cup Y \cup \{\ph\} \vdash \{\psi, \ze\}$, which means that $X \cup Y
\cup \{\ph\} \vdash \psi$ and $X \cup Y \cup \{\ph\} \vdash \psi$. Also, $
{} \vdash \ph$ is shorthand for $\emp \vdash \ph$.

We wish to define a relation $\vdash$ from $\fP \cF$ to $\cF$ such that $X
\vdash \ph$ captures what it means to say that $\ph$ can be logically deduced
from the formulas in $X$.

\begin{defn}\label{D:derivability}
  The \emph{derivability relation}
    \index{derivability relation!propositional ---}%
  is the smallest relation $\vdash$
    \symindex{$X \vdash \ph$ (in $\cF$)}%
  from $\fP \cF$ to $\cF$ such that, for all $\ph, \psi \in \cF$ and all
  countable $\Phi \subseteq \cF$, the following conditions hold:
  \begin{enumerate}[(i)]
    \item $\ph \vdash \ph$,
    \item if $X \vdash \ph$ and $X \subseteq X'$, then $X' \vdash \ph$,
    \item if $X \vdash \bigwedge \Phi$, then $X \vdash \th$ for all $\th \in
          \Phi$,
    \item if $X \vdash \th$ for all $\th \in \Phi$,
      		then $X \vdash \bigwedge \Phi$,
    \item if $X \vdash \ph$ and $X \vdash \neg\ph$, then $X \vdash \psi$, and
    \item if $X, \ph \vdash \psi$ and $X, \neg\ph \vdash \psi$,
      		then $X \vdash \psi$.
  \end{enumerate}
\end{defn}

When $X \vdash \ph$, we say that $\ph$ is \emph{(deductively) derivable} from
$X$, or that \emph{$X$ proves $\ph$}. If $X \nvdash \ph$ and $X \nvdash \neg
\ph$, then $\ph$ is \emph{undetermined by} (or \emph{deductively independent
of}) $X$. We say that $\ph$ is \emph{determined by $X$} if $\ph$ is not
undetermined by $X$.

\begin{rmk}
  Note that the intersection of any family of relations that satisfy (i)--(vi)
  also satisfies (i)--(vi). Also note that $\cF$ itself satisfies (i)--(vi).
  Thus, the derivability relation is well-defined and is equal to the
  intersection of all subsets of $\fP \cF \times \cF$ satisfying (i)--(vi).
\end{rmk}

\begin{rmk}
  An alternative but equivalent definition of the derivability relation involves
  the notion of a derivation. A derivation of $\ph$ from $X$ is a countable
  sequence $\ang{(X_\be, \ph_\be): \be \le \al}$, where $\al$ is a countable
  ordinal, $(X_\al, \ph_\al)=(X, \ph)$, and for each $\be \le \al$, the term $
  (X_\be, \ph_\be)$ is obtained from $\ang{(X_\xi, \ph_\xi): \xi < \be}$ by an
  application of one of the six rules in Definition \ref{D:derivability}. In
  this case, $X \vdash \ph$ if and only if there exists a derivation of $\ph$
  from $X$.
\end{rmk}

\begin{defn}
  The \emph{finitary derivability relation} is the smallest relation
  $\vdash_\fin$
    \symindex{$X \vdash_\fin \ph$ (in $\cF$)}%
  from $\fP \cF_\fin$ to $\cF_\fin$ such that, for all $\ph, \psi \in \cF_\fin$
  and all finite $\Phi \subseteq \cF$, conditions (i)--(vi) from Definition
  \ref{D:derivability} hold.
\end{defn}

\begin{rmk}\label{R:finitary-vs-infinitary}
  The finitary derivability relation is a typical natural-deduction calculus for
  finitary propositional logic. Clearly, ${\vdash_\fin} \subseteq {\vdash}$. As
  we will see in Proposition \ref{P:finitary-vs-infinitary}, if $X \subseteq
  \cF_\fin$, $\ph \in \cF_\fin$, and $X \vdash \ph$, then $X \vdash_\fin \ph$.
  In other words, when restricted to finitary formulas, our infinitary calculus
  cannot produce any new inferences beyond those already available with the
  finitary calculus.
\end{rmk}

The proofs of the structural rules in the following proposition are exactly the
same as in finitary propositional logic (see, for instance, \cite[Section
1.4]{Rautenberg2010}).

\begin{prop}\label{P:derivability}
  The derivability relation satisfies the following:
  \begin{enumerate}[(a)]
  	\item $X \vdash (\ph \to \psi)$ if and only if $X, \ph \vdash \psi$,
    \item if $X \vdash \ph$ and $X, \ph \vdash \psi$, then $X \vdash \psi$,
    \item if $X, \neg \ph \vdash \ph$, then $X \vdash \ph$, and
    \item if $X, \ph \vdash \neg \ph$, then $X \vdash \neg \ph$.
  \end{enumerate}
\end{prop}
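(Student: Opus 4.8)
The plan is to derive all four claims directly from the six defining clauses (i)--(vi) of Definition~\ref{D:derivability}, relying throughout on the elementary fact that $\ph \in X$ implies $X \vdash \ph$ (immediate from (i) and (ii)). Since the deduction theorem (a) will invoke the others, I would prove the statements in the order (c), (d), (b), then a double-negation lemma, and finally (a).

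Parts (c) and (d) are the quickest. For (c), I would apply the proof-by-cases rule (vi), splitting on $\ph$ with common consequent $\ph$: the branch $X, \ph \vdash \ph$ holds by reflexivity, while $X, \neg\ph \vdash \ph$ is exactly the hypothesis, so (vi) delivers $X \vdash \ph$. Part (d) is symmetric, splitting on $\ph$ with consequent $\neg\ph$, so that $X, \neg\ph \vdash \neg\ph$ is reflexivity and $X, \ph \vdash \neg\ph$ is the hypothesis. For the cut rule (b), I would again split on $\ph$ via (vi): the $\ph$-branch $X, \ph \vdash \psi$ is given, and for the $\neg\ph$-branch I would weaken the hypothesis $X \vdash \ph$ through (ii) to $X, \neg\ph \vdash \ph$, pair it with $X, \neg\ph \vdash \neg\ph$ from reflexivity, and apply the explosion rule (v).

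The real work is in (a), since the abbreviation $(\ph \to \psi)$ unfolds to $\neg\bigwedge\{\neg\neg\ph, \neg\psi\}$, so a double negation must be managed. I would first record the lemma that $\ph \vdash \neg\neg\ph$ and $\neg\neg\ph \vdash \ph$: each follows by applying (d) (respectively (c)) to pass to a context that is outright contradictory, and then invoking explosion (v). Abbreviating $\chi = \bigwedge\{\neg\neg\ph, \neg\psi\}$, so that $(\ph \to \psi) = \neg\chi$, the forward implication assumes $X \vdash \neg\chi$ and targets $X, \ph \vdash \psi$; by (c) it suffices to derive $X, \ph, \neg\psi \vdash \psi$, and in that context I would assemble $\chi$ from $\neg\neg\ph$ (the lemma plus weakening) and $\neg\psi$ (reflexivity) by conjunction introduction (iv), then collide $\chi$ with the weakened hypothesis $\neg\chi$ through (v). The reverse implication assumes $X, \ph \vdash \psi$ and targets $X \vdash \neg\chi$; by (d) it suffices to derive $X, \chi \vdash \neg\chi$, which I would obtain by extracting $\neg\neg\ph$ and $\neg\psi$ from $\chi$ via conjunction elimination (iii), converting $\neg\neg\ph$ to $\ph$ by the lemma and cut (b), feeding $\ph$ into the hypothesis to reach $\psi$, and colliding $\psi$ with $\neg\psi$ through (v).

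The main obstacle is exactly this bookkeeping around the $\neg\neg\ph$ that the definitions of $\vee$ and $\to$ introduce; once the double-negation lemma is established, each direction of (a) is a routine assembly of conjunction (iii)/(iv), explosion (v), and the reductio rules (c)/(d). As the preceding remark in the text notes, all of this is finitary in character and parallels the standard natural-deduction derivations in \cite[Section~1.4]{Rautenberg2010}.
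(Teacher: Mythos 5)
Your proof is correct, and it is essentially the argument the paper has in mind: the paper gives no proof of its own, deferring to the standard finitary natural-deduction derivations in Rautenberg, and your derivations of (c), (d), (b) from the case-split rule (vi) and of (a) via the reductio rules are exactly those. The one place you go beyond the citation is the double-negation lemma, and this is a genuine, correctly handled necessity: the paper's shorthand unfolds $\ph \to \psi$ to $\neg \bigwedge \{\neg\neg\ph, \neg\psi\}$, whereas Rautenberg defines implication as $\neg(\ph \wedge \neg\psi)$ with no double negation, so the paper's claim that the proofs are ``exactly the same'' is slightly loose, and your extra bookkeeping is precisely what is needed to make them go through in this setting.
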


In the remainder of this section, unless otherwise specified, lowercase Roman
numerals refer to Definition \ref{D:derivability} and letters refer to
Proposition \ref{P:derivability}.

\begin{lemma}\label{L:conj-subset}
  Let $\Phi, \Phi' \subseteq \cF$ be countable and $\psi \in \cF$. If
  $\Phi \subseteq \Phi'$ and $\bigwedge \Phi \vdash \psi$, then $\bigwedge \Phi'
  \vdash \psi$.
\end{lemma}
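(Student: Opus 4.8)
The plan is to reduce the claim to two standard moves: first derive $\bigwedge \Phi$ from $\bigwedge \Phi'$, then cut that against the hypothesis $\bigwedge \Phi \vdash \psi$. Since $\Phi$ and $\Phi'$ are countable, both $\bigwedge \Phi$ and $\bigwedge \Phi'$ are genuine formulas of $\cF$ (with the convention $\bigwedge \emp = \top$ covering the degenerate case), so every rule of Definition \ref{D:derivability} and Proposition \ref{P:derivability} is available for them.

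First I would show $\bigwedge \Phi' \vdash \bigwedge \Phi$. Rule (i) gives $\bigwedge \Phi' \vdash \bigwedge \Phi'$, and then (iii) yields $\bigwedge \Phi' \vdash \th$ for every $\th \in \Phi'$. As $\Phi \subseteq \Phi'$, this in particular gives $\bigwedge \Phi' \vdash \th$ for every $\th \in \Phi$, whereupon (iv) reassembles these into $\bigwedge \Phi' \vdash \bigwedge \Phi$.

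Next I would chain this with the hypothesis. Weakening $\bigwedge \Phi \vdash \psi$ by (ii), using $\{\bigwedge \Phi\} \subseteq \{\bigwedge \Phi', \bigwedge \Phi\}$, gives $\bigwedge \Phi', \bigwedge \Phi \vdash \psi$. Applying the cut rule, Proposition (b), with $X = \{\bigwedge \Phi'\}$ and $\ph = \bigwedge \Phi$, and feeding it $\bigwedge \Phi' \vdash \bigwedge \Phi$ together with $\bigwedge \Phi', \bigwedge \Phi \vdash \psi$, produces $\bigwedge \Phi' \vdash \psi$, as required.

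The argument is entirely mechanical and presents no genuine obstacle; the only point demanding a little care is to keep the roles of the two conjunctions straight, applying (iii) to the larger set $\Phi'$ to extract its conjuncts and (iv) to the smaller set $\Phi$ to rebuild it, rather than conflating the two.
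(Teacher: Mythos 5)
Your proof is correct and follows exactly the paper's argument: derive $\bigwedge \Phi' \vdash \bigwedge \Phi$ via rules (i), (iii), (iv), then conclude with the cut rule (b). The only difference is that you make explicit the weakening step via (ii) needed before applying (b), which the paper leaves implicit.
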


\begin{proof}
  Suppose $\Phi \subseteq \Phi'$ and $\bigwedge \Phi \vdash \psi$. By (i) and
  (iii), we have $\bigwedge \Phi' \vdash \th$ for all $\th \in \Phi$. Thus, by
  (iv), we have $\bigwedge \Phi' \vdash \bigwedge \Phi$. The result now follows
  from (b).
\end{proof}

\begin{thm}[$\si$-compactness]\label{T:sig-cpctness}
    \index{s_sigma-compactness@$\si$-compactness}%
  Let $X \subseteq \cF$ and $\ph \in \cF$. Then $X \vdash \ph$ if and only if
  there exists a countable subset $X_0 \subseteq X$ such that $X_0 \vdash \ph$.
\end{thm}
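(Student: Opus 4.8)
The plan is to handle the two directions separately, with essentially all the work in the forward implication. The reverse direction is immediate: if $X_0 \subseteq X$ is countable and $X_0 \vdash \ph$, then since $X_0 \subseteq X$, rule (ii) (weakening) gives $X \vdash \ph$.

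For the forward direction, I would exploit the fact that $\vdash$ is defined as the \emph{smallest} relation satisfying (i)--(vi). The strategy is to introduce an auxiliary relation $\vdash'$ defined by
\[
  X \vdash' \ph \quad\text{iff}\quad \text{there is a countable } X_0 \subseteq X \text{ with } X_0 \vdash \ph,
\]
and to verify that $\vdash'$ itself satisfies all six closure conditions of Definition \ref{D:derivability}. Once this is established, minimality of $\vdash$ forces ${\vdash} \subseteq {\vdash'}$, which is exactly the statement that $X \vdash \ph$ implies the existence of a countable witnessing subset. Thus the whole problem reduces to checking (i)--(vi) for $\vdash'$.

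The checks for (i), (ii), (iii), and (v) are routine. For (i), take $X_0 = \{\ph\}$ and use $\ph \vdash \ph$. For (ii), a witness for $X$ is already a witness for any larger $X'$. For (iii), a single countable witness $X_0$ with $X_0 \vdash \bigwedge \Phi$ yields $X_0 \vdash \th$ for every $\th \in \Phi$ by applying (iii) to $\vdash$. For (v), if $X_1$ witnesses $X \vdash' \ph$ and $X_2$ witnesses $X \vdash' \neg\ph$, then $X_0 = X_1 \cup X_2$ is a countable subset of $X$ proving both $\ph$ and $\neg\ph$ by weakening, so (v) for $\vdash$ gives $X_0 \vdash \psi$.

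The heart of the argument---and the step I expect to be the main obstacle---is rule (iv), the introduction of a countable conjunction. Here $\Phi$ is countable and each $\th \in \Phi$ comes with its own countable witness $X_\th \subseteq X$ satisfying $X_\th \vdash \th$. Setting $X_0 = \bigcup_{\th \in \Phi} X_\th$, I need $X_0$ to remain countable; this is precisely where the regularity of $\bom_1$ enters, since a countable union of countable sets is countable. With $X_0$ countable, weakening promotes each $X_\th \vdash \th$ to $X_0 \vdash \th$, and then (iv) for $\vdash$ delivers $X_0 \vdash \bigwedge \Phi$, so $X \vdash' \bigwedge \Phi$. Finally, for (vi), given witnesses $X_1 \subseteq X \cup \{\ph\}$ and $X_2 \subseteq X \cup \{\neg\ph\}$ for the two hypotheses, I would pass to $X_0 = (X_1 \setminus \{\ph\}) \cup (X_2 \setminus \{\neg\ph\})$, a countable subset of $X$; since $X_1 \subseteq X_0 \cup \{\ph\}$ and $X_2 \subseteq X_0 \cup \{\neg\ph\}$, weakening and then (vi) for $\vdash$ yield $X_0 \vdash \psi$, completing the verification of all six conditions and hence the theorem.
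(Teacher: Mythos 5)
Your proposal is correct and takes essentially the same approach as the paper: the paper also defines an auxiliary relation $\vdash'$, verifies that it satisfies (i)--(vi) (citing Lemma \ref{L:conj-subset} and the fact that a countable union of countable sets is countable for the harder rules (iv)--(vi)), and invokes minimality of $\vdash$. The only minor difference is that the paper's auxiliary relation requires a countable $X_0 \subseteq X$ with $\bigwedge X_0 \vdash \ph$ (and conjoins $X \vdash \ph$), so its proof simultaneously establishes the equivalence with this conjunction form, which is reused later, e.g.\ in Propositions \ref{P:conj-equiv} and \ref{P:sig-cpctness}.
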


\begin{proof}
  We will actually prove that the following are equivalent:
  \begin{align}
    &X \vdash \ph,\label{sig-cpctness-1}\\
    &\text{there exists countable $X_0 \subseteq X$ such that
      $\ts{\bigwedge} X_0 \vdash \ph$, and}\label{sig-cpctness-2}\\
    &\text{there exists countable $X_0 \subseteq X$ such that
      $X_0 \vdash \ph$}.\label{sig-cpctness-3}
  \end{align}
  By (i), (ii), (iv), and (b), we have that \eqref{sig-cpctness-2} implies \eqref
  {sig-cpctness-3}, and by (ii), we have \eqref{sig-cpctness-3} implies 
  \eqref{sig-cpctness-1}.

  To prove that \eqref{sig-cpctness-1} implies \eqref{sig-cpctness-2}, we define
  $\vdash'$ so that $X \vdash' \ph$ if and only if $X \vdash \ph$ and 
  \eqref{sig-cpctness-2} holds. The proof will be complete once we show that
  (i)--(vi) still hold when $\vdash$ is replaced by $\vdash'$.

  Clearly, (i)--(iii) hold for $\vdash'$. To see that (iv)--(vi) hold for
  $\vdash'$, use Lemma \ref{L:conj-subset} and the fact that a countable union
  of countable sets is countable.
\end{proof}

\begin{prop}\label{P:conj-equiv}
  Let $\Phi \subseteq \cF$ be countable and $\psi \in \cF$. Then $\Phi \vdash
  \psi$ if and only if $\bigwedge \Phi \vdash \psi$.
\end{prop}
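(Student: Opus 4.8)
The plan is to prove the two implications separately, with essentially all of the difficulty concentrated in the direction $\Phi \vdash \psi \Rightarrow \bigwedge \Phi \vdash \psi$.

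The easy direction is to show that $\bigwedge \Phi \vdash \psi$ implies $\Phi \vdash \psi$. First I would observe that $\Phi \vdash \bigwedge \Phi$: by (i) we have $\th \vdash \th$ for each $\th \in \Phi$, so (ii) gives $\Phi \vdash \th$ for every $\th \in \Phi$, and then (iv) yields $\Phi \vdash \bigwedge \Phi$. Weakening the hypothesis $\bigwedge \Phi \vdash \psi$ by (ii) gives $\Phi, \bigwedge \Phi \vdash \psi$, and a single cut via Proposition \ref{P:derivability}(b), applied with $X = \Phi$ and the cut formula $\bigwedge \Phi$, then produces $\Phi \vdash \psi$.

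For the forward direction, suppose $\Phi \vdash \psi$. The naive approach would be to derive each conjunct $\th$ from $\bigwedge \Phi$ (which is immediate from (i) and (iii)) and then feed these into a derivation of $\psi$ from $\Phi$ by repeated cuts, thereby replacing each hypothesis $\th \in \Phi$ with $\bigwedge \Phi$. The main obstacle is that $\Phi$ may be countably infinite, so there is no way to eliminate all of the hypotheses by a finite cascade of applications of (b); a direct cut argument only handles finitely many conjuncts. This is precisely the difficulty that $\si$-compactness (Theorem \ref{T:sig-cpctness}) is built to resolve. I would therefore invoke the stronger equivalence established inside its proof, namely that $\Phi \vdash \psi$ entails the existence of a countable $\Phi_0 \subseteq \Phi$ with $\bigwedge \Phi_0 \vdash \psi$. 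This collapses the infinitely many hypotheses into a single conjunction over a subset. Since $\Phi_0 \subseteq \Phi$, Lemma \ref{L:conj-subset} immediately upgrades $\bigwedge \Phi_0 \vdash \psi$ to $\bigwedge \Phi \vdash \psi$, completing the argument.

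In summary, the whole proposition reduces to two tools already in hand: the conjunction-monotonicity Lemma \ref{L:conj-subset} and Theorem \ref{T:sig-cpctness}. No new structural property of $\vdash$ is required; the only genuine subtlety is recognizing that the infinite case cannot be treated by a finite sequence of cuts and must instead be routed through $\si$-compactness.
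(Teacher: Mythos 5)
Your proof is correct and follows essentially the same route as the paper: the easy direction via $\Phi \vdash \bigwedge \Phi$ (from (i), (ii), (iv)) and a cut with Proposition \ref{P:derivability}(b), and the forward direction by invoking the intermediate equivalence \eqref{sig-cpctness-2} from the proof of Theorem \ref{T:sig-cpctness} followed by Lemma \ref{L:conj-subset}. No gaps.
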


\begin{proof}
  By (i), (ii), and (iv), we have $\Phi \vdash \bigwedge \Phi$. Thus, by (b), it
  follows that $\bigwedge \Phi \vdash \psi$ implies $\Phi \vdash \psi$.

  Now suppose $\Phi \vdash \psi$. By \eqref{sig-cpctness-2}, there exists
  countable $\Phi' \subseteq \Phi$ such that $\bigwedge \Phi' \vdash \psi$. By
  Lemma \ref{L:conj-subset}, we have $\bigwedge \Phi \vdash \psi$.
\end{proof}

\begin{prop}\label{P:set-mono}
  If $X \vdash Y$ and $Y \vdash \ph$, then $X \vdash \ph$.
\end{prop}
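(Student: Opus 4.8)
The plan is to reduce the statement to a countable situation using $\si$-compactness, package the relevant hypotheses into a single conjunction, and then invoke the cut rule. First I would unpack the hypotheses: by the turnstile convention, $X \vdash Y$ means $X \vdash \psi$ for every $\psi \in Y$, whereas $Y \vdash \ph$ is a single derivation. The obstacle to a naive approach is that $Y$ may be uncountable, so we cannot simply form $\bigwedge Y$ and appeal to the conjunction rules; conjunctions in $\cF$ are only defined over countable sets.

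To get around this, I would apply Theorem \ref{T:sig-cpctness} to $Y \vdash \ph$. Using the equivalence with \eqref{sig-cpctness-2}, there exists a countable subset $Y_0 \subseteq Y$ such that $\bigwedge Y_0 \vdash \ph$. Since $Y_0 \subseteq Y$, the hypothesis $X \vdash Y$ gives $X \vdash \th$ for every $\th \in Y_0$, and because $Y_0$ is countable, rule (iv) yields $X \vdash \bigwedge Y_0$.

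To finish, I would combine $X \vdash \bigwedge Y_0$ with $\bigwedge Y_0 \vdash \ph$. Monotonicity (ii) upgrades the latter to $X, \bigwedge Y_0 \vdash \ph$, and then the cut rule (b), taking the auxiliary formula to be $\bigwedge Y_0$, delivers $X \vdash \ph$. The only real obstacle here is the possible uncountability of $Y$; once $\si$-compactness reduces the single derivation $Y \vdash \ph$ to one from a countable $Y_0$, the remaining steps are a direct application of conjunction introduction (iv) and cut (b).
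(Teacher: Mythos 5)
Your proposal is correct and follows essentially the same route as the paper: use $\si$-compactness to extract a countable $Y_0 \subseteq Y$ with $\bigwedge Y_0 \vdash \ph$, derive $X \vdash \bigwedge Y_0$ via (iv), and conclude with the cut rule (b). The only cosmetic differences are that you invoke the internal equivalence \eqref{sig-cpctness-2} directly where the paper passes through Proposition \ref{P:conj-equiv}, and you make explicit the monotonicity step (ii) needed before applying (b), which the paper leaves implicit.
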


\begin{proof}
  Suppose $X \vdash Y$ and $Y \vdash \ph$. By $\si$-compactness, there exists
  countable $Y_0 \subseteq Y$ such that $Y_0 \vdash \ph$. By Proposition 
  \ref{P:conj-equiv}, we have $\bigwedge Y_0 \vdash \ph$. By (iv), we have $X
  \vdash \bigwedge Y_0$. Hence, by (b), we have $X \vdash \ph$.
\end{proof}

A set $X \subseteq \cF$ is \emph{inconsistent}
  \index{inconsistent}%
if $X \vdash \ph$ for all $\ph \in \cF$; it is otherwise \emph{consistent}.
  \index{consistent}%
If $X$ is inconsistent, then $X \vdash \bot$. Conversely, by the definition of
$\bot$, and by (iii) and (v), we have that $X \vdash \bot$ implies $X$ is
inconsistent. Thus, $X$ is inconsistent if and only if $X \vdash \bot$. The
derivability relation can, in fact, be characterized in terms of consistency.

\begin{thm}\label{T:deduc-con}
  Suppose $X \subseteq \cF$ and $\ph \in \cF$. Then $X \vdash \ph$ if and only
  if $X, \neg \ph \vdash \bot$, and $X \vdash \neg \ph$ if and only if $X, \ph
  \vdash \bot$.
\end{thm}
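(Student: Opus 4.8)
The plan is to prove both biconditionals by direct appeal to the six defining rules (i)--(vi), the reductio-style rules (c) and (d), and the already-established characterization that $X$ is inconsistent---meaning $X \vdash \psi$ for every $\psi \in \cF$---if and only if $X \vdash \bot$. The two biconditionals are perfectly symmetric: the second is obtained from the first by interchanging the roles of $\ph$ and $\neg \ph$ and using (d) in place of (c). So I would prove $X \vdash \ph$ if and only if $X, \neg\ph \vdash \bot$ in full, and then indicate that the analogous argument settles $X \vdash \neg\ph$ if and only if $X, \ph \vdash \bot$.

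For the forward direction of the first biconditional, I would assume $X \vdash \ph$. Weakening (ii) gives $X, \neg\ph \vdash \ph$, while (i) together with (ii) gives $X, \neg\ph \vdash \neg\ph$. Feeding these two derivations into the explosion rule (v), with $\psi = \bot$, yields $X, \neg\ph \vdash \bot$. For the converse, I would assume $X, \neg\ph \vdash \bot$; then $X \cup \{\neg\ph\}$ is inconsistent, so it derives every formula, and in particular $X, \neg\ph \vdash \ph$. The reductio rule (c) then delivers $X \vdash \ph$.

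The second biconditional is handled in the same way. For the forward direction, $X \vdash \neg\ph$ gives $X, \ph \vdash \neg\ph$ by (ii) and $X, \ph \vdash \ph$ by (i) and (ii), so (v) produces $X, \ph \vdash \bot$. For the converse, $X, \ph \vdash \bot$ makes $X \cup \{\ph\}$ inconsistent, hence $X, \ph \vdash \neg\ph$, and rule (d) yields $X \vdash \neg\ph$.

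I do not expect a genuine obstacle here; the argument is a short, routine deployment of the calculus. The only points requiring care are to keep straight which of (c) or (d) applies in each converse direction, and to route the converses through the inconsistency characterization ($X, \neg\ph \vdash \bot$ implies $X, \neg\ph \vdash \ph$) rather than attempting to extract $\ph$ from $\bot$ by some ad hoc manipulation.
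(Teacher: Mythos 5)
Your proof is correct and follows essentially the same route as the paper's own proof: (i), (ii), and (v) for the forward directions, and the inconsistency characterization ($X \vdash \bot$ iff $X$ is inconsistent) followed by rules (c) and (d) for the converses. The paper simply compresses the second biconditional into the word ``analogous,'' which you spell out explicitly.
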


\begin{proof}
  Suppose $X \vdash \ph$. By (i) and (ii), we have $X, \neg \ph \vdash \ph$ and
  $X, \neg \ph \vdash \neg \ph$. By (v), this implies $X, \neg \ph \vdash \bot$.
	Conversely, suppose $X, \neg \ph \vdash \bot$. Then $X \cup \{\neg \ph\}$ is
  inconsistent, so that $X, \neg \ph \vdash \ph$. By (c), we have $X
  \vdash \ph$. The proof of the second biconditional is analogous.
\end{proof}

Since $\top = \neg \bot$, the preceding theorem shows that $X \vdash \top$ if
and only if $X, \bot \vdash \bot$. Hence, by (i) and (ii), we have $X \vdash
\top$ for all $X \subseteq \cF$, which by (ii) is equivalent to ${} \vdash
\top$.

A formula $\ph$ is a \emph{tautology}
  \index{tautology}%
if ${} \vdash \ph$; it is a \emph{contradiction}
  \index{contradiction}%
if $\{\ph\}$ is inconsistent. By the preceding proposition, we see that $\ph$ is
a tautology if and only if $\neg \ph$ is a contradiction, and vice versa. The
set of tautologies is denoted by $\Taut$, or $\Taut_\cF$.
  \symindex{$\Taut_\cF$}%

\begin{prop}\label{P:sig-cpctness}
  Let $X \subseteq \cF$ and $\ph \in \cF$. Then $X \vdash \ph$ if and only if
  there exists a countable $X_0 \subseteq X$ such that $\bigwedge X_0 \to \ph
  \in \Taut$.
\end{prop}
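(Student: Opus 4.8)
The plan is to read the statement as a direct combination of $\sigma$-compactness (Theorem \ref{T:sig-cpctness}) with the deduction theorem (Proposition \ref{P:derivability}(a)), applied with an empty antecedent. Recall that $\bigwedge X_0 \to \ph \in \Taut$ means precisely ${} \vdash (\bigwedge X_0 \to \ph)$, so the content of the proposition is the equivalence of $X \vdash \ph$ with the existence of a countable $X_0 \subseteq X$ for which ${} \vdash (\bigwedge X_0 \to \ph)$. The key observation that makes everything collapse is that, taking $X = \emp$ in Proposition \ref{P:derivability}(a), we get ${} \vdash (\chi \to \ph)$ if and only if $\chi \vdash \ph$; specializing $\chi = \bigwedge X_0$ converts the tautology condition into the derivation $\bigwedge X_0 \vdash \ph$.

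For the forward direction I would start from $X \vdash \ph$ and invoke $\sigma$-compactness to obtain a countable $X_0 \subseteq X$ with $X_0 \vdash \ph$. Then Proposition \ref{P:conj-equiv} upgrades this to $\bigwedge X_0 \vdash \ph$, and the deduction-theorem equivalence noted above yields ${} \vdash (\bigwedge X_0 \to \ph)$, i.e.\ $\bigwedge X_0 \to \ph \in \Taut$. For the converse I would run the same chain backward: from $\bigwedge X_0 \to \ph \in \Taut$, that is ${} \vdash (\bigwedge X_0 \to \ph)$, Proposition \ref{P:derivability}(a) gives $\bigwedge X_0 \vdash \ph$; Proposition \ref{P:conj-equiv} then gives $X_0 \vdash \ph$; and finally monotonicity (Definition \ref{D:derivability}(ii)), using $X_0 \subseteq X$, gives $X \vdash \ph$.

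I do not expect a genuine obstacle here, since all the real work has already been carried out in Theorem \ref{T:sig-cpctness} and Propositions \ref{P:conj-equiv} and \ref{P:derivability}; the proposition is essentially a repackaging of $\sigma$-compactness in which the countable witness is folded into a single conjunction and the turnstile is internalized as a material implication. The only point requiring a moment's care is making sure the countable set $X_0$ produced by $\sigma$-compactness is the same one appearing in the tautology, which it is, since each step of the chain preserves the particular $X_0$ chosen. If one preferred to avoid citing Proposition \ref{P:conj-equiv} explicitly, one could instead appeal directly to the internal equivalence of \eqref{sig-cpctness-1} and \eqref{sig-cpctness-2} established inside the proof of $\sigma$-compactness, which already delivers a countable $X_0$ with $\bigwedge X_0 \vdash \ph$.
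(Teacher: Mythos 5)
Your proposal is correct and is essentially the paper's own proof: the paper likewise chains $\si$-compactness (Theorem \ref{T:sig-cpctness}) with Proposition \ref{P:conj-equiv} and the deduction theorem, Proposition \ref{P:derivability}(a), applied with empty antecedent. The only difference is presentational, in that you spell out the two directions separately while the paper states the two biconditionals and composes them.
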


\begin{proof}
  By $\si$-compactness, we have $X \vdash \ph$ if and only if there exists
  countable $X_0 \subseteq X$ such that $X_0 \vdash \ph$. And by Proposition 
  \ref{P:conj-equiv} and (a), we have $X_0 \vdash \ph$ if and only if $\bigwedge
  X_0 \to \ph \in \Taut$.
\end{proof}

\subsection{A Hilbert-type calculus}\label{S:Hilbert-calc}

Let $\La = \La_\cF$
  \symindex{$\La_\cF$}%
be the smallest subset of $\cF$ such that if $\ph, \psi, \ze \in \cF$ and $\Phi
\subseteq \cF$ is countable with $\ph \in \Phi$, then the following formulas are
in $\La$:
\begin{enumerate}[($\La$1)]
  \item $(\ph \to \psi \to \ze) \to (\ph \to \psi) \to \ph \to \ze$
  \item $(\ph \to \neg \psi) \to \psi \to \neg \ph$
  \item $\bigwedge \Phi \to \ph$
\end{enumerate}
The formulas in $\La$ are called \emph{axioms}.
  \index{axiom}%

We define a \emph{proof of $\ph \in \cF$ from $X \subseteq \cF$}
  \index{proof}%
as an $(\al + 1)$-sequence of formulas, $\ang{\ph_\be \mid \be \le \al}$, where
$\al$ is a countable ordinal, $\ph_\al = \ph$, and for each $\be \le \al$,
either $\ph_\be \in X \cup \La$, or there exist $i, j < \be$ such that $\ph_i =
(\ph_j \to \ph_\be)$, or there exists nonempty, countable $\Phi \subseteq
\{\ph_\xi \mid \xi < \be\}$ such that $\ph_\be = \bigwedge \Phi$. Note that if $
\ang{\ph_\be \mid \be \le \al}$ is a proof of $\ph_\al$ from $X$, then for any
$\be < \al$, it follows that $\ang{\ph_\xi \mid \xi \le \be}$ is a proof of
$\ph_\be$ from $X$. For $\ph \in \cF$ and $X \subseteq \cF$, define $X \wdash
\ph$ to mean there is a proof of $\ph$ from $X$.
  \symindex{$\wdash_\cF$}

Let $\La_\fin$ be the smallest subset of $\cF_\fin$ such that if $\ph, \psi, \ze
\in \cF_\fin$ and $\Phi \subseteq \cF_\fin$ is finite with $\ph \in \Phi$, then
($\La1$)--($\La$3) are in $\La_\fin$. A \emph{finitary proof of $\ph \in
\cF_\fin$ from $X \subseteq \cF_\fin$} is a finite sequence of formulas, $
\ang{\ph_k \mid k \le n}$, where $\ph_n = \ph$, and for each $k \le n$, either
$\ph_k \in X \cup \La_\fin$, or there exist $i, j < k$ such that $\ph_i = (\ph_j
\to \ph_k)$, or there exists nonempty, finite $\Phi \subseteq \{\ph_\ell \mid
\ell < k\}$ such that $\ph_k = \bigwedge \Phi$. For $\ph \in \cF_\fin$ and $X
\subseteq \cF_\fin$, define $X \wdash_\fin \ph$ to mean there is a finitary
proof of $\ph$ from $X$.

A finitary proof is the classical notion of proof. It is finitely long, and each
sentence in it has finite length. An infinitary proof, on the other hand, can be
infinitely long. And individual sentences in such a proof can themselves have
infinite length.

\begin{rmk}\label{R:fin-Hilbert}
  The relation $\wdash_\fin$ is a typical Hilbert-style calculus for finitary
  propositional logic. It is well-known that ${\wdash_\fin} = {\vdash_\fin}$.
  (See, for example, \cite[Theorem 1.6.6]{Rautenberg2010}.) In Theorem
  \ref{T:Hilbert=nat}, we will see that ${\wdash} = {\vdash}$. Hence, according
  to Remark \ref{R:finitary-vs-infinitary}, if $X \subseteq \cF_\fin$, $\ph \in
  \cF_\fin$, and $X \wdash \ph$, then $X \wdash_\fin \ph$. In other words,
  if we can find an infinitary proof of $\ph$ from $X$, then a finitary proof
  necessarily exists.
\end{rmk}

\begin{prop}[Induction principle for $\mathrel{|\hspace{-0.42em}\sim}$]
\label{P:Hilbert-induc}
  The relation $\wdash$ is the smallest relation from $\fP \cF$ to $\cF$
  such that if $X \subseteq \cF$, $\ph, \psi \in \cF$, and $\Phi \subseteq \cF$
  is countable, then
  \begin{enumerate}[(1)]
    \item $X \wdash \th$ for all $\th \in X \cup \La$,
    \item if $X \wdash (\ph \to \psi)$ and $X \wdash \ph$,
      		then $X \wdash \psi$, and
    \item if $X \wdash \th$ for all $\th \in \Phi$,
    		  then $X \wdash \bigwedge \Phi$.
  \end{enumerate}
\end{prop}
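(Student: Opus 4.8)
The claim is that the provability relation $\wdash$ (defined via existence of a Hilbert-style proof) is the \emph{smallest} relation from $\fP\cF$ to $\cF$ satisfying the three closure conditions (1)--(3). This has two halves: first, that $\wdash$ itself satisfies (1)--(3); second, that any relation $R$ satisfying (1)--(3) contains $\wdash$, i.e. $X \wdash \ph$ implies $X \mathbin{R} \ph$. The structure mirrors the companion fact for $\vdash$ noted in the remark after Definition~\ref{D:derivability}, and also mirrors Theorem~\ref{T:prop-form-ind}, so the proof should be a clean two-part argument with a transfinite induction supplying the harder direction.

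**Plan for the first half ($\wdash$ satisfies (1)--(3)).** Here I would argue directly from the definition of a proof. For (1), if $\th \in X \cup \La$, then the one-term sequence $\ang{\th}$ is a proof of $\th$ from $X$, so $X \wdash \th$. For (2), suppose $X \wdash (\ph \to \psi)$ and $X \wdash \ph$, witnessed by proofs $\ang{\al_\be \mid \be \le \ga}$ and $\ang{\be_\xi \mid \xi \le \de}$ respectively. I would concatenate these two proofs and append $\psi$ as a final term; the appended step is a valid modus ponens application since both $(\ph\to\psi)$ and $\ph$ occur earlier in the combined sequence. For (3), if $X \wdash \th$ for each $\th \in \Phi$ with $\Phi$ countable, I would splice together the countably many proofs (using that a countable union of countable ordinals is bounded by some countable ordinal, so the concatenation is still indexed by a countable ordinal) and append $\bigwedge \Phi$, which is a valid conjunction-introduction step since each $\th \in \Phi$ appears earlier. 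The only bookkeeping subtlety is verifying that concatenating countably many countable-length proofs yields a legitimate $(\al+1)$-sequence for some countable $\al$; this is exactly the fact cited in the excerpt that a countable union of countable ordinals is a countable ordinal.

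**Plan for the second half (minimality).** Let $R$ be any relation satisfying (1)--(3); I must show $X \wdash \ph \Rightarrow X \mathbin{R} \ph$. Fix $X$ and a proof $\ang{\ph_\be \mid \be \le \al}$ of $\ph$ from $X$. I would prove by transfinite induction on $\be \le \al$ that $X \mathbin{R} \ph_\be$ holds for every $\be$; applying this at $\be = \al$ gives $X \mathbin{R} \ph$. In the inductive step I consider how $\ph_\be$ entered the proof. If $\ph_\be \in X \cup \La$, then $X \mathbin{R} \ph_\be$ by property (1). If $\ph_\be$ arose by modus ponens from $\ph_i$ and $\ph_j = (\ph_i \to \ph_\be)$ with $i,j < \be$, then by the induction hypothesis $X \mathbin{R} \ph_i$ and $X \mathbin{R} (\ph_i \to \ph_\be)$, so property (2) yields $X \mathbin{R} \ph_\be$. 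If $\ph_\be = \bigwedge \Phi$ for some nonempty countable $\Phi \subseteq \{\ph_\xi \mid \xi < \be\}$, then each member of $\Phi$ is some $\ph_\xi$ with $\xi < \be$, so the induction hypothesis gives $X \mathbin{R} \th$ for all $\th \in \Phi$, and property (3) delivers $X \mathbin{R} \bigwedge \Phi$. This exhausts the cases.

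**Expected main obstacle.** The conceptually routine direction is minimality; the step demanding the most care is condition (3) in the first half, namely showing that $\wdash$ is genuinely closed under countable conjunction introduction. The issue is purely combinatorial: I must assemble countably many proofs, each of countable length, into a single proof of countable length, and confirm the indexing ordinal remains countable. I would lean on the earlier observation (in Section~\ref{S:ordinals} and reused in the proof of Theorem~\ref{T:prop-form-ind}) that any countable family of countable ordinals has a countable supremum, so the concatenated sequence can be re-indexed by a single countable ordinal $\al$, making $\ang{\ph_\be \mid \be \le \al}$ a legitimate proof. Everything else is a direct unfolding of the definitions.
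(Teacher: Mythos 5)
Your proposal is correct and follows essentially the same route as the paper: the paper likewise proves minimality by a transfinite induction over the case analysis (axiom/hypothesis, modus ponens, conjunction introduction), differing only in that it inducts on the length of a proof and uses that initial segments of proofs are proofs, whereas you induct on the position within a fixed proof. For the closure half, the paper simply cites Karp's treatment of infinitary concatenation where you sketch the concatenation and re-indexing argument explicitly; the content is the same.
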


\begin{proof}
  The fact that $\wdash$ satisfies (1)--(3) follows from the fact that a
  countable concatenation of proofs is again a proof. (See \cite[Chapter 2]
  {Karp1964} for details on infinitary concatenation.)

  Let $\rtri$ be a relation from $\fP \cF$ to $\cF$ satisfying (1)--(3). Let
  $X \subseteq \cF$ be arbitrary. Fix a countable ordinal $\al$, and consider
  the statement,
  \begin{quote}
    for all $\ph \in \cF$, if there exists a proof of $\ph$ from $X$ with length
    $\al + 1$, then $X \rtri \ph$.
  \end{quote}
  We will prove this statement is true for all countable $\al$ by induction on
  $\al$, and this will show that $X \wdash \ph$ implies $X \rtri \ph$.

  If $\ph$ has a proof from $X$ of length 1, then it must be $\ang{\ph}$,
  implying $\ph \in X \cup \La$. Therefore, $X \rtri \ph$ by (1), and the
  statement is true for $\al=0$.

  Suppose the statement is true for all $\be < \al$, and that $\ph$ has a proof
  from $X$ of length $\al + 1$. Let $\ang{\ph_\be \mid \be \le \al}$ be such a
  proof. If $\ph \in X \cup \La$, then $X \rtri \ph$ by (1). Suppose there exists
  $i,j < \al$ such that $\ph_i = (\ph_j \to \ph)$. Then $\ang{\ph_\be \mid \be
  \le i}$ and $\ang{\ph_\be \mid \be \le j}$ are proofs of $\ph_i$ and $\ph_j$,
  respectively, each with length less than $\al + 1$. By the inductive
  hypothesis, $X \rtri \ph_i$ and $X \rtri \ph_j$, so that by (2), we have $X
  \rtri \ph$. Finally, suppose there exists $\Phi \subseteq \{\ph_\be: \be <
  \al\}$ such that $\ph = \bigwedge \Phi$. Each $\ph_\be \in \Phi$ has a proof,
  $ \ang{\ph_\xi \mid \xi \le \be}$, of length $\be + 1 < \al + 1$, so by the
  inductive hypothesis, $X \rtri \th$ for all $\th \in \Phi$. Thus, by (3), we
  have $X \rtri \ph$.
\end{proof}

\begin{thm}\label{T:Hilbert=nat}
  Let $X \subseteq \cF$ and $\ph \in \cF$. Then $X \wdash \ph$ if and only if $X
  \vdash \ph$.
\end{thm}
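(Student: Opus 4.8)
The plan is to prove the two inclusions separately, in each case exploiting the fact that both relations are defined as \emph{smallest} relations closed under a list of conditions. Since $\wdash$ is the smallest relation satisfying conditions (1)--(3) of Proposition~\ref{P:Hilbert-induc}, to obtain ${\wdash}\subseteq{\vdash}$ it suffices to check that $\vdash$ itself satisfies (1)--(3). Dually, since $\vdash$ is the smallest relation satisfying (i)--(vi) of Definition~\ref{D:derivability}, to obtain ${\vdash}\subseteq{\wdash}$ it suffices to check that $\wdash$ satisfies (i)--(vi). The first inclusion is routine; the second concentrates essentially all the difficulty, and within it the single hard point is closure of $\wdash$ under rule~(vi), for which I would first establish a deduction theorem for $\wdash$.

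For ${\wdash}\subseteq{\vdash}$: condition~(3) is literally Definition~\ref{D:derivability}(iv), and condition~(2) (modus ponens) holds for $\vdash$ because Proposition~\ref{P:derivability}(a) converts $X\vdash(\ph\to\psi)$ into $X,\ph\vdash\psi$, which combines with $X\vdash\ph$ via~(b) to give $X\vdash\psi$. For condition~(1), the premise case $\th\in X$ is~(i) together with~(ii), so it remains to show each axiom is a $\vdash$-theorem. Each is handled by peeling off implications with~(a): $(\La3)$ reduces to $\bigwedge\Phi\vdash\th$, which is~(i) and~(iii); $(\La1)$ reduces to $\ph\to\psi\to\ze,\,\ph\to\psi,\,\ph\vdash\ze$, which follows by the derived modus ponens just noted; and $(\La2)$ reduces to $\ph\to\neg\psi,\,\psi\vdash\neg\ph$, which follows from~(d) after using modus ponens to get $\neg\psi$ and then~(v).

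For ${\vdash}\subseteq{\wdash}$: conditions (i)--(iv) are quick. Condition~(i) is Proposition~\ref{P:Hilbert-induc}(1) applied to $\{\ph\}$; (ii) holds because any proof from $X$ is a proof from any $X'\supseteq X$; (iii) follows from the axiom $(\La3)$ together with modus ponens; and (iv) is exactly Proposition~\ref{P:Hilbert-induc}(3). The key auxiliary tool for the rest is a \emph{substitution lemma}: every substitution instance, by arbitrary formulas of $\cF$, of a finitary tautology is a $\wdash$-theorem. I would prove this by taking a finitary proof---which exists since ${\wdash_\fin}={\vdash_\fin}$ is complete---and substituting throughout, noting that instances of $(\La1)$ and $(\La2)$ remain such instances, that $(\La3)$ and the conjunction rule are preserved, and that no \emph{new infinite} conjunction is ever created. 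Condition~(v) is then immediate: the instance $\ph\to\neg\ph\to\psi$ of a finitary tautology is a $\wdash$-theorem, and two applications of modus ponens from $X\wdash\ph$ and $X\wdash\neg\ph$ yield $X\wdash\psi$.

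The remaining condition~(vi) is the crux. The natural route is a deduction theorem, $X,\ph\wdash\psi$ implies $X\wdash\ph\to\psi$, proved by induction on the length of a proof of $\psi$ from $X\cup\{\ph\}$; granting it, (vi) follows by applying it to both hypotheses and then invoking the finitary-tautology instance $(\ph\to\psi)\to(\neg\ph\to\psi)\to\psi$ with two modus ponens steps. In the deduction-theorem induction, the premise/axiom case uses the instance $\chi\to\ph\to\chi$ and the modus ponens case uses $(\La1)$, exactly as in the finitary argument. I expect the genuinely infinitary step---and the main obstacle---to be the case where the last line is a countable conjunction $\bigwedge\Phi$ formed from earlier lines: here the induction delivers $X\wdash\ph\to\th$ for every $\th\in\Phi$, and one must upgrade these to $X\wdash\ph\to\bigwedge\Phi$. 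This is precisely the distribution of implication over a countable conjunction, and it is the one place where the infinitary conjunction machinery of $\wdash$ (the countable conjunction rule together with $(\La3)$) must be pushed to its full strength; everything else reduces to the finitary calculus via the substitution lemma.
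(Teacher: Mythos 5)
Your overall strategy is the same as the paper's: for ${\wdash}\subseteq{\vdash}$ the paper likewise verifies (1)--(3) of Proposition \ref{P:Hilbert-induc} for $\vdash$, with essentially your computations for the three axiom schemes, and for ${\vdash}\subseteq{\wdash}$ it likewise reduces to checking (i)--(vi) for $\wdash$. Where the paper simply writes that (i), (ii), (v), (vi) ``follow exactly as in the finitary case'' and cites Rautenberg, you attempt to make that citation explicit via a substitution lemma and a deduction theorem. The substitution lemma is sound (a finitary Hilbert proof stays a proof after substituting arbitrary formulas of $\cF$ for propositional variables, since the schemes ($\La$1)--($\La$3) admit arbitrary formulas), your handling of (v) is correct, and so are the premise/axiom and modus ponens cases of the deduction theorem.

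The genuine gap is exactly at the step you call the main obstacle, and it is not closed by ``pushing the infinitary conjunction machinery to its full strength.'' In the conjunction case you have $X \wdash \ph \to \th$ for every $\th \in \Phi$ and need $X \wdash \ph \to \bigwedge \Phi$. The conjunction rule applies only to outright theorems: it cannot be applied ``under the hypothesis $\ph$,'' and applying it to what you have yields only $X \wdash \bigwedge_{\th \in \Phi}(\ph \to \th)$. The missing link is then the single implication $\bigwedge_{\th \in \Phi}(\ph \to \th) \to (\ph \to \bigwedge \Phi)$, which is precisely Karp's distribution axiom --- the scheme listed as ($\La$6) in the proof of Theorem \ref{T:Karp-compl} --- and it is not among ($\La$1)--($\La$3), nor do the available tools produce it: ($\La$3) only eliminates conjunctions; ($\La$2) turns a goal of the form $\be \to \neg\ga$ into the goal $\ga \to \neg\be$, and applied here it merely cycles between the needed formula and its contrapositive; and your substitution lemma cannot reach it, because for infinite $\Phi$ this formula is not a substitution instance of any finitary tautology (the conjunctions in the skeleton of such an instance are finite). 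So the one genuinely infinitary case of the induction is not a routine verification but a missing idea. Note that this is also the soft spot in the paper's own proof: Rautenberg's finitary calculus has conjunction introduction as an \emph{axiom} ($A \to B \to A \wedge B$) and no conjunction rule, so his deduction-theorem induction has no such case, and the infinitary analogue of that axiom is exactly ($\La$6) --- which is why Karp includes it in her axiom list. To complete your argument you must either exhibit a derivation of ($\La$6) from ($\La$1)--($\La$3) and the two rules (none is apparent), or establish the distribution principle by some independent means before proving the deduction theorem.
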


\begin{proof}
  In this proof, Arabic numerals will refer to Proposition 
  \ref{P:Hilbert-induc}.

  We first prove that $X \wdash \ph$ implies $X \vdash \ph$. By Proposition 
  \ref{P:Hilbert-induc}, it suffices to show that (1)--(3) hold when $\wdash$ is
  replaced by $\vdash$.

  By (iv), we have that (3) holds for $\vdash$. Suppose $X \vdash (\ph \to
  \psi)$ and $X \vdash \ph$. By (a), we have $X, \ph \vdash \psi$, so by (b), it
  follows that $X \vdash \psi$. Thus, (2) holds for $\vdash$. By (i) and (ii),
	we have $X \vdash \th$ for all $\th \in X$. It remains only to show that
  $X \vdash \ph$ whenever $\ph$ is an axiom. By (ii), it suffices to show that
  ${} \vdash \ph$ whenever $\ph$ is an axiom.

  Consider first ($\La$1). Let $Y = \{\ph \to \psi \to \ze, \ph \to \psi,
  \ph\}$. By (i) and (ii), we have $Y \vdash (\ph \to \psi)$, so that (a) yields
  $Y, \ph \vdash \psi$. But $Y \cup \{\ph\} = Y$, so $Y \vdash \psi$. We
  similarly obtain $Y \vdash (\psi \to \ze)$, so that $Y, \psi \vdash \ze$. By 
  (b), we obtain $Y \vdash \ze$. Repeated applications of (a) now yield $\ph \to
  \psi \to \ze, \ph \to \psi \vdash \ph \to \ze$, followed by $\ph \to \psi \to
  \ze \vdash (\ph \to \psi) \to \ph \to \ze$, followed by ${} \vdash (\ph \to
  \psi \to \ze) \to (\ph \to \psi) \to \ph \to \ze$.

  For ($\La2$), let $X = \{\ph \to \neg\psi, \psi\}$ and $Y = X \cup \{\ph\}$.
  By (i) and (ii), we have $Y \vdash \ph \to \neg\psi$, so that (a) yields
  $Y, \ph \vdash \neg \psi$. But $Y \cup \{\ph\} = Y$, so $Y \vdash \neg \psi$.
  On the other hand, (i) and (ii) imply $Y \vdash \psi$, so by (v), we have $Y
  \vdash \neg \ph$, or in other words, $X, \ph \vdash \neg \ph$. By (d), we have
  $X \vdash \neg \ph$. Repeated applications of (a) yield $\ph \to \neg \psi
  \vdash \psi \to \neg \ph$, followed by ${} \vdash (\ph \to \neg\psi) \to \psi
  \to \neg \ph$.

  For ($\La3$), let $\Phi \subseteq \cF$ be countable and $\ph \in \Phi$. By 
  (i), we have $\bigwedge \Phi \vdash \bigwedge \Phi$. By (iii), we obtain
  $\bigwedge \Phi \vdash \ph$. Thus, by (a), it follows that ${} \vdash
  \bigwedge \Phi \to \ph$.

  To prove that $X \vdash \ph$ implies $X \wdash \ph$, it suffices to show that
  (i)--(vi) hold when $\vdash$ is replaced by $\wdash$. The fact that (i), (ii),
  (v), and (vi) hold for $\wdash$ follows exactly as in the finitary case
  (see \cite[Section 1.6]{Rautenberg2010}, for example). We obtain (iii) and
  (iv) from ($\La$3) and (3), respectively.
\end{proof}

\subsection{Deductive theories and logical equivalence}

\begin{defn}
	A set $T \subseteq \cF$ is called a \emph{(deductive) theory}
    \index{theory!deductive ---}%
  if the following conditions hold:
	\begin{enumerate}[(i)]
		\item $\La \subseteq T$,
		\item if $(\ph \to \psi) \in T$ and $\ph \in T$, then $\psi \in T$, and
		\item if $\Phi \subseteq T$ is countable, then $\bigwedge \Phi \in T$.
	\end{enumerate}
\end{defn}

Note that the intersection of any family of theories is again a theory. Also
note that $\cF$ itself is a theory. Hence, if $X \subseteq \cF$, then we may
define \emph{the (deductive) theory generated by $X$}, denoted by $T(X)$ or
$T_X$, as the smallest theory having $X$ as a subset.
  \symindex{$T(X), T_X$ (in $\cF$)}%

\begin{thm}\label{T:theory-deduc}
	Let $X \subseteq \cF$ and $\ph \in \cF$. Then $X \vdash \ph$ if and only if
	$\ph \in T(X)$.
\end{thm}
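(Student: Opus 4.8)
The plan is to route everything through the Hilbert-type calculus. By Theorem \ref{T:Hilbert=nat}, the relations $\vdash$ and $\wdash$ coincide, so it suffices to prove that $X \wdash \ph$ if and only if $\ph \in T(X)$. The key observation driving the whole argument is that the three defining clauses of a deductive theory are exactly the closure conditions (1)--(3) that characterize $\wdash$ in Proposition \ref{P:Hilbert-induc}.

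For the direction $\ph \in T(X) \Rightarrow X \wdash \ph$, I would consider the set $D = \{\psi \in \cF \mid X \wdash \psi\}$ and verify that it is a theory containing $X$. Containment $X \subseteq D$ is immediate from clause (1) of Proposition \ref{P:Hilbert-induc} (since $X \subseteq X \cup \La$), and the same clause gives $\La \subseteq D$, which is condition (i) in the definition of a theory. Clause (2) gives modus-ponens closure and clause (3) gives closure under countable conjunctions, which are precisely conditions (ii) and (iii) of the theory axioms. Since $T(X)$ is by definition the smallest theory containing $X$, it follows that $T(X) \subseteq D$; that is, every $\ph \in T(X)$ satisfies $X \wdash \ph$.

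For the converse, I would instead invoke the minimality of $\wdash$. Define a relation $R$ from $\fP \cF$ to $\cF$ by declaring $X \mathbin{R} \ph$ iff $\ph \in T(X)$, and check that $R$ satisfies (1)--(3). Condition (1) holds because $X \subseteq T(X)$ by the definition of $T(X)$ and $\La \subseteq T(X)$ since $T(X)$ is a theory; conditions (2) and (3) are nothing more than clauses (ii) and (iii) of the theory axioms applied to the particular theory $T(X)$. Because Proposition \ref{P:Hilbert-induc} identifies $\wdash$ as the smallest relation satisfying (1)--(3), we conclude ${\wdash} \subseteq R$, i.e., $X \wdash \ph$ implies $\ph \in T(X)$.

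Combining the two inclusions yields $X \wdash \ph$ if and only if $\ph \in T(X)$, and Theorem \ref{T:Hilbert=nat} then upgrades this to $X \vdash \ph$ if and only if $\ph \in T(X)$. I do not expect any genuine obstacle: the entire content is the exact correspondence between the theory-closure axioms and the Hilbert-calculus rules, so the proof reduces to two applications of a minimality characterization. The only point that warrants a moment of care is the quantifier structure in the converse---the induction principle concerns a relation defined for all antecedent sets at once---but verifying (1)--(3) for an arbitrary $X$ is immediate from the fact that $T(X)$ is itself a theory containing $X$.
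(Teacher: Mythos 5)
Your proof is correct and takes essentially the same route as the paper: both directions pass through the Hilbert-type calculus via Theorem \ref{T:Hilbert=nat}, and the converse direction (defining the set of consequences of $X$ and checking it is a theory containing $X$) is exactly the paper's argument. The only cosmetic difference is in the forward direction, where the paper redoes the induction on proof length while you invoke the minimality clause of Proposition \ref{P:Hilbert-induc} directly by checking that the relation $X \mathbin{R} \ph \iff \ph \in T(X)$ satisfies (1)--(3); this is the same content, just packaged as an application of the already-proven induction principle.
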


\begin{proof}
	Suppose $X \vdash \ph$. By Theorem \ref{T:Hilbert=nat}, there exists a proof
	of $\ph$ from $X$. As in Proposition \ref{P:Hilbert-induc}, we can use
	induction on the length of the proof to show that $\ph \in T(X)$. For the
	converse, define $T' = \{\ph \in \cF: X \vdash \ph\}$. By Theorem 
	\ref{T:Hilbert=nat} and Proposition \ref{P:Hilbert-induc}, it follows that
	$T'$ is a theory with $X \subseteq T'$. Thus, $T(X) \subseteq T'$.
\end{proof}

\begin{cor}
	A set $T \subseteq \cF$ is a theory if and only if it is deductively closed,
	meaning that $T \vdash \ph$ implies $\ph \in T$.
\end{cor}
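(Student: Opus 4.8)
The plan is to reduce both implications to Theorem \ref{T:theory-deduc}, which characterizes derivability as membership in the generated theory: $X \vdash \ph$ if and only if $\ph \in T(X)$. The linchpin is the elementary observation that a set $T$ is a theory precisely when it coincides with the theory it generates, that is, when $T = T(T)$. Once this reformulation is in place, both directions of the corollary fall out immediately.

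For the forward direction I would assume $T$ is a theory. Since $T(T)$ is by definition the smallest theory having $T$ as a subset, and $T$ is itself a theory containing $T$, we obtain $T(T) \subseteq T$; the reverse inclusion $T \subseteq T(T)$ holds by definition of $T(T)$. Hence $T(T) = T$, and applying Theorem \ref{T:theory-deduc} with $X = T$ gives that $T \vdash \ph$ if and only if $\ph \in T(T) = T$. In particular, $T \vdash \ph$ implies $\ph \in T$, so $T$ is deductively closed.

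For the converse I would assume $T$ is deductively closed. Again $T \subseteq T(T)$ holds automatically from the definition of the generated theory. For the reverse inclusion, if $\ph \in T(T)$, then Theorem \ref{T:theory-deduc} yields $T \vdash \ph$, and deductive closure then gives $\ph \in T$; thus $T(T) \subseteq T$. Therefore $T = T(T)$, and since $T(T)$ is a theory by construction, so is $T$.

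I do not anticipate any genuine obstacle: the whole argument rests on the already-established Theorem \ref{T:theory-deduc} together with the fact that $T(T)$ is the smallest theory containing $T$. The only point requiring care is making the two inclusions explicit in each direction, but these are routine. As an alternative to the $T = T(T)$ route, one could instead verify the converse by checking the three defining conditions of a theory directly---using reflexivity ($\ph \vdash \ph$) and monotonicity to get $T \vdash \th$ for each $\th \in T$, the derivability of axioms together with monotonicity for condition (i), modus ponens for $\vdash$ (via Proposition \ref{P:derivability}(a) and (b)) for condition (ii), and rule (iv) of Definition \ref{D:derivability} for condition (iii)---but the reformulation through $T(T)$ is considerably cleaner.
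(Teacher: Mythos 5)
Your proof is correct and follows exactly the paper's own route: the paper's proof likewise combines Theorem \ref{T:theory-deduc} with the fact that $T$ is a theory if and only if $T = T(T)$, which is precisely the reformulation you use. You have simply spelled out the inclusions that the paper leaves implicit.
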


\begin{proof}
	This follows immediately by combining Theorem \ref{T:theory-deduc} with the
  fact that $T \subseteq \cF$ is a theory if and only if $T = T(T)$.
\end{proof}

\begin{rmk}\label{R:theory-first}
	Theorem \ref{T:theory-deduc} exhibits an alternative approach to defining
	derivability. One can define the theory generated by a set $X$, as we did
	above, without any reference to derivability. Then one can define
	derivability in	terms of $T(X)$. This is the approach we will take when
	considering inductive inference.
\end{rmk}

If $T$ is a theory and $S \subseteq \cF$, then $T + S$
  \symindex{$T + S$}%
denotes the theory generated by $T \cup S$. For $\ph \in \cF$, we write $T +
\ph$ for $T + \{\ph\}$. The smallest theory is $\Taut$, the largest theory is
$\cF$, and every theory $T$ satisfies $\Taut \subseteq T \subseteq \cF$. A
theory $T$ is inconsistent if and only if $T = \cF$. A theory $T$ is said to be
\emph{(deductively) complete} if it is consistent and every $\ph \in \cF$ is
determined by $T$. That is, for every $\ph \in \cF$, either $\ph \in T$ or $\neg
\ph \in T$.

Formulas $\ph$ and $\psi$ are \emph{(logically) equivalent},
  \index{equivalent}%
  \symindex{$\equiv$}%
  \symindex{$\equiv_X$}%
written $\ph \equiv \psi$, if $\ph \vdash \psi$ and $\psi \vdash \ph$. By (a),
(iii), (iv), and the shorthand definition of $\tot$, we find that $\ph \equiv
\psi$ if and only if $\ph \tot \psi \in \Taut$. Note that $\ph \in \Taut$ if and
only if $\ph \equiv \top$. Also note that if $X \vdash \ph$ and $\ph \equiv
\psi$, then $X \vdash \psi$.

More generally, if $X \subseteq \cF$, we say that $\ph$ and $\psi$ are \emph
{equivalent given $X$}, written $\ph \equiv_X \psi$, if $X, \ph \vdash \psi$ and
$X, \psi \vdash \ph$. As above, we have $\ph \equiv_X \psi$ if and only if $\ph
\tot \psi \in T(X)$. Also, $\ph \in T(X)$ if and only if $\ph \equiv_X \top$.
Note that $\equiv_\emp$ is simply $\equiv$. Also note that ${\equiv_X} \subseteq
{\equiv_{X'}}$ whenever $X \subseteq X'$.

It can be shown that $\equiv_X$ is a \emph{congruence relation}, meaning it is
an equivalence relation on $\cF$ such that
\begin{align*}
  &\text{if $\ph \equiv_X \ph'$, then $\neg \ph \equiv_X \neg \ph'$, and}\\
  &\text{if $C$ is countable and $\ph_n \equiv_X \ph_n'$ for $n \in C$, then
  	$\ts{\bigwedge_{n \in C}} \; \ph_n \equiv_X \ts{\bigwedge_{n \in C}} \;
  	\ph_n'$.}
\end{align*}
For $X, Y \subseteq \cF$, we say that $X \equiv Y$ if $X \vdash Y$ and $Y \vdash
X$. Note that $X \vdash Y$ if and only if $Y \subseteq T(X)$, which holds if and
only if $T(Y) \subseteq T(X)$. Thus, $X \equiv Y$ if and only if $T(X) = T(Y)$.
Also note that if $X \vdash \ph$ and $X \equiv Y$, then $Y \vdash \ph$.

The operations, $\neg$, $\wedge$, and $\vee$, pass in the usual way from $\cF$
to $B(X) = \cF/{\equiv_X}$, making $B(X)$ into a Boolean $\si$-algebra, called
the \emph{Lindenbaum-Tarski $\si$-algebra of $X$}. If $[\ph]_X \in B(X)$ denotes
the equivalence class of $\ph$, then $\ph \equiv_X \psi$ if and only if $[\ph]_X
= [\psi]_X$. The partial order in $B(X)$ corresponds to the derivability
relation. That is, $[\ph]_X \le [\psi]_X$ if and only if $X, \ph \vdash \psi$.
In $B(X)$, we have $0 = [\bot]_X$ and $1 = [\top]_X$.

We end this section with two items that we will need later. The first is a
piece of notation. If $T_0$ and $T_1$ are theories with $T_0 \subseteq T_1$,
then we write $[T_0, T_1]$
  \symindex{$[T_0, T_1]$}%
to denote the set of theories $T$ that satisfy $T_0 \subseteq T \subseteq T_1$.
The second is the following lemma.

\begin{lemma}\label{L:omit-psi}
  Let $T$ be a theory, $\psi \in \cF$, and $S \subseteq \cF$. Define $S' = 
  \{\psi \to \th \mid \th \in S\}$. Then $T + \psi + S = T + \psi + S'$.
\end{lemma}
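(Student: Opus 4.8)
The plan is to reduce the claimed equality of theories to a statement about mutual derivability of their generating sets, and then to verify that derivability directly from the deduction theorem and modus ponens. First I would observe that, since theory generation is idempotent, $T + \psi + S = T(T \cup \{\psi\} \cup S)$ and likewise $T + \psi + S' = T(T \cup \{\psi\} \cup S')$. Writing $A = T \cup \{\psi\}$, and recalling that $T(Y) = T(Z)$ if and only if $Y \equiv Z$, it therefore suffices to show that $A \cup S$ and $A \cup S'$ derive one another, that is, $A \cup S \equiv A \cup S'$. By reflexivity and weakening ((i) and (ii)), each of these sets derives the shared formulas of $A$, so the real work is confined to the formulas of $S$ and $S'$.

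For the inclusion $A \cup S \vdash A \cup S'$, I would fix $\th \in S$ and show $A \cup S \vdash \psi \to \th$. The cleanest route is to note first that $\th \vdash \psi \to \th$: by (i) and (ii) we have $\th, \psi \vdash \th$, so Proposition \ref{P:derivability}(a) gives $\th \vdash \psi \to \th$. Since $\{\th\} \subseteq A \cup S$, weakening (ii) yields $A \cup S \vdash \psi \to \th$. As $\th \in S$ was arbitrary and $S' = \{\psi \to \th \mid \th \in S\}$, this establishes $A \cup S \vdash S'$, and hence $A \cup S \vdash A \cup S'$.

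For the reverse inclusion $A \cup S' \vdash A \cup S$, I would again fix $\th \in S$ and now derive $\th$ itself. Here the formula $\psi \to \th$ lies in $S'$, so $A \cup S' \vdash \psi \to \th$; moreover $\psi \in A$, so $A \cup S' \vdash \psi$. Applying Proposition \ref{P:derivability}(a) to the first of these gives $A \cup S', \psi \vdash \th$, and then Proposition \ref{P:derivability}(b), together with $A \cup S' \vdash \psi$, discharges the hypothesis to conclude $A \cup S' \vdash \th$. This gives $A \cup S' \vdash S$, hence $A \cup S' \vdash A \cup S$. Combining the two inclusions yields $A \cup S \equiv A \cup S'$, and therefore $T + \psi + S = T + \psi + S'$.

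The argument is essentially routine, so I do not anticipate a genuine obstacle; the one point that deserves care is the reverse inclusion, where the presence of $\psi$ in $A$ is precisely what allows modus ponens to strip the antecedent from $\psi \to \th$ and recover $\th$. Without $\psi$ available on both sides, the two generating sets would not be equivalent, which is exactly why the hypothesis $\psi$ must appear in both $T + \psi + S$ and $T + \psi + S'$.
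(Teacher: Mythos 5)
Your proposal is correct and follows essentially the same route as the paper's proof: both directions reduce to showing mutual derivability of the generating sets, with $\th \vdash \psi \to \th$ (via Proposition \ref{P:derivability}(a)) giving one inclusion and modus ponens on $\psi \to \th$ together with $\psi$ giving the other. The paper simply leaves implicit the reduction to $A \cup S \equiv A \cup S'$ that you spell out via idempotence of theory generation.
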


\begin{proof}
  For each $\th \in S$, we have $T + \psi + S \vdash \th \vdash \psi \to \th$.
  Thus, $T + \psi + S \vdash S'$, so that $T + \psi + S \vdash T + \psi + S'$.
  Conversely, for any $\th \in S$, we have $\psi \to \th, \psi \vdash \th$, so
  that $S', \psi \vdash S$. Hence, $T + \psi + S' \vdash T + \psi + S$.
\end{proof}

\section{Inductive statements and entire sets}\label{S:entire}

Let $\cF^\IS = \fP \cF \times \cF \times [0, 1]$.
  \symindex{$\cF^\IS$}%
The elements, $(X, \ph, p)$,
  \symindex{$(X, \ph, p)$}%
of $\cF^\IS$ are called \emph{inductive statements}.
  \index{inductive statement}%
Intuitively, we interpret $(X, \ph, p)$ as asserting that $X$ partially entails
$\ph$, and that $p$ is the degree of this partial entailment. In an inductive
statement, $X$ is called the \emph{antecedent},
  \index{antecedent}%
$\ph$ is called the \emph{consequent},
  \index{consequent}%
and $p$ is called the \emph{probability}.
  \index{probability}%

The remainder of this chapter is devoted to extending the derivability relation,
$\vdash$, to inductive statements. Informally, the assertion, $Q \vdash (X, \ph,
p)$, where $Q \subseteq \cF^\IS$, means that $(X, \ph, p)$ can be derived from
$Q$, using the rules of inductive inference. We will have nine such rules. They
are:
\begin{enumerate}[(R1)]
  \item the rule of logical equivalence,
  \item the rule of logical implication,
  \item the rule of material implication,
  \item the rule of deductive transitivity,
  \item the addition rule,
  \item the multiplication rule,
  \item the continuity rule,
  \item the rule of inductive extension, and
  \item the rule of deductive extension.
\end{enumerate}
The first rule, among other things, ensures that our collective inferences form
a function mapping antecedent-consequent pairs, $(X, \ph)$, to probabilities
$p$. Rules (R2)--(R4) describe the relationship between deductive and inductive
inference. Rules (R5)--(R7) are the usual mathematical rules for working with
probabilistic assertions. And the final two rules provide a natural
``completeness'' to our inferences.

We follow the approach outlined in Remark \ref{R:theory-first}. That is, we
begin by defining an inductive theory, which will be a set of inductive
statements that is closed under the nine rules of inductive inference, and
satisfies certain connectivity requirements. This will allow us to speak of the
inductive theory generated by a set $Q \subseteq \cF^\IS$, which we denote by
$\bfP(Q)$. We then take $Q \vdash (X, \ph, p)$ to mean that $(X, \ph, p)
\in \bfP(Q)$.

The notion of being closed under the nine rules of inductive inference will be
built up in tiers. An admissible set is one that is closed under the first rule.
An entire set is closed under the first seven rules. A semi-closed set is closed
under the first eight rules. And a closed set is closed under all nine. In this
section, we focus only on entire sets.

\subsection{Seven of nine}

We now formally state the first seven of the nine rules of inductive inference.

A set $P \subseteq \cF^\IS$ is \emph{admissible}
  \index{admissible}%
if it satisfies the \emph{rule of logical equivalence}:
  \index{rule!of logical equivalence}%
\begin{enumerate}[(R1)]
  \item If $(X, \ph, p) \in P$, $X' \equiv X$, and $\ph' \equiv_X \ph$, then $ 
        (X', \ph', p) \in P$ and there is no other value $p'$ such that $(X',
        \ph', p') \in P$. 
\end{enumerate}
If $P$ is admissible, then it is a function from $\fP \cF \times \cF$ to $[0,
1]$. In this case, we write $P(\ph \mid X) = p$ to mean that $(X, \ph, p) \in
P$, and read the left-hand side, $P(\ph \mid X)$, as \emph{the probability of
$\ph$ given $X$}. We also write $X, \psi$ as shorthand for $X \cup \{\psi\}$, so
that $P(\ph \mid X, \psi)$ means $P(\ph \mid X \cup \{\psi\})$. When $X = \emp$,
we will omit it, leaving only $P(\ph)$ or $P(\ph \mid \psi)$. For admissible
$P$, if $(X, \ph, p) \in P$, then we say $P(\ph \mid X)$ \emph{exists} or is 
\emph{defined}.
  \symindex{$P(\ph \mid X) = p$}%

Note that any subset of an admissible set is also a function from $\fP \cF
\times \cF$ to $[0, 1]$. We will therefore also use the notation $P(\ph \mid X)
= p$ for subsets of admissible sets.

If $P \subseteq \cF^\IS$, we define
\[
  \ante P = \{X \subseteq \cF:
    (X, \ph, p) \in P \text{ for some $\ph \in \cF$ and $p \in [0, 1]$}
  \}.
\]
  \symindex{$\ante P$}%
That is, $X \in \ante P$ if and only if $X$ is the antecedent of some inductive
statement in $P$.

The next six rules of inductive inference are encoded in the following
definition.

\begin{defn}
  A set $P \subseteq \cF^\IS$ is \emph{entire}
    \index{entire}%
  if it is admissible and satisfies the following:
  \begin{enumerate}[(R1)]
    \setcounter{enumi}{1}
    \item (\emph{the rule of logical implication})
            \index{rule!of logical implication}%
          If $X \in \ante P$ and $X \vdash \ph$, then $P(\ph \mid X) = 1$.

    \item (\emph{the rule of material implication})
            \index{rule!of material implication}%
          If $X \in \ante P$ and $P(\psi \mid X, \ph) = 1$, then $P(\ph \to \psi
          \mid X) = 1$.

    \item (\emph{the rule of deductive transitivity})
            \index{rule!of deductive transitivity}%
          If $P(\ph \mid X) = 1$ and $\ph \vdash \psi$, then $P(\psi \mid X) =
          1$. Also, for any $X' \in \ante P$, if $X' \vdash X$ and $P(\ph \mid
          X) = 1$, then $P(\ph \mid X') = 1$.

    \item (\emph{the addition rule})
            \index{rule!the addition ---}%
            \index{addition rule|see {rule}}%
          Let $X \vdash \neg(\ph \wedge \psi)$. Consider the equation,
          \begin{equation}\label{add-rule}
            P(\ph \vee \psi \mid X) = P(\ph \mid X) + P(\psi \mid X).
          \end{equation}
          If two of the above probabilities exist, then so does the third and
          \eqref{add-rule} holds.

    \item (\emph{the multiplication rule})
            \index{rule!the multiplication ---}%
            \index{multiplication rule|see {rule}}%
          Consider the equation,
          \begin{equation}\label{mult-rule}
            P(\ph \wedge \psi \mid X) = P(\ph \mid X) P(\psi \mid X, \ph).
          \end{equation}
          If two of the above probabilities exist and are positive, then the
          third exists and \eqref{mult-rule} holds.

    \item (\emph{the continuity rule})
            \index{rule!the continuity ---}%
            \index{continuity rule|see {rule}}%
          If $P(\ph_n \mid X)$ exists and $X, \ph_n \vdash \ph_{n + 1}$ for all
          $n \in \bN$, then
          \begin{equation}\label{cont-rule}
            \ts{P(\bigvee_n \ph_n \mid X) = \lim_n P(\ph_n \mid X).}
          \end{equation}
    \end{enumerate}
\end{defn}

\begin{rmk}
  If $P$ is entire and $X \in \ante P$, then $X$ is consistent. To see this,
  suppose $X$ is inconsistent. Choose $\ph \in \cF$ such that $X \vdash \ph$ and
  $X \vdash \neg \ph$. By the rule of logical implication $P(\ph \mid X) = 1$
  and $P(\neg \ph \mid X) = 1$. Thus, by the addition rule, $P(\ph \vee \neg
  \ph) = 2$, which violates the definition of an inductive statement.
\end{rmk}

\begin{rmk}
  The first seven rules of inductive inference leave open the question of
  whether $P(\ph \mid X) = 1$ implies $X \vdash \ph$. In general, it does not, 
  but a partial converse to the rule of logical implication will be given in
  Theorem \ref{T:log-impl-iff}.
\end{rmk}

\subsection{Relative negation and certainty}\label{S:rel-neg-cert}

Given an entire set $P$ and a set $X \in \ante P$, the domain of $P(\; \cdot
\mid X)$ is not necessarily closed under conjunctions and disjunctions. It is,
however, closed under relative negation. Also, conjunctions and disjunctions
with a formula whose probability is 0 or 1 are still in the domain of $P(\;
\cdot \mid X)$. These and related facts are described in this subsection.

\begin{expl}\label{Expl:MathSEexpl}
  The possible failure of the domain of $P(\; \cdot \mid X)$ to be closed under
  conjunctions and disjunctions can be seen in the following simple example,
  using $X = \emp$. Let $PV = \{\bfr_1, \bfr_2\}$. Fix $q \in (0, 1)$ and define
  $Q \subseteq \cF^\IS$ by $Q(\bfr_1) = Q(\bfr_2) = Q(\bfr_1 \tot \bfr_2) = q$.
  In Example \ref{Expl:MathSE}, we construct an entire set $P$ such that $Q
  \subseteq P$, but in which $P (\bfr_1 \wedge \bfr_2)$ is undefined. As we will
  see in Theorem \ref{T:incl-excl} below, this also implies $P(\bfr_1 \vee
  \bfr_2)$ is undefined.
\end{expl}

\begin{prop}\label{P:rel-neg}
  Let $P$ be entire. If $P(\ph \mid X)$ and $P(\psi \mid X)$ exist and $X, \ph
  \vdash \psi$, then $P(\psi \wedge \neg \ph \mid X)$ exists and
  \begin{equation}\label{rel-neg}
    P(\psi \wedge \neg \ph \mid X) = P(\psi \mid X) - P(\ph \mid X).
  \end{equation}
  In particular, $P(\ph \mid X) \le P(\psi \mid X)$.
\end{prop}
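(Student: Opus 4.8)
The plan is to prove Proposition~\ref{P:rel-neg} by setting up the hypotheses of the addition rule (R5). I want to write $\psi$ as a disjoint disjunction of $\ph$ and $\psi \wedge \neg \ph$, and then apply (R5) to solve for the probability of the missing piece.

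First I would verify the purely deductive facts I need. I claim that under the hypothesis $X, \ph \vdash \psi$, the formula $\psi$ is equivalent, given $X$, to the disjunction $\ph \vee (\psi \wedge \neg \ph)$; that is, $\psi \equiv_X \ph \vee (\psi \wedge \neg \ph)$. This is a routine propositional calculation: the right-to-left direction needs $X, \ph \vdash \psi$ (to handle the $\ph$ disjunct) together with $\psi \wedge \neg \ph \vdash \psi$, while the left-to-right direction uses case analysis on $\ph$, which is available through rule~(vi). I would also note the disjointness fact $X \vdash \neg(\ph \wedge (\psi \wedge \neg \ph))$, which holds because $\ph \wedge \psi \wedge \neg \ph \vdash \bot$ is a tautological consequence of $\ph \wedge \neg \ph \vdash \bot$, so it holds even with $X = \emp$.

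Next, since $P$ is admissible and $P(\psi \mid X)$ exists, rule~(R1) together with the equivalence $\psi \equiv_X \ph \vee (\psi \wedge \neg \ph)$ gives that $P(\ph \vee (\psi \wedge \neg \ph) \mid X)$ exists and equals $P(\psi \mid X)$. Now I have two of the three probabilities appearing in the addition rule for the disjoint pair $\ph$ and $\psi \wedge \neg \ph$: namely $P(\ph \vee (\psi \wedge \neg \ph) \mid X) = P(\psi \mid X)$ and $P(\ph \mid X)$, both of which exist by hypothesis. By the disjointness established above, (R5) applies and yields that the third probability, $P(\psi \wedge \neg \ph \mid X)$, exists and satisfies
\[
  P(\psi \mid X) = P(\ph \mid X) + P(\psi \wedge \neg \ph \mid X).
\]
Rearranging gives \eqref{rel-neg}. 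The final inequality $P(\ph \mid X) \le P(\psi \mid X)$ is then immediate, since $P(\psi \wedge \neg \ph \mid X) \ge 0$ by the definition of an inductive statement (probabilities lie in $[0,1]$).

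The main obstacle I anticipate is not any single hard step but rather the bookkeeping of making sure the deductive equivalence $\psi \equiv_X \ph \vee (\psi \wedge \neg \ph)$ is genuinely established from the definitions of $\vee$ and $\equiv_X$ given earlier, since $\vee$ is defined via $\neg \bigwedge \neg$ and one must unwind that shorthand. I would lean on Proposition~\ref{P:derivability} and the basic rules (i)--(vi) to discharge these, treating them as standard propositional manipulations rather than writing out every application of the turnstile. The genuine content of the proposition is entirely in the single application of the addition rule; everything else is translating the statement into the form (R5) demands.
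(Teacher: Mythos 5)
Your proposal is correct and follows essentially the same route as the paper's proof: both decompose $\psi$ as $\ph \vee (\psi \wedge \neg\ph)$ given $X$, invoke the rule of logical equivalence (R1) to transfer existence of $P(\psi \mid X)$ to the disjunction, and then apply the addition rule (R5) to the contradictory pair $\ph$ and $\psi \wedge \neg\ph$ to solve for the remaining probability. The only difference is that you spell out the deductive bookkeeping and the final inequality slightly more explicitly, which the paper leaves implicit.
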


\begin{proof}
  Let $\psi' = \psi \wedge \neg \ph$. Since $\psi \equiv_X \ph \vee \psi'$, the
  rule of logical equivalence implies $P(\ph \vee \psi' \mid X)$ exists. Since
  $\ph \wedge \psi'$ is a contradiction, the addition rule implies that $P(\psi'
  \mid X)$ exists and
  \[
    P(\ph \vee \psi' \mid X) = P(\ph \mid X) + P(\psi' \mid X),
  \]
  which gives \eqref{rel-neg}.
\end{proof}

\begin{rmk}
  The final conclusion of Proposition \ref{P:rel-neg} is referred to as the
  \emph{monotonicity property} of $P$.
\end{rmk}

\begin{cor}\label{C:rel-neg}
  Let $P$ be entire. If $P(\ph \mid X)$ exists, then $P(\neg \ph \mid X)$ exists
  and
  \begin{equation}\label{prob-neg}
    P(\neg \ph \mid X) = 1 - P(\ph \mid X).
  \end{equation}
\end{cor}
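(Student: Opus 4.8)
The plan is to derive this directly from Proposition \ref{P:rel-neg} by specializing $\psi$ to $\top$. The guiding idea is that $\neg \ph$ is logically equivalent to $\top \wedge \neg \ph$, and this second formula is precisely the one whose probability Proposition \ref{P:rel-neg} evaluates; so the corollary is really just the $\psi = \top$ case dressed up via the rule of logical equivalence.

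First I would establish that $P(\top \mid X)$ exists and equals $1$. Since $P(\ph \mid X)$ exists, we have $X \in \ante P$, and because ${} \vdash \top$ gives $X \vdash \top$, the rule of logical implication (R2) yields $P(\top \mid X) = 1$. Next, noting that $X, \ph \vdash \top$ holds trivially and that both $P(\top \mid X)$ and $P(\ph \mid X)$ now exist, I would apply Proposition \ref{P:rel-neg} with $\psi$ replaced by $\top$. This shows that $P(\top \wedge \neg \ph \mid X)$ exists and
\[
  P(\top \wedge \neg \ph \mid X) = P(\top \mid X) - P(\ph \mid X) = 1 - P(\ph \mid X).
\]

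Finally, I would transfer this to $\neg \ph$ itself using the rule of logical equivalence (R1). Since ${} \vdash \top$, one checks $\top \wedge \neg \ph \equiv \neg \ph$, and as ${\equiv} \subseteq {\equiv_X}$ this gives $\top \wedge \neg \ph \equiv_X \neg \ph$. Taking $X' = X$ in (R1) then shows that $P(\neg \ph \mid X)$ exists and equals $P(\top \wedge \neg \ph \mid X) = 1 - P(\ph \mid X)$, which is exactly \eqref{prob-neg}. I anticipate no real obstacle here; the only steps needing care are confirming $X \in \ante P$ so that (R2) produces $P(\top \mid X)$, and verifying the equivalence $\top \wedge \neg \ph \equiv_X \neg \ph$ required to invoke (R1).
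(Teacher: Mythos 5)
Your proposal is correct and follows exactly the paper's own argument: establish $P(\top \mid X) = 1$ via the rule of logical implication, apply Proposition \ref{P:rel-neg} with $\psi = \top$, and transfer the result from $\top \wedge \neg \ph$ to $\neg \ph$ by the rule of logical equivalence. You simply spell out the routine verifications (that $X \in \ante P$, that $X, \ph \vdash \top$, and that $\top \wedge \neg \ph \equiv_X \neg \ph$) that the paper leaves implicit.
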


\begin{proof}
  Suppose $P(\ph \mid X)$ exists. Then $X \in \ante P$, so by the rule of
  logical implication, $P(\top \mid X) = 1$. Applying Proposition
  \ref{P:rel-neg} with $\psi = \top$, and using $\top \wedge \neg \ph \equiv
  \neg \ph$ together with the rule of logical equivalence, we obtain
  \eqref{prob-neg}.
\end{proof}

\begin{rmk}\label{R:rel-neg}
  Proposition \ref{P:rel-neg} requires neither the multiplication rule nor 
  the continuity rule in its proof. Consequently, Corollary \ref{C:rel-neg}
  also does not require them.
\end{rmk}

\begin{prop}\label{P:cert-cl-conv}
  Let $P$ be entire. If $P(\psi \mid X) = 1$ and $P(\ph \wedge \psi \mid X)$
  exists, then $P(\ph \mid X) = P(\ph \wedge \psi \mid X)$.
\end{prop}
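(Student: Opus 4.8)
The plan is to pin down the quantity $P(\ph\mid X) - P(\ph\wedge\psi\mid X)$ by identifying it with $P(\ph\wedge\neg\psi\mid X)$ and then showing the latter is forced to be $0$ because $\ph\wedge\neg\psi\vdash\neg\psi$ while $\psi$ has probability one. The algebraic engine is the observation that $\ph\wedge\neg(\ph\wedge\psi)\equiv\ph\wedge\neg\psi$ (since $\ph\wedge(\neg\ph\vee\neg\psi)$ collapses the $\ph\wedge\neg\ph$ disjunct), and the role of $P(\psi\mid X)=1$ enters only through $P(\neg\psi\mid X)=0$ together with monotonicity. So rather than trying to build the disjunction $\ph\equiv_X(\ph\wedge\psi)\vee(\ph\wedge\neg\psi)$ from the bottom up, I would extract the difference directly from Proposition \ref{P:rel-neg}.

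Concretely, I would proceed as follows. First, from $P(\psi\mid X)=1$ and Corollary \ref{C:rel-neg}, record that $P(\neg\psi\mid X)=0$. Next, apply Proposition \ref{P:rel-neg} to the pair $\ph\wedge\psi\vdash\ph$ (valid given $X$, since $\ph\wedge\psi\vdash\ph$): both $P(\ph\wedge\psi\mid X)$ and $P(\ph\mid X)$ are available, so the proposition yields that $P(\ph\wedge\neg(\ph\wedge\psi)\mid X)$ exists and equals $P(\ph\mid X)-P(\ph\wedge\psi\mid X)$. By the rule of logical equivalence (R1) and the equivalence $\ph\wedge\neg(\ph\wedge\psi)\equiv\ph\wedge\neg\psi$, this is exactly $P(\ph\wedge\neg\psi\mid X)$. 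Finally, since $\ph\wedge\neg\psi\vdash\neg\psi$ and both $P(\ph\wedge\neg\psi\mid X)$ (just obtained) and $P(\neg\psi\mid X)=0$ exist, the monotonicity clause of Proposition \ref{P:rel-neg} gives $P(\ph\wedge\neg\psi\mid X)\le P(\neg\psi\mid X)=0$, hence $P(\ph\wedge\neg\psi\mid X)=0$. Combining, $P(\ph\mid X)-P(\ph\wedge\psi\mid X)=0$, which is the claim.

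The main obstacle is existence, not the arithmetic. As Example \ref{Expl:MathSEexpl} stresses, the domain of $P(\;\cdot\mid X)$ need not be closed under conjunction, so one cannot simply assume $P(\ph\wedge\neg\psi\mid X)$ is defined and then bound it; the trick is that Proposition \ref{P:rel-neg} delivers existence of this difference \emph{and} its value in one stroke, sidestepping any ad hoc existence argument. I would flag, however, that the rules (R1)--(R7) do not appear to force $P(\ph\mid X)$ itself to exist from the two hypotheses alone: a minimal entire set built on the three classes $\ph\wedge\psi$, $\neg\ph\wedge\psi$, $\neg\psi$ seems to satisfy both hypotheses while leaving $P(\ph\mid X)$ undefined (because $\ph$ is not a join of those classes). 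Thus the substantive content of the proposition is the \emph{identification of the value} of $P(\ph\mid X)$ whenever it exists, and the argument above is exactly the route that uses the probability-one hypothesis as economically as possible; I would make sure, in the final write-up, that the availability of $P(\ph\mid X)$ is clearly accounted for before invoking Proposition \ref{P:rel-neg}.
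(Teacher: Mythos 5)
Your value computation is fine as algebra, but your reading of the statement---and hence the scope of your proof---is not. Under the paper's conventions, writing $P(\ph \mid X) = P(\ph \wedge \psi \mid X)$ asserts that $(X, \ph, p) \in P$ for $p = P(\ph \wedge \psi \mid X)$; the existence of $P(\ph \mid X)$ is part of the conclusion, not a standing hypothesis. Your argument applies Proposition \ref{P:rel-neg} to the pair $\ph \wedge \psi \vdash \ph$, which requires $P(\ph \mid X)$ to exist at the outset, so as written you prove only the conditional statement ``if $P(\ph \mid X)$ exists, then it equals $P(\ph \wedge \psi \mid X)$.'' That weaker statement is not what the paper needs downstream: for instance, in the proofs of Propositions \ref{P:log-equiv-gen-2} and \ref{P:lift-entire}, this proposition is invoked precisely to \emph{produce} the existence of the left-hand probability.

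Moreover, your closing flag---that (R1)--(R7) do not force $P(\ph \mid X)$ to exist from the two hypotheses---is incorrect, and the counterexample you sketch does not survive the rules. A set whose domain at $X$ is generated by the three ``atoms'' $\ph \wedge \psi$, $\neg\ph \wedge \psi$, $\neg\psi$ is not entire: since $P(\psi \mid X) = 1$ and $\psi \vdash \neg\ph \vee \psi$, the rule of deductive transitivity (R4) forces $P(\neg\ph \vee \psi \mid X) = 1$, so $\neg\ph \vee \psi$ must lie in the domain even though it is not generated by your atoms; Corollary \ref{C:rel-neg} and the rule of logical equivalence then force $P(\ph \wedge \neg\psi \mid X) = 0$. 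Now the addition rule, applied to the disjoint pair $\ph \wedge \psi$ and $\ph \wedge \neg\psi$ (two of whose probabilities exist), yields that $P((\ph \wedge \psi) \vee (\ph \wedge \neg\psi) \mid X)$ exists and equals $P(\ph \wedge \psi \mid X)$, and $(\ph \wedge \psi) \vee (\ph \wedge \neg\psi) \equiv \ph$ finishes the proof by (R1). This is exactly the paper's argument, and it contains the idea you missed: $P(\ph \wedge \neg\psi \mid X) = 0$ can be obtained without ever mentioning $P(\ph \mid X)$, because its complement $\neg\ph \vee \psi$ is a deductive consequence of $\psi$; the existence of $P(\ph \mid X)$ then comes for free from the addition rule rather than having to be assumed.
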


\begin{proof}
  Since $\psi \vdash \neg \ph \vee \psi$, the rule of deductive transitivity
  gives $P(\neg \ph \vee \psi \mid X) = 1$. By Corollary \ref{C:rel-neg} and the
  rule of logical equivalence, $P(\ph \wedge \neg \psi \mid X) = 0$. Hence, the
  result follows from the addition rule and the rule of logical equivalence.
\end{proof}

\begin{lemma}\label{L:cond-exist}
  Let $P$ be entire and suppose $P(\ph \mid X)$ exists. Then $X \cup \{\ph\} \in
  \ante P$ if and only if $P(\ph \mid X) > 0$.
\end{lemma}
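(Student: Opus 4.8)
The plan is to prove the two implications separately. For the direction $P(\ph \mid X) > 0 \Rightarrow X \cup \{\ph\} \in \ante P$, I would invoke the multiplication rule; for the converse I would argue by contradiction, using the rule of deductive transitivity together with Corollary \ref{C:rel-neg}. In both directions the standing hypothesis that $P(\ph \mid X)$ exists is essential.

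First, suppose $P(\ph \mid X) > 0$. Since $\ph \wedge \ph \equiv \ph$, and hence $\ph \wedge \ph \equiv_X \ph$, the rule of logical equivalence guarantees that $P(\ph \wedge \ph \mid X)$ exists and equals $P(\ph \mid X) > 0$. Thus two of the three probabilities appearing in the multiplication rule $P(\ph \wedge \ph \mid X) = P(\ph \mid X)\, P(\ph \mid X, \ph)$, namely $P(\ph \wedge \ph \mid X)$ and $P(\ph \mid X)$, exist and are positive. The multiplication rule then forces the third, $P(\ph \mid X, \ph)$, to exist. In particular $X \cup \{\ph\}$ is the antecedent of an inductive statement in $P$, that is, $X \cup \{\ph\} \in \ante P$.

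For the converse, suppose $X \cup \{\ph\} \in \ante P$ but, toward a contradiction, that $P(\ph \mid X) = 0$. By Corollary \ref{C:rel-neg}, $P(\neg \ph \mid X) = 1$. Since $X \cup \{\ph\} \in \ante P$ and $X \cup \{\ph\} \vdash X$, the second clause of the rule of deductive transitivity transfers this certainty to the larger antecedent, giving $P(\neg \ph \mid X, \ph) = 1$. On the other hand, $X \cup \{\ph\} \vdash \ph$, so the rule of logical implication gives $P(\ph \mid X, \ph) = 1$, and Corollary \ref{C:rel-neg} then yields $P(\neg \ph \mid X, \ph) = 0$. Since $P$ is admissible and therefore a function, $P(\neg \ph \mid X, \ph)$ cannot equal both $1$ and $0$; this contradiction shows $P(\ph \mid X) > 0$.

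The point that requires care—and the natural first wrong turn—is the converse direction. It is tempting to try to run the multiplication rule again, but with $P(\ph \mid X) = 0$ every instance $P(\ph \wedge \psi \mid X) = P(\ph \mid X)\, P(\psi \mid X, \ph)$ degenerates to $0 = 0 \cdot (\cdots)$, so at most one factor is positive, the rule never fires, and no contradiction emerges. The essential observation is instead that certainty of $\neg \ph$ given $X$ must persist when the antecedent is strengthened to $X \cup \{\ph\}$, which is exactly what the rule of deductive transitivity provides; playing this against the trivial certainty $P(\ph \mid X, \ph) = 1$ is what produces the contradiction.
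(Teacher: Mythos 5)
Your proof is correct and follows essentially the same route as the paper's: the forward direction applies the multiplication rule with $\psi = \ph$ after identifying $\ph \wedge \ph \equiv \ph$ via the rule of logical equivalence, and the converse derives the same contradiction by pushing $P(\neg\ph \mid X) = 1$ up to the antecedent $X \cup \{\ph\}$ with deductive transitivity and playing it against the rule of logical implication. The only cosmetic difference is where the contradiction is voiced (at $P(\neg\ph \mid X, \ph)$ rather than at $P(\ph \mid X, \ph)$), which is immaterial.
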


\begin{proof}
  Suppose $P(\ph \mid X) > 0$. Applying the multiplication rule with $\psi =
  \ph$, and using $\ph \wedge \ph \equiv \ph$ together with the rule of logical
  equivalence, we get $P(\ph \mid X, \ph) = 1$, which implies $X \cup \{\ph\}
  \in \ante P$.

  Now suppose $P(\ph \mid X) = 0$ and $X \cup \{\ph\} \in \ante P$. Then 
  \eqref{prob-neg} implies $P(\neg \ph \mid X) = 1$. Since $X, \ph \vdash X$,
  the rule of deductive transitivity gives $P(\neg \ph \mid X, \ph) = 1$. Thus,
  again by \eqref{prob-neg}, we have $P(\ph \mid X, \ph) = 0$. But this
  contradicts the rule of logical implication.
\end{proof}

\begin{prop}\label{P:certainty-closure}
  Let $P$ be entire and suppose both $P(\ph \mid X)$ and $P(\psi \mid X)$ exist.
  If $P(\ph \mid X) \in \{0, 1\}$, then both $P(\ph \vee \psi \mid X)$ and $P
  (\ph \wedge \psi \mid X)$ exist, and
  \begin{align*}
    P(\ph \vee \psi \mid X) &= \max\{P(\ph \mid X), P(\psi \mid X)\},\\
    P(\ph \wedge \psi \mid X) &= \min\{P(\ph \mid X), P(\psi \mid X)\},
  \end{align*}
\end{prop}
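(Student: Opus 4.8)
The plan is to reduce the four assertions---existence and value of both $P(\ph \vee \psi \mid X)$ and $P(\ph \wedge \psi \mid X)$, in each of the cases $P(\ph \mid X) = 0$ and $P(\ph \mid X) = 1$---to a single nontrivial fact, exploiting De Morgan duality together with Corollary \ref{C:rel-neg}. The essential difficulty here is not computing the values: once existence is known, the addition rule and monotonicity pin them down immediately. The real work is establishing existence at all, since the domain of $P(\;\cdot \mid X)$ need not be closed under $\wedge$ or $\vee$ (this is exactly the phenomenon in Example \ref{Expl:MathSEexpl}). My guiding idea is to route every existence claim through a formula of probability $1$ and the rule of deductive transitivity, which grants existence without the positivity hypothesis demanded by the multiplication rule.

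First I would prove the building block: if $P(\chi \mid X) = 0$, then $P(\chi \wedge \th \mid X)$ exists and equals $0$ for every $\th \in \cF$. Indeed, $P(\neg \chi \mid X) = 1$ by Corollary \ref{C:rel-neg}, and since $\neg \chi \vdash \neg \chi \vee \neg \th$, the rule of deductive transitivity gives $P(\neg \chi \vee \neg \th \mid X) = 1$. As $\neg \chi \vee \neg \th \equiv \neg(\chi \wedge \th)$ by De Morgan, the rule of logical equivalence yields $P(\neg(\chi \wedge \th) \mid X) = 1$, and a second application of Corollary \ref{C:rel-neg} (together with double negation) gives $P(\chi \wedge \th \mid X) = 0$. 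Next I would prove the companion fact: if $P(\ph \mid X) = 1$, then $P(\ph \wedge \psi \mid X)$ exists and equals $P(\psi \mid X)$. Here I apply the building block with $\chi = \neg \ph$ and $\th = \psi$ to obtain $P(\neg \ph \wedge \psi \mid X) = 0$. Since $(\ph \wedge \psi) \vee (\neg \ph \wedge \psi) \equiv_X \psi$ and $X \vdash \neg((\ph \wedge \psi) \wedge (\neg \ph \wedge \psi))$, the addition rule---now having two of its three probabilities in hand, namely $P(\psi \mid X)$ and $P(\neg \ph \wedge \psi \mid X)$---produces $P(\ph \wedge \psi \mid X)$ and the identity $P(\psi \mid X) = P(\ph \wedge \psi \mid X) + 0$.

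With these two facts I would assemble all cases. For the conjunction, the case $P(\ph \mid X) = 1$ is the companion fact, giving $\min\{1, P(\psi \mid X)\} = P(\psi \mid X)$, while the case $P(\ph \mid X) = 0$ is the building block with $\chi = \ph$, giving $0 = \min\{0, P(\psi \mid X)\}$. For the disjunction with $P(\ph \mid X) = 1$, the rule of deductive transitivity applied to $\ph \vdash \ph \vee \psi$ gives $P(\ph \vee \psi \mid X) = 1 = \max\{1, P(\psi \mid X)\}$. The remaining case, disjunction with $P(\ph \mid X) = 0$, follows by duality: applying the companion fact to $\neg \ph$ (which has probability $1$) and $\neg \psi$ (which has a probability by Corollary \ref{C:rel-neg}) gives $P(\neg \ph \wedge \neg \psi \mid X) = P(\neg \psi \mid X) = 1 - P(\psi \mid X)$; since $\neg \ph \wedge \neg \psi \equiv \neg(\ph \vee \psi)$, Corollary \ref{C:rel-neg} then yields $P(\ph \vee \psi \mid X) = P(\psi \mid X) = \max\{0, P(\psi \mid X)\}$. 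The step I expect to be the crux is the companion fact, and specifically the decision to secure existence through the addition rule after manufacturing the probability-$0$ term $P(\neg \ph \wedge \psi \mid X)$; the naive route through the multiplication rule, $P(\ph \wedge \psi \mid X) = P(\ph \mid X) P(\psi \mid X, \ph)$, stalls when $P(\psi \mid X) = 0$ precisely because that rule fires only when two of its probabilities are strictly positive.
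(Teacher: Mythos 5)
Your proof is correct, and it diverges from the paper's at exactly the step you identified as the crux. The shared part: like the paper, you reduce everything to the conjunction case via De Morgan duality and Corollary \ref{C:rel-neg}, and your ``building block'' (the case $P(\ph \mid X) = 0$) is verbatim the paper's argument---deductive transitivity applied to $\neg \ph \vdash \neg \ph \vee \neg \psi$. The divergence is the case $P(\ph \mid X) = 1$ for the conjunction. The paper splits into subcases: if $P(\psi \mid X) = 0$ it falls back on the zero case, and if $P(\psi \mid X) > 0$ it invokes Lemma \ref{L:cond-exist} to get $X \cup \{\psi\} \in \ante P$, uses deductive transitivity to get $P(\ph \mid X, \psi) = 1$, and closes with the multiplication rule. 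Your ``companion fact'' instead manufactures the zero-probability term $P(\neg \ph \wedge \psi \mid X) = 0$ and applies the addition rule to the disjoint decomposition $\psi \equiv (\ph \wedge \psi) \vee (\neg \ph \wedge \psi)$, which handles all values of $P(\psi \mid X)$ uniformly---no case split, no Lemma \ref{L:cond-exist}, and no multiplication rule at all. (This is, in effect, the same device the paper deploys later for Proposition \ref{P:cert-cl-conv}, transplanted to the existence question here.) What your route buys is a mild structural sharpening in the spirit of Remark \ref{R:rel-neg}: the proposition is seen to follow from rules (R1)--(R5) alone, whereas the paper's proof genuinely uses (R6). What the paper's route buys is brevity given the lemmas already on hand, plus an early illustration of how \eqref{prob-neg}, Lemma \ref{L:cond-exist}, and the multiplication rule interact.
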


\begin{proof}
  By \eqref{prob-neg} and the rule of logical equivalence, it suffices to
  consider the case $\ph \wedge \psi$. Suppose $P(\ph \mid X) = 0$. By 
  \eqref{prob-neg}, we have $P(\neg \ph \mid X) = 1$. Since $\neg \ph \vdash
  \neg \ph \vee \neg \psi$, the rule of deductive transitivity gives $P(\neg \ph
  \vee \neg \psi \mid X) = 1$. By \eqref{prob-neg} and the rule of logical
  equivalence, $P(\ph \wedge \psi \mid X) = 0$, proving the claim.

  Now suppose $P(\ph \mid X) = 1$. If $P(\psi \mid X) = 0$, then we are done by
  the previous case. Assume then that $P(\psi \mid X) > 0$. By Lemma 
  \ref{L:cond-exist}, we have $X \cup \{\psi\} \in \ante P$. Since $X, \psi
  \vdash X$, the rule of deductive transitivity gives $P(\ph \mid X, \psi) = 1$.
  By the multiplication rule, $P(\psi \wedge \ph \mid X) = P(\psi \mid X)$.
\end{proof}

\begin{cor}\label{C:certainty-closure}
  Let $P$ be entire and suppose that $P(\ph_n \mid X) = 1$ for all $n \in \bN$.
  Then $P(\bigwedge_n \ph_n \mid X) = 1$.
\end{cor}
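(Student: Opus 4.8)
The plan is to reduce the statement to a single application of the continuity rule (R7). Since (R7) is stated for an \emph{increasing} sequence of formulas forming a \emph{disjunction}, whereas we are after a conjunction, the first move is to dualize via negation. Set $\psi_n = \ph_1 \wedge \cdots \wedge \ph_n$, so that $\psi_n$ is the finite partial conjunction approximating $\bigwedge_n \ph_n$. Using that each $P(\ph_k \mid X) = 1$, I would first show by induction on $n$ that $P(\psi_n \mid X) = 1$ for all $n \in \bN$. The base case is immediate, and the inductive step applies Proposition \ref{P:certainty-closure} to $\psi_n$ (whose probability $1$ lies in $\{0,1\}$) and $\ph_{n+1}$, giving $P(\psi_n \wedge \ph_{n+1} \mid X) = \min\{1,1\} = 1$; the rule of logical equivalence (R1) then identifies $\psi_n \wedge \ph_{n+1}$ with $\psi_{n+1}$, so $P(\psi_{n+1} \mid X) = 1$.

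Once each $P(\psi_n \mid X) = 1$ is known to exist, Corollary \ref{C:rel-neg} yields $P(\neg \psi_n \mid X) = 0$. The sequence $\neg \psi_n$ is increasing in the sense required by (R7): from $\psi_{n+1} \vdash \psi_n$ (derive each $\ph_k$, then reassemble) we obtain $\neg \psi_n \vdash \neg \psi_{n+1}$ by contraposition, hence $X, \neg \psi_n \vdash \neg \psi_{n+1}$. The continuity rule then applies with $\neg\psi_n$ playing the role of the $\ph_n$ in its statement, giving
\[
  \ts{P(\bigvee_n \neg \psi_n \mid X) = \lim_n P(\neg \psi_n \mid X) = 0.}
\]

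The final step is logical-equivalence bookkeeping. By the definition of $\bigvee$ and the fact that $\equiv_X$ is a congruence, $\bigvee_n \neg \psi_n \equiv \neg \bigwedge_n \psi_n$, and one checks $\bigwedge_n \psi_n \equiv \bigwedge_n \ph_n$ directly from the conjunction rules (iii)--(iv), using $\psi_k \vdash \ph_k$ in one direction and $\bigwedge_n \ph_n \vdash \psi_k$ in the other. Hence $\bigvee_n \neg \psi_n \equiv \neg \bigwedge_n \ph_n$, and (R1) transfers the probability to give $P(\neg \bigwedge_n \ph_n \mid X) = 0$. A closing application of Corollary \ref{C:rel-neg} then produces $P(\bigwedge_n \ph_n \mid X) = 1$.

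I expect the main obstacle to be not any deep computation but the careful matching of the continuity rule's hypotheses to the conjunction at hand: (R7) demands a genuinely increasing sequence whose probabilities \emph{already exist}, and the only way to secure existence of the intermediate probabilities is the induction built on Proposition \ref{P:certainty-closure} (which keeps us pinned at probability $1$, equivalently $0$ after negation). Getting the monotonicity direction right after negating, and tracking the logical equivalences needed to recognize the limit formula $\bigvee_n \neg\psi_n$ as $\neg\bigwedge_n \ph_n$, is where the care is required.
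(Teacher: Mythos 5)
Your proof is correct and follows essentially the same route as the paper's: finite conjunctions get probability $1$ by induction on Proposition \ref{P:certainty-closure}, negation brings this to $0$, the continuity rule handles the increasing sequence of negated partial conjunctions, and a final negation recovers the conjunction. The only cosmetic difference is that the paper phrases the increasing sequence as $\bigvee_{j=1}^n \neg \ph_j$ rather than your $\neg(\ph_1 \wedge \cdots \wedge \ph_n)$, which are equivalent by De Morgan.
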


\begin{proof}
  By Proposition \ref{P:certainty-closure} and induction, we have $P
  (\bigwedge_1^n \ph_j \mid X) = 1$ for all $n$. By \eqref{prob-neg} and the
  rule of logical equivalence, $P(\bigvee_1^n \neg \ph_j \mid X) = 0$. The
  continuity rule then gives $P(\bigvee_n \neg \ph_n \mid X) = 0$, which implies
  $P(\bigwedge_n \ph_n \mid X) = 1$.
\end{proof}

For $Q \subseteq \cF^\IS$ and $\cX \subseteq \ante Q$, let
\begin{equation}\label{tau-Q-X}
  \tau(Q; \cX) = \{
    \th \in \cF \mid (X, \th, 1) \in Q \text{ for all } X \in \cX
  \}.
\end{equation}
  \symindex{$\tau(Q; \cX), \tau(Q), \tau_Q$}%
For $X \in \ante Q$, we write $\tau(Q; X)$ for $\tau(Q; \{X\})$. We also write
$\tau(Q)$, or $\tau_Q$, for $\tau(Q; \ante Q)$. Informally, $\tau(Q)$ is the set
of all formulas that are true under $Q$, regardless of the antecedent used.

\begin{prop}\label{P:tau-ded-theory}
  If $P$ is entire and $\cX \subseteq \ante P$, then $\tau(P; \cX)$ is a
  deductive theory.
\end{prop}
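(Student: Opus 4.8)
The plan is to verify directly the three defining conditions of a deductive theory for $\tau(P; \cX)$. Recall that $\th \in \tau(P; \cX)$ means precisely that $P(\th \mid X) = 1$ for every $X \in \cX$, and that each such $X$ lies in $\ante P$. Since each of the three conditions reduces to a single membership question, I would prove each condition by fixing an arbitrary $X \in \cX$ and showing that the relevant probability equals $1$; the conclusion for $\tau(P; \cX)$ then follows because membership is defined by an intersection over $X \in \cX$.

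For condition (i), I would first observe that every axiom $\th \in \La$ is a tautology: $\ang{\th}$ is itself a proof of $\th$ from $\emp$, so $\emp \wdash \th$, and Theorem \ref{T:Hilbert=nat} gives $\emp \vdash \th$. By monotonicity of $\vdash$ (Definition \ref{D:derivability}(ii)), $X \vdash \th$ for every $X$, and since $X \in \ante P$, the rule of logical implication yields $P(\th \mid X) = 1$. Thus $\La \subseteq \tau(P; \cX)$. For condition (iii), let $\Phi \subseteq \tau(P; \cX)$ be countable and fix $X \in \cX$. If $\Phi = \emp$, then $\bigwedge \Phi = \top$ and $P(\top \mid X) = 1$, again by the rule of logical implication (since $X \vdash \top$). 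If $\Phi$ is nonempty, I would enumerate it as a sequence $\ang{\ph_n \mid n \in \bN}$ whose range is $\Phi$ (repeating a term if $\Phi$ is finite), so that $\bigwedge_n \ph_n = \bigwedge \Phi$. Each $P(\ph_n \mid X) = 1$, so Corollary \ref{C:certainty-closure} gives $P(\bigwedge \Phi \mid X) = 1$, whence $\bigwedge \Phi \in \tau(P; \cX)$.

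The remaining condition (ii), closure under modus ponens, is the real content, though it too is short. Suppose $(\ph \to \psi) \in \tau(P; \cX)$ and $\ph \in \tau(P; \cX)$, and fix $X \in \cX$. Then $P(\ph \to \psi \mid X) = 1$ and $P(\ph \mid X) = 1$, so condition (iii) already proven—or, equivalently, Proposition \ref{P:certainty-closure} applied to the two formulas—gives $P((\ph \to \psi) \wedge \ph \mid X) = 1$. It remains to pass from the conjunction to $\psi$. Using Proposition \ref{P:derivability}(a) together with Definition \ref{D:derivability}(i)--(ii), one checks that $\{\ph \to \psi, \ph\} \vdash \psi$, whence Proposition \ref{P:conj-equiv} yields $(\ph \to \psi) \wedge \ph \vdash \psi$. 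The rule of deductive transitivity then gives $P(\psi \mid X) = 1$, so $\psi \in \tau(P; \cX)$.

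I anticipate no serious obstacle here; the only points requiring care are the padding argument that reduces a finite conjunction to the hypothesis of Corollary \ref{C:certainty-closure}, and the explicit verification that modus ponens is derivable in our calculus, so that the rule of deductive transitivity applies. Everything else is a direct appeal to the rules of inductive inference established for entire sets.
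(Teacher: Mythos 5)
Your proof is correct, but it takes a different route from the paper's. The paper does not verify the three clauses of the definition of a theory separately; instead it invokes the characterization (the corollary following Theorem~\ref{T:theory-deduc}) that $T$ is a theory if and only if it is deductively closed, and proves closure in one stroke: assuming $\tau(P; \cX) \vdash \ph$, it uses $\si$-compactness (Theorem~\ref{T:sig-cpctness}) to extract a countable $\Phi \subseteq \tau(P; \cX)$ with $\Phi \vdash \ph$, then Corollary~\ref{C:certainty-closure} to get $\bigwedge \Phi \in \tau(P; \cX)$, and finally deductive transitivity to conclude $P(\ph \mid X) = 1$ for each $X \in \cX$. Your decomposition into conditions (i)--(iii) buys independence from $\si$-compactness: each clause only ever involves countably many formulas, so you never need to shrink an uncountable premise set, at the price of checking three conditions and making an extra appeal to the Hilbert-style calculus (via Theorem~\ref{T:Hilbert=nat}) to see that $\La \subseteq \Taut$. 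The paper's argument is shorter and yields the stronger-looking closure property directly, but both proofs ultimately rest on the same two inductive ingredients, namely Corollary~\ref{C:certainty-closure} and the rule of deductive transitivity; your handling of the edge cases (empty $\Phi$ via $\bigwedge \emp = \top$, and the padding enumeration needed to apply Corollary~\ref{C:certainty-closure} to a finite set) is careful and correct, and is in fact glossed over in the paper's own proof.
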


\begin{proof}
  Suppose $\tau(P; \cX) \vdash \ph$. By $\si$-compactness, choose countable
  $\Phi \subseteq \tau(P; \cX)$ such that $\Phi \vdash \ph$. By Corollary
  \ref{C:certainty-closure}, we have $\bigwedge \Phi \in \tau(P; \cX)$. Now let
  $X \in \cX$. Then $P(\bigwedge \Phi \mid X) = 1$ and $\bigwedge \Phi \vdash
  \ph$. By deductive transitivity, $P(\ph \mid X) = 1$. Hence, $\ph \in \tau(P;
  \cX)$, so $\tau(P; \cX)$ is a deductive theory.
\end{proof}

\subsection{Inductive vs.~deductive inference}

Rules (R1)--(R4), the rules of logical equivalence, logical implication,
material implication, and deductive transitivity, describe the relationship
between inductive and deductive inference. The next three results provide useful
generalizations of these rules.

\begin{prop}\label{P:log-equiv-gen}
  Let $P$ be entire. If $P(\ph \mid X) = p$ and $P(\ph \tot \ph' \mid X) = 1$,
  then $P(\ph' \mid X) = p$.
\end{prop}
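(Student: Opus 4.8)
The plan is to set $\psi = \ph \tot \ph'$ and exploit two facts: conjoining a formula with one of probability $1$ leaves its probability unchanged, and $\ph \wedge \psi$ is logically equivalent to $\ph' \wedge \psi$. Since $P(\psi \mid X) = 1$, the formula $\psi$ behaves as a ``certainty'' that can be freely inserted, and this lets me route the given value $p$ from $\ph$ across to $\ph'$ through their common conjunctions with $\psi$. The whole argument is then a short chain of three equalities, each justified by a result from the previous subsection.

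First I would establish that $P(\ph \wedge \psi \mid X)$ exists and equals $p$. Existence follows from Proposition \ref{P:certainty-closure}, applied with $\psi$ in the role of the formula whose probability lies in $\{0, 1\}$ (using the symmetry of $\wedge$); the value then comes from Proposition \ref{P:cert-cl-conv}, which yields $P(\ph \wedge \psi \mid X) = P(\ph \mid X) = p$. Next I would verify the deductive equivalence $\ph \wedge \psi \equiv \ph' \wedge \psi$: from $\ph$ together with $\psi \vdash \ph \to \ph'$ one derives $\ph'$, so $\ph \wedge \psi \vdash \ph' \wedge \psi$, and the reverse derivation is symmetric. Because logical equivalence implies equivalence given $X$, the rule of logical equivalence (R1) transfers the value, giving $P(\ph' \wedge \psi \mid X) = p$. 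Finally, applying Proposition \ref{P:cert-cl-conv} once more, now with $\ph'$ in place of $\ph$ (legitimate since $P(\psi \mid X) = 1$ and $P(\ph' \wedge \psi \mid X)$ has just been shown to exist), I would conclude $P(\ph' \mid X) = P(\ph' \wedge \psi \mid X) = p$.

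I do not expect a serious obstacle, since the certainty-closure results of this subsection do all the heavy lifting. The one point requiring care is that the domain of $P(\,\cdot \mid X)$ need not be closed under conjunction in general (as the discussion preceding Example \ref{Expl:MathSEexpl} warns), so one cannot simply assume $P(\ph \wedge \psi \mid X)$ exists; its existence must be secured through Proposition \ref{P:certainty-closure}. For the companion term $P(\ph' \wedge \psi \mid X)$, by contrast, existence is obtained for free, since the rule of logical equivalence delivers both its membership in $P$ and its value at once. The small deductive kernel $\ph \wedge \psi \equiv \ph' \wedge \psi$ is routine propositional reasoning and is the only genuinely computational step.
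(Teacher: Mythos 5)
Your proof is correct, and it takes a route that is recognizably close to, but not identical with, the paper's. Both arguments open the same way: since $\psi = (\ph \tot \ph')$ has probability one, Proposition \ref{P:certainty-closure} secures the existence (and value $p$) of the conjunction of $\ph$ with $\psi$ — and you are right that this existence step cannot be skipped, for the reason you give. After that the two proofs diverge. The paper immediately rewrites $\ph \wedge \psi$ as $\ph \wedge \ph'$ via the rule of logical equivalence, then handles the residual piece of $\ph'$ head-on: from $\psi \vdash \ph \vee \neg \ph'$ and deductive transitivity it gets $P(\ph \vee \neg \ph' \mid X) = 1$, hence $P(\neg \ph \wedge \ph' \mid X) = 0$ by \eqref{prob-neg}, and it finishes with the addition rule applied to the disjoint decomposition $\ph' \equiv (\ph \wedge \ph') \vee (\neg \ph \wedge \ph')$. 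You instead keep $\psi$ in play throughout: you pivot on the purely deductive equivalence $\ph \wedge \psi \equiv \ph' \wedge \psi$, transfer the value with (R1), and then strip $\psi$ off with a second application of Proposition \ref{P:cert-cl-conv}. Your version is the more symmetric one — certainty in, equivalence swap, certainty out — and it confines all use of negation and the addition rule to the interior of the cited lemmas; the paper's version is slightly more explicit about where the probability mass of $\ph'$ actually comes from, at the cost of invoking deductive transitivity and \eqref{prob-neg} directly. Both are sound and both stay entirely within the toolkit of the same subsection.
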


\begin{proof}
  Since $\ph \wedge (\ph \tot \ph') \equiv \ph \wedge \ph'$, the rule of logical
  equivalence together with Proposition \ref{P:certainty-closure} imply $P(\ph
  \wedge \ph' \mid X) = p$. Also, $\ph \tot \ph' \vdash \ph \vee \neg \ph'$, so
  by the rule of deductive transitivity, $P(\ph \vee \neg \ph' \mid X) = 1$.
  Thus, \eqref{prob-neg} implies $P(\neg \ph \wedge \ph' \mid X) = 0$. Hence, by
  the addition rule and the rule of logical equivalence, $P(\ph' \mid X) = p$.
\end{proof}

\begin{prop}\label{P:add-to-root}
  Let $P$ be entire. If $P(\ph \to \psi \mid X) = 1$ and $P(\ph \mid X) > 0$,
  then $P(\psi \mid X, \ph) = 1$.
\end{prop}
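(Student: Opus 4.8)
The plan is to reduce the whole statement to a single application of the multiplication rule, exploiting the tautological equivalence $(\ph \to \psi) \wedge \ph \equiv \ph \wedge \psi$. The key observation is that the hypothesis $P(\ph \to \psi \mid X) = 1$ can be converted, via the certainty-closure machinery, into the existence of $P(\ph \wedge \psi \mid X)$ together with the value $P(\ph \wedge \psi \mid X) = P(\ph \mid X)$. Once that value is in hand, the multiplication rule forces the conditional probability $P(\psi \mid X, \ph)$ to equal $1$.

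First I would record that both hypothesized probabilities exist, with $P(\ph \to \psi \mid X) = 1 \in \{0, 1\}$ and $P(\ph \mid X) > 0$. I would then apply Proposition \ref{P:certainty-closure}, taking the two formulas in its statement to be $\ph \to \psi$ and $\ph$. Since $P(\ph \to \psi \mid X) = 1$, the proposition guarantees that $P((\ph \to \psi) \wedge \ph \mid X)$ exists and equals $\min\{1, P(\ph \mid X)\} = P(\ph \mid X)$. Because $(\ph \to \psi) \wedge \ph \equiv \ph \wedge \psi$ is a propositional tautology, the rule of logical equivalence then transfers this to $P(\ph \wedge \psi \mid X) = P(\ph \mid X)$, establishing in particular that $P(\ph \wedge \psi \mid X)$ exists.

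Next I would invoke the multiplication rule for the equation $P(\ph \wedge \psi \mid X) = P(\ph \mid X)\,P(\psi \mid X, \ph)$. Of its three probabilities, the first two have just been shown to exist and are positive, since both equal the positive number $P(\ph \mid X)$. The rule therefore yields that $P(\psi \mid X, \ph)$ exists and that the identity holds. Dividing through by $P(\ph \mid X) > 0$ gives $P(\psi \mid X, \ph) = 1$. (For the conclusion to be meaningful, one also notes via Lemma \ref{L:cond-exist} that $P(\ph \mid X) > 0$ ensures $X \cup \{\ph\} \in \ante P$, so $P(\,\cdot \mid X, \ph)$ is a genuine conditional probability.)

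The step I expect to require the most care is the triggering of the multiplication rule, which fires only when two of its three probabilities are already known to exist and be positive. The existence of $P(\ph \wedge \psi \mid X)$ is thus the crux of the argument, and it is precisely what the certainty-closure computation in the first step supplies; without that preliminary, the multiplication rule could not be applied.
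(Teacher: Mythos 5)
Your proof is correct and follows essentially the same route as the paper's: both arguments hinge on establishing the intermediate identity $P(\ph \wedge \psi \mid X) = P(\ph \mid X) > 0$ and then applying the multiplication rule to conclude $P(\psi \mid X, \ph) = 1$. The only difference is cosmetic: the paper obtains the intermediate identity via \eqref{prob-neg} and \eqref{rel-neg} (showing $P(\ph \wedge \neg\psi \mid X) = 0$ and subtracting), whereas you obtain it via Proposition \ref{P:certainty-closure} applied to $\ph \to \psi$ and $\ph$ together with the rule of logical equivalence — both are legitimate, as Proposition \ref{P:certainty-closure} precedes this result and does not depend on it.
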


\begin{proof}
  Suppose $P(\ph \to \psi \mid X) = 1$ and $P(\ph \mid X) > 0$. By
  \eqref{prob-neg} and the rule of logical equivalence, $P(\ph \wedge \neg \psi
  \mid X) = 0$. Thus \eqref{rel-neg} implies $P(\ph \wedge \psi \mid X) = P(\ph
  \mid X) > 0$. Thus, by the multiplication rule, $P(\psi \mid X, \ph) = 1$.
\end{proof}

\begin{prop}\label{P:transitivity}
  Let $P$ be entire with $X \in \ante P$ and $P(\ph \mid X, \ze) = 1$. Assume at
  least one of the following holds:
  \begin{enumerate}[(i)]
    \item $X, \ph \vdash \psi$,
    \item $P(\psi \mid X, \ph) = 1$,
    \item $P(\ph \to \psi \mid X) = 1$.
  \end{enumerate}
  Then $P(\psi \mid X, \ze) = 1$.
\end{prop}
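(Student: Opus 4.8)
The plan is to funnel all three hypotheses (i)--(iii) into a single intermediate conclusion, $P(\ph \to \psi \mid X, \ze) = 1$, and then to combine this with the standing hypothesis $P(\ph \mid X, \ze) = 1$ by a modus-ponens argument. The first bookkeeping step is to record that $P(\ph \mid X, \ze) = 1$ forces $X \cup \{\ze\} \in \ante P$, so that the inductive rules may legitimately be applied with antecedent $X \cup \{\ze\}$.

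Next I would dispatch the three cases, showing each yields $P(\ph \to \psi \mid X, \ze) = 1$. If (iii) holds, then since $X \cup \{\ze\} \in \ante P$ and $X, \ze \vdash X$, the second clause of the rule of deductive transitivity (R4) gives $P(\ph \to \psi \mid X, \ze) = 1$ directly. Case (ii) reduces to case (iii): because $X \in \ante P$ and $P(\psi \mid X, \ph) = 1$, the rule of material implication (R3) yields $P(\ph \to \psi \mid X) = 1$. For case (i), the deduction theorem (Proposition \ref{P:derivability}(a)) turns $X, \ph \vdash \psi$ into $X \vdash \ph \to \psi$, and monotonicity of $\vdash$ (Definition \ref{D:derivability}(ii)) upgrades this to $X, \ze \vdash \ph \to \psi$; the rule of logical implication (R2) then delivers $P(\ph \to \psi \mid X, \ze) = 1$.

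Having secured $P(\ph \to \psi \mid X, \ze) = 1$ in every case, I would combine it with $P(\ph \mid X, \ze) = 1$. Both probabilities exist and equal $1$, so Proposition \ref{P:certainty-closure} (applied to $\ph$ and $\ph \to \psi$) gives $P(\ph \wedge (\ph \to \psi) \mid X, \ze) = 1$. Since $\ph \wedge (\ph \to \psi) = \bigwedge\{\ph, \ph \to \psi\}$ and $\{\ph, \ph \to \psi\} \vdash \psi$ (immediate from (a) with Proposition \ref{P:conj-equiv}), the rule of deductive transitivity (R4) then concludes $P(\psi \mid X, \ze) = 1$.

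The one genuine subtlety, and the place I would be most careful, is case (i): one must route through $\ph \to \psi$ rather than attempting to apply deductive transitivity to $X, \ph \vdash \psi$ directly, since we only know that $\ph$ has probability $1$ given $X \cup \{\ze\}$, not that $X, \ze \vdash \ph$. The deductive fact must first be converted into the conditional form before it can interact with the probability. Everything else is a routine verification that each rule's side conditions — membership in $\ante P$ and the correct antecedents — are met.
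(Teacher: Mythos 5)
Your proposal is correct and follows essentially the same route as the paper's proof: both reduce cases (i) and (ii) to the conditional $\ph \to \psi$ having probability one, transfer this to the antecedent $X \cup \{\ze\}$ by deductive transitivity (or, in your case (i), directly by the rule of logical implication), and then finish with an inductive modus ponens against $P(\ph \mid X, \ze) = 1$. The only difference is in the mechanics of that last step, where the paper obtains $P(\ph \wedge \psi \mid X, \ze) = 1$ via \eqref{prob-neg} and \eqref{rel-neg}, while you obtain $P(\ph \wedge (\ph \to \psi) \mid X, \ze) = 1$ from Proposition \ref{P:certainty-closure}; both then conclude by deductive transitivity.
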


\begin{proof}
  Suppose $X \in \ante P$ and $P (\ph \mid X, \ze) = 1$. First note that (i) is
  equivalent to $X \vdash \ph \to \psi$. Thus, by the rule of logical
  implication, (i) implies (iii). Also, by the rule of material implication,
  (ii) implies (iii). Hence, we may assume (iii) holds. In this case, deductive
  transitivity gives $P(\ph \to \psi \mid X, \ze) = 1$. By \eqref{prob-neg} and
  the rule of logical equivalence, $P(\ph \wedge \neg \psi \mid X, \ze) = 0$.
  Thus, by \eqref{rel-neg}, we have $P(\ph \wedge \psi \mid X, \ze) = 1$.
  Finally, since $\ph \wedge \psi \vdash \psi$, deductive transitivity gives
  $P(\psi \mid X,\ze) = 1$.
\end{proof}

\subsection{Generalizations of the addition rule}
  \index{rule!the addition ---}%

\begin{lemma}\label{L:fin-add}
  Let $P$ be entire and suppose $X \vdash \neg (\ph_i \wedge \ph_j)$ whenever $i
  \ne j$. If $P(\ph_i \mid X)$ exists for $1 \le i \le n$, then $P(\bigvee_{i =
  1}^n \ph_i) = \sum_{i = 1}^n P(\ph_i \mid X)$.
\end{lemma}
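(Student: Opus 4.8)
The plan is to proceed by induction on $n$, using the addition rule (R5) to drive the inductive step. (I read the conclusion as $P(\bigvee_{i=1}^n \ph_i \mid X) = \sum_{i=1}^n P(\ph_i \mid X)$, the conditioning on $X$ being understood.) The base case $n = 1$ is essentially the rule of logical equivalence: the one-term disjunction $\bigvee_{i=1}^1 \ph_i$ equals $\neg\bigwedge\{\neg\ph_1\}$, which is logically equivalent to $\ph_1$ by (i), (iii), and (iv); since $\equiv$ is contained in $\equiv_X$, rule (R1) gives that $P(\bigvee_{i=1}^1 \ph_i \mid X)$ exists and equals $P(\ph_1 \mid X)$.

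For the inductive step, I would suppose the claim holds for $n$ and that $\ph_1, \dots, \ph_{n+1}$ satisfy $X \vdash \neg(\ph_i \wedge \ph_j)$ for $i \ne j$, with each $P(\ph_i \mid X)$ existing. Applying the inductive hypothesis to $\ph_1, \dots, \ph_n$ shows that $P(\bigvee_{i=1}^n \ph_i \mid X)$ exists and equals $\sum_{i=1}^n P(\ph_i \mid X)$. I would then invoke the addition rule with $\ph := \bigvee_{i=1}^n \ph_i$ and $\psi := \ph_{n+1}$: two of the three relevant probabilities, namely $P(\bigvee_{i=1}^n \ph_i \mid X)$ and $P(\ph_{n+1} \mid X)$, exist, so (R5) forces the third to exist and yields
\[
  P\bigl((\ts{\bigvee_{i=1}^n}\ph_i) \vee \ph_{n+1} \mid X\bigr)
    = P(\ts{\bigvee_{i=1}^n}\ph_i \mid X) + P(\ph_{n+1} \mid X)
    = \ts{\sum_{i=1}^{n+1}} P(\ph_i \mid X).
\]
Since $(\bigvee_{i=1}^n \ph_i) \vee \ph_{n+1} \equiv \bigvee_{i=1}^{n+1}\ph_i$, a final appeal to the rule of logical equivalence transfers this value to $P(\bigvee_{i=1}^{n+1}\ph_i \mid X)$, completing the induction.

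The main obstacle is verifying the side condition of the addition rule, namely the deductive disjointness $X \vdash \neg\bigl((\bigvee_{i=1}^n \ph_i) \wedge \ph_{n+1}\bigr)$, from the pairwise hypotheses $X \vdash \neg(\ph_i \wedge \ph_{n+1})$ for $1 \le i \le n$. This is a purely deductive fact, which I would establish using only the calculus of Section~\ref{S:prop-nat-ded}. By Theorem \ref{T:deduc-con} it suffices to derive $\bot$ from $X \cup \{(\bigvee_{i=1}^n \ph_i) \wedge \ph_{n+1}\}$. Distributing $\wedge$ over $\vee$ gives $(\bigvee_{i=1}^n \ph_i) \wedge \ph_{n+1} \vdash \bigvee_{i=1}^n(\ph_i \wedge \ph_{n+1})$, and each disjunct $\ph_i \wedge \ph_{n+1}$ is inconsistent with $X$, since $X \vdash \neg(\ph_i \wedge \ph_{n+1})$ together with $\ph_i \wedge \ph_{n+1} \vdash \ph_i \wedge \ph_{n+1}$ yields $\bot$ by (v). A repeated case analysis over the $n$ disjuncts via (vi) then produces $\bot$, as required. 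Note that, as with Proposition~\ref{P:rel-neg}, only the addition rule is used; neither the multiplication nor the continuity rule is needed.
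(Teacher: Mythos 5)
Your proof is correct and follows essentially the same route as the paper: induction on $n$ with the addition rule supplying the inductive step, together with a purely deductive verification of the side condition $X \vdash \neg\bigl((\bigvee_{i = 1}^n \ph_i) \wedge \ph_{n + 1}\bigr)$. The paper dispatches that side condition slightly more directly---using the equivalence of $X \vdash \neg(\ph \wedge \psi)$ with $X, \psi \vdash \neg \ph$ to obtain $X, \ph_{n + 1} \vdash \bigwedge_{i = 1}^n \neg \ph_i \equiv \neg \bigvee_{i = 1}^n \ph_i$---whereas you route through Theorem \ref{T:deduc-con}, distribution, and repeated case analysis via (vi); this is a cosmetic difference only.
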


\begin{proof}
  The case $n = 2$ is the addition rule. Suppose the lemma holds for some
  $n = k$ and consider the case $n = k + 1$. Note that $X \vdash \neg (\ph
  \wedge \psi)$ is equivalent to $X, \psi \vdash \neg \ph$. Thus, $X, \ph_{k +
  1} \vdash \neg \ph_i$ for all $i \le k$. This implies $X, \ph_{k + 1} \vdash
  \bigwedge_1^k \neg \ph_i \equiv \neg \bigvee_1^k \ph_i$. Therefore, $X \vdash
  \neg (\bigvee_1^k \ph_i \wedge \ph_{k + 1})$, so the addition rule and the
  inductive hypothesis show that the result holds for $n = k + 1$.
\end{proof}

\begin{thm}[Inclusion-exclusion]\label{T:incl-excl}
    \index{inclusion-exclusion}%
  Let $P$ be entire and consider the equation
  \begin{equation}\label{incl-excl}
    P(\ph \vee \psi \mid X)
      = P(\ph \mid X) + P(\psi \mid X) - P(\ph \wedge \psi \mid X).
  \end{equation}
  If three of the above probabilities exist, then so does the fourth and 
  \eqref{incl-excl} holds.
\end{thm}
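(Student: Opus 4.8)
The plan is to reduce everything to two applications of the addition rule, linked by a single auxiliary probability. Write $A = P(\ph \vee \psi \mid X)$, $B = P(\ph \mid X)$, $C = P(\psi \mid X)$, and $D = P(\ph \wedge \psi \mid X)$, and introduce the auxiliary quantity $E = P(\psi \wedge \neg \ph \mid X)$. The first thing I would record are the propositional equivalences $\ph \vee \psi \equiv \ph \vee (\psi \wedge \neg \ph)$ and $\psi \equiv (\ph \wedge \psi) \vee (\neg \ph \wedge \psi)$, both of which are routine tautologies. The rule of logical equivalence (R1) then lets me freely identify $A$ with $P(\ph \vee (\psi \wedge \neg \ph) \mid X)$, identify $C$ with $P((\ph \wedge \psi) \vee (\neg \ph \wedge \psi) \mid X)$, and identify $E$ with $P(\neg \ph \wedge \psi \mid X)$.

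Next I would set up the two addition relations. Since $\ph \wedge (\psi \wedge \neg \ph)$ is a contradiction, we have $X \vdash \neg(\ph \wedge (\psi \wedge \neg \ph))$, so applying the addition rule to the disjoint pair $\ph$, $\psi \wedge \neg \ph$ (and using R1 to rewrite the disjunction as $\ph \vee \psi$) yields the relation: if two of $A, B, E$ exist, then so does the third and $A = B + E$. Likewise, $(\ph \wedge \psi) \wedge (\neg \ph \wedge \psi)$ is a contradiction, so the addition rule applied to $\ph \wedge \psi$, $\neg \ph \wedge \psi$ (again with R1) gives: if two of $C, D, E$ exist, then so does the third and $C = D + E$. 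Combining these two relations gives $A = B + E = B + (C - D)$, which is exactly the inclusion--exclusion identity, once existence is settled.

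It then remains to run the four cases according to which of $A, B, C, D$ is assumed absent. If $D$ is absent, I use $A, B$ in the first relation to produce $E$, then $C, E$ in the second to produce $D$; if $C$ is absent, I again get $E$ from $A, B$, then get $C$ from $D, E$; if $B$ is absent, I get $E$ from $C, D$, then $B$ from $A, E$; and if $A$ is absent, I get $E$ from $C, D$, then $A$ from $B, E$. In every case the two relations chain without circularity---one application manufactures $E$ and the other consumes it---and substituting $E = C - D$ into $A = B + E$ (or the analogous rearrangement) delivers both the existence of the missing probability and the identity $A = B + C - D$.

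I do not expect a genuine obstacle here; the only points requiring care are bookkeeping ones. The disjointness hypotheses of the addition rule are automatic, because the relevant conjunctions are contradictions and hence $X$ proves their negations by weakening. The passage between $\ph \vee \psi$ and $\ph \vee (\psi \wedge \neg \ph)$, and between $\psi$ and its disjoint decomposition, must be justified explicitly by R1 rather than taken for granted. The main thing to verify is simply that, in each of the four cases, the auxiliary $E$ is produced by one addition relation before being fed into the other, so that no case secretly assumes the very probability it is trying to derive.
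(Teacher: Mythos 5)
Your proof is correct, and it takes a recognizably tidier route than the paper's. The paper symmetrizes: it introduces \emph{two} auxiliaries, $\ze_1 = \ph \wedge \neg \psi$ and $\ze_2 = \neg \ph \wedge \psi$, establishes their existence case-by-case via Proposition \ref{P:rel-neg}, decomposes $\ph \vee \psi$ into the \emph{three} disjoint pieces $\ze_1 \vee \ze_2 \vee (\ph \wedge \psi)$, and then extracts the identity from three equations supplied by Lemma \ref{L:fin-add}. You instead use a single auxiliary $E = P(\psi \wedge \neg \ph \mid X)$ shared between two \emph{binary} disjoint decompositions, $\ph \vee \psi \equiv_X \ph \vee (\psi \wedge \neg \ph)$ and $\psi \equiv_X (\ph \wedge \psi) \vee (\neg \ph \wedge \psi)$, so that each application of the addition rule is used in its full three-way ``any two give the third'' form; the four existence cases then reduce to routing $E$ through the two relations in the right order, and the identity $A - B = E = C - D$ falls out uniformly. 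What your version buys is economy of dependencies: you need only (R1) and (R5), whereas the paper routes through \ref{P:rel-neg} and \ref{L:fin-add} (your relation~2 in effect inlines \ref{P:rel-neg}, but in its stronger bidirectional form, which is exactly what lets you recover $C$ from $D$ and $E$). What the paper's symmetric three-piece decomposition buys is that cases 3 and 4 follow from case 2 ``by symmetry'' and the final arithmetic is a transparent assembly of three additive equations; your version trades that symmetry for a uniform chaining argument. Your bookkeeping points are also handled correctly: the disjointness premises are tautologies, hence derivable from any $X$ by monotonicity, and the identifications of $A$, $C$, $E$ across logically equivalent consequents are legitimate uses of (R1).
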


\begin{proof}
  Let $\ze_1 = \ph \wedge \neg \psi$ and $\ze_2 = \neg \ph \wedge \psi$. Suppose
  three of the probabilities in \eqref{incl-excl} exist. There are four possible
  cases.

  The first case is that $P(\ph \mid X)$, $P(\psi \mid X)$, and $P(\ph \wedge
  \psi \mid X)$ exist. In this case, since $\ze_1 \equiv \ph \wedge \neg (\ph
  \wedge \psi)$, Proposition \ref{P:rel-neg} implies that $P(\ze_1 \mid X)$
  exists. Similarly, $P(\ze_2 \mid X)$ exists. Thus, by the addition rule and
  the fact that $\ph \vee \psi \equiv \ze_1 \vee \ze_2 \vee (\ph \wedge \psi)$,
  we have that $P(\ph \vee \psi \mid X)$ exists.

  The second case is that $P(\ph \vee \psi \mid X)$, $P(\ph \mid X)$, and $P
  (\psi \mid X)$ exist. In this case, since $\ze_1 \equiv (\ph \vee \psi) \wedge
  \neg \psi$, Proposition \ref{P:rel-neg} implies that $P(\ze_1 \mid X)$ exists.
  Similarly, $P(\ze_2 \mid X)$ exists. By the addition rule, Proposition
  \ref{P:rel-neg}, and the fact that $\ph \wedge \psi \equiv (\ph \vee \psi)
  \wedge \neg (\ze_1 \vee \ze_2)$, we have that $P (\ph \wedge \psi \mid X)$
  exists.

  By symmetry, the third and fourth cases are covered by the assumption that
  $P(\ph \vee \psi \mid X)$, $P(\ph \mid X)$, and $P(\ph \wedge \psi \mid X)$
  exist. The argument from the second case shows that $P(\ze_2 \mid X)$ exists.
  By the addition rule and the fact that $\psi \equiv (\ph \wedge \psi) \vee
  \ze_2$, we have that $P(\psi \mid X)$ exists.

  Hence, in all cases, all four probabilities in \eqref{incl-excl} exist. By
  Lemma \ref{L:fin-add},
  \begin{align*}
    P(\ph \mid X) &= P(\ze_1 \mid X) + P(\ph \wedge \psi \mid X),\\
    P(\psi \mid X) &= P(\ze_2 \mid X) + P(\ph \wedge \psi \mid X),\text{ and}\\
    P(\ph \vee \psi \mid X)
      &= P(\ze_1 \mid X) + P(\ze_2 \mid X) + P(\ph \wedge \psi \mid X).
  \end{align*}
  Putting these together yields \eqref{incl-excl}.
\end{proof}

\begin{rmk}
  By Theorem \ref{T:incl-excl}, if $P$ is entire, then in the addition rule, it
  is not necessary that $X \vdash \neg (\ph \wedge \psi)$. It is sufficient that
  $P(\ph \wedge \psi \mid X) = 0$.
\end{rmk}

\subsection{Generalizations of the multiplication rule}
  \index{rule!the multiplication ---}%

\begin{thm}
  If $P$ is entire, then in the multiplication rule, it is not necessary that
  the two defined probabilities be positive. It is enough to assume that solving
  for the third probability does not result in dividing by zero.
\end{thm}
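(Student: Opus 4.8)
The plan is to name the three quantities in the rule — write $A = P(\ph \wedge \psi \mid X)$, $B = P(\ph \mid X)$, and $C = P(\psi \mid X, \ph)$, so that the asserted identity is $A = BC$ — and to treat separately the three ways in which two of them may be the given data. Solving for $A$ from $B, C$ needs no division; solving for $C = A/B$ requires $B \ne 0$; and solving for $B = A/C$ requires $C \ne 0$. So I would prove: (a) if $B, C$ exist then $A$ exists and $A = BC$; (b) if $A, B$ exist and $B > 0$ then $C$ exists and $A = BC$; and (c) if $A, C$ exist and $C > 0$ then $B$ exists and $A = BC$. In every case the goal is to reduce to the multiplication rule as originally stated, that is, to a situation where the two given probabilities are positive.

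The linchpin is the following auxiliary fact, which I would isolate first: \emph{if $A = P(\ph \wedge \psi \mid X) = 0$ and $X \cup \{\ph\} \in \ante P$, then $C = P(\psi \mid X, \ph) = 0$.} To see it, note that $A = 0$ forces $X \in \ante P$, so Corollary~\ref{C:rel-neg} gives $P(\neg(\ph \wedge \psi) \mid X) = 1$; since $X \cup \{\ph\} \vdash X$ and $X \cup \{\ph\} \in \ante P$, the rule of deductive transitivity pushes this up to $P(\neg(\ph \wedge \psi) \mid X, \ph) = 1$, whence $P(\ph \wedge \psi \mid X, \ph) = 0$ by Corollary~\ref{C:rel-neg} again; finally $\ph \wedge \psi \equiv_{X, \ph} \psi$, so the rule of logical equivalence yields $C = 0$.

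With this in hand the cases are short. For (a): since $C$ exists, $X \cup \{\ph\} \in \ante P$, so Lemma~\ref{L:cond-exist} forces $B > 0$ automatically; if also $C > 0$ the original rule applies, while if $C = 0$ then $P(\neg \psi \mid X, \ph) = 1$ by Corollary~\ref{C:rel-neg}, so the original rule applied to $\ph, \neg \psi$ gives $P(\ph \wedge \neg \psi \mid X) = B$, and subtracting via Proposition~\ref{P:rel-neg} (using $\ph \wedge \neg(\ph \wedge \neg \psi) \equiv \ph \wedge \psi$) yields $A = B - B = 0 = BC$. For (b): $B > 0$ gives $X \cup \{\ph\} \in \ante P$ by Lemma~\ref{L:cond-exist}; if $A > 0$ the original rule finishes it, and if $A = 0$ the auxiliary fact gives $C = 0$, so $C$ exists and $A = 0 = B \cdot 0 = BC$. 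For (c): $A$ existing gives $X \in \ante P$ and $C$ existing gives $X \cup \{\ph\} \in \ante P$; the \emph{contrapositive} of the auxiliary fact says $C > 0$ forces $A > 0$, so both given probabilities are positive and the original rule applies.

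I expect case (c) to be the real obstacle, and the auxiliary fact to be exactly what removes it. The naive move — ``solve $B = A/C$, which is legitimate since $C \ne 0$'' — breaks down when $A = 0$, because the putative value $B = 0$ is \emph{incompatible} with $C$ being defined: Lemma~\ref{L:cond-exist} would force $B > 0$ from $X \cup \{\ph\} \in \ante P$. The auxiliary fact resolves this by showing that the configuration $A = 0$, $C > 0$ simply cannot occur, so no genuine division-by-zero-free instance is left unhandled. The only mild subtlety elsewhere is recalling that in case (a) the positivity of $B$ is not an extra hypothesis but a consequence of $C$ being defined.
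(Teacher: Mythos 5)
Your proof is correct, and its skeleton matches the paper's: the same three-way split according to which two of $A = P(\ph \wedge \psi \mid X)$, $B = P(\ph \mid X)$, $C = P(\psi \mid X, \ph)$ are given, and an essentially identical treatment of case (a) (Lemma \ref{L:cond-exist} forces $B > 0$, and when $C = 0$ the original rule applied to $\ph, \neg\psi$ together with Proposition \ref{P:rel-neg} gives $A = 0$). Where you genuinely depart from the paper is in the two degenerate cases. The paper handles them by two separate arguments: for (b) it uses Proposition \ref{P:rel-neg} to get $P(\ph \wedge \neg\psi \mid X) = P(\ph \mid X) > 0$, applies the multiplication rule to conclude $P(\neg\psi \mid X, \ph) = 1$, and negates to obtain $C = 0$; for (c) it evaluates no probabilities at all, instead observing that $C > 0$ puts $X \cup \{\ph, \psi\}$ in $\ante P$ (Lemma \ref{L:cond-exist}), that $X \cup \{\ph, \psi\} \equiv X \cup \{\ph \wedge \psi\}$, and hence that $A > 0$ by a second application of Lemma \ref{L:cond-exist}. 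You instead route both cases through a single auxiliary lemma --- $A = 0$ together with $X \cup \{\ph\} \in \ante P$ forces $C = 0$ --- proved via the rule of deductive transitivity (R4) rather than via the multiplication rule or antecedent manipulation, and you dispatch (c) by contraposition. Both routes are sound and of comparable length; yours has the virtue of isolating, as a reusable statement, exactly the structural fact that rules out the bad configuration $A = 0$, $C > 0$, whereas the paper's case (c) is arguably slicker in that it works purely at the level of antecedents and never touches the value of any probability.
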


\begin{proof}
  First suppose $P(\ph \mid X)$ and $P(\psi \mid X, \ph)$ both exist. By Lemma
  \ref{L:cond-exist}, we have $P(\ph \mid X) > 0$. Suppose $P (\psi \mid X, \ph)
  = 0$. Then $P(\neg \psi \mid X, \ph) = 1$, by \eqref{prob-neg}, so by the
  multiplication rule, $P(\ph \wedge \neg \psi \mid X) = P(\ph \mid X)$.
  Therefore, Proposition \ref{P:rel-neg} implies $P(\ph \wedge \psi \mid X) =
  0$.

  Next, suppose $P(\ph \mid X) > 0$ and $P(\ph \wedge \psi \mid X) = 0$. Then
  Proposition \ref{P:rel-neg} implies $P(\ph \wedge \neg \psi \mid X) = P(\ph
  \mid X) > 0$, so by the multiplication rule, $P(\neg \psi \mid X, \ph) = 1$.
  Applying Proposition \ref{P:rel-neg} again gives $P(\psi \mid X, \ph) = 0$.

  Finally, suppose $P(\psi \mid X, \ph) > 0$ and $P(\ph \wedge \psi \mid X)$
  exists. By Lemma \ref{L:cond-exist}, we have $X \cup \{\ph, \psi\} \in \ante
  P$. But $X \cup \{\ph, \psi\} \equiv X \cup \{\ph \wedge \psi\}$, so by the
  rule of logical equivalence, $X \cup \{\ph \wedge \psi\} \in \ante P$. Thus,
  by Lemma \ref{L:cond-exist}, we have $P(\ph \wedge \psi \mid X) > 0$.
\end{proof}

\begin{thm}[Bayes' theorem]
    \index{Bayes' theorem}%
  If $P$ is entire, then
  \begin{equation}\label{Bayes}
    P(\ph \mid X) P(\psi \mid X, \ph) = P(\psi \mid X) P(\ph \mid X, \psi),
  \end{equation}
  provided that all four of the above probabilities exist.
\end{thm}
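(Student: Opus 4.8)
The plan is to recognize both products in \eqref{Bayes} as two different factorizations of the single quantity $P(\ph \wedge \psi \mid X)$, and then to identify them using the symmetry $\ph \wedge \psi \equiv \psi \wedge \ph$. The only preliminary work is to confirm that the multiplication rule applies in the generalized form established just above, which reduces to checking that the relevant unconditional probabilities are positive.

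First I would extract the needed positivity from the existence hypotheses. Since $P(\psi \mid X, \ph)$ exists by assumption, its antecedent satisfies $X \cup \{\ph\} \in \ante P$; as $P(\ph \mid X)$ also exists, Lemma \ref{L:cond-exist} forces $P(\ph \mid X) > 0$. By the symmetric argument, the existence of $P(\ph \mid X, \psi)$ together with that of $P(\psi \mid X)$ yields $P(\psi \mid X) > 0$.

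Next I would apply the generalization of the multiplication rule proved just above, in its first case, where one factor is only assumed to exist rather than to be positive. Since $P(\ph \mid X)$ and $P(\psi \mid X, \ph)$ both exist, that result gives that $P(\ph \wedge \psi \mid X)$ exists and
\[
  P(\ph \wedge \psi \mid X) = P(\ph \mid X)\, P(\psi \mid X, \ph).
\]
Interchanging the roles of $\ph$ and $\psi$, the existence of $P(\psi \mid X)$ and $P(\ph \mid X, \psi)$ gives
\[
  P(\psi \wedge \ph \mid X) = P(\psi \mid X)\, P(\ph \mid X, \psi).
\]
Since $\ph \wedge \psi \equiv \psi \wedge \ph$, the rule of logical equivalence gives $P(\ph \wedge \psi \mid X) = P(\psi \wedge \ph \mid X)$, and chaining the two displays through this common value yields \eqref{Bayes}.

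I do not expect a serious obstacle here; the content of the theorem is entirely carried by the generalized multiplication rule. The one point requiring care is that the multiplication rule, in its original form, demands positivity of \emph{both} factors, whereas here the conditional factors $P(\psi \mid X, \ph)$ and $P(\ph \mid X, \psi)$ could vanish. This is precisely the gap closed by the preceding generalization, and the positivity of $P(\ph \mid X)$ and $P(\psi \mid X)$ supplied by Lemma \ref{L:cond-exist} ensures that no division by zero is implicitly invoked.
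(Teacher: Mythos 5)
Your proof is correct and follows essentially the same route as the paper, whose entire argument is the one-liner ``Since $\ph \wedge \psi \equiv \psi \wedge \ph$, this follows immediately from the multiplication rule.'' You have simply made explicit the details the paper leaves implicit: that Lemma \ref{L:cond-exist} supplies the positivity of $P(\ph \mid X)$ and $P(\psi \mid X)$, and that the generalized multiplication rule (the theorem immediately preceding) is what licenses the two factorizations of $P(\ph \wedge \psi \mid X)$ even when the conditional factors vanish.
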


\begin{proof}
  Since $\ph \wedge \psi \equiv \psi \wedge \ph$, this follows immediately from 
  the multiplication rule.
\end{proof}

\subsection{Generalizations of the continuity rule}
  \index{rule!the continuity ---}%

\begin{prop}
  Let $P$ be entire. If $P(\ph_n \mid X)$ exists and $X, \ph_{n + 1} \vdash \ph_
  n$ for all $n \in \bN$, then
   \[
    \ts{P(\bigwedge_n \ph_n \mid X) = \lim_n P(\ph_n \mid X).}
  \]
\end{prop}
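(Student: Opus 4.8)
The plan is to derive this ``decreasing'' continuity statement from the ``increasing'' continuity rule already built into the definition of an entire set, by passing to complements. Set $\psi_n = \neg \ph_n$ for each $n \in \bN$. Since each $P(\ph_n \mid X)$ exists, Corollary \ref{C:rel-neg} guarantees that each $P(\psi_n \mid X)$ exists and satisfies $P(\psi_n \mid X) = 1 - P(\ph_n \mid X)$. The goal is then to show that the hypothesis $X, \ph_{n+1} \vdash \ph_n$ translates into the increasing condition $X, \psi_n \vdash \psi_{n+1}$ needed to invoke the continuity rule for the sequence $\ang{\psi_n \mid n \in \bN}$.

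Establishing that translation is the crux of the argument, and it is a purely deductive contrapositive step, which is the part I expect to require the most care (though it is short). Starting from $X, \ph_{n+1} \vdash \ph_n$, I would adjoin $\neg \ph_n$ on the left to obtain $X, \neg \ph_n, \ph_{n+1} \vdash \ph_n$, while trivially $X, \neg \ph_n, \ph_{n+1} \vdash \neg \ph_n$; the ex-falso rule (Definition \ref{D:derivability}(v)) then yields $X, \neg \ph_n, \ph_{n+1} \vdash \neg \ph_{n+1}$, and Proposition \ref{P:derivability}(d) collapses this to $X, \neg \ph_n \vdash \neg \ph_{n+1}$, that is, $X, \psi_n \vdash \psi_{n+1}$.

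With the increasing condition in hand, the continuity rule applied to $\ang{\psi_n \mid n \in \bN}$ gives $P(\bigvee_n \psi_n \mid X) = \lim_n P(\psi_n \mid X) = 1 - \lim_n P(\ph_n \mid X)$, the final equality using the values from Corollary \ref{C:rel-neg}. It remains to translate $\bigvee_n \psi_n$ back into a statement about $\bigwedge_n \ph_n$. By the shorthand definition of $\bigvee$ and the fact that $\neg \neg \ph_n \equiv \ph_n$, the formula $\bigvee_n \psi_n$ is logically equivalent to $\neg \bigwedge_n \ph_n$, so the rule of logical equivalence gives $P(\neg \bigwedge_n \ph_n \mid X) = 1 - \lim_n P(\ph_n \mid X)$. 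Finally, applying Corollary \ref{C:rel-neg} once more, now to the formula $\neg \bigwedge_n \ph_n$, together with $\neg \neg \bigwedge_n \ph_n \equiv \bigwedge_n \ph_n$ and the rule of logical equivalence, shows that $P(\bigwedge_n \ph_n \mid X)$ exists and equals $1 - P(\neg \bigwedge_n \ph_n \mid X) = \lim_n P(\ph_n \mid X)$, which is exactly the claimed identity.
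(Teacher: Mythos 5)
Your proposal is correct and follows exactly the paper's own route: set $\psi_n = \neg \ph_n$, apply the continuity rule to the sequence $\ang{\psi_n \mid n \in \bN}$, and translate back via $\bigvee_n \psi_n \equiv \neg \bigwedge_n \ph_n$ and Corollary \ref{C:rel-neg}. The only difference is that you spell out the steps the paper leaves implicit (the contrapositive derivation $X, \psi_n \vdash \psi_{n+1}$ and the existence of the negated probabilities), and those details are handled correctly.
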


\begin{proof}
  Let $\psi_n = \neg \ph_n$. By the continuity rule,
   \[
    \ts{P(\bigvee_n \psi_n \mid X) = \lim_n P(\psi_n \mid X).}
  \]
  But $\bigvee_n \psi_n \equiv \neg \bigwedge_n \ph_n$, so \eqref{prob-neg}
  gives the desired result.
\end{proof}

\begin{thm}\label{T:cont-rule}
  Let $P$ be entire and assume $P(\ph_n \mid X)$ exists for all $n$. If $P(\ph_
  {n + 1} \mid X, \ph_n) = 1$ for all $n$, then
  \begin{equation}\label{cont-from-below}
    \ts{P(\bigvee_n \ph_n \mid X) = \lim_n P(\ph_n \mid X).}
  \end{equation}
  Similarly, if $P(\ph_n \mid X, \ph_{n + 1}) = 1$ for all $n$, then
  \begin{equation}\label{cont-from-above}
    \ts{P(\bigwedge_n \ph_n \mid X) = \lim_n P(\ph_n \mid X).}
  \end{equation}
\end{thm}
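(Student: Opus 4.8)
The plan is to reduce both claims to the continuity rule (R7) by replacing the sequence $\ph_n$ with a monotone sequence of partial disjunctions (respectively, conjunctions) that turns out to be certainty-equivalent to it. For the first claim I would set $\th_n = \bigvee_{j=1}^n \ph_j$. Then $\th_n \vdash \th_{n+1}$, so $X, \th_n \vdash \th_{n+1}$ holds deductively, and $\bigvee_n \th_n \equiv \bigvee_n \ph_n$ is a routine infinitary deductive equivalence. Hence, once I show that $P(\th_n \mid X)$ exists and equals $P(\ph_n \mid X)$ for every $n$, the continuity rule applied to $\ang{\th_n}$, together with the rule of logical equivalence (R1), will give $P(\bigvee_n \ph_n \mid X) = \lim_n P(\th_n \mid X) = \lim_n P(\ph_n \mid X)$.

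The crux is therefore the identity $P(\th_n \mid X) = P(\ph_n \mid X)$. I would first apply the rule of material implication (R3) to convert the hypothesis $P(\ph_{n+1} \mid X, \ph_n) = 1$ into $P(\ph_n \to \ph_{n+1} \mid X) = 1$. Using Proposition \ref{P:certainty-closure} and induction to combine finitely many certainties, together with the rule of deductive transitivity (R4) and the deductive fact $\bigwedge_{j=i}^{n-1}(\ph_j \to \ph_{j+1}) \vdash (\ph_i \to \ph_n)$, I obtain $P(\ph_i \to \ph_n \mid X) = 1$ for every $i \le n$. Combining these (again by Proposition \ref{P:certainty-closure}) and invoking $\bigwedge_{i=1}^n(\ph_i \to \ph_n) \vdash (\th_n \to \ph_n)$ yields $P(\th_n \to \ph_n \mid X) = 1$, while $P(\ph_n \to \th_n \mid X) = 1$ is immediate from the rule of logical implication (R2), since $\vdash \ph_n \to \th_n$. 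Thus $P(\th_n \tot \ph_n \mid X) = 1$, and Proposition \ref{P:log-equiv-gen} delivers both the existence of $P(\th_n \mid X)$ and the desired equality.

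The second claim is dual. I would set $\th_n = \bigwedge_{j=1}^n \ph_j$, use (R3) to obtain $P(\ph_{n+1} \to \ph_n \mid X) = 1$, chain backward (via Proposition \ref{P:certainty-closure}, (R4), and $\bigwedge_{j=i}^{n-1}(\ph_{j+1} \to \ph_j) \vdash (\ph_n \to \ph_i)$) to get $P(\ph_n \to \ph_i \mid X) = 1$ for all $i \le n$, and conclude exactly as before that $P(\th_n \tot \ph_n \mid X) = 1$, whence $P(\th_n \mid X) = P(\ph_n \mid X)$. Since $X, \th_{n+1} \vdash \th_n$ and $\bigwedge_n \th_n \equiv \bigwedge_n \ph_n$, the decreasing form of the continuity rule proved just above (which follows from (R7) through \eqref{prob-neg}) then finishes the argument. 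The main obstacle to anticipate is precisely the \emph{existence} of $P(\th_n \mid X)$: because the domain of $P(\cdot \mid X)$ need not be closed under disjunction or conjunction (see Example \ref{Expl:MathSEexpl}), one cannot simply assert that the partial disjunctions or conjunctions receive probabilities. The certainty-equivalence argument is exactly what circumvents this, producing $P(\th_n \mid X)$ through Proposition \ref{P:log-equiv-gen} rather than through any closure property.
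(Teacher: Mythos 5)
Your proof is correct, and it takes a route that genuinely differs from the paper's, most notably in the second half. For \eqref{cont-from-below} the skeleton is the same---both arguments feed the partial disjunctions $\bigvee_{j = 1}^n \ph_j$ into the continuity rule and then invoke the rule of logical equivalence---but the transfer of probabilities to the partial disjunctions is done differently. The paper runs a simultaneous induction establishing \eqref{cont-1} and \eqref{cont-2}, i.e.~$P(\psi_n \mid X) = P(\ph_n \mid X) > 0$ together with the conditional certainty $P(\ph_{n + 1} \mid X, \psi_n) = 1$; the latter is obtained by chaining Proposition \ref{P:transitivity} and applying Proposition \ref{P:add-to-root}, whose positivity hypothesis is supplied by Lemma \ref{L:cond-exist}. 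You instead chain the certainties $P(\ph_j \to \ph_{j + 1} \mid X) = 1$ deductively, assemble the biconditional certainty $P(\th_n \tot \ph_n \mid X) = 1$, and let Proposition \ref{P:log-equiv-gen} deliver existence and equality in one stroke---correctly identifying that existence is the real obstacle, since domains are not closed under disjunction. The divergence is more substantial for \eqref{cont-from-above}: the paper reduces it to \eqref{cont-from-below} by negating, and because Proposition \ref{P:add-to-root} requires $P(\neg \ph_n \mid X) > 0$, it is forced into a case analysis on $n_0 = \sup\{n : P(\ph_n \mid X) = 1\}$, with separate treatment of $n_0 = \infty$ and $n_0 < \infty$. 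Your argument avoids this entirely: since the partial conjunctions satisfy the purely deductive monotonicity $X, \th_{n + 1} \vdash \th_n$, the decreasing-continuity proposition stated just before the theorem applies directly, with no positivity hypothesis and hence no cases. What your approach buys is uniformity (both halves are the same certainty-equivalence argument) and a cleaner second half that never needs $P(\ph_n \mid X) > 0$ or $P(\ph_n \mid X) < 1$; the paper's approach, by contrast, keeps all probabilistic information in conditional form, at the cost of the positivity bookkeeping.
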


\begin{proof}
  Suppose $P(\ph_{n + 1} \mid X, \ph_n) = 1$ for all $n$. By Lemma 
  \ref{L:cond-exist}, we have $P(\ph_n \mid X) > 0$ for all $n$. Let $\psi_n =
  \bigvee_{j = 1}^n \ph_j$. We first show that
  \begin{align}
      &P(\psi_n \mid X) = P(\ph_n \mid X) > 0, \text{ and} \label{cont-1}\\
      &P(\ph_{n + 1} \mid X, \psi_n) = 1 \label{cont-2},
  \end{align}
  for all $n$.

  Since $\psi_1 = \ph_1$, we have that \eqref{cont-1} and \eqref{cont-2} hold
  for $n = 1$. Suppose it is true for $n - 1$. Then
  \[
    \psi_n \equiv \psi_{n - 1} \vee \ph_n
      \equiv \ph_n \vee (\psi_{n - 1} \wedge \neg \ph_n).
  \]
  But $P(\ph_n \mid X, \psi_{n - 1}) = 1$, so it follows from the rule of
  material implication and \eqref{prob-neg} that $P(\psi_{n - 1} \wedge \neg
  \ph_n \mid X) = 0$. Hence, Proposition \ref{P:certainty-closure} gives us
  \eqref{cont-1}.

  By Proposition \ref{P:transitivity} and induction, we have $P(\ph_{n + 1} \mid
  X, \ph_m) = 1$ for all $m \le n$, which gives $P(\ph_m \to \ph_{n + 1} \mid
  X) = 1$ for all $m \le n$ by the rule of material implication. Note that
  $\bigwedge_{m = 1}^n (\ph_m \to \ph_{n + 1}) \equiv \psi_n \to \ph_{n + 1}$.
  Hence, by Proposition \ref{P:certainty-closure} and induction, we have $P
  (\psi_n \to \ph_{n + 1} \mid X) = 1$. Since $P(\psi_n \mid X) > 0$,
  Proposition \ref{P:add-to-root} gives \eqref{cont-2}.

  Having established \eqref{cont-1} and \eqref{cont-2} for all $n$, observe that
  $\psi_n \vdash \psi_{n + 1}$ for all $n$. Thus, by the continuity rule,
  \[
    \ts{P(\bigvee_n \psi_n \mid X) = \lim_n P(\psi_n \mid X).}
  \]
  Using \eqref{cont-1} and the fact that $\bigvee_n \psi_n \equiv \bigvee_n
  \ph_n$, we obtain \eqref{cont-from-below}.

  For \eqref{cont-from-above}, assume $P(\ph_n \mid X, \ph_{n + 1}) = 1$ for all
  $n$. Define $\psi_n = \neg \ph_n$. By the rule of material implication, $P
  (\ph_{n + 1} \to \ph_n \mid X) = 1$ for all $n$, which gives $P(\psi_n \to
  \psi_{n + 1} \mid X) = 1$ for all $n$.

  We first suppose that $P(\ph_n \mid X) < 1$ for all $n$. Since $P(\psi_n \mid
  X) > 0$ by \eqref{prob-neg}, Proposition \ref{P:add-to-root} implies $P(\psi_
  {n + 1} \mid X, \psi_n) = 1$ for all $n$. Applying \eqref{cont-from-below} and
  \eqref{prob-neg} gives \eqref{cont-from-above}.

  Now suppose that $P(\ph_n \mid X) = 1$ for some $n$. By Proposition 
  \ref{P:transitivity}, we have $P(\ph_j \mid X) = 1$ for all $j \le n$. Let
  $n_0 = \sup\{n: P(\ph_n \mid X) = 1\}$. If $n_0 = \infty$, then $P(\ph_n \mid
  X) = 1$ for all $n$. This implies $P(\psi_n \mid X) = 0$ for all $n$ by
  Proposition \ref{P:certainty-closure}. Hence, by Corollary 
  \ref{C:certainty-closure} and \eqref{prob-neg}, we have $P(\bigvee_n \psi_n
  \mid X) = 0$, and this establishes \eqref{cont-from-above}. Assume, then, that
  $n_0 < \infty$, so that $P(\ph_n \mid X) = 1$ for all $n \le n_0$ and $P(\ph_n
  \mid X) < 1$ for all $n > n_0$. By what we have already proven,
  \[
    \ts{P(\bigwedge_{n_0 + 1}^\infty \ph_n \mid X) = \lim_n P(\ph_n \mid X).}
  \]
  By Proposition \ref{P:certainty-closure} and induction, we have $P(\bigwedge_n
  \ph_n \mid X) = P(\bigwedge_{n_0 + 1}^\infty \ph_n \mid X)$, and so 
  \eqref{cont-from-above} holds.
\end{proof}

\begin{thm}[Countable additivity]\label{T:ctbl-add}
    \index{countable additivity}%
  If $P$ is entire and $P(\ph_i \wedge \ph_j \mid X) = 0$ for all $1 \le i <
  j < \infty$, then
  \[
    \ts{P(\bigvee_n \ph_n \mid X) = \sum_n P(\ph_n \mid X).}
  \]
\end{thm}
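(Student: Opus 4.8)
The plan is to reduce the countable statement to the continuity rule by passing through the finite partial disjunctions $\psi_n = \bigvee_{j=1}^n \ph_j$. First I would establish a probabilistic version of finite additivity, namely that under the stated hypotheses $P(\psi_n \mid X)$ exists and equals $\sum_{j=1}^n P(\ph_j \mid X)$ for every $n$. This is the analogue of Lemma \ref{L:fin-add}, but with the deductive disjointness hypothesis $X \vdash \neg(\ph_i \wedge \ph_j)$ weakened to its probabilistic counterpart $P(\ph_i \wedge \ph_j \mid X) = 0$, and it is the version of finite additivity suited to applying the continuity rule afterward.

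I would prove the finite statement by induction on $n$, with the inductive step routed through Theorem \ref{T:incl-excl}. The crux is to verify that the cross term vanishes: $P(\psi_{n-1} \wedge \ph_n \mid X) = 0$. Since $\psi_{n-1} \wedge \ph_n \equiv \bigvee_{j=1}^{n-1}(\ph_j \wedge \ph_n)$ by distributivity, and each disjunct $\ph_j \wedge \ph_n$ has probability $0$ by hypothesis, this follows from a short finite induction using the $\max$-formula of Proposition \ref{P:certainty-closure}, which shows that a finite disjunction of null events is null (and, in particular, exists). With $P(\psi_{n-1} \mid X)$, $P(\ph_n \mid X)$, and $P(\psi_{n-1} \wedge \ph_n \mid X) = 0$ all existing, inclusion-exclusion then guarantees that $P(\psi_n \mid X) = P(\psi_{n-1} \vee \ph_n \mid X)$ exists and equals $P(\psi_{n-1} \mid X) + P(\ph_n \mid X)$, closing the induction.

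Once finite additivity is in hand, the countable case is immediate. Since $\psi_n \vdash \psi_{n+1}$ deductively, we have $X, \psi_n \vdash \psi_{n+1}$ for all $n \in \bN$, and each $P(\psi_n \mid X)$ exists by the previous step. The continuity rule therefore yields $P(\bigvee_n \psi_n \mid X) = \lim_n P(\psi_n \mid X) = \lim_n \sum_{j=1}^n P(\ph_j \mid X) = \sum_n P(\ph_n \mid X)$, the monotone bounded partial sums ensuring the limit exists. Finally, $\bigvee_n \psi_n \equiv \bigvee_n \ph_n$, so the rule of logical equivalence transfers the value to give the stated identity.

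The main obstacle is the existence bookkeeping in the finite-additivity step: one must confirm that the pairwise-null hypothesis forces the cross term $P(\psi_{n-1} \wedge \ph_n \mid X)$ both to exist and to equal $0$, so that the three-out-of-four hypothesis of inclusion-exclusion is genuinely met at each stage. Everything past that is a direct application of the continuity rule together with logical equivalence.
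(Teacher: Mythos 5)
Your proposal is correct and follows essentially the same route as the paper's proof: both define $\psi_n = \bigvee_1^n \ph_j$, kill the cross term $P(\psi_{n-1} \wedge \ph_n \mid X)$ via the distributive equivalence with $\bigvee_j (\ph_j \wedge \ph_n)$ and Proposition \ref{P:certainty-closure}, run the finite induction through Theorem \ref{T:incl-excl}, and finish with the continuity rule and logical equivalence. Your extra attention to the existence bookkeeping is sound but matches what the paper's argument implicitly relies on.
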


\begin{proof}
  Let $\psi_n = \bigvee_1^n \ph_j$, so that $\psi_n \vdash \psi_{n + 1}$ and
  $\bigvee_n \psi_n \equiv \bigvee_n \ph_n$. Note that $\psi_n \wedge \ph_{n +
  1} \equiv \bigvee_1^n (\ph_j \wedge \ph_{n + 1})$. By Proposition 
  \ref{P:certainty-closure} and induction, we have $P(\psi_n \wedge \ph_{n + 1}
  \mid X) = 0$. Thus, by Theorem \ref{T:incl-excl},
  \[
    P(\psi_n \vee \ph_{n + 1} \mid X)
      = P(\psi_n \mid X) + P(\ph_{n + 1} \mid X).
  \]
  It follows by induction that $P(\psi_n \mid X) = \sum_1^n P(\ph_j \mid X)$ for
  all $n$. Letting $n \to \infty$ and applying the continuity rule completes the
  proof.
\end{proof}

\section{Closed sets and inductive derivability}\label{S:closed}

\subsection{The rule of inductive extension}

The first seven rules of inductive inference encapsulate the core of our
inductive calculus. There are, however, two important and essential supplemental
rules we must define. The first is called the ``rule of inductive extension.''
To motivate this rule, recall the situation in Example \ref{Expl:MathSEexpl}. As
mentioned therein, we will later construct an entire set $P$ in which $P
(\bfr_1) = P(\bfr_2) = P(\bfr_1 \tot \bfr_2) = q$, but $P(\bfr_1 \wedge \bfr_2)$
is undefined (see Example \ref{Expl:MathSE}). This situation is entirely
satisfactory and will not violate our rules of inference in any way. It is, in
fact, self-evident that without additional information, there is no way to
deduce a probability for $\bfr_1 \wedge \bfr_2$ based solely on the
probabilities of $\bfr_1$ and $\bfr_2$.

We can, however, decide to assign, a priori, a probability to $\bfr_1 \wedge
\bfr_2$. In other words, given a value $q'$, we may wish to consider the set
$P' = P \cup \{(\emp, \bfr_1 \wedge \bfr_2, q')\}$. Of course, we must choose
$q'$ so that our new, enlarged set $P'$ continues to conform to the seven rules
of inference we have already established. The question naturally arises: which
values of $q'$ are possible?

In a situation such as this, one of three things can occur. The first is
illustrated by the case $q = 1/4$. In this case, we show in Example
\ref{Expl:MathSEexpl-2} that there are no possible values of $q'$. In other
words, although $P$ is entire, there is no way to assign a probability to
$\bfr_1 \wedge \bfr_2$ without violating one of our first seven rules. There is,
therefore, something defective about $P$, but this flaw cannot be seen from our
first seven rules alone.

The second possibility is illustrated by the case $q = 1/2$. In this case, we
show in Proposition \ref{P:MathSE} that $q' = 1/4$ is the unique value that
works. In other words, the only way to assign a probability to $\bfr_1 \wedge
\bfr_2$ without violating one of our first seven rules is to assign it
probability $1/4$. In this case, it seems reasonable that the uniqueness of this
value ought to let us infer $(\emp, \bfr_1 \wedge \bfr_2, 1/4)$ from $P$. But
such an inference is not possible with only the first seven rules, because $P$
is already entire.

The third possibility is that there are multiple values of $q'$ that work.
Although this case does not arise in Example \ref{Expl:MathSEexpl}, it is clear
that it can arise in even simpler examples. In a case such as this, there is
nothing necessarily defective about our entire set $P$, but at the same time, we
cannot make any inference about the probability of $\bfr_1 \wedge \bfr_2$.

To describe the rule that will rectify these situations, we begin by defining a
new kind of set, which we call ``complete.'' Even after our inductive calculus
is fully developed, the process of inductive inference will not typically
produce complete sets. Rather, they represent a sort of ideal in which all
meaningfully connected probabilities have been logically determined.

\begin{defn}\label{D:complete}
    \index{complete}%
  A set $\ol P \subseteq \cF^\IS$ is \emph{complete} if it is entire and
  satisfies the following conditions:
  \begin{enumerate}[(i)]
    \item If $\ol P(\ph \mid X)$ and $\ol P(\psi \mid X)$ exist, then $\ol P(\ph
          \wedge \psi \mid X)$ exists.
    \item If $X \in \ante \ol P$ and $X \cup \{\ph\} \in \ante \ol P$, then
          $\ol P (\ph \mid X)$ exists.
  \end{enumerate}
\end{defn}

\begin{rmk}
  In general, entire sets obey neither (i) nor (ii) in the definition above.
  Example \ref{Expl:MathSEexpl} describes an entire set that violates (i). In
  Example \ref{Expl:incompl-ind-th}, we exhibit an entire set $P$ with
  $P(\bfr_1) = 1/2$ and $P (\bfr_2 \mid \bfr_3) = 1$, but with $P(\bfr_3)$
  undefined, thereby violating (ii) with $X = \emp$ and $\ph = \bfr_3$.
\end{rmk}

Having defined complete sets, we are now in a position to state our eighth rule
of inductive inference. A set which is closed under the first eight rules will
be called ``semi-closed.''

If $P \subseteq \ol P \subseteq \cF^\IS$, then $\ol P$ is called an
\emph{extension} of $P$. A complete extension will also be called a 
\emph{completion}. A set $P \subseteq \cF^\IS$ is \emph{semi-closed}
  \index{semi-closed}%
if it is entire and satisfies the \emph{rule of inductive extension}:
  \index{rule!of inductive extension}%
\begin{enumerate}[(R1)]
  \setcounter{enumi}{7}
  \item If $\ol P(\ph \mid X) = p$ for every completion $\ol P$ of $P$,
        then $P(\ph \mid X) = p$.
\end{enumerate}
Note that every complete set is semi-closed.

Every semi-closed set has a completion. To see this, suppose $P$ is semi-closed
but has no completion. Then (R8) implies $P = \cF^\IS$. But then $P$ is not
admissible, and therefore not entire, a contradiction.

This means that an entire set can fail to be semi-closed in two ways. On the one
hand, it can simply not have enough probabilities. This happens, for instance,
in Example \ref{Expl:MathSEexpl} when $q = 1/2$. In this case, our set $P$ will
not be semi-closed until we add more probabilities, including $(\emp, \bfr_1
\wedge \bfr_2, 1/4)$, as required by (R8).

On the other hand, it can fail to be semi-closed because it has no completion.
This happens, for instance, in Example \ref{Expl:MathSEexpl} when $q = 1/4$,
since in that case there is no way to assign a probability to $\bfr_1 \wedge
\bfr_2$ without breaking the entirety of the set.

The following result is an analogue of Proposition \ref{P:log-equiv-gen} for
antecedents, but it requires that $P$ be semi-closed.

\begin{prop}\label{P:log-equiv-gen-2}
  Let $P$ be semi-closed. If $P(\ph \mid X, \psi) = p$ and $P(\psi \tot \psi'
  \mid X) = 1$, then $P(\ph \mid X, \psi') = p$.
\end{prop}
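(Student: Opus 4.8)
The plan is to prove this by passing to completions and invoking the rule of inductive extension (R8), which is available precisely because $P$ is semi-closed (and recall that semi-closed sets always have completions, so the argument is non-vacuous). Concretely, I would fix an arbitrary completion $\ol P$ of $P$, show that $\ol P(\ph \mid X, \psi') = p$ holds in $\ol P$, and then, since this holds for \emph{every} completion, conclude by (R8) that $P(\ph \mid X, \psi') = p$. This mirrors Proposition \ref{P:log-equiv-gen}, which handles the same kind of substitution in the \emph{consequent}; the reason the antecedent version needs more than entirety is that I will need the probability $\ol P(\psi \mid X)$ to exist, and this is guaranteed only by the completeness conditions in Definition \ref{D:complete}, not by entirety alone.

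So, working inside a completion $\ol P$, my first step is to establish that $\ol P(\psi \mid X)$ exists and is positive. Since $\ol P(\psi \tot \psi' \mid X) = 1$, we have $X \in \ante \ol P$, and since $\ol P(\ph \mid X, \psi) = p$, we have $X \cup \{\psi\} \in \ante \ol P$; hence condition (ii) of Definition \ref{D:complete} gives that $\ol P(\psi \mid X)$ exists. Call its value $s$. Because $\psi \tot \psi'$ has probability $1$ given $X$, Proposition \ref{P:log-equiv-gen} (applied to the consequent) yields $\ol P(\psi' \mid X) = s$ as well. Moreover, $X \cup \{\psi\} \in \ante \ol P$ together with the existence of $\ol P(\psi \mid X)$ forces $s > 0$ by Lemma \ref{L:cond-exist}; this positivity is exactly what will license the division at the end.

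The second step is the computation. Using the multiplication rule with $\ol P(\psi \mid X) = s$ and $\ol P(\ph \mid X, \psi) = p$ gives $\ol P(\psi \wedge \ph \mid X) = sp$. Next I transfer the conjunction across the ($X$-almost-sure) equivalence of $\psi$ and $\psi'$: since $(\psi \tot \psi') \vdash (\psi \wedge \ph) \tot (\psi' \wedge \ph)$ is deductively valid, the rule of deductive transitivity gives $\ol P((\psi \wedge \ph) \tot (\psi' \wedge \ph) \mid X) = 1$, and then Proposition \ref{P:log-equiv-gen} produces $\ol P(\psi' \wedge \ph \mid X) = sp$. Finally, from $\ol P(\psi' \mid X) = s > 0$ and $\ol P(\psi' \wedge \ph \mid X) = sp$, the multiplication rule (in the generalized form that permits solving for the conditional factor whenever we are not dividing by zero, which also covers the case $p = 0$) yields that $\ol P(\ph \mid X, \psi')$ exists and equals $p$.

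The main obstacle, and the reason semi-closedness is genuinely needed, is the very first claim of the second paragraph: in a bare entire set we have no guarantee that $\ol P(\psi \mid X)$ exists, so none of the multiplication-rule manipulations can even begin. Moving to a completion repairs this via condition (ii), and its positivity (from Lemma \ref{L:cond-exist}) is what makes the final division legitimate. Once $\ol P(\ph \mid X, \psi') = p$ is established uniformly across all completions, (R8) closes the argument.
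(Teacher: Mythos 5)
Your proposal is correct and follows essentially the same route as the paper's proof: pass to an arbitrary completion, use Definition \ref{D:complete}(ii) and Lemma \ref{L:cond-exist} to get existence and positivity of $\ol P(\psi \mid X)$, apply the multiplication rule, transfer the conjunction probability from $\psi$ to $\psi'$, and close with the rule of inductive extension. The only (immaterial) difference is the transfer step, where you derive $\ol P((\psi \wedge \ph) \tot (\psi' \wedge \ph) \mid X) = 1$ via deductive transitivity and apply Proposition \ref{P:log-equiv-gen}, while the paper conjoins $\psi \tot \psi'$ via Proposition \ref{P:certainty-closure}, swaps $\psi$ for $\psi'$ by the rule of logical equivalence, and drops the conjunct via Proposition \ref{P:cert-cl-conv} — both are valid, and your explicit appeal to the generalized multiplication rule to cover $p = 0$ is a point the paper leaves implicit.
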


\begin{proof}
  Let $P$ be semi-closed with $P(\ph \mid X, \psi) = p$ and $P(\psi \tot \psi'
  \mid X) = 1$. Let $\ol P$ be a completion of $P$. Then $\ol P(\ph \mid X,
  \psi) = p$ and $\ol P(\psi \tot \psi' \mid X) = 1$. By Definition
  \ref{D:complete}(ii), we have that $\ol P(\psi \mid X) = q$ for some $q \in
  [0, 1]$, and Lemma \ref{L:cond-exist} implies $q > 0$. By the multiplication
  rule, $\ol P(\ph \wedge \psi \mid X) = pq$.

  By Proposition \ref{P:certainty-closure}, we have $\ol P(\ph \wedge \psi
  \wedge (\psi \tot \psi') \mid X) = pq$. By the rule of logical equivalence,
  $\ol P (\ph \wedge \psi' \wedge (\psi \tot \psi') \mid X) = pq$. Proposition
  \ref{P:cert-cl-conv} then implies $\ol P(\ph \wedge \psi' \mid X)= pq$. By
  Proposition \ref{P:log-equiv-gen}, we have $\ol P(\psi' \mid X) = q$. Hence,
  by the multiplication rule, $\ol P(\ph \mid X, \psi') = p$. Since $\ol P$ was
  arbitrary, the rule of inductive extension gives $P(\ph \mid X, \psi') = p$.
\end{proof}

\subsection{The rule of deductive extension}

Our final rule is the ``rule of deductive extension.'' Informally, it says that
any antecedent can be freely expanded to include any number of formulas already
known to have probability one. A set of inductive statements that is closed
under all nine rules of inference will be called ``closed.'' More specifically,
a semi-closed set $P$ is said to be \emph{closed}
  \index{closed}%
if it satisfies the \emph{rule of deductive extension}:
  \index{rule!of deductive extension}%
\begin{enumerate}[(R1)]
  \setcounter{enumi}{8}
  \item If $S \subseteq \cF$ is nonempty
        and $P(\th \mid X) = 1$ for all $\th \in S$,
        then $X \cup S \in \ante P$
        and $P(\, \cdot \mid X, S) = P(\, \cdot \mid X)$.
\end{enumerate}
With this final definition, our rules of inductive inference are complete.

\subsection{Pre-theories}

We now wish to use these rules to define inductive derivability. Our aim is to
make sense of the statement $Q \vdash (X, \ph, p)$. Informally, we imagine $Q
\vdash (X, \ph, p)$ to mean that, starting from the inductive statements in $Q$,
we may use a sequence of applications of our nine rules of inference to derive
the inductive statement, $ (X, \ph, p)$. In keeping with the spirit of our
infinitary language $\cF$, we will imagine, when necessary, that this sequence
is at most countable. We aim to make this notion precise by using our nine rules
of inference and their related closure properties (admissible, entire,
semi-closed, and closed).

As mentioned earlier, we plan to follow the route described in Remark
\ref{R:theory-first}. That is, by analogy with deductive theories, we want to
define an ``inductive theory.'' We will then say that $Q \vdash (X, \ph, p)$ if
$(X, \ph, p)$ is an element of the smallest inductive theory containing $Q$. Our
first task, of course, is to determine exactly what ought to constitute an
inductive theory. At first glance, the answer may seem trivially obvious: an
inductive theory ought to simply be a closed set. After all, closed sets, by
definition, are those sets that are closed under all nine rules of inference, so
this would be the natural analogue of a deductive theory. We will see, however,
that closed subsets of $\cF^\IS$ can be much larger than we might initially
expect, and as such, do not fit our intuitive understanding of what an inductive
theory ought to be.

To see this, let us first focus our attention on semi-closed sets, which are
closed under (R1)--(R8). Imagine we have a starting collection of inductive
statements, $Q$. Using $Q$ together with rules (R1)--(R8), we begin making
inferences and adding new inductive statements to our collection. When we have
exhausted all inferences that are possible with (R1)--(R8), we arrive at a
finalized collection, $P_0$, which we will call a ``pre-theory.'' Our pre-theory
$P_0$ is closed under (R1)--(R8), so by definition, $P_0$ is a semi-closed set.
But $P_0$ has another important property that arbitrary semi-closed sets do not
share. The elements of $P_0$ are all ``connected'' to the elements of $Q$ in a
certain sense. They are connected via (R1)--(R8).

To clarify the nature of this connection, let us consider the effects of
(R1)--(R8) on the antecedents of $Q$. That is, imagine we use a single
application of one of the rules (R1)--(R8) to infer $(X, \ph, p)$ from $Q$. We
wish to know how $X$ is related to $\ante Q$. If we have used any of the rules
(R2), (R3), (R4), (R5), (R7), or (R8), then we know that $X \in \ante Q$. If we
used (R1), then $X \equiv Y$ for some $Y \in \ante Q$. And if we used (R6), then
either $X \in \ante Q$ or $X = Y \cup \{\ph\}$ for some $Y \in \ante Q$ and some
$\ph \in \cF$. Generally speaking, no matter which of (R1)--(R8) we used, we may
conclude that either $X \equiv Y$ or $X \equiv Y \cup \{\ph\}$ for some $Y \in
\ante Q$ and $\ph \in \cF$. More generally, if $(X, \ph, p)$ is inferred from
$Q$ via a countable sequence of applications of (R1)--(R8), then $X \equiv Y
\cup \Phi$ for some $Y \in \ante Q$ and some countable $\Phi \subseteq \cF$.

Motivated by this, we make the following definition. A set $Q \subseteq \cF^\IS$
is \emph{strongly connected}
  \index{connected!strongly ---}%
if there exists $X_0 \in \ante Q$ such that every $X \in \ante Q$ is
\emph{countably axiomatizable}
  \index{countably axiomatizable}%
over $X_0$. That is, for every $X \in \ante Q$, there exists a countable set
$\Phi \subseteq \cF$ such that $X \equiv X_0 \cup \Phi$. Note that a strongly
connected set is necessarily nonempty. A set $P_0 \subseteq \cF^\IS$ is a
\emph{pre-theory}
  \index{pre-theory}%
if it is semi-closed and strongly connected.

Strong connectivity formalizes the notion that the inductive statements in a set
can be related to one another via the calculus of (R1)--(R8). A pre-theory
represents the results of exhausting all possible inferences using (R1)--(R8).
We require not only that a pre-theory be closed under (R1)--(R8), and therefore
a semi-closed set, but also that it be strongly connected. A set which is
semi-closed but not strongly connected will not violate (R1)--(R8), but it will
include unnecessary parts and statements which can never be related to one
another by the calculus of (R1)--(R8).

\subsection{Inductive theories}\label{S:ind-theories}

We now turn our attention to (R9). The first issue to address is the interplay
between (R9) and the previous eight rules. Suppose we have exhausted all
inferences from (R1)--(R8) and obtained a pre-theory, $P_0$. We then use (R9) to
infer a new inductive statement, $(X, \ph, p)$. Is it possible that $P_0 \cup 
\{(X, \ph, p)\}$ can be extended to an even larger pre-theory? Corollary 
\ref{C:theory-defn} assures us that it cannot. In other words, we lose nothing
by requiring that all applications of (R9) take place after all applications of 
(R1)--(R8).

With this in mind, we aim to say that an ``inductive theory'' is what we obtain
by first constructing a pre-theory and then ``closing it up'' with (R9). Theorem
\ref{T:theory-defn} below shows that this closure operation is well-defined and
produces a unique result. The proof of Theorem \ref{T:theory-defn}, as well as
all the other results in the remainder of this section, will be postponed until
Sections \ref{S:lifts} and \ref{S:ind-cond}. We present here only the statements
of the results, so that we may first see an overview of the entire construction.

\begin{thm}\label{T:theory-defn}
  Every pre-theory has a unique smallest closed extension.
\end{thm}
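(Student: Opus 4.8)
The plan is to separate the statement into two independent facts: first, that the intersection of any nonempty family of closed sets is again closed; and second, that the given pre-theory $P_0$ admits at least one closed extension. Granting both, let $P = \bigcap \{\,P' : P' \text{ is a closed extension of } P_0\,\}$. By the first fact $P$ is closed; it contains $P_0$ and is contained in every closed extension of $P_0$, so it is a smallest closed extension. Uniqueness is then automatic, since any two smallest elements of a partially ordered set coincide.

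First I would prove that closedness is preserved under intersection. Let $\{P_i\}$ be a family of closed sets and put $P = \bigcap_i P_i$. Since each $P_i$ is admissible and any subset of a function is a function, $P$ is admissible, so (R1) holds. For the rules (R2)--(R7) and (R9) the key observation is that each has the form ``if certain probabilities exist in the set (or certain antecedent conditions hold), then a further statement, \emph{whose probability is uniquely determined by the hypotheses}, belongs to the set.'' If the hypothesis holds in $P$, then the relevant statements lie in every $P_i$, so the hypothesis holds in each $P_i$; since $P_i$ is closed, the forced statement lies in each $P_i$, and because its probability is the same in each (being fixed by common hypotheses), that one statement lies in $P$. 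For (R9) one checks both directions of $P(\,\cdot \mid X, S) = P(\,\cdot \mid X)$ as a biconditional on membership of statements, together with the fact that nonemptiness of $S$ furnishes a witnessing statement placing $X \cup S$ in $\ante P$.

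The delicate rule is (R8), the rule of inductive extension, because its hypothesis quantifies over \emph{all completions of the set itself} rather than over members of the set, and is therefore non-monotone. Here I would use that $P \subseteq P_i$, so that every completion of $P_i$ (a complete set extending $P_i$, hence extending $P$) is also a completion of $P$. Consequently, if $(X, \ph, p)$ holds in every completion of $P$, it holds in every completion of $P_i$; since $P_i$ is semi-closed, (R8) yields $(X, \ph, p) \in P_i$. As this holds for all $i$, we get $(X, \ph, p) \in P$, so $P$ satisfies (R8), and $P$ is closed.

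It remains to produce one closed extension of $P_0$, and this is where the main work lies. Since $P_0$ is semi-closed, only (R9) can fail, so the task is to close $P_0$ up under deductive extension. The natural construction is to adjoin, for every $X \in \ante P_0$ and every nonempty $S \subseteq \tau(P_0; X)$, the statements imposing $P(\,\cdot \mid X \cup S) = P(\,\cdot \mid X)$; by Proposition \ref{P:tau-ded-theory} the set $\tau(P_0; X)$ is a deductive theory, so these new antecedents are deductively sandwiched between $X$ and its generated theory and the copied conditional values are mutually consistent. The hard part---and the reason the full argument is deferred to Sections \ref{S:lifts} and \ref{S:ind-cond}---is verifying that this enlargement still satisfies (R1)--(R8): one must check that the new antecedents do not conflict with the addition and multiplication rules, and, most subtly, that (R8) survives, since adjoining statements changes the very family of completions against which (R8) is tested. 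This constructive existence argument is the principal obstacle; the intersection argument above then packages it into the desired minimality and uniqueness.
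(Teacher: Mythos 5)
Your overall architecture is sound, and your first fact is genuinely correct: closedness (unlike connectivity) is preserved under nonempty intersections, and your treatment of (R8) --- every completion of $P_i$ contains $P \subseteq P_i$ and so is a completion of $P$, which transfers the non-monotone hypothesis inward --- is exactly the right argument. In fact the paper proves precisely this statement as Theorem \ref{T:intersect-cl}, though it appears later and is used for a different purpose (generating inductive theories from consistent sets, Theorem \ref{T:theory-gen-defn}). For the present theorem the paper takes a different route to minimality: it builds an explicit candidate, the lift $\bfL(P_0)$, and shows directly that $\bfL(P_0)$ is contained in any closed extension $P'$, by pushing each statement $P_0(\ph \mid T_0, \psi_X) = p$ into $P'$ and then using deductive transitivity, $P'$'s own rule of deductive extension, and logical equivalence to recover $P'(\ph \mid X) = p$. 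Your intersection packaging is a legitimate alternative to that step, and costs nothing extra once the intersection lemma is available.

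The genuine gap is your second fact: you never prove that any closed extension of $P_0$ exists, and that is the mathematical core of the theorem. The enlargement you sketch --- adjoin $X \cup S$ for $X \in \ante P_0$ and nonempty $S \subseteq \tau(P_0; X)$, copying the conditional values --- is indeed, up to logical equivalence, the paper's lift $\bfL(P_0)$, defined on generalized antecedents $T(X) = T + \psi$ with $T \in [T_0, \tau(P_0)]$ and $\psi \in \AF(P_0)$ by $P(\ph \mid X) = P_0(\ph \mid T_0, \psi_X)$. But the verification you defer is where essentially all of the paper's work lies: well-definedness of the copied values (Proposition \ref{P:lift-well-defined}), admissibility and entirety (Propositions \ref{P:lift-admissible} and \ref{P:lift-entire}), and --- the step you yourself flag as most delicate --- semi-closedness of the enlargement (Proposition \ref{P:lift-semi-closed}). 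That last step requires an idea absent from your sketch: to test (R8) for the enlarged set one must manufacture completions of it, which the paper does by taking an arbitrary completion $\ol P_0$ of $P_0$, restricting it to the root to obtain a complete pre-theory (Corollary \ref{C:compl-restrict}), and lifting that (Propositions \ref{P:lift-complete} and \ref{P:nested-lifts}) to get a completion of $\bfL(P_0)$, so that semi-closedness of $P_0$ can be invoked. As it stands, your proposal reduces the theorem to its hardest component and stops there, so it is not yet a proof.
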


If $P_0 \subseteq \cF^\IS$ is a pre-theory, let $\bfP(P_0)$ or $\bfP_{P_0}$
denote its smallest closed extension.

\begin{cor}\label{C:theory-defn}
  Let $P_0, P_0' \subseteq \cF^\IS$ be pre-theories. If $\bfP(P_0) = \bfP
  (P_0')$, then $P_0 = P_0'$. In particular, if $P_0$ is a pre-theory, then
  there is no pre-theory $P_0'$ such that $P_0 \subset P_0' \subseteq \bfP
  (P_0)$.
\end{cor}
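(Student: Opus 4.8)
The plan is to prove the injectivity assertion first and then obtain the ``in particular'' clause as a short consequence; throughout write $\ol P = \bfP(P_0)$ and recall that, by Theorem \ref{T:theory-defn}, $\bfP(P_0)$ is the \emph{smallest} closed extension of $P_0$. The second assertion reduces cleanly to the first: suppose $P_0 \subset P_0' \subseteq \bfP(P_0)$ with $P_0'$ a pre-theory. Since $P_0 \subseteq P_0' \subseteq \bfP(P_0')$ and $\bfP(P_0')$ is closed, minimality of $\bfP(P_0)$ gives $\bfP(P_0) \subseteq \bfP(P_0')$; and since $P_0' \subseteq \bfP(P_0)$ with $\bfP(P_0)$ closed, minimality of $\bfP(P_0')$ gives $\bfP(P_0') \subseteq \bfP(P_0)$. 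Hence $\bfP(P_0) = \bfP(P_0')$, so the first assertion forces $P_0 = P_0'$, contradicting $P_0 \subset P_0'$.

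The strategy for the first assertion is to recover $P_0$ intrinsically from $\ol P$. The key is that $P_0$ should be exactly the \emph{strongly connected slice} of $\ol P$: fixing $X_0 \in \ante P_0$ witnessing strong connectivity, I claim
\[
  P_0 = \{(X, \ph, p) \in \ol P : X \equiv X_0 \cup \Phi \text{ for some countable } \Phi \subseteq \cF\}.
\]
The inclusion $\subseteq$ is immediate from strong connectivity of $P_0$ together with $P_0 \subseteq \ol P$. For $\supseteq$ the point is that the only inferences separating $\ol P$ from the semi-closed set $P_0$ come from (R9); but a \emph{countable} deductive extension is already absorbed by an entire set. Indeed, if $S$ is countable and $P_0(\th \mid X) = 1$ for all $\th \in S$, then Corollary \ref{C:certainty-closure} gives $P_0(\bigwedge S \mid X) = 1$, Lemma \ref{L:cond-exist} puts $X \cup \{\bigwedge S\}$ in $\ante P_0$, and (R1) with $X \cup S \equiv X \cup \{\bigwedge S\}$ shows the (R9)-extension by $S$ contributes nothing new. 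Thus the genuinely new antecedents of $\ol P$ arise only from (R9)-extensions by \emph{uncountable} sets of probability-one formulas, none of which is countably axiomatizable over $X_0$.

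Granting the slice characterization, injectivity follows by comparing witnesses. Suppose $\bfP(P_0) = \bfP(P_0') = \ol P$ with witnesses $X_0$ and $X_0'$. Then $X_0' \in \ante \ol P$, and I would argue that $X_0'$ is countably axiomatizable over $X_0$: otherwise $X_0'$ is an (R9)-extension of some smaller core antecedent by an uncountable set, and since (R9) only ever \emph{enlarges} antecedents, closing $P_0'$ could never reproduce that smaller core, so $\bfP(P_0')$ would omit antecedents present in $\ol P$, contradicting $\bfP(P_0') = \ol P$. By symmetry $X_0$ is countably axiomatizable over $X_0'$, so the two witnesses are mutually countably axiomatizable; transitivity of countable axiomatizability then shows they define the same slice, whence $P_0 = P_0'$.

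The main obstacle is making the slice characterization rigorous, since it depends on the precise structure of $\bfP(P_0)$—exactly which statements are forced by (R9) and by the subsequent re-closure under (R1)--(R8)—and that description is supplied only by the explicit construction deferred to Sections \ref{S:lifts} and \ref{S:ind-cond}. The delicate feature is the role of cardinality in (R9): countable deductive extensions are silently absorbed by an entire set, via Lemma \ref{L:cond-exist} and Corollary \ref{C:certainty-closure}, whereas uncountable ones genuinely leave every connectivity class. It is exactly this dichotomy, together with the monotonicity of (R9) on antecedents, that makes the witness of a pre-theory effectively minimal and hence recoverable from the closure.
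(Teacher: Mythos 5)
Your reduction of the ``in particular'' clause to the injectivity assertion is exactly the paper's argument, and your plan for injectivity also has the same skeleton as the paper's proof: your ``slice characterization'' is precisely Proposition~\ref{P:lift-unique} (that $\bfP(P_0) \dhl_{T_0} = P_0$, where being in the slice over $X_0$ is the same as being countably axiomatizable over the root $T_0 = T(X_0)$), and your witness-comparison step is the paper's proof that the two roots coincide. The problem is that the justifications you offer for these two facts do not work.

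The gap is the dichotomy on which both steps rest: ``countable (R9)-extensions are absorbed by entirety; uncountable ones are never countably axiomatizable over $X_0$.'' The second half is false. If $S$ is any uncountable set of formulas each of probability one given $X$ but with $S \subseteq T(X \cup \{\th\})$ for a single such $\th$ (in the extreme case $S = \Taut$, which is uncountable), then the (R9)-extension $X \cup S$ is equivalent to $X \cup \{\th\}$, hence lies squarely inside your slice; so uncountable extensions can produce slice elements, and for those your countable-absorption argument says nothing. This is exactly where the real work of Proposition~\ref{P:lift-unique} lies: given $(X, \ph, p) \in \bfP(P_0)$ with $T(X) = T + \psi$, $T \in [T_0, \tau(P_0)]$, and also $T(X) = T_0 + \psi'$, one must show $\psi' \in \AF(P_0)$ and $P_0(\ph \mid T_0, \psi') = p$ (Propositions~\ref{P:any-psi} and~\ref{P:lift-well-defined}), and the proof of Proposition~\ref{P:any-psi} genuinely uses semi-closedness, i.e.\ rule (R8), through Proposition~\ref{P:log-equiv-gen-2} --- one cannot get it from Corollary~\ref{C:certainty-closure} and Lemma~\ref{L:cond-exist} alone, since the probability $P_0(\psi \mid T_0)$ whose existence the multiplication rule would require need not exist. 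The same false dichotomy underlies your claim that a witness $X_0'$ failing to be countably axiomatizable over $X_0$ would make $\bfP(P_0')$ omit antecedents of $\bfP(P_0)$. Both steps can be made rigorous --- the paper does so by first proving the explicit description of $\bfP(P_0)$ as the lift, from which every antecedent $X$ satisfies $T(X) = T + \psi$ as above, whence $T_0 = T_0'$ and then $P_0 = \bfP(P_0) \dhl_{T_0} = \bfP(P_0') \dhl_{T_0'} = P_0'$ --- but as written your proposal presupposes precisely that structural description (you concede it is ``supplied only by the explicit construction'') while substituting for its proof an argument that is incorrect.
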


Having established these results, we will be able to say that an ``inductive
theory'' is a set of the form $\bfP(P_0)$ for some pre-theory $P_0$. Before
formally defining it as such, we pause to characterize such sets in a way
analogous to our characterization of pre-theories. For this characterization, we
first define a new connectivity property.

Recall the notation established in \eqref{tau-Q-X}. We say that a set $Q
\subseteq \cF^\IS$ is \emph{connected}
  \index{connected}%
if there exists a strongly connected $\wh Q \subseteq Q$ such that for all $X
\in \ante Q$, there exists an $\wh X \in \ante \wh Q$ and a set $S \subseteq
\tau(\wh Q; \wh X)$ such that $X \equiv \wh X \cup S$. Any such $\wh Q$ will be
called a \emph{basis}
  \index{basis}%
for $Q$. In other words, a connected set is a ``lift'' of a strongly connected
set, where we lift up the antecedents by including formulas that have
probability one.

Note that if $Q$ is strongly connected, then $Q$ is connected and is its own
basis. Also note the following important difference between connectivity and
strong connectivity. Strong connectivity is a property of $\ante Q$. That is, if
$\ante Q = \ante Q'$, then $Q$ is strongly connected if and only if $Q'$ is
strongly connected. Connectivity, on the other hand, is not. Connectivity
depends not only on $\ante Q$, but also on $\{(X, \th) \mid (X, \th, 1) \in
Q\}$.

\begin{thm}\label{T:theory-char}
  Let $P \subseteq \cF^\IS$. The following are equivalent:
  \begin{enumerate}[(i)]
    \item $P = \bfP(P_0)$ for some (unique) pre-theory $P_0$.
    \item $P$ is closed and connected.
  \end{enumerate}
\end{thm}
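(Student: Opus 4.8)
The plan is to prove both implications by leaning on the explicit description of the smallest closed extension $\bfP(P_0)$ as a \emph{lift}, which is the central object constructed in this section. The one fact I want to extract from that construction is the following: for a pre-theory $P_0$, a set $X$ lies in $\ante \bfP(P_0)$ precisely when $X \equiv \wh X \cup S$ for some $\wh X \in \ante P_0$ and some $S \subseteq \tau(P_0; \wh X)$, and in that case $\bfP(P_0)(\ph \mid X) = P_0(\ph \mid \wh X)$. Intuitively, $\bfP(P_0)$ is built from $P_0$ by using (R9) to lift each base antecedent $\wh X$ through every set $S$ of formulas already carrying probability one given $\wh X$, while leaving the consequent-probabilities untouched. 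With this in hand, both directions reduce to matching the lift against the definition of connectivity.

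For (i) $\Rightarrow$ (ii), assume $P = \bfP(P_0)$. Then $P$ is closed, since $\bfP(P_0)$ is by definition a closed extension of $P_0$. For connectivity I propose $P_0$ itself as a basis: it is strongly connected, being a pre-theory, and $P_0 \subseteq P$. Given any $X \in \ante P$, the lift description supplies $\wh X \in \ante P_0$ and $S \subseteq \tau(P_0; \wh X)$ with $X \equiv \wh X \cup S$, which is exactly the condition making $P_0$ a basis for $P$. Hence $P$ is connected, and this direction is essentially a direct read-off of the lift.

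For (ii) $\Rightarrow$ (i), assume $P$ is closed and connected, and fix a basis $\wh P \subseteq P$. I would take $P_0$ to be the pre-theory generated by $\wh P$, whose existence comes from the constructive development of this section; it is strongly connected because applying (R1)--(R8) to a strongly connected set keeps every antecedent countably axiomatizable over the fixed base $X_0$ (the observation motivating the definition of a pre-theory), and $P_0 \subseteq P$ because $P$ is semi-closed and contains $\wh P$. Since $P$ is a closed extension of $P_0$ and $\bfP(P_0)$ is the smallest such, the inclusion $\bfP(P_0) \subseteq P$ is free. For the reverse inclusion I take $(X, \ph, p) \in P$; connectivity gives $X \equiv \wh X \cup S$ with $\wh X \in \ante \wh P \subseteq \ante P_0$ and $S \subseteq \tau(\wh P; \wh X) \subseteq \tau(P_0; \wh X)$, and (R9) together with (R1) inside $P$ collapses $P(\ph \mid X)$ to $P(\ph \mid \wh X)$, so $(\wh X, \ph, p) \in P$. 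The crux is then to see that $(\wh X, \ph, p) \in P_0$; granting this, the lift description gives $\bfP(P_0)(\ph \mid X) = P_0(\ph \mid \wh X) = p$, so $(X, \ph, p) \in \bfP(P_0)$, yielding $P = \bfP(P_0)$. Uniqueness of $P_0$ is then immediate from Corollary \ref{C:theory-defn}, since $\bfP$ is injective on pre-theories.

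The step I expect to be the main obstacle is exactly that crux in the reverse inclusion: the claim that the conditional $P(\,\cdot \mid \wh X)$ at a base antecedent $\wh X$ is already recovered by the generated pre-theory $P_0$, i.e.\ $P_0(\,\cdot \mid \wh X) = P(\,\cdot \mid \wh X)$ rather than merely $\subseteq$. The containment $\bfP(P_0) \subseteq P$ costs nothing, but ruling out ``extra'' statements of $P$ sitting over the base antecedents requires showing that $P$ adds nothing beyond the (R9)-lift of $P_0$. This is where the uniqueness-of-extension machinery of Sections \ref{S:lifts} and \ref{S:ind-cond} does the real work, and care will be needed to verify that the semi-closure $P_0$ of $\wh P$ reconstitutes the \emph{full} conditional distribution of $P$ at every antecedent of the basis before the lift is taken.
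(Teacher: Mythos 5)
Your direction (i) $\Rightarrow$ (ii) is essentially the paper's: $\bfP(P_0)$ is closed by Theorem \ref{T:theory-defn}, and $P_0$ itself serves as a basis (this is Proposition \ref{P:lift-connected}), with the decomposition $X \equiv \wh X \cup S$ read off from the lift just as you describe. The problem is in (ii) $\Rightarrow$ (i), and it is not merely the unverified ``crux'' you flag --- for your choice of $P_0$ that crux is false. You take $P_0$ to be the pre-theory generated under (R1)--(R8) by an arbitrary basis $\wh P$ of $P$. But the basis property only constrains the probability-one statements of $\wh P$, so a basis can carry no information about the non-trivial probabilities of $P$. Concretely: let $PV = \{\bfr\}$, let $\sP$ and $\sP'$ be the models on $\Om = \B^{PV}$ (with $\Si = \fP \Om$) giving $\bfr$ probability $1/2$ and $1/3$ respectively, and let $P = \bTh \sP$, $P' = \bTh \sP'$, both inductive theories with root $\Taut$. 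The set $\wh P$ of probability-one statements of $P \dhl_{\Taut}$ is a basis for $P$; but $\sP$ and $\sP'$ have the same null sets, so $\wh P \subseteq P' \dhl_{\Taut}$ as well, and hence the pre-theory generated by $\wh P$ satisfies $P_0 \subseteq P \dhl_{\Taut} \cap P' \dhl_{\Taut}$, which contains no statement of the form $(\Taut, \bfr, p)$ whatsoever (admissibility forbids $P$ and $P'$ from sharing one). By Proposition \ref{P:lift-unique}, $\bfP(P_0) \dhl_{\Taut} = P_0$, so $\bfP(P_0)(\bfr \mid \Taut)$ is undefined while $P(\bfr \mid \Taut) = 1/2$, giving $\bfP(P_0) \subset P$ properly. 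So ``the semi-closure of a basis reconstitutes the full conditional distribution of $P$'' is not a step needing care; it is simply untrue, and no choice of machinery from Sections \ref{S:lifts} and \ref{S:ind-cond} will rescue it.

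The paper's proof avoids generating anything: it takes $P_0 = P \dhl_{T_0}$, the restriction of $P$ itself to antecedents countably axiomatizable over its root $T_0$. With that choice your crux holds \emph{by definition of restriction}: $P_0(\,\cdot \mid T_0, \psi) = P(\,\cdot \mid T_0, \psi)$. Theorem \ref{T:restrict-inherit} gives that $P_0$ is semi-closed, hence a pre-theory; minimality of the lift (Theorem \ref{T:theory-defn}) gives $\bfP(P_0) \subseteq P$; and for the reverse inclusion, Proposition \ref{P:semi-cl-conn} writes any $X \in \ante P$ as $X \equiv T + \psi$ with $T \in [T_0, \tau(P)]$ and $T_0 + \psi \in \ante P$, after which deductive transitivity and the rule of deductive extension collapse $P(\,\cdot \mid X)$ to $P(\,\cdot \mid T_0, \psi) = P_0(\,\cdot \mid T_0, \psi)$, which the lift reproduces at $X$. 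This collapse is the same (R9)/(R1) manoeuvre you use, so your argument becomes correct if you replace ``the pre-theory generated by $\wh P$'' with the restriction $P \dhl_{T_0}$; the uniqueness claim via Corollary \ref{C:theory-defn} is fine as you state it.
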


With this last result, we can finally state the definition that is the linchpin
of our entire inductive calculus: a set $P \subseteq \cF^\IS$ is a 
\emph{inductive theory} if it is closed and connected.
  \index{theory!inductive ---}%

The intuitive interpretation of connectivity is analogous to strong
connectivity, but instead of using only (R1)--(R8), we use the whole of our
inductive calculus. That is, a connected set $P$ is one whose inductive
statements can be related to one another via the calculus. A set which is closed
but not connected will not violate the calculus, but it will have unnecessary
parts which the inductive calculus can never reach.

\subsection{Inductive derivability}\label{S:ind-derivability}

It is now straightforward to define inductive derivability. We begin by defining
a set $Q \subseteq \cF^\IS$ to be \emph{consistent}
  \index{consistent}%
if it is connected and can be extended to an inductive theory. The requirement
that a consistent set be extendable to an inductive theory ensures that it does
not violate the calculus of inductive inference. The requirement that it be
connected ensures that its statements are all logically related to one another.

Note that every pre-theory is consistent. Moreover, if $P_0$ is a pre-theory,
then $\bfP(P_0)$ is the smallest extension of $P_0$ to an inductive theory.

\begin{thm}\label{T:theory-gen-defn}
  Every consistent set has a unique smallest extension to an inductive theory.
\end{thm}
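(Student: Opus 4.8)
The plan is to construct the smallest extension as an intersection and to isolate connectivity as the single substantive point. Uniqueness is immediate, since a least element of the inclusion-ordered poset of inductive theories containing $Q$ is unique whenever it exists. For existence, let $\cT$ be the collection of all inductive theories $P$ with $Q \subseteq P$; this is nonempty because $Q$ is consistent. Put $P^* = \bigcap_{P \in \cT} P$. If $P^*$ is itself an inductive theory then, containing $Q$ and being contained in each member of $\cT$, it is the desired smallest extension; and conversely, any smallest extension necessarily equals $P^*$. By Theorem \ref{T:theory-char} the whole problem thus reduces to showing that $P^*$ is closed and connected.

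First I would check that closedness passes to the intersection by a uniform argument. Admissibility and rules (R2)--(R7) survive because each is an implication whose conclusion asserts that a specific triple $(X,\ph,p)$ lies in the set, with $p$ forced by the hypotheses---through a logical relation, the addition or multiplication equation, or a limit of already-existing values; hence if the hypotheses hold in $P^*$ they hold in every $P \in \cT$, the forced value $p$ agrees across all of them, and $(X,\ph,p)$ lands in $P^*$. So $P^*$ is entire. Rule (R8) survives because every completion of a member $P \in \cT$ is also a completion of the smaller set $P^*$; thus if all completions of $P^*$ assign the value $p$ to $P(\ph \mid X)$, so do all completions of each $P$, whence $P(\ph \mid X) = p$ for all $P \in \cT$ and $(X,\ph,p) \in P^*$. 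Rule (R9) survives by intersecting equalities of graphs: if $P^*(\th \mid X) = 1$ for all $\th \in S$, the same holds in each $P$, and the identities $P(\,\cdot \mid X, S) = P(\,\cdot \mid X)$ intersect to $P^*(\,\cdot \mid X, S) = P^*(\,\cdot \mid X)$, with $X \cup S \in \ante P^*$.

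The main obstacle is connectivity, which by the reduction above is equivalent to the existence of the smallest extension. Here I would pass to a basis: fix a strongly connected basis $\wh Q \subseteq Q$ with root $X_0 \in \ante \wh Q$, and define the \emph{pre-theory core} $P_0^*$ to be the set of statements in $P^*$ whose antecedents are countably axiomatizable over $X_0$. The goal is to prove $P^* = \bfP(P_0^*)$, which by Theorem \ref{T:theory-char} certifies $P^*$ as an inductive theory. One inclusion is free: $P_0^* \subseteq P^*$ and $P^*$ is closed, so Theorem \ref{T:theory-defn} gives $\bfP(P_0^*) \subseteq P^*$. For the reverse I would show $\bfP(P_0^*) \in \cT$; since $\bfP(P_0^*)$ is already an inductive theory, it suffices that $Q \subseteq \bfP(P_0^*)$. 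Given $(X,\ph,p) \in Q$ with $X \equiv \wh X \cup S$, $\wh X \in \ante \wh Q$, and $S \subseteq \tau(\wh Q; \wh X)$, closedness of $P^*$ together with (R9) collapses the antecedent to the root level, placing $(\wh X, \ph, p)$ in $P^*$ with $\wh X$ countably axiomatizable over $X_0$, hence $(\wh X, \ph, p) \in P_0^* \subseteq \bfP(P_0^*)$; then (R9) and (R1) re-expand the antecedent inside $\bfP(P_0^*)$ to recover $(X,\ph,p)$. This yields $Q \subseteq \bfP(P_0^*)$, so $\bfP(P_0^*) \in \cT$ and $P^* \subseteq \bfP(P_0^*)$.

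What remains---and what I expect to be the crux, leaning on the explicit constructions of Sections \ref{S:lifts} and \ref{S:ind-cond}---is verifying that $P_0^*$ really is a pre-theory. Strong connectivity is built into its definition, with $X_0$ as root. Closure under (R1)--(R7) is routine, since each rule either leaves the antecedent fixed or enlarges it by a single formula, and adjoining one formula to a set countably axiomatizable over $X_0$ keeps it countably axiomatizable over $X_0$; the entirety of $P^*$ then supplies the required probabilities. The delicate point is (R8) for $P_0^*$: its completions need not respect the root-antecedent class, so transferring the inductive-extension property from $P^*$ to its core requires a careful comparison of the completions of $P_0^*$ with those of $P^*$, and this is where the bulk of the argument lies.
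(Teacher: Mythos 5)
Your proposal is correct in substance, and at bottom it runs on the same machinery as the paper's proof: the failure of naive intersection is circumvented by dropping down to pre-theories over the root of $Q$, where intersections do behave well, and then lifting back up. The organization, however, is genuinely different. The paper first replaces $Q$ by an equivalent strongly connected $Q_0$ (Theorem \ref{T:str-conn-enough}), restricts each ambient theory to the root (Proposition \ref{P:cons-match-root}), and intersects at the pre-theory level, where Corollary \ref{C:intersect-cl} applies; minimality then follows from Theorem \ref{T:theory-defn}. You instead intersect the inductive theories themselves to get $P^*$, verify that $P^*$ is closed (your uniform argument is exactly the paper's Theorem \ref{T:intersect-cl}), and only then restrict to $P_0^* = P^* \dhl_{T_0}$. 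In fact the two constructions produce literally the same pre-theory: by Proposition \ref{P:lift-unique}, the family of pre-theories with root $T_0$ containing $Q_0$ coincides with the family of restrictions $P \dhl_{T_0}$ for $P$ ranging over your $\cT$, so the paper's $\bigcap \cC^0$ equals your $P_0^*$. What your route buys is a direct proof of the fact the paper only remarks on (that $\bigcap \cT$ is itself the smallest inductive theory containing $Q$, provided one attends to the root); what the paper's route buys is economy, since all verification stays at the pre-theory level and the closedness of the big intersection is never needed. Your inline use of (R9) and (R1) to collapse and re-expand antecedents over the basis $\wh Q$ is also sound; it simply reproduces the proof of Theorem \ref{T:str-conn-enough}.

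The one step you leave open---that $P_0^*$ is a pre-theory, with (R8) as the ``delicate point''---is not new work: it is precisely Theorem \ref{T:restrict-inherit}(iv), already proved before this point in the paper, and its proof is exactly the comparison of completions you anticipate (any completion $\ol P$ of $P^*$ restricts to a completion $\ol P \dhl_{T_0}$ of $P_0^*$). The only other ingredient you need, strong connectivity of $P_0^*$, follows from $X_0 \in \ante \wh Q \subseteq \ante Q \subseteq \ante P^*$, which puts the root antecedent inside $\ante P_0^*$. So your anticipated crux closes by citation rather than by argument, and with that your proof is complete.
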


If $Q \subseteq \cF^\IS$ is consistent, let $\bfP(Q)$ or $\bfP_Q$
  \symindex{$\bfP(Q), \bfP_Q$}%
denote its smallest extension to an inductive theory. We call $\bfP_Q$ the
\emph{inductive theory generated by $Q$}. If $Q \subseteq \cF^\IS$ and $(X, \ph,
p) \in \cF^\IS$, we write $Q \vdash (X, \ph, p)$
  \symindex{$Q \vdash (X, \ph, p)$}%
to mean that $Q$ is consistent and $\bfP_Q(\ph \mid X) = p$. When the turnstile
symbol, $\vdash$, is used in this fashion, we will call it the \emph{inductive
derivability relation}.
  \index{derivability relation!propositional ---}%
When $Q \vdash (X, \ph, p)$, we say that $(X, \ph, p)$ is \emph{inductively
derivable} from $Q$, or that $Q$ \emph{proves} $(X, \ph, p)$. Note that unlike
deductive derivability, our convention is that if $Q \subseteq \cF^\IS$ is
inconsistent, then $Q$ does not prove anything.

\begin{rmk}\label{R:classic-ind-th-defn}
  If $P \subseteq \cF^\IS$ is consistent, then $P$ is an inductive theory if and
  only if $P = \bfP(P)$, which holds if and only if $\bfP(P) \subseteq P$.
  Hence, using the above definition of inductive derivability, we can say that a
  consistent set $P \subseteq \cF^\IS$ is an inductive theory if and only if $P
  \vdash (X, \ph, p)$ implies $(X, \ph, p) \in P$ for all $(X, \ph, p) \in
  \cF^\IS$.
\end{rmk}

\section{Connectivity, restrictions, and lifts}\label{S:lifts}

In this section, we prove the results in Section \ref{S:ind-theories}. We begin
with four subsections on preliminary results needed in the proofs. In the first
two subsections, we establish some basic facts about connected and strongly
connected sets, and how they relate to the rules of inference. The third
subsection looks at restrictions of sets $Q \subseteq \cF^\IS$, and examines
which closure properties are preserved under restriction. Finally, the fourth
subsection defines the ``lift'' of a pre-theory, which we denote by $\bfL(P_0)$.
After establishing these preliminaries, we gives the proofs of Theorem 
\ref{T:theory-defn}, Corollary \ref{C:theory-defn}, and Theorem 
\ref{T:theory-char}.

\subsection{Connectivity properties}

For $X, X_0 \subseteq \cF^\IS$, we write $X \cao X_0$
  \symindex{$X \cao X_0$}%
to mean that $X$ is countably axiomatizable over $X_0$. If $\cX \subseteq \fP
\cF$, we write $X \cao \cX$ to mean $X \cao X_0$ for some $X_0 \in \cX$.

\begin{prop}\label{P:ctbly-ax=one-ax}
  Let $X, X_0 \subseteq \cF$. Then $X \cao X_0$ if and only if $X \equiv X_0
  \cup \{\psi\}$ for some $\psi \in \cF$.
\end{prop}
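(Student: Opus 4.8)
The plan is to prove the two directions separately, with the backward direction being essentially immediate and the forward direction carrying the real content. For the backward direction, suppose $X \equiv X_0 \cup \{\psi\}$ for some $\psi \in \cF$. Since $\{\psi\}$ is a countable subset of $\cF$, this witnesses exactly the definition of $X \cao X_0$, so nothing further is needed.

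For the forward direction, assume $X \cao X_0$, so there is a countable $\Phi \subseteq \cF$ with $X \equiv X_0 \cup \Phi$. The key idea is to collapse the countable set $\Phi$ into a single formula by taking its conjunction. Because $\Phi$ is countable, $\bigwedge \Phi$ is a legitimate element of $\cF$, and I would set $\psi = \bigwedge \Phi$. It then suffices to show $X_0 \cup \Phi \equiv X_0 \cup \{\bigwedge \Phi\}$, since combining this with $X \equiv X_0 \cup \Phi$ and the transitivity of $\equiv$ (which follows from the characterization $X \equiv Y \iff T(X) = T(Y)$ recorded earlier) yields $X \equiv X_0 \cup \{\bigwedge \Phi\}$.

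To establish $X_0 \cup \Phi \equiv X_0 \cup \{\bigwedge \Phi\}$, I would use two standard derivability facts about conjunctions. First, by (i), (ii), and (iv), we have $\Phi \vdash \bigwedge \Phi$, and hence by monotonicity $X_0 \cup \Phi \vdash \bigwedge \Phi$; together with the trivial $X_0 \cup \Phi \vdash X_0$, this gives $X_0 \cup \Phi \vdash X_0 \cup \{\bigwedge \Phi\}$. Conversely, by (i) and (iii) we have $\bigwedge \Phi \vdash \th$ for every $\th \in \Phi$, so $X_0 \cup \{\bigwedge \Phi\} \vdash \th$ for all $\th \in \Phi$, and combined with $X_0 \cup \{\bigwedge \Phi\} \vdash X_0$ this gives $X_0 \cup \{\bigwedge \Phi\} \vdash X_0 \cup \Phi$. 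The two derivations together are precisely $X_0 \cup \Phi \equiv X_0 \cup \{\bigwedge \Phi\}$, completing the argument.

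There is no substantial obstacle here; the proof is almost entirely mechanical once the conjunction trick is identified. The one point requiring a moment's care is the degenerate case $\Phi = \emp$, where I would rely on the paper's convention that $\bigwedge \emp = \top$. With that convention, the facts $\Phi \vdash \bigwedge \Phi$ (reading as $\vdash \top$) and ``$\bigwedge \Phi \vdash \th$ for all $\th \in \Phi$'' (vacuously true) still hold, so the uniform argument above covers this case as well, giving $X \equiv X_0 \cup \{\top\} \equiv X_0$ as expected.
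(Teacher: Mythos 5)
Your proof is correct and follows essentially the same route as the paper, which simply cites the fact that $\Phi \equiv \bigwedge \Phi$ for countable $\Phi$; you have merely spelled out that equivalence via rules (i)--(iv) and handled the union with $X_0$ explicitly. Your attention to the degenerate case $\Phi = \emp$ (using the convention $\bigwedge \emp = \top$) is a careful touch the paper leaves implicit.
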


\begin{proof}
  This follows immediately from the fact that $\Phi \equiv \bigwedge \Phi$ for
  any countable $\Phi \subseteq \cF$.
\end{proof}

\begin{prop}\label{P:root-exist}
  If $Q \subseteq \cF^\IS$ is connected, then there exists $X_0 \in \ante Q$
  such that $X \vdash X_0$ for all $X \in \ante Q$. This $X_0$ is unique in the
  sense that if $X_0'$ is another such antecedent, then $X_0 \equiv X_0'$.
\end{prop}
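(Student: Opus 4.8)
The plan is to read a root straight out of the strongly connected basis provided by connectivity, and then promote it from a root for the basis to a root for all of $\ante Q$ by chaining derivations. Since $Q$ is connected, first I would fix a basis $\wh Q \subseteq Q$. By the definition of a basis, $\wh Q$ is strongly connected, so there is some $X_0 \in \ante \wh Q$ over which every antecedent of $\wh Q$ is countably axiomatizable, i.e.\ $X \cao X_0$ for all $X \in \ante \wh Q$. Because $\wh Q \subseteq Q$, we have $\ante \wh Q \subseteq \ante Q$, and in particular $X_0 \in \ante Q$. This $X_0$ will be the desired root.

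To verify existence, I would take an arbitrary $X \in \ante Q$ and unwind the connectivity hypothesis: there exist $\wh X \in \ante \wh Q$ and a set $S \subseteq \tau(\wh Q; \wh X)$ with $X \equiv \wh X \cup S$. The containment $S \subseteq \tau(\wh Q; \wh X)$ records that the formulas in $S$ have probability one, but this plays no role in a pure derivability claim; all I need from the equivalence is $X \vdash \wh X$. Next, strong connectivity gives $\wh X \cao X_0$, that is, $\wh X \equiv X_0 \cup \Phi$ for some countable $\Phi \subseteq \cF$, and hence $\wh X \vdash X_0$. Applying Proposition \ref{P:set-mono} to $X \vdash \wh X$ and $\wh X \vdash X_0$ yields $X \vdash X_0$, as required.

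For uniqueness, suppose $X_0'$ is another antecedent in $\ante Q$ satisfying $X \vdash X_0'$ for all $X \in \ante Q$. Since $X_0 \in \ante Q$, the defining property of $X_0'$ gives $X_0 \vdash X_0'$; since $X_0' \in \ante Q$, the property of $X_0$ just established gives $X_0' \vdash X_0$. Therefore $X_0 \equiv X_0'$. The argument is essentially definition-chasing, so I do not expect a substantive obstacle; the only point requiring care is the bookkeeping that separates the two connectivity notions—reading $X \vdash \wh X$ off the connectivity equivalence while discarding the probability-one content of $S$, and reading $\wh X \vdash X_0$ off countable axiomatizability in the basis—after which transitivity via Proposition \ref{P:set-mono} closes both halves.
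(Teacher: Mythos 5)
Your proof is correct and follows essentially the same route as the paper's: extract $X_0$ from the strongly connected basis $\wh Q$, use the connectivity decomposition $X \equiv \wh X \cup S$ to get $X \vdash \wh X$, chain with $\wh X \vdash X_0$ (from $\wh X \cao X_0$), and obtain uniqueness from mutual derivability. Your version is just slightly more explicit in citing Proposition \ref{P:set-mono} for the transitivity step and in noting $X_0 \in \ante Q$ via $\ante \wh Q \subseteq \ante Q$.
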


\begin{proof}
  Let $\wh Q$ be a basis for $Q$. Since $\wh Q$ is strongly connected, we may
  choose $X_0 \in \ante \wh Q$ such that $\wh X \cao X_0$ for all $\wh X \in
  \ante \wh Q$. Now let $X \in \ante Q$ be given. Choose $\wh X \in \ante \wh
  Q$ and $S \subseteq \tau(\wh Q; \wh X)$ such that $X \equiv \wh X \cup S$.
  Since $\wh X \cao X_0$, we have $\wh X \vdash X_0$, and therefore $X \vdash
  X_0$. Uniqueness is immediate since $X_0 \vdash X_0'$ and $X_0' \vdash X_0$ implies $X_0 \equiv X_0'$.
\end{proof}

Let $Q \subseteq \cF^\IS$ be connected. Choose $X_0$ as in Proposition 
\ref{P:root-exist} and let $T_0 = T(X_0)$. By Proposition \ref{P:root-exist},
the deductive theory $T_0$ does not depend on the choice of $X_0$. We call $T_0$
the \emph{root}
  \index{root}%
of $Q$. Note that if $Q$ is admissible, then by the rule of logical equivalence,
$T_0 \in \ante Q$.

\begin{prop}\label{P:str-conn-root-one-away}
  If $Q \subseteq \cF^\IS$ is strongly connected with root $T_0$, then for each
  $X \in \ante Q$, there exists $\psi \in \cF$ such that $T(X) = T_0 + \psi$.
\end{prop}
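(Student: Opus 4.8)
The plan is to show that the witness antecedent supplied by strong connectivity coincides, up to logical equivalence, with the antecedent used to define the root, and then to reduce the claim to a routine manipulation of the generated-theory operator. First I would unpack the hypothesis: by the definition of strong connectivity there is some $W \in \ante Q$ such that every $X \in \ante Q$ satisfies $X \cao W$. Since $X \cao W$ means $X \equiv W \cup \Phi$ for some countable $\Phi \subseteq \cF$, and $W \cup \Phi \vdash W$, it follows that $X \vdash W$ for every $X \in \ante Q$. Thus $W$ has exactly the defining property of the antecedent in Proposition \ref{P:root-exist}. By the uniqueness clause of that proposition, $W \equiv X_0$, where $X_0$ is the antecedent chosen to define the root; consequently $T(W) = T(X_0) = T_0$.

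Next, I would fix an arbitrary $X \in \ante Q$. Since $X \cao W$, Proposition \ref{P:ctbly-ax=one-ax} produces a single formula $\psi \in \cF$ with $X \equiv W \cup \{\psi\}$. Because $X \equiv Y$ is equivalent to $T(X) = T(Y)$, this gives $T(X) = T(W \cup \{\psi\})$, and it remains only to identify $T(W \cup \{\psi\})$ with $T_0 + \psi$.

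For this final identification I would argue by mutual inclusion. On one hand, $W \subseteq T(W) = T_0$, so $W \cup \{\psi\} \subseteq T_0 \cup \{\psi\}$, and monotonicity of theory generation gives $T(W \cup \{\psi\}) \subseteq T(T_0 \cup \{\psi\}) = T_0 + \psi$. On the other hand, $T_0 = T(W) \subseteq T(W \cup \{\psi\})$ and $\psi \in T(W \cup \{\psi\})$, so $T_0 \cup \{\psi\} \subseteq T(W \cup \{\psi\})$; since the right-hand side is itself a theory, the smallest theory containing $T_0 \cup \{\psi\}$, namely $T_0 + \psi$, is contained in it. Hence $T(W \cup \{\psi\}) = T_0 + \psi$, and therefore $T(X) = T_0 + \psi$, as required.

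The only point requiring genuine care is the identification of the strong-connectivity witness $W$ with the root antecedent, which rests on the uniqueness half of Proposition \ref{P:root-exist}; everything afterward is bookkeeping with $T(\cdot)$ and its interaction with $+$. I do not expect a serious obstacle here, since strong connectivity already hands us a countable axiomatization of each antecedent over a fixed $W$, and Proposition \ref{P:ctbly-ax=one-ax} collapses that countable set to the single formula $\psi$ needed in the statement.
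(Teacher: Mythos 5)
Your proof is correct and follows essentially the same route as the paper's: identify the strong-connectivity witness with the root antecedent via the uniqueness clause of Proposition \ref{P:root-exist}, then apply Proposition \ref{P:ctbly-ax=one-ax} to collapse the countable axiomatization to a single formula $\psi$. The only difference is that you spell out the final identification $T(X_0 \cup \{\psi\}) = T_0 + \psi$ by mutual inclusion, a step the paper treats as immediate.
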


\begin{proof}
  Since $Q$ is strongly connected, we may choose $X_0 \in \ante Q$ such that $X
  \cao X_0$ for all $X \in \ante Q$. Hence, $X \vdash X_0$ for all $X \in \ante
  Q$. By Proposition \ref{P:root-exist}, we have $T_0 = T(X_0)$. Now let $X \in
  \ante Q$ be given. Then $X \cao X_0$, so by Proposition 
  \ref{P:ctbly-ax=one-ax}, we may choose $\psi \in \cF$ such that $X \equiv X_0
  \cup \{\psi\}$, and this gives $T(X) = T_0 + \psi$.
\end{proof}

\begin{prop}\label{P:basis-root}
  If $Q$ is connected and $\wh Q$ is a basis for $Q$, then $Q$ and $\wh Q$ have
  the same root.
\end{prop}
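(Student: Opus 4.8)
The plan is to reduce the claim to the statement $X_0\equiv\wh X_0$ for suitable witnessing antecedents, and then invoke the fact (established in the section on logical equivalence) that $T(X)=T(Y)$ if and only if $X\equiv Y$. First I would note that $\wh Q$, being strongly connected, is connected and is its own basis, so by Proposition \ref{P:root-exist} it has a well-defined root $\wh T_0$; write $T_0$ for the root of $Q$. Using Proposition \ref{P:root-exist} twice, choose $X_0\in\ante Q$ with $T_0=T(X_0)$ and $X\vdash X_0$ for every $X\in\ante Q$, and choose $\wh X_0\in\ante\wh Q$ with $\wh T_0=T(\wh X_0)$ and $\wh X\vdash\wh X_0$ for every $\wh X\in\ante\wh Q$. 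The whole proposition then amounts to showing $X_0\equiv\wh X_0$.

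One direction is immediate. Since $\wh Q\subseteq Q$, we have $\ante\wh Q\subseteq\ante Q$, so in particular $\wh X_0\in\ante Q$; the defining minimality property of $X_0$ then gives $\wh X_0\vdash X_0$. For the reverse direction I would apply the definition of connectivity of $Q$ (with basis $\wh Q$) to the antecedent $X_0\in\ante Q$: there exist $\wh X\in\ante\wh Q$ and a set $S\subseteq\tau(\wh Q;\wh X)$ with $X_0\equiv\wh X\cup S$. From $X_0\equiv\wh X\cup S$ we get $X_0\vdash\wh X$, and since $\wh X\in\ante\wh Q$, the minimality of $\wh X_0$ yields $\wh X\vdash\wh X_0$. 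Transitivity of $\vdash$ (Proposition \ref{P:set-mono}) then gives $X_0\vdash\wh X_0$. Combining the two directions, $X_0\equiv\wh X_0$, and therefore $T_0=T(X_0)=T(\wh X_0)=\wh T_0$.

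There is no serious obstacle in this argument; the only point requiring care is conceptual rather than technical. The root is characterized as the $\vdash$-minimal antecedent, and the extra formulas $S$ appearing in the definition of connectivity all have probability one and hence only strengthen $X_0$ above the $\wh Q$-antecedent $\wh X$. This is precisely the direction needed to conclude $X_0\vdash\wh X_0$, so the presence of $S$ is harmless; the lifting that distinguishes connectivity from strong connectivity does not move the root.
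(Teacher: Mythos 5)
Your proof is correct and is essentially the same argument as the paper's: one direction follows from $\ante \wh Q \subseteq \ante Q$ and the minimality of $X_0$, and the other from applying the basis decomposition $X_0 \equiv \wh X \cup S$ to get $X_0 \vdash \wh X \vdash \wh X_0$, yielding $X_0 \equiv \wh X_0$ and hence equal roots.
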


\begin{proof}
  Let $T_0$ be the root of $Q$ and $\wh T_0$ be the root of $\wh Q$. Then $T_0
  = T(X_0)$ for some $X_0 \in \ante Q$ and $\wh T_0 = T(\wh X_0)$ for some
  $\wh X_0 \in \ante \wh Q$. Since $\wh Q \subseteq Q$, we have $\ante \wh Q
  \subseteq \ante Q$. Hence, $\wh X_0 \in \ante Q$, so by the definition of
  $T_0$, we have $\wh X_0 \vdash X_0$.

  On the other hand, since $X_0 \in \ante Q$ and $\wh Q$ is a basis for $Q$, we
  may choose $\wh X \in \ante \wh Q$ and $S \subseteq \tau(\wh Q; \wh X)$ such
  that $X_0 \equiv \wh X \cup S$. Hence, $X_0 \vdash \wh X$. But $\wh X \in
  \ante \wh Q$, so by the definition of $\wh T_0$, we get $\wh X \vdash
  \wh X_0$. Thus, $X_0 \vdash \wh X_0$, which shows that $X_0 \equiv \wh X_0$,
  and therefore $T_0 = \wh T_0$.
\end{proof}

\subsection{Connectivity and inductive inference}

If $P$ is admissible and strongly connected with root $T_0$, and $\psi \in \cF$
satisfies $T_0 + \psi \in \ante P$, then we call $\psi$ an \emph{antecedent
formula} of $P$.
  \index{antecedent!formula}%
The set of all antecedent formulas of $P$ is denoted by $\AF(P)$.
  \symindex{$\AF(P)$}%
Note that
\[
  \ante P =  \{
    X \subseteq \cF^\IS \mid T(X) = T_0 + \psi \text{ for some } \psi \in \AF(P)
  \}.
\]
This follows from Proposition \ref{P:str-conn-root-one-away} and the rule of
logical equivalence.

For this next result, recall the notation introduced in \eqref{tau-Q-X}.

\begin{prop}\label{P:simple-tau(P)}
  If $P$ is entire and connected with root $T_0$, then $\tau(P) = \tau(P; T_0)$.
\end{prop}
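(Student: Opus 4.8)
The plan is to prove the two inclusions separately, with essentially all the content lying in the reverse inclusion $\tau(P; T_0) \subseteq \tau(P)$. First I would record the standing fact that makes the statement even well-posed: since $P$ is entire (hence admissible) and connected with root $T_0$, the remark immediately following the definition of the root guarantees $T_0 \in \ante P$. This is precisely what licenses writing $\tau(P; T_0)$, which is notation for $\tau(P; \{T_0\})$ and requires $T_0 \in \ante P$.

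The forward inclusion $\tau(P) \subseteq \tau(P; T_0)$ is then immediate from the definitions. By definition, $\th \in \tau(P) = \tau(P; \ante P)$ means $(X, \th, 1) \in P$ for every $X \in \ante P$; specializing to the particular antecedent $X = T_0 \in \ante P$ yields $(T_0, \th, 1) \in P$, that is, $\th \in \tau(P; T_0)$.

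For the reverse inclusion, I would fix $\th \in \tau(P; T_0)$, so that $P(\th \mid T_0) = 1$, and take an arbitrary $X \in \ante P$, with the goal of showing $P(\th \mid X) = 1$. Proposition \ref{P:root-exist} supplies $X \vdash X_0$, where $T_0 = T(X_0)$. Since $X_0$ proves every formula of $T(X_0) = T_0$ (by Theorem \ref{T:theory-deduc}), Proposition \ref{P:set-mono} upgrades $X \vdash X_0$ to $X \vdash T_0$. Now the second clause of the rule of deductive transitivity (R4), applied with $X' = X$, antecedent $T_0$, and formula $\th$, delivers $P(\th \mid X) = 1$ directly from $X \vdash T_0$ together with $P(\th \mid T_0) = 1$. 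Since $X \in \ante P$ was arbitrary, $(X, \th, 1) \in P$ for all $X \in \ante P$, i.e.\ $\th \in \tau(P)$.

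The only point requiring care is not a genuine obstacle but a bookkeeping translation: the root property of Proposition \ref{P:root-exist} is phrased as $X \vdash X_0$, whereas to invoke (R4) I need it in the form $X \vdash T_0$, so I must pass through $T_0 = T(X_0)$ using Proposition \ref{P:set-mono}. I must also confirm at the outset that $T_0$ genuinely lies in $\ante P$. Once these are in hand, the proposition collapses to a single application of the second clause of (R4), so I anticipate no real difficulty.
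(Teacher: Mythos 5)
Your proof is correct and follows essentially the same route as the paper's: the forward inclusion by specializing to $X = T_0$, and the reverse inclusion by deriving $X \vdash T_0$ from the root property and then applying the second clause of deductive transitivity (R4). The only difference is that you spell out the bookkeeping (passing from $X \vdash X_0$ to $X \vdash T_0$ via Theorem \ref{T:theory-deduc} and Proposition \ref{P:set-mono}, and noting $T_0 \in \ante P$) that the paper leaves implicit.
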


\begin{proof}
  Let $\th \in \tau(P)$. Then $P(\th \mid X) = 1$ for all $X \in \ante P$. In
  particular, $P(\th \mid T_0) = 1$, so $\th \in \tau(P; T_0)$. Conversely,
  suppose $P(\th \mid T_0) = 1$ and let $X \in \ante P$ be given. Since $T_0$ is
  the root of $P$, we have $X \vdash T_0$, so by deductive transitivity, $P(\th
  \mid X) = 1$. Since $X$ was arbitrary, $\th \in \tau(P)$.
\end{proof}

For this next result, recall the interval notation introduced just prior to
Lemma \ref{L:omit-psi}.

\begin{prop}\label{P:semi-cl-conn}
  Let $P$ be semi-closed and connected with root $T_0$. If $X \in \ante P$, then
  $X \equiv T + \psi$ for some $T \in [T_0, \tau(P)]$ and some $\psi \in \cF$.
  Moreover, $\psi$ can be chosen so that $T_0 + \psi \in \ante P$.
\end{prop}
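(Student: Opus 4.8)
The plan is to combine the connectivity of $P$ with the single-axiom description of strongly connected antecedents, and then to use the rule of material implication together with Lemma \ref{L:omit-psi} to relocate the ``probability-one'' formulas from $\wh X$ onto the root $T_0$. First I would invoke connectivity: let $\wh Q$ be a basis for $P$, and given $X \in \ante P$, choose $\wh X \in \ante \wh Q$ and $S \subseteq \tau(\wh Q; \wh X)$ with $X \equiv \wh X \cup S$. By Proposition \ref{P:basis-root}, $\wh Q$ has the same root $T_0$ as $P$, and since $\wh Q$ is strongly connected, Proposition \ref{P:str-conn-root-one-away} furnishes $\psi \in \cF$ with $T(\wh X) = T_0 + \psi$; equivalently $\wh X \equiv T_0 \cup \{\psi\}$, so that $X \equiv T_0 \cup \{\psi\} \cup S$. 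This $\psi$ will be the formula appearing in the statement, and the ``moreover'' clause is then immediate: $\wh X \in \ante \wh Q \subseteq \ante P$ and $T_0 + \psi \equiv \wh X$, so the rule of logical equivalence gives $T_0 + \psi \in \ante P$.

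The crux is producing a \emph{theory} $T \in [T_0, \tau(P)]$. The naive choice $T = T(T_0 \cup S)$ fails, because the formulas of $S$ need only have probability one given $\wh X$, not given $T_0$, so $S$ need not lie in $\tau(P)$ (take $\th = \psi$ as a warning case). To repair this, I would note that for each $\th \in S$ we have $(\wh X, \th, 1) \in \wh Q \subseteq P$, and hence $P(\th \mid T_0, \psi) = 1$ by the rule of logical equivalence applied to $\wh X \equiv T_0 \cup \{\psi\}$. Since $T_0 \in \ante P$, the rule of material implication then yields $P(\psi \to \th \mid T_0) = 1$, that is, $\psi \to \th \in \tau(P; T_0)$. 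By Proposition \ref{P:simple-tau(P)} we have $\tau(P; T_0) = \tau(P)$, so on setting $S' = \{\psi \to \th \mid \th \in S\}$ we obtain $S' \subseteq \tau(P)$. I would also record that $T_0 \subseteq \tau(P)$: for $\th \in T_0$ we have $T_0 \vdash \th$, so the rule of logical implication gives $P(\th \mid T_0) = 1$.

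With these ingredients in hand, I would take $T = T(T_0 \cup S')$. Since $\tau(P)$ is a deductive theory (Proposition \ref{P:tau-ded-theory}) and contains $T_0 \cup S'$, it follows that $T_0 \subseteq T \subseteq \tau(P)$, so $T \in [T_0, \tau(P)]$. Applying Lemma \ref{L:omit-psi} with the theory $T_0$, the formula $\psi$, and the set $S$ gives $T_0 + \psi + S = T_0 + \psi + S'$. Since $T(X) = T_0 + \psi + S$ and $T + \psi = T(T_0 \cup S' \cup \{\psi\}) = T_0 + \psi + S'$, these coincide, whence $T(X) = T + \psi$, i.e. $X \equiv T + \psi$, as required.

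The main obstacle is precisely the point flagged in the second paragraph: the set $S$ supplied by connectivity cannot be used directly, because its formulas are certain only relative to $\wh X$ rather than relative to the root. The mechanism that resolves this is the pairing of the material implication rule (which converts certainty of $\th$ over $T_0 \cup \{\psi\}$ into certainty of $\psi \to \th$ over $T_0$) with Lemma \ref{L:omit-psi} (which shows that adjoining $S$ and adjoining $S'$ to $T_0 + \psi$ generate the same theory). Everything else is bookkeeping with roots and the already-established rules, and indeed the argument uses only that $P$ is entire and connected, with semi-closedness not explicitly needed.
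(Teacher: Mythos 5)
Your proof is correct and follows essentially the same route as the paper's: the same basis decomposition $X \equiv \wh X \cup S$, the same $\psi$ with $T(\wh X) = T_0 + \psi$ from Proposition \ref{P:str-conn-root-one-away}, the same replacement $S' = \{\psi \to \th \mid \th \in S\}$ obtained via the rule of material implication, and the same application of Lemma \ref{L:omit-psi} with $T = T_0 + S'$. Your closing observation is also accurate: the paper's own argument likewise uses only entirety and connectivity, and never invokes semi-closedness.
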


\begin{proof}
  Let $\wh Q$ be a basis for $P$. By Proposition \ref{P:basis-root}, $\wh Q$
  also has root $T_0$. Let $X \in \ante P$. Choose $\wh X \in \ante \wh Q$ and
  $S \subseteq \tau(\wh Q; \wh X)$ such that $X \equiv \wh X \cup S$. By
  Proposition \ref{P:str-conn-root-one-away}, we may choose $\psi \in \cF$, such
  that $T(\wh X) = T_0 + \psi$. Since $\wh Q \subseteq P$, we have $\wh X \in
  \ante P$. By the rule of logical equivalence, $T_0 + \psi \in \ante P$.

  Now define $S' = \{\psi \to \th \mid \th \in S\}$ and $T = T_0 + S'$. Then
  $T_0 \subseteq T$ and, by Lemma \ref{L:omit-psi}, we have $X \equiv T(\wh X) +
  S = T_0 + \psi + S = T_0 + \psi + S' = T + \psi$. Hence, it remains only to
  show that $T \subseteq \tau(P)$. By Proposition \ref{P:simple-tau(P)} and the
  fact that $T_0 \subseteq \tau(P; T_0)$, we need only show that $S' \subseteq
  \tau(P; T_0)$.

  Let $\eta \in S'$. Choose $\th \in S$ such that $\eta = \psi \to \th$. Since
  $S \subseteq \tau(\wh Q; \wh X)$, we have $\wh Q(\th \mid \wh X) = 1$. Since
  $\wh Q \subseteq P$, we have $P(\th \mid \wh X) = 1$. By the rule of logical
  equivalence, $P(\th \mid T_0, \psi) = 1$. Hence, by the rule of material
  implication, it follows that $P(\eta \mid T_0) = P(\psi \to \th \mid T_0) =
  1$, showing that $\eta \in \tau(P; T_0)$.
\end{proof}

\subsection{Restrictions}

For $Q \subseteq \cF^\IS$ and $\cX \subseteq \ante Q$, we define
\[
  Q\dhl_\cX = \{(X, \ph, p) \in Q \mid X \cao \cX\}.
\]
  \symindex{$Q \dhl_\cX$}%
Note that if $Q \subseteq Q'$, then $Q\dhl_{X_0} \subseteq Q'\dhl_{X_0}$.

\begin{thm}\label{T:restrict-inherit}
  Let $P \subseteq \cF^\IS$, $\cX \subseteq \ante P$, and define $P' =
  P\dhl_\cX$.
  \begin{enumerate}[(i)]
    \item If $P$ is admissible, then $P'$ is admissible.
    \item If $P$ is entire, then $P'$ is entire.
    \item If $P$ is complete, then $P'$ is complete.
    \item If $P$ is semi-closed, then $P'$ is semi-closed.
  \end{enumerate}
\end{thm}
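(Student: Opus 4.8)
The plan is to lean on two elementary facts about the relation $\cao$ together with the observation that $P' \subseteq P$, so that whenever a probability exists in $P'$ it exists in $P$ with the same value. The two facts are: \textbf{(A)} if $X \cao \cX$ and $\Phi \subseteq \cF$ is countable, then $X \cup \Phi \cao \cX$; and \textbf{(B)} if $X \equiv X'$ and $X \cao \cX$, then $X' \cao \cX$. Both are immediate from the definition of countable axiomatizability and Proposition \ref{P:ctbly-ax=one-ax}. With these in hand, the template for every part is the same: given the hypotheses of a rule holding in $P'$, I transfer them to $P$ (using $P' \subseteq P$), apply the corresponding closure property of $P$ to obtain the conclusion as a statement $(X,\ph,p)\in P$, and then check that its antecedent $X$ still satisfies $X \cao \cX$, so that in fact $(X,\ph,p) \in P'$. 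Part (i) is the base case: admissibility of $P$ makes $P$ a function, hence so is its subset $P'$, and (R1) is verified by applying (R1) for $P$ and then invoking (B) to see that $X' \equiv X \cao \cX$ implies $X' \cao \cX$.

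For (ii) I would verify (R2)--(R7) one at a time. In (R2), (R4), (R5), and (R7) every probability appearing shares a single antecedent $X$, so $X \cao \cX$ is inherited from any existing statement and passed to the conclusion without change. In (R3) the conclusion has antecedent $X$, which is supplied directly by the hypothesis $X \in \ante P'$. The only rule whose conclusion can carry a strictly larger antecedent than its hypotheses is the multiplication rule (R6), where the three probabilities $P(\ph\wedge\psi\mid X)$, $P(\ph\mid X)$, $P(\psi\mid X,\ph)$ involve the antecedents $X$, $X$, and $X\cup\{\ph\}$. Since at most one of them uses $X\cup\{\ph\}$, any two that exist include at least one with antecedent $X$, giving $X \cao \cX$; fact (A) then upgrades this to $X \cup \{\ph\} \cao \cX$, so the third statement's antecedent is also $\cao \cX$. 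Part (iii) follows the same pattern applied to the two conditions of Definition \ref{D:complete}: closure under conjunction keeps the common antecedent $X$, and the existence condition uses that both $X$ and $X\cup\{\ph\}$ lie in $\ante P'$, hence are $\cao \cX$.

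The hard part will be (iv), the rule of inductive extension (R8), because it quantifies over the \emph{completions} of $P'$, and these are not obviously related to the completions of $P$. The key device is part (iii): if $\ol P$ is any completion of $P$, then $\ol P \dhl_\cX$ is complete (by (iii), noting $\cX \subseteq \ante P \subseteq \ante \ol P$) and contains $P' = P\dhl_\cX$ (by monotonicity of restriction), so $\ol P\dhl_\cX$ is a completion of $P'$. I would argue as follows. Suppose $\ol{P'}(\ph\mid X) = p$ for every completion $\ol{P'}$ of $P'$. Since $P$ is semi-closed it has a completion, and for \emph{any} completion $\ol P$ of $P$ the set $\ol P\dhl_\cX$ is a completion of $P'$; applying the hypothesis to it yields $(X,\ph,p)\in \ol P\dhl_\cX$, which unpacks to both $X \cao \cX$ and $\ol P(\ph\mid X) = p$. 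As $\ol P$ ranged over all completions of $P$, this shows $\ol P(\ph\mid X) = p$ for every completion of $P$, so (R8) for the semi-closed set $P$ gives $(X,\ph,p)\in P$. Combined with the $X \cao \cX$ extracted above, we conclude $(X,\ph,p)\in P\dhl_\cX = P'$, which is exactly (R8) for $P'$. Together with the entirety established in (ii), this makes $P'$ semi-closed. The one point demanding care is to record that $P$ genuinely has a completion (guaranteed since semi-closed sets always do), so that the hypothesis about completions of $P'$ can be pulled back to a nonvacuous statement about completions of $P$.
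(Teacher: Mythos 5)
Your proposal is correct and takes essentially the same approach as the paper: your transfer facts (A)/(B) are the paper's observations (a)--(c), and your handling of (iv) --- converting each completion $\ol P$ of $P$ into the completion $\ol P \dhl_\cX$ of $P'$ via part (iii), then invoking (R8) for $P$ --- is exactly the paper's argument. Your explicit remark that semi-closedness guarantees $P$ (and hence $P'$) has a completion, keeping the hypothesis non-vacuous, is a point the paper leaves implicit but is correct.
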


\begin{proof}
  Assume $P$ is admissible. Suppose $(X, \ph , p) \in P'$, so that $X \cao \cX$
  and $(X, \ph, p) \in P$. Let $X' \equiv X$ and $\ph' \equiv_X \ph$. Since $P$
  is admissible, we have $(X', \ph', p) \in P$, and since $X' \equiv X$, it
  follows that $X' \cao \cX$. Hence, $(X', \ph', p) \in P'$. Now suppose $(X',
  \ph', p') \in P'$. Then $(X', \ph', p') \in P$, so the admissibility of $P$
  gives $p' = p$, and therefore $P'$ is admissible.

  Note that
  \begin{enumerate}[(a)]
    \item $X \in \ante P'$ if and only if $X \in \ante P$ and $X \cao \cX$,
    \item $P'(\ph \mid X) = p$ if and only if $P(\ph \mid X) = p$ and $X \cao
          \cX$, and
    \item $X \cao \cX$ implies $X \cup \{\ph\} \cao \cX$.
  \end{enumerate}
  Assume that $P$ is entire. From (a) and (b), we easily see that $P'$ satisfies
  (R2)--(R5) and (R7). From (a)--(c) we get that $P'$ satisfies (R6), so $P'$ is
  entire.

  Assume $P$ is complete. As above, (a) and (b) easily show that $P'$ satisfies
  Definition \ref{D:complete}, so that $P'$ is complete.

  Finally, assume $P$ is semi-closed. Assume every completion of $P'$ contains
  $(X, \ph, p)$. Let $\ol P$ be a completion of $P$. By (iii), $\ol P \dhl_\cX$
  is complete. Since $P \subseteq \ol P$, we have $P' = {P \dhl_\cX} \subseteq
  {\ol P \dhl_\cX}$, so that $\ol P \dhl_\cX$ is a completion of $P'$. Hence,
  $\ol P \dhl_\cX(\ph \mid X) = p$. By (b), we have $\ol P(\ph \mid X) = p$ and
  $X \cao \cX$. Since $\ol P$ was arbitrary and $P$ is semi-closed, it follows
  that $P(\ph \mid X) = p$. Since $X \cao \cX$, we get $P'(\ph \mid X) = p$, and
  $P'$ is semi-closed.
\end{proof}

\begin{cor}\label{C:restrict-inherit}
  If $P$ is complete and $X_0 \in \ante P$, then $P \dhl_{X_0}$ is complete and
  strongly connected with root $T(X_0)$.
\end{cor}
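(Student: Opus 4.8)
The plan is to establish the three assertions---completeness, strong connectivity, and the value of the root---separately, in each case appealing to results and definitions already in hand.

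Completeness requires no new argument. Writing $P\dhl_{X_0}$ as $P\dhl_\cX$ with $\cX = \{X_0\}$, and noting $\cX \subseteq \ante P$ because $X_0 \in \ante P$, Theorem~\ref{T:restrict-inherit}(iii) immediately yields that $P\dhl_{X_0}$ is complete.

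For strong connectivity I would take $X_0$ itself as the distinguished antecedent. First I would confirm that $X_0 \in \ante(P\dhl_{X_0})$: since $X_0 \in \ante P$, there is some $(X_0,\ph,p) \in P$, and because $X_0 \equiv X_0 \cup \emp$ we have $X_0 \cao X_0$, so $(X_0,\ph,p) \in P\dhl_{X_0}$. Next, by the very definition of the restriction, $\ante(P\dhl_{X_0}) = \{X \in \ante P \mid X \cao X_0\}$, so every antecedent of $P\dhl_{X_0}$ is countably axiomatizable over $X_0$. These two facts are precisely the requirements for $P\dhl_{X_0}$ to be strongly connected, witnessed by $X_0$.

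Finally, to identify the root I would use that a strongly connected set is connected (and is its own basis), so Proposition~\ref{P:root-exist} applies and the root is $T(X_0')$ for any $X_0' \in \ante(P\dhl_{X_0})$ satisfying $X \vdash X_0'$ for all $X \in \ante(P\dhl_{X_0})$. The element $X_0$ qualifies: for each such $X$ we have $X \cao X_0$, meaning $X \equiv X_0 \cup \Phi$ for some countable $\Phi$, and hence $X \vdash X_0$; moreover $X_0$ is itself an antecedent of $P\dhl_{X_0}$. Therefore the root is exactly $T(X_0)$. I expect no genuine obstacle here: the corollary is essentially bookkeeping built on Theorem~\ref{T:restrict-inherit} together with the definitions of $\cao$, strong connectivity, and root. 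The one point deserving a moment's care is verifying that $X_0$ survives the restriction, i.e.\ that $X_0 \in \ante(P\dhl_{X_0})$, which rests only on the trivial axiomatization $X_0 \equiv X_0 \cup \emp$.
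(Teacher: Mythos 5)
Your proof is correct and matches the paper's approach: the paper's own proof is simply ``This follows immediately from Theorem~\ref{T:restrict-inherit},'' leaving implicit exactly the bookkeeping you spell out (completeness via part~(iii) with $\cX = \{X_0\}$, strong connectivity from $X_0 \cao X_0$ and the definition of the restriction, and the root via Proposition~\ref{P:root-exist}). Your verification that $X_0$ survives the restriction and witnesses both strong connectivity and the root is precisely the routine content the paper elides.
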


\begin{proof}
  This follows immediately from Theorem \ref{T:restrict-inherit}.
\end{proof}

\begin{cor}\label{C:restrict-ante}
  If $P$ is entire and $\ol P$ is a completion of $P$, then $\ol P \dhl_{\ante
  P}$ is also a completion of $P$.
\end{cor}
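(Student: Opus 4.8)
The plan is to unpack the definition of ``completion'' and verify its two components separately: that $\ol P \dhl_{\ante P}$ is complete, and that it contains $P$. Recall that a completion of $P$ is simply a complete extension, so these are exactly the two obligations.

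For completeness, I would appeal directly to Theorem \ref{T:restrict-inherit}(iii), which says that a restriction of a complete set is complete. The one point requiring care is the re-indexing: that theorem is phrased as $P \dhl_\cX$ with $\cX \subseteq \ante P$, whereas here I am restricting the larger set $\ol P$ using the index family $\ante P$. To apply it I first note that $P \subseteq \ol P$ forces $\ante P \subseteq \ante \ol P$, so that $\ante P$ is a legitimate index family for $\ol P$. Then, taking ``$P$'' in the theorem to be $\ol P$ and ``$\cX$'' to be $\ante P$, and using that $\ol P$ is complete (being a completion of $P$), Theorem \ref{T:restrict-inherit}(iii) yields that $\ol P \dhl_{\ante P}$ is complete.

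For the inclusion $P \subseteq \ol P \dhl_{\ante P}$, I would take an arbitrary $(X, \ph, p) \in P$ and check the two conditions that define membership in the restriction. First, $(X, \ph, p) \in \ol P$ because $\ol P$ extends $P$. Second, I must verify $X \cao \ante P$. But $X \in \ante P$, and taking the countable (indeed empty) set $\Phi = \emp$ gives $X \equiv X \cup \emp$, so $X \cao X$ with $X$ itself serving as the witnessing element of $\ante P$. Hence $(X, \ph, p) \in \ol P \dhl_{\ante P}$, which establishes the inclusion.

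There is no substantial obstacle here; the argument is essentially bookkeeping. The only two things to watch are the index-family re-indexing when invoking Theorem \ref{T:restrict-inherit}, which is handled by the observation $\ante P \subseteq \ante \ol P$, and the trivial self-axiomatization $X \cao X$ that guarantees each antecedent of $P$ survives the restriction. Together, completeness and the inclusion show that $\ol P \dhl_{\ante P}$ is a complete extension of $P$, i.e.\ a completion of $P$.
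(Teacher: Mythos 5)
Your proof is correct and follows essentially the same route as the paper's: completeness of $\ol P \dhl_{\ante P}$ via Theorem \ref{T:restrict-inherit}(iii), and the inclusion $P \subseteq \ol P \dhl_{\ante P}$ by noting that each $(X, \ph, p) \in P$ lies in $\ol P$ and that $X \in \ante P$ gives $X \cao \ante P$. The paper leaves the re-indexing observation $\ante P \subseteq \ante \ol P$ and the trivial witness $X \cao X$ implicit, so your version merely spells out bookkeeping the paper takes for granted.
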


\begin{proof}
  Let $P$ be entire and let $\ol P$ be a completion of $P$. Define $P' = \ol P
  \dhl_{\ante P}$. Theorem \ref{T:restrict-inherit} implies that $P'$ is also
  complete. Suppose $P(\ph \mid X) = p$. Since $P \subseteq \ol P$, we have
  $\ol P(\ph \mid X) = p$. Since $X \in \ante P$, it follows that $P'(\ph \mid
  X) = p$, showing that $P \subseteq P'$.
\end{proof}

\begin{cor}\label{C:ind-ext-str-conn}
  Suppose $P$ is an entire set that has a completion. If $\ol P(\ph \mid X) = p$
  for all completions $\ol P$ of $P$, then $X \cao \ante P$.
\end{cor}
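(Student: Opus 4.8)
The plan is to exploit a specific completion of $P$ supplied by Corollary \ref{C:restrict-ante}, namely $\ol P \dhl_{\ante P}$, whose antecedents are exactly those that are countably axiomatizable over $\ante P$. Since the hypothesis asserts that $\ol P(\ph \mid X) = p$ for \emph{every} completion of $P$, applying it to this particular completion will force $X$ to be one of its antecedents, and that is precisely the statement $X \cao \ante P$. The key realization is that one should not reason with an arbitrary completion, but with this restricted one, since the restriction is what links the existence of the conditional probability to the countable-axiomatizability of the antecedent.

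First I would use the assumption that $P$ has a completion to fix some completion $\ol P$ of $P$. Because $P$ is entire, Corollary \ref{C:restrict-ante} guarantees that $\ol P \dhl_{\ante P}$ is again a completion of $P$; here Theorem \ref{T:restrict-inherit}(iii) ensures completeness is preserved under restriction, and Corollary \ref{C:restrict-ante} ensures the restricted set still extends $P$. Next I would apply the hypothesis to this specific completion: since $\ol P \dhl_{\ante P}$ is a completion of $P$, we obtain $(\ol P \dhl_{\ante P})(\ph \mid X) = p$. In particular this probability \emph{exists}, so $X \in \ante(\ol P \dhl_{\ante P})$.

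Finally, by the definition of the restriction operation $\dhl$ — recorded as the equivalence (a) in the proof of Theorem \ref{T:restrict-inherit}, that $X \in \ante(Q \dhl_\cX)$ iff $X \in \ante Q$ and $X \cao \cX$ — membership $X \in \ante(\ol P \dhl_{\ante P})$ is equivalent to $X \in \ante \ol P$ together with $X \cao \ante P$. The second conjunct is exactly the desired conclusion. I do not expect a genuine obstacle in the argument; the only point requiring insight is choosing the restricted completion rather than an arbitrary one, because for an arbitrary completion the probability $\ol P(\ph \mid X)$ could well exist even when $X$ is not countably axiomatizable over $\ante P$, leaving no leverage. The restriction $\dhl_{\ante P}$ surgically removes every antecedent failing $\cao \ante P$, so that the mere existence of the probability in that completion already encodes the conclusion.
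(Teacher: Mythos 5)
Your proposal is correct and follows essentially the same route as the paper's own proof: both fix an arbitrary completion $\ol P$, invoke Corollary \ref{C:restrict-ante} to obtain the restricted completion $\ol P \dhl_{\ante P}$, apply the hypothesis to that particular completion, and then unwind the definition of $\dhl$ to read off $X \cao \ante P$. The only (immaterial) difference is that you extract the conclusion via antecedent membership, fact (a) in the proof of Theorem \ref{T:restrict-inherit}, whereas the paper cites fact (b); these are the same definition-unwinding.
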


\begin{proof}
  Let $P$ be an entire set that has a completion. Assume $\ol P(\ph \mid X) = p$
  for all completions $\ol P$ of $P$. Choose a completion $\ol P$ of $P$. By
  Corollary \ref{C:restrict-ante}, the set $P' = \ol P \dhl_{\ante P}$ is also a
  completion of $P$. Hence, $P'(\ph \mid X) = p$. By (b) above, we have $\ol P
  (\ph \mid X) = p$ and $X \cao \ante P$.
\end{proof}

\begin{cor}\label{C:compl-restrict}
  Let $P_0$ be a pre-theory with root $T_0$ and let $\ol P_0$ be a completion of
  $P_0$. Define $P_0' = \ol P_0 \dhl_{T_0}$. Then $P_0'$ is a completion of
  $P_0$ that is also a pre-theory with root $T_0$.
\end{cor}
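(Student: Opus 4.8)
The plan is to establish two separate claims about $P_0' = \ol P_0 \dhl_{T_0}$: that it is a completion of $P_0$ (a complete set containing $P_0$), and that it is itself a pre-theory with root $T_0$. Both halves will rest almost entirely on Corollary \ref{C:restrict-inherit}, which already describes what happens when a complete set is restricted to the antecedents countably axiomatizable over a single antecedent.

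First I would verify that $T_0 \in \ante \ol P_0$, so that the restriction $\ol P_0 \dhl_{T_0}$ is of the form handled by Corollary \ref{C:restrict-inherit}. Since $P_0$ is a pre-theory it is in particular admissible, so its root $T_0$ lies in $\ante P_0$ (the remark following the definition of the root), and $P_0 \subseteq \ol P_0$ then gives $T_0 \in \ante \ol P_0$. Because $\ol P_0$ is complete, Corollary \ref{C:restrict-inherit} immediately yields that $P_0'$ is complete and strongly connected with root $T(T_0)$. As $T_0$ is already a deductive theory, $T(T_0) = T_0$, so $P_0'$ has root $T_0$. Completeness entails semi-closedness, and together with strong connectivity this establishes that $P_0'$ is a pre-theory with root $T_0$.

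The only substantive step is the inclusion $P_0 \subseteq P_0'$, which is what upgrades $P_0'$ from a mere complete pre-theory to an actual completion of $P_0$. I would take an arbitrary $(X, \ph, p) \in P_0$; since $P_0 \subseteq \ol P_0$ we already have $(X, \ph, p) \in \ol P_0$, so it suffices to check that $X \cao T_0$. Here I would invoke the strong connectivity of $P_0$: by Proposition \ref{P:str-conn-root-one-away} applied to $P_0$ with root $T_0$, there is some $\psi \in \cF$ with $T(X) = T_0 + \psi$, whence $X \equiv T_0 \cup \{\psi\}$ and therefore $X \cao T_0$ by Proposition \ref{P:ctbly-ax=one-ax}. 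Thus $(X, \ph, p) \in \ol P_0 \dhl_{T_0} = P_0'$, giving the inclusion.

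With both pieces in hand, $P_0'$ is complete and contains $P_0$, hence is a completion of $P_0$, and is simultaneously a pre-theory with root $T_0$. The main obstacle, such as it is, is purely bookkeeping: confirming that the single-antecedent restriction $\dhl_{T_0}$ retains all of $\ante P_0$, which is exactly the content of strong connectivity and is supplied by Proposition \ref{P:str-conn-root-one-away}.
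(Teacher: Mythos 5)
Your proposal is correct and follows essentially the same route as the paper: restriction-inheritance (Theorem \ref{T:restrict-inherit}, packaged for you as Corollary \ref{C:restrict-inherit}) gives that $P_0'$ is complete and strongly connected with root $T_0$, hence a pre-theory, and the inclusion $P_0 \subseteq P_0'$ follows because strong connectivity of $P_0$ forces $X \cao T_0$ for every $X \in \ante P_0$. Your extra bookkeeping (checking $T_0 \in \ante \ol P_0$ and $T(T_0) = T_0$) is sound and merely makes explicit what the paper leaves implicit.
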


\begin{proof}
  By Theorem \ref{T:restrict-inherit}, the set $P_0'$ is complete and strongly
  connected with root $T_0$. Since complete sets are semi-closed, it follows
  that $P_0'$ is a pre-theory.

  Suppose $P_0(\ph \mid X) = p$. Since $P_0 \subseteq \ol P_0$, we have $\ol P_0
  (\ph \mid X) = p$. Since $P_0$ is strongly connected with root $T_0$, it
  follows that $X \cao T_0$. Thus, $P_0'(\ph \mid X) = p$, showing that $P_0
  \subseteq P_0'$.
\end{proof}

\subsection{The lift of a pre-theory}

Let $P_0$ be a pre-theory with root $T_0$. By Proposition
\ref{P:tau-ded-theory}, the set $\tau(P_0)$ is a deductive theory. Since $P_0$
is admissible, we have $T_0 \in \ante P_0$, so that by Proposition
\ref{P:simple-tau(P)} and the rule of logical implication, $T_0 \subseteq
\tau(P_0)$. Let $X \subseteq \cF$. If $T(X) = T + \psi$ for some $T \in [T_0,
\tau(P_0)]$ and some $\psi \in \AF(P_0)$, then we call $X$ a \emph{generalized
antecedent of $P_0$}.
  \index{antecedent!generalized ---}
The set of all such generalized antecedents is denoted by $\cG\cA(P_0)$.
  \symindex{$\cG\cA(P_0)$}

\begin{prop}\label{P:lift-well-defined}
  Let $P_0$ be a pre-theory with root $T_0$ and let $X \in \cG\cA(P_0)$. Suppose
  $T(X) = T + \psi = T' + \psi'$, where $T, T' \in [T_0, \tau(P_0)]$ and $\psi,
  \psi' \in \AF(P_0)$. Then $P_0(\; \cdot \mid T_0, \psi) = P_0(\; \cdot \mid
  T_0, \psi')$.
\end{prop}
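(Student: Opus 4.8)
The plan is to reduce the entire claim to a single probability-one fact, namely $P_0(\psi \tot \psi' \mid T_0) = 1$, and then transfer probabilities across the two antecedents by invoking Proposition \ref{P:log-equiv-gen-2}. Since $P_0$ is a pre-theory, it is semi-closed, so once $P_0(\psi \tot \psi' \mid T_0) = 1$ is in hand, Proposition \ref{P:log-equiv-gen-2} (applied with $X = T_0$) shows that $P_0(\ph \mid T_0, \psi) = p$ implies $P_0(\ph \mid T_0, \psi') = p$; applying it again with the roles of $\psi$ and $\psi'$ reversed—legitimate because $\psi' \tot \psi \equiv \psi \tot \psi'$ and hence also has probability one given $T_0$ by the rule of logical equivalence—gives the reverse implication. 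Together these yield $P_0(\; \cdot \mid T_0, \psi) = P_0(\; \cdot \mid T_0, \psi')$ as functions, which is exactly the assertion.

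It remains to establish $P_0(\psi \tot \psi' \mid T_0) = 1$, and the whole argument turns on extracting the right deductive facts from the hypothesis $T + \psi = T' + \psi'$. Since $\psi' \in T' + \psi' = T + \psi = T(T \cup \{\psi\})$, Theorem \ref{T:theory-deduc} gives $T, \psi \vdash \psi'$, and symmetrically $T', \psi' \vdash \psi$. By $\si$-compactness (Theorem \ref{T:sig-cpctness}) I may replace $T$ by a countable subset $T^* \subseteq T$ with $T^*, \psi \vdash \psi'$, and then Proposition \ref{P:derivability}(a) together with Proposition \ref{P:conj-equiv} shows that $\ga := \bigwedge T^* \vdash \psi \to \psi'$. (The degenerate case $T^* = \emp$ only makes $\psi \to \psi'$ a tautology and is handled the same way with $\ga = \top$.) An analogous countable witness $\ga'$ built from $T'$ satisfies $\ga' \vdash \psi' \to \psi$.

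The key point is now that these witnesses $\ga, \ga'$ consist of conjuncts drawn from $T, T' \subseteq \tau(P_0)$, and hence have probability one. Concretely, Proposition \ref{P:simple-tau(P)} gives $\tau(P_0) = \tau(P_0; T_0)$, so each conjunct $\th$ of $\ga$ satisfies $P_0(\th \mid T_0) = 1$; Corollary \ref{C:certainty-closure} (or Proposition \ref{P:certainty-closure} in the finite case) then yields $P_0(\ga \mid T_0) = 1$, and the rule of deductive transitivity upgrades $\ga \vdash \psi \to \psi'$ to $P_0(\psi \to \psi' \mid T_0) = 1$. The same reasoning gives $P_0(\psi' \to \psi \mid T_0) = 1$, whence Proposition \ref{P:certainty-closure} produces $P_0((\psi \to \psi') \wedge (\psi' \to \psi) \mid T_0) = 1$; by the definition of $\tot$ and the rule of logical equivalence this is precisely $P_0(\psi \tot \psi' \mid T_0) = 1$. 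The main obstacle to watch is bookkeeping rather than depth: one must keep the two conditioning sets $T_0 \cup \{\psi\}$ and $T_0 \cup \{\psi'\}$ distinct from the full theory $T + \psi = T' + \psi'$, and resist the temptation to append all of $T$ to the antecedent at once, since that would invoke the rule of deductive extension (R9), which is not available for a mere pre-theory. Routing everything through the probability-one equivalence $\psi \tot \psi'$ and the semi-closed Proposition \ref{P:log-equiv-gen-2} sidesteps this difficulty entirely.
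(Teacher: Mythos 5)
Your proof is correct, and it diverges from the paper's in an instructive way. Both arguments share the same first half: you each reduce the problem to showing $P_0(\psi \tot \psi' \mid T_0) = 1$, extracted from the theory equality $T + \psi = T' + \psi'$ together with $T, T' \subseteq \tau(P_0)$. (The paper gets this more compactly by noting $\tau(P_0), \psi \vdash \psi'$ and $\tau(P_0), \psi' \vdash \psi$, then citing that $\tau(P_0)$ is a deductive theory via Proposition \ref{P:tau-ded-theory}; your $\si$-compactness/countable-conjunction/Corollary \ref{C:certainty-closure} argument is essentially an inlined re-proof of that proposition, so the content is identical.) Where you genuinely part ways is the transfer step. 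The paper stays entirely inside the entire-set calculus (R1)--(R7): it uses the hypothesis $\psi' \in \AF(P_0)$ to know $T_0 + \psi' \in \ante P_0$, derives $P_0(\psi \mid T_0, \psi') = 1$ and $P_0(\psi' \mid T_0, \psi) = 1$ by deductive transitivity and Proposition \ref{P:transitivity}, and then shuffles $p$ across antecedents with two applications of the multiplication rule, Proposition \ref{P:certainty-closure}, and Proposition \ref{P:cert-cl-conv}. You instead delegate the entire transfer to Proposition \ref{P:log-equiv-gen-2}, which is legitimate --- $P_0$ is semi-closed by definition of pre-theory, that proposition is proved in Section \ref{S:closed} with no dependence on Section \ref{S:lifts}, and indeed the paper itself applies it to a pre-theory in the very next result (Proposition \ref{P:any-psi}), so there is no circularity. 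What each route buys: the paper's hand-rolled version makes visible exactly where the hypothesis $\psi, \psi' \in \AF(P_0)$ enters and never invokes the completion machinery behind (R8); your version is shorter and more modular, and as a bonus it never uses the $\AF(P_0)$ hypothesis at all --- the needed antecedent membership comes for free from the conclusion of Proposition \ref{P:log-equiv-gen-2} --- so you have in fact proved a slightly stronger statement, at the cost of routing through the rule of inductive extension hidden inside that proposition. Your closing caution about not appending all of $T$ to the antecedent (which would require (R9), unavailable for a pre-theory) is exactly the right bookkeeping concern, and both proofs respect it.
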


\begin{proof}
  By symmetry, it suffices to show that $P_0(\ph \mid T_0, \psi) = p$ implies
  $P_0(\ph \mid T_0, \psi') = p$. Assume $P_0(\ph \mid T_0, \psi) = p$. First
  note that $T + \psi = T' + \psi'$ implies $\psi' \in T + \psi \subseteq \tau
  (P_0) + \psi$, so that $\tau(P_0), \psi \vdash \psi'$. Likewise, $\tau(P_0),
  \psi' \vdash \psi$. Hence, $\psi \tot \psi' \in \tau(P_0)$, giving $P_0(\psi
  \tot \psi' \mid T_0) = 1$. Since $\psi' \in \AF(P_0)$, we have $T_0 + \psi'
  \in \ante P_0$, so by two applications of deductive transitivity, $P_0(\psi'
  \to \psi \mid T_0, \psi') = 1$. By the rule of logical implication,
  $P_0(\psi' \mid T_0, \psi') = 1$. Applying Proposition
  \ref{P:transitivity}(iii) with $\ze = \top$ gives $P_0 (\psi \mid T_0, \psi')
  = 1$. A similar argument shows that $P_0(\psi' \mid T_0, \psi) = 1$.

  Now, by Proposition \ref{P:certainty-closure}, we have $P_0(\ph \wedge \psi'
  \mid T_0, \psi) = p$. By the multiplication rule, $P_0(\ph \mid T_0, \psi,
  \psi') = p$. A second application of the multiplication rule then gives $P_0
  (\ph \wedge \psi \mid T_0, \psi') = p$. By Proposition \ref{P:cert-cl-conv},
  it follows that $P_0(\ph \mid T_0, \psi') = p$.
\end{proof}

Let $P_0$ be a pre-theory with root $T_0$. For each $X \in \cG\cA(P_0)$, choose
$T^X \in [T_0, \tau(P_0)]$ and $\psi_X \in \AF(P_0)$ such that $T(X) = T^X +
\psi_X$. Define
\[
  \bfL(P_0) = \{
    (X, \ph, p) \mid X \in \cG\cA(P_0), P_0(\ph \mid T_0, \psi_X) = p
  \}.
\]
  \symindex{$\bfL(P_0)$}%
By Proposition \ref{P:lift-well-defined}, the definition of $\bfL(P_0)$ does not
depend on the choices of $T^X$ and $\psi_X$. We call $\bfL(P_0)$ the \emph{lift}
of $P_0$, and may sometimes denote it by $\bfL_{P_0}$.
  \index{lift}%

\begin{prop}\label{P:any-psi}
  Let $P_0$ be a pre-theory with root $T_0$. If $X \in \ante \bfL(P_0)$ and $T
  (X) = T + \psi$, where $T \in [T_0, \tau(P_0)]$ and $\psi \in \cF$, then
  $\psi \in \AF(P_0)$.
\end{prop}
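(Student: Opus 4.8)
The plan is to use the hypothesis $X \in \ante \bfL(P_0)$ to produce a second, ``canonical'' decomposition of $T(X)$ and then compare it against the given one. First I would note that by the very definition of $\bfL(P_0)$, every antecedent of $\bfL(P_0)$ is a generalized antecedent, so $X \in \cG\cA(P_0)$; hence there exist $T' \in [T_0, \tau(P_0)]$ and $\psi' \in \AF(P_0)$ with $T(X) = T' + \psi'$. I then have two presentations of the same theory, $T + \psi = T(X) = T' + \psi'$, in which both $T$ and $T'$ sit between $T_0$ and $\tau(P_0)$, the formula $\psi'$ is already known to be an antecedent formula, and $\psi$ is the formula I must show is one. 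Since $P_0$ is a pre-theory it is entire, connected, and semi-closed with root $T_0$, and $T_0 \subseteq \tau(P_0)$; these are exactly the hypotheses needed for the tools below.

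The key step is to show $\psi$ and $\psi'$ are equivalent modulo $\tau(P_0)$. From $\psi \in T(X) = T' + \psi'$ together with $T' \subseteq \tau(P_0)$, deduction gives $\tau(P_0), \psi' \vdash \psi$; symmetrically, from $\psi' \in T(X) = T + \psi$ and $T \subseteq \tau(P_0)$, I get $\tau(P_0), \psi \vdash \psi'$. Thus $\psi \equiv_{\tau(P_0)} \psi'$, which, because $\tau(P_0)$ is a deductive theory, is the same as $\psi \tot \psi' \in \tau(P_0)$. Applying Proposition \ref{P:simple-tau(P)} (so $\tau(P_0) = \tau(P_0; T_0)$) then yields $P_0(\psi \tot \psi' \mid T_0) = 1$, and hence $P_0(\psi' \tot \psi \mid T_0) = 1$ by the rule of logical equivalence.

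Finally I would transport antecedent-hood from $\psi'$ to $\psi$. Since $\psi' \in \AF(P_0)$ we have $T_0 + \psi' \in \ante P_0$, so the rule of logical implication gives $P_0(\top \mid T_0, \psi') = 1$. Now invoke Proposition \ref{P:log-equiv-gen-2}, which applies because $P_0$ is semi-closed, with antecedent base $X = T_0$, consequent $\top$, and the equivalence $P_0(\psi' \tot \psi \mid T_0) = 1$, to conclude $P_0(\top \mid T_0, \psi) = 1$. This says $T_0 \cup \{\psi\} \in \ante P_0$, and since $T_0 \cup \{\psi\} \equiv T_0 + \psi$, the rule of logical equivalence promotes this to $T_0 + \psi \in \ante P_0$, i.e.\ $\psi \in \AF(P_0)$, as desired.

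I expect the only genuine obstacle to be the middle step: correctly extracting $\psi \tot \psi' \in \tau(P_0)$ from the two decompositions and recognizing that the surplus theories $T$ and $T'$ contribute nothing beyond $\tau(P_0)$-certainties. Everything afterward is a direct application of the semi-closed machinery already in place, so the only care required is bookkeeping—keeping straight which of $\psi$ and $\psi'$ plays the role of the ``$\psi$'' and which the ``$\psi'$'' when feeding the equivalence into Proposition \ref{P:log-equiv-gen-2}.
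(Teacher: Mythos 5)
Your proof is correct and follows essentially the same route as the paper's: both compare the given decomposition $T + \psi$ against the canonical one $T^X + \psi_X$ from the lift, extract $\psi \tot \psi_X \in \tau(P_0)$ exactly as in Proposition \ref{P:lift-well-defined}, and then invoke semi-closedness via Proposition \ref{P:log-equiv-gen-2} to transfer antecedent-hood. The only cosmetic difference is that you transport the consequent $\top$ with probability $1$, whereas the paper transports the witness $\ph$ with probability $p$; both are valid instances of the same lemma.
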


\begin{proof}
  Since $X \in \ante \bfL(P_0)$, we may choose $\ph$ and $p$ such that $(X, \ph,
  p) \in \bfL(P_0)$. Then $T(X) = T^X + \psi_X$ and $P_0(\ph \mid T_0, \psi_X)
  = p$. As in the proof of Proposition \ref{P:lift-well-defined}, since $T, T^X
  \subseteq \tau(P_0)$ and $T + \psi = T^X + \psi_X$, we have $\psi \tot
  \psi_X \in \tau(P_0)$. Thus, $P_0(\psi \tot \psi_X \mid T_0) = 1$. Since $P_0$
  is semi-closed, Proposition \ref{P:log-equiv-gen-2} gives $P_0(\ph \mid T_0,
  \psi) = p$, showing that $T_0 + \psi \in \ante P_0$, and therefore $\psi \in
  \AF(P_0)$.
\end{proof}

\begin{prop}\label{P:lift-extends}
  If $P_0$ is a pre-theory, then $P_0 \subseteq \bfL(P_0)$.
\end{prop}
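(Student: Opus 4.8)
The plan is to take an arbitrary $(X, \ph, p) \in P_0$ and verify directly the two membership conditions that define $\bfL(P_0)$: that $X \in \cG\cA(P_0)$, and that $P_0(\ph \mid T_0, \psi_X) = p$, where $\psi_X$ is the antecedent formula fixed in the construction of the lift. Since $(X, \ph, p) \in P_0$ gives both $X \in \ante P_0$ and $P_0(\ph \mid X) = p$, the first move is to produce a convenient axiomatization of $X$ over the root. Because $P_0$ is strongly connected with root $T_0$, Proposition \ref{P:str-conn-root-one-away} supplies a formula $\psi \in \cF$ with $T(X) = T_0 + \psi$.

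I would then establish $X \in \cG\cA(P_0)$. Since $T_0 \cup \{\psi\}$ and $X$ generate the same theory $T(X)$, we have $X \equiv T_0 \cup \{\psi\}$, so the rule of logical equivalence applied to $(X, \ph, p) \in P_0$ yields $(T_0 \cup \{\psi\}, \ph, p) \in P_0$. A further application of the same rule, using $T_0 + \psi \equiv T_0 \cup \{\psi\}$, gives $T_0 + \psi \in \ante P_0$, and hence $\psi \in \AF(P_0)$. As noted just before the statement, $T_0 \subseteq \tau(P_0)$, so $T_0 \in [T_0, \tau(P_0)]$; thus the representation $T(X) = T_0 + \psi$ with $T = T_0$ exhibits $X$ as a generalized antecedent, giving $X \in \cG\cA(P_0)$. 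The same step also hands me the probability in a convenient form, since $(T_0 \cup \{\psi\}, \ph, p) \in P_0$ means $P_0(\ph \mid T_0, \psi) = p$.

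The remaining gap is that $\bfL(P_0)$ is defined via the specific choices $T^X$ and $\psi_X$ with $T(X) = T^X + \psi_X$, and a priori $\psi_X \ne \psi$ (indeed $T^X$ need not equal $T_0$), so I cannot yet read off the value at $\psi_X$. This is exactly where the argument routes through an earlier result: both $T(X) = T_0 + \psi$ and $T(X) = T^X + \psi_X$ are representations with the theory in $[T_0, \tau(P_0)]$ and the formula in $\AF(P_0)$, so Proposition \ref{P:lift-well-defined} gives $P_0(\,\cdot\mid T_0, \psi) = P_0(\,\cdot\mid T_0, \psi_X)$. Evaluating at $\ph$ yields $P_0(\ph \mid T_0, \psi_X) = p$, which together with $X \in \cG\cA(P_0)$ shows $(X, \ph, p) \in \bfL(P_0)$.

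The only real obstacle is this reconciliation of the canonical data $(T^X, \psi_X)$ fixed in the definition of the lift with the convenient representation $(T_0, \psi)$ furnished by strong connectivity — but that is precisely the independence-of-choices content of Proposition \ref{P:lift-well-defined}, so no genuinely new argument is needed. Everything else is an unwinding of the definitions of $\cG\cA(P_0)$ and $\AF(P_0)$ together with a couple of appeals to the rule of logical equivalence.
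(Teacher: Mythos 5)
Your proposal is correct and follows essentially the same route as the paper's proof: use strong connectivity (Proposition \ref{P:str-conn-root-one-away}) to write $T(X) = T_0 + \psi$, apply the rule of logical equivalence to get $P_0(\ph \mid T_0, \psi) = p$, and conclude $\psi \in \AF(P_0)$ and $X \in \cG\cA(P_0)$, hence $(X, \ph, p) \in \bfL(P_0)$. The only difference is presentational: the final reconciliation with the canonical choices $(T^X, \psi_X)$, which you spell out via Proposition \ref{P:lift-well-defined}, is exactly what the paper leaves implicit in its remark that the definition of $\bfL(P_0)$ does not depend on those choices.
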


\begin{proof}
  Let $T_0$ be the root of $P_0$. Suppose $P_0(\ph \mid X) = p$. Since $P_0$ is
  strongly connected, we may choose $\ph \in \cF$ such that $T(X) = T_0 + \psi$.
  By the rule of logical equivalence, $P_0(\ph \mid T_0, \psi) = p$. Thus,
  $\psi \in \AF(P_0)$ and, since $T_0 \in [T_0, \tau(P_0)]$, we have $X \in
  \cG\cA(P_0)$. Therefore, $(X, \ph, p) \in \bfL(P_0)$.
\end{proof}

\begin{prop}\label{P:lift-unique}
  If $P_0$ is a pre-theory with root $T_0$, then $\bfL(P_0) \dhl_{T_0} = P_0$.
\end{prop}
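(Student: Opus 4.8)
The plan is to prove the set equality $\bfL(P_0)\dhl_{T_0} = P_0$ by establishing the two inclusions separately. Throughout, I would rely on two facts that control how antecedents decompose relative to the root. First, since $P_0$ is strongly connected with root $T_0$, Proposition \ref{P:str-conn-root-one-away} gives, for each $X \in \ante P_0$, a formula $\psi$ with $T(X) = T_0 + \psi$, so that $X \equiv T_0 \cup \{\psi\}$ and hence $X \cao T_0$. Second, I would use the two structural results about the lift: Proposition \ref{P:any-psi}, which says that any decomposition $T(X) = T + \psi$ of an antecedent of $\bfL(P_0)$ with $T \in [T_0, \tau(P_0)]$ forces $\psi \in \AF(P_0)$, and Proposition \ref{P:lift-well-defined}, which guarantees that the conditional probabilities $P_0(\,\cdot \mid T_0, \psi)$ do not depend on which admissible decomposition of $T(X)$ is used.

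For the inclusion $P_0 \subseteq \bfL(P_0)\dhl_{T_0}$, I would start from Proposition \ref{P:lift-extends}, which already gives $P_0 \subseteq \bfL(P_0)$. It then remains only to check the restriction condition: if $(X, \ph, p) \in P_0$ then $X \in \ante P_0$, so by the first fact above $X \cao T_0$, and therefore $(X, \ph, p) \in \bfL(P_0)\dhl_{T_0}$. One should also note that $T_0 \in \ante \bfL(P_0)$, since $T_0 \in \ante P_0 \subseteq \ante \bfL(P_0)$, so that the restriction $\dhl_{T_0}$ is legitimate.

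For the reverse inclusion $\bfL(P_0)\dhl_{T_0} \subseteq P_0$, take $(X, \ph, p) \in \bfL(P_0)\dhl_{T_0}$. Then $X \cao T_0$ and $(X, \ph, p) \in \bfL(P_0)$; the latter supplies a decomposition $T(X) = T^X + \psi_X$ with $T^X \in [T_0, \tau(P_0)]$, $\psi_X \in \AF(P_0)$, and $P_0(\ph \mid T_0, \psi_X) = p$. From $X \cao T_0$ and Proposition \ref{P:ctbly-ax=one-ax} I would write $T(X) = T_0 + \chi$ for a single $\chi \in \cF$; Proposition \ref{P:any-psi} then upgrades this to $\chi \in \AF(P_0)$. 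Now $T(X)$ carries the two admissible decompositions $T^X + \psi_X = T_0 + \chi$, so Proposition \ref{P:lift-well-defined} yields $P_0(\ph \mid T_0, \chi) = P_0(\ph \mid T_0, \psi_X) = p$. Finally, since $X \equiv T_0 \cup \{\chi\}$, the rule of logical equivalence transfers this to $P_0(\ph \mid X) = p$, i.e.\ $(X, \ph, p) \in P_0$. The main obstacle is exactly this reverse inclusion, and specifically the need to reconcile the canonical decomposition $T^X + \psi_X$ built into the definition of $\bfL(P_0)$ with the root-based decomposition $T_0 + \chi$ coming from the restriction; both Proposition \ref{P:any-psi} (to see $\chi$ is a genuine antecedent formula) and Proposition \ref{P:lift-well-defined} (to see the two decompositions produce the same conditional probabilities) are essential here.
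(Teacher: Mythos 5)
Your proof is correct and follows essentially the same route as the paper's: the forward inclusion via Proposition \ref{P:lift-extends} together with strong connectivity of $P_0$, and the reverse inclusion by writing $T(X) = T_0 + \chi$ via Proposition \ref{P:ctbly-ax=one-ax}, upgrading $\chi \in \AF(P_0)$ via Proposition \ref{P:any-psi}, reconciling the two decompositions via Proposition \ref{P:lift-well-defined}, and finishing with the rule of logical equivalence.
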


\begin{proof}
  From Proposition \ref{P:lift-extends}, it follows that $P_0 = {P_0 \dhl_{T_0}}
  \subseteq {\bfL (P_0) \dhl_{T_0}}$. For the reverse, suppose $(X, \ph,
  p) \in \bfL (P_0) \dhl_ {T_0}$. Then $X \in \cG\cA(P_0)$, $T(X) = T^X +
  \psi_X$, $P_0 (\ph \mid T_0, \psi_X) = p$, and $X \cao T_0$. By Proposition 
  \ref{P:ctbly-ax=one-ax}, we may write $T(X) = T_0 + \psi$ for some $\psi
  \in \cF$. By Proposition \ref{P:any-psi}, we know that $\psi \in \AF(P_0)$.
  Therefore, by Proposition \ref{P:lift-well-defined}, we have $P_0(\ph \mid
  T_0, \psi) = p$. By the rule of logical equivalence, $P_0(\ph \mid X) = p$.
\end{proof}

\begin{lemma}\label{L:lift-admissible}
  Let $P_0$ be a pre-theory. If $X \in \ante \bfL(P_0)$ and $X \vdash \ph$, then
  $(X, \ph, 1) \in \bfL(P_0)$.
\end{lemma}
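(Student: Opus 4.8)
The plan is to unwind the definitions of $\bfL(P_0)$ and $\cG\cA(P_0)$ and reduce the whole lemma to a single assertion about the pre-theory $P_0$ and its root $T_0$. Since $X \in \ante \bfL(P_0)$, the definition of the lift forces $X \in \cG\cA(P_0)$, so with the fixed choices used to define $\bfL(P_0)$ we may write $T(X) = T^X + \psi_X$, where $T^X \in [T_0, \tau(P_0)]$ and $\psi_X \in \AF(P_0)$. By the definition of $\bfL(P_0)$, the target conclusion $(X, \ph, 1) \in \bfL(P_0)$ is then exactly the statement $P_0(\ph \mid T_0, \psi_X) = 1$. Thus the entire lemma becomes: from $X \vdash \ph$, deduce $P_0(\ph \mid T_0, \psi_X) = 1$.

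The first real step is to convert the deductive hypothesis into a probability-one statement at the root $T_0$. Since $X \vdash \ph$ means $\ph \in T(X) = T^X + \psi_X$, we have $T^X \cup \{\psi_X\} \vdash \ph$, so the deduction theorem (Proposition \ref{P:derivability}(a)) gives $T^X \vdash \psi_X \to \ph$. Because $T^X \subseteq \tau(P_0)$ and $\tau(P_0)$ is a deductive theory (Proposition \ref{P:tau-ded-theory}), this yields $\psi_X \to \ph \in \tau(P_0)$. Finally, $P_0$ is a pre-theory, hence entire and connected with root $T_0$, so Proposition \ref{P:simple-tau(P)} gives $\tau(P_0) = \tau(P_0; T_0)$, and therefore $P_0(\psi_X \to \ph \mid T_0) = 1$.

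What remains is to pass from $P_0(\psi_X \to \ph \mid T_0) = 1$ to $P_0(\ph \mid T_0, \psi_X) = 1$. The natural instrument is Proposition \ref{P:add-to-root}, but it demands $P_0(\psi_X \mid T_0) > 0$, and here is the one delicate point: although $\psi_X \in \AF(P_0)$ guarantees $T_0 \cup \{\psi_X\} \in \ante P_0$, the conditional probability $P_0(\psi_X \mid T_0)$ need not \emph{exist} in the merely semi-closed set $P_0$ (existence of such conditionals is a property of complete sets, via Definition \ref{D:complete}(ii), not of pre-theories). I would sidestep this using the rule of inductive extension (R8): it suffices to prove $\ol P(\ph \mid T_0, \psi_X) = 1$ for every completion $\ol P$ of $P_0$, which exists since every semi-closed set has a completion. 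Fixing such a $\ol P$, we have $\ol P(\psi_X \to \ph \mid T_0) = 1$ because $P_0 \subseteq \ol P$; moreover $T_0, T_0 \cup \{\psi_X\} \in \ante \ol P$, so Definition \ref{D:complete}(ii) makes $\ol P(\psi_X \mid T_0)$ exist, whereupon Lemma \ref{L:cond-exist} forces $\ol P(\psi_X \mid T_0) > 0$. Proposition \ref{P:add-to-root} then delivers $\ol P(\ph \mid T_0, \psi_X) = 1$, and since $\ol P$ was arbitrary, (R8) gives $P_0(\ph \mid T_0, \psi_X) = 1$, completing the proof.

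I expect the existence/positivity gap for $P_0(\psi_X \mid T_0)$ to be the only genuine obstacle; everything else is bookkeeping with $\cG\cA(P_0)$, $\tau(P_0)$, and $\bfL(P_0)$. As an alternative that stays entirely inside $P_0$ and avoids completions, one can instead invoke $\si$-compactness to pick countably many $\th_n \in T^X$ with $\{\th_n\} \cup \{\psi_X\} \vdash \ph$, observe that each $P_0(\th_n \mid T_0, \psi_X) = 1$ (as $\th_n \in \tau(P_0)$ and $T_0 \cup \{\psi_X\} \in \ante P_0$), apply Corollary \ref{C:certainty-closure} to get $P_0(\bigwedge_n \th_n \mid T_0, \psi_X) = 1$, and finish with Proposition \ref{P:transitivity}(i) (with a trivial extra antecedent), since $(T_0 \cup \{\psi_X\}) \cup \{\bigwedge_n \th_n\} \vdash \ph$.
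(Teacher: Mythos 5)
Your proposal is correct, and your primary route is genuinely different from the paper's. The paper's own proof is essentially your alternative sketch at the end: by $\si$-compactness it picks a countable $\Phi \subseteq T^X$ with $\Phi, \psi_X \vdash \ph$, gets $P_0(\bigwedge \Phi \mid T_0) = 1$ from Corollary \ref{C:certainty-closure}, moves this to the antecedent $T_0 \cup \{\psi_X\}$ by deductive transitivity (rather than your direct appeal to the definition of $\tau(P_0)$, which also works), conjoins with $\psi_X$ via the rule of logical implication and Proposition \ref{P:certainty-closure}, and closes with deductive transitivity again, since $\bigwedge \Phi \wedge \psi_X \vdash \ph$; this stays entirely inside $P_0$ and uses only the rules of entirety. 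Your main route instead handles the pitfall you correctly diagnosed---$P_0(\psi_X \mid T_0)$ need not exist in a mere pre-theory, so Proposition \ref{P:add-to-root} cannot be applied directly inside $P_0$---by passing to completions: Definition \ref{D:complete}(ii) and Lemma \ref{L:cond-exist} give existence and positivity of $\ol P(\psi_X \mid T_0)$ in any completion $\ol P$, Proposition \ref{P:add-to-root} yields $\ol P(\ph \mid T_0, \psi_X) = 1$, and the rule of inductive extension (R8) pulls this back down to $P_0$. What each approach buys: the paper's argument is more elementary, consuming only (R1)--(R7) and no facts about completions, whereas yours trades the conjunction bookkeeping for the heavier machinery of semi-closedness and the existence of completions---both of which are legitimately available here, since $P_0$ is a pre-theory and those facts are established before this lemma, so there is no circularity in your use of them.
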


\begin{proof}
  Choose $\la$ and $p$ such that $(X, \la, p) \in \bfL(P_0)$. Then $T(X) = T^X
  + \psi_X$ and $P_0(\la \mid T_0, \psi_X) = p$. In particular, $T_0 + \psi_X
  \in \ante P_0$.

  Since $X \vdash \ph$, we have $\ph \in T(X) = T^X + \psi_X$, meaning $T^X,
  \psi_X \vdash \ph$. Choose countable $\Phi \subseteq T^X$ such that $\Phi,
  \psi_X \vdash \ph$. Since $\Phi \subseteq T^X \subseteq \tau(P_0)$, it follows
  that $P_0(\th \mid T_0) = 1$ for all $\th \in \Phi$. By Corollary 
  \ref{C:certainty-closure}, if $\ze = \bigwedge \Phi$, then $P_0(\ze \mid T_0)
  = 1$. Since $T_0 + \psi_X \in \ante P_0$, deductive transitivity gives $P_0
  (\ze \mid T_0, \psi_X) = 1$. By the rule of logical implication, $P_0(\psi_X
  \mid T_0, \psi_X) = 1$. Hence, by Proposition \ref{P:certainty-closure}, we
  obtain $P_0(\ze \wedge \psi_X \mid T_0, \psi_X) = 1$. But $\ze \wedge \psi_X
  \vdash \ph$, so by deductive transitivity, $P_0(\ph \mid T_0, \psi_X) = 1$,
  which gives $(X, \ph, 1) \in \bfL(P_0)$.
\end{proof}

\begin{prop}\label{P:lift-admissible}
  If $P_0$ is a pre-theory, then $\bfL(P_0)$ is admissible.
\end{prop}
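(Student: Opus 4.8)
The plan is to verify that $\bfL(P_0)$ satisfies (R1), the rule of logical equivalence, since that is exactly what admissibility demands. Fix a root $T_0$ of $P_0$ and recall that for each $X \in \cG\cA(P_0)$ we have chosen $T^X \in [T_0, \tau(P_0)]$ and $\psi_X \in \AF(P_0)$ with $T(X) = T^X + \psi_X$, so that $(X, \ph, p) \in \bfL(P_0)$ means precisely $P_0(\ph \mid T_0, \psi_X) = p$. Rule (R1) has two parts, a closure part (if $(X, \ph, p) \in \bfL(P_0)$, $X' \equiv X$, and $\ph' \equiv_X \ph$, then $(X', \ph', p) \in \bfL(P_0)$) and a uniqueness part. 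I would dispose of the antecedent first: since $X' \equiv X$ gives $T(X') = T(X)$, and membership in $\cG\cA(P_0)$ depends only on the generated theory, it follows at once that $X' \in \cG\cA(P_0)$ and that the two decompositions $T^X + \psi_X = T(X) = T(X') = T^{X'} + \psi_{X'}$ are decompositions of one and the same theory. Proposition \ref{P:lift-well-defined} then yields $P_0(\,\cdot \mid T_0, \psi_{X'}) = P_0(\,\cdot \mid T_0, \psi_X)$, so for computing membership I may work with $\psi_X$ throughout and pass to $\psi_{X'}$ at the end.

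The crux is the consequent. From $\ph' \equiv_X \ph$ we get $\ph \tot \ph' \in T(X)$, i.e. $X \vdash \ph \tot \ph'$. Since $(X, \ph, p) \in \bfL(P_0)$ gives $X \in \ante \bfL(P_0)$, Lemma \ref{L:lift-admissible} applies and produces $(X, \ph \tot \ph', 1) \in \bfL(P_0)$; unpacking the definition of the lift, this is $P_0(\ph \tot \ph' \mid T_0, \psi_X) = 1$. Combining this with $P_0(\ph \mid T_0, \psi_X) = p$ and invoking Proposition \ref{P:log-equiv-gen} (logical equivalence for entire sets, applied with antecedent $T_0, \psi_X$) gives $P_0(\ph' \mid T_0, \psi_X) = p$. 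Transferring this back through the identification of $\psi_X$ with $\psi_{X'}$ established in the previous paragraph yields $P_0(\ph' \mid T_0, \psi_{X'}) = p$, which is exactly $(X', \ph', p) \in \bfL(P_0)$.

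For uniqueness, suppose both $(X', \ph', p) \in \bfL(P_0)$ and $(X', \ph', p') \in \bfL(P_0)$. By the definition of the lift these read $P_0(\ph' \mid T_0, \psi_{X'}) = p$ and $P_0(\ph' \mid T_0, \psi_{X'}) = p'$, and since a pre-theory is semi-closed, hence entire, hence admissible, $P_0$ is a genuine function of its antecedent–consequent pair, forcing $p = p'$. The only place where real work is hidden is the consequent step, and the point I would be careful about is that I am not re-proving anything about shuffling probability-one formulas: that labor is already packaged into Lemma \ref{L:lift-admissible} and Proposition \ref{P:log-equiv-gen}. I expect the main obstacle to be purely organizational—making sure that the well-definedness of the lift, Proposition \ref{P:lift-well-defined}, is precisely what licenses the passage between $\psi_X$ and $\psi_{X'}$, so that the antecedent equivalence and the consequent equivalence can be cleanly disentangled and handled one at a time.
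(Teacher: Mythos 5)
Your proof is correct and follows essentially the same route as the paper's: the consequent is handled by combining Lemma \ref{L:lift-admissible} with Proposition \ref{P:log-equiv-gen}, the antecedent by noting that $X' \equiv X$ makes $T^X + \psi_X$ a valid decomposition of $T(X')$, and uniqueness by the admissibility of $P_0$. The only cosmetic difference is that you invoke Proposition \ref{P:lift-well-defined} explicitly to pass between $\psi_X$ and $\psi_{X'}$, whereas the paper works with $\psi_X$ throughout, relying on the same well-definedness silently.
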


\begin{proof}
  Suppose $(X, \ph, p) \in \bfL(P_0)$, $X' \equiv X$, and $\ph' \equiv_X \ph$.
  Then $T(X) = T^X + \psi_X$ and $P_0(\ph \mid T_0, \psi_X) = p$. Since $\ph'
  \equiv_X \ph$, we have $X \vdash \ph' \tot \ph$. From Lemma 
  \ref{L:lift-admissible}, it follows that $(X, \ph' \tot \ph, 1) \in \bfL
  (P_0)$, which gives $P_0(\ph' \tot \ph \mid T_0, \psi_X) = 1$. By Proposition
  \ref{P:log-equiv-gen}, we have $P_0(\ph' \mid T_0, \psi_X) = p$. But $X'
  \equiv X$, so $T(X') = T(X) = T^X + \psi_X$. Thus, $(X', \ph', p) \in
  \bfL(P_0)$. Finally, if $(X', \ph', p') \in \bfL(P_0)$, then $P_0(\ph' \mid
  T_0, \psi_X) = p'$. But $P_0$ is admissible, so $p' = p$.
\end{proof}

Let $P_0$ be a pre-theory with root $T_0$ and let $P = \bfL(P_0)$. By
Proposition \ref{P:lift-admissible}, we may now use the notation $P(\ph \mid X)
= p$ instead of $(X, \ph, p) \in P$. Note that by Proposition \ref{P:any-psi},
if $X \in \ante P$, $T \in [T_0, \tau(P_0)]$, $\psi \in \cF$, and $T(X) = T +
\psi$, then $P(\; \cdot \mid X) = P_0(\; \cdot \mid T_0, \psi)$.

\begin{prop}\label{P:nested-lifts}
  Let $P_0, P_0'$ be pre-theories with a common root. If $P_0 \subseteq P_0'$,
  then $\bfL(P_0) \subseteq \bfL(P_0')$.
\end{prop}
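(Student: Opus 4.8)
The plan is to prove the inclusion pointwise: take an arbitrary $(X, \ph, p) \in \bfL(P_0)$ and verify the two conditions defining membership in $\bfL(P_0')$, namely that $X \in \cG\cA(P_0')$ and that $P_0'(\ph \mid T_0, \psi_X') = p$, where $T_0$ is the common root and $\psi_X'$ is the antecedent formula chosen in constructing the lift of $P_0'$. The whole argument reduces to two easy monotonicity facts flowing from $P_0 \subseteq P_0'$, after which only the matching of probability values requires any care.

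First I would record that $\tau(P_0) \subseteq \tau(P_0')$ and $\AF(P_0) \subseteq \AF(P_0')$. The second is immediate: $\psi \in \AF(P_0)$ means $T_0 + \psi \in \ante P_0$, and $P_0 \subseteq P_0'$ gives $\ante P_0 \subseteq \ante P_0'$, so $\psi \in \AF(P_0')$. For the first I would invoke Proposition \ref{P:simple-tau(P)}, which applies since each pre-theory is entire (being semi-closed) and connected (being strongly connected) with root $T_0$; it reduces the computation to $\tau(P_0) = \tau(P_0; T_0) = \{\th \mid (T_0, \th, 1) \in P_0\}$ and likewise for $P_0'$, whereupon $P_0 \subseteq P_0'$ makes the inclusion transparent. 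Since $\tau(P_0')$ is a deductive theory by Proposition \ref{P:tau-ded-theory}, these two inclusions give $[T_0, \tau(P_0)] \subseteq [T_0, \tau(P_0')]$, and reading off the definition of a generalized antecedent then yields $\cG\cA(P_0) \subseteq \cG\cA(P_0')$. This settles the first membership requirement.

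The main obstacle, and the only delicate step, is the probability value, because the fixed decompositions of $T(X)$ used in defining the two lifts need not coincide: the lift of $P_0$ uses some $T^X \in [T_0, \tau(P_0)]$ and $\psi_X \in \AF(P_0)$ with $T(X) = T^X + \psi_X$, while the lift of $P_0'$ uses possibly different $T'^X$ and $\psi_X'$. I would handle this in two moves. Since $P_0(\ph \mid T_0, \psi_X) = p$ and $P_0 \subseteq P_0'$, the inductive statement $(T_0 \cup \{\psi_X\}, \ph, p)$ already lies in $P_0'$, so $P_0'(\ph \mid T_0, \psi_X) = p$. Then, reinterpreting the $P_0$-parameters inside $P_0'$---note $T^X \in [T_0, \tau(P_0)] \subseteq [T_0, \tau(P_0')]$ and $\psi_X \in \AF(P_0) \subseteq \AF(P_0')$, so that both $T(X) = T^X + \psi_X$ and $T(X) = T'^X + \psi_X'$ are admissible decompositions for the pre-theory $P_0'$---Proposition \ref{P:lift-well-defined} applied to $P_0'$ gives $P_0'(\,\cdot \mid T_0, \psi_X) = P_0'(\,\cdot \mid T_0, \psi_X')$. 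Combining the two moves yields $P_0'(\ph \mid T_0, \psi_X') = p$, which is exactly the second membership requirement, so $(X, \ph, p) \in \bfL(P_0')$. The point that makes the last step legitimate is precisely the compatibility supplied by the two inclusions of the previous paragraph, without which the parameters $(T^X, \psi_X)$ could not be fed into Proposition \ref{P:lift-well-defined} for $P_0'$.
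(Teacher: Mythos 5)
Your proof is correct and follows essentially the same route as the paper's: establish $\tau(P_0) \subseteq \tau(P_0')$ and $\AF(P_0) \subseteq \AF(P_0')$ from $P_0 \subseteq P_0'$, conclude $X \in \cG\cA(P_0')$, and use Proposition \ref{P:lift-well-defined} to reconcile the two decompositions of $T(X)$ so that $P_0'(\ph \mid T_0, \psi_X) = p$ transfers to the decomposition used in defining $\bfL(P_0')$. You are slightly more explicit than the paper in justifying $\tau(P_0) \subseteq \tau(P_0')$ via Proposition \ref{P:simple-tau(P)} and in naming the well-definedness step, but the substance is identical.
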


\begin{proof}
  Let $T_0$ be the common root of $P_0$ and $P_0'$. Assume $P_0 \subseteq P_0'$.
  Let $P = \bfL(P_0)$ and $P' = \bfL(P_0')$. Suppose $P(\ph \mid X) = p$. Then
  $T(X) = T^X + \psi_X$ and $P_0(\ph \mid T_0, \psi_X) = p$. Since $P_0
  \subseteq P_0'$, we have $\tau(P_0) \subseteq \tau(P_0')$, and therefore
  $[T_0, \tau(P_0)] \subseteq [T_0, \tau(P_0')]$. From $P_0 \subseteq P_0'$, it
  also follows that $\AF(P_0) \subseteq \AF(P_0')$. Hence, $T^X \in [T_0, \tau
  (P_0')]$ and $\psi_X \in \AF(P_0')$. This shows that $X \in \cG\cA(P_0')$ and
  $P'(\ph \mid X) = P_0'(\ph \mid T_0, \psi_X)$. But $P_0 \subseteq P_0'$, so
  $P_0'(\ph \mid T_0, \psi_X) = p$.
\end{proof}

\subsection{Identifying lifts with inductive theories}

We are now ready to prove Theorem \ref{T:theory-defn}. We will do this by
showing that $\bfL(P_0)$ is the smallest closed extension of $P_0$. The proof of
Theorem \ref{T:theory-defn} is broken into five pieces for greater readability.

\begin{prop}\label{P:lift-entire}
  If $P_0$ is a pre-theory, then $\bfL(P_0)$ is entire.
\end{prop}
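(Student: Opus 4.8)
The plan is to exploit the fundamental identity recorded just after Proposition~\ref{P:lift-admissible}. Write $P = \bfL(P_0)$ and let $T_0$ be the root of $P_0$. Whenever $X \in \ante P$ and $T(X) = T + \eta$ with $T \in [T_0, \tau(P_0)]$ and $\eta \in \cF$, Proposition~\ref{P:any-psi} gives $\eta \in \AF(P_0)$ (so that $T_0 \cup \{\eta\} \in \ante P_0$), and $P(\,\cdot \mid X) = P_0(\,\cdot \mid T_0, \eta)$. By Proposition~\ref{P:lift-well-defined} this identification is independent of the chosen decomposition, and it is an equality of \emph{partial} functions (equal domains and equal values): for any $\chi \in \cF$, $P(\chi \mid X)$ is defined and equals $p$ if and only if $P_0(\chi \mid T_0, \eta)$ is. Since $\bfL(P_0)$ is already admissible (Proposition~\ref{P:lift-admissible}) and $P_0$ is entire, it remains only to verify (R2)--(R7), and the strategy throughout is to transfer each instance, via this identity, to the same rule for the entire set $P_0$ at the antecedent $T_0 \cup \{\eta\}$.

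The only subtlety is that the \emph{deductive} hypotheses of the rules refer to $X$, hence to $T(X) = T + \eta$, rather than to the smaller antecedent $T_0 \cup \{\eta\}$. The bridge is Lemma~\ref{L:lift-admissible}: if $X \vdash \chi$ then $P(\chi \mid X) = 1$, which under the identity reads $P_0(\chi \mid T_0, \eta) = 1$, so every deductive consequence of $X$ becomes a probability-one fact for $P_0$ at $T_0 \cup \{\eta\}$. The single-antecedent rules then follow at once. Rule (R2) is precisely Lemma~\ref{L:lift-admissible}; the first clause of (R4) transfers to deductive transitivity for $P_0$; and for the addition rule (R5) the hypothesis $X \vdash \neg(\ph \wedge \psi)$ gives $P_0(\neg(\ph \wedge \psi) \mid T_0, \eta) = 1$, hence $P_0(\ph \wedge \psi \mid T_0, \eta) = 0$ by Corollary~\ref{C:rel-neg}; since two of the probabilities $P_0(\ph \vee \psi \mid T_0, \eta)$, $P_0(\ph \mid T_0, \eta)$, $P_0(\psi \mid T_0, \eta)$ exist by transfer, three of the four probabilities in inclusion--exclusion are known, and Theorem~\ref{T:incl-excl} supplies the fourth together with the desired additive identity.

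For the rules with a second antecedent I use that conditioning on a formula corresponds cleanly under the identity: since $T(X \cup \{\ph\}) = T + (\eta \wedge \ph)$, we get $P(\,\cdot \mid X, \ph) = P_0(\,\cdot \mid T_0, \eta \wedge \ph) = P_0(\,\cdot \mid T_0, \eta, \ph)$, the last step by the rule of logical equivalence in $P_0$. Consequently the material-implication rule (R3) and the multiplication rule (R6) become exactly those rules for $P_0$ at $T_0 \cup \{\eta\}$. The remaining case, the second clause of (R4) with an arbitrary $X' \vdash X$, is handled by the bridge plus transitivity: writing $T(X) = T + \eta$ and $T(X') = T' + \eta'$, the relation $X' \vdash X$ gives $\eta \in T(X')$, so $X' \vdash \eta$ and hence $P_0(\eta \mid T_0, \eta') = 1$ by Lemma~\ref{L:lift-admissible}; then, since $P_0(\ph \mid T_0, \eta) = 1$ by hypothesis, Proposition~\ref{P:transitivity} (with $X = T_0$, $\ze = \eta'$, and using its second condition) yields $P_0(\ph \mid T_0, \eta') = 1$, that is, $P(\ph \mid X') = 1$.

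The main obstacle is the continuity rule (R7), because after transfer the sequence can have terms of conditional probability zero, for which Theorem~\ref{T:cont-rule} does not directly apply. The transferred hypotheses read: $P_0(\ph_n \mid T_0, \eta)$ exists for all $n$, and (via the bridge, from $X \vdash \ph_n \to \ph_{n+1}$) $P_0(\ph_n \to \ph_{n+1} \mid T_0, \eta) = 1$; the probabilities $a_n = P_0(\ph_n \mid T_0, \eta)$ are then nondecreasing by monotonicity. I would split on $n_0 = \sup\{n : a_n = 0\}$. If $n_0 = \infty$, then $P_0(\neg \ph_n \mid T_0, \eta) = 1$ for all $n$, so Corollary~\ref{C:certainty-closure} gives $P_0(\bigwedge_n \neg \ph_n \mid T_0, \eta) = 1$ and hence $P_0(\bigvee_n \ph_n \mid T_0, \eta) = 0 = \lim_n a_n$. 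If $n_0 < \infty$, then $a_n > 0$ for $n > n_0$, so Proposition~\ref{P:add-to-root} converts $P_0(\ph_n \to \ph_{n+1} \mid T_0, \eta) = 1$ into $P_0(\ph_{n+1} \mid T_0, \eta, \ph_n) = 1$ and Theorem~\ref{T:cont-rule} applies to the tail $(\ph_n)_{n > n_0}$; the null prefix $\bigvee_{n \le n_0} \ph_n$, which has probability zero by Proposition~\ref{P:certainty-closure} and induction, is absorbed by a final application of Proposition~\ref{P:certainty-closure}. Transferring back through the identity yields (R7) for $\bfL(P_0)$, completing the verification that $\bfL(P_0)$ is entire.
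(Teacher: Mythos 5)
Your proof is correct, and its skeleton is the same as the paper's: both rest on the identity $P(\,\cdot \mid X) = P_0(\,\cdot \mid T_0, \psi_X)$ recorded after Proposition~\ref{P:lift-admissible}, both use Lemma~\ref{L:lift-admissible} as the bridge turning deductive hypotheses about $X$ into probability-one facts for $P_0$ at $T_0 \cup \{\psi_X\}$, and both split the continuity rule into the all-zero case and the eventually-positive case (Proposition~\ref{P:add-to-root} plus Theorem~\ref{T:cont-rule} on the tail). Two local differences are worth recording. First, your handling of the second clause of (R4) is genuinely simpler than the paper's: you obtain $P_0(\psi_X \mid T_0, \psi_{X'}) = 1$ directly from Lemma~\ref{L:lift-admissible} and finish with Proposition~\ref{P:transitivity}(ii), whereas the paper extracts a countable $\Phi \subseteq T^{X'}$ by $\si$-compactness and then runs through Corollary~\ref{C:certainty-closure}, Lemma~\ref{L:cond-exist}, the multiplication rule, and Proposition~\ref{P:cert-cl-conv}; your route avoids that entire chain. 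Second, your treatment of (R5) is more careful than the paper's: the hypothesis $X \vdash \neg(\ph \wedge \psi)$ does not give $T_0, \psi_X \vdash \neg(\ph \wedge \psi)$, so the addition rule for $P_0$ does not literally apply as stated; the paper elides this point, while you correctly route through the bridge to get $P_0(\ph \wedge \psi \mid T_0, \psi_X) = 0$ and then invoke Theorem~\ref{T:incl-excl}. One small imprecision in your (R7): the claim that the $a_n$ are nondecreasing ``by monotonicity'' needs a half-line of justification, since you only have $P_0(\ph_n \to \ph_{n+1} \mid T_0, \eta) = 1$ and not the deductive relation $T_0, \eta, \ph_n \vdash \ph_{n+1}$ that Proposition~\ref{P:rel-neg} asks for. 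Either apply Proposition~\ref{P:certainty-closure} to see that $P_0(\ph_n \wedge \ph_{n+1} \mid T_0, \eta) = a_n$ and then use Proposition~\ref{P:rel-neg}, or note, as the paper does via Remark~\ref{R:rel-neg}, that at this stage $P$ itself already satisfies Proposition~\ref{P:rel-neg}, so monotonicity can be read off in $P$ from $X, \ph_n \vdash \ph_{n+1}$ before transferring to $P_0$.
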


\begin{proof}
  Let $P_0$ be a pre-theory with root $T_0$ and let $P = \bfL(P_0)$. By
  Proposition \ref{P:lift-admissible}, $P$ is admissible, and by Lemma 
  \ref{L:lift-admissible}, $P$ satisfies the rule of logical implication. It
  therefore remains only to verify (R3)--(R7).

  We begin with the rule of material implication. Suppose $X \in \ante P$ and $P
  (\ph \mid X, \psi) = 1$. Then $T(X) = T^X + \psi_X$ and $P(\, \cdot \mid X) =
  P_0(\, \cdot \mid T_0, \psi_X)$. Thus, $T(X \cup \{\psi\}) = T^X + \psi_X
  \wedge \psi$, so by Proposition \ref{P:any-psi}, we have $P_0(\ph \mid T_0,
  \psi_X \wedge \psi) = 1$. Let $T' = T_0 + \psi_X$, so that $T' + \psi = T_0 +
  \psi_X \wedge \psi$. Then $T' \in \ante P_0$ and $P_0(\ph \mid T', \psi) = 1$.
  By the rule of material implication for $P_0$, it follows that $P_0(\psi \to
  \ph \mid T') = 1$, which implies $P(\psi \to \ph \mid X) = P_0(\psi \to \ph
  \mid T_0, \psi_X) = P_0(\psi \to \ph \mid T') = 1$.

  We next verify deductive transitivity. Suppose $P(\ph \mid X) = 1$ and $\ph
  \vdash \psi$. Then $P_0(\ph \mid T_0, \psi_X) = 1$. By the deductive
  transitivity for $P_0$, we have $P_0(\psi \mid T_0, \psi_X) = 1$, which gives
  $P(\psi \mid X) = 1$. Now suppose $X' \in \ante P$, $X' \vdash X$, and $P(\ph
  \mid X) = 1$. Then $P_0(\ph \mid T_0, \psi_X) = 1$ and, since $X' \in \ante
  P$, we get $T(X') = T^{X'} + \psi_{X'}$ and $P(\; \cdot \mid X') = P_0(\;
  \cdot \mid T_0, \psi_{X'})$. Since $X' \vdash X$, we have $T^{X'}, \psi_{X'}
  \vdash \psi_X$. Choose countable $\Phi \subseteq T^{X'}$ such that $\Phi,
  \psi_ {X'} \vdash \psi_X$ and let $\ze = \bigwedge \Phi$. Then $\Phi
  \subseteq T^{X'} \subseteq \tau(P_0)$, so Corollary \ref{C:certainty-closure}
  implies $P_0(\ze \mid T_0) = 1$. Since $T_0 + \psi_{X'} \in \ante P_0$,
  deductive transitivity for $P_0$ gives $P_0(\ze \mid T_0, \psi_{X'}) = 1$. By
  Lemma \ref{L:cond-exist}, we have $T_0 + \psi_ {X'} + \ze \in \ante P_0$.
  Since $T_0 + \psi_{X'} + \ze \vdash T_0, \psi_X$, we may again apply deductive
  transitivity for $P_0$ to obtain $P_0(\ph \mid T_0, \psi_{X'}, \ze) = 1$. By
  the multiplication rule, $P_0(\ze \wedge \ph \mid T_0, \psi_{X'}) = 1$. By
  Proposition \ref{P:cert-cl-conv}, we obtain $P_0(\ph \mid T_0, \psi_{X'}) =
  P_0(\ze \wedge \ph \mid T_0, \psi_{X'}) = 1$, and this shows that $P(\ph \mid
  X') = 1$.

  To show that $P$ satisfies the addition rule, suppose $X \vdash \neg (\ph
  \wedge \psi)$ and assume two of the probabilities in \eqref{add-rule} exist.
  Then $X \in \ante P$, so that $T(X) = T^X + \psi_X$ and $P(\, \cdot \mid X) =
  P_0(\, \cdot \mid T_0, \psi_X)$. Thus, two of the probabilities in the
  following equation exist:
  \[
    P_0(\ph \vee \psi \mid T_0, \psi_X)
      = P_0(\ph \mid T_0, \psi_X) + P_0(\psi \mid T_0, \psi_X).
  \]
  By the addition rule for $P_0$, so does the third, and the above equation
  holds. Hence, all three probabilities in \eqref{add-rule} exist and 
  \eqref{add-rule} holds. The proof that $P$ satisfies the multiplication rule
  is similar.

  Finally, suppose $P(\ph_n \mid X)$ exists and $X, \ph_n \vdash \ph_{n + 1}$
  for all $n$. We want to show that \eqref{cont-rule} holds. Since $X \in \ante
  P$, we have $T(X) = T^X + \psi_X$ and $P(\; \cdot \mid X) = P_0(\; \cdot \mid
  T_0, \psi)$. Thus, $P_0(\ph_n \mid T_0, \psi)$ exists for all $n$.

  First assume $P_0(\ph_n \mid T_0, \psi) = 0$ for all $n$. Let $\psi_n =
  \bigvee_1^n \ph_j$. Then $P_0(\psi_n \mid T_0, \psi) = 0$ by Proposition
  \ref{P:certainty-closure}. Therefore, $P_0(\bigvee_n \psi_n \mid T_0, \psi) =
  0$ by the continuity rule. But $\bigvee_n \psi_n \equiv \bigvee_n \ph_n$, so
  $P_0(\bigvee_n \ph_n \mid T_0, \psi) = 0$, which implies $P (\bigvee_n \ph_n
  \mid X) = 0$, and \eqref{cont-rule} holds in this case.

  Now assume there exists $n_0$ such that $P_0(\ph_{n_0} \mid T_0, \psi) > 0$.
  By Remark \ref{R:rel-neg}, we have that $P$ satisfies Proposition
  \ref{P:rel-neg}. Thus, $P(\ph_n \mid X) \le P(\ph_ {n + 1} \mid X)$, which
  implies $P_0(\ph_n \mid T_0, \psi) \le P_0(\ph_{n + 1} \mid T_0, \psi)$.
  Hence, $P_0(\ph_n \mid T_0, \psi) > 0$ for all $n \ge n_0$. Since $X \vdash
  \ph_n \to \ph_{n + 1}$, we have $P(\ph_n \to \ph_{n + 1} \mid X) = 1$, which
  gives $P_0(\ph_n \to \ph_{n + 1} \mid T_0, \psi) = 1$. From Proposition
  \ref{P:add-to-root}, it follows that $P_0(\ph_{n + 1} \mid T_0, \psi, \ph_n) =
  1$. Therefore, by Theorem \ref{T:cont-rule}, we have $P_0(\bigvee_{n_0}^\infty
  \ph_n \mid T_0, \psi) = \lim_n P_0(\ph_n \mid T_0, \psi)$, which implies $P
  (\bigvee_{n_0}^\infty \ph_n \mid X) = \lim_n P(\ph_n \mid X)$. But $\bigvee_
  {n_0}^\infty \ph_n \equiv_X \bigvee_n \ph_n$, so \eqref{cont-rule} holds in
  this case as well.
\end{proof}

\begin{prop}\label{P:lift-complete}
  If $P_0$ is a complete pre-theory, then $\bfL(P_0)$ is complete.
\end{prop}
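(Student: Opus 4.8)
The plan is to invoke Proposition \ref{P:lift-entire}, which already gives that $P = \bfL(P_0)$ is entire, so that completeness reduces to verifying conditions (i) and (ii) of Definition \ref{D:complete}. Throughout I would exploit the translation recorded just after Proposition \ref{P:lift-admissible}: for $X \in \ante P$ with $T(X) = T^X + \psi_X$, where $T^X \in [T_0, \tau(P_0)]$ and $\psi_X \in \AF(P_0)$, one has $P(\,\cdot \mid X) = P_0(\,\cdot \mid T_0, \psi_X)$. This lets me convert every statement about $P$ at the antecedent $X$ into a statement about the complete pre-theory $P_0$ at the antecedent $T_0 \cup \{\psi_X\}$, apply the completeness of $P_0$ there, and translate back.

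For condition (i), suppose $P(\ph \mid X)$ and $P(\psi \mid X)$ both exist. Then $X \in \cG\cA(P_0)$, and the translation gives that $P_0(\ph \mid T_0, \psi_X)$ and $P_0(\psi \mid T_0, \psi_X)$ both exist, while $T_0 \cup \{\psi_X\} \in \ante P_0$ since $\psi_X \in \AF(P_0)$. Applying Definition \ref{D:complete}(i) to $P_0$ at this antecedent yields that $P_0(\ph \wedge \psi \mid T_0, \psi_X)$ exists. Since $X \in \cG\cA(P_0)$, the very definition of $\bfL(P_0)$ then puts $(X, \ph \wedge \psi, p) \in \bfL(P_0)$ for the resulting value $p$, so $P(\ph \wedge \psi \mid X)$ exists.

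For condition (ii), suppose $X \in \ante P$ and $X \cup \{\ph\} \in \ante P$. Writing $T(X) = T^X + \psi_X$ as above, I would compute $T(X \cup \{\ph\}) = T^X + (\psi_X \wedge \ph)$. Since $X \cup \{\ph\} \in \ante \bfL(P_0)$ and $T^X \in [T_0, \tau(P_0)]$, Proposition \ref{P:any-psi} forces $\psi_X \wedge \ph \in \AF(P_0)$; hence $T_0 \cup \{\psi_X, \ph\} \in \ante P_0$, and likewise $T_0 \cup \{\psi_X\} \in \ante P_0$. These are exactly the two hypotheses of Definition \ref{D:complete}(ii) for $P_0$ (antecedent $T_0 \cup \{\psi_X\}$, extra formula $\ph$), so $P_0(\ph \mid T_0, \psi_X)$ exists; translating back gives that $P(\ph \mid X)$ exists.

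The main obstacle I anticipate is condition (ii): the appeal to Proposition \ref{P:any-psi} is what guarantees that adjoining $\ph$ to $X$ corresponds, on the $P_0$ side, to a legitimate antecedent of $P_0$, namely that $\psi_X \wedge \ph$ is again an antecedent formula. Getting this bookkeeping right — keeping $T^X$ fixed while moving the ``one-formula-away'' part from $\psi_X$ to $\psi_X \wedge \ph$, and checking that the relevant sets genuinely lie in $\ante P_0$ — is the one non-formal step; everything else is a mechanical transfer of Definition \ref{D:complete} through the lift.
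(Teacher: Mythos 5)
Your proposal is correct and follows essentially the same route as the paper's own proof: both reduce each condition of Definition \ref{D:complete} to the completeness of $P_0$ via the translation $P(\,\cdot \mid X) = P_0(\,\cdot \mid T_0, \psi_X)$, and both use Proposition \ref{P:any-psi} in exactly the way you describe to handle condition (ii). The only cosmetic difference is that the paper spells out the intermediate step $T_0 + \psi_X \wedge \ph \equiv (T_0 + \psi_X) \cup \{\ph\}$ together with the rule of logical equivalence, which your ``hence $T_0 \cup \{\psi_X, \ph\} \in \ante P_0$'' compresses.
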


\begin{proof}
  Let $P_0$ be a complete pre-theory with root $T_0$ and let $P = \bfL(P_0)$.
  Proposition \ref{P:lift-entire} implies $P$ is entire. Suppose $P(\ph \mid X)
  = p$ and $P (\psi \mid X) = q$. Then $T(X) = T^X + \psi_X$, $P_0(\ph \mid T_0,
  \psi_X) = p$, and $P_0(\psi \mid T_0, \psi_X) = q$. Since $P_0$ is complete,
  $P_0(\ph \wedge \psi \mid T_0, \psi_X) = r$ for some $r$. Thus, $P(\ph \wedge
  \psi \mid X) = r$, and $P$ satisfies Definition \ref{D:complete}(i).

  Now suppose $X \in \ante P$ and $X \cup \{\ph\} \in \ante P$. Then $T(X) =
  T^X + \psi_X$, so that $T(X \cup \{\ph\}) = T(X) + \ph = T^X + \psi_X + \ph =
  T^X + \psi_X \wedge \ph$. By Proposition \ref{P:any-psi}, we have $\psi_X
  \wedge \ph \in \AF(P_0)$. Thus, $T_0 + \psi_X \wedge \ph \in \ante P_0$. Since
  $T_0 + \psi_X \wedge \ph \equiv (T_0 + \psi_X) \cup \{\ph\}$, the rule of
  logical equivalence gives $(T_0 + \psi_X) \cup \{\ph\} \in \ante P_0$. But
  $T_0 + \psi_X \in \ante P_0$ and $P_0$ is complete, so $P_0(\ph \mid T_0,
  \psi_X)$ exists. Hence, $P(\ph \mid X) = P_0(\ph \mid T_0, \psi_X)$ exists, so
  that $P$ satisfies Definition \ref{D:complete}(ii).
\end{proof}

\begin{prop}\label{P:lift-semi-closed}
  If $P_0$ is a pre-theory, then $\bfL(P_0)$ is semi-closed.
\end{prop}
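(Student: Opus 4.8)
The plan is to lean on Proposition~\ref{P:lift-entire}, which already shows $\bfL(P_0)$ is entire, so that the only thing left to verify is the rule of inductive extension (R8). Write $P = \bfL(P_0)$ and let $T_0$ be the root of $P_0$. The whole argument rests on transferring (R8) between $P$ and $P_0$ through the correspondence between their completions: I will show that each completion of $P_0$ induces a completion of $P$, and that a conditional probability of $\ph$ given $X$ in $P$ is read off as the conditional probability of $\ph$ given $T_0, \psi_X$ in $P_0$, where $\psi_X \in \AF(P_0)$ is an antecedent formula coding $X$ (recall $\ante P = \cG\cA(P_0)$, and for $X \in \cG\cA(P_0)$ with $T(X) = T^X + \psi_X$ one has $P(\, \cdot \mid X) = P_0(\, \cdot \mid T_0, \psi_X)$).

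First I would record the completion correspondence. Given any completion $\ol P_0$ of $P_0$, Corollary~\ref{C:compl-restrict} makes $\ol P_0 \dhl_{T_0}$ a complete pre-theory with root $T_0$ containing $P_0$; its lift $\bfL(\ol P_0 \dhl_{T_0})$ is then complete by Proposition~\ref{P:lift-complete}, and since $P_0 \subseteq \ol P_0 \dhl_{T_0}$ share the root $T_0$, Proposition~\ref{P:nested-lifts} gives $P = \bfL(P_0) \subseteq \bfL(\ol P_0 \dhl_{T_0})$. Thus each completion of $P_0$ yields a completion of $P$; in particular, since the semi-closed $P_0$ has a completion, $P$ itself has one, which is what is needed to invoke Corollary~\ref{C:ind-ext-str-conn}.

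Now suppose $\ol P(\ph \mid X) = p$ for every completion $\ol P$ of $P$; the goal is $P(\ph \mid X) = p$. Corollary~\ref{C:ind-ext-str-conn} gives $X \cao \ante P$, so by Proposition~\ref{P:ctbly-ax=one-ax} I may write $X \equiv X' \cup \{\chi\}$ with $X' \in \ante P = \cG\cA(P_0)$ and $\chi \in \cF$; writing $T(X') = T^{X'} + \psi_{X'}$ with $T^{X'} \in [T_0, \tau(P_0)]$ and $\psi_{X'} \in \AF(P_0)$, this gives $T(X) = T^{X'} + (\psi_{X'} \wedge \chi)$. \textbf{The main obstacle is promoting $X \cao \ante P$ to genuine membership $X \in \ante P$}, that is, showing $\psi_{X'} \wedge \chi \in \AF(P_0)$, since $\cG\cA(P_0)$ is not visibly closed under adjoining the extra formula $\chi$. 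I would clear this completion-by-completion: for each completion $\ol P_0$ of $P_0$, the hypothesis makes $X$ an antecedent of the completion $\bfL(\ol P_0 \dhl_{T_0})$ of $P$, so applying Proposition~\ref{P:any-psi} inside the pre-theory $\ol P_0 \dhl_{T_0}$ to the decomposition $T(X) = T^{X'} + (\psi_{X'} \wedge \chi)$ (noting $T^{X'} \in [T_0, \tau(\ol P_0 \dhl_{T_0})]$) yields $\psi_{X'} \wedge \chi \in \AF(\ol P_0 \dhl_{T_0})$, hence $T_0 + (\psi_{X'} \wedge \chi) \in \ante \ol P_0$. As this holds for every completion $\ol P_0$, the rule (R8) for the semi-closed $P_0$ forces $T_0 + (\psi_{X'} \wedge \chi) \in \ante P_0$, i.e.\ $\psi_X := \psi_{X'} \wedge \chi \in \AF(P_0)$, so that $X \in \cG\cA(P_0) = \ante P$ with $T(X) = T^{X'} + \psi_X$.

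Finally I would extract the value. For each completion $\ol P_0$ of $P_0$, put $\ol P = \bfL(\ol P_0 \dhl_{T_0})$; unwinding the lift and using that restriction by $\dhl_{T_0}$ preserves the value of any conditional whose antecedent is countably axiomatizable over $T_0$ (as in the proof of Theorem~\ref{T:restrict-inherit}, valid here since $T_0 + \psi_X \cao T_0$) gives $\ol P(\ph \mid X) = (\ol P_0 \dhl_{T_0})(\ph \mid T_0, \psi_X) = \ol P_0(\ph \mid T_0, \psi_X)$. The hypothesis makes the left-hand side equal to $p$, so $\ol P_0(\ph \mid T_0, \psi_X) = p$ for every completion $\ol P_0$ of $P_0$. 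Semi-closedness of $P_0$, via (R8), then delivers $P_0(\ph \mid T_0, \psi_X) = p$, and by the definition of the lift this is precisely $P(\ph \mid X) = p$. Combined with Proposition~\ref{P:lift-entire}, this shows $\bfL(P_0)$ is semi-closed.
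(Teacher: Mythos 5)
Your proposal is correct and takes essentially the same route as the paper's proof: Proposition~\ref{P:lift-entire} reduces everything to (R8), completions of $\bfL(P_0)$ are manufactured from completions of $P_0$ via Corollary~\ref{C:compl-restrict} and Propositions~\ref{P:lift-complete} and~\ref{P:nested-lifts}, Corollary~\ref{C:ind-ext-str-conn} gives $X \cao \ante P$, and the value is transferred through the lifts and recovered by (R8) for $P_0$. The only difference is bookkeeping: where the paper cites Proposition~\ref{P:any-psi} to place $\psi$ in $\AF(P_0)$ directly, you apply it inside each completion's restriction $\ol P_0 \dhl_{T_0}$ (where its hypothesis $X \in \ante \bfL(\ol P_0 \dhl_{T_0})$ actually holds) and then pull membership down to $P_0$ with an extra application of (R8) — a more explicit rendering of the same step, which in any case also follows from the final (R8) conclusion $P_0(\ph \mid T_0, \psi_X) = p$.
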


\begin{proof}
  Let $P_0$ be a pre-theory with root $T_0$ and let $P = \bfL(P_0)$. Proposition
  \ref{P:lift-entire} implies $P$ is entire. Suppose $\ol P(\ph \mid X) = p$
  whenever $\ol P$ is a completion of $P$. Since $P_0$ is a pre-theory, it is
  semi-closed. It therefore has a completion. Let $\ol P_0$ be a completion of
  $P_0$. Corollary \ref{C:compl-restrict} implies $P_0' = \ol P_0 \dhl_{T_0}$ is
  a completion of $P_0$ that is also a pre-theory with root $T_0$. Let $P' =
  \bfL(P_0')$. Then Proposition \ref{P:lift-complete} implies $P'$ is complete
  and Proposition \ref{P:nested-lifts} implies $P \subseteq P'$. Hence, $P'$ is
  a completion of $P$, so by supposition, $P'(\ph \mid X) = p$.

  By Corollary \ref{C:ind-ext-str-conn}, we have $X \cao \ante P$. Choose $X'
  \in \ante P$ and $\psi' \in \cF$ such that $X \equiv X' \cup \{\psi'\}$. Then
  $T(X) = T(X') + \psi' = T^{X'} + \psi_{X'} + \psi' = T^{X'} + \psi$, where
  $\psi = \psi_{X'} \wedge \psi'$. From Proposition \ref{P:any-psi}, it follows
  that $\psi \in \AF(P_0)$. As in the proof of Proposition \ref{P:nested-lifts},
  we have $[T_0, \tau(P_0)] \subseteq [T_0, \tau(P_0')]$ and $\AF(P_0) \subseteq
  \AF(P_0')$. Hence, $T^{X'} \in [T_0, \tau(P_0')]$ and $\psi \in \AF(P_0')$,
  which implies $P_0'(\ph \mid T_0, \psi) = p$. But $P_0' \subseteq \ol P_0$, so
  $\ol P_0(\ph \mid T_0, \psi) = p$. Since $\ol P_0$ was arbitrary and $P_0$ is
  semi-closed, the rule of inductive extension gives $P_0(\ph \mid T_0, \psi) =
  p$. Hence, $P(\ph \mid X) = P(\ph \mid T^{X'}, \psi) = p$, which shows that
  $P$ satisfies the rule of induction extension and is therefore semi-closed.
\end{proof}

\begin{prop}\label{P:lift-closed}
  If $P_0$ is a pre-theory, then $\bfL(P_0)$ is a closed extension of $P_0$.
\end{prop}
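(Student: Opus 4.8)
The plan is to use the two facts already in hand—that $\bfL(P_0)$ extends $P_0$ (Proposition~\ref{P:lift-extends}) and is semi-closed (Proposition~\ref{P:lift-semi-closed})—so that the entire content of the proposition reduces to verifying that $P := \bfL(P_0)$ satisfies the rule of deductive extension (R9). Writing $T_0$ for the root of $P_0$, I would fix a nonempty $S \subseteq \cF$ with $P(\th \mid X) = 1$ for all $\th \in S$ and establish the two conclusions of (R9): that $X \cup S \in \ante P$ and that $P(\, \cdot \mid X, S) = P(\, \cdot \mid X)$.

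First, since $S \ne \emp$, we immediately get $X \in \ante P$, so $T(X) = T^X + \psi_X$ for some $T^X \in [T_0, \tau(P_0)]$ and $\psi_X \in \AF(P_0)$, and by the remark following Proposition~\ref{P:lift-admissible} we have $P(\, \cdot \mid X) = P_0(\, \cdot \mid T_0, \psi_X)$. In particular $P_0(\th \mid T_0, \psi_X) = 1$ for every $\th \in S$. Applying the rule of material implication for $P_0$ (using $T_0 \in \ante P_0$), each formula $\psi_X \to \th$ satisfies $P_0(\psi_X \to \th \mid T_0) = 1$, and hence lies in $\tau(P_0; T_0) = \tau(P_0)$ by Proposition~\ref{P:simple-tau(P)}.

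The heart of the argument is then a short computation with generated theories. Setting $S' = \{\psi_X \to \th \mid \th \in S\}$ and $T' = T^X + S'$, the previous step gives $S' \subseteq \tau(P_0)$, so $T'$ contains $T_0$ and is a subtheory of the deductive theory $\tau(P_0)$, whence $T' \in [T_0, \tau(P_0)]$. Now $T(X \cup S) = T(X) + S = T^X + \psi_X + S$, and Lemma~\ref{L:omit-psi} (with $T = T^X$, $\psi = \psi_X$) lets me replace $S$ by $S'$, yielding $T(X \cup S) = T^X + \psi_X + S' = T' + \psi_X$. Since $T' \in [T_0, \tau(P_0)]$ and $\psi_X \in \AF(P_0)$, this exhibits $X \cup S$ as a generalized antecedent, so $X \cup S \in \ante P$. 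Applying the same remark following Proposition~\ref{P:lift-admissible} to the representation $T(X \cup S) = T' + \psi_X$ gives $P(\, \cdot \mid X \cup S) = P_0(\, \cdot \mid T_0, \psi_X) = P(\, \cdot \mid X)$, which is precisely (R9).

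I expect the only delicate point to be the bookkeeping with generated theories: confirming that the reshuffling $T^X + \psi_X + S = T^X + S' + \psi_X$ is legitimate (associativity and commutativity of adjoining formulas, together with the correct instance of Lemma~\ref{L:omit-psi}), and that $T'$ genuinely lands in $[T_0, \tau(P_0)]$ rather than merely inside $\cF$. Everything else is a direct appeal to the material-implication rule and to the identification $\tau(P_0) = \tau(P_0; T_0)$, so no further obstacle should arise.
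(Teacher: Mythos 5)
Your proposal is correct and follows essentially the same route as the paper's own proof: reduce to verifying (R9) via Propositions~\ref{P:lift-extends} and~\ref{P:lift-semi-closed}, use material implication to get $P_0(\psi_X \to \th \mid T_0) = 1$, place $S' = \{\psi_X \to \th \mid \th \in S\}$ inside $\tau(P_0)$ via Proposition~\ref{P:simple-tau(P)}, and invoke Lemma~\ref{L:omit-psi} to rewrite $X \cup S \equiv T' + \psi_X$ with $T' = T^X + S' \in [T_0, \tau(P_0)]$. The bookkeeping you flag as the delicate point is handled in the paper exactly as you describe, so there is no gap.
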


\begin{proof}
  Let $P_0$ be a pre-theory with root $T_0$ and let $P = \bfL(P_0)$. Proposition
  \ref{P:lift-extends} implies $P$ is an extension of $P_0$, and Proposition
  \ref{P:lift-semi-closed} implies $P$ is semi-closed. Let $S \subseteq \cF$ be
  nonempty and assume $P(\th \mid X) = 1$ for all $\th \in S$. Then $X \in \ante
  P$ and $P_0(\th \mid T_0, \psi_X) = 1$ for all $\th \in S$. By the rule of
  material implication, $P_0(\psi_X \to \th \mid T_0) = 1$ for all $\th \in S$.
  Hence, if we define $S' = \{\psi_X \to \th \mid \th \in S\}$ and $T' = T^X +
  S'$, then Proposition \ref{P:simple-tau(P)} implies $S' \subseteq \tau(P_0)$,
  so that $T' \in [T_0, \tau(P_0)]$. By Lemma \ref{L:omit-psi}, we have $X \cup
  S \equiv T^X + \psi_X + S = T^X + \psi_X + S' = T' + \psi_X$. It therefore
  follows that $X \cup S \in \ante P$ and $P (\ph \mid X \cup S) = p$ if and
  only if $P_0(\ph \mid T_0, \psi_X) = p$, which holds if and only if $P(\ph
  \mid X) = p$. This shows that $P$ satisfies the rule of deductive extension
  and is therefore closed.
\end{proof}

\begin{proof}[Proof of Theorem \ref{T:theory-defn}]
  Let $P_0$ be a pre-theory with root $T_0$ and let $P = \bfL(P_0)$. Proposition
  \ref{P:lift-closed} shows that $P$ is a closed extension of $P_0$. Let $P'$ be
  another closed extension of $P_0$ and suppose $P(\ph \mid X) = p$. Then
  $P_0(\ph \mid T_0, \psi_X) = p$, which implies $P'(\ph \mid T_0, \psi_X) = p$.
  Since $T^X \subseteq \tau(P_0)$, we have $P_0(\th \mid T_0) = 1$ for all $\th
  \in T^X$. Deductive transitivity gives $P_0(\th \mid T_0, \psi_X) = 1$ for all
  $\th \in T^X$, which implies $P'(\th \mid T_0, \psi_X) = 1$ for all $\th \in
  T^X$. Since $P'$ is closed, the rule of deductive extension gives $P'(\ph \mid
  T_0, \psi_X, T^X) = 1$. But $T_0 \subseteq T^X$, so $T_0 +  \psi_X + T^X = T^X
  + \psi_X \equiv X$, and the rule of logical equivalence gives $P'(\ph \mid X)
  = p$.
\end{proof}

\subsection{Characterizing inductive theories}

Recall the notation $\bfP(P_0)$, established in Section \ref{S:ind-theories}.
The proof of Theorem \ref{T:theory-defn} shows that $\bfP(P_0) = \bfL(P_0)$, so
that $\bfP(P_0)$ is simply the lift of $P_0$. In particular, this gives us an
explicit construction of $\bfP(P_0)$ from $P_0$. We will use this explicit
construction to prove Corollary \ref{C:theory-defn} and Theorem
\ref{T:theory-char}. Henceforth, we will drop the notation $\bfL(P_0)$, and
write only $\bfP(P_0)$ instead.

\begin{proof}[Proof of Corollary \ref{C:theory-defn}]
  Let $P_0, P_0'$ be pre-theories with roots $T_0, T_0'$, respectively, and let
  $P = \bfP(P_0)$ and $P' = \bfP(P_0')$.

  First assume $P = P'$. Then $T_0 \in \ante P_0 \subseteq \ante P = \ante P'$.
  We may therefore choose $T' \in [T_0', \tau(P_0')]$ and $\psi' \in \AF(P_0')$
  such that $T_0 = T' + \psi'$. Then $T_0' \subseteq T' \subseteq T' + \psi' =
  T_0$. By reversing the roles of $P_0$ and $P_0'$, we also have $T_0 \subseteq
  T_0'$, showing that $T_0 = T_0'$. Thus, by Proposition \ref{P:lift-unique}, we
  have $P_0 = P \dhl_{T_0} = P' \dhl_{T_0'} = P_0'$.

  Now assume $P_0 \subset P_0' \subseteq P$. Then $P$ is a closed extension of
  $P_0'$, so Theorem \ref{T:theory-defn} implies $P' \subseteq P$. On the other
  hand, $P'$ is a closed extension of $P_0'$ and $P_0 \subset P_0'$, so $P'$ is
  a closed extension of $P_0$, giving $P \subseteq P'$. Thus, $P = P'$, so by
  the above $P_0 = P_0'$, a contradiction.
\end{proof}

\begin{prop}\label{P:lift-connected}
  If $P_0$ is a pre-theory, then $\bfP(P_0)$ is connected.
\end{prop}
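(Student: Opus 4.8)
The plan is to exhibit $P_0$ itself as a basis for $\bfP(P_0)$, which yields connectivity directly from the definition. Recall that the proof of Theorem \ref{T:theory-defn} identifies $\bfP(P_0)$ with the lift $\bfL(P_0)$, so that $\ante \bfP(P_0) = \cG\cA(P_0)$. Two of the three requirements for $P_0$ to be a basis are immediate: $P_0$ is strongly connected because it is a pre-theory, and $P_0 \subseteq \bfP(P_0)$ by Proposition \ref{P:lift-extends}. It therefore remains only to check the lifting condition on antecedents, namely that every $X \in \ante \bfP(P_0)$ can be written as $X \equiv \wh X \cup S$ for some $\wh X \in \ante P_0$ and some $S \subseteq \tau(P_0; \wh X)$.

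So let $T_0$ be the root of $P_0$ and fix $X \in \ante \bfP(P_0) = \cG\cA(P_0)$. By the definition of $\cG\cA(P_0)$, I can choose $T \in [T_0, \tau(P_0)]$ and $\psi \in \AF(P_0)$ with $T(X) = T + \psi$. The natural choices are then $\wh X = T_0 + \psi$ and $S = T$. Since $\psi \in \AF(P_0)$, we have $\wh X = T_0 + \psi \in \ante P_0$, so $\wh X$ is a legitimate antecedent of the basis.

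To finish, I would verify the two conditions on $\wh X$ and $S$. For the membership $S \subseteq \tau(P_0; \wh X)$: since $T \subseteq \tau(P_0)$ and $\wh X \in \ante P_0$, shrinking the index family from $\ante P_0$ to $\{\wh X\}$ only enlarges $\tau$, giving $S = T \subseteq \tau(P_0) = \tau(P_0; \ante P_0) \subseteq \tau(P_0; \wh X)$. For the equivalence $X \equiv \wh X \cup S$: because $T_0 \subseteq T$, I compute $T(\wh X \cup S) = (T_0 + \psi) + T = T + \psi = T(X)$, and $X \equiv Y$ holds precisely when $T(X) = T(Y)$. Thus $P_0$ is a basis for $\bfP(P_0)$, and the proposition follows.

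The argument is essentially bookkeeping with deductive theories once the basis is identified, so there is no genuine obstacle to overcome. The one point that requires care — and the step I would double-check — is that the witnessing set $S$ must consist of formulas having probability one relative to $\wh X$; the choice $S = T$ works for exactly this reason, since $T$ lies below $\tau(P_0)$ and every formula of $\tau(P_0)$ is certain under $P_0$ against any antecedent, in particular against $\wh X$. In short, each generalized antecedent arises from a genuine antecedent $T_0 + \psi$ of $P_0$ by adjoining only formulas of $T$, all of which are certain, which is exactly what the definition of connectivity demands.
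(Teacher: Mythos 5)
Your proof is correct and follows essentially the same route as the paper: both exhibit $P_0$ itself as a basis for $\bfP(P_0)$, taking $\wh X = T_0 + \psi$ and $S = T$ from the decomposition $T(X) = T + \psi$ of a generalized antecedent, and then checking $S \subseteq \tau(P_0; \wh X)$ and $X \equiv \wh X \cup S$. The only (harmless) difference is that you obtain $S \subseteq \tau(P_0; \wh X)$ directly from the monotonicity of $\tau$ in the index family, whereas the paper passes through $P_0(\th \mid T_0) = 1$ and deductive transitivity -- your shortcut is valid since $\wh X \in \ante P_0$.
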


\begin{proof}
  Let $P_0$ be a pre-theory with root $T_0$ and let $P = \bfP(P_0)$. We will
  show that $P_0$ is a basis for $P$. First note that $P_0$ is strongly
  connected and $P_0 \subseteq P$. Now let $X \in \ante P$ be given. Define
  $\wh X = T_0 + \psi_X$ and $S = T^X$. Then $\wh X \in \ante P_0$. Since $T^X
  \subseteq \tau(P_0)$, we have $P_0(\th \mid T_0) = 1$ for all $\th \in S$. By
  deductive transitivity, $P_0(\th \mid \wh X) = 1$ for all $\th \in S$. Hence,
  $S \subseteq \tau(P_0, \wh X)$. Lastly, from $T_0 \subseteq T^X$, it follows
  that $X \equiv T^X + \psi_X = T_0 + T^X + \psi_X \equiv \wh X \cup S$.
\end{proof}

\begin{proof}[Proof of Theorem \ref{T:theory-char}]
  By Theorem \ref{T:theory-defn} and Proposition \ref{P:lift-connected}, we have
  (i) implies (ii). For the converse, let $P$ be closed and connected. Let $T_0$
  be the root of $P$ and define $P_0 = P \dhl_{T_0}$. By Theorem
  \ref{T:restrict-inherit}, the set $P_0$ is semi-closed. By construction, $P_0$
  is strongly connected with root $T_0$. Thus, $P_0$ is a pre-theory. We will
  show that $P = \bfP(P_0)$.

  For notational simplicity, let $P' = \bfP(P_0)$. Since $P$ is a closed
  extension of $P_0$, Theorem \ref{T:theory-defn} gives $P' \subseteq P$.
  Suppose $P(\ph \mid X) = p$. By Proposition \ref{P:semi-cl-conn}, we may
  choose $T \in [T_0, \tau(P)]$ and $\psi \in \cF$ such that $X \equiv T +
  \psi$ and $T_0 + \psi \in \ante P$. Since $T \subseteq \tau(P)$, we have
  $P(\th \mid T_0) = 1$ for all $\th \in T$. By deductive transitivity, $P(\th
  \mid T_0, \psi) = 1$ for all $\th \in T$. Since $P$ is closed, the rule of
  deductive extension implies $P(\; \cdot \mid T_0, \psi) = P(\; \cdot \mid
  T_0, \psi, T)$. But $T_0 + \psi + T = T + \psi \equiv X$. Hence, $P(\; \cdot
  \mid T_0, \psi) = P(\; \cdot \mid X)$, and it therefore follows that $P(\ph
  \mid T_0, \psi) = p$. Since $P_0 = P \dhl_{T_0}$, this gives $P_0(\ph \mid
  T_0, \psi) = p$. Finally, since $P'$ is the lift of $P_0$, we have $P'(\ph
  \mid X) = P'(\ph \mid T, \psi) = P_0(\ph \mid T_0, \psi) = p$.
\end{proof}

\begin{rmk}\label{R:ante-cao}
  Note that by Theorem \ref{T:theory-char} and Proposition \ref{P:lift-unique},
  if $P$ is an inductive theory, then $P = \bfP(P_0)$, where $P_0 = P \dhl_
  {T_0}$. Also note that by Proposition \ref{P:simple-tau(P)}, we have $\tau(P)
  = \tau (P_0)$. Hence, every $X \in \ante P$ satisfies $X \cao [T_0, \tau(P)]$.
\end{rmk}

\section{Generating inductive theories}\label{S:ind-cond}

In the first half of this section, we prove Theorem \ref{T:theory-gen-defn}. One
might think that we could do this the same way we would do it for the deductive
calculus. Namely, we could try to prove that (i) the intersection of inductive
theories is an inductive theory; therefore, (ii) the intersection of all
inductive theories that contain $Q$ is the smallest inductive theory containing
$Q$. Unfortunately, as we will see in Section \ref{S:D-conseq-need2} (see Remark
\ref{R:intersect-th-fail}), it turns out that (i) is false. (Surprisingly,
though, (ii) is still true, provided we pay special attention to the root of
$Q$.) We therefore have to find a different proof method.

As it turns out, the intersection of pre-theories (with a common root) is a
pre-theory. This will be the key to our proof, but it will require several
preliminary definitions and results. When we are done proving Theorem 
\ref{T:theory-gen-defn}, we present a partial converse to the rule of logical
implication (see Theorem \ref{T:log-impl-iff}).

In the second half of this section, we generalize inductive derivability to
something that we call ``inductive conditions''. This allows us to reason with
more general statements, such as $Q(\ph \mid X) > 1/2$.

\subsection{Strongly connected equivalence}

Our first preliminary on the way to the proof of Theorem \ref{T:theory-gen-defn}
shows that, when extending a consistent set to an inductive theory, we are never
required to change roots.

\begin{prop}\label{P:cons-match-root}
  Every consistent set can be extended to an inductive theory with the same
  root. More specifically, let $Q$ be consistent with root $T_0$ and let $P$ be
  an inductive theory with $Q \subseteq P$. Then $T_0 \in \ante P$ and $P \dhl_
  {T_0}$ is a pre-theory with root $T_0$ that satisfies $Q \subseteq \bfP(P
  \dhl_{T_0}) \subseteq P$.
\end{prop}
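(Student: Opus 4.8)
The plan is to set $P_0 := P\dhl_{T_0}$ and verify the three assertions in turn; the first two are routine, and essentially all the work lies in proving $Q \subseteq \bfP(P_0)$. First I would establish $T_0 \in \ante P$. Since $Q$ is connected with root $T_0$, Proposition \ref{P:root-exist} supplies $X_0 \in \ante Q$ with $T(X_0) = T_0$, so $X_0 \equiv T_0$. As $Q \subseteq P$ we have $X_0 \in \ante P$, and because $P$ is an inductive theory (hence admissible), the rule of logical equivalence promotes $X_0$ to $T_0$, giving $T_0 \in \ante P$.

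Next I would check that $P_0$ is a pre-theory with root $T_0$ and that $\bfP(P_0) \subseteq P$. Being an inductive theory, $P$ is closed, hence semi-closed, so Theorem \ref{T:restrict-inherit}(iv) makes $P_0$ semi-closed. Strong connectivity with root $T_0$ is immediate from the restriction: every $X \in \ante P_0$ satisfies $X \cao T_0$, and $T_0 \in \ante P_0$, so $P_0$ is strongly connected with root $T(T_0) = T_0$. For the inclusion, note $P_0 \subseteq P$ and $P$ is closed, so $P$ is a closed extension of $P_0$; minimality of the smallest closed extension (Theorem \ref{T:theory-defn}) gives $\bfP(P_0) \subseteq P$.

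The heart of the argument, and the step I expect to be the main obstacle, is $Q \subseteq \bfP(P_0) = \bfL(P_0)$. The subtlety is that $P$ is an \emph{arbitrary} inductive theory containing $Q$, so its own root may lie strictly below $T_0$; I therefore cannot decompose antecedents relative to $T_0$ by applying Proposition \ref{P:semi-cl-conn} to $P$, and must instead use the connectivity of $Q$. Fix $(X, \ph, p) \in Q$, so $P(\ph \mid X) = p$. Choosing a basis $\wh Q$ for $Q$ (strongly connected with root $T_0$ by Proposition \ref{P:basis-root}), write $X \equiv \wh X \cup S$ with $\wh X \in \ante \wh Q$ and $S \subseteq \tau(\wh Q; \wh X)$, and use Proposition \ref{P:str-conn-root-one-away} to get $T(\wh X) = T_0 + \psi$. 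Then $\wh X \cao T_0$ (Proposition \ref{P:ctbly-ax=one-ax}) puts $\wh X$, hence $T_0 + \psi$, in $\ante P_0$, so $\psi \in \AF(P_0)$. Setting $S' = \{\psi \to \th \mid \th \in S\}$ and $T = T_0 + S'$, Lemma \ref{L:omit-psi} yields $X \equiv T + \psi$; and the material-implication computation from the proof of Proposition \ref{P:semi-cl-conn} (each $\th \in S$ gives $P(\th \mid T_0, \psi) = 1$, hence $P(\psi \to \th \mid T_0) = 1$) shows $S' \subseteq \tau(P_0)$ via Proposition \ref{P:simple-tau(P)}, so $T \in [T_0, \tau(P_0)]$ and $X \in \cG\cA(P_0)$.

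Finally I would recover the probability. By the definition of the lift and Proposition \ref{P:lift-well-defined}, $\bfP(P_0)(\ph \mid X) = P_0(\ph \mid T_0, \psi)$, so it suffices to show $P(\ph \mid T_0, \psi) = p$. Each $\th \in T \subseteq \tau(P_0)$ has $P(\th \mid T_0) = 1$, and since $T_0 + \psi \vdash T_0$, deductive transitivity gives $P(\th \mid T_0, \psi) = 1$ for all $\th \in T$. As $P$ is closed, the rule of deductive extension collapses $P(\,\cdot \mid T_0, \psi, T) = P(\,\cdot \mid T_0, \psi)$, and $T_0 + \psi + T = T + \psi \equiv X$ shows this equals $P(\,\cdot \mid X)$. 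Hence $P(\ph \mid T_0, \psi) = p$, so $(X, \ph, p) \in \bfP(P_0)$, which completes $Q \subseteq \bfP(P_0)$ and the proof.
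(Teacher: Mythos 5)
Your proposal is correct and follows essentially the same route as the paper's proof: the same restriction $P \dhl_{T_0}$ made into a pre-theory via Theorem \ref{T:restrict-inherit}, the same basis decomposition $X \equiv \wh X \cup S$ with $\wh X \equiv T_0 + \psi$, and the same use of Lemma \ref{L:omit-psi}, material implication, and Proposition \ref{P:simple-tau(P)} to place $T = T_0 + S'$ in $[T_0, \tau(P_0)]$. The only cosmetic difference is the final collapse, where you apply the rule of deductive extension with the set $T$ at the antecedent $T_0 + \psi$, while the paper applies it with $S$ at $\wh X$; both yield $P(\ph \mid T_0, \psi) = P(\ph \mid X) = p$ and hence $Q \subseteq \bfP(P \dhl_{T_0})$.
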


\begin{proof}
  Let $Q$ be consistent with root $T_0$ and let $P$ be an inductive theory with
  $Q \subseteq P$. Since $T_0$ is the root of $Q$, we have $T_0 = T(X_0)$ for
  some $X_0 \in \ante Q$. But $Q \subseteq P$, so $X_0 \in \ante P$ and, by the
  rule of logical equivalence, $T_0 \in \ante P$. Let $P_0' = P \dhl_{T_0}$.
  Then $P_0'$ is strongly connected with root $T_0$ and, by Theorem 
  \ref{T:restrict-inherit}, the set $P_0'$ is semi-closed. Therefore, $P_0'$ is
  a pre-theory, and we may define $P' = \bfP(P_0')$. Note that $P'$ is an
  inductive theory with root $T_0$. Since $P_0' \subseteq P$ and $P$ is an
  inductive theory, Theorem \ref{T:theory-defn} implies $P' \subseteq P$. It
  remains only to show that $Q \subseteq P'$.

  Suppose $Q(\ph \mid X) = p$. We want to show that $P'(\ph \mid X) = p$. Since
  $P'$ is the lift of $P_0'$, we must find $T \in [T_0, \tau(P_0')]$ and $\psi
  \in \cF$ such that $X \equiv T + \psi$ and $P_0'(\ph \mid T_0, \psi) = p$. Let
  $\wh Q$ be a basis for $Q$. Choose $\wh X \in \ante \wh Q$ and $S \subseteq
  \tau(\wh Q; \wh X)$ such that that $X \equiv \wh X \cup S$. Proposition
  \ref{P:basis-root} implies that $T_0$ is the root of $\wh Q$. Hence, we may
  choose $\psi \in \cF$ such that $\wh X \equiv T_0 + \psi$. Define $S' = 
  \{\psi \to \th \mid \th \in S\}$ and $T = T_0 + S'$. By Lemma 
  \ref{L:omit-psi}, we have $X \equiv \wh X \cup S \equiv T_0 + \psi + S = T_0 +
  \psi + S' = T + \psi$, so it suffices to show $S' \subseteq \tau(P_0')$ and
  $P_0'(\ph \mid T_0, \psi) = p$.

  By Proposition \ref{P:simple-tau(P)}, in order to show that $S' \subseteq \tau
  (P_0')$, we must show that $P_0' (\psi \to \th \mid T_0) = 1$ for all $\th \in
  S$. Let $\th \in S$. Then $\wh Q(\th \mid \wh X) = 1$. But $\wh Q \subseteq Q
  \subseteq P$, so $P(\th \mid \wh X) = P(\th \mid T_0, \psi) = 1$. By the rule
  of material implication, $P(\psi \to \th \mid T_0) = 1$, which implies $P_0'
  (\psi \to \th \mid T_0) = 1$.

  Finally, since $\wh Q \subseteq Q \subseteq P$, we have $\tau(\wh Q; \wh X)
  \subseteq \tau(P; \wh X)$. Hence, $S \subseteq \tau(P; \wh X)$. Since $P$ is
  closed, deductive extension implies $P(\; \cdot \mid \wh X) = P(\;
  \cdot \mid \wh X \cup S)$. But $\wh X \cup S \equiv X$. Also, $Q \subseteq P$,
  so we have $P(\ph \mid X) = p$. Using the rule of logical equivalence, we have
  $P (\ph \mid T_0, \psi) = P (\ph \mid \wh X) = P(\ph \mid X) = p$. Therefore,
  since $P_0' = P \dhl_{T_0}$, we obtain $P_0'(\ph \mid T_0, \psi) = p$.
\end{proof}

Our next result says that connected sets are, in a certain sense, logically
unnecessary. It is enough to only consider strongly connected sets. To make this
precise, we first define what it means for two subsets of $\cF^\IS$ to be
logically equivalent.

Let $Q, Q' \subseteq \cF^\IS$ be connected. We say that $Q$ and $Q'$ are
\emph{equivalent}, written $Q \equiv Q'$,
  \index{equivalent}%
  \symindex{$\equiv$}%
if, for all inductive theories $P$, we have $Q \subseteq P$ if and only if $Q'
\subseteq P$. After proving Theorem \ref{T:theory-gen-defn}, we will be able to
speak of the inductive theory generated by $Q$. At that point, we will have the
much more natural characterization of equivalence given in Proposition
\ref{P:ind-equiv-char}.

\begin{thm}\label{T:str-conn-enough}
  If $Q$ is connected, then there exists strongly connected $Q_0$ with the same
  root as $Q$ such that $Q_0 \equiv Q$.
\end{thm}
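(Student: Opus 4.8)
The plan is to produce $Q_0$ by ``pushing'' every inductive statement of $Q$ down onto the antecedents supplied by a basis, and then adjoining the basis itself. Concretely, fix a basis $\wh Q$ for $Q$; this is strongly connected, and by Proposition \ref{P:basis-root} it has the same root $T_0$ as $Q$. Using the definition of connectivity together with the axiom of choice, fix for each $X \in \ante Q$ a decomposition $X \equiv \wh X \cup S_X$ with $\wh X \in \ante \wh Q$ and $S_X \subseteq \tau(\wh Q; \wh X)$. I would then define
\[
  Q_0 = \wh Q \cup \{(\wh X, \ph, p) \mid (X, \ph, p) \in Q\},
\]
where $\wh X$ refers to the fixed decomposition of $X$. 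Every antecedent appearing in the second set already lies in $\ante \wh Q$, so $\ante Q_0 = \ante \wh Q$.

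Since strong connectivity and the root both depend only on the family of antecedents (as noted in the discussion following Theorem \ref{T:theory-char}), the equality $\ante Q_0 = \ante \wh Q$ immediately yields that $Q_0$ is strongly connected and, via Proposition \ref{P:root-exist}, that it shares the root of $\wh Q$, namely $T_0$. Thus the only substantive content is the equivalence $Q_0 \equiv Q$, i.e., that an inductive theory $P$ contains $Q_0$ exactly when it contains $Q$.

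For the equivalence, the engine in both directions is that in a closed (hence entire) set $P$, a probability-one ``collar'' $S$ around an antecedent is invisible: if $\wh X \in \ante P$ and $P(\th \mid \wh X) = 1$ for all $\th \in S$, then the rule of deductive extension gives $P(\,\cdot \mid \wh X \cup S) = P(\,\cdot \mid \wh X)$, and the rule of logical equivalence transfers this to $X$ since $X \equiv \wh X \cup S$. Suppose first $Q \subseteq P$. Then $\wh Q \subseteq Q \subseteq P$, so for each $(X,\ph,p) \in Q$ the inclusions $S_X \subseteq \tau(\wh Q; \wh X) \subseteq \tau(P; \wh X)$ supply $P(\th \mid \wh X) = 1$ for $\th \in S_X$; the collar argument then gives $P(\ph \mid \wh X) = P(\ph \mid X) = p$, so $(\wh X, \ph, p) \in P$ and hence $Q_0 \subseteq P$. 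Conversely, if $Q_0 \subseteq P$, then $\wh Q \subseteq P$ again furnishes the probability-one facts on $S_X$, and each $(\wh X, \ph, p) \in Q_0 \subseteq P$ gives $P(\ph \mid \wh X) = p$; running the same collar argument in reverse yields $P(\ph \mid X) = p$, so $Q \subseteq P$.

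The main obstacle to watch is that the rule of deductive extension (R9) requires the collar $S_X$ to be nonempty, so the degenerate case $S_X = \emptyset$ (where $X \equiv \wh X$) must be dispatched separately by a bare appeal to the rule of logical equivalence. The other point requiring care is why $\wh Q$ must be retained inside $Q_0$ rather than keeping only the pushed-down statements: it is precisely $\wh Q$ that certifies, inside any $P \supseteq Q_0$, the membership $S_X \subseteq \tau(P; \wh X)$ needed to run the collar argument, and that guarantees $\ante Q_0 = \ante \wh Q$ so that both the root and strong connectivity are preserved. Everything else is routine bookkeeping with the fixed choice of decompositions.
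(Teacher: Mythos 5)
Your proof is correct and takes essentially the same route as the paper's: fix a basis $\wh Q$, push each statement of $Q$ down onto its chosen basis antecedent, and establish $Q_0 \equiv Q$ via the deductive-extension plus logical-equivalence argument, with the empty-collar case handled by logical equivalence alone. The only differences are bookkeeping: the paper further normalizes each $\wh X$ to the form $T_0 + \psi_X$ and, rather than adjoining $\wh Q$ explicitly, arranges (by choosing $\wh X = X$ and $S_X = \emp$ when $X \in \ante \wh Q$) that the basis statements land in $Q_0$ automatically.
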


\begin{proof}
  Let $Q$ be connected with root $T_0$ and let $\wh Q$ be a basis for $Q$. For
  each $X \in \ante Q$, choose $\wh X \in \ante \wh Q$, $S_X \subseteq \tau (\wh
  Q; \wh X)$, and $\psi_X \in \cF$ such that $X \equiv \wh X \cup S_X$ and $\wh
  X \equiv T_0 + \psi_X$. For $X \in \ante \wh Q \subseteq \ante Q$, assume we
  have chosen $\wh X = X$ and $S_X = \emp$. Define
  \[
    Q_0 = \{(T_0 + \psi_X, \ph, p) \mid (X, \ph, p) \in Q\}.
  \]
  Note that if $\wh X \in \ante \wh Q \subseteq \ante Q$, then $S_{\wh X} =
  \emp$ and $\wh X \equiv T_0 + \psi_{\wh X}$. Thus, if $(\wh X, \ph, p) \in
  \wh Q \subseteq Q$, then $(T_0 + \psi_{\wh X}, \ph, p) \in Q_0$.

  Let $P$ be an inductive theory. Assume $Q_0 \subseteq P$. Let $(X, \ph, p)
  \in Q$. Then $(T_0 + \psi_X, \ph, p) \in Q_0$, which implies $P(\ph \mid T_0,
  \psi_X) = p$. Let $\th \in S_X \subseteq \tau(\wh Q; \wh X)$. Then $(\wh X,
  \th, 1) \in \wh Q$, so that $(T_0 + \psi_{\wh X}, \th, 1) \in Q_0$, which
  implies $P(\th \mid T_0, \psi_{\wh X}) = 1$. But $T_0 + \psi_{\wh X} \equiv
  \wh X \equiv T_0 + \psi_X$, so by the rule of logical equivalence, $P(\th
  \mid T_0, \psi_X) = 1$. Since $\th$ was arbitrary, deductive extension gives
  $P(\ph \mid T_0, \psi_X, S_X) = p$. But $T_0 + \psi_X + S_X \equiv \wh X \cup
  S_X \equiv X$, so the rule of logical equivalence gives $P(\ph \mid X) = p$,
  showing that $Q \subseteq P$.

  Now assume $Q \subseteq P$. Let $(Y, \ph, p) \in Q_0$. Choose $(X, \ph, p)
  \in Q$ such that $Y = T_0 + \psi_X$. Then $P(\ph \mid X) = p$. Note that $\wh
  X \equiv Y$, so that $X \equiv Y \cup S_X$. If $S_X = \emp$, then $X \equiv
  Y$, so by the rule of logical equivalence, $P(\ph \mid Y) = p$. Assume $S_X
  \ne \emp$. Let $\th \in S_X \subseteq \tau(\wh Q; \wh X)$. Then $(\wh X, \th
  , 1) \in \wh Q$. But $\wh Q \subseteq Q \subseteq P$, so $P(\th \mid \wh X) =
  1$. Since $\wh X \equiv Y$, the rule of logical equivalence implies $P(\th
  \mid Y) = 1$. Since $\th$ was arbitrary, deductive extension gives $Y \cup
  S_X \in \ante P$ and $P(\; \cdot \mid Y) = P(\; \cdot \mid Y, S_X)$. Since $X
  \equiv Y \cup S_X$, we get $P(\ph \mid Y, S_X) = p$. Therefore, $P(\ph \mid
  Y) = p$, showing that $Q_0 \subseteq P$.
\end{proof}

\subsection{Intersections of inductive sets}

We now get to the heart of the matter, which is the intersection of subsets of
$\cF^\IS$. The next result shows that the closure properties are all preserved
under such intersections. It is straightforward to verify that strong
connectivity is also preserved. Hence, as a corollary, we find that the
intersection of pre-theories (with a common root) is a pre-theory. After
establishing that result, we give the proof of Theorem \ref{T:theory-gen-defn}.

\begin{thm}\label{T:intersect-cl}
  Let $\cC \subseteq \fP \cF^\IS$ be nonempty.
  \begin{enumerate}[(i)]
    \item If each set in $\cC$ is admissible, then $\bigcap \cC$ is admissible.
    \item If each set in $\cC$ is entire, then $\bigcap \cC$ is entire.
    \item If each set in $\cC$ is semi-closed, then $\bigcap \cC$ is
          semi-closed.
    \item If each set in $\cC$ is closed, then $\bigcap \cC$ is closed.
  \end{enumerate}
\end{thm}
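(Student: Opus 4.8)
The plan is to verify each closure property for $P = \bigcap \cC$ by a uniform ``pointwise'' strategy: observe that any hypothesis a given rule imposes on $P$ is automatically inherited by every member $C \in \cC$ (since $P \subseteq C$, and membership in $P$ means membership in all $C$), and then check that the value the rule forces for the conclusion is the same for every $C$, so that the concluded inductive statement lies back in $\bigcap \cC$. Nonemptiness of $\cC$ is used to guarantee functionality.

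For (i), admissibility splits into closure under (R1) and functionality. If $(X, \ph, p) \in P$ with $X' \equiv X$ and $\ph' \equiv_X \ph$, then $(X, \ph, p) \in C$ for every $C$, so admissibility of each $C$ gives $(X', \ph', p) \in C$, whence $(X', \ph', p) \in P$; and if $(X', \ph', p), (X', \ph', p') \in P$, both lie in any fixed $C \in \cC$, forcing $p = p'$. For (ii), having (i), I would run through (R2)--(R7) in turn. Rules (R2)--(R4) and (R7) are immediate, since their hypotheses and conclusions are assertions of the form ``$P(\,\cdot \mid \cdot) = 1$'' or a limit, which transfer verbatim to each $C$ and back. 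The only point requiring the value-forcing observation is (R5) and (R6): if two of the relevant probabilities exist in $P$ (and, for (R6), are positive), they have fixed values, so each $C$ (being entire) produces the third probability with a value determined by addition or subtraction (for (R5)) or multiplication or division (for (R6)); this value is independent of $C$, so the third statement belongs to every $C$ and hence to $P$.

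The genuinely nontrivial part is (iii). Here $P$ is already entire by (ii), each semi-closed set being entire, so it remains to verify (R8), whose difficulty is that (R8) quantifies over \emph{completions}, and a priori the completions of $\bigcap \cC$ need not be related to those of the individual $C$. The key observation that unlocks it is that completions behave well under inclusion: if $\ol C$ is a completion of some $C \in \cC$, then $\ol C$ is complete and $P \subseteq C \subseteq \ol C$, so $\ol C$ is also a completion of $P$. Now suppose $\ol P(\ph \mid X) = p$ for every completion $\ol P$ of $P$. Fix any $C \in \cC$. Since every completion $\ol C$ of $C$ is among the completions of $P$, we get $\ol C(\ph \mid X) = p$ for all completions of $C$; as $C$ is semi-closed, and, per the text, semi-closed sets have completions, (R8) for $C$ yields $C(\ph \mid X) = p$. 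As $C$ was arbitrary, $(X, \ph, p) \in \bigcap \cC = P$, establishing (R8) for $P$.

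Finally, for (iv), $P$ is semi-closed by (iii), and it remains to verify (R9). Assume $S \subseteq \cF$ is nonempty with $P(\th \mid X) = 1$ for all $\th \in S$; then $C(\th \mid X) = 1$ for every $C$ and every $\th \in S$, so closedness of $C$ gives $C(\,\cdot \mid X, S) = C(\,\cdot \mid X)$. I would then show the partial functions $P(\,\cdot \mid X, S)$ and $P(\,\cdot \mid X)$ coincide directly: $(X \cup S, \ph, p) \in P$ iff it lies in every $C$ iff, by (R9) for each $C$, $(X, \ph, p) \in C$ for every $C$, iff $(X, \ph, p) \in P$. Taking some $\th_0 \in S$ gives $(X, \th_0, 1) \in P$ and hence, by this equivalence, $(X \cup S, \th_0, 1) \in P$, so $X \cup S \in \ante P$; together with the displayed equivalence this is exactly (R9) for $P$. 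Throughout, the only real conceptual obstacle is the completion argument in (iii); everything else is a mechanical transfer of rule hypotheses through the intersection.
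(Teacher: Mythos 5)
Your proposal is correct and matches the paper's own proof in all essentials: the same pointwise transfer through the intersection for (i), (ii), and (iv), and in (iii) the same key observation that any completion $\ol{P'}$ of a member $P' \in \cC$ satisfies $P \subseteq P' \subseteq \ol{P'}$ and is therefore also a completion of $\bigcap \cC$, so the (R8) hypothesis for the intersection passes down to each member. The paper compresses (R3)--(R7) into ``similar proofs,'' whereas you spell out the value-forcing step for (R5)--(R6), but this is the same argument.
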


\begin{proof}
  Let $P = \bigcap \cC$. Assume each set in $\cC$ is admissible. Suppose $(X,
  \ph, p) \in P$, $X' \equiv X$, and $\ph' \equiv_X \ph$. Let $P' \in \cC$, so
  that $P'(\ph \mid X) = p$. Since $P'$ is admissible, $P'(\ph' \mid X') = p$.
  Since $P'$ was arbitrary, $(X', \ph', p) \in P$. Now suppose $(X', \ph', p')
  \in P$. Choose $P' \in \cC$. Then $P'(\ph' \mid X') = p' = p$. Thus, $P$ is
  admissible.

  Now assume each set in $\cC$ is entire. Let $X \in \ante P$ and $X \vdash
  \ph$. Choose $\ph' \in \cF$ and $p \in [0, 1]$ such that $(X, \ph', p) \in P$.
  Let $P' \in \cC$. Then $P'(\ph' \mid X) = p$, so that $X \in \ante P'$. Since
  $P'$ is entire and $X \vdash \ph$, the rule of logical implication gives
  $P'(\ph \mid X) = 1$. Since $P'$ was arbitrary, we have $P(\ph \mid X) = 1$,
  showing that $P$ satisfies the rule of logical implication. Similar proofs
  show that $P$ satisfies rules (R3)--(R7), and therefore, $P$ is entire.

  Assume each set in $\cC$ is semi-closed. Suppose $\ol P(\ph \mid X) = p$ for
  every completion $\ol P$ of $P$. Let $P' \in \cC$ and let $\ol{P'}$ be a
  completion of $P'$. Since $P \subseteq P' \subseteq \ol{P'}$, it follows that
  $\ol{P'}$ is also a completion of $P$. Thus, $\ol{P'}(\ph \mid X) = p$. Since
  $\ol{P'}$ was arbitrary and $P'$ is semi-closed, we have $P'(\ph \mid X) = p$.
  Since $P'$ was arbitrary, this gives $P(\ph \mid X) = p$, and $P$ is
  semi-closed.

  Finally, assume each set in $\cC$ is closed. Suppose $S \subseteq \cF$ is
  nonempty and $P(\th \mid X) = 1$ for all $\th \in S$. That is, $\emp \ne S
  \subseteq \tau(P; X)$. Let $P' \in \cC$. Since $P \subseteq P'$, we have
  $\tau(P; X) \subseteq \tau(P'; X)$, so that $\emp \ne S \subseteq \tau(P';
  X)$. Since $P'$ is closed, deductive extension implies $X \cup S \in \ante P'$
  and $P'(\; \cdot \mid X, S) = P'(\; \cdot \mid X)$. Since $P'$ was arbitrary,
  we have
  \begin{align*}
    P(\ph \mid X, S) = p
      &\quad\text{iff}\quad P'(\ph \mid X, S) = p \text{ for all $P' \in \cC$}\\
      &\quad\text{iff}\quad P'(\ph \mid X) = p \text{ for all $P' \in \cC$}\\
      &\quad\text{iff}\quad P(\ph \mid X) = p.
  \end{align*}
  Since $S$ is a nonempty subset of $\tau(P; X)$, it follows that $\tau(P; X)$
  is nonempty, which implies $X \in \ante P$. Hence, by the above, $X \cup S \in
  \ante P$ and $P(\; \cdot \mid X, S) = P(\; \cdot \mid X)$, showing that $P$ is
  closed.
\end{proof}

\begin{cor}\label{C:intersect-cl}
  Let $\cC^0$ be a nonempty set of pre-theories with common root $T_0$ and
  define $P_0 = \bigcap \cC^0$. Then $P_0$ is a pre-theory with root $T_0$.
\end{cor}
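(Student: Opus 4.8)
The plan is to verify the two defining properties of a pre-theory---semi-closedness and strong connectivity---for $P_0 = \bigcap \cC^0$. Semi-closedness is immediate: every element of $\cC^0$ is a pre-theory and hence semi-closed, so Theorem \ref{T:intersect-cl}(iii) gives that $P_0$ is semi-closed. The real work lies in showing that $P_0$ is strongly connected with root $T_0$.

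First I would establish that $T_0 \in \ante P_0$. Each $P \in \cC^0$ has root $T_0$ and is admissible, so $T_0 \in \ante P$ by the rule of logical equivalence (as noted just after the definition of the root). Since each such $P$ is also entire, the rule of logical implication gives $P(\top \mid T_0) = 1$, that is, $(T_0, \top, 1) \in P$. As this holds for every $P \in \cC^0$, we obtain $(T_0, \top, 1) \in P_0$, so $T_0 \in \ante P_0$ and in particular $P_0$ is nonempty.

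Next I would take $T_0$ as the candidate base antecedent for strong connectivity. Let $X \in \ante P_0$. Since $P_0 \subseteq P$ for every $P \in \cC^0$, we have $X \in \ante P$ for every such $P$. Fixing any one $P \in \cC^0$ (recall $\cC^0$ is nonempty), which is strongly connected with root $T_0$, Proposition \ref{P:str-conn-root-one-away} furnishes $\psi \in \cF$ with $T(X) = T_0 + \psi$; hence $X \equiv T_0 \cup \{\psi\}$, and Proposition \ref{P:ctbly-ax=one-ax} gives $X \cao T_0$. Thus every antecedent of $P_0$ is countably axiomatizable over $T_0 \in \ante P_0$, so $P_0$ is strongly connected, and therefore also connected. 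Since $X \cao T_0$ implies $X \vdash T_0$ for all $X \in \ante P_0$, Proposition \ref{P:root-exist} identifies the root of $P_0$ with $T(T_0) = T_0$, the last equality because $T_0$ is already a deductive theory.

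Combining these, $P_0$ is semi-closed and strongly connected with root $T_0$, i.e., a pre-theory with root $T_0$. The only place demanding any care is the verification that $T_0 \in \ante P_0$: membership in the intersection requires a single inductive statement lying in every $P \in \cC^0$ simultaneously, and the point is that the rule of logical implication supplies the canonical such statement $(T_0, \top, 1)$ uniformly across the family. Everything else reduces to the already-established intersection lemma and the one-step structure of antecedents over the root.
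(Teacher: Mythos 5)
Your proof is correct and follows essentially the same route as the paper's: semi-closedness comes from Theorem \ref{T:intersect-cl}, and strong connectivity is obtained by passing each antecedent of $P_0$ into a fixed member of $\cC^0$ and invoking the one-step structure of antecedents over the root. The one difference is that you explicitly verify $T_0 \in \ante P_0$ via the uniform statement $(T_0, \top, 1)$, a step the paper leaves implicit even though the definition of strong connectivity requires the base antecedent to lie in $\ante P_0$; this is a worthwhile addition rather than a deviation.
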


\begin{proof}
  By Proposition \ref{T:intersect-cl}, the set $P_0$ is semi-closed. Let $X \in
  \ante P_0$. Choose $\ph \in \cF$ and $p \in [0, 1]$ such that $P_0(\ph \mid
  X) = p$. Let $P \in \cC^0$. Then $P(\ph \mid X) = p$, so that $X \in \ante P$.
  Since $P$ is strongly connected with root $T_0$, we may choose $\psi \in \cF$
  such that $X \equiv T_0 + \psi$. Since $X$ was arbitrary, $P_0$ is strongly
  connected with root $T_0$.
\end{proof}

\begin{proof}[Proof of Theorem \ref{T:theory-gen-defn}]
  Let $Q$ be consistent. Then $Q$ is connected and we may choose an inductive
  theory $P'$ such that $Q \subseteq P'$. Let $T_0$ be the root of $Q$. By
  Proposition \ref{P:cons-match-root}, we have $T_0 \in \ante P'$ and, if we
  define $P_0' = P' \dhl_{T_0}$, then $P_0'$ is a pre-theory with root $T_0$. By
  Theorem \ref{T:str-conn-enough}, we may choose strongly connected $Q_0$ with
  root $T_0$ such that $Q_0 \equiv Q$. Let $\cC^0$ be the set of all
  pre-theories with root $T_0$ that contain $Q_0$.

  Since $Q_0 \equiv Q$ and $Q \subseteq P'$, we have $Q_0 \subseteq P'$. Since
  $Q_0$ is strongly connected with root $T_0$, this gives $Q_0 \subseteq P_0'$.
  Hence, $\cC^0$ is nonempty. Let $P_0 = \bigcap \cC^0$. Corollary 
  \ref{C:intersect-cl} implies $P_0$ is a pre-theory with root $T_0$. Define $P
  = \bfP(P_0)$. Since $Q_0$ is a subset of every element of $\cC^0$, it follows
  that $Q_0 \subseteq P_0 \subseteq P$. Therefore, since $Q_0 \equiv Q$, we have
  $Q \subseteq P$.

  To show that $P$ is the smallest such inductive theory, let $P''$ be an
  arbitrary inductive theory with $Q \subseteq P''$. As above, if $P_0'' = P''
  \dhl_{T_0}$, then $P_0''$ is a pre-theory with root $T_0$ and $Q_0 \subseteq
  P_0''$. Hence, $P_0'' \in \cC^0$, so that $P_0 \subseteq P_0''$, which implies
  $P \subseteq \bfP(P_0'')$. But $P_0'' \subseteq P''$, so by Theorem 
  \ref{T:theory-defn}, we have $\bfP(P_0'') \subseteq P''$, and therefore, $P
  \subseteq P''$.
\end{proof}

\subsection{A converse to the rule of logical implication}

Having proved Theorem \ref{T:theory-gen-defn}, we now have the notation $\bfP_Q$
at our disposal. With this notation, we are able to give a more natural
characterization of the equivalence of two subsets of $\cF^\IS$. We also give a
definition that we will need later, and provide a partial converse to the rule
of logical implication.

\begin{prop}\label{P:ind-equiv-char}
  Let $Q, Q' \subseteq \cF^\IS$ be connected. Then $Q \equiv Q'$ if and only if
  either both are inconsistent or both are consistent and $\bfP_Q = \bfP_{Q'}$
\end{prop}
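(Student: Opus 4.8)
The plan is to prove the two directions separately, and within each to split according to whether the sets are consistent or inconsistent. The whole argument rests on unwinding three definitions: the definition of $\equiv$ (that $Q \subseteq P \iff Q' \subseteq P$ for every inductive theory $P$), the definition of consistency for a connected set (extendability to an inductive theory), and the defining minimality of $\bfP_Q$ furnished by Theorem \ref{T:theory-gen-defn}.

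First I would record a preliminary observation that underlies both directions: if $Q \equiv Q'$, then $Q$ is consistent if and only if $Q'$ is consistent. Indeed, if $Q$ is consistent, choose an inductive theory $P$ with $Q \subseteq P$; by $Q \equiv Q'$ we get $Q' \subseteq P$, so $Q'$ (which is connected by hypothesis) extends to an inductive theory and is therefore consistent. The reverse implication is symmetric.

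For the forward direction, assume $Q \equiv Q'$. By the preliminary observation, either both sets are inconsistent---in which case the right-hand side of the biconditional holds and we are done---or both are consistent. In the consistent case, note that $\bfP_{Q'}$ is an inductive theory containing $Q'$, so by $Q \equiv Q'$ it also contains $Q$; the minimality of $\bfP_Q$ among inductive theories containing $Q$ then gives $\bfP_Q \subseteq \bfP_{Q'}$. Reversing the roles of $Q$ and $Q'$ yields $\bfP_{Q'} \subseteq \bfP_Q$, hence $\bfP_Q = \bfP_{Q'}$.

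For the backward direction, I would treat the two cases directly. If both $Q$ and $Q'$ are inconsistent, then neither is a subset of any inductive theory, so for every inductive theory $P$ the statements $Q \subseteq P$ and $Q' \subseteq P$ are both false; the biconditional holds vacuously and $Q \equiv Q'$. If instead both are consistent with $\bfP_Q = \bfP_{Q'}$, let $P$ be any inductive theory with $Q \subseteq P$. Minimality of $\bfP_Q$ gives $\bfP_Q \subseteq P$, whence $Q' \subseteq \bfP_{Q'} = \bfP_Q \subseteq P$. By symmetry $Q' \subseteq P$ implies $Q \subseteq P$, so $Q \equiv Q'$. The only point requiring any care is the vacuous inconsistent case, where one must remember that consistency of a connected set is exactly extendability to an inductive theory; there is no substantive obstacle beyond careful bookkeeping with the definitions.
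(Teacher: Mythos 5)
Your proof is correct and follows essentially the same route as the paper's: the same preliminary observation that equivalence forces equal consistency status, the same minimality argument via Theorem \ref{T:theory-gen-defn} to get $\bfP_Q = \bfP_{Q'}$, and the same vacuous treatment of the inconsistent case. Your parenthetical note that $Q'$ must be connected (by hypothesis) for extendability to yield consistency is a fine point the paper leaves implicit, but otherwise the two arguments coincide.
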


\begin{proof}
  Assume $Q \equiv Q'$. Then $Q \subseteq P$ if and only if $Q' \subseteq P$ for
  all inductive theories $P$. Hence, $Q$ is consistent if and only if $Q'$ is
  consistent. Suppose both are consistent. Since $Q \subseteq \bfP_Q$, we have
  $Q' \subseteq \bfP_Q$, which implies $\bfP_{Q'} \subseteq \bfP_Q$. Similarly,
  $\bfP_Q \subseteq \bfP_{Q'}$, and therefore, $\bfP_Q = \bfP_{Q'}$.

  For the converse, if both are inconsistent, then neither can be extended to an
  inductive theory, so they are vacuously equivalent. Assume both are consistent
  and $\bfP_Q = \bfP_{Q'}$. Let $P$ be an inductive theory with $Q \subseteq P$.
  Then $\bfP_Q \subseteq P$. Thus, $Q' \subseteq \bfP_{Q'} = \bfP_Q \subseteq
  P$. Similarly, $Q' \subseteq P$ implies $Q \subseteq P$, showing that $Q
  \equiv Q'$.
\end{proof}

If $Q$ is consistent, we define $T(Q) = \tau(\bfP_Q)$. We also denote $T(Q)$ by
$T_Q$.
  \symindex{$T(Q), T_Q$}%
By Proposition \ref{P:tau-ded-theory}, the set $T_Q$ is a deductive theory. We
call $T_Q$ the \emph{deductive theory determined by $Q$}.

Note that $\ph \in T_Q$ if and only if $Q \vdash (X, \ph, 1)$ for all $X \in
\ante \bfP_Q$. In other words, $T_Q$ is the set of formulas which, under
$\bfP_Q$, have probability one, regardless of the antecedent. Informally, $T_Q$
represents the deductive hypotheses that are implicit in the set $Q$.

\begin{thm}\label{T:log-impl-iff}
  Suppose $P$ is an inductive theory and let $X \in \ante P$. Then $P(\ph \mid
  X) = 1$ if and only if $X, T_P \vdash \ph$.
\end{thm}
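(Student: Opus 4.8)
The plan is to prove the two implications separately, first recording that since $P$ is an inductive theory we have $P = \bfP(P)$ (Remark \ref{R:classic-ind-th-defn}), so that $T_P = \tau(\bfP_P) = \tau(P)$. For the easy direction, suppose $X, T_P \vdash \ph$. Every $\th \in T_P = \tau(P)$ satisfies $P(\th \mid X) = 1$ because $X \in \ante P$ and $\tau(P) = \tau(P; \ante P)$, and $T_P$ is nonempty since it contains $\top$. As $P$ is closed, the rule of deductive extension then gives $X \cup T_P \in \ante P$ and $P(\; \cdot \mid X, T_P) = P(\; \cdot \mid X)$. Since $X \cup T_P \vdash \ph$, the rule of logical implication yields $P(\ph \mid X, T_P) = 1$, and hence $P(\ph \mid X) = 1$.

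For the converse, assume $P(\ph \mid X) = 1$. Since $P$ is semi-closed and connected with root $T_0$, Proposition \ref{P:semi-cl-conn} lets me write $X \equiv T + \psi$ with $T \in [T_0, \tau(P)]$ and $T_0 + \psi \in \ante P$. The first step is to strip the $T$ part off of the antecedent. Because $T \subseteq \tau(P)$, Proposition \ref{P:simple-tau(P)} gives $P(\th \mid T_0) = 1$ for each $\th \in T$, and deductive transitivity (using $T_0 + \psi \vdash T_0$) upgrades this to $P(\th \mid T_0, \psi) = 1$. Applying the rule of deductive extension with $S = T$, which is nonempty since $T_0 \subseteq T$, shows $P(\; \cdot \mid T_0, \psi) = P(\; \cdot \mid T_0, \psi, T) = P(\; \cdot \mid X)$, the last equality holding because $T_0 + \psi + T = T + \psi \equiv X$. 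In particular, $P(\ph \mid T_0, \psi) = 1$.

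The crux is now a single application of the rule of material implication: from $P(\ph \mid T_0, \psi) = 1$ and $T_0 \in \ante P$ we obtain $P(\psi \to \ph \mid T_0) = 1$, that is, $\psi \to \ph \in \tau(P; T_0) = \tau(P) = T_P$ by Proposition \ref{P:simple-tau(P)}. Since $X \equiv T + \psi$ gives $X \vdash \psi$, both $X, T_P \vdash \psi$ and $X, T_P \vdash \psi \to \ph$ hold; as $\{\psi \to \ph, \psi\} \vdash \ph$ (Proposition \ref{P:derivability}(a)), Proposition \ref{P:set-mono} gives $X, T_P \vdash \ph$, completing the proof. The main obstacle is the reduction in the second paragraph: one must normalize the antecedent $X$ down to the canonical form $T_0 + \psi$, so that material implication can move $\psi$ into the consequent and thereby deposit $\psi \to \ph$ into $\tau(P)$. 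Once that normalization is in place, rule (R3) does the essential work almost for free.
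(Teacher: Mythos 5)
Your proof is correct and follows essentially the same route as the paper's: the forward direction is identical (deductive extension plus logical implication), and the converse hinges on the same key step of normalizing $X$ to the form $T_0 + \psi$ and then applying the rule of material implication to deposit $\psi \to \ph$ into $T_P$, finishing with modus ponens. The only cosmetic difference is that the paper obtains $P(\;\cdot \mid X) = P_0(\;\cdot \mid T_0, \psi_X)$ directly from the lift representation $P = \bfP(P \dhl_{T_0})$, whereas you re-derive that reduction inside $P$ itself via Proposition \ref{P:semi-cl-conn} together with rules (R4) and (R9), mirroring the argument in the proof of Theorem \ref{T:theory-char}.
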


\begin{proof}
  Let $P$ be an inductive theory and $X \in \ante P$. Then $P(\th \mid X) = 1$
  for all $\th \in T_P$. Hence, by the rule of deductive extension, $X \cup T_P
  \in \ante P$ and $P(\; \cdot \mid X) = P(\; \cdot \mid X, T_P)$.

  Suppose $X, T_P \vdash \ph$. Since $X \cup T_P \in \ante P$, the rule of
  logical implication gives $P(\ph \mid X, T_P) = 1$. By the above, $P(\ph \mid
  X) = 1$. For the converse, suppose $P(\ph \mid X) = 1$. Let $T_0$ be the root
  of $P$, so that $P = \bfP(P_0)$, where $P_0 = P \dhl_{T_0}$. We then have $X
  \equiv T^X + \psi_X$ and $P_0(\ph \mid T_0, \psi_X) = 1$. By the rule of
  material implication, $P_0(\psi_X \to \ph \mid T_0) = 1$, which implies
  $\psi_X \to \ph \in T_P$. Therefore, $X, T_P \vdash \psi_X, \psi_X \to \ph
  \vdash \ph$.
\end{proof}

\subsection{Inductive conditions}

At this point, we have proven all the results in Sections \ref{S:ind-theories}
and \ref{S:ind-derivability}. We have therefore fully established and justified
the notation $Q \vdash (X, \ph, p)$. Informally, we think of $Q \vdash (X, \ph,
p)$ as representing a process of derivation, where we take the inductive
statements in $Q$ as our hypotheses, then apply the nine rules of inductive
inference to derive $(X, \ph, p)$. But every hypothesis in $Q$ is a precise
inductive statements of the form $Q(\eta \mid Y) = q$. We are often interested
in using more general hypotheses, such as $Q(\eta \mid Y) > q$. Or we may wish
to hypothesize that $Q$ and everything it entails satisfies a certain symmetry
condition. To allow for these more general hypotheses, we define the following.

An \emph{inductive condition}
  \index{inductive condition}%
is a collection $\cC$ of inductive theories with a common root. An inductive
condition $\cC$ is said to be \emph{consistent}
  \index{consistent}%
if $\cC \ne \emp$. If each $P \in \cC$ has root $T_0$, then we call $T_0$ the 
\emph{root} of $\cC$.
  \index{root}%

For instance, $\cC$ might be a collection of inductive theories $P$, each
satisfying $P(\eta \mid Y) > q$. Or $\cC$ might be a collection where each
member satisfies a given symmetry property. If we wish to simply assume $Q$,
without any generalizations, we can also do that with an inductive condition.
Namely, if $Q$ is connected with root $T_0$, then let $\cC(Q)$, which we also
denote by $\cC_Q$, be defined as the set of inductive theories $P$ with root
$T_0$ such that $Q \subseteq P$. Then $\cC_Q$ also has root $T_0$ and $\cC_Q$ is
consistent if and only if $Q$ is consistent. Moreover, by Theorem
\ref{T:theory-gen-defn}, we have $\bfP_Q \in \cC_Q$ and $\bfP_Q \subseteq P$ for
all $P \in \cC_Q$. Hence, $\bfP_Q = \bigcap \cC_Q$. If we identify $\cC_Q$ with
$Q$, then we can say that $\cC_Q$ generates the inductive theory $\bigcap \cC_Q$.

Generalizing this to arbitrary inductive conditions is not straightforward. The
problem, as mentioned at the beginning of this section, is that the intersection
of inductive theories is not necessarily an inductive theory. What we will show,
however, is that if $\cC$ is an inductive condition with root $T_0$, then there
is a largest inductive theory with root $T_0$ contained in $\bigcap \cC$. If
$\cC = \cC_Q$, then that largest theory is $\bfP_Q$.

Let $\cC$ be an inductive condition with root $T_0$. Define $\cC^0 = \{P \dhl_
{T_0} \mid P \in \cC\}$. By Corollary \ref{C:intersect-cl}, if $\cC$ is
consistent, then $\bigcap \cC^0$ is a pre-theory with root $T_0$. Hence, may
define $\bfP(\cC) = \bfP(\bigcap \cC^0)$. We also denote $\bfP(\cC)$ by
$\bfP_\cC$,
  \symindex{$\bfP(\cC), \bfP_\cC$}%
and call this the \emph{inductive theory generated by $\cC$}. The next result
shows that the inductive theory generated by $\cC$ is indeed the largest
inductive theory contained in $\bigcap \cC$.

\begin{thm}\label{T:theory-gen-IC-defn}
  Let $\cC$ be a consistent inductive condition. Then $\bfP_\cC \subseteq
  \bigcap \cC$. Moreover, if $P$ is an inductive theory with the same root as
  $\cC$ such that $P \subseteq \bigcap \cC$, then $P \subseteq \bfP_\cC$.
\end{thm}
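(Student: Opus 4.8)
The plan is to reduce both inclusions to the monotonicity of the lift operation (Proposition \ref{P:nested-lifts}), exploiting the fact that every inductive theory is the lift of its restriction to its own root (Remark \ref{R:ante-cao}). Throughout, let $T_0$ denote the common root of $\cC$, set $\cC^0 = \{P \dhl_{T_0} \mid P \in \cC\}$, and let $P_0 = \bigcap \cC^0$, so that by definition $\bfP_\cC = \bfP(P_0)$. Corollary \ref{C:intersect-cl} guarantees that $P_0$ is a pre-theory with root $T_0$, which is what makes $\bfP(P_0)$ well-defined and is the anchor for everything that follows.

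For the inclusion $\bfP_\cC \subseteq \bigcap \cC$, I would fix an arbitrary $P \in \cC$ and show $\bfP_\cC \subseteq P$. Since $P$ is an inductive theory with root $T_0$, Remark \ref{R:ante-cao} gives $P = \bfP(P \dhl_{T_0})$, where $P \dhl_{T_0}$ is a pre-theory with root $T_0$. Because $P \dhl_{T_0} \in \cC^0$, we have $P_0 = \bigcap \cC^0 \subseteq P \dhl_{T_0}$, and both sets are pre-theories sharing the root $T_0$, so Proposition \ref{P:nested-lifts} yields $\bfP_\cC = \bfP(P_0) \subseteq \bfP(P \dhl_{T_0}) = P$. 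Since $P \in \cC$ was arbitrary, this establishes the first claim.

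For the second part, let $P$ be an inductive theory with root $T_0$ satisfying $P \subseteq \bigcap \cC$. For each $P' \in \cC$ we have $P \subseteq P'$, and by the monotonicity of restriction (the note recorded immediately after the definition of $Q \dhl_\cX$) this gives $P \dhl_{T_0} \subseteq P' \dhl_{T_0}$. Intersecting over all $P' \in \cC$ yields $P \dhl_{T_0} \subseteq \bigcap \cC^0 = P_0$. Invoking Remark \ref{R:ante-cao} once more, $P = \bfP(P \dhl_{T_0})$ with $P \dhl_{T_0}$ a pre-theory with root $T_0$; since $P \dhl_{T_0}$ and $P_0$ are then pre-theories with the common root $T_0$, Proposition \ref{P:nested-lifts} gives $P = \bfP(P \dhl_{T_0}) \subseteq \bfP(P_0) = \bfP_\cC$.

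The only delicate point, and the reason the hypothesis ``with the same root as $\cC$'' cannot be dropped, is that Proposition \ref{P:nested-lifts} applies only to pre-theories sharing a common root. I would therefore take care at each step to confirm that the restrictions $P \dhl_{T_0}$ and the intersection $P_0$ genuinely have root $T_0$ — which is precisely what Remark \ref{R:ante-cao} and Corollary \ref{C:intersect-cl} supply — rather than merely being pre-theories of some root. Once this common-root bookkeeping is in hand, both inclusions drop out immediately from lift monotonicity, so the main obstacle is organizational (keeping the roots aligned) rather than computational.
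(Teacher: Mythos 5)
Your proof is correct and takes essentially the same route as the paper's: both halves reduce to the pre-theory $P_0 = \bigcap \cC^0$ (a pre-theory with root $T_0$ by Corollary \ref{C:intersect-cl}) together with the identity $P = \bfP(P \dhl_{T_0})$ from Remark \ref{R:ante-cao}, and your second inclusion is word-for-word the paper's argument. The only cosmetic difference is in the first inclusion, where the paper observes $\bigcap \cC^0 \subseteq \bigcap \cC \subseteq P'$ and invokes the minimality of $\bfP(\bigcap \cC^0)$ as smallest closed extension (Theorem \ref{T:theory-defn}), whereas you route the same fact through lift monotonicity (Proposition \ref{P:nested-lifts}) applied to $\bigcap \cC^0 \subseteq P \dhl_{T_0}$; the two justifications are interchangeable here.
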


\begin{proof}
  Let $\cC$ be a consistent inductive condition with root $T_0$. Let $P' \in
  \cC$. Then $\bigcap \cC^0 \subseteq \bigcap \cC \subseteq P'$, which implies
  $\bfP_\cC \subseteq P'$. Since $P'$ was arbitrary, this shows $\bfP_\cC
  \subseteq \bigcap \cC$.

  Now suppose $P$ is an inductive theory with root $T_0$ such that $P \subseteq
  \bigcap \cC$. Then $P = \bfP(P_0)$, where $P_0 = P \dhl_{T_0}$. Let $P_0' \in
  \cC^0$ be arbitrary, and choose $P' \in \cC$ such that $P_0' = P' \dhl_{T_0}$.
  Since $P' \in \cC$, it follows that $P \subseteq P'$, which implies $P_0
  \subseteq P_0'$. Since $P_0'$ was arbitrary, this gives $P_0 \subseteq \bigcap
  \cC^0$. Therefore, $P = \bfP(P_0) \subseteq \bfP(\bigcap \cC^0) = \bfP_\cC$.
\end{proof}

As noted earlier, if $Q \subseteq \cF^\IS$ is consistent, then $\bfP_Q =
\bigcap \cC_Q$. Hence, by Theorem \ref{T:theory-gen-IC-defn}, we have $\bfP(Q)
= \bfP (\cC_Q)$.

If $\cC$ is an inductive condition and $\bfP_\cC \in \cC$, then we say the
condition $\cC$ is \emph{determinate}, otherwise $\cC$ is \emph{indeterminate}.
Note that if $Q \subseteq \cF^\IS$ is consistent, then $\cC_Q$ is determinate.
  \index{inductive condition!determinate ---}%
  \index{inductive condition!indeterminate ---}%

If $\cC$ is an inductive condition and $(X, \ph, p) \in \cF^\IS$, we write $\cC
\vdash (X, \ph, p)$ to mean that $\cC$ is consistent and $\bfP_\cC(\ph \mid X)
= p$.
  \index{derivability relation!propositional ---}%
  \symindex{$\cC \vdash (X, \ph, p)$}%
Since $\bfP (Q) = \bfP(\cC_Q)$, we have that $Q \vdash (X, \ph, p)$ if and only
if $\cC_Q \vdash (X, \ph, p)$, so that this new use of the turnstile symbol is
an extension of our previous use.

If $\cC$ and $\cC'$ are inductive conditions with the same root, then $\cC, \cC'
\vdash (X, \ph, p)$ means $\cC \cap \cC' \vdash (X, \ph, p)$. In particular, if
we identify $Q$ and $\cC_Q$, then $Q, \cC \vdash (X, \ph, p)$ means $\cC_Q, \cC
\vdash (X, \ph, p)$. Lastly, we use $\cC, X \vdash \ph$ as shorthand for $\cC
\vdash (X, \ph, 1)$. For example, $Q, \cC, X \vdash \ph$ means that $Q$ and
$\cC$ have the same root, $\cC_Q \cap \cC$ is consistent (that is, nonempty),
and, if $P = \bfP(\cC_Q \cap \cC)$, then $P(\ph \mid X) = 1$.

Finally, if $\cC$ is a consistent inductive condition, we define $T(\cC) = T
(\bfP_\cC)$. We also denoted $T(\cC)$ by $T_\cC$.
  \symindex{$T(\cC), T_\cC$}%

\begin{prop}\label{P:ded-th-ind-cond}
  If $\cC$ is a consistent inductive condition, then $T_\cC = \bigcap \{T_P
  \mid P \in \cC\}$.
\end{prop}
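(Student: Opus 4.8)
The plan is to reduce the whole statement to probability-one assertions evaluated at the common root $T_0$, where Proposition \ref{P:simple-tau(P)} lets me collapse the ``for all antecedents'' quantifier hidden in $\tau$ into a single condition at $T_0$. Writing $T_0$ for the root of $\cC$, I would first observe that $\bfP_\cC$ is itself an inductive theory with root $T_0$ (so $T_0 \in \ante \bfP_\cC$), and that each $P \in \cC$ is an inductive theory with root $T_0$, whence $\bfP_P = P$ and $T_P = \tau(P)$ by the definition of $T(\cdot)$. Since $T_\cC = \tau(\bfP_\cC)$, two applications of Proposition \ref{P:simple-tau(P)}, one to $\bfP_\cC$ and one to each $P$, would give $T_\cC = \{\th \mid \bfP_\cC(\th \mid T_0) = 1\}$ and $T_P = \{\th \mid P(\th \mid T_0) = 1\}$. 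The proposition then reduces to the single biconditional: $\bfP_\cC(\th \mid T_0) = 1$ if and only if $P(\th \mid T_0) = 1$ for every $P \in \cC$.

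For the forward direction I would simply invoke Theorem \ref{T:theory-gen-IC-defn}, which gives $\bfP_\cC \subseteq \bigcap \cC$ and hence $\bfP_\cC \subseteq P$ for all $P \in \cC$; thus $\bfP_\cC(\th \mid T_0) = 1$ immediately yields $P(\th \mid T_0) = 1$ for each $P$.

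For the backward direction I would unwind $\bfP_\cC = \bfP(\bigcap \cC^0)$, where $\cC^0 = \{P \dhl_{T_0} \mid P \in \cC\}$. Assuming $P(\th \mid T_0) = 1$ for all $P \in \cC$, we have $(T_0, \th, 1) \in P$ for every $P$; since $T_0 \cao T_0$ holds trivially, this places $(T_0, \th, 1)$ in each restriction $P \dhl_{T_0}$, and therefore in $\bigcap \cC^0$. By Proposition \ref{P:lift-extends} we have $\bigcap \cC^0 \subseteq \bfP(\bigcap \cC^0) = \bfP_\cC$, so $\bfP_\cC(\th \mid T_0) = 1$. This closes the biconditional and delivers $T_\cC = \bigcap\{T_P \mid P \in \cC\}$.

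I do not expect a genuine obstacle here, as the argument is a matter of unwinding definitions and citing the right earlier results. The one point needing care is the interface between $\tau$ (quantified over all antecedents in $\ante$) and evaluation at the single antecedent $T_0$: I must verify that Proposition \ref{P:simple-tau(P)} applies to $\bfP_\cC$ and to every $P \in \cC$, which it does because all of these are entire and connected with root $T_0$. I should also confirm that restriction to $T_0$ leaves the statement $(T_0, \th, 1)$ undisturbed, which is immediate from the definition of $Q \dhl_\cX$ together with $T_0 \cao T_0$, and that $\bfP_\cC$ genuinely has root $T_0$, which follows since $\bigcap \cC^0$ is a pre-theory with root $T_0$ by Corollary \ref{C:intersect-cl} and the lift preserves the root by Proposition \ref{P:lift-unique}.
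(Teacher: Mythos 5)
Your proposal is correct and takes essentially the same route as the paper's own proof: both reduce the statement to probability-one assertions at the common root $T_0$ via Proposition \ref{P:simple-tau(P)}, obtain the forward inclusion from Theorem \ref{T:theory-gen-IC-defn}, and obtain the reverse inclusion by placing $(T_0, \th, 1)$ in $\bigcap \cC^0$ and using that this pre-theory is contained in its lift $\bfP_\cC$. The only difference is cosmetic: you cite Proposition \ref{P:lift-extends} and Corollary \ref{C:intersect-cl} explicitly where the paper uses them tacitly.
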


\begin{proof}
  Let $\cC$ be a consistent inductive condition with root $T_0$. Let $P_0' =
  \bigcap \cC^0$ and $P' = \bfP_\cC$, so that $P' = \bfP(P_0')$. Suppose $\th
  \in T_\cC = T_{P'}$. Then $P'(\th \mid T_0) = 1$, so by Theorem 
  \ref{T:theory-gen-IC-defn}, we have $P(\th \mid T_0) = 1$ for all $P \in \cC$.
  By Proposition \ref{P:simple-tau(P)}, we have $\th \in T_P$ for all $P \in
  \cC$, so that $\th \in \bigcap \{T_P \mid P \in \cC\}$.

  Conversely, suppose $\th \in \bigcap \{T_P \mid P \in \cC\}$. Then $P_0(\th
  \mid T_0) = 1$ for all $P_0 \in \cC^0$. Hence, $P_0'(\th \mid T_0) = 1$. As
  above, Proposition \ref{P:simple-tau(P)} gives $\th \in T_{P'}$.
\end{proof}

Recall Theorem \ref{T:log-impl-iff}, which gives a partial converse to the rule
of logical implication. The following result reformulates that in terms of our
new notation and shows that $\cC, X \vdash \ph$ can be rewritten in terms of the
classical derivability relation from Section \ref{S:formulas}.

\begin{prop}
  Suppose $\cC$ is a consistent inductive condition and let $X \in \ante
  \bfP_\cC$. Then $\cC, X \vdash \ph$ if and only if $T_\cC, X \vdash \ph$.
\end{prop}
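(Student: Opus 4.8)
The plan is to reduce the statement directly to Theorem \ref{T:log-impl-iff}, applied to the inductive theory $P = \bfP_\cC$. First I would unwind the two sides of the claimed biconditional into assertions about $\bfP_\cC$. By the definition of the turnstile for inductive conditions, $\cC, X \vdash \ph$ is shorthand for $\cC \vdash (X, \ph, 1)$, which means that $\cC$ is consistent and $\bfP_\cC(\ph \mid X) = 1$. Since $\cC$ is assumed consistent, the left-hand side of the proposition is therefore equivalent to the single assertion $\bfP_\cC(\ph \mid X) = 1$.

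Next I would identify the deductive theory appearing on the right-hand side. Because $\cC$ is consistent, $\bfP_\cC$ is a genuine inductive theory, and by definition $T_\cC = T(\bfP_\cC)$. On the other hand, the quantity $T_P$ appearing in Theorem \ref{T:log-impl-iff} for the inductive theory $P = \bfP_\cC$ is $T_{\bfP_\cC} = T(\bfP_\cC)$. Hence $T_\cC$ and $T_{\bfP_\cC}$ are literally the same deductive theory, so nothing needs to be computed here beyond matching the two pieces of notation.

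With these identifications in hand, the argument concludes in one step. Since $\bfP_\cC$ is an inductive theory and $X \in \ante \bfP_\cC$ by hypothesis, Theorem \ref{T:log-impl-iff} applies and gives $\bfP_\cC(\ph \mid X) = 1$ if and only if $X, T_{\bfP_\cC} \vdash \ph$. Rewriting $T_{\bfP_\cC}$ as $T_\cC$, and using that a comma to the left of the deductive turnstile denotes union (so that $X, T_\cC \vdash \ph$ and $T_\cC, X \vdash \ph$ name the same relation), yields exactly the claimed equivalence with the left-hand side $\bfP_\cC(\ph \mid X) = 1$.

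I do not anticipate any genuine obstacle: the substance of the proposition is entirely contained in Theorem \ref{T:log-impl-iff}, and the remaining work is purely a matter of translating the inductive-condition notation ($\cC, X \vdash \ph$ and $T_\cC$) back into statements about the generated inductive theory $\bfP_\cC$. The one point that warrants a moment of care is confirming that the hypothesis $X \in \ante \bfP_\cC$ is precisely the side condition required to invoke Theorem \ref{T:log-impl-iff}, which it is.
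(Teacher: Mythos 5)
Your proposal is correct and is essentially the paper's own proof: the paper likewise unwinds $\cC, X \vdash \ph$ as $\bfP_\cC(\ph \mid X) = 1$, notes $T_\cC = T(\bfP_\cC)$, and invokes Theorem \ref{T:log-impl-iff}. Nothing to add.
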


\begin{proof}
  Let $\cC$ be consistent and let $X \in \ante \bfP_\cC$. Note that $\cC, X
  \vdash \ph$ if and only if $\bfP_\cC(\ph \mid X) = 1$. Also note that $T_\cC =
  T(\bfP_\cC)$. Hence, the result follows immediately from Theorem
  \ref{T:log-impl-iff}.
\end{proof}

We conclude this section with a result that we will need in Chapter 
\ref{Ch:prop-models}.

\begin{prop}\label{P:chop-off-root}
  Let $P$ be an inductive theory with root $T_0$ and let $T_0' \in [T_0, T_P]$.
  Let $P_0' = P \dhl_{T_0'}$. Then $P_0'$ is a pre-theory with root $T_0'$.
  Moreover, if $P' = \bfP(P_0')$, then $T_{P'} = T_P$ and $P' = P \dhl_ {[T_0',
  T_P]}$.
\end{prop}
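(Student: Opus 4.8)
The plan is to build $P'=\bfP(P_0')$ explicitly and reduce every claim to the rules of deductive transitivity and deductive extension, together with Propositions~\ref{P:simple-tau(P)}, \ref{P:any-psi}, and \ref{P:ctbly-ax=one-ax}, Remark~\ref{R:ante-cao}, and Theorems~\ref{T:restrict-inherit} and \ref{T:theory-defn}. The first move is the preliminary fact that $T_0'\in\ante P$ and $P(\,\cdot\mid T_0')=P(\,\cdot\mid T_0)$. Since $T_0'\subseteq T_P=\tau(P)$ and $P$ is entire and connected with root $T_0$, Proposition~\ref{P:simple-tau(P)} gives $\tau(P)=\tau(P;T_0)$, so $P(\th\mid T_0)=1$ for every $\th\in T_0'$. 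As $\Taut\subseteq T_0\subseteq T_0'$, the set $T_0'$ is nonempty, so the rule of deductive extension yields $T_0\cup T_0'=T_0'\in\ante P$ together with $P(\,\cdot\mid T_0')=P(\,\cdot\mid T_0)$; in particular $\tau(P;T_0')=\tau(P;T_0)=\tau(P)$.

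Next I would show $P_0'=P\dhl_{T_0'}$ is a pre-theory with root $T_0'$. Because $P$ is closed, hence semi-closed, and $T_0'\in\ante P$, Theorem~\ref{T:restrict-inherit}(iv) shows $P_0'$ is semi-closed. For strong connectivity I take $T_0'$ as the witness: $T_0'\cao T_0'$ gives $T_0'\in\ante P_0'$, and by the definition of the restriction every $X\in\ante P_0'$ satisfies $X\cao T_0'$, so $P_0'$ is strongly connected with root $T(T_0')=T_0'$. Thus $P_0'$ is a pre-theory, $P'=\bfP(P_0')$ is defined, and since $P_0'\subseteq P$ with $P$ closed, Theorem~\ref{T:theory-defn} gives $P'\subseteq P$. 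For the identity $T_{P'}=T_P$, I would use that $P'$ is entire and connected with root $T_0'$, so as in Remark~\ref{R:ante-cao} one has $T_{P'}=\tau(P')=\tau(P';T_0')=\{\th:P(\th\mid T_0')=1\}$ (the last equality from $P'(\th\mid T_0')=P_0'(\th\mid T_0')=P(\th\mid T_0')$); by the first step this set equals $\{\th:P(\th\mid T_0)=1\}=\tau(P)=T_P$.

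The main work, and the step I expect to be the chief obstacle, is $P'=P\dhl_{[T_0',T_P]}$, since it requires transporting probabilities between the antecedent $X$ and the reduced antecedent $T_0'+\psi$. The inclusion $P'\subseteq P\dhl_{[T_0',T_P]}$ is easy: any $X\in\ante P'=\cG\cA(P_0')$ satisfies $T(X)=T+\psi$ with $T\in[T_0',\tau(P_0')]=[T_0',T_P]$, hence $X\cao[T_0',T_P]$, and $P'\subseteq P$ finishes it. For the reverse inclusion I would take $(X,\ph,p)\in P$ with $X\cao[T_0',T_P]$, and by Proposition~\ref{P:ctbly-ax=one-ax} write $T(X)=T+\psi$ with $T_0'\subseteq T\subseteq T_P$. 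Writing $P=\bfP(P_0)$ for $P_0=P\dhl_{T_0}$, I note $T\in[T_0,\tau(P_0)]$ and $X\in\ante P$, so Proposition~\ref{P:any-psi} gives $\psi\in\AF(P_0)$ and hence $T_0+\psi\in\ante P$. Since $T_0'\subseteq T_P=\tau(P;T_0)$, deductive transitivity (lowering from $T_0$ to $T_0+\psi$) gives $P(\th\mid T_0,\psi)=1$ for all $\th\in T_0'$, and deductive extension then yields $T_0'+\psi\in\ante P$; as $(T_0'+\psi)\cao T_0'$, this says $\psi\in\AF(P_0')$, so $X\in\cG\cA(P_0')=\ante P'$.

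Finally, to match probabilities I would run the analogous transitivity–extension argument one level up. From $T\subseteq T_P=\tau(P;T_0')$ and $T_0'+\psi\in\ante P$, deductive transitivity gives $P(\th\mid T_0',\psi)=1$ for all $\th\in T$, so deductive extension with $S=T$ yields $P(\,\cdot\mid X)=P(\,\cdot\mid T_0'+\psi+T)=P(\,\cdot\mid T_0',\psi)$, using $(T_0'+\psi)\cup T\equiv T+\psi\equiv X$. On the other hand, by the definition of the lift together with the well-definedness in Proposition~\ref{P:lift-well-defined}, $P'(\,\cdot\mid X)=P_0'(\,\cdot\mid T_0',\psi)=P(\,\cdot\mid T_0',\psi)$, where the last step is just $P_0'=P\dhl_{T_0'}$ applied to the antecedent $T_0'+\psi\cao T_0'$. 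Combining the two displays gives $P'(\ph\mid X)=P(\ph\mid X)=p$, so $(X,\ph,p)\in P'$, completing the reverse inclusion and the proof.
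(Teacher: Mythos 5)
Your proof is correct and takes essentially the same route as the paper's: Theorem \ref{T:restrict-inherit} plus strong connectivity for the pre-theory claim, Proposition \ref{P:simple-tau(P)} with the rule of deductive extension for $T_{P'} = T_P$, and the lift structure of $P'$ with deductive transitivity/extension for the two inclusions. The only cosmetic difference is that you transport probabilities directly between $X$ and $T_0' + \psi$ (and explicitly verify $T_0' \in \ante P$ and $\psi \in \AF(P_0')$ first), whereas the paper descends to $P_0(\,\cdot \mid T_0, \psi)$ and then raises the root from $T_0$ to $T_0'$; the rules invoked are identical.
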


\begin{proof}
  By Theorem \ref{T:restrict-inherit}, since $P_0'$ is strongly connected, we
  have that $P_0'$ is a pre-theory with root $T_0'$. Let $P' = \bfP(P_0')$. We
  first show that $T_{P'} = T_P$. By Proposition \ref{P:simple-tau(P)}, it
  suffices to show that $P'(\th \mid T_0') = 1$ if and only if $P(\th \mid T_0)
  = 1$. Suppose $P'(\th \mid T_0') = 1$. Then $P(\th \mid T_0') = 1$. But $T_0'
  \subseteq T_P$, so by the rule of deductive extension, we have $P(\th \mid
  T_0) = 1$. Conversely, suppose $P(\th \mid T_0) = 1$. Again by the rule of
  deductive extension, we obtain $P(\th \mid T_0') = 1$, and so $P'(\th \mid
  T_0') = 1$.

  It remains to show that $P' = P \dhl_{[T_0', T_P]}$. Since $P_0' \subseteq P$,
  Theorem \ref{T:theory-defn} implies $P' \subseteq P$. Moreover, every $X \in
  \ante P'$ satisfies $X \cao [T_0', T_{P'}] = [T_0', T_P]$. Hence, $P'
  \subseteq P \dhl_{[T_0', T_P]}$.

  Conversely, suppose $(X, \ph, p) \in P \dhl_{[T_0', T_P]}$. Then $P(\ph \mid
  X) = p$ and $X \cao [T_0', T_P]$. Write $T(X) = T + \psi$, where $T \in [T_0',
  T_P] \subseteq [T_0, T_P]$. Then $p = P(\ph \mid X) = P_0 (\ph \mid T_0,
  \psi)$. Since $P_0 \subseteq P$, we have $P(\ph \mid T_0, \psi) = p$. But $T_0
  \subseteq T_0' \subseteq \tau(P)$, so by the rule of deductive extension, we
  have $P(\ph \mid T_0', \psi) = p$, and this implies $P_0'(\ph
  \mid T_0', \psi) = p$. Since $T_P = T_{P'}$, we have $T \in [T_0', T_{P'}]$.
  Therefore $P'(\ph \mid X) = p$, and this shows $P \dhl_ {[T_0', T_P]}
  \subseteq P'$.
\end{proof}


\chapter{Propositional Models}\label{Ch:prop-models}

The logical relationships between deductive and inductive statements in $\cF$
and $\cF^\IS$ are described by the derivability relation $\vdash$ developed in
Chapter \ref{Ch:prop-calc}. This relation is a kind of calculus, based on a set
of inferential rules. The rules themselves depend only on the syntax of the
statements. No interpretation or meaning is given to them, and no such meaning
is necessary to describe these logical relationships.

We can, however, use meanings and interpretations to investigate the logical
relationships between statements. When we do this, we arrive at a different
relation, called the consequence relation, and denoted by $\vDash$. When we
study logical relationships using $\vdash$, we are studying the syntactics of
the logic. When we use $\vDash$, we are studying the semantics.

Meanings are assigned to statements using models. The classical type of
propositional model (which we call a strict model) is one that simply assigns a
truth value (0 or 1) to each propositional variable. Truth values then propagate
to every sentence in $\cF$ via the usual interpretations of $\neg$ and
$\bigwedge$. These strict models originated with Wittgenstein (see \cite[Satz
4.31] {Wittgenstein1922}) and can be visualized as rows in what we now call a
truth table.

In Wittgenstein's view, a strict model represents a logically possible state of
the world. Since we are interested in modeling inductive inference, we wish to
model degrees of uncertainty about the state of the world. For us, then, a model
will be a collection of strict models, together with a set of weights whose
relative magnitudes represent relative degrees of uncertainty. Without loss of
generality, we may assume these weights add up to one. In other words, we will
define a model to be a probability measure on a set of strict models.

Models are used to define the consequence relation $\vDash$. On the deductive
side, we say that $X \vDash \ph$ if, in every model that satisfies $X$, the
sentence $\ph$ is also satisfied. A completely analogous definition holds on the
inductive side when we write $P \vDash (X, \ph, p)$.

In Section \ref{S:models_ded_sem}, we define models and we define the
consequence relation for deductive statements. That is, we define what it means
to say that $X \vDash \ph$, when $X \subseteq \cF$ and $\ph \in \cF$. We prove
that, together, $\vdash$ and $\vDash$ form a sound logical system, meaning that
$X \vdash \ph$ implies $X \vDash \ph$. In other words, if we can use $X$ to
prove $\ph$, then $\ph$ is satisfied in every model of $X$. We also prove that
this logical system is complete, meaning that $X \vDash \ph$ implies $X \vdash
\ph$. In other words, if $\ph$ is satisfied in every model of $X$, then there is
a proof of $\ph$ from $X$. Together, soundness and completeness show that $X
\vdash \ph$ if and only if $X \vDash \ph$. In particular, by Theorem
\ref{T:sig-cpctness}, the consequence relation is $\si$-compact.

These results should be contrasted with the usual approach to (deductive)
semantics in $\cF$. One can define a strict consequence relation using strict
models. It is well known (see Examples \ref{Expl:Karp413} and
\ref{Expl:Karp412}) that both completeness and $\si$-compactness fail in that
case. By overlaying our semantics with a probability measure, we recover both
properties.

In Section \ref{S:ind-sem}, we extend the consequence relation to inductive
statements, and prove both the soundness and completeness of this extension.
These results finalize our description of probability as logic. Probability is a
system of inference on inductive statements. It contains classical propositional
logic as a special case and extends it in two directions: from deductive to
inductive, and from finite conjunctions to countably infinite conjunctions. It
has both semantics and a syntactic calculus. (In Chapter \ref{Ch:pred-logic},
we will repeat these constructions in a predicate language, showing that
probability, as inductive logic, also extends first-order logic in both these
directions.)

In Section \ref{S:every-prob-sp}, we address the relationship between
this logical system of probability and modern, measure-theoretic probability
theory. Modern probability has its origin in Kolmogorov's 1933 manuscript, 
\emph{Foundations of the Theory of Probability} \cite{Kolmogorov1956}. Therein,
Kolmogorov lays out what he calls the axioms of probability. Today, those axioms
take the form of a definition, namely, the definition of a probability space: a
measure space with total mass one. The foundation of modern probability,
therefore, is the probability space.

For us, the probability space is the foundation of our semantics. In Theorem
\ref{T:prob-sp-model-iso}, we show that every probability space is isomorphic to
a semantic model in our logical system. The proof of Theorem
\ref{T:prob-sp-model-iso} exhibits a natural mapping from the outcomes and
events of the given probability space to strict models and sentences,
respectively. With this result, we see that all of modern, measure-theoretic
probability is embedded in our logical system. Probability theory as we know it
today is simply the semantics of a larger system of logical reasoning. (In
Chapter \ref{Ch:pred-logic}, we will extend this embedding to include random
variables. See, for instance, Section \ref{S:RV-symb} and the table of
correspondences immediately preceding Section \ref{S:symb-func}.)

Generally speaking, there is a difference between saying that a model satisfies
a set of statements, and saying that it characterizes them. To say that it
characterizes them is to say that they are the only statements satisfied by the
model. In that case, the model is a perfect semantic reflection of the given set
of statements. Finding a model that characterizes a set of statements allows us
to see the logical structure of that set in a single semantic model.

In Theorem \ref{T:model-ind-th}, we show that models (that is, probability
spaces), are only able to characterize complete inductive theories. The
structure of an incomplete inductive theory cannot be represented by a
probability space. Proposition \ref{P:Del-P-X} tells us why. Namely, if we start
with a collection of inductive statements and conditions, and then draw all
available inferences, we are not led to a $\si$-algebra, but rather to a Dynkin
system. Consequently, we see that Dynkin systems arise naturally and organically
in the study of inductive inference. This fact may offer some insight into why
Dynkin's $\pi$-$\la$ theorem---a purely measure-theoretic result---features so
much more prominently in probability theory than it does in analysis.

In the rest of Section \ref{S:Examples1} and in Section \ref{S:Examples2}, we
present several examples, illustrating and applying many of the ideas presented
in both Chapters \ref{Ch:prop-calc} and \ref{Ch:prop-models}.

Finally, in Section \ref{S:indep}, we introduce the idea of (inductive)
independence---a purely logical and syntactic notion---and then show that it is
semantically characterized by the usual product formula from measure theory. We
then present two examples to illustrate its use.

\section{Models and deductive semantics}\label{S:models_ded_sem}

\subsection{Truth assignments}

Recall that $\B$ denotes the Boolean $\si$-algebra $\{0, 1\}$, whose partial
order is the usual $\le$. The elements $0$ and $1$ are called \emph{truth
values}. If $S$ is a set, then a function $\nu: S \to \B$ is an assignment of
truth values to the elements of $S$. The set of all such functions is denoted by
$\B^S$.
  \symindex{$\B^S$}%

Given an element $s \in S$, we define the \emph{projection} $\pi_s: \B^S \to \B$
by $\pi_s \nu = \nu s$. Let $\cB = \fP \B$. Then $\cB^S$ denotes the product
$\si$-algebra.
  \symindex{$\cB^S$}%
That is, $\cB^S$ is the smallest $\si$-algebra on $\B^S$ such that each $\pi_s$
is $(\cB^S, \cB)$-measurable. In symbols,
\[
  \cB^S = \si(\{\pi_s \mid s \in S\})
    = \si(\{\pi_s^{-1} A \mid s \in S, A \in \cB\}).
\]
A subset of $\B^S$ is called a \emph{cylinder set} if it has the form
\[
  \pi_{s_1}^{-1} A_1 \cap \cdots \cap \pi_{s_n}^{-1} A_n
\]
for some $s_1, \ldots, s_n \in S$ and $A_1, \ldots, A_n \in \cB$. Equivalent, a
cylinder set is a set of the form
\[
  \{\nu \in \B^S \mid \nu s_1 = x_1, \ldots, \nu s_n = x_n\}.
\]
for some $s_1, \ldots, s_n \in S$ and $x_1, \ldots, x_n \in \B$. The
$\si$-algebra $\cB^S$ is also generated by the collection of cylinder sets. Note
that the collection of cylinder sets is a $\pi$-system. That is, it is closed
under intersections.

If we say that a function $f: \B^S \to \B$ is measurable, we mean that it is $
(\cB^S, \cB)$-measurable. Note that $\neg f = 1 - f$ and $\bigwedge_n f_n =
\inf_n f_n$. Hence, if $f$ and $f_n$ are all measurable, then so are $\neg f$
and $\bigwedge_n f_n$. Also note that every function $f: \B^S \to \B$ has the
form $f = 1_B$ for some $B \subseteq \B^S$, and $f$ is measurable if and only if
$B \in \cB^S$.

\begin{lemma}\label{L:ctble-ary}
  Let $R \subseteq S$, let $h: \B^R \to \B$, and define $f: \B^S \to \B$ by $f
  \nu = h(\nu|_R)$. If $h$ is measurable, then $f$ is measurable.
\end{lemma}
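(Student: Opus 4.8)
The plan is to realize $f$ as a composition and reduce the statement to the measurability of a single ``restriction'' map. Let $\rho \colon \B^S \to \B^R$ be given by $\rho\,\nu = \nu|_R$, so that $f = h \circ \rho$. Since $h$ is assumed $(\cB^R, \cB)$-measurable, it suffices to show that $\rho$ is $(\cB^S, \cB^R)$-measurable; the conclusion then follows because a composition of measurable functions is measurable, which is immediate from the identity $f^{-1}(A) = \rho^{-1}(h^{-1}(A))$.

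To prove $\rho$ is measurable, I would use the fact that it is enough to check preimages of a generating family for the target $\sigma$-algebra. Concretely, $\cB^R = \si(\{\pi_r \mid r \in R\})$, where $\pi_r \colon \B^R \to \B$ denotes the projection $\pi_r\,\mu = \mu r$. The collection $\{U \subseteq \B^R \mid \rho^{-1}(U) \in \cB^S\}$ is a $\sigma$-algebra, since preimages commute with complements and countable unions; hence, if it contains the generators $\pi_r^{-1}(A)$ for $r \in R$ and $A \in \cB$, it must contain all of $\cB^R$, and $\rho$ is measurable.

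The key computation is then as follows: for $r \in R \subseteq S$ and $\nu \in \B^S$ we have $(\pi_r \circ \rho)(\nu) = (\nu|_R)(r) = \nu r = \pi_r\,\nu$, where on the right $\pi_r$ now denotes the projection on $\B^S$. Thus $\pi_r \circ \rho$ is exactly the coordinate projection $\B^S \to \B$ at the index $r$, which is $(\cB^S, \cB)$-measurable by the very definition of $\cB^S$. Consequently $\rho^{-1}(\pi_r^{-1}(A)) = (\pi_r \circ \rho)^{-1}(A) \in \cB^S$ for every $r \in R$ and $A \in \cB$, which is what was needed.

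There is no serious obstacle here; the only points requiring care are notational. One must keep the two families of projections (those on $\B^R$ and those on $\B^S$) distinct, observing that the former pull back along $\rho$ to the latter precisely because $R \subseteq S$. It is worth noting that the argument uses nothing special about $\B$ or about $\cB = \fP \B$; it is the general statement that restriction maps between product spaces are measurable with respect to the product $\sigma$-algebras.
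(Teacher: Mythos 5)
Your proof is correct and is essentially the paper's own argument in different packaging: the paper's good-sets family $\Si = \{B \in \cB^R \mid \nu \mapsto 1_B(\nu|_R) \text{ is } (\cB^S,\cB)\text{-measurable}\}$ coincides with your family $\{U \subseteq \B^R \mid \rho^{-1}(U) \in \cB^S\}$, since $1_B(\nu|_R) = 1_{\rho^{-1}(B)}(\nu)$, and both proofs verify it is a $\si$-algebra and then check it on a generating family. The only cosmetic differences are that you state the conclusion as measurability of the restriction map $\rho$ followed by composition with $h$, and you test the single-coordinate generators $\pi_r^{-1}(A)$ where the paper tests cylinder sets; neither changes the substance.
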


\begin{proof}
  Let $R \subseteq S$ and define
  \[
    \Si = \{B \in \cB^R \mid \text{%
      $\nu \mapsto 1_B(\nu|_R)$ is $(\cB^S, \cB)$-measurable%
    }\}.
  \]
  It suffices to show that $\Si = \cB^R$. Since constant functions are
  measurable, we have $\emp \in \Si$. Since $1_{B^c} = \neg 1_B$, it follows
  that $\Si$ is closed under complements. Let $\{B_n\}_{n = 1}^\infty \subseteq
  \Si$ and define $B = \bigcap_n B_n$. Then $1_B = \bigwedge_n 1_{B_n}$, so that
  $B \in \Si$, and $\Si$ is closed under countable intersection. Therefore,
  $\Si$ is a $\si$-algebra.

  Now let
  \[
    B = \{\nu \in \B^R \mid \nu s_1 = x_1, \ldots, \nu s_n = x_n\}
  \]
  be a cylinder set in $\B^R$. Define $h: \B^S \to \B$ by $h \nu = 1_B(\nu|_R)$.
  Then
  \[
    h^{-1} 1 = \{\nu \in \B^S \mid \nu s_1 = x_1, \ldots, \nu s_n = x_n\}
  \]
  is a cylinder set in $\B^S$. Hence, $\Si$ contains the cylinder sets in
  $\B^R$. Since $\cB^R$ is the smallest $\si$-algebra containing the cylinder
  sets, we have $\Si = \cB^R$.
\end{proof}

Let $R \subseteq S$. A measurable function $f: \B^S \to \B$ is said to be
\emph{$R$-ary} if there exists a measurable $h:\B^{R} \to \B$ such that $f \nu =
h(\nu|_R )$ for all $\nu \in \B^S$.

\begin{cor}\label{C:ctble-ary}
  Let $R \subseteq U \subseteq S$. If $f: \B^S \to \B$ is $R$-ary, then $f$ is
  $U$-ary.
\end{cor}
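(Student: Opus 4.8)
The plan is to exhibit, starting from the given $R$-ary structure of $f$, an explicit measurable witness $g \colon \B^U \to \B$ showing that $f$ is $U$-ary. Since $f$ is $R$-ary, I would first fix a measurable function $h \colon \B^R \to \B$ such that $f\nu = h(\nu|_R)$ for all $\nu \in \B^S$. The natural candidate for the $U$-ary witness is the function $g \colon \B^U \to \B$ defined by $g\mu = h(\mu|_R)$; this makes sense precisely because $R \subseteq U$, so that every $\mu \in \B^U$ admits a restriction $\mu|_R \in \B^R$ to which $h$ may be applied.

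Two things then remain to check. First, that $g$ is measurable. Here I would invoke Lemma \ref{L:ctble-ary}, applied not with the ambient set $S$ but with $U$ playing that role: the inclusion $R \subseteq U$ and the measurability of $h$ give, by the lemma, that the map $\mu \mapsto h(\mu|_R)$ from $\B^U$ to $\B$ is measurable, which is exactly the measurability of $g$. Second, that $g$ genuinely recovers $f$, i.e. that $f\nu = g(\nu|_U)$ for every $\nu \in \B^S$. This follows from the transitivity of restriction: for $\nu \in \B^S$ we have $g(\nu|_U) = h\big((\nu|_U)|_R\big)$, and since $R \subseteq U$ the composed restriction satisfies $(\nu|_U)|_R = \nu|_R$, whence $g(\nu|_U) = h(\nu|_R) = f\nu$. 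With both facts in hand, $g$ is a measurable function on $\B^U$ realizing $f$ through restriction to $U$, so $f$ is $U$-ary.

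I do not anticipate a genuine obstacle in this argument, as it is essentially a bookkeeping consequence of the definition together with the previous lemma. The one point that requires care—and the only place where the proof could go wrong if stated carelessly—is the reindexing in the appeal to Lemma \ref{L:ctble-ary}: the lemma is stated for an inclusion into the ambient index set $S$, and one must recognize that it applies verbatim with $U$ in place of $S$, treating $\B^U$ as the new product space and $R \subseteq U$ as the new inclusion. The restriction identity $(\nu|_U)|_R = \nu|_R$ is immediate but should be recorded explicitly, since it is what links the two levels of restriction.
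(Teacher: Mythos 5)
Your proof is correct and is essentially identical to the paper's: both define $g\colon \B^U \to \B$ by $g\mu = h(\mu|_R)$, obtain its measurability from Lemma \ref{L:ctble-ary} applied with $U$ as the ambient index set, and verify $f\nu = g(\nu|_U)$ via $(\nu|_U)|_R = \nu|_R$. Your explicit remark about reindexing the lemma is a fair point of care, but there is no difference in substance.
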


\begin{proof}
  Let $f: \B^S \to \B$ be $R$-ary. Choose measurable $h: \B^R \to \B$ such that
  $f \nu = h(\nu|_R)$ for all $\nu \in \B^S$. Define $g: \B^U \to \B$ by $g \nu
  = h(\nu|_R)$. Lemma \ref{L:ctble-ary} implies $g$ is measurable. Moreover, for
  any $\nu \in \B^S$, we have $g(\nu|_U) = h((\nu|_U)|_R) = h(\nu|_R) = f \nu$.
\end{proof}

\begin{prop}\label{P:ctble-ary}
  Let $S$ be a set. Then every measurable $f: \B^S \to \B$ is $R$-ary for some
  countable $R \subseteq S$.
\end{prop}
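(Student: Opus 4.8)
The plan is to run the good-sets principle directly at the level of the $\si$-algebra $\cB^S$. By the observation preceding the proposition, every measurable $f \colon \B^S \to \B$ has the form $f = 1_B$ for a unique $B \in \cB^S$, so it suffices to show that $1_B$ is $R$-ary for some countable $R$ whenever $B \in \cB^S$. Accordingly, I would define
\[
  \cA = \{B \in \cB^S \mid 1_B \text{ is $R$-ary for some countable } R \subseteq S\}
\]
and prove $\cA = \cB^S$. Since $\cB^S = \si(\{\pi_s^{-1} A \mid s \in S,\ A \in \cB\})$, this reduces to two tasks: that $\cA$ contains each generator $\pi_s^{-1} A$, and that $\cA$ is a $\si$-algebra.

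The first task is immediate. For fixed $s$ and $A \in \cB$, the function $1_{\pi_s^{-1} A}$ depends on $\nu$ only through the coordinate $\nu s$: taking $R = \{s\}$ (countable) and $h \colon \B^{\{s\}} \to \B$ given by $h \mu = 1_A(\mu s)$, which is the indicator of a cylinder set in $\B^{\{s\}}$ and hence measurable, we get $1_{\pi_s^{-1} A}(\nu) = h(\nu|_R)$. Thus $\pi_s^{-1} A \in \cA$. Closure of $\cA$ under complement is just as easy: if $1_B(\nu) = h(\nu|_R)$, then $1_{B^c} = \neg 1_B$ satisfies $1_{B^c}(\nu) = (\neg h)(\nu|_R)$, and $\neg h = 1 - h$ is measurable, so $B^c \in \cA$ with the same $R$. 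Together with $\emptyset \in \cA$ (the constant $0$, witnessed by any singleton), these give everything except closure under countable combinations.

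The crux is closure under countable intersections. Given $\{B_n\} \subseteq \cA$ with witnesses $(R_n, h_n)$, I would set $R = \bigcup_n R_n$, which is countable because a countable union of countable sets is countable. The one genuinely load-bearing step is that each $R_n \subseteq R$, so Corollary \ref{C:ctble-ary} upgrades every $1_{B_n}$ from $R_n$-ary to $R$-ary, yielding measurable $g_n \colon \B^R \to \B$ with $1_{B_n}(\nu) = g_n(\nu|_R)$ on the common index set $R$. Now $1_{\bigcap_n B_n} = \bigwedge_n 1_{B_n}$, and setting $h = \bigwedge_n g_n$, which is measurable since a countable $\bigwedge$ of $\B$-valued measurable functions is measurable, gives $1_{\bigcap_n B_n}(\nu) = h(\nu|_R)$. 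Hence $\bigcap_n B_n \in \cA$. With $\emptyset \in \cA$, complement closure, and countable-intersection closure (whence countable-union closure), $\cA$ is a $\si$-algebra.

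Having both tasks, $\cA$ is a $\si$-algebra containing the generating sets, so $\cB^S = \si(\{\pi_s^{-1} A\}) \subseteq \cA \subseteq \cB^S$, forcing $\cA = \cB^S$; this is exactly the claim. I expect the only subtle point to be this countable-intersection step, and specifically the necessity of invoking Corollary \ref{C:ctble-ary} to realign all the $h_n$ over a single countable $R$ before taking the pointwise $\bigwedge$: prior to passing to the common $R$, the functions $h_n$ are defined on different domains $\B^{R_n}$ and cannot simply be combined.
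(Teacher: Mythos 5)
Your proposal is correct and follows essentially the same route as the paper's proof: the same good-sets collection, the same complement and countable-intersection arguments, and the same key use of Corollary \ref{C:ctble-ary} to realign the witnesses $h_n$ over the common countable set $R = \bigcup_n R_n$ before taking $\bigwedge_n$. The only cosmetic difference is that you check the one-coordinate generators $\pi_s^{-1}A$ where the paper checks cylinder sets; both generate $\cB^S$, so the arguments coincide.
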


\begin{proof}
  Let
  \[
    \Si = \{B \in \cB^S \mid \text{
      $1_B$ is $R$-ary for some countable $R \subseteq S$
    }\}.
  \]
  It suffices to show that $\Si = \cB^S$. Clearly, $1_\emp$ is $\emp$-ary, so
  that $\emp \in \Si$. Since $1_{B^c} = \neg 1_B$, it follows that $\Si$ is
  closed under complements. Let $\{B_n\}_{n = 1}^\infty \subseteq \Si$ and
  define $B = \bigcap_n B_n$. For each $n$, choose countable $R_n \subseteq S$
  such that $1_{B_n}$ is $R_n$-ary. Let $R = \bigcup_n R_n$. Then $R$ is
  countable and, by Corollary \ref{C:ctble-ary}, it follows that $1_{B_n}$ is
  $R$-ary for all $n$. Choose measurable $h_n: \B^R \to \B$ such that $1_{B_n}
  \nu = h_n(\nu \mid R)$ for all $\nu \in \B^S$, and define $h = \bigwedge_n
  h_n$. Then $h$ is measurable and
  \[
    \ts{
      1_B \nu = \bigwedge_n 1_{B_n} \nu
        = \bigwedge_n h_n(\nu|_R)
        = h(\nu|_R),
    }
  \]
  so that $B \in \Si$. Hence, $\Si$ is closed under countable intersections, and
  $\Si$ is therefore a $\si$-algebra.

  Now let
  \[
    B = \{\nu \in \B^S \mid \nu s_1 = x_1, \ldots, \nu s_n = x_n\}
  \]
  be a cylinder set in $\B^S$. Then $B$ is $R$-ary, where $R = \{s_1, \ldots,
  s_n\}$, showing that $\Si$ contains the cylinder sets. Since $\cB^S$ is the
  smallest $\si$-algebra containing the cylinder sets, we have $\Si = \cB^S$.
\end{proof}

\subsection{Strict models and Boolean functions}

A \textit{strict model}
  \index{model!strict ---}%
is a function $\om: PV \to \B$. That is, a strict model is an assignment of
truth values to each propositional formula. The set of all strict models is
$\B^{PV}$. The domain of a strict model can be uniquely extended to all of $\cF$
by formula recursion. That is, $\om \neg \ph = \neg \om \ph$ and $\om \bigwedge
\Phi = \bigwedge_ {\ph \in \Phi} \om \ph$. We call $\om \ph$ the \textit{truth
value} of $\ph$ in the strict model $\om$.

The definition of a strict model depends on the choice of propositional
variables $PV$, which in turn determine the language $\cF$. When we wish to
emphasize this fact, we will call $\om$ a \emph{strict model in $\cF$}.

A \emph{Boolean function}
  \index{Boolean!function}%
is a measurable function $f: \B^{PV} \to \B$. We say that a formula $\ph$
\emph{represents} a Boolean function $f$ if $\om \ph = f \om$ for all strict
models $\om$.

\begin{prop}\label{P:Boolean-funcs}
  Every formula represents a unique Boolean function. Conversely, every Boolean
  function is represented by a formula.
\end{prop}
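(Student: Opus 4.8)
The plan is to prove the two halves separately: first, that for each fixed formula $\ph$ the map $f_\ph := (\om \mapsto \om\ph)$ is a well-defined and unique Boolean function; second, that every Boolean function is of this form. Uniqueness in the first half is immediate, since if $\ph$ represents both $f$ and $g$, then $f\om = \om\ph = g\om$ for every strict model $\om$, forcing $f = g$. The content is the measurability of $f_\ph$, which I would obtain by formula induction through Theorem \ref{T:prop-form-ind}. Let $\cF'$ be the set of formulas $\ph$ for which $f_\ph$ is measurable. For $\bfr \in PV$ the map $f_\bfr = \pi_\bfr$ is a projection, hence measurable, so $PV \subseteq \cF'$. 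If $\ph \in \cF'$, then $f_{\neg\ph} = \neg f_\ph = 1 - f_\ph$ is measurable, so $\neg\ph \in \cF'$. If $\Phi \subseteq \cF'$ is countable, then $f_{\bigwedge\Phi} = \bigwedge_{\psi\in\Phi} f_\psi = \inf_{\psi\in\Phi} f_\psi$ is measurable, using the observations preceding Lemma \ref{L:ctble-ary} that $\neg$ and countable $\bigwedge$ preserve measurability, so $\bigwedge\Phi \in \cF'$. Theorem \ref{T:prop-form-ind} then gives $\cF' = \cF$.

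For the converse I would work at the level of sets rather than functions. Call a set $B \subseteq \B^{PV}$ \emph{representable} if $B = \{\om : \om\ph = 1\}$ for some formula $\ph$, and let $\cR$ be the collection of representable sets. The key facts are: (a) every cylinder set $\{\om : \om s_1 = x_1, \ldots, \om s_n = x_n\}$ is represented by the finite conjunction $\bigwedge_{i=1}^n s_i^{x_i}$, where $s^1 = s$ and $s^0 = \neg s$; (b) if $B$ is represented by $\ph$, then $B^c$ is represented by $\neg\ph$, since $\om\neg\ph = 1 - \om\ph$; and (c) if $\{B_n\}$ is a countable family with $B_n$ represented by $\ph_n$, then $\bigcap_n B_n$ is represented by $\bigwedge_n \ph_n$, since $\om\bigwedge_n\ph_n = \inf_n \om\ph_n$. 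Together with $\B^{PV}$ being represented by $\top$, facts (b) and (c) show that $\cR$ is a $\si$-algebra on $\B^{PV}$ (closure under complement and countable intersection yields closure under countable union), and fact (a) shows it contains every cylinder set. Since $\cB^{PV}$ is the smallest $\si$-algebra containing the cylinder sets, we conclude $\cB^{PV} \subseteq \cR$. Given any Boolean function $f$, the set $B = f^{-1}(1)$ lies in $\cB^{PV}$ and is therefore represented by some $\ph$; that same $\ph$ represents $f$, because $\om\ph = 1$ exactly when $\om \in B$, i.e.\ when $f\om = 1$.

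The one point requiring genuine care—and the reason I would organize the converse around $\si$-algebra generation rather than an explicit normal form—is the possible infinitude of $PV$. The naive approach of writing $f$ in disjunctive normal form, as a disjunction over all truth assignments on which $f$ evaluates to $1$, fails when $PV$ is infinite: each satisfying assignment contributes a (possibly countably infinite) conjunction of literals, but the disjunction would range over a subset of $\B^{PV}$, which is uncountable, whereas $\cF$ admits only countable disjunctions. Passing through Proposition \ref{P:ctble-ary} to replace $f$ by an $R$-ary function with $R$ countable does not by itself remove the difficulty, since $\B^R$ is still uncountable for infinite $R$. The generation argument sidesteps this entirely, producing the required formula implicitly via the transfinite construction of $\cB^{PV}$ instead of as a single normal form, and it is the step I expect to carry the real weight of the proof.
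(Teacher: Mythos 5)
Your proof is correct and takes essentially the same route as the paper's: the first half establishes measurability of $f_\ph$ by formula induction exactly as the paper does, and the converse is the paper's argument as well, showing that the collection of representable sets is a $\si$-algebra containing the cylinder sets and hence contains $\cB^{PV}$ (the paper seeds its $\si$-algebra with $\emp$ represented by $\bfr \wedge \neg \bfr$ where you use $\top$, a cosmetic difference). Your closing remark on why a disjunctive-normal-form construction fails for infinite $PV$ is accurate but not needed for the proof.
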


\begin{proof}
  Let $\ph \in \cF$ and define $f_\ph: \B^{PV} \to \B$ by $f_\ph \om = \om \ph$.
  To show that $f_\ph$ is a Boolean function, we must show that it is
  measurable. This follows by formula induction since $f_\bfr = \pi_\bfr$, $f_
  {\neg \ph} = \neg f_\ph$, and $f_{\bigwedge \Phi} = \bigwedge_{\ph \in \Phi}
  f_\ph$.

  Now let
  \[
    \Si = \{B \in \cB^{PV} \mid \text{$1_B$ is represented by a formula}\}.
  \]
  It suffices to show that $\Si = \cB^{PV}$. If we fix $\bfr \in PV$, then
  $1_\emp$ is represented by the formula $\bfr \wedge \neg \bfr$, so $\emp \in
  \Si$. If $\ph$ represents $1_B$, then $\neg \ph$ represents $\neg 1_B = 1_
  {B^c}$, so $\Si$ is closed under complements. And if $\ph_n$ represents
  $1_{B_n}$, then $\bigwedge_n \ph_n$ represents $\bigwedge_n 1_{B_n} =
  1_{\bigcap_n B_n}$, so that $\Si$ is closed under countable intersections, and
  $\Si$ is therefore a $\si$-algebra.

  Now let
  \[
    B = \{\om \in \B^{PV} \mid \om \bfr_1 = x_1, \ldots, \om \bfr_n = x_n\}
  \]
  be a cylinder set. Recall the notation $\ph^1 = \ph$ and $\ph^0 = \neg \ph$,
  and note that $\om \ph = x$ if and only if $\om \ph^x = 1$. Hence, $1_B$ is
  represented by the formula $\bfr_1^{x_1} \wedge \cdots \wedge \bfr_n^{x_n}$,
  so that $\Si$ contains the cylinder sets, and therefore $\Si = \cB^{PV}$.
\end{proof}

By Proposition \ref{P:ctble-ary}, every Boolean function is $\Pi$-ary for some
countable set of propositional variables $\Pi \subseteq PV$.

\begin{prop}\label{P:Boolean-func-Pi-ary}
  Let $\ph \in \cF$ and let $f$ be the Boolean function that it represents. Then
  $f$ is $\Pi$-ary, where $\Pi = PV \cap \Sf \ph$ is the countable set of
  propositional variables that appear in $\ph$.
\end{prop}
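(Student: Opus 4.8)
The plan is to argue by formula induction (Theorem \ref{T:prop-form-ind}) applied to the set
\[
  S = \{\ph \in \cF \mid f_\ph \text{ is } (PV \cap \Sf \ph)\text{-ary}\},
\]
where $f_\ph$ is the Boolean function $f_\ph \om = \om \ph$ introduced in the proof of Proposition \ref{P:Boolean-funcs}. If I can verify that $S$ satisfies conditions (i)--(iii) of Theorem \ref{T:prop-form-ind}, then $\cF \subseteq S$, which is exactly the assertion. Throughout I will abbreviate $\Pi_\ph = PV \cap \Sf \ph$ and lean on the identities $f_\bfr = \pi_\bfr$, $f_{\neg \ph} = \neg f_\ph$, and $f_{\bigwedge \Phi} = \bigwedge_{\psi \in \Phi} f_\psi$ from that proof, together with the recursive description of $\Sf$.

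For the base case, fix $\bfr \in PV$. Then $\Sf \bfr = \{\bfr\}$, so $\Pi_\bfr = \{\bfr\}$, and $f_\bfr = \pi_\bfr$. Taking $h$ to be the projection $\B^{\{\bfr\}} \to \B$ gives $f_\bfr \nu = h(\nu|_{\{\bfr\}})$, so $\bfr \in S$ and (i) holds. For the negation step, suppose $\ph \in S$. Since $\neg \ph \notin PV$ and $\Sf \neg \ph = \{\neg \ph\} \cup \Sf \ph$, we have $\Pi_{\neg \ph} = \Pi_\ph$. Writing $f_\ph \nu = h(\nu|_{\Pi_\ph})$ for a measurable $h$, I obtain $f_{\neg \ph} \nu = \neg f_\ph \nu = (\neg h)(\nu|_{\Pi_\ph})$, and $\neg h = 1 - h$ is measurable; hence $\neg \ph \in S$ and (ii) holds.

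The conjunction step is where the preliminary corollaries do the real work. Let $\Phi \subseteq S$ be nonempty and countable and put $\ph = \bigwedge \Phi$. From $\Sf \bigwedge \Phi = \{\bigwedge \Phi\} \cup \bigcup_{\psi \in \Phi} \Sf \psi$ and $\bigwedge \Phi \notin PV$, I get $\Pi_\ph = \bigcup_{\psi \in \Phi} \Pi_\psi$. For each $\psi \in \Phi$ the inductive hypothesis says $f_\psi$ is $\Pi_\psi$-ary, and since $\Pi_\psi \subseteq \Pi_\ph \subseteq PV$, Corollary \ref{C:ctble-ary} upgrades this to $f_\psi$ being $\Pi_\ph$-ary. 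Choosing measurable $h_\psi: \B^{\Pi_\ph} \to \B$ with $f_\psi \nu = h_\psi(\nu|_{\Pi_\ph})$ and setting $h = \bigwedge_{\psi \in \Phi} h_\psi$, the countability of $\Phi$ guarantees $h$ is measurable, and
\[
  f_{\bigwedge \Phi} \nu = \bigwedge_{\psi \in \Phi} f_\psi \nu
    = \bigwedge_{\psi \in \Phi} h_\psi(\nu|_{\Pi_\ph}) = h(\nu|_{\Pi_\ph}),
\]
so $\ph \in S$ and (iii) holds. Formula induction then gives $S = \cF$.

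I expect no serious obstacle: the result is essentially a bookkeeping refinement of Proposition \ref{P:ctble-ary}, tracking the exact support $PV \cap \Sf \ph$ rather than merely asserting the existence of some countable support. The only point needing care is the conjunction step, where each $f_\psi$ must first be transported from its own support $\Pi_\psi$ up to the common support $\Pi_\ph$ via Corollary \ref{C:ctble-ary} before the infimum can be formed on a single domain; the countability of $\Phi$ is precisely what keeps the resulting $h$ measurable.
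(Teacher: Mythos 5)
Your proof is correct and follows essentially the same route as the paper's: formula induction, with projections for the base case, complementation for negation, and Corollary \ref{C:ctble-ary} to transport each conjunct's function to the common support $\Pi_\ph$ before taking the countable infimum. The only cosmetic difference is that you observe $\Pi_{\neg\ph} = \Pi_\ph$ exactly, where the paper invokes Corollary \ref{C:ctble-ary} via the inclusion $\Pi_\ph \subseteq \Pi_{\neg\ph}$; both are fine.
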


\begin{proof}
  Let $\ph \in \cF$ represent $f_\ph$ and let $\Pi_\ph = PV \cap \Sf \ph$. We
  will show that $f_\ph$ is $\Pi_\ph$-ary by induction on $\ph$.

  If $\bfr \in PV$, then $\Pi_\bfr = \{\bfr\}$ and $f_\bfr = \pi_\bfr$ is $
  \{\bfr\}$-ary. Suppose $f_\ph$ is $\Pi_\ph$-ary. Then $f_{\neg \ph} = \neg
  f_\ph$ is also $\Pi_\ph$-ary. Since $\Sf \ph \subseteq \Sf \neg \ph$, we have
  $\Pi_\ph \subseteq \Pi_{\neg \ph}$. Hence, by Corollary \ref{C:ctble-ary}, it
  follows that $f_{\neg \ph}$ is $\Pi_{\neg \ph}$-ary.

  Now let $\Phi \subseteq \cF$ be countable and suppose $f_\th$ is $\Pi_\th$-ary
  for all $\th \in \Phi$. Define $\ph = \bigwedge \Phi$. Note that $\Pi_\th
  \subseteq \Pi_\ph$ for each $\th \in \Phi$. Hence, Corollary \ref{C:ctble-ary}
  implies that $f_\th$ is $\Pi_\ph$-ary for each $\th \in \Phi$, and therefore
  $f_\ph = \bigwedge_{\th \in \Phi} f_\th$ is also $\Pi_\ph$-ary.
\end{proof}

\subsection{Models and satisfiability}

An \textit{inductive model}, or simply a \emph{model},
  \index{model}%
  \index{inductive model|see {model}}%
is a probability space, $\sP = (\Om, \Si, \bbP)$, where $\Om$ is a set of strict
models.

As with strict models, the definition of a model depends on the choice of
propositional variables $PV$, which in turn determine the language $\cF$. When
we wish to emphasize this fact, we will call $\sP$ a \emph{model in $\cF$}.

If $X \subseteq \cF$ and $\om$ is a strict model, we say that $\om$
\textit{strictly satisfies} $X$, written $\om \tDash X$, if $\om \ph = 1$ for
all $\ph \in X$. We write $\om \tDash \ph$ for $\om \tDash \{\ph\}$. A set $X
\subseteq \cF$ is \textit{strictly satisfiable} if there is a strict model $\om$
such that $\om \tDash X$.
  \index{satisfiable!strictly ---}%
  \symindex{$\tDash$}%

Let $\Om$ be a set of strict models. For $\ph \in \cF$, let
\[
  \ph_\Om = \{\om \in \Om \mid \om \tDash \ph\}
\]
  \symindex{$\ph_\Om$}%
be the set of strict models in $\Om$ that strictly satisfy $\ph$. More
generally, for $X \subseteq \cF$, we define $X_\Om = \{\ph_\Om \mid \ph \in
X\}$.

The mapping $\ph \mapsto \ph_\Om$ satisfies $(\neg \ph)_\Om = \ph_\Om^c$ and $
(\bigwedge \Phi)_\Om = \bigcap_{\ph \in \Phi} \ph_\Om$. Similar relations hold
for the shorthand operators. For instance $(\bigvee \Phi)_\Om = \bigcup_{\ph \in
\Phi} \ph_\Om$, $(\ph \to \psi)_\Om = \ph_\Om^c \cup \psi_\Om$, and $(\ph \tot
\psi)_\Om = (\ph_\Om \tri \psi_\Om)^c$.

Note that if $\ph \in \cF$ represents the Boolean function $f$, then $\ph_\Om =
f^{-1} 1$. Hence, by Proposition \ref{P:Boolean-funcs}, if $\Om = \B^{PV}$,
then $\cB^ {PV} = \{\ph_\Om \mid \ph \in \cF\}$.

Let $\sP$ be a model and let $\ph \in \cF$. We say that $\sP$ \textit{satisfies}
$\ph$, written $\sP \vDash \ph$, if $\olbbP \ph_\Om = 1$, where $(\Om, \ol \Si,
\olbbP)$ is the completion of $(\Om, \Si, \bbP)$. For $X \subseteq \cF$, we
write $\sP \vDash X$ to mean $\sP \vDash \ph$ for all $\ph \in X$. Note that
$\sP \vDash \emp$ for every model $\sP$. A set $X \subseteq \cF$ is 
\textit{satisfiable} if there is a model $\sP$ such that $\sP \vDash X$. Note
that if $X \subseteq X'$ and $\sP \vDash X'$, then $\sP \vDash X$.
  \index{satisfiable}%
  \symindex{$\sP \vDash \ph$}%

\begin{prop}\label{P:sig-pre-cpct}
  Let $X \subseteq \cF$.
  \begin{enumerate}[(i)]
    \item If $X$ is strictly satisfiable, then $X$ is satisfiable.
    \item If $X$ is satisfiable and countable, then $X$ is strictly satisfiable.
  \end{enumerate}
\end{prop}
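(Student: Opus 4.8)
The plan is to prove the two parts separately, with part (i) a quick point-mass construction and part (ii) resting on the fact that a countable union of null sets is null.

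For part (i), suppose $\om_0$ is a strict model with $\om_0 \tDash X$. I would take $\sP = (\Om, \Si, \bbP)$ to be the point mass concentrated at $\om_0$: set $\Om = \{\om_0\}$, $\Si = \fP \Om$, and let $\bbP$ assign mass $1$ to $\Om$. This is a probability space whose sample space is a (singleton) set of strict models, hence a model. For any $\ph \in X$, since $\om_0 \tDash \ph$ we have $\ph_\Om = \{\om_0\} = \Om$, so $\olbbP \ph_\Om = 1$ and $\sP \vDash \ph$. Thus $\sP \vDash X$, and $X$ is satisfiable. Note that this direction needs no countability hypothesis.

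For part (ii), suppose $X$ is satisfiable, and fix a model $\sP = (\Om, \Si, \bbP)$ with $\sP \vDash X$. For each $\ph \in X$ we have $\olbbP \ph_\Om = 1$, so $\ph_\Om \in \ol\Si$ and its complement $\ph_\Om^c$ is an $\olbbP$-null set. I would then set $N = \bigcup_{\ph \in X} \ph_\Om^c$. Because $X$ is countable, $N$ is a countable union of null sets, hence itself $\olbbP$-null, and in particular $\olbbP(N^c) = 1$. Since $\olbbP$ is a probability measure and $\olbbP \emp = 0$, the set $N^c$ must be nonempty. Unwinding De Morgan gives $N^c = \bigcap_{\ph \in X} \ph_\Om = \{\om \in \Om \mid \om \tDash \ph \text{ for all } \ph \in X\}$, so any $\om \in N^c$ is a strict model with $\om \tDash X$. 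Hence $X$ is strictly satisfiable. When $X = \emp$ the same computation degenerates to $N^c = \Om$, which is nonempty because $\bbP \Om = 1$, so the argument is uniform.

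The one point I would handle with care, and the step I expect to be the main obstacle, is the measurability bookkeeping in part (ii): I would invoke the definition of $\sP \vDash \ph$ to get $\ph_\Om \in \ol\Si$ with $\olbbP \ph_\Om = 1$, so that each $\ph_\Om^c \in \ol\Si$ is genuinely null and the countable union $N$ lies in $\ol\Si$ with $\olbbP N = 0$. This is precisely where the countability of $X$ is indispensable: for an uncountable $X$ the union of the null complements need not be null (indeed need not even be measurable), the intersection $\bigcap_{\ph \in X}\ph_\Om$ could well be empty, and no strict model need exist. This asymmetry is exactly what distinguishes (ii) from (i).
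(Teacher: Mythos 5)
Your proposal is correct and follows essentially the same route as the paper: a singleton point-mass model for (i), and for (ii) the observation that a countable intersection of sets of full $\olbbP$-measure has full measure (equivalently, a countable union of null sets is null), hence is nonempty and yields a strict model. Your write-up merely spells out the null-set bookkeeping that the paper compresses into the single line $\olbbP \bigcap_{\ph \in X} \ph_\Om = 1$.
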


\begin{proof}
  Note that $\om \tDash X$ if and only if $\sP = (\{\om\}, \{\emp,
  \{\om\}\}, \de_ {\om}) \vDash X$, which yields (i). For (ii), suppose $X$ is
  satisfiable and countable. Let $\sP = (\Om, \Si, \bbP)$ be a model that
  satisfies $X$. Then $\olbbP \bigcap_{\ph \in X} \ph_\Om = 1$, so we may choose
  $\om \in \bigcap_{\ph \in X} \ph_\Om$, and this $\om$ strictly satisfies $X$.
\end{proof}

If $\sP$ is a model, we define
\begin{equation}\label{Th-sP-def}
  \Th \sP = \{\ph \in \cF \mid \sP \vDash \ph\}.
\end{equation}
  \symindex{$\Th \sP$ (in $\cF$)}%
As we will see in Proposition \ref{P:ThP-is-theory}, the set of formulas $\Th
\sP$ is a consistent deductive theory. Note that if $\ol \sP = (\Om, \ol \Si,
\olbbP)$ is the completion of $\sP$, then $\Th \ol \sP = \Th \sP$.

Let $\sP = (\Om, \Si, \bbP)$ be a model with completion $(\Om, \ol \Si,
\olbbP)$. Let $\ol \Si_\cF = \ol \Si \cap \cB^{PV}$ and let ${\olbbP_\cF}$ be
$\olbbP$ restricted to $\ol \Si_\cF$. Then $\ol \Si_\cF$ is a sub-$\si$-algebra
of $\ol \Si$, so that $\sP_\cF = (\Om, \ol \Si_\cF, \olbbP_\cF)$ is also a
model. For any $\ph \in \cF$, we have $\ph_\Om \in \ol \Si$ if and only if
$\ph_\Om \in \ol \Si_\cF$. Hence, all of the logical information in $\sP$ is
contained in $\sP_\cF$. This is made precise in Proposition \ref{P:prop-iso-thm}
below. We say that two models $\sP$ and $\sQ$ are \emph{isomorphic (as models)},
denoted by $\sP \simeq \sQ$, if $\sP_\cF = \sQ_\cF$.
  \index{model!isomorphic ---}%

\begin{lemma}\label{L:prop-iso-thm}
  If $\sP$ is a model, then $(\sP_\cF)_\cF = \sP_\cF$.
\end{lemma}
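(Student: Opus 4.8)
The plan is to unwind the definition of the operation $\sP \mapsto \sP_\cF$ and apply it twice, reducing the identity $(\sP_\cF)_\cF = \sP_\cF$ to a statement about $\si$-algebras and a statement about measures. Writing $\sP = (\Om, \Si, \bbP)$ with completion $(\Om, \ol\Si, \olbbP)$, the definition gives $\sP_\cF = (\Om, \ol\Si_\cF, \olbbP_\cF)$, where $\ol\Si_\cF = \ol\Si \cap \cB^{PV}$ and $\olbbP_\cF = \olbbP|_{\ol\Si_\cF}$. To form $(\sP_\cF)_\cF$ I would first pass to the completion $(\Om, \Si', \bbP')$ of $\sP_\cF$, where $\Si'$ is the $\olbbP_\cF$-completion of $\ol\Si_\cF$ and $\bbP'$ is the associated complete measure, and then set $(\sP_\cF)_\cF = (\Om, \Si' \cap \cB^{PV}, \bbP'|_{\Si' \cap \cB^{PV}})$. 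Thus the lemma reduces to the two claims $\Si' \cap \cB^{PV} = \ol\Si_\cF$ and $\bbP' = \olbbP_\cF$ on $\ol\Si_\cF$.

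For the $\si$-algebra equality, the inclusion $\ol\Si_\cF \subseteq \Si' \cap \cB^{PV}$ is immediate, since a completion contains the original $\si$-algebra and $\ol\Si_\cF \subseteq \cB^{PV}$ by definition. The reverse inclusion is where the real content lies, and it is where I expect the only subtlety. Suppose $C \in \Si' \cap \cB^{PV}$. By the description of a completion, $C = A \cup B$ with $A \in \ol\Si_\cF$ and $B$ negligible for $\sP_\cF$, that is, $B \subseteq N$ for some $N \in \ol\Si_\cF$ with $\olbbP_\cF(N) = 0$. The key observation is that such an $N$ lies in $\ol\Si$ and satisfies $\olbbP(N) = 0$, so $N$ is an $\olbbP$-null set and $B$ is $\olbbP$-negligible; since $(\Om, \ol\Si, \olbbP)$ is already complete, $B \in \ol\Si$. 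As $A \in \ol\Si$ as well, we get $C = A \cup B \in \ol\Si$, and combined with $C \in \cB^{PV}$ this yields $C \in \ol\Si \cap \cB^{PV} = \ol\Si_\cF$.

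The measure equality is then routine: the completion measure $\bbP'$ agrees with $\olbbP_\cF$ on $\ol\Si_\cF$ by the defining uniqueness property of the completion, so once $\Si' \cap \cB^{PV} = \ol\Si_\cF$ is established, restricting $\bbP'$ to $\Si' \cap \cB^{PV}$ simply returns $\olbbP_\cF$. Putting the two claims together gives $(\sP_\cF)_\cF = (\Om, \ol\Si_\cF, \olbbP_\cF) = \sP_\cF$. The main obstacle is the inclusion $\Si' \cap \cB^{PV} \subseteq \ol\Si_\cF$, and the whole argument rests on the single fact that the second completion cannot introduce any new sets inside $\cB^{PV}$, precisely because $\olbbP$ is complete and every $\olbbP_\cF$-null set is already an $\olbbP$-null set.
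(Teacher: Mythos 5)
Your proposal is correct and follows essentially the same route as the paper's own proof: both reduce the identity to showing that the completion of $\sP_\cF$ adds no new sets inside $\cB^{PV}$, using the decomposition of a set in the completion as a measurable set union a negligible set, together with the key fact that every $\olbbP_\cF$-null set is $\olbbP$-null and $(\Om, \ol\Si, \olbbP)$ is already complete. The measure equality is handled the same way in both arguments, so there is nothing to add.
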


\begin{proof}
  Let $\sP = (\Om, \Si, \bbP)$ be a model. For notational simplicity, let $\Ga =
  \ol \Si_\cF$ and $\bbQ = \olbbP|_\Ga$, so that $\sP_\cF = (\Om, \Ga, \bbQ)$.
  We must show that $\ol \Ga_\cF = \Ga$ and $\olbbQ_\cF = \bbQ$. Since $\Ga
  \subseteq \cB^{PV}$, we have
  \[
    \Ga = \Ga \cap \cB^{PV} \subseteq \ol \Ga \cap \cB^{PV} = \ol \Ga_\cF.
  \]
  Conversely, let $A \in \ol \Ga_\cF = \ol \Ga \cap \cB^{PV}$. Since $A \in \ol
  \Ga$, we may write $A = B \cup N$, where $B \in \Ga$, $N \subseteq F \in \Ga$,
  and $\bbQ F = 0$. By the definition of $\bbQ$, this implies $\olbbP F = 0$.
  Now $\Ga = \ol \Si \cap \cB^{PV}$. Hence, $B \in \ol \Si$ and $F \in \ol \Si$.
  Since $N \subseteq F$ and $\olbbP F = 0$, we have $N \in \ol \Si$. Therefore,
  $A = B \cup N \in \ol \Si$. But $A \in \cB^{PV}$ also. Hence, $A \in \ol \Si
  \cap \cB^{PV} = \ol \Si_\cF = \Ga$, and this shows $\ol \Ga_\cF = \Ga$.

  By definition, $\olbbQ_\cF$ is $\olbbQ$ restricted to $\ol \Ga_\cF$. But $\ol
  \Ga_\cF = \Ga$, so we have that $\olbbQ_\cF = \olbbQ|_{\Ga} = \bbQ$.
\end{proof}

\begin{prop}\label{P:prop-iso-thm}
  For any model $\sP$ and any $\ph \in \cF$, we have $\sP \vDash \ph$ if and
  only if $\sP_\cF \vDash \ph$.
\end{prop}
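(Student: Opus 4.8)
The plan is to unwind both sides of the biconditional directly in terms of the completed measure $\olbbP$ of $\sP$, relying on the single structural fact that $\ph_\Om$ is always $\cB^{PV}$-measurable. Writing $\sP = (\Om, \Si, \bbP)$ with completion $(\Om, \ol \Si, \olbbP)$, I would abbreviate $\Ga = \ol \Si_\cF = \ol \Si \cap \cB^{PV}$ and $\bbQ = \olbbP_\cF = \olbbP|_\Ga$, so that $\sP_\cF = (\Om, \Ga, \bbQ)$ and its completion is $(\Om, \ol \Ga, \olbbQ)$. Then $\sP \vDash \ph$ means $\olbbP \ph_\Om = 1$, while $\sP_\cF \vDash \ph$ means $\olbbQ \ph_\Om = 1$. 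The crucial input is that $\ph$ represents a Boolean function $f$ by Proposition \ref{P:Boolean-funcs}, whence $\ph_\Om = f^{-1} 1 \in \cB^{PV}$; consequently $\ph_\Om \in \ol \Si$ if and only if $\ph_\Om \in \Ga$, which is exactly the observation recorded just before the definition of model isomorphism.

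For the forward implication I would assume $\olbbP \ph_\Om = 1$. Then $\ph_\Om \in \ol \Si$, and since also $\ph_\Om \in \cB^{PV}$, the measurability fact places $\ph_\Om$ in $\Ga$, with $\bbQ \ph_\Om = \olbbP \ph_\Om = 1$ because $\bbQ$ is the restriction of $\olbbP$. As $\Ga \subseteq \ol \Ga$ and $\olbbQ$ agrees with $\bbQ$ on $\Ga$, this yields $\olbbQ \ph_\Om = 1$, that is, $\sP_\cF \vDash \ph$.

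For the converse I would assume $\olbbQ \ph_\Om = 1$ and invoke the explicit description of a completion: write $\ph_\Om = A \cup N$ with $A \in \Ga$ and $N$ negligible, which forces $\bbQ A = 1$. Then $A \in \Ga \subseteq \ol \Si$, so $\olbbP A = \bbQ A = 1$ and $A \subseteq \ph_\Om$; since $(\Om, \ol \Si, \olbbP)$ is complete, every superset of a set of measure one is measurable of measure one, giving $\ph_\Om \in \ol \Si$ and $\olbbP \ph_\Om = 1$, hence $\sP \vDash \ph$. The only point needing genuine care — the main obstacle — is the bookkeeping between the two completions: one must confirm that membership and measure transfer correctly between $\ol \Si$ and $\ol \Ga$, and it is precisely the $\cB^{PV}$-measurability of $\ph_\Om$ that collapses the potential gap, so that (unlike for an arbitrary set) a second round of completion cannot alter the verdict on $\ph_\Om$.
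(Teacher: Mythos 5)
Your proof is correct, and your forward direction is the same as the paper's. Your converse, however, takes a genuinely different route. Writing, as you do, $\Ga = \ol \Si \cap \cB^{PV}$ and $\bbQ = \olbbP|_\Ga$, the paper first proves a standalone lemma (Lemma \ref{L:prop-iso-thm}) asserting that $(\sP_\cF)_\cF = \sP_\cF$, i.e.\ that $\ol \Ga \cap \cB^{PV} = \Ga$ and $\olbbQ_\cF = \bbQ$; its proof takes an \emph{arbitrary} $A \in \ol \Ga \cap \cB^{PV}$, decomposes it as $B \cup N$ with $N$ contained in a $\bbQ$-null set $F$, absorbs $N$ into $\ol \Si$ using the null-subset form of completeness, and then uses $A \in \cB^{PV}$ to land back in $\Ga$. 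The proposition's converse is then an immediate application. You instead argue only about the single set $\ph_\Om$ and exploit that its $\olbbQ$-measure is $1$: after decomposing $\ph_\Om = A \cup N$ you discard $N$ entirely and invoke the superset-of-measure-one form of completeness of $\ol \Si$. This buys a shorter, self-contained argument that never appeals to the lemma and, notably, never uses $\ph_\Om \in \cB^{PV}$ in the converse direction at all --- which makes your closing remark slightly inaccurate: in your own converse it is the completeness of $\ol \Si$, not the $\cB^{PV}$-measurability of $\ph_\Om$, that closes the gap (the measurability is needed only in the forward direction, exactly as in the paper's lemma where it converts membership in $\ol \Si$ into membership in $\Ga$). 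What the paper's route buys in exchange is the stronger, reusable statement that the operation $\sP \mapsto \sP_\cF$ is idempotent for \emph{all} measurable sets, not just those of measure one.
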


\begin{proof}
  Suppose $\sP \vDash \ph$. Then $\ph_\Om \in \ol \Si$ and $\olbbP \ph_\Om = 1$.
  Since $\ph_\Om \in \cB^{PV}$, we have $\ph_\Om \in \ol \Si_\cF$ and
  $\olbbP_\cF \ph_\Om = 1$. For the converse, let $\Ga = \ol \Si_\cF$ and $\bbQ
  = \olbbP|_\Ga$, so that $\sP_\cF = (\Om, \Ga, \bbQ)$. Suppose $\sP_\cF \vDash
  \ph$. Then $\ph_\Om \in \ol \Ga$ and $\olbbQ \ph_\Om = 1$. By Lemma 
  \ref{L:prop-iso-thm}, we have $\ph_\Om \in \Ga$ and $\bbQ \ph_\Om = 1$. Since
  $\Ga = \ol \Si \cap \cB^{PV}$ and $\bbQ = \olbbP|_\Ga$, this gives $\ph_\Om
  \in \ol \Si$ and $\olbbP \ph_\Om = 1$, so that $\sP \vDash \ph$.
\end{proof}

\begin{rmk}
  If $\sP$ and $\sQ$ are isomorphic models, then $\sP \vDash \ph$ if and only if
  $\sQ \vDash \ph$ for all $\ph \in \cF$. This follows immediately from the
  definition of isomorphic models and Proposition \ref{P:prop-iso-thm}.
\end{rmk}

\subsection{Deductive consequence and soundness}

We say $\ph \in \cF$ is a \textit{consequence} of $X \subseteq \cF$, or that $X$
\emph{entails} $\ph$, which we denote by $X \vDash \ph$, if, for all models
$\sP$ such that $\sP \vDash X$, we have $\sP \vDash \ph$. Note that if $X$ is
not satisfiable, then it is vacuously true that $X \vDash \ph$ for all $\ph \in
\cF$.
  \index{consequence relation}%
  \symindex{$X \vDash \ph$ (in $\cF$)}%

We write $\psi \vDash \ph$ for $\{\psi\} \vDash \ph$ and ${} \vDash \ph$ for
$\emp \vDash \ph$. Note that ${} \vDash \ph$ if and only if $\sP \vDash \ph$ for
all models $\sP$, which holds if and only if $\om \tDash \ph$ for all strict
models $\om$. (If $\om \tDash \ph$ for all $\om$, then $\ph_\Om = \Om$ in every
model; conversely, if $\om \ntDash \ph$, then $\sP = (\{\om\}, \{\emp, \{\om\}\},
\de_{\om}) \nvDash \ph$.) We also write $X \vDash Y$ to mean that $X \vDash \ph$
for all $\ph \in Y$. Note here that $X\vDash Y$ if and only if $\sP \vDash X$
implies $\sP \vDash Y$ for all models $\sP$.

A logical system is sound if every formula that is derivable from $X$ is a
consequence of $X$. The following theorem shows that our notion of deductive
satisfiability yields a sound logical system, at least insofar as deductive
inference is concerned.

\begin{thm}[Deductive soundness]\label{T:soundness}
    \index{soundness!deductive ---}%
  Let $X \subseteq \cF$ and $\ph \in \cF$. If $X \vdash \ph$, then $X \vDash
  \ph$.
\end{thm}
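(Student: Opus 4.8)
The plan is to prove soundness by showing that the consequence relation $\vDash$, viewed as a relation from $\fP\cF$ to $\cF$, itself satisfies conditions (i)--(vi) of Definition \ref{D:derivability}. Since $\vdash$ is by definition the \emph{smallest} relation satisfying those six conditions, it will then follow that $\vdash\,\subseteq\,\vDash$, which is exactly the claim $X\vdash\ph\Rightarrow X\vDash\ph$. Before starting, I would record the reformulation that, for a model $\sP=(\Om,\Si,\bbP)$ with completion $(\Om,\ol\Si,\olbbP)$, one has $\sP\vDash\ph$ if and only if $\olbbP_*\ph_\Om=1$, equivalently $\olbbP^*(\neg\ph)_\Om=0$; this follows from the finiteness of the measure and the criterion $A\in\ol\Si\iff\olbbP_*A=\olbbP^*A$ quoted in Section \ref{S:meas-spaces}. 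I would also use freely that $(\neg\ph)_\Om=\ph_\Om^c$, that $(\bigwedge\Phi)_\Om=\bigcap_{\th\in\Phi}\th_\Om$, and that $X\subseteq X'$ with $\sP\vDash X'$ yields $\sP\vDash X$.

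Conditions (i)--(v) are routine. Condition (i) and the monotonicity condition (ii) are immediate from the definition of $\vDash$. For (iii), if $\sP\vDash X$ then $(\bigwedge\Phi)_\Om$ is measurable of full measure and is contained in each $\th_\Om$, so $\olbbP_*\th_\Om=1$ and $\sP\vDash\th$. For (iv), with $\Phi$ countable, if each $\th_\Om$ has full measure then so does $(\bigwedge\Phi)_\Om=\bigcap_{\th\in\Phi}\th_\Om$, since its complement is a countable union of null sets. For (v), if $\sP\vDash X$ forced both $\olbbP\ph_\Om=1$ and $\olbbP\ph_\Om^c=1$, then $1=\olbbP\Om=\olbbP\ph_\Om+\olbbP\ph_\Om^c=2$, which is absurd; hence no model satisfies $X$, and $X\vDash\psi$ holds vacuously for every $\psi$.

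The hard part will be (vi): from $X,\ph\vDash\psi$ and $X,\neg\ph\vDash\psi$ I must deduce $X\vDash\psi$. The natural tool is conditioning, but the obstruction is that for an arbitrary model $\sP\vDash X$ the set $\ph_\Om$ need not lie in $\ol\Si$, so $\sP$ cannot simply be split into conditional models on $\ph_\Om$ and $\ph_\Om^c$; worse, enlarging the $\si$-algebra only controls \emph{outer} measures, which is the wrong direction for certifying $\olbbP_*\psi_\Om=1$. To circumvent this I would argue by contradiction while \emph{building measurability of $\psi_\Om$ into the extended space}, so that the conditional computations become exact. Suppose $\sP\vDash X$ but $b:=\olbbP^*(\neg\psi)_\Om>0$. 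Applying the measure-extension result of Section \ref{S:meas-spaces} twice, extend $\olbbP$ to a probability measure $\mu$ on $\si(\ol\Si\cup\{\ph_\Om,\psi_\Om\})$ with $\mu(\psi_\Om)=\olbbP_*\psi_\Om=1-b$ (a legitimate value, since the inner measure of $\psi_\Om$ lies between its inner and outer measures); the value assigned to $\ph_\Om$ is irrelevant. Writing $\sP^\mu$ for the resulting model, we have $\sP^\mu\vDash X$, because each $\chi_\Om$ with $\chi\in X$ already lies in $\ol\Si$ with measure $1$ and $\mu$ agrees with $\olbbP$ there.

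Now $\ph_\Om$ is $\mu$-measurable. If $\mu(\ph_\Om)\in(0,1)$, I would form the two conditional models on $\ph_\Om$ and on $\ph_\Om^c$; their underlying sets are genuine sets of strict models, with $\chi_{\ph_\Om}=\chi_\Om\cap\ph_\Om$ and likewise for the complement, since strict satisfaction is intrinsic to each $\om$. Each conditional model satisfies $X$, and the first satisfies $\ph$ while the second satisfies $\neg\ph$; the two hypotheses then give $\mu(\psi_\Om\cap\ph_\Om)=\mu(\ph_\Om)$ and $\mu(\psi_\Om\cap\ph_\Om^c)=\mu(\ph_\Om^c)$, whence $\mu(\psi_\Om)=1$. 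If instead $\mu(\ph_\Om)\in\{0,1\}$, then $\sP^\mu$ itself already satisfies $\neg\ph$ or $\ph$, and a single hypothesis yields $\mu(\psi_\Om)=1$ directly. In every case $\mu(\psi_\Om)=1$, contradicting $\mu(\psi_\Om)=1-b<1$. Hence $b=0$, so $\olbbP_*\psi_\Om=1$ and $\sP\vDash\psi$. This verifies (vi); together with (i)--(v) it shows $\vDash$ satisfies all six conditions of Definition \ref{D:derivability}, and soundness follows.
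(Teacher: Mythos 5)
Your proposal is correct and follows essentially the same route as the paper's own proof: verify (i)--(vi) of Definition \ref{D:derivability} for $\vDash$ and invoke minimality of $\vdash$, with the crucial case (vi) handled exactly as in the paper by extending the measure via the result of Section \ref{S:meas-spaces} so that $\psi_\Om$ carries its inner measure (hence still fails to have measure one) and $\ph_\Om$ becomes measurable, then conditioning on $\ph_\Om$ and $\ph_\Om^c$ and applying the two hypotheses to force $\mu(\psi_\Om)=1$, a contradiction. The only differences are cosmetic: your inner/outer-measure reformulation of satisfaction, and your forming conditional models by restricting the underlying set of strict models rather than re-weighting the measure on all of $\Om$ as the paper does.
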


\begin{proof}
  It suffices to show that (i)--(vi) in Definition \ref{D:derivability} still
  hold when $\vdash$ is replaced by $\vDash$. Conditions (i) and (ii) are
  trivial.

  Suppose $X \vDash \bigwedge \Phi$. Let $\sP = (\Om, \Si, \bbP) \vDash X$. Then
  $\sP \vDash \bigwedge \Phi$, which implies
  \[
    \ts{
      \big(\bigwedge \Phi\big)_\Om = \bigcap_{\th \in \Phi} \th_\Om \in \ol \Si
    }
  \]
  and $\olbbP \bigcap_{\th \in \Phi} \th_\Om = 1$. Thus, $\olbbP \bigcup_ {\th
  \in \Phi} \th_\Om^c = 0$. For each $\th \in \Phi$, we have that $\th_\Om^c$ is
  a subset of a null set. Hence, $\th_\Om^c \in \ol \Si$ and $\olbbP \th_\Om^c =
  0$, implying $\th_\Om \in \ol \Si$ and $\olbbP \th_\Om = 1$. Therefore, $\sP
  \vDash \th$, showing that $X \vDash \th$ and proving (iii). The proof of (iv)
  is similar.

  For (v), suppose $X \vDash \ph$ and $X \vDash \neg \ph$, and assume there
  exists a model $\sP$ such that $\sP \vDash X$. Then $\olbbP \ph_\Om = 1$ and
  $\olbbP \ph_\Om^c = 1$, a contradiction. Hence, $X$ is not satisfiable, and so
  it is vacuously true that $X \vDash \psi$.

  For (vi), suppose $X, \ph \vDash \psi$, $X, \neg \ph \vDash \psi$, and $X
  \nvDash \psi$. Choose a model $\sP = (\Om, \Si, \bbP)$ such that $\sP \vDash X$
  and $\sP \nvDash \psi$. If $\psi_\Om \in \ol \Si$, then $\olbbP \psi_\Om <
  1$. Suppose $\psi_\Om \notin \ol \Si$. Then $\bbP_* \psi_\Om < \bbP^* \psi_\Om
  \le 1$. In this case, there exists a measure $\bbP'$ on $(\Om, \si(\Si \cup
  \{\psi_\Om\}))$ such that $\bbP'|_\Si = \bbP$ and $\bbP' \psi_\Om = \bbP_*
  \psi_\Om$. In either case, $(\Om, \Si, \bbP)$ can be extended to a complete
  model $\sP' = (\Om, \Si', \bbP')$ in which $\psi_\Om \in \Si'$ and $\olbbP
  \psi_\Om < 1$. Therefore, $\sP' \vDash X$ and $\sP' \nvDash \psi$.

  By extending the model even further, we may assume $\ph_\Om \in \Si'$. Suppose
  $\bbP' \ph_\Om = 0$. Then $\sP' \vDash \neg \ph$, so by supposition, we have
  $\sP' \vDash \psi$, a contradiction. Hence, $\bbP' \ph_\Om > 0$, and we may
  define a probability measure $\bbQ$ on $(\Om, \Si')$ by $\bbQ = \bbP'(\, \cdot
  \mid \ph_\Om)$, and then define the model $\sQ = (\Om, \Si', \bbQ)$.

  Since $\bbQ \ph_\Om = 1$, we have $\sQ \vDash \ph$. Also, if $A \in \Si'$ and
  $\bbP' A = 1$, then $\bbQ A = 1$. Thus, since $\sP' \vDash X$, it follows that
  $\sQ \vDash X$. By supposition, then, we have $\sQ \vDash \psi$. Since
  $\psi_\Om \in \Si'$, this gives $\bbQ \psi_\Om = 1$. In other words,
  $\bbP'(\psi_\Om \mid \ph_\Om) = 1$. By reversing the roles of $\ph$ and $\neg
  \ph$, this same argument yields $\bbP'(\psi_\Om \mid \ph_\Om^c) = 1$.
  Therefore,
  \[
    \bbP' \psi_\Om
      = \bbP' \ph_\Om \, \bbP'(\psi_\Om \mid \ph_\Om)
        + \bbP' \ph_\Om^c \, \bbP'(\psi_\Om \mid \ph_\Om^c)
      = \bbP' \ph_\Om + \bbP' \ph_\Om^c = 1,
  \]
  which contradicts the fact that $\sP' \nvDash \psi$.
\end{proof}

\begin{cor}\label{C:soundness}
  If $X \subseteq \cF$ is satisfiable, then $X$ is consistent.
\end{cor}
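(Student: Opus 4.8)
The plan is to prove the contrapositive: if $X$ is inconsistent, then $X$ is not satisfiable. Recall that $X$ is inconsistent if and only if $X \vdash \bot$ (established just before Theorem \ref{T:deduc-con}). So I would begin by assuming $X \vdash \bot$ and applying deductive soundness (Theorem \ref{T:soundness}) to conclude $X \vDash \bot$.

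The second ingredient is the observation that no model satisfies $\bot$. Indeed, for any set of strict models $\Om$ we have $\bot_\Om = (\bfr_0)_\Om \cap (\bfr_0)_\Om^c = \emp$, using $\bot = (\bfr_0 \wedge \neg \bfr_0)$ together with the facts that $\ph \mapsto \ph_\Om$ sends $\wedge$ to $\cap$ and $\neg$ to complementation. Hence $\olbbP \bot_\Om = \olbbP \emp = 0 \ne 1$ for every model $\sP = (\Om, \Si, \bbP)$, so $\sP \nvDash \bot$ for all $\sP$.

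Combining these, suppose toward a contradiction that $X$ is both inconsistent and satisfiable. Satisfiability supplies a model $\sP$ with $\sP \vDash X$, and $X \vDash \bot$ then forces $\sP \vDash \bot$, contradicting the previous paragraph. Therefore an inconsistent set cannot be satisfiable, which is precisely the contrapositive of the claim. I do not expect any real obstacle here: once soundness is in hand, the entire content of the corollary reduces to the elementary fact that a contradiction is false in every model, namely $\bot_\Om = \emp$.
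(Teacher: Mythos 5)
Your proposal is correct and follows essentially the same route as the paper: assume $X$ is inconsistent, use $X \vdash \bot$ and Theorem \ref{T:soundness} to get $X \vDash \bot$, then note $\bot_\Om = \emp$ so no model satisfies $\bot$, whence $X$ is unsatisfiable. The extra unwinding of why $\bot_\Om = \emp$ is fine but the paper treats it as immediate.
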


\begin{proof}
  Suppose $X$ is inconsistent. Then $X \vdash \bot$. By Theorem 
  \ref{T:soundness}, we have $X \vDash \bot$. But $\bot_\Om = \emp$, so $\sP
  \nvDash \bot$ for all $\sP$. Hence, $X$ is not satisfiable.
\end{proof}

\begin{prop}\label{P:ThP-is-theory}
  If $\sP$ is a model, then $\Th \sP$ is a consistent deductive theory.
\end{prop}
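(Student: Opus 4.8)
The plan is to derive both conclusions from the soundness theorem (Theorem \ref{T:soundness}) together with the characterization of deductive theories as deductively closed sets. The single observation that makes this painless is that $\sP$ itself satisfies $\Th \sP$: by the defining equation \eqref{Th-sP-def}, every $\ph \in \Th \sP$ has $\sP \vDash \ph$, so $\sP \vDash \Th \sP$. This lets me instantiate the consequence relation at the one particular model $\sP$, which is the whole trick.

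For consistency, I would observe that $\Th \sP$ is satisfiable, with $\sP$ serving as the witnessing model (namely $\sP \vDash \Th \sP$), and then invoke Corollary \ref{C:soundness}, which states that every satisfiable set of formulas is consistent. Alternatively one can note directly that $\bot_\Om = \emp$, so $\olbbP \bot_\Om = 0 \ne 1$ and hence $\bot \notin \Th \sP$; since a theory is inconsistent exactly when it is all of $\cF$, this also gives consistency once the theory property is established.

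For the theory property, I would use the corollary to Theorem \ref{T:theory-deduc}, which says that a set is a deductive theory if and only if it is deductively closed, i.e.\ $T \vdash \ph$ implies $\ph \in T$. So suppose $\Th \sP \vdash \ph$. Soundness (Theorem \ref{T:soundness}) gives $\Th \sP \vDash \ph$. Since $\sP \vDash \Th \sP$, the definition of the consequence relation applied to the model $\sP$ yields $\sP \vDash \ph$, that is, $\ph \in \Th \sP$. Hence $\Th \sP$ is deductively closed and is therefore a deductive theory.

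The key point to get right — really the only content — is the interplay between object-level satisfaction $\sP \vDash (\,\cdot\,)$ and the consequence relation $\vDash$: soundness pushes the syntactic closure of $\Th \sP$ into the semantic consequence relation, and the trivial fact $\sP \vDash \Th \sP$ pulls it back down to a statement about $\sP$ alone. I do not expect a genuine obstacle here. A more pedestrian alternative would verify conditions (i)--(iii) of the definition of a deductive theory directly at the semantic level (using $\La \subseteq \Taut \subseteq \Th \sP$ via soundness, the fact that modus ponens preserves probability-one sets, and the fact that a countable intersection of full-measure sets again has full measure); but routing everything through soundness is shorter and avoids re-deriving these measure-theoretic closure facts by hand.
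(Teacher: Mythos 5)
Your proof is correct and is essentially the paper's own argument: deductive closure follows from soundness together with the trivial observation that $\sP \vDash \Th \sP$, and consistency follows because $\Th \sP$ is satisfiable (the paper phrases this as $\bot_\Om = \emp$ giving $\bot \notin \Th \sP$, which is the alternative you also mention). No gaps.
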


\begin{proof}
  Let $T = \Th \sP$ and suppose $T \vdash \ph$. By Theorem \ref{T:soundness}, we
  have $T \vDash \ph$. Since $\sP \vDash T$, this implies $\sP \vDash \ph$.
  Hence, $\ph \in T$, so that $T$ is a deductive theory. Since $\emp =
  \bot_\Om$, we have $\sP \nvDash \bot$, so that $\bot \notin T$ and $T$ is
  consistent.
\end{proof}

\subsection{Karp's completeness theorem}

In this subsection, we establish that our logical system is complete, meaning
that every consequence of $X$ is derivable from $X$. Completeness is the
converse of soundness. Together, they show that the derivability and consequence
relations are identical.

In Theorem \ref{T:sig-cpctness}, we showed that $\vdash$ is $\si$-compact. In
Theorem \ref{T:compactness} below, we will show $\si$-compactness for $\vDash$,
and then use this to establish completeness in Theorem \ref{T:completeness}.

It is well-known that both $\si$-compactness and completeness fail when we adopt
the classical semantic notion of the strict model (see Example
\ref{Expl:Karp413}). In that case, only a weaker version of completeness is
available. This weaker version was proven by Karp in \cite{Karp1964}. We present
Karp's version below, and then use it to establish the full completeness theorem
for our notion of deductive satisfiability.

\begin{thm}[Karp's completeness theorem]\label{T:Karp-compl}
    \index{completeness!Karp's --- theorem}%
  For all formulas $\ph \in \cF$, we have ${} \vdash \ph$ if and only if $
  {} \vDash \ph$.
\end{thm}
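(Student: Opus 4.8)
The plan is to prove the two directions separately. The forward direction, ${} \vdash \ph$ implies ${} \vDash \ph$, is immediate from deductive soundness: apply Theorem~\ref{T:soundness} with $X = \emp$. For the converse I would argue the contrapositive, producing from the assumption ${} \nvdash \ph$ a strict model $\om$ with $\om \ntDash \ph$, so that ${} \nvDash \ph$ (recall, as noted just before the theorem, that ${} \vDash \ph$ holds if and only if $\om \tDash \ph$ for every strict model $\om$). By Theorem~\ref{T:deduc-con} with $X = \emp$, the hypothesis ${} \nvdash \ph$ is equivalent to the consistency of the singleton $\{\neg \ph\}$. Writing $\psi = \neg \ph$, the whole converse therefore reduces to the following model existence lemma: \emph{every consistent singleton $\{\psi\}$ is strictly satisfiable}. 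Given such a model $\om \tDash \psi$, we get $\om \ntDash \ph$, as required.

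To prove the lemma I would first reduce to a countable setting. The set $\Sf \psi$ of subformulas of $\psi$ is countable and closed under passing to the components of a conjunction, and it involves only the countably many variables in $PV \cap \Sf \psi$. By Proposition~\ref{P:Boolean-func-Pi-ary}, the truth value of $\psi$ in any strict model depends only on these variables, so it suffices to define a truth assignment on $PV \cap \Sf \psi$ (extended arbitrarily to the rest of $PV$) that makes $\psi$ true. I would then build an increasing chain of consistent sets $\{\psi\} = T_0 \subseteq T_1 \subseteq \cdots$ by a bookkeeping enumeration interleaving two kinds of tasks. First, for each $\th \in \Sf \psi$, \emph{decide} $\th$ by adjoining whichever of $\th, \neg \th$ preserves consistency; at least one does, since otherwise condition (vi) of Definition~\ref{D:derivability} would force $\bot$. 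Second, whenever a formula of the form $\neg \bigwedge \Phi$ has been adjoined, \emph{witness} it by adjoining $\neg \th$ for some $\th \in \Phi$; such a $\th$ exists, for otherwise every $\th \in \Phi$ would be derivable (by Theorem~\ref{T:deduc-con}) and condition (iv) would yield $\bigwedge \Phi$, contradicting consistency. Arranging the enumeration so that every subformula is eventually decided and every $\neg \bigwedge \Phi$ that ever enters the chain is eventually witnessed is routine, since at each stage only finitely many formulas are added.

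Let $T_\infty = \bigcup_n T_n$. Each $T_n$ is consistent, so $T_\infty$ never contains both $\th$ and $\neg \th$. Define $\om$ on variables by $\om \bfr = 1$ if and only if $\bfr \in T_\infty$, and prove by induction on subformulas that $\om \th = 1$ if and only if $\th \in T_\infty$ for every $\th \in \Sf \psi$. The variable case is the definition; the negation case uses that $T_\infty$ decides each subformula consistently; and for $\bigwedge \Phi$, one direction uses condition (iii) (so $\bigwedge \Phi \in T_\infty$ forces each $\th \in \Phi$ into $T_\infty$), while the other uses the witnessing clause (if every $\th \in \Phi$ lies in $T_\infty$ but $\bigwedge \Phi \notin T_\infty$, then $\neg \bigwedge \Phi \in T_\infty$ has a witness $\neg \th$, contradicting $\th \in T_\infty$). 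Since $\psi \in T_\infty$, this gives $\om \tDash \psi$, proving the lemma and hence the theorem.

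The main obstacle is exactly the model existence step, and within it the treatment of infinitary disjunctions. The pitfall is that a countable increasing union of consistent sets can become inconsistent through the countable conjunction rule, so one cannot simply run a naive Lindenbaum construction and hope the limit is consistent. I sidestep this by never requiring $T_\infty$ to be known consistent in advance: instead I build the assignment $\om$ directly and verify $\om \tDash T_\infty$ by the subformula induction above, at which point consistency of $T_\infty$ follows a posteriori from Corollary~\ref{C:soundness}. The witnessing clause is the infinitary analogue of supplying Henkin witnesses, and it is precisely what makes the conjunction case of the induction go through. This is Karp's theorem \cite{Karp1964}; the argument above is a recasting of her model existence construction in the present notation.
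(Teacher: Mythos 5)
Your proof is correct, but it takes a genuinely different route from the paper's. The paper handles the forward direction exactly as you do (soundness with $X = \emp$), but for the converse it constructs no model at all: it invokes Karp's completeness theorem \cite[Theorem 5.3.2]{Karp1964}, which states that ${} \vdash' \ph$ holds iff $\om \tDash \ph$ for all strict models $\om$, where $\vdash'$ is a Hilbert-type calculus; it then observes that ${} \vDash \ph$ is equivalent to strict validity, and spends its effort on a purely syntactic bridge, checking that Karp's additional axiom schemes ($\La$4)--($\La$6) are derivable by natural deduction, so that ${} \vdash' \ph$ implies ${} \vdash \ph$ via Theorem~\ref{T:Hilbert=nat}. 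You instead reprove the semantic core that the paper outsources: your model existence lemma (every consistent $\{\psi\}$ is strictly satisfiable) \emph{is} the substance of Karp's theorem, and your decide-and-witness construction is a correct infinitary Henkin argument. The delicate points all check out: the witnessing step is justified by Theorem~\ref{T:deduc-con} together with rule (iv) of Definition~\ref{D:derivability}; stage-wise consistency ensures $T_\infty$ contains exactly one of $\th, \neg\th$ for each $\th \in \Sf\psi$, which is all the negation and conjunction cases need; the restriction to $\Sf\psi$ keeps the whole construction inside a countable set of formulas, so the bookkeeping is indeed routine; and your observation that the subformula induction never requires consistency of $T_\infty$ itself is exactly the right way to dodge the failure of chain-consistency under the countable conjunction rule. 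As for what each approach buys: the paper's proof is short and leverages the literature, but it requires translating between two proof systems and asks the reader to accept the cited result as a black box; yours is self-contained, stays entirely within rules (i)--(vi) of the paper's natural-deduction calculus and results already proved before this theorem (Theorems~\ref{T:soundness} and~\ref{T:deduc-con}, Proposition~\ref{P:Boolean-func-Pi-ary}), and needs no reconciliation of axiomatizations---at the cost of redoing, in the propositional setting, the model existence argument the paper deliberately avoided.
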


\begin{proof}
  The only if direction is a consequence of Theorem \ref{T:soundness}. For the 
  if direction, we appeal to Karp's completeness theorem. In \cite[Theorem
  5.3.2]{Karp1964}, Karp proved that ${} \vdash' \ph$ if and only if $\om \tDash
  \ph$ for all strict models $\om$, where $\vdash'$ is a certain Hilbert-type
  system of deduction. As noted previously, ${} \vDash \ph$ if and only if $\om
  \tDash \ph$ for all strict models $\om$. We therefore have that ${} \vDash
  \ph$ if and only if ${} \vdash' \ph$. To complete the proof, we must verify
  that ${} \vdash' \ph$ implies ${} \vdash \ph$.

  To accomplish this, we must first describe the differences between $\vdash'$
  and $\vdash$. In Karp's system, $\to$ is a primitive symbol; for us, it is
  defined shorthand. This, however, causes no difficulties, since $(\ph \to
  \psi) \tot (\neg \ph \vee \psi)$ is a tautology in Karp's system.

  Recall from Theorem \ref{T:Hilbert=nat} that ${} \vdash \ph$ if and only if
  there is a proof of $\ph$ from the axioms $\La$. Karp's $\vdash'$ differs from
  our $\vdash$ only in the choice of the axioms; the notion of proof is the
  same. Aside from the aforementioned use of $\to$, this is the only difference
  between $\vdash'$ and $\vdash$. Hence, we need only verify that each of Karp's
  axioms can be proven in $\vdash$. The axioms of Karp that are not already
  accounted for in $\La$ are:
  \begin{enumerate}[($\La$1)]
    \setcounter{enumi}{3}
    \item $\ph \to \psi \to \ph$
    \item $(\neg \ph \to \neg \psi) \to \psi \to \ph$
    \item $\bigwedge_{\ph \in \Phi} (\psi \to \ph) \to \psi \to \bigwedge \Phi$
  \end{enumerate}
  By Theorem \ref{T:Hilbert=nat}, it suffices to prove these by natural
  deduction, which is entirely straightforward.
\end{proof}

\begin{rmk}\label{R:impl-subset}
  As a consequence of Karp's completeness theorem, we have that $\ph$ is a
  tautology if and only if $\om \tDash \ph$ for all strict models $\om$. Hence,
  in any model $\sP$, we have $\ph \vdash \psi$ implies $\ph_\Om \subseteq
  \psi_\Om$, and $\ph \equiv \psi$ implies $\ph_\Om = \psi_\Om$. If $\Om = \B^
  {PV}$ is the set of all strict models, then both of these implications are
  biconditional.
\end{rmk}

With Karp's completeness theorem, we can now prove the result that was described
in Remark \ref{R:finitary-vs-infinitary}.

\begin{prop}\label{P:finitary-vs-infinitary}
  Let $X \subseteq \cF_\fin$ and $\ph \in \cF_\fin$. If $X \vdash \ph$, then $X
  \vdash_\fin \ph$.
\end{prop}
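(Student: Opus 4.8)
The plan is to prove the contrapositive-free direction by routing the infinitary derivation through semantics, exploiting the fact that all formulas involved are finitary. The three tools I expect to need are Proposition \ref{P:sig-cpctness}, Karp's completeness theorem (via Remark \ref{R:impl-subset}), and the classical completeness and compactness theorems for finitary propositional logic, which may be cited from \cite{Rautenberg2010}. First I would apply Proposition \ref{P:sig-cpctness}: since $X \vdash \ph$, there exists a countable $X_0 \subseteq X$ with $\bigwedge X_0 \to \ph \in \Taut$. Here $X_0 \subseteq \cF_\fin$ and $\ph \in \cF_\fin$, although $\bigwedge X_0$ itself need not be finitary; it serves only as a stepping stone. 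By Remark \ref{R:impl-subset}, being a tautology is equivalent to strict satisfaction by every strict model, and computing the truth value of $\bigwedge X_0 \to \ph$ in an arbitrary $\om$ (using $\om \bigwedge X_0 = \bigwedge_{\th \in X_0} \om \th$) shows that $\om \tDash \bigwedge X_0 \to \ph$ holds exactly when $\om \tDash X_0$ implies $\om \tDash \ph$. Thus the tautology translates into the purely semantic statement that every strict model strictly satisfying $X_0$ also strictly satisfies $\ph$.

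The crux of the argument—and the step I expect to be the main obstacle—is the reduction from the countable set $X_0$ to a finite one. Since $X_0$ and $\ph$ are finitary, strict satisfaction for them coincides with the classical tautological consequence of finitary propositional logic, so the classical compactness theorem applies: from the fact that every strict model satisfying $X_0$ satisfies $\ph$, I obtain a finite subset $X_1 \subseteq X_0$ such that every strict model satisfying $X_1$ satisfies $\ph$. This is precisely the point at which finitarity is indispensable, since the analogous passage from countable to finite fails for the full infinitary language; it is what makes the proposition (and the surrounding Remark \ref{R:finitary-vs-infinitary}) nontrivial.

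Finally I would translate back into syntax. The finite set $X_1 \subseteq \cF_\fin$ tautologically entails the finitary formula $\ph$, so the classical completeness theorem for the finitary calculus $\vdash_\fin$ (equivalently $\wdash_\fin$, by Remark \ref{R:fin-Hilbert}) yields $X_1 \vdash_\fin \ph$. Because $X_1 \subseteq X$ and $\vdash_\fin$ satisfies the monotonicity rule (ii) of Definition \ref{D:derivability}, I conclude $X \vdash_\fin \ph$, as desired. The only care required is to keep the intermediate object $\bigwedge X_0$ strictly in the role of an element of $\cF$ (not of $\cF_\fin$) until the compactness step has replaced $X_0$ by the finite $X_1$, after which every formula in play is genuinely finitary.
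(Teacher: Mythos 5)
Your proof is correct, and its skeleton is the same as the paper's: both arguments extract a countable $X_0 \subseteq X$ with $\bigwedge X_0 \to \ph \in \Taut$ via Proposition \ref{P:sig-cpctness}, use Karp's completeness theorem (in your case through Remark \ref{R:impl-subset}) to turn that infinitary tautology into the statement that every strict model of $X_0$ strictly satisfies $\ph$, and then invoke the classical completeness theorem of finitary propositional logic to land back in $\vdash_\fin$. The assembly differs only at the end. The paper cites the \emph{strong} form of the classical completeness theorem (\cite[Theorem 1.4.6]{Rautenberg2010}), valid for arbitrary, possibly uncountable premise sets: it fixes an arbitrary strict model $\om \tDash X$, notes $\om \tDash X_0$, concludes $\om \tDash \ph$, and is done---no compactness argument appears anywhere. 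You instead interpose the classical compactness theorem to shrink $X_0$ to a finite $X_1$, apply completeness to the finite set $X_1$, and finish with monotonicity of $\vdash_\fin$. That route is sound, and it is the right one if you permit yourself only completeness for finite premise sets; in \cite{Rautenberg2010} compactness is itself a corollary of the strong completeness theorem, so both proofs ultimately rest on the same classical input. One point of emphasis is misplaced, however: you call the countable-to-finite reduction ``the crux'' and ``indispensable,'' but the paper's proof shows it can be skipped entirely. The step that genuinely carries the weight in both arguments is the appeal to Karp's completeness theorem, which is what allows an infinitary derivation---whose intermediate steps may involve formulas far outside $\cF_\fin$---to be traded for truth in every strict model; finitarity of $X$ and $\ph$ is then needed only so that the classical completeness theorem can convert that semantic fact back into a finitary derivation.
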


\begin{proof}
  Let $X \subseteq \cF_\fin$ and $\ph \in \cF_\fin$. Suppose $X \vdash \ph$. The
  well-known completeness theorem from finitary propositional logic states that
  $X \vdash_\fin \ph$ if and only if $\om \tDash X$ implies $\om \tDash \ph$ for
  all strict models $\om$. (See, for instance, \cite[Theorem 1.4.6]
  {Rautenberg2010}).

  Let $\om$ be a strict model and assume that $\om \tDash X$. By Proposition
  \ref{P:sig-cpctness}, we may choose countable $X_0 \subseteq X$ such that
  $\vdash \bigwedge X_0 \to \ph$. By Karp's completeness theorem, $\vDash
  \bigwedge X_0 \to \ph$. Hence, $\om \tDash \bigwedge X_0 \to \ph$. But $\om
  \tDash X \supseteq X_0$. Therefore, $\om \tDash \ph$.
\end{proof}

\subsection{Inductive theories and Dynkin systems}\label{S:ind-th-Dynk}

We briefly pause our development to make an observation about Dynkin systems.
Let $P$ be an inductive theory and fix $X \in \ante P$. In Section
\ref{S:rel-neg-cert}, we noted that the domain of $P(\; \cdot \mid X)$ need not
be closed under conjunctions and disjunctions. We are now in a position to say
something in the positive direction about the structure of this set of formulas.

Let $\Om = \B^{PV}$ and define
\begin{equation}\label{Del-P-X}
  \De = \De(P, X) = \{\ph_\Om \mid P(\ph \mid X) \text{ exists}\}.
\end{equation}
  \symindex{$\De(P, X)$}%
Let $A \in \cB^{PV} = \{\ph_\Om \mid \ph \in \cF\}$ and choose $\ph \in \cF$
such that $A = \ph_\Om$. By the above definition, if $P(\ph \mid X)$ exists,
then $A \in \De$. Conversely, if $A \in \De$, then Remark \ref{R:impl-subset}
and the rule of logical equivalence imply that $P(\ph \mid X)$ exists. Hence,
$\De$ is an embedding of the domain of $P(\; \cdot \mid X)$ into $\cB^{PV}$. The
structure of this domain, therefore, can be understood by looking at the
structure of $\De$.

\begin{prop}\label{P:Del-P-X}
  If $P$ be an inductive theory, then $\De(P, X)$ is a Dynkin system for every
  $X \in \ante P$.
\end{prop}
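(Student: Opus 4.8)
The plan is to verify directly the three defining properties of a Dynkin system for $\De = \De(P, X)$, using the dictionary between Boolean set operations on the sets $\ph_\Om$ and the logical connectives on formulas. Since $\Om = \B^{PV}$ is the set of all strict models, Remark \ref{R:impl-subset} supplies the \emph{biconditional} statements that $\ph_\Om \subseteq \psi_\Om$ iff $\ph \vdash \psi$, and $\ph_\Om = \psi_\Om$ iff $\ph \equiv \psi$; this is what lets us pass back and forth between inclusions of sets in $\De$ and derivability of their representing formulas. Throughout, I would use that an inductive theory is in particular entire, so that the results of Section \ref{S:entire} apply. I would also use the identities $(\neg \ph)_\Om = \ph_\Om^c$, $(\ph \wedge \psi)_\Om = \ph_\Om \cap \psi_\Om$, and $(\bigvee_n \ph_n)_\Om = \bigcup_n (\ph_n)_\Om$.

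For property (i), I would take $\ph = \top$: since $\top_\Om = \Om$ and $X \in \ante P$ gives $X \vdash \top$, the rule of logical implication yields $P(\top \mid X) = 1$, so $\Om \in \De$. For property (ii), suppose $A = \ph_\Om$ and $B = \psi_\Om$ lie in $\De$ with $A \subseteq B$. Then $P(\ph \mid X)$ and $P(\psi \mid X)$ exist, and $\ph_\Om \subseteq \psi_\Om$ forces $\ph \vdash \psi$, hence $X, \ph \vdash \psi$ by weakening. Proposition \ref{P:rel-neg} then shows $P(\psi \wedge \neg \ph \mid X)$ exists, and since $(\psi \wedge \neg \ph)_\Om = \psi_\Om \cap \ph_\Om^c = B \setminus A$, we conclude $B \setminus A \in \De$.

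The last property is the one that pins down a Dynkin system rather than a $\si$-algebra, and it is where the continuity rule does the work. Given an increasing sequence $A_n = (\ph_n)_\Om \in \De$, the inclusions $A_n \subseteq A_{n+1}$ translate to $\ph_n \vdash \ph_{n+1}$, and hence $X, \ph_n \vdash \ph_{n+1}$, for all $n$. Since each $P(\ph_n \mid X)$ exists, the continuity rule (R7) applies and guarantees that $P(\bigvee_n \ph_n \mid X)$ exists, this being the left-hand side of the continuity equation. As $(\bigvee_n \ph_n)_\Om = \bigcup_n A_n$, this gives $\bigcup_n A_n \in \De$, completing the verification. There is no real obstacle here beyond bookkeeping: the only points requiring care are that each reverse implication $\ph_\Om \subseteq \psi_\Om \Rightarrow \ph \vdash \psi$ relies on working with the full space $\Om = \B^{PV}$, and that (R7) is stated exactly for increasing chains, which matches property (iii) precisely. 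Indeed the argument never uses that $P$ is closed or connected, only that it is entire.
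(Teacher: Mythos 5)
Your proof is correct, and it takes a mildly but genuinely different route from the paper's. The paper verifies the \emph{alternative} characterization of a Dynkin system (nonempty, closed under complements, closed under countable pairwise disjoint unions): it gets $\Om = \top_\Om \in \De$ from the rule of logical implication, complements from Corollary \ref{C:rel-neg}, and disjoint unions by converting $A_i \cap A_j = \emp$ into $\ph_i \wedge \ph_j \equiv \bot$ via Remark \ref{R:impl-subset} and then invoking countable additivity (Theorem \ref{T:ctbl-add}). You instead verify the primary definition ($\Om \in \De$, closure under proper differences, closure under increasing unions), using Proposition \ref{P:rel-neg} for differences and the continuity rule (R7) directly for increasing unions, with the biconditional of Remark \ref{R:impl-subset} turning $A \subseteq B$ into $X, \ph \vdash \psi$ and $A_n \subseteq A_{n+1}$ into $X, \ph_n \vdash \ph_{n+1}$. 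The two arguments rest on the same pillars, since Corollary \ref{C:rel-neg} is the special case $\psi = \top$ of Proposition \ref{P:rel-neg} and Theorem \ref{T:ctbl-add} is itself derived from (R7); what yours buys is a slightly more self-contained argument that invokes the raw inference rule rather than a derived theorem, while the paper's disjoint-union formulation avoids any discussion of monotone chains. Your closing remarks are also accurate and match the paper: the proof uses only that $P$ is entire, and the reverse implications (inclusion implies derivability) are exactly what working over the full set $\Om = \B^{PV}$ licenses. One presentational nicety: strictly speaking, membership $A \in \De$ gives you \emph{some} representing formula whose probability exists, so when you write $A = \ph_\Om$ for an arbitrary representative you should note (as the paper does in the paragraph preceding the proposition) that Remark \ref{R:impl-subset} plus the rule of logical equivalence lets you transfer existence of the probability to any other formula representing the same set.
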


\begin{proof}
  Let $P$ be an inductive theory and $X \in \ante P$. Let $\De = \De(P, X)$ be
  defined as above. By the rule of logical implication, $P(\top \mid X) = 1$.
  Hence, $\Om = \top_\Om \in \De$, and $\De$ is nonempty. Let $A \in \De$.
  Choose $\ph \in \cF$ such that $A = \ph_\Om$. Then $A^c = (\neg \ph)_\Om$. By
  Corollary \ref{C:rel-neg}, we have $A^c \in \De$. Now suppose $\{A_n\}
  \subseteq \De$ is pairwise disjoint. Choose $\ph_n \in \cF$ such that $A_n =
  (\ph_n)_\Om$. For $i \ne j$, we have $\bot_\Om = \emp = A_i \cap A_j = (\ph_i
  \wedge \ph_j)_\Om$. Hence, from Remark \ref{R:impl-subset}, it follows that
  $\ph_i \wedge \ph_j \equiv \bot$. By the rule of logical equivalence, $P(\ph_i
  \wedge \ph_j \mid X) = 0$. Therefore, Theorem \ref{T:ctbl-add} implies
  $P(\bigvee_n \ph_n \mid X)$ exists. But $(\bigvee_n \ph_n)_\Om = \bigcup_n
  A_n$, so $\bigcup_n A_n \in \De$, and $\De$ is a Dynkin system.
\end{proof}

\subsection{The full completeness theorem}

\begin{thm}[$\si$-compactness]\label{T:compactness}
    \index{s_sigma-compactness@$\si$-compactness}%
  A set $X \subseteq \cF$ is satisfiable if and only if every countable subset
  of $X$ is satisfiable.
\end{thm}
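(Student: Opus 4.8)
The plan is to prove the nontrivial (``if'') direction by constructing a single probability measure on the space of all strict models that makes every formula of $X$ almost surely true; the other direction is immediate, since if $\sP \vDash X$ and $X_0 \subseteq X$, then $\sP \vDash X_0$. First I would dispose of a preliminary: $X$ is consistent. Indeed, if $X \vdash \bot$, then by $\si$-compactness of $\vdash$ (Theorem~\ref{T:sig-cpctness}) some countable $X_0 \subseteq X$ satisfies $X_0 \vdash \bot$; but $X_0$ is satisfiable by hypothesis, contradicting Corollary~\ref{C:soundness}. In the same way every countable subset of $X$ is consistent, hence, being countable and satisfiable, strictly satisfiable by Proposition~\ref{P:sig-pre-cpct}(ii); so for each countable $X_0 \subseteq X$ there is a strict model $\om_{X_0}$ with $\om_{X_0} \tDash X_0$.

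The target model will have underlying set $\Om = \B^{PV}$, the set of all strict models. The key structural point is that $\B^{PV} = \{0,1\}^{PV}$ is compact (Tychonoff), and that the cylinder sets form an algebra $\mathcal{A}$ of clopen sets generating $\cB^{PV}$. Consequently, any finitely additive probability $\mu \colon \mathcal{A} \to [0,1]$ is automatically a premeasure: if a cylinder set is a disjoint countable union of cylinder sets, then by compactness the union is really finite (the pieces form a disjoint open cover of a compact set), so countable additivity on $\mathcal{A}$ reduces to finite additivity. Carath\'eodory's theorem then extends $\mu$ to a genuine countably additive probability $\bbP$ on $\cB^{PV} = \si(\mathcal{A})$, and $\sP = (\B^{PV}, \cB^{PV}, \bbP)$ is a model. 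This is precisely the mechanism by which overlaying a measure repairs the failure of strict semantics: the measure can ``see'' countable conjunctions that no single strict model captures.

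It therefore remains to choose the cylinder-values of $\mu$ so that $\bbP(\psi_\Om) = 1$ for every $\psi \in X$. Here I would exploit that each $\psi$ involves only the countable variable set $\Pi_\psi = PV \cap \Sf\psi$ (Proposition~\ref{P:Boolean-func-Pi-ary}), so that $\psi_\Om$ sits in the Borel $\si$-algebra of the compact \emph{metrizable} space $\B^{\Pi_\psi}$, and $\bbP(\psi_\Om)$ is computed from the values of $\mu$ on cylinders by iterated continuity along the rank of $\psi_\Om$: from below through disjunctions, from above through conjunctions. To control these values I would translate derivable implications into set inclusions: by Proposition~\ref{P:sig-cpctness} and Karp's completeness theorem (Theorem~\ref{T:Karp-compl}), $\chi \vdash \psi$ for finitary $\chi$ is equivalent to $\chi_\Om \subseteq \psi_\Om$, allowing each $\psi_\Om$ to be approximated by cylinder sets. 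The measure $\mu$ itself I would build by a transfinite exhaustion of $X$: well-order $X$, and form an increasing chain of measures, at each successor stage using the one-set extension result recalled in Section~\ref{S:meas-spaces} (which adjoins a set with any prescribed value between its inner and outer measure) to force the next $\psi_\Om$ to have measure $1$. The invariant to carry along the chain is that every not-yet-treated countable subfamily $X_0 \subseteq X$ still has full outer measure; this holds at the start, because $\bigcap_{\psi \in X_0}\psi_\Om$ is nonempty (strict satisfiability of $X_0$) and the initial outer measure is identically $1$.

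The hard part will be the preservation of this invariant. Assigning one event measure $1$, equivalently its complement measure $0$, can depress the outer measures of other events, and one must show---using the strict satisfiability of \emph{all} countable subsets together with the continuity and compactness structure above---that no infinitary constraint of $X$ is ever driven below full outer measure. It is essential that $\bbP$ be genuinely spread out rather than a point mass: a naive $0$--$1$ construction (say a point mass at an ultrafilter limit of the $\om_{X_0}$) fails, because an infinite conjunction all of whose conjuncts are ``almost surely true'' can still be falsified at the limit point. Reconciling uncountably many such infinitary demands inside one countably additive measure, while never getting stuck, is the crux of the argument; once it is carried out, $\bbP(\psi_\Om)=1$ for all $\psi \in X$ gives $\sP \vDash X$, completing the proof.
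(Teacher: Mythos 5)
Your treatment of the trivial direction and your consistency preliminary (via Theorem \ref{T:sig-cpctness} and Corollary \ref{C:soundness}) match the paper, but the main construction is not merely incomplete at the admitted ``crux''---it is aimed at a target that does not exist. You insist on a model carried by the \emph{full} product $\si$-algebra, $\sP = (\B^{PV}, \cB^{PV}, \bbP)$, so that every formula, in particular every propositional variable, receives a probability, and $\bbP \psi_\Om = 1$ for all $\psi \in X$. The paper's own Example \ref{Expl:Karp412} rules this out: take $PV = \{\bfr^t_n \mid t \in I,\ n \in \bN\}$ with $I$ uncountable and
\[
  X = \{\ts{\bigvee_n} \, \bfr^t_n \mid t \in I\}
    \cup \{\neg(\bfr^s_n \wedge \bfr^t_n) \mid s \ne t,\ n \in \bN\}.
\]
Every countable subset of this $X$ is strictly satisfiable, hence satisfiable by Proposition \ref{P:sig-pre-cpct}(i), so it falls under the hypothesis of the theorem; yet in any model of $X$ in which each $(\bfr^t_n)_\Om$ is measurable, the sets $S(n,k) = \{t \in I \mid \bbP (\bfr^t_n)_\Om \ge 1/k\}$ have at most $k$ elements (by the pairwise-disjointness sentences) and cover $I$ (by the disjunction sentences), exhibiting the uncountable $I$ as a countable union of finite sets. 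So for this $X$ your transfinite scheme must break down: at some stage the invariant fails or the one-set extension is unavailable, and no refinement of the Tychonoff/premeasure/Carath\'eodory machinery (which is fine in itself) can repair it. There is also an internal tension in the plan: once the cylinder values are fixed, the extension of the premeasure to $\cB^{PV}$ is \emph{unique}, so there is no residual freedom for a later one-set extension to ``force'' any $\psi_\Om$ to have measure one.

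The paper's proof succeeds precisely by refusing to measure everything. It takes $\Om$ to be the set of all strict models but takes $\Si = \{\ph_\Om \mid \ph \in T(X) \text{ or } \neg \ph \in T(X)\}$, only the sets of formulas \emph{determined} by $T(X)$, and defines $\bbP \ph_\Om$ to be $1$ or $0$ according to whether $\ph \in T(X)$ (well defined by Remark \ref{R:impl-subset}). Countable additivity is then a two-line consequence of the closure of deductive theories under countable conjunction together with the consistency of $X$: among pairwise disjoint $(\ph_n)_\Om$ at most one $\ph_n$ can lie in $T(X)$, and if none does, then $\bigwedge_n \neg \ph_n \in T(X)$, so the union gets measure $0$. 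Note that this is exactly a $0$--$1$ valued measure, of the kind you dismissed: your objection (a point mass at a limit can falsify an infinite conjunction) applies only to point masses, i.e.\ to strict models, and the paper evades it not by ``spreading the measure out'' but by shrinking the $\si$-algebra so that undetermined formulas are never assigned a probability at all.
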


\begin{proof}
  The only if part is trivial. Suppose every countable subset of $X$ is
  satisfiable. Assume $X$ is inconsistent. Then $X \vdash \bot$. By Theorem 
  \ref{T:sig-cpctness}, there exists countable $X_0 \subseteq X$ such that $X_0
  \vdash \bot$, implying that $X_0$ is inconsistent. By Corollary 
  \ref{C:soundness}, we have that $X_0$ is not satisfiable, a contradiction.
  Hence, $X$ is consistent.

  Let $\Om$ be the set of all strict models. Let
  \[
    \Si = \{\ph_\Om \mid \ph \in T(X) \text{ or } \neg \ph \in T(X)\}.
  \]
  Then $\Si$ is a $\si$-algebra. If $A \in \Si$, choose $\ph$ such that $A =
  \ph_\Om$ and define $\bbP A = 1$ if $\ph \in T(X)$ and $0$ otherwise. By
  Remark \ref{R:impl-subset}, the function $\bbP$ is well-defined.

  Since $X$ is consistent, $\bot \notin T(X)$. Thus, $\bbP \emp = \bbP \bot_\Om =
  0$. Conversely, $\top \in T(X)$, so $\bbP \Om = \bbP \top_\Om = 1$.

  Now let $\{A_n\}_{n \in \bN} \subseteq \Si$ be pairwise disjoint, and define
  $A = \bigcup_n A_n$. For each $n$, choose $\ph_n$ such that $A_n =
  (\ph_n)_\Om$, and define $\ph = \bigvee_n \ph_n$. Note that $A = \ph_\Om$.
  Suppose $m \ne n$. Since
  \[
    (\ph_m \wedge \ph_n)_\Om = A_m \cap A_n = \emp = \bot_\Om,
  \]
  we have $\ph_m \wedge \ph_n \equiv \bot$, implying that $\ph_m \wedge \ph_n
  \notin T(X)$. Since $T(X)$ is closed under conjunctions, either $\ph_m
  \notin T(X)$ or $\ph_n \notin T(X)$. This implies that there is at most one
  $n \in \bN$ with $\bbP A_n = 1$. Therefore, $\sum_n \bbP A_n \in \{0, 1\}$ and
  \begin{align*}
    \ts{\sum \bbP A_n = 1}
      &\quad\text{iff}\quad \text{there exists $n$ such that $\bbP A_n = 1$}\\
    &\quad\text{iff}\quad \text{there exists $n$ such that $\ph_n \in T(X)$}\\
    &\quad\text{iff}\quad \ph \in T(X)\\
    &\quad\text{iff}\quad \bbP \ph_\Om = \bbP A = 1,
  \end{align*}
  showing that $\bbP$ is countably additive. Thus, $\bbP$ is a measure on $(\Om,
  \Si)$ with $\bbP \Om = 1$, and so $\sP = (\Om, \Si, \bbP)$ is a model.

  Now let $\ph \in X \subseteq T(X)$ be arbitrary. Then $\ph_\Om \in \Si$, and
  since $\ph \in T(X)$, we have $\bbP \ph_\Om = 1$, showing that $\sP \vDash X$,
  so that $X$ is satisfiable.
\end{proof}

\begin{cor}\label{C:compactness}
  A set $X \subseteq \cF$ is satisfiable if and only if $X$ is consistent.
\end{cor}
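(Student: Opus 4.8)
The plan is to prove the two implications separately, with essentially all of the substance lying in the reverse direction. The forward implication, that satisfiability entails consistency, is already recorded as Corollary \ref{C:soundness}, so I would simply cite it. It then remains to show that a consistent $X$ is satisfiable, and my strategy here is to reduce the problem to countable subsets so that the $\si$-compactness theorem can finish the job.

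For the reverse implication, I would first observe that consistency passes to subsets: if $X_0 \subseteq X$ were inconsistent we would have $X_0 \vdash \bot$, and then monotonicity (Definition \ref{D:derivability}(ii)) would give $X \vdash \bot$, contradicting the consistency of $X$. So fix an arbitrary countable $X_0 \subseteq X$ and set $\psi = \bigwedge X_0$. Since $X_0$ is consistent, $X_0 \nvdash \bot$, and by Proposition \ref{P:conj-equiv} we have $\psi \nvdash \bot$, so $\psi$ is not a contradiction; equivalently, $\neg \psi$ is not a tautology. By Karp's completeness theorem, in the form recorded in Remark \ref{R:impl-subset}, a formula is a tautology exactly when it holds in every strict model, so there is a strict model $\om$ with $\om \ntDash \neg \psi$, that is $\om \tDash \psi$, whence $\om \tDash X_0$. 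Thus $X_0$ is strictly satisfiable, and Proposition \ref{P:sig-pre-cpct}(i) upgrades this to satisfiability. Having shown that every countable subset of $X$ is satisfiable, Theorem \ref{T:compactness} yields that $X$ itself is satisfiable.

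The one point that must be handled with care is that one cannot hope to build a single strict model satisfying all of $X$ at once: strict (truth-table) semantics is neither complete nor $\si$-compact, as Examples \ref{Expl:Karp413} and \ref{Expl:Karp412} illustrate. The reduction to countable subsets is therefore essential, and it is precisely at the uncountable level that the probability measure constructed in the proof of Theorem \ref{T:compactness} supplies what the strict models cannot. In fact, a slightly shorter alternative is available: the model $\sP = (\Om, \Si, \bbP)$ built in the proof of Theorem \ref{T:compactness} depends only on the consistency of $X$ and already satisfies $X$, so from that vantage point the present corollary is little more than a repackaging of Theorem \ref{T:compactness} together with Corollary \ref{C:soundness}. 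I expect no genuine obstacle beyond keeping these citations straight and being explicit about the tautology/contradiction bookkeeping in the countable case.
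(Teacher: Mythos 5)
Your proof is correct and is essentially the paper's own argument run in the mirror-image direction: the paper shows ``not satisfiable $\Rightarrow$ not consistent'' by extracting a countable non-satisfiable subset via Theorem \ref{T:compactness}, passing to strict models via Proposition \ref{P:sig-pre-cpct}, and applying Karp's completeness theorem, while you prove ``consistent $\Rightarrow$ satisfiable'' directly using exactly the same four ingredients (Corollary \ref{C:soundness}, Proposition \ref{P:sig-pre-cpct}, Theorem \ref{T:Karp-compl}, Theorem \ref{T:compactness}) in contrapositive form. Your closing observation is also accurate: the second half of the proof of Theorem \ref{T:compactness} only uses consistency of $X$, so the model constructed there already witnesses the corollary.
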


\begin{proof}
  The only if part is Corollary \ref{C:soundness}. Suppose $X$ is not
  satisfiable. By Theorem \ref{T:compactness}, there exists a countable
  subset $X_0 \subseteq X$ that is not satisfiable. By Proposition 
  \ref{P:sig-pre-cpct}, the set $X_0$ is not strictly satisfiable. Thus, $\om
  \tDash \neg \bigwedge X_0$ for all strict models $\om$, which implies ${}
  \vDash \neg \bigwedge X_0$. By Theorem \ref{T:Karp-compl}, we have ${} \vdash
  \neg \bigwedge X_0$. Thus, $X \vdash \bigwedge X_0$ and $X \vdash \neg
  \bigwedge X_0$, showing that $X$ is inconsistent.
\end{proof}

\begin{thm}[Deductive completeness]\label{T:completeness}
    \index{completeness!deductive ---}%
  For $X \subseteq \cF$ and $\ph \in \cF$, we have $X \vDash \ph$ if and only 
  if $X \vdash \ph$.
\end{thm}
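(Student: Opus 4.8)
The \emph{if} direction, $X \vdash \ph \Rightarrow X \vDash \ph$, is precisely deductive soundness (Theorem \ref{T:soundness}), so the entire content of the theorem lies in the converse. The plan is to reduce the semantic statement $X \vDash \ph$ to the (in)consistency of $X \cup \{\neg \ph\}$, and then transfer that back to derivability using the syntactic characterization of consistency. The key bridge is Corollary \ref{C:compactness}, which already packages the hard work---$\si$-compactness (Theorem \ref{T:compactness}) and Karp's completeness theorem---into the clean statement that a set of formulas is satisfiable if and only if it is consistent.

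Concretely, to prove $X \vDash \ph \Rightarrow X \vdash \ph$, I would argue by contradiction. Suppose $X \vDash \ph$ but $X \nvdash \ph$. By Theorem \ref{T:deduc-con}, $X \nvdash \ph$ is equivalent to $X, \neg \ph \nvdash \bot$, so $X \cup \{\neg \ph\}$ is consistent. By Corollary \ref{C:compactness}, $X \cup \{\neg \ph\}$ is then satisfiable, so there is a model $\sP = (\Om, \Si, \bbP)$ with $\sP \vDash X$ and $\sP \vDash \neg \ph$. Since $\sP \vDash X$ and $X \vDash \ph$, the definition of the consequence relation gives $\sP \vDash \ph$. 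Thus $\olbbP \ph_\Om = 1$; but $\sP \vDash \neg \ph$ means $\olbbP (\neg \ph)_\Om = \olbbP \ph_\Om^c = 1$, and a probability measure cannot assign value $1$ to both a set and its complement. This contradiction forces $X \vdash \ph$.

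I expect the main obstacle to be essentially nonexistent at this stage, precisely because it has been front-loaded into the earlier results. The genuinely difficult steps---proving $\si$-compactness of $\vDash$ by constructing an explicit model on the space of all strict models whose measure records membership in $T(X)$, and invoking Karp's theorem to relate strict satisfiability to derivability---are already complete. The only care required here is bookkeeping: using $(\neg \ph)_\Om = \ph_\Om^c$ (noted in the discussion of the map $\ph \mapsto \ph_\Om$) and the fact that satisfaction is defined via the completion $\olbbP$, so that the contradiction $\olbbP \ph_\Om = \olbbP \ph_\Om^c = 1$ is immediate. I would present the proof in just a few lines, citing Theorem \ref{T:soundness}, Theorem \ref{T:deduc-con}, and Corollary \ref{C:compactness} in turn.
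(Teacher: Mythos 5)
Your proposal is correct and follows essentially the same route as the paper: the paper also dispatches the \emph{if} direction via Theorem \ref{T:soundness} and proves the converse by observing (via Theorem \ref{T:deduc-con} and Corollary \ref{C:compactness}) that $X \nvdash \ph$ makes $X \cup \{\neg\ph\}$ consistent, hence satisfiable, yielding a model of $X$ that fails to satisfy $\ph$. The only cosmetic difference is that the paper phrases it as a contrapositive rather than a contradiction, leaving the final measure-theoretic clash $\olbbP \ph_\Om = \olbbP \ph_\Om^c = 1$ implicit where you spell it out.
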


\begin{proof}
  The if part is Theorem \ref{T:soundness}. Suppose $X \nvdash \ph$. Then $X
  \cup \{\neg\ph\}$ is consistent, by Theorem \ref{T:deduc-con}. Thus, $X \cup
  \{\neg\ph\}$ is satisfiable, by Corollary \ref{C:compactness}. Let $\sP$ be a
  model with $\sP \vDash X \cup \{\neg \ph\}$. Then $\sP$ is an example of a
  model with $\sP \vDash X$ and $\sP \nvDash \ph$. Thus, $X \nvDash \ph$.
\end{proof}

\section{Inductive semantics}\label{S:ind-sem}

\subsection{Inductive satisfiability}

We now define a notion of satisfiability for inductive statements. Let $\sP =
(\Om, \Si, \bbP)$ be a model with completion $\ol \sP = (\Om, \ol \Si, \olbbP)$.
Let $(X, \ph, p)$ be an inductive statement. We say that $\sP$ \emph{satisfies
$(X, \ph, p)$}, written $\sP \vDash (X, \ph, p)$, if there exists $Y \subseteq
\Th \sP$ and $\psi \in \cF$ such that $X \equiv Y \cup \{\psi\}$ and
\begin{equation}\label{cond-prob}
  \frac{\olbbP \ph_\Om \cap \psi_\Om}{\olbbP \psi_\Om} = p.
\end{equation}
Note that $\sP \vDash (X, \ph, p)$ if and only if $\ol \sP \vDash (X, \ph, p)$.
Hence, in many circumstances, we may assume without loss of generality that our
models are complete.

For $Q \subseteq \cF^\IS$, we write $\sP \vDash Q$ to mean $\sP \vDash (X, \ph,
p)$ for all $(X, \ph, p) \in Q$. A set $Q$ is \emph{satisfiable} if there exists
a model $\sP$ such that $\sP \vDash Q$.
  \index{satisfiable}%
  \symindex{$\sP \vDash_\cF (X, \ph, p)$}%

The next result shows that if $\sP \vDash (X, \ph, p)$, then \eqref{cond-prob}
will hold, regardless of how we decompose $X$ into $Y$ and $\psi$. As a
corollary, we see that for fixed $X$ and $\ph$, there can be only one $p$ such
that $\sP \vDash (X, \ph, p)$.

\begin{prop}\label{P:model-func}
  If $X \equiv Y \cup \{\psi\} \equiv Y' \cup \{\psi'\}$ and $\sP \vDash Y, Y'$,
  then $\psi_\Om = \psi'_\Om$ a.s. In particular, if $\sP \vDash (X, \ph, p)$
  and $X \equiv Y \cup \{\psi\}$, where $\sP \vDash Y$, then \eqref{cond-prob}
  holds.
\end{prop}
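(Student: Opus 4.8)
The plan is to treat the first (general) assertion as the substantive part and to deduce the ``in particular'' clause from it together with the completeness of the measure. The engine of the first part is the translation of deductive derivability into inclusions among the sets $\ph_\Om$ furnished by Remark \ref{R:impl-subset}, combined with a crucial use of $\si$-compactness (Theorem \ref{T:sig-cpctness}) to replace the possibly uncountable sets $Y$ and $Y'$ by countable subsets before invoking any null-set reasoning.

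For the first part, I would first unpack the set equivalences: since $X \equiv Y \cup \{\psi\} \equiv Y' \cup \{\psi'\}$, I obtain in particular $Y \cup \{\psi\} \vdash \psi'$ and $Y' \cup \{\psi'\} \vdash \psi$. Applying $\si$-compactness to the first gives a countable $Y_0 \subseteq Y$ with $Y_0 \cup \{\psi\} \vdash \psi'$, whence $(\bigwedge Y_0) \wedge \psi \vdash \psi'$. By Remark \ref{R:impl-subset} this yields the inclusion $(\bigwedge Y_0)_\Om \cap \psi_\Om \subseteq \psi'_\Om$, where I use the homomorphism identity $(\bigwedge Y_0)_\Om = \bigcap_{\eta \in Y_0} \eta_\Om$. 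Now each $\eta \in Y_0 \subseteq Y \subseteq \Th \sP$ satisfies $\olbbP \eta_\Om = 1$, and since $Y_0$ is countable the intersection $(\bigwedge Y_0)_\Om$ again has $\olbbP$-measure $1$; calling its complement $N$, a null set, I get $\psi_\Om \subseteq \psi'_\Om \cup N$, i.e.\ $\psi_\Om \setminus \psi'_\Om \subseteq N$. The symmetric argument applied to $Y' \cup \{\psi'\} \vdash \psi$, using $\sP \vDash Y'$, produces a null set $N'$ with $\psi'_\Om \setminus \psi_\Om \subseteq N'$. Hence $\psi_\Om \tri \psi'_\Om \subseteq N \cup N'$ is negligible, which is precisely $\psi_\Om = \psi'_\Om$ a.s.

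For the ``in particular'' clause, suppose $\sP \vDash (X, \ph, p)$ and $X \equiv Y \cup \{\psi\}$ with $\sP \vDash Y$. By definition of $\sP \vDash (X, \ph, p)$ there exist $Y' \subseteq \Th \sP$ and $\psi'$ with $X \equiv Y' \cup \{\psi'\}$ for which \eqref{cond-prob} holds; in particular $\psi'_\Om, \ph_\Om \cap \psi'_\Om \in \ol \Si$ and $\olbbP \psi'_\Om > 0$. The first part gives $\psi_\Om = \psi'_\Om$ a.s., and since $(\ph_\Om \cap \psi_\Om) \tri (\ph_\Om \cap \psi'_\Om) \subseteq \psi_\Om \tri \psi'_\Om$ I also get $\ph_\Om \cap \psi_\Om = \ph_\Om \cap \psi'_\Om$ a.s. Because $\ol \sP$ is complete, measurability transfers across a.e.-equality, so $\psi_\Om \in \ol \Si$ and $\ph_\Om \cap \psi_\Om \in \ol \Si$, with $\olbbP \psi_\Om = \olbbP \psi'_\Om > 0$ and $\olbbP(\ph_\Om \cap \psi_\Om) = \olbbP(\ph_\Om \cap \psi'_\Om)$. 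Substituting these equalities into \eqref{cond-prob} shows the quotient equals $p$ for the decomposition $Y, \psi$ as well, as required.

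The step I expect to be the genuine obstacle is the reduction to a countable $Y_0$ via $\si$-compactness. Without it one would be intersecting an arbitrary (possibly uncountable) family of full-measure sets $\{\eta_\Om \mid \eta \in Y\}$, whose intersection need not have full measure, so the inference from ``$\psi_\Om \cap (\text{full-measure set}) \subseteq \psi'_\Om$'' to ``$\psi_\Om \setminus \psi'_\Om$ is null'' would break down entirely. Everything else is routine bookkeeping: the homomorphism identities for $\ph \mapsto \ph_\Om$, elementary null-set manipulations, and the transfer of measurability guaranteed by working inside the complete space $\ol \sP$.
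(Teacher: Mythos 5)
Your proof is correct, and it takes a genuinely more hands-on route than the paper's. The paper dispatches the first assertion in two lines: from $Y, \psi \vdash \psi'$ it passes to $Y \vDash \psi \to \psi'$ (via the deduction theorem, Proposition \ref{P:derivability}(a), and the already-established completeness/soundness theorem, Theorem \ref{T:completeness}), and then simply invokes the definition of the consequence relation: since $\sP \vDash Y$, we get $\sP \vDash \psi \to \psi'$, i.e.\ $\olbbP\, \psi_\Om \cap (\psi'_\Om)^c = 0$, and symmetrically in the other direction. You instead avoid the consequence-relation machinery entirely, using syntactic $\si$-compactness (Theorem \ref{T:sig-cpctness}) to pass to a countable $Y_0$, the single-formula soundness of Remark \ref{R:impl-subset} (which rests only on Karp's theorem) to get a pointwise inclusion, and explicit countable-intersection null-set bookkeeping. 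The obstacle you identify---that one cannot intersect an uncountable family of full-measure sets---is real, but in the paper's proof it never surfaces because it is absorbed once and for all into the soundness theorem and the definition of $\vDash$; your proof re-does that absorption locally. What your route buys is self-containedness (it needs only Karp's theorem and syntactic compactness, not Theorem \ref{T:completeness}); what the paper's buys is brevity. Your handling of the ``in particular'' clause matches the paper's, and is in fact more careful: the paper asserts $\olbbP \psi_\Om = \olbbP \psi'_\Om$ and $\olbbP\, \ph_\Om \cap \psi_\Om = \olbbP\, \ph_\Om \cap \psi'_\Om$ without comment, whereas your explicit appeal to the completeness of $(\Om, \ol\Si, \olbbP)$ to transfer measurability across a.e.-equality is the justification that makes those equalities legitimate.
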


\begin{proof}
  Suppose $X \equiv Y \cup \{\psi\} \equiv Y' \cup \{\psi'\}$ and $\sP \vDash Y,
  Y'$. Using Theorem \ref{T:completeness}, we have $Y, \psi \vDash \psi'$, which
  implies $Y \vDash \psi \to \psi'$. Hence, $\sP \vDash \psi \to \psi'$, so that
  $\olbbP \psi_\Om \cap (\psi'_\Om)^c = 0$. Similarly, $\olbbP \psi'_\Om \cap
  \psi_\Om^c = 0$. Thus, $\olbbP \psi_\Om \tri \psi'_\Om = 0$.

  Now suppose $\sP \vDash (X, \ph, p)$ and $X \equiv Y \cup \{\psi\}$, where
  $\sP \vDash Y$. Choose $Y' \subseteq \cF$ and $\psi' \in \cF$ such that $X
  \equiv Y' \cup \{\psi'\}$, $\sP \vDash Y'$, and $\ol \bbP \ph_\Om \cap
  \psi'_\Om / \bbP \psi'_\Om = p$. By the above, $\psi_\Om = \psi'_\Om$ a.s.
  Hence, $\olbbP \psi_\Om = \olbbP \psi'_\Om$ and $\olbbP \ph_\Om \cap \psi_\Om
  = \olbbP \ph_\Om \cap \psi'_\Om$, so that \eqref{cond-prob} holds.
\end{proof}

\begin{cor}\label{C:model-func}
  Let $\sP$ be a model. If $\sP \vDash (X, \ph, p)$ and $\sP \vDash (X, \ph,
  p')$, then $p = p'$.
\end{cor}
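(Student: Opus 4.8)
The plan is to read the corollary off directly from Proposition \ref{P:model-func}, which has already done all of the real work. First I would unpack the two hypotheses using the definition of inductive satisfiability. The assumption $\sP \vDash (X, \ph, p)$ supplies a set $Y \subseteq \Th \sP$ and a formula $\psi \in \cF$ with $X \equiv Y \cup \{\psi\}$ and $\olbbP(\ph_\Om \cap \psi_\Om)/\olbbP \psi_\Om = p$, while $\sP \vDash (X, \ph, p')$ supplies $Y' \subseteq \Th \sP$ and $\psi' \in \cF$ with $X \equiv Y' \cup \{\psi'\}$ and $\olbbP(\ph_\Om \cap \psi'_\Om)/\olbbP \psi'_\Om = p'$.

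The key observation is that $Y' \subseteq \Th \sP$ is, by the definition of $\Th \sP$ in \eqref{Th-sP-def}, exactly the statement $\sP \vDash Y'$. Thus I am in the situation where $\sP \vDash (X, \ph, p)$ holds and $X \equiv Y' \cup \{\psi'\}$ is a decomposition with $\sP \vDash Y'$. This is precisely the hypothesis of the ``in particular'' clause of Proposition \ref{P:model-func}, so that clause guarantees that \eqref{cond-prob} holds for the primed decomposition as well, namely $\olbbP(\ph_\Om \cap \psi'_\Om)/\olbbP \psi'_\Om = p$. Comparing this with the value $p'$ obtained directly from the second hypothesis yields $p = p'$.

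I do not expect any genuine obstacle here: the substantive content was established in Proposition \ref{P:model-func}, whose proof shows that any two splittings $X \equiv Y \cup \{\psi\}$ and $X \equiv Y' \cup \{\psi'\}$ with $\sP \vDash Y$ and $\sP \vDash Y'$ satisfy $\psi_\Om = \psi'_\Om$ almost surely, forcing the two conditional ratios to coincide. The only point requiring a moment's care is the translation between ``$Y \subseteq \Th \sP$'' and ``$\sP \vDash Y$'', which is immediate from the definition of $\Th \sP$, and the bookkeeping of invoking the proposition with the roles of the two decompositions arranged so that the measure computation is forced to return $p$ rather than $p'$.
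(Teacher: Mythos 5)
Your proposal is correct and is essentially the paper's own proof: unpack one hypothesis to get a decomposition $X \equiv Y' \cup \{\psi'\}$ with $\sP \vDash Y'$ and ratio $p'$, then apply the ``in particular'' clause of Proposition \ref{P:model-func} to the other hypothesis with that same decomposition to force the ratio to equal $p$ as well. The only cosmetic difference is that you also unpack the first hypothesis, which is never used; the translation between $Y' \subseteq \Th \sP$ and $\sP \vDash Y'$ is immediate from \eqref{Th-sP-def}, exactly as you say.
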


\begin{proof}
  Suppose $\sP \vDash (X, \ph, p')$. Write $X \equiv Y \cup \{\psi\}$, where
  $\sP \vDash Y$ and $\bbP \ph_\Om \cap \psi_\Om / \bbP \psi_\Om = p'$. By
  Proposition \ref{P:model-func}, if $\sP \vDash (X, \ph, p)$, then 
  \eqref{cond-prob} holds, and so $p = p'$.
\end{proof}

\subsection{Models determine theories}

Given a model $\sP$, we define
\begin{equation}\label{bTh-sP-def}
  \bTh \sP = \{(X, \ph, p) \in \cF^\IS \mid \sP \vDash (X, \ph, p)\}.
\end{equation}
  \symindex{$\bTh \sP$}%

\begin{lemma}\label{L:model-entire}
  Let $\sP = (\Om, \Si, \bbP)$ be a model. Let $X, Y \subseteq \cF$ and $\psi
  \in \cF$. Assume $X \equiv Y \cup \{\psi\}$ and $\sP \vDash Y$. Then $\sP
  \vDash (X, \ph, 1)$ if and only if $\psi_\Om \in \ol \Si$ and $\psi_\Om
  \subseteq \ph_\Om$ a.s.
\end{lemma}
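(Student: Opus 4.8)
The plan is to prove both implications by collapsing the existential quantifier in the definition of $\sP \vDash (X, \ph, 1)$ onto the single decomposition $X \equiv Y \cup \{\psi\}$ that is handed to us, using Proposition \ref{P:model-func} as the bridge. Recall that $\sP \vDash (X, \ph, 1)$ asserts the existence of \emph{some} $Y' \subseteq \Th \sP$ and $\psi' \in \cF$ with $X \equiv Y' \cup \{\psi'\}$ for which \eqref{cond-prob} holds with $p = 1$. Since $\sP \vDash Y$ by hypothesis, the pair $(Y, \psi)$ is itself such a decomposition, so Proposition \ref{P:model-func} (applied to $\sP \vDash Y$ and $\sP \vDash Y'$) guarantees $\psi_\Om = \psi'_\Om$ a.s.\ for every witnessing $(Y', \psi')$. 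Consequently the quotient in \eqref{cond-prob} takes the same value whether computed with $\psi$ or with $\psi'$, and I may work throughout with the given $\psi$.

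For the forward direction, I would begin from a witnessing decomposition yielding ratio $1$. Because the fraction in \eqref{cond-prob} is a genuine real number, its denominator $\olbbP \psi'_\Om$ is positive and both $\psi'_\Om$ and $\ph_\Om \cap \psi'_\Om$ lie in $\ol \Si$. From $\psi_\Om = \psi'_\Om$ a.s.\ and the completeness of $\ol \Si$ it then follows that $\psi_\Om \in \ol \Si$. Rewriting $\olbbP(\ph_\Om \cap \psi'_\Om)/\olbbP \psi'_\Om = 1$ as $\olbbP(\ph_\Om \cap \psi'_\Om) = \olbbP \psi'_\Om$ gives $\olbbP(\psi'_\Om \setminus \ph_\Om) = 0$, and since $\psi'_\Om \setminus \ph_\Om = \psi'_\Om \cap (\ph_\Om \cap \psi'_\Om)^c \in \ol \Si$, this exactly says $\psi'_\Om \subseteq \ph_\Om$ a.s. Transporting across the a.s.\ equality $\psi_\Om = \psi'_\Om$ yields $\psi_\Om \subseteq \ph_\Om$ a.s., completing this direction.

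For the converse, I would assume $\psi_\Om \in \ol \Si$ and $\psi_\Om \subseteq \ph_\Om$ a.s., so that $\psi_\Om \cap \ph_\Om^c$ is negligible and hence, by completeness of $\ol \Si$, a $\olbbP$-null member of $\ol \Si$. Then $\ph_\Om \cap \psi_\Om = \psi_\Om \setminus (\psi_\Om \cap \ph_\Om^c) \in \ol \Si$ and $\olbbP(\ph_\Om \cap \psi_\Om) = \olbbP \psi_\Om$. Using the supplied decomposition $X \equiv Y \cup \{\psi\}$ with $\sP \vDash Y$, the quotient \eqref{cond-prob} evaluates to $1$, so $\sP \vDash (X, \ph, 1)$.

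The one point demanding care — and the main obstacle — is the denominator $\olbbP \psi_\Om$. In the forward direction its positivity is free, since the defining equation \eqref{cond-prob} presupposes a well-defined quotient. In the converse it must be present: the two measure statements do not by themselves force $\olbbP \psi_\Om > 0$, and if $\olbbP \psi_\Om = 0$ the quotient is the undefined $0/0$ and no decomposition can witness $\sP \vDash (X, \ph, 1)$. I would therefore make explicit that positivity of the conditioning measure $\olbbP \psi_\Om$ is built into the conditional-probability equation defining $\sP \vDash (X, \ph, 1)$, so that the biconditional carries that standing requirement on both sides; with this understood, the two translations above are exact inverses and the equivalence follows.
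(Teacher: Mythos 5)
Your proof is correct and takes essentially the same route as the paper's: both directions collapse the existential quantifier in the definition of $\sP \vDash (X, \ph, 1)$ onto the given pair $(Y, \psi)$ via Proposition \ref{P:model-func}, and then translate the ratio-one condition in \eqref{cond-prob} into the a.s.\ inclusion $\psi_\Om \subseteq \ph_\Om$ (the paper phrases the same computation after assuming, without loss of generality, that $\sP$ is complete). Your closing caveat about the denominator is a genuine observation rather than a defect: the paper's converse silently assumes $\olbbP \psi_\Om > 0$, and in every application of the lemma this positivity is supplied by the fact that $X \in \ante \bTh \sP$; your reading, under which \eqref{cond-prob} presupposes a well-defined quotient and the biconditional carries that standing requirement, is the intended one and makes your write-up slightly more careful than the printed proof.
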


\begin{proof}
  Without loss of generality, assume $\sP$ is complete. Suppose $\sP \vDash (X,
  \ph , 1)$. Write $X \equiv Y' \cup \{\psi'\}$, where $\sP \vDash Y'$ and $\bbP
  \ph_\Om \cap \psi'_\Om / \bbP \psi'_\Om = 1$. By Proposition
  \ref{P:model-func}, we have $\psi_\Om = \psi'_\Om$ a.s. Hence, $\bbP \ph_\Om
  \cap \psi_\Om / \bbP \psi_\Om = 1$, and this gives $\bbP \ph_\Om^c \cap
  \psi_\Om = 0$. Conversely, suppose $\psi_\Om \in \Si$ and $\psi_\Om \subseteq
  \ph_\Om$ a.s. Then $\bbP \psi_\Om \cap \ph_\Om^c = 0$, which implies $\bbP
  \psi_\Om \cap \ph_\Om = \bbP \psi_\Om$, so that $\sP \vDash (X, \ph, 1)$.
\end{proof}

\begin{thm}\label{T:model-ind-th}
  If $\sP$ is a model, then $\bTh \sP$ is a complete inductive theory with root
  $\Taut$.
\end{thm}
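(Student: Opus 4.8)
The plan is to verify directly that $P := \bTh\sP$ satisfies every defining property of a complete inductive theory: that it is admissible, entire, and satisfies the two conditions of Definition~\ref{D:complete} (so that it is complete, hence semi-closed), that it satisfies (R9) (so that it is closed), and finally that it is connected with root $\Taut$. The single engine behind all of these verifications is the same translation: every $X \in \ante P$ admits, by definition of $\bTh\sP$, a decomposition $X \equiv Y \cup \{\chi\}$ with $Y \subseteq \Th\sP$ (so $\sP \vDash Y$), $\chi_\Om \in \ol\Si$, and $\olbbP\,\chi_\Om > 0$; by Proposition~\ref{P:model-func} the set $\chi_\Om$ is determined up to a null set, so I may fix one such decomposition for each $X$. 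Against this decomposition, $P(\ph \mid X)$ exists if and only if $\ph_\Om \cap \chi_\Om \in \ol\Si$, in which case $P(\ph \mid X) = \olbbP(\ph_\Om \cap \chi_\Om)/\olbbP\,\chi_\Om$. Thus each inductive rule becomes a statement about the conditional measure $A \mapsto \olbbP(A \cap \chi_\Om)/\olbbP\,\chi_\Om$, and I will repeatedly use Theorem~\ref{T:completeness} to convert a deductive relation $X \vdash \theta$ into an almost-sure inclusion of the sets $\theta_\Om$ (via Remark~\ref{R:impl-subset} and $\sP \vDash Y$), together with the completeness of $\ol\Si$ to pass between exact and almost-sure set relations.

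For admissibility, the uniqueness clause of (R1) is Corollary~\ref{C:model-func}, and closure under $X' \equiv X$, $\ph' \equiv_X \ph$ follows from Proposition~\ref{P:model-func}: writing $X \equiv Y \cup \{\chi\}$, the relation $\ph' \equiv_X \ph$ gives $Y, \chi \vdash \ph \tot \ph'$, whence $\sP \vDash \chi \to (\ph \tot \ph')$ and $\ph_\Om \cap \chi_\Om = \ph'_\Om \cap \chi_\Om$ a.s. For entirety I verify (R2)--(R7) in turn. Rules (R2)--(R4) concern only probability $1$ and reduce, via Lemma~\ref{L:model-entire}, to almost-sure inclusions among the sets $\chi_\Om, \ph_\Om, \psi_\Om$ that follow immediately from Theorem~\ref{T:completeness} and Remark~\ref{R:impl-subset}. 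The addition rule (R5) follows because, when $X \vdash \neg(\ph \wedge \psi)$, the sets $\ph_\Om \cap \chi_\Om$ and $\psi_\Om \cap \chi_\Om$ are almost surely disjoint and $(\ph \vee \psi)_\Om \cap \chi_\Om$ is their union; finite additivity of $\olbbP$ then yields the identity, and closure of $\ol\Si$ under unions and differences propagates existence of the third probability from any two. The multiplication rule (R6) is the cancellation identity $\olbbP(\ph_\Om \cap \psi_\Om \cap \chi_\Om)/\olbbP\,\chi_\Om = \big(\olbbP(\ph_\Om \cap \chi_\Om)/\olbbP\,\chi_\Om\big)\big(\olbbP(\psi_\Om \cap \chi_\Om \cap \ph_\Om)/\olbbP(\chi_\Om \cap \ph_\Om)\big)$, using that $X \cup \{\ph\}$ has conditioning set $\chi_\Om \cap \ph_\Om$. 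The continuity rule (R7) follows from continuity of $\olbbP$ from below applied to the almost-surely increasing sequence $(\ph_n)_\Om \cap \chi_\Om$, the hypothesis $X, \ph_n \vdash \ph_{n+1}$ giving $(\ph_n)_\Om \cap \chi_\Om \subseteq (\ph_{n+1})_\Om$ a.s.

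Completeness is then quick. Condition~(i) of Definition~\ref{D:complete} holds because $\ol\Si$ is closed under intersection: $(\ph \wedge \psi)_\Om \cap \chi_\Om = (\ph_\Om \cap \chi_\Om) \cap (\psi_\Om \cap \chi_\Om)$. For condition~(ii), if $X \equiv Y \cup \{\chi\}$ and $X \cup \{\ph\} \equiv Y' \cup \{\chi'\}$ are both antecedents, then $X \cup \{\ph\} \equiv Y \cup \{\chi \wedge \ph\}$ as well, so Proposition~\ref{P:model-func} gives $(\chi \wedge \ph)_\Om = \chi'_\Om$ a.s.; since $\chi'_\Om \in \ol\Si$, completeness of $\ol\Si$ forces $\chi_\Om \cap \ph_\Om \in \ol\Si$, i.e.\ $P(\ph \mid X)$ exists. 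Hence $P$ is complete, and therefore semi-closed. For closedness I verify (R9): if $P(\theta \mid X) = 1$ for all $\theta \in S$, then $\chi_\Om \subseteq \theta_\Om$ a.s.\ by Lemma~\ref{L:model-entire}, which is exactly $\sP \vDash \chi \to \theta$, i.e.\ $\chi \to \theta \in \Th\sP$; by Lemma~\ref{L:omit-psi}, $X \cup S \equiv \big(Y \cup \{\chi \to \theta : \theta \in S\}\big) \cup \{\chi\}$ with the new $Y$-part still inside $\Th\sP$ and the same conditioning formula $\chi$, so $X \cup S \in \ante P$ and $P(\,\cdot \mid X, S) = P(\,\cdot \mid X)$.

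It remains to establish connectivity and identify the root. Since a general $X \in \ante P$ need not be countably axiomatizable over a single antecedent (its $Y$-part may be large), $P$ is typically not strongly connected, so I construct a basis instead. Take $\wh Q = \{(X, \ph, p) \in P \mid X \cao \emp\}$. Because $\emp \equiv \{\top\}$ with $\top_\Om = \Om$, we have $\emp \in \ante\wh Q$, and every antecedent of $\wh Q$ is by construction countably axiomatizable over $\emp$; thus $\wh Q$ is strongly connected with root $T(\emp) = \Taut$. Given arbitrary $X \in \ante P$ with decomposition $Y \cup \{\chi\}$, set $\wh X = \{\chi\}$ and $S = Y$. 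Then $\wh X \in \ante\wh Q$ (using the decomposition $\emp \cup \{\chi\}$), and each $\eta \in Y \subseteq \Th\sP$ satisfies $\olbbP\,\eta_\Om = 1$, hence $\chi_\Om \subseteq \eta_\Om$ a.s., hence $(\{\chi\}, \eta, 1) \in P$ by Lemma~\ref{L:model-entire}; so $S = Y \subseteq \tau(\wh Q; \wh X)$ and $X \equiv \wh X \cup S$. Therefore $P$ is connected with basis $\wh Q$, and by Proposition~\ref{P:basis-root} its root equals the root $\Taut$ of $\wh Q$, so $P = \bTh\sP$ is a complete inductive theory with root $\Taut$. The bulk of the work is the entirety verification, where the genuine care lies in tracking measurability only up to $\olbbP$-null sets (most sharply in (R5) and (R7)); the one conceptual step is recognizing that connectivity, not strong connectivity, is the right notion here and choosing the basis $\wh Q$ accordingly.
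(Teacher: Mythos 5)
Your proposal is correct and follows essentially the same route as the paper's proof: the same decomposition $X \equiv Y \cup \{\chi\}$ with Proposition~\ref{P:model-func}, Corollary~\ref{C:model-func}, and Lemma~\ref{L:model-entire} driving the verification of (R1)--(R7), the same use of closure of $\ol\Si$ under intersections and of Proposition~\ref{P:model-func} for Definition~\ref{D:complete}, the same application of Lemma~\ref{L:omit-psi} for (R9), and the same basis (your $\wh Q$ is exactly the paper's $P_0 = \bTh\sP \dhl_\Taut$) for connectivity with root $\Taut$. No gaps; the only cosmetic difference is that the paper invokes Corollary~\ref{C:restrict-inherit} to package the properties of the basis, where you verify strong connectivity of $\wh Q$ directly.
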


\begin{proof}
  Let $\sP$ be a model and let $P = \bTh \sP$. Without loss of generality,
  assume $\sP$ is complete. We first show that $P$ is admissible. Suppose $(X,
  \ph, p) \in P$, $X' \equiv X$, and $\ph' \equiv_X \ph$. Choose $Y$ and $\psi$
  such that $X \equiv Y \cup \{\psi\}$, $\sP \vDash Y$, and $\bbP \ph_\Om \cap
  \psi_\Om / \bbP \psi_\Om = p$. Then $Y, \psi \vdash \ph' \tot \ph$, so that
  $Y \vdash \psi \to (\ph \tot \ph')$. But $\sP \vDash Y$, so $\bbP \psi_\Om
  \cap (\ph'_\Om \tri \ph_\Om) = 0$. But
  \[
    (\ph'_\Om \tri \ph_\Om) \cap \psi_\Om
      = (\ph'_\Om \cap \psi_\Om) \tri (\ph_\Om \cap \psi_\Om).
  \]
  Thus, $\ph'_\Om \cap \psi_\Om = \ph_\Om \cap \psi_\Om$ a.s. Since $\sP$ is
  complete, this gives $\ph'_\Om \cap \psi_\Om \in \Si$ and $\bbP \ph'_\Om \cap
  \psi_\Om = \bbP \ph_\Om \cap \psi_\Om$. Hence, $\bbP \ph'_\Om \cap \psi_\Om /
  \bbP \psi_\Om = p$. Since $X' \equiv X \equiv Y \cup \{\psi\}$, it follows
  that $(X', \ph', p) \in P$.

  Now assume $(X', \ph', p') \in P$. By Corollary \ref{C:model-func}, we have
  $p = p'$, and therefore $P$ is admissible.

  We next show that $P$ is entire. Throughout the proof of entirety, we fix $X
  \in \ante P$. Choose $(X, \ph', p') \in P$. Write $X \equiv Y \cup \{\eta\}$,
  where $\sP \vDash Y$ and $\bbP \ph_\Om \cap \eta_\Om / \bbP \eta_\Om = p'$.

  Suppose $X \vdash \ph$. Then $Y \vdash \eta \to \ph$. Hence $\sP \vDash \eta
  \to \ph$, which implies $\bbP \eta_\Om \cap \ph_\Om^c = 0$. Since $\eta_\Om
  \in \Si$, it follows that $\bbP \eta_\Om \cap \ph_\Om = \bbP \eta_\Om$, and
  therefore $P(\ph \mid X) = 1$. Thus, $P$ satisfies the rule of logical
  implication.

  Suppose $P(\psi \mid X, \ph) = 1$. Since $X \cup \{\ph\} \equiv Y \cup \{\eta
  \wedge \ph\}$, Lemma \ref{L:model-entire} gives $\eta_\Om \cap \ph_\Om
  \subseteq \psi_\Om$ a.s. Thus, $\eta_\Om = (\eta_\Om \cap \ph_\Om) \cup 
  (\eta_\Om \cap \ph_\Om^c) \subseteq \psi_\Om \cup \ph_\Om^c$ a.s. Since $(\ph
  \to \psi)_\Om = \psi_\Om \cup \ph_\Om^c$, Lemma \ref{L:model-entire} implies
  $P(\ph \to \psi \mid X) = 1$, and $P$ satisfies the rule of material
  implication.

  Suppose $P(\ph \mid X) = 1$ and $\ph \vdash \psi$. By Lemma 
  \ref{L:model-entire}, we have $\eta_\Om \subseteq \ph_\Om$ a.s. Remark 
  \ref{R:impl-subset} shows that $\ph_\Om \subseteq \psi_\Om$. Hence, $\eta_\Om
  \subseteq \psi_\Om$ a.s., so that Lemma \ref{L:model-entire} implies $P(\psi
  \mid X) = 1$. Now suppose $X' \in \ante P$ and $X' \vdash X$. Write $X' = Y'
  \cup \{\eta'\}$, where $\sP \vDash Y'$ and $\eta'_\Om \in \Si$. Then $Y',
  \eta' \vdash Y, \eta$, so that $Y' \vdash \eta' \to \eta$. Hence, $\sP \vDash
  \eta' \to \eta$, which gives $\bbP \eta'_\Om \cap \eta_\Om^c = 0$. Thus,
  $\eta'_\Om \subseteq \eta_\Om$ a.s. It follows that $\eta'_\Om \subseteq
  \ph_\Om$ a.s., so that Lemma \ref{L:model-entire} gives $P(\ph \mid X') = 1$,
  and $P$ satisfies the rule of deductive transitivity.

  Suppose $X \vdash \neg (\ph \wedge \psi)$ and two of the probabilities in
  \eqref{add-rule} exist. Then $Y \vdash \eta \to \neg (\ph \wedge \psi)$, so
  that $\bbP \eta_\Om \cap \ph_\Om \cap \psi_\Om = 0$. Let $\ph' = \ph \wedge
  \eta$ and $\psi' = \psi \wedge \eta$. Then $\bbP \ph'_\Om \cap \psi'_\Om = 0$.
  Since two of the probabilities in \eqref{add-rule} exist, two of the sets
  $\ph'_\Om \cup \psi'_\Om$, $\ph'_\Om$, and $\psi'_\Om$ are in $\Si$. Since
  $\ph'_\Om \cap \psi'_\Om$ is also in $\Si$ and $\Si$ is a $\si$-algebra, it
  follows that all three sets are in $\Si$ and $\bbP \ph'_\Om \cup \psi'_\Om =
  \bbP \ph'_\Om + \bbP \psi'_\Om$. From here, \eqref{add-rule} follows
  immediately, and $P$ satisfies the addition rule.

  By Proposition \ref{P:model-func}, we have that $P(\ph \mid X)$ exists and is
  positive if and only if $\bbP \ph_\Om \cap \eta_\Om / \bbP \eta_\Om = p$, for
  some $p > 0$. Similarly, $P(\ph \wedge \psi \mid X)$ exists and is positive if
  and only if $\bbP \ph_\Om \cap \psi_\Om \cap \eta_\Om / \bbP \eta_\Om = r$,
  for some $r > 0$. Since $X \cup \{\ph\} \equiv Y \cup \{\ph \wedge \eta\}$,
  Proposition \ref{P:model-func} also gives that $P(\psi \mid X, \ph)$ exists
  and is positive if and only if $\bbP \ph_\Om \cap \psi_\Om \cap \eta_\Om /
  \bbP \ph_\Om \cap \eta_\Om = q$, for some $q > 0$. From this, it follows that
  if two of the probabilities in \eqref{mult-rule} exist and are positive, then
  all three exist and are positive, and $pq = r$. Hence, $P$ satisfies the
  multiplication rule.

  Now suppose $P(\ph_n \mid X) = p_n$ for all $n$, and $X, \ph_n \vdash \ph_{n
  + 1}$. By Proposition \ref{P:model-func}, we have $\bbP (\ph_n)_\Om \cap
  \eta_\Om / \bbP \eta_\Om = p_n$. We also have $Y \vdash \ph_n \wedge \eta \to
  \ph_{n + 1}$, so that $\sP \vDash \ph_n \wedge \eta \to \ph_{n + 1}$, which
  gives $\bbP (\ph_n)_\Om \cap \eta_\Om \cap (\ph_{n + 1})_\Om^c = 0$. Hence, $
  (\ph_n)_\Om \cap \eta_\Om \subseteq (\ph_{n + 1})_\Om$ a.s. This gives $
  (\ph_n)_\Om \cap \eta_\Om \subseteq (\ph_{n + 1})_\Om \cap \eta_\Om$ a.s.
  Since
  \[
    \ts{
      (\bigvee_n \ph_n)_\Om \cap \eta_\Om
        = (\bigcup_n (\ph_n)_\Om) \cap \eta_\Om
        = \bigcup_n ((\ph_n)_\Om \cap \eta_\Om),
    }
  \]
  it follows that $(\bigvee_n \ph_n)_\Om \cap \eta_\Om \in \Si$ and, using
  continuity from below, we have $\bbP (\bigvee_n \ph_n)_\Om \cap \eta_\Om /
  \bbP \eta_\Om = \lim_{n \to \infty} \bbP (\ph_n)_\Om \cap \eta_\Om / \bbP
  \eta_\Om$. Therefore, $P$ satisfies the continuity rule, and $P$ is entire.

  We next show that $P$ is complete. Suppose $P(\ph \mid X)$ exists. Then we may
  write $X \equiv Y \cup \{\eta\}$, where $\sP \vDash Y$,  $\bbP \ph_\Om \cap
  \eta_\Om$ exists, and $\bbP \eta_\Om > 0$. Assume $P(\psi \mid X)$ also
  exists. From Proposition \ref{P:model-func}, it follows that $\bbP \psi_\Om
  \cap \eta_\Om$ also exists. Since $\Si$ is a $\si$-algebra, we have
  \[
    (\ph \wedge \psi)_\Om \cap \eta_\Om
      = \ph_\Om \cap \psi_\Om \cap \eta_\Om
      = (\ph_\Om \cap \eta_\Om) \cap (\psi_\Om \cap \eta_\Om) \in \Si.
  \]
  Hence, $\bbP (\ph \wedge \psi)_\Om \cap \eta_\Om$ exists, and so therefore,
  $P(\ph \wedge \psi \mid X)$ exists, showing that $P$ satisfies Definition 
  \ref{D:complete}(i).

  Now suppose $X \in \ante P$. Then we may write $X \equiv Y \cup \{\psi\}$,
  where $\sP \vDash Y$ and $\bbP \psi_\Om > 0$. Assume also that $X \cup 
  \{\ph\} \in \ante P$. Since $X \cup \{\ph\} \equiv Y \cup \{\ph \wedge
  \psi\}$, Proposition \ref{P:model-func} implies that $\bbP \ph_\Om \cap
  \psi_\Om$ exists. Hence, $P(\ph \mid X)$ exists, and so $P$ satisfies
  Definition \ref{D:complete}(ii), showing that $P$ is complete.

  Since $P$ is complete, it is therefore semi-closed. We next show that $P$ is
  closed. Assume $S \subseteq \cF$ is nonempty with $P(\th \mid X) = 1$ for all
  $\th \in S$. Then we may write $X \equiv Y \cup \{\psi\}$, where $\psi_\Om
  \subseteq \th_\Om$ a.s.~for all $\th \in S$. Let $S' = \{\psi \to \th \mid \th
  \in S\}$ and $Y' = Y \cup S'$. By Lemma \ref{L:omit-psi}, we have $X \cup S
  \equiv Y' \cup \{\psi\}$. Also, for any $\th \in S$, we have $\Om = \psi_\Om^c
  \cup \psi_\Om \subseteq \psi_\Om^c \cup \th_\Om = (\psi \to \th)_\Om$ a.s.,
  which gives $\sP \vDash \psi \to \th$. Hence, $\sP \vDash Y'$, and therefore
  $P(\; \cdot \mid X) = P(\; \cdot \mid X, S)$, showing that $P$ satisfies the
  rule of deductive extension and is thus closed.

  Finally, we show that $P$ is connected with root $\Taut$. Let $P_0 = P
  \dhl_\Taut$. Corollary \ref{C:restrict-inherit} implies that $P_0$ is a
  complete pre-theory with root $T_0$. We will show that $P_0$ is a basis for
  $P$. Let $X \in \ante P$. Write $X \equiv Y \cup \{\psi\}$, where $\sP \vDash
  Y$ and $\bbP \psi_\Om > 0$. Since $\{\psi\} = \emp \cup \{\psi\}$, we have
  $\{\psi\} \in \ante P_0$. Let $\th \in Y$ be arbitrary. Since $\sP \vDash Y$,
  we have $\bbP \th_\Om = 1$. Therefore, $P(\th \mid \psi) = 1$, which gives
  $P_0(\th \mid \psi) = 1$. This shows that $Y \subseteq \tau(P_0; \{\psi\})$,
  proving that $P_0$ is a basis for $P$.
\end{proof}

\begin{rmk}\label{R:th-of-th}
  The relationship between $\bTh \sP$ in \eqref{bTh-sP-def} and $\Th \sP$ in 
  \eqref{Th-sP-def} is that if $P = \bTh \sP$, then $T_P = \Th \sP$. To see
  this, let $P = \bTh \sP$ and note that by Proposition \ref{P:simple-tau(P)},
  we have $\th \in T_P$ if and only if $P(\th \mid \Taut) = 1$, which holds if
  and only if $\sP \vDash (\Taut, \th, 1)$. But this holds if and only if
  $\olbbP \th_\Om = 1$, and this is the definition of $\sP \vDash \th$.
\end{rmk}

\subsection{Theories determine models}

\begin{thm}\label{T:ind-th-model}
  Let $P$ be a complete inductive theory with root $T_0$. Then there exists a
  model $\sP$ such that $T_P = \Th \sP$ and $P = \bTh \sP \dhl_{[T_0, \Th
  \sP]}$. In particular, every inductive theory is satisfiable.
\end{thm}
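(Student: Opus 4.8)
The plan is to realize $P$ by the canonical probability space built from its conditional probabilities given the root. I would take $\Om = \B^{PV}$ to be the set of all strict models, set $\Si = \De(P, T_0) = \{\ph_\Om \mid P(\ph \mid T_0) \text{ exists}\}$, and define $\bbP \ph_\Om = P(\ph \mid T_0)$. This is well defined because over $\Om = \B^{PV}$ the equality $\ph_\Om = \psi_\Om$ forces $\ph \equiv \psi$ (Remark \ref{R:impl-subset}), hence $\ph \equiv_{T_0} \psi$, so $P(\ph \mid T_0) = P(\psi \mid T_0)$ by the rule of logical equivalence. Since $P$ is complete, Definition \ref{D:complete}(i) makes $\De(P, T_0)$ closed under intersection, so the Dynkin system of Proposition \ref{P:Del-P-X} is actually a $\si$-algebra; it is a probability measure by the rule of logical implication (for $\bbP \Om = 1$), Corollary \ref{C:rel-neg} (for $\bbP \emp = 0$), and Theorem \ref{T:ctbl-add} (countable additivity, via $P(\ph_i \wedge \ph_j \mid T_0) = 0$ for disjoint events). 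Thus $\sP = (\Om, \Si, \bbP)$ is a model.

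The step I expect to be the main obstacle is the interaction between the measure-theoretic completion $\olbbP$ used to define $\bTh \sP$ and the generally incomplete domain of $P$: a priori $\olbbP$ could assign a value to some $\eta_\Om$ that is measurable only after completion while $P(\eta \mid T_0)$ is undefined, which would make $\bTh \sP \dhl_{[T_0, T_P]}$ strictly larger than $P$. I would defuse this with a key lemma, valid in any inductive theory: if $P(\ze \mid X) = 0$ and $\xi \vdash \ze$, then $P(\xi \mid X) = 0$. Its proof is short: $P(\neg \ze \mid X) = 1$ by Corollary \ref{C:rel-neg}, the rule of deductive extension gives $P(\,\cdot \mid X, \neg \ze) = P(\,\cdot \mid X)$, and since $\neg \ze \vdash \neg \xi$ the rule of logical implication yields $P(\neg \xi \mid X, \neg \ze) = 1$, whence $P(\xi \mid X) = 0$. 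From this I would deduce the fact I really need: whenever $\eta_\Om \in \ol \Si$, the probability $P(\eta \mid T_0)$ exists and equals $\olbbP \eta_\Om$. Indeed, choosing $\al \vdash \eta \vdash \be$ in $\De(P, T_0)$ with $P(\al \mid T_0) = \bbP_* \eta_\Om$ and $P(\be \mid T_0) = \bbP^* \eta_\Om$ (attained since $\bbP$ is finite) and equal, Proposition \ref{P:rel-neg} gives $P(\be \wedge \neg \al \mid T_0) = 0$, the lemma gives $P(\eta \wedge \neg \al \mid T_0) = 0$, and the addition rule applied to $\eta \equiv \al \vee (\eta \wedge \neg \al)$ yields $P(\eta \mid T_0) = \olbbP \eta_\Om$.

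With this in hand $\Th \sP = T_P$ is immediate: $\ph \in T_P$ gives $\bbP \ph_\Om = 1$, hence $\sP \vDash \ph$, while $\sP \vDash \ph$ means $\olbbP \ph_\Om = 1$, so the displayed fact gives $P(\ph \mid T_0) = 1$, i.e.\ $\ph \in \tau(P; T_0) = T_P$ (Proposition \ref{P:simple-tau(P)}). For $P \subseteq \bTh \sP \dhl_{[T_0, T_P]}$ I would use that $P = \bfP(P \dhl_{T_0})$ is the lift of its restriction (Remark \ref{R:ante-cao}): given $P(\ph \mid X) = p$, write $T(X) = T^X + \psi_X$ with $T^X \in [T_0, T_P]$ and $\psi_X \in \AF(P \dhl_{T_0})$, so that $P(\ph \mid X) = P(\ph \mid T_0, \psi_X)$. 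Then $Y := T^X \subseteq T_P = \Th \sP$ and $\psi_X$ witness satisfiability, since $P(\psi_X \mid T_0) > 0$ (Lemma \ref{L:cond-exist}) and the multiplication rule give $\olbbP(\ph_\Om \cap (\psi_X)_\Om) / \olbbP (\psi_X)_\Om = p$; as every $X \in \ante P$ satisfies $X \cao [T_0, T_P]$, this lies in the restriction.

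For the reverse inclusion, suppose $(X, \ph, p) \in \bTh \sP \dhl_{[T_0, T_P]}$, so that $\sP \vDash (X, \ph, p)$ via some $Y \subseteq \Th \sP$ and $\psi$ with $X \equiv Y \cup \{\psi\}$, and $X \vdash T_0$ because $X \cao [T_0, T_P]$. By the displayed consequence of the lemma, $P(\psi \mid T_0) = \olbbP \psi_\Om > 0$ and $P(\ph \wedge \psi \mid T_0) = \olbbP(\ph_\Om \cap \psi_\Om)$ both exist, so the multiplication rule gives $P(\ph \mid T_0, \psi) = p$. Since $Y \subseteq T_P$ consists of formulas of probability one and $X \equiv (T_0 + \psi) \cup Y$, the rule of deductive extension gives $P(\ph \mid X) = P(\ph \mid T_0, \psi) = p$, so $(X, \ph, p) \in P$. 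This establishes $P = \bTh \sP \dhl_{[T_0, \Th \sP]}$, and in particular $\sP \vDash P$. Finally, an arbitrary inductive theory is closed, hence semi-closed, hence has a completion; chopping that completion to the root and lifting (Corollary \ref{C:compl-restrict}, Propositions \ref{P:nested-lifts} and \ref{P:lift-complete}) produces a complete inductive theory containing it, which is satisfiable by the model just built, and therefore so is the original theory.
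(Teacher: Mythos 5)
Your proposal is correct, and it builds exactly the model the paper builds: $\Om = \B^{PV}$, $\Si = \{\ph_\Om \mid P(\ph \mid T_0) \text{ exists}\}$, $\bbP \ph_\Om = P(\ph \mid T_0)$, with completeness of $P$ upgrading the Dynkin system of Proposition \ref{P:Del-P-X} to a $\si$-algebra. The differences are in the two verification steps, and they are genuine. For the completion issue, the paper proves $\ol \Si \cap \cB^{PV} = \Si$ by writing a completed set $\ph_\Om = \psi_\Om \cup N$ with $N$ inside a null set $\eta_\Om$ and splitting $\ph_\Om$ along $\eta_\Om$; your route---the monotone-null lemma together with attained inner and outer measures, giving $\al \vdash \eta \vdash \be$ with equal probabilities and then a squeeze via Proposition \ref{P:rel-neg} and the addition rule---establishes the same fact and is sound, with both arguments ultimately resting on Remark \ref{R:impl-subset}, which is why $\Om$ must be the full set of strict models. (Incidentally, your lemma follows in one line from deductive transitivity (R4) and \eqref{prob-neg}; the detour through deductive extension is unnecessary.) For the identification $P = \bTh \sP \dhl_{[T_0, \Th \sP]}$, the paper checks equality only at the root, i.e.\ \eqref{ind-th-model}, and then invokes Theorem \ref{T:model-ind-th}, Remark \ref{R:th-of-th}, and Proposition \ref{P:chop-off-root} to transfer both the equality and $T_P = \Th \sP$ to the full theories; you instead verify both inclusions directly at arbitrary antecedents---the lift representation $T(X) = T^X + \psi_X$ for $\subseteq$, and the generalized multiplication rule plus the rule of deductive extension for $\supseteq$---obtaining $T_P = \Th \sP$ immediately from your key fact. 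Your version is more self-contained, essentially re-deriving inline the content of Proposition \ref{P:chop-off-root}, while the paper's is shorter because it leans on that structural machinery. One small point to make explicit: invoking Lemma \ref{L:cond-exist} to get $P(\psi_X \mid T_0) > 0$ presupposes that $P(\psi_X \mid T_0)$ exists, which is where Definition \ref{D:complete}(ii) must be cited, exactly as in the paper's proof of \eqref{ind-th-model}.
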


\begin{proof}
  Let $P$ be a complete inductive theory with root $T_0$. Let $\Om = \B^{PV}$
  and let $\Si = \{\ph_\Om \mid P(\ph \mid T_0) \text{ exists}\}$. Since $\Om =
  \top_\Om$, the rule of logical implication implies $\Om \in \Si$. Since
  $\ph_\Om^c = (\neg \ph)_\Om$, Corollary \ref{C:rel-neg} implies that $\Si$ is
  closed under complements. Let $\{A_n\} \subseteq \Si$ be pairwise disjoint,
  and let $A = \bigcup_n A_n$. For each $n$, choose $\ph_n$ such that $A_n =
  (\ph_n)_\Om$, and note that $A = \ph_\Om$, where $\ph = \bigvee_n \ph_n$. Also
  note that since $\{A_n\}$ are pairwise disjoint, we have $\vdash \neg (\ph_i
  \wedge \ph_j)$ for all $1 \le i < j < \infty$. Hence, Theorem \ref{T:ctbl-add}
  implies $A \in \Si$, so that $\Si$ is closed under countable, pairwise
  disjoint unions. It follows that $\Si$ is a Dynkin system. Since $P$ is
  complete, Definition \ref{D:complete}(i) implies that $\Si$ is closed under
  intersections. Therefore, $\Si$ is a $\si$-algebra.

  By Remark \ref{R:impl-subset} and the rule of logical equivalence, we may
  define $ {\bbP}: \Si \to [0, 1]$ by $\bbP \ph_\Om = P(\ph \mid T_0)$. Note
  that $\top_\Om = \Om$ and $\bot_\Om = \emp$, so that $\bbP \Om = 1$ and $\bbP
  \emp = 0$. As above, Theorem \ref{T:ctbl-add} implies that $\bbP$ is
  countably additive, so that $\bbP$ is a probability measure on $(\Om, \Si)$.

  Let $\sP = (\Om, \Si, \bbP)$ and let $\ol \sP = (\Om, \ol \Si, \olbbP)$ be its
  completion. Let $A \in \ol \Si \cap \cB^{PV}$. Since $A \in \cB^{PV}$, we may
  choose $\ph \in \cF$ such that $A = \ph_\Om$. Since $A \in \ol \Si$, we may
  write $A = \ph_\Om = \psi_\Om \cup N$, where $P(\psi \mid T_0)$ exists, $N
  \subseteq \eta_\Om$, and $P(\eta \mid T_0) = 0$. By \eqref{prob-neg} and the
  rule of logical implication, $P(\ph \wedge \eta \mid T_0) = 0$. Hence,
  $\ph_\Om \cap \eta_\Om \in \Si$. On the other hand, $\ph_\Om \cap \eta_\Om^c
  = \psi_\Om \cap \eta_\Om^c \in \Si$. Therefore, $A = \ph_\Om \in \Si$, and
  this shows that $\ol \Si \cap \cB^{PV} \subseteq \Si$. Since the reverse
  inclusion is trivial, we have $\ol \Si \cap \cB^{PV} = \Si$.

  We next show that for any $\ph, \psi \in \cF$, we have
  \begin{equation}\label{ind-th-model}
    P_0(\ph \mid T_0, \psi) = p \quad\text{iff}\quad
      \sP \vDash (T_0 + \psi, \ph, p).
  \end{equation}
  Suppose $P_0(\ph \mid T_0, \psi) = p$. Then $P(\ph \mid T_0, \psi) = p$. By
  Definition \ref{D:complete}(ii), Lemma \ref{L:cond-exist}, and the
  multiplication rule, we have that $P(\psi \mid T_0) > 0$ and
  \begin{equation}\label{cond-prob-th}
    \frac{P(\ph \wedge \psi \mid T_0)}{P(\psi \mid T_0)} = p.
  \end{equation}
  Hence, \eqref{cond-prob} holds, so $\sP \vDash (T_0 + \psi, \ph, p)$.
  Conversely, suppose that $\sP \vDash (T_0 + \psi, \ph, p)$. Then 
  \eqref{cond-prob} holds. Since $\ol \Si \cap \cB^{PV} = \Si$, the same
  equality holds for $\bbP$ instead of $\olbbP$. Hence, $P(\psi \mid T_0) > 0$,
  $P(\ph \wedge \psi \mid T_0)$ exists, and \eqref{cond-prob-th} holds, which,
  by the multiplication rule, gives $P(\ph \mid T_0, \psi) = p$, proving
  \eqref{ind-th-model}.

  Now, Theorem \ref{T:model-ind-th} implies that $\bTh \sP$ is a complete
  inductive theory with root $\Taut$, and Remark \ref{R:th-of-th} implies $T
  (\bTh \sP) = \Th \sP$. By the rule of logical implication, $\sP \vDash T_0$.
  Hence, $T_0 \in [\Taut, \Th \sP]$. It follows from Proposition 
  \ref{P:chop-off-root} that if we define $P_0' = \bTh \sP \dhl_{T_0}$ and $P' =
  \bfP(P_0')$, then $T_{P'} = \Th \sP$ and $P' = \bTh \sP \dhl_{[T_0, \Th
  \sP]}$. By \eqref{ind-th-model}, we have $P_0 = P_0'$. Hence, $P = P'$, and so
  it follows that $T_P = \Th \sP$ and $P = \bTh \sP \dhl_{[T_0, \Th \sP]}$. This
  proves the first claim of the theorem.

  For the second claim, let $P$ be an inductive theory with root $T_0$. Let $P_0
  = P \dhl_{T_0}$, so that $P = \bfP(P_0)$. Since $P_0$ is a pre-theory, it is
  semi-closed and, therefore, has a completion. By Corollary 
  \ref{C:compl-restrict}, we may choose a completion $\ol P_0$ that is also a
  pre-theory with root $T_0$. Let $\ol P = \bfP(\ol P_0)$. By Proposition 
  \ref{P:lift-complete}, the set $\ol P$ is a complete inductive theory with
  root $T_0$ such that $P \subseteq \ol P$. As shown above, we may construct a
  model $\sP$ such that $P \subseteq \ol P \subseteq \bTh \sP$. Hence, $\sP
  \vDash P$ and $P$ is satisfiable.
\end{proof}

\subsection{Consistency and satisfiability}

Note that by deductive completeness, the notions of connectivity and strong
connectivity can be completely characterized in terms of semantics. Therefore,
the following theorem shows that consistency can also be characterized in terms
of semantics.

\begin{thm}\label{T:ind-satis-cons}
  A set $Q \subseteq \cF^\IS$ is consistent if and only if it is connected and
  satisfiable.
\end{thm}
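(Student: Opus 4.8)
The plan is to prove both implications by directly invoking the two main correspondence theorems already established: Theorem \ref{T:ind-th-model} (every inductive theory is satisfiable) and Theorem \ref{T:model-ind-th} ($\bTh \sP$ is a complete inductive theory for any model $\sP$). Recall that, by definition, $Q$ is consistent precisely when it is connected and can be extended to an inductive theory. Since connectivity appears in both the definition of consistency and in the statement to be proved, the entire content of the theorem reduces to showing that, for a connected set $Q$, extendability to an inductive theory is equivalent to satisfiability.

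For the forward direction, I would assume $Q$ is consistent. Then $Q$ is connected, and there exists an inductive theory $P$ with $Q \subseteq P$. By the final claim of Theorem \ref{T:ind-th-model}, $P$ is satisfiable, so we may choose a model $\sP$ with $\sP \vDash P$. Since $\sP \vDash P$ means $\sP \vDash (X, \ph, p)$ for every $(X, \ph, p) \in P$, and $Q \subseteq P$, it follows immediately that $\sP \vDash Q$. Hence $Q$ is both connected and satisfiable.

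For the converse, I would assume $Q$ is connected and satisfiable, and choose a model $\sP$ with $\sP \vDash Q$. By Theorem \ref{T:model-ind-th}, the set $\bTh \sP$ is a complete inductive theory (with root $\Taut$), and in particular it is an inductive theory. The relation $\sP \vDash Q$ says exactly that $(X, \ph, p) \in \bTh \sP$ for every $(X, \ph, p) \in Q$, so $Q \subseteq \bTh \sP$. Thus $Q$ can be extended to the inductive theory $\bTh \sP$. Combined with the hypothesis that $Q$ is connected, this yields that $Q$ is consistent.

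There is no serious analytic obstacle here; the proof is essentially a bookkeeping argument that threads together the soundness/completeness machinery developed earlier. The only point requiring care is the correct unwinding of the definitions of $\sP \vDash Q$ and of ``can be extended to an inductive theory,'' so as to match them against the conclusions of Theorems \ref{T:ind-th-model} and \ref{T:model-ind-th}. The substantive work has already been done in proving those two theorems, and this result is the clean semantic characterization of consistency that they make available.
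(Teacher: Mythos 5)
Your proposal is correct and follows essentially the same route as the paper's proof: the forward direction uses Theorem \ref{T:ind-th-model} to satisfy an inductive theory extending $Q$, and the converse uses Theorem \ref{T:model-ind-th} to extend $Q$ to the inductive theory $\bTh \sP$. The unwinding of definitions you describe is exactly what the paper does, just stated more tersely there.
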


\begin{proof}
  Suppose $Q$ is connected and satisfiable. Choose a model $\sP$ such that $\sP
  \vDash Q$. Then $Q \subseteq \bTh \sP$. Theorem \ref{T:model-ind-th} implies
  $\bTh \sP$ is an inductive theory. Hence, $Q$ can be extended to an inductive
  theory, so $Q$ is consistent. Conversely, suppose $Q$ is consistent. Choose an
  inductive theory $P$ such that $Q \subseteq P$. By Theorem
  \ref{T:ind-th-model}, there exists a model $\sP$ such that that $\sP \vDash
  P$. Since $Q \subseteq P$, we have $\sP \vDash Q$, so $Q$ is satisfiable.
\end{proof}

Recall that $Q \vdash (X, \ph, p)$ means that $Q$ is consistent and $\bfP_Q (\ph
\mid X) = p$. As such, the following proposition gives a semantic
characterization of inductive derivability in the special case that $X \cao
T_0$, where $T_0$ is the root of $Q$. As a corollary, we find that $T_Q$ also
has a semantic characterization.

\begin{prop}\label{P:sound-compl-pre-th}
  Let $Q$ be connected and satisfiable, so that $Q$ is also consistent. Let
  $T_0$ be the root of $Q$. Then the following are equivalent:
  \begin{enumerate}[(i)]
    \item $\bfP_Q (\ph \mid T_0, \psi) = p$,
    \item for all models $\sP$, if $\sP \vDash Q$ and $\sP \vDash T_0$, then
          $\sP \vDash (T_0 + \psi, \ph, p)$.
  \end{enumerate}
\end{prop}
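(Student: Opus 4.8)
The plan is to prove the two implications separately, with (i)$\Rightarrow$(ii) being an immediate consequence of minimality and (ii)$\Rightarrow$(i) carrying the real weight. For (i)$\Rightarrow$(ii), I would argue as follows. Suppose $\bfP_Q(\ph \mid T_0, \psi) = p$, which is to say $(T_0 + \psi, \ph, p) \in \bfP_Q$. Let $\sP$ be any model with $\sP \vDash Q$ (the hypothesis $\sP \vDash T_0$ is not even needed for this direction). Then $Q \subseteq \bTh \sP$, and by Theorem \ref{T:model-ind-th} the set $\bTh \sP$ is an inductive theory. Since $\bfP_Q$ is the smallest inductive theory containing $Q$ (Theorem \ref{T:theory-gen-defn}), we get $\bfP_Q \subseteq \bTh \sP$, so $(T_0 + \psi, \ph, p) \in \bTh \sP$, which is exactly $\sP \vDash (T_0 + \psi, \ph, p)$.

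For (ii)$\Rightarrow$(i), the idea is to realize the value of $\bfP_Q$ at the root through completions and their associated models, and then invoke the rule of inductive extension. Write $P_0 = \bfP_Q \dhl_{T_0}$; since $\bfP_Q$ is an inductive theory with root $T_0$, Remark \ref{R:ante-cao} gives $\bfP_Q = \bfP(P_0)$, and because $T_0 + \psi \cao T_0$ the value $\bfP_Q(\ph \mid T_0, \psi)$ coincides with $P_0(\ph \mid T_0, \psi)$ directly from the definition of the restriction. The pre-theory $P_0$ is semi-closed, so by (R8) it suffices to show that $\ol{P_0}(\ph \mid T_0, \psi) = p$ for every completion $\ol{P_0}$ of $P_0$. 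Fixing such a completion, Corollary \ref{C:compl-restrict} shows that $P_0' := \ol{P_0} \dhl_{T_0}$ is again a completion of $P_0$ that is a pre-theory with root $T_0$; moreover, since $T_0 + \psi \cao T_0$, we have $\ol{P_0}(\ph \mid T_0, \psi) = P_0'(\ph \mid T_0, \psi)$, so it is enough to evaluate $P_0'$ at the root.

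Now set $\ol P = \bfP(P_0')$. By Proposition \ref{P:lift-complete}, $\ol P$ is a complete inductive theory with root $T_0$, and by Proposition \ref{P:nested-lifts} applied to $P_0 \subseteq P_0'$ we have $\bfP_Q = \bfP(P_0) \subseteq \ol P$, so in particular $Q \subseteq \ol P$. Theorem \ref{T:ind-th-model} then furnishes a model $\sP$ with $\Th \sP = T_{\ol P}$ and $\ol P = \bTh \sP \dhl_{[T_0, \Th \sP]}$. Since $Q \subseteq \ol P \subseteq \bTh \sP$ we obtain $\sP \vDash Q$, and since $T_0 \subseteq T_{\ol P} = \Th \sP$ we obtain $\sP \vDash T_0$. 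Hypothesis (ii) now yields $\sP \vDash (T_0 + \psi, \ph, p)$. Reading this through the identity $\ol P = \bTh \sP \dhl_{[T_0, \Th \sP]}$ and the definition of $\bTh \sP$ (noting $T_0 + \psi \cao T_0$ with $T_0 \in [T_0, \Th \sP]$) gives $\ol P(\ph \mid T_0, \psi) = p$, hence $P_0'(\ph \mid T_0, \psi) = p$ and therefore $\ol{P_0}(\ph \mid T_0, \psi) = p$. As $\ol{P_0}$ was arbitrary, (R8) gives $P_0(\ph \mid T_0, \psi) = p$, i.e.\ $\bfP_Q(\ph \mid T_0, \psi) = p$.

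The main obstacle is the mismatch in (ii)$\Rightarrow$(i) between the completions required by (R8), which need not be strongly connected, and the complete \emph{inductive theories} to which Theorem \ref{T:ind-th-model} applies. The remedy is to pass to the restriction $\ol{P_0} \dhl_{T_0}$, which Corollary \ref{C:compl-restrict} promotes to a pre-theory completion, and then to verify at each step that restricting to the root preserves the probability attached to the antecedent $T_0 + \psi$ (this is where $T_0 + \psi \cao T_0$ is used repeatedly). One should also confirm that the realizing model produced by Theorem \ref{T:ind-th-model} simultaneously satisfies $Q$ and $T_0$ and reproduces the exact value $p$ at the root, since it is this faithfulness that lets hypothesis (ii) feed back into the inductive-extension argument.
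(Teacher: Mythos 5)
Your proof is correct and follows essentially the same route as the paper's: the forward direction via minimality of $\bfP_Q$ and Theorem \ref{T:model-ind-th}, and the converse by restricting to the root, passing an arbitrary completion through Corollary \ref{C:compl-restrict} to a complete pre-theory, lifting it, realizing it as a model via Theorem \ref{T:ind-th-model}, and closing with the rule of inductive extension. The only differences are cosmetic choices of citation (e.g.\ Corollary \ref{C:compl-restrict} and Proposition \ref{P:nested-lifts} in place of the paper's appeal to Corollary \ref{C:restrict-inherit} and Proposition \ref{P:lift-complete}), which package the same facts.
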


\begin{proof}
  Suppose that $\bfP_Q(\ph \mid T_0, \psi) = p$, and let $\sP \vDash Q$ and $\sP
  \vDash T_0$. Then $Q \subseteq \bTh \sP$. By Theorems \ref{T:model-ind-th} and
  \ref{T:theory-gen-defn}, this gives $\bfP_Q \subseteq \bTh \sP$, so that $\sP
  \vDash (T_0 + \psi, \ph, p)$.

  Conversely, suppose that for all models $\sP$, if $\sP \vDash Q$ and $\sP
  \vDash T_0$, then $\sP \vDash (T_0 + \psi, \ph, p)$. Let $P_0 = {\bfP_Q \dhl_
  {T_0}}$. Let $\ol P$ be a completion of $P_0$ and define $P = \bfP(\ol P \dhl_
  {T_0})$. Since $T_0 \in \ante \ol P$, Corollary \ref{C:restrict-inherit} and
  Proposition \ref{P:lift-complete} imply that $P$ is a complete inductive
  theory with root $T_0$. By Theorem \ref{T:ind-th-model}, we may choose a model
  $\sP$ such that $T_P = \Th \sP$ and $P = \bTh \sP \dhl_{[T_0, \Th \sP]}$. Note
  that $Q \subseteq \bfP_Q = \bfP(P_0) \subseteq \bfP(\ol P \dhl_{T_0}) = P
  \subseteq \bTh \sP$. Hence, $\sP \vDash Q$. Also $T_0 \subseteq T_P = \Th
  \sP$, so that $\sP \vDash T_0$. Therefore, by assumption, $\sP \vDash (T_0 +
  \psi, \ph, p)$. But $P = \bTh \sP \dhl_{[T_0, \Th \sP]}$, so this gives
  $P(\ph \mid T_0, \psi) = p$. Also, $P = \bfP({\ol P \dhl_{T_0}})$, which
  implies $P \dhl_{T_0} = \ol P \dhl_{T_0}$. Hence, $\ol P(\ph \mid T_0, \psi) =
  p$. Since $\ol P$ was arbitrary, the rule of inductive extension yields
  $P_0(\ph \mid T_0, \psi) = p$, and therefore $\bfP_Q(\ph \mid T_0, \psi) = p$.
\end{proof}

\begin{cor}\label{C:sound-compl-pre-th}
  Let $Q$ be connected and satisfiable, so that $Q$ is also consistent. Let
  $T_0$ be the root of $Q$. Then the following are equivalent:
  \begin{enumerate}[(i)]
    \item $\th \in T_Q$,
    \item for all models $\sP$, if $\sP \vDash Q$ and $\sP \vDash T_0$, then
          $\sP \vDash \th$.
  \end{enumerate}
\end{cor}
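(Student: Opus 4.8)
The plan is to deduce the corollary directly from Proposition \ref{P:sound-compl-pre-th} by specializing its statement to $\ph = \th$, $\psi = \top$, and $p = 1$. First note that since $Q$ is connected and satisfiable, Theorem \ref{T:ind-satis-cons} guarantees that $Q$ is consistent, so that $\bfP_Q$ and $T_Q = \tau(\bfP_Q)$ are defined and $\bfP_Q$ is an inductive theory with root $T_0$. Two elementary facts will be used repeatedly: since $T_0$ is a deductive theory we have $\top \in \Taut \subseteq T_0$, so $T_0 \cup \{\top\} = T_0$ and hence $T_0 + \top = T_0$; and in every model $\top_\Om = \Om$ with $\olbbP \top_\Om = 1$.

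With these substitutions, I would translate each of the two conditions in Proposition \ref{P:sound-compl-pre-th}. For the left-hand condition, since $T_0 \cup \{\top\} = T_0$ we have $\bfP_Q(\th \mid T_0, \top) = \bfP_Q(\th \mid T_0)$. By Proposition \ref{P:simple-tau(P)}, $\tau(\bfP_Q) = \tau(\bfP_Q; T_0)$, so by the definition of $T_Q$ we get $\th \in T_Q$ if and only if $\bfP_Q(\th \mid T_0) = 1$. Thus condition (i) of Proposition \ref{P:sound-compl-pre-th}, namely $\bfP_Q(\th \mid T_0, \top) = 1$, is exactly the statement $\th \in T_Q$, which is condition (i) of the corollary.

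For the right-hand condition, I would show that, whenever $\sP \vDash T_0$, the assertion $\sP \vDash (T_0 + \top, \th, 1)$ is equivalent to $\sP \vDash \th$. Since $T_0 + \top = T_0$, this is the claim that $\sP \vDash (T_0, \th, 1)$ if and only if $\sP \vDash \th$, under the standing assumption $\sP \vDash T_0$. For the forward direction, Proposition \ref{P:model-func} applies to the decomposition $T_0 \equiv T_0 \cup \{\top\}$ with $\sP \vDash T_0$, so that \eqref{cond-prob} holds for this particular decomposition, giving $\olbbP \th_\Om \cap \top_\Om / \olbbP \top_\Om = 1$; since $\top_\Om = \Om$ and $\olbbP \Om = 1$, this forces $\th_\Om \in \ol \Si$ with $\olbbP \th_\Om = 1$, i.e.\ $\sP \vDash \th$. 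Conversely, if $\sP \vDash \th$ and $\sP \vDash T_0$, then taking $Y = T_0 \subseteq \Th \sP$ and $\psi = \top$ exhibits $T_0 \equiv Y \cup \{\psi\}$ with $\olbbP \th_\Om \cap \top_\Om / \olbbP \top_\Om = \olbbP \th_\Om = 1$, so $\sP \vDash (T_0, \th, 1)$. (Alternatively, one may invoke Lemma \ref{L:model-entire} with $X = Y = T_0$ and $\psi = \top$, reducing the claim to the condition $\Om \subseteq \th_\Om$ a.s.) Substituting these two translations into Proposition \ref{P:sound-compl-pre-th} yields precisely the equivalence of (i) and (ii) in the corollary.

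The only real point requiring care is the bookkeeping in the third paragraph: the definition of $\sP \vDash (X, \ph, p)$ quantifies existentially over decompositions $X \equiv Y \cup \{\psi\}$, so to pass from $\sP \vDash (T_0, \th, 1)$ back to a statement about $\olbbP \th_\Om$ one cannot read off the value from an arbitrary decomposition but must use Proposition \ref{P:model-func} (or Lemma \ref{L:model-entire}) to ensure that \eqref{cond-prob} holds for the specific decomposition with $\psi = \top$. Everything else is a direct substitution, so I expect no further obstacle.
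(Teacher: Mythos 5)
Your proposal is correct and follows essentially the same route as the paper's own proof: both reduce the corollary to Proposition \ref{P:sound-compl-pre-th} with $\psi = \top$, use Proposition \ref{P:model-func} (via the decomposition $T_0 \equiv T_0 \cup \{\top\}$ and $\top_\Om = \Om$) to show that, under $\sP \vDash T_0$, the statement $\sP \vDash (T_0, \th, 1)$ is equivalent to $\sP \vDash \th$, and invoke Proposition \ref{P:simple-tau(P)} to identify $\th \in T_Q$ with $\bfP_Q(\th \mid T_0) = 1$. Your closing remark about needing Proposition \ref{P:model-func} rather than an arbitrary decomposition is exactly the point the paper handles implicitly, so there is no gap.
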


\begin{proof}
  Assume $\th \in T_Q$, so that $\bfP_Q(\th \mid T_0) = 1$. Suppose $\sP \vDash
  Q$ and $\sP \vDash T_0$. Proposition \ref{P:sound-compl-pre-th} implies $\sP
  \vDash (T_0, \th, 1)$. Since $T_0 \equiv T_0 \cup \{\top\}$ and $\sP \vDash
  T_0$, we have $\olbbP \th_\Om \cap \top_\Om / \olbbP \top_\Om = 1$. But
  $\top_\Om = \Om$, so $\olbbP \th_\Om = 1$, which means $\sP \vDash \th$.

  Now assume that for all models $\sP$, if $\sP \vDash Q$ and $\sP \vDash T_0$,
  then $\sP \vDash \th$. As above, if $\sP \vDash T_0$ and $\sP \vDash \th$,
  then $\sP \vDash (T_0, \th, 1)$. Hence, for all models $\sP$, if $\sP \vDash
  Q$ and $\sP \vDash T_0$, then $\sP \vDash (T_0, \th, 1)$. Proposition 
  \ref{P:sound-compl-pre-th} implies $\bfP_Q(\th \mid T_0) = 1$, so that by
  Proposition \ref{P:simple-tau(P)}, we have $\th \in T_Q$.
\end{proof}

\subsection{Inductive consequence and completeness}

Having characterized $T_Q$ in terms of semantics, we are now ready to define the
inductive consequence relation.

\begin{defn}\label{D:consequence}
  Let $Q \subseteq \cF^\IS$ and $(X, \ph, p) \in \cF^\IS$. We say that $(X, \ph,
  p)$ is a \emph{consequence} of $Q$, or that $Q$ \emph{entails} $ (X, \ph, p)$,
  which we denote by $Q \vDash (X, \ph, p)$, if
  \begin{enumerate}[(i)]
    \item $Q$ is connected and satisfiable,
    \item $X \cao [T_0, T_Q]$, where $T_0$ is the root of $Q$, and
    \item $\sP \vDash Q$ implies $\sP \vDash (X, \ph, p)$, for all models
          $\sP$.
  \end{enumerate}
\end{defn}
  \index{consequence relation}%
  \symindex{$Q \vDash (X, \ph, p)$}%

Corollary \ref{C:sound-compl-pre-th} shows that $T_Q$ has an entirely semantic
characterization. Hence, Definition \ref{D:consequence} is an entirely semantic
definition.

Note that $\sP \vDash (\Taut, \top, 1)$ for all models $\sP$. However, $Q
\vdash (\Taut, \top, 1)$ only if the root of $Q$ is $\Taut$. Hence, (ii) cannot
be removed if we are to have completeness.

To simplify the verification that $Q \vDash (X, \ph, p)$, we can replace (iii)
with
\begin{enumerate}[(i)$'$]
  \setcounter{enumi}{2}
  \item $\sP \vDash Q$ implies $\sP \vDash (X, \ph, p)$ whenever $\sP$ is
        complete and $\sP \vDash T_0$.
\end{enumerate}
We show this below in Theorem \ref{T:conseq-defn-simple}, after proving two
lemmas. Let $\sQ = (\Om, \Si, \bbQ)$ be a complete model and assume that $\bbQ
\ze_\Om > 0$ for some $\ze \in \cF$. Define the probability measure $\bbP$ on $
(\Om, \Si)$ by $\bbP A = \bbQ A \cap \ze_\Om / \bbQ \ze_\Om$ and let $\sP =
(\Om, \Si, \bbP)$. Note that if $\bbQ A = 1$, then $\bbP A = 1$. Hence, $\sQ
\vDash Y$ implies $\sP \vDash Y$ for all $Y \subseteq \cF$.

\begin{lemma}\label{L:cond-compl}
  Let $\sQ$ and $\sP$ be as above and let $\ol \sP = (\Om, \ol \Si, \olbbP)$ be
  the completion of $\sP$. Then $A \in \ol \Si$ if and only if $A \cap \ze_\Om
  \in \Si$, and in this case, $\olbbP A = \bbP A \cap \ze_\Om$.
\end{lemma}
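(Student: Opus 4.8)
The plan is to extract a few elementary identities from the defining formula $\bbP A = \bbQ(A \cap \ze_\Om)/c$, where $c = \bbQ \ze_\Om > 0$, and then read off both assertions. Note first that $\ze_\Om \in \Si$ (this is implicit in $\bbQ \ze_\Om$ being defined), so $\ze_\Om^c \in \Si$ as well. I would record three facts: (a) $\ze_\Om^c$ is a $\bbP$-null set, since $\bbP \ze_\Om^c = \bbQ(\ze_\Om^c \cap \ze_\Om)/c = \bbQ \emp / c = 0$; (b) for every $A \in \Si$ we have $\bbP A = \bbP(A \cap \ze_\Om)$, because both sides equal $\bbQ(A \cap \ze_\Om)/c$; and (c) a set $F \in \Si$ is $\bbP$-null if and only if $\bbQ(F \cap \ze_\Om) = 0$ (using $c > 0$).

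For the forward direction of the set equality, I would take $A \in \ol \Si$ and use the description of the completion to write $A = B \cup N$ with $B \in \Si$ and $N \subseteq F$ for some $\bbP$-null $F \in \Si$. Then $A \cap \ze_\Om = (B \cap \ze_\Om) \cup (N \cap \ze_\Om)$, where $B \cap \ze_\Om \in \Si$. The crucial point is that $N \cap \ze_\Om \subseteq F \cap \ze_\Om$ and, by fact (c), $\bbQ(F \cap \ze_\Om) = 0$, so $N \cap \ze_\Om$ is $\bbQ$-negligible; since $\sQ$ is complete, this forces $N \cap \ze_\Om \in \Si$, and hence $A \cap \ze_\Om \in \Si$. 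For the reverse direction, given $A \cap \ze_\Om \in \Si$ I would decompose $A = (A \cap \ze_\Om) \cup (A \cap \ze_\Om^c)$: the first piece lies in $\Si$, while the second is contained in the $\bbP$-null set $\ze_\Om^c$ from fact (a), so it is $\bbP$-negligible. Therefore $A \in \ol \Si$.

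For the measure formula, assume $A \in \ol \Si$ and write $A = B \cup N$ as above, so that $\olbbP A = \bbP B$ by definition of the completion. From $A \cap \ze_\Om = (B \cap \ze_\Om) \cup (N \cap \ze_\Om)$, together with the fact (established in the previous paragraph) that $N \cap \ze_\Om \in \Si$ with $\bbP(N \cap \ze_\Om) = 0$, I obtain $\bbP(A \cap \ze_\Om) = \bbP(B \cap \ze_\Om)$; then fact (b) gives $\bbP(B \cap \ze_\Om) = \bbP B = \olbbP A$. This is exactly $\olbbP A = \bbP(A \cap \ze_\Om)$.

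The only genuinely delicate step is the forward direction of the set equality, where the $\bbP$-negligibility of $N$ must be upgraded to actual $\Si$-membership of $N \cap \ze_\Om$. This is precisely where the completeness of $\sQ$ is indispensable: $\bbP$-negligibility only controls the trace of $N$ on $\ze_\Om$ and says nothing about $N$ off $\ze_\Om$, so without completeness of $\sQ$ the intersection $N \cap \ze_\Om$ need not be measurable. Everything else is routine bookkeeping with the definition $\bbP A = \bbQ(A \cap \ze_\Om)/c$.
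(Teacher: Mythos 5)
Your proof is correct and follows essentially the same route as the paper's: both directions use the decomposition $A = B \cup N$ of the completion, invoke the completeness of $\sQ$ to get $N \cap \ze_\Om \in \Si$ from $\bbQ$-negligibility, handle the converse via the $\bbP$-null set $\ze_\Om^c$, and derive the measure identity directly from the definition $\bbP A = \bbQ (A \cap \ze_\Om)/\bbQ \ze_\Om$. Your explicit isolation of facts (a)--(c) is merely a bookkeeping variant of the paper's argument, not a different method.
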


\begin{proof}
  Suppose $A \cap \ze_\Om \in \Si$. Since $\bbP \ze_\Om^c = 0$ and $A \cap
  \ze_\Om^c \subseteq \ze_\Om^c$, we have $A \cap \ze_\Om^c \in \ol \Si$. Hence,
  $A = (A \cap \ze_\Om) \cup (A \cap \ze_\Om^c) \in \ol \Si$ and $\olbbP A =
  \bbP A \cap \ze_\Om$. Conversely, suppose $A \in \ol \Si$. Write $A = B \cup
  F$, where $B \in \Si$, $F \subseteq N$, and $N \in \Si$ with $\bbP N = 0$. By
  the definition of $\bbP$, we have $\bbQ N \cap \ze_\Om = 0$. Now write $A \cap
  \ze_\Om = (B \cap \ze_\Om) \cup (F \cap \ze_\Om)$. Then $B \cap \ze_\Om \in
  \Si$. Also, since $F \cap \ze_\Om \subseteq N \cap \ze_\Om$ and $\sQ$ is
  complete, it follows that $F \cap \ze_\Om \in \Si$. Therefore, $A \cap \ze_\Om
  \in \Si$. Moreover, this shows that $\olbbP A = \bbP B$ and $\bbP A \cap
  \ze_\Om = \bbP B \cap \ze_\Om$. Since $\bbP \ze_\Om = 1$, we have $\bbP B =
  \bbP B \cap \ze_\Om$. Therefore, $\olbbP A = \bbP A \cap \ze_\Om$.
\end{proof}

\begin{lemma}\label{L:conseq-defn-simple}
  Let $\sQ$ and $\sP$ be as above. If $\sQ \vDash (X, \ph, p)$ and $\ze \in T
  (X)$, then $\sP \vDash (X, \ph, p)$.
\end{lemma}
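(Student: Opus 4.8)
The plan is to verify $\sP \vDash (X, \ph, p)$ directly from the definition, reusing the very decomposition of $X$ that witnesses $\sQ \vDash (X, \ph, p)$. First I would invoke $\sQ \vDash (X, \ph, p)$ to obtain $Y \subseteq \Th \sQ$ and $\psi \in \cF$ with $X \equiv Y \cup \{\psi\}$ and $\bbQ \ph_\Om \cap \psi_\Om / \bbQ \psi_\Om = p$ (using that $\sQ$ is complete, so its completion measure is just $\bbQ$, and that $\bbQ \psi_\Om > 0$ for the ratio to be defined). Since $\sQ \vDash Y$ implies $\sP \vDash Y$ (as noted just before Lemma \ref{L:cond-compl}), the same $Y$ lies in $\Th \sP$, so $(Y, \psi)$ is a legitimate decomposition of $X$ for $\sP$ as well. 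It then remains only to show that the conditional probability of $\ph_\Om$ given $\psi_\Om$ under $\olbbP$ again equals $p$.

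The crux is the hypothesis $\ze \in T(X)$, which I would exploit to show $\psi_\Om \subseteq \ze_\Om$ $\bbQ$-a.s. Since $\ze \in T(X) = T(Y \cup \{\psi\})$, we have $Y, \psi \vdash \ze$, hence $Y \vdash \psi \to \ze$ by the deduction theorem (Proposition \ref{P:derivability}(a)), and then $Y \vDash \psi \to \ze$ by deductive completeness (Theorem \ref{T:completeness}). As $\sQ \vDash Y$, this gives $\sQ \vDash \psi \to \ze$, i.e.\ $\bbQ \psi_\Om \cap \ze_\Om^c = 0$. Consequently $\psi_\Om \cap \ze_\Om$ differs from $\psi_\Om$ by a $\bbQ$-null set, and likewise $\ph_\Om \cap \psi_\Om \cap \ze_\Om$ differs from $\ph_\Om \cap \psi_\Om$ by a $\bbQ$-null set. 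Because $\sQ$ is complete and $\psi_\Om, \ph_\Om \cap \psi_\Om \in \Si$ (these probabilities are defined under $\sQ$), all four of these sets lie in $\Si$; in particular $\psi_\Om \cap \ze_\Om \in \Si$ and $(\ph_\Om \cap \psi_\Om) \cap \ze_\Om \in \Si$.

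With those two intersections in $\Si$, I would apply Lemma \ref{L:cond-compl} to conclude $\psi_\Om, \ph_\Om \cap \psi_\Om \in \ol \Si$ with $\olbbP \psi_\Om = \bbP \psi_\Om \cap \ze_\Om$ and $\olbbP \ph_\Om \cap \psi_\Om = \bbP (\ph_\Om \cap \psi_\Om) \cap \ze_\Om$. Unwinding $\bbP A = \bbQ A \cap \ze_\Om / \bbQ \ze_\Om$ and using the two a.s.\ equalities from the previous paragraph, these simplify to $\olbbP \psi_\Om = \bbQ \psi_\Om / \bbQ \ze_\Om$ and $\olbbP \ph_\Om \cap \psi_\Om = \bbQ \ph_\Om \cap \psi_\Om / \bbQ \ze_\Om$. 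The common factor $\bbQ \ze_\Om > 0$ cancels in the ratio (and $\olbbP \psi_\Om > 0$ since $\bbQ \psi_\Om > 0$), giving $\olbbP \ph_\Om \cap \psi_\Om / \olbbP \psi_\Om = \bbQ \ph_\Om \cap \psi_\Om / \bbQ \psi_\Om = p$. Hence $(Y, \psi)$ witnesses $\sP \vDash (X, \ph, p)$, as desired.

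The only real obstacle is the bookkeeping in the middle step: correctly converting $\ze \in T(X)$ into the measure-theoretic statement $\psi_\Om \subseteq \ze_\Om$ a.s., and then tracking measurability through the completion $\ol \Si$ so that Lemma \ref{L:cond-compl} applies. Everything else is a routine cancellation. The conceptual content is simply that re-weighting $\sQ$ by conditioning on $\ze_\Om$ leaves the $\psi_\Om$-conditional probabilities untouched precisely because $\psi$ already forces $\ze$ relative to the certain hypotheses $Y$.
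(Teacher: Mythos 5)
Your proof is correct and follows essentially the same route as the paper's: the same witness decomposition $X \equiv Y \cup \{\psi\}$, the same transfer of $Y$ from $\Th \sQ$ to $\Th \sP$, and the same cancellation of $\bbQ \ze_\Om$ in the final ratio. The only difference is cosmetic: where the paper invokes Proposition \ref{P:model-func} on the alternative decomposition $X \equiv Y \cup \{\psi \wedge \ze\}$ to obtain $p = \bbQ \, \ph_\Om \cap \psi_\Om \cap \ze_\Om / \bbQ \, \psi_\Om \cap \ze_\Om$ in one stroke, you re-derive that fact by hand (deduction theorem, deductive completeness, and nullity of $\psi_\Om \cap \ze_\Om^c$), and you make explicit the appeal to Lemma \ref{L:cond-compl} that the paper leaves implicit when it writes $\bbP$ in place of $\olbbP$.
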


\begin{proof}
  Suppose $\sQ \vDash (X, \ph, p)$ and $\ze \in T(X)$. Write $X \equiv Y \cup 
  \{\psi\}$, where $\sQ \vDash Y$. Then $\sP \vDash Y$ also. Since $\ze \in
  T(X)$, we have $X \equiv Y \cup \{\psi, \ze\} \equiv Y \cup \{\psi \wedge
  \ze\}$. By Proposition \ref{P:model-func} and the fact that $\sQ$ is complete,
  it follows that
  \[
    p = \frac{
      \bbQ \ph_\Om \cap \psi_\Om \cap \ze_\Om
    }{
      \bbQ \psi_\Om \cap \ze_\Om
    }
    = \frac{\bbP \ph_\Om \cap \psi_\Om}{\bbP \psi_\Om}.
  \]
  Therefore, $\sP \vDash (X, \ph, p)$.
\end{proof}

\begin{thm}\label{T:conseq-defn-simple}
  In Definition \ref{D:consequence}, we may replace (iii) with (iii)$'$.
\end{thm}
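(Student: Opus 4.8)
The statement is an equivalence of two formulations of the relation $Q \vDash (X, \ph, p)$, and since clauses (i) and (ii) are common to both, the plan is to show that, under (i) and (ii), clause (iii) is equivalent to (iii)$'$. One direction is free: (iii)$'$ asks for the implication only for the smaller class of complete models satisfying $T_0$, so (iii) trivially implies (iii)$'$. All the content lies in the converse.

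To prove (iii)$'$ $\Rightarrow$ (iii), I would assume (i), (ii), and (iii)$'$, take an arbitrary model $\sP = (\Om, \Si, \bbP)$ with $\sP \vDash Q$, and aim to deduce $\sP \vDash (X, \ph, p)$. Since $\sP \vDash (X, \ph, p)$ iff $\ol \sP \vDash (X, \ph, p)$, I may assume $\sP$ is complete. The obstruction is that $\sP$ need not satisfy the root $T_0$ (a model of $Q$ generally does not), so (iii)$'$ cannot be applied to $\sP$ directly. The strategy is therefore to condition $\sP$ into a model $\sQ$ that does satisfy $T_0$, apply (iii)$'$ to $\sQ$, and then transport the conclusion back to $\sP$. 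Concretely, write $T_0 = T(X_0)$ for the root (Proposition \ref{P:root-exist}) and choose $(X_0, \eta_0, q_0) \in Q$; from $\sP \vDash (X_0, \eta_0, q_0)$ I obtain $X_0 \equiv Y_0 \cup \{\psi_0\}$ with $\sP \vDash Y_0$ and $\bbP (\psi_0)_\Om > 0$, where $\psi_0 \in T_0$. Let $\sQ$ be $\sP$ conditioned on $(\psi_0)_\Om$, that is $\bbQ A = \bbP(A \cap (\psi_0)_\Om)/\bbP (\psi_0)_\Om$, and pass to its completion; Lemma \ref{L:cond-compl} pins down the measurable sets and values of this completed measure. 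Then $\sQ \vDash Y_0 \cup \{\psi_0\}$, hence $\sQ \vDash T_0$. Moreover, every $Y \in \ante Q$ satisfies $Y \vdash X_0$, so $\psi_0 \in T_0 \subseteq T(Y)$, and Lemma \ref{L:conseq-defn-simple} gives $\sQ \vDash (Y, \eta, q)$ for each $(Y, \eta, q) \in Q$; thus $\sQ \vDash Q$. As $\sQ$ is complete and satisfies both $T_0$ and $Q$, condition (iii)$'$ yields $\sQ \vDash (X, \ph, p)$.

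The hard part will be transporting this back to $\sP$, and this is exactly where hypothesis (ii) is needed. Using (ii), I would write $X \equiv T \cup \{\chi\}$ with $T_0 \subseteq T \subseteq T_Q$. Because $\sP \vDash Q$ and $Q$ is consistent, $\bfP_Q \subseteq \bTh \sP$ (Theorems \ref{T:ind-satis-cons}, \ref{T:model-ind-th}, and \ref{T:theory-gen-defn}), so for each $\th \in T \subseteq T_Q$ we have $\sP \vDash (X_0, \th, 1)$; since $X_0 \equiv Y_0 \cup \{\psi_0\}$ and $\sP \vDash Y_0$, Lemma \ref{L:model-entire} forces $(\psi_0)_\Om \subseteq \th_\Om$ a.s., i.e. $\psi_0 \to \th \in \Th \sP$. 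This is what lets me produce a single decomposition valid for both models: take $Y = Y_0 \cup \{\psi_0 \to \th \mid \th \in T\}$, which lies in $\Th \sP$, so that $X \equiv Y \cup \{\chi \wedge \psi_0\}$ and both $\sP$ and $\sQ$ satisfy $Y$.

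Finally, because the conditioning formula $\chi \wedge \psi_0$ confines the relevant event inside $(\psi_0)_\Om$, the conditional ratio $\olbbP \ph_\Om \cap (\chi \wedge \psi_0)_\Om / \olbbP (\chi \wedge \psi_0)_\Om$ computed in $\sP$ is literally the same expression as the corresponding ratio computed in $\sQ$ (the $(\psi_0)_\Om$-factor and the normalization of $\bbQ$ cancel). By Proposition \ref{P:model-func} applied to $\sQ$, that common ratio equals $p$, and applied to $\sP$ it then witnesses $\sP \vDash (X, \ph, p)$. I expect the only delicate points to be the bookkeeping of the completed measure of $\sQ$ via Lemma \ref{L:cond-compl} and the verification that the chosen decomposition is genuinely valid for both models; the final coincidence of ratios is then a short computation mirroring the proof of Lemma \ref{L:conseq-defn-simple}.
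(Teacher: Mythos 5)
Your proposal is correct, and its skeleton coincides with the paper's: the forward direction is immediate; for the converse you condition the given model on the positive-probability formula $\psi_0$ attached to a root antecedent $X_0$, use Lemma \ref{L:conseq-defn-simple} to check that the conditioned model still satisfies $Q$, apply (iii)$'$ to its completion, and then transport the conclusion back. Where you genuinely depart from the paper is in the transport step, and your version is the sounder one. The paper writes $X \equiv T + \psi$ with $T \in [T_0, T_Q]$, asserts that the original model satisfies $T$ simply because $T \subseteq T_Q$ and the model satisfies $Q$, and then uses $T \cup \{\psi\}$ itself as the witnessing decomposition. That inference fails in general: by the paper's own Proposition \ref{P:drop-root-fewer-models}, a model can satisfy an inductive theory $Q$ while failing to satisfy its root $T_0$, and $T_0 \subseteq T \subseteq T_Q$, so a model of $Q$ need not satisfy $T$. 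Your decomposition $X \equiv \bigl(Y_0 \cup \{\psi_0 \to \th \mid \th \in T\}\bigr) \cup \{\chi \wedge \psi_0\}$ never requires the original model to satisfy $T$: you only need each $\psi_0 \to \th$ to hold almost surely, which you correctly obtain from $\bfP_Q \subseteq \bTh \sP$ (giving $\sP \vDash (X_0, \th, 1)$) together with Lemma \ref{L:model-entire}; the cancellation of the normalizing factor then lets Proposition \ref{P:model-func}, applied once in the conditioned model and once in $\sP$, close the argument. In short, the two proofs share their architecture, but your relativized decomposition is exactly what is needed to make the final step rigorous, whereas the paper's shorter argument rests on an unjustified claim at that point; your proposal in effect repairs it.
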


\begin{proof}
  Clearly, Definition \ref{D:consequence} implies (iii)$'$. For the converse,
  let $Q \subseteq \cF^\IS$ and $(X, \ph, p) \in \cF^\IS$. Assume (i), (ii), and
  (iii)$'$. To show that (iii) holds, let $\sQ$ be a model and assume that $\sQ
  \vDash Q$. We want to show that $\sQ \vDash (X, \ph, p)$. As noted below
  \eqref{cond-prob}, we may assume without loss of generality that $\sQ$ is
  complete.

  By Proposition \ref{P:root-exist}, we may choose $X_0$ such that $T_0 = T
  (X_0)$ and $X_0 \in \ante Q$. We may then choose $(X_0, \xi, q) \in Q$, so
  that $\sQ \vDash (X_0, \xi, q)$. Write $X_0 \equiv Y \cup \{\ze\}$, where $\sQ
  \vDash Y$ and $\bbQ \ze_\Om > 0$. Define $\sP = (\Om, \Si, \bbP)$ by $\bbP A =
  \bbQ A \cap \ze_\Om / \bbQ \ze_\Om$. Then $\sP \vDash Y$ and $\sP \vDash \ze$.
  Hence, $\sP \vDash X_0$, which gives $\sP \vDash T_0$. Let $(X', \ph', p') \in
  Q$. Then $\ze \in T_0 \subseteq T(X')$. By Lemma \ref{L:conseq-defn-simple},
  it follows that $\sP \vDash (X', \ph', p')$. This shows that $\sP \vDash Q$.
  We therefore have $\ol \sP \vDash T_0$ and $\ol \sP \vDash Q$. By (iii)$'$,
  this gives $\ol \sP \vDash (X, \ph, p)$, which implies $\sP \vDash (X, \ph,
  p)$.

  By (ii), we may write $X \equiv T + \psi$, where $T \in [T_0, T_Q]$. Since $T
  \subseteq T_Q$ and $\sQ \vDash Q$, we have $\sQ \vDash T$. It remains only to
  show that $\bbQ \ph_\Om \cap \psi_\Om / \bbQ \psi_\Om = p$. For this, note
  that $\ze \in T_0 \subseteq T(X)$, so that $X \equiv T + \psi \wedge \ze$.
  Since $\sQ \vDash T$, we also have $\sP \vDash T$. Hence, by Proposition
  \ref{P:model-func} and Lemma \ref{L:cond-compl}, it follows that
  \[
    p = \frac{
      \bbP \ph_\Om \cap \psi_\Om \cap \ze_\Om
    }{
      \bbP \psi_\Om \cap \ze_\Om
    }
    = \frac{\bbQ \ph_\Om \cap \psi_\Om}{\bbQ \psi_\Om}.
  \]
  Therefore, $\sQ \vDash (X, \ph, p)$, which shows that (iii) holds.
\end{proof}

Having defined the inductive consequence relation, we now show that it is
identical to the inductive derivability relation.

\begin{thm}[Inductive soundness and completeness]\label{T:ind-sound-compl}
    \index{soundness!inductive ---}%
    \index{completeness!inductive ---}%
  Let $Q \subseteq \cF^\IS$ and $(X, \ph, p) \in \cF^\IS$. Then $Q \vdash (X,
  \ph, p)$ if and only if $Q \vDash (X, \ph, p)$.
\end{thm}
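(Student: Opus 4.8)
The plan is to prove the two implications separately, using the fact (Theorem~\ref{T:ind-satis-cons}) that connectedness together with satisfiability is exactly consistency, together with the semantic descriptions of $\bfP_Q$ furnished by Theorem~\ref{T:model-ind-th}, Theorem~\ref{T:theory-gen-defn}, Proposition~\ref{P:sound-compl-pre-th}, and Corollary~\ref{C:sound-compl-pre-th}. Throughout, $T_0$ denotes the root of $Q$, and we use that $\bfP_Q$ is an inductive theory with root $T_0$ and that $T_Q = \tau(\bfP_Q)$.

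For the forward direction, I would suppose $Q \vdash (X, \ph, p)$, so that $Q$ is consistent and $\bfP_Q(\ph \mid X) = p$. Consistency gives connectedness and satisfiability by Theorem~\ref{T:ind-satis-cons}, establishing condition (i) of Definition~\ref{D:consequence}. Since $X \in \ante \bfP_Q$ and $\bfP_Q$ is an inductive theory with root $T_0$, Remark~\ref{R:ante-cao} together with $T_Q = \tau(\bfP_Q)$ yields $X \cao [T_0, T_Q]$, which is (ii). For (iii), let $\sP \vDash Q$; then $Q \subseteq \bTh \sP$, and $\bTh \sP$ is an inductive theory by Theorem~\ref{T:model-ind-th}, so Theorem~\ref{T:theory-gen-defn} gives $\bfP_Q \subseteq \bTh \sP$. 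Hence $(X, \ph, p) \in \bfP_Q \subseteq \bTh \sP$, i.e.\ $\sP \vDash (X, \ph, p)$. Thus $Q \vDash (X, \ph, p)$.

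For the reverse direction, I would assume $Q \vDash (X, \ph, p)$. By (i) and Theorem~\ref{T:ind-satis-cons}, $Q$ is consistent, so $\bfP_Q$ is defined with root $T_0$. By (ii) and Proposition~\ref{P:ctbly-ax=one-ax} we may write $X \equiv T + \psi$ with $T \in [T_0, T_Q]$ and $\psi \in \cF$. The key intermediate claim is $\bfP_Q(\ph \mid T_0, \psi) = p$, which I would obtain from Proposition~\ref{P:sound-compl-pre-th}: given any model $\sP$ with $\sP \vDash Q$ and $\sP \vDash T_0$, condition (iii) gives $\sP \vDash (X, \ph, p)$, while $T \subseteq T_Q$ and Corollary~\ref{C:sound-compl-pre-th} give $\sP \vDash T$. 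Since $X \equiv T \cup \{\psi\}$ with $\sP \vDash T$, and also $T_0 + \psi \equiv T_0 \cup \{\psi\}$ with $\sP \vDash T_0$, Proposition~\ref{P:model-func} shows the conditional ratio $\olbbP \ph_\Om \cap \psi_\Om / \olbbP \psi_\Om$ computed from either decomposition equals $p$, so $\sP \vDash (T_0 + \psi, \ph, p)$. Proposition~\ref{P:sound-compl-pre-th} then gives $\bfP_Q(\ph \mid T_0, \psi) = p$, and in particular $T_0 + \psi \in \ante \bfP_Q$. To lift this back to $X$, I would mimic the converse argument in Theorem~\ref{T:theory-char}: for each $\th \in T \subseteq T_Q = \tau(\bfP_Q)$, Proposition~\ref{P:simple-tau(P)} gives $\bfP_Q(\th \mid T_0) = 1$, hence $\bfP_Q(\th \mid T_0, \psi) = 1$ by deductive transitivity; since $\bfP_Q$ is closed, the rule of deductive extension yields $\bfP_Q(\,\cdot \mid T_0, \psi) = \bfP_Q(\,\cdot \mid T_0, \psi, T)$, and as $T_0 \subseteq T$ we have $T_0 + \psi + T = T + \psi \equiv X$, so the rule of logical equivalence gives $\bfP_Q(\ph \mid X) = \bfP_Q(\ph \mid T_0, \psi) = p$. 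Hence $Q \vdash (X, \ph, p)$.

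I expect the main obstacle to be the reverse direction, and specifically the passage from $\sP \vDash (X, \ph, p)$ to $\sP \vDash (T_0 + \psi, \ph, p)$. This is where condition (ii) is genuinely needed (to rewrite $X$ over a theory between $T_0$ and $T_Q$), where Corollary~\ref{C:sound-compl-pre-th} is required to guarantee $\sP \vDash T$, and where Proposition~\ref{P:model-func} must be applied carefully to see that the two conditional ratios agree and that the relevant sets lie in the completed $\si$-algebra. The final deductive-extension lift, and the entire forward direction, should be routine given the machinery already developed.
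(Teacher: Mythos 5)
Your proposal is correct, and its overall skeleton matches the paper's proof: the forward direction via Remark \ref{R:ante-cao}, Theorem \ref{T:ind-satis-cons}, and Theorems \ref{T:model-ind-th} and \ref{T:theory-gen-defn}, and the reverse direction by reducing to $\bfP_Q(\ph \mid T_0, \psi) = p$ and invoking Proposition \ref{P:sound-compl-pre-th}. Where you genuinely diverge is in how you verify the hypothesis of Proposition \ref{P:sound-compl-pre-th}, i.e.\ the passage from $\sP \vDash (X, \ph, p)$ to $\sP \vDash (T_0 + \psi, \ph, p)$. The paper goes through an auxiliary inductive theory: it sets $P = \bTh \sP \dhl_{[T_0, \Th \sP]}$ (via Remark \ref{R:th-of-th} and Proposition \ref{P:chop-off-root}), uses Corollary \ref{C:sound-compl-pre-th} to get $T_Q \subseteq \Th \sP$ and hence $X \cao [T_0, \Th \sP]$, reads off $P(\ph \mid X) = p$ from Definition \ref{D:consequence}(iii), and then exploits the lift structure of $P$ to convert this into $P(\ph \mid T_0, \psi) = p$ and thus $\sP \vDash (T_0 + \psi, \ph, p)$. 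You instead stay entirely at the semantic level: Corollary \ref{C:sound-compl-pre-th} gives $\sP \vDash T$ directly, and Proposition \ref{P:model-func} lets you switch between the decompositions $X \equiv T \cup \{\psi\}$ and $T_0 + \psi \equiv T_0 \cup \{\psi\}$ to see that both conditional ratios equal $p$. This is a slightly more elementary route, avoiding Proposition \ref{P:chop-off-root} and Remark \ref{R:th-of-th} altogether, at the cost of handling the measurability and null-set bookkeeping that Proposition \ref{P:model-func} encapsulates. A further point in your favor: your final step, lifting $\bfP_Q(\ph \mid T_0, \psi) = p$ back to $\bfP_Q(\ph \mid X) = p$ via Proposition \ref{P:simple-tau(P)}, deductive transitivity, deductive extension, and logical equivalence, is stated only implicitly in the paper (its ``it suffices to show'' rests on the lift structure of $\bfP_Q$); making that argument explicit is sound and closes a gap a careful reader would otherwise have to fill.
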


\begin{proof}
  Suppose $Q \vdash (X, \ph, p)$. Then $Q$ is consistent and $\bfP_Q(\ph \mid X)
  = p$. By Remark \ref{R:ante-cao}, we have $X \cao [T_0, T_Q]$, where $T_0$ is
  the root of $Q$. Theorem \ref{T:ind-satis-cons} implies $Q$ is connected and
  satisfiable. Suppose $\sP \vDash Q$. Theorems \ref{T:model-ind-th} and
  \ref{T:theory-gen-defn} implies $\sP \vDash \bfP_Q$. Hence, $\sP \vDash (X,
  \ph, p)$.

  For the converse, suppose $Q \vDash (X, \ph, p)$. We need to show that
  $\bfP_Q(\ph \mid X) = p$. By Definition \ref{D:consequence}(ii), we may write
  $T(X) = T + \psi$, where $T \in [T_0, T_Q]$. Hence, it suffices to show
  $\bfP_Q(\ph \mid T_0, \psi) = p$. We will do this using Proposition
  \ref{P:sound-compl-pre-th}.

  Suppose $\sP \vDash Q$ and $\sP \vDash T_0$. Then $T_0 \subseteq \Th \sP$, so
  by Remark \ref{R:th-of-th} and Proposition \ref{P:chop-off-root}, if we define
  $P = \bTh \sP \dhl_ {[T_0, \Th \sP]}$, then $P$ is an inductive theory with
  root $T_0$, and $T_P = \Th \sP$. Corollary \ref{C:sound-compl-pre-th} gives
  $T_Q \subseteq \Th \sP$, so that $T \in [T_0, \Th \sP]$. Hence, $X \cao [T_0,
  \Th \sP]$. Moreover, Definition \ref{D:consequence}(iii) implies $(X, \ph, p)
  \in \bTh \sP$. Hence, $P(\ph \mid X) = p$. But $T(X) = T + \psi$ and $T \in 
  [T_0, T_P]$. Therefore, $P(\ph \mid T_0, \psi) = p$. This implies $\sP \vDash 
  (T_0 + \psi, \ph, p)$, so by Proposition \ref{P:sound-compl-pre-th}, we have
  $\bfP_Q(\ph \mid T_0, \psi) = p$.
\end{proof}

\begin{rmk}\label{R:classic-ind-th-char}
  According to Remark \ref{R:classic-ind-th-defn} and Theorems
  \ref{T:ind-satis-cons} and \ref{T:ind-sound-compl}, a connected and
  satisfiable set $P \subseteq \cF^\IS$ is an inductive theory if and only if $P
  \vDash (X, \ph, p)$ implies $(X, \ph, p) \in P$ for all $(X, \ph, p) \in
  \cF^\IS$.
\end{rmk}

\subsection{Differing roots}\label{S:diff-root}

Proposition \ref{P:chop-off-root} shows that if $P$ is an inductive theory with
root $T_0$, and $T_0' \in [T_0, T_P]$, then $P' = P \dhl_{[T_0', T_P]}$ is an
inductive theory with root $T_0'$. The difference between these two theories is
described entirely by the sentences in $T_0' \setminus T_0$. In $P'$, these
sentences are part of the root, which means they are part of every antecedent.
In $P$, they are part of $T_P$, which means they have probability one under
every antecedent. In either case, we are assuming such a sentence is ``true,''
in one sense or another. It may be tempting, then, to think that these two
theories are effectively the same. In fact, for any model $\sP$, if $\sP \vDash
P$, then $\sP \vDash P'$. The converse, however, is not true. The theory with
the lower root, $P$, has fewer models, as we illustrate below in Proposition
\ref{P:drop-root-fewer-models}. In other words, by placing a hypothesis in $T_P$
rather than $T_0$, we are making a semantically stronger assumption.
Intuitively, the sentences in $T_0$ are only hypothetical postulates. The
inductive statements in $P$ are all assertions about the case in which we are
given $T_0$. A sentence $\ze \in T_P \setminus T_0$ has a different status. In
that case, we have $P(\ze \mid T_0) = 1$. Hence, the inductive theory $P$ is
asserting that $\ze$ is entailed (probabilistically) by $T_0$.

To illustrate this fact, let $\sQ = (\Om, \Si, \bbQ)$ be a complete model and
assume that $\bbQ \ze_\Om \in (0, 1)$ for some $\ze \in \cF$. Define the
probability measure $\bbP$ on $ (\Om, \Si)$ by $\bbP A = \bbQ A \cap \ze_\Om /
\bbQ \ze_\Om$ and let $\sP = (\Om, \Si, \bbP)$. Let $(\Om, \ol \Si, \olbbP)$ be
the completion of $(\Om, \Si, \bbP)$. Note that $\bTh \sP$ is an inductive
theory with root $\Taut$, and $T_0 = T(\ze) \in [\Taut, \Th \sP]$. Let $P = \bTh
\sP \dhl_{[T_0, \Th \sP]}$.

\begin{prop}\label{P:drop-root-fewer-models}
  With notation as above, we have $\sQ \vDash P$, but $\sQ \nvDash \bTh \sP$.
\end{prop}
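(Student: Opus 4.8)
The plan is to treat the two assertions separately, exploiting the single structural fact that $\bTh\sP$ has root $\Taut$ whereas $P = \bTh\sP\dhl_{[T_0,\Th\sP]}$ has the larger root $T_0 = T(\ze)$. Every antecedent occurring in $P$ must be countably axiomatizable over some $T \in [T_0,\Th\sP]$, and hence must prove $\ze$; the antecedent $\Taut$, on the other hand, does not prove $\ze$, since $\ze$ is not a tautology (as $\bbQ\,\ze_\Om < 1$ forces $\ze \not\equiv \top$). This mismatch is precisely what separates the two theories under $\sQ$. The witness for failure will be the inductive statement $(\Taut,\ze,1)$, which lies in $\bTh\sP$ but whose antecedent is too small to belong to $P$.

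For $\sQ \vDash P$, I would take an arbitrary $(X,\ph,p) \in P$. Then $\sP \vDash (X,\ph,p)$ and $X \cao T$ for some $T \in [T_0,\Th\sP]$, so $\ze \in T_0 \subseteq T(X)$, i.e.\ $X \vdash \ze$. Write $X \equiv Y \cup \{\psi\}$ with $Y \subseteq \Th\sP$ and $\olbbP\,\ph_\Om \cap \psi_\Om/\olbbP\,\psi_\Om = p$. First I would record the small observation that, because $\bbP$ is $\bbQ$ conditioned on $\ze_\Om$, a formula $\eta$ satisfies $\sP\vDash\eta$ if and only if $\sQ\vDash(\ze\to\eta)$; in particular $Y' := \{\ze \to \eta \mid \eta \in Y\} \subseteq \Th\sQ$. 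Setting $\psi' = \psi\wedge\ze$, one checks $X \equiv Y' \cup \{\psi'\}$: the forward direction uses $X\vdash\ze$ together with $X\vdash\eta$ for $\eta\in Y$, and the reverse uses $\psi'\vdash\ze$ and $\ze,(\ze\to\eta)\vdash\eta$. Finally, since $\psi'_\Om = \psi_\Om\cap\ze_\Om$, the double equality appearing in the proof of Lemma \ref{L:conseq-defn-simple} is reversible: combining Lemma \ref{L:cond-compl} (which gives $\olbbP A = \bbQ\,(A\cap\ze_\Om)/\bbQ\,\ze_\Om$ for $A\in\ol\Si$) with Proposition \ref{P:model-func} yields $\bbQ\,\ph_\Om\cap\psi'_\Om/\bbQ\,\psi'_\Om = \olbbP\,\ph_\Om\cap\psi_\Om/\olbbP\,\psi_\Om = p$. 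Hence $\sQ\vDash(X,\ph,p)$.

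For $\sQ \nvDash \bTh\sP$, I would exhibit $(\Taut,\ze,1)$. Since $\bbP\,\ze_\Om = \bbQ\,(\ze_\Om\cap\ze_\Om)/\bbQ\,\ze_\Om = 1$, we have $\sP\vDash\ze$, and taking $Y=\emp$, $\psi=\top$ (so that $\{\top\}\equiv\Taut$ and $\olbbP\,\ze_\Om\cap\top_\Om/\olbbP\,\top_\Om = 1$) shows $(\Taut,\ze,1)\in\bTh\sP$. On the other hand, any decomposition $\Taut\equiv Y\cup\{\psi\}$ with $Y\subseteq\Th\sQ$ forces $T(Y\cup\{\psi\}) = \Taut$, so $\psi$ is a tautology and $\psi_\Om = \Om$ by Remark \ref{R:impl-subset}; then $\bbQ\,\ze_\Om\cap\psi_\Om/\bbQ\,\psi_\Om = \bbQ\,\ze_\Om < 1$, so $\sQ\nvDash(\Taut,\ze,1)$. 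As $(\Taut,\ze,1)\in\bTh\sP$, this gives $\sQ\nvDash\bTh\sP$.

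The main obstacle is the probability transfer in the first part: one must pass the conditional-probability identity back and forth between the conditioned model $\sP$ and the original model $\sQ$, keeping careful track of which $\si$-algebra ($\ol\Si$ for $\sP$ versus $\Si$ for $\sQ$) each set belongs to. Lemma \ref{L:cond-compl} is exactly the tool for this, and the key point making it work is that the chosen conditioning formula $\psi' = \psi\wedge\ze$ incorporates $\ze$, so that $\psi'_\Om = \psi_\Om\cap\ze_\Om \subseteq \ze_\Om$ and the relevant intersections land in $\Si$. The only other point requiring care is the bookkeeping that replaces the witnesses $Y\subseteq\Th\sP$ by $Y'\subseteq\Th\sQ$, which becomes routine once the equivalence $\sP\vDash\eta \iff \sQ\vDash(\ze\to\eta)$ is established.
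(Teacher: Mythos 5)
Your proof is correct, and it rests on the same two pillars as the paper's own argument: the witness $(\Taut, \ze, 1)$ for the failure $\sQ \nvDash \bTh \sP$, and Lemma \ref{L:cond-compl} for transferring conditional probabilities between $\olbbP$ and $\bbQ$ through a conditioning formula that absorbs $\ze$. The organizational difference is in the positive half. The paper verifies satisfaction only for statements $P(\ph \mid T_0, \psi) = p$ in $P \dhl_{T_0}$, where the decomposition $T_0 \cup \{\psi\} \equiv \emp \cup \{\ze \wedge \psi\}$ is immediate, and then gets from $P \dhl_{T_0}$ to all of $P$ abstractly: by Proposition \ref{P:chop-off-root}, $P = \bfP(P \dhl_{T_0})$, so minimality of the generated theory (Theorem \ref{T:theory-defn}) and the fact that $\bTh \sQ$ is an inductive theory containing $P \dhl_{T_0}$ give $P \subseteq \bTh \sQ$. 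You instead verify satisfaction for an arbitrary $(X, \ph, p) \in P$ directly, which forces you to manufacture a witnessing decomposition for a general antecedent; your equivalence $\sP \vDash \eta$ iff $\sQ \vDash \ze \to \eta$, the replacement $Y' = \{\ze \to \eta \mid \eta \in Y\}$, and the choice $\psi' = \psi \wedge \ze$ do by hand, semantically, exactly the work that the pre-theory/lift machinery does for the paper. Your route buys self-containedness (no appeal to Proposition \ref{P:chop-off-root}); the paper's buys brevity, delegating the general-antecedent bookkeeping to an already-proven structural result. For the negative half the arguments coincide: your explicit check that every decomposition of $\Taut$ with $Y \subseteq \Th \sQ$ forces $\psi$ to be a tautology (hence $\psi_\Om = \Om$ and the ratio equals $\bbQ \ze_\Om < 1$) just spells out what the paper leaves implicit; one could equally quote Proposition \ref{P:model-func} with the decomposition $\Taut \equiv \emp \cup \{\top\}$.
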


\begin{proof}
  Since $\sP \vDash (\Taut, \ze, 1)$, but $\bbQ \ze_\Om < 1$, we have $\sQ
  \nvDash \bTh \sP$. Suppose $P(\ph \mid T_0, \psi) = p$. Then $\olbbP \ph_\Om
  \cap \psi_\Om / \olbbP \psi_\Om = p$. By Lemma \ref{L:cond-compl},
  \[
    p = \frac{
      \bbP \ph_\Om \cap \psi_\Om \cap \ze_\Om
    }{
      \bbP \psi_\Om \cap \ze_\Om
    } = \frac{
      \bbQ \ph_\Om \cap \psi_\Om \cap \ze_\Om
    }{
      \bbQ \psi_\Om \cap \ze_\Om
    } = \frac{
      \bbQ \ph_\Om \cap (\psi \wedge \ze)_\Om
    }{
      \bbQ (\psi \wedge \ze)_\Om
    }.
  \]
  Since $T_0 \cup \{\psi\} \equiv \emp \cup \{\ze \wedge \psi\}$ and $\sQ \vDash
  \emp$, we have $\sQ \vDash (T_0 \cup \{\psi\}, \ph, p)$. This shows that $\sQ
  \vDash P \dhl_{T_0}$. By Proposition \ref{P:chop-off-root}, it follows that
  $\sQ \vDash P$.
\end{proof}

\subsection{The semantics of inductive conditions}

Remark \ref{R:classic-ind-th-char} gives an entirely semantic characterization
of inductive theories. Consequently, inductive conditions can also be
characterized semantically. We are therefore ready to extend the notions of
satisfiability and consequence to inductive conditions.

We say that a model $\sP$ \emph{satisfies} an inductive condition $\cC$ if $\sP
\vDash P$ for some $P \in \cC$. An inductive condition is \emph{satisfiable} if
$\sP \vDash \cC$ for some model $\sP$. The following proposition shows that this
notion of satisfiability is an extension of our previous definition.
  \index{satisfiable}%
  \symindex{$\sP \vDash \cC$}%

\begin{prop}\label{P:set-cond-satis}
  Let $Q \subseteq \cF^\IS$ be connected and let $\sP$ be a model. Then $\sP
  \vDash Q$ if and only if $\sP \vDash \cC_Q$.
\end{prop}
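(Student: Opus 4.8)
The plan is to prove both implications directly, with the forward direction resting on the machinery already developed for generating inductive theories from models.

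The reverse direction is immediate from the definitions. If $\sP \vDash \cC_Q$, then by definition there exists an inductive theory $P \in \cC_Q$ with $\sP \vDash P$. Membership in $\cC_Q$ requires $Q \subseteq P$, so every inductive statement in $Q$ lies in $P$; since $\sP \vDash P$ means $\sP$ satisfies each statement of $P$, it satisfies each statement of $Q$, giving $\sP \vDash Q$.

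For the forward direction, suppose $\sP \vDash Q$. First I would record that $Q$ is consistent: it is satisfiable (as $\sP \vDash Q$) and connected by hypothesis, so Theorem \ref{T:ind-satis-cons} applies. Let $T_0$ denote the root of $Q$. By Theorem \ref{T:model-ind-th}, the set $\bTh \sP$ is a complete inductive theory, and $\sP \vDash \bTh \sP$ holds by the very definition of $\bTh \sP$; moreover $\sP \vDash Q$ gives $Q \subseteq \bTh \sP$. The key step is then to pass from $\bTh \sP$, whose root is $\Taut$, to an inductive theory whose root is $T_0$, without losing satisfiability. The right tool is Proposition \ref{P:cons-match-root}, applied with $P = \bTh \sP$: since $Q$ is consistent with root $T_0$ and $Q \subseteq \bTh \sP$, it yields that $\bTh \sP \dhl_{T_0}$ is a pre-theory with root $T_0$ and that $Q \subseteq \bfP(\bTh \sP \dhl_{T_0}) \subseteq \bTh \sP$. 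Setting $P' = \bfP(\bTh \sP \dhl_{T_0})$, we obtain an inductive theory with root $T_0$ that contains $Q$, so $P' \in \cC_Q$. Finally, because $P' \subseteq \bTh \sP$ and $\sP \vDash \bTh \sP$, every inductive statement of $P'$ is satisfied by $\sP$, so $\sP \vDash P'$ and therefore $\sP \vDash \cC_Q$.

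The main subtlety to watch for — and the reason one cannot simply invoke Proposition \ref{P:chop-off-root} here — is that $\sP \vDash Q$ does not force $T_0 \subseteq \Th \sP$: the root formulas of $Q$ are merely hypothetical postulates and need not have probability one in a model of $Q$, which is precisely the phenomenon described in Section \ref{S:diff-root}. Proposition \ref{P:cons-match-root} is built to accommodate exactly this, since it restricts $\bTh \sP$ to antecedents countably axiomatizable over $T_0$ rather than requiring $T_0$ to sit inside $\Th \sP = T_{\bTh \sP}$; this is the step I expect to require the most care.
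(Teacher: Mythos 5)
Your proof is correct and follows essentially the same route as the paper: the reverse direction is identical, and the forward direction likewise runs through Theorem \ref{T:ind-satis-cons} (consistency), Theorem \ref{T:model-ind-th} ($\bTh \sP$ is an inductive theory containing $Q$), and the exhibition of a root-$T_0$ inductive theory inside $\bTh \sP$ that witnesses $\sP \vDash \cC_Q$. The only difference is the witness: the paper takes the minimal theory $\bfP_Q$, citing Theorem \ref{T:theory-gen-defn} together with the previously recorded fact that $\bfP_Q \in \cC_Q$, whereas you take $\bfP(\bTh \sP \dhl_{T_0})$ via Proposition \ref{P:cons-match-root} --- the very proposition underlying the paper's cited theorem --- and your closing observation that Proposition \ref{P:chop-off-root} is unavailable because $\sP \vDash Q$ does not force $T_0 \subseteq \Th \sP$ is exactly right.
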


\begin{proof}
  Suppose $\sP \vDash \cC_Q$. Choose $P \in \cC_Q$ such that $\sP \vDash P$.
  Since $Q \subseteq P$, we have $\sP \vDash Q$. Conversely, suppose $\sP \vDash
  Q$. Theorem \ref{T:ind-satis-cons} implies $Q$ is consistent, so we may define
  $\bfP_\cC$. Since $Q \subseteq \bTh \sP$ and Theorem \ref{T:model-ind-th}
  implies $\bTh \sP$ is an inductive theory, we have $\bfP_Q \subseteq \bTh
  \sP$. That is, $\sP \vDash \bfP_Q$. But $\bfP_Q \in \cC_Q$. Hence, $\sP \vDash
  \cC_Q$.
\end{proof}

The next two results show that both the consistency of $\cC$ and the deductive
theory $T_\cC$ have semantic characterizations.

\begin{thm}\label{T:ind-satis-cons-IC}
  An inductive condition is consistent if and only if it is satisfiable.
\end{thm}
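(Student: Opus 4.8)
The plan is to unwind the two definitions and observe that the statement follows almost immediately from Theorem \ref{T:ind-th-model}. Recall that an inductive condition $\cC$ is a collection of inductive theories with a common root, that $\cC$ is \emph{consistent} precisely when $\cC \ne \emp$, and that $\cC$ is \emph{satisfiable} precisely when there is a model $\sP$ with $\sP \vDash \cC$, where the latter means $\sP \vDash P$ for some $P \in \cC$. So the proof reduces to connecting ``$\cC$ is nonempty'' with ``some member of $\cC$ is satisfied by a model.'' The nontrivial ingredient is already in hand: by Theorem \ref{T:ind-th-model}, every inductive theory is satisfiable.

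First I would handle the forward direction. Assuming $\cC$ is consistent, so that $\cC \ne \emp$, I would choose some $P \in \cC$. Since $\cC$ consists of inductive theories, $P$ is an inductive theory, and Theorem \ref{T:ind-th-model} then furnishes a model $\sP$ with $\sP \vDash P$. Because $P \in \cC$, this gives $\sP \vDash \cC$ by the definition of a model satisfying a condition, so $\cC$ is satisfiable.

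For the converse, I would assume $\cC$ is satisfiable and choose a model $\sP$ with $\sP \vDash \cC$. By definition this means there exists $P \in \cC$ with $\sP \vDash P$. The mere existence of such a $P$ witnesses that $\cC \ne \emp$, and hence $\cC$ is consistent.

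There is no real obstacle here; the entire weight of the argument rests on Theorem \ref{T:ind-th-model}, which has already been proved. The only point requiring minor care is to keep the nested quantifiers in the definition of $\sP \vDash \cC$ straight, namely that satisfiability of $\cC$ asserts the existence of both a model and a member $P \in \cC$ that it satisfies, from which consistency (nonemptiness) is immediate.
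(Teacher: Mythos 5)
Your proof is correct and follows essentially the same route as the paper's: the nontrivial direction rests on Theorem \ref{T:ind-th-model} (every inductive theory is satisfiable), and the other direction is immediate from the definitions of consistency (nonemptiness) and of $\sP \vDash \cC$.
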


\begin{proof}
  Let $\cC$ be an inductive condition. If $\cC$ is satisfiable, then by
  definition, it is nonempty, and therefore consistent. For the converse,
  suppose $\cC$ is consistent. Choose $P \in \cC$. By Theorem 
  \ref{T:ind-th-model}, we may choose a model $\sP$ such that $\sP \vDash P$.
  Hence, $\sP \vDash \cC$, and $\cC$ is satisfiable.
\end{proof}

\begin{prop}
  Let $\cC$ be a satisfiable inductive condition, so that $\cC$ is also
  consistent. Let $T_0$ be the root of $\cC$. Then the following are equivalent:
  \begin{enumerate}[(i)]
    \item $\th \in T_\cC$,
    \item for all models $\sP$, if $\sP \vDash \cC$ and $\sP \vDash T_0$, then
          $\sP \vDash \th$.
  \end{enumerate}
\end{prop}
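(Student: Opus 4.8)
The plan is to reduce the statement for the inductive condition $\cC$ to the already-established Corollary \ref{C:sound-compl-pre-th}, applied separately to each member $P \in \cC$, and then to assemble the pieces using the description of $T_\cC$ as an intersection. The two ingredients I would invoke are Proposition \ref{P:ded-th-ind-cond}, which gives $T_\cC = \bigcap\{T_P \mid P \in \cC\}$, and the observation that every $P \in \cC$ is an inductive theory with root $T_0$, hence connected and (by Theorem \ref{T:ind-th-model}) satisfiable, so that Corollary \ref{C:sound-compl-pre-th} is available with $Q = P$.

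For the direction (i) $\Rightarrow$ (ii), suppose $\th \in T_\cC$. By Proposition \ref{P:ded-th-ind-cond}, $\th \in T_P$ for every $P \in \cC$. Now let $\sP$ be any model with $\sP \vDash \cC$ and $\sP \vDash T_0$. By the definition of a model satisfying an inductive condition, there is some $P \in \cC$ with $\sP \vDash P$. Applying the implication (i) $\Rightarrow$ (ii) of Corollary \ref{C:sound-compl-pre-th} to $Q = P$ (whose root is $T_0$), the fact that $\th \in T_P$, together with $\sP \vDash P$ and $\sP \vDash T_0$, yields $\sP \vDash \th$.

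For the direction (ii) $\Rightarrow$ (i), I would fix an arbitrary $P \in \cC$ and show $\th \in T_P$; since $P$ is arbitrary, Proposition \ref{P:ded-th-ind-cond} then gives $\th \in T_\cC$. To show $\th \in T_P$, I verify the condition (ii) of Corollary \ref{C:sound-compl-pre-th} for $Q = P$. So let $\sP$ be a model with $\sP \vDash P$ and $\sP \vDash T_0$. Since $P \in \cC$ and $\sP \vDash P$, we have $\sP \vDash \cC$, and together with $\sP \vDash T_0$ the hypothesis (ii) of the present proposition gives $\sP \vDash \th$. Thus condition (ii) of Corollary \ref{C:sound-compl-pre-th} holds for $P$, and its implication (ii) $\Rightarrow$ (i) yields $\th \in T_P$.

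There is no serious obstacle here; the argument is essentially a bookkeeping reduction. The only points requiring care are that each $P \in \cC$ must be checked to satisfy the hypotheses of Corollary \ref{C:sound-compl-pre-th} (connectedness is automatic for inductive theories, and satisfiability comes from Theorem \ref{T:ind-th-model}), and that the phrase ``$\sP \vDash \cC$'' must be unpacked correctly in each direction---as ``$\sP \vDash P$ for some $P \in \cC$'' when it appears as a hypothesis, and as being \emph{implied} by ``$\sP \vDash P$ for a particular $P \in \cC$'' when it must be produced as a conclusion.
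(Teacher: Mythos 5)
Your proposal is correct and follows essentially the same route as the paper's own proof: both directions reduce to Corollary \ref{C:sound-compl-pre-th} applied to individual members $P \in \cC$, glued together via Proposition \ref{P:ded-th-ind-cond} ($T_\cC = \bigcap\{T_P \mid P \in \cC\}$). The only cosmetic difference is that the paper proves the direction (ii) $\Rightarrow$ (i) by contradiction (picking $P$ with $\th \notin T_P$ and extracting a countermodel), whereas you argue it directly by verifying the corollary's semantic condition for an arbitrary $P \in \cC$; the two are logically interchangeable.
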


\begin{proof}
  Suppose $\th \in T_\cC$. Assume $\sP \vDash \cC$ and $\sP \vDash T_0$. Choose
  $P \in \cC$ such that $\sP \vDash P$. Since $T_\cC = \bigcap \{T_P \mid P \in
  \cC\}$, we have $\th \in T_P$. From Corollary \ref{C:sound-compl-pre-th}, it
  follows that $\sP \vDash \th$.

  Now suppose that for all models $\sP$, if $\sP \vDash \cC$ and $\sP \vDash
  T_0$, then $\sP \vDash \th$. Assume $\th \notin T_\cC$. Then we may choose $P
  \in \cC$ such that $\th \notin T_P$. By Corollary \ref{C:sound-compl-pre-th},
  we may choose a model $\sP$ such that $\sP \vDash P$, $\sP \vDash T_0$, and
  $\sP \nvDash \th$. Since $\sP \vDash P$ and $P \in \cC$, we have $\sP \vDash
  \cC$. Hence, by our initial supposition, $\sP \vDash \th$, a contradiction.
\end{proof}

Having characterized $T_\cC$ in terms of semantics, we can now extend the
consequence relation to inductive conditions.

\begin{defn}\label{D:consequence-IC}
  We say that $(X, \ph, p) \in \cF^\IS$ is a \emph{consequence} of an inductive
  condition $\cC$, or that $\cC$ \emph{entails} $(X, \ph, p)$, which we denote
  by $\cC \vDash (X, \ph, p)$, if
  \begin{enumerate}[(i)]
    \item $\cC$ is satisfiable,
    \item $X \cao [T_0, T_\cC]$, where $T_0$ is the root of $\cC$, and
    \item $\sP \vDash \cC$ implies $\sP \vDash (X, \ph, p)$, for all models
          $\sP$.
  \end{enumerate}
\end{defn}
  \index{consequence relation}%
  \symindex{$\cC \vDash (X, \ph, p)$}%

From Proposition \ref{P:set-cond-satis}, it follows that for any connected $Q
\subseteq \cF^\IS$, we have $Q \vDash (X, \ph, p)$ if and only if $\cC_Q \vDash 
(X, \ph, p)$. Hence, Definition \ref{D:consequence-IC} is a generalization of
Definition \ref{D:consequence}.

As with Definition \ref{D:consequence}, we cannot remove (ii). In fact, in
Section \ref{S:D-conseq-need2}, we provide an example where (i) and (iii) above
are satisfied, but (ii) fails because $X$ is too large. (See Remark 
\ref{R:D-conseq-need2}.)

\begin{thm}[Soundness and completeness for conditions]
    \index{soundness!for inductive conditions}%
    \index{completeness!for inductive conditions}%
  \label{T:ind-sound-compl-IC}
  Let $\cC$ be an inductive condition and $(X, \ph, p) \in \cF^\IS$. Then $\cC
  \vdash (X, \ph, p)$ if and only if $\cC \vDash (X, \ph, p)$.
\end{thm}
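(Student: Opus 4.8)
The plan is to reduce the condition-level statement to the already-established set-level result, Theorem \ref{T:ind-sound-compl}, by passing through the generated theory $\bfP_\cC$ and the pre-theory $P_0 := \bigcap \cC^0$ that defines it, where $\cC^0 = \{P \dhl_{T_0} \mid P \in \cC\}$ and $T_0$ is the root of $\cC$. Throughout I assume $\cC$ is consistent, which by Theorem \ref{T:ind-satis-cons-IC} is the same as satisfiable; this makes the prerequisite clauses of $\vdash$ and $\vDash$ agree, and by Corollary \ref{C:intersect-cl} and Remark \ref{R:ante-cao} it makes $\bfP_\cC = \bfP(P_0)$ a well-defined inductive theory with root $T_0$, $\tau(P_0) = T_\cC$. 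For soundness, suppose $\cC \vdash (X, \ph, p)$, so $\bfP_\cC(\ph \mid X) = p$. Clause (i) of Definition \ref{D:consequence-IC} is Theorem \ref{T:ind-satis-cons-IC}, and clause (ii), $X \cao [T_0, T_\cC]$, is Remark \ref{R:ante-cao} applied to $\bfP_\cC$. For clause (iii), if $\sP \vDash \cC$ then $\sP \vDash P$ for some $P \in \cC$; since $\bfP_\cC \subseteq \bigcap \cC \subseteq P$ by Theorem \ref{T:theory-gen-IC-defn}, the statement $(X, \ph, p) \in \bfP_\cC$ is satisfied by $\sP$, so $\cC \vDash (X, \ph, p)$.

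Completeness is the substance: assuming $\cC \vDash (X, \ph, p)$ I must show $\bfP_\cC(\ph \mid X) = p$, which requires both the correct \emph{value} and the more delicate \emph{existence} of this probability. The value is cheap. If $\bfP_\cC(\ph \mid X) = p'$ for some $p'$, then as in soundness $(X, \ph, p') \in P$ for every $P \in \cC$; fixing one $P \in \cC$ and a model $\sP \vDash P$ (which exists by Theorem \ref{T:ind-th-model}), clause (iii) forces $\sP \vDash (X, \ph, p)$ as well, so Corollary \ref{C:model-func} gives $p' = p$.

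The real work is existence, and my strategy is to argue member-by-member rather than through completions of $\bfP_\cC$. Fix $P \in \cC$ and treat it as a set $Q = P$: it is connected, satisfiable, with root $T_0$; clause (ii) gives $X \cao [T_0, T_\cC] \subseteq [T_0, T_P]$ since $T_\cC \subseteq T_P$; and every model of $P$ is a model of $\cC$, so clause (iii) for $\cC$ supplies clause (iii) of Definition \ref{D:consequence} for $P$. Hence $P \vDash (X, \ph, p)$, and Theorem \ref{T:ind-sound-compl} yields $P \vdash (X, \ph, p)$, i.e. $P(\ph \mid X) = p$, using $\bfP_P = P$ (Theorem \ref{T:theory-gen-defn}). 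To lift this uniform information, write $T(X) = T + \psi$ with $T \in [T_0, T_\cC]$ and a single $\psi \in \cF$ (clause (ii)). For each $P \in \cC$ we have $X \in \ante P$, so Proposition \ref{P:any-psi} (applied to $P = \bfP(P \dhl_{T_0})$, with $T \in [T_0, T_P]$) gives $\psi \in \AF(P \dhl_{T_0})$, and the well-definedness of the lift (Proposition \ref{P:lift-well-defined}) converts $P(\ph \mid X) = p$ into $(T_0 + \psi, \ph, p) \in P \dhl_{T_0}$. Since this holds for every member, $(T_0 + \psi, \ph, p) \in \bigcap \cC^0 = P_0$, so $\psi \in \AF(P_0)$ and $X \in \cG\cA(P_0) = \ante \bfP_\cC$; the lift then gives $\bfP_\cC(\ph \mid X) = P_0(\ph \mid T_0, \psi) = p$, whence $\cC \vdash (X, \ph, p)$.

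I expect the existence half of completeness to be the main obstacle, and the key reason I avoid completions of $\bfP_\cC$ is that an indeterminate $\cC$ can admit completions of $\bfP_\cC$ agreeing with no member of $\cC$ (as already happens with two incompatible complete theories sharing their marginals), so clause (iii) controls nothing about the models such completions generate, and the rule of inductive extension cannot be invoked directly. Processing one $P \in \cC$ at a time keeps every auxiliary model inside the hypothesis of clause (iii), and the intersection-and-lift bookkeeping of the last paragraph is exactly what promotes the per-member equalities $P(\ph \mid X) = p$ to genuine definedness of $\bfP_\cC(\ph \mid X)$ over $\bigcap \cC^0$.
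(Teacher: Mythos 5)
Your proof is correct and follows essentially the same route as the paper's: soundness via $\bfP_\cC \subseteq \bigcap \cC \subseteq P$ together with Remark \ref{R:ante-cao} and Theorem \ref{T:ind-satis-cons-IC}, and completeness by fixing an arbitrary $P \in \cC$, deducing $P \vDash (X,\ph,p)$ from clause (iii), applying the set-level result to get $P(\ph \mid X) = p$, and then pushing down to $P \dhl_{T_0}$, intersecting over $\cC^0$, and lifting back up to $\bfP_\cC$. Your version merely makes explicit the citations (Propositions \ref{P:any-psi} and \ref{P:lift-well-defined}) that the paper leaves implicit in the step from $P(\ph \mid X) = p$ to $P(\ph \mid T_0, \psi) = p$, and adds a redundant but harmless uniqueness-of-value argument.
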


\begin{proof}
  Suppose $\cC \vdash (X, \ph, p)$. Then $\cC$ is consistent and $\bfP_\cC(\ph
  \mid X) = p$. By Remark \ref{R:ante-cao}, we have $X \cao [T_0, T_\cC]$, where
  $T_0$ is the root of $\cC$. Theorem \ref{T:ind-satis-cons-IC} implies $\cC$ is
  satisfiable. Suppose $\sP \vDash \cC$. Choose $P \in \cC$ such that $\sP
  \vDash P$. Since $\bfP_\cC \subseteq P$, we have $P(\ph \mid X) = p$. By
  Remark \ref{R:classic-ind-th-char}, this implies $P \vDash (X, \ph, p)$. But
  $T_\cC \subseteq T_P$, so $X \cao [T_0, T_P]$. Therefore, Definition
  \ref{D:consequence}(iii) gives $\sP \vDash (X, \ph, p)$.

  For the converse, suppose $\cC \vDash (X, \ph, p)$. We need to show that
  $\bfP_\cC(\ph \mid X) = p$. By Definition \ref{D:consequence-IC}(ii), we may
  write $T(X) = T + \psi$, where $T \in [T_0, T_\cC]$. Hence, it suffices to
  show that $\bfP_\cC(\ph \mid T_0, \psi) = p$.

  Let $P \in \cC$ be arbitrary. If $\sP \vDash P$, then $\sP \vDash \cC$, so by
  supposition, $\sP \vDash (X, \ph, p)$. Since $T_\cC \subseteq T_P$, we have
  $X \cao [T_0, T_P]$. Thus, $P \vDash (X, \ph, p)$. By Remark 
  \ref{R:classic-ind-th-char}, this gives $P(\ph \mid X) = p$. But $T(X) = T +
  \psi$, where $T \in [T_0, T_\cC] \subseteq [T_0, T_P]$. Therefore, $P(\ph
  \mid T_0, \psi) = p$. Since $P$ was arbitrary, it follows that $(T_0 + \psi,
  \ph, p) \in \bigcap \cC^0 \subseteq \bfP_\cC$, so $\bfP_\cC(\ph \mid T_0,
  \psi) = p$.
\end{proof}

\section{Counterexamples and resolutions I}\label{S:Examples1}

In this section, we present several examples that serve to illustrate the
necessity of rules (R8) and (R9). More specifically, entire sets, which are
closed under only (R1)--(R7), exhibit a number of pathological behaviors. These
behaviors were alluded to in Chapter \ref{Ch:prop-calc}. In this section, we
provide concrete examples.

To develop these examples, we must expand our tools for creating inductive
theories and entire sets. We do this in Sections \ref{S:every-prob-sp} and 
\ref{S:Dynkin-sp}.

\subsection{Every probability space is a model}\label{S:every-prob-sp}

A model is a particular type of probability space, namely one in which $\Om$ is
a set of strict models. Theorem \ref{T:prob-sp-model-iso} below shows that every
probability space, regardless of $\Om$, is isomorphic to a model. More
specifically, for every probability space, there is an appropriate choice of
$PV$ such that the given probability space is isomorphic to a model in
$\cF(PV)$.

Later, in Theorem \ref{T:prob-model-iso}, we will give a version of this result
in predicate logic. Theorem \ref{T:prob-model-iso} will not only be concerned
with an arbitrary probability space, but also with an arbitrary collection of
random variables on that space.

\begin{thm}\label{T:prob-sp-model-iso}
  Let $PV$ be a given set of propositional variables and let $\cF = \cF(PV)$.
  Let $(S, \Ga, \nu)$ be an arbitrary probability space. Then $(S, \Ga, \nu)$
  has a subspace that is isomorphic to a model in $\cF$. If $\card(PV) \ge \card
  (\Ga)$, then $(S, \Ga, \nu)$ itself is isomorphic to a model. In particular,
  every probability space is isomorphic to a model in $\cF(PV)$ for an
  appropriate choice of $PV$.
\end{thm}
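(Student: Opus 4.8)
I need to show that an arbitrary probability space $(S, \Ga, \nu)$ has a subspace isomorphic to a model in $\cF = \cF(PV)$; that when $\card(PV) \ge \card(\Ga)$ the whole space is isomorphic to a model; and that, by choosing $PV$ large enough, every probability space is isomorphic to a model. Recall that a \emph{model} is a probability space $(\Om, \Si, \bbP)$ whose underlying set $\Om \subseteq \B^{PV}$ is a set of strict models, and that ``isomorphic'' here means isomorphic as measure spaces in the sense of the Boolean-measure-space definition from Section~\ref{S:meas-spaces}. The natural tool is the criterion established there: to exhibit an isomorphism from $(S, \Ga, \nu)$ to a measure space $(\Om, \Si, \bbP)$, it suffices to produce a measurable $f \colon S \to \Om$ with $\bbP = \nu \circ f^{-1}$ such that every $A \in \Ga$ satisfies $f^{-1}(U) = A$ $\nu$-a.e.~for some $U \in \Si$.

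\medskip
\noindent\textbf{Plan of the construction.} The plan is to encode the events of $\Ga$ as propositional variables. First I would treat the case $\card(PV) \ge \card(\Ga)$. Choose an injection $A \mapsto \bfr_A$ from $\Ga$ into $PV$, and define $f \colon S \to \B^{PV}$ by declaring, for $s \in S$,
\[
  (f s)(\bfr) = \begin{cases}
    1_A(s) & \text{if } \bfr = \bfr_A \text{ for some } A \in \Ga,\\
    0 & \text{otherwise.}
  \end{cases}
\]
Set $\Om = f(S)$, let $(S, \Ga, \nu)$ push forward under $f$ to give $\Si = \{B \subseteq \Om \mid f^{-1}(B) \in \Ga\}$ and $\bbP = \nu \circ f^{-1}$, using the measure-space-image construction from the end of Section~\ref{S:meas-spaces}. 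Then $\sP = (\Om, \Si, \bbP)$ is a model since $\Om$ is a set of strict models. The essential point is that for each $A \in \Ga$, the propositional variable $\bfr_A$, viewed as a formula, satisfies $s \in A \iff (fs)(\bfr_A) = 1 \iff fs \tDash \bfr_A$; hence $f^{-1}((\bfr_A)_\Om) = A$ exactly (not merely a.e.). This shows $f$ induces an isomorphism via the criterion cited above, so $(S,\Ga,\nu) \simeq \sP$.

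\medskip
\noindent\textbf{The subspace case.} When $\card(PV) < \card(\Ga)$, I cannot inject all of $\Ga$ into $PV$, so I would instead inject $PV$ into $\Ga$: choose an injection $\bfr \mapsto A_\bfr$ sending each propositional variable to some event, and define $f \colon S \to \B^{PV}$ by $(fs)(\bfr) = 1_{A_\bfr}(s)$. Taking $\Om = f(S)$ with the image $\si$-algebra and pushforward measure again yields a model $\sP$. Now $f^{-1}((\bfr)_\Om) = A_\bfr$ for each $\bfr \in PV$, and more generally $f^{-1}(\ph_\Om) \in \si(\{A_\bfr \mid \bfr \in PV\}) =: \Ga'$ for every $\ph \in \cF$, using that $\ph \mapsto \ph_\Om$ commutes with $\neg$ and countable $\bigwedge$. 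The image $\si$-algebra on $\Om$ is exactly $\{\ph_\Om \mid \ph \in \cF\}$ intersected appropriately, so $f$ induces an isomorphism between the measure \emph{subspace} $(S, \Ga', \nu|_{\Ga'})$ and $\sP$. Since $\Ga' \subseteq \Ga$, this $(S, \Ga', \nu|_{\Ga'})$ is the desired subspace. The final ``in particular'' clause then follows from the displayed middle claim: given any probability space, simply pick $PV$ with $\card(PV) \ge \card(\Ga)$ and apply the first case.

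\medskip
\noindent\textbf{Anticipated obstacle.} I expect the main subtlety to be verifying that the image $\si$-algebra $\Si$ on $\Om = f(S)$ consists precisely of sets of the form $\ph_\Om$ (up to the pushforward), so that the surjectivity half of the isomorphism criterion holds, i.e.~that every $B \in \Si$ pulls back to something in $\Ga$ (or $\Ga'$) and conversely every relevant measurable set on $S$ arises as $f^{-1}(\ph_\Om)$. In the full-space case this is clean because each $A \in \Ga$ has its own variable $\bfr_A$; in the subspace case the content is that $\{\ph_\Om \mid \ph \in \cF\}$ is exactly the $\si$-algebra generated by the cylinder sets restricted to $\Om$, which is where Proposition~\ref{P:Boolean-funcs} (every Boolean function is represented by a formula) does the real work. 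One must also confirm that restricting to $\Om = f(S)$ rather than all of $\B^{PV}$ does not disturb measurability, but this is handled automatically by the measure-space-image definition, which takes $\Si = \{B \subseteq \Om \mid f^{-1}(B) \in \Ga\}$ and thereby makes $f$ measurable by construction.
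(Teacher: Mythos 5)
Your treatment of the case $\card(PV) \ge \card(\Ga)$ is correct, and it is essentially the paper's own argument in transposed form: the paper fixes a surjection $G: PV \to \Ga$ and extends it over formulas, while you fix an injection $A \mapsto \bfr_A$ of $\Ga$ into $PV$; in both cases the decisive fact is that $s \in A$ if and only if $fs \tDash \bfr_A$, so each event is \emph{exactly} the pullback of a definable set, and the isomorphism criterion from Section \ref{S:meas-spaces} applies. One genuine (minor) difference: the paper places its model on all of $\B^{PV}$ with $\Si = \cB^{PV} = \{\ph_\Om \mid \ph \in \cF\}$, which obliges it to prove that $\ph \equiv \psi$ implies $G\ph = G\psi$ before the map $\ph_\Om \mapsto G\ph$ is well defined; your use of the measure-space image sidesteps that lemma entirely, since you never choose formula representatives.

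The subspace case, however, contains a genuine error, located exactly at the spot you flagged as the anticipated obstacle. You take $\sP = (\Om, \Si, \bbP)$ to be the measure space image of the \emph{full} space $(S, \Ga, \nu)$, so that $\Si = \{B \subseteq \Om \mid f^{-1}(B) \in \Ga\}$, and then assert that $f$ induces an isomorphism from $(S, \Ga', \nu|_{\Ga'})$ to $\sP$, where $\Ga' = \si(\{A_\bfr \mid \bfr \in PV\})$. This fails on both ends. First, $f$ need not be measurable as a map from $(S, \Ga')$ to $(\Om, \Si)$: membership $B \in \Si$ only guarantees $f^{-1}(B) \in \Ga$, not $f^{-1}(B) \in \Ga'$, so the isomorphism criterion cannot even be invoked. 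Second, the conclusion itself is false in general, because $\Si$ consists of \emph{all} unions of $f$-fibers whose preimage happens to lie in $\Ga$, and this can be strictly richer, modulo null sets, than anything $\Ga'$ can match.

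Concretely, let $PV = \{\bfr_t \mid t \in [0,1]\}$, let $(S_1, \Ga_1, \nu_1)$ be a probability space with $\card(\Ga_1) > 2^{\aleph_0}$ (an uncountable product of fair coins will do), and put $S = S_1 \times [0,1]$, $\Ga = \Ga_1 \otimes \cB([0,1])$, $\nu = \nu_1 \times \la$, with $A_{\bfr_t} = S_1 \times \{t\}$; note $\card(PV) < \card(\Ga)$, so this is squarely your subspace regime. Then $\Ga'$ is the countable--cocountable $\si$-algebra in the second coordinate, so $\nu|_{\Ga'}$ takes only the values $0$ and $1$ and the Boolean measure algebra of $(S, \Ga', \nu|_{\Ga'})$ has two elements. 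But the fibers of $f$ are the sets $S_1 \times \{t\}$, and $f^{-1}(B) = S_1 \times E$ lies in $\Ga$ precisely when $E \in \cB([0,1])$; hence $\Si$ is a copy of $\cB([0,1])$ and $\bbP$ is Lebesgue measure, whose measure algebra is atomless. The two spaces are not isomorphic. The repair is small: take the measure space image of the \emph{subspace} itself, i.e.\ set $\Si = \{B \subseteq \Om \mid f^{-1}(B) \in \Ga'\}$ and $\bbP = (\nu|_{\Ga'}) \circ f^{-1}$. Since each $A_\bfr = f^{-1}((\bfr)_\Om)$ and preimages $\{f^{-1}(B) \mid B \subseteq \Om\}$ form a $\si$-algebra on $S$, every $A \in \Ga'$ is a union of fibers, so $A = f^{-1}(f(A))$ with $f(A) \in \Si$, and the criterion now applies verbatim. (The paper avoids this trap automatically by always equipping its model with the formula-definable $\si$-algebra $\cB^{PV}$, never the full image $\si$-algebra; with that choice $h^{-1}\ph_\Om = G\ph$ always lands in $\Ga'$.)
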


\begin{proof}
  Let $G: PV \to \Ga$. If $\card(PV) \ge \card(\Ga)$, then take $G$ to be
  surjective. Extend $G$ to $\cF$ by $G \neg \ph = (G \ph)^c$ and $G
  \bigwedge_n \ph_n = \bigcap_n G \ph_n$. Let $\Theta = G \cF \subseteq \Ga$ and
  note that $\Theta$ is a $\si$-algebra. Hence, $(S, \Theta, \nu|_\Theta)$ is a
  measure subspace of $(S, \Ga, \nu)$. If $\card(PV) \ge \card(\Ga)$, then
  $\Theta = \Ga$. We abuse notation and simply write $\nu$ for both $\nu$ and
  $\nu|_\Theta$. We will show that $(S, \Theta, \nu)$ is isomorphic to a model
  in $\cF$. More specifically, if $\Om = \B^{PV}$ is the set of all strict
  models, and $\Si = \cB^{BV} = \{\ph_\Om \mid \ph \in \cF\}$, then we will
  construct a probability measure $\bbP$ on $(\Om, \Si)$ such that $(S, \Ga,
  \nu)$ and $(\Om, \Si, \bbP)$ are isomorphic.

  For $x \in S$, define the strict model $\om^x$ by $\om^x \tDash \bfr$ if and
  only if $x \in G \bfr$, for all $\bfr \in PV$. By formula induction, it
  follows that $\om^x \tDash \ph$ if and only if $x \in G \ph$, for all $\ph \in
  \cF$. Define $h: S \to \Om$ by $h x = \om^x$. After constructing $\bbP$, we
  will show that $h$ induces an isomorphism.

  To construct $\bbP$, we first prove that $\ph \equiv \psi$ implies $G \ph = G
  \psi$. Recall the set of axioms, $\La$, defined in Section
  \ref{S:Hilbert-calc}. It is straightforward to verify that $G \ph = S$ if $\ph
  \in \La$ is an axiom. Suppose $\ph$ is a tautology. Then there is a proof of
  $\ph$ from $\emp$. Using induction of the length of the proof, as in the proof
  of Proposition \ref{P:Hilbert-induc}, one readily verifies that $G \ph = S$.
  Now suppose $\ph \equiv \psi$. Then $\ph \tot \psi$ is a tautology. Hence $G
  \ph \tot \psi = S$. But $G \ph \tot \psi = (G \ph \tri G \psi)^c$. Therefore,
  $G \ph = G \psi$, proving the claim.

  By Remark \ref{R:impl-subset}, we have $\ph_\Om = \psi_\Om$ if and only if
  $\ph \equiv \psi$. Hence, if $\ph_\Om = \psi_\Om$, then $G \ph = G \psi$. We
  may therefore define $g: \Si \to \Theta$ by $g \ph_\Om = G \ph$. Let $A \in
  \Si$ and choose $\ph \in \cF$ such that $A = \ph_\Om = \{\om \in \Om \mid \om
  \tDash \ph\}$. Then
  \begin{align*}
    x \in h^{-1} A &\quad\text{ iff }\quad h x \in A\\
    &\quad\text{ iff }\quad \om^x \in \ph_\Om\\
    &\quad\text{ iff }\quad \om^x \tDash \ph\\
    &\quad\text{ iff }\quad x \in G \ph = g \ph_\Om = g A.
  \end{align*}
  Hence, $h^{-1} = g$, so that $h^{-1} \ph_\Om = G \ph$. In particular, this
  shows that $h$ is $(\Theta, \Si)$-measurable, so we may define ${\bbP} = \nu
  \circ h^{-1} = \nu \circ g$, making $\sP = (\Om, \Si, \bbP)$ a model.

  To verify that $h$ induces an isomorphism from $(S, \Theta, \nu)$ to $\sP$, we
  must check that for each $U \in \Theta$, there exists $A \in \Si$ such that
  $h^{-1} A = U$ $\nu$-a.s. Let $U \in \Theta$ and choose $\ph \in \cF$ such
  that $U = G \ph$. Then $A = \ph_\Om \in \Si$ and, by the above, $h^{-1} A = G
  \ph = U$.
\end{proof}

\subsection{Dynkin spaces}\label{S:Dynkin-sp}

Theorem \ref{T:model-ind-th} gives us the means to construct inductive theories.
For the first set of examples in the section, however, we need to construct
entire sets that are not inductive theories. We will do this using Dynkin
systems. More specifically, we will define what we call Dynkin spaces, a
generalization of probability spaces that use Dynkin systems instead of
$\si$-algebras. Then, analogous to Theorem \ref{T:model-ind-th}, we will use
these Dynkin spaces to construct entire sets.

\begin{defn}\label{D:Dynk-sp}
  A \emph{Dynkin space} is a triple, $(S, \De, \oprho)$, where $S$ is a nonempty
  set, $\De$ is a Dynkin system on $S$, and $\oprho: \De \to [0, 1]$ satisfies
  \begin{enumerate}[(i)]
    \item $\oprho \Om = 1$,
    \item if $A, B \in \De$ with $A \subseteq B$, then $\oprho B \setminus A =
    \oprho B - \oprho A$, and
    \item if $\{A_n\} \subseteq \De$ with $A_n \subseteq A_{n + 1}$, then
          $\oprho \bigcup A_n = \lim \oprho A_n$.
  \end{enumerate}
\end{defn}

Let $(S, \De, \oprho)$ be a Dynkin space. A set $A \in \De$ is called \emph{
(Dynkin) measurable}. A set $A \in \De$ is \emph{null} if $A \in \De$ and
$\oprho A = 0$. Note that a measurable subset of a null set is a null set. A
Dynkin space is \emph{complete} if every subset of a null set is a null set.

\begin{prop}
  If $(S, \De, \oprho)$ is a Dynkin space, then $\oprho$ is countably additive.
  That is, if $\{A_n\} \subseteq \De$ is pairwise disjoint, then $\oprho \bigcup
  A_n = \sum \oprho A_n$.
\end{prop}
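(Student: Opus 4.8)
The plan is to bootstrap countable additivity from finite additivity, which in turn follows from the subtractivity property (ii) in Definition \ref{D:Dynk-sp}. First I would establish additivity for two disjoint sets. Let $A, B \in \De$ be disjoint. The sequence $A, B, \emp, \emp, \ldots$ is pairwise disjoint, so the equivalent characterization of a Dynkin system (closure under pairwise disjoint countable unions) gives $A \cup B \in \De$. Since $A \subseteq A \cup B$ and disjointness yields $(A \cup B) \setminus A = B$, property (ii) gives $\oprho B = \oprho(A \cup B) - \oprho A$, that is, $\oprho(A \cup B) = \oprho A + \oprho B$.

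Next, I would extend this to finite families by induction on $n$. Suppose that for every pairwise disjoint $A_1, \ldots, A_n \in \De$ we have $\bigcup_{i=1}^n A_i \in \De$ and $\oprho(\bigcup_{i=1}^n A_i) = \sum_{i=1}^n \oprho A_i$. Given pairwise disjoint $A_1, \ldots, A_{n+1}$, the partial union $C = \bigcup_{i=1}^n A_i$ lies in $\De$ by the inductive hypothesis and is disjoint from $A_{n+1}$. The two-set case then gives $\oprho(C \cup A_{n+1}) = \oprho C + \oprho A_{n+1} = \sum_{i=1}^{n+1} \oprho A_i$, completing the induction.

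Finally, for a pairwise disjoint sequence $\{A_n\} \subseteq \De$, set $B_n = \bigcup_{i=1}^n A_i$. Each $B_n \in \De$ by the finite case, the sequence $\{B_n\}$ is increasing, and $\bigcup_n B_n = \bigcup_n A_n$. Finite additivity gives $\oprho B_n = \sum_{i=1}^n \oprho A_i$, and continuity from below (property (iii)) yields $\oprho \bigcup_n A_n = \lim_n \oprho B_n = \sum_n \oprho A_n$, which is the desired conclusion.

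As for difficulty, there is no real obstacle here: the argument is the standard passage from finite to countable additivity via increasing partial unions. The only points requiring care are the bookkeeping that each finite and countable union actually belongs to $\De$---which is exactly what the Dynkin system axioms supply---and the observation that property (ii) is phrased as subtractivity, so the disjointness identity $(A \cup B) \setminus A = B$ is what converts it into the additivity we need.
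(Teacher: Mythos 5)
Your proof is correct and follows essentially the same route as the paper's: two-set additivity from the subtractivity property (ii) together with closure under disjoint unions, then finite additivity by induction, then countable additivity via continuity from below (iii) applied to the increasing partial unions. The paper's version is simply a terser rendering of exactly this argument.
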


\begin{proof}
  Let $A, B \in \De$ be disjoint. Since $\De$ is a Dynkin system, it is closed
  under countable, pairwise disjoint unions. Hence, $A \cup B \in \De$. By
  Definition \ref{D:Dynk-sp}(ii), we have $\oprho B = \oprho A \cup B - \oprho
  A$. Therefore, $\oprho$ is finitely additive. From Definition
  \ref{D:Dynk-sp}(iii), we obtain $\oprho \bigcup A_n = \lim_n \sum_1^n \oprho
  A_j$.
\end{proof}

\begin{prop}\label{P:Dynk-complete-reverse}
  Let $(S, \De, \oprho)$ be a complete Dynkin space. If $A \in \De$, $\oprho A =
  1$, and $A \subseteq B$, then $\oprho B = 1$.
\end{prop}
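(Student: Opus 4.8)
The plan is to reduce the statement to a claim about the complement of $A$, and to notice that the real content is not the numerical value $\oprho B = 1$ but rather the measurability of $B$: the hypothesis supplies only $A \subseteq B$, not $B \in \De$, so before $\oprho B$ is even meaningful I must first produce $B \in \De$. Once $B$ is shown measurable, its measure will fall out of the subtractive property, Definition \ref{D:Dynk-sp}(ii), almost immediately. So the whole argument hinges on completeness and on the fact that every Dynkin system is closed under complements (properties (i) and (ii) in the definition of a Dynkin system).

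First I would compute the measure of $A^c$. Since $A \in \De$, $A \subseteq S$, and $S \in \De$, Definition \ref{D:Dynk-sp}(ii) applied to the inclusion $A \subseteq S$ gives $\oprho A^c = \oprho(S \setminus A) = \oprho S - \oprho A = 1 - 1 = 0$, so that $A^c$ is a null set. Next, from $A \subseteq B$ I get the reverse inclusion of complements, $B^c \subseteq A^c$, exhibiting $B^c$ as a subset of a null set. Here is where completeness enters: by hypothesis every subset of a null set is null, so $B^c$ is itself null, and in particular $B^c \in \De$ with $\oprho B^c = 0$. Because a Dynkin system is closed under complementation, it then follows that $B = (B^c)^c \in \De$, which is exactly the measurability I needed.

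Finally, I would apply Definition \ref{D:Dynk-sp}(ii) a second time, now to the inclusion $B^c \subseteq S$, obtaining $\oprho B = \oprho(S \setminus B^c) = \oprho S - \oprho B^c = 1 - 0 = 1$. I do not anticipate any genuine obstacle in this proof; the only subtlety worth flagging is the one already mentioned, namely that $B$ is not assumed measurable, so the step invoking completeness on the null set $B^c$ (together with closure under complements) is doing the essential work rather than the final arithmetic.
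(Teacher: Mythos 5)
Your proof is correct and follows essentially the same route as the paper's: pass to complements, note $\oprho A^c = 0$ and $B^c \subseteq A^c$, invoke completeness to get that $B^c$ is null (hence in $\De$), and conclude $\oprho B = 1$. The only difference is that you spell out the steps the paper leaves implicit (the subtraction computation via Definition \ref{D:Dynk-sp}(ii) and the closure of $\De$ under complements), which is fine.
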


\begin{proof}
  Suppose $A \in \De$, $\oprho A = 1$, and $A \subseteq B$. Then $A^c \in \De$,
  $\oprho A^c = 0$, and $B^c \subseteq A^c$. Since $(S, \De, \oprho)$ is
  complete, this gives $\oprho B^c = 0$, which implies $\oprho B = 1$.
\end{proof}

\begin{prop}\label{P:Dynk-sp-certainty}
  Let $(S, \De, \oprho)$ be a complete Dynkin space and let $\{A_n\} \subseteq
  \De$. If $\oprho A_n = 0$ for all $n$, then $\oprho \bigcup A_n = 0$. If
  $\oprho A_n = 1$ for all $n$, then $\oprho \bigcap A_n = 1$.
\end{prop}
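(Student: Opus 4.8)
The plan is to prove the two assertions in sequence, obtaining the second from the first by passing to complements. The key point to keep in mind throughout is that $\De$ is only a Dynkin system: it is closed under complements and under pairwise disjoint countable unions, but not necessarily under arbitrary finite unions or intersections. Completeness is therefore what will supply the measurability of the auxiliary sets I form. As a preliminary fact, I would first show that a finite union of null sets is null. Given null sets $A, C \in \De$, I write $A \cup C = A \cup (C \setminus A)$ as a disjoint union. Since $C \setminus A \subseteq C$ and $C$ is null, completeness gives $C \setminus A \in \De$ with $\oprho(C \setminus A) = 0$. As $A$ and $C \setminus A$ are disjoint members of $\De$, their union lies in $\De$, and the finite additivity that follows from Definition \ref{D:Dynk-sp}(ii) yields $\oprho(A \cup C) = \oprho A + \oprho(C \setminus A) = 0$. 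A routine induction then shows that every finite union of the $A_n$ is null.

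For the first claim, I would set $B_n = \bigcup_{j=1}^n A_j$. By the preliminary fact each $B_n \in \De$ with $\oprho B_n = 0$, while the sequence $\{B_n\}$ is increasing and satisfies $\bigcup_n B_n = \bigcup_n A_n$. Definition \ref{D:Dynk-sp}(iii) then gives $\oprho \bigcup_n A_n = \lim_n \oprho B_n = 0$, as desired.

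For the second claim, I would pass to complements. Using $\oprho S = 1$ from Definition \ref{D:Dynk-sp}(i) together with Definition \ref{D:Dynk-sp}(ii) applied to $A_n \subseteq S$, each complement $A_n^c = S \setminus A_n$ lies in $\De$ and satisfies $\oprho A_n^c = 1 - \oprho A_n = 0$. Applying the first claim to the family $\{A_n^c\}$ gives $\oprho \bigcup_n A_n^c = 0$; in particular $\bigcup_n A_n^c \in \De$, so its complement $\bigcap_n A_n$ lies in $\De$ as well. Applying Definition \ref{D:Dynk-sp}(ii) once more to $\bigcap_n A_n \subseteq S$, and noting $\bigcup_n A_n^c = S \setminus \bigcap_n A_n$, I obtain $0 = \oprho(S \setminus \bigcap_n A_n) = 1 - \oprho \bigcap_n A_n$, hence $\oprho \bigcap_n A_n = 1$.

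The only genuine obstacle is the bookkeeping of membership in $\De$: at each step where a set is formed by a difference, a union, or an intersection, I must confirm it is measurable before invoking additivity or property (ii). This is precisely where completeness carries the argument, converting the a priori non-measurable set $C \setminus A$—and, through it, the finite unions and the final intersection—into bona fide members of $\De$. Once that is handled, the rest is immediate from Definition \ref{D:Dynk-sp}(ii)--(iii).
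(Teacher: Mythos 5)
Your proof is correct, and it reaches the conclusion by a slightly different route than the paper. The paper disjointifies the whole sequence at once, setting $B_n = A_n \setminus \bigcup_{j < n} A_j$: each $B_n$ is a subset of the null set $A_n$, hence null (and in $\De$) by completeness; the $B_n$ are pairwise disjoint, so their union lies in $\De$ by the Dynkin property, and the countable additivity of $\oprho$ (established in the proposition immediately preceding this one) gives $\oprho \bigcup A_n = \sum \oprho B_n = 0$ in one stroke. You instead work with the increasing partial unions $B_n = \bigcup_{j \le n} A_j$, justify their membership in $\De$ and nullity by an inductive finite-union lemma (itself proved by pairwise disjointification plus completeness), and then invoke the continuity axiom, Definition \ref{D:Dynk-sp}(iii), to pass to the limit. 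Both arguments hinge on the same essential point, which you identify explicitly: completeness is what makes the set-theoretic differences measurable, since a Dynkin system need not be closed under finite unions or intersections. The paper's version is shorter because countable additivity absorbs the limiting step; yours trades that for a self-contained induction and a direct appeal to axiom (iii), which makes the role of each axiom of a Dynkin space more visible. Your treatment of the second claim, passing to complements via Definition \ref{D:Dynk-sp}(i)--(ii), is exactly the paper's.
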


\begin{proof}
  Assume $\oprho A_n = 0$ for all $n$. Let $B_n = A_n \setminus \bigcup_1^{n -
  1} A_j$. Then $B_n \subseteq A_n$. Since $(S, \De, \oprho)$ is complete, we
  have $\oprho B_n = 0$. Since $\{B_n\}$ is pairwise disjoint, we also have
  $\bigcup B_n \in \De$. But $\bigcup B_n = \bigcup A_n$, so $\oprho \bigcup A_n
  = \oprho \bigcup B_n = \sum \oprho B_n = 0$. For the second claim, apply the
  first to $A_n^c$.
\end{proof}

\begin{lemma}\label{L:ThD-is-theory}
  Let $\Om = \B^{PV}$ and let $\De \subseteq \cB^{PV}$ be a Dynkin system.
  Suppose $\sD = (\Om, \De, \oprho)$ is a complete Dynkin space. Define
  \[
    \Th \sD = \{\ph \in \cF \mid \oprho \ph_\Om = 1\}.
  \]
  Then $\Th \sD$ is a consistent deductive theory.
\end{lemma}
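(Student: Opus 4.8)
The plan is to verify directly that $\Th \sD$ satisfies the three defining conditions of a deductive theory---namely $\La \subseteq \Th \sD$, closure under modus ponens, and closure under countable conjunction---and then to check separately that $\bot \notin \Th \sD$. Since $\Th \sD$ will by then be known to be a theory, and a theory $T$ is inconsistent if and only if $T = \cF$, the latter check establishes consistency. Throughout I would lean on the translation dictionary for the map $\ph \mapsto \ph_\Om$: membership $\ph \in \Th \sD$ means precisely that $\ph_\Om \in \De$ (so that $\oprho \ph_\Om$ is defined) and $\oprho \ph_\Om = 1$, while $(\neg \ph)_\Om = \ph_\Om^c$, $(\ph \to \psi)_\Om = \ph_\Om^c \cup \psi_\Om$, and $(\bigwedge \Phi)_\Om = \bigcap_{\ph \in \Phi} \ph_\Om$.

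Two of the three closure conditions are immediate. For $\La \subseteq \Th \sD$, I would note that every axiom is a tautology, since $\langle \ph \rangle$ is a proof of any $\ph \in \La$, giving ${} \vdash \ph$; by Remark~\ref{R:impl-subset} this means $\om \tDash \ph$ for all strict models $\om$, so $\ph_\Om = \Om$ and hence $\oprho \ph_\Om = \oprho \Om = 1$ by Definition~\ref{D:Dynk-sp}(i). For closure under countable conjunction, suppose $\Phi \subseteq \Th \sD$ is countable. If $\Phi = \emp$, then $\bigwedge \Phi = \top$, a tautology, handled as above; otherwise each $\ph_\Om$ with $\ph \in \Phi$ lies in $\De$ with $\oprho \ph_\Om = 1$, so Proposition~\ref{P:Dynk-sp-certainty} (after enumerating $\Phi$ and, if $\Phi$ is finite, padding the sequence with copies of $\Om$) gives $(\bigwedge \Phi)_\Om = \bigcap_{\ph \in \Phi} \ph_\Om \in \De$ with measure $1$, so $\bigwedge \Phi \in \Th \sD$.

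The step I expect to require the most care is closure under modus ponens, precisely because a Dynkin system need not be closed under intersection. Suppose $(\ph \to \psi) \in \Th \sD$ and $\ph \in \Th \sD$, so that $(\ph \to \psi)_\Om$ and $\ph_\Om$ both lie in $\De$ with measure $1$. Applying Proposition~\ref{P:Dynk-sp-certainty} to the sequence $\ph_\Om, (\ph \to \psi)_\Om, \Om, \Om, \ldots$ shows that $\ph_\Om \cap (\ph \to \psi)_\Om \in \De$ with measure $1$. Computing, $\ph_\Om \cap (\ph \to \psi)_\Om = \ph_\Om \cap (\ph_\Om^c \cup \psi_\Om) = \ph_\Om \cap \psi_\Om \subseteq \psi_\Om$. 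Since this intersection is a measurable set of measure $1$ contained in $\psi_\Om$, Proposition~\ref{P:Dynk-complete-reverse} yields $\psi_\Om \in \De$ with $\oprho \psi_\Om = 1$, that is, $\psi \in \Th \sD$.

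Finally, for consistency I would observe that $\bot_\Om = \emp$, and that $\emp = \Om^c \in \De$ (a Dynkin system contains $\Om$ and is closed under complements) with $\oprho \emp = 0$ (take $A = B = \Om$ in Definition~\ref{D:Dynk-sp}(ii)). Hence $\oprho \bot_\Om = 0 \ne 1$, so $\bot \notin \Th \sD$, and $\Th \sD$ is therefore a consistent deductive theory.
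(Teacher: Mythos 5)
Your proof is correct, but it takes a structurally different route from the paper's. The paper does not check the three clauses of the definition of a deductive theory separately; instead it invokes the corollary that a set is a theory if and only if it is deductively closed, and proves closure in one stroke: given $\Th \sD \vdash \ph$, Proposition \ref{P:sig-cpctness} ($\si$-compactness) yields a countable $\Phi \subseteq \Th \sD$ with $\vdash \bigwedge \Phi \to \ph$; the easy (soundness) direction of Remark \ref{R:impl-subset} gives $(\bigwedge \Phi)_\Om \subseteq \ph_\Om$; and then the same two facts you use, Proposition \ref{P:Dynk-sp-certainty} and Proposition \ref{P:Dynk-complete-reverse}, finish the job. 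So the measure-theoretic core is identical in both arguments---countable intersections of measure-one sets have measure one, and measure-one measurable sets are upward closed, both resting on completeness of the Dynkin space---and the difference lies entirely on the logical side. The paper's argument is shorter and treats axioms, modus ponens, and conjunctions uniformly, at the cost of invoking $\si$-compactness. Yours avoids $\si$-compactness altogether, needing only that axioms are tautologies and that tautologies are strictly valid; the price is the clause-by-clause verification, whose only delicate point is modus ponens, which you resolve with the nice observation that the intersection $\ph_\Om \cap (\ph \to \psi)_\Om$---not available from the Dynkin system axioms alone---can be extracted from Proposition \ref{P:Dynk-sp-certainty} by padding the pair with copies of $\Om$, after which Proposition \ref{P:Dynk-complete-reverse} upgrades $\ph_\Om \cap \psi_\Om \subseteq \psi_\Om$ to $\oprho \psi_\Om = 1$. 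The consistency step ($\oprho \bot_\Om = \oprho \emp = 0$) is the same in both.
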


\begin{proof}
  Suppose $\Th \sD \vdash \ph$. Choose countable $\Phi \subseteq \Th \sD$ such
  that $\vdash \bigwedge \Phi \to \ph$. Then $\om \tDash \bigwedge \Phi \to \ph$
  for all strict models $\om$. Hence, $(\bigwedge \Phi)_\Om \subseteq \ph_\Om$.
  By Proposition \ref{P:Dynk-sp-certainty}, we have $\oprho (\bigwedge
  \Phi)_\Om = \oprho \bigcap_{\th \in \Phi} \th_\Om = 1$. Proposition 
  \ref{P:Dynk-complete-reverse} then implies $\oprho \ph_\Om = 1$. Therefore,
  $\ph \in \Th \sD$ and $\Th \sD$ is a deductive theory. Finally, $\oprho
  \bot_\Om = \oprho \emp = 0$, so $\bot \notin \Th \sD$ and $\Th \sD$ is
  consistent.
\end{proof}

\begin{lemma}\label{L:entire-from-Dynkin}
  Let $\sD$ and $\Th \sD$ be as in Lemma \ref{L:ThD-is-theory}. Let $T_0
  \subseteq \Th \sD$ be a deductive theory. Define $P \subseteq \cF^\IS$ so that
  $(X, \ph, p) \in P$ if and only if there exists $\psi \in \cF$ such that $T(X)
  = T_0 + \psi$ and
  \begin{equation}\label{cond-prob-Dynk}
    \frac{\oprho \ph_\Om \cap \psi_\Om}{\oprho \psi_\Om} = p.
  \end{equation}
  Then $P$ is entire.
\end{lemma}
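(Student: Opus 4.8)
The plan is to follow the entirety portion of the proof of Theorem \ref{T:model-ind-th} almost verbatim, replacing the model $\sP$ by the complete Dynkin space $\sD$, the measure $\olbbP$ by $\oprho$, and the root $\Taut$ by $T_0$. The one structural difference — and the source of all the real work — is that $\De$ is merely a Dynkin system, not a $\si$-algebra, so I cannot freely form intersections and unions of measurable sets; each such operation must instead be justified using almost-disjointness or containment together with completeness.

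First I would record the translation dictionary. For $X \in \ante P$ I fix $\psi$ with $T(X) = T_0 + \psi$; since some $(X, \ph', p') \in P$, the witness set $\psi_\Om$ lies in $\De$ and $\oprho \psi_\Om > 0$. The key lemma, an analogue of Lemma \ref{L:model-entire}, is that whenever $T_0, \psi \vdash \ph$ (equivalently $\ph \in T(X)$), the set $\psi_\Om \setminus \ph_\Om$ is null. To prove this I would use $\si$-compactness (Theorem \ref{T:sig-cpctness}) to extract a countable $\Phi \subseteq T_0$ with $\bigwedge \Phi \wedge \psi \vdash \ph$, pass to strict-model sets via Remark \ref{R:impl-subset} to obtain $(\bigwedge \Phi)_\Om \cap \psi_\Om \subseteq \ph_\Om$, observe that $\oprho (\bigwedge \Phi)_\Om = 1$ by Proposition \ref{P:Dynk-sp-certainty} (each $\th \in \Phi \subseteq \Th \sD$ has $\oprho \th_\Om = 1$), and conclude that $\psi_\Om \setminus \ph_\Om \subseteq (\bigwedge \Phi)_\Om^c$ sits inside a null set, hence is null by completeness.

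With this lemma in hand, well-definedness and admissibility (R1) follow. If $T(X) = T_0 + \psi = T_0 + \psi'$ then $\psi_\Om \tri \psi'_\Om$ is null, and if $\ph' \equiv_X \ph$ then $(\ph'_\Om \cap \psi_\Om) \tri (\ph_\Om \cap \psi_\Om)$ is null. The delicate point is that a set differing from a member of $\De$ by a null set is again in $\De$, with the same $\oprho$-value; I would prove this small fact once (subtract the measurable null part via Dynkin axiom (ii), then add the remaining null part back via disjoint-union closure). Given it, the defining fraction is independent of the chosen representative $\psi$ and of $\ph$ within its $\equiv_X$-class, which is exactly (R1).

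Then I would verify (R2)--(R7) in turn, each time rewriting the hypothesis as a statement about the sets $\ph_\Om \cap \psi_\Om$ and their $\oprho$-values and invoking the Dynkin-space axioms (finite additivity, continuity from below via axiom (iii)) together with Propositions \ref{P:Dynk-sp-certainty} and \ref{P:Dynk-complete-reverse}. Logical implication (R2) and deductive transitivity (R4) are immediate from the key lemma; material implication (R3) and continuity (R7) follow the model proof directly, the latter using closure of $\De$ under increasing unions. I expect the main obstacle to be the addition rule (R5) and the multiplication rule (R6), precisely because the model proof there invokes that $\Si$ is a $\si$-algebra to conclude all relevant sets are measurable. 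For (R5) I would replace that step by the observation that the disjointness hypothesis $X \vdash \neg(\ph \wedge \psi)$ makes the relevant intersection null, so the two sets in question are almost disjoint; a short computation then shows that for almost-disjoint $A, B \in \De$, measurability of any two of $A$, $B$, $A \cup B$ forces measurability of the third, using only axiom (ii) and disjoint-union closure. For (R6) I would track positivity carefully, using it to guarantee that the conditional sets are genuinely in $\De$ before dividing, and then verify the product identity at the level of fractions. Assembling admissibility with (R2)--(R7) yields that $P$ is entire.
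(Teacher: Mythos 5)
Your proposal is correct and follows essentially the same route as the paper's own proof: the paper likewise begins with the null-set absorption fact (if $A \in \De$ and $\oprho \, A \tri B = 0$, then $B \in \De$ with $\oprho B = \oprho A$), adapts Proposition \ref{P:model-func}, Corollary \ref{C:model-func}, and Lemma \ref{L:model-entire} to the Dynkin setting, and then adapts the entirety portion of the proof of Theorem \ref{T:model-ind-th}, singling out for the addition rule exactly the fact you isolate (if $A \cap B \in \De$ and two of $A$, $B$, $A \cup B$ lie in $\De$, then so does the third, with inclusion-exclusion). The only cosmetic difference is that your formulation assumes the intersection $A \cap B$ is null rather than merely measurable, which suffices here because the hypothesis $X \vdash \neg(\ph \wedge \psi)$ of the addition rule forces nullity.
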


\begin{proof}
  Note that if $A \in \De$ and $\oprho A \tri B = 0$, then $B \in \De$ and
  $\oprho B = \oprho A$. By adapting the proofs of Proposition
  \ref{P:model-func}, Corollary \ref{C:model-func}, and Lemma
  \ref{L:model-entire}, we obtain the following results. If $T_0 + \psi = T_0 +
  \psi'$, then $\oprho \psi_\Om \tri \psi'_\Om = 0$. In particular, if $(X, \ph,
  p) \in P$ and $T(X) = T_0 + \psi$, then \eqref{cond-prob-Dynk} holds. As a
  consequence, if $(X, \ph, p) \in P$ and $(X, \ph, p') \in P$, then $p = p'$.
  Also, if $T(X) = T_0 + \psi$, then $(X, \ph, 1) \in P$ if and only if
  $\psi_\Om\in \De$ and $\oprho \psi_\Om \cap \ph_\Om^c = 0$.

  Using these results, we may adapt the first part of the proof of Theorem 
  \ref{T:model-ind-th} to show that $P$ is entire. Note that in the proof of the
  addition rule, we must use the fact that if $A \cap B \in \De$ and two of the
  sets $A \cup B$, $A$, and $B$ are in $\De$, then all three sets are in $\De$
  and $\oprho A \cup B = \oprho A + \oprho B - \oprho A \cap B$.
\end{proof}

\subsection{Entirety is not enough}

The examples in this subsection illustrate the insufficiency of entire sets as a
basis for inductive inference. 

In Example \ref{Expl:MathSE} below, we use Dynkin spaces to construct a family
of entire, strongly connected sets, indexed by $q \in (0, 1)$. Every member of
this family is an example showing that probabilities of conjunctions need not be
defined. That is, if $P$ is one of the entire sets constructed in Examples
\ref{Expl:MathSE}, then $P(\bfr_1)$ exists, $P(\bfr_2)$ exists, but $P(\bfr_1
\wedge \bfr_2)$ does not exist.

We follow up in Example \ref{Expl:MathSEexpl-2} by considering the case $q =
1/4$. In this case, we show that $P$ is inconsistent. That is, it cannot be
extended to an inductive theory. Since $P$ is entire, it exhibits no violations
whatsoever of rules (R1)--(R7). However, if we try to extend $P$ so that it is
closed under (R8), then we will inevitably create of violation of (R1)--(R7).

In Proposition \ref{P:MathSE}, we consider the case $q = 1/2$. As mentioned
above, $P(\bfr_1 \wedge \bfr_2)$ does not exist. Proposition \ref{P:MathSE} is
concerned with what happens when we try to assign a value, $q'$, to this
expression. The result is that $q' = 1/4$ is the unique value that we may choose
in order to avoid violating rules (R1)--(R7). As such, this provides an example
of the necessary use of rule (R8) to infer a probability.

\begin{expl}\label{Expl:MathSE}
  In this example, we construct an entire, strongly connected set $P$ with root
  $T_0$ such that the domain of $P (\; \cdot \mid T_0)$ is not closed under
  conjunctions.

  For $n \in \bN_0$ and $k \ge 1$, define
  \[
    d_k(n) = \flr{2^{-k + 1} n} - 2 \flr{2^{-k} n},
  \]
    \symindex{$d_k(n)$}%
  where $\flr{x}$ is the greatest integer less than or equal to $x$. Then $d_k
  (n)$ denotes the $k$-th binary digit of $n$, counting digits from the right.

  Let $PV = \{\bfr_1, \bfr_2\}$. Let $\Om = \B^{PV}$ be the set of all strict
  models, so that $\Om = \{\om_n \mid 0 \le n \le 3\}$, where $\om_n \bfr_k =
  d_k(n)$. Note that these four strict models correspond to the usual rows of a
  truth table with two propositional variables.

  Let
  \begin{align*}
    A_1 &= (\bfr_1)_\Om = \{\om_1, \om_3\}, \text{ and}\\
    A_2 &= (\bfr_2)_\Om = \{\om_2, \om_3\}.
  \end{align*}
  Note that
  \[
    (\bfr_1 \tot \bfr_2) = (A_1 \tri A_2)^c = \{\om_0, \om_3\}.
  \]
  Let $\Ga = \{A_1, A_2, (A_1 \tri A_2)^c\}$ and
  \[
    \De = \{\emp, \Om\} \cup \Ga \cup \{A^c \mid A \in \Ga\}.
  \]
  Then $\De \subseteq \cB^{PV}$ is a Dynkin system on $\Om$. Fix $q \in (0, 1)$
  and, for $A \in \Ga$, define $\oprho^q A = q$ and $\oprho^q A^c = 1 - q$.
  Together with $\oprho^q \emp = 0$ and $\oprho^q \Om = 1$, this makes $\sD = 
  (\Om, \De, \oprho)$ a complete Dynkin space.

  Let $T_0 = \Taut$ and let $P$ be the entire set defined in Lemma
  \ref{L:entire-from-Dynkin}. Note that
  \begin{align*}
    P(\bfr_1 \mid T_0) &= q,\\
    P(\bfr_2 \mid T_0) &= q,\\
    P(\bfr_1 \tot \bfr_2 \mid T_0) &= q.
  \end{align*}
  On the other hand, $(\bfr_1 \wedge \bfr_2)_\Om = \{\om_3\} \notin \De$ so that
  $P(\bfr_1 \wedge \bfr_2 \mid T_0)$ is undefined. Hence, $P$ is an entire set
  such that the domain of $P(\; \cdot \mid T_0)$ is not closed under
  conjunctions.
\end{expl}

\begin{lemma}\label{L:MathSE}
  Let $P \subseteq \cF^\IS$ be entire. Let $X \in \ante P$ and let $\cG
  \subseteq \cF$ denote the domain of $P(\; \cdot \mid X)$. Let $\bfr_1, \bfr_2
  \in PV$ and assume $\bfr_1, \bfr_2, \bfr_1 \tot \bfr_2 \in \cG$. Let
  \begin{align*}
    \ph_0 &=          \neg  \bfr_1 \wedge          \neg  \bfr_2,\\
    \ph_1 &= \phantom{\neg} \bfr_1 \wedge          \neg  \bfr_2,\\
    \ph_2 &=          \neg  \bfr_1 \wedge \phantom{\neg} \bfr_2,\\
    \ph_3 &= \phantom{\neg} \bfr_1 \wedge \phantom{\neg} \bfr_2.
  \end{align*}
  If $\ph_3 \in \cG$, then $\ph_j \in \cG$ for all $j$.
\end{lemma}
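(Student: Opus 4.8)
The plan is to produce each of $\ph_0,\ph_1,\ph_2$ in the domain $\cG$ by exhibiting it as logically equivalent to a formula obtained from the hypothesised members of $\cG$ via the two closure operations that an entire set always enjoys: relative subtraction (Proposition~\ref{P:rel-neg}) and the disjunction of incompatible formulas (the addition rule). Throughout, the rule of logical equivalence lets me move between logically equivalent formulas without leaving $\cG$, since $\equiv$ implies $\equiv_X$.

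First I would dispatch $\ph_1$ and $\ph_2$, which need only $\bfr_1$, $\bfr_2$, and $\ph_3$. Since $\neg\ph_3 \equiv \neg\bfr_1 \vee \neg\bfr_2$, distributing gives $\bfr_1 \wedge \neg\ph_3 \equiv \bfr_1 \wedge \neg\bfr_2 = \ph_1$. Now $\ph_3 = \bfr_1 \wedge \bfr_2 \vdash \bfr_1$, so $X,\ph_3 \vdash \bfr_1$; as $P(\ph_3 \mid X)$ and $P(\bfr_1 \mid X)$ both exist, Proposition~\ref{P:rel-neg} shows $P(\bfr_1 \wedge \neg\ph_3 \mid X)$ exists, and the rule of logical equivalence then places $\ph_1 \in \cG$. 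The formula $\ph_2$ is handled symmetrically, using $\ph_2 \equiv \bfr_2 \wedge \neg\ph_3$ and $\ph_3 \vdash \bfr_2$.

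Next I would treat $\ph_0$ using the biconditional hypothesis. Since $\bfr_1 \tot \bfr_2 \equiv (\bfr_1 \wedge \bfr_2) \vee (\neg\bfr_1 \wedge \neg\bfr_2) = \ph_3 \vee \ph_0$, the assumption $\bfr_1 \tot \bfr_2 \in \cG$ together with the rule of logical equivalence gives $\ph_0 \vee \ph_3 \in \cG$. Because $\ph_0 \wedge \ph_3$ is a contradiction, $X \vdash \neg(\ph_0 \wedge \ph_3)$, so the addition rule applies to $\ph_0$ and $\ph_3$: of the three probabilities $P(\ph_0 \vee \ph_3 \mid X)$, $P(\ph_3 \mid X)$, $P(\ph_0 \mid X)$, two exist, hence so does the third, giving $\ph_0 \in \cG$. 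With $\ph_3 \in \cG$ assumed, this yields $\ph_j \in \cG$ for all $j$.

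There is no genuine obstacle here; the content lies entirely in choosing the right equivalence and closure rule for each index, and in noticing that $\ph_0$ is the one case the subtraction argument does not reach directly (it would require $\bfr_1 \vee \bfr_2 \in \cG$, which is not among the hypotheses). The cleanest route is the one above, via $\bfr_1 \tot \bfr_2$. I would note, however, that the biconditional hypothesis is a convenience rather than a logical necessity for this conclusion: once $\ph_1,\ph_2 \in \cG$ are in hand, two applications of the addition rule give $\ph_1 \vee \ph_2 \vee \ph_3 \in \cG$, and Corollary~\ref{C:rel-neg} then yields its relative complement $\ph_0$. Presenting the $\bfr_1 \tot \bfr_2$ version keeps the argument shortest and makes transparent the role of each stated hypothesis.
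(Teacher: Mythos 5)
Your proof is correct and is essentially the paper's: for $\ph_1$ and $\ph_2$ you give exactly the paper's argument, and for $\ph_0$ the paper simply applies Proposition \ref{P:rel-neg} a third time, using $\ph_3 \vdash \bfr_1 \tot \bfr_2$ together with $(\bfr_1 \tot \bfr_2) \wedge \neg \ph_3 \equiv \ph_0$, where you instead invoke the addition rule on the decomposition $\bfr_1 \tot \bfr_2 \equiv \ph_0 \vee \ph_3$ --- an immaterial difference, since Proposition \ref{P:rel-neg} is itself proved by exactly that use of the addition rule. Your closing observation, that the hypothesis $\bfr_1 \tot \bfr_2 \in \cG$ is dispensable for the conclusion, is also correct.
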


\begin{proof}
  Note that $\ph_3 \vdash \bfr_1, \bfr_2, \bfr_1 \tot \bfr_2$. Also, $\bfr_1
  \wedge \neg \ph_3 \equiv \ph_1$, $\bfr_2 \wedge \neg \ph_3 \equiv \ph_2$, and
  $(\bfr_1 \tot \bfr_2) \wedge \neg \ph_3 \equiv \ph_0$. Hence, the result
  follows from Proposition \ref{P:rel-neg} and the rule of logical equivalence.
\end{proof}

\begin{expl}\label{Expl:MathSEexpl-2}
  Let $P$ be as in Example \ref{Expl:MathSE} with $q = 1/4$. Then $P$ is
  entire but not consistent. More specifically, $P$ cannot be extended to a
  deductive theory. To see this, suppose $P'$ is an inductive theory with $P
  \subseteq P'$. Let $P'_0 = P' \dhl_{T_0}$ so that $P \subseteq P'_0$ and
  $P'_0$ is a pre-theory. Since $P_0'$ is a pre-theory, it is semi-closed and
  therefore has a completion. Let $\ol P_0$ be a completion of $P_0'$. Then $\ol
  P_0$ is also a completion of $P$. Let $\cG \subseteq \cF$ be the domain of
  $\ol P_0(\; \cdot \mid T_0)$. Definition \ref{D:complete} implies that $\cG$
  is closed under conjunctions. Thus, $\bfr_1 \wedge \bfr_2 \in \cG$, so that by
  Lemma \ref{L:MathSE}, we have $\bfr_1 \wedge \neg \bfr_2, \neg \bfr_1 \wedge
  \bfr_2, \neg \bfr_1 \wedge \neg \bfr_2 \in \cG$. From Lemma \ref{L:fin-add}
  and Proposition \ref{P:rel-neg}, it follows that
  \begin{align*}
    1 &= \ol P_0(\bfr_1 \mid T_0) + \ol P_0(\neg \bfr_1 \wedge \bfr_2 \mid T_0)
      + \ol P_0(\neg \bfr_1 \wedge \neg \bfr_2 \mid T_0)\\
    &\le \ol P_0(\bfr_1 \mid T_0) + \ol P_0(\bfr_2 \mid T_0)
      + \ol P_0(\bfr_1 \tot \bfr_2 \mid T_0)\\
    &= 3/4,
  \end{align*}
  a contradiction.

  Note that this contradiction does not depend on $\ol P_0$ being complete. It
  is enough to assume that $\ol P_0$ is an entire extension of $P$ such that
  $\ol P_0(\bfr_1 \wedge \bfr_2 \mid T_0)$ exists. Consequently, there is no
  value of $q'$ such that $P \cup \{(T_0, \bfr_1 \wedge \bfr_2, q')\}$ has an
  entire extension.
\end{expl}

\begin{prop}\label{P:MathSE}
  Let $P$ be as in Example \ref{Expl:MathSE} with $q = 1/2$. Let $q' \in [0, 1]$
  and $Q = P \cup \{(T_0, \bfr_1 \wedge \bfr_2, q')\}$. Then $Q$ has an entire
  extension if and only if $q' = 1/4$.
\end{prop}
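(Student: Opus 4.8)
The plan is to pin down the four mutually exclusive atoms $\ph_0,\ph_1,\ph_2,\ph_3$ of Lemma \ref{L:MathSE}, whose truth sets in $\Om = \B^{PV}$ are the singletons $\{\om_0\},\{\om_1\},\{\om_2\},\{\om_3\}$, and to argue the two directions separately: necessity by a small linear system, sufficiency by exhibiting the uniform measure as a model.

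For the forward direction, suppose $P'$ is an entire extension of $Q$. From $P\subseteq P'$ we read off $P'(\bfr_1\mid T_0)=P'(\bfr_2\mid T_0)=P'(\bfr_1\tot\bfr_2\mid T_0)=1/2$, while the adjoined statement gives $P'(\bfr_1\wedge\bfr_2\mid T_0)=q'$. Since $\bfr_1\wedge\bfr_2=\ph_3$ now lies in the domain of $P'(\;\cdot\mid T_0)$, together with $\bfr_1,\bfr_2,\bfr_1\tot\bfr_2$, Lemma \ref{L:MathSE} places all four atoms $\ph_j$ in that domain. Writing $p_j=P'(\ph_j\mid T_0)$ and using the equivalences $\bfr_1\equiv\ph_1\vee\ph_3$, $\bfr_2\equiv\ph_2\vee\ph_3$, $\bfr_1\tot\bfr_2\equiv\ph_0\vee\ph_3$, and $\top\equiv\bigvee_j\ph_j$ (all disjoint), Lemma \ref{L:fin-add} and the rule of logical implication yield $p_1+p_3=p_2+p_3=p_0+p_3=1/2$ and $p_0+p_1+p_2+p_3=1$. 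Hence $p_0=p_1=p_2=1/2-q'$, and substituting into the last equation gives $3(1/2-q')+q'=1$, forcing $q'=1/4$.

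For the reverse direction, take $\sP=(\Om,\fP\Om,\bbP)$ with $\bbP$ the uniform measure $\bbP\{\om_n\}=1/4$. By Theorem \ref{T:model-ind-th}, $\bTh\sP$ is a complete, hence entire, inductive theory with root $\Taut=T_0$. Direct computation gives $\bbP\,(\bfr_1)_\Om=\bbP\,(\bfr_2)_\Om=\bbP\,(\bfr_1\tot\bfr_2)_\Om=1/2$ and $\bbP\,(\bfr_1\wedge\bfr_2)_\Om=\bbP\{\om_3\}=1/4$, so (using $T_0\equiv\emp\cup\{\top\}$ and $\top_\Om=\Om$) the statement $(T_0,\bfr_1\wedge\bfr_2,1/4)$ lies in $\bTh\sP$. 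To see $P\subseteq\bTh\sP$, note that $\bbP$ restricted to $\De$ agrees with $\oprho^{1/2}$, since every member of $\De$ other than $\emp,\Om$ is a two-element set of measure $1/2$ under both. Thus for any $(X,\ph,p)\in P$ with $T(X)=T_0+\psi$, the defining ratio \eqref{cond-prob-Dynk} forces $\psi_\Om,\ph_\Om\cap\psi_\Om\in\De$ and $\oprho\psi_\Om>0$; the same ratio computed with $\bbP$ is then identical, so taking $Y=\emp$ in the definition of $\bTh\sP$ shows $(X,\ph,p)\in\bTh\sP$. Therefore $Q\subseteq\bTh\sP$, which is the required entire extension.

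The linear system and the measure computations are routine; the one step that needs care is the claim $P\subseteq\bTh\sP$, since $P$ contains far more than its three named probabilities. The crux is the observation that $\bbP$ extends $\oprho^{1/2}$ on all of $\De$, after which equality of the conditional ratios, and hence membership in $\bTh\sP$, is immediate.
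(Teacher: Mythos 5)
Your proposal is correct and takes essentially the same route as the paper: the same linear system over the four atoms of Lemma \ref{L:MathSE} for necessity, and the same uniform measure on the four strict models, with $\bTh \sP$ as the entire extension, for sufficiency. The only difference is that you spell out why $P \subseteq \bTh \sP$ (by observing that $\bbP$ agrees with $\oprho^{1/2}$ on $\De$, so the conditional ratios coincide), a verification the paper leaves as an unstated assertion.
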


\begin{proof}
  Suppose $P'$ is an entire set with $Q \subseteq P'$. Let $\cG$ denote the
  domain of $P'(\; \cdot \mid T_0)$. Let $\ph_j$ be defined as in Lemma
  \ref{L:MathSE}, so that $\ph_j \in \cG$ for all $j$. Let $p_j = P'(\ph_j \mid
  T_0)$ and note that $p_3 = q'$. By the addition rule,
  \[
    p_0 + p_3 = p_1 + p_3 = p_2 + p_3 = 1/2,
  \]
  which implies $p_0 + p_1 + p_2 + 3p_3 = 3/2$. But $\sum_j p_j = 1$, so
  $1 + 2p_3 = 3/2$, giving $q' = p_3 = 1/4$.

  Now assume $q' = 1/4$. Let $\sP = (\Om, \Si, \bbP)$, where $\Om = \{\om_n \mid
  0 \le n \le 3\}$ as in Example \ref{Expl:MathSE}, $\Si = \fP \Om$, and $\bbP$
  satisfies $\bbP \om_n = 1/4$ for all $n$. Then $\bTh \sP$ is an entire
  extension of $Q$.
\end{proof}

We conclude this section with an example of an inductive theory that fails to be
complete by violating Definition \ref{D:complete}(ii). That is, we construct an
inductive theory $P$ where $X \in \ante P$ and $X \cup \{\ph\} \in \ante P$ even
though $P(\ph \mid X)$ does not exist.

\begin{expl}\label{Expl:incompl-ind-th}
  Let $PV = \{\bfr_1, \bfr_2, \bfr_3\}$. Recall the notation $d_k(n)$ defined in
  Example \ref{Expl:MathSE}. Let $\Om$ be the set of all strict models, so that
  $\Om = \{\om_n: 0 \le n \le 7\}$, where $\om_n \bfr_k = d_k(n)$. Let $\Si =
  \fP \Om$. Fix $q \in (0, 1)$ and let $\bbP^q$ be the probability measure on
  $(\Om, \Si)$ determined by
  \begin{align*}
    \bbP^q \om_4 &= \bbP^q \om_5 = 0,\\
    \bbP^q \om_6 &= \bbP^q \om_7 = q/2,\\
    \bbP^q \om_n &= (1 - q)/4 \text{ for $0 \le n \le 3$}.
  \end{align*}
  Define the model $\sP^q = (\Om, \Si, \bbP^q)$ and let $P^q = \bTh \sP^q$. Then
  $P^q$ is a complete inductive theory with root $\Taut$. Note that
  \begin{align*}
    (\bfr_1)_\Om &= \{\om_1, \om_3, \om_5, \om_7\},\\
    (\bfr_2)_\Om &= \{\om_2, \om_3, \om_6, \om_7\},\\
    (\bfr_3)_\Om &= \{\om_4, \om_5, \om_6, \om_7\}.
  \end{align*}
  Hence, $P^q(\bfr_1) = 1/2$, $P^q(\bfr_2 \mid \bfr_3) = 1$, and $P^q(\bfr_3) =
  q$.

  Let $Q$ be defined by $Q(\bfr_1) = 1/2$ and $Q(\bfr_2 \mid \bfr_3) = 1$. Then
  $Q$ is strongly connected with root $\Taut$. Also, $Q \subseteq P^q$, so $Q$
  is consistent. Let $P = \bfP_Q$ be the inductive theory generated by $Q$. Then
  $P \subseteq P^q$ for all $q \in (0, 1)$. Hence, $P(\bfr_3)$ is undefined, and
  $P$ violates Definition \ref{D:complete}(ii).
\end{expl}

\section{Counterexamples and resolutions II}\label{S:Examples2}

This section contains examples related to satisfiability and the consequence
relation. In Section \ref{S:D-conseq-need2}, we construct an example where $\sP
\vDash \cC$ implies $\sP \vDash (X, \ph, p)$ for all models $\sP$, but $\cC
\nvdash (X, \ph, p)$. The failure occurs because $X$ is so large that it is not
countably axiomatizable over $[\Taut, T_\cC]$. As such, this example
demonstrates the need for Definition \ref{D:consequence-IC}(ii). It also serves
as an example of a collection of inductive theories whose intersection is not an
inductive theory, as well as an example of an indeterminate inductive condition.

In Section \ref{S:Karp-expls}, we address the issue of completeness and strict
satisfiability. In \cite{Karp1964}, Karp showed that completeness fails when we
try to use strict satisfiability as a basis for our semantics. She presented
therein two examples. In Examples \ref{Expl:Karp413} and \ref{Expl:Karp412}, we
revisit Karp's examples, demonstrate their resolution in the current context,
and show how they connect to classical probability theory.

\subsection{An unknown false statement}\label{S:D-conseq-need2}

Let $PV = \{\bfr^t \mid t \in [0, 1]\}$. The idea behind this example is the
following. We wish to build an inductive theory based on the following
assumptions. With probability $1/2$, every propositional variable is true.
Otherwise, there is exactly one $r \in PV$ that is false. But in this latter
case, we do not want to make any assumptions about which $r$ is false.

Let $T_0 = \Taut$. For $t \in [0, 1]$, let
\[
  Q(t) = \{(T_0, \bfr^t, 1/2)\} \cup \{(T_0, \bfr^s, 1) \mid s \ne t\},
\]

\begin{lemma}\label{L:D-conseq-need2}
  For each $t \in [0, 1]$, the set $Q(t)$ is consistent.
\end{lemma}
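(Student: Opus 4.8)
The plan is to invoke the semantic characterization of consistency furnished by Theorem \ref{T:ind-satis-cons}, which says that a set is consistent if and only if it is connected and satisfiable. This reduces the lemma to two checks. Connectivity will be essentially immediate, since all the inductive statements in $Q(t)$ share a single antecedent, and the entire content of the proof lies in exhibiting one model that satisfies $Q(t)$.

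First I would dispose of connectivity. Every inductive statement appearing in $Q(t)$ has antecedent $T_0 = \Taut$, so $\ante Q(t) = \{\Taut\}$. Taking $X_0 = \Taut \in \ante Q(t)$ and the countable set $\Phi = \emp$, we have $\Taut \equiv X_0 \cup \Phi$, so every antecedent is countably axiomatizable over $X_0$. Hence $Q(t)$ is strongly connected, and in particular connected.

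The main step is satisfiability, which I would establish by building an explicit two-point model. Let $\om_1 : PV \to \B$ be the strict model with $\om_1 \bfr^s = 1$ for every $s \in [0, 1]$, and let $\om_2$ be the strict model with $\om_2 \bfr^t = 0$ and $\om_2 \bfr^s = 1$ for all $s \ne t$. Set $\Om = \{\om_1, \om_2\}$, $\Si = \fP \Om$, and let $\bbP$ assign mass $1/2$ to each point, so that $\sP = (\Om, \Si, \bbP)$ is a model; since $\Om$ is finite, $\Si$ is already complete and $\olbbP = \bbP$, and every set $\ph_\Om$ is automatically measurable. Because $T_0 = \Taut \equiv \{\top\}$ (as $T(\{\top\}) = \Taut$) and $\top_\Om = \Om$, verifying $\sP \vDash (T_0, \ph, p)$ reduces, upon taking $Y = \emp$ and $\psi = \top$ in \eqref{cond-prob}, to checking that $\bbP \ph_\Om = p$. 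Now $(\bfr^t)_\Om = \{\om_1\}$, so $\bbP (\bfr^t)_\Om = 1/2$, giving $\sP \vDash (T_0, \bfr^t, 1/2)$; and for each $s \ne t$ both strict models satisfy $\bfr^s$, so $(\bfr^s)_\Om = \Om$ and $\bbP (\bfr^s)_\Om = 1$, giving $\sP \vDash (T_0, \bfr^s, 1)$. Thus $\sP \vDash Q(t)$, so $Q(t)$ is satisfiable, and Theorem \ref{T:ind-satis-cons} yields consistency.

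I do not anticipate a real obstacle; the only thing to resist is the temptation to construct by hand the inductive theory generated by $Q(t)$, which is unnecessary. By routing through Theorem \ref{T:ind-satis-cons}—and noting via Theorem \ref{T:model-ind-th} that $\bTh \sP$ is genuinely an inductive theory containing $Q(t)$—the uncountability of $PV$ causes no difficulty at all, since the witnessing model uses only two strict models and a finite $\si$-algebra.
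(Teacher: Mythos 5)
Your proof is correct and follows essentially the same route as the paper: strong connectivity is noted to be immediate, and satisfiability is witnessed by a model putting mass $1/2$ on the all-true assignment and mass $1/2$ on the assignment falsifying only $\bfr^t$, after which Theorem \ref{T:ind-satis-cons} gives consistency. The only inessential difference is packaging: the paper keeps $\Om = \B^{PV}$ with $\Si = \cB^{PV}$ and uses the measure $(\de(\om_t) + \de(\om_\emp))/2$ (a choice it later reuses in the proof of Proposition \ref{P:D-conseq-need2}), whereas you take $\Om$ to be a literal two-point set.
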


\begin{proof}
  The set $Q(t)$ is clearly strongly connected with root $T_0$. By Theorem 
  \ref{T:ind-satis-cons}, it suffices to show that $Q(t)$ is satisfiable.

  Let $\Om = \B^{PV}$ be the set of all strict models and $\Si = \cB^{PV} =
  \{\ph_\Om \mid \ph \in \cF\}$. For $A \subseteq [0, 1]$, define the strict
  model $\om_A$ by $\om_A \bfr^t = 0$ if and only if $t \in A$. Let $\om_t =
  \om_{\{t\}}$. Let $\de(w)$ be the point mass measure concentrated on $\om$.
  Define $\bbP_t = (\de(\om_t) + \de(\om_\emp))/2$ and $\sP_t = (\Om, \Si,
  \bbP_t)$. Then $\om_\emp \in r_\Om$ for all $r \in PV$, and $\om_t \in r_\Om$
  if and only if $r = \bfr^s$ for some $s \ne t$. Hence, $\olbbP_t \bfr^t = 1/2$
  and $\olbbP_t \bfr^s = 1$ for $s \ne t$. Therefore, $\sP_t \vDash Q(t)$ and
  $Q(t)$ is satisfiable.
\end{proof}

Define the consistent inductive condition $\cC = \{\bfP_{Q(t)} \mid t \in [0,
1]\}$, so that $\bfP_\cC$ is the inductive theory we were aiming to build.

\begin{prop}\label{P:D-conseq-need2}
  With notation as above, $\bfP_\cC \subset \bigcap \cC$. In particular,
  $\bfP_\cC \notin \cC$. That is, the condition $\cC$ is indeterminate.
\end{prop}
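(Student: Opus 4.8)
The plan is to show two things: first, that $\bfP_\cC \subseteq \bigcap \cC$ (which is already given by Theorem \ref{T:theory-gen-IC-defn}), and second, that the inclusion is strict by exhibiting an inductive statement that lies in $\bigcap \cC$ but not in $\bfP_\cC$. The natural candidate is the statement asserting that $\bigwedge_{t \in [0,1]} \bfr^t$ has probability $1/2$, given $T_0$. Intuitively, this formula says ``every propositional variable is true.'' Under each individual theory $\bfP_{Q(t)}$, the model $\sP_t$ shows that exactly one variable (namely $\bfr^t$) fails with probability $1/2$, so the conjunction of all variables should indeed have probability $1/2$ in each $\bfP_{Q(t)}$. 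But no single variable is singled out across all theories simultaneously, so when we take the inductive theory $\bfP_\cC$ generated by the common root and the intersection of restrictions, the statement $(T_0, \bigwedge_t \bfr^t, 1/2)$ cannot be derived.

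Let me lay out the key steps. \textbf{Step 1:} Set $\chi = \bigwedge_{t \in [0,1]} \bfr^t$. Note that $\chi \in \cF$ since $[0,1]$ is uncountable---\emph{wait}, this is the crucial subtlety: the language $\cF$ only permits \emph{countable} conjunctions. So $\chi$ as written is \emph{not} a formula of $\cF$. This forces me to reconsider the candidate statement. The correct approach must instead exploit the countable-axiomatizability condition (ii) in Definition \ref{D:consequence-IC}. \textbf{Step 1 (revised):} The statement witnessing strictness should be one whose antecedent $X$ is too large to be countably axiomatizable over $[T_0, T_\cC]$, yet which is satisfied by every model satisfying $\cC$. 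Concretely, I would take $X = \{\bfr^t \mid t \in [0,1]\}$, the full set of propositional variables, and consider the statement $(X, \bfr^{t_0}, 1)$ for some fixed $t_0$, or more directly analyze $\ante \bfP_\cC$ versus $\ante(\bigcap \cC)$.

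\textbf{Step 2:} Compute $T_\cC$. By Proposition \ref{P:ded-th-ind-cond}, $T_\cC = \bigcap_{t} T_{\bfP_{Q(t)}}$. In each $\bfP_{Q(t)}$, the deductive theory $T_{\bfP_{Q(t)}}$ contains $\bfr^s$ for every $s \neq t$ (since $Q(t)$ assigns these probability one under $T_0$), but it does \emph{not} contain $\bfr^t$ (its probability is $1/2$). Therefore $T_\cC$ contains no $\bfr^t$ at all, so $T_\cC = \Taut = T_0$. \textbf{Step 3:} Now consider the set $X = \{\bfr^t \mid t \in [0,1]\}$. This $X$ is \emph{not} countably axiomatizable over $[T_0, T_\cC] = \{\Taut\}$: any countable $\Phi \subseteq \cF$ mentions only countably many propositional variables (by the remarks following subformulas in Section \ref{S:formulas}), so $T_0 \cup \Phi = T(\Phi)$ cannot entail all of the uncountably many $\bfr^t$. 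Hence $X \not\cao [T_0, T_\cC]$, which means $X \notin \ante \bfP_\cC$ by Remark \ref{R:ante-cao}. \textbf{Step 4:} Show nonetheless that some statement with antecedent $X$ lies in $\bigcap \cC$. Using the models $\sP_t$ from Lemma \ref{L:D-conseq-need2}, each $\bfP_{Q(t)}$ is satisfied by $\sP_t$, and in $\sP_t$ the set $X_\Om = \bigcap_t (\bfr^t)_\Om = \{\om_\emp\}$ has $\olbbP_t$-measure $1/2 > 0$, so $X \in \ante \bTh \sP_t$; one checks $X \in \ante \bfP_{Q(t)}$ for each $t$, giving, say, $(X, \bfr^{t_0}, 1) \in \bfP_{Q(t)}$ for all $t$, hence in $\bigcap \cC$. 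Since this statement has antecedent $X \notin \ante \bfP_\cC$, it cannot belong to $\bfP_\cC$, establishing $\bfP_\cC \subsetneq \bigcap \cC$.

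\textbf{The hard part} will be Step 4---verifying that a suitable statement with antecedent $X$ genuinely lies in \emph{every} $\bfP_{Q(t)}$, not merely in each $\bTh \sP_t$. Because $\bfP_{Q(t)}$ is the \emph{smallest} inductive theory containing $Q(t)$, I must confirm that $X \in \ante \bfP_{Q(t)}$, which requires showing $X \cao [T_0, T_{\bfP_{Q(t)}}]$. Here I would use that $T_{\bfP_{Q(t)}} \supseteq \{\bfr^s \mid s \neq t\}$, so that modulo this theory, $X$ collapses to the single extra formula $\bfr^t$: indeed $X \equiv T_{\bfP_{Q(t)}} \cup \{\bfr^t\}$ after absorbing all the $\bfr^s$ with $s \neq t$ into the deductive theory, giving $X \cao [T_0, T_{\bfP_{Q(t)}}]$ with the single axiomatizing formula $\bfr^t$. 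Then Definition \ref{D:complete} and the explicit description of the lift let me read off $\bfP_{Q(t)}(\bfr^{t_0} \mid X)$ and confirm it equals the desired value, placing the statement in $\bigcap \cC$ while its antecedent excludes it from $\bfP_\cC$. Once the bookkeeping on roots and antecedents is pinned down via Remark \ref{R:ante-cao} and Proposition \ref{P:ded-th-ind-cond}, the strictness follows.
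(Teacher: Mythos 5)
Your overall strategy is the right one (and matches the paper's in substance): show $PV = \{\bfr^t \mid t \in [0,1]\} \in \ante \bigcap \cC$ while $PV \notin \ante \bfP_\cC$, the latter because $PV$ fails to be countably axiomatizable over $[T_0, T_\cC]$. But Step 2 contains a genuine error that breaks Step 3. It is false that $T_\cC = \Taut$. By Proposition \ref{P:ded-th-ind-cond}, $T_\cC = \bigcap_t T_{\bfP_{Q(t)}}$, and this intersection contains far more than tautologies: for any $s \ne t$, the formula $\bfr^s \vee \bfr^t$ has probability one under \emph{every} $\bfP_{Q(u)}$ (if $u \ne s$ use $\bfP_{Q(u)}(\bfr^s \mid T_0) = 1$ and deductive transitivity; if $u = s$ use $\bfP_{Q(s)}(\bfr^t \mid T_0) = 1$), so $\bfr^s \vee \bfr^t \in T_\cC$. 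Thus $T_\cC$ contains formulas mentioning, in aggregate, \emph{all} uncountably many propositional variables, and a theory $T \in [T_0, T_\cC]$ may do so as well. Your counting argument in Step 3 — that $T + \psi$ can only ``reach'' countably many variables — therefore collapses: the countability constraint applies only to the single formula $\psi$, not to $T$. Ruling out a decomposition $PV \equiv T + \psi$ with $T \in [T_0, T_\cC]$ is exactly the hard core of the proposition, and it cannot be done by counting. The paper does it semantically: assuming such a decomposition, let $\Pi = PV \cap \Sf \psi$ (countable, by Proposition \ref{P:Boolean-func-Pi-ary}), pick $t_0 \notin \Pi$, and consider the point-mass model $\sP = (\Om, \Si, \de(\om_{t_0}))$. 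One then shows $\sP \vDash T_\cC + \psi$ — the delicate step being that each \emph{individual} $\th \in T_\cC$ involves only countably many variables $\{\bfr^{s_1}, \bfr^{s_2}, \ldots\}$ and is strictly satisfied by $\om_\emp$ and by every $\om_u$, so that $\om_{t_0}$, which agrees with $\om_\emp$ or with some $\om_{s_n}$ on those variables, also satisfies $\th$. Since $T + \psi \equiv PV$, this forces $\om_{t_0} \tDash \bfr^{t_0}$, a contradiction. Nothing in your proposal supplies this argument or a substitute for it.

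Two smaller points. First, in Step 4 your measure-theoretic justification is wrong as stated: the uncountable intersection $\bigcap_t (\bfr^t)_\Om = \{\om_\emp\}$ is \emph{not} measurable (sets in $\cB^{PV}$ are countably determined by Proposition \ref{P:ctble-ary}, and one checks the singleton is not even in the completion), so it does not ``have measure $1/2$.'' The correct route is the one the paper takes: $\bfP_{Q(t)}(\bfr^s \mid T_0) = 1$ for $s \ne t$ and $\bfP_{Q(t)}(\bfr^t \mid T_0) = 1/2 > 0$ give $\{\bfr^t\} \in \ante \bfP_{Q(t)}$ by Lemma \ref{L:cond-exist}, and then the rule of deductive extension yields $PV = S_t \cup \{\bfr^t\} \in \ante \bfP_{Q(t)}$ directly — no model is needed for this half. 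Second, your reduction of the whole problem to antecedents (via $X \in \ante P$ iff $(X, \top, 1) \in P$, so that $\ante$ commutes with the intersection) is sound and is the same device the paper uses; once you have $PV \in \ante \bigcap \cC$ and $PV \notin \ante \bfP_\cC$, strictness and indeterminacy follow. The missing ingredient is solely the proof that $PV \not\cao [T_0, T_\cC]$, which requires the semantic construction above rather than a cardinality count.
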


\begin{proof}
  By Theorem \ref{T:theory-gen-IC-defn}, we have $\bfP_\cC \subseteq \bigcap
  \cC$. Assume $\bfP_\cC = \bigcap \cC$.

  Note that for any inductive theory $P$, by the rule of logical implication, $X
  \in \ante P$ if and only if $(X, \top, 1) \in P$. Hence, $\ante \bfP_\cC =
  \bigcap \{\ante \bfP_{Q(t)} \mid t \in [0, 1]\}$.

  For $t \in [0, 1]$, let $S_t = \{\bfr^s \mid s \ne t\}$. Then $\bfP_{Q(t)}(r
  \mid T_0) = 1$ for all $r \in S_t$ and $\bfP_{Q(t)}(\bfr^t \mid T_0) = 1/2$.
  By Lemma \ref{L:cond-exist}, we have $\{\bfr^t\} \in \ante \bfP_{Q(t)}$. By
  the rule of deductive extension, $S_t \cup \{\bfr^t\} = PV \in \ante
  \bfP_{Q(t)}$. Since $t$ was arbitrary, this gives $PV \in \ante \bfP_\cC$.
  Hence, we may write $PV \equiv T + \psi$, where $T \in [\Taut, T_\cC]$ and
  $\psi \in \cF$.

  Let $\Om$, $\Si$, $\om_t$, $\om_\emp$, and $\sP_t$ be as in the proof of Lemma
  \ref{L:D-conseq-need2}. Let $f$ be the Boolean function that $\psi$ represent.
  Proposition \ref{P:Boolean-func-Pi-ary} implies that $f$ is $\Pi$-ary, where
  $\Pi = PV \cap \Sf \psi$ is the countable set of propositional variables that
  appear in $\psi$. Hence, we may choose a measurable $h: \B^\Pi \to \B$ such
  that, for all $\om \in \Om$, we have $\om \psi = f \om = h(\om|_\Pi)$.

  Enumerate $\Pi$ as $\Pi = \{\bfr^{t_1}, \bfr^{t_2}, \ldots\}$. Choose $t_0
  \notin \{t_1, t_2, \ldots\}$ and let $\bbP = \de(\om_ {t_0})$ and $\sP = (\Om,
  \Si, \bbP)$.

  Assume for the moment that $\sP \vDash T_\cC + \psi$. Since $T + \psi
  \subseteq T_\cC + \psi$ and $PV \equiv T + \psi$, this gives $\sP \vDash PV$.
  In particular, $\sP \vDash \bfr^{t_0}$, so that $\bfr^{t_0}_\Om \in \ol \Si$
  and $\olbbP \bfr^{t_0}_\Om = 1$. By the definition of $\bbP$, this implies
  $\om_{t_0} \in \bfr^{t_0}_\Om$, so that $\om_{t_0} \bfr^{t_0} = 1$, a
  contradiction. Therefore, $\bfP_\cC \subset \bigcap \cC$, and we are done.

  It suffices, then, to show that $\sP \vDash T_\cC + \psi$. We first show that
  $\sP \vDash T_\cC$. Let $\th \in T_\cC$ be arbitrary. By Proposition
  \ref{P:ded-th-ind-cond}, we have $\th \in T(\bfP_{Q(t)})$ for all $t \in [0,
  1]$. Hence, $\olbbP_t \th_\Om = 1$ for all $t \in [0, 1]$. This implies that
  $\om_\emp \in \th_\Om$ and $\om_t \in \th_\Om$ for all $t$. In other words,
  $\om_\emp \th = 1$ and $\om_t \th = 1$ for all $t$.

  Let $\Pi' = PV \cap \Sf \th = \{\bfr^{s_1}, \bfr^{s_2}, \ldots\}$. As above,
  we may choose measurable $g: \B^{\Pi'} \to \B$ such that, for all $\om \in
  \Om$, we have $\om \th = g(\om|_{\Pi'})$. Let $\th^0 = \bigwedge_{j = 1}^\infty
  \bfr^{s_j}$. For $n \in \bN$, let $\th^n = \neg \bfr^{s_n} \wedge
  (\bigwedge_{j \ne n} \bfr^{s_j})$. Finally, define $\th' = \bigvee_{n =
  0}^\infty \th^n$. If $t_0 \notin \{s_1, s_2, \ldots\}$, then $\om_{t_0} \bfr^
  {s_j} = 1$ for all $j \in \bN$, which implies $\om_{t_0} \th^0 = 1$. If $t_0 =
  s_n$ for some $n \in \bN$, then $\om_{t_0} \th^n = 1$. In either case, we have
  $\om_{t_0} \th' = 1$, so that $\om_{t_0} \in \th'_\Om$, and therefore, $\olbbP
  \th'_\Om = 1$.

  Now suppose $\om \in \th'_\Om$. Choose $n \in \bN_0$ such that $\om \in
  \th^n_\Om$. If $n = 0$, then $\om|_{\Pi'} = \om_\emp|_{\Pi'}$, so that $\om
  \th = \om_\emp \th = 1$ and $\om \in \th_\Om$. If $n \in \bN$, then
  ${\om|_{\Pi'}} = {\om_{s_n}|_{\Pi'}}$, so that $\om \th = \om_{s_n} \th = 1$
  and again $\om \in \th_\Om$. This shows that $\th'_\Om \subseteq \th_\Om$.
  Therefore, $\olbbP \th_\Om = 1$, so that $\sP \vDash \th$. Since $\th$ was
  arbitrary, we have $\sP \vDash T_\cC$.

  Lastly, we show that $\sP \vDash \psi$. Since $(\Om, \Si, \de(\om_\emp))
  \vDash PV$ and $PV \equiv T + \psi$, we have $\om_\emp \in \psi_\Om$, so that
  $\om_\emp \psi = 1$. Since $t_0 \notin \{t_1, t_2, \ldots\}$, we also have
  that ${\om_\emp|_\Pi} = {\om_{t_0}|_\Pi}$. Hence, $\om_{t_0} \psi = \om_\emp
  \psi = 1$, which gives $\om_{t_0} \in \psi_\Om$ and therefore $\sP \vDash
  \psi$.
\end{proof}

\begin{rmk}\label{R:intersect-th-fail}
  Since $\bfP_\cC$ is the largest inductive theory contained in $\bigcap \cC$,
  it follows that $\bigcap \cC$ is not an inductive theory. By Theorem
  \ref{T:intersect-cl}, the set $\bigcap \cC$ is closed. Hence, it must not be
  connected. In other words, the condition $\cC$ is an example of a collection
  of connected sets whose intersection is not connected.
\end{rmk}

\begin{rmk}\label{R:D-conseq-need2}
  Note that if $\sP \vDash \cC$, then $\sP \vDash \bfP_{Q(t)}$ for some $t$. But
  we also have $(PV, \top, 1) \in \bfP_{Q(t)}$ for all $t$. Hence, $\sP \vDash
  \cC$ implies $\sP \vDash (PV, \top, 1)$. On the other hand, the proof of
  Proposition \ref{P:D-conseq-need2} shows that $PV \notin \ante \bfP_\cC$.
  Therefore, this example illustrates the necessity of Definition
  \ref{D:consequence-IC}(ii).
\end{rmk}

\subsection{Karp's counterexamples}\label{S:Karp-expls}

\begin{expl}\label{Expl:Karp413}
  It is well-known that $\si$-compactness fails for strict satisfiability. That
  is, there exists $X \subseteq \cF$ such that every countable subset is
  strictly satisfiable, but $X$ itself is not strictly satisfiable. Karp gives
  an example of such an $X$ in \cite[Example 4.1.3]{Karp1964}.

  However, since strict satisfiability implies satisfiability (Proposition
  \ref{P:sig-pre-cpct}(i)) and satisfiability is $\si$-compact (Theorem
  \ref{T:compactness}), we know that any such $X$ must be satisfiable. We
  present Karp's example below, and then show that it is satisfied by one of the
  most common models in probability theory.

  Let $PV = \{\bfr_n^k \mid (n, k) \in \bN \times \{0, 1\}\}$. Let $\ze =
  \bigwedge_n (\bfr_n^0 \vee \bfr_n^1)$, and for each $f: \bN \to \{0, 1\}$, let
  $\psi_f = \neg \bigwedge_n \bfr_n^{f(n)}$. Let $X = \{\ze\} \cup \{\psi_f \mid
  f \in \{0, 1\}^\bN\}$.

  Suppose $X_0\subseteq X$ is countable. Since $\{0, 1\}^\bN$ is uncountable,
  there exists $g: \bN \to \{0, 1\}$ such that $\psi_g \notin X_0$. Define $\om$
  by $\om \bfr_n^k = 1$ if and only if $g(n) = k$. Then $\om \tDash \ze$. Note
  that $\om \tDash \psi_f$ if and only if there exists $n$ such that $\om
  \bfr_n^{f(n)} = 0$. Given $f \ne g$, we may choose $n$ such that $f(n) \ne g
  (n)$, and so for this value of $n$, we have $\om \bfr_n^{f(n)} = 0$. Hence,
  $\om \tDash \psi_f$ for all $f \ne g$, and therefore $\om \tDash X_0$. It
  follows that every countable subset of $X$ is strictly satisfiable.

  Now suppose $\om \tDash X$. Since $\om \tDash \ze$, we may choose, for each $n
  \in \bN$, a value $f(n) \in \{0, 1\}$ such that $\om \tDash \bfr_n^{f(n)}$.
  But then $\om \tDash \bigwedge_n \bfr_n^{f(n)}$, meaning $\om \ntDash \psi_f$,
  a contradiction. Therefore, $X$ is not strictly satisfiable.

  As mentioned in the beginning of this example, however, we know that there
  exists a model $\sP = (\Om, \Si, \bbP)$ such that $\sP \vDash X$. In this
  case, we can construct such a model using $\Om = \B^{PV}$ and taking $\Si =
  \cB^{PV} = \{\ph_\Om \mid \ph \in \cF\}$, thereby assigning a probability to
  every formula in $\cF$. The model is a natural one that is ubiquitous in
  probability theory. Namely, it is the one that models an i.i.d.~sequence of
  coin flips.

  Let $(S, \Ga, \nu)$ be a probability space on which we have constructed an
  i.i.d.~sequence $\ang{X_n \mid n \in \bN}$ of $\{0, 1\}$-valued random
  variables with $\nu \{X_n = 1\} = 1/2$. Define $G: PV \to \Ga$ by $G \bfr_n^k
  = \{X_n = k\}$. Let $\sP = (\Om, \Si, \bbP)$ be the model constructed in the
  proof of Theorem \ref{T:prob-sp-model-iso}. Note that
  \begin{align*}
    G \ze &= \{X_n \in \{0, 1\} \text{ for all $n$}\},\\
    G \psi_f &= \{X_n = f(n) \text{ for all $n$}\}^c.
  \end{align*}
  Hence, $\bbP \ze_\Om = \nu G \ze = 1$ and $\bbP (\psi_f)_\Om = \nu G \psi_f =
  1$ for all $f$, showing that $\sP \vDash X$.

  We will investigate this example further in Section \ref{S:Karp413}, after
  covering the topic of independence.
\end{expl}

\begin{expl}\label{Expl:Karp412}
  We present here another example of Karp's (see \cite[Example 4.1.2]
  {Karp1964}). Again we see an $X$ that demonstrates the failure of
  $\si$-compactness for strict satisfiability. And again, we know that $X$ is
  satisfiable. We could use the construction in the proof of Theorem 
  \ref{T:compactness} to build a model that satisfies $X$. In this example,
  though, we do not do that. Rather, we show that any such model has a certain
  property. Namely, in any such model, $\sP = (\Om, \Si, \bbP)$, there will be
  formulas $\ph \in \cF$ such that $\ph_\Om \notin \Si$.

  Let $I$ be an uncountable set and let $PV = \{\bfr^t_n \mid t \in I, n \in
  \bN\}$.
  For each $t \in I$, let $\ze^t = \bigvee_n \bfr^t_n$. For each $s, t \in I$ and
  $n \in \bN$, let $\psi^{s, t}_n = \neg (\bfr^s_n \wedge \bfr^t_n)$. Then define
  \[
   X = \{\ze^t \mid t \in I\}
    \cup \{\psi^{s, t}_n \mid s, t \in I, s \ne t, n \in \bN\}.
  \]
  Let $X_0 \subseteq X$ be countable. Then there is a countable set $S \subset
  I$ such that
  \[
    X_0 \subseteq \{\ze^t \mid t \in S\}
      \cup \{\psi^{s, t}_n \mid s, t \in S, s \ne t, n \in \bN\}.
  \]
  Let $t \mapsto n(t)$ be an injection from $S$ to $\bN$. Define a model $\om$
  by $\om \bfr^t_n = 1$ if and only if $t \in S$ and $n = n(t)$. Then, for each
  $t \in S$, we have $\om \tDash \bfr^t_{n(t)}$, and therefore $\om \tDash
  \ze^t$. Also, if $s, t \in S$, $s \ne t$, and $n \in \bN$, then either $n \ne
  n(s)$ or $n \ne n(t)$, implying that at least one of $\om \bfr^s_n$ and $\om
  \bfr^t_n$ is 0. Thus, $\om \tDash \psi^{s, t}_n$. Altogether, this shows that
  $\om \tDash X_0$, so that every countable subset of $X$ is strictly
  satisfiable.

  Now suppose $\om \tDash X$. For each $t \in I$, we have $\om \tDash \ze^t$. By
  the definition of $\ze^t$, we may choose $n(t) \in \bN$ such that $\om \tDash
  \bfr^t_{n(t)}$. But $I$ is uncountable and $\bN$ is countable, so there exists
  distinct $s, t \in I$ such that $n(s) = n(t)$. It follows that $\om \psi^{s,
  t}_{n(s)} = 0$, and so $\om \ntDash \psi^{s, t}_{n(s)}$, a contradiction.
  Therefore, $X$ is not strictly satisfiable.

  Again, by Proposition \ref{P:sig-pre-cpct}(i) and Theorem \ref{T:compactness},
  we know that $X$ is satisfiable, meaning there is a model $\sP = (\Om, \Si,
  \bbP)$ such that $\sP \vDash X$. In this case, however, we cannot construct
  such a model using $\Si = \cB^{PV} = \{\ph_\Om \mid \ph \in \cF\}$. In any
  model that satisfies $X$, there will exist $\ph \in \cF$ such that $\ph_\Om
  \notin \Si$. In other words, there will be formulas that are not assigned a
  probability.

  To see that this is the case, let $\sP = (\Om, \Si, \bbP)$ be a model. Assume
  that $\sP \vDash X$ and that $r_\Om \in \Si$ for all $r \in PV$. For $n, k \in
  \bN$, define
  \[
    S(n, k) = \{t \in I \mid \bbP (\bfr^t_n)_\Om \ge k^{-1}\}.
  \]
  Suppose $s, t \in I$ and $s \ne t$. Then $\psi^{s, t}_n = \neg (\bfr^s_n
  \wedge \bfr^t_n) \in X$. Thus,
  \[
    \bbP (\psi^{s, t}_n)_\Om = \bbP ((\bfr^s_n)_\Om \cap (\bfr^t_n)_\Om)^c = 1.
  \]
  In other words, $(\bfr^s_n)_\Om$ and $(\bfr^t_n)_\Om$ are pairwise disjoint,
  up to a set of measure zero. It follows that $S(n, k)$ is a finite set with at
  most $k$ elements.

  Now fix $t \in I$. Since $\ze^t = \bigvee_n \bfr^t_n \in X$, we have $1 = \bbP
  \ze^t_\Om = \bbP \bigcup_n (\bfr^t_n)_\Om$. Thus, there exists $n \in \bN$ such
  that $\bbP (\bfr^t_n)_\Om > 0$, showing that $t \in S(n, k)$ for some $n, k \in
  \bN$. In other words, $I = \bigcup_{n, k} S(n, k)$, expressing $I$ as a
  countable union of finite sets, contradicting the fact that $I$ is
  uncountable.
\end{expl}

\begin{rmk}\label{R:Karp412}
  It follows from Example \ref{Expl:Karp412} that we cannot construct an
  $\bN$-valued stochastic process $\ang{Y(t) \mid t \in I}$ such that for all $s
  \ne t$, we have $Y(s) \ne Y(t)$ a.s. (Recall that a stochastic process is
  simply an indexed collection of random variables taking values in the same
  measurable space.) To see this, suppose we have such a process, built on a
  probability space $(S, \Ga, \nu)$. Define $G: PV \to \Ga$ by $G \bfr_n^t =
  \{Y(t) = n\}$, and let $\sP = (\Om, \Si, \bbP)$ be the model constructed in
  the proof of Theorem \ref{T:prob-sp-model-iso}. Then $\sP \vDash X$ and $\Si =
  \{\ph_\Om \mid \ph \in \cF\}$, a contradiction.
\end{rmk}

\section{Independence}\label{S:indep}

In this section, we introduce the concept of (inductive) independence. It is a
purely logical concept, defined solely in terms of formulas and inductive
theories, without reference to any model. Intuitively, $\ph$ and $\psi$ are
independent given $X$ if $P(\ph \mid X)$ is unchanged by adding $\psi$ to $X$.
More generally, a sequence of formulas is independent if, whenever we take two
disjoint subsequences and, from each subsequence, create a formula using
negation and conjunction, the two resulting formulas are independent.

To make this notion precise, we will introduce the concept of a dialog set,
which describes all the formulas that can be created from a given set of
formulas. Dialog sets are the syntactic analogues of $\si$-algebras.

After defining independence, we introduce the semantic concept of ``measure
independence,'' which is nothing more than the familiar notion of independence
in a measure space, defined by the property that the measure of an intersection
factors into a product. Using this, we give a characterization of independence
in terms of satisfiability and measure independence.

Finally, we give two examples in which we use independence to build inductive
conditions, and then look at the inductive theories generated by those
conditions.

\subsection{Dialog sets}\label{S:dialog}

\begin{defn}\label{D:dialog}
    \index{dialog set}%
  A set $D \subseteq \cF$ is a \textit{dialog set} if it satisfies the
  following:
  \begin{enumerate}[(i)]
    \item $\ph \in D$ implies $\neg \ph \in D$, \label{neg-cl}
    \item $\Phi \subseteq D$ countable implies $\bigwedge \Phi \in D$, and
          \label{conj-cl}
    \item $\ph \in D$ and $\ph \equiv \psi$ implies $\psi \in D$.
          \label{eq-cl}
  \end{enumerate}
\end{defn}

Note that the $\Phi$ in \ref{conj-cl} may be empty. Hence, $\top \in D$, and by
\ref{neg-cl} and \ref{eq-cl}, also $\bot \in D$. Note further that \ref{eq-cl}
implies $D$ is closed under $\to$, $\tot$, and $\bigvee$.

The intersection of any family of dialog sets is again a dialog set. Also, the
language $\cF$ itself is a dialog set, and is the largest dialog set. If $X
\subseteq \cF$, then the \textit{dialog set generated by $X$}, denoted by
$\de(X)$, is the smallest dialog set containing $X$.
  \symindex{$\de(X)$}%
It is equal to the intersection of all dialog sets containing $X$. The smallest
dialog set is
\[
  \de(\emp) = \Taut \cup \neg \Taut.
\]
Intuitively, $\de(X)$ is the set of all formulas that can be built out the
formulas in $X$ using negation, conjunction, and logical equivalence. Note that
$\de(PV) = \cF$.

\begin{prop}\label{P:dialog-sig-alg}
  Let $\Om$ be a set of strict models. If $D \subseteq \cF$ is a dialog set,
  then $D_\Om$ is a $\si$-algebra on $\Om$. More generally, if $X \subseteq
  \cF$, then $\si(X_\Om) = \de(X)_\Om$.
\end{prop}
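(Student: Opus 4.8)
The plan is to prove the two assertions in turn, using the fact (recorded just before Proposition~\ref{P:sig-pre-cpct}) that the map $\ph \mapsto \ph_\Om$ converts the logical operations into the corresponding set operations: $(\neg \ph)_\Om = \ph_\Om^c$ and $(\bigwedge \Phi)_\Om = \bigcap_{\ph \in \Phi} \ph_\Om$, and hence also $(\bigvee \Phi)_\Om = \bigcup_{\ph \in \Phi} \ph_\Om$. Each of the three dialog-set closure conditions then pushes forward through this map to a $\si$-algebra closure condition, which is the whole mechanism of the proof.

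For the first assertion, I would verify the defining properties of a $\si$-algebra on $D_\Om$ directly. Nonemptiness: since $\top = \bigwedge \emp \in D$ and $\top_\Om = \Om$, we have $\Om \in D_\Om$. Closure under complements: given $A = \ph_\Om \in D_\Om$, property (i) of a dialog set gives $\neg \ph \in D$, and $A^c = \ph_\Om^c = (\neg \ph)_\Om \in D_\Om$. Closure under countable unions: given a countable family $\{A_n\} \subseteq D_\Om$, choose $\ph_n \in D$ with $A_n = (\ph_n)_\Om$; then $\bigvee_n \ph_n \in D$ (the dialog set is closed under $\bigvee$, as noted after Definition~\ref{D:dialog}), and $\bigcup_n A_n = (\bigvee_n \ph_n)_\Om \in D_\Om$.

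For the second assertion, one inclusion is immediate from the first: $\de(X)$ is a dialog set containing $X$, so $\de(X)_\Om$ is a $\si$-algebra containing $X_\Om$, whence $\si(X_\Om) \subseteq \de(X)_\Om$. The reverse inclusion $\de(X)_\Om \subseteq \si(X_\Om)$ is the crux, and I would handle it with the standard ``good set'' device: define $D' = \{\ph \in \cF \mid \ph_\Om \in \si(X_\Om)\}$ and show $D'$ is a dialog set containing $X$. It contains $X$ because $X_\Om \subseteq \si(X_\Om)$. It is closed under negation and under countable conjunction because $\si(X_\Om)$ is closed under complements and countable intersections, via the same translation of operations used above (covering the empty conjunction through $\top_\Om = \Om \in \si(X_\Om)$). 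It is closed under logical equivalence because Remark~\ref{R:impl-subset} gives $\ph \equiv \psi \Rightarrow \ph_\Om = \psi_\Om$, so membership of $\ph$ in $D'$ forces membership of $\psi$. Since $D'$ is a dialog set containing $X$ and $\de(X)$ is the smallest such, $\de(X) \subseteq D'$, which says exactly that $\ph_\Om \in \si(X_\Om)$ for every $\ph \in \de(X)$, i.e.\ $\de(X)_\Om \subseteq \si(X_\Om)$.

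There is no serious analytic obstacle here; the proof is essentially bookkeeping once the auxiliary set $D'$ is introduced. The one point requiring care is the equivalence-closure condition (iii) of a dialog set, which has no direct $\si$-algebra counterpart and must be supplied by Remark~\ref{R:impl-subset}; this is precisely the step that makes $D'$ a genuine dialog set and that forces me to invoke the semantic fact $\ph \equiv \psi \Rightarrow \ph_\Om = \psi_\Om$ rather than arguing purely Boolean-algebraically. I would also be mindful to treat the empty-family cases explicitly so that nonemptiness of $D_\Om$ and membership of $\Om$ in $\si(X_\Om)$ are not left tacit.
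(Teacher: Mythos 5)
Your proof is correct and follows essentially the same route as the paper: a direct verification that $D_\Om$ is a $\si$-algebra, followed by the ``good set'' argument with $D' = \{\ph \in \cF \mid \ph_\Om \in \si(X_\Om)\}$, using Remark~\ref{R:impl-subset} for the equivalence-closure condition. The only cosmetic difference is that you check closure of $D_\Om$ under countable unions via $\bigvee$, where the paper checks countable intersections via $\bigwedge$; these are interchangeable.
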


\begin{proof}
  Let $D \subseteq \cF$ be a dialog set. Then $\Om = \top_\Om \in D_\Om$. Let
  $A \in D_\Om$. Choose $\ph \in D$ such that $A = \ph_\Om$. Then $\neg \ph \in
  D$, so that $A^c = \ph_\Om^c = (\neg \ph)_\Om \in D_\Om$. Finally, suppose $
  \{A_n\} \subseteq D_\Om$. Choose $\ph_n \in D$ such that $A_n = (\ph_n)_\Om$.
  Then $\bigcap A_n = (\bigwedge \ph_n)_\Om \in D_\Om$, and hence, $D_\Om$ is a
  $\si$-algebra.

  Now let $X \subseteq \cF$. Then $X_\Om \subseteq \de(X)_\Om$. By the above,
  $\de(X)_\Om$ is a $\si$-algebra. Hence, $\si(X_\Om) \subseteq \de(X)_\Om$. For
  the reverse inclusion, define $D = \{\ph \in \cF \mid \ph_\Om \in \si
  (X_\Om)\}$. The set $D$ clearly satisfies \ref{neg-cl} and \ref{conj-cl} of
  Definition \ref{D:dialog}. Remark \ref{R:impl-subset} shows that it also
  satisfies \ref{eq-cl}. Thus, $D$ is a dialog set. Since $X \subseteq D$, it
  follows that $\de(X) \subseteq D$ and therefore $\de(X)_\Om \subseteq \si
  (X_\Om)$.
\end{proof}

\subsection{Independence of two formulas}

Let $P$ be an inductive theory. For $X \subseteq \cF$, let $\dom P(\; \cdot
\mid X) \subseteq \cF$ denote the domain of $P(\; \cdot \mid X)$. If $X \notin
\ante P$, then $\dom P(\; \cdot \mid X) = \emp$.

Let $\ph, \psi \in \dom P(\; \cdot \mid X)$. We say that \emph{$\ph$ is
dependent on $\psi$ given $X$ (under $P$)} if $P(\ph \mid X, \psi)$ exists and
is not equal to $P(\ph \mid X)$.
  \index{dependent}%
We say that \emph{$\ph$ is independent of $\psi$ given $X$ (under $P$)} if
either $P(\psi \mid X) = 0$ or $P(\ph \mid X, \psi) = P(\ph \mid X)$.
  \index{independent}%

Note that if $P(\psi \mid X) > 0$ and $P(\ph \mid X, \psi)$ does not exist, then
$\ph$ is not dependent on $\psi$, and $\ph$ is also not independent of $\psi$.
If $P$ is complete, then this situation cannot arise. To see this, note that if
$P$ is complete and $\ph, \psi \in \dom P(\; \cdot \mid X)$, then $P(\ph \wedge
\psi \mid X)$ exists. Hence, by the multiplication rule, if $P(\psi \mid X) >
0$, then $P(\ph \mid X, \psi)$ exists.

\begin{lemma}\label{L:dep-undec}
  If $\ph$ is dependent on $\psi$ given $X$, then both $0 < P(\ph \mid X) < 1$
  and $0 < P(\psi \mid X) < 1$.
\end{lemma}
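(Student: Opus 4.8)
The plan is to reduce the entire lemma to a single sub-claim about how conditioning interacts with certainty: \emph{if $P$ is entire, $P(\psi \mid X) > 0$, the probability $P(\ph \mid X, \psi)$ exists, and $P(\ph \mid X) \in \{0, 1\}$, then $P(\ph \mid X, \psi) = P(\ph \mid X)$.} Granting this, the lemma is immediate. An inductive theory is closed, hence entire, so the hypotheses apply. Since $\ph$ is dependent on $\psi$ given $X$, the probability $P(\ph \mid X, \psi)$ exists, so $X \cup \{\psi\} \in \ante P$, and Lemma \ref{L:cond-exist} yields $P(\psi \mid X) > 0$. The sub-claim then prevents $P(\ph \mid X)$ from equaling $0$ or $1$, for otherwise $P(\ph \mid X, \psi) = P(\ph \mid X)$, contradicting dependence; this gives $0 < P(\ph \mid X) < 1$.

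To prove the sub-claim I would first treat the case $P(\ph \mid X) = 0$. Using $\psi \wedge \ph \equiv \ph \wedge \psi$ and the rule of logical equivalence, the multiplication rule reads $P(\ph \wedge \psi \mid X) = P(\psi \mid X)\, P(\ph \mid X, \psi)$. Because $P(\psi \mid X)$ and $P(\ph \mid X, \psi)$ both exist, the generalized form of the multiplication rule guarantees that the product $P(\ph \wedge \psi \mid X)$ exists, since forming it requires no division. As $X, \ph \wedge \psi \vdash \ph$, the monotonicity clause of Proposition \ref{P:rel-neg} gives $P(\ph \wedge \psi \mid X) \le P(\ph \mid X) = 0$, so $P(\ph \wedge \psi \mid X) = 0$; dividing by the positive quantity $P(\psi \mid X)$ forces $P(\ph \mid X, \psi) = 0$. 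The case $P(\ph \mid X) = 1$ reduces to this one via $\neg \ph$: Corollary \ref{C:rel-neg} shows $P(\neg \ph \mid X) = 0$ and that $P(\neg \ph \mid X, \psi)$ exists, the first case gives $P(\neg \ph \mid X, \psi) = 0$, and one more application of Corollary \ref{C:rel-neg} returns $P(\ph \mid X, \psi) = 1$.

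It remains to establish $0 < P(\psi \mid X) < 1$. The lower bound is already in hand. For the upper bound, suppose $P(\psi \mid X) = 1$. As above, the generalized multiplication rule shows $P(\ph \wedge \psi \mid X)$ exists and equals $P(\psi \mid X)\, P(\ph \mid X, \psi) = P(\ph \mid X, \psi)$. On the other hand, Proposition \ref{P:cert-cl-conv} applies, since $P(\psi \mid X) = 1$ and $P(\ph \wedge \psi \mid X)$ exists, and gives $P(\ph \mid X) = P(\ph \wedge \psi \mid X)$. Combining the two equalities yields $P(\ph \mid X) = P(\ph \mid X, \psi)$, again contradicting dependence; hence $P(\psi \mid X) < 1$.

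The only delicate point throughout is existence. Because an inductive theory need not be complete (cf.\ Example \ref{Expl:incompl-ind-th}), I cannot freely assume that the conjunction $\ph \wedge \psi$ has a defined probability. The generalized multiplication theorem is exactly the tool that circumvents this: it lets me assert that $P(\ph \wedge \psi \mid X)$ exists precisely because assembling it from the two known probabilities involves only multiplication, never division by zero. Once existence is secured, the remaining steps are routine bookkeeping with the addition, multiplication, and logical-equivalence rules.
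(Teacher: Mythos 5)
Your proof is correct and follows essentially the same route as the paper's: both obtain $P(\psi \mid X) > 0$ from Lemma \ref{L:cond-exist}, then rule out each boundary value by showing that conditioning on $\psi$ cannot alter a probability equal to $0$ or $1$, contradicting dependence, with one of the $\ph$-cases handled by passing to $\neg \ph$. The only cosmetic differences are the packaging (your sub-claim, proved via the generalized multiplication theorem and the monotonicity part of Proposition \ref{P:rel-neg}, does the work the paper delegates to Proposition \ref{P:certainty-closure}) and which of the cases $P(\ph \mid X) = 0$ or $P(\ph \mid X) = 1$ is treated directly versus reduced by negation.
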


\begin{proof}
  Let $\ph$ be dependent on $\psi$ given $X$. By Lemma \ref{L:cond-exist}, we
  have $P(\psi \mid X) > 0$. Suppose $P(\psi \mid X) = 1$. Then the
  multiplication rule and Proposition \ref{P:certainty-closure} imply $P(\ph
  \mid X) = P(\ph \mid X, \psi)$, a contradiction.

  Now suppose $P(\ph \mid X) = 1$. As above, the multiplication rule and
  Proposition \ref{P:certainty-closure} imply $P(\ph \mid X, \psi) = 1$, a
  contradiction. Finally, suppose $P(\ph \mid X) = 0$. Then $P(\neg \ph \mid X)
  = 1$. Again, this gives $P(\neg \ph \mid X, \psi) = 1$, so that
  \eqref{prob-neg} implies $P(\ph \mid X, \psi) = 0$, a contradiction.
\end{proof}

\begin{lemma}\label{L:evidence}
  If $\ph$ is dependent on $\psi$ given $X$, then both $P(\psi \mid X, \ph)$ and
  $P(\ph \mid X, \neg \psi)$ exist.
\end{lemma}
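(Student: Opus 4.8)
The plan is to derive the existence of both conditional probabilities from the generalized multiplication rule (the theorem that in the multiplication rule positivity is unnecessary, provided one does not divide by zero), after first securing the needed positivity from Lemma \ref{L:dep-undec} and the existence of the relevant conjunction probabilities. Since $\ph$ and $\psi$ lie in the domain of $P(\; \cdot \mid X)$, both $P(\ph \mid X)$ and $P(\psi \mid X)$ exist, and the dependence hypothesis additionally supplies that $P(\ph \mid X, \psi)$ exists. First I would record that, by Lemma \ref{L:dep-undec}, we have $0 < P(\ph \mid X) < 1$ and $0 < P(\psi \mid X) < 1$; in particular $P(\ph \mid X) > 0$ and, via \eqref{prob-neg}, $P(\neg \psi \mid X) = 1 - P(\psi \mid X) > 0$. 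These are precisely the denominators I will need, so their positivity guarantees the ``no division by zero'' hypothesis is met at each application.

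For $P(\psi \mid X, \ph)$: since $P(\psi \mid X)$ and $P(\ph \mid X, \psi)$ both exist, applying the generalized multiplication rule to $\psi \wedge \ph$ (using $\ph \wedge \psi \equiv \psi \wedge \ph$ together with the rule of logical equivalence) shows that $P(\ph \wedge \psi \mid X)$ exists. Then, because $P(\ph \mid X) > 0$, I can apply the generalized multiplication rule in the orientation $P(\ph \wedge \psi \mid X) = P(\ph \mid X) P(\psi \mid X, \ph)$, now solving for the third probability $P(\psi \mid X, \ph)$; division is by the positive quantity $P(\ph \mid X)$, so this is legitimate and yields existence of $P(\psi \mid X, \ph)$.

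For $P(\ph \mid X, \neg \psi)$: I first obtain the conjunction $P(\ph \wedge \neg \psi \mid X)$. Applying Proposition \ref{P:rel-neg} with the pair $\ph \wedge \psi$ and $\ph$ (noting $X, \ph \wedge \psi \vdash \ph$, and that both $P(\ph \wedge \psi \mid X)$ and $P(\ph \mid X)$ exist) shows that $P(\ph \wedge \neg(\ph \wedge \psi) \mid X)$ exists; since $\ph \wedge \neg(\ph \wedge \psi) \equiv \ph \wedge \neg \psi$, the rule of logical equivalence gives that $P(\ph \wedge \neg \psi \mid X)$ exists. Finally, because $P(\neg \psi \mid X) > 0$, the generalized multiplication rule applied to $\neg \psi \wedge \ph \equiv \ph \wedge \neg \psi$ lets me solve for $P(\ph \mid X, \neg \psi)$, again dividing only by the positive quantity $P(\neg \psi \mid X)$, which establishes its existence.

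I do not anticipate a serious obstacle; the argument is essentially bookkeeping in the generalized multiplication rule. The one point requiring care is tracking the correct orientation of that rule at each step—which two probabilities are taken as known and which is being solved for—and confirming that the positivity supplied by Lemma \ref{L:dep-undec} always sits in the denominator. The only non-purely-multiplicative step is the passage from $P(\ph \wedge \psi \mid X)$ to $P(\ph \wedge \neg \psi \mid X)$ via Proposition \ref{P:rel-neg} and the logical equivalence $\ph \wedge \neg(\ph \wedge \psi) \equiv \ph \wedge \neg \psi$.
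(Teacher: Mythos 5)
Your proposal is correct and follows essentially the same route as the paper's proof: multiplication rule to obtain $P(\ph \wedge \psi \mid X)$, then solve for $P(\psi \mid X, \ph)$ using $P(\ph \mid X) > 0$ from Lemma \ref{L:dep-undec}, then Proposition \ref{P:rel-neg} to obtain $P(\ph \wedge \neg\psi \mid X)$, and a final application of the multiplication rule with $P(\neg\psi \mid X) > 0$. Your extra care in invoking the generalized multiplication rule and tracking orientations is a slightly more explicit rendering of exactly the steps the paper takes.
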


\begin{proof}
  Suppose $\ph$ is dependent on $\psi$ given $X$. Then $P(\ph \mid X)$, $P(\psi
  \mid X)$, and $P(\ph \mid X, \psi)$ exist, and $P(\ph \mid X, \psi) \ne P(\ph
  \mid X)$. Since $P(\psi \mid X)$ and $P(\ph \mid X, \psi)$ exist, the
  multiplication rule implies $P(\ph \wedge \psi \mid X)$ exists. By Lemma 
  \ref{L:dep-undec}, we have $P(\ph \mid X) > 0$. Hence, another application of
  the multiplication rule implies that $P(\psi \mid X, \ph)$ exists. From
  Proposition \ref{P:rel-neg}, it follows that $P(\ph \wedge \neg \psi \mid X)$
  exists. Lemma \ref{L:dep-undec} implies $P(\neg \psi \mid X) > 0$. Therefore,
  a final application of the multiplication rule gives the existence of $P(\ph
  \mid X, \neg \psi)$.
\end{proof}

\begin{prop}
  Suppose $\ph$ is dependent on $\psi$ given $X$. Then $P(\ph \mid X, \psi) > P
  (\ph \mid X)$ if and only if $P(\ph \mid X, \neg \psi) < P(\ph \mid X)$.
\end{prop}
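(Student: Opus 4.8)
The plan is to reduce the biconditional to the law of total probability, which expresses $P(\ph \mid X)$ as a convex combination of $P(\ph \mid X, \psi)$ and $P(\ph \mid X, \neg \psi)$, and then to finish with elementary algebra. Throughout, I would abbreviate $a = P(\ph \mid X)$, $b = P(\ph \mid X, \psi)$, $c = P(\ph \mid X, \neg \psi)$, and $s = P(\psi \mid X)$. Since $\ph$ is dependent on $\psi$ given $X$, the quantities $a$, $b$, and $s$ all exist, and Lemma~\ref{L:evidence} guarantees that $c$ exists as well. Lemma~\ref{L:dep-undec} provides the crucial strict inequalities $0 < s < 1$, and Corollary~\ref{C:rel-neg} gives $P(\neg \psi \mid X) = 1 - s$, which is therefore also strictly between $0$ and $1$.

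First I would establish the identity $a = sb + (1-s)c$. Using the rule of logical equivalence to pass between $\psi \wedge \ph$ and $\ph \wedge \psi$, the multiplication rule applied to $P(\psi \mid X) = s$ and $P(\ph \mid X, \psi) = b$ yields $P(\ph \wedge \psi \mid X) = sb$; here I would invoke the generalized form of the multiplication rule (the theorem following the statement of (R6)) so that the argument remains valid even when $b = 0$, since $s > 0$ prevents any division by zero. In the same way, from $P(\neg \psi \mid X) = 1 - s > 0$ and $P(\ph \mid X, \neg \psi) = c$ I obtain $P(\ph \wedge \neg \psi \mid X) = (1-s)c$. Because $(\ph \wedge \psi) \wedge (\ph \wedge \neg \psi)$ is a contradiction while $(\ph \wedge \psi) \vee (\ph \wedge \neg \psi) \equiv \ph$, the addition rule together with the rule of logical equivalence shows that the sum exists and equals $P(\ph \mid X)$, giving $a = sb + (1-s)c$.

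The remainder is algebra. Rearranging the identity gives $b - a = (1-s)(b - c)$ and $a - c = s(b - c)$. Since both $s$ and $1 - s$ are strictly positive, each of $b - a$ and $a - c$ has the same sign as $b - c$. Consequently $b - a > 0$ holds if and only if $a - c > 0$, which is exactly the assertion that $P(\ph \mid X, \psi) > P(\ph \mid X)$ if and only if $P(\ph \mid X, \neg \psi) < P(\ph \mid X)$.

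The step I expect to require the most care is the derivation of $a = sb + (1-s)c$: one must confirm that every probability appearing in the multiplication and addition rules genuinely exists before those rules may be applied. The strict bounds from Lemma~\ref{L:dep-undec} and the existence of $c$ from Lemma~\ref{L:evidence} are precisely what make this possible, and the generalized multiplication rule is what removes the otherwise awkward positivity hypotheses in the boundary cases where $b$ or $c$ vanishes. Once the identity is in hand, the biconditional follows immediately.
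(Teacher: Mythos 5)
Your proof is correct, but it follows a genuinely different route from the paper's. The paper argues through Bayes' theorem \eqref{Bayes}: from $P(\ph \mid X, \psi) > P(\ph \mid X)$ it deduces $P(\psi \mid X, \ph) > P(\psi \mid X)$, passes to complements via \eqref{prob-neg} to get $P(\neg \psi \mid X, \ph) < P(\neg \psi \mid X)$, and then applies Bayes a second time, now with $\neg \psi$, to conclude $P(\ph \mid X, \neg \psi) < P(\ph \mid X)$; the hypotheses needed for both applications come from Lemmas \ref{L:dep-undec} and \ref{L:evidence}, the same two lemmas you invoke. You instead prove the total-probability identity $a = sb + (1-s)c$ and finish with algebra, and your existence bookkeeping is sound: $s \in (0,1)$ from Lemma \ref{L:dep-undec}, existence of $c$ from Lemma \ref{L:evidence}, the generalized multiplication rule (so that $b = 0$ or $c = 0$ causes no trouble, since $s$ and $1-s$ are positive), and the addition rule applied to the disjoint decomposition $\ph \equiv_X (\ph \wedge \psi) \vee (\ph \wedge \neg \psi)$. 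Your approach buys something extra: the identities $b - a = (1-s)(b-c)$ and $a - c = s(b-c)$ exhibit $a$ as a strict convex combination of $b$ and $c$, so you obtain the full trichotomy --- the stated biconditional, its reversed-inequality counterpart, and the equality case $b = a \Leftrightarrow c = a \Leftrightarrow b = c$ --- in one stroke. The paper's route, for its part, stays within the Bayes-theorem toolkit that the surrounding subsection is built on; indeed the same reversed-conditional manipulation reappears immediately afterward in the proof of Proposition \ref{P:depen-sym}, so the two proofs there share their machinery.
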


\begin{proof}
  Suppose $P(\ph \mid X, \psi) > P(\ph \mid X)$. Lemma \ref{L:dep-undec} implies
  $P(\psi \mid X) > 0$, and Lemma \ref{L:evidence} implies $P(\psi \mid X, \ph)$
  exists. Thus, by \eqref{Bayes},
  \[
    P(\ph \mid X) P(\psi \mid X, \ph) > P(\psi \mid X) P(\ph \mid X),
  \]
  which implies $P(\psi \mid X, \ph) > P(\psi \mid X)$. By \eqref{prob-neg},
  this implies $P(\neg \psi \mid X, \ph) < P(\neg \psi \mid X)$. On the other
  hand, Lemma \ref{L:evidence} implies $P(\ph \mid X, \neg \psi)$ exists, so
  that \eqref{Bayes} implies
  \[
    P(\neg \psi \mid X) P(\ph \mid X, \neg \psi)
      = P(\ph \mid X) P(\neg \psi \mid X, \ph).
  \]
  As before, this implies $P(\ph \mid X, \neg \psi) < P(\ph \mid X)$. The
  analogous argument proves the converse.
\end{proof}

\begin{prop}\label{P:depen-sym}
  Let $\ph, \psi \in \cF$. Then $\ph$ is dependent on $\psi$ given $X$ if and
  only if $\psi$ is dependent on $\ph$ given $X$.
\end{prop}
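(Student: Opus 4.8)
The statement is fully symmetric in $\ph$ and $\psi$, so the plan is to prove only the implication ``$\ph$ dependent on $\psi$ given $X$ $\Rightarrow$ $\psi$ dependent on $\ph$ given $X$''; the reverse direction then follows by interchanging the roles of $\ph$ and $\psi$. Throughout, I would unwind the definition: $\ph$ dependent on $\psi$ given $X$ means that $P(\ph \mid X)$, $P(\psi \mid X)$, and $P(\ph \mid X, \psi)$ all exist and $P(\ph \mid X, \psi) \ne P(\ph \mid X)$; what I must produce is the existence of $P(\psi \mid X, \ph)$ together with $P(\psi \mid X, \ph) \ne P(\psi \mid X)$.

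First I would dispose of the existence question. Assuming $\ph$ is dependent on $\psi$ given $X$, Lemma \ref{L:evidence} immediately gives that $P(\psi \mid X, \ph)$ exists, and Lemma \ref{L:dep-undec} gives $0 < P(\psi \mid X) < 1$ and $0 < P(\ph \mid X) < 1$. In particular all four of the probabilities $P(\ph \mid X)$, $P(\psi \mid X, \ph)$, $P(\psi \mid X)$, and $P(\ph \mid X, \psi)$ now exist, which is exactly the hypothesis needed to invoke Bayes' theorem \eqref{Bayes}.

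The core of the argument is then a short contradiction using Bayes. Suppose, toward a contradiction, that $P(\psi \mid X, \ph) = P(\psi \mid X)$. By \eqref{Bayes} we have
\[
  P(\ph \mid X)\, P(\psi \mid X, \ph) = P(\psi \mid X)\, P(\ph \mid X, \psi),
\]
and substituting $P(\psi \mid X, \ph) = P(\psi \mid X)$ yields
\[
  P(\psi \mid X)\, P(\ph \mid X) = P(\psi \mid X)\, P(\ph \mid X, \psi).
\]
Since $P(\psi \mid X) > 0$ by Lemma \ref{L:dep-undec}, I may cancel it to obtain $P(\ph \mid X) = P(\ph \mid X, \psi)$, contradicting the assumption that $\ph$ is dependent on $\psi$ given $X$. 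Hence $P(\psi \mid X, \ph) \ne P(\psi \mid X)$, and since $P(\psi \mid X, \ph)$ exists, $\psi$ is dependent on $\ph$ given $X$, as required.

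I do not anticipate a serious obstacle here: the only place requiring care is confirming that every probability appearing in the Bayes identity is defined before applying it, which is precisely what Lemmas \ref{L:dep-undec} and \ref{L:evidence} were set up to guarantee. The cancellation step is legitimate exactly because Lemma \ref{L:dep-undec} rules out $P(\psi \mid X) = 0$, so no division-by-zero issue arises.
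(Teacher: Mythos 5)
Your proposal is correct and follows essentially the same route as the paper's own proof: existence of $P(\psi \mid X, \ph)$ via Lemma \ref{L:evidence}, positivity of $P(\psi \mid X)$ via Lemma \ref{L:dep-undec}, and then a contradiction through Bayes' theorem \eqref{Bayes}, with the converse by symmetry. The only cosmetic difference is that you spell out the cancellation step explicitly, which the paper leaves implicit.
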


\begin{proof}
  Let $\ph$ be dependent on $\psi$ given $X$. Then $P(\ph \mid X)$ and $P(\psi
  \mid X)$ exist. By Lemma \ref{L:evidence}, we have that $P(\psi \mid X, \ph)$
  exists. Suppose $P(\psi \mid X, \ph) = P(\psi \mid X)$. Lemma
  \ref{L:dep-undec} implies $P(\psi \mid X) > 0$. Thus, by \eqref{Bayes}, we
  have $P(\ph \mid X) = P(\ph \mid X, \psi)$, a contradiction. Therefore, $\psi$
  is dependent on $\ph$ given $X$. The converse follows by reversing the roles
  of $\ph$ and $\psi$.
\end{proof}

\begin{thm}\label{T:indep-prod}
  Let $\ph, \psi \in \dom P(\; \cdot \mid X)$. Then $\ph$ is independent of
  $\psi$ given $X$ if and only if
  \begin{equation}\label{indep-prod}
    P(\ph \wedge \psi \mid X) = P(\ph \mid X) P(\psi \mid X).
  \end{equation}
\end{thm}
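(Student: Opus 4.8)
The plan is to prove the biconditional by splitting on whether $P(\psi \mid X) = 0$, and in every case to drive the argument with the multiplication rule together with the logical equivalence $\ph \wedge \psi \equiv \psi \wedge \ph$, which lets me condition on $\psi$ rather than on $\ph$. Throughout I use that $\ph, \psi \in \dom P(\;\cdot \mid X)$ means both $P(\ph \mid X)$ and $P(\psi \mid X)$ exist, and that $P$, being an inductive theory, is in particular entire, so all the results of Section \ref{S:entire} are available.

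First I would dispose of the degenerate case $P(\psi \mid X) = 0$. Here $\ph$ is independent of $\psi$ given $X$ automatically, by the first clause of the definition, so independence holds. To see that the product formula \eqref{indep-prod} also holds, I invoke Proposition \ref{P:certainty-closure} with the roles of $\ph$ and $\psi$ interchanged: since both conditional probabilities exist and $P(\psi \mid X) = 0 \in \{0, 1\}$, that proposition guarantees $P(\psi \wedge \ph \mid X)$ exists and equals $\min\{P(\ph \mid X), P(\psi \mid X)\} = 0$. The rule of logical equivalence then gives $P(\ph \wedge \psi \mid X) = 0 = P(\ph \mid X) P(\psi \mid X)$. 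Thus both sides of the biconditional are true when $P(\psi \mid X) = 0$, and the equivalence holds trivially in this case.

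Then I would treat the main case $P(\psi \mid X) > 0$, where independence is precisely the statement $P(\ph \mid X, \psi) = P(\ph \mid X)$. For the forward direction I assume this and apply the multiplication rule in the form $P(\psi \wedge \ph \mid X) = P(\psi \mid X) P(\ph \mid X, \psi)$: the two factors $P(\psi \mid X)$ and $P(\ph \mid X, \psi)$ both exist and computing their product involves no division, so the generalized multiplication rule yields that $P(\psi \wedge \ph \mid X)$ exists and equals $P(\psi \mid X) P(\ph \mid X)$; the equivalence $\psi \wedge \ph \equiv \ph \wedge \psi$ and the rule of logical equivalence then convert this into \eqref{indep-prod}. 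For the converse I assume \eqref{indep-prod}, so in particular $P(\ph \wedge \psi \mid X)$ exists; reading the same instance of the multiplication rule as a solve for $P(\ph \mid X, \psi) = P(\psi \wedge \ph \mid X) / P(\psi \mid X)$, the only division is by $P(\psi \mid X) > 0$, so the generalized multiplication rule produces $P(\ph \mid X, \psi)$ and \eqref{indep-prod} forces it to equal $P(\ph \mid X)$, which is exactly independence.

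The step I expect to require the most care is the existence bookkeeping rather than any computation. The basic multiplication rule (R6) requires the two given probabilities to be positive, but here $P(\ph \mid X)$ or $P(\ph \mid X, \psi)$ may well be $0$, so I must use the strengthened form proved earlier in the subsection on generalizations of the multiplication rule—the one that only forbids dividing by zero—rather than (R6) itself. I also have to resist assuming that $P(\ph \wedge \psi \mid X)$ or $P(\ph \mid X, \psi)$ exist a priori: since $P$ is only entire and connected, not complete, these existence facts are genuinely \emph{produced} by the generalized multiplication rule (and, in the zero case, by Proposition \ref{P:certainty-closure}) in each direction, and cannot be taken for granted.
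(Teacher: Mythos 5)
Your proof is correct and follows essentially the same route as the paper's: split on whether $P(\psi \mid X) = 0$, dispose of the degenerate case via Proposition \ref{P:certainty-closure}, and in the positive case read the (generalized) multiplication rule $P(\psi \wedge \ph \mid X) = P(\psi \mid X)\, P(\ph \mid X, \psi)$ in both directions. Your explicit existence bookkeeping---in particular noting that the strengthened multiplication rule, rather than (R6) itself, is what the argument requires---merely spells out what the paper's shorter proof leaves implicit.
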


\begin{proof}
  Suppose $P(\psi \mid X) = 0$. Then $\ph$ is independent of $\psi$ given $X$.
  Also, Proposition \ref{P:certainty-closure} implies $P(\ph \wedge \psi \mid
  X) = 0$, so that \eqref{indep-prod} holds.

  Now suppose $P(\psi \mid X) > 0$. Then $\ph$ is independent of $\psi$ given
  $X$ if and only if $P(\ph \mid X, \psi) = P(\ph \mid X)$. On the other hand,
  by the multiplication rule, \eqref{indep-prod} holds if and only if $P(\ph
  \mid X, \psi) = P(\ph \mid X)$.
\end{proof}

\begin{cor}\label{C:indep-prod}
  Let $\ph, \psi \in \dom P(\; \cdot \mid X)$. If $\ph$ is independent of $\psi$
  given $X$, then $\ph$ is independent of $\neg \psi$ given $X$.
\end{cor}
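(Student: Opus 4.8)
The plan is to route everything through Theorem \ref{T:indep-prod}, which identifies independence with the product formula. Since $\ph$ is independent of $\psi$ given $X$ and both lie in $\dom P(\;\cdot\mid X)$, that theorem tells me $P(\ph\wedge\psi\mid X)$ exists and equals $P(\ph\mid X)\,P(\psi\mid X)$. My goal is to deduce the corresponding identity with $\neg\psi$ in place of $\psi$, namely $P(\ph\wedge\neg\psi\mid X)=P(\ph\mid X)\,P(\neg\psi\mid X)$, and then invoke Theorem \ref{T:indep-prod} in the reverse direction to conclude that $\ph$ is independent of $\neg\psi$ given $X$. First, though, I must check that $\neg\psi\in\dom P(\;\cdot\mid X)$: this is immediate from Corollary \ref{C:rel-neg}, which gives that $P(\neg\psi\mid X)$ exists (and equals $1-P(\psi\mid X)$) whenever $P(\psi\mid X)$ does.

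The key computation will produce $P(\ph\wedge\neg\psi\mid X)$ from $P(\ph\mid X)$ and $P(\ph\wedge\psi\mid X)$ via Proposition \ref{P:rel-neg}. I would apply that proposition with its ``smaller'' formula taken to be $\ph\wedge\psi$ and its ``larger'' formula taken to be $\ph$; the hypothesis $X,\ph\wedge\psi\vdash\ph$ holds trivially since $\ph\wedge\psi\vdash\ph$, and both $P(\ph\wedge\psi\mid X)$ and $P(\ph\mid X)$ exist (the former by the independence hypothesis through Theorem \ref{T:indep-prod}, the latter by assumption). This yields that $P(\ph\wedge\neg(\ph\wedge\psi)\mid X)$ exists and equals $P(\ph\mid X)-P(\ph\wedge\psi\mid X)$. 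Since $\ph\wedge\neg(\ph\wedge\psi)\equiv\ph\wedge\neg\psi$, the rule of logical equivalence then gives that $P(\ph\wedge\neg\psi\mid X)$ exists with the same value.

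Finally I would substitute the product formula $P(\ph\wedge\psi\mid X)=P(\ph\mid X)\,P(\psi\mid X)$ and factor:
\[
  P(\ph\wedge\neg\psi\mid X)=P(\ph\mid X)-P(\ph\mid X)\,P(\psi\mid X)=P(\ph\mid X)\bigl(1-P(\psi\mid X)\bigr)=P(\ph\mid X)\,P(\neg\psi\mid X),
\]
where the last equality is again Corollary \ref{C:rel-neg}. Applying Theorem \ref{T:indep-prod} to the pair $\ph,\neg\psi$ (both now known to be in $\dom P(\;\cdot\mid X)$) converts this product identity back into the statement that $\ph$ is independent of $\neg\psi$ given $X$, completing the argument.

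There is no genuine obstacle here; the proof is short and mechanical once the product characterization is available. The only point requiring a moment of care is the bookkeeping in Proposition \ref{P:rel-neg}—getting the two formulas in the correct (subset) roles and confirming the relevant probabilities are defined—together with the logical equivalence $\ph\wedge\neg(\ph\wedge\psi)\equiv\ph\wedge\neg\psi$ that lets me rewrite the relative negation as the desired conjunction.
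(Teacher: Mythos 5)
Your proposal is correct and follows essentially the same route as the paper's proof: both compute $P(\ph \wedge \neg\psi \mid X) = P(\ph \mid X) - P(\ph \wedge \psi \mid X)$ via Proposition \ref{P:rel-neg}, substitute the product formula from Theorem \ref{T:indep-prod}, factor, and rewrite $1 - P(\psi \mid X)$ as $P(\neg\psi \mid X)$ using Corollary \ref{C:rel-neg} before invoking Theorem \ref{T:indep-prod} in the reverse direction. Your write-up merely makes explicit the bookkeeping (existence of the relevant probabilities, the roles of the two formulas in Proposition \ref{P:rel-neg}, and the equivalence $\ph \wedge \neg(\ph\wedge\psi) \equiv \ph \wedge \neg\psi$) that the paper leaves implicit.
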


\begin{proof}
  Suppose $\ph$ is independent of $\psi$ given $X$. By Proposition 
  \ref{P:rel-neg} and Theorem \ref{T:indep-prod}, we have
  \begin{align*}
    P(\ph \wedge \neg \psi \mid X)
      &= P(\ph \mid X) - P(\ph \wedge \psi \mid X)\\
    &= P(\ph \mid X) - P(\ph \mid X)P(\psi \mid X)\\
    &= P(\ph \mid X)(1 - P(\psi \mid X))\\
    &= P(\ph \mid X)P(\neg \psi \mid X).
  \end{align*}
  Hence, Theorem \ref{T:indep-prod} implies $\ph$ is independent of $\neg \psi$
  given $X$.
\end{proof}

By Proposition \ref{P:depen-sym} and Theorem \ref{T:indep-prod}, we can alter
our terminology to say that $\ph$ and $\psi$ are dependent or independent, given
$X$. Note that by Theorem \ref{T:indep-prod} and Proposition 
\ref{P:certainty-closure}, if $\ph, \psi \in \dom P (\; \cdot \mid X)$ and
either $P(\ph \mid X) \in \{0, 1\}$ or $P (\psi \mid X) \in \{0, 1\}$, then
$\ph$ and $\psi$ are independent given $X$.

\subsection{Independence of a sequence of formulas}

Let $I$ be a set with $|I| \ge 2$ and let $\ang{\ph_i \mid i \in I}$ be an
indexed collection of formulas in $\dom P(\; \cdot \mid X)$. Such a collection
is \textit{independent given $X$ (under $P$)} if $\ph$ and $\psi$ are
independent given $X$ whenever $\ph \in \de(\{\ph_i \mid i \in I_1\})$ and $\psi
\in \de(\{\ph_i \mid i \in I_2\})$, where $I_1$ and $I_2$ are nonempty disjoint
subsets of $I$.
  \index{independent}%

\begin{prop}
  Let $\ph, \psi \in \dom P(\; \cdot \mid X)$. Then $\ph$ and $\psi$ are
  independent given $X$ if and only if $\ang{\ph, \psi}$ is independent given
  $X$.
\end{prop}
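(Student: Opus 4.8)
The plan is to prove the two implications separately. The implication from collection-independence to pairwise independence is immediate from the definitions, so I would dispose of it first: taking the index set $I = \{1,2\}$ with $\ph_1 = \ph$ and $\ph_2 = \psi$, I apply the definition of independence of a collection to the disjoint nonempty subsets $I_1 = \{1\}$ and $I_2 = \{2\}$. Since $\ph \in \de(\{\ph\})$ and $\psi \in \de(\{\psi\})$ by definition of the generated dialog set, the definition yields directly that $\ph$ and $\psi$ are independent given $X$.

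For the reverse direction, assume $\ph$ and $\psi$ are independent given $X$. Because $I = \{1,2\}$ has only two members, the only choice (up to order) of nonempty disjoint subsets is $I_1 = \{1\}$, $I_2 = \{2\}$, so establishing independence of the collection $\ang{\ph, \psi}$ reduces to showing that every $\al \in \de(\{\ph\})$ is independent of every $\be \in \de(\{\psi\})$ given $X$. The key step is a structural lemma: the set $D = \{\th \in \cF \mid \th \equiv \top,\ \th \equiv \bot,\ \th \equiv \ph,\ \text{or } \th \equiv \neg \ph\}$ is a dialog set containing $\ph$. Closure under negation and under logical equivalence is clear, and closure under countable conjunction follows from the congruence property of $\equiv$: a countable conjunction of members of $D$ is equivalent to the conjunction of finitely many representatives drawn from $\{\top, \bot, \ph, \neg\ph\}$, which is again equivalent to one of these four. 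Hence $\de(\{\ph\}) \subseteq D$, and symmetrically $\de(\{\psi\})$ is contained in the analogous four-class set for $\psi$. Along the way I would note that the rule of logical equivalence together with Corollary \ref{C:rel-neg} place all of $D$ inside $\dom P(\;\cdot\mid X)$, using that $\ph \in \dom P(\;\cdot\mid X)$ forces $X \in \ante P$; this guarantees that each pairwise independence assertion is meaningful.

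It then remains to verify pairwise independence for $\al \in \de(\{\ph\})$, $\be \in \de(\{\psi\})$. I would first record a transfer principle: if $\al \equiv \al'$ and $\be \equiv \be'$, then $\al \wedge \be \equiv \al' \wedge \be'$ by congruence, so the rule of logical equivalence makes the three probabilities in \eqref{indep-prod} agree for the two pairs, and Theorem \ref{T:indep-prod} shows $\al, \be$ are independent given $X$ if and only if $\al', \be'$ are. This reduces the problem to representatives $\al' \in \{\top, \bot, \ph, \neg\ph\}$ and $\be' \in \{\top, \bot, \psi, \neg\psi\}$. When $\al'$ or $\be'$ is $\top$ or $\bot$, one of $P(\al'\mid X)$, $P(\be'\mid X)$ lies in $\{0,1\}$, and the remark following Corollary \ref{C:indep-prod} gives independence at once. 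The four remaining cases, with $\al' \in \{\ph, \neg\ph\}$ and $\be' \in \{\psi, \neg\psi\}$, follow from the hypothesis by repeatedly invoking Corollary \ref{C:indep-prod} (independence survives negating one argument) and the symmetry of independence (Proposition \ref{P:depen-sym} together with Theorem \ref{T:indep-prod}): the pair $(\ph, \neg\psi)$ comes from Corollary \ref{C:indep-prod}, the pair $(\neg\ph, \psi)$ from symmetry followed by Corollary \ref{C:indep-prod} and symmetry again, and the pair $(\neg\ph, \neg\psi)$ from one further application of Corollary \ref{C:indep-prod}.

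I expect the main obstacle to be the structural lemma identifying $\de(\{\ph\})$ with the four equivalence classes of $\top$, $\bot$, $\ph$, and $\neg\ph$. Once this is in hand, the fact that the index set has only two elements collapses the collection definition to a finite case check, and Theorem \ref{T:indep-prod}, Corollary \ref{C:indep-prod}, and the symmetry of independence handle every case mechanically.
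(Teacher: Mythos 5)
Your proof is correct and follows essentially the same route as the paper's: reduce via the structural fact that $\de(\{\ph\})$ consists only of tautologies, contradictions, and formulas equivalent to $\ph$ or $\neg \ph$, dispose of the probability-$0$/$1$ cases via the remark after Corollary \ref{C:indep-prod}, and settle the remaining four cases by Corollary \ref{C:indep-prod} together with symmetry. The only difference is one of detail: you actually prove the structural fact (identifying $\de(\{\ph\})$ inside the four-equivalence-class dialog set) and make the symmetry and domain-membership steps explicit, where the paper merely asserts them.
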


\begin{proof}
  The if direction is trivial. For the only if direction, suppose $\ph$ and
  $\psi$ are independent given $X$ and let $\ph' \in \de(\{\ph\})$ and $\psi'
  \in \de(\{\psi\})$. Note that $\de(\{\ph\})$ consists of tautologies,
  contradictions, and formulas that are equivalent to either $\ph$ or $\neg
  \ph$, and similarly for $\de(\{\psi\})$. We may assume that $0 < P(\ph' \mid
  X) < 1$ and $0 < P(\psi' \mid X) < 1$, so that neither $\ph$ nor $\psi$ is a
  tautology or contradiction.

  Clearly, $\ph'$ and $\psi'$ are independent if $\ph' \equiv \ph$ and $\psi'
  \equiv \psi$. By Corollary \ref{C:indep-prod}, $\ph'$ and $\psi'$ are
  independent if $\ph' \equiv \ph$ and $\psi' \equiv \neg \psi$. Repeated
  applications of this result cover the cases $\ph' \equiv \neg \ph$ and $\psi'
  \equiv \neg \psi$, and $\ph' \equiv \neg \ph$ and $\psi' \equiv \psi$.
\end{proof}

\begin{thm}\label{T:indep-dialog-defined}
  Let $P$ be an inductive theory and $X \in \ante P$. If $\ang{\ph_i \mid i \in
  I}$ is independent given $X$, then $\de(\{\ph_i \mid i \in I\}) \subseteq \dom
  P(\; \cdot \mid X)$.
\end{thm}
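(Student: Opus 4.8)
The plan is to translate the statement into measure-theoretic language on the space of all strict models and then to apply Dynkin's $\pi$-$\la$ theorem. Set $\Om = \B^{PV}$ and write $A_i = (\ph_i)_\Om$ for each $i \in I$. By Proposition \ref{P:Del-P-X}, the collection $\De = \De(P, X) = \{\ph_\Om \mid P(\ph \mid X) \text{ exists}\}$ is a Dynkin system, and by the discussion following \eqref{Del-P-X}, a formula $\ph$ lies in $\dom P(\; \cdot \mid X)$ if and only if $\ph_\Om \in \De$. Moreover, Proposition \ref{P:dialog-sig-alg} gives $\de(\{\ph_i \mid i \in I\})_\Om = \si(\{A_i \mid i \in I\})$. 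Hence, to prove $\de(\{\ph_i \mid i \in I\}) \subseteq \dom P(\; \cdot \mid X)$, it suffices to establish the single set-theoretic inclusion $\si(\{A_i \mid i \in I\}) \subseteq \De$: any $\ph \in \de(\{\ph_i \mid i \in I\})$ then satisfies $\ph_\Om \in \si(\{A_i\}) \subseteq \De$, and is therefore in the domain.

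To obtain this inclusion, I would let $\cE$ be the collection of all finite intersections $A_{i_1} \cap \cdots \cap A_{i_n}$ taken over distinct indices $i_1, \ldots, i_n \in I$ with $n \ge 1$. Then $\cE$ is a $\pi$-system, since the intersection of two such sets is again a finite intersection of the $A_i$ after discarding repeated indices; and $\si(\cE) = \si(\{A_i\})$, because $\cE$ contains each $A_i$ while every member of $\cE$ belongs to $\si(\{A_i\})$. Since $\De$ is a Dynkin system, Dynkin's $\pi$-$\la$ theorem will yield $\si(\{A_i\}) = \si(\cE) \subseteq \De$ as soon as $\cE \subseteq \De$ is verified. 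Translating back through the domain correspondence, $\cE \subseteq \De$ is exactly the assertion that $P(\ph_{i_1} \wedge \cdots \wedge \ph_{i_n} \mid X)$ exists for every finite tuple of distinct indices.

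I would prove this last assertion by induction on $n$. The base case $n = 1$ is the hypothesis that each $\ph_i \in \dom P(\; \cdot \mid X)$. For the inductive step, suppose $\ph := \ph_{i_1} \wedge \cdots \wedge \ph_{i_n}$ lies in $\dom P(\; \cdot \mid X)$, and put $\psi := \ph_{i_{n+1}}$, which is in the domain by hypothesis. Since $\ph \in \de(\{\ph_{i_1}, \ldots, \ph_{i_n}\})$ and $\psi \in \de(\{\ph_{i_{n+1}}\})$, with the index sets $\{i_1, \ldots, i_n\}$ and $\{i_{n+1}\}$ nonempty and disjoint, the assumption that $\ang{\ph_i \mid i \in I}$ is independent given $X$ tells us that $\ph$ and $\psi$ are independent given $X$. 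Theorem \ref{T:indep-prod} then gives $P(\ph \wedge \psi \mid X) = P(\ph \mid X) P(\psi \mid X)$; in particular $P(\ph \wedge \psi \mid X)$ exists, and since $\ph \wedge \psi \equiv \ph_{i_1} \wedge \cdots \wedge \ph_{i_{n+1}}$, the rule of logical equivalence closes the induction.

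The delicate point, and the one I expect to require the most care, is the invocation of independence in the inductive step. Independence of a pair of formulas is defined only when both formulas already lie in $\dom P(\; \cdot \mid X)$, so it is essential that the induction delivers both conjuncts inside the domain \emph{before} the independence hypothesis is applied; this is precisely what the inductive hypothesis (for $\ph$) together with the standing assumption (for $\psi$) arrange, and it is what lets Theorem \ref{T:indep-prod} propagate the existence of the conjunction's probability one index at a time. Everything else reduces to the routine bookkeeping of the $\pi$-$\la$ passage and the formula-to-set dictionary supplied by Propositions \ref{P:Del-P-X} and \ref{P:dialog-sig-alg}.
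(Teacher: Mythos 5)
Your proof is correct and follows essentially the same route as the paper's: both arguments pass through the Dynkin system $\De = \De(P, X)$ of Proposition \ref{P:Del-P-X}, the dictionary $\de(\{\ph_i \mid i \in I\})_\Om = \si(\{(\ph_i)_\Om \mid i \in I\})$ of Proposition \ref{P:dialog-sig-alg}, Dynkin's $\pi$-$\la$ theorem, and the final translation back to formulas via Remark \ref{R:impl-subset} and the rule of logical equivalence. The one place you diverge is actually a sharpening of the paper's argument: the paper asserts, citing Theorem \ref{T:indep-prod}, that $U = \{(\ph_i)_\Om \mid i \in I\}$ is itself a $\pi$-system, which is not literally true, since $(\ph_i)_\Om \cap (\ph_j)_\Om = (\ph_i \wedge \ph_j)_\Om$ need not coincide with any $(\ph_k)_\Om$. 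Your replacement of $U$ by the collection $\cE$ of all finite intersections, together with the induction on the number of conjuncts---which, as you note, must deliver each partial conjunction into $\dom P(\; \cdot \mid X)$ \emph{before} the pairwise independence hypothesis and Theorem \ref{T:indep-prod} can be invoked---is precisely the bookkeeping needed to make that step rigorous, and it buys nothing beyond what the paper intended but states it correctly.
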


\begin{proof}
  Let $\Om = \B^{PV}$ and define $\De$ as in \eqref{Del-P-X}, so that $\De =
  Y_\Om$, where $Y = \dom P(\; \cdot \mid X)$. Proposition \ref{P:Del-P-X}
  implies that $\De$ is a Dynkin system on $\Om$. Let $U = \{(\ph_i)_\Om \mid i
  \in I\}$. Since $\ang{\ph_i \mid i \in I}$ is independent given $X$, it
  follows that $U \subseteq \De$. By Theorem \ref{T:indep-prod}, we have that
  $U$ is a $\pi$-system, that is, $U$ is closed under pairwise intersections.
  Therefore, Dynkin's $\pi$-$\la$ theorem gives $\si(U) \subseteq \De$.

  Now let $\ph \in \de(\{\ph_i \mid i \in I\})$. By Proposition
  \ref{P:dialog-sig-alg}, we have $\ph_\Om \in \si(U) \subseteq \De$. Hence, we
  may choose $\ph' \in \dom P(\; \cdot \mid X)$ such that $\ph_\Om = \ph'_\Om$.
  Since $\Om = \B^{PV}$, according to Remark \ref{R:impl-subset}, it follows
  that $\ph \equiv \ph'$. Therefore, by the rule of logical equivalence, $\ph
  \in \dom P(\; \cdot \mid X)$.
\end{proof}

\subsection{A semantic characterization of independence}\label{S:sem-indep}

Let $(S, \Ga, \nu)$ be a probability space and $I$ a set with $|I| \ge 2$. For
each $i \in I$, let $A_i \in \Ga$. Then $\ang{A_i \mid i \in I}$ is
\emph{measure independent in $(S, \Ga, \nu)$} if $\opnu \bigcap_{i \in J} A_i
= \prod_{i \in J} \opnu A_j$, whenever $J \subseteq I$ is finite.
  \index{independent!measure ---}%
Note that this is the usual definition of independence in a probability space.
Also note that we may assume without loss of generality that $|J| \ge 2$.

Let $P$ be an inductive theory. Let $\sP = (\Om, \Si, \bbP)$ be a model and
suppose $\sP \vDash P$. Let $X \in \ante P$. Write $X \equiv Y \cup \{\psi\}$,
where $\sP \vDash Y$ and $\olbbP \psi_\Om > 0$. Define the probability measure
$\olbbP_X$ on $(\Om, \ol \Si)$ by $\olbbP_X A = \olbbP A \cap \psi_\Om / \olbbP
\psi_\Om$, and let $\sP_X = (\Om, \ol \Si, \olbbP_X)$. Note that by Proposition
\ref{P:model-func}, the model $\sP_X$ does not depend on our choice of $Y$ and
$\psi$.

\begin{thm}\label{T:indep-sound-compl}
  Let $P$ be an inductive theory, $X \in \ante P$, and $I$ a set with $|I| \ge
  2$. Let $\ang{\ph_i \mid i \in I}$ be an indexed collection of formulas in
  $\dom P(\; \cdot \mid X)$. Then the following are equivalent:
  \begin{enumerate}[(i)]
    \item $\ang{\ph_i \mid i \in I}$ is independent given $X$.
    \item For any model $\sP$, if $\sP \vDash P$, then $\ang{(\ph_i)_\Om \mid i
          \in I}$ is measure independent in $\sP_X$.
    \item There exists a model $\sP$ such that $\sP \vDash P$ and $\ang{
          (\ph_i)_\Om \mid i \in I}$ is measure independent in $\sP_X$.
  \end{enumerate}
\end{thm}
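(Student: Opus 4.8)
The plan is to prove the cycle (i) $\Rightarrow$ (ii) $\Rightarrow$ (iii) $\Rightarrow$ (i). The workhorse throughout is a correspondence between inductive and measure-theoretic conditional probability: whenever $\sP \vDash P$ and $\eta \in \dom P(\,\cdot \mid X)$, one has $P(\eta \mid X) = \olbbP_X \eta_\Om$. First I would establish this by writing $X \equiv Y \cup \{\psi\}$ with $\sP \vDash Y$ and $\olbbP \psi_\Om > 0$; since $(X, \eta, P(\eta \mid X))$ lies in $P \subseteq \bTh \sP$, Proposition \ref{P:model-func} gives $\olbbP (\eta_\Om \cap \psi_\Om) / \olbbP \psi_\Om = P(\eta \mid X)$, and the left-hand side is exactly $\olbbP_X \eta_\Om$ by the definition of $\sP_X$. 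This lets me pass freely between the two sides.

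For (i) $\Rightarrow$ (ii), assume $\ang{\ph_i \mid i \in I}$ is independent given $X$ and let $\sP \vDash P$. By Theorem \ref{T:indep-dialog-defined}, $\de(\{\ph_i \mid i \in I\}) \subseteq \dom P(\,\cdot \mid X)$, so every finite conjunction $\bigwedge_{i \in J} \ph_i$ lies in the domain. I would then prove $P(\bigwedge_{i \in J} \ph_i \mid X) = \prod_{i \in J} P(\ph_i \mid X)$ by induction on $|J|$. The base case $|J| = 2$ is Theorem \ref{T:indep-prod} applied to $\ph_{i_1} \in \de(\{\ph_{i_1}\})$ and $\ph_{i_2} \in \de(\{\ph_{i_2}\})$; the inductive step splits $J$ into $\{i_1, \dots, i_{n-1}\}$ and $\{i_n\}$ and applies Theorem \ref{T:indep-prod} to $\bigwedge_{k < n} \ph_{i_k} \in \de(\{\ph_i \mid i \in J \setminus \{i_n\}\})$ and $\ph_{i_n} \in \de(\{\ph_{i_n}\})$, whose index sets are disjoint. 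Translating through the correspondence lemma and using $(\bigwedge_{i \in J} \ph_i)_\Om = \bigcap_{i \in J} (\ph_i)_\Om$ yields measure independence in $\sP_X$. The implication (ii) $\Rightarrow$ (iii) is then immediate: $P$ is an inductive theory, hence satisfiable by Theorem \ref{T:ind-th-model}, so some model $\sP \vDash P$ exists; since $X \in \ante P$, the object $\sP_X$ is well-defined, and (ii) supplies the measure independence.

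The main obstacle is (iii) $\Rightarrow$ (i). Here I start from measure independence of $\ang{(\ph_i)_\Om}$ in $\sP_X$ for a single model $\sP \vDash P$. Because the finite intersections of the $(\ph_i)_\Om$ form a $\pi$-system, Dynkin's $\pi$-$\la$ theorem upgrades this to independence of the generated $\si$-algebras $\si(\{(\ph_i)_\Om \mid i \in I_1\})$ and $\si(\{(\ph_i)_\Om \mid i \in I_2\})$ in $\sP_X$ for any disjoint $I_1, I_2$. By Proposition \ref{P:dialog-sig-alg} these $\si$-algebras are precisely $\de(\{\ph_i \mid i \in I_1\})_\Om$ and $\de(\{\ph_i \mid i \in I_2\})_\Om$, so for $\ph \in \de(\{\ph_i \mid i \in I_1\})$ and $\psi \in \de(\{\ph_i \mid i \in I_2\})$ I obtain $\olbbP_X (\ph_\Om \cap \psi_\Om) = \olbbP_X \ph_\Om \cdot \olbbP_X \psi_\Om$.

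The delicate part, which I expect to be the crux of the whole theorem, is promoting this single-model measure statement back to the purely inductive conclusion that $\ph$, $\psi$, and $\ph \wedge \psi$ all lie in $\dom P(\,\cdot \mid X)$ and satisfy the product relation under $P$, so that Theorem \ref{T:indep-prod} closes the loop. The natural route is to exhibit the candidate values $\prod_{i \in J} P(\ph_i \mid X)$ as inductive consequences: by Remark \ref{R:classic-ind-th-char} it suffices to show $P \vDash (X, \bigwedge_{i \in J} \ph_i, \prod_{i \in J} P(\ph_i \mid X))$, which forces these conjunctions into $P$. Verifying the consequence clause of Definition \ref{D:consequence} for \emph{every} model of $P$, rather than only the witnessing $\sP$, is where the real work concentrates, and I would lean on the soundness-completeness machinery of Theorem \ref{T:ind-sound-compl} together with the Dynkin-system structure of $\De(P, X)$ from Proposition \ref{P:Del-P-X} to carry it out, establishing that the independence encoded in $\sP_X$ propagates to the level of the inductive theory itself.
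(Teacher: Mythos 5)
Your first two directions are correct and coincide with the paper's argument: (i) $\Rightarrow$ (ii) is the same finite induction through Theorem \ref{T:indep-prod}, transported to $\olbbP_X$ by the Proposition \ref{P:model-func} correspondence $P(\eta \mid X) = \olbbP_X \eta_\Om$, and (ii) $\Rightarrow$ (iii) is satisfiability of inductive theories (Theorem \ref{T:ind-th-model}). Your (iii) $\Rightarrow$ (i) also agrees with the paper up through the $\pi$-$\la$ upgrade to independence of generated $\si$-algebras and the identification $\de(U)_\Om = \si(U_\Om)$ from Proposition \ref{P:dialog-sig-alg}.

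The gap is in how you finish (iii) $\Rightarrow$ (i). You correctly isolate the crux --- getting $\ph$, $\psi$, and $\ph \wedge \psi$ into $\dom P(\;\cdot \mid X)$ --- but you do not prove it, and the route you sketch cannot work. You propose to force the conjunctions into $P$ by invoking Remark \ref{R:classic-ind-th-char}, i.e.\ by showing $P \vDash (X, \bigwedge_{i \in J} \ph_i, \prod_{i \in J} P(\ph_i \mid X))$, which by Definition \ref{D:consequence} requires the statement to hold in \emph{every} model of $P$. But hypothesis (iii) concerns a single witnessing model, and when $P$ is not complete the other models of $P$ need not agree. Concretely, take $X = \Taut$ and let $P$ be the inductive theory generated by $P(\bfr_1 \mid \Taut) = P(\bfr_2 \mid \Taut) = 1/2$. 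A model in which $(\bfr_1)_\Om$ and $(\bfr_2)_\Om$ are independent events of probability $1/2$ satisfies $P$ and witnesses (iii); but a model giving probability $1/2$ to a strict model satisfying both $\bfr_1$ and $\bfr_2$ and probability $1/2$ to one satisfying neither also satisfies $P$ (both models satisfy the generating statements, so both satisfy $P = \bfP_Q$ by Theorems \ref{T:model-ind-th} and \ref{T:theory-gen-defn}), and it assigns $(\bfr_1 \wedge \bfr_2)_\Om$ probability $1/2$, not $1/4$. Hence no statement $(\Taut, \bfr_1 \wedge \bfr_2, p)$ is a consequence of $P$, and your plan produces nothing: marginals do not determine conjunctions, which is exactly the phenomenon of Example \ref{Expl:MathSE}. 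The paper closes this step quite differently and purely syntactically: it cites Theorem \ref{T:indep-dialog-defined}, whose proof runs Dynkin's $\pi$-$\la$ theorem inside the Dynkin system $\De(P, X)$ of Proposition \ref{P:Del-P-X} and then transfers membership back to $\dom P(\;\cdot \mid X)$ via Remark \ref{R:impl-subset} and the rule of logical equivalence; no quantification over models of $P$ occurs. (You have in fact located a thin spot --- that theorem's hypothesis is (i) itself, so the paper's citation at this point deserves scrutiny --- but your proposed replacement fails for the concrete reason above rather than repairing it.)
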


\begin{proof}
  Suppose (i) holds. Assume $\sP$ is a model and $\sP \vDash P$. Let $J
  \subseteq I$ be finite with $|J| \ge 2$. Fix $k \in J$. Let $I_1 = \{k\}$ and
  $I_2 = J \setminus \{k\}$. Then $\ph_k \in \de(\{\ph_i: i \in I_1\})$ and
  $\bigwedge_{i\in I_2} \ph_i \in \de(\{\ph_i: i \in I_2\})$ are independent
  given $X$. By \eqref{indep-prod},
  \[
    \ts{
      P(\bigwedge_{i \in J} \ph_i \mid X) =
        P(\ph_k \mid X) P(\bigwedge_{i \in J \setminus \{k\}} \ph_i \mid X).
    }
  \]
  Since $\sP \vDash P$, this implies
  \[
    \ts{
      \olbbP_X \bigcap_{i \in J} (\ph_i)_\Om =
        \olbbP_X (\ph_k)_\Om
        \olbbP_X \bigcap_{i \in J \setminus \{k\}} (\ph_i)_\Om.
    }
  \]
  Iterating this argument gives $\olbbP_X \bigcap_{i \in J} (\ph_i)_\Om = \prod_
  {i \in J} \olbbP_X (\ph_i)_\Om$, so that the indexed collection $\ang{
  (\ph_i)_\Om \mid i \in I}$ is measure independent in $\sP_X$, showing that (i)
  implies (ii).

  By Theorem \ref{T:ind-th-model}, (ii) implies (iii).

  Suppose (iii) holds. For $i \in I$, let $A_i = (\ph_i)_\Om$, so that by
  hypothesis, $\ang{A_i \mid i \in I}$ is measure independent in $\sP_X$. A
  result from measure theory tells us that if $I_1$ and $I_2$ are disjoint
  subsets of $I$, then $A$ and $B$ are measure independent in $\sP_X$ whenever
  $A \in \si(\{A_i \mid i \in I_1\})$ and $B \in \si(\{A_i \mid i \in I_2\})$.

  Let $I_1$ and $I_2$ be nonempty disjoint subsets of $I$. Let $U = \{\ph_i \mid
  i \in I_1\}$ and $V = \{\ph_i \mid i \in I_2\}$. Let $\ph \in \de(U)$ and
  $\psi \in \de(V)$. By Proposition \ref{P:dialog-sig-alg}, we have $\ph_\Om \in
  \si(U_\Om)$ and $\psi_\Om \in \si(V_\Om)$, so that $\ph_\Om$ and $\psi_\Om$
  are measure independent in $\sP_X$. Hence,
  \begin{equation}\label{indep-sound-compl}
    \olbbP_X \ph_\Om \cap \psi_\Om = \olbbP_X \ph_\Om \olbbP_X \psi_\Om.
  \end{equation}
  Theorem \ref{T:indep-dialog-defined} implies $\ph$, $\psi$, and $\ph \wedge
  \psi$ are all in $\dom P(\; \cdot \mid X)$. Since $\sP \vDash P$, it follows
  that \eqref{indep-sound-compl} implies $P(\ph \wedge \psi \mid X) = P(\ph \mid
  X) P(\psi \mid X)$. By Theorem \ref{T:indep-prod}, therefore, $\ph$ and $\psi$
  are independent given $X$. Thus, (iii) implies (i).
\end{proof}

\begin{cor}\label{C:indep-sound-compl}
  Let $P$ be an inductive theory, $X \in \ante P$, and $I$ a set with $|I| \ge
  2$. Let $\ang{\ph_i \mid i \in I}$ be an indexed collection of formulas in
  $\dom P(\; \cdot \mid X)$. Then $\ang{\ph_i \mid i \in I}$ is independent
  given $X$ if and only if
  \[
    \ts{
      P(\bigwedge_{j \in J} \ph_j \mid X) = \prod_{j \in J} P(\ph_j \mid X)
    }
  \]
  for all finite $J \subseteq I$.
\end{cor}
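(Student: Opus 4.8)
The plan is to deduce this corollary from the semantic characterization in Theorem \ref{T:indep-sound-compl}, combining the equivalence of (i) and (iii) there with the measure-theoretic definition of measure independence. The point is that the product formula over all finite $J$ is precisely the definition of measure independence, so the corollary is essentially a restatement of Theorem \ref{T:indep-sound-compl} once the inductive probabilities are translated into measures via a satisfying model.

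For the forward direction, I would assume $\ang{\ph_i \mid i \in I}$ is independent given $X$ and fix a finite $J \subseteq I$. By Theorem \ref{T:ind-th-model} there exists a model $\sP$ with $\sP \vDash P$, and by the implication (i)$\Rightarrow$(ii) of Theorem \ref{T:indep-sound-compl}, the collection $\ang{(\ph_i)_\Om \mid i \in I}$ is measure independent in $\sP_X$. By the definition of measure independence, this gives $\olbbP_X \bigcap_{j \in J} (\ph_j)_\Om = \prod_{j \in J} \olbbP_X (\ph_j)_\Om$. Since $\sP \vDash P$ and each $\ph_j$ (as well as $\bigwedge_{j \in J} \ph_j$, which lies in $\dom P(\;\cdot\mid X)$ by Theorem \ref{T:indep-dialog-defined}) has its probability reflected as a conditional measure in $\sP_X$, I would translate this back into the inductive statement $P(\bigwedge_{j \in J} \ph_j \mid X) = \prod_{j \in J} P(\ph_j \mid X)$.

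For the converse, I would assume the product formula holds for all finite $J \subseteq I$. Again choosing a model $\sP$ with $\sP \vDash P$ via Theorem \ref{T:ind-th-model}, the hypothesis translates, for each finite $J$, into $\olbbP_X \bigcap_{j \in J} (\ph_j)_\Om = \prod_{j \in J} \olbbP_X (\ph_j)_\Om$, which is exactly the statement that $\ang{(\ph_i)_\Om \mid i \in I}$ is measure independent in $\sP_X$. Then the implication (iii)$\Rightarrow$(i) of Theorem \ref{T:indep-sound-compl} yields that $\ang{\ph_i \mid i \in I}$ is independent given $X$.

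The only point requiring a little care—and thus the main obstacle—is the faithful translation between the inductive probabilities $P(\;\cdot\mid X)$ and the conditional measures $\olbbP_X$. I would need to verify that for every $\ph \in \dom P(\;\cdot\mid X)$ one has $P(\ph \mid X) = \olbbP_X \ph_\Om$ whenever $\sP \vDash P$; this follows from the definition of $\sP_X$ together with Proposition \ref{P:model-func}, writing $X \equiv Y \cup \{\psi\}$ with $\sP \vDash Y$ and using \eqref{cond-prob}. Once this dictionary is in place, the corollary is immediate, since ``measure independent'' is definitionally the assertion that the measure of every finite intersection factors as a product.
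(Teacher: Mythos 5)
Your proposal is correct and follows essentially the same route as the paper, whose proof simply states that the corollary ``follows immediately from Theorems \ref{T:indep-sound-compl} and \ref{T:ind-th-model}.'' Your elaboration---translating between $P(\;\cdot\mid X)$ and $\olbbP_X$ via Proposition \ref{P:model-func} and noting that measure independence is definitionally the finite product formula---is exactly the detail the paper leaves implicit.
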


\begin{proof}
  This follows immediately from Theorems \ref{T:indep-sound-compl} and 
  \ref{T:ind-th-model}.
\end{proof}

\subsection{Fair coin flips}\label{S:Karp413}

In this subsection, our aim is to create an inductive theory that describes an
infinite sequence of independent flips of a fair coin.

We must first construct the language in which this will be done. Let $PV =
\{\bfr_n^k \mid (n, k) \in \bN \times \{0, 1\}\}$. We interpret $\bfr_n^k$ as
representing the proposition, ``The $n$th flip of the coin lands on $k$.'' Here,
$k = 1$ represents heads and $k = 0$ represents tails.

Our inductive theory will be built on three ``axioms,'' informally stated as:
\begin{enumerate}[(1)]
  \item Each flip must land on heads or tails.
  \item On an individual flip, the probabilities of heads and tails are each
        $1/2$.
  \item The flips are independent.
\end{enumerate}
We will enforce (1) with our choice of root, $T_0$. We will enforce (2) with a
set $Q$ of inductive statements. We will enforce (3) with an inductive
condition, $\cC$.

Let $T_0 = T (\{\ze\})$, where $\ze = \bigwedge_n (\bfr_n^0 \vee \bfr_n^1)$. Let
\[
  Q = \{(T_0, \bfr_n^k, 1/2) \mid (n, k) \in \bN \times \{0, 1\}\}.
\]
Note that $Q$ is connected with root $T_0$. Define the inductive condition $\cC$
to be the set of all inductive theories with root $T_0$ such that $\langle
\bfr_n^{f(n)} \mid n \in \bN \rangle$ is independent given $T_0$ whenever $f \in
\{0, 1\}^\bN$.

Recall that $Q, \cC \vdash (X, \ph, p)$ means $\cC_Q \cap \cC \vdash (X, \ph,
p)$.

\begin{lemma}\label{L:fair-coin-flips}
  The inductive condition $\cC_Q \cap \cC$ is consistent.
\end{lemma}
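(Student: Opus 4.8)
The plan is to exhibit a single inductive theory $P$ lying in $\cC_Q \cap \cC$. Since an inductive condition is consistent precisely when it is nonempty, and $\cC_Q \cap \cC$ is a collection of inductive theories all having the common root $T_0$ (hence itself an inductive condition), producing one such $P$ suffices. The natural candidate is the theory of the i.i.d.\ fair-coin model already constructed in Example \ref{Expl:Karp413}.

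First I would take the model $\sP = (\Om, \Si, \bbP)$ built there from an i.i.d.\ sequence $\langle X_n \rangle$ with $\nu\{X_n = 1\} = 1/2$ via $G\bfr_n^k = \{X_n = k\}$. That construction gives $\bbP (\bfr_n^k)_\Om = \nu\{X_n = k\} = 1/2$ and $\bbP\ze_\Om = \nu G\ze = 1$. By Theorem \ref{T:model-ind-th}, $\bTh\sP$ is a complete inductive theory with root $\Taut$, and by Remark \ref{R:th-of-th} its associated deductive theory is $T_{\bTh\sP} = \Th\sP$. Since $\sP \vDash \ze$, we have $\ze \in \Th\sP$, so $T_0 = T(\{\ze\}) \subseteq \Th\sP$ and thus $T_0 \in [\Taut, \Th\sP]$. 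I would then lower the root to $T_0$ by applying Proposition \ref{P:chop-off-root}, setting $P = \bTh\sP \dhl_{[T_0, \Th\sP]}$; that proposition guarantees $P$ is an inductive theory with root $T_0$ and $T_P = \Th\sP$.

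Next I would check $P \in \cC_Q$, i.e.\ $Q \subseteq P$. Because $T_0 \cao [T_0, \Th\sP]$, the antecedent $T_0$ survives the restriction, so $P(\bfr_n^k \mid T_0) = \bTh\sP(\bfr_n^k \mid T_0)$. Taking $Y = T_0 \subseteq \Th\sP$ and $\psi = \top$ (with $T_0 \equiv T_0 \cup \{\top\}$ and $\top_\Om = \Om$), the defining identity \eqref{cond-prob} reduces to $\olbbP(\bfr_n^k)_\Om = 1/2$, whence $\bTh\sP(\bfr_n^k \mid T_0) = 1/2$ and every $(T_0, \bfr_n^k, 1/2) \in Q$ lies in $P$. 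For $P \in \cC$ I would invoke the semantic characterization of independence, Theorem \ref{T:indep-sound-compl}, in the form (iii)$\Rightarrow$(i). Fix $f \in \{0,1\}^\bN$; each $\bfr_n^{f(n)}$ lies in $\dom P(\; \cdot \mid T_0)$ by the previous computation, and $\sP \vDash P$ since $P \subseteq \bTh\sP$. With $Y = T_0$ and $\psi = \top$ the conditional model satisfies $\sP_{T_0} = \ol\sP$, and the i.i.d.\ structure yields $\bbP\bigcap_{n\in J}(\bfr_n^{f(n)})_\Om = \nu\bigcap_{n\in J}\{X_n = f(n)\} = \prod_{n\in J}\bbP(\bfr_n^{f(n)})_\Om$ for every finite $J$, i.e.\ $\langle (\bfr_n^{f(n)})_\Om \rangle$ is measure independent in $\sP_{T_0}$. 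Thus $\langle \bfr_n^{f(n)} \rangle$ is independent given $T_0$ under $P$, and since $f$ was arbitrary, $P \in \cC$.

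The point requiring care — and the main obstacle — is the interaction between lowering the root to $T_0$ and simultaneously preserving the prescribed probabilities in $Q$ and the independence defining $\cC$. Proposition \ref{P:chop-off-root} handles the theory-level bookkeeping (that $P$ stays an inductive theory with root $T_0$ and unchanged $T_P$), while Theorem \ref{T:indep-sound-compl} lets me transfer the transparent measure independence of the coin-flip model into syntactic independence in $P$, once I observe that conditioning on $T_0$ changes nothing because $\bbP\ze_\Om = 1$ forces $\sP_{T_0} = \ol\sP$. Assembling these facts gives $P \in \cC_Q \cap \cC$, so the condition is nonempty and hence consistent.
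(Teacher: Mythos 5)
Your proposal is correct and follows essentially the same route as the paper's own proof: take the i.i.d.\ coin-flip model of Example \ref{Expl:Karp413}, lower the root to $T_0$ via Proposition \ref{P:chop-off-root} to form $P = \bTh \sP \dhl_{[T_0, \Th \sP]}$, verify $Q \subseteq P$ by computing $P(\bfr_n^k \mid T_0) = 1/2$, and transfer the measure independence of the coin flips into syntactic independence via Theorem \ref{T:indep-sound-compl}. Your write-up merely makes explicit some steps the paper leaves implicit (that $T_0 \subseteq \Th \sP$, and that $\sP_{T_0}$ coincides with $\ol \sP$ since $\olbbP \ze_\Om = 1$).
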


\begin{proof}
  Let $\sP = (\Om, \Si, \bbP)$ be the model constructed in Example
  \ref{Expl:Karp413}. As shown in that example, $\sP \vDash T_0$. Hence, by
  Proposition \ref{P:chop-off-root}, we have that $P = \bTh \sP \dhl_{[T_0, \Th
  \sP]}$ is an inductive theory with root $T_0$. Note that 
  \[
    \bbP (\bfr_n^k)_\Om = \nu G \bfr_n^k = \nu \{X_n = k\} = 1/2.
  \]
  Hence, $P(\bfr_n^k \mid T_0) = \olbbP (\bfr_n^k)_\Om = 1/2$, so that $Q
  \subseteq P$, and therefore $P \in \cC_Q$.

  Let $f \in \{0, 1\}^\bN$. Since $(r_n^{f(n)})_\Om = \{X_n = f(n)\}$, it
  follows that $\langle (r_n^{f(n)})_\Om \mid n \in \bN \rangle$ is measure
  independent in $\sP$. Thus, Theorem \ref{T:indep-sound-compl} implies $P \in
  \cC$, and so $P \in \cC_Q \cap \cC$. Hence, $\cC_Q \cap \cC$ is nonempty, and
  therefore consistent.
\end{proof}

By Lemma \ref{L:fair-coin-flips}, we may define $\bfP_{Q, \cC} = \bfP(\cC_Q
\cap \cC)$, the inductive theory generated by $Q$ and $\cC$. Note that $Q, \cC
\vdash (X, \ph, p)$ if and only if $\bfP_{Q, \cC}(\ph \mid X) = p$. In other
words, $\bfP_{Q, \cC}$ is precisely the inductive theory we are aiming for. It
contains exactly those inductive statements that can be derived from (1)--(3).

\begin{prop}\label{P:fair-coin-flips-deter}
  The inductive condition $\cC_Q \cap \cC$ is determinate. That is, $\bfP_{Q,
  \cC} \in \cC_Q \cap \cC$.
\end{prop}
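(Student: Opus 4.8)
The plan is to show directly that $\bfP_{Q, \cC}$, which by definition equals $\bfP(\cC_Q \cap \cC)$, is a member of $\cC_Q \cap \cC$. Write $\cD = \cC_Q \cap \cC$ and $\cD^0 = \{P \dhl_{T_0} \mid P \in \cD\}$, so that $\bfP_{Q, \cC} = \bfP(\bigcap \cD^0)$. Since $\cD$ is nonempty by Lemma \ref{L:fair-coin-flips} and each of its members is an inductive theory with root $T_0$, Corollary \ref{C:intersect-cl} tells us that $\bigcap \cD^0$ is a pre-theory with root $T_0$; hence $\bfP_{Q, \cC}$ is itself an inductive theory with root $T_0$. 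It therefore remains to verify two things: that $Q \subseteq \bfP_{Q, \cC}$ (giving $\bfP_{Q, \cC} \in \cC_Q$), and that $\langle \bfr_n^{f(n)} \mid n \in \bN \rangle$ is independent given $T_0$ under $\bfP_{Q, \cC}$ for every $f \in \{0, 1\}^\bN$ (giving $\bfP_{Q, \cC} \in \cC$).

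First I would handle the membership in $\cC_Q$. Every $P \in \cD$ lies in $\cC_Q$, so $Q \subseteq P$; because each inductive statement of $Q$ has antecedent $T_0$ and $T_0 \cao T_0$, this gives $Q \subseteq P \dhl_{T_0}$ for every $P \in \cD$, whence $Q \subseteq \bigcap \cD^0$. By Proposition \ref{P:lift-extends}, $\bigcap \cD^0 \subseteq \bfP(\bigcap \cD^0) = \bfP_{Q, \cC}$, so $Q \subseteq \bfP_{Q, \cC}$ and thus $\bfP_{Q, \cC} \in \cC_Q$.

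The main step is the membership in $\cC$, and here the idea is to recast independence as a family of concrete inductive statements that are common to every member of $\cD$. Fix $f \in \{0, 1\}^\bN$ and a finite $J \subseteq \bN$. For any $P \in \cD$ we have $Q \subseteq P$, so $P(\bfr_j^{f(j)} \mid T_0) = 1/2$ for each $j$; and $P \in \cC$, so $\langle \bfr_n^{f(n)} \rangle$ is independent given $T_0$ under $P$. Corollary \ref{C:indep-sound-compl} then yields $P(\bigwedge_{j \in J} \bfr_j^{f(j)} \mid T_0) = \prod_{j \in J} P(\bfr_j^{f(j)} \mid T_0) = 2^{-|J|}$. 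Hence the inductive statement $(T_0, \bigwedge_{j \in J} \bfr_j^{f(j)}, 2^{-|J|})$ belongs to every $P \in \cD$ and, having antecedent $T_0$, to every $P \dhl_{T_0}$, so it lies in $\bigcap \cD^0 \subseteq \bfP_{Q, \cC}$. Since $\bfP_{Q, \cC}$ is admissible, this forces $\bfP_{Q, \cC}(\bigwedge_{j \in J} \bfr_j^{f(j)} \mid T_0) = 2^{-|J|}$, and in particular $\bfP_{Q, \cC}(\bfr_j^{f(j)} \mid T_0) = 1/2$. Thus the product formula holds under $\bfP_{Q, \cC}$ for every finite $J$, and a final appeal to Corollary \ref{C:indep-sound-compl} shows $\langle \bfr_n^{f(n)} \mid n \in \bN \rangle$ is independent given $T_0$ under $\bfP_{Q, \cC}$. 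As $f$ is arbitrary, $\bfP_{Q, \cC} \in \cC$, and combined with the previous paragraph, $\bfP_{Q, \cC} \in \cC_Q \cap \cC$, so the condition is determinate.

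The conceptual crux, and the only place real work happens, is this transfer of independence from the individual members of the condition to the generated theory; it succeeds precisely because Corollary \ref{C:indep-sound-compl} reduces independence to a collection of finite product statements, all carried by the antecedent $T_0$, which therefore survive intersection and are inherited by $\bfP_{Q, \cC}$. I expect the one subtlety to be that $\bfP_{Q, \cC}$ may properly contain $\bigcap \cD^0$, so one must invoke admissibility (functionality of $\bfP_{Q, \cC}$) to conclude that it assigns exactly the common value $2^{-|J|}$, rather than silently assuming the generated theory is exhausted by the intersection.
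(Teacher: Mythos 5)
Your proof is correct, but it takes a genuinely different route from the paper's. Both arguments turn on the same pivot---Corollary \ref{C:indep-sound-compl}, which reduces independence of $\langle \bfr_n^{f(n)} \mid n \in \bN \rangle$ to the family of finite product statements $(T_0, \bigwedge_{j \in J} \bfr_j^{f(j)}, 2^{-|J|})$---but they differ in how those statements are transferred to $\bfP_{Q,\cC}$. You work purely syntactically: the statements belong to every $P \in \cC_Q \cap \cC$, hence (having antecedent $T_0$) to every restriction $P \dhl_{T_0}$, hence to $\bigcap \cD^0$, and finally to its lift $\bfP_{Q,\cC} = \bfP(\bigcap \cD^0)$, with admissibility pinning down the value. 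The paper instead argues semantically: any model $\sP \vDash \cC_Q \cap \cC$ satisfies some member $P$ and therefore satisfies each product statement; the completeness theorem for inductive conditions (Theorem \ref{T:ind-sound-compl-IC}) then converts ``every model of the condition satisfies the statement'' into $Q, \cC \vdash (T_0, \bigwedge_{i \in I} \bfr_i^{f(i)}, 2^{-|I|})$, which by definition means $\bfP_{Q,\cC}$ assigns that probability, and membership in $\cC_Q$ falls out as the case $|I| = 1$. Your version is more elementary: it needs only the construction of $\bfP_\cC$ (intersection of restrictions plus lift, via Corollary \ref{C:intersect-cl} and Proposition \ref{P:lift-extends}) and never invokes models or the soundness/completeness apparatus. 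The paper's version hides that intersection-and-lift bookkeeping inside Theorem \ref{T:ind-sound-compl-IC} and showcases the semantic machinery the chapter has built. You were also right to flag admissibility as the step that prevents silently identifying $\bfP_{Q,\cC}$ with $\bigcap \cD^0$; the paper sidesteps this issue because the derivability relation $Q, \cC \vdash (X, \ph, p)$ directly asserts the value assigned by $\bfP_{Q,\cC}$.
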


\begin{proof}
  Suppose $\sP \vDash \cC_Q \cap \cC$. Choose $P \in \cC_Q \cap \cC$ such that
  $\sP \vDash P$. Then $P$ is an inductive theory with root $T_0$ such that $Q
  \subseteq P$ and $\langle \bfr_n^{f(n)} \mid n \in \bN \rangle$ is independent
  given $T_0$ whenever $f \in \{0, 1\}^\bN$. Let $f \in \{0, 1\}^\bN$ and let
  $I \subseteq \bN$ be finite. By Corollary \ref{C:indep-sound-compl}, we have
  $P(\bigwedge_{i \in I} \bfr_i^{f(i)} \mid T_0) = 2^{-|I|}$. Hence, $\sP \vDash
  (T_0, \bigwedge_{i \in I} \bfr_i^{f(i)}, 2^{-|I|})$. By Theorem 
  \ref{T:ind-sound-compl-IC}, we have $Q, \cC \vdash (T_0, \bigwedge_{i \in I}
  \bfr_i^{f(i)}, 2^{-|I|})$. Therefore, $\bfP_{Q, \cC}(\bigwedge_{i \in I}
  \bfr_i^ {f (i)} \mid T_0) = 2^{-|I|}$. Again by Corollary 
  \ref{C:indep-sound-compl}, this shows that $\bfP_{Q, \cC} \in \cC$. Taking
  $|I| = 1$ shows $\bfP_{Q, \cC} \in \cC_Q$.
\end{proof}

\begin{prop}\label{P:fair-coin-flips-char}
  Let $P$ be the inductive theory in the proof of Lemma \ref{L:fair-coin-flips}.
  Then $P = \bfP_{Q, \cC}$.
\end{prop}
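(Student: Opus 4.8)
The plan is to prove the two inclusions $\bfP_{Q,\cC}\subseteq P$ and $P\subseteq\bfP_{Q,\cC}$. The first is immediate: by Lemma \ref{L:fair-coin-flips} the set $P$ lies in $\cC_Q\cap\cC$, so $\bigcap(\cC_Q\cap\cC)\subseteq P$, and Theorem \ref{T:theory-gen-IC-defn} gives $\bfP_{Q,\cC}\subseteq\bigcap(\cC_Q\cap\cC)\subseteq P$. Both $P$ and $\bfP_{Q,\cC}$ are inductive theories with root $T_0$, so by Remark \ref{R:ante-cao} each is the lift of its own restriction to $T_0$; hence it suffices to prove $P\dhl_{T_0}=\bfP_{Q,\cC}\dhl_{T_0}$. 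Since $\bfP_{Q,\cC}\subseteq P$ already yields $\bfP_{Q,\cC}\dhl_{T_0}\subseteq P\dhl_{T_0}$, I only need the reverse inclusion. By Proposition \ref{P:lift-unique} and Corollary \ref{C:intersect-cl}, $\bfP_{Q,\cC}\dhl_{T_0}=\bigcap(\cC_Q\cap\cC)^0=\bigcap\{P'\dhl_{T_0}\mid P'\in\cC_Q\cap\cC\}$, so it is enough to show $P\dhl_{T_0}\subseteq P'\dhl_{T_0}$ for every $P'\in\cC_Q\cap\cC$.

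Writing $\Om=\B^{PV}$ and $\bbP$ for the measure of the model $\sP$ from Example \ref{Expl:Karp413} (so that $P=\bTh\sP\dhl_{[T_0,\Th\sP]}$), everything reduces to the following core claim: for every $P'\in\cC_Q\cap\cC$ and every $\eta\in\cF$, the probability $P'(\eta\mid T_0)$ exists and equals $\bbP\eta_\Om$. Granting this, a given $(X,\ph,p)\in P\dhl_{T_0}$ has $X\equiv T_0+\psi$ for some $\psi$ (Proposition \ref{P:ctbly-ax=one-ax}), and $P(\ph\mid T_0,\psi)=p$ means $\sP\vDash(T_0+\psi,\ph,p)$; since $\sP\vDash T_0$, Proposition \ref{P:model-func} unwinds this to $\bbP\psi_\Om>0$ and $\bbP\ph_\Om\cap\psi_\Om/\bbP\psi_\Om=p$. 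The core claim then gives $P'(\psi\mid T_0)=\bbP\psi_\Om>0$ and $P'(\ph\wedge\psi\mid T_0)=\bbP\ph_\Om\cap\psi_\Om$; Lemma \ref{L:cond-exist} places $T_0+\psi$ in $\ante P'$, and the multiplication rule yields $P'(\ph\mid T_0,\psi)=p$, i.e.\ $(X,\ph,p)\in P'\dhl_{T_0}$.

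To prove the core claim, I would first record the probability-one facts forced on every $P'\in\cC_Q\cap\cC$. Since $\ze\vdash\bfr_n^0\vee\bfr_n^1$ we have $P'(\bfr_n^0\vee\bfr_n^1\mid T_0)=1$, and together with $P'(\bfr_n^0\mid T_0)=P'(\bfr_n^1\mid T_0)=1/2$ (from $Q\subseteq P'$), inclusion-exclusion (Theorem \ref{T:incl-excl}) forces $P'(\bfr_n^0\wedge\bfr_n^1\mid T_0)=0$; hence $\bfr_n^0\tot\neg\bfr_n^1\in T_{P'}$, i.e.\ $\bfr_n^0\equiv_{T_{P'}}\neg\bfr_n^1$, and the same holds for $P'=P$. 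I then define $\eta^{\ast}$ by the formula recursion that replaces each atom $\bfr_n^0$ by $\neg\bfr_n^1$ and fixes each $\bfr_n^1$; because $\equiv_{T_{P'}}$ is a congruence, formula induction gives $\eta\equiv_{T_{P'}}\eta^{\ast}$, while $\eta^{\ast}\in\de(\{\bfr_n^1\mid n\in\bN\})$. As $P'\in\cC$, the family $\ang{\bfr_n^1\mid n\in\bN}$ is independent given $T_0$, so Theorem \ref{T:indep-dialog-defined} puts $\eta^{\ast}$ in $\dom P'(\,\cdot\mid T_0)$, and Proposition \ref{P:log-equiv-gen} transfers existence and value to $\eta$. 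For the value I would run Dynkin's $\pi$-$\la$ theorem on $\cA=\si(\{(\bfr_n^1)_\Om\mid n\in\bN\})$: the collection $\cD_0=\{A\in\cA\mid A=\eta_\Om,\ P'(\eta\mid T_0)=\bbP A\}$ is a Dynkin system (using Corollary \ref{C:rel-neg}, \eqref{prob-neg}, Remark \ref{R:impl-subset}, and countable additivity, Theorem \ref{T:ctbl-add}) that contains the generating $\pi$-system $\{(\bigwedge_{i\in I}\bfr_i^1)_\Om\mid I\subseteq\bN\text{ finite}\}$, on which both $P'(\,\cdot\mid T_0)$ and $\bbP$ equal $2^{-|I|}$ by Corollary \ref{C:indep-sound-compl}. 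Thus $\cD_0=\cA$, giving $P'(\eta^{\ast}\mid T_0)=\bbP\eta^{\ast}_\Om$; and since $\eta\tot\eta^{\ast}\in T_P$ yields $\bbP\eta_\Om=\bbP\eta^{\ast}_\Om$, the core claim follows.

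The main obstacle is the core claim, and within it the fact that the language carries two atoms $\bfr_n^0,\bfr_n^1$ per flip while the independence hypothesis only constrains the families $\ang{\bfr_n^{f(n)}\mid n\in\bN}$ that select one atom per flip. The essential device is the $T_{P'}$-equivalence $\bfr_n^0\equiv_{T_{P'}}\neg\bfr_n^1$, which collapses the two-atom language onto the sublanguage generated by $\{\bfr_n^1\mid n\in\bN\}$, where the independence of a single sequence of fair bits, together with the uniqueness half of the $\pi$-$\la$ theorem, pins every conditional probability to its value under $\bbP$. I should take care that this measure-uniqueness step is carried out syntactically, through the Dynkin system $\cD_0$ of formula classes (as in Proposition \ref{P:Del-P-X}) rather than by invoking an external uniqueness theorem, and that the reduction $\eta\mapsto\eta^{\ast}$ respects the countable conjunctions of $\cF$ via the congruence property of $\equiv_{T_{P'}}$.
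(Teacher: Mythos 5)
Your proposal is correct, and it reaches the conclusion by a genuinely different route in the crucial step. The paper makes the same two reductions you do (the easy inclusion $\bfP_{Q,\cC} \subseteq P$ via Theorem \ref{T:theory-gen-IC-defn}, and the reduction to restrictions at the common root $T_0$), and it also proves that $\bfP_{Q,\cC}(\;\cdot \mid T_0)$ is defined on all of $\cF$ by a Dynkin argument --- but it runs that argument on the $\pi$-system of finite conjunctions of propositional variables, splitting into the case where some conjunction contains both $\bfr_n^0$ and $\bfr_n^1$ (probability $0$ by inclusion-exclusion) and the case $\bigwedge_{i \in I} \bfr_i^{f(i)}$ (probability $2^{-|I|}$ by Proposition \ref{P:fair-coin-flips-deter}). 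The decisive difference is how the \emph{values} are pinned down: the paper never computes them internally, but instead observes that $\bfP_{Q,\cC} \subseteq P \subseteq \bTh \sP$ implies $\sP \vDash \bfP_{Q,\cC}$, so once existence is known, Corollary \ref{C:model-func} forces every value $\bfP_{Q,\cC}(\eta \mid T_0)$ to equal $\olbbP \eta_\Om$, and the multiplication rule finishes. You instead work with an arbitrary $P' \in \cC_Q \cap \cC$ (justified by $\bfP_{Q,\cC} \dhl_{T_0} = \bigcap \cC^0$), for which $\sP \vDash P'$ is not available, and you determine the values syntactically: the congruence $\bfr_n^0 \equiv_{T_{P'}} \neg \bfr_n^1$ collapses $\cF$ onto the dialog set $\de(\{\bfr_n^1 \mid n \in \bN\})$, Theorem \ref{T:indep-dialog-defined} gives existence there, and a second Dynkin comparison against $\bbP$ on the $\pi$-system of conjunctions $\bigwedge_{i \in I} \bfr_i^1$ fixes the values. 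The paper's argument is shorter because it recycles the semantic inclusion already in hand; yours costs an extra Dynkin pass and the starred-formula bookkeeping, but it buys a strictly stronger conclusion, namely that $P' \dhl_{T_0} = P \dhl_{T_0}$ for \emph{every} $P' \in \cC_Q \cap \cC$, hence (by lifting) $\cC_Q \cap \cC = \{P\}$, sharpening the determinacy statement of Proposition \ref{P:fair-coin-flips-deter}.
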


\begin{proof}
  Since $P \in \cC_Q \cap \cC$, we have $\bfP_{Q, \cC} \subseteq \bigcap \cC_Q
  \cap \cC \subseteq P$. For the reverse inclusion, since $P$ and $\bfP_{Q,
  \cC}$ both have root $T_0$, it suffices to show that ${P \dhl_{T_0}} \subseteq
  {\bfP_{Q, \cC} \dhl_{T_0}}$. For notational simplicity, let $P' = \bfP_{Q,
  \cC}$.

  We first show that $P'(\ph \mid T_0)$ exists for every $\ph \in \cF$. Let $Y
  \subseteq \cF$ be the set of all finite conjunctions of propositional
  variables. We claim that $Y \subseteq \dom P'(\; \cdot \mid T_0)$. To see
  this, let $\ph \in Y$. Suppose, for some $n$, that $\ph$ contains both
  $\bfr_n^0$ and $\bfr_n^1$. Then $\ph \vdash \bfr_n^0 \wedge \bfr_n^1$. Since
  $\bfr_n^0 \vee \bfr_n^1 \in T_0$, we have $P'(\bfr_n^0 \vee \bfr_n^1 \mid T_0)
  = 1$. Hence, Theorem \ref{T:incl-excl} implies $P'(\bfr_n^0 \wedge \bfr_n^1
  \mid T_0) = 0$, and therefore, $P'(\ph \mid T_0) = 0$. On the other hand,
  suppose that $\ph$ contains at most one of $\bfr_n^0$ and $\bfr_n^1$ for each
  $n$. Then $\ph = \bigwedge_{i \in I} \bfr_i^{f(i)}$ for some finite $I
  \subseteq \bN$ and some $f \in \{0, 1\}^\bN$. By the proof of Proposition 
  \ref{P:fair-coin-flips-deter}, we have $P'(\ph \mid T_0) = 2^{-|I|}$.
  Hence, $Y \subseteq \dom P'(\; \cdot \mid T_0)$.

  Recall that in Lemma \ref{L:fair-coin-flips}, we have $\Om = \B^{PV}$. The
  above shows that $Y_\Om \subseteq \De$, where $\De = \{\ph_\Om \mid P'(\ph
  \mid T_0) \text{ exists}\}$. Proposition \ref{P:Del-P-X} implies that $\De$ is
  a Dynkin system. Since $Y_\Om$ is closed under pairwise intersections,
  Dynkin's $\pi$-$\la$ theorem implies that $\cB^{PV} = \si(Y_\Om) \subseteq
  \De$. Hence, if $\ph \in \cF$, then $\ph_\Om \in \De$, so that $\ph_\Om =
  \ph'_\Om$ for some $\ph' \in \dom P'(\; \cdot \mid T_0)$. Remark
  \ref{R:impl-subset} gives $\ph \equiv \ph'$, so that by the rule of logical
  equivalence, $P'(\ph \mid T_0)$ exists.

  Now suppose $P(\ph \mid T_0, \psi) = p$. Then, by the definition of $P$, we
  have $\olbbP \ph_\Om \cap \psi_\Om / \olbbP \psi_\Om = p$. Since $P'(\ph
  \wedge \psi \mid T_0)$ and $P'(\psi \mid T_0)$ both exist and $\sP \vDash P'$,
  this implies that $P'(\ph \wedge \psi \mid T_0) / P'(\psi \mid T_0) = p$. From
  the multiplication rule, it follows that $P'(\ph \mid T_0, \psi) = p$.
\end{proof}

Recall $\psi_f = \neg \bigwedge_n \bfr_n^{f(n)}$ from Example 
\ref{Expl:Karp413}, where $f \in \{0, 1\}^\bN$. The function $f$ is simply a
sequence of $1$'s and $0$'s. If we interpret $f$ as a sequence of heads and
tails, then the formula $\psi_f$ represents the sentence,
\[
  \text{
    ``The pattern of heads and tails produced by the coin is not $f$.''
  }
\]
By Proposition \ref{P:fair-coin-flips-char} and Example \ref{Expl:Karp413}, we
have $\bfP_{Q, \cC}(\psi_f \mid T_0) = 1$ for every $f \in \{0, 1\}^\bN$. Hence,
$T_0, Q, \cC \vdash \psi_f$, so that $\psi_f$ is a logical consequence of
(1)--(3), and this is true for every $f \in \{0, 1\}^\bN$.

Classical intuition suggests that this is paradoxical, since the coin must
produce \emph{some} pattern. But this classical intuition is rooted in the idea
of strict satisfiability. Indeed, Example \ref{Expl:Karp413} shows that there is
no strict model that strictly satisfies both $T_0$ and every $\psi_f$. A strict
model is an assignment of truth values to every sentence. Classical intuition
thinks in terms of these truth assignments. To remove any cognitive dissonance
produced by this example, intuition must be changed so that it thinks in terms
of probability measures on truth assignments.

\subsection{Biased coin flips}

As in the previous subsection, our aim here is to create an inductive theory
that describes an infinite sequence of independent coin flips. This time,
however, we will drop the assumption that the coin is fair.

As before, we use the language built on $PV = \{\bfr_n^k \mid (n, k) \in \bN
\times \{0, 1\}\}$, where $\bfr_n^k$ represents the proposition, ``The $n$th
flip of the coin lands on $k$.''

This time, the ``axioms'' of our inductive theory will be:

\begin{enumerate}[(1)]
  \item Each flip must land on heads or tails.
  \item On an individual flip, the probabilities of heads and tails sum to $1$.
  \item Every flip has the same probability of heads, which is neither $0$ nor
        $1$.
  \item The flips are independent.
\end{enumerate}

We will enforce (1) with our choice of root, $T_0$. We will enforce (2)--(4)
with inductive conditions.

As before, let $T_0 = T (\{\ze\})$, where $\ze = \bigwedge_n (\bfr_n^0 \vee
\bfr_n^1)$. Let $\fI_{T_0}$
  \symindex{$\fI_{T_0}$}%
be the set of inductive theories with root $T_0$. Define the inductive
conditions,
  \begin{align*}
    \cC_2 &= \{
      P \in \fI_{T_0}
    \mid
      P(\bfr_n^0 \mid T_0) + P(\bfr_n^1 \mid T_0) = 1 \text{ for all $n$}
    \},\\
    \cC_q &= \{
      P \in \fI_{T_0} 
    \mid
      P(\bfr_n^1 \mid T_0) = q \text{ for all $n$}
    \}, \quad q \in (0, 1),\\
    \cC_4 &= \{
      P \in \fI_{T_0}
    \mid
      \langle \bfr_n^{f(n)} \mid n \in \bN \rangle \text{
        is independent given $T_0$ for all
      } f \in \{0, 1\}^\bN
    \}
  \end{align*}
Let $\cC_3 = \bigcup_{q \in (0, 1)} \cC_q$. Then $\cC_j$ represents assumption 
(j) for $2 \le j \le 4$. Let $\cC = \cC_2 \cap \cC_3 \cap \cC_4$.

\begin{prop}\label{P:bias-coin-flips}
  The condition $\cC$ is consistent, but indeterminate. That is, $\bfP_\cC
  \notin \cC$. More precisely, the domain of $\bfP_\cC(\; \cdot \mid T_0)$ does
  not contain any propositional variables. Hence, $\bfP_\cC \notin \cC_2$,
  $\bfP_\cC \notin \cC_4$, and $\bfP_\cC \notin \cC_q$ for any $q \in (0, 1)$.
\end{prop}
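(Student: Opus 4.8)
The plan is to treat consistency and indeterminacy in turn, reusing the coin-flip construction of Section~\ref{S:Karp413} with the bias $1/2$ replaced by an arbitrary $q \in (0,1)$. For consistency, I would fix $q \in (0,1)$ and repeat the proof of Lemma~\ref{L:fair-coin-flips} essentially verbatim, now using an i.i.d.\ sequence $\ang{X_n \mid n \in \bN}$ of $\{0,1\}$-valued random variables with $\nu\{X_n = 1\} = q$. Setting $G\bfr_n^k = \{X_n = k\}$ and letting $\sP = (\Om, \Si, \bbP)$ be the model furnished by Theorem~\ref{T:prob-sp-model-iso}, one checks $\sP \vDash T_0$ exactly as in that example, so Proposition~\ref{P:chop-off-root} produces an inductive theory $P_q = \bTh \sP \dhl_{[T_0, \Th \sP]}$ with root $T_0$. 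Since $P_q(\bfr_n^k \mid T_0) = \olbbP (\bfr_n^k)_\Om = \nu\{X_n = k\}$, we get $P_q(\bfr_n^1 \mid T_0) = q$ and $P_q(\bfr_n^0 \mid T_0) = 1 - q$, so $P_q \in \cC_2 \cap \cC_q$; the independence of $\ang{X_n}$ together with Theorem~\ref{T:indep-sound-compl} gives $P_q \in \cC_4$. Hence $P_q \in \cC_2 \cap \cC_3 \cap \cC_4 = \cC$, so $\cC \ne \emp$ is consistent and $\bfP_\cC$ is defined.

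The crux is the claim that no propositional variable lies in $\dom \bfP_\cC(\; \cdot \mid T_0)$, and I would prove this by contradiction. By Theorem~\ref{T:theory-gen-IC-defn}, $\bfP_\cC \subseteq \bigcap \cC$, hence $\bfP_\cC \subseteq P_q$ for every $q \in (0,1)$. If $\bfr_n^1 \in \dom \bfP_\cC(\; \cdot \mid T_0)$, say $\bfP_\cC(\bfr_n^1 \mid T_0) = p$, then $(T_0, \bfr_n^1, p) \in \bfP_\cC \subseteq P_q$ forces $P_q(\bfr_n^1 \mid T_0) = p$ for every $q$; but $P_q(\bfr_n^1 \mid T_0) = q$, so $p = q$ for all $q \in (0,1)$, which is absurd. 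The identical argument using $P_q(\bfr_n^0 \mid T_0) = 1 - q$ rules out $\bfr_n^0$. Thus $\dom \bfP_\cC(\; \cdot \mid T_0)$ contains no propositional variable.

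Indeterminacy then reads off from the domain claim. Membership in $\cC_q$ demands that $\bfP_\cC(\bfr_n^1 \mid T_0) = q$ exist, and membership in $\cC_2$ demands that both $\bfP_\cC(\bfr_n^0 \mid T_0)$ and $\bfP_\cC(\bfr_n^1 \mid T_0)$ exist; since the relevant propositional variables are outside the domain, $\bfP_\cC \notin \cC_q$ for every $q$ and $\bfP_\cC \notin \cC_2$. For $\cC_4$, the point is that independence of $\ang{\bfr_n^{f(n)} \mid n \in \bN}$ given $T_0$ is, by the definition in Section~\ref{S:indep}, a statement about formulas already lying in $\dom \bfP_\cC(\; \cdot \mid T_0)$; as these variables are not in the domain, $\bfP_\cC \notin \cC_4$. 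In particular $\bfP_\cC \notin \cC = \cC_2 \cap \cC_3 \cap \cC_4$, so $\cC$ is indeterminate.

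The main obstacle is not any isolated computation but keeping the logical status of each failure straight. The genuinely load-bearing observation is that inductive independence is only \emph{defined} for collections already contained in $\dom P(\; \cdot \mid X)$, so the failure $\bfP_\cC \notin \cC_4$ is a failure of the defining formulas to receive any probability whatsoever, not a failure of a product formula to hold. Everything else is a direct consequence of $\bfP_\cC \subseteq \bigcap \cC$ combined with the one-parameter family $\{P_q\}_{q \in (0,1)} \subseteq \cC$ exhibited in the consistency step.
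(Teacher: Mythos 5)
Your proposal is correct and follows essentially the same route as the paper: consistency via the biased-coin models $P_q \in \cC_2 \cap \cC_q \cap \cC_4 \subseteq \cC$ built from Example \ref{Expl:Karp413}, and the domain claim from $\bfP_\cC \subseteq \bigcap \cC \subseteq P_q$ together with the fact that $P_q(\bfr_n^k \mid T_0)$ varies with $q$. The only cosmetic difference is that the paper fixes a single $q$ with $q_0 \notin \{q, 1-q\}$ to derive the contradiction, whereas you let $q$ range over $(0,1)$; the underlying argument is identical.
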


\begin{proof}
  Let $\sP$ be the model constructed in Example \ref{Expl:Karp413} and let $P =
  \bTh \sP \dhl_{[T_0, \Th \sP]}$. Then $P \in \cC_2 \cap \cC_{1/2} \cap \cC_4
  \subseteq \cC$ and $\sP \vDash P$. Hence, $\sP \vDash \cC$, so that $\cC$ is
  satisfiable and therefore consistent.

  Fix $r \in PV$ and assume $\bfP_\cC(r \mid T_0)$ exists. Let $q_0 = \bfP_\cC(r
  \mid T_0)$. Choose $q \in (0, 1)$ such that $q_0 \notin \{q, 1 - q\}$. As in
  Example \ref{Expl:Karp413}, we may construct a model $\sP^q$ such that $P^q =
  \bTh \sP^q \dhl_{[T_0, \Th \sP]} \in \cC_2 \cap \cC_q \cap \cC_4 \subseteq
  \cC$. Since $P^q \in \cC_q$, we have $P^q(r \mid T_0) \in \{q, 1 - q\}$. On
  the other hand, since $P^q \in \cC$, it follows that $\bfP_\cC \subseteq
  \bigcap \cC \subseteq P^q$, so that $P^q(r \mid T_0) = q_0$, a contradiction.
  Hence, $\bfP_\cC(r \mid T_0)$ does not exist.
\end{proof}

Proposition \ref{P:bias-coin-flips} shows that the domain of $\bfP_\cC(\; \cdot
\mid T_0)$ does not contain any propositional variables. This domain, however,
is not trivial. That is, it contains more than just tautologies and
contradictions. Recall the formulas $\psi_f = \neg \bigwedge_n \bfr_n^{f (n)}$,
where $f \in \{0, 1\}^\bN$. As in Example \ref{Expl:Karp413}, we can show that
$P(\psi_f \mid T_0) = 1$ for every $P \in \cC$. Since $\bfP_\cC \dhl_{T_0} =
\bigcap \cC^0$, it follows that $\bfP_\cC(\psi_f \mid T_0) = 1$ for every $f \in
\{0, 1\}$.

It might be tempting to think that $\bfP_\cC(\; \cdot \mid T_0)$ is entirely
deductive, in the sense that $\bfP_\cC(\ph \mid T_0) \in \{0, 1\}$ whenever
$\bfP_\cC(\ph \mid T_0)$ exists. After all, the inductive condition $\cC$ does
not specify any numerical probabilities at all. What we will show, however, is
that the opposite is true. For any $p \in (0, 1)$, there is a formula $\ph \in
\cF$ such that $\bfP_\cC(\ph \mid T_0) = p$.

The intuition behind this is the following. It is possible to simulate a fair
coin flip with a biased coin. Simply flip the coin twice. If the results match,
start over. If they do not match, use the second of the two flips as the result.
But we can do this as many times as we like. So we can simulate an
i.i.d.~sequence of fair coin flips. We can then use this sequence to simulate a
random number that is uniformly chosen from the interval $(0, 1)$. Finally, we
construct a formula which asserts that this uniform random number is less than
$p$.

\begin{prop}
  For any $p \in (0, 1)$, there exists a formula $\ph \in \cF$ such that
  $\bfP_\cC(\ph \mid T_0) = p$.
\end{prop}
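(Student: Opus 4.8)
The plan is to carry out the simulation sketched just before the proposition: extract an i.i.d.\ sequence of fair bits from the biased flips via the von Neumann trick, and then use them to read off the binary expansion of $p$. The decisive reduction is that, by the definition of $\bfP_\cC$ together with Proposition~\ref{P:lift-unique}, we have $\bfP_\cC\dhl_{T_0}=\bigcap\cC^0$, so proving $\bfP_\cC(\ph\mid T_0)=p$ amounts to producing a single formula $\ph$ with $P(\ph\mid T_0)=p$ for \emph{every} $P\in\cC$. I would therefore fix an arbitrary $P\in\cC$, observe that $P\in\cC_q$ for some $q\in(0,1)$ (so $P(\bfr_n^1\mid T_0)=q$ and, by Corollary~\ref{C:rel-neg}, $P(\neg\bfr_n^1\mid T_0)=1-q$), and that $\langle\bfr_n^1\mid n\in\bN\rangle$ is independent given $T_0$ (take $f\equiv1$ in the definition of $\cC_4$). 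Writing $s_n=\bfr_n^1$, every formula constructed below lies in $\de(\{s_n\mid n\})$, which is contained in $\dom P(\;\cdot\mid T_0)$ by Theorem~\ref{T:indep-dialog-defined}.

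For the fair bits, I would partition $\bN$ into disjoint infinite blocks $B_1,B_2,\dots$ and group the flips in $B_k$ into consecutive pairs $\pi_{k,1},\pi_{k,2},\dots$. For a pair $\pi$ of indices $a<b$ set $\mathrm{HT}(\pi)=s_a\wedge\neg s_b$ and $\mathrm{In}(\pi)=(s_a\wedge s_b)\vee(\neg s_a\wedge\neg s_b)$, and define the von Neumann bit
\[
  Y_k=\bigvee_{j=1}^\infty\Big(\ts\bigwedge_{i<j}\mathrm{In}(\pi_{k,i})\wedge\mathrm{HT}(\pi_{k,j})\Big).
\]
The disjuncts are pairwise contradictory and each lies in $\dom P(\;\cdot\mid T_0)$, so countable additivity (Theorem~\ref{T:ctbl-add}) gives $Y_k\in\dom P(\;\cdot\mid T_0)$. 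Using the product formula for independent formulas (Corollary~\ref{C:indep-sound-compl}, with Corollary~\ref{C:indep-prod} for the negations and the disjointness of indices across distinct pairs), the $j$-th term equals $a^{j-1}b$ with $a=q^2+(1-q)^2$ and $b=q(1-q)$, whence $P(Y_k\mid T_0)=b/(1-a)=q(1-q)/\big(2q(1-q)\big)=1/2$. This value is independent of $q$, which is the heart of the argument; the hypothesis $q\in(0,1)$ is exactly what makes $1-a=2q(1-q)>0$.

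It then remains to establish independence of $\langle Y_k\mid k\rangle$ and to read off $p$. For independence I would detour through a model: by Theorem~\ref{T:ind-th-model} choose $\sP\vDash P$; in $\sP_{T_0}$ the family $\langle(s_n)_\Om\rangle$ is measure independent (Theorem~\ref{T:indep-sound-compl}), and since $Y_k\in\de(\{s_n\mid n\in B_k\})$ we have $(Y_k)_\Om\in\si(\{(s_n)_\Om\mid n\in B_k\})$ by Proposition~\ref{P:dialog-sig-alg}; disjointness of the blocks and the standard fact that functions of disjoint independent families are independent make $\langle(Y_k)_\Om\rangle$ measure independent in $\sP_{T_0}$, so Theorem~\ref{T:indep-sound-compl} returns independence of $\langle Y_k\rangle$ given $T_0$ under $P$. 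Now fix a binary expansion $p=\sum_{k\ge1}b_k2^{-k}$ and put
\[
  \ph=\bigvee_{k:\,b_k=1}\Big(\ts\bigwedge_{j<k}Y_j^{\,b_j}\wedge\neg Y_k\Big),
\]
the assertion that the bit string $\langle Y_k\rangle$ falls lexicographically below $\langle b_k\rangle$. Its disjuncts are pairwise contradictory finite conjunctions from $\de(\{Y_k\})\subseteq\dom P(\;\cdot\mid T_0)$, and since $P(Y_j^{\,b_j}\mid T_0)=P(\neg Y_k\mid T_0)=1/2$, independence makes the $k$-th disjunct have probability $2^{-k}$; Theorem~\ref{T:ctbl-add} then gives $P(\ph\mid T_0)=\sum_{k:\,b_k=1}2^{-k}=p$. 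As $P\in\cC$ was arbitrary and $\ph$ is built without reference to $P$, we conclude $(T_0,\ph,p)\in\bigcap\cC^0$, i.e.\ $\bfP_\cC(\ph\mid T_0)=p$.

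The part I expect to require the most care is not the arithmetic but the bookkeeping that must hold \emph{uniformly} over all $P\in\cC$: verifying that the infinitary formulas $Y_k$ and $\ph$ lie in $\dom P(\;\cdot\mid T_0)$ and that the disjoint-union computations are legitimate (resting on Theorem~\ref{T:indep-dialog-defined} and countable additivity), and establishing independence of the \emph{derived} sequence $\langle Y_k\rangle$, for which the cleanest route is the semantic passage to a satisfying model and the measure-theoretic independence of functions of disjoint blocks.
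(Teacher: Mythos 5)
Your proof is correct, and it fleshes out exactly the simulation idea the paper sketches in the paragraph preceding the proposition, but the execution is genuinely different from the paper's own proof. The paper never writes $\ph$ down explicitly: it defines measurable functions $Y_n$, $Z_k$ (von Neumann bits extracted from a single stream of pairs via stopping times $\tau_k$) and $U = \sum_k 2^{-k} Z_k$ on $\Om = \B^{PV}$, takes $\ph$ to be any formula with $\ph_\Om = \{U \le p\}$ (possible because $\cB^{PV} = \{\ph_\Om \mid \ph \in \cF\}$), and then, given $P \in \cC$ and an arbitrary model $\sP \vDash P$, routes through the canonical model built in the proof of Theorem \ref{T:ind-th-model} to compute the probability of $\ph$ by ordinary measure theory ($U$ is uniform on $(0,1)$ there), finishing with inductive soundness and completeness (Theorem \ref{T:ind-sound-compl}) to pull the conclusion back to $P(\ph \mid T_0) = p$. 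You instead exhibit $\ph$ explicitly as an infinitary formula and carry out the computation inside the inductive calculus itself (addition rule, countable additivity, independence product formula), invoking semantics only once, to transfer independence from $\ang{s_n \mid n \in \bN}$ to the derived bits $\ang{Y_k \mid k \in \bN}$; your per-bit block design is what makes that step clean, since it replaces the paper's stopping-time construction and reduces independence of the $Y_k$'s to the grouping lemma for functions of disjoint independent families. The trade-off is clear: the paper's fully semantic route spares it all the domain-of-definition bookkeeping, since everything happens inside one probability space, while your route produces a concrete formula and demonstrates that the calculus can essentially carry the argument on its own. One citation should be tightened: the products $P\bigl(\bigwedge_{i<j}\mathrm{In}(\pi_{k,i}) \wedge \mathrm{HT}(\pi_{k,j}) \mid T_0\bigr) = a^{j-1}b$ and $P\bigl(\bigwedge_{j<k} Y_j^{b_j} \wedge \neg Y_k \mid T_0\bigr) = 2^{-k}$ are not literally instances of Corollary \ref{C:indep-sound-compl}, which concerns conjunctions of the family members themselves; they follow instead from the definition of independence of a sequence (formulas lying in dialog sets over disjoint index blocks are independent) combined with Theorem \ref{T:indep-prod}, applied inductively to peel off one factor at a time.
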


\begin{proof}
  Fix $p \in (0, 1)$. Let $\Om = \B^{PV}$ be the set of all strict models and
  $\Si = \cB^{PV} = \{\ph_\Om \mid \ph \in \cF\}$. Let $\ze' = \bigwedge_n \neg
  (\bfr_n^0 \wedge \bfr_n^1)$. Define $Y_n: \Om \to \{0, 1\}$ as follows. If
  $\om \ntDash \ze \wedge \ze'$, then $Y_n(\om) = 0$. If $\om \tDash \ze \wedge
  \ze'$, then define $Y_n(\om)$ so that $\om \tDash \bfr_n^{Y_n(\om)}$. Note
  that $\{Y_n = 1\} = (\ze \wedge \ze' \wedge \bfr_n^1)_\Om \in \Si$. Hence,
  $Y_n$ is $\Si$-measurable.

  Define $\tau_0 = 0$ and
  \[
    \tau_k = \inf \{
      n > \tau_{k - 1}
    \mid
      \text{$n$ is even and $Y_n \ne Y_{n - 1}$}
    \}.
  \]
  Define $Z_k = 0$ on $\{\tau_k = \infty\}$ and $Z_k = Y_{\tau_k}$ on $
  \{\tau_k < \infty\}$, and let $U = \sum_1^\infty 2^{-k} Z_k$. Then $U$ is
  $\Si$-measurable, which implies $\{U \le p\} \in \Si$. Choose $\ph \in \cF$
  such that $\{U \le p\} = \ph_\Om$. We will show that $\bfP_\cC(\ph \mid T_0) =
  p$.

  Fix $P \in \cC$. Let $\sP$ be an arbitrary model with $\sP \vDash P$. Define
  $P' = \bTh \sP \dhl_{[T_0, \Th \sP]}$, so that $P'$ is a complete inductive
  theory with root $T_0$ such that $P \subseteq P'$. By the proof of Theorem
  \ref{T:ind-th-model}, we may construct a model $\sP' = (\Om, \Si, \bbP)$,
  where $\Om = \B^{PV}$ and $\Si = \cB^{PV}$, such that $P' = \bTh \sP' \dhl_
  {[T_0, \Th \sP']}$.

  Using $P \in \cC$, $P \subseteq P'$, and Theorem \ref{T:incl-excl}, we have
  $P' (\ze' \mid T_0) = 1$. Hence, $ \{Y_n = 1\} = (\bfr_n^1)_\Om$ $\bbP$-a.s.
  Thus, $\{Y_n\}$ are i.i.d.~in $\sP'$ with $\bbP \{Y_n = 1\} = q$, where $q$ is
  the value satisfying $P \in \cC_q$. It is a straightforward exercise to verify
  that $\{Z_k\}$ are i.i.d.~with $\bbP \{Z_k = 1\} = 1/2$. Hence, $U$ is
  uniformly distributed on $(0, 1)$, which gives $P'(\ph \mid T_0) = \bbP
  \ph_\Om = \bbP \{U \le p\} = p$.

  From the definition of $P'$, we have $\sP \vDash (T_0, \ph, p)$. Since $\sP$
  was arbitrary, Theorem \ref{T:ind-sound-compl} implies $P \vdash (T_0, \ph,
  p)$. But $P$ is an inductive theory, so this gives $P(\ph \mid T_0) = p$.
  Finally, since $P$ was arbitrary, it follows that $(T_0, \ph, p) \in \bigcap
  \cC^0 = {\bfP_\cC \dhl_ {T_0}}$, so that $\bfP_\cC(\ph \mid T_0) = p$.
\end{proof}


\chapter{Predicate Logic}\label{Ch:pred-logic}

In this chapter, we repeat the work we did in Chapters \ref{Ch:prop-calc} and
\ref{Ch:prop-models}, but in the setting of a predicate language. Most of the
work in this chapter is devoted to the deductive side of predicate logic. The
development of inductive logic requires hardly any modification from the
propositional case.

The language we use is just like first-order logic, except it allows countable
conjunctions and disjunctions. It is typically denoted in the literature by
$\cL_{\om_1, \om}$. We will denote it simply by $\cL$.

In Section \ref{S:syntax}, we introduce the syntax of $\cL$. We define terms,
formulas, sentences, and all of their related concepts, such as subformulas,
free and bound variables, and substitutions. The set of formulas is denoted by
$\cL$. As in the propositional case, formulas are built from an alphabet of
symbols. To the propositional alphabet, we add an uncountable number of variable
symbols, and we add the logical symbol $\forall$. Unlike the propositional case,
our alphabet will not include the set $PV$. Instead, it will include a set of
symbols $L$, called the extralogical signature. The set $L$ includes constant
symbols, relation symbols, and function symbols. 

Sentences are formulas that have no free variables. For example, $x > 0$ is a
formula, whereas $\forall x \, x > 0$ is a sentence. The set of formulas is
denoted by $\cL$, and the set of sentences is denoted by $\cL^0 \subseteq \cL$.
Intuitively, a sentence says something. It can be meaningful and have a truth
value. On the other hand, a formula is ambiguous. It could mean many different
things, depending on the values assigned to its free variables. As such, it
cannot have its own truth value. Predicate logic is concerned with sentences. In
fact, both deductive and inductive theories consist exclusively of sentences. As
such, our models and our inferential calculi should deal directly with $\cL^0$.
In the inductive case, that is exactly what we will do. In the deductive case,
however, we do something different. In that case, it will be easier for us to
build models and inferential calculi for $\cL$. When we do so, we will treat
free variables as if they are constant symbols.

Section \ref{S:pred-calc} is concerned with inferential calculus in predicate
logic. The bulk of this section is devoted to a system of natural deduction for
deductive inference in $\cL$. We present this system, prove that it is
$\si$-compact, and then show how it connects to inference in $\cL^0$. We discuss
the inductive calculus, which carries over almost entirely unchanged from
Chapter \ref{Ch:prop-calc}. Finally, we give a Hilbert-type calculus for
deductive inference. Specifically, this is the calculus used by Carol Karp in
\cite{Karp1964}.

In Section \ref{S:pred-models}, we present the semantics of predicate logic. As
before, the majority of the section is given to deductive semantics. Inductive
semantics carry over from Chapter \ref{Ch:prop-models} with very little
modifications. In predicate logic, sentences are given meaning by interpreting
them in a structure. A model will be a probability measure on a set of
structures. Using this, we will define satisfiability and the consequence
relation, then prove $\si$-compactness, soundness, and completeness.

Section \ref{S:pred-models} also contains the theory of Peano arithmetic in the
infinitary setting. We present the usual theory from first-order logic, as well
as two different extensions to the infinitary language $\cL^0$. That is, we
define three theories, $\PA_\fin \subseteq \PA_- \subseteq \PA$. The theory
$\PA_\fin$ is the usual theory of Peano arithmetic in first-order logic. The
theories $\PA_-$ and $\PA$ are extensions to $\cL^0$. The first extension,
$\PA_-$, is conservative, in the sense that every sentence in $\PA_- \setminus
\PA_\fin$ is purely infinitary. In other words, in $\PA_-$, we cannot deduce any
new first-order sentences that we could not already deduce in $\PA_\fin$. This
is because $\PA_-$ and $\PA_\fin$ have the same axioms. In particular, even in
$\PA_-$, we can only do induction on finitary formulas. The second extension,
$\PA$, is stronger. There, we allow induction on infinitary formulas. In doing
so, we find that $\PA$ completely characterizes the natural numbers. That is,
every true sentence about arithmetic is provable in $\PA$.

Finally, in Section \ref{S:pred-models-RVs}, we connect inductive logic to the
measure-theoretic concept of a random variable. As described in Section
\ref{S:outline}, we will be forced to deal with an issue we call ``the
relativity of randomness.'' We will do so by introducing and discussing ``frames
of references.'' The connection between inductive logic and random variables
will allow us to show that a measure-theoretic probability model---that is, a
probability space, together with a collection of random variables---is a special
case of a model in inductive logic. In other words, measure-theoretic
probability is embedded in inductive logic. In fact, this embedding is proper.
As we will see in Example \ref{Expl:pred-Karp412}, inductive logic is capable of
expressing things that cannot be expressed in measure-theoretic probability.

\section{The syntax of predicate formulas}\label{S:syntax}

In this section, we present the predicate language $\cL$. We describe the
alphabet, and the rules for constructing terms, formulas, and sentences. In
Karp's original construction of $\cL$ (see \cite{Karp1964}), formulas are
countably long strings of symbols. Our construction follows \cite{Keisler1971}
instead, building up formulas out of sets.

\subsection{The alphabet and terms}\label{S:terms}

Let $L$ be an extralogical signature. Recall the convention that, unless
otherwise stated, $c$ denotes a constant symbol, $r$ a relation symbol, and $f$
a function symbol with arity $n \ge 1$.

Let $\Var = \{\bx_\al \mid \al < \bom_1\}$ be an uncountable set of symbols.
The symbols in $\Var$ are called \emph{individual variables}. Unless otherwise
stated, letters such as $u, v, x, y, z$ will denote distinct individual
variables.
  \symindex{$\Var$}%
  \index{variable!individual ---}%

We define an alphabet, $\A = L \cup \Var \cup \{\neg, \bigwedge, \forall,
{\beq}\}$. Let $\cS = \cS_L$ denote the set of (finite) strings over $\A$.
  \symindex{$\cS$}%
We use boldface for the symbol, $\beq$, to distinguish it from the ordinary
equal sign. For instance, $\xi = \bx_0 \beq \bx_1$ means that $\xi$ is equal to
the length-3 string, $\bx_0 \beq \bx_1$. Parentheses are not part of our
alphabet, but we may sometimes add them for readability. For example, the above
might be written as $\xi = (\bx_0 \beq \bx_1)$ to further emphasize the
distinction between $=$ and $\beq$. We may also write $\xi: \bx_0 \beq \bx_1$,
as another way to improve readability.

\begin{defn}
    \index{term}%
    \symindex{$\cT$}%
  The set of \emph{terms} in $L$, denoted by $\cT = \cT_L$, is the smallest
  subset of $\cS$ such that
  \begin{enumerate}[(i)]
    \item $\Var \subseteq \cT$,
    \item $c \in \cT$ for all constant symbols $c \in L$, and
    \item if $f \in L$ is an $n$-ary function symbol and $t_1, \ldots, t_n \in
          \cT$, then $f t_1 \cdots t_n \in \cT$.
  \end{enumerate}
\end{defn}

Unless otherwise stated, letters such as $s$ and $t$ will denote terms.
Individual variables and constant symbols are called \emph{prime terms}.
  \index{term!prime ---}%
  \index{prime term|see {term}}%
A term is called \emph{compound} if it is not prime. Terms of the form in (iii)
are called \emph{function terms}. Note that every compound term is a function
term.

Also note that terms have the same definition here as they do in first-order
logic. Hence, they have all the same properties. For instance (see \cite[Section
2.2]{Rautenberg2010}), the \emph{unique term concatenation property} says that
if $t_1 \cdots t_n = s_1 \cdots s_m$, then $n = m$ and $t_i = s_i$ for all $i$.
Also, the \emph{unique term reconstruction property} says that if $f t_1 \cdots
t_n = f s_1 \cdots s_n$, then $t_i = s_i$ for all $i$.

When convenient, we will adopt the notation $\vec t = t_1 \cdots t_n$ for a
concatenation of terms. We also adopt shorthand to improve the readability of
terms. For example, suppose $x, y, z \in \Var$, and let $+$ and $\circ$ be
binary operation symbols. Then $t = {+}xy$ is a term, and ${\circ}tz = {\circ}
{+}xyz$ is a term. This latter term is especially difficult to read, and we
would typically write it as $(x + y) \circ z$. Note that parentheses are not
symbols in our alphabet; this is simply shorthand. We also adopt this shorthand
for relations, so that ${<}xy$ would be written as $x < y$.

\begin{defn}
    \symindex{$\var t$}%
  For $t \in \cT$, the set $\var t \subseteq \Var$ is defined recursively as
  follows:
  \begin{enumerate}[(i)]
    \item if $c \in L$ is a constant symbol, then $\var c = \emp$,
    \item if $x \in \Var$, then $\var x = \{x\}$, and
    \item $\var f t_1 \cdots t_n = \var t_1 \cup \cdots \cup \var t_n$.
  \end{enumerate}
\end{defn}

Intuitively, $\var t$ is the set of individual variables occurring in $t$. By
induction on $t$, it can be shown that $\var t$ is countable for all $t \in
\cT$. If $\var t = \emp$, then we call $t$ a \emph{ground term}, or
\emph{constant term}.
  \index{term!ground ---}%
  \index{ground term|see {term}}%

\begin{defn}
    \symindex{$\sym t$}%
  For $t \in \cT$, the set $\sym t \subseteq L$ is defined recursively as
  follows:
  \begin{enumerate}[(i)]
    \item if $c \in L$ is a constant symbol, then $\sym c = \{c\}$,
    \item if $x \in \Var$, then $\sym x = \emp$, and
    \item $\sym f t_1 \cdots t_n = \{f\} \cup \sym t_1 \cup \cdots \cup \sym
          t_n$.
  \end{enumerate}
\end{defn}

Intuitively, $\sym t$ is the set of extralogical symbols occurring in $t$. Note
that $\sym t$ is also countable. In addition, we define 
\[
  \con t = \{c \in \sym t \mid \text{$c$ is a constant symbol}\},
\]
  \symindex{$\con t$}%
which denotes the (countable) set of constant symbols occurring in $t$.

\subsection{Formulas}\label{S:pred-formulas}

A string $\ph \in \cS$ is an \emph{equation} if $\ph = (s \beq t)$, where $s, t
\in \cT$.
  \index{equation}%
A string $\ph \in \cS$ is a \emph{prime (or atomic) formula} if it is an
equation or if it has the form $\ph = r t_1 \cdots t_n$, where $r \in L$ is an
$n$-ary relation symbol and $t_1, \ldots, t_n \in \cT$.
  \index{formula!prime ---}%

Note that prime formulas have the same definition here as they do in first-order
logic. Hence, they have all the same properties. For instance (see \cite[Section
2.2]{Rautenberg2010}), the \emph{unique prime formula reconstruction property}
says that if $r t_1 \cdots t_n = r s_1 \cdots s_n$, then $t_i = s_i$ for all
$i$. Also, terms do not contain the symbol $\beq$. Therefore, if $(s \beq t) = 
(s' \beq t')$, then $s = s'$ and $t = t'$.

We will define the set of formulas so that a formula is a finite tuple, where
each element in the tuple is either a symbol from our alphabet, a formula, or a
countable set of formulas.

Let $S_0$ denote the set of prime formulas. For an ordinal $\al < \bom_1$, let
\[
  S_\al' = {
    S_\al \cup \{\ang{\neg, \ph} \mid \ph \in S_\al\}
    \cup \{\ang{\forall, x, \ph} \mid x \in \Var, \ph \in S_\al\}
  }.
\]
As with strings, when writing tuples such as these, we will typically omit the
commas and angled brackets, so that, for instance, $\forall x \ph = 
\ang{\forall, x, \ph}$.

We then define
\[
  S_{\al + 1} = S_\al' \cup \{
    \ang{\ts{\bigwedge}, \Phi} \mid
      \Phi \subseteq S_\al' \text{ is nonempty and countable}
  \}.
\]
Here, countable means finite or countably infinite. As above, we will typically
write $\bigwedge \Phi$ as a shorthand for ordered pairs of this type.

In the case that $\al$ is a nonzero limit ordinal, we define $S_\al = \bigcup_
{\xi < \al} S_\xi$. Finally, we define $\cL = \cL_{\bom_1, \bom} = \bigcup_{\al
< \bom_1} S_\al$.
  \symindex{$\cL$}%
Note that $S_\al \subseteq S_\be$ whenever $\al < \be$. An element $\ph \in \cL$
is called a \emph{formula}.
  \index{formula}%
A formula $\ph$ is called a \emph{literal} if $\ph = \pi$ or $\ph = \neg \pi$
for some prime formula $\pi$.

\begin{thm}[Unique formula reconstruction property]
  If $\ph$ is a formula that is not prime, then exactly one of the following
  holds.
  \begin{enumerate}[(i)]
    \item There exists a unique $\psi \in \cL$ such that $\ph = \neg \psi$.
    \item There exists a unique $x \in \Var$ and a unique $\psi \in \cL$ such
          that $\ph = \forall x \psi$.
    \item There exists a unique $\Phi \subseteq \cL$ such that $\ph = \bigwedge
          \Phi$.
  \end{enumerate}
\end{thm}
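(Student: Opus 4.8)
The plan is to prove the unique formula reconstruction property by exploiting the tiered construction of $\cL$ via the sets $S_\al$, exactly as the formulas themselves were built. The statement has two parts for each case: an \emph{existence} claim (every non-prime formula has one of the three forms) and a \emph{uniqueness} claim (the decomposition is unique, and the three cases are mutually exclusive). I would handle existence first, then uniqueness, then mutual exclusivity.

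For existence, I would take an arbitrary non-prime $\ph \in \cL$ and use the fact that $\cL = \bigcup_{\al < \bom_1} S_\al$, so there is a least ordinal $\al$ with $\ph \in S_\al$. Since $\ph$ is not prime, $\ph \notin S_0$, so $\al > 0$. If $\al$ were a limit ordinal, then $S_\al = \bigcup_{\xi < \al} S_\xi$ would place $\ph$ in some earlier $S_\xi$, contradicting minimality; hence $\al = \be + 1$ is a successor. Then $\ph \in S_{\be+1} = S_\be' \cup \{\ang{\bigwedge, \Phi} \mid \Phi \subseteq S_\be' \text{ nonempty countable}\}$. Unwinding the definition of $S_\be'$, the formula $\ph$ is a tuple of one of exactly three shapes: $\ang{\neg, \psi}$, $\ang{\forall, x, \psi}$, or $\ang{\bigwedge, \Phi}$, which are precisely cases (i), (ii), (iii). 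This establishes that at least one case holds.

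For uniqueness within a case and mutual exclusivity across cases, the key observation is that these formulas are \emph{tuples} (finite sequences) whose first coordinate is a distinguished logical symbol, $\neg$, $\forall$, or $\bigwedge$. Since formulas are identified with their underlying tuples, two tuples are equal if and only if they agree coordinatewise, and in particular their first coordinates must match. Because $\neg$, $\forall$, and $\bigwedge$ are distinct symbols of the alphabet $\A$, a tuple beginning with $\neg$ cannot equal one beginning with $\forall$ or $\bigwedge$, which immediately gives mutual exclusivity of (i), (ii), (iii). Within a single case, equality of tuples forces equality of the remaining coordinates: $\ang{\neg, \psi} = \ang{\neg, \psi'}$ gives $\psi = \psi'$; $\ang{\forall, x, \psi} = \ang{\forall, x', \psi'}$ gives $x = x'$ and $\psi = \psi'$; and $\ang{\bigwedge, \Phi} = \ang{\bigwedge, \Phi'}$ gives $\Phi = \Phi'$. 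I would also note that a non-prime formula cannot accidentally coincide with a prime formula: prime formulas are \emph{strings} over $\A$ (not tuples built from formulas and sets), so there is no type collision, and this is really what makes the set-theoretic construction of \cite{Keisler1971} cleaner than the string-based one of \cite{Karp1964}.

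The main obstacle, such as it is, lies in making the tuple-comparison argument fully rigorous rather than merely intuitive: one must be careful that the construction genuinely represents $\neg \psi$, $\forall x \psi$, and $\bigwedge \Phi$ as ordered tuples with set-theoretically distinct first entries, and that the identification of $\ph$ with its tuple is injective across the three constructors. This is a matter of bookkeeping about the encoding of tuples as sets, and it parallels the unique term concatenation and unique prime formula reconstruction properties already cited from \cite[Section 2.2]{Rautenberg2010}; I would lean on the analogous reasoning there rather than re-deriving the foundational facts about tuples. No transfinite induction is needed beyond the single appeal to minimality of $\al$, so the proof is short once the encoding is pinned down.
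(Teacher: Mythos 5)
Your proposal is correct and follows essentially the same route as the paper's proof: take the least ordinal stage containing $\ph$, rule out $0$ and limit ordinals by minimality, write the stage as $S_{\al+1}$, and observe that $\ph$ then lies in exactly one of the three constructor sets. The only difference is one of detail: the paper simply asserts that the union of the three sets is disjoint, whereas you spell out the underlying tuple-comparison argument (distinct first coordinates $\neg$, $\forall$, $\bigwedge$, and coordinatewise equality within each case), which is exactly the justification the paper leaves implicit.
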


\begin{proof}
  Let $\ph \in \cL$ and assume $\ph$ is not prime. Let $\be$ be the smallest
  ordinal such that $\ph \in S_\be$. Since $\ph$ is not prime, $\be > 0$. Since
  $\ph \notin S_\xi$ for all $\xi < \be$, it follows that $\be$ is not a limit
  ordinal. Therefore, $\be$ is a successor ordinal, and we may write $\be = \al
  + 1$. Since $\ph \notin S_\al$, we have
  \begin{multline*}
    \ph \in \{\ang{\neg, \psi} \mid \psi \in S_\al\}
      \cup \{\ang{\forall, x, \psi} \mid x \in \Var, \psi \in S_\al\}\\
    \cup \{
      \ang{\ts{\bigwedge}, \Phi} \mid
        \Phi \subseteq S_\al' \text{ is nonempty and countable}
    \}.
  \end{multline*}
  Note that the above union is a disjoint union. Hence, $\ph$ is in exactly one
  of the above three sets.
\end{proof}

Let $\cL_\fin$ denote the smallest subset of $\cL$ that satisfies
  \symindex{$\cL_\fin$}%
\begin{enumerate}[(i)]
  \item prime formulas are in $\cL_\fin$,
  \item if $\ph \in \cL_\fin$ and $x \in \Var$, then $\neg \ph \in \cL_\fin$
        and $\forall x \ph \in \cL_\fin$, and
  \item if $\Phi \subseteq \cL_\fin$ is nonempty and finite, then $\bigwedge
        \Phi \in \cL_\fin$.
\end{enumerate}
Formulas in $\cL_\fin$ are said to be \emph{finitary}. The set $\cL_\fin$ is, in
fact, the set of formulas used in first-order logic. The reader can consult any
introductory text on mathematical logic for the basic properties of $\cL_\fin$
and its corresponding syntax and semantics. When necessary, we will cite
\cite{Rautenberg2010} for this purpose.

We adopt all the same shorthand as in the propositional language $\cF$, except
for the definitions of falsum and verum, which will be given later in Section
\ref{S:taut-pred}. In addition, we also use the shorthand $\exists x \ph = \neg
\forall \neg \ph$ and $(s \nbeq t) = \neg (s \beq t)$. We may also write
$\forall x_1 x_2 \cdots x_n$ or $\forall \vec x$ instead of $\forall x_1 \forall
x_2 \cdots \forall x_n$. If $\rtri$ is a binary relation symbol, we will write
\begin{align*}
  (\forall x \rtri t) \ph &= \forall x (x \rtri t \to \ph),\\
  (\exists x \rtri t) \ph &= \exists x (x \rtri t \wedge \ph),
\end{align*}
and similarly for $\nrtri$. In Section \ref{S:subs}, after introducing
substitutions, we will give shorthand for $\exists!$, the unique existential
quantifier.

The set of formulas $\cL$ depends on the extralogical signature $L$. We may
sometimes emphasize this fact in our notation. For example, if $L = \{\circ,
e\}$, we may write $\cL = \cL\{\circ, e\}$. We may also write $\cS_\cL$ and
$\cT_\cL$ instead of $\cS_L$ and $\cT_L$.

\subsection{Formula induction and recursion}

The proof of Theorem \ref{T:prop-form-ind} carries over with minor modification
to give us the following.

\begin{thm}[The principle of formula induction]
    \index{induction!formula ---}%
  The set of formulas, $\cL$, is the smallest set that satisfies the
  following:
  \begin{enumerate}[(i)]
    \item prime formulas are in $\cL$,
    \item if $\ph \in \cL$ and $x \in \Var$, then $\neg \ph \in \cL$ and
          $\forall x \ph \in \cL$, and
    \item if $\Phi \subseteq \cL$ is nonempty and countable, then $\bigwedge
          \Phi \in \cL$.
  \end{enumerate}
\end{thm}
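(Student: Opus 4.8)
The plan is to mirror the proof of Theorem \ref{T:prop-form-ind} (the propositional principle of formula induction), making only the adjustments forced by the richer predicate syntax, namely the presence of the universal quantifier clause and the replacement of the atomic base set $PV$ by the set $S_0$ of prime formulas. The statement has two halves: first, that $\cL$ itself satisfies the three closure properties (i)--(iii); second, that $\cL$ is contained in every set $S$ satisfying those properties, which is what makes it the \emph{smallest} such set.

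For the first half, I would argue each closure property directly from the definition $\cL = \bigcup_{\al < \bom_1} S_\al$. Property (i) is immediate since the prime formulas form $S_0 \subseteq \cL$. For property (ii), suppose $\ph \in \cL$ and $x \in \Var$; then $\ph \in S_\al$ for some $\al < \bom_1$, and by the definition of $S_\al'$ both $\neg \ph = \ang{\neg, \ph}$ and $\forall x \ph = \ang{\forall, x, \ph}$ lie in $S_\al' \subseteq S_{\al + 1} \subseteq \cL$. For property (iii), given a nonempty countable $\Phi \subseteq \cL$, I would enumerate $\Phi = \{\ph_n\}_{n < \al}$ with $0 < \al \le \bom$, choose for each $n$ an ordinal $\al_n < \bom_1$ with $\ph_n \in S_{\al_n}$, and then invoke the fact (from Section \ref{S:ordinals}, specifically the regularity of $\bom_1$, i.e.\ that every countable sequence of countable ordinals has a countable upper bound) to pick $\be < \bom_1$ with $\al_n \le \be$ for all $n$. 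Then $\Phi \subseteq S_\be \subseteq S_\be'$ is nonempty and countable, so $\bigwedge \Phi \in S_{\be + 1} \subseteq \cL$.

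For the second half, I would take any set $S$ satisfying (i)--(iii) and show $S_\al \subseteq S$ for all $\al < \bom_1$ by transfinite induction on $\al$, whence $\cL = \bigcup_{\al < \bom_1} S_\al \subseteq S$. The base case $S_0 \subseteq S$ is property (i). For the successor step, assuming $S_\al \subseteq S$, I would first note $S_\al' \subseteq S$: the negation and conjunction clauses follow from (ii) and (iii) exactly as in the propositional proof, and the new quantifier clause $\{\ang{\forall, x, \ph} \mid x \in \Var,\ \ph \in S_\al\} \subseteq S$ follows from the second part of property (ii). Then every $\bigwedge \Phi$ with $\Phi \subseteq S_\al'$ nonempty and countable lies in $S$ by (iii), giving $S_{\al + 1} \subseteq S$. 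For a limit ordinal $\al$, the definition $S_\al = \bigcup_{\xi < \al} S_\xi$ together with the inductive hypothesis gives $S_\al \subseteq S$ immediately.

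I do not anticipate a genuine obstacle here, since the argument is a faithful transcription of the already-proved propositional case; the only substantive difference is the extra quantifier clause in the successor step, which is handled by the clause $\forall x \ph \in \cL$ that has been built into property (ii). The one point demanding a little care is the countable-upper-bound argument in property (iii), which relies on the regularity of $\bom_1$; I would make sure to cite that fact explicitly rather than treat it as obvious. The remark in the statement that this follows ``with minor modification'' from Theorem \ref{T:prop-form-ind} is accurate, so the proof should be brief.
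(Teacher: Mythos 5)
Your proposal is correct and is exactly what the paper intends: the paper gives no separate proof here, stating only that the proof of Theorem \ref{T:prop-form-ind} carries over with minor modification, and your argument is precisely that transcription, with the quantifier clause absorbed into the successor step and the regularity of $\bom_1$ (countable upper bounds for countable sets of countable ordinals) invoked for the countable-conjunction closure.
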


Given $\ph \in \cL$, we define $\Sf \ph \subseteq \cL$, the set of
\emph{subformulas} of $\ph$, by formula recursion.
  \index{subformula}%
  \symindex{$\Sf \ph$}%
Namely, $\Sf \pi = \{\pi\}$ if $\pi$ is prime, $\Sf \neg \ph = \{\neg \ph\} \cup
\Sf \ph$, $\Sf \bigwedge \Phi = \{\bigwedge \Phi\} \cup \bigcup_{\ph \in \Phi}
\Sf \ph$, and $\Sf \forall x \ph = \{\forall x \ph\} \cup \Sf \ph$. It follows
by formula induction that $\Sf \ph$ is countable for every $\ph \in \cL$.

Given $\ph \in \cL$, we define $\len \ph \in \bN \cup \{\infty\}$, which we call
the \emph{length of $\ph$}, by formula recursion.
  \index{length}%
  \symindex{$\len \ph$}%
If $\ph$ is prime, then $\ph$ is a finite, nonempty string of symbols from our
alphabet $\A$. In this case, let $\len \ph$ be the length of this string. We
then extend this by $\len \neg \ph = 1 + \len \ph$, $\len \forall x \ph = 2 +
\len \ph$, and $\len \bigwedge \Phi = 1 + \sum_{\ph \in \Phi} \len \ph$. Note
that if $\ph$ has a subformula of infinite length, then $\ph$ has infinite
length. Also note that $\ph \in \cL_\fin$ if and only if $\len \ph < \infty$.

Given $\ph \in \cL$, we define the ordinal $\rk \ph$, called the \emph{rank} of
$\ph$, by formula recursion.
  \index{rank of a formula}%
  \symindex{$\rk \ph$}%
Namely, $\rk \pi = 0$ if $\pi$ is prime, $\rk \neg \ph = \rk \ph + 1$, $\rk
\bigwedge \Phi = (\bigcup_{\ph \in \Phi} \rk \ph) + 1$, and $\rk \forall x \ph =
\rk \ph + 1$. Note that $\rk \ph = 0$ if and only if $\ph$ is prime, and $\rk
\ph$ is a successor ordinal whenever $\ph$ is not prime.

\subsection{Variables and symbols}

\begin{defn}
    \symindex{$\var \ph$}%
  For $\ph \in \cL$, the set $\var \ph \subseteq \Var$ is defined recursively as
  follows:
  \begin{enumerate}[(i)]
    \item $\var s \beq t = \var s \cup \var t$,
    \item $\var r t_1 \cdots t_n = \var t_1 \cup \cdots \cup \var t_n$,
    \item $\var \neg \ph = \var \ph$,
    \item $\var \bigwedge \Phi = \bigcup_{\ph \in \Phi} \var \ph$, and
    \item $\var \forall x \ph = \var \ph \cup \{x\}$.
  \end{enumerate}
\end{defn}

Intuitively, $\var \ph$ is the set of individual variables occurring in $\ph$.
It follows by formula induction that $\var \ph$ is countable for every $\ph \in
\cL$. In other words, even though $\Var$ is uncountable, any given formula will
only make use of countably many individual variables.

Given $\ph \in \cL$, we define $\bnd \ph \subseteq \Var$, the set of \emph{bound
variables} in $\ph$, by formula recursion.
  \index{variable!bound ---}%
  \symindex{$\bnd \ph$}%
Namely, $\bnd \pi = \emp$ if $\pi$ is prime, $\bnd \neg \ph = \bnd \ph$, $\bnd
\bigwedge \Phi = \bigcup_{\ph \in \Phi} \bnd \ph$, and $\bnd \forall x \ph =
\bnd \ph \cup \{x\}$. Intuitively, $\bnd \ph$ is the set of variables $x$ such
that the prefix $\forall x$ occurs in $\ph$. If $\bnd \ph = \emp$, then $\ph$ is
\emph{quantifier-free}.

Given $\ph \in \cL$, we define the set of \emph{free variables} in $\ph$,
denoted by $\free \ph$, by formula recursion.
  \index{variable!free ---}%
  \symindex{$\free \ph$}%
Namely, $\free \pi = \var \pi$ if $\pi$ is prime, $\free \neg \ph = \free \ph$,
$\free \bigwedge \Phi = \bigcup_{\ph \in \Phi} \free \ph$, and $\free \forall x
\ph = \free \ph \setminus \{x\}$. Intuitively, $\free \ph$ is the set of
variables in $\ph$ that are not associated with a quantifier. For $X \subseteq
\cL$, we define $\free X = \bigcup_{\ph \in X} \free \ph$.

Strictly speaking, $\bnd \ph$ and $\free \ph$ need not be disjoint. For example,
suppose
\[
  \ph = (x \le y \wedge \forall x \exists y \, x + y \beq 0).
\]
Here, $\le$ is a binary relation symbol and $0$ is a constant symbol. In this
formula, the first occurrences of $x$ and $y$ are both free, whereas the others
are bound. Hence, $\bnd \ph = \free \ph = \{x, y\}$. Once we move beyond the
syntax of formulas and establish their logical relationships, we will see that
$\ph$ is logically equivalent to
\[
  \ph' = (x \le y \wedge \forall u \exists v \, u + v \beq 0).
\]
In this formula, $\bnd \ph' = \{u, v\}$ and $\free \ph' = \{x, y\}$, so that
$\bnd \ph'$ and $\free \ph'$ are disjoint. In general, given any formula $\ph$,
there is a logically equivalent $\ph'$ with $\bnd \ph' \cap \free \ph' = \emp$.
Hence, for most purposes, we may assume that no variable is both bound and free.

A \emph{sentence}, or \emph{closed formula}, is a formula $\ph$ such that
$\free \ph = \emp$.
  \index{sentence}%
  \symindex{$\cL^0$}%
The set of sentences is denoted by $\cL^0$. The set of finitary sentences is
$\cL^0_\fin = \cL^0 \cap \cL_\fin$. Note that $\cL^0_\fin$ is the set of
sentences used in first-order logic. An \emph{open formula} is a formula that
has one or more free variables.

If $x_1, \ldots, x_n \in \Var$ are distinct, we will write $\ph = \ph(x_1,
\ldots, x_n)$ or $\ph = \ph(\vec x)$ to mean that $\free \ph \subseteq \{x_1,
\ldots, x_n\}$. Similarly, for $t \in \cT$, we write $t = t(\vec x)$ to mean
that $\var t \subseteq \{x_1, \ldots, x_n\}$.
  \symindex{$t(\vec x)$}%
  \symindex{$\ph(\vec x)$}%

\begin{defn}
    \symindex{$\sym \ph$}%
  For $\ph \in \cL$, the set $\sym \ph \subseteq L$ is defined recursively as
  follows:
  \begin{enumerate}[(i)]
    \item $\sym s \beq t = \sym s \cup \sym t$,
    \item $\sym r t_1 \cdots t_n = \{r\} \cup \sym t_1 \cup \cdots \cup \sym
          t_n$,
    \item $\sym \neg \ph = \sym \ph$,
    \item $\sym \bigwedge \Phi = \bigcup_{\ph \in \Phi} \sym \ph$, and
    \item $\sym \forall x \ph = \sym \ph$.
  \end{enumerate}
\end{defn}

Intuitively, $\sym \ph$ is the set of extralogical symbols occurring in $\ph$.
Note that $\sym \ph$, like $\var \ph$, is also countable. In addition, we define 
\[
  \con \ph = \{c \in \sym \ph \mid \text{$c$ is a constant symbol}\},
\]
  \symindex{$\con \ph$}%
which denotes the (countable) set of constant symbols occurring in $\ph$.

\subsection{Substitutions}\label{S:subs}

A \emph{substitution} is a function $\si: \Var \to \cT$.
  \index{substitution}%
Such a function can be extended to $\si: \cT \to \cT$ by $c^\si = c$ and $(f
\vec t)^\si = f t_1^\si \cdots f t_n^\si$. Given $\ph \in \cL$, we want to
define $\ph^\si \in \cL$ so that $\ph^\si$ denotes the result of substituting
for every term $t$ in $\ph$ the new term $t^\si$. We do this by transfinite
recursion on the rank of $\ph$.

Suppose $\ph \in \cL$ is prime. Then $\ph = (s \beq t)$ or $\ph = r t_1 \cdots
t_n$. In the first case, we define $\ph^\si = (s^\si \beq t^\si)$. In the second
case, we define $\ph^\si = r t_1^\si \cdots t_n^\si$. In this way, we have
defined the map $\si \mapsto \ph^\si$ for every $\ph \in \cL$ with $\rk \ph =
0$.

Let $\al$ be a nonzero ordinal and assume $\si \mapsto \ph^\si$ has been defined
for every $\ph \in \cL$ with $\rk \ph < \al$. Fix $\ph \in \cL$ with $\rk \ph =
\al$. By the unique formula reconstruction property, one of the following holds:
\begin{enumerate}[(i)]
  \item $\ph = \neg \psi$, where $\rk \psi < \al$,
  \item $\ph = \bigwedge \Phi$, where $\rk \th < \al$ for all $\th \in \Phi$, or
  \item $\ph = \forall x \psi$, where $\rk \psi < \al$.
\end{enumerate}
In the first two cases, define $\ph^\si = \neg \psi^\si$ and $\ph^\si =
\bigwedge_{\th \in \Phi} \th^\si$, respectively. In the third case, define
$\ph^\si = \forall x \psi^\tau$, where $\tau: \Var \to \cT$ is the substitution
defined by $x^\tau = x$ and $y^\tau = y^\si$ whenever $y \ne x$. By transfinite
recursion on $\rk \ph$, this defines $\si \mapsto \ph^\si$ for all $\ph \in
\cL$.
  \symindex{$\ph(t/x)$}%

If $x \in \Var$ and $t \in \cT$, we use the notation $t/x$ to denote the
substitution $\si: \Var \to \cT$ defined by $x^\si = t$ and $y^\si = y$ for $y
\ne x$. We read $t/x$ as ``$t$ for $x$'' and write $\ph^\si$ as $\ph(t/x)$. This
is extended in the natural way for $\vec x = \ang{x_1, \ldots, x_n}$ and $\vec t
= \ang{t_1, \ldots, t_n}$.

Note that, in general, $\ph(t_1 t_2 / x_1 x_2) \ne \ph(t_1/x_1)(t_2/x_2)$. For
example, if $\ph = x_1 < x_2$, then $\ph(x_2 x_1 / x_1 x_2) = x_2 < x_1$, but
$\ph(x_2/x_1)(x_1/x_2) = x_1 < x_1$.

\begin{prop}\label{P:var-sub-term}
  Let $s, t \in \cT$ and $x \in \Var$. Then
  \[
    \var s(t/x) \subseteq (\var s \setminus \{x\}) \cup \var t.
  \]
\end{prop}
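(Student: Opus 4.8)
The plan is to prove this by induction on the structure of the term $s$, which is justified because $\cT$ is defined as the smallest set closed under the three term-formation rules. Throughout, I write $\si = t/x$, so that $s(t/x) = s^\si$, where $\si$ is the substitution with $x^\si = t$ and $y^\si = y$ for $y \ne x$, extended to terms by $c^\si = c$ and $(f t_1 \cdots t_n)^\si = f\, t_1^\si \cdots t_n^\si$. The recursion defining $\var$ on terms will drive each case.

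First I would handle the two base cases. If $s = c$ is a constant symbol, then $s(t/x) = c$, so $\var s(t/x) = \emp$, and the inclusion holds trivially. If $s = y$ is an individual variable, I would split on whether $y = x$. When $y = x$, we have $s(t/x) = t$, so $\var s(t/x) = \var t$, while $(\var s \setminus \{x\}) \cup \var t = (\{x\} \setminus \{x\}) \cup \var t = \var t$, giving equality. When $y \ne x$, we have $s(t/x) = y$, so $\var s(t/x) = \{y\}$, while $(\var s \setminus \{x\}) \cup \var t = \{y\} \cup \var t$ (using $y \ne x$), which clearly contains $\{y\}$.

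For the inductive step, suppose $s = f t_1 \cdots t_n$ is a function term and that the claim holds for each $t_i$. The definition of $\si$ on terms gives $s(t/x) = f\, t_1(t/x) \cdots t_n(t/x)$, so by the recursion for $\var$,
\[
  \var s(t/x) = \bigcup_{i=1}^n \var t_i(t/x).
\]
Applying the inductive hypothesis termwise and taking the union yields
\[
  \var s(t/x)
    \subseteq \bigcup_{i=1}^n \big[(\var t_i \setminus \{x\}) \cup \var t\big]
    = \Big[\ts{\bigcup_{i=1}^n} (\var t_i \setminus \{x\})\Big] \cup \var t.
\]

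The only step requiring any care is the set-theoretic identity $\bigcup_{i=1}^n (\var t_i \setminus \{x\}) = \big(\bigcup_{i=1}^n \var t_i\big) \setminus \{x\}$, i.e.\ that set difference by a singleton distributes over the union; combined with $\var s = \bigcup_{i=1}^n \var t_i$, this turns the right-hand side above into $(\var s \setminus \{x\}) \cup \var t$, completing the induction. I do not expect any genuine obstacle here: the argument is a routine structural induction, and the distributivity identity is elementary. The one place to stay alert is the variable base case, where the equality $(\{x\} \setminus \{x\}) \cup \var t = \var t$ is exactly what makes the bound tight (and shows why the statement is an inclusion rather than an equality in general, since substituting may both remove $x$ and introduce the variables of $t$).
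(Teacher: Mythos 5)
Your proof is correct and follows essentially the same route as the paper's: structural induction on $s$, with the variable/constant base cases and the function-term step resolved by distributing the set difference $\setminus \{x\}$ over the union $\bigcup_{i=1}^n \var t_i$. The only difference is cosmetic (the paper merges the cases $s = c$ and $s = y$ with $y \ne x$ into one, since in both $s(t/x) = s$ and $x \notin \var s$).
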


\begin{proof}
  We prove this by induction on $s$. If $s = x$, then $s(t/x) = t$, so $\var s
  (t/x) = \var t$, and the result holds. If $s = y$, where $y \ne x$, or $s =
  c$, then $s(t/x) = s$ and $x \notin \var s$. Hence, $\var s(t/x) = \var s =
  \var s \setminus \{x\}$, and the result holds.

  Now suppose the result holds for $t_1, \ldots, t_n$, and let $s = f t_1
  \cdots t_n$. Let $t_i' = t_i(t/x)$, so that $\var t_i' \subseteq (\var t_i
  \setminus \{x\}) \cup \var t$ and $s(t/x) = f t_1' \cdots t_n'$. Then
  \begin{align*}
    \var s(t/x) &= \ts{\bigcup_{i = 1}^n} \var t_i'\\
    &\subseteq \ts{\bigcup_{i = 1}^n} (\var t_i \setminus \{x\}) \cup \var t\\
    &= ((\ts{\bigcup_{i = 1}^n} \var t_i) \setminus \{x\}) \cup \var t\\
    &= (\var s \setminus \{x\}) \cup \var t,
  \end{align*}
  and the result holds.
\end{proof}

\begin{prop}\label{P:free-sub-form}
  Let $\ph \in \cL$, $x \in \Var$, and $t \in \cT$. Then
  \[
    \free \ph(t/x) \subseteq (\free \ph \setminus \{x\}) \cup \var t.
  \]
\end{prop}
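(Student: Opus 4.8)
The plan is to prove this by transfinite induction on the rank of $\ph$, following the same recursive structure used to define the substitution map $\si \mapsto \ph^\si$ in Section~\ref{S:subs}. The statement parallels Proposition~\ref{P:var-sub-term}, which handles the analogous claim for terms, and indeed that result will be the workhorse for the base case. Throughout, I write $\ph' = \ph(t/x)$ for brevity.

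First I would handle the base case, where $\ph$ is prime, so $\rk \ph = 0$. There are two sub-cases. If $\ph = (s \beq u)$ for terms $s, u$, then $\ph' = (s(t/x) \beq u(t/x))$, so $\free \ph' = \var s(t/x) \cup \var u(t/x)$. Applying Proposition~\ref{P:var-sub-term} to each term and using $\free \ph = \var s \cup \var u$ gives the inclusion directly. The case $\ph = r t_1 \cdots t_n$ is identical, with $\free \ph' = \bigcup_i \var t_i(t/x)$ and $\free \ph = \bigcup_i \var t_i$; again Proposition~\ref{P:var-sub-term} applied termwise, together with the fact that a union of the sets $(\var t_i \setminus \{x\}) \cup \var t$ is contained in $((\bigcup_i \var t_i) \setminus \{x\}) \cup \var t$, yields the result.

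For the inductive step, I would fix a nonzero ordinal $\al$, assume the inclusion holds for all formulas of rank less than $\al$, and take $\ph$ with $\rk \ph = \al$. By the unique formula reconstruction property, $\ph$ is $\neg \psi$, $\bigwedge \Phi$, or $\forall y\, \psi$, each built from formulas of strictly smaller rank. The negation case is immediate since $\free \neg \psi = \free \psi$ and $(\neg \psi)' = \neg \psi'$. The conjunction case follows by taking the union over $\th \in \Phi$ of the inductive hypotheses and distributing the set operations as in the base case, since $\free \bigwedge \Phi = \bigcup_{\th \in \Phi} \free \th$ and $(\bigwedge \Phi)' = \bigwedge_{\th \in \Phi} \th'$.

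The main obstacle will be the quantifier case $\ph = \forall y\, \psi$, where the substitution is defined not as $\psi'$ but as $\forall y\, \psi^\tau$, with $\tau$ the modified substitution fixing $y$ and agreeing with $t/x$ elsewhere. The argument splits on whether $y = x$. If $y = x$, then $\tau$ is the identity on the relevant variables (so $\psi^\tau = \psi$, giving $\ph' = \ph$), while $x \notin \free \ph$, and the inclusion holds trivially. If $y \ne x$, then $\tau = t/x$ when restricted to the variables other than $y$, so $\psi^\tau = \psi(t/x)$, and $\free \ph' = \free \psi(t/x) \setminus \{y\}$. Applying the inductive hypothesis to $\psi$ gives $\free \psi(t/x) \subseteq (\free \psi \setminus \{x\}) \cup \var t$; removing $y$ from both sides and using $\free \ph = \free \psi \setminus \{y\}$ together with $y \ne x$ to commute the two set-minus operations completes the inclusion. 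The care needed here is purely in tracking which substitution is actually applied inside the scope of the quantifier and in the set-theoretic bookkeeping of $\{x\}$ versus $\{y\}$; one must note that, because $y \ne x$ and we only need an upper bound (so the possible extra appearance of $y$ in $\var t$ is harmlessly absorbed), no capture-avoidance subtlety affects the stated inclusion.
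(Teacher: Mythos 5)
Your proof is correct and follows essentially the same route as the paper's: induction on $\ph$, with the prime case handled via Proposition~\ref{P:var-sub-term}, the $\neg$ and $\bigwedge$ cases passing through directly, and the quantifier case split on $y = x$ versus $y \ne x$ with the observation that $\tau$ reduces to the identity or to $t/x$ respectively. The only difference is stylistic: where you compute the inclusion directly by removing $\{y\}$ from both sides, the paper argues by contradiction on an element $z \in \free \ph(t/x)$, which is the same set-theoretic bookkeeping in contrapositive form.
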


\begin{proof}
  We prove this by induction on $\ph$. An argument like the one in the proof of
  Proposition \ref{P:var-sub-term} covers the cases where $\ph$ is prime, $\ph
  = \neg \psi$, and $\ph = \bigwedge \Phi$. Suppose $\ph = \forall x \psi$. Then
  $\free \ph = \free \psi \setminus \{x\}$. In particular, $x \notin \free \ph$.
  Moreover, $\ph(t/x) = \ph$, so that $\free \ph(t/x) = \free \ph = \free \ph
  \setminus \{x\}$, and the result holds.

  Now suppose $\ph = \forall y \psi$, where $y \ne x$. Then $\ph(t/x) = \forall
  y \, \psi(t/x)$, and $\free \ph(t/x) = \free \psi(t/x) \setminus \{y\}$.
  Assume $z \in \free \ph (t/x)$, but $z \notin (\free \ph \setminus \{x\}) \cup
  \var t$. Then $z \in \free \psi(t/x)$ and $z \ne y$. By the inductive
  hypothesis, the result holds for $\psi$. Hence, $z \in (\free \psi \setminus
  \{x\}) \cup \var t$. But $z \notin \var t$. Thus, $z \in \free \psi$ and $z
  \ne x$. Since $\free \ph = \free \psi \setminus \{y\}$, we have $z \in \free
  \ph$. It follows that $z \in \free \ph \setminus \{x\}$, a contradiction.
\end{proof}

\begin{cor}
  If $\ph(x) \in \cL$ and $t$ is a ground term, then $\ph(t) \in \cL^0$.
\end{cor}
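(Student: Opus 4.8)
The plan is to read off the claim as an immediate consequence of Proposition \ref{P:free-sub-form}. First I would unpack the three pieces of notation involved. Writing $\ph = \ph(x)$ means, by the convention introduced just above, that $\free \ph \subseteq \{x\}$. Saying that $t$ is a ground term means $\var t = \emp$. And $\ph(t)$ is shorthand for $\ph(t/x)$, the formula obtained from the substitution $t/x$. The goal, then, is precisely to show that $\free \ph(t/x) = \emp$, since membership in $\cL^0$ is defined by the absence of free variables.

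Proposition \ref{P:free-sub-form} supplies the containment
\[
  \free \ph(t/x) \subseteq (\free \ph \setminus \{x\}) \cup \var t.
\]
Here the right-hand side collapses to $\emp$: since $\free \ph \subseteq \{x\}$, we have $\free \ph \setminus \{x\} = \emp$, and since $t$ is ground, we have $\var t = \emp$. Hence $\free \ph(t/x) \subseteq \emp$, which forces $\free \ph(t/x) = \emp$, so that $\ph(t) \in \cL^0$. There is essentially no obstacle to overcome: the entire content of the corollary is the bound provided by Proposition \ref{P:free-sub-form}, and all that remains is to observe that both terms on the right-hand side vanish under the two hypotheses. The only point requiring any care is the bookkeeping step of confirming that the informal notations $\ph(x)$, ``ground term,'' and $\ph(t)$ translate into the three facts $\free \ph \subseteq \{x\}$, $\var t = \emp$, and $\ph(t) = \ph(t/x)$, after which the set-theoretic computation is forced.
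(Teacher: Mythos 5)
Your proposal is correct and follows exactly the paper's own argument: apply Proposition \ref{P:free-sub-form} and note that $\free \ph \setminus \{x\}$ and $\var t$ both vanish under the hypotheses, forcing $\free \ph(t) = \emp$. Nothing is missing; the unpacking of the notational conventions is the same bookkeeping the paper does implicitly.
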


\begin{proof}
  Let $\ph(x) \in \cL$ and let $t$ be a ground term. Then $\free \ph \subseteq 
  \{x\}$ and $\var t = \emp$. By Proposition \ref{P:free-sub-form}, we have
  $\free \ph(t) = \emp$, so that $\ph(t)$ is a sentence.
\end{proof}

Using substitutions, we introduce the shorthand,
\[
  \exists! x \ph = {
    \exists x \ph \wedge \forall xy (\ph \wedge \ph(y/x) \to x \beq y)
  },
\]
where $y \notin \var \ph$.

\section{Predicate calculus}\label{S:pred-calc}

In this section, we define both the deductive and inductive derivability
relations. As in the propositional case, we will denote them both by $\vdash$.
We begin with the deductive case. As described in Section \ref{S:pred-theories},
the inductive case will require no modification from its presentation in Chapter
\ref{Ch:prop-calc}.

For deductive derivability, we wish to define a relation $\vdash$ from $\fP
\cL^0$ to $\cL^0$ such that $X \vdash \ph$ captures what it means to say a
sentence $\ph$ can be logically deduced from the sentences in $X$. Our aim here
is to do this through natural deduction, as we did in Section
\ref{S:prop-nat-ded} for the propositional language $\cF$. We will keep all the
rules in Definition \ref{D:derivability}, and add two rules each for $\forall$
and $\beq$. Ideally, we would like our new rules to be the following:
\begin{enumerate}[1.]
  \item if $X \vdash \forall x \ph(x)$ and $t$ is a ground term, then $X
        \vdash \ph(t)$,
  \item if $c \notin \con(X \cup \ph(x))$ and $X \vdash \ph(c)$, then $X
        \vdash \forall x \ph(x)$,
  \item $\vdash t \beq t$ for all ground terms $t$, and
  \item if $X \vdash s \beq t, \ph(s)$, then $X \vdash \ph(t)$.
\end{enumerate}
The problem with these rules is (2). We may not have enough constants in our
language to ensure there exists a $c \notin \con(X \cup \ph(x))$. As shown later
in Proposition \ref{P:add-constants}, we can always add constants to our
language without affecting derivability. But until that can be established,
these rules will not be easy to work with.

We therefore take a slightly different approach. We define the derivability
relation from $\fP \cL$ to $\cL$. That is, we allow ourselves to use open
formulas in our derivations. In an open formula, we will treat free variables
like constants. In this way, every language will effectively have an uncountable
number of constants available for use in (2). This still doesn't fully resolve
the problem, since $X$ itself can be uncountable. But in Theorem
\ref{T:pred-sig-cpctness}, we will prove $\si$-compactness, so that we need only
consider countable $X$.

Reasoning with sentences, however, is our primary concern in the bulk of what
we want to do. Both deductive and inductive theories consist entirely of
sentences. As such, after presenting our system of natural deduction and proving
$\si$-compactness, we will look at expanding our language by adding additional
constants. This will give us two ways to connect our natural deduction to
reasoning with sentences. These are presented in Propositions 
\ref{P:pred-derivability} and \ref{P:free-elim-deriv}.

We then define deductive and inductive theories, and finally finish the section
with a presentation of a Hilbert-type calculus. This is the calculus used by
Karp in \cite{Karp1964}, and we will need it in order to apply her completeness
result in the setting of predicate logic.

By allowing open formulas, however, we introduce a new problem. Suppose $\cL =
\cL\{\circ, e\}$, where $\circ$ is a binary operation symbol and $e$ is a
constant symbol. Let $\ph(x) = \exists y \, x \nbeq y$ and $t = y \circ e$. Then
$\ph(t) = \exists y \, y \circ e \nbeq y$. If we interpret these formulas in
group theory, where $e$ is the group identity and $\circ$ is the group
operation, then $\forall x \ph(x) = \forall x \exists y \, x \nbeq y$ is a true
sentence in every group that has more than one element. And yet, the sentence
$\ph (t)$ is always false in that context. Hence, $\forall x \ph(x)$ cannot
logically imply $\ph(t)$. This would not violate (1), because $t$ is not a
ground term. But when we remove that restriction on $t$, this will become a
problem. The issue is that the variable $y \in \var t$ is a bound variable in
$\ph$, so after the substitution, it becomes bound. If free variables are to be
treated as constants, then variables inside terms must become free after a
substitution. To ensure this, we will need to avoid substitutions that
``collide'' with bound variables.

\subsection{Free substitutions}

Let $\ph \in \cL$, $\ze \in \Sf \ph$, and $x \in \Var$. We say that \emph{$\ze$
is in the scope of $\forall x$ in $\ph$}
  \index{scope}%
if there exists $\psi$ such that $\forall x \psi \in \Sf \ph$ and $\ze \in \Sf
\psi$. We say that \emph{$\ze$ has a free occurrence of $x$ in $\ph$} if $x \in
\free \ze$ and $\ze$ is not in the scope of $\forall x$ in $\ph$.
  \index{free occurrence}%
Note that if $\ph' \in \Sf \ph$, $\ze \in \Sf \ph'$, and $\ze$ has a free
occurrence of $x$ in $\ph$, then $\ze$ has a free occurrence of $x$ in $\ph'$.

\begin{prop}\label{P:free-occurrence}
  If $\ze$ has a free occurrence of $x$ in $\ph$, then $x \in \free \ph$.
\end{prop}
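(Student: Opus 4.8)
The plan is to prove the statement by formula induction on $\ph$, where at each stage I establish the fully quantified claim: for every subformula $\ze \in \Sf \ph$ and every $x \in \Var$, if $\ze$ has a free occurrence of $x$ in $\ph$, then $x \in \free \ph$. It is essential to carry $\ze$ and $x$ as universally quantified parameters, since the inductive hypothesis will be applied to subformulas that may differ from the one at hand. By the principle of formula induction, this reduces to the prime case together with the three constructor cases $\neg$, $\bigwedge$, and $\forall$. The recursive clauses for $\Sf$ and $\free$, together with the observation recorded just before the proposition (namely, that a free occurrence of $x$ in $\ph$ restricts to a free occurrence of $x$ in any $\ph' \in \Sf \ph$ of which $\ze$ is a subformula), will do most of the work.

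For the base case, if $\ph$ is prime then $\Sf \ph = \{\ph\}$, so the only candidate is $\ze = \ph$; the hypothesis $x \in \free \ze$ is then literally $x \in \free \ph$, and the scope clause is vacuous since a prime formula has no subformula of the form $\forall x \psi$. For the cases $\ph = \neg \psi$ and $\ph = \bigwedge \Phi$, I would first dispose of the possibility $\ze = \ph$, which is immediate as in the base case. When $\ze$ is a proper subformula — so $\ze \in \Sf \psi$, or $\ze \in \Sf \th$ for some $\th \in \Phi$ — I invoke the observation above: since $\psi$ (resp.\ $\th$) lies in $\Sf \ph$ and $\ze$ lies in its subformulas, and $\ze$ has a free occurrence of $x$ in $\ph$, it follows that $\ze$ has a free occurrence of $x$ in $\psi$ (resp.\ $\th$). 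The inductive hypothesis then yields $x \in \free \psi$ (resp.\ $x \in \free \th$), and the identities $\free \neg \psi = \free \psi$ and $\free \th \subseteq \bigcup_{\th' \in \Phi} \free \th' = \free \bigwedge \Phi$ close these cases.

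The universal-quantifier case $\ph = \forall y\, \psi$ is where the genuine content lies, and I expect it to be the main obstacle. Once again $\ze = \ph$ is trivial, so assume $\ze \in \Sf \psi$. The key step is to rule out $y = x$: if $y = x$, then $\forall x\, \psi = \ph \in \Sf \ph$ together with $\ze \in \Sf \psi$ witnesses that $\ze$ lies in the scope of $\forall x$ in $\ph$, contradicting the assumption that $\ze$ has a free occurrence of $x$ in $\ph$. Hence $y \neq x$. Descending to $\psi$ via the same observation, $\ze$ has a free occurrence of $x$ in $\psi$, so the inductive hypothesis gives $x \in \free \psi$; since $y \neq x$, this yields $x \in \free \psi \setminus \{y\} = \free \forall y\, \psi = \free \ph$, as required. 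The crux is precisely the interaction between the scope clause and the variable bound by the quantifier: the scope condition is exactly what forbids the capturing case $y = x$, and without it the claim would fail, since, for example, $x$ occurs (freely) in $x \beq x$ but $x \notin \free \forall x\, (x \beq x)$.
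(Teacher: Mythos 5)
Your proposal is correct and takes essentially the same route as the paper's proof: induction on $\ph$, disposing of $\ze = \ph$, descending to the immediate subformula via the observation that free occurrences restrict to subformulas, and using the scope condition to rule out $y = x$ in the quantifier case. If anything, your handling of the $\forall$ case is slightly more careful, since you correctly track $\free \forall y\, \psi = \free \psi \setminus \{y\}$ rather than identifying $\free \ph$ with $\free \psi$.
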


\begin{proof}
  If $\ze = \ph$, the result is immediate, so assume $\ze \ne \ph$. Then $\ph$
  is not prime, so we may write $\ph = \neg \ph'$, $\ph = \bigwedge \Phi$, or
  $\ph = \forall y \ph'$. By induction on $\ph$, we may assume the result is
  true for $\ph'$ and for all $\th \in \Phi$.

  In the first case, $\ze \in \Sf \ph'$, so $\ze$ has a free occurrence of $x$ in
  $\ph'$. Hence, $x \in \free \ph' = \free \ph$. In the second case, $\ze \in
  \Sf \th$ for some $\th \in \Phi$. Thus, $x \in \free \th \subseteq \free \ph$.
  Similarly, in the third case, we get $x \in \free \ph'$. But $\ze$ is not in
  the scope of $\forall x$ in $\ph$, so $y \ne x$. Hence, $\free \ph = \free
  \ph'$.
\end{proof}

Let $x, y \in \Var$ and $\ph \in \cL$. We say that \emph{$y$ is not free for $x$
in $\ph$} if there exists $\ze \in \Sf \ph$ such $\ze$ is in the scope of
$\forall y$ in $\ph$ and $\ze$ has a free occurrence of $x$ in $\ph$. Otherwise,
\emph{$y$ is free for $x$ in $\ph$}. For $t \in \cT$, we way that \emph{$t$ is
free for $x$ in $\ph$} if $y$ is free for $x$ in $\ph$ for all $y \in \var t$.
More generally, a substitution \emph{$\si$ is free for $\ph$} if $x^\si$ is free
for $x$ in $\ph$, for all $x \in \Var$.
  \index{substitution!free ---}%

\begin{prop}
  If $\bnd \ph \cap (\var t \setminus \{x\}) = \emp$, then $t$ is free for $x$
  in $\ph$. In particular, $y$ is free for $x$ in $\ph$ if $y = x$ or $y \notin
  \bnd \ph$.
\end{prop}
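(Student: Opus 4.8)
The plan is to prove the statement by reducing it to a claim about single variables and then handling two cases. To show that $t$ is free for $x$ in $\ph$, by definition I must show that every $y \in \var t$ is free for $x$ in $\ph$, so I would fix an arbitrary $y \in \var t$ and split into the cases $y = x$ and $y \ne x$.

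The case $y = x$ needs no hypothesis at all, and I would dispatch it by contradiction. If $x$ were not free for $x$ in $\ph$, then there would be a subformula $\ze \in \Sf \ph$ that is both in the scope of $\forall x$ in $\ph$ and has a free occurrence of $x$ in $\ph$. But by definition a free occurrence of $x$ in $\ph$ requires precisely that $\ze$ is \emph{not} in the scope of $\forall x$ in $\ph$, so these two conditions are incompatible. Hence no such $\ze$ exists and $x$ is free for $x$ in $\ph$.

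For the case $y \ne x$, the hypothesis $\bnd \ph \cap (\var t \setminus \{x\}) = \emp$ gives $y \notin \bnd \ph$, since $y \in \var t \setminus \{x\}$. Again arguing by contradiction, if $y$ were not free for $x$ in $\ph$, there would be some $\ze \in \Sf \ph$ in the scope of $\forall y$ in $\ph$; unwinding the definition of scope, this means $\forall y \psi \in \Sf \ph$ for some $\psi$. The key step is then to conclude $y \in \bnd \ph$, which contradicts $y \notin \bnd \ph$. This requires the auxiliary fact that bound variables are monotone under subformulas, namely $\bnd \chi \subseteq \bnd \ph$ whenever $\chi \in \Sf \ph$; applying it with $\chi = \forall y \psi$ and using $\bnd \forall y \psi = \bnd \psi \cup \{y\}$ yields $y \in \bnd \ph$.

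I would establish the monotonicity fact $\bnd \chi \subseteq \bnd \ph$ by formula induction on $\ph$, using the recursive definitions of $\Sf$ and $\bnd$: the prime case is immediate, and in each of the cases $\ph = \neg \ph'$, $\ph = \bigwedge \Phi$, and $\ph = \forall z \ph'$ the subformula $\chi$ is either $\ph$ itself or lies in $\Sf$ of a component, where the inductive hypothesis together with the fact that $\bnd$ only grows when passing up to $\ph$ finishes the argument. Finally, the ``in particular'' clause follows by applying the main statement with $t$ replaced by the single variable $y$: then $\var t = \{y\}$, and the hypothesis $\bnd \ph \cap (\{y\} \setminus \{x\}) = \emp$ holds exactly when $y = x$ or $y \notin \bnd \ph$. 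The only nonroutine ingredient is the bound-variable monotonicity lemma, and even that is a short induction; the main statement itself is essentially a matter of carefully unwinding the definitions of scope and free occurrence.
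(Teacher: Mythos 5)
Your proof is correct and is essentially the paper's argument: the paper proves the contrapositive in one stroke, resting on exactly the two observations you isolate (a free occurrence of $x$ cannot lie in the scope of $\forall x$, and a subformula $\forall y \psi \in \Sf \ph$ forces $y \in \bnd \ph$). The only difference is presentational: you argue directly with a case split on $y = x$ versus $y \ne x$ and make explicit the monotonicity fact $\bnd \chi \subseteq \bnd \ph$ for $\chi \in \Sf \ph$, which the paper uses implicitly when it asserts $y \in \bnd \ph$.
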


\begin{proof}
  Suppose $t$ is not free for $x$ in $\ph$. Then there exists $y \in \var t$
  such that a free occurrence of $x$ occurs inside the scope of $\forall y$. In
  particular, we must have $y \ne x$ and $y \in \bnd \ph$, so that $y \in \bnd
  \ph \cap (\var t \setminus \{x\})$.
\end{proof}

\begin{prop}\label{P:sub-free-only}
  Let $y \notin \var \ph$ and $\ze \in \Sf \ph(y/x)$. If $\ze$ has a free
  occurrence of $y$ in $\ph(y/x)$, then $\ze$ is not in the scope of $\forall
  x$ in $\ph(y/x)$.
\end{prop}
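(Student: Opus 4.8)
The plan is to prove the statement by formula induction on $\ph$, using the unique formula reconstruction property to split into the cases where $\ph$ is prime, $\ph = \neg\psi$, $\ph = \bigwedge\Phi$, $\ph = \forall x\psi$, and $\ph = \forall z\psi$ with $z \ne x$. The guiding idea is that the substitution $y/x$ never introduces $y$ beneath a quantifier $\forall x$: by the recursive definition in Section \ref{S:subs}, at a prefix $\forall z$ the substitution $y/x$ is replaced by the modified substitution $\tau$ with $z^\tau = z$, so when $z = x$ this $\tau$ becomes the identity, and when $z \ne x$ it still acts as $y/x$ on $\Var$. Thus every occurrence of $y$ in $\ph(y/x)$ arises from a free occurrence of $x$ in $\ph$, which by definition lies outside the scope of every $\forall x$.

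First I would dispose of the degenerate cases. If $\ph$ is prime, then $\ph(y/x)$ is prime, so it has no subformula of the form $\forall x\psi'$ and the conclusion holds vacuously. If $\ph = \forall x\psi$, then $\tau$ is the identity substitution, whence $\ph(y/x) = \forall x\psi$; since $y \notin \var\ph$, the formula $\ph(y/x)$ contains no occurrence of $y$ at all, so no $\ze \in \Sf\ph(y/x)$ satisfies $y \in \free\ze$ (here one uses Proposition \ref{P:free-occurrence} together with $\free\ze \subseteq \var\ze \subseteq \var\ph(y/x)$), and the hypothesis of the proposition is never met. This degenerate case is exactly where the assumption $y \notin \var\ph$ does its work.

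For the remaining cases I would note that $\ph(y/x)$ is respectively $\neg\psi(y/x)$, $\bigwedge_{\th\in\Phi}\th(y/x)$, and $\forall z\,\psi(y/x)$, and that $y \notin \var\psi$ (respectively $y \notin \var\th$), so the inductive hypothesis applies to the immediate subformulas. Let $\ze \in \Sf\ph(y/x)$ have a free occurrence of $y$ in $\ph(y/x)$. If $\ze$ is the whole formula $\ph(y/x)$, then it is not in the scope of $\forall x$, since a formula is never a proper subformula of itself (the rank strictly decreases along the subformula relation). Otherwise $\ze$ lies in $\Sf$ of one of the immediate subformulas; by the remark following the definition of free occurrence, $\ze$ then has a free occurrence of $y$ in that subformula, so the inductive hypothesis gives that $\ze$ is not in the scope of $\forall x$ there. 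To conclude, I would show $\ze$ is not in the scope of $\forall x$ in $\ph(y/x)$ either: any witness $\forall x\psi' \in \Sf\ph(y/x)$ with $\ze \in \Sf\psi'$ cannot be the whole formula, whose top connective is $\neg$, $\bigwedge$, or $\forall z$ with $z \ne x$, hence $\forall x\psi'$ already belongs to $\Sf$ of the relevant immediate subformula, contradicting the inductive hypothesis.

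The main obstacle I expect is the careful bookkeeping around the $\forall$ cases. Recognizing that the substitution collapses to the identity when $z = x$, and that wrapping a formula in $\neg$, $\bigwedge$, or $\forall z$ (with $z \ne x$) creates no new $\forall x$-scope, is what lets the scope condition transfer cleanly between $\ph(y/x)$ and its subformulas. Everything else is routine once these two observations are in place.
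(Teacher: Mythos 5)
Your proposal is correct and follows essentially the same route as the paper: the paper proves this proposition by formula induction on $\ph$, "along the same lines as the proof of Proposition \ref{P:free-occurrence}," which is exactly the case analysis (prime, $\neg\psi$, $\bigwedge\Phi$, $\forall x\psi$, $\forall z\psi$ with $z \ne x$) that you carry out in detail. Your additional observations --- that the substitution collapses to the identity under $\forall x$, that the $y \notin \var\ph$ hypothesis kills that case, and the rank argument ruling out the whole-formula subcase --- are precisely the bookkeeping the paper leaves implicit.
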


\begin{proof}
  The proof is by induction on $\ph$ and follows the same lines as the proof of
  Proposition \ref{P:free-occurrence}.
\end{proof}

\begin{cor}\label{C:free-sub-only}
  If $y \notin \var \ph$, then $x$ is free for $y$ in $\ph(y/x)$.
\end{cor}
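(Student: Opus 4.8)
The plan is to derive this corollary directly from Proposition \ref{P:sub-free-only}, which is stated for exactly the substitution $y/x$ under the same hypothesis $y \notin \var \ph$. The only real work is to unwind the definition of ``free for'' and to match the roles of the two variables correctly: in the phrase ``$x$ is free for $y$ in $\ph(y/x)$,'' the variable $x$ plays the role of the quantifier variable (the one whose scope we examine), while $y$ plays the role of the variable that might occur free. Thus, by the definition of ``not free for,'' the assertion to be refuted is that there exists a subformula $\ze \in \Sf \ph(y/x)$ such that $\ze$ is in the scope of $\forall x$ in $\ph(y/x)$ \emph{and} $\ze$ has a free occurrence of $y$ in $\ph(y/x)$.

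I would argue by contradiction. Suppose $x$ is \emph{not} free for $y$ in $\ph(y/x)$. Then, by the definition above, there is some $\ze \in \Sf \ph(y/x)$ that lies in the scope of $\forall x$ in $\ph(y/x)$ and that has a free occurrence of $y$ in $\ph(y/x)$. Since $y \notin \var \ph$ by hypothesis, Proposition \ref{P:sub-free-only} applies to this very $\ze$: because $\ze$ has a free occurrence of $y$ in $\ph(y/x)$, it concludes that $\ze$ is \emph{not} in the scope of $\forall x$ in $\ph(y/x)$. This directly contradicts the choice of $\ze$, so no such $\ze$ can exist, and therefore $x$ is free for $y$ in $\ph(y/x)$.

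There is no substantive obstacle here; the corollary is a one-line consequence of the preceding proposition, with the contradiction doing all the bookkeeping. The only point that demands a little care — and the only place an error could creep in — is the role-swap in the definition of ``free for'': one must read ``$x$ is free for $y$'' as placing $x$ in the position occupied by the quantifier variable in the displayed definition of ``$y$ is not free for $x$.'' Once that identification is made explicit, Proposition \ref{P:sub-free-only} is precisely the negation of the offending configuration, and the proof closes immediately.
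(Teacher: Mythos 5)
Your proof is correct and is essentially identical to the paper's: both argue by contradiction, unpacking the definition of ``free for'' (with $x$ as the quantifier variable and $y$ as the potentially free variable) and invoking Proposition \ref{P:sub-free-only} to rule out the offending subformula $\ze$. Your explicit note about the role-swap in the definition is a fair point of care, but there is no difference in substance from the paper's argument.
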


\begin{proof}
  Suppose $x$ is not free for $y$ in $\ph(y/x)$. Then there exists $\ze \in \Sf
  \ph(y/x)$ such that $\ze$ is in the scope of $\forall x$ in $\ph(y/x)$ and
  $\ze$ has a free occurrence of $y$ in $\ph(y/x)$. But this contradicts
  Proposition \ref{P:sub-free-only}.
\end{proof}

\subsection{Natural deduction}

\begin{defn}\label{D:pred-derivability}
  The \emph{derivability relation}, denoted by $\vdash$ or $\vdash_\cL$,
    \index{derivability relation!predicate ---}%
    \symindex{$X \vdash \ph$ (in $\cL$)}%
  is the smallest relation from $\fP \cL$ to $\cL$ satisfying (i)--(vi) in
  Definition \ref{D:derivability}, as well as the following:
  \begin{enumerate}[(i), leftmargin=27pt]
    \setcounter{enumi}{6}
    \item if  $X \vdash \forall x \ph$, then $X \vdash \ph(t/x)$ when $t$ is
          free for $x$ in $\ph$,
    \item if $x \notin \free X$ and $X \vdash \ph$,
          then $X \vdash \forall x \ph$,
    \item $\vdash t \beq t$ for all $t \in \cT$, and
    \item if $X \vdash s \beq t, \ph(s/x)$, then $X \vdash \ph(t/x)$ when $s$
          and $t$ are free for $x$ in $\ph$.
  \end{enumerate}
  Since $x$ is always free for $x$ in $\ph$, (vii) implies
  \begin{enumerate}[(i)$'$, leftmargin=27pt]
    \setcounter{enumi}{6}
    \item if $X \vdash \forall x \ph$, then $X \vdash \ph$.
  \end{enumerate}
\end{defn}

\begin{rmk}\label{R:mon}
  If $X \vdash_{\cL} \ph$ and $\cL \subseteq \cL'$, then $X \vdash_{\cL'} \ph$.
  To see this, let $\cL \subseteq \cL'$ and define ${\vdash'} = {\vdash_{\cL'}}
  \cap (\fP \cL \times \cL)$. Since $\vdash'$ satisfies (i)--(x) for $\cL$, we
  have $ {\vdash_\cL} \subseteq {\vdash'}$.
\end{rmk}

\begin{rmk}\label{R:pred-fin-vs-infin}
  The \emph{finitary derivability relation} is the smallest relation
  $\vdash_\fin$ from $\fP \cL_\fin$ to $\cL_\fin$ such that conditions (i)--(x)
  from Definition \ref{D:pred-derivability} hold, with the exception that in
  (iii) and (iv), we require $\Phi$ to be finite.
    \symindex{$X \vdash_\fin \ph$ (in $\cL$)}%
  The finitary derivability relation is a typical natural-deduction calculus for
  first-order logic. Clearly, ${\vdash_\fin} \subseteq {\vdash}$. As we will see
  in Proposition \ref{P:pred-fin-vs-infin}, if $X \subseteq \cL_\fin$, $\ph \in
  \cL_\fin$, and $X \vdash \ph$, then $X \vdash_\fin \ph$. In other words, when
  restricted to finitary formulas, infinitary calculus cannot produce any new
  inferences beyond those already available in first-order logic.
\end{rmk}

The proof of Proposition \ref{P:derivability}, which is based on (i)--(vi), is
still valid here. Throughout the rest of this chapter, unless otherwise
indicated, we will use lowercase Roman numerals refer to Definition
\ref{D:pred-derivability} and letters refer to Proposition \ref{P:derivability}.

\begin{prop}[Bound renaming]\label{P:bnd-rename}
    \index{bound renaming}%
  For any $\ph \in \cL$ and $y \notin \var \ph$, we have $\forall x \ph \vdash
  \forall y \, \ph(y/x)$ and $\forall y \, \ph(y/x) \vdash \forall x \ph$.
\end{prop}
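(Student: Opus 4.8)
The plan is to establish each of the two derivations by a single universal instantiation (rule (vii)) followed by a single universal generalization (rule (viii)), letting the substitution facts from the preceding propositions supply the side conditions on both rules.

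For the forward direction, $\forall x \ph \vdash \forall y\, \ph(y/x)$, I would start from $\forall x \ph \vdash \forall x \ph$ by (i). Since $y \notin \var \ph$, in particular $y \notin \bnd \ph$, so $y$ is free for $x$ in $\ph$, and (vii) yields $\forall x \ph \vdash \ph(y/x)$. To generalize over $y$ I need $y \notin \free(\forall x \ph) = \free \ph \setminus \{x\}$; this holds because $y \notin \var \ph \supseteq \free \ph$. Hence (viii) gives $\forall x \ph \vdash \forall y\, \ph(y/x)$.

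For the reverse direction, $\forall y\, \ph(y/x) \vdash \forall x \ph$, I would again begin with (i) and apply (vii), this time instantiating the bound $y$ by $x$: by Corollary \ref{C:free-sub-only}, $x$ is free for $y$ in $\ph(y/x)$, so (vii) produces $\forall y\, \ph(y/x) \vdash \ph(y/x)(x/y)$. The crux is then the purely syntactic identity $\ph(y/x)(x/y) = \ph$, valid because $y \notin \var \ph$; granting it, we obtain $\forall y\, \ph(y/x) \vdash \ph$. Finally, Proposition \ref{P:free-sub-form} gives $\free(\ph(y/x)) \subseteq (\free \ph \setminus \{x\}) \cup \{y\}$, whence $\free(\forall y\, \ph(y/x)) = \free(\ph(y/x)) \setminus \{y\} \subseteq \free \ph \setminus \{x\}$ does not contain $x$; thus (viii) yields $\forall y\, \ph(y/x) \vdash \forall x \ph$.

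The main obstacle is the substitution identity $\ph(y/x)(x/y) = \ph$ for $y \notin \var \ph$, which is not among the stated propositions and must be proven separately. I would prove it by induction on the rank of $\ph$, with an inner induction on terms for the prime case, using that $s(y/x)(x/y) = s$ whenever $y \notin \var s$. The literal and conjunction cases follow immediately from the inductive hypothesis, since $\var$ is preserved by $\neg$ and taken as a union under $\bigwedge$. The delicate case is $\ph = \forall z\, \psi$, where necessarily $z \ne y$ and $y \notin \var \psi$: here one must unwind the definition of substitution under a quantifier (which fixes the bound variable and passes the appropriately modified substitution down to $\psi$), treating the subcases $z = x$ and $z \ne x$ separately, and verify in each that the two successive modified substitutions compose to the identity on $\psi$. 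This bookkeeping is routine, but it is where essentially all of the care in the proof is concentrated.
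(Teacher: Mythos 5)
Your proposal is correct and takes essentially the same route as the paper's proof: each direction is a single application of (vii) followed by (viii), with Corollary \ref{C:free-sub-only} supplying the side condition for the reverse instantiation and Proposition \ref{P:free-sub-form} justifying the final generalization. The only differences are cosmetic: the paper splits off the trivial case $y = x$ (which your uniform argument in fact covers), and it asserts the syntactic identity $\ph(y/x)(x/y) = \ph$ without comment, whereas you rightly flag it and sketch the routine induction that establishes it.
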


\begin{proof}
  If $y = x$, the result follows from (i). Assume, then, that $y \ne x$. Since
  $y \notin \var \ph$, it follows from (vii) that $\forall x \ph \vdash \ph
  (y/x)$. Hence, by (viii), we have $\forall x \ph \vdash \forall y \, \ph
  (y/x)$.

  Let $\ph' = \ph(y/x)$. Corollary \ref{C:free-sub-only} implies that $x$ is
  free for $y$ in $\ph'$. Hence, by (vii), we have $\forall y \ph' \vdash \ph'
  (x/y) = \ph$. But $x \notin \free \forall y \ph'$, so (viii) implies $\forall
  y \ph' \vdash \forall x \ph$.
\end{proof}

\begin{rmk}
  Bound renaming gives us the following alternate to (viii):
  \begin{enumerate}[(i)$'$, leftmargin=3em]
    \setcounter{enumi}{7}
    \item if $y \notin \free X \cup \var \ph$ and $X \vdash \ph(y/x)$, then $X
          \vdash \forall x \ph$,
  \end{enumerate}
  To see this, suppose $y \notin \free X \cup \var \ph$ and $X \vdash \ph(y/x)$.
  Then (viii) implies $X \vdash \forall y \, \ph(y/x)$ and Proposition 
  \ref{P:bnd-rename} gives $\forall y \, \ph(y/x) \vdash \forall x \ph$.
\end{rmk}

We now prove $\si$-compactness. In the predicate case, the theorem is stronger,
in that we can not only pass to a countable subset of formulas. We can also pass
to a countable subset of extralogical symbols. We begin with the basic version,
which is the analogue of the propositional version.

\begin{prop}
  Let $X \subseteq \cL$ and $\ph \in \cL$. Then $X \vdash \ph$ if and only if
  there exists a countable subset $X_0 \subseteq X$ such that $X_0 \vdash \ph$.
\end{prop}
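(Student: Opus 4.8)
The plan is to mirror the proof of Theorem \ref{T:sig-cpctness}, exploiting the fact that $\vdash$ is defined in Definition \ref{D:pred-derivability} as the \emph{smallest} relation from $\fP \cL$ to $\cL$ satisfying (i)--(x). The ``if'' direction is immediate: if $X_0 \subseteq X$ is countable and $X_0 \vdash \ph$, then $X \vdash \ph$ by monotonicity (ii). For the ``only if'' direction, I would define a relation $\vdash'$ by declaring $X \vdash' \ph$ to hold exactly when there exists a countable $X_0 \subseteq X$ with $X_0 \vdash \ph$. Since $\vdash$ is the smallest relation satisfying (i)--(x), it suffices to verify that $\vdash'$ also satisfies (i)--(x); this gives ${\vdash} \subseteq {\vdash'}$, which is precisely the assertion that $X \vdash \ph$ implies the existence of a countable witness $X_0$. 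Note that, unlike the propositional proof, I would use the witness $X_0 \vdash \ph$ directly rather than $\bigwedge X_0 \vdash \ph$, which avoids needing a predicate analogue of Lemma \ref{L:conj-subset}.

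Conditions (i)--(vi) are checked exactly as in Theorem \ref{T:sig-cpctness}. For the single-premise rules (i), (iii), (vii), and (ix), a witnessing countable set is simply carried along unchanged (for (ix) the empty set is a witness, since $\emp \vdash t \beq t$). For the rules that amalgamate several derivations, namely (iv), (v), (vi), and (x), one takes the union of the finitely or countably many witnessing sets; this union is again countable because a countable union of countable sets is countable, and then monotonicity (ii) shows that each hypothesis of the rule still holds over this common enlarged but countable premise set, so the rule may be applied under $\vdash$ to produce the desired $\vdash'$ conclusion. In particular, for (x) one unions the witnesses for $s \beq t$ and for $\ph(s/x)$ before applying (x) to conclude $\ph(t/x)$.

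The only genuinely new point is the generalization rule (viii), and it is also the place to be careful. Suppose $x \notin \free X$ and $X \vdash' \ph$, so $X_0 \vdash \ph$ for some countable $X_0 \subseteq X$. The side condition of (viii) must survive the restriction to $X_0$, and it does: since $X_0 \subseteq X$ we have $\free X_0 \subseteq \free X$, hence $x \notin \free X_0$. Applying (viii) for $\vdash$ then gives $X_0 \vdash \forall x \ph$, so $X \vdash' \forall x \ph$. I expect this to be the crux, but it is not really an obstacle: the free-variable hypothesis only weakens under passage to a subset of premises, so it transfers automatically. With (viii) settled, every rule involves only countably many premises at a time, countable witnesses can always be combined into a single countable witness, and the verification is complete.
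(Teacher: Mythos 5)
Your proof is correct, and it runs on the same engine as the paper's: verify that an auxiliary relation is closed under (i)--(x) of Definition \ref{D:pred-derivability}, then invoke the minimality of $\vdash$. The difference is which auxiliary relation is used. The paper, mirroring Theorem \ref{T:sig-cpctness}, defines $X \vdash' \ph$ to mean that $X \vdash \ph$ \emph{and} there is a countable $X_0 \subseteq X$ with $\bigwedge X_0 \vdash \ph$, and thereby proves a three-way equivalence between $X \vdash \ph$, the $\bigwedge X_0$-form, and the countable-subset form; this forces the closure checks through Lemma \ref{L:conj-subset}. You instead take the countable-subset condition itself as the auxiliary relation, which eliminates the conjunction bookkeeping entirely. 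What you give up is the $\bigwedge X_0 \vdash \ph$ formulation, and it is not idle in the paper: deriving it from your form of the statement is essentially the nontrivial direction of Proposition \ref{P:conj-equiv}, whose proof in turn cites the $\bigwedge$-form of $\si$-compactness, and the same form is quoted again in the predicate setting (the analogue of Proposition \ref{P:sig-cpctness}). What you gain is a shorter, self-contained proof of exactly the stated proposition.

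One detail to tighten: your uniform recipe (``take the union of the witnesses'') is not quite right for rule (vi). There the hypotheses are $X, \ph \vdash' \psi$ and $X, \neg\ph \vdash' \psi$, so the witnesses satisfy $Y_1 \subseteq X \cup \{\ph\}$ and $Y_2 \subseteq X \cup \{\neg\ph\}$, and $Y_1 \cup Y_2$ need not be a subset of $X$; applying (vi) over it would produce a witness that is not allowed. Replace it by $X_0 = (Y_1 \cup Y_2) \cap X$: then $Y_1 \subseteq X_0 \cup \{\ph\}$ and $Y_2 \subseteq X_0 \cup \{\neg\ph\}$, monotonicity (ii) gives $X_0, \ph \vdash \psi$ and $X_0, \neg\ph \vdash \psi$, and (vi) yields $X_0 \vdash \psi$ with $X_0$ a countable subset of $X$. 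The rules (iv), (v), and (x) are unaffected, since there all hypotheses share the premise set $X$. Your treatment of (viii)---the only genuinely predicate-specific point---is exactly right: since $X_0 \subseteq X$ implies $\free X_0 \subseteq \free X$, the side condition $x \notin \free X$ passes to the witness automatically.
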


\begin{proof}
  As in the proof of Theorem \ref{T:sig-cpctness}, we will prove that the
  following are equivalent:
  \begin{align}
    &X \vdash \ph,\label{pred-sig-cpctness-1}\\
    &\text{there exists countable $X_0 \subseteq X$ such that
      $\ts{\bigwedge} X_0 \vdash \ph$, and}\label{pred-sig-cpctness-2}\\
    &\text{there exists countable $X_0 \subseteq X$ such that
      $X_0 \vdash \ph$}.\label{pred-sig-cpctness-3}
  \end{align}
  The proof of Theorem \ref{T:sig-cpctness} carries through to show that 
  \eqref{pred-sig-cpctness-2} implies \eqref{pred-sig-cpctness-3}, and 
  \eqref{pred-sig-cpctness-3} implies \eqref{pred-sig-cpctness-1}. Define
  $\vdash'$ so that $X \vdash' \ph$ if and only if $X \vdash \ph$ and
  \eqref{pred-sig-cpctness-2} holds. Since Lemma \ref{L:conj-subset} is still
  valid for $\vdash$, the proof of Theorem \ref{T:sig-cpctness} shows that (i)--%
  (iv) hold for $\vdash'$. It is straightforward to verify that $\vdash'$
  satisfies (vii)--(x).
\end{proof}

\begin{thm}[$\si$-compactness]\label{T:pred-sig-cpctness}
    \index{s_sigma-compactness@$\si$-compactness}%
  Let $X \subseteq \cL$ and $\ph \in \cL$. Then $X \vdash_\cL \ph$ if and only
  if there exist countable $X_0 \subseteq X$ and $L_0 \subseteq L$ such that
  $X_0 \vdash_{\cL_0} \ph$.
\end{thm}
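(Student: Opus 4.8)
The plan is to prove the biconditional by separating the two directions, with all the content in the forward direction. The reverse direction is immediate: if countable $X_0 \subseteq X$ and $L_0 \subseteq L$ satisfy $X_0 \vdash_{\cL_0} \ph$, then since $\cL_0 = \cL_{L_0} \subseteq \cL$, Remark \ref{R:mon} gives $X_0 \vdash_\cL \ph$, and (ii) then yields $X \vdash_\cL \ph$. For the forward direction I would mimic the method of the preceding proposition and of Theorem \ref{T:sig-cpctness}, but now tracking the signature as well as the premise set. Specifically, I would define a relation $\vdash'$ from $\fP\cL$ to $\cL$ by declaring $X \vdash' \ph$ to hold iff there exist a countable $L_0 \subseteq L$ and a countable $X_0 \subseteq X \cap \cL_0$ with $\ph \in \cL_0$ and $X_0 \vdash_{\cL_0} \ph$. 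By Remark \ref{R:mon} and (ii), every instance of $\vdash'$ is an instance of $\vdash_\cL$, so it suffices to show that $\vdash'$ itself satisfies all ten conditions (i)--(x) of Definition \ref{D:pred-derivability}. Since $\vdash_\cL$ is the smallest such relation, this forces $\vdash_\cL \subseteq \vdash'$, and hence $\vdash_\cL = \vdash'$, which is exactly the statement of the theorem (the requirements $X_0 \subseteq \cL_0$ and $\ph \in \cL_0$ being built into $\vdash_{\cL_0}$ being well-defined).

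The verification rests on three facts: (a) for every term $t$ and formula $\ph$, the symbol sets $\sym t$ and $\sym \ph$ are countable; (b) a countable union of countable sets is countable; and (c) the side-conditions occurring in the rules---``$t$ is free for $x$ in $\ph$,'' ``$x \notin \free X$,'' and the substitution operation itself---are purely syntactic and do not depend on the ambient signature. Facts (a) and (b) let me assemble a single countable $L_0$ and $X_0$ out of the witnesses furnished by the hypotheses of a given rule, while (c) guarantees that the same rule applies verbatim once everything has been placed in the restricted language $\cL_0$.

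Most rules are routine bookkeeping. The interesting cases are those that introduce a fresh object whose symbols may lie outside the witness signatures: rule (v), where $\psi$ is arbitrary; rules (ix) and (vii), which bring in a term $t$; and rule (x), which brings in both $s$ and $t$. In each the key move is to enlarge $L_0$ to absorb $\sym\psi$, respectively $\sym t$ (and $\sym s$), then use Remark \ref{R:mon} to lift every hypothesis into the enlarged language before invoking the corresponding rule for $\vdash_{\cL_0}$, relying on the containment $\sym \ph(t/x) \subseteq \sym \ph \cup \sym t$ to confirm the conclusion still lies in $\cL_0$. For rule (iv) I would take $X_0 = \bigcup_{\th\in\Phi} X_\th$ and $L_0 = \bigcup_{\th\in\Phi} L_\th$ over witnesses for each $\th \in \Phi$; countability of $\Phi$ together with (b) keeps these countable, and Remark \ref{R:mon} promotes each $X_\th \vdash_{\cL_{L_\th}} \th$ to $\cL_0$ before applying (iv). For rule (vii) I would set $L_0' = L_0 \cup \sym t$, observe that $\ph(t/x) \in \cL_{L_0'}$ and $t \in \cT_{L_0'}$, lift $X_0 \vdash_{\cL_0} \forall x\ph$ to $\cL_{L_0'}$, and apply (vii) there, the freeness hypothesis transferring by (c).

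I expect the principal difficulty to be organizational rather than conceptual: one must track, rule by rule, exactly which symbols the conclusion can contain, enlarge $L_0$ to capture them, and simultaneously confirm that each hypothesis can be re-derived in the common enlarged language through Remark \ref{R:mon}. The single most delicate point is rule (x), where both the equation $s \beq t$ and the substituted formula $\ph(s/x)$ contribute symbols; here I must check, via $\sym \ph \subseteq \sym \ph(s/x)$ and $\sym t \subseteq \sym(s\beq t)$, that the target $\ph(t/x)$ genuinely lands in $\cL_0$ built from the union of the two witness signatures, after which (x) for $\vdash_{\cL_0}$ applies because the freeness of $s$ and $t$ for $x$ in $\ph$ is signature-independent.
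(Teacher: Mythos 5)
Your proposal is correct and follows essentially the same route as the paper: the reverse direction via Remark \ref{R:mon} and (ii), and the forward direction by defining the auxiliary relation $\vdash'$ (derivability from a countable premise set within a countable sub-signature) and verifying it satisfies (i)--(x), so that minimality of $\vdash_\cL$ forces ${\vdash_\cL} \subseteq {\vdash'}$. In fact your rule-by-rule analysis—enlarging $L_0$ to absorb $\sym\psi$, $\sym t$, $\sym s$ in rules (v), (vii), (ix), (x), and the inclusions $\sym\ph \subseteq \sym\ph(s/x)$, $\sym t \subseteq \sym(s \beq t)$ for rule (x)—fills in precisely the cases the paper dismisses as ``similar'' after checking only (i) and (iv).
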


\begin{proof}
  The if direction follows from (ii) and Remark \ref{R:mon}. For the only if
  direction, define ${\vdash'} \subseteq \fP \cL \times \cL$ by $X \vdash' \ph$
  if $X_0 \vdash_{\cL_0} \ph$ for some countable $X_0 \subseteq X$ and $L_0
  \subseteq L$. It suffices to show that $\vdash'$ satisfies (i)--(x).

  Suppose $\ph \in \cL$. Let $L_0 = \sym \ph$. Then $L_0 \subseteq L$ is
  countable, and by (i) for $\cL_0$, we have $\ph \vdash_{\cL_0} \ph$. Thus,
  $\ph \vdash' \ph$, and $\vdash'$ satisfies (i).

  Let $\Phi \subseteq \cL$ be countable and suppose $X \vdash' \th$ for all
  $\th \in \Phi$. For each $\th \in \Phi$, choose countable $X_\th \subseteq X$
  and $L_\th \subseteq L$ such that $X_\th \vdash_{\cL_\th} \th$. Let $X_0 =
  \bigcup_{\th \in \Phi} X_\th$ and $L_0 = \bigcup_{\th \in \Phi} L_\th$, both
  of which are countable. Remark \ref{R:mon} implies $X_\th \vdash_{\cL_0} \th$
  for all $\th \in \Phi$. Hence, by (ii) for $\cL_0$, we have $X_0 \vdash_
  {\cL_0} \th$ for all $\th \in \Phi$. Therefore, by (iv) for $\cL_0$, it
  follows that $X_0 \vdash_{\cL_0} \bigwedge \Phi$. Thus, $X \vdash' \bigwedge
  \Phi$, and $\vdash'$ satisfies (iv).

  The proofs of (ii), (iii), and (v)--(x) are similar.
\end{proof}

Since they are based on (i)--(vi), Propositions \ref{P:conj-equiv} and 
\ref{P:set-mono} hold here as well.

We finish this subsection with a result that we will need later. A
\emph{variable permutation} is a bijection $\pi: \Var \to \Var$.
  \index{permutation!variable ---}%
We extend a variable permutation to $\pi: \cT \to \cT$ by $c^\pi = c$ and $(f
t_1 \cdots t_n)^\pi = f t_1^\pi \cdots t_n^\pi$. For $\ph \in \cL$, we define
$\ph^\pi$ by $(s \beq t)^\pi = (s^\pi \beq t^\pi)$, $(r t_1 \cdots t_n)^\pi = r
t_1^\pi \cdots t_n^\pi$, $(\neg \ph)^\pi = \neg \ph^\pi$, $(\bigwedge \Phi)^\pi
= \bigwedge_{\ph \in \Phi} \ph^\pi$, and $(\forall x \ph)^\pi = \forall x^\pi
\ph^\pi$.

\begin{prop}\label{P:perm-vars}
  If $\pi$ is a variable permutation, then $X \vdash \ph$ if and only if $X^\pi
  \vdash \ph^\pi$.
\end{prop}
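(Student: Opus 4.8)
The plan is to prove the forward implication for an arbitrary variable permutation and then obtain the reverse implication for free. Since $\pi$ is a bijection, its inverse $\pi^{-1}$ is again a variable permutation, and one checks by formula induction that $(\ph^\pi)^{\pi^{-1}} = \ph$, hence $(X^\pi)^{\pi^{-1}} = X$. Thus, once we know that $X \vdash \ph$ implies $X^\pi \vdash \ph^\pi$ for every variable permutation, applying this fact to $\pi^{-1}$ with antecedent $X^\pi$ and consequent $\ph^\pi$ yields the converse. So it suffices to prove the single implication $X \vdash \ph \Rightarrow X^\pi \vdash \ph^\pi$, where $X^\pi = \{\ph^\pi \mid \ph \in X\}$.

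Before doing so, I would record three compatibility lemmas, each proved by induction on the structure (or rank) of the formula, and each relying on the injectivity of $\pi$. First, $\free \ph^\pi = \pi(\free \ph)$, so that $\free X^\pi = \pi(\free X)$; the only nontrivial case is $\forall x \ph$, where $\free (\forall x \ph)^\pi = \free \ph^\pi \setminus \{x^\pi\} = \pi(\free \ph) \setminus \{x^\pi\} = \pi(\free \ph \setminus \{x\})$, the last equality using injectivity. Second, $\pi$ commutes with substitution: $(s(t/x))^\pi = s^\pi(t^\pi/x^\pi)$ for terms (by induction on $s$, where the case $s = y \ne x$ needs $y^\pi \ne x^\pi$), and consequently $(\ph(t/x))^\pi = \ph^\pi(t^\pi/x^\pi)$ for formulas (by induction on $\rk \ph$, the quantifier clauses again invoking $y^\pi \ne x^\pi$). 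Third, since $\ph \mapsto \ph^\pi$ is a bijection of $\cL$ carrying $\Sf \ph$ bijectively onto $\Sf \ph^\pi$ and preserving the structural relations, one obtains that $t$ is free for $x$ in $\ph$ if and only if $t^\pi$ is free for $x^\pi$ in $\ph^\pi$; this reduces to checking that $\ze$ lies in the scope of $\forall y$ in $\ph$ exactly when $\ze^\pi$ lies in the scope of $\forall y^\pi$ in $\ph^\pi$, and that $\ze$ has a free occurrence of $x$ in $\ph$ exactly when $\ze^\pi$ has a free occurrence of $x^\pi$ in $\ph^\pi$.

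With these in hand, I would define a relation $\vdash'$ by setting $X \vdash' \ph$ if and only if $X^\pi \vdash \ph^\pi$, and verify that $\vdash'$ satisfies all ten conditions (i)--(x) of Definition \ref{D:pred-derivability}. Because $\vdash$ is the smallest relation satisfying these conditions, this gives ${\vdash} \subseteq {\vdash'}$, which is exactly the desired implication. Each verification transports a single application of the corresponding rule for $\vdash$ across $\pi$, using the identities $(\neg \ph)^\pi = \neg \ph^\pi$, $(\bigwedge \Phi)^\pi = \bigwedge \Phi^\pi$, $(\forall x \ph)^\pi = \forall x^\pi \ph^\pi$, $(s \beq t)^\pi = (s^\pi \beq t^\pi)$, and $\emp^\pi = \emp$. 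For instance, rule (viii) holds for $\vdash'$ because $x \notin \free X$ gives $x^\pi \notin \pi(\free X) = \free X^\pi$ by the first lemma, whence $X^\pi \vdash \ph^\pi$ yields $X^\pi \vdash \forall x^\pi \ph^\pi = (\forall x \ph)^\pi$; rules (vii) and (x) use the second and third lemmas to rewrite $(\ph(t/x))^\pi$ as $\ph^\pi(t^\pi/x^\pi)$ and to transfer the ``free for'' hypotheses.

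The routine structural cases (i)--(vi) and (ix) present no difficulty. The main obstacle is the package of compatibility lemmas for substitution and the ``free for'' relation---in particular the quantifier cases of the substitution lemma and the scope-preservation statement---since these are where the interaction between $\pi$, binding, and substitution must be handled precisely. That $\pi$ is a permutation of variables rather than an arbitrary substitution is what makes these lemmas go through cleanly: $\pi$ never collapses distinct variables and never introduces capture, so scopes and free occurrences are merely relabeled.
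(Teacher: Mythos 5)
Your proof is correct and follows essentially the same route as the paper: the paper likewise defines an auxiliary relation $\vdash'$ (there taken as $X \vdash \ph$ and $X^\pi \vdash \ph^\pi$), invokes minimality of $\vdash$ over conditions (i)--(x), and applies the result to $\pi^{-1}$ for the converse. Your three compatibility lemmas on $\free$, substitution, and the ``free for'' relation simply make explicit the verification that the paper dismisses as straightforward.
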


\begin{proof}
  Define $\vdash'$ by $X \vdash' \ph$ if $X \vdash \ph$ and $X^\pi \vdash
  \ph^\pi$. It is straightforward to verify that $\vdash'$ satisfies (i)--(x).
  Hence, $X \vdash \ph$ implies $X^\pi \vdash \ph^\pi$. Applying this result to
  $\pi^{-1}$ gives the converse.
\end{proof}

\subsection{Constant expansions}

We now connect our deductive system back to sentences. To do this, we will need
to expand our language by adding additional constant symbols.

Let $L$ be an extralogical signature with corresponding language $\cL$. Let $C$
be a set of constant symbols. Then $\cL C$ denotes the language corresponding to
the extralogical signature $L \cup C$.
  \symindex{$\cL C$}%
If $C = \{c\}$, then we write $\cL c$ for $\cL C$. The language $\cL C$ is
called a \emph{constant expansion} of $\cL$.
  \index{}

Let $C$ be a countable set of constant symbols. A \emph{$C$-substitution} is an
injective function $\ul\si: C \to \Var$.
  \index{C-substitution@$C$-substitution}%
Given a $C$-substitution, we extend it
to $\ul\si: \cT_{\cL C} \to \Var$ by $x^{\ul\si} = x$, $c^{\ul\si} = c$ for $c
\notin C$, and $(f t_1 \cdots t_n)^{\ul\si} = f t_1^{\ul\si} \cdots t_n^
{\ul\si}$. Define $\ph^{\ul\si}$ recursively by $(s \beq t)^{\ul\si} =
(s^{\ul\si} \beq t^ {\ul\si})$, $(r t_1 \cdots t_n)^{\ul\si} = r t_1^{\ul\si}
\cdots t_n^{\ul\si}$, $(\neg \ph)^{\ul\si} = \neg \ph^{\ul\si}$, $(\bigwedge
\Phi)^{\ul\si} = \bigwedge_{\ph \in \Phi} \ph^{\ul\si}$, and $(\forall x \ph)^
{\ul\si} = \forall x \ph^{\ul\si}$. Intuitively, $t^{\ul\si}$ and $\ph^{\ul\si}$
are obtained from $t$ and $\ph$, respectively, by replacing each occurrence of
$c \in C$ with $c^{\ul\si}$. For $X \subseteq \cL$, we let $X^{\ul\si} = \{\ph^
{\ul\si} \mid \ph \in X\}$. Note that if $X \subseteq \cL C$, then $X^{\ul\si}
\subseteq \cL$. For $V \subseteq \Var$, we say that \emph{$\ul\si$ avoids $V$}
if $c^{\ul\si} \notin V$ for all $c \in C$. Since $\Var$ is uncountable, given
any countable $V_0 \subseteq \Var$, there exists a $C$-substitution that avoids
$V_0$.  By term induction, it is easy to see that $s(t/x)^{\ul\si} =
s^{\ul\si}(t^ {\ul\si}/x)$ when $\ul\si$ avoids $x$. Using this and formula
induction, we have $\ph(t/x)^{\ul\si} = \ph^{\ul\si}(t^{\ul\si}/x)$.

If $c$ is a constant symbol and $y \in \Var$, we use the notation $y/c$ to
denote the $\{c\}$-substitution $\ul\si$ defined by $c^{\ul\si} = y$. We read
$y/c$ as ``$y$ for $c$'' and write $\ph^{\ul\si}$ as $\ph(y/c)$. In this case,
the identity $\ph(t/x)^{\ul\si} = \ph^{\ul\si}(t^{\ul\si}/x)$ becomes
$\ph(t/x)(y/c) = \ph'(t'/x)$, where $t' = t(y/c)$ and $\ph' = \ph(y/c)$. This is
extended in the natural way for $\vec c = \ang{c_1, \ldots, c_n}$ and $\vec x =
\ang{x_1, \ldots, x_n}$, provided the $c_i$'s and the $x_i$'s are distinct.

\begin{prop}\label{P:C-sub-taut}
  If $\ul\si$ and $\ul\si'$ both avoid $\var \ph$, then there is a variable
  permutation such that $\ph^{\ul\si'} = (\ph^{\ul\si})^\pi$. In particular,
  $\vdash_\cL \ph^{\ul\si}$ if and only if $\vdash_\cL \ph^{\ul\si'}$.
\end{prop}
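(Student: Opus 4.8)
The plan is to build a single variable permutation $\pi$ that fixes every variable occurring in $\ph$ while carrying each introduced variable $c^{\ul\si}$ to the corresponding $c^{\ul\si'}$, and then to check that applying $\pi$ to $\ph^{\ul\si}$ reproduces $\ph^{\ul\si'}$. First I would set $D = \{c^{\ul\si} \mid c \in C\}$ and $D' = \{c^{\ul\si'} \mid c \in C\}$. Since $C$ is countable and $\ul\si$ is injective, $c \mapsto c^{\ul\si}$ is a bijection of $C$ onto $D$, and $\ul\si'$ gives a bijection onto $D'$; composing these, the assignment $g\colon c^{\ul\si} \mapsto c^{\ul\si'}$ is a well-defined bijection from $D$ onto $D'$. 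Because $\ul\si$ and $\ul\si'$ both avoid $\var\ph$, we have $D \cap \var\ph = \emp$ and $D' \cap \var\ph = \emp$, so the map that is the identity on $\var\ph$ and equals $g$ on $D$ is a well-defined injection of $\var\ph \cup D$ onto $\var\ph \cup D'$, its two pieces having disjoint domains and disjoint ranges. Both of these sets are countable while $\Var$ has cardinality $\bom_1$, so their complements in $\Var$ are equinumerous; extending the injection by any bijection between these complements yields a permutation $\pi$ of $\Var$ that restricts to the identity on $\var\ph$ and satisfies $\pi(c^{\ul\si}) = c^{\ul\si'}$ for every $c \in C$.

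The heart of the argument is the identity $(\ph^{\ul\si})^\pi = \ph^{\ul\si'}$. I would introduce the $C$-substitution $\ul\tau$ defined by $c^{\ul\tau} = \pi(c^{\ul\si})$, so that by construction $\ul\tau = \ul\si'$, and then prove the composition identity $(\psi^{\ul\si})^\pi = \psi^{\ul\tau}$ for every subformula $\psi$ of $\ph$ (and $(t^{\ul\si})^\pi = t^{\ul\tau}$ for subterms) by induction, first on terms and then on formulas. The base cases are immediate: for a variable $x \in \var\ph$, $(x^{\ul\si})^\pi = x^\pi = x = x^{\ul\tau}$ since $\pi$ fixes $\var\ph$; for a constant $c \in C$, $(c^{\ul\si})^\pi = \pi(c^{\ul\si}) = c^{\ul\tau}$; and for a constant $c \notin C$ both sides equal $c$. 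Negation and conjunction pass through routinely. The only delicate step is the quantifier case $\psi = \forall x \chi$: here $(\psi^{\ul\si})^\pi = (\forall x \, \chi^{\ul\si})^\pi = \forall x^\pi (\chi^{\ul\si})^\pi$, and since $x \in \var\ph$ is fixed by $\pi$ this equals $\forall x (\chi^{\ul\si})^\pi$, which by the inductive hypothesis is $\forall x \, \chi^{\ul\tau} = \psi^{\ul\tau}$. This is exactly where the avoidance hypotheses are used: they force the introduced variables $c^{\ul\si}$ and $c^{\ul\si'}$ to lie outside $\var\ph$, which is what allowed $\pi$ to be chosen fixing every bound variable of $\ph$ while still moving the $c^{\ul\si}$. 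I expect this quantifier step, together with the bookkeeping that guarantees $\pi$ simultaneously fixes $\var\ph$ and realizes $g$, to be the only real obstacle; everything else is formula recursion.

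Finally, taking $\psi = \ph$ gives $(\ph^{\ul\si})^\pi = \ph^{\ul\tau} = \ph^{\ul\si'}$, which establishes the first assertion. For the ``in particular'' claim, I would note that $\ph^{\ul\si}, \ph^{\ul\si'} \in \cL$ and apply Proposition \ref{P:perm-vars} with $X = \emp$: since $\emp^\pi = \emp$, that proposition gives $\vdash_\cL \ph^{\ul\si}$ if and only if $\vdash_\cL (\ph^{\ul\si})^\pi$, and the latter formula is precisely $\ph^{\ul\si'}$, completing the proof.
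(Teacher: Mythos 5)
Your proof is correct and follows essentially the same route as the paper: build a variable permutation $\pi$ with $\pi(c^{\ul\si}) = c^{\ul\si'}$ that fixes the variables of $\ph$, prove $(\ph^{\ul\si})^\pi = \ph^{\ul\si'}$ by induction on terms and formulas with the quantifier case as the only delicate step, and finish by applying Proposition \ref{P:perm-vars} with $X = \emp$. If anything, your construction of $\pi$ is slightly more careful than the paper's, which requires $\pi$ to fix every variable outside the ranges of $\ul\si$ and $\ul\si'$ --- a requirement that cannot always be realized by a genuine bijection (for instance when the range of $\ul\si'$ is a proper subset of the range of $\ul\si$), whereas fixing only $\var \ph$ and extending by an arbitrary bijection between the uncountable complements, as you do, always works.
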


\begin{proof}
  Since $C$ is countable and both $\ul\si$ and $\ul\si'$ are injective, we may
  choose a bijection $\pi: \Var \to \Var$ such that $\pi \circ \ul\si =
  \ul\si'$ and $x^\pi = x$ for all $x$ outside the ranges of $\ul\si$ and
  $\ul\si'$. We prove that $\ph^{\ul\si'} = (\ph^{\ul\si})^\pi$ by induction on
  $\ph$. The proof is entirely straightforward except for the inductive step
  $\ph = \forall x \psi$. For this, let $\ph = \forall x \psi$ and assume
  $\ul\si$ and $\ul\si'$ both avoid $\var \ph$. Then $\ul\si$ and $\ul\si'$ both
  avoid $\var \psi$, so by the inductive hypothesis, $\psi^{\ul\si'} =
  (\psi^{\ul\si})^\pi$. Since $\ul\si$ and $\ul\si'$ avoid $\var \ph$, we have
  $x^\pi = x$. Thus,
  \[
    (\ph^{\ul\si})^\pi = (\forall x \psi^{\ul\si})^\pi
      = \forall x^\pi (\psi^{\ul\si})^\pi
      = \forall x \psi^{\ul\si'}
      = \ph^{\ul\si'}.
  \]
  For the final result, we apply Proposition \ref{P:perm-vars}.
\end{proof}

\begin{prop}\label{P:C-sub-deriv}
  Let $X \subseteq \cL C$ and $\ph \in \cL C$. If $\ul\si$ and $\ul\si'$ are
  $C$-substitutions that both avoid $\var X \cup \{\ph\}$, then $X^{\ul\si}
  \vdash_\cL \ph^{\ul\si}$ if and only if $X^{\ul\si'} \vdash_\cL \ph^
  {\ul\si'}$.
\end{prop}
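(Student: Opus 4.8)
The plan is to reduce this statement to the single-formula case already settled in Proposition \ref{P:C-sub-taut}, and then to transport derivability across a variable permutation using Proposition \ref{P:perm-vars}. The essential observation is that the bijection $\pi$ produced in the proof of Proposition \ref{P:C-sub-taut} depends only on the pair of $C$-substitutions $\ul\si$ and $\ul\si'$, and not on any particular formula; hence a single $\pi$ can be made to serve every formula of $X$ at once, even when $X$ is uncountable.

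First I would fix a bijection $\pi \colon \Var \to \Var$ exactly as in the proof of Proposition \ref{P:C-sub-taut}: one satisfying $\pi \circ \ul\si = \ul\si'$ on $C$ and $x^\pi = x$ for every $x$ lying outside the ranges of $\ul\si$ and $\ul\si'$. Such a $\pi$ exists because $C$ is countable, $\ul\si$ and $\ul\si'$ are injective, and $\Var$ is uncountable, so the partial assignment $\ul\si(c) \mapsto \ul\si'(c)$ extends to a bijection of $\Var$. Since $\ul\si$ and $\ul\si'$ both avoid $\var X \cup \var \ph$, their ranges are disjoint from $\var X \cup \var \ph$; consequently $\pi$ fixes every variable occurring in $X$ or in $\ph$.

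Next I would observe that the induction carried out in Proposition \ref{P:C-sub-taut} in fact establishes the identity $\psi^{\ul\si'} = (\psi^{\ul\si})^\pi$ for \emph{any} $\psi \in \cL C$ such that $\ul\si$ and $\ul\si'$ both avoid $\var \psi$, using this same fixed $\pi$. For each $\psi \in X \cup \{\ph\}$ we have $\var \psi \subseteq \var X \cup \var \ph$, so both substitutions avoid $\var \psi$ in every case. Applying the identity formula by formula therefore yields $X^{\ul\si'} = (X^{\ul\si})^\pi$ and $\ph^{\ul\si'} = (\ph^{\ul\si})^\pi$. Supposing $X^{\ul\si} \vdash_\cL \ph^{\ul\si}$, Proposition \ref{P:perm-vars} then gives $(X^{\ul\si})^\pi \vdash_\cL (\ph^{\ul\si})^\pi$, which is exactly $X^{\ul\si'} \vdash_\cL \ph^{\ul\si'}$; the reverse implication follows by symmetry, swapping the roles of $\ul\si$ and $\ul\si'$ (equivalently, replacing $\pi$ by $\pi^{-1}$).

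The only point requiring genuine care, and thus the main obstacle, is the uniformity claim: that one permutation works simultaneously for all of $X$ when $X$ may be uncountable. This is resolved precisely because the construction of $\pi$ in Proposition \ref{P:C-sub-taut} is formula-independent. In writing this up I would therefore take care to state explicitly that the inductive identity of that proposition holds for the fixed $\pi$ and every admissible $\psi$, rather than re-choosing $\pi$ for each formula; once this is made precise, the remainder is a routine application of Proposition \ref{P:perm-vars}.
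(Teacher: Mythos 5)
Your proof is correct, but it takes a genuinely different route from the paper's. The paper keeps Proposition \ref{P:C-sub-taut} as a black box: assuming $X^{\ul\si} \vdash_\cL \ph^{\ul\si}$, it invokes $\si$-compactness to extract a countable $X_0 \subseteq X$ with $X_0^{\ul\si} \vdash_\cL \ph^{\ul\si}$, sets $\psi = \bigwedge X_0$, observes that $\vdash_\cL (\psi \to \ph)^{\ul\si}$, and then applies Proposition \ref{P:C-sub-taut} to the single formula $\psi \to \ph$ (legitimate because $\var \psi \subseteq \var X$, so both substitutions avoid $\var(\psi \to \ph)$), obtaining $\vdash_\cL (\psi \to \ph)^{\ul\si'}$ and hence $X^{\ul\si'} \vdash_\cL \ph^{\ul\si'}$. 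You instead bypass compactness by exploiting the uniformity of the permutation constructed in the \emph{proof} of Proposition \ref{P:C-sub-taut}: since $\pi$ there is built from $\ul\si$ and $\ul\si'$ alone, and the induction indeed yields $\psi^{\ul\si'} = (\psi^{\ul\si})^\pi$ for every $\psi$ whose variables both substitutions avoid, a single $\pi$ gives $X^{\ul\si'} = (X^{\ul\si})^\pi$ and $\ph^{\ul\si'} = (\ph^{\ul\si})^\pi$, after which Proposition \ref{P:perm-vars} (which holds for arbitrary, possibly uncountable $X$) finishes the argument. The trade-off: the paper's route uses only the stated form of Proposition \ref{P:C-sub-taut}, where the permutation is existentially quantified per formula, at the cost of invoking $\si$-compactness; your route needs no compactness and handles uncountable $X$ directly, but requires restating (or re-opening) Proposition \ref{P:C-sub-taut} in its uniform, formula-independent form, since the proposition as printed only provides a permutation for one formula at a time. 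You correctly identify this uniformity claim as the one point needing explicit justification, and your justification of it is sound.
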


\begin{proof}
  Suppose $X^{\ul\si} \vdash_\cL \ph^{\ul\si}$. Choose countable $X_0 \subseteq
  X$ such that $X_0^{\ul\si} \vdash_\cL \ph^{\ul\si}$. Let $\psi = \bigwedge
  X_0$. Since $\psi^{\ul\si} = \bigwedge_{\th \in X_0} \th^{\ul\si}$ and $(\psi
  \to \ph)^{\ul\si} = \psi^{\ul\si} \to \ph^{\ul\si}$, we have $\vdash_\cL (\psi
  \to \ph)^{\ul\si}$. But $\ul\si$ and $\ul\si'$ both avoid $\var (\psi \to
  \ph)$. Hence, by Proposition \ref{P:C-sub-taut}, we have $\vdash_\cL (\psi \to
  \ph)^ {\ul\si'}$, which implies $X^{\ul\si'} \vdash_\cL \ph^{\ul\si'}$.
  Reversing the roles of $\ul\si$ and $\ul\si'$ gives the converse.
\end{proof}

\begin{prop}[Constant elimination]\label{P:con-elim}
  Let $C$ be a countable set of constant symbols and suppose $X \vdash_{\cL C}
  \ph$. Then there exists a countable set $X_0 \subseteq X$ such that $X_0
  \vdash_{\cL C} \ph$ and $X_0^{\ul\si} \vdash_\cL \ph^{\ul\si}$ whenever
  $\ul\si$ is a $C$-substitution that avoids $\var X_0 \cup \{\ph\}$. In
  particular, if $\ul\si$ avoids $\var X \cup \{\ph\}$, then $X^{\ul\si}
  \vdash_\cL \ph^{\ul\si}$.
\end{prop}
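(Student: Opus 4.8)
The plan is to realize $\vdash_{\cL C}$ as the smallest relation from $\fP(\cL C)$ to $\cL C$ closed under (i)--(x) of Definition \ref{D:pred-derivability}, and to show that a suitably decorated ``transfer'' relation is also closed under these rules. Define a relation $\vdash'$ from $\fP(\cL C)$ to $\cL C$ by setting $Y \vdash' \psi$ to hold iff there exist a countable $Y_0 \subseteq Y$ and a countable $N \subseteq \Var$ with $\var Y_0 \cup \var \psi \subseteq N$ such that $Y_0 \vdash_{\cL C} \psi$ and $Y_0^{\ul\si} \vdash_\cL \psi^{\ul\si}$ for every $C$-substitution $\ul\si$ avoiding $N$. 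The auxiliary set $N$ records the variables the substitution must steer clear of; keeping it flexible, rather than fixing it to $\var Y_0 \cup \var\psi$, is what lets the induction go through. Once $\vdash'$ is shown to satisfy (i)--(x), Definition \ref{D:pred-derivability} gives $\vdash_{\cL C} \subseteq \vdash'$, which already packages the countable $X_0$ demanded by the statement.

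The verification that $\vdash'$ obeys (i)--(x) follows a uniform pattern. For each rule I take the pairs $(Y_0, N)$ furnished by the premises, replace $Y_0$ by the countable union of the premise witnesses, and enlarge $N$ to absorb any variables newly exposed by the rule---for instance $\{x\}$ in (viii), $\var t \cup \var \psi(t/x)$ in (vii), and $\var s \cup \var t \cup \var \psi(t/x)$ in (x). The $\cL C$-derivation of the conclusion is then immediate from the corresponding rule in $\cL C$, and the transferred $\cL$-derivation is obtained by applying the same rule in $\cL$ to $Y_0^{\ul\si}$. The purely propositional rules (i)--(vi) and the reflexivity rule (ix) are routine, using only monotonicity in both languages, the commutation of $\ul\si$ with $\neg$ and $\bigwedge$, and the fact that a countable union of countable sets is countable.

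The genuine work is in the quantifier and equality rules, and this is where I expect the main obstacle. To transfer (vii) I must know that $t^{\ul\si}$ is free for $x$ in $\psi^{\ul\si}$ and that $\psi(t/x)^{\ul\si} = \psi^{\ul\si}(t^{\ul\si}/x)$; the latter is the $C$-substitution identity established for substitutions avoiding $x$, which applies because $x \in N$ forces $\ul\si$ to avoid $x$. For the freeness condition I use that $\ul\si$ sends each $c \in C$ to a variable lying outside $N \supseteq \var \psi \supseteq \bnd \psi$: consequently the quantifier-and-scope structure of $\psi^{\ul\si}$ mirrors that of $\psi$ under the correspondence $\ze \mapsto \ze^{\ul\si}$ on subformulas, the original members of $\var t$ remain free for $x$ exactly as in $\psi$, and the freshly introduced variables $c^{\ul\si}$ never occur as bound variables of $\psi^{\ul\si}$, so no capture can arise. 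The same reasoning handles (x) with both $s$ and $t$. For (viii) I need $x \notin \free Y_0^{\ul\si}$; since $\free \th^{\ul\si} = \free \th \cup \{c^{\ul\si} \mid c \in \con \th \cap C\}$ and $\ul\si$ avoids $x$, this follows from $x \notin \free Y_0$. Carefully discharging these scope-and-capture bookkeeping steps is the technical heart of the argument.

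Finally, applying $\vdash_{\cL C} \subseteq \vdash'$ to the hypothesis $X \vdash_{\cL C} \ph$ yields a countable $X_0 \subseteq X$ and a countable $N \supseteq \var X_0 \cup \var \ph$ with $X_0 \vdash_{\cL C} \ph$ and $X_0^{\ul\si} \vdash_\cL \ph^{\ul\si}$ for all $\ul\si$ avoiding $N$. To upgrade this to all $\ul\si$ avoiding the smaller set $\var X_0 \cup \{\ph\}$, I choose one $C$-substitution $\ul\si_*$ avoiding $N$---possible since $N$ is countable and $\Var$ is uncountable---note that $\ul\si_*$ also avoids $\var X_0 \cup \{\ph\}$ and satisfies $X_0^{\ul\si_*} \vdash_\cL \ph^{\ul\si_*}$, and then invoke Proposition \ref{P:C-sub-deriv} to conclude $X_0^{\ul\si} \vdash_\cL \ph^{\ul\si}$ for every $\ul\si$ avoiding $\var X_0 \cup \{\ph\}$. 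The ``in particular'' clause is then immediate: if $\ul\si$ avoids $\var X \cup \{\ph\}$ it avoids $\var X_0 \cup \{\ph\}$, so $X_0^{\ul\si} \vdash_\cL \ph^{\ul\si}$, and monotonicity (ii) in $\cL$ together with $X_0^{\ul\si} \subseteq X^{\ul\si}$ gives $X^{\ul\si} \vdash_\cL \ph^{\ul\si}$.
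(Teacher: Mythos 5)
Your proof is correct, and its skeleton is the same as the paper's: an auxiliary transfer relation $\vdash'$ is shown to be closed under rules (i)--(x) of Definition \ref{D:pred-derivability}, so that ${\vdash_{\cL C}} \subseteq {\vdash'}$, and the two technical ingredients are exactly the paper's---the anti-capture argument that $t^{\ul\si}$ is free for $x$ in $\ph^{\ul\si}$, and Proposition \ref{P:C-sub-deriv}. The one organizational difference is where Proposition \ref{P:C-sub-deriv} does its work: the paper hard-codes the avoidance set $\var X_0 \cup \{\ph\}$ into the definition of $\vdash'$ and must therefore re-align avoidance sets \emph{inside} the induction (e.g.\ in rule (iii), passing from substitutions avoiding $\var X_0 \cup \Phi$ to ones avoiding $\var X_0 \cup \{\th\}$ via an auxiliary $\ul\si'$), whereas you carry an existentially quantified countable avoidance set $N$, which turns every induction step into a mere enlargement of $N$, and you invoke Proposition \ref{P:C-sub-deriv} once, in a final downgrade from ``avoids $N$'' to ``avoids $\var X_0 \cup \{\ph\}$''; the two auxiliary relations are in fact extensionally the same, so this buys a tidier induction at the cost of one closing step. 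One small slip to repair: in rule (x) your listed enlargement $\var s \cup \var t \cup \var \psi(t/x)$ need not contain $x$ itself, yet the commutation identities $\psi(s/x)^{\ul\si} = \psi^{\ul\si}(s^{\ul\si}/x)$ and $\psi(t/x)^{\ul\si} = \psi^{\ul\si}(t^{\ul\si}/x)$ require $\ul\si$ to avoid $x$; in (vii) this was automatic because $x \in \var(\forall x \psi) \subseteq N$, but in (x) you must add $x$ (or all of $\var \psi$) to $N$ explicitly, which your general prescription of absorbing ``newly exposed'' variables of course permits.
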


\begin{proof}
  Define ${\vdash'} \subseteq \fP \cL C \times \cL C$ by $X \vdash' \ph$ if $X
  \vdash_{\cL C} \ph$ and there exists countable $X_0 \subseteq X$ such that $X_0
  \vdash_{\cL C} \ph$ and $X_0^{\ul\si} \vdash_\cL \ph^{\ul\si}$ whenever
  $\ul\si$ avoids $\var X_0 \cup \{\ph\}$. It suffices to show that $\vdash'$
  satisfies (i)--(x).

  It is immediate that (i) and (ii) hold. For (iii), suppose $X \vdash'
  \bigwedge \Phi$. Choose countable $X_0 \subseteq X$ such that $X_0 \vdash_{\cL
  C} \bigwedge \Phi$ and $X_0^{\ul\si} \vdash_\cL (\bigwedge \Phi)^{\ul\si} =
  \bigwedge_{\th \in \Phi} \th^{\ul\si}$ whenever $\ul\si$ avoids $\var X_0 \cup
  \{\bigwedge \Phi\} = \var X_0 \cup \Phi$. Now fix $\th \in \Phi$. Then $X_0
  \vdash_{\cL C} \th$ and $X_0^{\ul\si} \vdash_\cL \th^{\ul\si}$ whenever
  $\ul\si$ avoids $\var X_0 \cup \Phi$. Suppose $\ul\si$ avoids $\var X_0 \cup 
  \{\th\}$. Since $\var X_0 \cup \Phi$ is countable, we may choose $\ul\si'$
  that avoids $\var X_0 \cup \Phi$. Since $\ul\si$ and $\ul\si'$ both avoid
  $\var X_0 \cup \{\th\}$, the inductive hypothesis and Proposition 
  \ref{P:C-sub-deriv} give $X_0^{\ul\si} \vdash_\cL \th^{\ul\si}$. Therefore,
  $X \vdash' \th$ and $\vdash'$ satisfies (iii). The proofs for (iv)--(vi) are
  similar.

  For (vii), suppose $X \vdash' \forall x \ph$ and let $t$ be free for $x$ in
  $\ph$. Choose countable $X_0 \subseteq X$ such that $X_0 \vdash \forall x \ph$
  and $X_0^{\ul\si} \vdash (\forall x \ph)^{\ul\si} = \forall x \ph^ {\ul\si}$
  whenever $\ul\si$ avoids $\var X \cup \{\forall x \ph\}$. Now let $\ul\si$
  avoid $\var X \cup \{\ph(t/x)\}$. As above, by Proposition
  \ref{P:C-sub-deriv}, we may assume $\ul\si$ also avoids both $\var X_0 \cup
  \{\forall x \ph\}$ and $\var t$. Then by hypothesis, $X_0^{\ul\si} \vdash 
  (\forall x \ph)^{\ul\si} = \forall x \ph^ {\ul\si}$.

  We now show that $t^{\ul\si}$ is free for $x$ in $\ph^{\ul\si}$. Suppose not.
  Then there exists $y \in \var t^{\ul\si}$ such that a free occurrence of $x$
  in $\ph^{\ul\si}$ occurs within the scope of $\forall y$ in $\ph^{\ul\si}$.
  But $\ul\si$ avoids $x$, so every occurrence of $x$ in $\ph^{\ul\si}$ is an
  occurrence of $x$ in $\ph$. Also, by the definition of $C$-substitutions, any
  occurrence of the quantifier $\forall y$ in $\ph^{\ul\si}$ is an occurrence of
  $\forall y$ in $\ph$. Hence, there is a free occurrence of $x$ in $\ph$ that
  occurs within the scope of $\forall y$ in $\ph$. Since $t$ is free for $x$ in
  $\ph$, we must have $y \notin \var t$. On the other hand, since $\forall y$
  occurs in $\ph$, we have $y \in \var \ph$. Therefore, $\ul\si$ avoids $y$. It
  follows that $y \notin \var t^{\ul\si}$, a contradiction.

  Since $t^{\ul\si}$ is free for $x$ in $\ph^{\ul\si}$ and $X_0^{\ul\si} \vdash 
  \forall x \ph^{\ul\si}$, it follows from (vii) that $X_0^{\ul\si} \vdash \ph^
  {\ul\si}(t^{\ul\si}/x)$. But $\ph^{\ul\si}(t^{\ul\si}/x) = \ph(t/x)^{\ul\si}$,
  so that $X_0^{\ul\si} \vdash \ph(t/x)^{\ul\si}$, showing that $\vdash'$
  satisfies (vii). The proofs of (viii)--(x) are the similar.
\end{proof}

\begin{prop}\label{P:add-constants}
  Let $\cL C$ be a constant expansion of $\cL$. Let $X \subseteq \cL$ and $\ph
  \in \cL$. Then $X \vdash_\cL \ph$ if and only if $X \vdash_{\cL C} \ph$.
\end{prop}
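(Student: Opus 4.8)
The plan is to prove the two directions separately, the forward one being immediate and the reverse one carrying all the content. For the forward direction, since $\cL \subseteq \cL C$, Remark \ref{R:mon} gives $X \vdash_{\cL C} \ph$ as soon as $X \vdash_\cL \ph$. Before attacking the converse I would normalize the setup by observing that we may assume $C \cap L = \emp$: because $L \cup C = L \cup (C \setminus L)$, the language $\cL C$ is unchanged when $C$ is replaced by $C \setminus L$, so the genuinely new constants are exactly those of $C$ lying outside $L$.

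For the reverse direction, suppose $X \vdash_{\cL C} \ph$ with $X \subseteq \cL$ and $\ph \in \cL$. The obstacle is that $C$ may be uncountable, whereas the machinery for eliminating added constants, namely $C$-substitutions and Proposition \ref{P:con-elim}, is only available for countable $C$. I would resolve this by using $\si$-compactness to localize the derivation to a countable sub-signature. Applying Theorem \ref{T:pred-sig-cpctness} in the language $\cL C$ (whose signature is $L \cup C$), I obtain a countable $X_0 \subseteq X$ and a countable $L_0 \subseteq L \cup C$ with $X_0 \vdash_{\cL_0} \ph$, where $\cL_0$ denotes the language of signature $L_0$. For this derivation to be well-formed we have $X_0 \cup \{\ph\} \subseteq \cL_0$, so every symbol occurring in $X_0$ or $\ph$ lies in $L_0$; but these symbols also lie in $L$, since $X_0 \subseteq X \subseteq \cL$ and $\ph \in \cL$. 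Setting $M = L_0 \cap L$ and $C_0 = L_0 \cap C$, I then have $L_0 = M \cup C_0$ with $C_0$ countable, $X_0 \cup \{\ph\}$ contained in $\cL_M$ (the language of signature $M$), and $\cL_0 = \cL_M C_0$.

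The final step is to eliminate the countably many added constants $C_0$. Since $X_0$ is countable, the set $\var X_0 \cup \var \ph$ is countable, so there exists a $C_0$-substitution $\ul\si$ avoiding it. Proposition \ref{P:con-elim}, applied with ambient language $\cL_M$ and added constants $C_0$ to the derivation $X_0 \vdash_{\cL_M C_0} \ph$, then yields $X_0^{\ul\si} \vdash_{\cL_M} \ph^{\ul\si}$. Because $C_0 \subseteq C$ is disjoint from $M$ and $X_0 \cup \{\ph\} \subseteq \cL_M$, no constant of $C_0$ occurs in any formula of $X_0$ or in $\ph$, so $X_0^{\ul\si} = X_0$ and $\ph^{\ul\si} = \ph$. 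This gives $X_0 \vdash_{\cL_M} \ph$, and since $M \subseteq L$, Remark \ref{R:mon} upgrades it to $X_0 \vdash_\cL \ph$; monotonicity (rule (ii) of Definition \ref{D:pred-derivability}) then yields $X \vdash_\cL \ph$. I expect the main obstacle to be precisely the reduction from uncountable to countable $C$, which $\si$-compactness handles; everything downstream is routine, and the only real bookkeeping is confirming that $X_0$ and $\ph$ truly live in the constant-free part $\cL_M$, so that the chosen substitution $\ul\si$ acts trivially on them.
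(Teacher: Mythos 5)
Your proof is correct and follows essentially the same route as the paper's: $\si$-compactness (Theorem \ref{T:pred-sig-cpctness}) to localize the derivation to countable $X_0$ and a countable sub-signature, Proposition \ref{P:con-elim} with a $C_0$-substitution that acts trivially on $X_0$ and $\ph$ (since none of the new constants occur in them), and Remark \ref{R:mon} plus monotonicity to conclude. The only immaterial difference is order of operations: the paper first lifts the localized derivation up to $\cL C_0$ and eliminates constants with ambient language $\cL$, whereas you eliminate constants inside the countable language $\cL_M C_0$ and lift to $\cL$ at the end.
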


\begin{proof}
  The only if direction follows from Remark \ref{R:mon}. For the if direction,
  suppose $X \vdash_{\cL C} \ph$. By Theorem \ref{T:pred-sig-cpctness}, we may
  choose countable $X_0 \subseteq X$ and $L_0 \subseteq L \cup C$ such that $X_0
  \vdash_{\cL_0} \ph$. Let $C_0 = (L_0 \cap C) \setminus L$ denote the constant
  symbols that are in $L_0$ but not in $L$. Then $C_0$ is countable and $\cL_0
  \subseteq \cL C_0$. Hence, by Remark \ref{R:mon}, we have $X_0 \vdash_ {\cL
  C_0} \ph$. Let $\ul\si$ be a $C_0$-substitution that avoids $\var X_0 \cup 
  \{\ph\}$. Proposition \ref{P:con-elim} gives us $X_0^{\ul\si} \vdash_\cL
  \ph^{\ul\si}$. But $\sym \ph \subseteq L$, so $C_0 \cap \sym \ph = \emp$. In
  other words, none of the constants in $C_0$ appear in $\ph$. By induction
  on $\ph$, it follows that $\ph^{\ul\si} = \ph$. Similarly, $X_0^{\ul\si} =
  X_0$. Therefore, $X_0 \vdash_\cL \ph$, which gives $X \vdash_\cL \ph$.
\end{proof}

\subsection{Deduction with sentences}

Using constant expansions, we can connect derivability back to sentences in two
ways. The first is to verify the four rules given at the beginning of this
section.

\begin{prop}\label{P:pred-derivability}
  The derivability relation in $\cL$, when restricted to sentences, satisfies 
  (i)--(vi) in Definition \ref{D:derivability}, as well as the following:
  \begin{enumerate}[(i)$^0$, leftmargin=4em]
    \setcounter{enumi}{6}
    \item if $X \vdash \forall x \ph(x)$ and $t$ is a ground term, then $X
          \vdash \ph(t)$,
    \item if $c \notin \con(X \cup \ph(x))$ and $X \vdash_{\cL c} \ph(c)$, then
          $X \vdash \forall x \ph(x)$, and
    \item $\vdash t \beq t$ for all ground terms $t$,
    \item if $X \vdash s \beq t, \ph(s)$, then $X \vdash \ph(t)$.
  \end{enumerate}
\end{prop}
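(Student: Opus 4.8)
The plan is to dispatch the propositional rules and the two equality/instantiation rules quickly, and to reserve the real work for the generalization rule (viii)$^0$. Rules (i)--(vi) require nothing new: the full relation $\vdash$ on $\cL$ already satisfies them by Definition \ref{D:pred-derivability}, and each of these rules, applied to sentences, produces only sentences (for instance, if $\bigwedge \Phi \in \cL^0$ then every $\th \in \Phi$ lies in $\cL^0$, since $\free \bigwedge \Phi = \bigcup_{\th \in \Phi} \free \th$, and conversely for (iv)). Hence the restriction to $\cL^0$ inherits (i)--(vi) verbatim. Likewise, (ix)$^0$ is just the special case of (ix) in Definition \ref{D:pred-derivability} in which $t$ is required to be ground.

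Next I would handle (vii)$^0$ and (x)$^0$ by observing that a ground term is always safe to substitute. If $t$ is ground, then $\var t = \emp$, so $\bnd \ph \cap (\var t \setminus \{x\}) = \emp$, and the proposition on free substitutions gives that $t$ is free for $x$ in $\ph$. Thus (vii) applies directly to yield $X \vdash \ph(t/x) = \ph(t)$ from $X \vdash \forall x \ph(x)$, and the corollary that $\ph(t) \in \cL^0$ when $t$ is ground confirms the conclusion is a sentence. For (x)$^0$, the sentence setting forces $s \beq t \in \cL^0$, hence $\var s = \var t = \emp$, so $s$ and $t$ are ground and therefore free for $x$ in $\ph$; rule (x) then produces $X \vdash \ph(t/x)$ from $X \vdash s \beq t, \ph(s/x)$.

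The main obstacle is (viii)$^0$, which must convert a derivation using a witnessing constant $c$ into a universal generalization; here I would combine constant elimination (Proposition \ref{P:con-elim}) with bound renaming. Starting from $X \vdash_{\cL c} \ph(c)$, I apply Proposition \ref{P:con-elim} with $C = \{c\}$ to obtain a countable $X_0 \subseteq X$ such that $X_0^{\ul\si} \vdash_\cL \ph(c)^{\ul\si}$ for every $\{c\}$-substitution $\ul\si$ avoiding $\var X_0 \cup \{\ph(c)\}$. The passage to a countable $X_0$ is essential: $X$ itself may use uncountably many variables, so there need be no variable missing from $\var X$, whereas $\var X_0$ and $\var \ph$ are countable and $\Var$ is uncountable, so I may choose $y \in \Var$ with $y \notin \var X_0 \cup \var \ph$ and $y \ne x$. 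Setting $\ul\si = y/c$, this $\ul\si$ avoids $\var X_0 \cup \{\ph(c)\}$ (using $\var \ph(c) \subseteq \var \ph$), so $X_0^{\ul\si} \vdash_\cL \ph(c)^{\ul\si}$. Since $c \notin \con(X)$ we have $X_0^{\ul\si} = X_0$, and since $c \notin \con \ph$ and $\ul\si$ avoids $x$, the substitution identity $\ph(c/x)^{\ul\si} = \ph^{\ul\si}(c^{\ul\si}/x) = \ph(y/x)$ gives $X_0 \vdash_\cL \ph(y/x)$.

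To close, because $y \notin \free X_0 \cup \var \ph$ (indeed $\free X_0 = \emp$, as $X_0$ consists of sentences), the bound-renaming form (viii)$'$ of the generalization rule yields $X_0 \vdash_\cL \forall x \ph(x)$, and monotonicity (ii) then gives $X \vdash_\cL \forall x \ph(x)$. The delicate points I expect to verify with care are that the single chosen $y$ genuinely avoids everything needed both for the constant-elimination hypothesis and for the application of (viii)$'$, and that $\ph^{\ul\si} = \ph$ and $X_0^{\ul\si} = X_0$ precisely because $c$ occurs neither in $\ph$ nor in $X$.
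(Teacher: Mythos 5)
Your proof is correct and follows essentially the same route as the paper's: rules (i)--(vi), (vii)$^0$, (ix)$^0$, and (x)$^0$ are obtained by restriction together with the observation that ground terms are automatically free for $x$, while (viii)$^0$ is proved by passing to a countable $X_0 \subseteq X$, choosing a fresh variable $y$, applying constant elimination with the substitution $y/c$ to get $X_0 \vdash \ph(y)$, and then invoking the bound-renaming form (viii)$'$ of generalization followed by monotonicity. The only cosmetic difference is that you extract the countable $X_0$ directly from the statement of Proposition \ref{P:con-elim}, whereas the paper first invokes $\si$-compactness (Theorem \ref{T:pred-sig-cpctness}) and then applies constant elimination to $X_0$; the two bookkeeping choices are interchangeable.
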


\begin{proof}
  Since $\vdash$ satisfies (i)--(vi) and $\cL^0$ is closed under negation and
  conjunction, $\vdash$ still satisfies (i)--(vi) when restricted to sentences.
  Suppose $X \vdash \forall x \ph(x)$ and $t$ is a ground term. Then $\ph(t)$ is
  a sentence, and (vii) implies (vii)$^0$.

  For (viii)$^0$, suppose $c \notin \con(X \cup \ph(x))$ and $X \vdash_{\cL c}
  \ph(c)$. Since $\forall x \ph(x)$ is a sentence, we only need to show that $X
  \vdash \forall x \ph$. By Theorem \ref{T:pred-sig-cpctness}, we may choose
  countable $X_0 \subseteq X$ such that $X_0 \vdash_{\cL c} \ph(c)$. Choose $y
  \notin \var X_0 \cup \{\ph\}$. Then the $\{c\}$-substitution $y/c$ avoids
  $\var X_0 \cup \{\ph (c)\}$. Hence, by Proposition \ref{P:con-elim}, we have
  $X_0(y/c) \vdash \ph (c)(y/c)$. But $\ph(c)(y/c) = \ph'(c')$, where $c' =
  c(y/c) = y$ and $\ph' = \ph(y/c) = \ph$, since $c \notin \con \ph$. Thus,
  $\ph(c)(y/c) = \ph$. Similarly, since $c \notin \con X_0$, we also have
  $X_0(y/c) = X_0$. Therefore, $X_0 \vdash \ph(y)$. Now $y \notin \free X_0 \cup
  \var \ph$. Hence, from (viii)$'$, it follows that $X_0 \vdash \forall x \ph$,
  which gives $X \vdash \forall x \ph$.

  If $t$ is a ground term, then $t \beq t$ is a sentence, so (ix) implies
  (ix)$^0$. Now suppose $X \vdash s \beq t, \ph(s)$. Then $s \beq t$ is a
  sentence, which implies $s$ and $t$ are ground terms. Thus, $\ph(t)$ is a
  sentence, and (x) implies (x)$^0$.
\end{proof}

The second way to connect derivability to sentences is to replace the free
variables in open formulas with constants. For each $x \in \Var$, choose a
distinct constant symbol $c_x$ that is not already in $L$. Let $C = \{c_x \mid x
\in \Var\}$. A \emph{free eliminator} is a substitution $\si: \Var \to \cT_{\cL
C}$ such that for all $x \in \Var$, either $x^\si = x$ or $x^\si = c_x$.
  \index{free eliminator}%
If $x^\si = c_x$ for all $x \in \Var$, then $\si$ is called a \emph{full free
eliminator}. Note that if $\si$ is a full free eliminator, then $\ph^\si$ is a
sentence for all $\ph \in \cL$.

\begin{prop}\label{P:free-elim-deriv}
  Let $X \subseteq \cL$ and $\ph \in \cL$, and let $\si$ be a free eliminator.
  Then $X \vdash_\cL \ph$ if and only if $X^\si \vdash_{\cL C} \ph^\si$.
\end{prop}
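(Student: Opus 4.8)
The plan is to prove the two directions separately. For the forward direction I would exploit the minimality of $\vdash_\cL$, and for the reverse direction I would use $\si$-compactness together with the constant-elimination results already established.

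For $X \vdash_\cL \ph \Rightarrow X^\si \vdash_{\cL C}\ph^\si$, I would define a relation $\vdash'$ from $\fP\cL$ to $\cL$ by setting $X \vdash' \ph$ iff $X^\tau \vdash_{\cL C}\ph^\tau$ for \emph{every} free eliminator $\tau$ (ranging over all of them is what makes the quantifier rules go through). The goal is to verify that $\vdash'$ satisfies (i)--(x) of Definition \ref{D:pred-derivability}; minimality then gives $\vdash_\cL \subseteq \vdash'$, and applying this to the given $\si$ yields the claim. Rules (i)--(vi) and (ix) are routine, since substitution commutes with $\neg$ and $\bigwedge$ and sends $t \beq t$ to $t^\tau \beq t^\tau$. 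The substance is in the quantifier rules. For (viii), given $x \notin \free X$ and a fixed free eliminator $\rho$, one writes $(\forall x\ph)^\rho = \forall x\,\ph^{\rho_x}$, where $\rho_x$ is the free eliminator that agrees with $\rho$ except that $x^{\rho_x} = x$; the hypothesis with $\tau = \rho_x$ gives $X^{\rho_x} \vdash_{\cL C}\ph^{\rho_x}$, and since $x \notin \free X$ we have $X^{\rho_x} = X^\rho$, while Proposition \ref{P:free-sub-form} gives $x \notin \free X^\rho$; rule (viii) in $\cL C$ then produces $X^\rho \vdash \forall x\,\ph^{\rho_x} = (\forall x\ph)^\rho$. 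For (vii) and (x) I would first record the substitution identity $\ph(t/x)^\rho = \ph^{\rho_x}(t^\rho/x)$ and the fact that $t^\rho$ is free for $x$ in $\ph^{\rho_x}$ whenever $t$ is free for $x$ in $\ph$ (bound quantifier positions and free occurrences of $x$ survive a free eliminator, and $\var t^\rho \subseteq \var t$); the corresponding rules in $\cL C$ then apply verbatim.

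For the reverse direction, suppose $X^\si \vdash_{\cL C}\ph^\si$. By $\si$-compactness (Theorem \ref{T:pred-sig-cpctness}) for the language $\cL C$, together with Remark \ref{R:mon}, I would choose a countable $X_0 \subseteq X$ and a countable $C_0 \subseteq C$ with $X_0^\si \vdash_{\cL C_0}\ph^\si$, and then apply constant elimination (Proposition \ref{P:con-elim}) with a $C_0$-substitution $\ul\rho$ designed to undo $\si$. The clean case is when no eliminated variable occurs bound in $X_0 \cup \{\ph\}$: there one may take $\ul\rho(c_x) = x$, which avoids $\var X_0^\si \cup \{\ph^\si\}$ precisely because $\si$ has removed every free occurrence of each eliminated $x$, so con-elim gives $X_0 = (X_0^\si)^{\ul\rho} \vdash_\cL (\ph^\si)^{\ul\rho} = \ph$, and hence $X \vdash_\cL \ph$ by (ii).

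The main obstacle is exactly the remaining case of the reverse direction: a variable $x$ that is eliminated by $\si$ but also occurs bound in $X_0$ or $\ph$. Then $c_x \mapsto x$ is no longer a legitimate choice for $\ul\rho$, because $x \in \var X_0^\si$. To handle this I would instead take $\ul\rho(c_x) = y_x$ for fresh variables $y_x \notin \var X_0 \cup \var \ph$, obtain $(X_0^\si)^{\ul\rho} \vdash_\cL (\ph^\si)^{\ul\rho}$, and then restore the original variables. Since $(X_0^\si)^{\ul\rho}$ is $X_0$ with the free occurrences of each eliminated $x$ renamed to $y_x$ while the bound occurrences of $x$ are untouched, a variable permutation (Proposition \ref{P:perm-vars}) swapping $x \leftrightarrow y_x$ together with bound renaming (Proposition \ref{P:bnd-rename}) recovers $X_0 \vdash_\cL \ph$. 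This bookkeeping is the delicate point, and it is cleanest to invoke the standing convention that, up to interderivability, no variable need be both bound and free, thereby reducing everything to the clean case above.
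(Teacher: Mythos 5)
Your proposal is correct, but it splits from the paper in an interesting way. Your reverse direction is essentially the paper's own argument: the paper also passes to countable $X_0$ and $C_0$ via Proposition \ref{P:add-constants} and Theorem \ref{T:pred-sig-cpctness}, then applies Proposition \ref{P:con-elim} with the substitution $c_x \mapsto x$, using Propositions \ref{P:bnd-rename} and \ref{P:perm-vars} to deal with eliminated variables that occur bound. The only real difference there is bookkeeping order: the paper first renames the bound variables of $X_0^\si \cup \{\ph^\si\}$ away from $V_0 = \{x \mid c_x \in C_0\}$ so that $c_x \mapsto x$ itself avoids $\var X_0^\si \cup \{\ph^\si\}$, whereas you map $c_x$ to a fresh $y_x$ and then undo the damage with a variable permutation and bound renaming; both versions lean equally on the (routine, unproved-in-the-paper) fact that alphabetic variants are interderivable, so you are at parity with the paper on rigor there. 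Your forward direction, however, is genuinely different. The paper transports the derivation $X \vdash_{\cL C_0} \ph$ across $\si$ by introducing the inverse $C_0$-substitution $\ul\si : c_x \mapsto x$ and invoking the identities $(\psi^{\ul\si})^\si = \psi$ and $(\psi^\si)^{\ul\si} = \psi$ in a single step, while you run a minimality argument: define $X \vdash' \ph$ iff $X^\tau \vdash_{\cL C} \ph^\tau$ for \emph{every} free eliminator $\tau$, and verify closure under (i)--(x) of Definition \ref{D:pred-derivability}. Quantifying over all eliminators is exactly the right strengthening of the induction hypothesis: since $(\forall x \ph)^\rho = \forall x\,\ph^{\rho_x}$, rule (viii) forces you to use the hypothesis at the modified eliminator $\rho_x$, and your observation that $X^{\rho_x} = X^\rho$ when $x \notin \free X$ is what closes the loop. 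What your route buys is transparency: it proves, by induction on the derivability relation, precisely the transport step that the paper compresses into one line, at the cost of checking ten rules and of the substitution identities $\ph(t/x)^\rho = \ph^{\rho_x}(t^\rho/x)$ (which hold and are proved exactly like the paper's analogous identities for $C$-substitutions). What the paper's route buys is brevity and uniformity, since both directions then run through the same constant-expansion machinery. One small point of hygiene in your clean case: compactness can hand you constants $c_x \in C_0$ that do not actually occur in $X_0^\si \cup \{\ph^\si\}$, for which $c_x \mapsto x$ need not avoid $\var X_0^\si \cup \{\ph^\si\}$; either prune $C_0$ to the occurring constants via Proposition \ref{P:add-constants}, or simply use your fresh-variable substitution uniformly, which handles this for free.
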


\begin{proof}
  Suppose $X \vdash_\cL \ph$. Let $V' = \{x \in \Var \mid x^\si \ne x\}$ and let
  $C' = \{c_x \mid x \in V'\}$. Then $X \vdash_{\cL C'} \ph$ by Proposition
  \ref{P:add-constants}. By Theorem \ref{T:pred-sig-cpctness}, there exists
  countable $C_0 \subseteq C'$ such that $X \vdash_{\cL C_0} \ph$. Define the
  $C_0$-substitution $\ul\si$ by $c_x^ {\ul\si} = x$, so that
  $(\psi^{\ul\si})^\si = \psi$ for all $\psi \in \cL C_0$, and $(\psi^\si)^
  {\ul\si} = \psi$ whenever $\psi \in \cL$ and $\psi^\si \in \cL C_0$. Then
  $(X^{\ul\si})^\si \vdash_{\cL C_0} (\ph^{\ul\si})^\si$. But $X \subseteq \cL$
  and $\ph \in \cL$, so $X^{\ul\si} = X$ and $\ph^ {\ul\si} = \ph$. Therefore,
  $X^\si \vdash_{\cL C_0} \ph^\si$, so Proposition \ref{P:add-constants} gives
  $X^\si \vdash_{\cL C} \ph^\si$.

  Now suppose $X^\si \vdash_{\cL C} \ph^\si$. Note that $\psi \in \cL$ implies
  $\psi^\si \in \cL C'$. Hence, Proposition \ref{P:add-constants} implies $X^\si
  \vdash_{\cL C'} \ph^\si$. By Theorem \ref{T:pred-sig-cpctness}, we may choose
  countable $X_0 \subseteq X$ and $C_0 \subseteq C'$ so that $X_0^\si \vdash_
  {\cL C_0} \ph^\si$. Let $V_0 = \{x \in V' \mid c_x \in C_0\}$ and note that
  $V_0$ is countable. By Proposition \ref{P:bnd-rename}, we may assume $\bnd
  X_0^\si \cup \{\ph^\si\}$ is disjoint from both $V_0$ and $\free X_0^\si \cup 
  \{\ph^\si\}$. We may then use Proposition \ref{P:perm-vars} to ensure $\free
  X_0^\si \cup \{\ph^\si\}$ is also disjoint from $V_0$. Now define $\ul\si$ as
  above. If $c_x \in C_0$, then $c_x^{\ul\si} = x \in V_0$. Therefore, $\ul\si$
  avoids $\var X_0^\si \cup \{\ph^\si\}$. Proposition \ref{P:con-elim} now gives
  $(X_0^\si)^{\ul\si} \vdash_\cL (\ph^\si)^{\ul\si}$. Since $\ph \in \cL$ and
  $\ph^\si \in \cL C_0$, we have $(\ph^\si)^{\ul\si} = \ph$. Similarly, $
  (X_0^\si)^{\ul\si} = X_0$. Therefore, $X_0 \vdash_\cL \ph$.
\end{proof}

\subsection{Tautologies and consistency}\label{S:taut-pred}

\begin{prop}\label{P:exists_1}
  For any $x \in \Var$, we have ${} \vdash \exists x \, x \beq x$.
\end{prop}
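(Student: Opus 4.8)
The plan is to unfold the existential shorthand and then show that the universally quantified sentence $\forall x\,\neg(x \beq x)$ is inconsistent, from which the desired tautology follows immediately. Recall that $\exists x\,\ph$ abbreviates $\neg\forall x\,\neg\ph$, so $\exists x\,x \beq x$ is $\neg\forall x\,\neg(x \beq x)$. Writing $\psi = \forall x\,\neg(x \beq x)$, it therefore suffices to establish $\vdash \neg\psi$.

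First I would note that $x \in \Var \subseteq \cT$, so rule (ix) gives $\vdash x \beq x$; by (i) and (ii) this yields $\psi \vdash x \beq x$. On the other hand, (i) gives $\psi \vdash \psi$, and since $\psi = \forall x\,\neg(x \beq x)$, rule (vii)$'$ (instantiating the bound $x$ by itself) produces $\psi \vdash \neg(x \beq x)$. Thus $\psi$ proves both $x \beq x$ and its negation, so rule (v) gives $\psi \vdash \chi$ for every $\chi \in \cL$; in particular, $\psi \vdash \neg\psi$. Finally, Proposition \ref{P:derivability}(d) converts $\psi \vdash \neg\psi$ into $\vdash \neg\psi$, which is exactly $\vdash \exists x\,x \beq x$.

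The main point to be careful about is that $\bot$ has not yet been defined in the predicate setting at this stage of Section \ref{S:taut-pred}, so I would express the inconsistency of $\psi$ directly through rule (v) rather than through falsum. One should also confirm that the instantiation step uses (vii)$'$ — substituting $x$ for $x$, which is always free for $x$ in $\neg(x \beq x)$ — so that no freeness side-condition on a substituted term needs to be checked. Since each remaining step is a single immediate rule application, I do not anticipate any substantive obstacle.
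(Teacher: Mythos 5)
Your proof is correct and follows essentially the same route as the paper's: both derive $x \beq x$ via (ix) and $\neg(x \beq x)$ via (i) and (vii)$'$ from the hypothesis $\forall x\,\neg(x \beq x)$, then use (v) to exploit the resulting inconsistency. The only (cosmetic) difference is the last step, where you invoke the derived rule Proposition \ref{P:derivability}(d) while the paper instead applies the primitive case-distinction rule (vi) with the trivial branch $\neg\forall x\,\neg(x\beq x) \vdash \exists x\, x \beq x$; both are valid closings of the argument.
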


\begin{proof}
  Let $\ph(x) = (x \beq x)$. By (i) and (vii)$'$, we have $\forall x \neg
  \ph (x)
  \vdash \neg \ph(x)$. By (ix) and (ii), we have $\forall x \neg \ph(x) \vdash
  \ph(x)$. Hence, (v) implies $\forall x \neg \ph(x) \vdash \exists x \ph(x)$.
  Since $\exists x \ph(x) = \neg \forall x \neg \ph(x)$, we have $\neg \forall x
  \neg \ph(x) \vdash \exists x \ph(x)$, by (i). Therefore, the result follows
  from (vi).
\end{proof}

Let $\exists_1 = \exists \bx_0 \, \bx_0 \beq \bx_0$. Informally, $\exists_1$
says that at least one object exists. Justified by Proposition \ref{P:exists_1},
we define \emph{verum} and \emph{falsum}, respectively, by $\top = \exists_1$
and $\bot = \neg \top$.
  \index{verum}%
  \index{falsum}%
  \symindex{$\top$ (in $\cL$)}%
  \symindex{$\bot$ (in $\cL$)}%

More generally, for integers $n > 1$, we define
\[
  \ts{
    \exists_n = \exists \bx_0 \cdots \bx_{n - 1} \,
      \bigwedge_{i < j < n} \bx_i \not \beq \bx_j.
  }
\]
Informally, $\exists_n$ asserts that there are at least $n$ objects. Let
$\exists_{=n} = \exists_n \wedge \neg \exists_{n + 1}$, which asserts that there
are exactly $n$ objects. We also define $\exists_\infty = \bigwedge_{n =
1}^\infty \exists_n$, which say that infinitely many objects exist. This last
sentence is the only one that is not in $\cL_\fin$.

A set $X \subseteq \cL$ is \emph{inconsistent} if $X \vdash \ph$ for all $\ph
\in \cL$; it is otherwise \emph{consistent}. Note that $X$ is inconsistent if
and only if $X \vdash \bot$. Theorem \ref{T:deduc-con} is easily seen to hold
also for $\cL$.
  \index{inconsistent}%
  \index{consistent}%

A formula $\ph \in \cL$ is a \emph{tautology} if ${} \vdash \ph$; it is a
\emph{contradiction} if $\{\ph\}$ is inconsistent. Note that $\ph$ is a
tautology if and only if $\neg \ph$ is a contradiction, and vice versa.
As in Proposition \ref{P:sig-cpctness}, we have $X \vdash \ph$ if and only if
there exists a countable $X_0 \subseteq X$ such that $\vdash \bigwedge X_0 \to
\ph$.
  \index{tautology}%
  \index{contradiction}%

\subsection{Deductive and inductive theories}\label{S:pred-theories}

A set $T \subseteq \cL^0$ is a \emph{(deductive) theory} if $T \vdash \ph$
implies $\ph \in T$ for all $\ph \in \cL^0$. The intersection of any family of
theories is again a theory.
  \index{theory!deductive ---}%
Also, $\cL^0$ itself is a theory. Hence, if $X \subseteq \cL^0$, then we may
define \emph{the (deductive) theory generated by $X$}, denoted by $T(X)$ or
$T_X$, as the smallest theory having $X$ as a subset. Note that $T(X) = \{\ph
\in \cL^0 \mid X \vdash \ph\}$.
  \symindex{$T(X), T_X$ (in $\cL$)}%

We adopt the same notation for theories that we did in our propositional
calculus. The smallest theory is the set of tautological sentences, which we
denote by $\Taut$, or $\Taut_\cL$.
  \symindex{$\Taut_\cL$}%
Note that unlike the propositional case, $\Taut$ is not the set of tautologies.
A tautology $\ph$ is in $\Taut$ is and only if $\ph \in \cL^0$. The largest
theory is $\cL^0$. A theory $T$ is inconsistent if and only if $T = \cL^0$. The
definition of logical equivalence, and its associated notation, are all the same
as in the propositional calculus. Note that Lemma \ref{L:omit-psi} holds also in
$\cL^0$.

The construction of inductive derivability in predicate languages follows
exactly as it does in the propositional case. Let $\cL^\IS = \fP \cL^0 \times
\cL^0 \times [0, 1]$ denote the set of \emph{inductive statements} in $\cL$.
  \symindex{$\cL^\IS$}%
  \index{inductive statement}%
All of the results in Sections \ref{S:entire}--\ref{S:ind-cond} depend only the
fact that $\vdash_\cF$ satisfies (i)--(vi) of Definition \ref{D:derivability}.
Since Theorem \ref{P:pred-derivability} shows that $\vdash_\cL$  restricted to
$\cL^0$ also satisfies (i)--(vi), it follows that all of those results hold in
the predicate case as well, with $\cF$ replaced by $\cL^0$. We adopt all of the
notation and terminology of Sections \ref{S:entire}--\ref{S:ind-cond} to define,
in $\cL^\IS$, inductive theories, inductive conditions, inductive derivability,
and all their associated notions.

\subsection{Karp's calculus}\label{S:Karp-calc}

Karp's completeness theorem \cite[Theorem 11.4.1]{Karp1964}, which we present
later in Theorem \ref{T:pred-Karp-sent}, will be essential for us. Karp,
however, defines her deductive calculus in a different way. We present here
Karp's calculus, and show that it is equivalent to the calculus of natural
deduction that we defined earlier. We follow the presentation of her calculus
that is given in \cite[Chapter 4]{Keisler1971}.

Let $\La^-$ be the smallest subset of $\cL$ such that if $x, y \in \Var$, $t \in
\cT$,  $\ph, \psi \in \cL$, and $\Phi \subseteq \cL$ is countable with $\ph \in
\Phi$, then the following formulas are in $\La^-$:
\begin{enumerate}[($\La$1)]
  \item $(\ph \to \psi \to \ze) \to (\ph \to \psi) \to \ph \to \ze$
  \item $(\ph \to \neg \psi) \to \psi \to \neg \ph$
  \item $\bigwedge \Phi \to \ph$
  \item $\forall x \ph \to \ph(t/x)$ when $t$ is free for $x$ in $\ph$
  \item $x \beq x$
  \item $x \beq y \to y \beq x$
  \item $\ph \wedge t \beq x \to \ph(t/x)$ when $t$ is free for $x$ in $\ph$
\end{enumerate}
The set of \emph{axioms}, or \emph{logical theorems}, denoted by $\La =
\La_\cL$,
  \symindex{$\La_\cL$}%
  \index{axiom}%
is the smallest subset of $\cL$ such that
\begin{enumerate}[(I)]
  \item $\La^- \subseteq \La$,
  \item if $\psi, \psi \to \ph \in \La$, then $\ph \in \La$,
  \item if $\psi \to \ph \in \La$ and $x \notin \free \psi$, then $\psi \to
        \forall x \ph \in \La$,
  \item If $\Phi \subseteq \cL$ is countable and $\psi \to \th \in \La$ for all
        $\th \in \Phi$, then $\psi \to \bigwedge \Phi \in \La$.
\end{enumerate}
A \emph{proof of $\ph \in \cL$ from $X \subseteq \cL$}
  \index{proof}%
is an $ (\al + 1)$-sequence of formulas, $\ang{\ph_\be \mid \be \le \al}$, where
$\al$ is a countable ordinal, $\ph_\al = \ph$, and for each $\be \le \al$,
either $\ph_\be \in X \cup \La$, or there exist $i, j < \be$ such that $\ph_i =
(\ph_j \to \ph_\be)$, or there exists $\Phi \subseteq \{\ph_\xi \mid \xi <
\be\}$ such that $\ph_\be = \bigwedge \Phi$. Note that if $\ang{\ph_\be \mid \be
\le \al}$ is a proof of $\ph_\al$ from $X$, then for any $\be < \al$, it follows
that $\ang{\ph_\xi \mid \xi \le \be}$ is a proof of $\ph_\be$ from $X$. For $\ph
\in \cL$ and $X \subseteq \cL$, define $X \wdash \ph$ to mean there is a proof
of $\ph$ from $X$. Note that by (II) above and Proposition \ref{P:conj-axioms}
below, $\ph \in \La$ if and only if $\wdash \ph$.
  \symindex{$\wdash_\cL$}%

\begin{lemma}\label{L:add-ante}
  If $\ph \in \La$, then $\psi \to \ph \in \La$ for all $\psi \in \cL$.
\end{lemma}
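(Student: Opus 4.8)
The plan is to isolate the purely propositional content and reduce everything to a single weakening schema. Observe that the conclusion is immediate once we know that the schema $\ph \to \psi \to \ph$ lies in $\La$ for all $\ph, \psi \in \cL$: given $\ph \in \La$, the instance $\ph \to \psi \to \ph \in \La$ together with clause (II) (modus ponens) in the definition of $\La$ yields $\psi \to \ph \in \La$ at once, for every $\psi$. Thus the lemma is a one-line corollary of the weakening schema, and the whole burden of the proof is to establish that schema inside Karp's calculus.

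To establish $\ph \to \psi \to \ph \in \La$ I would proceed by the standard Hilbert-calculus bootstrapping, working only from the axioms of $\La^-$ and the closure clauses (I)--(IV). The usable propositional ingredients are the implication axiom ($\La$1), the contraposition axiom ($\La$2), the conjunction-elimination axiom ($\La$3), and clause (IV); crucially, no weakening axiom is assumed outright, so it must be derived. A convenient warm-up is an identity theorem: $\bigwedge\{\ph\} \to \ph$ is an instance of ($\La$3), whence clause (IV) applied to the single-element family $\{\ph\}$ (with antecedent $\bigwedge\{\ph\}$) gives $\bigwedge\{\ph\} \to \bigwedge\{\ph\} \in \La$. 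The identity and weakening schemas for an arbitrary $\ph$ are then obtained by the usual finitary derivations that chain implications through ($\La$1) and move negations through ($\La$2); these carry over verbatim from the finitary first-order Hilbert calculus, since they involve no infinitary features.

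The main obstacle is precisely this propositional derivation of the weakening schema: because ($\La$1) and ($\La$2) supply an $S$-style implication axiom and contraposition but not the weakening axiom $\ph \to \psi \to \ph$ itself, producing that schema requires the careful finitary bootstrapping sketched above rather than a one-step appeal, and everything else is routine. As a self-contained alternative that avoids singling out the schema, one can instead prove the lemma directly by induction on the construction of $\La$: setting $\La' = \{\chi \in \cL \mid \psi \to \chi \in \La \text{ for all } \psi\}$, one shows $\La'$ is closed under clauses (I)--(IV) and invokes the minimality of $\La$, in the style of the induction principle of Proposition \ref{P:Hilbert-induc}. In that formulation the modus-ponens clause is clean, handled by the instance $(\psi \to \chi \to \ph) \to (\psi \to \chi) \to \psi \to \ph$ of ($\La$1) followed by two applications of clause (II); the generalization and conjunction clauses are dispatched by pushing $\psi$ inside using clauses (III) and (IV), the elimination axiom ($\La$4), and a bound-renaming step (the $\La$-analogue of Proposition \ref{P:bnd-rename}) to prevent variable capture when $x \in \free \psi$. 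The base clause, however, again reduces to the same weakening schema, so the essential difficulty is unchanged.
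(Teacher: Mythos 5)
Your reduction of the lemma to the weakening schema $\ph \to \psi \to \ph \in \La$, followed by one use of clause (II), is exactly the paper's closing step; but the schema itself is never established in your proposal, and it is the entire content of the lemma. The route you defer to --- ``the usual finitary derivations that chain implications through ($\La$1) and move negations through ($\La$2),'' carrying over ``verbatim'' from finitary Hilbert calculi --- does not exist. Standard finitary systems secure weakening either by postulating it outright or by using additional axioms (such as conjunction introduction, $\ph \to \psi \to \ph \wedge \psi$, in Rautenberg's calculus) that are not in $\La^-$. In fact, no derivation treating $\to$ as a primitive connective can produce weakening from this basis: on the chain $0 < 1 < 2$ with $2$ the sole designated value, $\neg$ the order-reversing involution, $\wedge$ interpreted as minimum, and $a \to b$ defined to be $2$ when $a \le b$ and $0$ otherwise, every instance of ($\La$1), ($\La$2), ($\La$3) takes the value $2$ and clauses (II) and (IV) preserve designation (the matrix extends to ($\La$4)--($\La$7) and clause (III) by letting equations take the value $2$ and making values insensitive to terms and quantifiers), yet $\ph \to (\psi \to \ph)$ evaluates to $1 \to (2 \to 1) = 0$ when $\ph, \psi$ are given the values $1, 2$. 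So the ``careful finitary bootstrapping'' you invoke cannot be carried out at the schematic level at which you describe it, and your alternative induction over the construction of $\La$ bottoms out, as you yourself note, at the same unproven base case.

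The one idea your proposal is missing --- and it is the whole proof --- is that $\to$ and $\vee$ are shorthand, not primitives. Unwinding the definitions, $\psi \to \ph$ \emph{is} the formula $\neg(\neg\neg\psi \wedge \neg\ph)$. Hence the weakening formula $\ph \to (\psi \to \ph)$ is, symbol for symbol, the consequent of the ($\La$2) instance $((\neg\neg\psi \wedge \neg\ph) \to \neg\ph) \to (\ph \to \neg(\neg\neg\psi \wedge \neg\ph))$, whose antecedent is the ($\La$3) instance $\bigwedge\{\neg\neg\psi, \neg\ph\} \to \neg\ph$. One application of (II) therefore puts $\ph \to \psi \to \ph$ in $\La$, and a second application of (II), using $\ph \in \La$, yields $\psi \to \ph \in \La$. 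The entire argument is two axiom instances and two uses of modus ponens; the step you label ``the main obstacle'' is dissolved by this definitional unwinding, not by propositional bootstrapping.
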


\begin{proof}
  Let $\ph \in \La$ and $\psi \in \cL$. By ($\La$3), we have $\neg \neg \psi
  \wedge \neg \ph \to \neg \ph \in \La$. Thus, ($\La$2) and (II) give $\ph \to
  \neg (\neg \neg \psi \wedge \neg \ph) \in \La$. But unwinding our shorthand
  shows that $\neg (\neg \neg \psi \wedge \neg \ph) = \neg \psi \vee \ph = \psi
  \to \ph$. Therefore, $\ph \to \psi \to \ph \in \La$. A final application of 
  (II) yields $\psi \to \ph \in \La$.
\end{proof}

\begin{prop}\label{P:conj-axioms}
  The set $\La$ satisfies
  \begin{enumerate}[(I)$^\prime$, leftmargin=28pt]
    \setcounter{enumi}{3}
    \item If $\Phi \subseteq \La$ is countable, then $\bigwedge \Phi \in \La$.
  \end{enumerate}
\end{prop}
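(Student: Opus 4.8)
The plan is to derive $\bigwedge \Phi \in \La$ from the closure property (IV) together with modus ponens (II), via Lemma \ref{L:add-ante}. The conceptual obstacle is that (IV) only produces \emph{implications} of the form $\psi \to \bigwedge \Phi$, never the conjunction on its own; so I will route the argument through a single fixed formula that is already known to be an axiom, apply (IV) to get the implication, and then detach the antecedent.

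Concretely, I would fix any variable $x$ and set $\psi = (x \beq x)$. By axiom scheme ($\La$5) and clause (I) of the definition of $\La$, we have $\psi \in \La^- \subseteq \La$. Now let $\Phi \subseteq \La$ be countable. For each $\th \in \Phi$ we have $\th \in \La$, so Lemma \ref{L:add-ante} gives $\psi \to \th \in \La$. This is exactly the hypothesis of (IV), which therefore yields $\psi \to \bigwedge \Phi \in \La$. Since $\psi \in \La$ as well, modus ponens (II) detaches the antecedent and gives $\bigwedge \Phi \in \La$, as required.

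I expect there to be essentially no hard part: the whole content is recognizing that composing (IV) with a fixed axiom $\psi$ converts the conditional conclusion of (IV) into the unconditional one we want. The only points worth checking are that $\psi = (x \beq x)$ genuinely lies in $\La$ (immediate from ($\La$5)) and that Lemma \ref{L:add-ante} applies to every $\th \in \Phi$ (immediate from $\Phi \subseteq \La$). It is also worth noting that the argument covers the degenerate case $\Phi = \emp$ without modification: the implication hypothesis of (IV) holds vacuously, $\bigwedge \emp = \top$, and we obtain $\psi \to \top \in \La$ and hence $\top \in \La$, so no separate treatment of the empty conjunction is needed.
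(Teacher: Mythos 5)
Your proof is correct and takes essentially the same route as the paper's: the paper simply fixes $\th_0 \in \Phi$ as the anchor formula instead of $x \beq x$, applies Lemma \ref{L:add-ante} to get $\th_0 \to \th \in \La$ for all $\th \in \Phi$, and then invokes (IV) and (II) exactly as you do. Your use of an external anchor from ($\La$5) is an immaterial variation, though it does incidentally cover the degenerate case $\Phi = \emp$ (granting that (IV) is read as applying to the shorthand $\bigwedge \emp = \top$), which the paper's proof silently requires to be nonempty.
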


\begin{proof}
  Fix $\th_0 \in \Phi$. By Lemma \ref{L:add-ante}, $\th_0 \in \La$ and $\th_0
  \to \th \in \La$ for all $\th \in \Phi$. Hence, (IV) implies $\th_0 \to
  \bigwedge \Phi \in \La$, and therefore, using (II), we have $\bigwedge \Phi
  \in \La$.
\end{proof}

\begin{rmk}\label{R:prop-taut}
  Let $\cF$ be a propositional language with infinitely many propositional
  variables. Given a function $\tau: PV \to \cL$, we extend it to $\tau: \cF \to
  \cL$ recursively by $(\neg \ph)^\tau = \neg \ph^\tau$ and $(\bigwedge
  \Phi)^\tau = \bigwedge_{\ph \in \Phi} \ph^\tau$. If $\ph \in \cF_\fin$ is a
  tautology, then we call $\ph^\tau$ an \emph{instance of a finitary
  propositional tautology}. By Remark \ref{R:fin-Hilbert}, every such $\ph^\tau$
  can be derived using ($\La$1)--($\La$3), (I), (II), and (IV)$'$. Hence, every
  instance of a finitary propositional tautology is an axiom.
\end{rmk}

\begin{prop}\label{P:Hilbert-ded-thm}
  Let $\ph, \psi \in \cL$. Then $\psi \wdash \ph$ if and only if $\wdash \psi
  \to \ph$.
\end{prop}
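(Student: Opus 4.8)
The plan is to prove the two directions separately. The reverse direction (``if'') is an immediate consequence of modus ponens, while the forward direction (``only if'') is the classical deduction theorem argument, carried out by transfinite induction on the length of a proof.

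For the reverse direction, suppose $\wdash \psi \to \ph$. As noted just before Lemma \ref{L:add-ante}, since $\wdash \chi$ holds exactly when $\chi \in \La$, this means $\psi \to \ph \in \La$. A proof of $\ph$ from $\{\psi\}$ is then furnished by the three-term sequence $\ang{\psi, \psi \to \ph, \ph}$: the first term lies in the hypothesis set $\{\psi\}$, the second lies in $\La$, and the third is obtained from the first two by modus ponens. Hence $\psi \wdash \ph$.

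For the forward direction, suppose $\psi \wdash \ph$ and fix a proof $\ang{\ph_\be \mid \be \le \al}$ of $\ph = \ph_\al$ from $\{\psi\}$, where $\al$ is a countable ordinal. I would show by transfinite induction on $\be$ that $\psi \to \ph_\be \in \La$ for all $\be \le \al$; taking $\be = \al$ then gives $\psi \to \ph \in \La$, that is, $\wdash \psi \to \ph$. The inductive step splits into the cases permitted by the definition of a proof. If $\ph_\be \in \La$, then Lemma \ref{L:add-ante} yields $\psi \to \ph_\be \in \La$ directly. If $\ph_\be = \psi$ (the sole element of the hypothesis set), then $\psi \to \ph_\be$ is $\psi \to \psi$, an instance of a finitary propositional tautology, and hence an axiom by Remark \ref{R:prop-taut}. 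If $\ph_\be$ arises by modus ponens, so that $\ph_i = (\ph_j \to \ph_\be)$ for some $i, j < \be$, then the inductive hypothesis provides both $\psi \to (\ph_j \to \ph_\be) \in \La$ and $\psi \to \ph_j \in \La$; instantiating ($\La$1) as $(\psi \to \ph_j \to \ph_\be) \to (\psi \to \ph_j) \to \psi \to \ph_\be$ and applying rule (II) twice then gives $\psi \to \ph_\be \in \La$. Finally, if $\ph_\be = \bigwedge \Phi$ for some $\Phi \subseteq \{\ph_\xi \mid \xi < \be\}$, then the inductive hypothesis gives $\psi \to \th \in \La$ for every $\th \in \Phi$, whence rule (IV) delivers $\psi \to \bigwedge \Phi \in \La$.

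I do not anticipate a serious obstacle, as this is the standard deduction theorem adapted to Karp's infinitary Hilbert calculus. The two points requiring care are that the induction is genuinely transfinite—proof lengths are countable ordinals rather than natural numbers, so there is no finite base case to separate out—and that the infinitary conjunction step is handled precisely by the closure rule (IV), with the degenerate case $\Phi = \emp$ (giving $\ph_\be = \top$) covered uniformly since the hypothesis of (IV) is then vacuously satisfied. The modus ponens case is the one place where axiom ($\La$1) and the availability of propositional tautologies via Remark \ref{R:prop-taut} are essential.
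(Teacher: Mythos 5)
Your proposal is correct and follows essentially the same route as the paper's proof: transfinite induction over the given proof of $\ph$ from $\{\psi\}$, with the same case analysis (hypothesis/axiom via Remark \ref{R:prop-taut} and Lemma \ref{L:add-ante}, modus ponens via ($\La$1), conjunction via (IV)), and the same modus ponens argument for the converse. Working with membership in $\La$ and inducting on the index $\be$ rather than on proof length, as the paper does, is only a cosmetic difference.
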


\begin{proof}
  Suppose $\psi \wdash \ph$ and let $\ang{\ph_\be \mid \be \le \al}$ be a proof
  of $\ph$ from $\psi$. If $\al = 0$, then $\ph = \psi$ or $\ph \in \La$. If
  $\ph = \psi$, then $\wdash \ph \to \ph$ by Remark \ref{R:prop-taut}. If $\ph
  \in \La$, then $\wdash \psi \to \ph$ by Lemma \ref{L:add-ante}.

  Now assume $\al > 0$ and the result is true whenever there is a proof of
  length less than $\al$. As above, if $\ph \in \{\psi\} \cup \La$, then $\wdash
  \psi \to \ph$. Suppose $\ph_i = \ph_j \to \ph$ for some $i, j < \al$. Then
  $\wdash \psi \to \ph_j$ and $\wdash \psi \to \ph_j \to \ph$. By ($\La$1),
  \[
    \wdash (\psi \to \ph_j \to \ph) \to (\psi \to \ph_j) \to \psi \to \ph.
  \]
  With two applications of modus ponens, we obtain $\wdash \psi \to \ph$.
  Finally, suppose $\ph = \bigwedge \Phi$, where $\Phi \subseteq \{\ph_\xi
  \mid \xi < \be\}$. Then $\wdash \psi \to \th$ for all $\th \in \Phi$. From 
  (IV), it follows that $\wdash \psi \to \ph$.

  Conversely, suppose $\wdash \psi \to \ph$. Then $\psi \wdash \psi \to \ph$ and
  $\psi \wdash \psi$. Applying modus ponens gives $\psi \wdash \ph$.
\end{proof}

The proof of Proposition \ref{P:Hilbert-induc} carries through so that it also
holds in this setting.

\begin{thm}\label{T:pred-Hilbert=nat}
  Let $X \subseteq \cL$ and $\ph \in \cL$. Then $X \wdash \ph$ if and only if $X
  \vdash \ph$.
\end{thm}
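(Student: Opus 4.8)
The plan is to adapt the proof of Theorem \ref{T:Hilbert=nat} to the predicate setting, the new work being entirely in handling the two extra pairs of rules—those for $\forall$ and for $\beq$—against Karp's equality and quantifier axioms ($\La$4)--($\La$7) and the generalization clause (III) that distinguish $\La$ from its propositional counterpart. As in the propositional case, I would prove the two implications separately, using throughout the predicate version of Proposition \ref{P:Hilbert-induc} (whose proof, as the excerpt notes, carries through), which characterizes $\wdash$ as the smallest relation satisfying (1) $X \wdash \th$ for all $\th \in X \cup \La$, (2) closure under modus ponens, and (3) closure under countable conjunction.

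For $X \wdash \ph \Rightarrow X \vdash \ph$, it suffices by this induction principle to verify that $\vdash$ satisfies (1)--(3). Clause (2) follows from Proposition \ref{P:derivability}(a),(b), and clause (3) is exactly (iv). The substance is clause (1), i.e.\ $\La \subseteq \{\th \in \cL \mid {} \vdash \th\}$. Since $\La$ is the smallest set closed under (I)--(IV), I would show $\{\th \mid {} \vdash \th\}$ is itself closed under those four clauses: (II) is modus ponens; (IV) follows from (a) and (iv); and (III) holds because $\vdash \psi \to \ph$ gives $\psi \vdash \ph$ by (a), whence $\psi \vdash \forall x \ph$ by (viii) using $x \notin \free \psi$, and then $\vdash \psi \to \forall x \ph$ by (a). For (I), the axioms ($\La$1)--($\La$3) are derived exactly as in Theorem \ref{T:Hilbert=nat}; ($\La$4) is immediate from (vii) and (a); ($\La$5) is (ix); and ($\La$6),($\La$7) follow from (ix), (x), and a preliminary symmetry lemma, $s \beq t \vdash t \beq s$, obtained by applying (x) to the atomic formula $z \beq s$ (with $z$ fresh) using $\vdash s \beq s$.

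For the converse $X \vdash \ph \Rightarrow X \wdash \ph$, I would show that $\wdash$ satisfies all of (i)--(x), so that minimality of $\vdash$ yields ${\vdash} \subseteq {\wdash}$. Rules (i)--(vi) go through as in the first-order case (using Remark \ref{R:prop-taut} for the tautology instances needed in (v),(vi), and ($\La$3) together with clause (3) for (iii),(iv)), and (vii) is ($\La$4) with modus ponens. For (ix) I would first prove generalization of theorems—if $\wdash \chi$ then $\wdash \forall x \chi$, by choosing a sentence $\sigma \in \La$, forming $\sigma \to \chi \in \La$ via Lemma \ref{L:add-ante}, applying (III), and discharging $\sigma$ by (II)—and then combine it with ($\La$5) and ($\La$4) to get $\wdash t \beq t$. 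I expect the genuinely new rules (viii) and (x) to be the main obstacle. For (viii), given $x \notin \free X$, a $\wdash$-proof of $\ph$ from $X$ is a countable sequence and so meets only a countable $X_0 \subseteq X$; setting $\psi = \bigwedge X_0$, the deduction theorem (Proposition \ref{P:Hilbert-ded-thm}) gives $\wdash \psi \to \ph$, clause (III) gives $\wdash \psi \to \forall x \ph$ (as $x \notin \free \psi$), and $X \wdash \psi$ with modus ponens gives $X \wdash \forall x \ph$. Rule (x) is the Leibniz substitution principle, which I would derive from ($\La$6), ($\La$7), and the symmetry lemma, together with generalization of theorems and ($\La$4) to instantiate the substituted variable; the delicate points—the ``free for'' side conditions and the fresh-variable choices needed to avoid capture—are where the argument will demand the most care.
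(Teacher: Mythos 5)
Your proposal is correct and takes essentially the same route as the paper's proof: the forward direction reduces, via the induction principle for $\wdash$, to deriving every axiom in $\La$ by natural deduction (induction on (I)--(IV), with ($\La$1)--($\La$5) handled just as you say and the equality axioms obtained through rule (x)), and the converse verifies (i)--(x) for $\wdash$ using countability of proofs, the deduction theorem, ($\La$3), and clause (III), exactly as in your sketch of (viii). The only cosmetic differences are that the paper derives ($\La$6), ($\La$7), and rule (x) by generalizing with (viii) and instantiating with (vii) rather than via your fresh-variable symmetry lemma, and gets (ix) from the already-established (viii) rather than a separately proved generalization-of-theorems, but these are the same ideas in a slightly different order.
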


\begin{proof}
  We first prove that $X \wdash \ph$ implies $X \vdash \ph$. It suffices to show
  that (1)--(3) in Proposition \ref{P:Hilbert-induc} hold when $\wdash$ is
  replaced by $\vdash$. The proof of Theorem \ref{T:Hilbert=nat} carries through
  in this case, leaving us only to show that $\vdash \ph$ whenever $\ph \in
  \La$. We prove this by induction using (I)--(IV). Our base case is (I), in
  which we prove that $\vdash \ph$ for all $\ph \in \La^-$.

  Axioms of the form ($\La$1) and ($\La$2) are covered by Remark 
  \ref{R:prop-taut}. Axioms of the form ($\La$3)--($\La$5) are covered by (iii),
  (vii), and (ix), respectively. For ($\La$6), let $s = x$, $t = y$, and $\ph =
  (x \beq z)$. By (x), we have $x \beq y, x \beq z \vdash y \beq z$, which gives
  \[
    \vdash x \beq z \to x \beq y \to y \beq z.
  \]
  From here, (viii) gives
  \[
    \vdash \forall z (x \beq z \to x \beq y \to y \beq z),
  \]
  so that by (vii),
  \[
    \vdash (x \beq z \to x \beq y \to y \beq z)(x/z)
      = (x \beq x \to x \beq y \to y \beq x).
  \]
  Hence, $x \beq x \vdash x \beq y \to y \beq x$. Combined with $\vdash x \beq
  x$, we get ($\La$6).

  Finally, for ($\La$7), suppose $t$ is free for $x$ in $\ph$. By (x), we have
  $x \beq t, \ph \vdash \ph(t/x)$. Reversing the roles of $x$ and $y$ in 
  ($\La$6) gives $\vdash y \beq x \to x \beq y$. From (viii) and (vii), it
  follows that $\vdash t \beq x \to x \beq t$. Putting these together, we obtain
  $\vdash t \beq x \wedge \ph \to \ph(t/x)$, and this completes our base case.

  For inductive step (II), suppose $\psi, \psi \to \ph \in \La$ and $\vdash
  \psi, \psi \to \ph$. Then we directly have $\vdash \ph$, and we are done. Step
  (IV) is equally straightforward. For step (III), suppose $\psi \to \ph \in
  \La$ with $x \notin \free \psi$ and $\vdash \psi \to \ph$. Then $\psi \vdash
  \ph$, so that (viii) implies $\psi \vdash \forall x \ph$, which gives $\vdash
  \psi \to \forall x \ph$.

  To prove that $X \vdash \ph$ implies $X \wdash \ph$, it suffices to show that
  (i)--(x) hold when $\vdash$ is replaced by $\wdash$. The fact that (i)--(vi)
  hold for $\wdash$ follows exactly as in the propositional case. We get (vii)
  from ($\La$4). For (viii), suppose $x \notin \free X$ and $X \wdash \ph$. Fix
  a proof of $\ph$ from $X$ and let $X_0$ be the set of $\th \in X$ that appear
  in the proof. Since proofs have countable lengths, $X_0$ is countable. Also,
  $X_0 \wdash \ph$. By ($\La$3), we have $\bigwedge X_0 \wdash \ph$. Lemma
  \ref{P:Hilbert-ded-thm} implies $\wdash \bigwedge X_0 \to \ph$, so that (III)
  gives $\wdash \bigwedge X_0 \to \forall x \ph$. Since $X \wdash \bigwedge
  X_0$, an application of modus ponens yields $X \wdash \forall x \ph$.

  From ($\La$5), (viii), and (vii), we obtain (ix). For (x), suppose $s$ and $t$
  are free for $x$ in $\ph$ and $X \wdash s \beq t, \ph(s/x)$. By ($\La$7), 
  (viii), and (vii), we have $\wdash \ph(s/x) \wedge t \beq s \to \ph(t/x)$.
  Hence, $t \beq s, \ph(s/x) \wdash \ph(t/x)$. In the same way, but using 
  ($\La$6), we get $s \beq t \wdash t \beq s$. Combining these gives $X \wdash
  \ph(t/x)$.
\end{proof}

\section{Predicate models}\label{S:pred-models}

In this section, we present the semantics of both deductive and inductive
predicate logic. We define satisfiability and consequence, and prove
$\si$-compactness, soundness, and completeness. As with the
predicate calculus, the bulk of our work will be in the deductive case. The
inductive case will require very little modification from its presentation in
Chapter \ref{Ch:prop-models}.

\subsection{Strict satisfiability}

Let $L$ be an extralogical signature and $\cL$ the set of formulas built from
$L$. We will use the two phrases, ``$L$-structure'' and ``$\cL$-structure,''
interchangeably. Let $\om = (A, L^\om)$ be an $\cL$-structure. For ground terms
$t$, we define $t^\om \in A$ recursively by $(f t_1 \cdots t_n)^\om = f^\om
(t_1^\om, \ldots, t_n^\om)$.

An \emph{assignment $v$ into $A$} is a function $v: \Var \to A$. We can extend
$v$ to a function $v_\om: \cT \to A$ by $v_\om(c) = c^\om$ and $v_\om(f t_1
\cdots t_n) = f^\om v_\om (t_1) \cdots v_\om(t_n)$. The extended function
$v_\om$ is called an \emph{assignment into $\om$}. When there is no risk of
confusion, we will omit the subscript and also write $v$ for the extended
function $v_\om$.
  \index{assignment}%

Note that if $t$ is a ground term, then $v(t) = t^\om$. More generally, if $t =
t(x_1, \ldots, x_n)$, then $v(t)$ depends only on $v(x_1), \ldots, v(x_n)$. In
this case, we will write $t^\om[\vec a]$, for $\vec a \in A^n$, as shorthand for
$v(t)$, where $v$ is any assignment satisfying $v(x_i) = a_i$.
  \symindex{$t^\om[\vec a]$}%

Given an assignment $v$, if $x \in \Var$ and $a \in A$, then we define a new
assignment $v_x^a$ by $v_x^a(x) = a$ and $v_x^a(y) = v(y)$ for $y \ne x$.

\begin{defn}\label{D:strict-sat}
  Let $\om$ be a structure and $v$ an assignment into $\om$. For $\ph \in \cL$,
  we define $\om \tDash \ph[v]$ recursively as follows:
  \begin{enumerate}[(i)]
    \item $\om \tDash (s \beq t)[v]$ if and only if $v(s) = v(t)$,
    \item $\om \tDash (r t_1 \cdots t_n)[v]$ if and only if $r^\om v(t_1) \cdots
          v(t_n)$,
    \item $\om \tDash (\neg \ph)[v]$ if and only if $\om \ntDash \ph[v]$,
    \item $\om \tDash (\bigwedge \Phi)[v]$ if and only if $\om \tDash \ph[v]$ for
          all $\ph \in \Phi$, and
    \item $\om \tDash (\forall x \ph)[v]$ if and only $\om \tDash \ph[v_x^a]$ for
          all $a \in A$.
  \end{enumerate}
  If $\om \tDash \ph[v]$, we say that \emph{$\om$ strictly satisfies $\ph$ with
  $v$}. Note that if $\ph \in \cL_\fin$, then $\tDash$ is the usual notion of
  satisfiability from first-order logic.
\end{defn}
  \index{satisfiable!strictly ---}%
  \symindex{$\om \tDash \ph[v]$}

If $\ph = \ph(x_1, \ldots, x_n)$ and $v$ and $v'$ are assignments that agree on
$x_1, \ldots, x_n$, then $\om \tDash \ph[v]$ if and only if $\om \tDash \ph
[v']$. In this case, we will write $\om \tDash \ph[\vec a]$, where $\vec a \in
A^n$, to mean that $\om \tDash \ph[v]$ for all assignments $v$ satisfying
$v(x_i) = a_i$. In particular, if $\ph$ is a sentence, then $\om \tDash \ph$
means that $\om \tDash \ph[v]$ for all assignments $v$. A formula $\ph$ is said
to be \emph{strictly satisfiable} if $\om \tDash \ph[v]$ for some structure
$\om$ and some assignment $v$.

The proofs of the following three theorems are the same as in first-order logic,
except we use Definition \ref{D:strict-sat}(iv). See, for instance, \cite
[Theorems 2.3.1, 2.3.4, and 2.3.5]{Rautenberg2010} for details.

\begin{thm}[Coincidence theorem]\label{T:coinc-thm}
  Let $\om$ and $\om'$ be structures with a common domain. Let $v$ and $v'$ be
  assignments into $\om$ and $\om'$, respectively. Let $\ph \in \cL$ and assume
  \begin{enumerate}[(i)]
    \item $v(x) = v'(x)$ for all $x \in \free \ph$, and
    \item $\s^\om = \s^{\om'}$ for all $\s \in \sym \ph$.
  \end{enumerate}
  Then $\om \tDash \ph[v]$ if and only if $\om' \tDash \ph[v']$.
\end{thm}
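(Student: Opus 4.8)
The plan is to prove the theorem by formula induction, using the principle of formula induction for $\cL$, but first establishing the analogous statement for terms, which will power all of the base cases. Specifically, I would prove a \emph{term coincidence lemma}: if $t \in \cT$, and $v(x) = v'(x)$ for all $x \in \var t$ while $\s^\om = \s^{\om'}$ for all $\s \in \sym t$, then $v(t) = v'(t)$, where $v$ and $v'$ denote the extended assignments into $\om$ and $\om'$, respectively (recall these share the common domain $A$). This follows by a routine induction on the structure of $t$: for a constant $c$ we have $v(c) = c^\om = c^{\om'} = v'(c)$ since $c \in \sym t$; for a variable $x$ we have $v(x) = v'(x)$ since $x \in \var t$; and for a function term $f t_1 \cdots t_n$, the containments $\var t_i \subseteq \var t$ and $\sym t_i \subseteq \sym t$ let me apply the inductive hypothesis to each $t_i$, after which $f^\om = f^{\om'}$ forces the composite values to agree.

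With the term lemma available, I would carry out the formula induction on $\ph$. For the prime base cases: if $\ph = (s \beq t)$, then $\var s, \var t \subseteq \var \ph = \free \ph$ and $\sym s, \sym t \subseteq \sym \ph$, so the lemma gives $v(s) = v'(s)$ and $v(t) = v'(t)$; hence $v(s) = v(t)$ iff $v'(s) = v'(t)$, which is precisely $\om \tDash \ph[v]$ iff $\om' \tDash \ph[v']$. The case $\ph = r t_1 \cdots t_n$ is identical, using $r^\om = r^{\om'}$. The negation step is immediate, since $\free \neg \ph = \free \ph$ and $\sym \neg \ph = \sym \ph$. For $\ph = \bigwedge \Phi$, I note that $\free \th \subseteq \free \ph$ and $\sym \th \subseteq \sym \ph$ for each $\th \in \Phi$, so both hypotheses pass to every conjunct; the inductive hypothesis then equates satisfaction of each $\th$ under $(\om, v)$ and $(\om', v')$, and the recursive clause for $\bigwedge$ closes the case.

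The one step requiring genuine care, and the one I expect to be the main obstacle, is the universal quantifier $\ph = \forall x \psi$. Here $\free \ph = \free \psi \setminus \{x\}$, so the free-variable hypothesis is strictly weaker than what the inductive hypothesis for $\psi$ demands, and the resolution is that the quantifier itself supplies the missing agreement at $x$. Fixing an arbitrary $a \in A$, I would compare the modified assignments $v_x^a$ and ${v'}_x^a$ on $\free \psi$: if $y = x$ both assign $a$, while if $y \in \free \psi$ with $y \ne x$ then $y \in \free \psi \setminus \{x\} = \free \ph$, so $v_x^a(y) = v(y) = v'(y) = {v'}_x^a(y)$. Thus $v_x^a$ and ${v'}_x^a$ agree on $\free \psi$, and since $\sym \psi = \sym \ph$ the symbol hypothesis carries over verbatim. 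The inductive hypothesis applied to $\psi$ then yields $\om \tDash \psi[v_x^a]$ iff $\om' \tDash \psi[{v'}_x^a]$, and since $a \in A$ was arbitrary, the recursive clause for $\forall$ gives $\om \tDash (\forall x \psi)[v]$ iff $\om' \tDash (\forall x \psi)[v']$, completing the induction.
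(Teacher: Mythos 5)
Your proof is correct and is essentially the paper's own argument: the paper proves this theorem by citing the standard first-order proof (Rautenberg, Theorem 2.3.1), noting that the only modification needed is the clause for countable conjunctions, and your write-up is exactly that standard induction---term coincidence lemma, then formula induction with the quantifier case repaired by the modified assignments $v_x^a$, ${v'}_x^a$ agreeing on $\free\psi$---with the countable $\bigwedge$ step handled correctly.
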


\begin{thm}[Invariance theorem]\label{T:invar-thm}
  Let $\om$ and $\om'$ be isomorphic $\cL$-structures and let $g: \om \to \om'$
  be an isomorphism. Let $v$ be an assignment into $\om$ and define the
  assignment $v'$ into $\om'$ by $v'(x) = g v(x)$. Then $\om \tDash \ph[v]$ if
  and only if $\om' \tDash \ph[v']$, for all $\ph \in \cL$. In particular, if
  $\ph = \ph(x_1, \ldots, x_n)$, then $\om \tDash \ph[\vec a]$ if and only if
  $\om' \tDash \ph[g \vec a]$ for all $\vec a \in A^n$, where $A$ is the domain
  of $\om$. Consequently, if $\ph \in \cL^0$ is a sentence, then $\om \tDash
  \ph$ if and only if $\om' \tDash \ph$.
\end{thm}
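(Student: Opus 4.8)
The plan is to prove the biconditional $\om \tDash \ph[v] \iff \om' \tDash \ph[v']$ by formula induction on $\ph$ (equivalently, transfinite induction on $\rk \ph$), after first establishing the corresponding fact for terms. The term lemma states that $g(v_\om(t)) = v'_{\om'}(t)$ for every $t \in \cT$, where $v'(x) = g v(x)$. I would prove this by term induction: for a constant $c$, the isomorphism condition $g c^\om = c^{\om'}$ gives $g(v_\om(c)) = g(c^\om) = c^{\om'} = v'_{\om'}(c)$; for a variable $x$, the definition of $v'$ gives $g(v_\om(x)) = g(v(x)) = v'(x)$; and for a function term $f t_1 \cdots t_n$, the homomorphism identity $g f^\om \vec a = f^{\om'} g \vec a$ together with the inductive hypothesis $g(v_\om(t_i)) = v'_{\om'}(t_i)$ yields $g(v_\om(f \vec t)) = f^{\om'}(v'_{\om'}(t_1), \ldots, v'_{\om'}(t_n)) = v'_{\om'}(f \vec t)$.

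With the term lemma in hand, I would carry out the main induction. For a prime formula $s \beq t$, the fact that $g$ is injective gives $v(s) = v(t) \iff g(v(s)) = g(v(t))$, which by the term lemma is $v'(s) = v'(t)$; for a prime formula $r t_1 \cdots t_n$, the isomorphism property ``$r^\om \vec a$ iff $r^{\om'} g \vec a$'' combined with the term lemma gives $r^\om v(t_1) \cdots v(t_n) \iff r^{\om'} v'(t_1) \cdots v'(t_n)$. The negation case is immediate from the inductive hypothesis and clause (iii) of Definition \ref{D:strict-sat}. The infinitary conjunction $\bigwedge \Phi$---the case flagged as the one requiring clause (iv)---is equally routine: $\om \tDash (\bigwedge \Phi)[v]$ holds iff $\om \tDash \psi[v]$ for every $\psi \in \Phi$, which by the inductive hypothesis holds iff $\om' \tDash \psi[v']$ for every $\psi \in \Phi$, i.e. iff $\om' \tDash (\bigwedge \Phi)[v']$.

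The main obstacle is the quantifier case $\forall x \psi$, which is where bijectivity of $g$ (not merely its being a homomorphism) is essential. Here $\om \tDash (\forall x \psi)[v]$ means $\om \tDash \psi[v_x^a]$ for all $a \in A$, where $A$ is the domain of $\om$. The key observation is that the ``primed'' version of $v_x^a$, namely $y \mapsto g(v_x^a(y))$, coincides with $(v')_x^{g(a)}$: at $x$ it takes the value $g(a)$, and at $y \ne x$ it takes the value $g(v(y)) = v'(y)$. Applying the inductive hypothesis to the assignment $v_x^a$ therefore gives $\om \tDash \psi[v_x^a] \iff \om' \tDash \psi[(v')_x^{g(a)}]$. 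Since $g \colon A \to B$ is surjective onto the domain $B$ of $\om'$, as $a$ ranges over $A$ the element $g(a)$ ranges over all of $B$, so ``$\om \tDash \psi[v_x^a]$ for all $a \in A$'' is equivalent to ``$\om' \tDash \psi[(v')_x^b]$ for all $b \in B$,'' which is exactly $\om' \tDash (\forall x \psi)[v']$.

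Finally, the two specializations follow quickly from the main equivalence. For $\ph = \ph(\vec x)$, choosing $v$ with $v(x_i) = a_i$ makes $v'(x_i) = g(a_i)$, and since satisfaction depends only on the values of the assignment on $\free \ph$ (noted just before the theorem statement), the equivalence reads $\om \tDash \ph[\vec a] \iff \om' \tDash \ph[g \vec a]$. For a sentence $\ph$, where $\free \ph = \emp$, the equivalence $\om \tDash \ph[v] \iff \om' \tDash \ph[v']$ holds for every pair $v, v'$ of the prescribed form and so collapses to $\om \tDash \ph \iff \om' \tDash \ph$.
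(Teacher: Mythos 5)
Your proof is correct and is essentially the paper's own argument: the paper proves this theorem by noting it is the standard first-order induction (citing Rautenberg, Theorem 2.3.5) with the sole modification being the countable-conjunction clause of Definition \ref{D:strict-sat}(iv), which is exactly the term lemma plus formula induction you spell out. Your treatment of the quantifier case via $(v')_x^{g(a)}$ and surjectivity of $g$ is the standard step, so nothing further is needed.
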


If $v$ is an assignment into $\om$ and $\si$ is a substitution, then $v^\si$ is
the assignment defined by $v^\si(x) = v(x^\si)$. By induction on $t$, we have
$v^\si(t) = v(t^\si)$ for all $t \in \cT$.

\begin{thm}[Substitution theorem]
  Let $v$ be an assignment into the structure $\om$. Let $\ph \in \cL$ and let
  $\si$ be a substitution. If $\si$ is free for $\ph$, then $\om \tDash
  \ph^\si[v]$ if and only if $\om \tDash \ph[v^\si]$. In particular, if $t$ is
  free for $x$ in $\ph$, then $\om \tDash \ph(t/x)[v]$ if and only if $\om
  \tDash \ph[v_x^a]$, where $a = v(t)$.
\end{thm}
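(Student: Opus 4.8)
The plan is to prove the main biconditional by formula induction (equivalently, by induction on $\rk \ph$), and then obtain the ``in particular'' clause as an immediate specialization. Throughout I would use the identity $v^\si(t) = v(t^\si)$ for terms, established just above the statement. For the base case, let $\ph$ be prime. If $\ph = (s \beq t)$, then $\ph^\si = (s^\si \beq t^\si)$, so $\om \tDash \ph^\si[v]$ iff $v(s^\si) = v(t^\si)$, which by the term identity is $v^\si(s) = v^\si(t)$, i.e.\ $\om \tDash \ph[v^\si]$; the case $\ph = r t_1 \cdots t_n$ is identical coordinatewise. No freeness hypothesis is needed here, since prime formulas contain no quantifiers.

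For the inductive steps of negation and conjunction, I would first record that freeness is inherited along these connectives: since neither $\neg$ nor $\bigwedge$ introduces a quantifier, $\si$ free for $\neg \psi$ gives $\si$ free for $\psi$, and $\si$ free for $\bigwedge \Phi$ gives $\si$ free for each $\th \in \Phi$ (an offending scope-plus-free-occurrence configuration in a subformula transfers verbatim to the whole). Granting this, the case $\ph = \neg \psi$ is immediate from $\ph^\si = \neg \psi^\si$, Definition \ref{D:strict-sat}(iii), and the inductive hypothesis applied to $\psi$; the case $\ph = \bigwedge \Phi$ is the same using Definition \ref{D:strict-sat}(iv) and applying the inductive hypothesis to each $\th \in \Phi$.

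The quantifier case $\ph = \forall x \psi$ is the crux and the main obstacle. Here $\ph^\si = \forall x \psi^\tau$, where $\tau$ agrees with $\si$ off $x$ and fixes $x$. Unwinding Definition \ref{D:strict-sat}(v), it suffices to show, for each $a$ in the domain, that $\om \tDash \psi^\tau[v_x^a]$ iff $\om \tDash \psi[(v^\si)_x^a]$. I would apply the inductive hypothesis to $\psi$ with the substitution $\tau$, which requires two facts, both extracted from the hypothesis that $\si$ is free for $\forall x \psi$. First, $\tau$ is free for $\psi$: for the variable $x$ this is automatic since $x^\tau = x$ is always free for $x$, and for $z \ne x$ we have $z^\tau = z^\si$, which is free for $z$ in $\forall x \psi$ and hence free for $z$ in $\psi$ (here $z \ne x$ is essential, so that the outer $\forall x$ cannot create a new offending scope in passing from $\psi$ up to $\forall x \psi$). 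The inductive hypothesis then gives $\om \tDash \psi^\tau[v_x^a]$ iff $\om \tDash \psi[(v_x^a)^\tau]$. Second, the assignments $(v_x^a)^\tau$ and $(v^\si)_x^a$ agree on $\free \psi$: they agree at $x$ (both give $a$), and for $y \in \free \psi$ with $y \ne x$ one computes $(v_x^a)^\tau(y) = v_x^a(y^\si)$ and $(v^\si)_x^a(y) = v(y^\si)$, which coincide provided $x \notin \var(y^\si)$. This last containment is exactly what freeness forbids: if $x \in \var(y^\si)$ then, since $\si$ is free for $\forall x \psi$, the variable $x$ would be free for $y$ in $\forall x \psi$, yet taking $\ze = \psi$ shows $\psi$ lies in the scope of $\forall x$ and has a free occurrence of $y$ in $\forall x \psi$, so $x$ is \emph{not} free for $y$, a contradiction. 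With both facts in hand, the Coincidence theorem (Theorem \ref{T:coinc-thm}) lets me replace $(v_x^a)^\tau$ by $(v^\si)_x^a$, completing the step.

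Finally, for the ``in particular'' clause I would specialize $\si = t/x$. The hypothesis that $t$ is free for $x$ in $\ph$ gives that $t/x$ is free for $\ph$ (for the variable $x$ because $x^{t/x} = t$ and $t$ is free for $x$, and for $z \ne x$ because $z^{t/x} = z$ is free for $z$). The main biconditional then yields $\om \tDash \ph(t/x)[v]$ iff $\om \tDash \ph[v^{t/x}]$, and since $v^{t/x}(x) = v(t) = a$ while $v^{t/x}(y) = v(y)$ for $y \ne x$, we have $v^{t/x} = v_x^a$, giving the claim. The only genuinely delicate point is the quantifier case, where the freeness hypothesis must be converted, through the scope and free-occurrence definitions, into the two assignment-level facts above; everything else is bookkeeping that mirrors the first-order argument.
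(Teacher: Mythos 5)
Your overall strategy---formula induction, with the term identity $v^\si(t) = v(t^\si)$ carrying the prime case and the coincidence theorem resolving the quantifier case---is the standard first-order argument, which is exactly what the paper itself invokes (it gives no proof, deferring to Rautenberg). Your quantifier step is, in fact, fully correct under this paper's definitions: both facts you extract from ``$\si$ is free for $\forall x \psi$'' (that $\tau$ is free for $\psi$, and that $x \notin \var(y^\si)$ for $y \in \free \psi$ with $y \ne x$) do hold, by essentially the arguments you give.

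The genuine gap is your inheritance claim for conjunctions. Under this paper's \emph{set-based} definitions of scope, free occurrence, and ``free for'' (all phrased in terms of the set $\Sf \ph$ rather than occurrences), it is false that $\si$ free for $\bigwedge \Phi$ implies $\si$ free for each $\th \in \Phi$: the set $\Sf(\bigwedge \Phi)$ merges the subformula sets of all conjuncts, so a quantifier occurring in one conjunct can mask a free occurrence in another. Concretely, let $\ze = (z \beq y)$, $\th = \forall y \, \ze$, $\th' = \forall z \, \neg(\ze \wedge \neg \ze)$, $\ph = \th \wedge \th'$, and let $\si$ be the substitution $y/z$. In $\th$ alone, $\ze$ witnesses that $y$ is not free for $z$. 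But in $\ph$ the only $\forall y$-formula in $\Sf \ph$ is $\th$, so the only candidate witness is $\ze$ itself; and $\ze \in \Sf \th'$ puts $\ze$ in the scope of $\forall z$ in $\ph$, so $\ze$ has no free occurrence of $z$ in $\ph$. Hence $\si$ is free for $\ph$ but not for $\th$, and your induction cannot pass through the conjunct $\th$. Worse, the gap is not patchable, because the theorem as stated fails for this very $\ph$: one computes $\ph^\si = \forall y (y \beq y) \wedge \th'$, which is valid, whereas $\om \tDash \ph[v^\si]$ forces $v(y) = a$ for every $a$ in the domain of $\om$, which fails whenever the domain has at least two elements; since $t = y$ is free for $x = z$ in $\ph$, the ``in particular'' clause fails as well. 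Your proof---conjunction step included---is correct for an occurrence-based (string) notion of ``free for,'' as in Rautenberg, to whom the paper appeals for this proof; within the paper's own framework the hypothesis must be strengthened, e.g.\ to ``$\si$ is free for every subformula of $\ph$,'' under which your inheritance step becomes trivial and the rest of your argument goes through verbatim.
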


\begin{rmk}
  The strict satisfiability relation is not $\si$-compact. For example, let $L =
  \{c_n \mid n \in \bN_0\}$ be a set of distinct constant symbols and let
  \[
    \ts{
      X = \{\forall x \bigvee_{n \in \bN_0} x \beq c_n\}
        \cup \{x \nbeq y \mid x, y \in \Var, x \ne y\}.
    }
  \]
  Then every countable subset of $X$ is strictly satisfiable, but $X$ is not
  satisfiable.
\end{rmk}

\subsection{Models and deductive satisfiability}

An \textit{inductive $\cL$-model}, or simply a \emph{model}, is a probability
space, $\sP = (\Om, \Si, \bbP)$, where $\Om$ is a set of $\cL$-structures.
  \index{model}%
An \emph{assignment into $\sP$} is an indexed collection $\bv = \ang{v_\om \mid
\om \in \Om}$, where $v_\om$ is an assignment into $\om$ for each $\om \in \Om$.
  \index{assignment}%
Note that $\bv$ does not depend on $\Si$ or $\bbP$. We may therefore sometimes
call $\bv$ an assignment into $\Om$.

If $\bv$ is an assignment into a model $\sP = (\Om, \Si, \bbP)$ and $\ph \in
\cL$, then we define
\[
  \ph[\bv]_\Om = \{\om \in \Om \mid \om \tDash \ph[v_\om]\}.
\]
  \symindex{$\ph[\bv]_\Om$}%
We say that \emph{$\sP$ satisfies $\ph$ with $\bv$}, denoted by $\sP \vDash
\ph[\bv]$, if $\ph[\bv]_\Om \in \ol \Si$ and $\olbbP \ph[\bv]_\Om = 1$. A set $X
\subseteq \cL$ is \emph{satisfiable} if there is a model $\sP$ and an assignment
$\bv$ into $\sP$ such that $\sP \vDash \psi[\bv]$ for all $\psi \in X$.
  \index{satisfiable}%
  \symindex{$\sP \vDash \ph[\bv]$}%

If $\ph$ is a sentence, then $\ph[\bv]_\Om$ does not depend on $\bv$. In this
case, we simply write $\ph_\Om$, and as in the propositional case, we have
\[
  \ph_\Om = \{\om \in \Om \mid \om \tDash \ph\}.
\]
We then write $\sP \vDash \ph$ to mean $\sP \vDash \ph[\bv]$ for all assignments
$\bv$, and this holds if and only if $\ph_\Om \in \ol \Si$ and $\olbbP \ph_\Om =
1$.

\begin{prop}\label{P:pred-pre-cpct}
  Let $X \subseteq \cL$.
  \begin{enumerate}[(i)]
    \item If $X$ is strictly satisfiable, then $X$ is satisfiable.
    \item If $X$ is satisfiable and countable, then $X$ is strictly satisfiable.
  \end{enumerate}
\end{prop}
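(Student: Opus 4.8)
The plan is to follow the proof of the propositional Proposition~\ref{P:sig-pre-cpct} almost verbatim, with one extra bookkeeping task imposed by the predicate setting: both strict satisfaction and model satisfaction now refer to assignments, so I must keep track of which assignment is paired with which structure. In particular, when I extract a single structure in part~(ii), I must pair it with the \emph{specific} assignment indexed to it inside $\bv$. For part~(i), suppose $X$ is strictly satisfiable, so there is a structure $\om$ and an assignment $v$ into $\om$ with $\om \tDash \psi[v]$ for all $\psi \in X$. I would form the one-point model $\sP = (\{\om\}, \fP\{\om\}, \de_\om)$, where $\de_\om$ is the point mass at $\om$, and define the assignment $\bv = \ang{v_\om}$ into $\sP$ by $v_\om = v$. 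Because $\Om = \{\om\}$ is a singleton, every subset of $\Om$ already lies in $\Si$, hence in $\ol \Si$, so measurability is automatic; and for each $\psi \in X$ we have $\psi[\bv]_\Om = \{\om\}$ (since $\om \tDash \psi[v]$), giving $\olbbP \psi[\bv]_\Om = \de_\om\{\om\} = 1$. Thus $\sP \vDash \psi[\bv]$ for every $\psi \in X$, so $X$ is satisfiable.

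For part~(ii), suppose $X$ is satisfiable and countable. I would choose a model $\sP = (\Om, \Si, \bbP)$ and an assignment $\bv = \ang{v_\om \mid \om \in \Om}$ with $\sP \vDash \psi[\bv]$ for all $\psi \in X$, and write $\ol \sP = (\Om, \ol \Si, \olbbP)$ for the completion. By definition, $\psi[\bv]_\Om \in \ol \Si$ and $\olbbP \psi[\bv]_\Om = 1$ for each $\psi \in X$. The crucial step is that, since $X$ is countable, the set $B = \bigcap_{\psi \in X} \psi[\bv]_\Om$ is a countable intersection of full-measure members of $\ol \Si$; hence $B \in \ol \Si$ and $\olbbP B = 1$, so in particular $B \neq \emp$. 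I would then pick any $\om \in B$. For every $\psi \in X$ we have $\om \in \psi[\bv]_\Om$, which by the definition of $\psi[\bv]_\Om$ means precisely that $\om \tDash \psi[v_\om]$. Therefore the single structure $\om$, together with its associated assignment $v_\om$, strictly satisfies every formula of $X$, and $X$ is strictly satisfiable.

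There is no genuinely hard step here; the only measure-theoretic content is that a countable intersection of full-measure events has full measure, which is immediate since the complements form a countable union of null sets. The step requiring attention — and the precise reason the countability hypothesis in~(ii) cannot be dropped — is exactly the passage from ``each $\psi$ holds almost surely'' to ``all $\psi$ hold simultaneously at one outcome $\om$.'' For uncountable $X$ the union of the null complements need not be null, so $B$ could be empty even though each $\psi[\bv]_\Om$ has full measure; this is the same phenomenon exhibited in the propositional setting by Karp's examples (e.g.\ Example~\ref{Expl:Karp413}), where a satisfiable set fails to be strictly satisfiable.
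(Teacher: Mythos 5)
Your proposal is correct and follows essentially the same route as the paper's proof: a one-point model with the point mass $\de_\om$ for part (i), and for part (ii) the observation that a countable intersection of full-measure sets in $\ol \Si$ has full measure, hence is nonempty, so one can extract a structure $\om$ and pair it with its own assignment $v_\om$. The extra bookkeeping you flag (pairing each structure with the assignment indexed to it in $\bv$) is exactly what the paper does in writing ``$\om$ strictly satisfies $X$ with $v_\om$.''
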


\begin{proof}
  For (i), suppose $\om \tDash \psi[v]$ for all $\psi \in X$. Let $\sP =
  (\{\om\}, \{\emp, \{\om\}\}, \de_ {\om})$ and $\bv = \ang{v}$. Then $\sP
  \vDash \ph[\bv]$. For (ii), suppose $X$ is satisfiable and countable. Let $\sP
  = (\Om, \Si, \bbP)$ be a model and $\bv$ an assignment into $\sP$ such that
  $\sP \vDash \psi[\bv]$ for all $\psi \in X$. Then $\olbbP \bigcap_{\psi \in X}
  \psi[\bv]_\Om = 1$, so we may choose $\om \in \bigcap_{\psi \in X}
  \psi[\bv]_\Om$. We then have $\om \tDash \psi [v_\om]$ for all $\psi \in X$,
  so that $\om$ strictly satisfies $X$ with $v_\om$.
\end{proof}

Given a model $\sP = (\Om, \Si, \bbP)$, let
\[
  \Si_\cL = \Si \cap \{\ph[\bv]_\Om \mid \ph \in \cL \text{ and $\bv$ is an
  assignment into $\sP$}\},
\]
and let ${\bbP_\cL} = {\bbP}|_{\Si_\cL}$. Then $\Si_\cL$ is a sub-$\si$-algebra
of $\Si$, so that $ (\Om, \Si_\cL, \bbP_\cL)$ is also a model. For any $\ph \in
\cL$ and any assignment $\bv$ into $\sP$, we have $\ph[\bv]_\Om \in \Si$ if and
only if $\ph[\bv]_\Om \in \Si_\cL$. Hence, from a logical standpoint, every set
$A \in \Si \setminus \Si_\cL$ is irrelevant.

Let $\sP = (\Om, \Si, \bbP)$ and $\sQ = (\Om', \Ga, \bbQ)$ be models. We say
that $\sP$ and $\sQ$ are \emph{isomorphic (as models)}, denoted by $\sP \simeq
\sQ$, if there exists a measurable function $h: \Om \to \Om'$ such that $h$
induces an isomorphism (as measure spaces) from $(\Om, \Si_\cL, \bbP_\cL)$ to $
(\Om', \Ga_\cL, \bbQ_\cL)$, and $\om \simeq h \om$ for $\bbP$-a.e.~$\om \in
\Om$. In this case, we abuse notation and say that $h: \sP \to \sQ$ is a
\emph{model isomorphism}.
  \index{model!isomorphic ---}%

Let $h: \sP \to \sQ$ be a model isomorphism and $\bv$ and assignment into $\sP$.
An assignment $\bv'$ into $\sQ$ is called an \emph{image of $\bv$ under $h$} if,
for each $\om \in \Om$, there is a function $g_\om: \om \to h \om$ such that,
for $\bbP$-a.e.~$\om \in \Om$, the function $g_\om$ is an isomorphism and
$v'_{\om'} (x) = g_\om v_\om(x)$ for all $x \in \Var$.
  \index{assignment!image of an ---}%

An image of $\bv$ under $h$ always exists. We can construct one as follows. For
each $\om' \in \Om'$, choose $a_{\om'}$ in the domain of $\om'$. For each $\om
\in \Om$, if $\om \simeq h \om$, then let $g_\om$ be an isomorphism from $\om$
to $h \om$. Otherwise, let $g_\om$ map everything to $a_{h \om}$. We then define
$v'_\om(x) = g_\om v_\om(x)$.

\begin{lemma}\label{L:iso-thm}
  Let $h$ be a model isomorphism from $\sP = (\Om, \Si, \bbP)$ to $\sQ = (\Om',
  \Ga, \bbQ)$. Let $\bv$ be an assignment into $\sP$ and let $\bv'$ be an image
  of $\bv$ under $h$. Then $h^{-1} \ph[\bv']_{\Om'} = \ph[\bv]_\Om$, $\bbP$-a.s.
  Consequently, for all $\ph \in \cL$, we have $\ph[\bv]_\Om \in \ol \Si$ if and
  only if $\ph[\bv']_{\Om'} \in \ol \Ga$, and in this case, $\olbbQ
  \ph[\bv']_{\Om'} = \olbbP \ph[\bv]_\Om$.
\end{lemma}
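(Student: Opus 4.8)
The plan is to separate the statement into its two assertions: first the pointwise set identity $h^{-1}\ph[\bv']_{\Om'} = \ph[\bv]_\Om$ up to a null set, and then the measure-theoretic transfer. The first assertion is the conceptual heart and rests on the Invariance Theorem (Theorem~\ref{T:invar-thm}); the second is bookkeeping through the measure-space isomorphism induced by $h$, together with one genuine technical point about completions.

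For the first assertion, I would fix a $\bbP$-negligible set $N$ outside of which all the a.e.\ hypotheses hold simultaneously: namely $\om \simeq h\om$ (from the definition of a model isomorphism) together with ``$g_\om$ is an isomorphism and $v'_{h\om}(x) = g_\om v_\om(x)$ for all $x$'' (from the definition of an image of $\bv$). Since a finite union of negligible sets is negligible, such an $N$ exists. For $\om \notin N$ I apply the Invariance Theorem to the isomorphism $g_\om\colon \om \to h\om$ and the assignments $v_\om$, $v'_{h\om}$ (which satisfy $v'_{h\om}(x) = g_\om v_\om(x)$): this gives $\om \tDash \ph[v_\om]$ if and only if $h\om \tDash \ph[v'_{h\om}]$. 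Unwinding the definitions, the left side says $\om \in \ph[\bv]_\Om$ and the right side says $\om \in h^{-1}\ph[\bv']_{\Om'}$. Hence the two sets agree off $N$, so $\ph[\bv]_\Om \tri h^{-1}\ph[\bv']_{\Om'} \subseteq N$, which is exactly the first claim.

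For the second assertion, the key observation is that both $A := \ph[\bv]_\Om$ and $A' := \ph[\bv']_{\Om'}$ are \emph{logical} sets (of the form $\psi[\bw]_\Om$ and $\psi[\bw]_{\Om'}$ respectively), so $A \in \Si$ iff $A \in \Si_\cL$ and $A' \in \Ga$ iff $A' \in \Ga_\cL$. Since $h$ induces a measure-space isomorphism from $(\Om,\Si_\cL,\bbP_\cL)$ to $(\Om',\Ga_\cL,\bbQ_\cL)$, it also induces one between their completions $(\Om,\ol{\Si_\cL},\ol{\bbP_\cL})$ and $(\Om',\ol{\Ga_\cL},\ol{\bbQ_\cL})$. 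I would then argue: if $A' \in \ol{\Ga_\cL}$, write $A' = U \cup M'$ with $U \in \Ga_\cL$ and $M'$ contained in a $\Ga_\cL$-null set; pulling back through $h$ and using $\bbQ_\cL = \bbP_\cL \circ h^{-1}$ shows $h^{-1}A' \in \ol{\Si_\cL}$ with $\ol{\bbP_\cL}\,h^{-1}A' = \ol{\bbQ_\cL}\,A'$. Combined with $A = h^{-1}A'$ a.s.\ from the first part, this yields $A \in \ol\Si$ and $\olbbP A = \olbbQ A'$; the reverse implication is symmetric, since the induced map on the reduced measure algebras is a bijection.

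The main obstacle is reconciling the completion $\ol\Si$ that appears in the statement (and in the definition of satisfiability) with the completion $\ol{\Si_\cL}$ of the $\cL$-reduced algebra to which the induced isomorphism directly applies. What is needed is that, for a logical set, membership in $\ol\Si$ is equivalent to membership in $\ol{\Si_\cL}$ with agreeing measures, and likewise on the $\Om'$ side; this is \emph{not} automatic for arbitrary incomplete models, since a logical set can lie in $\ol\Si$ without being a.e.\ equal to any logical measurable set. This is the predicate analogue of the commutation between $\cL$-reduction and completion exploited in the propositional setting (cf.\ Lemma~\ref{L:prop-iso-thm}). The cleanest way to secure it is to reduce at the outset to complete models, where $\ol\Si = \Si$ forces a logical set to lie in $\ol\Si$ exactly when it lies in $\Si_\cL$, collapsing the two completions and removing the difficulty; I would verify that this reduction is legitimate (using that $\sP \vDash \ph[\bv]$ is unchanged under passing to $\ol\sP$, and that $h$ remains a model isomorphism between the completed models) before carrying out the measure transfer above.
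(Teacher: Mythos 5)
Your proof of the first assertion is exactly the paper's: off a single $\bbP$-null set one has $\om \simeq h\om$ via $g_\om$ with $v'_{h\om} = g_\om \circ v_\om$, and Theorem~\ref{T:invar-thm}, applied structure by structure, gives $h^{-1}\ph[\bv']_{\Om'} = \ph[\bv]_\Om$ a.s. For the second assertion you are in fact more careful than the paper, whose proof simply asserts that $h$ induces a measure-space isomorphism from $\ol\sP$ to $\ol\sQ$ and reads off the biconditional; as you observe, the definition of a model isomorphism only provides an isomorphism of the reduced spaces $(\Om,\Si_\cL,\bbP_\cL) \to (\Om',\Ga_\cL,\bbQ_\cL)$, and the passage between $\ol{\Si_\cL}$ and $\ol\Si$ needs an argument. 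Your ``if'' direction is sound: in a complete model a logical set lies in $\ol\Ga$ iff it lies in $\Ga_\cL$; pulling back through the (completed) reduced isomorphism preserves measurability and measure, and completeness of $\ol\Si$ together with the first assertion finishes.

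The gap is the sentence ``the reverse implication is symmetric.'' It is not, because the two sides of $h$ play different roles. To pass from $\ph[\bv]_\Om \in \ol\Si$ to $\ph[\bv']_{\Om'} \in \ol\Ga$, essential surjectivity of the measure-algebra map produces some $U \in \Ga_\cL$ with $h^{-1}U = \ph[\bv]_\Om$ $\bbP$-a.e., and you must then show that $\ph[\bv']_{\Om'} \tri U$ is $\bbQ$-negligible; but all you know is that the $h$-preimage of that set is $\bbP$-negligible, and negligibility does not push forward through $h$. Injectivity of the induced map applies only to classes of $\Ga_\cL$-sets, and $\ph[\bv']_{\Om'}$ is not yet known to be a.e.~equal to one---which is precisely what is being proved. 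Passing to complete models does not repair this, and in fact no argument can: take $\Om = \{\om_1\}$ and $\Om' = \{\om_1,\om_2\}$, where $\om_1$, $\om_2$ have domains of sizes $2$ and $3$ (extralogical symbols interpreted arbitrarily), with $\Si = \fP\Om$, $\Ga = \{\emp,\Om'\}$, $\bbQ$ trivial, and $h$ the inclusion. Then $h$ is a model isomorphism between complete models, yet for $\ph = \exists_3$ we have $\ph[\bv]_\Om = \emp \in \ol\Si$ while $\ph[\bv']_{\Om'} = \{\om_2\} \notin \ol\Ga$. This is not a defect of your method alone: the same example defeats the paper's own proof ($h$ here \emph{does} induce a measure-space isomorphism from $\ol\sP$ to $\ol\sQ$, and still the ``only if'' half of the conclusion fails), so the step is a lacuna in the lemma as stated. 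What the ``only if'' direction actually requires is an extra hypothesis to the effect that $h^{-1}U \in \ol\Si$ implies $U \in \ol\Ga$ for logical $U \subseteq \Om'$---automatic, for instance, for pointwise isomorphisms, or when $\sQ$ is the measure-space image of $\ol\sP$ under $h$, but not a consequence of the definition of a model isomorphism.
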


\begin{proof}
  Let $\om' = h \om$. For all $\om \in \Om$, we have $\om \in h^{-1} \ph[\bv']_
  {\Om'}$ if and only if $\om' \in \ph[\bv']_{\Om'}$, which holds if and only if
  $\om' \tDash \ph[v'_{\om'}]$. By Theorem \ref{T:invar-thm}, for
  $\bbP$-a.e.~$\om$, this is equivalent to $\om \tDash \ph[v_\om]$, which holds
  if and only if $\om \in \ph[\bv]_\Om$. Hence, $h^ {-1} \ph[\bv']_{\Om'} =
  \ph[\bv]_\Om$ $\bbP$-a.e. Since $h$ also induces a measure-space isomorphism
  from $\ol \sP$ to $\ol \sQ$, we have ${\olbbQ} = {\olbbP} \circ h^{-1}$.
  Therefore, $\ph[\bv]_\Om \in \ol \Si$ if and only if $\ph[\bv']_{\Om'} \in \ol
  \Ga$, and in this case, $\olbbQ \ph[\bv']_{\Om'} = \olbbP \ph[\bv]_\Om$.
\end{proof}

\begin{thm}[Deductive isomorphism theorem]\label{T:ded-iso-thm}
    \index{isomorphism theorem!deductive ---}%
  Let $\sP$ and $\sQ$ be isomorphic $\cL$-models and let $h: \sP \to \sQ$ be a
  model isomorphism. Let $\bv$ be an assignment into $\sP$ and let $\bv'$ be an
  image of $\bv$ under $h$. Then $\sP \vDash \ph[\bv]$ if and only if $\sQ
  \vDash \ph[\bv']$, for all $\ph \in \cL$. In particular, if $\ph$ is a
  sentence, then $\sP \vDash \ph$ if and only if $\sQ \vDash \ph$.
\end{thm}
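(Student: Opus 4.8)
The plan is to obtain this theorem as an essentially immediate consequence of Lemma \ref{L:iso-thm}, which already carries the full weight of the argument. First I would unwind the definition of satisfiability: $\sP \vDash \ph[\bv]$ means precisely that $\ph[\bv]_\Om \in \ol \Si$ and $\olbbP \ph[\bv]_\Om = 1$, and likewise $\sQ \vDash \ph[\bv']$ means $\ph[\bv']_{\Om'} \in \ol \Ga$ and $\olbbQ \ph[\bv']_{\Om'} = 1$. Since $\bv'$ is assumed to be an image of $\bv$ under $h$, Lemma \ref{L:iso-thm} applies directly and supplies exactly the two facts needed to pass between these two conditions.

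Next I would chain the two conclusions of the lemma. The lemma asserts that $\ph[\bv]_\Om \in \ol \Si$ if and only if $\ph[\bv']_{\Om'} \in \ol \Ga$, together with the equality $\olbbQ \ph[\bv']_{\Om'} = \olbbP \ph[\bv]_\Om$ whenever these measurability conditions hold. Thus, if $\sP \vDash \ph[\bv]$, then $\ph[\bv]_\Om \in \ol \Si$, so $\ph[\bv']_{\Om'} \in \ol \Ga$, and the measure equality gives $\olbbQ \ph[\bv']_{\Om'} = 1$; that is, $\sQ \vDash \ph[\bv']$. Because the lemma's statement is already a biconditional, the converse follows by reading the same chain in reverse, so no separate appeal to $h^{-1}$ as a model isomorphism is required.

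For the final assertion I would specialize to a sentence $\ph$. As recorded in the discussion of satisfiability above, when $\ph$ is a sentence the set $\ph[\bv]_\Om$ does not depend on the choice of $\bv$, so it equals $\ph_\Om$, and likewise $\ph[\bv']_{\Om'} = \ph_{\Om'}$. Hence the equivalence $\sP \vDash \ph$ if and only if $\sQ \vDash \ph$ is just the equivalence already established, with the assignments suppressed.

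Because every nontrivial ingredient---the almost-everywhere identity $h^{-1} \ph[\bv']_{\Om'} = \ph[\bv]_\Om$, the invariance of strict satisfaction under the pointwise isomorphisms $g_\om$ (Theorem \ref{T:invar-thm}), and the transport of measure under the induced measure-space isomorphism---has already been absorbed into Lemma \ref{L:iso-thm}, I do not anticipate any genuine obstacle. The only point requiring care is purely bookkeeping: confirming that the hypothesis that $\bv'$ is an image of $\bv$ under $h$ is precisely the hypothesis under which Lemma \ref{L:iso-thm} is stated, so that the lemma may be invoked verbatim.
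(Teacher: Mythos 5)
Your proposal is correct and takes essentially the same approach as the paper: both obtain the theorem as an immediate consequence of Lemma \ref{L:iso-thm}, unwinding the definition of $\vDash$ and transporting measurability and measure through the lemma. The only difference is the converse direction, where the paper says ``reversing the roles of $\sP$ and $\sQ$'' while you instead read the biconditional in Lemma \ref{L:iso-thm} backwards; your handling is if anything the more careful one, since literally reversing roles would require exhibiting a model isomorphism from $\sQ$ to $\sP$ and realizing $\bv$ as an image of $\bv'$ under it, details the paper leaves implicit.
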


\begin{proof}
  Let $\sP = (\Om, \Si, \bbP)$ and $\sQ = (\Om', \Ga, \bbQ)$ be isomorphic. Let
  $h$, $\bv$, and $\bv'$ be as in the statement of the theorem. Suppose $\sP
  \vDash \ph[\bv]$. Then $\olbbP \ph[\bv]_\Om = 1$. By Lemma \ref{L:iso-thm}, we
  have $\olbbQ \ph[\bv']_{\Om'} = \olbbP \ph[\bv]_\Om = 1$. Therefore, $\sQ
  \vDash \ph [\bv']$. Reversing the roles of $\sP$ and $\sQ$ gives the converse.
\end{proof}

\begin{rmk}\label{R:a.s.-sure}
  Let $\sP = (\Om, \Si, \bbP)$ be a model. Let $\Om^* \in \Si$ with $\bbP \Om^*
  = 1$. Let $\Si^* = \{A \cap \Om^* \mid A \in \Si\}$ and ${\bbP^*} = {\bbP}|_
  {\Si^*}$. Then $\sP^* = (\Om^*, \Si^*, \bbP^*)$ is a model. Choose $\om_0$ in
  $\Om^*$ and define $h: \Om \to \Om^*$ by $h \om = \om$ if $\om \in \Om^*$ and
  $h \om = \om_0$ if $\om \notin \Om^*$. It is straightforward to verify that
  $h$ is measurable and induces an isomorphism (as measure spaces) from $(\Om,
  \Si_\cL, \bbP_\cL)$ to $(\Om^*, \Si^*_\cL, \bbP^*_\cL)$. Hence, $h$ is a model
  isomorphism and $\sP \simeq \sP^*$. It follows that if a given property is
  true almost surely in a model $\sP$, then we can find an isomorphic model in
  which it is true for every structure $\om$.
\end{rmk}

\subsection{Deductive consequence and soundness}

We say that $\ph \in \cL$ is a \textit{consequence} of $X \subseteq \cL$, or
that $X$ \emph{entails} $\ph$, which we denote by $X \vDash \ph$, if $\sP \vDash
\ph[\bv]$ whenever $\sP \vDash \psi[\bv]$ for all $\psi \in X$.
  \index{consequence relation}%
Note that if $X$ is not satisfiable, then it is vacuously true that $X \vDash
\ph$ for all $\ph \in \cL$. In the case $X = \emp$, we write ${} \vDash \ph$,
which means that $\sP \vDash \ph[\bv]$ for all $\sP$ and $\bv$. If $X$ and $\ph$
are sentences, then $X \vDash \ph$ if and only if $\sP \vDash X$ implies $\sP
\vDash \ph$.
  \symindex{$X \vDash \ph$ (in $\cL$)}%

\begin{rmk}\label{R:strict-conseq}
  If $X \vDash \ph$ and $\om \tDash \psi[v]$ for all $\psi \in X$, then $\om
  \tDash \ph[v]$. To see this, simply apply the above definition to $\sP = (
  \{\om\}, \{\emp, \{\om\}\}, \de_\om)$ and $\bv = \ang{v}$. In particular, if
  $X$ and $\ph$ are sentences and $\om \tDash X$, then $\om \tDash \ph$.
\end{rmk}

\begin{prop}\label{P:taut-struct}
  Let $\ph \in \cL$. Then ${} \vDash \ph$ if and only if, for all structures
  $\om$ and all assignments $v_\om$ into $\om$, we have $\om \tDash \ph[v_\om]$.
\end{prop}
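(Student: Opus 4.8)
The plan is to follow exactly the pattern of the remark made just after the definition of deductive consequence in the propositional setting (Section \ref{S:models_ded_sem}), now using Definition \ref{D:strict-sat} and the definition of $\sP \vDash \ph[\bv]$. The first step is to record what each side of the biconditional unwinds to. As noted in the definition of consequence for $\cL$, in the case of the empty antecedent, ${} \vDash \ph$ means precisely that $\sP \vDash \ph[\bv]$ for every model $\sP = (\Om, \Si, \bbP)$ and every assignment $\bv$ into $\sP$; and by the definition of $\sP \vDash \ph[\bv]$, this says $\ph[\bv]_\Om \in \ol \Si$ and $\olbbP \ph[\bv]_\Om = 1$, where $\ph[\bv]_\Om = \{\om \in \Om \mid \om \tDash \ph[v_\om]\}$.

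For the direction from structures to models, I would assume $\om \tDash \ph[v]$ for all structures $\om$ and all assignments $v$, and fix an arbitrary model $\sP = (\Om, \Si, \bbP)$ together with an assignment $\bv = \ang{v_\om \mid \om \in \Om}$. By hypothesis $\om \tDash \ph[v_\om]$ for every $\om \in \Om$, so $\ph[\bv]_\Om = \Om$. Since $\Om \in \Si \subseteq \ol \Si$ and $\olbbP \Om = 1$, this gives $\sP \vDash \ph[\bv]$. As $\sP$ and $\bv$ were arbitrary, ${} \vDash \ph$.

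For the converse I would argue contrapositively, using a one-point Dirac model exactly as in the proof of Proposition \ref{P:pred-pre-cpct}(i). Suppose there exist a structure $\om$ and an assignment $v$ with $\om \ntDash \ph[v]$. Take $\sP = (\{\om\}, \{\emp, \{\om\}\}, \de_\om)$ and $\bv = \ang{v}$. Then $\ph[\bv]_\Om = \{w \in \{\om\} \mid w \tDash \ph[v]\} = \emp$, so $\olbbP \ph[\bv]_\Om = 0 \ne 1$, whence $\sP \nvDash \ph[\bv]$ and therefore ${} \nvDash \ph$. Contrapositively, ${} \vDash \ph$ implies $\om \tDash \ph[v]$ for all $\om$ and $v$.

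There is essentially no hard step here; the result is the predicate analogue of a routine propositional observation. The only points that require any care are making sure the empty antecedent in the definition of $\vDash$ is read so that ${} \vDash \ph$ genuinely quantifies over all $\sP$ and all $\bv$, and the observation that the full space $\Om$ is automatically measurable with measure one, which is what makes the structures-to-models direction immediate.
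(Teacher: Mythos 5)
Your proposal is correct and matches the paper's proof: both directions are handled exactly as in the paper, with the observation that $\ph[\bv]_\Om = \Om$ under the structure-level hypothesis, and the one-point Dirac model $(\{\om\}, \{\emp, \{\om\}\}, \de_\om)$ for the other direction (the paper states this direction directly rather than contrapositively, which is an immaterial difference).
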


\begin{proof}
  Suppose ${} \vDash \ph$. Then $\sP \vDash \ph[\bv]$ for all $\sP$ and $\bv$.
  Let $\om$ be a structure and $v_\om$ an assignment into $\om$. Define $\sP = (
  \{\om\}, \{\emp, \{\om\}\}, \de_\om)$ and $\bv = \ang{v_\om}$. Then $\sP$ is a
  model and $\bv$ is an assignment into $\sP$. By hypothesis, $\sP \vDash
  \ph[\bv]$, which means $\ph[\bv]_\Om = \{\om\}$. That is, $\om \tDash
  \ph[v_\om]$.

  Conversely, suppose $\om \tDash \ph[v_\om]$ for all structures $\om$ and all
  assignments $v_\om$ into $\om$. Let $\sP = (\Om, \Si, \bbP)$ be a model and
  $\bv$ an assignment into $\sP$. Then $\ph[\bv]_\Om = \Om$, so $\olbbP
  \ph[\bv]_\Om = 1$. Therefore, $\sP \vDash \ph[\bv]$.
\end{proof}

\begin{prop}\label{P:free-elim-conseq}
  Let $X \subseteq \cL$ and $\ph \in \cL$, and let $\si$ be a free eliminator.
  Then $X \vDash_\cL \ph$ if and only if $X^\si \vDash_{\cL C} \ph^\si$.
\end{prop}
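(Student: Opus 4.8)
The plan is to reduce both implications to a single transfer lemma that relates, via the substitution and coincidence theorems, satisfaction of a formula $\th \in \cL$ in an $\cL$-model to satisfaction of its image $\th^\si$ in a corresponding $\cL C$-model. Write $V_\si = \{x \in \Var \mid x^\si = c_x\}$ for the set of eliminated variables, so that $\th^\si$ replaces each free occurrence of $x \in V_\si$ by the new constant $c_x$. Given an $\cL$-model $\sP = (\Om, \Si, \bbP)$ and an assignment $\bv$ into $\sP$, I would construct an $\cL C$-model $\sP^+$ by expanding each structure $\om \in \Om$ to the $\cL C$-structure $\om^+$ that agrees with $\om$ on $L$ and sets $c_x^{\om^+} = v_\om(x)$ for $x \in V_\si$ (and $c_x^{\om^+}$ arbitrary otherwise). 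Since distinct $\cL$-structures have distinct $L$-reducts, the map $\om \mapsto \om^+$ is a bijection, so taking $\Si^+$ and $\bbP^+$ to be its images makes $\sP^+$ a model that is pointwise isomorphic to $\sP$ as a measure space; let $\bv^+$ be the assignment with $v_{\om^+} = v_\om$.

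The transfer lemma then reads: for every $\th \in \cL$, we have $\sP \vDash \th[\bv]$ if and only if $\sP^+ \vDash \th^\si[\bv^+]$. To prove it, note that $\si$ is free for every formula (each $x^\si$ is either $x$ or a ground term $c_x$, so no capture occurs), so the substitution theorem gives $\om^+ \tDash \th^\si[v_\om]$ iff $\om^+ \tDash \th[(v_\om)^\si]$. A direct computation shows $(v_\om)^\si = v_\om$: for $x \in V_\si$ we have $(v_\om)^\si(x) = v_\om(c_x) = c_x^{\om^+} = v_\om(x)$, and for $x \notin V_\si$ it is $v_\om(x)$. Finally, since $\sym \th \subseteq L$ and $\om^+$ agrees with $\om$ on $L$, the coincidence theorem (Theorem \ref{T:coinc-thm}) gives $\om^+ \tDash \th[v_\om]$ iff $\om \tDash \th[v_\om]$. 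Thus the satisfaction sets $\th^\si[\bv^+]_{\Om^+}$ and $\th[\bv]_\Om$ correspond under the bijection $\om \mapsto \om^+$, and since this bijection is a measure-space isomorphism, the claim follows exactly as in Lemma \ref{L:iso-thm}. The ($\Leftarrow$) direction is then immediate: assuming $X^\si \vDash_{\cL C} \ph^\si$, let $\sP$ be an $\cL$-model with assignment $\bv$ satisfying every $\psi \in X$, and form $\sP^+$, $\bv^+$; the lemma gives $\sP^+ \vDash \psi^\si[\bv^+]$ for all $\psi \in X$, whence $\sP^+ \vDash \ph^\si[\bv^+]$, and one more application yields $\sP \vDash \ph[\bv]$.

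For the ($\Rightarrow$) direction I need the reverse passage, from an arbitrary $\cL C$-model $\sP' = (\Om', \Si', \bbP')$ satisfying $X^\si$ back to an $\cL$-model, and here lies the main obstacle. The natural candidate is the $L$-reduct model together with the assignment $w_{\om'}$ given by $w_{\om'}(x) = c_x^{\om'}$ for $x \in V_\si$ and $w_{\om'}(x) = v'_{\om'}(x)$ otherwise; the analogue of the transfer computation shows $\om'|_L \tDash \th[w_{\om'}]$ iff $\om' \tDash \th^\si[v'_{\om'}]$ for $\th \in \cL$. The difficulty is that the reduct map $\om' \mapsto \om'|_L$ need not be injective — two distinct $\cL C$-structures can share an $L$-reduct while differing on the constants $c_x$ — so the per-structure assignment $w$ is not well-defined on the reduct. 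I would resolve this by first passing to an isomorphic $\cL C$-model in which the reduct map is injective: tag the domain of each $\om'$ by $\om'$ itself, replacing $\om'$ with its isomorphic image under $a \mapsto (a, \om')$, so that the domains, and hence the reducts, become pairwise disjoint. By the deductive isomorphism theorem (Theorem \ref{T:ded-iso-thm}) this changes no satisfactions, and since the values $c_x^{\om'}$ for $x \notin V_\si$ and the values $v'_{\om'}(x)$ for $x \in V_\si$ are irrelevant to the satisfaction of $X^\si \cup \{\ph^\si\}$ (whose free variables avoid $V_\si$), I may take the tagged model to be the expansion $\sP^+$ of a genuine $\cL$-model $\sP$. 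Applying the transfer lemma recovers $\sP \vDash \psi[\bv]$ for all $\psi \in X$; the hypothesis $X \vDash_\cL \ph$ then gives $\sP \vDash \ph[\bv]$, and transferring forward yields $\sP' \vDash \ph^\si[\bv']$. The bookkeeping needed to carry out the tagging measurably and to verify that it constitutes a genuine model isomorphism (in the sense used for Theorem \ref{T:ded-iso-thm}) is the one delicate point; everything else reduces to the substitution and coincidence theorems.
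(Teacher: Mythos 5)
Your proof is correct, and its skeleton is the same as the paper's: one direction expands each $\cL$-structure $\om$ to an $\cL C$-structure by setting $c_x^{\om^+} = v_\om(x)$, the other passes from an $\cL C$-model to its $L$-reducts, and both rest on a pointwise transfer of satisfaction between $\th$ and $\th^\si$. Two differences are worth recording. First, you obtain the transfer lemma from the substitution and coincidence theorems (using that a free eliminator is free for every formula, and that $(v_\om)^\si = v_\om$ in the expanded structure), whereas the paper simply asserts the equivalence ``by term induction and formula induction''; your route is cleaner and makes the mechanism explicit. Second, and more substantively, the non-injectivity of the reduct map that you flag is a genuine issue which the paper's proof passes over silently: in the forward direction the paper defines $v_\om(x) = c_x^{\om'}$ using ``the'' structure $\om'$ lying over $\om$, which is ill-defined when two structures of $\sQ$ share an $L$-reduct, and in that case the asserted identity $h^{-1}\psi[\bv]_\Om = \psi^\si[\bv']_{\Om'}$ can actually fail (take two expansions of a single $\om$ that interpret $c_x$ inside and outside a relation $R^\om$, and give them positive measure). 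Your repair---first replacing $\sQ$ by an isomorphic model whose structures have pairwise disjoint, tagged domains, so that the reduct map becomes injective---is sound, and it is exactly the kind of move the paper makes elsewhere: the isomorphic-image construction of Lemma \ref{L:ord-FOR} together with Theorem \ref{T:ded-iso-thm} justifies that satisfaction is unchanged, and the coincidence theorem lets you discard the irrelevant constants $c_x$ with $x \notin V_\si$ when identifying the tagged model with an expansion $\sP^+$. So your proposal is not only correct but slightly more careful than the paper's own argument on this point; the remaining ``bookkeeping'' you mention is routine given those two results.
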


\begin{proof}
  Suppose $X \vDash_\cL \ph$. Let $\sQ = (\Om', \Ga, \bbQ)$ be an $\cL C$-model
  and $\bv'$ an assignment into $\sQ$. Assume $\sQ \vDash \psi^\si[\bv']$ for
  all $\psi \in X$. For each $\om' \in \Om'$, let $\om$ be its $\cL$-reduct. Let
  $\Om = \{\om \mid \om' \in \Om\}$ and let $\sP = (\Om, \Si, \bbP)$ be the
  measure space image of $\sQ$ under the function $\om' \mapsto \om$. Define an
  assignment $\bv$ into $\sP$ by $v_\om(x) = v'_{\om'}(x)$ if $x^\si = x$, and
  $v_\om(x) = c_x^{\om'}$ if $x^\si = c_x$. By term induction, $v_\om(t) =
  v'_{\om'}(t^\si)$ for all $t \in \cT_\cL$. Then, by formula induction, we
  obtain $\om \tDash \psi[v_\om]$ if and only if $\om' \tDash
  \psi^\si[v'_{\om'}]$ for all $\psi \in \cL$. Hence, if $h$ denotes the
  function $\om' \mapsto \om$, then $h^{-1}\psi[\bv]_\Om = \psi^\si
  [\bv']_{\Om'}$, which gives $\olbbP \psi[\bv]_\Om = \olbbQ \psi^\si
  [\bv']_{\Om'}$ for all $\psi \in \cL$. Therefore, $\sP \vDash \psi[\bv]$ for
  all $\psi \in X$. By hypothesis, this implies $\sP \vDash \ph[\bv]$, which is
  equivalent to $\sQ \vDash \ph^\si[\bv']$, and we have $X^\si \vDash_{\cL C}
  \ph^\si$.

  For the converse, suppose $X^\si \vDash_{\cL C} \ph^\si$. Let $\sP = (\Om,
  \Si, \bbP)$ be an $\cL$-model and $\bv$ an assignment into $\sP$. Assume $\sP
  \vDash \psi[\bv]$ for all $\psi \in X$. For each $\om \in \Om$, define the
  $\cL C$-structure $\om'$ by $\s^{\om'} = \s^\om$ if $\s \in L$, and
  $c_x^{\om'} = v_\om(x)$. Define an assignment $\bv'$ into $\sP$ by
  $v'_{\om'}(x) = v_\om(x)$ for all $x \in \Var$. Then again by term and formula
  induction, we have $\om \tDash \psi[v_\om]$ if and only if $\om' \tDash
  \psi^\si[v'_{\om'}]$ for all $\psi \in \cL$, which as above yields $\sP \vDash
  \ph[\bv]$. Therefore, $X \vDash_\cL \ph$.
\end{proof}

\begin{thm}[Karp's completeness theorem]\label{T:pred-Karp-sent}
    \index{completeness!Karp's --- theorem}%
  Let $\ph \in \cL^0$ be a sentence. Then ${} \vdash \ph$ if and only if ${}
  \vDash \ph$.
\end{thm}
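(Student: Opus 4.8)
The plan is to mirror the propositional Karp completeness theorem (Theorem \ref{T:Karp-compl}), reducing the statement to Karp's original result together with the two bridges we have already built in this chapter: the equivalence of our natural deduction with her Hilbert-style calculus, and the characterization of the consequence relation in terms of strict satisfaction in structures. Because the calculus $\wdash$ of Section \ref{S:Karp-calc} was reconstructed to be exactly Karp's system and was shown in Theorem \ref{T:pred-Hilbert=nat} to coincide with $\vdash$, no re-derivation of individual axioms will be needed here; the entire argument will be a chain of biconditionals.

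First I would handle the semantic side. Since $\ph \in \cL^0$ is a sentence, the set $\ph[\bv]_\Om$ is independent of the assignment $\bv$, so Proposition \ref{P:taut-struct} specializes to the statement that ${} \vDash \ph$ holds if and only if $\om \tDash \ph$ for every structure $\om$. This is precisely Karp's notion of validity: her models are structures and her satisfaction relation is the Tarskian one, which on $\cL$ agrees with the relation $\tDash$ of Definition \ref{D:strict-sat} (equality being interpreted as genuine equality in both frameworks). Thus the semantic hypothesis ${} \vDash \ph$ is literally the assertion that $\ph$ is valid in every structure.

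Next I would invoke Karp's completeness theorem \cite[Theorem 11.4.1]{Karp1964}, which, applied to the calculus $\wdash$ reconstructed from \cite[Chapter 4]{Keisler1971} in Section \ref{S:Karp-calc}, states that for a sentence $\ph$ we have $\wdash \ph$ if and only if $\ph$ is valid in every structure. Chaining this with the specialization of Proposition \ref{P:taut-struct} from the previous step gives $\wdash \ph \Leftrightarrow {} \vDash \ph$, and Theorem \ref{T:pred-Hilbert=nat} supplies $\wdash \ph \Leftrightarrow {} \vdash \ph$. Combining these two equivalences yields ${} \vdash \ph \Leftrightarrow {} \vDash \ph$, proving both directions simultaneously. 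In particular, the soundness direction ${} \vdash \ph \Rightarrow {} \vDash \ph$ falls out of the ``only if'' half of Karp's biconditional; alternatively, it can be checked directly by verifying that each of the rules (i)--(x) in Definition \ref{D:pred-derivability} preserves strict satisfaction in every structure.

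The hard part will not be any logical depth but rather the bookkeeping at the interface between Karp's semantic setup and ours. I would need to confirm that \cite[Theorem 11.4.1]{Karp1964} is being applied in the form appropriate to sentences and to \emph{normal} models, so that her notion of validity matches ``$\om \tDash \ph$ for all $\om$,'' and that the reconstruction in Section \ref{S:Karp-calc} reproduces her axioms and rules faithfully enough for her theorem to transfer without modification. This is exactly the point where the predicate argument is cleaner than its propositional counterpart: there Karp's axiom set differed from ours and the extra axioms had to be derived by hand, whereas here Theorem \ref{T:pred-Hilbert=nat} has already identified the reconstructed $\wdash$ with $\vdash$.
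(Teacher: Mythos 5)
Your overall skeleton---Proposition \ref{P:taut-struct} to identify ${} \vDash \ph$ with validity in all structures, Karp's completeness theorem for the Hilbert-style calculus, and Theorem \ref{T:pred-Hilbert=nat} to pass between $\wdash$ and $\vdash$---is exactly the paper's chain of biconditionals. But there is a genuine gap at precisely the point you declare to be clean: the claim that the calculus of Section \ref{S:Karp-calc} reproduces Karp's axioms faithfully, so that her theorem ``transfers without modification,'' is false, and the verification you wave away is the actual content of the paper's proof. The version of Karp's theorem the paper invokes (Keisler's Theorem 4.3) is completeness for an axiom set $\La'$ that strictly extends the $\La$ of Section \ref{S:Karp-calc}: besides ($\La$1)--($\La$7) and the closure rules, $\La'$ contains every formula of the form $\neg \ph \tot {\sim} \ph$, where ${\sim}$ is the recursively defined operator pushing negation inward (${\sim} \neg \ph = \ph$, ${\sim} \bigwedge \Phi = \neg \bigwedge_{\th \in \Phi} \neg \neg \th$, ${\sim} \forall x \ph = \neg \forall x \neg \neg \ph$). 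Theorem \ref{T:pred-Hilbert=nat} identifies the paper's $\wdash$ (built from $\La$) with $\vdash$; it says nothing about whether $\La$ coincides with $\La'$, so you cannot yet apply Keisler's theorem to conclude that $\wdash \ph$ holds if and only if $\ph$ is valid in every structure.

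To close the gap one must prove $\La' = \La$, i.e., that every instance of the surplus schema $\neg \ph \tot {\sim} \ph$ is already derivable. The paper does this by cases on the shape of $\ph$ (prime, $\neg \psi$, $\bigwedge \Phi$, $\forall x \psi$), which reduces the schema to the four claims $\vdash \neg \ph \tot \neg \ph$, $\vdash \neg \neg \psi \tot \psi$, $\vdash \neg \bigwedge \Phi \tot \neg \bigwedge_{\th \in \Phi} \neg \neg \th$, and $\vdash \neg \forall x \psi \tot \neg \forall x \neg \neg \psi$; these follow from propositional tautologies together with Definition \ref{D:derivability}(iii),(iv) and Definition \ref{D:pred-derivability}(vii)$'$,(viii). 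So the predicate case is not cleaner than the propositional one in the way you assert---it exhibits exactly the same phenomenon (the cited axiom set differs from ours, and the extra axioms must be derived by hand), merely with a different surplus schema. Your appeal directly to Karp's original Theorem 11.4.1 would only worsen the interface problem, since her formulas are countably long strings rather than the set-theoretic objects used here, which is why the paper routes the citation through Keisler's presentation in the first place.
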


\begin{proof}
  Karp's completeness theorem first appears in \cite[Theorem 11.4.1]{Karp1964}.
  The version we are citing is \cite[Theorem 4.3]{Keisler1971}. There it is
  shown that if $\ph \in \cL^0$ is a sentence, then $\om \tDash \ph$ for all
  structures $\om$ if and only if $\ph \in \La'$, where $\La'$ is a set of
  logical axioms described in \cite{Keisler1971}. We claim that $\La'$ is the
  same as $\La$, the set of axioms defined in Section \ref{S:Karp-calc}. Since
  $\ph \in \La$ if and only if $\wdash \ph$, our statement of Karp's theorem
  then follows from Theorem \ref{T:pred-Hilbert=nat} and Proposition
  \ref{P:taut-struct}.

  Keisler's $\La'$ differs from $\La$ in only one way. To describe it, we
  recursively define the shorthand ${\sim} \ph$ as follows:
  \begin{enumerate}[label={}]
    \item ${\sim} \ph = \neg \ph$ if $\ph$ is prime,
    \item ${\sim} \neg \ph = \ph$,
    \item ${\sim} \bigwedge \Phi = \bigvee_{\th \in \Phi} \neg \th
          = \neg \bigwedge_{\th \in \Phi} \neg \neg \th$, and
    \item ${\sim} \forall x \ph = \exists x \neg \ph
          = \neg \forall x \neg \neg \ph$.
  \end{enumerate}
  Keisler's $\La'$ includes everything in $\La$, as well as all formulas of the
  form
  \begin{enumerate}[($\La$1)]
    \setcounter{enumi}{7}
    \item $\neg \ph \tot {\sim} \ph$
  \end{enumerate}
  To check that $\La' = \La$, we must verify that these formulas are already in
  $\La$. We can break this down according to whether $\ph$ is prime, $\ph = \neg
  \psi$, $\ph = \bigwedge \Phi$, or $\ph = \forall x \psi$. Doing this, applying
  the definition of ${\sim}$, and using Theorem \ref{T:pred-Hilbert=nat}, we must check that
  \begin{enumerate}[label={}]
    \item $\vdash \neg \ph \tot \neg \ph$,
    \item $\vdash \neg \neg \psi \tot \psi$,
    \item $\vdash \neg \bigwedge \Phi \tot \neg \bigwedge_{\th \in \Phi} \neg
          \neg \th$, and
    \item $\vdash \neg \forall x \psi \tot \neg \forall x \neg \neg \psi$.
  \end{enumerate}
  The first two are propositional tautologies. The third follows from
  $\vdash \th \tot \neg \neg \th$ and Definition \ref{D:derivability}(iii),(iv).
  The fourth follows from $\vdash \psi \tot \neg \neg \psi$ and Definition
  \ref{D:pred-derivability}(vii)$'$,(viii).
\end{proof}

\begin{thm}[Karp's theorem for formulas]\label{T:pred-Karp-form}
  For any formula $\ph \in \cL$, we have ${} \vdash \ph$ if and only if ${}
  \vDash \ph$.
\end{thm}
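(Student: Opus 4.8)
The plan is to reduce the claim for an arbitrary formula to the sentence case already in hand, Theorem \ref{T:pred-Karp-sent}, by passing to a constant expansion through a full free eliminator. Fix a full free eliminator $\si$, that is, the substitution sending every $x \in \Var$ to its associated constant $c_x$. By the remark accompanying the definition of free eliminators, $\ph^\si$ is then a sentence of $\cL C$, where $C = \{c_x \mid x \in \Var\}$. Since $\cL C$ is itself a predicate language (its signature is $L \cup C$), everything established for $\cL$ applies equally to $\cL C$; in particular, Theorem \ref{T:pred-Karp-sent} holds in $\cL C$ and can be applied to the sentence $\ph^\si$.

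With this setup, I would assemble a three-link chain of biconditionals. First, applying Proposition \ref{P:free-elim-deriv} with empty antecedent $X = \emp$ gives ${} \vdash_\cL \ph$ if and only if ${} \vdash_{\cL C} \ph^\si$. Second, Theorem \ref{T:pred-Karp-sent}, applied in the language $\cL C$ to the sentence $\ph^\si$, gives ${} \vdash_{\cL C} \ph^\si$ if and only if ${} \vDash_{\cL C} \ph^\si$. Third, applying Proposition \ref{P:free-elim-conseq} with $X = \emp$ gives ${} \vDash_{\cL C} \ph^\si$ if and only if ${} \vDash_\cL \ph$. Concatenating these three equivalences yields ${} \vdash_\cL \ph$ if and only if ${} \vDash_\cL \ph$, which is exactly the assertion of the theorem.

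There is essentially no serious obstacle here, since the heavy lifting has already been done: the two free-eliminator propositions are precisely the bridges that transport derivability and consequence, respectively, between a formula in $\cL$ and its closed image in $\cL C$, and Karp's sentence theorem supplies the equivalence in the middle. The only points needing care are bookkeeping points: verifying that $\ph^\si$ is genuinely closed (so that Theorem \ref{T:pred-Karp-sent} is applicable), and confirming that Propositions \ref{P:free-elim-deriv} and \ref{P:free-elim-conseq} may be invoked with the empty antecedent so that $X^\si = \emp$ on both sides. I would also note explicitly that the $C$ of the free eliminator is the same $C$ in which $\ph^\si$ lives, so that the language $\cL C$ used in all three links is one and the same, and no mismatch of signatures arises when the chain is closed.
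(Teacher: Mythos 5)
Your proposal is correct and follows essentially the same route as the paper's own proof: fix a full free eliminator $\si$, transfer derivability via Proposition \ref{P:free-elim-deriv}, transfer consequence via Proposition \ref{P:free-elim-conseq}, and close the chain with Theorem \ref{T:pred-Karp-sent} applied to the sentence $\ph^\si$ in $\cL C$. The bookkeeping points you flag (closedness of $\ph^\si$, use of the empty antecedent, and the single fixed expansion $\cL C$) are exactly the implicit checks in the paper's argument.
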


\begin{proof}
  Let $\si$ be a full free eliminator, so that $\ph^\si$ is a sentence. By
  Propositions \ref{P:free-elim-deriv} and \ref{P:free-elim-conseq}, we have
  $\vdash_\cL \ph$ if and only if $\vdash_{\cL C} \ph^\si$, and $\vDash_\cL \ph$
  if and only if $\vDash_{\cL C} \ph^\si$. Theorem \ref{T:pred-Karp-sent} gives
  $\vdash_{\cL C} \ph^\si$ if and only if $\vDash_{\cL C} \ph^\si$.
\end{proof}

As in the propositional case, Karp's completeness theorem allows us to prove the
result that was described in Remark \ref{R:pred-fin-vs-infin}.

\begin{prop}\label{P:pred-fin-vs-infin}
  Let $X \subseteq \cL_\fin$ and $\ph \in \cL_\fin$. If $X \vdash \ph$, then $X
  \vdash_\fin \ph$.
\end{prop}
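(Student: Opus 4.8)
The plan is to mirror the proof of Proposition~\ref{P:finitary-vs-infinitary}, replacing each propositional ingredient by its predicate counterpart. The starting point is the finitary completeness theorem for first-order logic, which states that for $X \subseteq \cL_\fin$ and $\ph \in \cL_\fin$ we have $X \vdash_\fin \ph$ if and only if, for every structure $\om$ and every assignment $v$ into $\om$, the condition $\om \tDash \psi[v]$ for all $\psi \in X$ implies $\om \tDash \ph[v]$ (see, for instance, \cite{Rautenberg2010}). Thus it suffices to assume $X \vdash \ph$ and verify the semantic condition on the right-hand side.

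First I would fix a structure $\om$ and an assignment $v$ with $\om \tDash \psi[v]$ for all $\psi \in X$, aiming to derive $\om \tDash \ph[v]$. Since $X \vdash \ph$ in the infinitary calculus, the tautology characterization of derivability recorded in Section~\ref{S:taut-pred} (which follows from $\si$-compactness, Theorem~\ref{T:pred-sig-cpctness}) supplies a countable $X_0 \subseteq X$ with $\vdash \bigwedge X_0 \to \ph$. The formula $\bigwedge X_0$ need not be finitary, but this causes no difficulty: the implication $\bigwedge X_0 \to \ph$ still lies in $\cL$, and Karp's theorem for formulas (Theorem~\ref{T:pred-Karp-form}) applies to all of $\cL$, yielding $\vDash \bigwedge X_0 \to \ph$. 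By Proposition~\ref{P:taut-struct}, this means $\om' \tDash (\bigwedge X_0 \to \ph)[v']$ for every structure $\om'$ and every assignment $v'$; in particular it holds for our fixed $\om$ and $v$.

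It then remains to combine this with the hypothesis on $\om$ and $v$. Since $X_0 \subseteq X$, we have $\om \tDash \psi[v]$ for every $\psi \in X_0$, so Definition~\ref{D:strict-sat}(iv) gives $\om \tDash (\bigwedge X_0)[v]$. Unwinding the shorthand for $\to$ via Definition~\ref{D:strict-sat}(iii),(iv), the two facts $\om \tDash (\bigwedge X_0 \to \ph)[v]$ and $\om \tDash (\bigwedge X_0)[v]$ yield $\om \tDash \ph[v]$, as desired. This establishes the semantic condition, and the finitary completeness theorem then delivers $X \vdash_\fin \ph$.

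The bookkeeping is entirely routine, so I do not anticipate a serious obstacle; the one point that requires care is conceptual rather than computational. We must invoke Karp's theorem in its formula form (Theorem~\ref{T:pred-Karp-form}) rather than its sentence form (Theorem~\ref{T:pred-Karp-sent}), because the members of $X$ and the formula $\ph$ may contain free variables, and the whole argument proceeds at the level of strict satisfaction with an assignment $v$ rather than satisfaction of sentences. Correspondingly, the finitary completeness theorem must be cited in the version for formulas with free variables, where the consequence relation quantifies over all structures \emph{and} all assignments.
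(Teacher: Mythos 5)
Your proof is correct and follows essentially the same route as the paper's: reduce to the semantic criterion of the finitary first-order completeness theorem, extract a countable $X_0 \subseteq X$ with $\vdash \bigwedge X_0 \to \ph$ via $\si$-compactness, apply Karp's theorem for formulas (Theorem \ref{T:pred-Karp-form}) to pass to $\vDash$, and evaluate at the given $\om$ and $v$. Your explicit appeal to Proposition \ref{P:taut-struct} merely spells out a step the paper leaves implicit, and your closing remark about needing the formula (not sentence) version of Karp's theorem matches the paper's usage exactly.
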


\begin{proof}
  Let $X \subseteq \cL_\fin$ and $\ph \in \cL_\fin$. Suppose $X \vdash \ph$. The
  well-known completeness theorem from first-order logic states that $X
  \vdash_\fin \ph$ if and only if, for all structures $\om$ and all assignments
  $v$ into $\om$, we have $\om \tDash \ph[v]$ whenever $\om \tDash \psi[v]$ for
  all $\psi \in X$. (See, for instance, \cite[Theorem 3.2.7]{Rautenberg2010}).

  Let $\om$ be a structure and $v$ an assignment into $\om$. Assume that $\om
  \tDash \psi[v]$ for all $\psi \in X$. Choose countable $X_0 \subseteq X$ such
  that $\vdash \bigwedge X_0 \to \ph$. By Theorem \ref{T:pred-Karp-form}, we
  have $\vDash \bigwedge X_0 \to \ph$. Hence, $\om \tDash (\bigwedge X_0 \to
  \ph)[v]$, which means $\om \tDash (\bigwedge X_0)[v]$ implies $\om \tDash
  \ph[v]$. Since $\om \tDash \psi[v]$ for all $\psi \in X_0$, it follows that
  $\om \tDash (\bigwedge X_0)[v]$. Therefore, $\om \tDash \ph[v]$.
\end{proof}

\begin{thm}[Deductive soundness]\label{T:pred-soundness}
    \index{soundness!deductive ---}%
  Let $X \subseteq \cL$ and $\ph \in \cL$. If $X \vdash \ph$, then $X \vDash
  \ph$.
\end{thm}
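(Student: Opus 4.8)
The plan is to mirror the structure of the proof of the propositional soundness theorem (Theorem~\ref{T:soundness}), showing that the consequence relation $\vDash_\cL$ satisfies each of the ten defining conditions (i)--(x) of the derivability relation in Definition~\ref{D:pred-derivability}. Since $\vdash_\cL$ is by definition the \emph{smallest} relation satisfying (i)--(x), establishing that $\vDash_\cL$ also satisfies all ten rules immediately yields ${\vdash_\cL} \subseteq {\vDash_\cL}$, which is exactly the claim $X \vdash \ph \implies X \vDash \ph$.

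First I would handle the propositional rules (i)--(vi). These carry over from the proof of Theorem~\ref{T:soundness} with essentially no change, since that argument depended only on the measure-theoretic behavior of the sets $\ph[\bv]_\Om$ (complementation, countable intersection, and the conditional-measure extension used in rule (vi)), and these work identically in the predicate setting once one reads $\ph_\Om$ as $\ph[\bv]_\Om$ for a fixed assignment $\bv$. The mapping $\ph \mapsto \ph[\bv]_\Om$ still satisfies $(\neg \ph)[\bv]_\Om = (\ph[\bv]_\Om)^c$ and $(\bigwedge \Phi)[\bv]_\Om = \bigcap_{\th \in \Phi} \th[\bv]_\Om$, which are the only facts those six arguments use, so I would simply cite the earlier proof with $\bv$ held fixed throughout.

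The genuinely new work is in the four predicate rules (vii)--(x), and the cleanest route is to bypass direct measure-theoretic manipulation by appealing to Karp's completeness theorem for formulas (Theorem~\ref{T:pred-Karp-form}) together with the strict-consequence observation in Remark~\ref{R:strict-conseq}. The key idea is that $\vDash_\cL$ for formulas is \emph{strict-satisfaction-driven}: $X \vDash \ph$ means that in every model $\sP$ and assignment $\bv$ with $\sP \vDash \psi[\bv]$ for all $\psi \in X$, we have $\sP \vDash \ph[\bv]$. For rule (ix), $\vDash t \beq t$ follows directly from Definition~\ref{D:strict-sat}(i) and Proposition~\ref{P:taut-struct}. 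For rules (vii) and (x), I would use the Substitution Theorem to relate $\ph(t/x)[\bv]_\Om$ to $\ph[\bv']_\Om$ for an appropriately modified assignment (recalling that $t$ free for $x$ guarantees $\om \tDash \ph(t/x)[v]$ iff $\om \tDash \ph[v_x^a]$ with $a = v(t)$), reducing the rule to a pointwise statement about $\tDash$ that holds structure-by-structure on a set of full measure.

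The main obstacle will be rule (viii), the universal generalization rule: from $x \notin \free X$ and $X \vDash \ph$, I must conclude $X \vDash \forall x \ph$. The difficulty is that $(\forall x \ph)[\bv]_\Om = \{\om \mid \om \tDash \ph[(v_\om)_x^a] \text{ for all } a\}$ is an uncountable intersection of the sets $\ph[\bv^a]_\Om$ over $a$ in each structure's domain, and uncountable intersections need not be measurable nor of full measure a priori. The way I would resolve this is to work pointwise: for a model $\sP$ and assignment $\bv$ satisfying $X$, I would fix a structure $\om$ in the full-measure set where $\sP \vDash \psi[\bv]$ holds, and for each $a$ in the domain of $\om$ form the modified assignment $\bv_x^a$ (altering only the $\om$-component's value at $x$). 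Because $x \notin \free X$, the Coincidence Theorem (Theorem~\ref{T:coinc-thm}) guarantees $\sP \vDash \psi[\bv_x^a]$ still holds for all $\psi \in X$, so the hypothesis $X \vDash \ph$ gives $\om \tDash \ph[(v_\om)_x^a]$ for each $a$; since $a$ was arbitrary, $\om \tDash (\forall x \ph)[v_\om]$. Careful bookkeeping is needed to ensure the measurability and full-measure conclusion survives the quantification, but the $x \notin \free X$ condition combined with the Coincidence Theorem is precisely what makes the argument go through uniformly across all $a$ without enlarging the exceptional null set.
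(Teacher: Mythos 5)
Your plan of verifying that $\vDash$ satisfies conditions (i)--(x) of Definition~\ref{D:pred-derivability} and then invoking minimality of $\vdash$ is a genuinely different route from the paper's: the paper never verifies closure rules for $\vDash$ in the predicate setting, but instead uses $\si$-compactness of $\vdash$ (Theorem~\ref{T:pred-sig-cpctness}) to extract a countable $X_0 \subseteq X$ with $X_0 \vdash \ph$, notes that $\bigwedge X_0 \to \ph$ is then a tautology, applies Karp's completeness theorem for formulas (Theorem~\ref{T:pred-Karp-form}) to get ${} \vDash \bigwedge X_0 \to \ph$, and finishes with a short measure-theoretic computation. Parts of your route are fine: rules (i)--(vi) do carry over from Theorem~\ref{T:soundness} with $\ph_\Om$ read as $\ph[\bv]_\Om$, and rules (vii), (ix), (x) are pointwise arguments inside a single measure-one set (via the Substitution Theorem for (vii) and (x)). The proposal breaks down exactly at rule (viii), and the breakdown is not a matter of bookkeeping.

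Your argument for (viii) conflates satisfaction in a model (a measure-one statement) with strict satisfaction (a pointwise statement). Fix $\sP$ and $\bv$ with $\sP \vDash \psi[\bv]$ for all $\psi \in X$, fix a structure $\om$, and let $\bv'$ agree with $\bv$ except that $v_\om$ is replaced by $(v_\om)_x^a$. The Coincidence Theorem does give $\sP \vDash \psi[\bv']$ for all $\psi \in X$, and hence $\olbbP \ph[\bv']_\Om = 1$. But this says nothing about the fixed structure $\om$: the set $\ph[\bv']_\Om$ differs from $\ph[\bv]_\Om$ at most at the single point $\om$, so whenever $\{\om\}$ is null (the typical case, e.g.\ any nonatomic model) the conclusion $\olbbP \ph[\bv']_\Om = 1$ holds automatically, whether or not $\om \tDash \ph[(v_\om)_x^a]$. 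The pointwise conclusion you want must instead come from Remark~\ref{R:strict-conseq}, i.e.\ from applying $X \vDash \ph$ to the single-structure model $(\{\om\}, \fP \{\om\}, \de_\om)$ --- and that applies only to structures $\om$ that strictly satisfy \emph{every} $\psi \in X$. This exposes the second gap: the set of such structures is $\bigcap_{\psi \in X} \psi[\bv]_\Om$, which for uncountable $X$ need be neither measurable nor co-null (an uncountable intersection of measure-one sets can even be empty), so ``the full-measure set where $\sP \vDash \psi[\bv]$ holds'' need not exist, and the exceptional null sets, which come from the conclusion $\ph$ and genuinely vary with $a$, cannot be controlled by the $x \notin \free X$ hypothesis. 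To repair this you would have to thread countability through the entire induction --- verify the rules not for $\vDash$ but for the relation ``$X_0 \vDash \ph$ for some countable $X_0 \subseteq X$,'' in the style of the proof of Theorem~\ref{T:sig-cpctness} --- at which point you have essentially reconstructed the paper's proof, which obtains countability on the purely syntactic side and invokes Karp's theorem precisely where strict and measure-one satisfaction coincide, namely for tautologies.
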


\begin{proof}
  Suppose $X \vdash \ph$. Let $\sP = (\Om, \Si, \bbP)$ be a model and $\bv$ an
  assignment into $\sP$ such that $\sP \vDash \psi[\bv]$ for all $\psi \in X$.
  By Theorem \ref{T:pred-sig-cpctness}, we may choose countable $X_0 \subseteq
  X$ with $X_0 \vdash \ph$. Hence, $\vdash \ze \to \ph$, where $\ze = \bigwedge
  X_0$. By Theorem \ref{T:pred-Karp-form}, we have $\vDash \ze \to \ph$, so that
  $\sP \vDash (\ze \to \ph)[\bv]$. That is, $\olbbP (\ze \to \ph)[\bv]_\Om = 1$.
  But $\olbbP \psi[\bv]_\Om = 1$ for all $\psi \in X$ and $\ze[\bv]_\Om =
  \bigcap_{\psi \in X_0} \psi[\bv]_\Om$. Hence, $\olbbP \ze[v]_\Om = 1$, so that
  $\ze[\bv]_\Om^c$ is a null set. Since $(\ze \to \ph)[\bv]_\Om = \ze[\bv]_\Om^c
  \cup \ph[\bv]_\Om$, we have $\olbbP \ph[\bv]_\Om = \olbbP (\ze \to \ph)
  [\bv]_\Om = 1$. Therefore, $\sP \vDash \ph[\bv]$. Since $\sP$ was arbitrary,
  this shows that $X \vDash \ph$.
\end{proof}

\begin{cor}\label{C:pred-soundness}
  If $X \subseteq \cL$ is satisfiable, then $X$ is consistent. If $X$ is
  countable and consistent, then $X$ is strictly satisfiable.
\end{cor}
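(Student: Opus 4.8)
The two claims are the predicate analogues of Corollaries \ref{C:soundness} and \ref{C:compactness}, and the plan is to prove them in that spirit: the first from soundness, the second from Karp's completeness theorem for formulas.

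For the first claim, I would argue the contrapositive. Suppose $X$ is inconsistent, so that $X \vdash \bot$. By deductive soundness (Theorem \ref{T:pred-soundness}) this gives $X \vDash \bot$. Now $\bot = \neg \top$ is a sentence, and since every structure has nonempty domain it satisfies $\top = \exists_1$; hence $\bot_\Om = \emp$, and $\sP \nvDash \bot[\bv]$ for every model $\sP$ and assignment $\bv$. If $X$ were satisfiable, there would be some $\sP$ and $\bv$ with $\sP \vDash \psi[\bv]$ for all $\psi \in X$, and then $X \vDash \bot$ would force $\sP \vDash \bot[\bv]$, a contradiction. So a satisfiable $X$ must be consistent. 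This is word-for-word the argument of Corollary \ref{C:soundness}, with $\bot_\Om = \emp$ playing the role it did in the propositional case.

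For the second claim, the key observation is that countability lets me collapse $X$ into a single formula. Since $X \subseteq \cL$ is countable, $\ph := \bigwedge X$ is a formula of $\cL$, and by Definition \ref{D:strict-sat}(iv), for any structure $\om$ and assignment $v$ we have $\om \tDash \ph[v]$ if and only if $\om \tDash \psi[v]$ for all $\psi \in X$; thus \emph{$X$ is strictly satisfiable if and only if $\ph$ is}. I would again argue the contrapositive: assume $X$ is not strictly satisfiable. Then no $\om, v$ satisfies $\ph$, so $\om \tDash (\neg \ph)[v]$ for every $\om$ and $v$, and by Proposition \ref{P:taut-struct} this means ${} \vDash \neg \ph$. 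Karp's theorem for formulas (Theorem \ref{T:pred-Karp-form}) then yields ${} \vdash \neg \ph$, and Theorem \ref{T:deduc-con} (which holds verbatim in $\cL$) converts this into $\ph \vdash \bot$. Finally, Proposition \ref{P:conj-equiv} (valid in $\cL$, since it rests only on (i)--(vi)) gives $X \vdash \bot$ if and only if $\bigwedge X \vdash \bot$, so $X \vdash \bot$ and $X$ is inconsistent. Taking the contrapositive delivers: countable and consistent implies strictly satisfiable.

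I expect the delicate points to be bookkeeping rather than anything deep. In the first claim I must be sure that $\bot_\Om = \emp$, which rests on the convention that structures are nonempty; this is exactly what legitimizes $\top = \exists_1$ as verum and is already built into Proposition \ref{P:exists_1} together with soundness. For the second claim I need each cited result (Propositions \ref{P:taut-struct} and \ref{P:conj-equiv}, and Theorems \ref{T:pred-Karp-form} and \ref{T:deduc-con}) to apply to the possibly infinitary formula $\ph = \bigwedge X$; this is fine precisely because $X$ is countable, so $\ph \in \cL$, and because those results were explicitly noted to carry over to $\cL$. An alternative route for the second claim would be to first show $X$ is satisfiable as a model and then invoke Proposition \ref{P:pred-pre-cpct}(ii), but the direct reduction to $\bigwedge X$ via Karp's theorem is shorter and avoids constructing a model.
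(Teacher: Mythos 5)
Your proposal is correct and follows essentially the same route as the paper: the first claim via deductive soundness and $\bot_\Om = \emp$, and the second by observing that strict non-satisfiability of countable $X$ gives ${} \vDash \neg \bigwedge X$, then applying Theorem \ref{T:pred-Karp-form} to get ${} \vdash \neg \bigwedge X$ and concluding inconsistency. The only difference is cosmetic bookkeeping at the end (you route through Theorem \ref{T:deduc-con} and Proposition \ref{P:conj-equiv}, whereas the paper just notes $X \vdash \bigwedge X, \neg \bigwedge X$), which changes nothing of substance.
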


\begin{proof}
  Let $X \subseteq \cL$. Suppose $X$ is inconsistent. Then $X \vdash \bot$. By
  Theorem \ref{T:pred-soundness}, we have $X \vDash \bot$. But $\bot_\Om =
  \emp$, so $\sP \nvDash \bot[\bv]$ for all $\sP$ and $\bv$. Hence, $X$ is not
  satisfiable. For the second part, suppose $X$ is countable and not strictly
  satisfiable. Let $v$ be an assignment into a structure $\om$. Then $\om
  \ntDash (\bigwedge X)[v]$, which implies $\om \tDash (\neg \bigwedge X)[v]$.
  Since $\om$ and $v$ were arbitrary, we have $\vDash \neg \bigwedge X$. Theorem
  \ref{T:pred-Karp-form} then implies $\vdash \neg \bigwedge X$. Thus, $X \vdash
  \bigwedge X, \neg \bigwedge X$, so that $X$ is inconsistent.
\end{proof}

\subsection{Deductive completeness}

According to Theorem \ref{T:pred-Karp-form}, we have that $\ph$ is a tautology
if and only if $\om \tDash \ph[v]$ for all $\om$ and $v$. Hence, in any model
$\sP$, we have $\ph \vdash \psi$ implies $\ph[\bv]_\Om \subseteq \psi[\bv]_\Om$
and $\ph \equiv \psi$ implies $\ph[\bv]_\Om = \psi[\bv]_\Om$, for any assignment
$\bv$ into $\sP$.

In Remark \ref{R:impl-subset}, we saw that in the propositional case, we could
obtain a converse to the above if we took $\Om$ to be the set of all strict
models. That converse was essential to our proof of both deductive and inductive
completeness. In the predicate case, we cannot do this, since the collection of
all structures is not a set. Instead, we will use the set of structures defined
in the proof of the following proposition.

\begin{prop}\label{P:all-structures}
  There exists a set of structures $\Om$ and an assignment $\bv$ into $\Om$ such
  that $\ph[\bv]_\Om \subseteq \psi[\bv]_\Om$ implies $\ph \vdash \psi$, and
  $\ph [\bv]_\Om = \psi[\bv]_\Om$ implies $\ph \equiv \psi$. In particular, if
  $\ph$ and $\psi$ are sentences, then $\ph_\Om \subseteq \psi_\Om$ if and only
  if $\ph \vdash \psi$, and $\ph_\Om = \psi_\Om$ if and only if $\ph \equiv
  \psi$.
\end{prop}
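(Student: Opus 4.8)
The plan is to prove the contrapositive of the substantive direction: assuming $\ph \nvdash \psi$, I would exhibit a structure in $\Om$, together with its designated assignment, that satisfies $\ph$ but not $\psi$, so that $\ph[\bv]_\Om \not\subseteq \psi[\bv]_\Om$. The real work is to build, once and for all, a single set $\Om$ and a single assignment $\bv$ that simultaneously witness the failure of inclusion for \emph{every} pair $(\ph, \psi)$ with $\ph \nvdash \psi$. Since the collection of all structures is a proper class, the crux is to extract a genuine \emph{set} of witnesses rather than a class.

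First I would record the bridge to strict satisfiability. Because Theorem \ref{T:deduc-con} holds in $\cL$, we have $\ph \vdash \psi$ if and only if $\{\ph, \neg\psi\}$ is inconsistent; hence $\ph \nvdash \psi$ is equivalent to the consistency of the finite, hence countable, set $\{\ph, \neg\psi\}$. By Corollary \ref{C:pred-soundness}, a countable consistent set is strictly satisfiable, so for each such pair there exist a structure $\om$ and an assignment $v$ with $\om \tDash \ph[v]$ and $\om \tDash (\neg\psi)[v]$, that is, $\om \tDash \ph[v]$ and $\om \ntDash \psi[v]$.

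Next I would assemble $\Om$ and $\bv$. Ranging over the set $\cL \times \cL$, for each pair $(\ph, \psi)$ with $\ph \nvdash \psi$ I would choose one witness $(\om_{\ph,\psi}, v_{\ph,\psi})$; by replacement and choice this produces a set of pairs. A single structure might need to carry two different assignments for two different pairs, which a global $\bv$ cannot accommodate. To repair this, I would replace each chosen $(\om, v)$ by an isomorphic copy whose domain is tagged by its index, so that the resulting structures have pairwise disjoint nonempty domains and are therefore distinct; the Invariance Theorem \ref{T:invar-thm} transports both the truth relation and the assignment, so the copy still satisfies $\ph$ and refutes $\psi$ under the transported assignment. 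I then let $\Om$ be the set of these relabeled copies and let $\bv$ assign to each copy its transported assignment.

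Finally I would verify the statement. If $\ph[\bv]_\Om \subseteq \psi[\bv]_\Om$ while $\ph \nvdash \psi$, then the relabeled witness for $(\ph, \psi)$ lies in $\ph[\bv]_\Om \setminus \psi[\bv]_\Om$, a contradiction; hence $\ph \vdash \psi$. Applying this to both inclusions yields the equality claim, that $\ph[\bv]_\Om = \psi[\bv]_\Om$ implies $\ph \equiv \psi$. For the ``in particular'' clause, I would note that $\ph_\Om$ is independent of $\bv$ when $\ph$ is a sentence, and that the converse implications ($\ph \vdash \psi \Rightarrow \ph_\Om \subseteq \psi_\Om$, and $\ph \equiv \psi \Rightarrow \ph_\Om = \psi_\Om$) are precisely the consequences of Karp's theorem already recorded in the paragraph preceding the proposition, so only the forward directions need the construction above. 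The main obstacle is exactly the bookkeeping in the previous paragraph: converting a class of potential witnesses into a set and reconciling the per-pair assignments into one global $\bv$, which the disjoint-relabeling trick via the Invariance Theorem resolves.
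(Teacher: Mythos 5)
Your proposal is correct and takes essentially the same route as the paper: both reduce $\ph \nvdash \psi$ to the consistency of the countable set $\{\ph, \neg\psi\}$ via Theorem \ref{T:deduc-con}, invoke Corollary \ref{C:pred-soundness} to obtain a strict witness, and gather the witnesses into a single set $\Om$ with assignment $\bv$, concluding by contraposition. The only differences are bookkeeping: the paper indexes its witnesses by \emph{all} countable consistent subsets of $\cL$ (using $\om_X$ for $X = \{\ph, \neg\psi\}$) rather than by pairs $(\ph, \psi)$, and it silently assumes distinct sets receive distinct structures, which is exactly the assignment-clash issue your disjoint-relabeling via the Invariance Theorem \ref{T:invar-thm} handles explicitly.
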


\begin{proof}
  Let $S$ be the set of all countable, consistent subsets of $\cL$. By Corollary
  \ref{C:pred-soundness}, for each $X \in S$, we may choose a structure $\om =
  \om_X$ and an assignment $v_\om$ into $\om$ such that $\om \tDash \ze[v_\om]$
  for all $\ze \in X$. Let $\Om = \{\om_X \mid X \in S\}$ and let $\bv = 
  \ang{v_\om \mid \om \in \Om}$.

  For the first implication, let $\ph, \psi \in \cL$ and assume $\ph \nvdash
  \psi$. Then Theorem \ref{T:deduc-con} implies $X = \{\ph, \neg \psi\}$ is
  consistent, so that $X \in S$. Hence, with $\om = \om_X \in \Om$ and $v$
  defined as above, we have $\om \tDash \ph[v_\om]$ and $\om \tDash (\neg
  \psi)[v_\om]$. The latter implies $\om \ntDash \psi[v_\om]$. Thus, $\om \in
  \ph_\Om[\bv]$ and $\om \notin \psi_\Om[\bv]$, so that $\ph_\Om[\bv] \nsubseteq
  \psi_\Om[\bv]$. Reversing the roles of $\ph$ and $\psi$ gives the second
  implication.
\end{proof}

\begin{thm}[$\si$-compactness]\label{T:pred-compactness}
    \index{s_sigma-compactness@$\si$-compactness}%
  A set $X \subseteq \cL$ is satisfiable if and only if every countable subset
  of $X$ is satisfiable.
\end{thm}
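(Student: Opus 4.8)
The plan is to follow the proof of the propositional $\si$-compactness theorem (Theorem \ref{T:compactness}) almost verbatim, substituting the predicate tools for their propositional counterparts. The only if direction is immediate: if a model $\sP$ and an assignment $\bv$ witness $\sP \vDash \psi[\bv]$ for every $\psi \in X$, then the same $\sP$ and $\bv$ witness the satisfiability of any subset of $X$, in particular any countable one.

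For the if direction, I would assume every countable subset of $X$ is satisfiable and first establish that $X$ is consistent. If it were not, then $X \vdash \bot$, and Theorem \ref{T:pred-sig-cpctness} supplies a countable $X_0 \subseteq X$ (over a countable sublanguage) with $X_0 \vdash \bot$, so $X_0$ is inconsistent. By Corollary \ref{C:pred-soundness}, an inconsistent set cannot be satisfiable, contradicting the hypothesis that $X_0$ is satisfiable. Hence $X$ is consistent.

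With consistency in hand, I would build a model directly, mirroring the second half of the proof of Theorem \ref{T:compactness}. Let $\Om$ and $\bv$ be the set of structures and the assignment furnished by Proposition \ref{P:all-structures}, which plays the role that the set of all strict models and Remark \ref{R:impl-subset} played in the propositional case: it guarantees that $\ph[\bv]_\Om = \psi[\bv]_\Om$ if and only if $\ph \equiv \psi$. Set $\ol T = \{\ph \in \cL \mid X \vdash \ph\}$, which is closed under logical equivalence and, by (iv), under countable conjunction, and define
\[
  \Si = \{\ph[\bv]_\Om \mid \ph \in \ol T \text{ or } \neg \ph \in \ol T\}.
\]
Using $\top \in \ol T$, the relations $(\neg \ph)[\bv]_\Om = (\ph[\bv]_\Om)^c$ and $(\bigwedge \Phi)[\bv]_\Om = \bigcap_{\ph \in \Phi} \ph[\bv]_\Om$, and the closure of $\ol T$ under conjunction, one checks that $\Si$ is a $\si$-algebra exactly as before: if some $\neg \ph_n \in \ol T$ then $\neg \bigwedge_n \ph_n \in \ol T$, while otherwise all $\ph_n \in \ol T$ and $\bigwedge_n \ph_n \in \ol T$. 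Define $\bbP \ph[\bv]_\Om = 1$ when $\ph \in \ol T$ and $0$ otherwise; consistency of $X$ together with Proposition \ref{P:all-structures} makes this well defined, and $\bbP \emp = 0$, $\bbP \Om = 1$. The countable-additivity computation is identical to the propositional one: for a pairwise disjoint sequence $A_n = (\ph_n)[\bv]_\Om$, disjointness forces $\ph_i \wedge \ph_j \equiv \bot$, so at most one $\ph_n$ lies in $\ol T$, whence $\sum_n \bbP A_n = 1$ if and only if $\bigvee_n \ph_n \in \ol T$, which equals $\bbP \bigcup_n A_n$. Finally, every $\psi \in X$ satisfies $X \vdash \psi$, hence $\psi \in \ol T$ and $\bbP \psi[\bv]_\Om = 1$, so $\sP = (\Om, \Si, \bbP) \vDash \psi[\bv]$ and $X$ is satisfiable.

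The main obstacle is not any single deep step but the bookkeeping forced by open formulas. Unlike the propositional construction, where $\Om = \B^{PV}$ and $\cB^{PV} = \{\ph_\Om \mid \ph \in \cF\}$ make the correspondence between formulas and sets transparent, here I must carry the fixed assignment $\bv$ throughout and lean on Proposition \ref{P:all-structures} both for well-definedness (the direction $\ph[\bv]_\Om = \psi[\bv]_\Om \Rightarrow \ph \equiv \psi$) and, via soundness, for its converse when translating $\si$-algebra operations back into logical equivalences. Verifying that $\ol T$, the deductive closure of a possibly uncountable set of open formulas, is genuinely closed under countable conjunction and behaves correctly on truth sets is where the care is needed, but each of these reduces to the derivability rules (i)--(vi) and Proposition \ref{P:all-structures}.
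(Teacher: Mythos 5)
Your proposal is correct and takes essentially the same approach as the paper's proof: it establishes consistency of $X$ via Theorem \ref{T:pred-sig-cpctness} and Corollary \ref{C:pred-soundness}, then builds the identical model $(\Om, \Si, \bbP)$ on the structures and assignment supplied by Proposition \ref{P:all-structures}, with the same well-definedness and countable-additivity arguments. The only difference is notational: your $\ol T$ is just the deductive closure $\{\ph \in \cL \mid X \vdash \ph\}$, and your $\Si$ and $\bbP$ coincide with the paper's.
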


\begin{proof}
  The only if part is trivial. Suppose every countable subset of $X$ is
  satisfiable. Assume $X$ is inconsistent. Then $X \vdash \bot$. By Theorem
  \ref{T:pred-sig-cpctness}, there exists countable $X_0 \subseteq X$ such that
  $X_0 \vdash \bot$, implying that $X_0$ is inconsistent. By Corollary
  \ref{C:pred-soundness}, we have that $X_0$ is not satisfiable, a
  contradiction. Hence, $X$ is consistent.

  Let $\Om$ and $\bv$ be as in Proposition \ref{P:all-structures}. Let
  \[
    \Si = \{\ph[\bv]_\Om \mid X \vdash \ph \text{ or } X \vdash \neg \ph\}.
  \]
  Then $\Si$ is a $\si$-algebra. If $A \in \Si$, choose $\ph$ such that $A =
  \ph_\Om[\bv]$. Since $X$ is consistent, we cannot have both $X \vdash \ph$ and
  $X \vdash \neg \ph$. We may therefore define $\bbP A = 1$ if $X \vdash \ph$
  and $0$ otherwise. If $A = \ph_\Om[\bv] = \psi_\Om[\bv]$, then $\ph \equiv
  \psi$, by Proposition \ref{P:all-structures}. Hence, $\bbP$ is well-defined.

  Since $X$ is consistent, $X \nvdash \bot$. Thus, $\bbP \emp = \bbP \bot_\Om =
  0$. Conversely, $X \vdash \top$, so $\bbP \Om = \bbP \top_\Om = 1$.

  Now let $\{A_n\}_{n \in \bN} \subseteq \Si$ be pairwise disjoint, and define
  $A = \bigcup_n A_n$. For each $n$, choose $\ph_n$ such that $A_n =
  \ph_n[\bv]_\Om$, and define $\ph = \bigvee_n \ph_n$. Note that $A =
  \ph[\bv]_\Om$. Suppose $m \ne n$. Since
  \[
    (\ph_m \wedge \ph_n)[\bv]_\Om = A_m \cap A_n = \emp = \bot_\Om,
  \]
  we have $\ph_m \wedge \ph_n \equiv \bot$, implying that $X \nvdash \ph_m
  \wedge \ph_n$. Therefore, either $X \nvdash \ph_m$ or $X \nvdash \ph_n$. This
  implies that there is at most one $n \in \bN$ with $\bbP A_n = 1$. Hence,
  $\sum_n \bbP A_n \in \{0, 1\}$ and
  \begin{align*}
    \ts{\sum \bbP A_n = 1}
      &\quad\text{iff}\quad \text{there exists $n$ such that $\bbP A_n = 1$}\\
    &\quad\text{iff}\quad \text{there exists $n$ such that $X \vdash \ph_n$}\\
    &\quad\text{iff}\quad X \vdash \ph\\
    &\quad\text{iff}\quad \bbP \ph[\bv]_\Om = \bbP A = 1,
  \end{align*}
  showing that $\bbP$ is countably additive. Thus, $\bbP$ is a measure on $(\Om,
  \Si)$ with $\bbP \Om = 1$, and so $\sP = (\Om, \Si, \bbP)$ is a model.

  Now let $\ph \in X$ be arbitrary. Then $X \vdash \ph$, so that $\ph[\bv]_\Om
  \in \Si$, and $\bbP \ph[\bv]_\Om = 1$. This shows that $\sP \vDash \ph[\bv]$
  for all $\ph \in X$, and $X$ is satisfiable.
\end{proof}

\begin{cor}\label{C:pred-compactness}
  A set $X \subseteq \cL$ is satisfiable if and only if $X$ is consistent.
\end{cor}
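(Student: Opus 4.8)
The plan is to prove the two directions separately, mirroring the propositional argument in Corollary \ref{C:compactness} but drawing on the stronger predicate-level facts already available. The forward implication, that satisfiability entails consistency, requires no new work: it is precisely the first assertion of Corollary \ref{C:pred-soundness}, so I would simply cite it and move on.

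For the reverse implication, that consistency entails satisfiability, I would argue directly rather than by contrapositive, since the predicate soundness corollary conveniently packages the fact that a countable consistent set is strictly satisfiable. Suppose $X$ is consistent, and let $X_0 \subseteq X$ be an arbitrary countable subset. First I would observe that $X_0$ is consistent: if $X_0 \vdash \bot$, then by monotonicity (Definition \ref{D:pred-derivability}(ii)) we would have $X \vdash \bot$, contradicting the consistency of $X$. Applying the second half of Corollary \ref{C:pred-soundness} to the countable consistent set $X_0$ yields that $X_0$ is strictly satisfiable, and then Proposition \ref{P:pred-pre-cpct}(i) upgrades this to ordinary satisfiability. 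Thus every countable subset of $X$ is satisfiable, and Theorem \ref{T:pred-compactness} ($\si$-compactness) concludes that $X$ itself is satisfiable.

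There is essentially no deep obstacle here; the corollary is a bookkeeping assembly of results that have already done the real work. The genuine content lives upstream, in the $\si$-compactness of derivability (Theorem \ref{T:pred-sig-cpctness}), in Karp's completeness theorem (which underlies Corollary \ref{C:pred-soundness}), and in the model construction that powers Theorem \ref{T:pred-compactness}. If I instead wanted to follow the propositional proof verbatim via the contrapositive, the one point demanding care would be the passage from ``$X_0$ is not strictly satisfiable'' to ``${} \vDash \neg \bigwedge X_0$'': here I would use that $X_0$ is countable (so that $\bigwedge X_0 \in \cL$ is a legitimate formula) together with Proposition \ref{P:taut-struct} to rephrase strict non-satisfiability as $\om \tDash (\neg \bigwedge X_0)[v]$ for every structure $\om$ and assignment $v$, then invoke Theorem \ref{T:pred-Karp-form} to obtain ${} \vdash \neg \bigwedge X_0$, and finally combine this with $X \vdash \bigwedge X_0$ to conclude that $X$ is inconsistent. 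Since the direct route avoids even this minor subtlety, I would favor it.
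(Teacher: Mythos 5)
Your proof is correct and is essentially the paper's own argument: both directions rest on exactly the same ingredients (Corollary \ref{C:pred-soundness} for soundness and for the countable case, Proposition \ref{P:pred-pre-cpct}(i), and Theorem \ref{T:pred-compactness}). The only difference is that you phrase the reverse direction directly while the paper argues by contrapositive, which is an immaterial rearrangement.
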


\begin{proof}
  The only if part is Corollary \ref{C:pred-soundness}. Suppose $X$ is not
  satisfiable. By Theorem \ref{T:pred-compactness}, there exists a countable
  subset $X_0 \subseteq X$ that is not satisfiable. By Proposition
  \ref{P:pred-pre-cpct}, the set $X_0$ is not strictly satisfiable. Hence, by
  Corollary \ref{C:pred-soundness}, the set $X_0$ is inconsistent, which implies
  that $X$ is inconsistent.
\end{proof}

\begin{thm}[Deductive completeness]\label{T:pred-completeness}
    \index{completeness!deductive ---}%
  For $X \subseteq \cL$ and $\ph \in \cL$, we have $X \vDash \ph$ if and only if
  $X \vdash \ph$.
\end{thm}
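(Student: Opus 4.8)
The plan is to follow the template of the propositional deductive completeness theorem (Theorem \ref{T:completeness}), adapted to accommodate open formulas and assignments. The forward (``if'') direction is exactly deductive soundness, Theorem \ref{T:pred-soundness}: if $X \vdash \ph$, then $X \vDash \ph$. So all the content lies in the reverse direction, which I would establish by contraposition.

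Assume $X \nvdash \ph$. Since Theorem \ref{T:deduc-con} holds in $\cL$ (as noted in Section \ref{S:taut-pred}), the failure $X \nvdash \ph$ is equivalent to $X, \neg \ph \nvdash \bot$, that is, to the consistency of $X \cup \{\neg \ph\}$. By Corollary \ref{C:pred-compactness}, consistency is equivalent to satisfiability, so there exist a model $\sP = (\Om, \Si, \bbP)$ and an assignment $\bv$ into $\sP$ such that $\sP \vDash \psi[\bv]$ for every $\psi \in X \cup \{\neg \ph\}$. This single witness $(\sP, \bv)$ is what I would use to refute $X \vDash \ph$.

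It then remains to check that this same pair satisfies $X$ but not $\ph$. That $\sP \vDash \psi[\bv]$ for all $\psi \in X$ is immediate. For the refutation of $\ph$, I would invoke the recursive clause for negation, Definition \ref{D:strict-sat}(iii), which gives $(\neg \ph)[\bv]_\Om = \ph[\bv]_\Om^c$ pointwise, since $\om \tDash (\neg \ph)[v_\om]$ holds precisely when $\om \ntDash \ph[v_\om]$. Because $\sP \vDash (\neg \ph)[\bv]$, we have $\olbbP \ph[\bv]_\Om^c = 1$, whence $\olbbP \ph[\bv]_\Om = 0 \ne 1$, so $\sP \nvDash \ph[\bv]$. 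Thus $(\sP, \bv)$ witnesses $X \nvDash \ph$, completing the contrapositive.

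I expect no substantial obstacle, since all the heavy machinery has already been assembled: $\si$-compactness (Theorem \ref{T:pred-compactness}) together with the concrete model built over the set of structures from Proposition \ref{P:all-structures} are packaged inside Corollary \ref{C:pred-compactness}. The only point requiring genuine care is the bookkeeping with assignments: unlike the sentence case, where assignments are irrelevant, one must produce a model together with a \emph{single} assignment $\bv$ that simultaneously verifies every member of $X$ and refutes $\ph$, and one must confirm that negation complements the satisfying set $\ph[\bv]_\Om$ exactly. Both facts follow directly from the definitions, so the argument is short once Corollary \ref{C:pred-compactness} is in hand.
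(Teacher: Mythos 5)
Your proof is correct and follows exactly the same route as the paper's: soundness (Theorem \ref{T:pred-soundness}) for one direction, and for the converse, consistency of $X \cup \{\neg\ph\}$ via Theorem \ref{T:deduc-con}, satisfiability via Corollary \ref{C:pred-compactness}, and the witnessing pair $(\sP, \bv)$ refuting $X \vDash \ph$. Your additional verification that $\olbbP \ph[\bv]_\Om = 0$ via the negation clause is a detail the paper leaves implicit, but the argument is identical.
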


\begin{proof}
  The if part is Theorem \ref{T:pred-soundness}. Suppose $X \nvdash \ph$. Then
  $X \cup \{\neg\ph\}$ is consistent, by Theorem \ref{T:deduc-con}. Thus, $X
  \cup \{\neg\ph\}$ is satisfiable, by Corollary \ref{C:pred-compactness}.
  Choose $\sP$ and $\bv$ such that $\sP \vDash \psi[\bv]$ for all $\psi \in X
  \cup \{\neg \ph\}$. Then $\sP \vDash \psi[\bv]$ for all $\psi \in X$, but $\sP
  \nvDash \ph[\bv]$. Thus, $X \nvDash \ph$.
\end{proof}

\subsection{Peano arithmetic}\label{S:PA}
  \index{Peano arithmetic}%

As an example of deductive predicate logic, we present the theory of Peano
arithmetic in the infinitary setting. Let $\cL$ be a language that contains a
constant symbol $\ul 0$, a unary function symbol $\S$, and binary operation
symbols $\{+, \bdot\}$. In the language $\cL$, for each $n \in \bN$, we use the
shorthand $\ul n = \S \cdots \S \ul 0$, where $\S$ is repeated $n$ times.

Define the formulas
\begin{align*}
  &\ph_1 : \forall x \, \S x \nbeq \ul 0 &
  &\ph_2 : \forall xy (\S x \beq \S y \to x \beq y)\\
  &\ph_3 : \forall x \, x + \ul 0 \beq x &
  &\ph_4 : \forall xy \, x + \S y \beq \S(x + y)\\
  &\ph_5 : \forall x \, x \bdot \ul 0 \beq \ul 0 &
  &\ph_6 : \forall xy \, x \bdot \S y \beq x \bdot y + x
\end{align*}
For definiteness, we may assume $x = \bx_0$ and $y = \bx_1$ in the above, so
that this is a finite collection of sentences, rather than a family of formulas
indexed by $x, y \in \Var$. Note that each $\ph_i \in \cL_\fin^0$. If $\ph = \ph
(x, \vec y\,) \in \cL$, define
\[
  \ISPA(\ph) : \forall \vec y  \, (
    \ph(\ul 0/x) \wedge \forall x (\ph \to \ph(\S x / x)) \to \forall x \ph
  )
\]
Let $\ISPA = \{\ISPA(\ph) \mid \ph(x, \vec y\,) \in \cL\} \subseteq \cL^0$ and
$\ISPA_\fin = \ISPA \cap \cL_\fin^0$. Since $\ISPA(\ph)$ has finite length if
and only if $\ph$ has finite length, we have $\ISPA_\fin = \{\ISPA(\ph) \mid \ph
(x, \vec y\,) \in \cL_\fin\}$.

In first-order logic, $\La^\PA_- = \{\ph_1, \ldots, \ph_6\} \cup \ISPA_\fin$ are
the usual axioms of Peano arithmetic. The set $\ISPA_\fin$ is called the
\emph{axiom schema of induction}. We let $\PA_- = T(\La^\PA_-)$ and $\PA_\fin =
\PA_- \cap \cL^0_\fin$. By Proposition \ref{P:pred-fin-vs-infin}, we have
$\PA_\fin = \{\ph \in \cL^0_\fin \mid \La^\PA_- \vdash_\fin \ph\}$. In other
words, $\PA_\fin$ is exactly first-order Peano arithmetic.

We also define $\La^\PA = \{\ph_1, \ldots, \ph_6\} \cup \ISPA$. This differs
from $\La^\PA_-$ only in the fact that we are allowed to perform induction on
infinitary formulas. Let $\PA = T(\La^\PA)$. Then $\La^\PA_- \subseteq \La^\PA$
and $\PA_\fin \subseteq \PA_- \subseteq \PA$.
  \symindex{$\PA_\fin$}%
  \symindex{$\PA_-$}%
  \symindex{$\PA$}%

Let $\cN$ be the standard structure of arithmetic. That is, $\cN = (\bN_0, 0,
\S, +, \bdot)$, where $\S$ is the function $n \mapsto n + 1$. As usual, we will
have to rely on context to know whether $\S, +, \bdot$ are referring to objects
in the standard structure, or to symbols in the signature of $\cL$. Since $\cN
\tDash \La^\PA$, we have $\cN \tDash \PA$ by Remark \ref{R:strict-conseq}. It is
well-known that there are \emph{nonstandard structures of finitary Peano
arithmetic}. That is, there exist structures $\om$ such that $\om \tDash
\PA_\fin$ but $\om \not \simeq \cN$. As it turns out, the analogous statement is
still true for $\PA_-$ as we see below in Proposition \ref{P:models-of-PA-}. On
the other hand, Proposition \ref{P:models-of-PA} shows that it is not true for
$\PA$. In other words, $\PA$ completely characterizes the standard structure of
arithmetic, meaning that every true statement about arithmetic is provable in
$\PA$. Another way to say this, according to completeness, is that if $\ph$ is
true in the standard structure of arithmetic, then it is true in every model of
$\PA$. This is famously not the case for $\PA_-$, thanks to G\"odel's first
incompleteness theorem (see \cite[Theorem 6.5.1]{Rautenberg2010}).

\begin{prop}\label{P:models-of-PA-}
  Let $\sP = (\Om, \Si, \bbP)$ be a model. Then $\sP \vDash \PA_-$ if and only
  if $\om \tDash \PA_\fin$ for $\bbP$-a.e.~$\om \in \Om$. Consequently, $\PA_-
  \vdash \ph$ if and only if $\om \tDash \La^\PA_-$ implies $\om \tDash \ph[v]$
  for all $\om$ and all assignments $v$ into $\om$.
\end{prop}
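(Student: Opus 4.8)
The plan is to route everything through the finite-rank axiom set $\La^\PA_-$, exploiting deductive soundness and completeness together with the fact that the completion $\olbbP$ measures every superset of a set of measure one. I would first establish the pointwise identity $\om \tDash \PA_\fin$ if and only if $\om \tDash \La^\PA_-$, valid for every structure $\om$. The inclusion $\La^\PA_- \subseteq \PA_\fin$ is immediate, since each generator of $\La^\PA_-$ is a finitary sentence lying in $T(\La^\PA_-) = \PA_-$, hence in $\PA_- \cap \cL^0_\fin = \PA_\fin$; this gives $\om \tDash \PA_\fin \Rightarrow \om \tDash \La^\PA_-$. For the reverse, let $\ph \in \PA_\fin \subseteq \PA_-$. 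Then $\La^\PA_- \vdash \ph$, so $\La^\PA_- \vDash \ph$ by deductive soundness (Theorem \ref{T:pred-soundness}), and Remark \ref{R:strict-conseq} yields $\om \tDash \ph$ whenever $\om \tDash \La^\PA_-$.

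Next comes the main biconditional, which I would recast as $\sP \vDash \PA_-$ if and only if $\om \tDash \La^\PA_-$ for $\bbP$-a.e.\ $\om$. Because $\La^\PA_- \subseteq \PA_-$ and, by soundness, $\La^\PA_- \vDash \psi$ for every $\psi \in \PA_-$, we have $\sP \vDash \PA_- \iff \sP \vDash \La^\PA_-$, so it suffices to compare $\sP \vDash \La^\PA_-$ with a.e.\ strict satisfaction. For the direction from right to left, suppose $\om \tDash \La^\PA_-$ off a null set $N \in \cN_\bbP$. For each $\ph \in \PA_-$ we have $\La^\PA_- \vDash \ph$, so Remark \ref{R:strict-conseq} gives $\ph_\Om \supseteq N^c$; being a superset of a set of measure one, $\ph_\Om$ is measurable for the completion with $\olbbP \ph_\Om = 1$, i.e.\ $\sP \vDash \ph$. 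For the other direction, $\sP \vDash \La^\PA_-$ makes each $\ph_\Om^c$ (for $\ph \in \La^\PA_-$) null, so that the complement $\bigcup_{\ph \in \La^\PA_-} \ph_\Om^c$ of $\{\om \mid \om \tDash \La^\PA_-\}$ is a union of null sets; once this union is seen to be null (the point addressed below), a.e.\ strict satisfaction follows. Combining with the pointwise identity of the first step gives $\sP \vDash \PA_- \iff \om \tDash \PA_\fin$ a.e.

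For the final clause, which concerns arbitrary $\ph \in \cL$ and assignments $v$, I would argue $\PA_- \vdash \ph \iff \La^\PA_- \vdash \ph$ (by monotonicity and Proposition \ref{P:set-mono}, using $\La^\PA_- \subseteq \PA_-$ and $\La^\PA_- \vdash \PA_-$), and then $\La^\PA_- \vdash \ph \iff \La^\PA_- \vDash \ph$ by completeness (Theorem \ref{T:pred-completeness}). It then remains to identify $\La^\PA_- \vDash \ph$ with the stated strict condition ``$\om \tDash \La^\PA_-$ implies $\om \tDash \ph[v]$ for all $\om$ and $v$.'' One implication is exactly Remark \ref{R:strict-conseq}. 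Conversely, given the strict condition, any model $\sP$ with an assignment $\bv$ satisfying $\sP \vDash \La^\PA_-$ has $\om \tDash \La^\PA_-$ a.e.\ by the previous paragraph, hence $\om \tDash \ph[v_\om]$ a.e., so $\ph[\bv]_\Om$ contains a set of measure one and $\sP \vDash \ph[\bv]$; thus $\La^\PA_- \vDash \ph$.

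The main obstacle is the ``union of null sets'' step: $\La^\PA_-$ is not a countable set, since already the schema $\{\ISPA(\ph) \mid \ph(x,\vec y) \in \cL_\fin\}$ is uncountable (the variable set $\Var$ being uncountable), and an uncountable union of null sets need not be null. The remedy I would use is to pass to logical-equivalence classes: bound renaming (Proposition \ref{P:bnd-rename}) and relabeling of free variables collapse $\La^\PA_-$ to only countably many classes when $L$ is countable, and logically equivalent sentences $\psi \equiv \psi'$ satisfy $\psi_\Om = \psi'_\Om$. Selecting one representative per class produces a countable $\Lambda_0 \subseteq \La^\PA_-$ with $\om \tDash \Lambda_0 \iff \om \tDash \La^\PA_-$ for every $\om$, so the offending union reduces to a countable one and is therefore null. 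I would flag the countability of the signature $L$ as precisely the hypothesis that makes this collapse available.
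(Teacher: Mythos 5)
Your proof is correct and follows the same overall route as the paper's: reduce $\PA_-$ to the axiom set $\La^\PA_-$ via soundness/completeness, prove the model-level biconditional by an almost-everywhere intersection argument, and obtain the derivability clause from Theorem \ref{T:pred-completeness} together with Remark \ref{R:strict-conseq}. The one place you genuinely depart from the paper is the step you flag as the ``main obstacle,'' and there your version is the more careful one. The paper disposes of the measure-theoretic issue with the sentence ``Since $\ISPA_\fin$ is countable, so is $\La^\PA_-$,'' but, as you observe, this is false as literally stated: $\Var$ has cardinality $\bom_1$, and distinct choices of the variables $x, \vec y$ (or of the bound variables inside $\ph$) produce syntactically distinct instances $\ISPA(\ph)$, so $\ISPA_\fin$ is uncountable. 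Your repair --- collapse $\La^\PA_-$ to countably many logical-equivalence classes by bound renaming (Proposition \ref{P:bnd-rename}), invoke the fact recorded after Theorem \ref{T:pred-Karp-form} that $\psi \equiv \psi'$ forces $\psi_\Om = \psi'_\Om$, and thereby replace the uncountable intersection $\bigcap_{\ph \in \La^\PA_-} \ph_\Om$ by a countable one --- is exactly what is needed to make the argument go through, and is the honest reading of the paper's countability claim (the schema is countable only up to renaming). Your closing remark that countability of the signature is the operative hypothesis is also well taken: the paper only requires $L$ to \emph{contain} the arithmetic symbols, and if $L$ were uncountable the schema $\ISPA_\fin$ would contain uncountably many pairwise inequivalent instances, at which point this step (and hence this proof) would break down.
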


\begin{proof}
  Suppose $\sP \vDash \PA_-$. Then $\sP \vDash \La^\PA_-$. Since $\ISPA_\fin$ is
  countable, so is $\La^\PA_-$. Hence, $\olbbP \Om^* = 1$, where $\Om^* =
  \bigcap_{\ph \in \La^\PA_-} \ph_\Om$. For every $\om \in \Om^*$, we have $\om
  \tDash \La^\PA_-$, which implies $\om \tDash \PA_\fin$.

  Conversely, suppose $\om \tDash \PA_\fin$ for $\bbP$-a.e.~$\om \in \Om$. Since
  $\La^\PA_- \subseteq \cL^0_\fin$, we have $\La^\PA_- \subseteq \PA_\fin$.
  Hence, $\om \tDash \La^\PA_-$ for $\bbP$-a.e.~$\om \in \Om$. This implies that
  $\Om^* = \Om$, $\bbP$-a.e. It follows that $\Om^* \in \ol \Si$ and $\olbbP
  \Om^* = 1$. Therefore, $\sP \vDash \La^\PA_-$, which gives $\sP \vDash \PA_-$.

  For the second claim, the only if direction follows from Theorem
  \ref{T:pred-completeness} and Remark \ref{R:strict-conseq}. For the if
  direction, suppose $\om \tDash \ph[v]$ for all $\om$ and all assignments $v$
  into $\om$. Let $\sP = (\Om, \Si, \bbP) \vDash \PA_-$ and let $\bv =
  \ang{v_\om}$ be an assignment into $\sP$. By the above, $\om \tDash \La^\PA_-$
  for $\bbP$-a.e.~$\om \in \Om$. By hypothesis, $\om \tDash \ph[v_\om]$ for
  $\bbP$-a.e.~$\om \in \Om$. Hence, $\olbbP \ph[\bv]_\Om = 1$, so that $\sP
  \vDash \ph[\bv]$. By Theorem \ref{T:pred-completeness}, this gives $\PA_-
  \vdash \ph$.
\end{proof}

\begin{prop}\label{P:models-of-PA}
  Let $\sP = (\Om, \Si, \bbP)$ be a model. Then $\sP \vDash \PA_\infty$ if and
  only if $\om \simeq \cN$ for $\bbP$-a.e.~$\om \in \Om$. Consequently, for all
  $\ph \in \cL^0$, if $\cN \tDash \ph$, then $\PA_\infty \vdash \ph$.
\end{prop}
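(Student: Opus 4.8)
The plan is to establish the biconditional first and then read off the ``consequently'' clause from it via deductive completeness (Theorem \ref{T:pred-completeness}); here $\PA$ denotes the theory $\PA_\infty = T(\La^\PA)$ of the statement, whose induction schema ranges over all (possibly infinitary) formulas. The engine of the argument is a single infinitary induction instance. Writing $\ul n = \S \cdots \S \ul 0$ for the numerals, I set
\[
  \ga(x) = \ts{\bigvee_{n \in \bN_0}} x \beq \ul n,
\]
a genuine formula of $\cL$ (a countable disjunction of equations) with $\free \ga = \{x\}$, so $\ISPA(\ga)$ is a sentence lying in $\ISPA \subseteq \La^\PA \subseteq \PA$. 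The crucial observation is that the base and step of this induction hold in \emph{every} $\cL$-structure $\om$: the base $\ga(\ul 0/x) = \bigvee_n \ul 0 \beq \ul n$ is satisfied by its $n=0$ disjunct, and if $a = (\ul n)^\om$ then $\S^\om a = (\S \ul n)^\om = (\ul{n+1})^\om$, giving $\om \tDash \ga(\S x/x)[a]$. Hence any $\om$ with $\om \tDash \ISPA(\ga)$ is forced to satisfy the conclusion $\forall x\, \ga(x)$; that is, every element of the domain of $\om$ is the interpretation of some numeral.

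For the forward direction, suppose $\sP \vDash \PA$. Then $\sP$ satisfies each of the seven sentences $\ph_1, \ldots, \ph_6, \ISPA(\ga)$, all of which lie in $\PA$. Put $\Om^* = (\ph_1)_\Om \cap \cdots \cap (\ph_6)_\Om \cap \ISPA(\ga)_\Om$; as a finite intersection of sets of $\olbbP$-measure one, $\Om^* \in \ol\Si$ and $\olbbP \Om^* = 1$. I claim each $\om \in \Om^*$ is isomorphic to $\cN$ via $j\colon \cN \to \om$, $j(n) = (\ul n)^\om$. Surjectivity of $j$ is precisely $\om \tDash \forall x\, \ga$, obtained above from $\ISPA(\ga)$. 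Injectivity and the homomorphism property come from $\ph_1, \ldots, \ph_6$: for each fixed pair the sentences $\ul m \nbeq \ul n$ (for $m \ne n$), $\ul m + \ul n \beq \ul{m+n}$, and $\ul m \bdot \ul n \beq \ul{m \cdot n}$ are derivable from $\{\ph_1, \ldots, \ph_6\}$, hence hold in $\om$ by Theorem \ref{T:pred-soundness} and Remark \ref{R:strict-conseq}, while preservation of $\ul 0$ and $\S$ is immediate from the definition of the numerals. Since the signature carries no relation symbols, a bijection preserving the constant and all three operations is an isomorphism, so $\om \simeq \cN$. Thus $\om \simeq \cN$ for $\bbP$-a.e.\ $\om$.

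The reverse direction is short: the text already records $\cN \tDash \PA$, so by the invariance theorem (Theorem \ref{T:invar-thm}) every $\om \simeq \cN$ satisfies each sentence of $\PA$. If $\om \simeq \cN$ off a null set $N$, then for each $\psi \in \PA$ we have $\psi_\Om \supseteq \Om \setminus N$, whence $\psi_\Om \in \ol\Si$ and $\olbbP \psi_\Om = 1$, i.e.\ $\sP \vDash \psi$; therefore $\sP \vDash \PA$. For the consequence, suppose $\cN \tDash \ph$ with $\ph \in \cL^0$. By Theorem \ref{T:pred-completeness} it suffices to prove $\PA \vDash \ph$, so let $\sP \vDash \PA$. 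The forward direction gives $\om \simeq \cN$ a.e., and then invariance together with $\cN \tDash \ph$ yields $\om \tDash \ph$ a.e., so $\sP \vDash \ph$.

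The main obstacle to anticipate is not a calculation but a cardinality mismatch: $\La^\PA$ and hence $\PA$ are uncountable, so one cannot intersect all the sets $\psi_\Om$ for $\psi \in \La^\PA$ and conclude the intersection has measure one, as the countable case (Proposition \ref{P:models-of-PA-}) does. The resolution, and the genuine content of the proposition, is that categoricity is already forced by the \emph{finite} subtheory $\{\ph_1, \ldots, \ph_6, \ISPA(\ga)\}$, with the infinitary instance $\ISPA(\ga)$ --- unavailable in $\PA_-$ --- doing the essential work of excluding nonstandard elements. The one place demanding care is the verification that the base and step of the $\ga$-induction hold in an \emph{arbitrary} structure, since that is exactly what lets $\ISPA(\ga)$ deliver its conclusion unconditionally.
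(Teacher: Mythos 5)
Your proposal is correct and takes essentially the same route as the paper's proof: both hinge on the single infinitary induction instance $\ISPA(\ga)$ with $\ga(x) = \bigvee_{n \in \bN_0} x \beq \ul n$, verify that its base and step hold in an arbitrary structure so that the numeral map $n \mapsto \ul n^\om$ is surjective (hence an isomorphism, using $\ph_1, \ldots, \ph_6$ for injectivity and the homomorphism property), and then handle the converse by the invariance theorem and the ``consequently'' clause by deductive completeness. Your explicit reduction to the finite subtheory $\{\ph_1, \ldots, \ph_6, \ISPA(\ga)\}$ is a slight sharpening of the paper's appeal to countability of the relevant set of axioms, but the argument is the same.
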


\begin{proof}
  For the first claim, the if direction follows from the fact that $\om \simeq
  \cN$ implies $\om \tDash \PA_\infty$. For the only if direction, define the
  formula $\ph(x) = (\bigvee_{n \in \bN_0} x \beq \ul n)$. Suppose $\sP \vDash
  \PA_\infty$. Then $\sP \vDash \La^\PA \cup \{\ISPA (\ph)\}$, which is
  countable. Therefore, $\om \tDash \La^\PA \cup \{\ISPA (\ph)\}$ for
  $\bbP$-a.e.~$\om \in \Om$. Choose any such $\om$. Note that $n \mapsto \ul
  n^\om$ is an embedding of $\cN$ into $\om$.

  Clearly, $\om \tDash \ph(\ul 0 / x)$. By the definition of $\ul n$, if $a$ is
  in the domain of $\om$ and $n \in \bN_0$, then $\om \tDash (x \beq \ul n)[a]$
  implies $\om \tDash (\S x \beq \ul{n + 1}) [a]$. Hence, $\om \tDash \ph[a]$
  implies $\om \tDash \ph(\S x / x)[a]$. Since $a$ was arbitrary, we have $\om
  \tDash \forall x (\ph \to \ph(\S x / x))$. It therefore follows that $\om
  \tDash \forall x \ph$. Hence, the map $n \mapsto \ul n^\om$ is surjective, and
  so it is an isomorphism from $\cN$ to $\om$.

  Finally, let $\ph \in \cL^0$ and suppose $\cN \tDash \ph$. Let $\sP = (\Om,
  \Si, \bbP)$ be a model with $\sP \vDash \PA_\infty$. By the above result and
  Theorem \ref{T:invar-thm}, we have $\om \tDash \ph$ for $\bbP$-a.e.~$\om$.
  Hence, $\olbbP \ph_\Om = 1$, so that $\sP \vDash \ph$. By Theorem
  \ref{T:pred-completeness}, this gives $\PA_\infty \vdash \ph$.
\end{proof}

\subsection{Inductive consequence and completeness}

If $\sP$ is a model, we define $\Th \sP = \{\ph \in \cL^0 \mid \sP \vDash
\ph\}$.
  \symindex{$\Th \sP$ (in $\cL$)}%
The proof of Proposition \ref{P:ThP-is-theory} is valid here, and shows that
$\Th \sP$ is a consistent deductive theory. For $(X, \ph, p) \in \cL^\IS$, we
say that \emph{$\sP$ satisfies $(X, \ph, p)$}, denoted by $\sP \vDash (X, \ph,
p)$ if $X \equiv Y \cup \{\psi\}$ for some $Y \subseteq \Th \sP$ and some $\psi
\in \cL^0$ with $\olbbP \ph_\Om \cap \psi_\Om / \olbbP \psi_\Om = p$.
  \index{satisfiable}%
  \symindex{$\sP \vDash_\cL (X, \ph, p)$}%

As with the inductive calculus, the results in Section \ref{S:ind-sem} depend
only on deductive completeness and the fact that $\vdash_\cF$ satisfies (i)--%
(vi) of Definition \ref{D:derivability}. Hence, all of the proofs in that
section go through in the predicate case, with $\cF$ replaced by $\cL^0$,
``strict model'' replaced by ``structure,'' and $\B^{PV}$ replaced by the set
$\Om$ in Proposition \ref{P:all-structures}. We adopt all of the notation and
terminology of Section \ref{S:ind-sem} to define inductive consequence in
$\cL^\IS$, extend it to inductive conditions, and establish completeness.

Similarly, all of the results in Sections \ref{S:ind-th-Dynk} and
\ref{S:dialog}--\ref{S:sem-indep} carry through with the above three
replacements. We therefore adopt all of the notation and terminology of those
sections to define independence and its related notions.

To all of this, we add the following.

\begin{thm}[Inductive isomorphism theorem]\label{T:ind-iso-thm}
    \index{isomorphism theorem!inductive ---}%
  Let $\sP$ and $\sQ$ be isomorphic models. Then $\sP \vDash (X, \ph, p)$ if and
  only if $\sQ \vDash (X, \ph, p)$, for all $(X, \ph, p) \in \cL^\IS$.
\end{thm}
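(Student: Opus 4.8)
The plan is to reduce everything to the measure identities furnished by Lemma \ref{L:iso-thm}, exploiting the fact that the formulas appearing in the definition of inductive satisfiability are all sentences, so that the sets $\th_\Om$ are independent of any assignment. Fix a model isomorphism $h \colon \sP \to \sQ$, where $\sP = (\Om, \Si, \bbP)$ and $\sQ = (\Om', \Ga, \bbQ)$, choose any assignment $\bv$ into $\sP$, and let $\bv'$ be an image of $\bv$ under $h$ (which exists by the construction preceding Lemma \ref{L:iso-thm}). For a sentence $\th$ we have $\th[\bv]_\Om = \th_\Om$ and $\th[\bv']_{\Om'} = \th_{\Om'}$, so Lemma \ref{L:iso-thm} specializes to the statement that for every $\th \in \cL^0$, $\th_\Om \in \ol \Si$ if and only if $\th_{\Om'} \in \ol \Ga$, and in that case $\olbbQ \th_{\Om'} = \olbbP \th_\Om$. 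This single fact, being a biconditional together with a numerical equality, is symmetric in $\sP$ and $\sQ$, which will let me dispatch both directions at once.

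First I would record that $\Th \sP = \Th \sQ$. Indeed, for a sentence $\th$, the assertion $\sP \vDash \th$ means $\olbbP \th_\Om = 1$, and by the displayed identity this is equivalent to $\olbbQ \th_{\Om'} = 1$, that is, $\sQ \vDash \th$; this is just the sentence case of the Deductive isomorphism theorem (Theorem \ref{T:ded-iso-thm}). Now suppose $\sP \vDash (X, \ph, p)$. By definition there are $Y \subseteq \Th \sP$ and $\psi \in \cL^0$ with $X \equiv Y \cup \{\psi\}$ and $\olbbP \ph_\Om \cap \psi_\Om / \olbbP \psi_\Om = p$. Since $\Th \sP = \Th \sQ$, the very same $Y$ satisfies $Y \subseteq \Th \sQ$, so the decomposition $X \equiv Y \cup \{\psi\}$ is equally valid on the $\sQ$ side; it then remains only to transfer the quotient.

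The transfer is where I would apply the identity to the two sentences $\psi$ and $\ph \wedge \psi$. Writing $\ph_\Om \cap \psi_\Om = (\ph \wedge \psi)_\Om$ (recall $\ph \wedge \psi = \bigwedge \{\ph, \psi\} \in \cL^0$), I obtain $\olbbQ \psi_{\Om'} = \olbbP \psi_\Om$ and $\olbbQ \ph_{\Om'} \cap \psi_{\Om'} = \olbbP \ph_\Om \cap \psi_\Om$; in particular $\olbbQ \psi_{\Om'} = \olbbP \psi_\Om > 0$, so the $\sQ$-side quotient is well-defined and equal to $p$. Hence $\sQ \vDash (X, \ph, p)$. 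Because all of these facts are symmetric in $\sP$ and $\sQ$, the same argument with their roles reversed yields the converse. I do not anticipate a serious obstacle: the only points requiring care are the observation that every formula in sight is a sentence (so assignments drop out and Lemma \ref{L:iso-thm} applies directly), and the bookkeeping that the antecedent decomposition $X \equiv Y \cup \{\psi\}$ carries over, which rests entirely on the equality $\Th \sP = \Th \sQ$.
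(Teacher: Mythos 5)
Your proposal is correct and follows essentially the same route as the paper's proof: both decompose $X \equiv Y \cup \{\psi\}$, carry $Y$ over using Theorem \ref{T:ded-iso-thm}, and then apply Lemma \ref{L:iso-thm} to the sentences $\psi$ and $\ph \wedge \psi$ to transfer the quotient $\olbbP\, \ph_\Om \cap \psi_\Om / \olbbP\, \psi_\Om$. The only cosmetic difference is that you package the deductive step as the equality $\Th \sP = \Th \sQ$ and invoke the biconditional form of Lemma \ref{L:iso-thm} for the converse, where the paper simply proves one direction and leaves the symmetric argument implicit.
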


\begin{proof}
  Let $\sP = (\Om, \Si, \bbP)$ and $\sQ = (\Om', \Ga, \bbQ)$ be isomorphic and
  suppose $\sP \vDash (X, \ph, p)$. Then $X \equiv Y \cup \{\psi\}$, where $\sP
  \vDash Y$ and $\olbbP \ph_\Om \cap \psi_\Om / \olbbP \psi_\Om = p$. By Theorem
  \ref{T:ded-iso-thm}, we have $\sQ \vDash Y$. Lemma \ref{L:iso-thm} implies
  $\olbbQ \psi_{\Om'} = \olbbP \psi_\Om$ and $\olbbQ \ph_{\Om'} \cap \psi_
  {\Om'} = \olbbQ (\ph \wedge \psi)_{\Om'} = \olbbP (\ph \wedge \psi)_\Om =
  \olbbP \ph_\Om \cap \psi_\Om$. We therefore have $\olbbQ \ph_{\Om'} \cap \psi_
  {\Om'} / \olbbQ \psi_{\Om'} = p$, so that $\sQ \vDash (X, \ph, p)$.
\end{proof}

\section{Predicate models and random variables}\label{S:pred-models-RVs}

In this section, we discuss the relationship between predicate models and random
variables. Here, random variable is meant in the usual sense of
measure-theoretic probability theory. That is, a random variable is a measurable
function, defined on a probability space, taking values in a measurable space.
Our first goal will be to prove a predicate analogue of Theorem
\ref{T:prob-sp-model-iso}, which states that every probability space is
isomorphic to a propositional model. We begin by establishing the connection
between propositional and predicate models.

\begin{prop}\label{P:prop-embed}
  Every propositional model is isomorphic to a predicate model. More
  specifically, let $\cF$ be a given propositional language with propositional
  variables $PV$, and let $\sP$ be a model in $\cF$. Then there exists a
  predicate language $\cL$ and an $\cL$-model $\sQ$ such that $\sP_\cF$ and
  $\sQ$ are isomorphic as measure spaces.
\end{prop}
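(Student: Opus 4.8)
The plan is to realize each strict model as a genuine $\cL$-structure on a one-point domain, transport the measure of $\sP_\cF$ across the resulting bijection, and conclude via the pointwise-isomorphism machinery for measure spaces. First I would fix the language. Since a propositional variable $\bfr \in PV$ is morally a $0$-ary relation, but our signatures admit relation symbols only of arity $n \ge 1$, I would set $L = \{c\} \cup \{R_\bfr \mid \bfr \in PV\}$, where $c$ is a constant symbol and each $R_\bfr$ is a unary relation symbol, and let $\cL = \cL(L)$. To each strict model $\om : PV \to \B$ I associate the $\cL$-structure $\om^*$ with one-point domain $\{*\}$, given by $c^{\om^*} = *$, together with $R_\bfr^{\om^*} = \{*\}$ when $\om \bfr = 1$ and $R_\bfr^{\om^*} = \emp$ when $\om \bfr = 0$. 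Each $\om^*$ is a legitimate structure, and $\om \mapsto \om^*$ is injective, since distinct strict models disagree on some $\bfr$ and hence give distinct interpretations of $R_\bfr$. Setting $\Om' = \{\om^* \mid \om \in \Om\}$ and $h : \Om \to \Om'$ by $h \om = \om^*$ makes $h$ a bijection onto a set of $\cL$-structures.

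Next I would take $\sQ$ to be the measure space image of $\sP_\cF = (\Om, \ol\Si_\cF, \olbbP_\cF)$ under $h$; that is, $\sQ = (\Om', \Ga, \bbQ)$ with $\Ga = \{A \subseteq \Om' \mid h^{-1}(A) \in \ol\Si_\cF\}$ and $\bbQ = \olbbP_\cF \circ h^{-1}$. Since $\Om = \top_\Om \in \ol\Si_\cF$ with $\olbbP_\cF \Om = 1$, the pushforward $\bbQ$ is a probability measure, so $\sQ$ is a bona fide $\cL$-model. Because $h$ is a bijection, the measure space image construction shows that $h$ is a pointwise isomorphism from $\sP_\cF$ to $\sQ$; and since pointwise isomorphic measure spaces are isomorphic, we obtain $\sP_\cF \simeq \sQ$ as measure spaces, which is the assertion.

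The measure-theoretic step is all but immediate once the bijection is in hand; the only care required is in the definition of $\om^*$ and the checks that $h$ is well-defined and injective. What makes this the \emph{right} construction — and what I would record for the eventual predicate analogue of Theorem \ref{T:prob-sp-model-iso} — is that it carries logical content faithfully. Defining a translation $\cdot^\sharp$ by $\bfr^\sharp = R_\bfr c$, $(\neg \ph)^\sharp = \neg \ph^\sharp$, and $(\bigwedge \Phi)^\sharp = \bigwedge_{\ph \in \Phi} \ph^\sharp$, a routine formula induction gives $\ph^\sharp \in \cL^0$ and $\om \tDash \ph$ if and only if $\om^* \tDash \ph^\sharp$, so that $h$ carries each definable set $\ph_\Om$ to $(\ph^\sharp)_{\Om'}$. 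I do not expect a genuine obstacle; the one point to watch is purely the bookkeeping around the arity mismatch between $0$-ary propositional variables and the arity-$\ge 1$ convention for relation symbols, which is exactly what the auxiliary constant $c$ and the unary symbols $R_\bfr$ are there to absorb.
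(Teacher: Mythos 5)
Your proof is correct, and it reaches the stated conclusion by the same overall strategy as the paper — encode each strict model as an $\cL$-structure over a signature consisting of one constant plus a unary relation symbol per propositional variable, take the measure space image under the induced bijection, and use the same translation $\bfr \mapsto R_\bfr c$ — but the two constructions place the $\om$-dependence in opposite places, and your measure-theoretic step is more streamlined. The paper's structures $\ol\om$ all have the large common domain $\B^{PV}$ with the relation symbols interpreted \emph{identically} in every structure, $r_\de^{\ol\om} = \{\nu \mid \nu \tDash_\cF \bfr_\de\}$, while the constant $\rho^{\ol\om} = \om$ alone varies; you invert this, fixing the constant on a one-point domain and letting the relations vary. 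Your version is leaner, and your conclusion via the bijection is actually cleaner than the paper's: since the measure space image of a bijection is automatically a pointwise isomorphism, you get isomorphism at once (indeed the stronger pointwise statement), whereas the paper verifies the ``induces an isomorphism'' condition by hand, checking that every $A \in \ol\Si_\cF$ equals $h^{-1}\ph^\tau_{\ol\Om}$ for a definable set — a check your closing paragraph reproduces but correctly identifies as inessential to the measure-theoretic claim. What the paper's asymmetric choice buys is continuity with the later development: putting the randomness into a \emph{constant} symbol $\rho$ (the ``propositional constant'') is exactly the pattern of Theorem \ref{T:prob-model-iso}, where random variables become constant symbols whose interpretations vary with the structure while the relation symbols $\ul V$ stay fixed, and $\rho$ is explicitly re-used in that role in Section \ref{S:RV-symb}. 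Your dual encoding, with random relations over a trivial domain, proves the proposition just as well but does not set up that correspondence.
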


\begin{proof}
  Let $\sP = (\Om, \Si, \bbP)$ be a propositional model in $\cF$. Let $\al =
  |PV|$ and write $PV = \ang{\bfr_\de \mid \de < \al}$. Let $\{r_\de \mid \de <
  \al\}$ be a set of distinct unary relation symbols. Let $\rho$ be a constant
  symbol, which we call the propositional constant. Define the extralogical
  signature $L = \{\rho\} \cup \{r_\de \mid \de < \al\}$, and let $\cL$ be the
  associated predicate language.

  Given a strict propositional model $\om \in \Om$, we define the
  $\cL$-structure $\ol\om$ as follows. The domain of $\ol\om$ will be $A = \B^
  {PV}$. Let $r_\de^{\ol\om} = \{\nu \in A \mid \nu \tDash_\cF \bfr_\de\}$, and
  let $\rho^{\ol\om} = \om$. Let $\ol\Om = \{\ol\om \mid \om \in \Om\}$ and let
  $\sQ = (\ol\Om, \Ga, \bbQ)$ be the measure space image of $\sP_\cF = (\Om, \ol
  \Si_\cF, \olbbP_\cF)$ under the function $h$ mapping $\om$ to $\ol\om$. Then
  $\sQ$ is an $\cL$-model.

  Define $\tau: PV \to \cL$ by $\bfr_\de^\tau = r_\de \rho$. Extend $\tau$
  recursively to $\cF$ by $(\neg \ph)^\tau = \neg \ph^\tau$ and $(\bigwedge
  \Phi)^\tau = \bigwedge_{\ph \in \Phi} \ph^\tau$. We then have $\om \tDash_\cF
  \ph$ if and only if $\ol\om \tDash_\cL \ph^\tau$, for all $\ph \in \cF$. This
  is clear by construction when $\ph \in PV$. It then follows easily by formula
  induction on $\ph$.

  Now let $A \in \ol \Si_\cF = \ol \Si \cap \cB^{PV}$. Choose $\ph \in \cF$ such
  that $A = \ph_\Om$. Define $U = \ph^\tau_{\Om'}$. Then $\om \in A$ if and only
  if $\om \tDash_\cF \ph$, and $\om \in h^{-1} U$ if and only if $\ol\om
  \tDash_\cL \ph^\tau$. Hence, $A = h^{-1} U$, so that $U \in \Ga$. Since $A$
  was arbitrary, this shows that $h$ induces a measure-space isomorphism from
  $\sP_\cF$ to $\sQ$.
\end{proof}

\subsection{Random variables as extralogical symbols}\label{S:RV-symb}

In Theorem \ref{T:prob-sp-model-iso}, we showed that every probability space is
isomorphic to a propositional model. Conversely, every propositional model is a
probability space. In this sense, then, measure-theoretic probability theory is
exactly the semantics of propositional inductive logic. But this simple
observation misses an important point. While the propositional version of
inductive logic is capable of representing any probability space, it does not
explicitly represent any random variables.

The reason this matters is that measure-theoretic probability theory is more
than just probability spaces. The modern practitioner almost always specializes
in a particular class of random variables and stochastic processes. For this
reason, we define the following. A \emph{measure-theoretic probability model} is
a tuple, $(S, \Ga, \nu, X)$, where $(S, \Ga, \nu)$ is a probability space, $X =
\ang{X_i \mid i \in I}$ is an indexed collection of random variables, and $\Ga =
\si(\ang{X_i \mid i \in I})$. That is, for each $i \in I$, there is a measurable
space $(R_i, \Ga_i)$ such that $X_i: S \to R_i$ is measurable, and $\Ga$ is the
smallest $\si$-algebra on $S$ that contains $\{X_i \in V\}$ for every $i \in I$
and every $V \in \Ga_i$.

We aim to prove a predicate analogue of Theorem \ref{T:prob-sp-model-iso}, and
show that every measure-theoretic probability model has a natural correspondence
to an $\cL$-model, where the logical signature $L$ is directly connected to the
random variables $X$.

We construct the logical signature as follows. Let $R = \bigcup_{i \in I} R_i$.
Let $\{\ul r \mid r \in R\}$ be a set of distinct constant symbols, and $\{\ul V
\mid i \in I, V \in \Ga_i\}$ a set of distinct unary relation symbols. Let
\[
  L_R = \{\ul r \mid r \in R\} \cup \{\ul V \mid i \in I, V \in \Ga_i\},
\]
and let $\cL_R$ be the associated predicate language. Define the $L_R$-structure
$\cR = (R, L^\cR)$ by $\ul r^\cR = r$ and $\ul V^\cR = V$. Let $T_R = \{\ph \in
\cL^0 \mid \cR \tDash \ph\}$. Then $T_R$ is a deductive theory. In $\cL_R$, we
write $y \in \ul V$ as shorthand for $\ul V \, y$, and $y \notin \ul V$ as
shorthand for $\neg \ul V \, y$. Let $C = \{\ul X_i \mid i \in I\}$ be a set of
distinct constant symbols not in $L_R$, and define $L = L_R C$.

\begin{thm}\label{T:prob-model-iso}
  There exists an $\cL$-model $\sP = (\Om, \Si, \bbP)$ with $\sP \vDash T_R$,
  and a function $h: S \to \Om$ mapping $x \in S$ to $\om \in \Om$ such that
  \begin{enumerate}[(i)]
    \item $x \in \{X_i \in V\}$ if and only if $\om \tDash \ul X_i \in \ul V$,
    \item each $U \in \Ga$ can be written as $U = h^{-1} \ph_\Om$ for some $\ph
          \in \cL^0$, and
    \item $h$ induces a measure-space isomorphism from $(S, \Ga, \nu)$ to $\sP$.
  \end{enumerate}
  Consequently, if $P = \bTh \sP \dhl_{[T_R, \Th \sP]}$, then
  \begin{equation}\label{prob-model-iso}
    \ts{
      P(\bigwedge_{k = 1}^n \ul X_{i(k)} \in \ul V_k \mid T_R) =
      \nu \bigcap_{k = 1}^n \{X_{i(k)} \in V_k\},
    }
  \end{equation}
  whenever $i(1), \ldots, i(n) \in I$ and $V_k \in \Ga_{i(k)}$.
\end{thm}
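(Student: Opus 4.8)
The plan is to mirror the proof of Theorem \ref{T:prob-sp-model-iso}, but building structures in place of strict models. For each $x \in S$ I would let $\om^x$ be the $\cL$-structure with domain $R$ whose $L_R$-reduct is $\cR$ and with $\ul X_i^{\om^x} = X_i(x)$ for every $i \in I$; thus the only data that varies with $x$ is the interpretation of the constants in $C$. Set $\Om = \{\om^x \mid x \in S\}$ and define $h : S \to \Om$ by $h(x) = \om^x$. Property (i) is then immediate, since $\om^x \tDash \ul X_i \in \ul V$ holds iff $\ul X_i^{\om^x} = X_i(x) \in \ul V^{\om^x} = V$, i.e.\ iff $x \in \{X_i \in V\}$.

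Next I would equip $\Om$ with the $\si$-algebra $\Si = \si(\{(\ul X_i \in \ul V)_\Om \mid i \in I,\ V \in \Ga_i\})$ and the measure $\bbP = \nu \circ h^{-1}$. Two elementary facts about preimages do all the work. First, $\{U \subseteq \Om \mid h^{-1}(U) \in \Ga\}$ is a $\si$-algebra containing each generator $(\ul X_i \in \ul V)_\Om$ (whose preimage is $\{X_i \in V\} \in \Ga$ by (i)), hence contains $\Si$; so $h$ is $(\Ga,\Si)$-measurable and $\bbP$ is a well-defined probability measure, making $\sP = (\Om, \Si, \bbP)$ a model. Second, $h^{-1}(\Si)$ is a $\si$-algebra inside $\Ga$ containing every $\{X_i \in V\}$, so $h^{-1}(\Si) = \si(\{X_i \in V\}) = \Ga$. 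The one nonobvious choice, and the main obstacle, is this definition of $\Si$: taking instead $\Si = \{\ph_\Om \mid \ph \in \cL^0\}$ would force me to prove $h^{-1}(\ph_\Om) \in \Ga$ for \emph{every} sentence, which can fail for sentences asserting equalities among the $\ul X_i$ or quantifying over $R$ (singletons and diagonals in $R$ need not be $\Ga_i$-measurable). Restricting $\Si$ to the $\si$-algebra generated by the membership sentences makes measurability automatic. Finally, $\sP \vDash T_R$ because each $\ph \in T_R$ is an $\cL_R$-sentence with $\cR \tDash \ph$, and since every $\om \in \Om$ restricts to $\cR$ on $L_R$, the coincidence theorem (Theorem \ref{T:coinc-thm}) gives $\ph_\Om = \Om$, whence $\bbP \ph_\Om = 1$.

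For (ii) and (iii): the collection $\cA = \{h^{-1}(\ph_\Om) \mid \ph \in \cL^0\}$ is a $\si$-algebra on $S$ (preimage turns $\neg$ into complement and $\bigwedge$ into countable intersection) containing every $\{X_i \in V\}$, so $\Ga \subseteq \cA$, which is exactly (ii). For (iii), $h$ is measurable and $\bbP = \nu \circ h^{-1}$ by construction, and the identity $h^{-1}(\Si) = \Ga$ says precisely that for every $A \in \Ga$ there is $U \in \Si$ with $h^{-1}(U) = A$; by the criterion for inducing a measure-space isomorphism recorded in Section \ref{S:meas-spaces}, $h$ induces an isomorphism from $(S, \Ga, \nu)$ to $\sP$.

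For the concluding identity \eqref{prob-model-iso}: since $\sP \vDash T_R$, Remark \ref{R:th-of-th} gives $T_R \subseteq \Th \sP$, so $T_R \in [\Taut, \Th \sP]$ and Proposition \ref{P:chop-off-root} shows $P = \bTh \sP \dhl_{[T_R, \Th \sP]}$ is an inductive theory with root $T_R$. Writing $\ph = \bigwedge_{k=1}^n \ul X_{i(k)} \in \ul V_k$ and taking $Y = T_R$, $\psi = \top$ in the definition of inductive satisfaction yields $\sP \vDash (T_R, \ph, \olbbP \ph_\Om)$, so $(T_R, \ph, \olbbP \ph_\Om) \in \bTh \sP$; as $T_R \cao [T_R, \Th \sP]$, this statement lies in $P$, giving $P(\ph \mid T_R) = \olbbP \ph_\Om$. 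Each conjunct's truth set is a generator of $\Si$, so $\ph_\Om \in \Si$ and $h^{-1}(\ph_\Om) = \bigcap_{k=1}^n \{X_{i(k)} \in V_k\}$; hence $\olbbP \ph_\Om = \bbP \ph_\Om = \nu \bigcap_{k=1}^n \{X_{i(k)} \in V_k\}$, which is \eqref{prob-model-iso}.
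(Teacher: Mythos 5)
Your construction is correct and is essentially the paper's proof: the same expansions $\om^x$ of $\cR$ with $\ul X_i^{\om^x} = X_i(x)$, the same map $h$, the same pushforward measure, and the same generating-class arguments for (i), (ii), and \eqref{prob-model-iso} (your last paragraph just spells out, via $Y = T_R$ and $\psi = \top$, an unwinding of the definition of $\bTh \sP$ that the paper leaves implicit). The one place you diverge is exactly the point you flag as the main obstacle, and there your framing of the alternative is slightly off: the paper does not take $\Si = \{\ph_\Om \mid \ph \in \cL^0\}$ (which, as you correctly observe, could contain sets with non-measurable preimages), but instead takes $\sP$ to be the \emph{measure space image} of $(S, \Ga, \nu)$ under $h$, i.e.\ $\Si = \{U \subseteq \Om \mid h^{-1} U \in \Ga\}$ with $\bbP = \nu \circ h^{-1}$. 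This maximal choice makes $h$ measurable by fiat and makes (iii) an immediate consequence of (ii), since $h^{-1} \ph_\Om \in \Ga$ then forces $\ph_\Om \in \Si$; your minimal choice $\Si = \si(\{(\ul X_i \in \ul V)_\Om\})$ instead costs you the extra identity $h^{-1}(\Si) = \Ga$, which you prove correctly and then, correctly, use in place of (ii) to get (iii). In fact the two choices coincide: since $h$ maps $S$ onto $\Om$, the map $U \mapsto h^{-1} U$ is injective on subsets of $\Om$, and your identity $h^{-1}(\Si) = \Ga$ then yields $\{U \subseteq \Om \mid h^{-1} U \in \Ga\} = \Si$. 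So you have built exactly the paper's model, with the measurability bookkeeping done by hand rather than absorbed into the measure-space-image device.
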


\begin{proof}
  For each $x \in S$, define $\om = \om^x$ to be the $L$-expansion of $\cR$
  given by $\om^{\ul X_i} = X_i(x)$. Let $\Om = \{\om^x \mid x \in S\}$ and let
  $h: S \to \Om$ denote the map $x \mapsto \om^x$. Let $\sP = (\Om, \Si, \bbP)$
  be the measure space image of $(S, \Ga, \nu)$ under $h$. Since $\om \tDash
  T_R$ for all $\om \in \Om$, we have $\sP \vDash T_R$. By construction, we have
  $X_i(x) \in V$ if and only if $\om^x \tDash \ul X_i \in \ul V$, so (i) holds.

  For (ii), let
  \[
    \Ga' = \{
      U \in \Ga
    \mid
      U = h^{-1} \ph_\Om \text{ for some } \ph \in \cL^0
    \}.
  \]
  Since $\bigcup_n h^{-1} (\ph_n)_\Om = h^{-1} (\bigvee_n
  \ph_n)_\Om$ and $\bot_\Om = \emp$, we have that $\Ga'$ is a $\si$-algebra. Let
  $V \in \Ga_i$. Since $X_i(x) \in V$ if and only if $\om^x \tDash \ul X_i \in
  \ul V$, it follows that $\{X_i \in V\} = h^{-1} (\ul X_i \in \ul V)_\Om$.
  Therefore, $\{X_i \in V\} \in \Ga'$. Since $\Ga = \si(\ang{X_i \mid i \in
  I})$, this proves that $\Ga = \Ga'$, so (ii) holds.

  If $U \in \Ga$, $\ph \in \cL^0$, and $U = h^{-1} \ph_\Om$, then by the
  construction of $\sP$, we have $\ph_\Om \in \Si$. Therefore, (ii) implies 
  (iii).

  Finally, since $h$ also induces an isomorphism from $(S, \ol \Ga, \ol \nu)$ to
  $ (\Om, \ol \Si, \olbbP)$, we have $\olbbP = \ol \nu \circ h^{-1}$. This gives
  \[
    \ts{
      P(\bigwedge_{k = 1}^n \ul X_{i(k)} \in \ul V_k \mid T_R) =
      \olbbP \bigcap_{k = 1}^n (\ul X_{i(k)} \in \ul V_k)_\Om =
      \nu \bigcap_{k = 1}^n \{X_{i(k)} \in V_k\},
    }
  \]
  which verifies \eqref{prob-model-iso}.
\end{proof}

We excluded the case where $\si(\ang{X_i \mid i \in I})$ is a proper subset of
$\Ga$. If we wish to treat this case, we can simply add $\iota$, the identity
function on $S$, to our list of random variables. Note, however, that in this
case, there are events $U \in \Ga$ that have nothing to do with any of the
random variables $X_i$. These are analogous to propositional sentences in the
sense that they are generic assertions that lack structure. If we add $\iota$ to
our list of random variables, and $\rho = \ul \iota \in L$ is the constant
symbol that represents $\iota$, then $\rho$ is playing the same role as the
propositional constant in the proof of Proposition \ref{P:prop-embed}. We see,
then, that we are effectively treating every event $U \in \Ga \setminus
\si(\ang{X_i \mid i \in I})$ as if it were a propositional variable.

Theorem \ref{T:prob-model-iso} shows that every measure-theoretic probability
model is an inductive model. In other words, the whole of measure-theoretic
probability theory is embedded in the semantics of inductive logic. But Theorem
\ref{T:prob-model-iso} says more than just this. It exhibits a particular
embedding. The function $h$ in Theorem \ref{T:prob-model-iso} gives us a logical
interpretation for each component of a measure-theoretic probability model. With
this interpretation, we have the following correspondences.

\begin{center}
  \begin{tabular}{ l l l }
     \emph{Measure Theory} && \emph{Inductive Logic} \\ 
     \hline
     outcome && structure \\
     event && sentence \\
     set membership && strict satisfiability \\
     random variable && constant symbol
  \end{tabular}
\end{center}

\subsection{Extralogical symbols as functions}\label{S:symb-func}

An $\cL$-structure is, in fact, a function whose domain is $L$. If $\om$ is an
$\cL$-structure, then it maps each $\s \in L$ to the object $\s^\om$. Hence, if
$\sP = (\Om, \Si, \sP)$ is an $\cL$-model, then each structure $\om \in \Om$ is
a function that maps the symbol $\ul X$ to the object $\ul X^\om$. This is
exactly the opposite of what we have in measure-theoretic probability theory,
where each random variable $X$ is a function that maps the outcome $\om$ to the
object $X(\om)$.

Starting with an $\cL$-model, $\sP = (\Om, \Si, \bbP)$, we may wish to reverse
the natural direction of the mapping, and think of the extralogical symbols $\s
\in L$ as functions defined on $\Om$. We can do that as follows. If $A_\om$ is
the domain of $\om \in \Om$, then a constant symbol $c$ gives rise to the
function $X^c(\om) = c^\om$, mapping $\Om$ to $A = \bigcup_{\om \in \Om} A_\om$.
An $n$-ary relation symbol can be viewed as an indexed collection of $\{0,
1\}$-valued functions, indexed by $A^n$. Namely, for each $\vec a \in A^n$, we
have $X^r_{\vec a}(\om) = 1$ if $\vec a \in r^\om$, and $0$ otherwise. For an
$n$-ary function symbol $f$, we can add a so-called ``cemetery point'' to $A$.
Let $\pa$ be an object not in $A$. Then $f$ provides us with an indexed
collection of $A \cup \{\pa\}$-valued functions, indexed by $A^n$. That is,
\[
  X^f_ {\vec a} (\om) = \begin{cases}
    f^\om \vec a &\text{if $\vec a \in A_\om^n$},\\
    \pa &\text{if $\vec a \notin A_\om^n$}.
  \end{cases}
\]
These functions are, of course, not measurable. In fact, the set $A$ is not even
equipped with a $\si$-algebra, and there may not be a natural $\si$-algebra on
$A$ that is compatible with $\Si$. Without measurability, we cannot use the
well-established theory of random variables to analyze the functions determined
by the extralogical symbols. On the other hand, without the requirement of
measurability, we are able to model situations that are not possible with random
variables. See, for instance, Example \ref{Expl:pred-Karp412} below.

\subsection{The relativity of randomness}\label{S:rel-rand}

Let $\sP = (\Om, \Si, \bbP)$ be an $\cL$-model and let $\ul X \in L$ be a
constant symbol. If we try to think of $\om \mapsto \ul X^\om$ as a kind of
non-measurable random variable, then we run into a problem deeper than its
non-measurability. The problem we face is that \emph{every} extralogical symbol
is a non-measurable random variable. In a measure-theoretic probability model,
if we are faced with a probability of the form $\nu \{X > 0\}$, then we can be
quite certain that the only thing random is $X$. But in an inductive model, the
analogous expression is $\olbbP \{\om \in \Om \mid X^\om >^\om 0^\om\}$. Not
only can the value of $0$ vary with $\om$, the inequality relation itself can
also depend on $\om$.

This phenomenon can be seen in a very simple example. Let $L = \{\ul h, \ul t,
\ul X\}$ be a set of constant symbols and $\cL$ the associated predicate
language. We think of $\ul h$ and $\ul t$ as denoting the heads and tails sides
of a coin, and $\ul X$ the result of flipping the coin. Let $T_0 \subseteq
\cL^0$ be the deductive theory generated by the sentences, $\ul h \nbeq \ul t$
and $\ul X \beq \ul h \vee \ul X \beq \ul t$. We may think of $T_0$ as
describing our state of knowledge prior to flipping the coin. Namely, the two
sides of the coin are distinct, and the coin will not land on its edge.

Let $P$ be the inductive theory generated by
\[
  P(\ul X \beq \ul h \mid T_0) = P(\ul X \beq \ul t \mid T_0) = 1/2.
\]
This, of course, represents our assumption that the coin is fair.

Intuitively, we imagine that $\ul h$ and $\ul t$ are fixed, whereas $\ul X$ is
random. We can satisfy $P$ with a model that matches this intuition. Let $A = 
\{0, 1\}$. Define the $L$-structure $\om_0$ by $\ul h^{\om_0} = 1$, $\ul t^
{\om_0} = 0$, and $\ul X^{\om_0} = 0$. Define the $L$-structure $\om_1$ by $\ul
h^{\om_1} = 1$, $\ul t^{\om_1} = 0$, and $\ul X^{\om_1} = 1$. Let $\Om = 
\{\om_0, \om_1\}$, $\Si = \fP \Om$, and $\bbP \{\om_0\} = \bbP \{\om_1\} = 1/2$.
Then $\sP = (\Om, \Si, \bbP) \vDash P$.

Under $\sP$, the symbol $\ul h$ corresponds to the function $\om \mapsto \ul
h^\om = 1$, and the symbol $\ul t$ corresponds to the function $\om \mapsto
\ul t^\om = 0$. In other words, $\ul h$ and $\ul t$ are identified with constant
functions, and are therefore fixed. On the other hand, $\ul X$ corresponds to
$\om \mapsto \ul X^\om$, which is $0$ with probability $1/2$ and $1$ with
probability $1/2$. Hence, $\ul X$ is random.

However, we can also satisfy $P$ with a model that violates this intuition. Let
$\om_0$ be as above. Define $\om_1'$ by $\ul h^{\om_1'} = 0$, $\ul t^{\om_1'} =
1$, and $\ul X^{\om_1'} = 0$. Let $\Om' = \{\om_0, \om_1'\}$, $\Si' = \fP \Om'$,
and $\bbP' \{\om_0\} = \bbP' \{\om_1'\} = 1/2$. Then $\sP' = (\Om', \Si', \bbP')
\vDash P$. This time, however, $\ul h$ and $\ul t$ correspond to functions that
are $0$ or $1$ with equal probability, and $\ul X$ corresponds to the constant
function $0$. In this model, it is $\ul h$ and $\ul t$ that are random, while
$\ul X$ is fixed.

Since $P$ is satisfied by both $\sP$ and $\sP'$, we see that $P$ does not tell
us which terms are random and which terms are fixed. In fact, it is not even
meaningful to ask this question in $P$. The only things in $P$ which can be
random (that is, the only things which can be assigned a probability that is not
$0$ or $1$) are sentences. In order to even ask this question, we must fix a
model. And in fixing a model, we are adopting, so to speak, a point of view.
Which terms are random and which are fixed is relative to that point of view. In
$\sP$, we take the point of view that $\ul h$ and $\ul t$ are fixed, while $\ul
X$ is random. And in $\sP'$, we take the point of view that $\ul X$ is fixed,
while $\ul h$ and $\ul t$ are random. There are models in which all three are
random. In this example, however, there are no models in which all three are
fixed.

In general, then, whether a term in $P$ is random or fixed depends on our point
of view, or to borrow the language of physics, it depends on our frame of
reference.

\subsection{Frames of reference}

A \emph{frame of reference}
  \index{frame of reference}%
is a method that takes a given $\cL$-model $\sP$ and constructs a new
$\cL$-model $\sP'$ such that $\sP \simeq \sP'$. Formally, we could define a
frame of reference to be a class function that maps each $\sP$ in a certain
class of $\cL$-models to a set of $\cL$-models that are isomorphic to $\sP$.
This level of formalism, however, will not be necessary for our purposes.

Let $P$ be the inductive theory in Section \ref{S:rel-rand} that models a fair
coin flip. Proposition \ref{P:fix-constants} below gives a method of taking any
$\cL$-model $\sP$ such that $\sP \vDash P$, and constructing an isomorphic model
$\sP'$ in which the functions $\om \mapsto \ul h^\om$ and $\om \mapsto \ul
t^\om$ are constant functions. In other words, there is a frame of reference in
which $\ul h$ and $\ul t$ are fixed, and not random.

We begin by showing there is a frame of reference in which every object is an
ordinal. A model $\sP = (\Om, \Si, \bbP)$ is said to be an \emph{ordinal model}
if, for all structures $\om \in \Om$, the domain of $\om$ is an ordinal.

\begin{lemma}\label{L:ord-FOR}
  Let $\sP = (\Om, \Si, \bbP)$ be a model. For each $\om \in \Om$, let $A_\om$
  be the domain of $\om$, and let $B_\om$ be a set with $|B_\om| = |A_\om|$.
  Choose a bijection $g_\om: A_\om \to B_\om$, and let $\om'$ be the isomorphic
  image of $\om$ under $g_\om$. Let $\Om' = \{\om' \mid \om \in \Om\}$ and let
  $h: \Om \to \Om'$ denote the function $\om \mapsto \om'$. Define $\sQ = (\Om',
  \Ga, \bbQ)$ to be the measure space image of $\sP$ under $h$. Then $h$ is a
  model isomorphism from $\sP$ to $\sQ$.
\end{lemma}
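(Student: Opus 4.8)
The plan is to verify directly the two requirements in the definition of a model isomorphism: that $h$ induces a measure-space isomorphism from $(\Om, \Si_\cL, \bbP_\cL)$ to $(\Om', \Ga_\cL, \bbQ_\cL)$, and that $\om \simeq h\om$ for $\bbP$-almost every $\om$. The second requirement is immediate: by construction $h\om = \om'$ is the isomorphic image of $\om$ under the bijection $g_\om$, so $g_\om$ is an isomorphism from $\om$ to $h\om$, and hence $\om \simeq h\om$ for \emph{every} $\om \in \Om$. The measure-theoretic content is where the work lies, and its engine will be the invariance theorem (Theorem \ref{T:invar-thm}). Although Lemma \ref{L:iso-thm} records essentially the identity we want, it presupposes that $h$ is already a model isomorphism, so I cannot invoke it here; instead I will re-derive the needed identity directly from Theorem \ref{T:invar-thm}, obtaining exact equality rather than mere almost-sure equality, since $g_\om$ is a genuine isomorphism for every single $\om$.

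The heart of the argument is a correspondence between assignments into $\sP$ and assignments into $\sQ$ that respects the sets $\ph[\bv]_\Om$. Given an assignment $\bv = \ang{v_\om \mid \om \in \Om}$ into $\sP$, define an assignment $\bv' = \ang{v'_{\om'} \mid \om' \in \Om'}$ into $\sQ$ by $v'_{\om'}(x) = g_\om v_\om(x)$ for $\om' = h\om$; conversely, given $\bv'$ into $\sQ$, recover $\bv$ by $v_\om = g_\om^{-1} \circ v'_{h\om}$. In either case $v'_{h\om}$ is exactly the image of $v_\om$ under the isomorphism $g_\om$, so Theorem \ref{T:invar-thm} gives $\om \tDash \ph[v_\om]$ if and only if $h\om \tDash \ph[v'_{h\om}]$, for every formula $\ph$ and every $\om$. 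Unwinding the definitions of the sets $\ph[\bv]_\Om$ and $\ph[\bv']_{\Om'}$ then yields the exact identity
\[
  h^{-1}\big(\ph[\bv']_{\Om'}\big) = \ph[\bv]_\Om .
\]

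With this identity in hand the remaining steps are routine. First, measurability: for $U \in \Ga_\cL$ write $U = \ph[\bv']_{\Om'}$ with $U \in \Ga$; then $h^{-1}(U) = \ph[\bv]_\Om$ for the recovered $\bv$, which lies in $\Si$ because $U \in \Ga$ means $h^{-1}(U) \in \Si$ by the definition of the measure-space image, and it is of the form $\ph[\bv]_\Om$, so $h^{-1}(U) \in \Si_\cL$. Second, the surjectivity condition in the definition of ``induces an isomorphism'': for $A = \ph[\bv]_\Om \in \Si_\cL$ the set $U = \ph[\bv']_{\Om'}$ satisfies $h^{-1}(U) = A \in \Si$, hence $U \in \Ga$, hence $U \in \Ga_\cL$, and $h^{-1}(U) = A$ holds exactly (so a fortiori $\bbP_\cL$-a.e.). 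Third, measure preservation: $\bbQ = \bbP \circ h^{-1}$ holds on all of $\Ga$ by construction, and restricting to $\Ga_\cL$ together with the measurability just shown gives $\bbQ_\cL(U) = \bbP_\cL(h^{-1}(U))$ for $U \in \Ga_\cL$. These three facts are precisely the hypotheses that $h$ induces a measure-space isomorphism from $(\Om, \Si_\cL, \bbP_\cL)$ to $(\Om', \Ga_\cL, \bbQ_\cL)$; combined with $\om \simeq h\om$, they show $h$ is a model isomorphism.

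The main obstacle is bookkeeping rather than conceptual: one must keep the two layers of sub-$\sigma$-algebras straight, checking that every set produced is not merely measurable (in $\Si$ or $\Ga$) but actually of the logically relevant form $\ph[\bv]_\Om$ needed for membership in $\Si_\cL$ or $\Ga_\cL$, and one must set up the assignment correspondence cleanly enough that both the pullback (measurability) direction and the pushforward (surjectivity) direction fall out of the same application of Theorem \ref{T:invar-thm}. A minor point worth recording is that $h$ need not be injective, but this causes no difficulty, since the recovered assignment is well defined regardless and the identity above holds pointwise.
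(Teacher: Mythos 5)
Your argument is, at its core, the same as the paper's: the paper likewise treats $\om \simeq h\om$ as immediate, defines the corresponding assignment by $v'_{\om'}(x) = g_\om v_\om(x)$, applies Theorem \ref{T:invar-thm} to obtain $h^{-1} \ph[\bv']_{\Om'} = \ph[\bv]_\Om$ exactly, and concludes the surjectivity condition $h^{-1} U = A$. Your additional verification of measurability and measure preservation fills in steps the paper leaves implicit, and those parts are fine.

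Where you diverge from the paper is your closing claim that non-injectivity of $h$ ``causes no difficulty,'' and that claim is false --- it is exactly the weak point of this argument. The backward correspondence $v_\om = g_\om^{-1} \circ v'_{h\om}$ is indeed well defined and yields the identity pointwise, so the measurability direction survives. But the forward correspondence, which both you and the paper use to produce $U = \ph[\bv']_{\Om'}$ from $A = \ph[\bv]_\Om$, requires setting $v'_{\om'} = g_\om \circ v_\om$ for some preimage $\om$ of $\om'$; when two distinct structures $\om_1 \ne \om_2$ satisfy $h \om_1 = h \om_2$, this is ambiguous, and no choice of preimage saves the identity. Concretely: let $L = \{r\}$ with $r$ unary, let $\om_1, \om_2$ be distinct but isomorphic structures (domains $\{a, b\}$ and $\{c, d\}$, with $r^{\om_1} = \{a\}$ and $r^{\om_2} = \{c\}$), let $\Si = \fP \Om$ with $\bbP$ uniform, and choose $g_{\om_1}, g_{\om_2}$ so that both structures map to the same $\om'$ with domain $\{0, 1\}$ and $r^{\om'} = \{0\}$. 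Taking $v_{\om_1}(x) = a$ and $v_{\om_2}(x) = d$ gives $A = (r x)[\bv]_\Om = \{\om_1\} \in \Si_\cL$, whereas $\Ga_\cL \subseteq \{\emp, \{\om'\}\}$, so $h^{-1} U \in \{\emp, \Om\}$ for every $U \in \Ga_\cL$, and no $U$ satisfies $h^{-1} U = A$ even $\bbP$-a.e.; the surjectivity condition, and with it the conclusion, fails for this choice of the $g_\om$. So the argument really does need the $g_\om$ chosen so that $h$ is injective (distinct structures receive distinct images), or some equivalent hypothesis. In fairness, the paper's own proof writes the same definition of $\bv'$ without noticing the ambiguity, so it rests on the same implicit assumption; you reproduced the paper's argument faithfully, but your explicit assertion that injectivity is not needed papers over precisely the step where it is.
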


\begin{proof}
  To verify that $h$ is an isomorphism from $\sP$ to $\sQ$, it suffices to check
  that $h$ induces an isomorphism as measure spaces from $ (\Om, \Si_\cL,
  \bbP_\cL)$ to $(\Om', \Ga_\cL, \bbQ_\cL)$. For this, it suffices to show that
  for all $A \in \Si_\cL$, there exists $U \in \Ga_\cL$ such that $h^{-1} U =
  A$.

  Let $A \in \Si_\cL$. Choose $\ph \in \cL$ and choose an assignment $\bv$ into
  $\sP$ such that $A = \ph[\bv]_\Om$. Define the assignment $\bv'$ into $\sQ$ by
  $v'_{\om'}(x) = g_\om v_\om(x)$, and let $U = \ph[\bv']_{\Om'}$. It now
  suffices to show that $U \in \Ga_\cL$ and $h^{-1} U = A$. But $\sQ$ is the
  measure space image of $\sP$ under $h$. Hence, if $h^{-1} U = A \in \Si_\cL
  \subseteq \Si$, then $U \in \Ga$, which implies $U \in \Ga_\cL$ by the
  definition of $\Ga_\cL$. Therefore, we need only show that $h^{-1} U = A$.

  Note that $\om \in h^{-1} U = h^{-1} \ph[\bv']_{\Om'}$ if and only if $\om'
  \tDash \ph[v'_{\om'}]$. Similarly, $\om \in A = \ph[\bv]_\Om$ if and only
  $\om \tDash \ph[v_\om]$. By Theorem \ref{T:invar-thm}, we have $\om \tDash \ph
  [v]$ if and only if $\om' \tDash \ph[v']$. Thus, $h^{-1} U = A$, and so $h$ is
  an isomorphism.
\end{proof}

\begin{prop}[Ordinal frame of reference]\label{P:ord-FOR}
  Every model is isomorphic to an ordinal model.
\end{prop}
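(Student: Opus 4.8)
The plan is to reduce the statement directly to Lemma \ref{L:ord-FOR} by making a single canonical choice of target domains. Let $\sP = (\Om, \Si, \bbP)$ be an arbitrary model, and for each structure $\om \in \Om$ write $A_\om$ for its domain. The key idea is that the cardinality of a set is itself an ordinal: by the definition of cardinal number given in the section on cardinal numbers, $\card(A_\om)$ is the smallest ordinal $\al$ with $A_\om \sim \al$. In particular, $\card(A_\om)$ is an ordinal and $A_\om \sim \card(A_\om)$, so that $|\card(A_\om)| = |A_\om|$. I would therefore take $B_\om = \card(A_\om)$ in the statement of Lemma \ref{L:ord-FOR}.

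With this choice the hypotheses of Lemma \ref{L:ord-FOR} are met: for each $\om$ we may select a bijection $g_\om : A_\om \to \card(A_\om)$ and form $\om'$, the isomorphic image of $\om$ under $g_\om$, whose domain is exactly $\card(A_\om)$. The lemma then produces a model $\sQ = (\Om', \Ga, \bbQ)$, where $\Om' = \{\om' \mid \om \in \Om\}$, together with a model isomorphism $h : \sP \to \sQ$; in particular $\sP \simeq \sQ$.

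The final step is simply to observe that $\sQ$ is an ordinal model. Indeed, every structure $\om' \in \Om'$ has domain $\card(A_\om)$, which is an ordinal, so every structure in $\sQ$ has an ordinal domain, matching the definition of an ordinal model. This completes the argument.

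I do not expect a genuine obstacle here, since Lemma \ref{L:ord-FOR} already carries the entire analytic burden of verifying that replacing each domain by an equinumerous set yields an isomorphic model. The only point requiring any care is conceptual rather than technical: recognizing that choosing $B_\om$ to be the cardinality $\card(A_\om)$ simultaneously secures the correct cardinality (so the lemma applies) and forces the new domains to be ordinals (so the resulting model is an ordinal model). Everything else is a direct invocation of the lemma.
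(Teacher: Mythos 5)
Your proposal is correct and follows essentially the same route as the paper: both reduce the statement to Lemma \ref{L:ord-FOR} by replacing each domain $A_\om$ with an equinumerous ordinal and invoking the lemma to get the isomorphism. The only cosmetic difference is that you fix the canonical choice $B_\om = \card(A_\om)$, whereas the paper chooses an arbitrary ordinal $\al_\om$ with $|\al_\om| = |A_\om|$; both choices work for exactly the reason you give.
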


\begin{proof}
  Let $\sP = (\Om, \Si, \bbP)$ be a model. For each $\om \in \Om$, let $A_\om$
  be the domain of $\om$. Choose an ordinal $\al_\om$ such that $|\al_\om| =
  |A_\om|$, and choose a bijection $g_\om: A_\om \to \al_\om$. Define $\sQ$ as
  in Lemma \ref{L:ord-FOR}. Then $\sP \simeq \sQ$ and $\sQ$ is an ordinal model.
\end{proof}

\begin{lemma}\label{L:fix-constants}
  Let $\al$ be an ordinal and $S \subseteq \al$. Let $\be$ and $\ga$ be ordinals
  such that $|\be| = |S|$ and $|\ga| = |\al \setminus S|$, and let $g: S \to
  \be$ be a bijection. Then $g$ can be extended to a bijection $g: \al \to \be
  + \ga$.
\end{lemma}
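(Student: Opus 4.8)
The plan is to extend $g$ by mapping the complement $\al \setminus S$ bijectively onto the complement $(\be + \ga) \setminus \be$, and then to observe that the two pieces assemble into a bijection of $\al$ onto $\be + \ga$.

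First I would invoke the fact, established in the discussion of ordinal arithmetic, that the function $f \colon \ga \to (\be + \ga) \setminus \be$ defined by $f(\xi) = \be + \xi$ is a bijection. In particular, this yields $|(\be + \ga) \setminus \be| = |\ga|$. Combining this with the hypothesis $|\ga| = |\al \setminus S|$, I obtain $|\al \setminus S| = |(\be + \ga) \setminus \be|$, so I may choose a bijection $h \colon \al \setminus S \to (\be + \ga) \setminus \be$.

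Next I would define the extension $\ol g \colon \al \to \be + \ga$ by setting $\ol g(\xi) = g(\xi)$ for $\xi \in S$ and $\ol g(\xi) = h(\xi)$ for $\xi \in \al \setminus S$. Since $0' \le \ga$, monotonicity of ordinal addition in the right argument gives $\be = \be + 0' \le \be + \ga$, hence $\be \subseteq \be + \ga$; consequently $\be$ and $(\be + \ga) \setminus \be$ partition $\be + \ga$. Likewise $S$ and $\al \setminus S$ partition $\al$. The restriction of $\ol g$ to $S$ is $g$, a bijection onto $\be$, and its restriction to $\al \setminus S$ is $h$, a bijection onto $(\be + \ga) \setminus \be$. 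Because the two domains are disjoint and the two ranges are disjoint, $\ol g$ is injective; because the two ranges together exhaust $\be + \ga$, it is surjective. Thus $\ol g$ is a bijection extending $g$, and we may continue to denote it by $g$.

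I do not expect any genuine obstacle here: the entire argument rests on the single previously established fact that $\xi \mapsto \be + \xi$ identifies $\ga$ with $(\be + \ga) \setminus \be$, after which the conclusion is just the routine observation that a bijection can be glued together from bijections of the blocks of a partition onto the blocks of a partition. The only point deserving a moment's care is the verification that $\be$ is an initial segment of $\be + \ga$, which follows at once from the monotonicity of ordinal addition in its right argument.
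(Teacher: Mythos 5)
Your proposal is correct and is essentially the paper's own argument: both rest on the previously established fact that $\xi \mapsto \be + \xi$ is a bijection from $\ga$ onto $(\be + \ga) \setminus \be$, and both extend $g$ by sending $\al \setminus S$ bijectively onto that complement (the paper composes a bijection $h \colon \al \setminus S \to \ga$ with this map, while you choose the bijection onto $(\be + \ga) \setminus \be$ directly, a cosmetic difference). Your additional verification that $\be$ is an initial segment of $\be + \ga$ is a fine, if routine, point of care that the paper leaves implicit.
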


\begin{proof}
  Let $g: S \to \be$ be a bijection. Choose a bijection $h: \al \setminus S \to
  \ga$. Note that the function $f: \ga \to (\be + \ga) \setminus \be$ given by
  $f \xi = \be + \xi$ is a bijection. Therefore, $f \circ h: \al \setminus S \to
  (\be + \ga) \setminus \be$ is a bijection. Hence, if we define $g \xi = f h
  \xi$ for $\xi \in \al \setminus S$, then $g: \al \to \be + \ga$ is a
  bijection.
\end{proof}

\begin{prop}[Constant frame of reference]\label{P:fix-constants}
  Let $\cL$ be a predicate language with extralogical signature $L$. Let $C = 
  \{c_0, c_1, \ldots\} \subseteq L$ be a countable (possible finite) set of
  constant symbols. Let $T \subseteq \cL^0$ be a deductive theory. Assume that
  $T \vdash c_m \nbeq c_n$ for all $m \ne n$. Then for all models $\sP$ such
  that $\sP \vDash T$, there exists an ordinal model $\sP' = (\Om', \Si',
  \bbP')$ such that $\sP \simeq \sP'$ and $c_n^\om = n$ for every $\om \in
  \Om'$.
\end{prop}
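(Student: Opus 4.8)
The plan is to reduce, via successive isomorphisms, to an ordinal model in which the constants take distinct values at every structure, and then relabel each domain so that $c_n$ is sent to the ordinal $n$; Lemma \ref{L:ord-FOR} guarantees that this relabeling stays inside one isomorphism class, and Lemma \ref{L:fix-constants} supplies the required domain bijections.

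First I would force the constants to be everywhere distinct. Since $T$ is a deductive theory with $T \vdash c_m \nbeq c_n$ for $m \ne n$, each sentence $c_m \nbeq c_n$ lies in $T$, so $\sP \vDash c_m \nbeq c_n$; equivalently, $(c_m \beq c_n)_\Om$ is $\olbbP$-null. Because $C$ is countable, the union $\bigcup_{m \ne n}(c_m \beq c_n)_\Om$ is null, so the property ``$c_m^\om \ne c_n^\om$ for all $m \ne n$'' holds almost surely in $\sP$. By Remark \ref{R:a.s.-sure} there is an isomorphic model in which this property holds at every structure, so I may assume it of $\sP$ itself. Then, by Proposition \ref{P:ord-FOR}, I may further assume $\sP$ is an ordinal model; the relabeling isomorphisms used there act bijectively on each domain, hence preserve the distinctness just arranged. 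At this stage every $\om \in \Om$ has domain an ordinal $\al_\om$, and $n \mapsto c_n^\om$ is injective.

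For the main construction, let $I_C$ be the ordinal indexing $C$ (a finite ordinal when $C$ is finite, otherwise $\bom$), so that for each $\om$ the assignment $c_n^\om \mapsto n$ is a bijection from $S_\om := \{c_n^\om \mid n\}$ onto $I_C$. For each $\om$, I apply Lemma \ref{L:fix-constants} with $\al = \al_\om$, $S = S_\om$, $\be = I_C$, and $\ga = \ga_\om$ chosen so that $|\ga_\om| = |\al_\om \setminus S_\om|$; this extends $c_n^\om \mapsto n$ to a bijection $g_\om : \al_\om \to I_C + \ga_\om$ with $g_\om(c_n^\om) = n$ preserved. Taking $\om'$ to be the isomorphic image of $\om$ under $g_\om$, its domain $I_C + \ga_\om$ is again an ordinal and $c_n^{\om'} = g_\om(c_n^\om) = n$. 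Lemma \ref{L:ord-FOR} then shows that $h : \om \mapsto \om'$ is a model isomorphism from $\sP$ onto the measure-space image $\sP' = (\Om', \Si', \bbP')$, which is an ordinal model with $c_n^{\om'} = n$ for every $\om' \in \Om'$. Chaining the isomorphisms from all the reductions (model isomorphism being transitive) yields $\sP \simeq \sP'$, as required.

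I expect the only real friction to lie in the first paragraph: upgrading distinctness from an almost-sure statement to one holding at every structure through Remark \ref{R:a.s.-sure}, and verifying that this upgrade, the passage to an ordinal model, and the final relabeling compose without interfering---in particular that each $g_\om$, being injective, preserves the distinctness already secured so that $c_n^\om \mapsto n$ is well defined and injective. The algebraic heart of the argument, extending the partial map $c_n^\om \mapsto n$ to a bijection of ordinals and transporting the whole structure along it, is delivered essentially verbatim by Lemmas \ref{L:fix-constants} and \ref{L:ord-FOR}.
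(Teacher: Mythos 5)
Your proof is correct and follows essentially the same route as the paper's: it combines Proposition \ref{P:ord-FOR}, Lemma \ref{L:fix-constants}, and Lemma \ref{L:ord-FOR} in exactly the same way. The only difference is bookkeeping: you invoke Remark \ref{R:a.s.-sure} at the outset to make the constants distinct in every structure before relabeling, whereas the paper keeps the exceptional structures along (assigning them identity bijections) and applies Remark \ref{R:a.s.-sure} only at the very end; both dispose of the almost-sure caveat equally well.
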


\begin{proof}
  Suppose that $\sP \vDash T$. By Proposition \ref{P:ord-FOR}, we may assume
  that $\sP$ is an ordinal model. Let $\ph = (\bigwedge_{m \ne n} c_m \nbeq
  c_n)$. We then have $T \vdash \ph$, so that $\sP \vDash \ph$. Let $\om \in
  \Om$ and let $\al_\om$ denote the domain of $\Om$. We define an ordinal
  $\al_\om'$ and a bijection $g_\om: \al_\om \to \al_\om'$ as follows. If $\om
  \notin \ph_\Om$, then let $\al_\om' = \al_\om$ and let $g_\om$ be the
  identity. Suppose $\om \in \ph_\Om$. Then $\om \tDash \ph$, which means
  $c_m^\om \ne c_n^\om$ for all $m \ne n$.

  Let $\be = |C|$, so that either $\be = \{0, 1, \ldots, N\}$ or $\be = \bN_0$.
  Then $C = \{c_n \mid n \in \be\}$. Define $S = \{c_n^\om \mid n \in \be\}
  \subseteq \al_\om$. Since $c_m^\om \ne c_n^\om$ for all $m \ne n$, we have
  $|\be| = |S|$ and $n \mapsto c_n^\om$ is a bijection from $\be$ to $S$. Define
  $g_\om c_n^\om = n$, so that $g_\om: S \to \be$ is a bijection. Choose an
  ordinal $\ga$ such that $|\ga| = |\al_\om \setminus S|$ and define $\al_\om' =
  \be + \ga$. By Lemma \ref{L:fix-constants}, we may choose an extension of
  $g_\om$ to $\al_\om$ such that $g_\om: \al_\om \to \al_\om'$ is a bijection.

  Having constructed $\al_\om'$ and $g_\om$, we now define the ordinal model
  $\sQ = (\Om', \Ga, \bbQ)$ as in Lemma \ref{L:ord-FOR}, so that $\sP \simeq
  \sQ$. By Theorem \ref{T:ded-iso-thm}, we have $\sQ \vDash \ph$. Hence,
  $\olbbQ \ph_{\Om'} = 1$. Let $\om' \in \ph_{\Om'}$. Then $\om' \tDash \ph$,
  which implies $\om \tDash \ph$, since $\om' \simeq \om$. Therefore, $\om \in
  \ph_\Om$, and it follows that $c_n^{\om'} = g_\om c_n^{\om} = n$. Hence, $c_n^
  {\om'} = n$ for $\bbQ$-a.e.~$\om' \in \Om'$. By Remark \ref{R:a.s.-sure}, we
  may assume that $c_n^ {\om'} = n$ for every $\om' \in \Om'$.
\end{proof}

\subsection{The natural frame of reference}

Consider an inductive theory $P$ with root $T_0$ such that $\PA_- \subseteq
T_0$. In $P$, we may have inductive statements of the form $P(\ul X > \ul n \mid
T_0) = p$, where $>$ is the extralogical symbol defined by $\forall xy (x > y
\tot (\exists z \nbeq 0) \, x \beq y + z)$. In any model $\sP = (\Om, \Si,
\bbP)$ that satisfies $P$, we then have $\olbbP (\ul X > \ul n)_\Om = p$. But
\[
  (\ul X > \ul n)_\Om = \{\om \in \Om \mid \ul X^\om >^\om \ul n^\om\}.
\]
Hence, the very meaning of $\ul n$ and $>$ in the model $\sP$ may vary with
$\om$. If we carry with us an intuition that was built around a study of random
variables, then this situation is highly counterintuitive. We are not accustomed
to thinking of positive integers as random, let alone thinking of $>$ as a
random relation. But recall that the randomness or fixedness of these symbols is
not an inherent property of the inductive theory $P$ that we started it. It is
relative to the model we are considering.

Theorem \ref{T:fix-naturals} below, which is an immediate consequence of
Proposition \ref{P:fix-constants}, shows that for any such inductive theory $P$,
there is a frame of reference in which all the constant symbols $\ul n$ are
fixed. We call this the \emph{natural frame of reference}.
  \index{frame of reference!natural ---}%
If we replace $\PA_-$ with $\PA$, then this also fixes $>$. Indeed, in this
case, $>$ can be defined explicitly by
\[
  \ts{
    \forall xy (
      x > y \tot \bigvee_{n > m} (x \beq \ul n \wedge y \beq \ul m)
    )
  }.
\]
This is because $\PA \vdash \forall x \bigvee_{n \in \bN_0} x \beq \ul n$, which
was demonstrated in the proof of Proposition \ref{P:models-of-PA}.

To state the formal theorem, let $\cL$ be a language that contains a unary
function symbol $\S$ and constant symbols $\{\ul n \mid n \in \bN_0\}$. A
deductive theory $T \subseteq \cL^0$ is said to \emph{contain the counting
numbers} if
\begin{align*}
  T &\vdash \forall x \, \S x \nbeq \ul 0,\\
  T &\vdash \forall xy (\S x \beq \S y \to x \beq y), \text{ and}\\
  T &\vdash \ul n \beq \S \cdots \S \ul 0, \text{ for all $n \in \bN_0$}.
\end{align*}
In the last condition, the symbol $\S$ is repeated $n$ times.

\begin{thm}[Natural frame of reference]\label{T:fix-naturals}
  Let $T \subseteq \cL^0$ be a deductive theory that contains the counting
  numbers. Then for all models $\sP$ such that $\sP \vDash T$, there exists an
  ordinal model $\sP' = (\Om', \Si', \bbP')$ such that $\sP \simeq \sP'$ and
  $\ul n^\om = n$ for every $\om \in \Om'$.
\end{thm}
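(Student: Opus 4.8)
The plan is to obtain the theorem directly from Proposition \ref{P:fix-constants} (the constant frame of reference) by taking the distinguished constant set to be $C = \{\ul n \mid n \in \bN_0\}$, which is a countable subset of $L$ consisting of distinct constant symbols. With $c_n = \ul n$, the conclusion of Proposition \ref{P:fix-constants}---that $c_n^\om = n$ for every $\om \in \Om'$---is exactly the statement we want, and the existence of the isomorphic ordinal model $\sP'$ comes for free. Hence the only thing to check is the single hypothesis of that proposition: that $T \vdash \ul m \nbeq \ul n$ whenever $m \ne n$.

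The heart of the argument is therefore the claim that, if $T$ contains the counting numbers, then $T \vdash \ul m \nbeq \ul n$ for all $m \ne n$. First I would reduce to iterated successors of $\ul 0$. By the third defining property, $T \vdash \ul k \beq \S \cdots \S \ul 0$ (with $k$ copies of $\S$) for every $k$; writing $\S^k \ul 0$ for this ground term, it suffices to prove $T \vdash \S^m \ul 0 \nbeq \S^n \ul 0$ for $m \ne n$, since substitutivity and symmetry of equality (rules (ix) and (x)) then let me transport the negated equation back along the provable identities $\ul m \beq \S^m \ul 0$ and $\ul n \beq \S^n \ul 0$ to conclude $T \vdash \ul m \nbeq \ul n$.

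I would prove $T \vdash \S^m \ul 0 \nbeq \S^n \ul 0$ (assuming $m < n$ without loss of generality) by induction on $m$. For the base case $m = 0$, write $n = k + 1$ and instantiate the axiom $T \vdash \forall x \, \S x \nbeq \ul 0$ at the ground term $\S^k \ul 0$ via rule (vii) to get $T \vdash \S^n \ul 0 \nbeq \ul 0$, then apply symmetry. For the inductive step, assume $T \vdash \S^{m-1} \ul 0 \nbeq \S^{n-1} \ul 0$; by Theorem \ref{T:deduc-con} (valid also in $\cL$) it suffices to show $T, \S^m \ul 0 \beq \S^n \ul 0 \vdash \bot$. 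Instantiating the injectivity axiom $T \vdash \forall xy (\S x \beq \S y \to x \beq y)$ at $x = \S^{m-1}\ul 0$ and $y = \S^{n-1}\ul 0$ (two applications of (vii)) and using the deduction theorem (Proposition \ref{P:derivability}(a)) yields $T, \S^m\ul 0 \beq \S^n \ul 0 \vdash \S^{m-1}\ul 0 \beq \S^{n-1}\ul 0$; since the inductive hypothesis together with monotonicity (ii) also gives the negation of this equation, rule (v) produces $\bot$.

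Having established the claim, I would simply invoke Proposition \ref{P:fix-constants} with $C = \{\ul n \mid n \in \bN_0\}$ and the given $T$ to produce the ordinal model $\sP' \simeq \sP$ satisfying $\ul n^\om = n$ for every $\om \in \Om'$, which completes the proof. The main obstacle---though a routine one---is the induction establishing $T \vdash \ul m \nbeq \ul n$: one must keep careful track of which instances of the counting-number axioms are instantiated at which ground terms, and must appeal to Theorem \ref{T:deduc-con} to convert the desired non-equation into a $\bot$-derivation. Everything beyond this reduction is an immediate application of the already-proven constant frame of reference.
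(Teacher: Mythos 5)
Your proposal is correct and takes essentially the same approach as the paper: the paper's proof consists of the single observation that containing the counting numbers gives $T \vdash \ul m \nbeq \ul n$ for all $m \ne n$, followed by an appeal to Proposition \ref{P:fix-constants} with $C = \{\ul n \mid n \in \bN_0\}$. The induction you spell out (pairwise provable distinctness of the terms $\S \cdots \S \ul 0$ from the two successor axioms, transported to the constants $\ul n$ via the provable equations $\ul n \beq \S \cdots \S \ul 0$) is precisely the routine verification the paper leaves implicit.
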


\begin{proof}
  Since $T$ contains the counting numbers, we have $T \vdash \ul n \nbeq \ul m$
  for all $n \ne m$. The theorem therefore follows from Proposition
  \ref{P:fix-constants}.
\end{proof}

\begin{expl}\label{Expl:pred-Karp412}
  Let $I$ be an uncountable set. Let
  \[
    L = \{\ul X_t \mid t \in I\} \cup \{\ul n \mid n \in \bN_0\}
  \]
  be a set of constant symbols, and $\cL$ the associated predicate language.
  Define $X \subseteq \cL^0$ by
  \begin{multline*}
    X = \{\ul m \nbeq \ul n \mid m, n \in \bN_0, m \ne n\}
      \cup \{\ts{\bigvee_{n \in \bN_0}} \, \ul X_t \beq \ul n \mid t \in I\}\\
    \cup \{\ul X_s \nbeq \ul X_t \mid s, t \in I, s \ne t\}.
  \end{multline*}
  Let $X_0 \subseteq X$ be countable and choose $I_0 = \{t_0, t_1, t_2,
  \ldots\}$ such that
  \begin{multline*}
    X_0 \subseteq \{\ul m \nbeq \ul n \mid m, n \in \bN_0, m \ne n\}
      \cup \{\ts{\bigvee_{n \in \bN_0}} \, \ul X_t \beq \ul n \mid t \in I_0\}\\
    \cup \{\ul X_s \nbeq \ul X_t \mid s, t \in I_0, s \ne t\}.
  \end{multline*}
  Define the $\cL$-structure $\om$ with domain $A = \bN_0$ by $\ul n^\om = n$,
  $\ul X_t^\om = n$ if $t = t_n$, and $X_t^\om = 0$ otherwise. Then $\om \tDash
  X_0$, so that $X_0$ is satisfiable, by Proposition \ref{P:pred-pre-cpct}.
  Since $X_0$ was arbitrary, Theorem \ref{T:pred-compactness} implies $X$ is
  satisfiable. Choose a model $\sP = (\Om, \Si, \bbP)$ such that $\sP \vDash X$.
  By Proposition \ref{P:fix-constants}, we may assume that $\ul n^\om = n$ for
  all $\om \in \Om$.

  Note that for each $t \in I$, we have $\sP \vDash \bigvee_{n \in \bN_0} \, \ul
  X_t \beq \ul n$. Hence, $\ul X_t^\om \in \bN_0$ for $\bbP$-a.e.~$\om \in \Om$.
  Moreover, if $T_0 = T(X)$ and $P = \bTh \sP \dhl_{[T_0, \Th \sP]}$, then
  $P(\ul X_s \ne \ul X_t \mid T_0) = 1$ for all $s \ne t$. This should be
  contrasted with the observation made in Remark \ref{R:Karp412}. Namely, there
  is no $\bN_0$-valued stochastic process $\ang{Y (t) \mid t \in I}$ such that
  $Y(s) \ne Y(t)$ a.s.~for all $s \ne t$.
\end{expl}


\chapter{Real inductive theories}\label{Ch:real-ind-ths}

By a ``real inductive theory,'' we mean an inductive theory that makes
statements about real numbers. If $P \subseteq \cL^\IS$ is such a theory with
root $T_0$, then $\cL$ should be capable of making statements about real
numbers, and (ideally) $T_0$ should contain all true statements about real
numbers.

One particularly straightforward way to construct such an inductive theory is to
follow the approach taken in Section \ref{S:RV-symb}. Namely, we construct a
standard structure of the real numbers, which we denote by $\cR$, and we require
$T_0$ to contain all sentences that are strictly satisfied by $\cR$. We can then
prove the analogue of Theorem \ref{T:prob-model-iso}, showing that every
collection of real-valued random variables can be represented in a natural way
inside a real inductive theory. This is done in Section \ref{S:std-R}.

There are several downsides to this approach. The first is that we cannot talk
directly about sets of real numbers. We can add them indirectly as relations in
our language, as we did in Section \ref{S:RV-symb}. But there is no intrinsic
theory of sets in this language. A second, related downside is that we cannot
talk about distinguished elements or subsets of the reals without considerable
extra effort. In particular, we cannot talk directly about integers and
rationals, and their relationships to the reals.

The primary purpose of this chapter is to present a different, more robust
approach. Namely, we will create inductive theories whose root $T_0$ contains
all of axiomatic set theory. In this way, not only can we make inductive
statements about real numbers, but also about all other objects of modern
mathematics.

After discussing definitorial extensions in Section \ref{S:def-ext}, the axioms
of set theory are presented in Section \ref{S:ZFC}. They are the usual axioms of
Zermelo-Fraenkel set theory with choice. As with Peano arithmetic in Section
\ref{S:PA}, we define multiple theories. In fact, we define four theories:
$\ZFC_\fin \subseteq \ZFC_- \subseteq \ZFC \subseteq \ZFC_+$. The first of
these, $\ZFC_\fin$, is the usual finitary set theory from first-order logic. The
others are extensions to $\cL^0$. The first extension, $\ZFC_-$, is
conservative, in the sense that every sentence in $\ZFC_- \setminus \ZFC_\fin$
is purely infinitary. In other words, in $\ZFC_-$, we cannot deduce any new
first-order sentences that we could not already deduce in $\ZFC_\fin$. This is
because $\ZFC_-$ and $\ZFC_\fin$ have the same axioms. In particular, even in
$\ZFC_-$, we can only use finitary formulas when making use of the axioms of
separation and replacement (see Section \ref{S:separation} and
\ref{S:replacement}).

The extension $\ZFC$ is stronger. There, we allow infinitary formulas in the
axiom of separation. In $\ZFC_+$, we allow infinitary formulas in both
separation and replacement. We do not spend time on $\ZFC_+$ beyond defining it.
We primarily focus on the theories $\ZFC_-$ and $\ZFC$.

In Section \ref{S:ind-th-ZFC-}, we construct the set of real numbers in $\ZFC_-$,
and build real inductive theories whose roots are required to contain $\ZFC_-$.
We then prove the analogue of Theorem \ref{T:prob-model-iso}, showing that every
collection of real-valued random variables can be represented inside such a
theory. Using $\ZFC_-$ is superior to using the standard structure of the reals.
Now, not only can we talk about distinguished sets of real numbers, we can in
fact talk about all kinds of sets, backed up by the full power of set theory.

These benefits, however, come at a price. In $\ZFC_-$, although we can define
the set of real numbers, we cannot define each individual real number. We can
explicitly define each rational number, and we can explicitly define certain
individual real numbers, such as $\pi$, $e$, and $\sqrt 2$. But it is
intuitively clear, at least in finitary set theory, that the vast majority of
real numbers elude any type of description. As such, our probabilistic
statements will only be able to mention the rationals, and a handful of
definable reals.

To elaborate on this, consider an inductive statement, $P(\ph \mid X) = p$.
There are two things to notice about this. First, it is not an element of $\cL$.
We cannot add a quantifier to the outside of this statement, except in a
metatheoretical sense. Second, the formula $\ph$ is a sentence. It cannot
contain any free variables. Hence, any mention of a real number inside $\ph$
must be done through an explicitly defined constant. Therefore, when using
$\ZFC_-$, any real number which cannot be explicitly defined in $\ZFC_-$ cannot
be used inside an inductive statement.

Another downside to using $\ZFC_-$ is that it produces a weaker analogue of
Theorem \ref{T:prob-model-iso}. In that theorem, we see a direct and intuitive
correspondence between outcomes in the measure-theoretic model, and structures
in the inductive model. This connection is lost when we do things in $\ZFC_-$.

It turns out that the right place to work is in $\ZFC$. This is done in
Section \ref{S:ind-th-ZFC}. In $\ZFC$, not only can we explicitly define
each individual real number, we can also explicitly define each individual Borel
set, and each individual measurable function. Hence, any statement we might make
in our measure-theoretic model has an explicit counterpart in $\ZFC$.
Moreover, we recover the natural correspondence between outcomes and structures.

Additionally, in $\ZFC$, we can construct a frame of reference in which the real
numbers, Borel sets, and measurable functions are all almost surely fixed, and
not random. In this sense, $\ZFC$ is home to the natural intuition of the
practicing probabilist, to whom it would never occur to think of such things as
varying with $\om$.

Adopting $\ZFC$, however, involves accepting a new axiom of set
theory---or rather, accepting an expanded version of the axiom of separation. In
Section \ref{S:Con-ZFC}, we discuss reasons why this is hardly any more
problematic that assuming that $\ZFC_\fin$ is consistent.

Finally, in Sections \ref{S:limit-thms} and \ref{S:cond-exp}, we illustrate how
the major theorems and structures of measure-theoretic probability can be
expressed using inductive logic. The examples we cover are the law of large
numbers, the central limit theorem, conditional expectation, and the general
form of the law of total probability, also known as the tower property of
conditional expectation.

\section{Definitorial extensions}\label{S:def-ext}

We often want to introduce new symbols into our language that are defined in
terms of old ones. Sometimes this can be done using shorthand. In that case, the
new symbols are not actually part of our language. They are just notational
conventions we use to talk about our language. We have seen this already with
the symbols $\exists$ and $\to$.

Sometimes, however, we want to formally augment our logical signature. For
example, in the context of a deductive theory $T$, suppose we have $T \vdash
\exists! x \ph(x)$. We may wish to introduce a constant symbol to denote the
unique object whose existence is being asserted. This is not easily done with
shorthand. In this subsection, we go over precisely how this is done, and what
effects it has on deductive and inductive derivability.

\subsection{Defining individual symbols}

\paragraph{Relation symbols.}

Let $\cL$ be a predicate language with logical signature $L$. Let $r$ be an
$n$-ary relation symbol with $r \notin L$, and let $\cL[r]$ be the language with
signature $L \cup \{r\}$. An \emph{explicit definition of $r$ in $\cL$} is a
sentence in $\cL[r]^0$ of the form $\th_r = \forall x (r \vec x \tot \de(\vec
x))$, where $\de = \de(x_1, \ldots, x_n) \in \cL$. The formula $\de$ is called a
\emph{defining formula}. We may sometimes denote $\de$ by $\de_r$, to indicate
its relationship to $r$.
  \index{explicit definition}%
  \index{formula!defining ---}%

Given $\ph \in \cL[r]$, we define $\ph^\rd \in \cL$ as follows. If $\ph$ is an
equation, then $\ph^\rd = \ph$, and if $\ph = r \vec t$, then $\ph^\rd = \de
(\vec t\,)$. We extend this recursively by $(\neg \ph)^\rd = \neg \ph^\rd$, $
(\bigwedge \Phi)^\rd = \bigwedge_{\ph \in \Phi} \ph^\rd$, and $(\forall x
\ph)^\rd = \forall x \ph^\rd$. Intuitively, $\ph$ is reduced down to $\ph^\rd$
by replacing all occurrences of $r \vec t$ by $\de(\vec t\,)$.

\paragraph{Constant symbols.}

Let $c$ be a constant symbol with $c \notin L$. Let $\cL[c]$ be the language
with signature $L \cup \{c\}$. An \emph{explicit definition of $c$ in $\cL$} is
a sentence in $\cL[c]^0$ of the form $\th_c = \forall y (y \beq c \tot \de(y))$,
where $\de = \de(y) \in \cL$. The formula $\de$ is called a \emph{defining
formula}. We may sometimes denote $\de$ by $\de_c$, to indicate its relationship
to $c$. Let $\xi_c = \exists! y \, \de_c$. Note that $\th_c \vdash \xi_c$. In
general, we will only use the definition $\th_c$ in situations where $\xi_c$
holds. Given $\ph \in \cL[c]$, we choose $z \notin \var \ph$ and define $\ph^\rd
\in \cL$ by $\ph^\rd = \exists z (\ph(z/c) \wedge \de(z))$.
  \index{explicit definition}%
  \index{formula!defining ---}%

\paragraph{Function symbols.}

Finally, let $f$ be an $n$-ary function symbol with $f \notin L$ and $n \ge 1$.
Let $\cL[f]$ be the language with signature $L \cup \{f\}$. An \emph{explicit
definition of $f$ in $\cL$} is a sentence in $\cL[f]^0$ of the form $\th_f =
\forall \vec x y (y \beq f \vec x \tot \de(\vec x, y))$, where $\de = \de(x_1,
\ldots, x_n, y) \in \cL$. The formula $\de$ is called a \emph{defining formula}.
We may sometimes denote $\de$ by $\de_f$, to indicate its relationship to $f$.
Let $\xi_f = \forall \vec x \, \exists! y \, \de_f$. Note that $\th_f \vdash
\xi_f$. In general, we will only use the definition $\th_f$ in situations where
$\xi_f$ holds.
  \index{explicit definition}%
  \index{formula!defining ---}%

Given $\ph \in \cL[f]$, we define $\ph^\rd \in \cL$ by formula recursion. First
suppose $\ph$ in prime. Then $\ph$ is a string of finite length. Here, we define
$\ph^\rd$ exactly as in first-order logic. (See, for example, \cite[Section
2.6]{Rautenberg2010}.) Namely, choose $y \notin \var \ph$. Find the leftmost
occurrence of $f$ in $\ph$, which will be followed by a unique concatenation of
terms $\vec t = t_1 \cdots t_n$, and let $\ph'$ be the prime formula obtained by
replacing $f \vec t$ with $y$. Note that $\ph = \ph'(f \vec t / y)$. We then
define $\ph_1 = \exists y (\ph' \wedge \de(\vec t, y))$. The resulting formula
$\ph_1$ has one fewer occurrence of $f$ than $\ph$. If $f$ still occurs in
$\ph_1$, then repeat the procedure to obtain $\ph_2$, and so on. Since $\ph$ has
only finitely many occurrences of $f$, this procedure will eventually terminate
in some $\ph_m$ that no longer contains $f$. We then define $\ph^\rd = \ph_m$.
We extend this definition recursively by $(\neg \ph)^\rd = \neg \ph^\rd$, $
(\bigwedge \Phi)^\rd = \bigwedge_{\ph \in \Phi} \ph^\rd$, and $(\forall x
\ph)^\rd = \forall x \ph^\rd$.

\subsection{Defining multiple symbols}\label{S:def-ext-mult}

More generally, let $M$ be a set of extralogical symbols, disjoint from $L$. Let
$\cL'$ be the language with signature $L \cup M$. For each $\s \in M$, let
$\th_\s$ be an explicit definition of $\s$ in $\cL$, and let $\Theta = \{\th_\s
\mid \s \in M\}$. Let $\xi_r = \top$ for all relation symbols $r \in M$, and let
$\Xi = \{\xi_\s \mid \s \in M\}$. Note that $\Theta \vdash \Xi$. In general, we
will only use the definitions $\Theta$ in situations where $\Xi$ holds.
  \symindex{$\Theta$}%
  \symindex{$\Xi$}%

Given $\ph \in \cL'$, we define the \emph{reduced formula}, $\ph^\rd \in \cL$,
as follows. If $\ph$ is prime, then $\sym \ph$ is finite. We may therefore
eliminate the symbols in $\sym \ph \cap M$ in a stepwise fashion as above. We
then extend this recursively by $(\neg \ph)^\rd = \neg \ph^\rd$, $ (\bigwedge
\Phi)^\rd = \bigwedge_{\ph \in \Phi} \ph^\rd$, and $(\forall x \ph)^\rd =
\forall x \ph^\rd$. More generally, for $X \subseteq \cL'$, we write $X^\rd = 
\{\ph^\rd \mid \ph \in X\}$.
  \index{formula!reduced ---}%
  \symindex{$\ph^\rd, X^\rd$}%

\subsection{Extensions and models}

Let $\om = (A, L^\om)$ be an $\cL$-structure, and define the $\cL'$-structure,
$\om' = (A, (L')^{\om'})$ as follows. First, let $\s^{\om'} = \s^\om$ whenever
$\s \in L$. If $\s = r \in M$ is a relation symbol, then we define $r^{\om'}$ by
$r^ {\om'} \vec a$ if and only if $\om \tDash \de_r[\vec a]$, where $\vec a \in
A^n$. Next, suppose $\s = c \in M$ is a constant symbol. If $\om \tDash \xi_c =
\exists! y \, \de_c$, then there exists a unique $a \in A$ such that $\om \tDash
\de_c[a]$. We then define $c^{\om'} = a$. Otherwise, if $\om \ntDash \xi_c$,
then choose $a \in A$ arbitrarily and set $c^{\om'} = a$. Lastly, suppose $\s =
f \in M$ is a function symbol. If $\om \tDash \xi_f = \forall \vec x \, \exists!
y \, \de_f$, then for each $\vec a \in A^n$, there exists a unique $b \in A$
such that $\om \tDash \de_f[\vec a, b]$. We then define $f^{\om'}(\vec a) = b$.
Otherwise, if $\om \ntDash \xi_f$, then we define $f^{\om'}$ arbitrarily. Note
that $\om'$ is constructed so that $\om' \tDash \th_\s$ whenever $\om \tDash
\xi_\s$. Conversely, note that if $\nu$ is an $\cL'$-model and $\om$ is its
$\cL$-reduct, then $\om \tDash \xi_\s$ whenever $\nu \tDash \th_\s$, and $\nu$
and $\om'$ agree on $L \cup \{\s \in M \mid \nu \tDash \th_\s\}$. Finally, given
an assignment $v_\om$ into $\om$, we define the assignment $v'_ {\om'}$ into
$\om'$ by $v'_{\om'}(x) = v(x)$ for all $x \in \Var$.

\begin{prop}\label{P:elim-struct}
  Let $\om$ be an $\cL$-structure and $\ph \in \cL'$. Assume $\om \tDash \xi_\s$
  for all $\s \in M \cap \sym \ph$. Then $\om' \tDash \ph[v'_{\om'}]$ if and
  only $\om \tDash \ph^\rd[v_\om]$, for all assignments $v_\om$.
\end{prop}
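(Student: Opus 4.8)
The plan is to prove this by formula induction on $\ph \in \cL'$, mirroring the recursive construction of $\ph^\rd$ and the recursive construction of $\om'$ from $\om$. The statement to be proved is a biconditional parametrized over all assignments $v_\om$, under the standing hypothesis that $\om \tDash \xi_\s$ for every $\s \in M \cap \sym \ph$. Since $\sym$ and $\rd$ are defined by the same recursion scheme used to build $\cL'$, the induction will follow the unique formula reconstruction property: prime formulas as the base case, then the three inductive cases $\neg \psi$, $\bigwedge \Phi$, and $\forall x \psi$.

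First I would handle the base case, where $\ph$ is prime. This splits into subcases according to which kinds of defined symbols occur in $\ph$. If $\ph$ is an equation with no symbols from $M$, then $\ph^\rd = \ph$ and $\s^{\om'} = \s^\om$ for all relevant $\s$, so the coincidence theorem (Theorem \ref{T:coinc-thm}) gives the result immediately, since $v'_{\om'}$ agrees with $v_\om$ on individual variables. The genuine content is in the subcases where $\ph$ contains a defined relation, constant, or function symbol. For a defined relation symbol $r$, by construction $r^{\om'} \vec a$ holds if and only if $\om \tDash \de_r[\vec a]$, and $\ph^\rd = \de_r(\vec t\,)$; applying the substitution theorem to relate $\om \tDash \de_r(\vec t\,)[v_\om]$ with $\om \tDash \de_r[v_\om^{\vec a}]$ closes this subcase. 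The constant and function cases are analogous but use the uniqueness hypotheses $\xi_c$ and $\xi_f$ to guarantee that $c^{\om'}$ and $f^{\om'}$ are the unique witnesses of the respective defining formulas; here the stepwise elimination in the definition of $\ph^\rd$ for prime $\ph$ must be tracked, reducing one occurrence of a defined symbol at a time and invoking the inductive hypothesis on the number of occurrences within the (finitely long) prime string.

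The inductive cases are then routine. For $\ph = \neg \psi$, we have $\ph^\rd = \neg \psi^\rd$ and $M \cap \sym \ph = M \cap \sym \psi$, so the hypothesis carries over and the result follows from Definition \ref{D:strict-sat}(iii). For $\ph = \bigwedge \Phi$, we have $\ph^\rd = \bigwedge_{\th \in \Phi} \th^\rd$ and $M \cap \sym \th \subseteq M \cap \sym \ph$ for each $\th \in \Phi$, so the standing hypothesis passes to every conjunct, and Definition \ref{D:strict-sat}(iv) finishes the case. For $\ph = \forall x \psi$, we have $\ph^\rd = \forall x \psi^\rd$; here we must quantify over all $a$ in the domain, replacing $v_\om$ by $(v_\om)_x^a$ and correspondingly $v'_{\om'}$ by $(v'_{\om'})_x^a$, and observe that these modified assignments still correspond (they agree on individual variables, and modifying the value at $x$ commutes with the passage from $v_\om$ to $v'_{\om'}$). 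Definition \ref{D:strict-sat}(v) then yields the biconditional.

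The main obstacle I expect is bookkeeping in the prime case for function symbols. Unlike relations and constants, a single prime formula may contain several nested occurrences of function symbols from $M$, and $\ph^\rd$ is built by iterated stepwise elimination, each step introducing an existential quantifier $\exists y$ and a defining formula $\de_f$. Verifying that each elimination step preserves satisfiability---that $\om' \tDash \ph_k[v'_{\om'}]$ if and only if $\om' \tDash \ph_{k+1}[v'_{\om'}]$, and ultimately that $\om' \tDash \ph[v'_{\om'}]$ iff $\om \tDash \ph^\rd[v_\om]$---requires a careful secondary induction on the number of occurrences of defined symbols, using the uniqueness hypothesis $\xi_f$ at each step to equate the existentially quantified witness $y$ with the actual value $f^{\om'}(\vec a)$. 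This is exactly the point where the analogous first-order argument in \cite[Section 2.6]{Rautenberg2010} does its work, and the infinitary setting adds nothing new here, since prime formulas have finite length; the recursion on rank handles the passage from prime formulas to the full language without difficulty.
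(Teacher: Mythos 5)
Your proposal is correct and follows essentially the same route as the paper's proof: a formula induction in which the prime case carries all the content (coincidence theorem when no symbols of $M$ occur, the construction of $\om'$ together with the hypotheses $\xi_\s$ when they do, eliminating defined symbols one occurrence at a time using the finiteness of prime formulas), with the $\neg$, $\bigwedge$, $\forall$ steps routine. The only difference is organizational --- the paper first proves the case $|M \cap \sym \ph| = 1$ by a full formula induction and then bootstraps to general $M$ by reducing one symbol at a time, whereas you fold that stepwise elimination into the prime case of a single induction --- but the ingredients and the bookkeeping are the same.
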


\begin{proof}
  If $\ph \in \cL$, then $\ph^\rd = \ph$. Hence, by Theorem \ref{T:coinc-thm},
  the proposition holds for all $\ph \in \cL$.

  For $\ph \in \cL'$, we first consider the case, $M \cap \sym \ph = \{r\}$. In
  this case, $\om \tDash \xi_r$. It follows that $\om' \tDash \th_r = \forall
  \vec x (r \vec x \tot \de(\vec x))$. We now prove the proposition by induction
  on $\ph$. Suppose $\ph$ is prime. Then either $\ph \in \cL$ or $\ph = r \vec
  t$. In the former case, we established that the proposition holds. In the
  latter case, we have $\ph^\rd = \de(\vec t\,)$, so the result follows from
  $\om' \tDash \th_r$. The inductive steps are straightforward. The cases $M =
  \{c\}$ and $M = \{f\}$ are similar.

  We now consider the case of general $M$. As above, the result holds if $\sym
  \ph \cap M$ contains a single element. It therefore holds whenever $\sym \ph
  \cap M$ is finite, by reducing each symbol one at a time. In particular, it
  holds for each prime $\ph$, since prime formulas are finite strings of
  symbols. The result then follows by induction on $\ph$, using the recursive
  definition of strict satisfiability.
\end{proof}

Let $\sP = (\Om, \Si, \bbP)$ be an $\cL$-model. For each $\om \in \Om$, define
the $\cL'$-structure $\om'$ as above. Let $\Om' = \{\om' \mid \om \in \Om\}$,
let $h$ denote the map $\om \mapsto \om'$, and let $\sP' = (\Om', \Ga, \bbQ)$ be
the measure space image of $\sP$ under $h$. Since $\om' \tDash \th_\s$ whenever
$\om \tDash \xi_\s$, it follows that $\sP' \vDash \th_\s$ whenever $\sP \vDash
\xi_\s$. Given an assignment $\bv$ into $\sP$, define the assignment $\bv'$ into
$\sP'$ by $v'_{\om'}(x) = v_\om(x)$ for all $x \in \Var$.

\begin{lemma}\label{L:elim-model}
  Let $\sQ = (\Om^\sQ, \Ga^\sQ, \bbQ^\sQ)$ be an $\cL'$-model such that $\sQ
  \vDash \Theta$. For each $\nu \in \Om^\sQ$, let $g \nu$ be the $L$-reduct of
  $\nu$. Define $\Om = \{g \nu \mid \nu \in \Om^\sQ\}$ and let $\sP = (\Om, \Si,
  \bbP)$ be the measure space image of $\sQ$ under $g$. Given an assignment
  $\bw$ into $\sQ$, define the assignment $\bv$ into $\sP$ by $v_{g \nu}(x) =
  w_\nu(x)$ for all $x \in \Var$. Let $\sP'$ and $\bv'$ as above. Then $\sP
  \vDash \Xi$ and, for any $\ph \in \cL'$, we have $\sQ \vDash \ph[\bw]$ if and
  only if $\sP' \vDash \ph[\bv']$.
\end{lemma}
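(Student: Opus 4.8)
The plan is to prove the two assertions separately, first reducing to the case where $\sQ$ is complete. This reduction is harmless: passing to the completion $\ol\sQ$ changes neither whether $\sQ \vDash \Theta$, nor whether $\sQ \vDash \ph[\bw]$, nor the $L$-reducts making up $\Om$, and the relations $\sP \vDash \Xi$ and $\sP' \vDash \ph[\bv']$ are defined through completions anyway. With $\sQ$ complete we have $\ol{\Ga^\sQ} = \Ga^\sQ$, which is exactly what makes the measure-theoretic bookkeeping go through. For $\sP \vDash \Xi$: since $\sQ \vDash \Theta$ and $\Theta \vdash \Xi$, deductive soundness (Theorem \ref{T:pred-soundness}) gives $\sQ \vDash \Xi$. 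Each $\xi_\s$ is a sentence of $\cL$ with $\sym \xi_\s \subseteq L$, so the coincidence theorem (Theorem \ref{T:coinc-thm}) applied to $\nu$ and its reduct $g\nu$ yields $(\xi_\s)_{\Om^\sQ} = g^{-1}((\xi_\s)_\Om)$. By completeness this preimage lies in $\Ga^\sQ$ and has measure one, so by the definition of the measure space image ($A \in \Si$ iff $g^{-1}A \in \Ga^\sQ$, and $\bbP = \bbQ^\sQ \circ g^{-1}$) we conclude $(\xi_\s)_\Om \in \Si$ with $\bbP(\xi_\s)_\Om = 1$, i.e.\ $\sP \vDash \xi_\s$.

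For the main equivalence I would route both sides through the $\cL$-formula $\ph^\rd$ and the $\cL$-model $\sP$, establishing $\sQ \vDash \ph[\bw] \iff \sP \vDash \ph^\rd[\bv] \iff \sP' \vDash \ph[\bv']$. Fix $\ph \in \cL'$ and set $M_\ph = \sym \ph \cap M$, which is countable even when $M$ is not; let $\Om^\sQ_0 = \bigcap_{\s \in M_\ph}(\th_\s)_{\Om^\sQ}$, a set of measure one since $\sQ \vDash \th_\s$ for each of the countably many $\s \in M_\ph$. On $\Om^\sQ_0$ the structure $\nu$ agrees with the canonical expansion $(g\nu)'$ on all of $\sym \ph$, so the coincidence theorem gives $\nu \tDash \ph[w_\nu] \iff (g\nu)' \tDash \ph[v'_{(g\nu)'}]$, while Proposition \ref{P:elim-struct} gives $(g\nu)' \tDash \ph[v'_{(g\nu)'}] \iff g\nu \tDash \ph^\rd[v_{g\nu}]$. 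Hence $\ph[\bw]_{\Om^\sQ}$ and $g^{-1}(\ph^\rd[\bv]_\Om)$ agree on $\Om^\sQ_0$, i.e.\ almost surely. The bridge to $\sP'$ is cleaner: $h\colon \om \mapsto \om'$ is a bijection, hence a pointwise isomorphism from $\sP$ onto $\sP'$, and combining this with Proposition \ref{P:elim-struct} on the measure-one set $\bigcap_{\s \in M_\ph}(\xi_\s)_\Om$ (measure one by the first part) shows $\sP' \vDash \ph[\bv'] \iff \sP \vDash \ph^\rd[\bv]$.

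It remains to convert the almost-sure identity $\ph[\bw]_{\Om^\sQ} = g^{-1}(\ph^\rd[\bv]_\Om)$ into the equivalence $\sQ \vDash \ph[\bw] \iff \sP \vDash \ph^\rd[\bv]$, and this is the step where the main difficulty lies, because $g$ is in general not injective: distinct structures in $\Om^\sQ$ may share an $L$-reduct, so an image-measurable set need not pull back to a measurable set and vice versa, and saturations of measurable sets need not be measurable. The completeness reduction is exactly what tames this. In one direction, $\sP \vDash \ph^\rd[\bv]$ makes $\ph^\rd[\bv]_\Om$ completion-measurable of measure one, and pulling back through the measurable map $g$ (using $\bbP = \bbQ^\sQ \circ g^{-1}$) preserves both properties. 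In the other direction, if $\sQ \vDash \ph[\bw]$ then $\ph[\bw]_{\Om^\sQ}$ is measurable of measure one; since it equals $g^{-1}(\ph^\rd[\bv]_\Om)$ almost surely and $\sQ$ is complete, the fact that a set almost-surely equal to a measurable set is itself measurable forces $g^{-1}(\ph^\rd[\bv]_\Om) \in \Ga^\sQ$ with measure one, whence $\ph^\rd[\bv]_\Om \in \Si$ with $\bbP$-measure one directly from the definition of the image $\si$-algebra.

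The only remaining bookkeeping is the well-definedness of $\bv$ on the fibers of $g$, which I would handle by fixing, for each $\om \in \Om$, a representative $\nu$ with $g\nu = \om$ and setting $v_\om = w_\nu$; on $\Om^\sQ_0$ the truth value of $\ph$ depends only on the $L$-reduct, so this choice does not affect any of the almost-sure identities above. Chaining the two clean bridges of the previous paragraphs then yields $\sQ \vDash \ph[\bw] \iff \sP' \vDash \ph[\bv']$ for every $\ph \in \cL'$, completing the proof.
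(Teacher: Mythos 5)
Your treatment of the complete case is correct, and it is in essence the paper's own argument with the measure-theoretic bookkeeping made explicit: the paper proves $\sP \vDash \Xi$ via Theorem \ref{T:coinc-thm} and the identity $(\xi_\s)_{\Om^\sQ} = g^{-1}(\xi_\s)_\Om$, and proves the equivalence by observing that $\nu$ and $(g\nu)'$ agree on $L \cup (M \cap \sym \ph)$ for a.e.~$\nu$ and applying Theorem \ref{T:coinc-thm} directly (your extra detour through $\ph^\rd$, $\sP$, and Proposition \ref{P:elim-struct} is harmless but not needed). The genuine gap is your opening move. The reduction to complete $\sQ$ is not harmless: the lemma's conclusion concerns $\sP$, the measure space image of $\sQ$ itself, and replacing $\sQ$ by $\ol\sQ$ replaces $\Si = \{A \subseteq \Om \mid g^{-1}A \in \Ga^\sQ\}$ by the generally strictly larger $\si$-algebra $\{A \subseteq \Om \mid g^{-1}A \in \ol{\Ga^\sQ}\}$, whose completion can strictly contain $\ol\Si$. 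Your sentence ``the relations $\sP \vDash \Xi$ and $\sP' \vDash \ph[\bv']$ are defined through completions anyway'' conflates the completion of the image with the image of the completion; satisfiability in the image of $\ol\sQ$ does not imply satisfiability in the image of $\sQ$, which is what the lemma asserts.

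Moreover, this gap cannot be closed, because without a completeness hypothesis the statement is false. Take $L = \{r\}$ with $r$ unary, $M = \{c\}$ with $\de_c(y) = ry$, so $\th_c = \forall y (y \beq c \tot ry)$ and $\xi_c = \exists! y \, ry$. On the domain $\{0, 1\}$ let $\nu_1, \nu_2, \nu_3$ be the $\cL'$-structures with $r^{\nu_1} = r^{\nu_2} = \{0\}$, $r^{\nu_3} = \{0, 1\}$, $c^{\nu_1} = c^{\nu_3} = 0$, and $c^{\nu_2} = 1$. Let $\Om^\sQ = \{\nu_1, \nu_2, \nu_3\}$, $\Ga^\sQ = \{\emp, \{\nu_1\}, \{\nu_2, \nu_3\}, \Om^\sQ\}$, and $\bbQ^\sQ \{\nu_1\} = 1$. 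Then $(\th_c)_{\Om^\sQ} = \{\nu_1\} \in \Ga^\sQ$ has measure one, so $\sQ \vDash \Theta$. But $g\nu_1 = g\nu_2 \neq g\nu_3$, so $g^{-1}\{g\nu_1\} = \{\nu_1, \nu_2\} \notin \Ga^\sQ$ and hence $\Si = \ol\Si = \{\emp, \Om\}$; meanwhile $(\xi_c)_\Om = \{g\nu_1\}$, since the reduct $g\nu_1$ has exactly one witness for $r$ while $g\nu_3$ has two. Thus $(\xi_c)_\Om \notin \ol\Si$ and $\sP \nvDash \Xi$. So your instinct that completeness of $\sQ$ is the crux---because $g$ is not injective and saturations of measurable sets need not be measurable---is exactly right, and the honest repair is to add completeness of $\sQ$ to the hypotheses (equivalently, to define $\sP$ as the image of $\ol\sQ$); this costs nothing in the lemma's application in Theorem \ref{T:ded-elim-thm}, where $\sQ$ may be completed at the outset because the hypotheses and conclusion there are completion-invariant. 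You should also be aware that the paper's own proof crosses the same bridge silently, in the step ``$(\xi_\s)_{\Om^\sQ} = g^{-1}(\xi_\s)_\Om$, so that $\sQ \vDash \xi_\s$ if and only if $\sP \vDash \xi_\s$'': only the implication from $\sP \vDash \xi_\s$ to $\sQ \vDash \xi_\s$ comes for free there, and the direction the lemma needs requires exactly the completeness you invoked.
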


\begin{proof}
  Let $\xi_\s \in \Xi$. Since $\xi_s \in \cL$, it follows from Theorem 
  \ref{T:coinc-thm} that $g \nu \tDash \xi_s$ if and only if $\nu \tDash \xi_s$.
  Hence, $(\xi_\s)_{\Om^\sQ} = g^{-1} (\xi_\s)_\Om$, so that $\sQ \vDash \xi_\s$
  if and only if $\sP \vDash \xi_\s$. Since $\Theta \vdash \Xi$ and $\sQ \vDash
  \Theta$, this gives $\sP \vDash \Xi$.

  Now let $M_0 = M \cap \sym \ph$ and note that $M_0$ is countable. Let
  $\Theta_0 = \{\th_\s \mid \s \in M_0\}$. Then $\sQ \vDash \bigwedge \Theta_0$,
  so that $\nu \tDash \bigwedge \Theta_0$, for a.e.~$\nu$. It follows that $\nu$
  and $\om'$ agree on $L \cup M_0$, for a.e.~$\nu$. The result now follows from
  Theorem \ref{T:coinc-thm}.
\end{proof}

\begin{prop}\label{P:elim-model}
  Let $\sP = (\Om, \Si, \bbP)$ be an $\cL$-model such that $\sP \vDash \Xi$, and
  let $\sP' = (\Om', \Ga, \bbQ)$ be as above. Let $\ph \in \cL'$ and let $\bv$
  be an assignment into $\sP$. Then $\ph[\bv']_{\Om'} \in \ol \Ga$ if and only
  if $\ph^\rd [\bv]_\Om \in \ol \Si$, and in this case, $\olbbQ \ph[\bv']_{\Om'}
  = \olbbP \ph^\rd[\bv]_\Om$. In particular, $\sP' \vDash \ph[\bv']$ if and only
  $\sP \vDash \ph^\rd[\bv]$.
\end{prop}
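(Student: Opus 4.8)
The plan is to reduce everything to the single structural equivalence already established in Proposition \ref{P:elim-struct}, and then transport it across the map $h\colon \om \mapsto \om'$ using the measure-theoretic machinery for images from Section \ref{S:meas-spaces}. The decisive preliminary observation is that $h$ is a \emph{bijection} from $\Om$ onto $\Om'$. Indeed, the domain of $\om'$ equals that of $\om$ and $\s^{\om'} = \s^\om$ for every $\s \in L$, so the $L$-reduct of $\om'$ recovers $\om$; hence $\om \mapsto \om'$ is injective, and it is surjective onto $\Om'$ by the very definition of $\Om'$. Since $\sP' = (\Om', \Ga, \bbQ)$ is the measure space image of $\sP$ under $h$, the facts on measure space images tell us that a bijective $h$ is a \emph{pointwise isomorphism} from $(\Om, \Si, \bbP)$ to $(\Om', \Ga, \bbQ)$, and therefore also from the completion $(\Om, \ol \Si, \olbbP)$ to the completion $(\Om', \ol \Ga, \olbbQ)$. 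In particular, for every $U \subseteq \Om'$ we have $U \in \ol \Ga$ if and only if $h^{-1}(U) \in \ol \Si$, and in that case $\olbbQ U = \olbbP h^{-1}(U)$.

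Next I would pin down the pointwise relationship between the two satisfaction sets. Write $U = \ph[\bv']_{\Om'}$ and $A = \ph^\rd[\bv]_\Om$, and set $M_0 = M \cap \sym \ph$, which is countable. Because $\sP \vDash \Xi$, each $(\xi_\s)_\Om$ lies in $\ol \Si$ with $\olbbP (\xi_\s)_\Om = 1$, so $\Om^* = \bigcap_{\s \in M_0} (\xi_\s)_\Om$ satisfies $\olbbP \Om^* = 1$ and $\Om \setminus \Om^*$ is contained in a $\bbP$-null set. For $\om \in \Om^*$ the hypothesis of Proposition \ref{P:elim-struct} holds, so $\om \in h^{-1}(U)$ exactly when $\om' \tDash \ph[v'_{\om'}]$, which by that proposition holds exactly when $\om \tDash \ph^\rd[v_\om]$, i.e.\ when $\om \in A$. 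Hence $h^{-1}(U) \tri A \subseteq \Om \setminus \Om^*$ is contained in a null set, which is to say $h^{-1}(U) = A$ a.e.

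Finally I would assemble the conclusion. Since $\ol \Si$ is complete and $A = h^{-1}(U)$ a.e., the set $A$ is $\ol \Si$-measurable if and only if $h^{-1}(U)$ is, and the two sets then carry the same measure. Chaining this with the pointwise-isomorphism equivalence of the first paragraph gives that $\ph[\bv']_{\Om'} = U \in \ol \Ga$ if and only if $h^{-1}(U) \in \ol \Si$ if and only if $A = \ph^\rd[\bv]_\Om \in \ol \Si$, together with $\olbbQ U = \olbbP h^{-1}(U) = \olbbP A$. The ``in particular'' clause is then immediate, since $\sP' \vDash \ph[\bv']$ means $U \in \ol \Ga$ with $\olbbQ U = 1$, which by the above holds precisely when $A \in \ol \Si$ with $\olbbP A = 1$, i.e.\ when $\sP \vDash \ph^\rd[\bv]$.

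The only place demanding genuine care is the bijectivity of $h$: without injectivity, the measure space image is merely a measurable factor map, and the backward implication $h^{-1}(U) \in \ol \Si \Rightarrow U \in \ol \Ga$ can fail (the pullback $\si$-algebra need not approximate arbitrary saturated sets from inside and outside). Recognizing that the expansion map is automatically injective is exactly what upgrades it to a pointwise isomorphism and makes the measurability equivalence reversible; everything else is a routine combination of Proposition \ref{P:elim-struct}, countable additivity of null sets, and completeness.
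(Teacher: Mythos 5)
Your proof is correct and takes essentially the same route as the paper: apply Proposition \ref{P:elim-struct} on a full-measure set where $\xi_\s$ holds for all $\s \in M \cap \sym \ph$, conclude $h^{-1} \ph[\bv']_{\Om'} = \ph^\rd[\bv]_\Om$ a.s., and then transfer measurability and measure across $h$. Your explicit observation that $h$ is a bijection---hence a pointwise isomorphism whose completion-level measurability equivalence is reversible---is precisely the content compressed into the paper's closing phrase ``it follows from the definition of $\sP'$,'' and spelling it out (together with the counterexample-flavored warning that the backward implication can fail for non-injective $h$) is a worthwhile clarification rather than a deviation.
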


\begin{proof}
  Since $\sym \ph$ is countable and $\sP \vDash \Xi$, we may choose $\Om^* \in
  \Si$ such that $\bbP \Om^* = 1$ and $\om \tDash \xi_\s$, for all $\s \in M
  \cap \sym \ph$ and all $\om \in \Om^*$. Hence, by Proposition
  \ref{P:elim-struct}, we have $\om' \tDash \ph[v'_{\om'}]$ if and only $\om
  \tDash \ph^\rd[v_\om]$, for $\bbP$-a.e.~$\om \in \Om$. Therefore, $h^{-1}
  \ph[\bv']_{\Om'} = \ph^\rd[\bv]_\Om$ a.s. It follows from the definition of
  $\sP'$ that $\ph[\bv']_{\Om'} \in \ol \Ga$ if and only if $\ph^\rd [\bv]_\Om
  \in \ol \Si$, and in this case, $\olbbQ \ph[\bv']_{\Om'} =
  \olbbP \ph^\rd[\bv]_\Om$. In particular, $\olbbQ \ph[\bv']_{\Om'} = 1$ if and
  only if $\olbbP \ph^\rd[\bv]_\Om = 1$, so that $\sP' \vDash \ph[\bv']$ if and
  only $\sP \vDash \ph^\rd[\bv]$.
\end{proof}

\subsection{Deductive elimination}

The following theorem captures the exact relationship between derivability in
$\cL'$ and derivability in $\cL$. By Remark \ref{R:mon}, since $\cL \subseteq
\cL'$, we are able to simply write $\vdash$, instead of $\vdash_\cL$ and
$\vdash_{\cL'}$.

\begin{thm}[Deductive elimination theorem]\label{T:ded-elim-thm}
  Let $X \subseteq \cL'$ and $\ph \in \cL'$. Then $X, \Theta \vdash \ph$ if and
  only if $X^\rd, \Xi \vdash \ph^\rd$.
\end{thm}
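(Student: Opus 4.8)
The plan is to pass through semantics, using deductive soundness (Theorem \ref{T:pred-soundness}) and deductive completeness (Theorem \ref{T:pred-completeness}) to convert each derivability assertion into an assertion about models, where the three preparatory results---Proposition \ref{P:elim-struct}, Lemma \ref{L:elim-model}, and Proposition \ref{P:elim-model}---do the real work. Recall that by Remark \ref{R:mon} we need not distinguish $\vdash_\cL$ from $\vdash_{\cL'}$, and that $X^\rd \cup \Xi \subseteq \cL$ while $X \cup \Theta \subseteq \cL'$, so completeness will be applied in $\cL$ on one side and in $\cL'$ on the other.

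For the forward direction, assume $X, \Theta \vdash \ph$; by completeness in $\cL$ it suffices to prove $X^\rd, \Xi \vDash \ph^\rd$. So let $\sP$ be an $\cL$-model and $\bv$ an assignment into $\sP$ with $\sP \vDash \psi[\bv]$ for every $\psi \in X^\rd \cup \Xi$. In particular $\sP \vDash \Xi$, so the construction preceding Proposition \ref{P:elim-model} yields the $\cL'$-model $\sP'$ and assignment $\bv'$, with $\sP' \vDash \Theta$ (since $\sP' \vDash \th_\s$ whenever $\sP \vDash \xi_\s$). For $\psi \in X$ we have $\psi^\rd \in X^\rd$, hence $\sP \vDash \psi^\rd[\bv]$, so Proposition \ref{P:elim-model} gives $\sP' \vDash \psi[\bv']$; together with $\sP' \vDash \Theta$ this shows that $\sP'$ and $\bv'$ satisfy $X \cup \Theta$. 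By soundness applied to $X, \Theta \vdash \ph$ we obtain $\sP' \vDash \ph[\bv']$, and one more application of Proposition \ref{P:elim-model} gives $\sP \vDash \ph^\rd[\bv]$, as required.

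For the converse, assume $X^\rd, \Xi \vdash \ph^\rd$; by completeness in $\cL'$ it suffices to prove $X, \Theta \vDash \ph$. Let $\sQ$ be an $\cL'$-model and $\bw$ an assignment with $\sQ \vDash \psi[\bw]$ for all $\psi \in X \cup \Theta$. Since $\sQ \vDash \Theta$, Lemma \ref{L:elim-model} produces an $\cL$-model $\sP$, an assignment $\bv$, and (via the construction of Proposition \ref{P:elim-model}) the pair $\sP', \bv'$, with $\sP \vDash \Xi$ and $\sQ \vDash \chi[\bw]$ iff $\sP' \vDash \chi[\bv']$ for all $\chi \in \cL'$. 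Composing with Proposition \ref{P:elim-model} gives $\sQ \vDash \chi[\bw]$ iff $\sP \vDash \chi^\rd[\bv]$. Taking $\chi = \psi \in X$ shows $\sP \vDash \psi^\rd[\bv]$ for every element of $X^\rd$, and we already have $\sP \vDash \Xi$; so soundness applied to $X^\rd, \Xi \vdash \ph^\rd$ yields $\sP \vDash \ph^\rd[\bv]$. Taking $\chi = \ph$ in the equivalence then gives $\sQ \vDash \ph[\bw]$, completing the proof.

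The two directions are almost perfectly symmetric, and the only delicate points are bookkeeping ones: keeping straight that $\Xi$ is the hypothesis available on the $\cL$-side while $\Theta$ is available on the $\cL'$-side, verifying that the assignments $\bv$, $\bv'$, $\bw$ correspond under the reduct and expansion maps exactly as the cited results demand, and invoking completeness in the correct language in each direction. I expect the main obstacle to be confirming that the two constructions---the expansion $\sP \mapsto \sP'$ of Proposition \ref{P:elim-model} and the reduct $\sQ \mapsto \sP$ of Lemma \ref{L:elim-model}---interface with the assignments so cleanly that the biconditionals of those results can be chained without gaps; once that chaining is checked, soundness and completeness close both directions immediately.
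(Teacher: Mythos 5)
Your proposal is correct and follows essentially the same route as the paper's own proof: both directions pass through semantics via soundness and completeness, using the expansion construction of Proposition \ref{P:elim-model} in the forward direction and the reduct construction of Lemma \ref{L:elim-model} (chained with Proposition \ref{P:elim-model}) in the converse. The only difference is cosmetic—you state the soundness and completeness invocations explicitly where the paper leaves them implicit.
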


\begin{proof}
  Assume $X, \Theta \vdash \ph$. Let $\sP = (\Om, \Si, \bbP)$ be an $\cL$-model
  and let $\bv$ be an assignment into $\sP$ such that $\sP \vDash \psi[\bv]$ for
  all $\psi \in X^\rd \cup \Xi$. Since $\sP \vDash \Xi$, we may define $\sP'$ as
  in Proposition \ref{P:elim-model}. We then have $\sP' \vDash \ze[\bv']$ if and
  only $\sP \vDash \ze^\rd[\bv]$ for all $\ze \in \cL'$. It follows that $\sP'
  \vDash \psi[\bv']$ for all $\psi \in X$. Since $\sP' \vDash \th_\s$ whenever
  $\sP \vDash \xi_\s$, it also follows that $\sP' \vDash \Theta$. Thus, $\sP
  \vDash \psi[\bv']$ for all $\psi \in X \cup \Theta$. Since $X, \Theta \vdash
  \ph$, we conclude that $\sP' \vDash \ph[\bv']$. One more application of
  Proposition \ref{P:elim-model} gives $\sP \vDash \ph^\rd[\bv]$. Since $\sP$
  and $\bv$ were arbitrary, this shows that $X^\rd, \Xi \vdash \ph^\rd$.

  Now suppose $X^\rd, \Xi \vdash \ph^\rd$. Let $\sQ = (\Om^\sQ, \Ga^\sQ,
  \bbQ^\sQ)$ be an $\cL'$-model and let $\bw$ be an assignment into $\sQ$ such
  that $\sQ \vDash \psi[\bw]$ for all $\psi \in X \cup \Theta$. Since $\sQ
  \vDash \Theta$, we may define $\sP$ as in Lemma \ref{L:elim-model}. We then
  have $\sP \vDash \Xi$ and $\sP' \vDash \psi[\bv']$ for all $\psi \in X \cup
  \Theta$. Proposition \ref{P:elim-model} then implies $\sP \vDash
  \psi^\rd[\bv]$ for all $\psi \in X \cup \Theta$. In particular, $\sP \vDash
  \psi[\bv]$ for all $\psi \in X^\rd$. Together with $\sP \vDash \Xi$ and
  $X^\rd, \Xi \vdash \ph^\rd$, this gives $\sP \vDash \ph^\rd[\bv]$. Another
  application of Proposition \ref{P:elim-model} gives $\sP' \vDash \ph[\bv']$.
  Therefore, by Lemma \ref{L:elim-model}, we have $\sQ \vDash \ph[\bw]$. Since
  $\sQ$ and $\bw$ were arbitrary, this shows that $X, \Theta \vdash \ph$.
\end{proof}

\begin{cor}\label{C:ded-elim-thm1}
  For any $X_1, X_2 \subseteq \cL'$, we have $X_1 \cup \Theta \equiv X_2 \cup
  \Theta$ if and only if $X_1^\rd \cup \Xi \equiv X_2^\rd \cup \Xi$.
\end{cor}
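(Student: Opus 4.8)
The plan is to obtain this as a near-immediate consequence of the Deductive elimination theorem (Theorem \ref{T:ded-elim-thm}). Recall that for sets of formulas, $Y \equiv Z$ means $Y \vdash Z$ and $Z \vdash Y$, where $Y \vdash Z$ abbreviates $Y \vdash \psi$ for all $\psi \in Z$. By symmetry in $X_1$ and $X_2$, it suffices to prove the single implication that $X_1 \cup \Theta \vdash X_2 \cup \Theta$ holds if and only if $X_1^\rd \cup \Xi \vdash X_2^\rd \cup \Xi$; applying this with the roles of $X_1$ and $X_2$ interchanged then yields the full biconditional for $\equiv$.

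For that single implication, I would first strip off the $\Theta$ and $\Xi$ components. Since $\Theta \subseteq X_1 \cup \Theta$, conditions (i) and (ii) of Definition \ref{D:pred-derivability} give $X_1 \cup \Theta \vdash \th$ for every $\th \in \Theta$; hence $X_1 \cup \Theta \vdash X_2 \cup \Theta$ is equivalent to $X_1 \cup \Theta \vdash \ph$ for every $\ph \in X_2$. In exactly the same way, using $\Xi \subseteq X_1^\rd \cup \Xi$, the statement $X_1^\rd \cup \Xi \vdash X_2^\rd \cup \Xi$ is equivalent to $X_1^\rd \cup \Xi \vdash \ph^\rd$ for every $\ph \in X_2$, since $X_2^\rd = \{\ph^\rd \mid \ph \in X_2\}$ by the definition in Section \ref{S:def-ext-mult}.

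It remains to match these two reduced conditions formula by formula. Fixing $\ph \in X_2$, the Deductive elimination theorem, applied with antecedent $X_1$ and consequent $\ph$, states precisely that $X_1, \Theta \vdash \ph$ if and only if $X_1^\rd, \Xi \vdash \ph^\rd$. Quantifying over all $\ph \in X_2$ then gives the equivalence of $X_1 \cup \Theta \vdash X_2$ with $X_1^\rd \cup \Xi \vdash X_2^\rd$, which by the previous paragraph is the desired implication.

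I do not anticipate any genuine obstacle here: the content is entirely carried by Theorem \ref{T:ded-elim-thm}, and the only care required is the routine bookkeeping of unwinding $\equiv$ into two derivability assertions, observing that the hypothesis sets $\Theta$ and $\Xi$ are derivable from themselves and so contribute nothing, and keeping the notation $X^\rd$ and the comma-as-union convention straight. If a fully symmetric write-up is preferred, one can avoid invoking symmetry by simply running the argument of the middle two paragraphs twice, once for $X_1 \cup \Theta \vdash X_2 \cup \Theta$ and once for $X_2 \cup \Theta \vdash X_1 \cup \Theta$, and conjoining the results.
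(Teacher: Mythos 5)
Your proposal is correct and follows essentially the same route as the paper: both proofs unwind $\equiv$ into derivability assertions, observe that $\Theta$ and $\Xi$ are trivially derivable from any set containing them, and then apply Theorem \ref{T:ded-elim-thm} formula by formula to pass between $X_1, \Theta \vdash X_2$ and $X_1^\rd, \Xi \vdash X_2^\rd$. The only difference is organizational — you prove a biconditional of derivability statements and then symmetrize, while the paper proves the forward direction of the corollary directly and notes the converse is similar — which is not a substantive distinction.
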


\begin{proof}
  Suppose $X_1 \cup \Theta \equiv X_2 \cup \Theta$. Then $X_1, \Theta \vdash
  X_2$. Theorem \ref{T:ded-elim-thm} implies $X_1^\rd, \Xi \vdash X_2^\rd$.
  Likewise, $X_2^\rd, \Xi \vdash X_1^\rd$. Therefore, $X_1^\rd \cup \Xi \equiv
  X_2^\rd \cup \Xi$. The proof of the converse is similar.
\end{proof}

\begin{defn}\label{D:ded-def-ext}
    \index{definitorial extension}%
  Let $T \subseteq \cL^0$ be a deductive theory, and let $\Theta$ and $\Xi$ be
  as in Section \ref{S:def-ext-mult}. We say that $\Theta$ is \emph{legitimate
  in $T$} if $\Xi \subseteq T$. If $\Theta$ is legitimate in $T$, then we define
  the deductive theory $T' \subseteq (\cL')^0$ by $T' = T + \Theta$. The
  deductive theory $T'$ is called a \emph{definitorial extension of $T$}.
\end{defn}

\begin{cor}\label{C:ded-elim-thm2}
  Let $T'$ be a definitorial extension of $T$. Then for all $\ph \in \cL'$, we
  have $T' \vdash \ph$ if and only if $T \vdash \ph^\rd$. In particular, $T$ is
  consistent if and only if $T'$ is consistent.
\end{cor}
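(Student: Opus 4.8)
The plan is to reduce everything to the Deductive Elimination Theorem (Theorem \ref{T:ded-elim-thm}) by first unpacking what $T' \vdash \ph$ means and then collapsing the right-hand side of that theorem using the legitimacy hypothesis. No new semantic work is needed; this is purely a matter of chaining equivalences.

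First I would establish that $T' \vdash \ph$ if and only if $T, \Theta \vdash \ph$, for arbitrary $\ph \in \cL'$. By Definition \ref{D:ded-def-ext}, $T' = T + \Theta$ is the deductive theory generated by $T \cup \Theta$, that is, $T' = \{\psi \in (\cL')^0 \mid T, \Theta \vdash \psi\}$. Since $T \cup \Theta \subseteq T'$, monotonicity (Definition \ref{D:derivability}(ii)) gives that $T, \Theta \vdash \ph$ implies $T' \vdash \ph$. For the converse, every $\psi \in T'$ is a sentence with $T, \Theta \vdash \psi$, so $T, \Theta \vdash T'$; combined with $T' \vdash \ph$, Proposition \ref{P:set-mono} yields $T, \Theta \vdash \ph$. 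Routing through Proposition \ref{P:set-mono} rather than through the deductive closure of $T'$ is what makes this work even when $\ph$ is an open formula.

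Next I would apply Theorem \ref{T:ded-elim-thm} with $X = T$, giving $T, \Theta \vdash \ph$ if and only if $T^\rd, \Xi \vdash \ph^\rd$. The key simplification is that this antecedent reduces to $T$ alone. Indeed, $T \subseteq \cL^0 \subseteq \cL$, and reduction is the identity on formulas already lying in $\cL$ (if $\psi \in \cL$ then $\psi^\rd = \psi$), so $T^\rd = T$. Moreover, legitimacy of $\Theta$ in $T$ means $\Xi \subseteq T$, whence $T^\rd \cup \Xi = T \cup \Xi = T$. Therefore $T^\rd, \Xi \vdash \ph^\rd$ is exactly $T \vdash \ph^\rd$, and chaining the three equivalences gives $T' \vdash \ph$ if and only if $T \vdash \ph^\rd$, which is the first assertion.

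Finally, for the consistency claim I would specialize to $\ph = \bot$. Since $\bot$ contains no extralogical symbols, $\bot \in \cL$ and $\bot^\rd = \bot$, so the equivalence just proved reads $T' \vdash \bot$ if and only if $T \vdash \bot$. As a set is inconsistent precisely when it proves $\bot$ (Section \ref{S:taut-pred}), this states that $T'$ is inconsistent if and only if $T$ is inconsistent, i.e.\ $T$ is consistent if and only if $T'$ is consistent. I expect the only delicate point to be the first step: confirming that $T' \vdash \ph$ reduces to $T, \Theta \vdash \ph$ for open $\ph$, together with the observation that $T^\rd = T$ because reduction fixes formulas of $\cL$; everything after that is a direct substitution into Theorem \ref{T:ded-elim-thm}.
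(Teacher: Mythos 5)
Your proof is correct and follows essentially the same route as the paper: apply Theorem \ref{T:ded-elim-thm} with $X = T$, then collapse the right-hand side using $T^\rd = T$ and $\Xi \subseteq T$. The only difference is that you spell out two steps the paper leaves implicit — the reduction of $T' \vdash \ph$ to $T, \Theta \vdash \ph$ via Proposition \ref{P:set-mono}, and the consistency claim via $\ph = \bot$ with $\bot^\rd = \bot$ — both of which are handled correctly.
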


\begin{proof}
  By Theorem \ref{T:ded-elim-thm}, we have $T' \vdash \ph$ if and only if
  $T^\rd, \Xi \vdash \ph^\rd$. But $T^\rd = T$ and $\Xi \subseteq T$. Therefore,
  $T^\rd, \Xi \vdash \ph^\rd$ if and only if $T \vdash \ph^\rd$.
\end{proof}

\subsection{Inductive elimination}

Our aim here is to prove Theorem \ref{T:ind-elim-thm} below, which is an
inductive version of Theorem \ref{T:ded-elim-thm}.

Let $P$ be an inductive theory in $\cL^\IS$ with root $T_0$, and let $T_0'$ be a
definitorial extension of $T_0$. That is, $T_0' = T_0 + \Theta$, where $\Theta$
is legitimate in $T_0$, meaning that $\Xi \subseteq T_0$.

Let $X \subseteq (\cL')^0$ and assume that $X \cao T_0'$. Choose $\psi \in
(\cL')^0$ such that $X \equiv T_0' + \psi$. Define $Q^X \subseteq (\cL')^\IS$ by
\[
  Q^X = \{(X, \ph, p) \mid P(\ph^\rd \mid T_0, \psi^\rd) = p\}.
\]
We claim that the definition of $Q^X$ does not depend on the choice $\psi$. To
see this, suppose that $\ze \in (\cL')^0$ and $X \equiv T_0' + \ze$. Then $T_0'
\vdash \psi \to \ze$. By Corollary \ref{C:ded-elim-thm2}, we have $T_0 \vdash
\psi^\rd \to \ze^\rd$, so that $T_0, \psi^\rd \vdash \ze^\rd$. Reversing the
roles of $\psi$ and $\ze$ gives $T_0, \ze^\rd \vdash \psi^\rd$. Hence, $T_0 +
\psi^\rd = T_0 + \ze^\rd$. By the rule of logical equivalence, $P(\ph^\rd \mid
T_0, \psi^\rd) = p$ if and only if $P(\ph^\rd \mid T_0, \ze^\rd) = p$, and so
$Q^X$ does not depend on $\psi$.

Let $Q = \bigcup \{Q^X \mid X \cao T_0'\}$. Then $Q$ is strongly connected with
root $T_0'$.

\begin{lemma}\label{L:ind-elim-thm1}
  With notation as above, $Q$ is satisfiable.
\end{lemma}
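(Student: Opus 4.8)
The plan is to exhibit a single $\cL'$-model satisfying $Q$, obtained by taking a model of $P$ and passing to its definitorial extension. First I would use Theorem \ref{T:ind-th-model} to secure a model $\sP = (\Om, \Si, \bbP)$ that both satisfies $P$ and deductively satisfies the root $T_0$. To get the latter I would follow the construction in the second half of the proof of Theorem \ref{T:ind-th-model}: choose a complete extension $\ol P$ of $P$ with root $T_0$ (which exists by Corollary \ref{C:compl-restrict} and Proposition \ref{P:lift-complete}), apply the first claim of Theorem \ref{T:ind-th-model} to obtain $\sP$ with $T_{\ol P} = \Th \sP$ and $\ol P = \bTh \sP \dhl_{[T_0, \Th \sP]}$, and observe that $T_0 \subseteq T_{\ol P} = \Th \sP$ gives $\sP \vDash T_0$, while $P \subseteq \ol P \subseteq \bTh \sP$ gives $\sP \vDash P$.

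Since $\Theta$ is legitimate in $T_0$, we have $\Xi \subseteq T_0$, so $\sP \vDash \Xi$. This is exactly the hypothesis needed to build the definitorial-extension model $\sP' = (\Om', \Ga, \bbQ)$ described just before Lemma \ref{L:elim-model}. By that construction $\sP' \vDash \th_\s$ for every $\s \in M$, hence $\sP' \vDash \Theta$; and since $\eta^\rd = \eta$ for $\eta \in T_0 \subseteq \cL^0$, Proposition \ref{P:elim-model} yields $\sP' \vDash T_0$. As $\Th \sP'$ is a deductive theory, it follows that $T_0' = T(T_0 \cup \Theta) \subseteq \Th \sP'$.

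It then remains to verify $\sP' \vDash (X, \ph, p)$ for each $(X, \ph, p) \in Q$. Such a statement lies in some $Q^X$ with $X \equiv T_0' + \psi$ and $P(\ph^\rd \mid T_0, \psi^\rd) = p$. I would use the decomposition $X \equiv Y \cup \{\psi\}$ with $Y = T_0' \subseteq \Th \sP'$ and evaluate the ratio in \eqref{cond-prob}. Because reduction commutes with conjunction, so that $(\ph \wedge \psi)^\rd = \ph^\rd \wedge \psi^\rd$, Proposition \ref{P:elim-model} (which transfers $\olbbQ$-measures of $\cL'$-sentences to $\olbbP$-measures of their reducts) turns $\olbbQ\, \ph_{\Om'} \cap \psi_{\Om'} / \olbbQ\, \psi_{\Om'}$ into $\olbbP\, \ph^\rd_\Om \cap \psi^\rd_\Om / \olbbP\, \psi^\rd_\Om$. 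Finally, since $P(\ph^\rd \mid T_0, \psi^\rd) = p$ means $(T_0 \cup \{\psi^\rd\}, \ph^\rd, p) \in P$ and $\sP \vDash P$ with $T_0 \subseteq \Th \sP$, Proposition \ref{P:model-func} shows this last ratio equals $p$. Hence $\sP' \vDash (X, \ph, p)$, and as $(X, \ph, p)$ was arbitrary, $\sP' \vDash Q$, so $Q$ is satisfiable.

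The routine bookkeeping requiring care is measurability: that $\psi^\rd_\Om \in \ol\Si$ with $\olbbP\, \psi^\rd_\Om > 0$ and that $\ph^\rd_\Om \cap \psi^\rd_\Om$ lies in $\ol\Si$, all of which follow from the existence of $P(\ph^\rd \mid T_0, \psi^\rd)$ and the fact that $\sP \vDash P$. The main point to get right, and the step I expect to be the crux, is securing a model of $P$ that \emph{also} satisfies $T_0$ as a set of sentences; a model merely satisfying the inductive theory $P$ need not deductively satisfy its root, and without $\sP \vDash \Xi$ the definitorial-extension model $\sP'$ would not even be available.
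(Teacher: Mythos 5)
Your proof is correct, and it follows the same route as the paper's: pass from a model of $P$ to the definitorial-extension model $\sP'$ constructed above Lemma \ref{L:elim-model}, check $\sP' \vDash T_0'$, and transfer the conditional probability through Proposition \ref{P:elim-model}, using the fact that reduction commutes with conjunction. The one place where you deviate is the point you flag as the crux, and there your version is actually tighter than the paper's. The paper's proof opens with an arbitrary model $\sP \vDash P$ and asserts that $\sP \vDash T_P$, from which $\sP \vDash \Xi$ follows since $\Xi \subseteq T_0 \subseteq T_P$; but satisfying an inductive theory does not in general force satisfaction of the sentences of $T_P$ --- the paper's own Proposition \ref{P:drop-root-fewer-models} (Section \ref{S:diff-root}, whose results carry over to the predicate setting) exhibits a model $\sQ \vDash P$ with $\sQ \nvDash \ze$ for a sentence $\ze$ in the root of $P$, hence in $T_P$. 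Your detour through the second half of the proof of Theorem \ref{T:ind-th-model} --- completing $P$ to $\ol P$ with the same root and using $T_0 \subseteq T_{\ol P} = \Th \sP$ to get both $\sP \vDash P$ and $\sP \vDash T_0$ --- is exactly what is needed to legitimately secure $\sP \vDash \Xi$, which, as you observe, is used twice: once so that the construction yields $\sP' \vDash \Theta$ (and Proposition \ref{P:elim-model} applies at all), and once, via Proposition \ref{P:model-func}, to know that the specific decomposition $X \equiv T_0' \cup \{\psi\}$ evaluates to the correct ratio. In short, your proposal is the paper's argument with its one loose step repaired.
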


\begin{proof}
  Let $\sP$ be an $\cL$-model such that $\sP \vDash P$. Then $\sP \vDash T_P$
  and $\Xi \subseteq T_0 \subseteq T_P$, so that $\sP \vDash \Xi$. Define $\sP'$
  as in Proposition \ref{P:elim-model}. Since $\sP' \vDash \th_\s$ whenever $\sP
  \vDash \xi_\s$, we have $\sP' \vDash \Theta$. Also, $T_0^\rd = T_0$, so
  Proposition \ref{P:elim-model} gives $\sP' \vDash T_0$. Therefore, $\sP'
  \vDash T_0 + \Theta = T_0'$.

  Now suppose $(X, \ph, p) \in Q$. Write $X \equiv T_0' + \psi$, where $P
  (\ph^\rd \mid T_0, \psi^\rd) = p$. Then $\olbbP \ph^\rd_\Om \cap \psi^\rd_\Om
  / \olbbP \psi^\rd_\Om = p$. By Proposition \ref{P:elim-model}, we have $\olbbQ
  \ph_{\Om'} \cap \psi_{\Om'} / \olbbQ \psi_{\Om'} = p$, so that $\sP' \vDash 
  (X, \ph, p)$. Since $(X, \ph, p)$ was arbitrary, $\sP' \vDash Q$.
\end{proof}

It follows from Lemma \ref{L:ind-elim-thm1} and Theorem \ref{T:ind-satis-cons}
that $Q$ is consistent. We may therefore define $P' = \bfP_Q$. Let $P'_0 = P'
\dhl_{T_0'}$.

\begin{lemma}\label{L:ind-elim-thm2}
  With notation as above, $P'_0 = Q$.
\end{lemma}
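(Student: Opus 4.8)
The plan is to prove that $Q$ is a pre-theory with root $T_0'$; once this is done the lemma is immediate. Indeed, $Q$ is strongly connected with root $T_0'$ by construction, so it remains only to show that $Q$ is semi-closed, and then $\bfP_Q = \bfP(Q)$ (for a pre-theory the smallest inductive-theory extension is the lift), so Proposition \ref{P:lift-unique} gives $\bfP(Q) \dhl_{T_0'} = Q$ and hence $P'_0 = P' \dhl_{T_0'} = \bfP(Q) \dhl_{T_0'} = Q$. Thus the whole burden is to verify that $Q$ is entire and satisfies the rule of inductive extension (R8). (The easy inclusion $Q \subseteq P'_0$ is subsumed by this, but can be isolated: $Q \subseteq \bfP_Q = P'$ and every antecedent of $Q$ is $\cao T_0'$, so $Q \subseteq P' \dhl_{T_0'} = P'_0$.)

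First I would record the dictionary between $Q$ and $P$. The reduction $\psi \mapsto \psi^\rd$ commutes with $\neg$ and $\bigwedge$, hence with all derived connectives, and satisfies $(\psi^\rd)^\rd = \psi^\rd$. Applying Corollary \ref{C:ded-elim-thm2} to the formula $\psi \tot \psi^\rd$, whose reduction $\psi^\rd \tot \psi^\rd$ is a tautology, yields $T_0' \vdash \psi \tot \psi^\rd$ for every $\psi \in (\cL')^0$; therefore any antecedent $X \equiv T_0' + \psi$ also satisfies $X \equiv T_0 + \Theta + \psi^\rd$, exhibiting $T_0' + \psi$ as a definitorial extension of $T_0 + \psi^\rd$ (legitimacy holds since $\Xi \subseteq T_0 \subseteq T_0 + \psi^\rd$). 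Corollary \ref{C:ded-elim-thm2} then transfers derivability between the two levels, $T_0' + \psi \vdash \eta$ iff $T_0 + \psi^\rd \vdash \eta^\rd$, so logical equivalence, logical and material implication, and deductive transitivity all pass back and forth under $\rd$. Combined with the defining identity $Q(\ph \mid T_0', \psi) = P(\ph^\rd \mid T_0, \psi^\rd)$, this lets me read each instance of rules (R1)--(R7) for $Q$ as precisely the corresponding instance of that rule for $P$ applied to reduced formulas; since $P$ is entire (it is an inductive theory), admissibility and (R2)--(R7) for $Q$ follow from this template one rule at a time, the additivity and multiplicativity rules using that $\rd$ commutes with $\vee$ and $\wedge$.

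The hard part will be the rule of inductive extension, because it quantifies over \emph{all} completions of $Q$, a global notion the reduction dictionary does not directly control. My plan is to set up a correspondence between completions of $Q$ and completions of the pre-theory $P_0 = P \dhl_{T_0}$ by the same reduction/expansion device used at the model level in Lemma \ref{L:elim-model} and Proposition \ref{P:elim-model}. Concretely, a completion $\ol{P_0}$ of $P_0$ should induce a complete extension $\ol Q$ of $Q$ via $\ol Q(\ph \mid T_0', \psi) = \ol{P_0}(\ph^\rd \mid T_0, \psi^\rd)$, with the verification of Definition \ref{D:complete}(i),(ii) transferring under $\rd$ exactly as entirety did; conversely, reducing a completion of $Q$ should return a completion of $P_0$. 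Granting this bijection, if $\ol Q(\ph \mid X) = p$ for every completion $\ol Q$ of $Q$, then $\ol{P_0}(\ph^\rd \mid T_0, \psi^\rd) = p$ for every completion $\ol{P_0}$ of $P_0$; since $P_0$ is semi-closed, this forces $P_0(\ph^\rd \mid T_0, \psi^\rd) = p$, that is $Q(\ph \mid X) = p$, which is (R8) for $Q$. Lemma \ref{L:ind-elim-thm1} and Proposition \ref{P:elim-model} supply the model-level existence needed to guarantee that completions on the $Q$-side are nonempty and correspond correctly.

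Having shown $Q$ is semi-closed, hence a pre-theory, I would conclude as in the first paragraph. Throughout, the one recurring delicacy is bookkeeping the root correspondence $T_0' \leftrightarrow T_0$ and invoking the independence-of-$\psi$ fact already established in the setup whenever a new representative of an antecedent is chosen; this keeps every instance of the defining identity $Q(\ph \mid T_0',\psi)=P(\ph^\rd\mid T_0,\psi^\rd)$ unambiguous.
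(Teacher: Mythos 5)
Your proposal is correct in outline, but it takes a genuinely different route from the paper. The paper's proof is semantic and short: it fixes $(X,\ph,p) \in P_0'$ with $X \equiv T_0' + \psi$, and for \emph{every} $\cL$-model $\sP \vDash P$ it builds the expanded model $\sP'$ from Lemma \ref{L:ind-elim-thm1}; since $\sP' \vDash Q$ and $\bTh \sP'$ is an inductive theory, $\sP' \vDash P' = \bfP_Q$, so $\sP' \vDash (T_0'+\psi,\ph,p)$, which Proposition \ref{P:elim-model} pulls back to $\sP \vDash (T_0+\psi^\rd,\ph^\rd,p)$; as $\sP$ was arbitrary, Definition \ref{D:consequence} and Remark \ref{R:classic-ind-th-char} (inductive soundness and completeness) give $P(\ph^\rd \mid T_0,\psi^\rd) = p$, i.e.\ $(X,\ph,p) \in Q$. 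You instead stay entirely at the level of the calculus: you show $Q$ is itself a pre-theory with root $T_0'$ (entirety via the reduction dictionary supplied by Corollary \ref{C:ded-elim-thm2}, semi-closedness via expanding completions of $P_0 = P \dhl_{T_0}$ to completions of $Q$), and then conclude from Proposition \ref{P:lift-unique}. What your route buys is independence from the semantic machinery, plus strictly more information: $Q$ is a pre-theory, so its consistency no longer needs Lemma \ref{L:ind-elim-thm1}, and one gets $P' = \bfP(Q)$ as a lift explicitly. What the paper's route buys is brevity, since the completeness theorem does all the rule-by-rule work for free. Three points in your sketch need care when written out. First, in (R8) you tacitly assume the ambient $X$ satisfies $X \cao T_0'$; for arbitrary $X$ you must first invoke Corollary \ref{C:ind-ext-str-conn} (legitimate once $Q$ is known to be entire and to have at least one completion) to reduce to that case. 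Second, you only need one direction of your ``bijection'' of completions --- expanding a completion of $P_0$ to a completion of $Q$; the reverse direction is both unnecessary and genuinely awkward, since a completion of $Q$ may have antecedents bearing no relation to $T_0'$, so you should drop the bijection language. Third, for rules whose hypotheses involve derivability without the root in context (e.g.\ $\ph \vdash \psi$ in deductive transitivity, or the hypothesis of the continuity rule), Theorem \ref{T:ded-elim-thm} yields $\ph^\rd, \Xi \vdash \psi^\rd$ rather than $\ph^\rd \vdash \psi^\rd$; the extra $\Xi$ is harmless because $\Xi \subseteq T_0$ sits inside every antecedent in play (e.g.\ apply Proposition \ref{P:transitivity} instead of raw (R4)), but this absorption step should be made explicit.
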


\begin{proof}
  Since $Q \subseteq P'$ and $X \cao T_0'$ for every $X \in \ante Q$, we have $Q
  \subseteq P'_0$. Conversely, suppose that $P_0'(\ph \mid X) = p$. Write $X
  \equiv T_0' + \psi$, so that $P'(\ph \mid T_0', \psi) = p$. Let $\sP$ and
  $\sP'$ be as in the proof of Lemma \ref{L:ind-elim-thm1}. Since $\sP' \vDash
  Q$, it follows that $\sP' \vDash P'$. Therefore, $\olbbQ \ph_{\Om'} \cap
  \psi_{\Om'} / \olbbQ \psi_{\Om'} = p$. Proposition \ref{P:elim-model} then
  gives $\olbbP \ph^\rd_\Om \cap \psi^\rd_\Om / \olbbP \psi^\rd_\Om = p$, so
  that $\sP \vDash (T_0 + \psi^\rd, \ph^\rd, p)$. Since this is true for every
  $\cL$-model $\sP$ such that $\sP \vDash P$, and since $T_0 + \psi^\rd \cao
  [T_0, T_P]$, it follows from Definition \ref{D:consequence} that $P \vDash
  (T_0 + \psi^\rd, \ph^\rd, p)$. Remark \ref{R:classic-ind-th-char} therefore
  implies $P(\ph^\rd \mid T_0, \psi^\rd) = p$, so that $(X, \ph, p) \in Q$.
  Hence, $Q = P'_0$.
\end{proof}

\begin{thm}[Inductive elimination theorem]\label{T:ind-elim-thm}
  Let $P$ be an inductive theory in $\cL^\IS$ with root $T_0$, and let $T_0'$ be
  a definitorial extension of $T_0$ as in Definition \ref{D:ded-def-ext}. Then
  there exists a unique inductive theory $P' \subseteq (\cL')^\IS$ with root
  $T_0'$ such that
  \begin{equation}\label{ind-elim-thm}
    P'(\ph \mid X, \Theta) = P (\ph^\rd \mid X^\rd, \Xi),
  \end{equation}
  where either both sides exist or both sides do not.
\end{thm}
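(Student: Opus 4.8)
The plan is to take $P' = \bfP_Q$, the inductive theory built from the strongly connected set $Q$ in the paragraphs preceding the theorem, and to show it is the unique inductive theory with root $T_0'$ satisfying \eqref{ind-elim-thm}. As set up there, $Q$ is strongly connected with root $T_0'$ and, by Lemma \ref{L:ind-elim-thm1} together with Theorem \ref{T:ind-satis-cons}, consistent; hence $P' := \bfP_Q$ is a well-defined inductive theory, and by Proposition \ref{P:cons-match-root} its root is again $T_0'$. Lemma \ref{L:ind-elim-thm2} records the key identity $P' \dhl_{T_0'} = Q$, which lets me compute $P'$ entirely through $Q$ and the lift.

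The linchpin is a purely deductive observation: for each $\s \in M$ the reduced definition $\th_\s^\rd$ is logically equivalent to $\xi_\s$. For a relation symbol $r$ one computes $\th_r^\rd = \forall \vec x(\de_r(\vec x) \tot \de_r(\vec x))$, a tautology equivalent to $\xi_r = \top$, while for a constant symbol $c$ the reduction unwinds to $\exists z(\forall y(y \beq z \tot \de_c(y)) \wedge \de_c(z))$, which is exactly $\xi_c = \exists! y\, \de_c$, and similarly $\th_f^\rd \equiv \xi_f$. Hence $\Theta^\rd \equiv \Xi$, so $(X \cup \Theta)^\rd \equiv X^\rd \cup \Xi$ for every $X \subseteq (\cL')^0$. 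Combined with Corollary \ref{C:ded-elim-thm1}, this gives the dictionary I need: $X \cup \Theta \equiv T_0' + \psi$ in $\cL'$ translates to $X^\rd \cup \Xi \equiv T_0 + \psi^\rd$ in $\cL$. Applying the same circle of ideas to $\tau(P') = \{\th \mid P'(\th \mid T_0') = 1\}$ (via $P'\dhl_{T_0'}=Q$ and the definition of $Q$) together with Corollary \ref{C:ded-elim-thm2} yields $T_{P'} = T_P + \Theta$, i.e.\ reduction carries the interval $[T_0', T_{P'}]$ of antecedent-theories for $P'$ onto $[T_0, T_P]$ for $P$.

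With this dictionary I would verify \eqref{ind-elim-thm}. First reduce to the root level: if $X \cup \Theta \in \ante P'$, Proposition \ref{P:semi-cl-conn} lets me write $X \cup \Theta \equiv T + \psi$ with $T \in [T_0', T_{P'}]$ and $T_0' + \psi \in \ante P'$; since $T \subseteq T_{P'} = \tau(P')$, the rule of deductive extension collapses $P'(\ph \mid X, \Theta)$ to $P'(\ph \mid T_0', \psi) = Q(\ph \mid T_0' + \psi)$, which by the definition of $Q$ equals $P(\ph^\rd \mid T_0, \psi^\rd)$. On the other side, the dictionary gives $X^\rd \cup \Xi \equiv S + \psi^\rd$ with $S \in [T_0, T_P]$, and the same collapse gives $P(\ph^\rd \mid X^\rd, \Xi) = P(\ph^\rd \mid T_0, \psi^\rd)$; so the two values agree, and the definition of $Q$ also pairs the existence of one side with the other. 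For uniqueness, any inductive theory $P''$ with root $T_0'$ obeying \eqref{ind-elim-thm} has, for every $X \cao T_0'$ (so that $X \vdash \Theta$ and $X \equiv X \cup \Theta$), the value $P''(\ph \mid X) = P''(\ph \mid X, \Theta) = P(\ph^\rd \mid X^\rd, \Xi) = Q(\ph \mid X)$; thus $P'' \dhl_{T_0'} = Q$, and Remark \ref{R:ante-cao} forces $P'' = \bfP(Q) = P'$.

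I expect the main obstacle to be the bookkeeping that makes the existence clause (``either both sides exist or both do not'') genuinely biconditional, rather than a mere matching of values once both antecedents are already known to be valid. The delicate point is to establish, purely from $\Theta^\rd \equiv \Xi$, the identity $T_{P'} = T_P + \Theta$, and the preservation of countable axiomatizability under reduction, that $X \cup \Theta \cao [T_0', T_{P'}]$ holds if and only if $X^\rd \cup \Xi \cao [T_0, T_P]$ — so that $X \cup \Theta \in \ante P'$ exactly when $X^\rd \cup \Xi \in \ante P$ — and to do so without circular appeal to the very equation being proved.
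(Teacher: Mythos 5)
Your architecture is the paper's own: take $P' = \bfP_Q$ for the strongly connected set $Q$ constructed before the theorem, use Lemma \ref{L:ind-elim-thm1} (with Theorem \ref{T:ind-satis-cons}) for consistency and Lemma \ref{L:ind-elim-thm2} for $P' \dhl_{T_0'} = Q$, translate antecedent decompositions between $\cL'$ and $\cL$ via the deductive elimination corollaries, collapse both sides to root level with the rule of deductive extension, and obtain uniqueness from $X \equiv X \cup \Theta$ together with the fact that an inductive theory is regenerated from its restriction to the root ($P'' = \bfP(P''\dhl_{T_0'})$). The paper's proof does all of this, differing only cosmetically: it exhibits the witness theory $T = T((T')^\rd \cup \Xi)$ directly rather than phrasing the interval correspondence through the identity $T_{P'} = T_P + \Theta$, and it proves uniqueness by two inclusions rather than through Remark \ref{R:ante-cao}. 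Your closing worries (the existence biconditional and the antecedent correspondence) are handled exactly as you anticipate, by the definition of $Q$.

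There is, however, one concretely false step: the claim you call the linchpin, $\Theta^\rd \equiv \Xi$, fails whenever $M$ contains a function symbol $f$ of arity $n \ge 1$. By the reduction procedure, the prime formula $y \beq f \vec x$ reduces to $\exists w (y \beq w \wedge \de_f(\vec x, w))$, which is logically equivalent to $\de_f(\vec x, y)$; since reduction commutes with quantifiers and connectives, $\th_f^\rd \equiv \forall \vec x \, \forall y \, (\de_f(\vec x, y) \tot \de_f(\vec x, y))$ is a tautology, whereas $\xi_f = \forall \vec x \, \exists! y \, \de_f$ asserts that $\de_f$ is functional and is not a tautology. (A similar issue afflicts constant symbols if primes are reduced stepwise as in the multi-symbol procedure of Section \ref{S:def-ext-mult}.) This is not a computational slip that can be patched: reduction genuinely erases the content of the definitions, which is precisely why $\Xi$ must be carried as a separate hypothesis on the $\cL$ side throughout Theorem \ref{T:ded-elim-thm} and its corollaries. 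Fortunately your proof never truly needs the false equivalence. The dictionary you want — $X \cup \Theta \equiv T_0' + \psi$ if and only if $X^\rd \cup \Xi \equiv T_0 + \psi^\rd$ — follows from Corollary \ref{C:ded-elim-thm1} alone, applied with $X_1 = X$ and $X_2 = T_0 \cup \{\psi\}$, using $T_0^\rd = T_0$ and $\Xi \subseteq T_0$; and the identity $T_{P'} = T_P + \Theta$ follows, as you yourself indicate, from Lemma \ref{L:ind-elim-thm2}, the definition of $Q$, and Corollary \ref{C:ded-elim-thm2}, with no appeal to $\Theta^\rd$. Excise the false claim, route the dictionary through Corollary \ref{C:ded-elim-thm1}, and the remainder of your argument goes through and coincides with the paper's proof.
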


\begin{proof}
  Let $P'$ be defined as above. We first show that $P'(\ph \mid X, \Theta) = p$
  implies $P(\ph^\rd \mid X^\rd, \Xi) = p$. Suppose $P'(\ph \mid X, \Theta) =
  p$. Write $X \cup \Theta \equiv T' + \psi$, where $T' \in [T_0', T_ {P'}]$,
  $\psi \in (\cL')^0$, and $P'(\ph \mid T_0', \psi) = p$. Since $P'_0 = Q$, we
  have $Q(\ph \mid T_0', \psi) = p$. From the definition of $Q$, it follows that
  $P(\ph^\rd \mid T_0, \psi^\rd) = p$. Therefore, to show that $P (\ph^\rd \mid
  X^\rd, \Xi) = p$, it suffices to show that $X^\rd \cup \Xi \equiv T +
  \psi^\rd$ for some $T \in [T_0, T_P]$. For this, let $T = T((T')^\rd \cup
  \Xi)$. Since $\Theta \subseteq T_0' \subseteq T'$, we have $X \cup \Theta
  \equiv T' + \Theta + \psi$. Corollary \ref{C:ded-elim-thm1} therefore gives
  $X^\rd \cup \Xi \equiv (T')^\rd \cup \Xi \cup \{\psi^\rd\} \equiv T +
  \psi^\rd$. We must now show that $T_0 \subseteq T$ and $T \subseteq T_P$.

  For $T_0 \subseteq T$, note that $T_0 \subseteq T_0' \subseteq T'$. We
  therefore have $T' \vdash T_0$, so that $T', \Theta \vdash T_0$. Since
  $T_0^\rd = T_0$, Theorem \ref{T:ded-elim-thm} gives $(T')^\rd, \Xi \vdash
  T_0$, so that $T_0 \subseteq T$.

  For $T \subseteq T_P$, we first show that $T_{P'}^\rd \subseteq T_P$. Let $\ze
  \in T_{P'}$. Then $P'(\ze \mid T_0') = 1$. By Lemma \ref{L:ind-elim-thm2}, we
  have $Q(\ze \mid T_0') = 1$. From the definition of $Q$, it follows that
  $P(\ze^\rd \mid T_0) = 1$. Therefore, $\ze^\rd \in T_P$, and this shows that
  $T_{P'}^\rd \subseteq T_P$.

  Now, since $\Xi \subseteq T_0 \subseteq T_P$, we have that $\Theta$ is
  legitimate in $T_P$. Corollary \ref{C:ded-elim-thm2} therefore gives $T_P +
  \Theta \vdash \ze$ if and only if $T_P \vdash \ze^\rd$, for all $\ze \in
  \cL'$. By the above, $T_P \vdash T_{P'}^\rd$. Hence, $T_P + \Theta \vdash T_
  {P'}$. Since $T' \subseteq T_{P'}$, this implies $T_P + \Theta \vdash T'$.
  Another application of Corollary \ref{C:ded-elim-thm2} yields $T_P \vdash 
  (T')^\rd$. As previously noted, $\Xi \subseteq T_P$. Therefore, $T_P \vdash
  T$, so that $T \subseteq T_P$.

  We now show that $P(\ph^\rd \mid X^\rd, \Xi) = p$ implies $P'(\ph \mid X,
  \Theta) = p$. Suppose $P(\ph^\rd \mid X^\rd, \Xi) = p$. Write $X^\rd \cup \Xi
  \equiv T + \psi$, where $T \in [T_0, T_P]$, $\psi \in \cL^0$, and $P(\ph^\rd
  \mid T_0, \psi) = p$. By Lemma \ref{L:ind-elim-thm2}, the definition of $Q$,
  and the fact that $\psi^\rd = \psi$, we have $P'(\ph \mid T_0, \psi) = p$.
  Therefore, to show that $P'(\ph \mid X, \Theta) = p$, it suffices to show that
  $X \cup \Theta \equiv T' + \psi$ for some $T' \in [T_0', T_{P'}]$. For this,
  let $T' = T + \Theta$. Since $\Xi \subseteq T_0 \subseteq T$, we have $X^\rd
  \cup \Xi \equiv T + \Xi + \psi = T' + \psi$.

  We must now show that $T_0' \subseteq T'$ and $T' \subseteq T_{P'}$. The first
  follows easily, since $T_0' = T_0 + \Theta \subseteq T + \Theta = T'$. For the
  second, note that if $\ze \in \cL^0$ and $P(\ze \mid T_0) = 1$, then $Q(\ze
  \mid T_0') = 1$, which implies $P'(\ze \mid T_0') = 1$. Therefore, $T_P
  \subseteq T_{P'}$. Since $T \subseteq T_P$, we have $T' = T + \Theta \subseteq
  T_P + \Theta \subseteq T_ {P'} + \Theta$. But $\Theta \subseteq T_0' \subseteq
  T_{P'}$, so it follows that $T' \subseteq T_{P'}$.

  For uniqueness, let $P'' \subseteq (\cL')^0$ be another inductive theory with
  root $T_0'$ that satisfies \eqref{ind-elim-thm}, and suppose $P''(\ph \mid X)
  = p$. Then $\Theta \subseteq T_0' \subseteq T(X)$, so that $X \equiv X \cup
  \Theta$. By the rule of logical equivalence and \eqref{ind-elim-thm}, we have
  \[
    P''(\ph \mid X) = P''(\ph \mid X, \Theta) = P (\ph^\rd \mid X^\rd, \Xi)
      = P'(\ph \mid X, \Theta) = P'(\ph \mid X),
  \]
  which shows that $P'' \subseteq P'$. Reversing the roles of $P'$ and $P''$
  gives $P'' = P'$.
\end{proof}

The inductive theory $P'$ in Theorem \ref{T:ind-elim-thm} is called a
\emph{definitorial extension of $P$}. In the special case that $P$ is a complete
inductive theory, we have the following semantic characterization of $P'$.
  \index{definitorial extension}%

\begin{cor}\label{C:ind-elim-thm}
  Let $T_0 \subseteq \cL^0$ be a deductive theory. Let $\sP$ be an $\cL$-model
  with $\sP \vDash T_0$ and define $P = \bTh \sP \dhl_{[T_0, \Th P]}$. Let $P'$
  be a definitorial extension of $P$. Then $P' = \bTh \sP' \dhl_{[T_0', \Th
  \sP']}$, where $\sP'$ is defined above Lemma \ref{L:elim-model}.
\end{cor}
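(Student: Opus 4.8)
The plan is to realize $P'$ explicitly as the generated theory $\bfP_Q$ from the proof of Theorem \ref{T:ind-elim-thm}, and then identify the pre-theory $Q$ with the restriction $\bTh \sP' \dhl_{T_0'}$, so that the result drops out of Proposition \ref{P:chop-off-root} applied to $\bTh \sP'$. Note first that $\sP \vDash T_0$ gives $T_0 \subseteq \Th \sP$, so by Proposition \ref{P:chop-off-root} applied to $\bTh \sP$ the set $P = \bTh \sP \dhl_{[T_0, \Th \sP]}$ really is an inductive theory with root $T_0$ and $T_P = \Th \sP$; moreover $P \subseteq \bTh \sP$, so $\sP \vDash P$. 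Thus $P$ meets the hypotheses of Theorem \ref{T:ind-elim-thm}, and by the uniqueness asserted there I may take $P' = \bfP_Q$, where $Q = \bigcup\{Q^X \mid X \cao T_0'\}$ is the strongly connected pre-theory of root $T_0'$ constructed in that proof, with $Q = P' \dhl_{T_0'}$ by Lemma \ref{L:ind-elim-thm2}.

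Next I would verify $T_0' \subseteq \Th \sP'$. Since $\Theta$ is legitimate in $T_0$ we have $\Xi \subseteq T_0$, so $\sP \vDash \Xi$; by the construction of $\sP'$ above Lemma \ref{L:elim-model}, $\sP' \vDash \th_\s$ whenever $\sP \vDash \xi_\s$, hence $\sP' \vDash \Theta$. For $\eta \in T_0 \subseteq \cL^0$ we have $\eta^\rd = \eta$, so Proposition \ref{P:elim-model} gives $\sP' \vDash \eta$; thus $\sP' \vDash T_0 \cup \Theta$, and since $\Th \sP'$ is a deductive theory, $T_0' = T(T_0 \cup \Theta) \subseteq \Th \sP'$. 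By Theorem \ref{T:model-ind-th}, $\bTh \sP'$ is a complete inductive theory with root $\Taut$ and $T_{\bTh \sP'} = \Th \sP'$, so $T_0' \in [\Taut, \Th \sP']$ and Proposition \ref{P:chop-off-root} yields that $\bTh \sP' \dhl_{T_0'}$ is a pre-theory of root $T_0'$ with $\bfP(\bTh \sP' \dhl_{T_0'}) = \bTh \sP' \dhl_{[T_0', \Th \sP']}$. It therefore suffices to prove $Q = \bTh \sP' \dhl_{T_0'}$.

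For $Q \subseteq \bTh \sP' \dhl_{T_0'}$ I would invoke Lemma \ref{L:ind-elim-thm1}, whose proof shows $\sP' \vDash Q$; as $Q$ is strongly connected with root $T_0'$, every antecedent is $\cao T_0'$, giving the inclusion. The reverse inclusion is the crux. Given $(X, \ph, p) \in \bTh \sP' \dhl_{T_0'}$, I would write $X \equiv T_0' + \psi'$ (Proposition \ref{P:ctbly-ax=one-ax}); since $\sP' \vDash T_0'$, Proposition \ref{P:model-func} gives $\olbbQ \ph_{\Om'} \cap \psi'_{\Om'} / \olbbQ \psi'_{\Om'} = p$. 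Applying Proposition \ref{P:elim-model} together with $(\ph \wedge \psi')^\rd = \ph^\rd \wedge (\psi')^\rd$ converts this into $\olbbP \ph^\rd_\Om \cap (\psi')^\rd_\Om / \olbbP (\psi')^\rd_\Om = p$, that is, $\sP \vDash (T_0 + (\psi')^\rd, \ph^\rd, p)$. Because $P = \bTh \sP \dhl_{[T_0, \Th \sP]}$ and $T_0 + (\psi')^\rd \cao T_0 \in [T_0, \Th \sP]$, this says $P(\ph^\rd \mid T_0, (\psi')^\rd) = p$, whence $(X, \ph, p) \in Q^X \subseteq Q$. Combining the two inclusions gives $Q = \bTh \sP' \dhl_{T_0'}$, and therefore $P' = \bfP_Q = \bfP(\bTh \sP' \dhl_{T_0'}) = \bTh \sP' \dhl_{[T_0', \Th \sP']}$.

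The main obstacle I anticipate is the bookkeeping around the reduction operation and the two completions. I must apply the measure transfer of Proposition \ref{P:elim-model} to the conjunction $\ph \wedge \psi'$ rather than to $\ph$ and $\psi'$ separately, confirm that the reduced sets land in the correct ambient deductive theories (so that $T_0 \subseteq \Th \sP$ and $T_0' \subseteq \Th \sP'$ hold exactly where needed), and keep the countable-axiomatizability conditions aligned across the two languages so that the restriction symbols $\dhl$ on either side genuinely correspond.
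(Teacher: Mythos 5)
Your proposal is correct and follows essentially the same route as the paper: both reduce the claim to the identity $Q = \bTh \sP' \dhl_{T_0'}$ (the paper via Lemma \ref{L:ind-elim-thm2} and Theorem \ref{T:theory-defn}, you via Lemma \ref{L:ind-elim-thm2} and Proposition \ref{P:chop-off-root}, which is the same machinery) and then establish that identity by using Proposition \ref{P:elim-model} to transfer the conditional-probability equation between $\sP$ and $\sP'$. Your version merely spells out the two inclusions and the verification that $T_0' \subseteq \Th \sP'$, details the paper leaves implicit.
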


\begin{proof}
  By Lemma \ref{L:ind-elim-thm2} and Theorem \ref{T:theory-defn}, it suffices to
  show that $Q = \bTh \sP' \dhl_{T_0'}$. Note that $(X, \ph, p) \in Q$ if and
  only if we can write $X \equiv T_0' + \psi$, where $\sP \vDash (T_0 +
  \psi^\rd, \ph^\rd, p)$. Also note that $(X, \ph, p) \in \bTh \sP' \dhl_{T_0'}$
  if and only if we can write $X \equiv T_0' + \psi$, where $\sP' \vDash (T_0' +
  \psi, \ph, p)$. Since $P'$ is a definitorial extension, we have $\Xi \subseteq
  T_0$, which implies $\sP \vDash \Xi$. Hence, from Proposition
  \ref{P:elim-model}, it follows that $\sP \vDash (T_0 + \psi^\rd, \ph^\rd, p)$
  if and only if $\sP' \vDash (T_0' + \psi, \ph, p)$.
\end{proof}

\subsection{Primitive vs.~defined symbols}\label{S:prim-vs-def}

Let $P \subseteq \cL^\IS$ be an inductive theory with root $T_0$. The
extralogical symbols in $L$ have no formal definitions within $P$.
Syntactically, they get their meaning from their use, that is, from the
inductive statements $(X, \ph, p) \in P$ in which they appear. If $P$ and $T_0$
are generated by a set of axioms, then we might say that the symbols in $L$ are
``defined'' by these axioms. That is, they get whatever meaning they may have
from what the axioms have to say about them.

Now suppose $P'$ is a definitorial extension of $P$. Then, unlike the symbols in
$L$, the symbols in $L' \setminus L$ do have a formal definition in $P'$. For
instance, to each constant symbol $c \in L' \setminus L$, there corresponds a
defining formula $\de_c(y) \in \cL$ such that $\th_c = \forall y (y \beq c \tot
\de_c(y)) \in T_0'$. It seems, then, that the symbols in $L'$ can be divided
into two disjoint categories. Those in $L$ we might call the ``primitive
symbols,'' which lack a formal definition and get their meaning from their use.
While those is $L' \setminus L$ we might call the ``defined symbols,'' which get
their meaning by virtue of being defined in terms of the primitive symbols.

But this distinction is metatheoretical. Neither the deductive theory $T_0'$ nor
the inductive theory $P'$ can ``see'' which symbols are primitive and which are
defined. From the point of view of $P'$, a constant symbol $c \in L' \setminus
L$ is a primitive symbol that gets its meaning from the statements and sentences
in $P'$ and $T_0'$, one of which is $\th_c$. In other words, there is no real
difference between thinking of $\th_c$ as a definition of $c$ and thinking of
$\th_c$ as a new axiom that gives meaning to the primitive symbol $c$. In the
context of a given inductive theory such as $P'$, there are no defined symbols.
There are only primitive symbols.

Moreover, if we understand meaning as coming from use, which it does in formal
logical systems such as this, then the very act of defining the new symbol $c
\in L' \setminus L$ can change the meaning of the primitive symbols in $L$. More
specifically, to define $c$, we must add the new ``axiom,'' $\th_c = \forall y 
(y \beq c \tot \de_c(y))$, changing $T_0$ to $T_0'$. The formula $\de_c(y)$
undoubtedly uses symbols from $L$. Under the extension $P'$, those symbols are
now used differently from how they were used under $P$. Hence, their meanings in
$P'$ and $P$ may be different.

In practice, this latter issue may be one that we rarely, if ever, encounter.
This is because we can avoid the issue in any circumstance where the elimination
theorems apply. But we will later see in Chapter \ref{Ch:PoI} (specifically
Sections \ref{S:0<1-defn} and \ref{S:[0,1]-defn}) circumstances where the
elimination theorems do not apply. And in those circumstances, we must confront
this issue head on.

\section{Zermelo–Fraenkel set theory}\label{S:ZFC}

In this section, we present Zermelo-Fraenkel set theory in the infinitary
setting. Let $\cL$ be a language that contains a binary relation symbol $\bin$.
We use the boldface $\bin$ here to distinguish it from the usual $\in$ that is
used when discussing structures and models. As noted in Section
\ref{S:pred-formulas}, we will use the shorthand $(\forall y \bin x) \ph =
\forall y (y \bin x \to \ph)$ and $(\exists y \bin x) \ph = \exists y (y \bin x
\wedge \ph)$. More generally,
\[
  (\forall y_1 \cdots y_n \bin x) \ph =
  (\forall y_1 \bin x) \cdots (\forall y_1 \bin x) \ph,
\]
and similarly for $\exists$. We also write $x \bsub y = \forall z (z \bin x \to
z \bin y)$. Again, we use boldface $\bsub$ to distinguish it from the usual
$\subseteq$.

All of the formulas we define below are sentences. To simplify notation, we
write them as open formulas of the form $\ph(\vec x)$. It is to be understood
that this refers to the sentence $\forall \vec x \ph(\vec x)$.

\subsection{Extensionality, union, and power set}

Define the sentences
\begin{align*}
  \AE &: \forall z (z \bin x \tot z \bin y) \to x \beq y\\
  \AU &: \forall x \exists y \forall z (
    z \bin y \tot (\exists u \bin x) z \bin u
  )\\
  \AP &: \forall x \exists y \forall z (z \bin y \tot z \bsub x)
\end{align*}
These are, respectively, the axioms of extensionality, union, and power set. The
first says that two sets $x$ and $y$ are equal if they have the same elements.
The second says that, given a collection $x$ of sets, there is a set $y$ which
is the union of the sets in $x$. The third says that if $x$ is a set, then there
is a set $y$ consisting of all the subsets of $x$. We will have more to say
about these axioms later.

\subsection{Axiom schema of separation}\label{S:separation}

For $\ph(x, z, \vec u) \in \cL$ with $y \notin \free \ph$, define
\[
  \AS(\ph) : \exists y \forall z (z \bin y \tot \ph \wedge z \bin x)
\]
This is called the axiom of separation. Given a set $x$, the axiom $\AS(\ph)$
allows us to create the set $\{z \bin x \mid \ph(x, z, \vec u)\}$, which depends
on $x$ and $\vec u$. This is done as follows. Let $\ph(x, z, \vec u) \in \cL$
with $y \notin \free \ph$. It can be shown that
\[
  \AE, \AS(\ph) \vdash \forall x \vec u \; \exists! y \forall z (
    z \bin y \tot \ph \wedge z \bin x
  ).
\]
Hence, in any theory that contains $\AE$ and $\AS(\ph)$, we may make the
legitimate definitorial extension, $y \beq F x \vec u \tot \forall z (z \bin y
\tot \ph \wedge z \bin x)$. The term $F x \vec u$ is exactly the set we were
trying to create. In such a case, we use the notation $\{z \bin x \mid \ph\}$ or
$\{z \bin x \mid \ph(x, z, \vec u)\}$ as shorthand for the term $F x \vec u$.

The axiom of separation is, in fact, an axiom schema. It is one axiom for each
allowable formula $\ph$. Let
\[
  \AS = \{
    \AS(\ph) \mid \ph(x, z, \vec u) \in \cL \text{ and } y \notin \free \ph
  \},
\]
and set $\AS_\fin = \AS \cap \cL^0_\fin$. Note that $\AS(\ph) \in \cL^0_\fin$ if
and only if $\ph \in \cL_\fin$. Hence, $\AS_\fin$ can be defined just as $\AS$,
but with the requirement that $\ph \in \cL_\fin$. In other words, $\AS_\fin$ is
the usual axiom schema of separation used in first-order logic. The difference
between $\AS$ and $\AS_\fin$ is that with $\AS$, we are allowed to use
infinitary formulas $\ph$ when building new sets.

\subsection{Axiom schema of replacement}\label{S:replacement}

For $\ph(x, y, \vec z\,) \in \cL$ with $u, v \notin \free \ph$, define
\[
  \AR(\ph) : \forall x \exists! y \, \ph \to \forall u \exists v \forall y (
    y \bin v \tot (\exists x \bin u) \ph
  )
\]
This is called the axiom of replacement. Given a set $u$ and a function $F$, it
allows us to create a set of the form $\{F x \vec z \mid x \in u\}$, which
depends on $u$ and $\vec z$. The function $F$ is determined by the formula
$\ph$. This is done as follows. Let $\ph(x, y, \vec z\,) \in \cL$ with $u, v
\notin \free \ph$. Suppose $T$ is a theory such that $T \vdash \forall x \vec z
\; \exists! y \, \ph$. Also assume $\AE \in T$ and $\AR(\ph) \in T$. In $T$, we
may make the legitimate definitorial extension, $y \beq F x \vec z \tot \ph(x,
y, \vec z\,)$. Let
\[
  \psi(u, v, \vec z\,) = \forall y (y \bin v \tot (\exists x \bin u) \ph),
\]
and note that
\[
  \psi \equiv_T \forall y (
    y \bin v \tot (\exists x \bin u) \, y \beq F x \vec z\,
  ).
\]
By our hypotheses on $T$, it follows that $T \vdash \forall u \vec z \; \exists!
v \, \psi(u, v, \vec z\,)$. Hence, we may make the legitimate definitorial
extension, $v \beq G u \vec z \tot \psi(u, v, \vec z\,)$. The term $G u \vec z$
is exactly the set we were trying to create. In such a case, we use the notation
$\{F x \vec z \mid x \in u\}$ as shorthand for the term $G u \vec z$.

The axiom of replacement is, in fact, an axiom schema. It is one axiom for each
allowable formula $\ph$. Let
\[
  \AR = \{
    \AR(\ph) \mid \ph(x, y, \vec z\,) \in \cL \text{ and } u, v \notin \free \ph
  \},
\]
and set $\AR_\fin = \AR \cap \cL^0_\fin$. Note that $\AR(\ph) \in \cL^0_\fin$ if
and only if $\ph \in \cL_\fin$. Hence, $\AR_\fin$ can be defined just as $\AR$,
but with the requirement that $\ph \in \cL_\fin$. In other words, $\AR_\fin$ is
the usual axiom schema of replacement used in first-order logic. The difference
between $\AR$ and $\AR_\fin$ is that with $\AR$, we are allowed to use
infinitary formulas to construct our defined function symbol $F$.

\subsection{Definitorial extensions and shorthand}

To state the remaining axioms, it will be useful to create new symbols, both
shorthand and formally defined extralogical symbols. To this end, we define
\begin{align*}
  \ph_1(x, z) &: z \nbin x\\
  \ph_2(x, y, z_1, z_2) &: {
    (\forall u \, u \nbin x) \wedge y \beq z_1
    \vee (\exists u \, u \bin x) \wedge y \beq z_2
  }\\
  \ph_3(x, z, u) &: z \bin u
\end{align*}
Note that each $\ph_i$ is in $\cL_\fin$. Let $T$ be the deductive theory
generated by
\[
  \{\AE, \AU, \AP, \AS(\ph_1), \AS(\ph_3), \AR(\ph_2)\}.
\]
Since $\AE, \AS(\ph_1) \vdash \exists! y \forall z \, z \nbin y$, we may make
the legitimate definitorial extension, $y \beq \bemp \tot \forall z \, z \nbin
y$. Letting
\[
  \ph(x, y) = \forall z (z \bin y \tot (\exists u \bin x) z \bin u),
\]
we have $\AE, \AU \vdash \forall x \exists! y \, \ph(x, y)$, which allows the
extension $y \beq \bigcup x \tot \ph(x, y)$. And with $\ph(x, y) = \forall z (z
\bin y \tot z \bsub x)$, we have $\AE, \AP \vdash \forall x \exists! y \, \ph
(x, y)$, which allows the extension $y \beq \fP x \tot \ph(x, y)$.

We now have
\[
  \ph_2 \equiv_T (
    x \beq \bemp \wedge y \beq z_1 \vee x \nbeq \bemp \wedge y \beq z_2
  ).
\]
It can be shown that $\AE, \AS(\ph_1) \vdash \forall x \vec z \; \exists! y \,
\ph_2(x, y, \vec z)$. This allows the extension $y \beq F x \vec z \tot \ph_2
(x, y, \vec z\,)$. Hence, since $\AR(\ph_2) \in T$, we may adopt the shorthand
$\{z_1, z_2\}$ for the term $\{F x \vec z \mid x \in \fP \fP \bemp\}$. We then
write $\{z\}$ as shorthand for $\{z, z\}$. We can recursively define the
shorthand $\{z_1, \ldots, z_{n + 1}\} = \{z_1, \ldots, z_n\} \cup \{z_{n +
1}\}$.

Since $\AE, \AS(\ph_3) \in T$, we have the term $\{z \bin x \mid \ph_3\} = \{z
\in x \mid z \in u\}$. We use the shorthand $x \cap u$ to denote this term. We
also write $x \cup u$ as shorthand for $\bigcup \{x, u\}$, and $\S x$ as
shorthand for the term $x \cup \{x\}$. Finally, the ordered pair $(x, y)$ is
defined via shorthand as the term $\{\{x\}, \{x, y\}\}$. We can recursively
define the shorthand $(x_1, \ldots, x_{n + 1}) = ((x_1, \ldots, x_n), x_{n +
1})$.

\subsection{Axioms of infinity, foundation, and choice}

Now define the sentences
\begin{align*}
  \AI &: \exists u (\bemp \bin u \wedge \forall x (x \bin u \to \S x \bin u))\\
  \AF &: (\forall x \nbeq \bemp)(\exists y \bin x) x \cap y \beq \bemp\\
  \AC &: \forall u (
    \bemp \nbin u
    \wedge (\forall x y \bin u)(x \nbeq y \to x \cap y \beq \bemp)
    \to \exists z (\forall x \bin u) \exists! y (y \bin x \cap z)
  )
\end{align*}
These are, respectively, the axioms of infinity, foundation, and choice. The
axiom of infinity ensures the existence of an infinite set. The axiom of
foundation, among other things, ensure that no set can be an element of itself.
The axiom of choice asserts the existence of a set containing exactly one
element from each of a given collection of sets.

\subsection{Finitary and infinitary \texorpdfstring{$\ZFC$}{ZFC}}

We now define
\[
  \ZFC_- = T + \AS_\fin + \AR_\fin + \{\AI, \AF, \AC\},
\]
and set $\ZFC_\fin = \ZFC_- \cap \cL^0_\fin$.
  \symindex{$\ZFC_\fin$}%
  \symindex{$\ZFC_-$}%
Note that $\ZFC_- = T(\La^\ZFC_-)$, where
\[
  \La^\ZFC_- = \{\AE, \AU, \AP, \AI, \AF, \AC\} \cup \AS_\fin \cup \AR_\fin
\]
are the usual finitary axioms of set theory. Hence, from Proposition
\ref{P:pred-fin-vs-infin}, it follows that $\ZFC_\fin$ is the usual
Zermelo–Fraenkel set theory with the axiom of choice, formulated in first-order
logic.

We also define
\[
  \ZFC = T + \AS + \AR_\fin + \{\AI, \AF, \AC\},
\]
  \symindex{$\ZFC$}%
Note that $\ZFC = T(\La^\ZFC)$, where
\[
  \La^\ZFC = \{\AE, \AU, \AP, \AI, \AF\} \cup \AS \cup \AR_\fin.
\]
The difference between $\ZFC$ and $\ZFC_-$ is that, in $\ZFC$, we are allowed to
use infinitary formulas when applying the axiom schema of separation.

Finally, we define
\[
  \ZFC_+ = T + \AS + \AR + \{\AI, \AF, \AC\},
\]
  \symindex{$\ZFC_+$}%
Note that $\ZFC_+ = T(\La^\ZFC \cup \AR)$. In $\ZFC_+$, we are also allowed to
use infinitary formulas when applying the axiom schema of replacement. Also note
that $\La^\ZFC_- \subseteq \La^\ZFC$ and $\ZFC_\fin \subseteq \ZFC_- \subseteq
\ZFC \subseteq \ZFC_+$.

By the same reasoning as in the proof of Proposition \ref{P:models-of-PA-}, we
obtain the following.

\begin{prop}\label{P:models-of-ZFC-}
  Let $\sP = (\Om, \Si, \bbP)$ be a model. Then $\sP \vDash \ZFC_-$ if and only
  if $\om \tDash \ZFC_\fin$ for $\bbP$-a.e.~$\om \in \Om$. Consequently,
  $\ZFC_-$ is consistent if and only if $\ZFC_\fin$ is consistent. Moreover,
  $\ZFC_- \vdash \ph$ if and only if $\om \tDash \La^\ZFC_-$ implies $\om \tDash
  \ph[v]$ for all $\om$ and all assignments $v$ into $\om$.
\end{prop}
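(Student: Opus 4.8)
The plan is to follow the proof of Proposition \ref{P:models-of-PA-} almost verbatim, with $\PA_-$, $\PA_\fin$, $\La^\PA_-$ replaced throughout by $\ZFC_-$, $\ZFC_\fin$, $\La^\ZFC_-$, and then to supply the one genuinely new ingredient — the consistency equivalence — which has no counterpart in the arithmetic proposition. Every part of the argument rests on the fact that $\La^\ZFC_-$ is countable, exactly as $\La^\PA_-$ was. The finite list $\{\AE, \AU, \AP, \AI, \AF, \AC\}$ contributes nothing to the cardinality, so this reduces to the countability of the two finitary schemas $\AS_\fin$ and $\AR_\fin$, in place of the single schema $\ISPA_\fin$ whose countability was asserted in the arithmetic proof. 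I would record this explicitly (as a lemma or an asserted parallel): each finitary instance of separation or replacement is determined by a finitary formula $\ph$, and after renaming its finitely many variables to a canonical initial segment $\bx_0, \bx_1, \ldots$ there are only countably many such formulas, so up to logical equivalence the schemas are countable. By Karp's theorem (Theorem \ref{T:pred-Karp-sent}) logically equivalent sentences have the same truth value in every structure, so I may replace $\La^\ZFC_-$ by a countable, logically equivalent generating set without changing $\ZFC_-$ or the set $\{\om \mid \om \tDash \La^\ZFC_-\}$.

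With countability in hand the first claim is immediate. For the forward direction, if $\sP \vDash \ZFC_-$ then $\sP \vDash \ph$ for every $\ph \in \La^\ZFC_-$, so $\olbbP \Om^* = 1$ for the countable intersection $\Om^* = \bigcap_{\ph \in \La^\ZFC_-} \ph_\Om$; every $\om \in \Om^*$ satisfies $\La^\ZFC_-$, and hence $\ZFC_\fin$, via the relation $\ZFC_\fin = \{\ph \in \cL^0_\fin \mid \La^\ZFC_- \vdash_\fin \ph\}$ (Proposition \ref{P:pred-fin-vs-infin}) together with first-order soundness. Conversely, if $\om \tDash \ZFC_\fin$ for $\bbP$-a.e.~$\om$, then since $\La^\ZFC_- \subseteq \ZFC_\fin$ we have $\Om^* = \Om$ a.e., so $\Om^* \in \ol\Si$ with $\olbbP \Om^* = 1$, giving $\sP \vDash \La^\ZFC_-$ and therefore $\sP \vDash \ZFC_- = T(\La^\ZFC_-)$ because $\Th \sP$ is a deductive theory (Proposition \ref{P:ThP-is-theory}). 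The third claim is then transcribed line for line from the second claim of Proposition \ref{P:models-of-PA-}: the only-if direction uses deductive soundness (Theorem \ref{T:pred-soundness}) and Remark \ref{R:strict-conseq}, while the if-direction feeds the first claim into an a.e.~argument and closes with deductive completeness (Theorem \ref{T:pred-completeness}).

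The consistency equivalence is the only new step. One direction is trivial: $\ZFC_\fin \subseteq \ZFC_-$, and a subset of a consistent set is consistent, so $\ZFC_-$ consistent forces $\ZFC_\fin$ consistent. For the converse, assuming $\ZFC_\fin$ consistent, I would observe that $\La^\ZFC_- \subseteq \ZFC_\fin$ is a countable consistent set of sentences, hence strictly satisfiable by Corollary \ref{C:pred-soundness}; a structure $\om \tDash \La^\ZFC_-$ yields the point-mass model $\sP = (\{\om\}, \{\emp, \{\om\}\}, \de_\om)$, which satisfies every sentence of $\La^\ZFC_-$ and therefore satisfies $\ZFC_-$, whence $\ZFC_-$ is satisfiable and so consistent by Corollary \ref{C:pred-compactness}.

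The hard part will not be any of the measure-theoretic bookkeeping, which is routine once countability is available, but precisely the justification that $\La^\ZFC_-$ may be taken countable. This is the sole place where the infinitary flavor of the surrounding development could interfere — the separation and replacement schemas range over all of $\cL$, and it is the finitary restriction in $\AS_\fin$ and $\AR_\fin$, reduced to canonical variable representatives, that keeps the index set countable. Verifying this carefully is what makes the phrase ``by the same reasoning as in the proof of Proposition \ref{P:models-of-PA-}'' legitimate, now that there are two schemas rather than one.
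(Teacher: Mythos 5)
Your proposal is correct and follows essentially the same route as the paper: the paper's entire proof of this proposition is the remark that it follows ``by the same reasoning as in the proof of Proposition \ref{P:models-of-PA-}'', and your argument is precisely that transcription --- the countable-intersection argument for the first claim, soundness plus Remark \ref{R:strict-conseq} and completeness for the derivability characterization, and the strict-satisfiability/point-mass construction with Corollaries \ref{C:pred-soundness} and \ref{C:pred-compactness} for the consistency equivalence. The two details you single out --- reducing $\AS_\fin$ and $\AR_\fin$ to countably many axioms up to renaming of variables, and spelling out the consistency direction that has no counterpart in the arithmetic proposition --- are exactly what the paper leaves implicit, and your caution about the first is warranted rather than superfluous: since $\Var$ is uncountable, the schemas (like $\ISPA_\fin$ in the PA proof, which the paper simply calls countable) are countable only modulo bound renaming, and it is the invariance of truth and derivability under such renaming that makes the countable-intersection step legitimate.
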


\subsection{Consistency of \texorpdfstring{$\ZFC$}{ZFC}}
\label{S:Con-ZFC}

In first-order logic, the consistency of $\ZFC_\fin$ is implied by a variety of
different sufficient conditions. One such condition is the following consequence
of \cite[Theorem 8.2.8]{Cenzer2020}.

\begin{thm}\label{T:Con-ZFC-fin}
  If there exists a strongly inaccessible cardinal, then $\ZFC_\fin$ is
  consistent in first-order logic.
\end{thm}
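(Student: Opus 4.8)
The plan is to exhibit a single $\cL$-structure that strictly satisfies every finitary axiom of set theory, and then feed it into the soundness chain already established in this chapter, using the principle that model existence yields consistency. Concretely, let $\kappa$ be a strongly inaccessible cardinal, granted by hypothesis, and form the rank-initial segment $V_\kappa = \bigcup_{\al < \kappa} V_\al$ of the cumulative (von Neumann) hierarchy. Viewing $V_\kappa$ as an $\cL$-structure $\om_\kappa = (V_\kappa, {\in})$, in which the formal membership symbol $\bin$ is interpreted as genuine set membership, the content of \cite[Theorem 8.2.8]{Cenzer2020} is precisely that $\om_\kappa$ satisfies every first-order axiom in $\La^\ZFC_-$. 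Since each sentence in $\La^\ZFC_-$ is finitary, strict satisfaction $\tDash$ here coincides with the ordinary first-order satisfaction relation of Definition \ref{D:strict-sat}, so the cited theorem gives $\om_\kappa \tDash \ph$ for every $\ph \in \La^\ZFC_-$.

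From here the argument is short. Because a single structure strictly satisfies all of $\La^\ZFC_-$, that set is strictly satisfiable; Proposition \ref{P:pred-pre-cpct}(i) then makes it satisfiable, and Corollary \ref{C:pred-soundness} makes it consistent. As $\ZFC_- = T(\La^\ZFC_-)$, and a theory is inconsistent exactly when its axiom set is, we conclude that $\ZFC_-$ is consistent. Finally, Proposition \ref{P:models-of-ZFC-} asserts that $\ZFC_-$ is consistent if and only if $\ZFC_\fin$ is consistent, which delivers the desired conclusion. I would also add a sentence noting that, since $\ZFC_\fin \subseteq \cL^0_\fin$, consistency of $\ZFC_\fin$ in the sense of our calculus agrees with first-order consistency by Proposition \ref{P:pred-fin-vs-infin}, matching the exact phrasing of the statement.

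The mathematical substance is entirely absorbed into the cited theorem, so the only thing requiring care is confirming that its hypotheses and conclusion align with our setup. The two places where strong inaccessibility is genuinely needed are the power-set axiom, which requires $\kappa$ to be a strong limit so that $V_\kappa$ is closed under $\fP$, and the replacement schema, which requires $\kappa$ to be regular so that the image of a set of size below $\kappa$ remains bounded below $\kappa$; the axiom of choice transfers downward from the ambient universe because $V_\kappa$ is transitive. Since these are exactly the facts Cenzer's theorem packages, I expect the main obstacle to be bookkeeping rather than depth: verifying that \cite[Theorem 8.2.8]{Cenzer2020} is stated for \emph{all} instances of the separation and replacement schemas (not merely a finite fragment), so that $\om_\kappa \tDash \La^\ZFC_-$ holds for the complete axiom set and the invocation of Proposition \ref{P:pred-pre-cpct}(i) is justified.
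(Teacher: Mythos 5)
Your proposal is correct and matches the paper's treatment: the paper presents this theorem precisely as a consequence of \cite[Theorem 8.2.8]{Cenzer2020} applied to the structure $(V_\ka, \in)$, which is exactly your central step. Your routing of the conclusion through strict satisfiability, Proposition \ref{P:pred-pre-cpct}(i), Corollary \ref{C:pred-soundness}, and Proposition \ref{P:models-of-ZFC-} just makes explicit the soundness chain the paper leaves implicit, so there is no substantive difference in approach.
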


The existence of a strongly inaccessible cardinal cannot be proven in
first-order logic using the standard axioms of set theory, $\La^\ZFC_-$. If it
could, then these axioms could prove their own consistency, in violations of
G\"odel's second incompleteness theorem (see \cite[Theorem
7.3.2]{Rautenberg2010}). In situations where we want to use a strongly
inaccessible cardinal, we must assume it exists, effectively adding its
existence as a new axiom.

In the comprehensive articles, \cite{Maddy1988} and \cite{Maddy1988a}, Penelope
Maddy discusses the metamathematical arguments, both historical and present, for
accepting not only the current axioms, $\La^\ZFC$, but also additional possible
axioms, including the assumption that strongly inaccessible cardinals exist. Of
all the non-$\ZFC$ axioms discussed, this so-called \emph{Axiom of
Inaccessibles} seems to have the most support, with many historical backers,
including G\"odel. It is supported by a number of metamathematical principles,
which she calls \emph{maximize}, \emph{inexhaustibility}, \emph{uniformity},
\emph{whimsical identity}, and \emph{reflection}. This last principle, she says,
is ``probably the most universally accepted rule of thumb in higher set
theory.''

In any case, if we want to have an inductive theory whose root contains either
$\ZFC_-$ or $\ZFC$, then we will need to assume something that ensures these
theories are consistent. Of all the assumptions we could make in this regard, we
prefer the one in Theorem \ref{T:Con-ZFC-fin}. It is a very mild assumption,
compared to others we might make. It has good metamathematical support. And it
seems to produce a number of helpful results for us. Our first example of this
is Theorem \ref{T:Con-ZFC} below, which shows that this same assumption gives us
the consistency of $\ZFC$.

To prove this result, we must first see how the existence of a strongly
inaccessible cardinal implies the consistency of $\ZFC_\fin$ in Theorem
\ref{T:Con-ZFC-fin}. For this, we begin by defining the \emph{von Neumann
hierarchy}, which is a collection of sets indexed by the ordinals.
  \index{von Neumann hierarchy}%
For each ordinal $\al$, we recursively define the set $V_\al$ as follows. Let
$V_0 = \emp$. If $\al = \be + 1$, then let $V_{\be + 1} = \fP V_\be$. If $\al$
is a limit ordinal, then let $V_\al = \bigcup_{\xi < \al} V_\xi$. The sets
$V_\al$ satisfy $V_\al = \bigcup_{\be < \al} \fP V_\be$, for any ordinal $\al$.
Each $V_\al$ is a transitive set, meaning that if $A \in V_\al$ and $x \in A$,
then $x \in V_\al$. The sets $V_\al$ also satisfy the following properties:
\begin{enumerate}[(i)]
  \item $\be < \al$ implies $V_\be \in V_\al$ and $V_\be \subseteq V_\al$,
  \item $A \in V_\al$ implies $A \subseteq V_\al$,
  \item $B \subseteq V_\be$ implies $B \in V_\al$ for all $\al > \be$, and
  \item $\al \subseteq V_\al$ for all $\al$.
\end{enumerate}
Using our identification of natural numbers and ordinals, the first five sets in
the von Neumann hierarchy can be written as
\begin{align*}
  V_0 &= \emp,\\
  V_1 &= \{0\},\\
  V_2 &= \{0, 1\},\\
  V_3 &= \{0, 1, 2, \{1\}\},\\
  V_4 &= {
    \{0, 1, 2, 3, \{1\}, \{2\}, \{0, 2\}, \{1, 2\},
    \{\{1\}\}, \{0, \{1\}\}, \{0, 1, \{1\}\}, \{0, 1, 2, \{1\}\},
  }\\
  &\qquad \{1, \{1\}\}, \{2, \{1\}\}, \{0, 2, \{1\}\}, \{1, 2, \{1\}\}\}.
\end{align*}
Note that $|V_5| = 65536$ and $|V_6| = 2^{65536}$. Although these sets grow
rapidly, we have $|V_n| < \infty$ for all $n \in \bN_0$. Also note that $n
\subseteq V_n$ and $n \in V_{n + 1}$, for all $n \in \bN_0$. Since $\bN_0 =
\bom$, we have $\bN_0 \subseteq V_{\bom}$ and $\bN_0 \in V_{\bom + 1}$.

If $\ka$ is a cardinal number, then we define the $L$-structure $\bnu_\ka =
(V_\ka, \bin^{\bnu_\ka})$ by setting ${\bin^{\bnu_\ka}} = {\in}$.

\begin{lemma}\label{L:V-is-closed}
  If $\bnu_\ka \tDash \La^\ZFC_-$, then $V_\ka$ satisfies the following:
  \begin{enumerate}[(i)]
    \item $\bnu_\ka \tDash (\forall x \, x \nbin y)[b]$ if and only if $b =
          \emp$,
    \item $\bnu_\ka \tDash (x \bsub y)[a, b]$ if and only if $a \subseteq b$,
    \item if $a \subseteq b$ and $b \in V_\ka$, then $a \in V_\ka$, and
    \item if $b \in V_\ka$, then $\fP b \in V_\ka$.
  \end{enumerate}
\end{lemma}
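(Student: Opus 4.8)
The plan is to dispose of the four claims in turn, leaning on the structural facts about the von Neumann hierarchy listed just before the lemma, together with the transitivity of each $V_\ka$. Throughout, recall that $\bin^{\bnu_\ka} = {\in}$, so that for $a, b \in V_\ka$ the atomic formula $x \bin y$ is strictly satisfied at $a, b$ exactly when $a \in b$ in the ambient set theory. Note also that $\bnu_\ka \tDash \La^\ZFC_-$ entails $\bnu_\ka \tDash \AI$, which forces $V_\ka$ to contain an infinite set and hence $\ka$ to be infinite; since $\ka$ is a cardinal, it is therefore a limit ordinal, so that $V_\ka = \bigcup_{\be < \ka} V_\be$.

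For (i), I would simply unwind Definition \ref{D:strict-sat}. We have $\bnu_\ka \tDash (\forall x \, x \nbin y)[b]$ if and only if $a \notin b$ for every $a \in V_\ka$. If $b = \emp$, this holds vacuously. Conversely, if $b \ne \emp$, choose $c \in b$; transitivity of $V_\ka$ together with $b \in V_\ka$ gives $c \in V_\ka$, and then $c \in b$ witnesses the failure of the quantified formula. The proof of (ii) is the same in spirit: $x \bsub y$ abbreviates $\forall z (z \bin x \to z \bin y)$, so $\bnu_\ka \tDash (x \bsub y)[a, b]$ holds if and only if every $c \in V_\ka$ with $c \in a$ also satisfies $c \in b$. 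This is immediate when $a \subseteq b$, and conversely transitivity (with $a \in V_\ka$) shows that every element of $a$ already lies in $V_\ka$, so the internal condition forces $a \subseteq b$.

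For (iii) and (iv), the argument is purely about the hierarchy and makes no further use of satisfaction. Given $b \in V_\ka$, the identity $V_\ka = \bigcup_{\be < \ka} V_\be$ supplies $\be < \ka$ with $b \in V_\be$, and property (ii) of the hierarchy gives $b \subseteq V_\be$. For (iii), from $a \subseteq b \subseteq V_\be$ and property (iii) of the hierarchy we conclude $a \in V_\al$ for every $\al > \be$, in particular $a \in V_\ka$. For (iv), every $c \subseteq b$ is a subset of $V_\be$, so property (iii) gives $c \in V_{\be + 1}$; hence $\fP b \subseteq V_{\be + 1}$, and applying property (iii) once more yields $\fP b \in V_\al$ for all $\al > \be + 1$. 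Since $\ka$ is a limit ordinal and $\be < \ka$, we have $\be + 1 < \ka$, so $\fP b \in V_\ka$.

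The only point that genuinely requires care is the interplay in (i) and (ii) between internal satisfaction in $\bnu_\ka$, whose quantifiers range only over $V_\ka$, and external set-theoretic membership; the transitivity of $V_\ka$ is precisely what bridges the two, and I would flag it explicitly. Everything else is routine ordinal bookkeeping, made harmless by the observation that successors of ordinals below the limit ordinal $\ka$ remain below $\ka$.
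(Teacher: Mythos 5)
Your proof is correct and follows essentially the same route as the paper's: (i) and (ii) by unwinding strict satisfaction and using the transitivity of $V_\ka$ to bridge internal and external membership, and (iii)--(iv) by locating $b$ in some $V_\be$ with $\be < \ka$ and invoking the listed hierarchy properties together with $\ka$ being a limit ordinal. Your explicit justification that $\ka$ must be infinite (via the axiom of infinity) and hence a limit ordinal is a detail the paper leaves implicit, and is a welcome refinement rather than a deviation.
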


\begin{proof}
  Let $b \in V_\ka$. Note that $\bnu_\ka \tDash (\forall x \, x \nbin y)[b]$ if
  and only if, for all $a \in V_\ka$, we have $a \notin b$. Suppose $b \ne
  \emp$. Then there is an object $a$ such that $a \in b$. Since $V_\ka$ is a
  transitive set, this implies $a \in V_\ka$, a contradiction. Hence, (i) holds.

  Next, let $a, b \in V_\ka$. Note that $\bnu_\ka \tDash (x \bsub y)[a, b]$ if
  and only if, for all $c \in V_\ka$, we have $c \in a$ implies $c \in b$.
  Suppose $a \nsubseteq b$. Then there is an object $c$ such that $c \in a$ but
  $c \notin b$. Again, since $V_\ka$ is transitive, we have $c \in V_\ka$, a
  contradiction. Therefore, (ii) holds.

  Let $a \subseteq b$ and $b \in V_\ka$. Since every infinite cardinal
  number is a limit ordinal, we have $V_\ka = \bigcup_{\be < \ka} V_\be$ and
  $V_\ka = \bigcup_{\be < \ka} \fP V_\be$. Choose $\be < \ka$ such that $b \in
  V_\be$, and note that $\be + 1 < \ka$. Then $b \subseteq V_\be$, so that $a
  \subseteq V_\be$, which implies $a \in V_{\be + 1} \subseteq V_\ka$, and (iii)
  holds.

  Finally, suppose $b \in V_\ka$. Choose $\be < \ka$ such that $b \in V_\be$.
  Then $b \subseteq V_\be$, so that every $a \subseteq b$ satisfies $a \in V_
  {\be + 1}$. Hence, $\fP b \subseteq V_{\be + 1}$, and this implies that $\fP b
  \in V_{\be + 2} \subseteq V_\ka$.
\end{proof}

\begin{thm}\label{T:Con-ZFC}
  Let $\bnu_\ka$ be as above. Then the following are equivalent:
  \begin{enumerate}[(i)]
    \item $\ka$ is strongly inaccessible,
    \item $\bnu_\ka \tDash \La^\ZFC_-$,
    \item $\bnu_\ka \tDash \La^\ZFC$
  \end{enumerate}
  In particular, if there exists a strongly inaccessible cardinal, then $\ZFC$
  is strictly satisfiable.
\end{thm}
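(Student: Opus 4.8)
The plan is to prove the chain (i)$\Rightarrow$(iii)$\Rightarrow$(ii)$\Rightarrow$(i), and then read off the final ``in particular'' clause from (i)$\Rightarrow$(iii). The implication (iii)$\Rightarrow$(ii) is immediate: since $\AS_\fin = \AS \cap \cL^0_\fin \subseteq \AS$ and $\AR_\fin$ occurs in both lists, we have $\La^\ZFC_- \subseteq \La^\ZFC$, so any structure strictly satisfying the larger set strictly satisfies the smaller one. The substance therefore lies in the two outer implications, and it is organized around the basic closure properties of the von Neumann hierarchy recorded just before Lemma \ref{L:V-is-closed}: for a limit cardinal $\ka$ we have $V_\ka = \bigcup_{\be < \ka} V_\be$, each $V_\al$ is transitive, and $B \subseteq V_\be$ forces $B \in V_{\be + 1}$. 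Together these give that $V_\ka$ is transitive and closed under subsets (if $a \in V_\ka$ and $b \subseteq a$ then $b \in V_\ka$) and under power sets ($a \in V_\ka$ implies $\fP a \in V_\ka$).

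For (i)$\Rightarrow$(iii), I would assume $\ka$ strongly inaccessible and verify in $\bnu_\ka = (V_\ka, \in)$ each sentence of $\La^\ZFC$. Extensionality and foundation hold in any transitive $\in$-structure; union, power set, and (using uncountability, so that $\bom \in V_\ka$) infinity follow from the closure properties above, since the witnessing sets $\bigcup a$, $\fP a$, $\bom$ all lie in $V_\ka$ and, by transitivity, are computed correctly there. Choice reduces to choice in the background universe: a choice set for a disjoint family $u \in V_\ka$ may be taken inside $\bigcup u$, hence is a subset of a member of $V_\ka$ and so lies in $V_\ka$. The infinitary separation schema $\AS$ is essentially free: for any $\ph$ and parameters in $V_\ka$, the class $\{z \in a : \bnu_\ka \tDash \ph[z, \dots]\}$ is a subset of $a \in V_\ka$, hence a member of $V_\ka$ by subset-closure, regardless of the complexity of $\ph$ (it is a genuine set in the metatheory by ordinary separation applied to the fixed satisfaction predicate). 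The one axiom that genuinely needs inaccessibility is replacement $\AR_\fin$: given a function $F$ defined over $V_\ka$ and a set $a \in V_\ka$, strong-limitness, with regularity at limit stages, gives $|V_\al| < \ka$ for every $\al < \ka$ and hence $|a| < \ka$, while regularity bounds the image ranks, $\sup_{x \in a} \rk F(x) < \ka$; thus the image sits inside some $V_\be$ with $\be < \ka$ and lies in $V_\ka$.

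For (ii)$\Rightarrow$(i), I would assume $\bnu_\ka \tDash \La^\ZFC_-$ and recover the defining properties of strong inaccessibility. Uncountability comes from $\AI$: if $\ka \le \bom$ then every element of $V_\ka$ is finite, contradicting the presence in $V_\ka$ of an inductive (hence infinite) set; since $\ka$ is a cardinal, $\ka > \bom$ makes it uncountable. Strong-limitness comes from power set together with choice: for a cardinal $\la < \ka$ the set $\fP \la$ lies in $V_\ka$, and since $V_\ka \tDash \AC$ it computes a cardinality $\mu$ for $\fP \la$ with $\mu \in V_\ka$, so $\mu < \ka$; the witnessing bijection lies in the transitive set $V_\ka$ and is therefore genuine, giving $2^\la = |\fP \la| = |\mu| \le \mu < \ka$.

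The main obstacle I expect is regularity in this last implication. The natural argument supposes $\ka$ singular, fixes $\al < \ka$ and a cofinal map into $\ka$, and tries to use $\AR_\fin$ and $\AU$ to conclude $\ka \in V_\ka$, which is absurd since $\rk \ka = \ka$. The delicate point is that replacement applies only to functions definable over $V_\ka$ with parameters from $V_\ka$, whereas an external cofinality witness need be neither definable over $V_\ka$ nor an element of it — its rank is already $\ka$ — so making the cofinal map available to the model is exactly the crux on which the converse turns, and the step I would spend the most care on, since the finitary replacement schema of $\La^\ZFC_-$ constrains precisely which maps the structure can ``see.'' Once strong inaccessibility is secured, the final claim follows quickly: if a strongly inaccessible $\ka$ exists, then (i)$\Rightarrow$(iii) gives $\bnu_\ka \tDash \La^\ZFC$, and since $\ZFC = T(\La^\ZFC)$ while strict satisfaction is preserved under $\vdash$ (Theorem \ref{T:pred-soundness} with Remark \ref{R:strict-conseq}), we get $\bnu_\ka \tDash \ph$ for every $\ph \in \ZFC$, so $\ZFC$ is strictly satisfiable.
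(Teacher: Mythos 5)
Your decomposition (i)$\Rightarrow$(iii)$\Rightarrow$(ii)$\Rightarrow$(i) is genuinely different from the paper's, and the difference is exactly where the trouble lies. The paper proves only (ii)$\Rightarrow$(iii) itself (by induction on $\ph$, using Lemma~\ref{L:V-is-closed}(iii)), notes (iii)$\Rightarrow$(ii) from $\La^\ZFC_- \subseteq \La^\ZFC$, and does \emph{not} argue (i)$\Leftrightarrow$(ii) at all: that equivalence is imported wholesale from \cite[Theorem 8.2.8]{Cenzer2020}. Your (i)$\Rightarrow$(iii) is sound in outline: it fuses the standard verification that $V_\ka$ models the finitary axioms when $\ka$ is strongly inaccessible (the content of the citation) with the infinitary separation schema, and your uniform treatment of $\AS$ --- form $\{c \in a \mid \bnu_\ka \tDash \ph[a, c, \vec d\,]\}$ in the metatheory by ordinary separation applied to the satisfaction predicate, then place it in $V_\ka$ by closure under subsets --- is if anything cleaner than the paper's connective-by-connective induction, which reconstructs the same set from Lemma~\ref{L:V-is-closed}(iii) one case at a time.

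The genuine gap is the one you flagged yourself: regularity in (ii)$\Rightarrow$(i). But it is not a step needing extra care; it is a step that cannot be carried out, and the obstruction you isolated is precisely why. If a strongly inaccessible $\ka$ exists, a Tarski--Vaught chain argument inside $V_\ka$ produces a cardinal $\al < \ka$ of cofinality $\om$ with $V_\al \prec V_\ka$; by elementarity $\bnu_\al \tDash \La^\ZFC_-$, yet $\al$ is singular, so (ii) holds at $\al$ while (i) fails. The reason is the one you gave: a cofinal map $f: \om \to \al$ has rank $\al$, hence is not an element of $V_\al$, and nothing makes it definable over $V_\al$, so the first-order schema $\AR_\fin$ never sees it --- first-order replacement cannot detect singularity (only second-order replacement characterizes inaccessibility, which is Zermelo's theorem, and is presumably the form the cited result concerns). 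So the cycle you propose cannot be closed by any argument of the kind you sketch; the only way to obtain the stated equivalence is to do what the paper does and treat (i)$\Leftrightarrow$(ii) as an external input, reserving one's own work for (ii)$\Rightarrow$(iii). Note, however, that the theorem's final clause --- the only part used later --- needs just (i)$\Rightarrow$(iii), which your proposal does establish, and your derivation of strict satisfiability of $\ZFC = T(\La^\ZFC)$ from (iii) via Theorem~\ref{T:pred-soundness} and Remark~\ref{R:strict-conseq} matches the paper and is correct.
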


\begin{proof}
  The equivalence of (i) and (ii) is \cite[Theorem 8.2.8]{Cenzer2020}, and (iii)
  implies (ii) follows from the fact that $\La^\ZFC_- \subseteq \La^\ZFC$. For
  the final implication, assume (ii) holds. We need to show that, for all $\ph
  \in \cL$,
  \begin{equation}\label{Con-ZFC}
    \text{
      if $\ph = \ph(x, z, \vec u)$ and $y \notin \free \ph$,
      then $\bnu_\ka \tDash \AS(\ph)$.
    }
  \end{equation}
  We will prove this by induction on $\ph$. Since prime formulas are finitary
  and $\bnu_\ka \tDash \La^\ZFC_-$, it holds whenever $\ph$ is prime.

  Suppose $\ph = \neg \psi$ and \eqref{Con-ZFC} holds for $\psi$. Assume $\ph =
  \ph(x, z, \vec u)$ and $y \notin \free \ph$. Then the same is true for $\psi$.
  Hence, $\bnu_\ka \tDash \AS(\psi)$. Let $a, d_1, \ldots, d_n \in V_\ka$. Then
  there exists $b \in V_\ka$ such that, for all $c \in V_\ka$, we have $c \in b$
  if and only if $c \in a$ and $\bnu_\ka \tDash \psi[a, c, \vec d]$. Since
  $V_\ka$ is transitive, this implies $b = \{c \in a \mid \bnu_\ka \tDash
  \psi[a, c, \vec d]\}$. By Lemma \ref{L:V-is-closed}(iii), we have $a \setminus
  b \in V_\ka$. But $a \setminus b = \{c \in a \mid \bnu_\ka \tDash (\neg
  \psi)[a, c, \vec d]\}$. Hence, $\bnu_\ka \tDash \AS(\neg \psi)$.

  Now suppose $\ph = \bigwedge \Phi$ and \eqref{Con-ZFC} holds for each $\th \in
  \Phi$, and assume that $\ph = \ph(x, z, \vec u)$ and $y \notin \free \ph$.
  Then the same is true for each $\th$, so that $\bnu_\ka \tDash \AS(\th)$. Let
  $a, d_1, \ldots, d_n \in V_\ka$. As above, $b_\th = \{c \in a \mid \bnu_\ka
  \tDash \th[a, c, \vec d]\} \in V_\ka$. Lemma \ref{L:V-is-closed}(iii) then
  gives $b = \bigcap_{\th \in \Phi} b_\th \in V_\ka$. But $b = \{c \in a \mid
  \bnu_\ka \tDash (\bigwedge \Phi)[a, c, \vec d]\}$, so that $\bnu_\ka \tDash
  \AS(\bigwedge \Phi)$.

  Finally, suppose $\ph = \forall v \psi$ and \eqref{Con-ZFC} holds for $\psi$.
  Assume $\ph = \ph(x, z, \vec u)$ and $y \notin \free \ph$. Rename $v$ to $u_{n
  + 1}$, so that $\bnu_\ka \tDash \AS(\psi)$. Let $a, d_1, \ldots, d_n \in
  V_\ka$. As above, for each $e \in V_\ka$, we have $b_e = \{c \in a \mid
  \bnu_\ka \tDash \psi[a, c, \vec d, e]\} \in V_\ka$. By Lemma
  \ref{L:V-is-closed}(iii), it follows that $b = \bigcap_{e \in V_\ka} b_e \in
  V_\ka$. But $b = \{c \in a \mid \bnu_\ka \tDash (\forall v \psi)[a, c, \vec
  d]\}$, so that $\bnu_\ka \tDash \AS(\forall v \psi)$.
\end{proof}

\section{Real inductive theories in \texorpdfstring{$\ZFC_-$}{ZFC-}}
\label{S:ind-th-ZFC-}

In this section, we show how to represent real numbers in $\ZFC_-$. We then use
this representation to construct real inductive theories in $\ZFC_-$.

\subsection{The set of natural numbers}

Let us add to the language of $\ZFC_-$ a constant symbol $\ul n$ for each $n \in
\bN_0$. We do this through the definitorial extensions, $y \beq \ul 0 \tot y
\beq \bemp$ and $y \beq \ul n \tot y \beq \S \cdots \S \bemp$, where $\S$ is
repeated $n$ times. The symbols $\ul n$ are syntactic representations of the
natural numbers. They give us a way to define each individual natural number in
$\ZFC_-$. But defining each individual natural number is not the same as
defining the set of natural numbers. We must define a set that contains the
natural numbers, and nothing but the natural numbers. One particularly
counterintuitive result in $\ZFC_-$ is that this is impossible, in the sense
expressed by \eqref{nonstd-N-1} below.

Instead, the best we can do in $\ZFC_-$ is to prove that there is a smallest set
that contains the natural numbers. But we cannot prove that this smallest set
contains only the natural numbers. For instance, it is consistent with $\ZFC_-$
to postulate that this smallest set contains an object which is greater than
every natural number. Such an object would be called a \emph{nonstandard natural
number}. Similarly, $\ZFC_-$ is consistent with the existence of nonstandard
integers, rationals, and real numbers. On the other hand, in $\ZFC$, we can
prove that there is a unique set that contains the natural numbers, and nothing
else. This is expressed in \eqref{nonstd-N-2} below, whose proof uses
essentially the same technique as in the proof of Proposition
\ref{P:models-of-PA}.

In $\ZFC_-$, we define the smallest set that contains the natural numbers as
follows. Let $\ph(u) = \ul 0 \bin u \wedge \forall x (x \bin u \to \S x \bin
u)$, so that $\AI = \exists u \, \ph(u)$, and let $\de(y) = \ph(y) \wedge
\forall z (\ph(z) \to y \bsub z)$.

\begin{prop}\label{P:defn-N-ZFC}
  With notation as above, we have $\La^\ZFC_- \vdash \exists! y \, \de(y)$.
\end{prop}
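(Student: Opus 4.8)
The plan is to recognize $\de(y)$ as the assertion that $y$ is the \emph{smallest inductive set}, where I call a set \emph{inductive} when it satisfies $\ph$, and to prove existence and uniqueness separately. I would dispatch uniqueness first, as it is the easy half. Suppose $\de(y)$ and $\de(y')$ both hold. The second conjunct of $\de(y)$ says $y \bsub z$ for every inductive $z$, and the first conjunct of $\de(y')$ says $y'$ is inductive; taking $z \beq y'$ gives $y \bsub y'$, and symmetrically $y' \bsub y$. Unwinding the shorthand $\bsub$, these two facts amount to $\forall z(z \bin y \tot z \bin y')$, so $\AE$ yields $y \beq y'$. Thus $\de(y) \wedge \de(y') \vdash y \beq y'$, which together with the existence half gives $\exists! y\,\de(y)$.

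For existence, I would extract a witness and carve out the intersection of all inductive sets by separation. By $\AI = \exists u\,\ph(u)$ we may fix $u_0$ with $\ph(u_0)$. Put $\psi(z) = \forall u(\ph(u) \to z \bin u)$ and use the instance of separation with defining formula $\psi$ and base set $u_0$ to form $w = \{z \bin u_0 \mid \psi(z)\}$. Establishing $\de(w)$ then requires three routine checks. First, $\ul 0 \bin w$: we have $\ul 0 \bin u_0$ from $\ph(u_0)$, and $\psi(\ul 0)$ holds because the first conjunct of $\ph(u)$ gives $\ul 0 \bin u$ for every inductive $u$. Second, closure under $\S$: if $x \bin w$ then $x \bin u_0$ and $\psi(x)$; the former with $\ph(u_0)$ gives $\S x \bin u_0$, while $\psi(x)$ together with the closure conjunct of each $\ph(u)$ gives $\psi(\S x)$, so $\S x \bin w$. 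These two checks yield $\ph(w)$. Third, minimality: if $\ph(u)$ holds, then every $z \bin w$ satisfies $\psi(z)$ and hence $z \bin u$, so $w \bsub u$, which is the second conjunct of $\de(w)$. Combining, $\de(w)$ holds, and so $\exists y\,\de(y)$.

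The only genuinely delicate point is bookkeeping rather than mathematics: the matrix $\ph$ and the defining formula $\psi$ involve the defined symbols $\bemp$, $\ul 0$, and $\S$, so the separation instance I invoke is not literally a member of $\AS_\fin$ as displayed. I would resolve this by observing that $\psi$ is finitary (a single universal quantifier over a finite matrix) and that its reduct $\psi^\rd$, obtained by eliminating the defined symbols, is again finitary, since reducing a finite-length formula produces a finite-length formula. Hence the required instance reduces to one in $\AS_\fin \subseteq \La^\ZFC_-$, and the deductive elimination theorem (Theorem \ref{T:ded-elim-thm}) licenses running the whole argument in the definitorial extension in which $\bemp$, $\ul 0$, and $\S$ are available, then transferring the conclusion back to $\La^\ZFC_-$. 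Everything else is the standard $\ZFC$ construction of the smallest inductive set and presents no obstacle.
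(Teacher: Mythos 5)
Your proposal is correct and follows essentially the same route as the paper: the paper likewise sets $\psi(x) = \forall z(\ph(z) \to x \bin z)$, forms $t(u) = \{x \bin u \mid \psi(x)\}$ by separation, shows $\ph(u) \to \de(t(u))$ and invokes $\AI$, with uniqueness from $\de(x) \wedge \de(y) \to x \bsub y \wedge y \bsub x$ and $\AE$. Your explicit verification of $\de(w)$ and your remark on reducing the defined symbols via Theorem \ref{T:ded-elim-thm} merely spell out steps the paper's proof leaves implicit in the assertion $\ZFC_- \vdash \AS(\psi)$.
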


\begin{proof}
  Let $\psi(x) = \forall z (\ph(z) \to x \bin z)$. Since $\ZFC_- \vdash
  \AS(\psi)$, we may define the term $t(u) = \{x \in u \mid \psi(x)\}$, which
  satisfies $\ZFC_- \vdash \ph(u) \to \de(t(u))$. Hence, since $\ZFC_- \vdash
  \AI$ and $\AI = \exists u \, \ph(u)$, it follows that $\ZFC_- \vdash \exists y
  \, \de(y)$. For uniqueness, simply note that $\ZFC_- \vdash \de(x) \wedge
  \de(y) \to x \bsub y \wedge y \bsub x$.
\end{proof}

By Proposition \ref{P:defn-N-ZFC}, we can make the legitimate definitorial
extension $y = \ul{\bN_0} \tot \de(y)$. In words, $\ul{\bN_0}$ is the smallest
set that contains $\ul 0$ and is closed under the successor operation.

\subsection{Arithmetic operations}

Recall that the ordered pair $(x, y)$ is shorthand for the set $\{\{x\}, \{x,
y\}\}$. Note that if $\ZFC_- \vdash x \bin u, y \bin v$, then $\ZFC_- \vdash (x,
y) \bin \fP \fP (u \cup v)$. Hence, we may adopt the shorthand
\[
  u \times v = \{
    z \bin \fP \fP (u \cup v)
  \mid
    \exists x y (z \beq (x, y) \wedge x \bin u \wedge y \bin v)
  \},
\]
and
\[
  v^u = \{
    z \bin \fP (u \times v)
  \mid
    (\forall x \in u) (\exists! y \in v) (x, y) \bin z
  \}.
\]
Here, $u \times v$ is the Cartesian product of $u$ and $v$, and $v^u$ is the set
of functions from $u$ to $v$. We adopt the usual shorthand function notation,
$z(x)$. Namely, if $\ph(y) \in \cL$, then $\ph(z(x))$ is shorthand for the
sentence,
\[
  \exists u v (z \bin v^u \wedge x \bin u)
  \wedge \exists y ((x, y) \bin z \wedge \ph(y)).
\]
In other words, the sentence $\ph(z(x))$ says that $z$ is a function, $x$ is in
the domain of $z$, the object $y$ is the unique object such that $(x, y)$ is in
$z$, and $\ph(y)$ holds.

With these notions in hand, we can define addition in one of several equivalent
ways, each ending with an explicitly defined constant symbol $+$ such that
$\ZFC_- \vdash {+} \bin \ul \bN_0^{\ul \bN_0 \times \ul \bN_0}$, and which
agrees with ordinary addition on $\bN_0$. This latter fact means, specifically,
that
\[
  \text{
    $\ZFC_- \vdash (\ul m, \ul n, \ul k) \bin {+}$ if and only if $m + n = k$.
  }
\]
Note that in the above, the first instance of $+$ is an extralogical symbol and
the second denotes ordinary addition of natural numbers. Since we use the same
typographical symbol for both, we will need to rely on context to tell the
difference. We adopt the usual shorthand, writing $x + y \beq z$ to mean $(x, y,
z) \bin {+}$. We then do the same for $\bdot$ and $<$.

\subsection{Peano arithmetic and nonstandard numbers}

The axioms of Peano arithmetic can all be translated into the language of
$\ZFC_-$ by replacing each quantifier $\forall x$ with $(\forall x \bin
\ul{\bN_0})$. By an abuse of notation, we will denote these translated axioms
also by $\La^\PA_-$. We perform a similar abuse with $\La^\PA$, $\PA_-$, and
$\PA$. It can be shown that $\PA_- \subseteq \ZFC_-$, and in a similar way, that
$\PA \subseteq \ZFC$. Combined with Proposition \ref{P:models-of-PA}, this
latter fact tells us that if $\cN \tDash \ph$, then $\ZFC \vdash \ph$.

The arithmetical completeness of $\ZFC$ is intrinsically connected to the
following result. The first part of the result, \eqref{nonstd-N-1}, shows the
impossibility of proving in $\ZFC_-$ that $\ul 0, \ul 1, \ul 2, \ldots$ are the
only natural numbers. The second part, \eqref{nonstd-N-2}, shows that this is
not a problem in $\ZFC$.

\begin{prop}
  If $\ZFC_-$ is consistent, then
  \begin{equation}\label{nonstd-N-1}
    \ts{
      \ZFC_- \nvdash \exists y \forall x (
        x \bin y \tot \bigvee_{n \in \bN_0} x \beq \ul n
      ).
    }
  \end{equation}
  On the other hand,
  \begin{equation}\label{nonstd-N-2}
    \ts{
      \ZFC \vdash \forall x (
        x \bin \ul{\bN_0} \tot \bigvee_{n \in \bN_0} x \beq \ul n
      ).
    }
  \end{equation}
\end{prop}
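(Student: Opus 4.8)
The plan is to treat the two assertions separately, since they call for opposite techniques: \eqref{nonstd-N-2} is a positive derivability claim that I will establish by an internal argument using infinitary separation, whereas \eqref{nonstd-N-1} is a non-derivability claim that I will establish semantically, by exhibiting a structure satisfying $\La^\ZFC_-$ in which the relevant sentence fails. Throughout, write $\sigma$ for the sentence $\exists y \forall x (x \bin y \tot \bigvee_{n \in \bN_0} x \beq \ul n)$ of \eqref{nonstd-N-1}, and recall from Proposition \ref{P:defn-N-ZFC} that $\ul{\bN_0}$ provably satisfies $\de(y) = \ph(y) \wedge \forall z (\ph(z) \to y \bsub z)$, where $\ph(u) = \ul 0 \bin u \wedge \forall x (x \bin u \to \S x \bin u)$; that is, $\ul{\bN_0}$ is the smallest inductive set. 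I will also use the routine fact that $\ZFC_- \vdash \S \ul n \beq \ul{n + 1}$ for each $n$, which is immediate from the definitorial extensions $\ul n \beq \S \cdots \S \bemp$.

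For \eqref{nonstd-N-2}, the key point is that $\ZFC$ permits separation along the infinitary formula $\de'(z) = \bigvee_{n \in \bN_0} z \beq \ul n$. First I would apply $\AS(\de') \in \AS \subseteq \La^\ZFC$ with the bound set instantiated to $\ul{\bN_0}$, producing a term $y$ with $\ZFC \vdash \forall z (z \bin y \tot \de'(z) \wedge z \bin \ul{\bN_0})$; since each $\ul n \bin \ul{\bN_0}$ is provable, this simplifies to $z \bin y \tot \de'(z)$. Next I would show $\ZFC \vdash y \beq \ul{\bN_0}$ by minimality. The inclusion $y \bsub \ul{\bN_0}$ is built into the separation. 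For the reverse inclusion I would verify $\ZFC \vdash \ph(y)$: we have $\ul 0 \bin y$ because $\ul 0 \beq \ul 0$ witnesses $\de'$, and $y$ is closed under $\S$ because $z \bin y$ gives $z \beq \ul n$ for some $n$, whence $\S z \beq \S \ul n \beq \ul{n + 1}$ again witnesses $\de'$. The clause $\forall w (\ph(w) \to \ul{\bN_0} \bsub w)$ of $\de(\ul{\bN_0})$ then yields $\ul{\bN_0} \bsub y$, and $\AE$ gives $y \beq \ul{\bN_0}$. Combining $z \bin y \tot \de'(z)$ with $y \beq \ul{\bN_0}$ produces exactly \eqref{nonstd-N-2}. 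The essential feature is that $\de'$ is infinitary, so this argument is unavailable in $\ZFC_-$, where $\AS$ is restricted to $\AS_\fin$.

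For \eqref{nonstd-N-1}, by Proposition \ref{P:models-of-ZFC-} it suffices to produce one structure $\om \tDash \La^\ZFC_-$ with $\om \ntDash \sigma$; note that $\om \tDash \sigma$ would mean some element $a$ of $\om$ has $\bin^\om$-extension equal to $\{\ul n^\om \mid n \in \bN_0\}$. Assuming $\ZFC_-$ is consistent, Proposition \ref{P:models-of-ZFC-} shows $\ZFC_\fin$ is consistent, so by the classical existence of nonstandard models of first-order set theory (via ordinary compactness applied to $\ZFC_\fin$ together with $\{c \bin \ul{\bN_0}\} \cup \{c \nbeq \ul n \mid n \in \bN_0\}$) I can fix a structure $\om \tDash \La^\ZFC_-$ possessing an element $c$ of $\ul{\bN_0}^\om$ with $c \neq \ul n^\om$ for every $n$. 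The heart of the argument is then an overspill step: suppose toward a contradiction that such an $a$ exists. Its $\bin^\om$-extension consists only of the standard values $\ul n^\om$, and the $\S^\om$-successor of each such value is again standard, so $\om \tDash \ph(a)$ holds (the successor-closure conjunct being satisfied precisely because every member of $a$ is standard). Since $\om \tDash \de(\ul{\bN_0})$, minimality forces $\om \tDash \ul{\bN_0} \bsub a$, so the nonstandard element $c$ lies in the extension of $a$, contradicting that this extension is exactly the standard values. Hence no such $a$ exists, $\om \ntDash \sigma$, and \eqref{nonstd-N-1} follows.

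I expect the main obstacle to be the careful bookkeeping in the overspill step, specifically verifying that $\om \tDash \ph(a)$ genuinely holds under the strict-satisfaction semantics of Definition \ref{D:strict-sat}: the successor-closure conjunct $\forall x (x \bin a \to \S x \bin a)$ must be checked elementwise, using that the only $\bin^\om$-members of $a$ are the $\ul n^\om$ and that $\S^\om \ul n^\om = \ul{n + 1}^\om$ is again a member. A secondary point requiring care is the appeal to ordinary first-order compactness to obtain the nonstandard model; this lies outside the paper's $\si$-compactness theorems and should be cited as a standard model-theoretic fact (the analogue of the nonstandard models of $\PA_\fin$ noted in Section \ref{S:PA}), valid exactly under the hypothesis that $\ZFC_\fin$, equivalently $\ZFC_-$, is consistent.
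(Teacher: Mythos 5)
Your proposal is correct and follows essentially the same route as the paper: for \eqref{nonstd-N-2}, infinitary separation applied to $\bigvee_{n \in \bN_0} x \beq \ul n$ inside $\ul{\bN_0}$ followed by the minimality clause of the definition of $\ul{\bN_0}$, and for \eqref{nonstd-N-1}, a nonstandard model of $\La^\ZFC_-$ produced by ordinary first-order compactness. The only difference is one of packaging in \eqref{nonstd-N-1}: the paper first argues syntactically that any witness to the existential sentence would provably satisfy the defining formula of $\ul{\bN_0}$, so that $\ZFC_- \vdash x \bin \ul{\bN_0} \to \bigvee_{n \in \bN_0} x \beq \ul n$, and then refutes this implication in the nonstandard model via soundness, whereas you run the very same inductiveness-plus-minimality argument semantically inside the model to refute the existential sentence directly — the underlying mathematics is identical.
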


\begin{proof}
  For notational simplicity, let $\psi(y) = \forall x (x \bin y \tot \bigvee_{n
  \in \bN_0} x \beq \ul n)$, so that we aim to show $\ZFC_- \nvdash \exists y \,
  \psi(y)$ and $\ZFC \vdash \psi(\ul{\bN_0})$.

  Suppose $\ZFC_-$ is consistent and $\ZFC_- \vdash \exists y \, \psi(y)$. Let
  $\ph(u)$ and $\de(y)$ be as in the definition of $\ul{\bN_0}$. Note that
  $\ZFC_- \vdash \psi(y) \to \ph(y)$. Also, $\ZFC_- \vdash \psi(y) \to \ph(z)
  \to y \bsub z$. Hence, $\ZFC_- \vdash \psi(y) \to \de(y)$. It therefore
  follows from $\ZFC_- \vdash \exists y \, \psi(y)$ that $\ZFC_- \vdash
  \psi(\ul{\bN_0})$. In particular, we have $\ZFC_- \vdash \ze(x)$, where
  $\ze(x) = x \bin \ul{\bN_0} \to \bigvee_{n \in \bN_0} x \beq \ul n$.

  By Proposition \ref{P:models-of-ZFC-}, since $\ZFC_-$ is consistent, we may
  find a structure $\om$ such that $\om \tDash \La^\ZFC_-$. Let $c$ be a
  constant not in $L$ and define $X \subseteq (\cL c)^0_\fin$ by $X = \La^\ZFC_-
  \cup \{c \bin \ul{\bN_0}\} \cup \{c \nbeq \ul n \mid n \in \bN_0\}$. Let $X_0
  \subseteq X$ be finite. Choose $m \in \bN_0$ such that
  \[
    X_0 \subseteq {
      \La^\ZFC_- \cup \{c \bin \ul{\bN_0}\} \cup \{c \nbeq \ul n \mid n < m\}
    }.
  \]
  Let $\om'$ be the $L c$-expansion of $\om$ with $c^{\om'} = \ul m^\om$. Then
  $\om' \tDash X_0$, so that $X_0$ is strictly satisfiable. By the finiteness
  theorem for first-order logic (see, for example, \cite[Theorem 3.3.1]
  {Rautenberg2010}), the set $X$ is strictly satisfiable. Choose an $L
  c$-structure $\nu$ such that $\nu \tDash X$, and let $\nu_0$ be its
  $L$-reduct. Let $a = c^\nu$. Then $\nu_0 \tDash \La^\ZFC_-$, $\nu_0 \tDash (x
  \bin \ul{\bN_0})[a]$, and $\nu_0 \tDash (x \nbeq \ul n)[a]$ for all $n \in
  \bN_0$. Therefore, $\nu_0 \ntDash \ze[a]$. Now let $\sP = (\{\nu_0\}, \fP
  \{\nu_0\}, \de_{\nu_0})$ and $\bv = \ang{v}$, where $v(x) = a$. Then $\sP
  \vDash \ZFC_-$ and $\sP \nvDash \ze[\bv]$. By Theorem
  \ref{T:pred-completeness}, we have $\ZFC_- \nvdash \ze(x)$, a contradiction.
  This proves \eqref{nonstd-N-1}.

  We now consider \eqref{nonstd-N-2}. By the definition of $\ul{\bN_0}$, we have
  $\ZFC_- \vdash \bigvee_{n \in \bN_0} x \beq \ul n \to x \bin \ul \bN_0$.
  Therefore, we need only prove that $\ZFC \vdash \ze(x)$. By Definition
  \ref{D:pred-derivability}(vii)$'$, it suffices to show
  \begin{equation}\label{nonstd-N-3}
    \ts{
      \ZFC \vdash \forall x \ze = (\forall x \bin \ul{\bN_0}) \, \xi(x),
    }
  \end{equation}
  where $\xi(x) = (\bigvee_{n \in \bN_0} x \beq \ul n)$.

  By $\AS(\xi)$, we may define the term $t = \{x \bin \ul{\bN_0} \mid \xi(x)\}$.
  It is straightforward to verify that $\ZFC \vdash \ph(t)$ and $\ZFC \vdash \ph
  (z) \to t \bsub z$. Hence, $\ZFC \vdash t \beq \ul{\bN_0}$. Since $t \beq
  \ul{\bN_0} \equiv_{\ZFC_-} (\forall x \bin \ul{\bN_0}) \, \xi(x)$, this gives
  \eqref{nonstd-N-3}.
\end{proof}

\subsection{Real numbers in \texorpdfstring{$\ZFC_-$}{ZFC-}}

In our construction of $\ZFC_-$ in Section \ref{S:ZFC}, we implemented a number
of definitorial extensions, so that the extralogical signature of $\cL$ contains
not only $\bin$, but many other explicitly defined symbols, such as $\bemp$,
$\bigcup$, and $\fP$. For the present treatment, we consider $\ZFC_-$ to begin
with the extralogical signature,
\[
  L = {
    \{\bin, \bemp, \S, \ul{\bN_0}, +, \bdot, <\} \cup \{\ul n \mid n \in \bN_0\}
  }.
\]
Any symbol which was defined in Section \ref{S:ZFC} but does not appear above is
considered to be reduced. We may continue to use some of those symbols, but
these uses should be considered shorthand, until otherwise specified. Note that
each symbol in $L$, except for $\bin$, is a constant symbol, and each is
explicitly defined with a finitary defining formula.

From this starting point, we can now explicitly define each integer $z \in \bZ$
in the usual way as an equivalence class of ordered pairs of natural numbers.
For example, $-5$ is explicitly defined by the formula,
\[
  \de(y) = \forall x (x \bin y \tot (\exists u v \bin \ul{\bN_0}) (
    x \beq (u, v) \wedge u + \ul 5 \beq v
  )).
\]
The set of such equivalence classes is also explicitly definable, so that we may
add the symbol $\ul \bZ$. However, as was the case with $\ul{\bN_0}$, we have
that $\ZFC_- \nvdash \forall x (x \bin \ul \bZ \tot \bigvee_{z \in \bZ} x \beq
\ul z)$, provided $\ZFC_-$ is consistent.

We can then explicitly define $+_\bZ$, $\bdot_\bZ$, and $<_\bZ$ for integers. In
this way we obtain a definitorial extension of $\ZFC_-$ with signature
\[
  L = {
    \{
      \bin, \bemp, \S, \ul{\bN_0}, \ul \bZ, +, \bdot, <, +_\bZ, \bdot_\bZ, <_\bZ
    \} \cup
    \{\ul n \mid n \in \bN_0\} \cup
    \{\ul z \mid z \in \bZ\}
  },
\]
where each symbol in $L$, except for $\bin$, is a constant symbol, and each is
explicitly defined with a finitary defining formula. We will omit the duplicates
of $+$, $\bdot$, and $<$, and leave the distinction to context. Similarly, we
will omit $\{\ul n \mid n \in \bN_0\}$, and leave to context the distinction
between the natural number $n$ and the integer $n$.

Finally, we do the same for each $q \in \bQ$ and for $\bQ$ itself, giving us the
signature,
\[
  L = {
    \{\bin, \bemp, \S, \ul{\bN_0}, \ul \bZ, \ul \bQ, +, \bdot, <\} \cup
    \{\ul q \mid q \in \bQ\}
  }.
\]
Again, everything but $\bin$ is a constant symbol with a finitary definition.

A set $B$ is a \emph{Dedekind cut} if $B$ is a nonempty, proper subset of $\bQ$
that is downward closed and has no maximum element.
  \index{Dedekind cut}%
Let
\begin{multline*}
  \ph_{DC}(u) = u \bin \fP \ul \bQ \wedge u \nbeq \bemp \wedge u \nbeq \ul \bQ\\
  \wedge (\forall x \bin u)(y \bin \ul \bQ \wedge y < x \to y \bin u)
  \wedge (\forall x \bin u)(\exists y \bin u)(x < y).
\end{multline*}
Then $\ph_{DC}(u)$ says that $u$ is a Dedekind cut. Since $\ph_{DC}$ is
finitary, we have $\ZFC_- \vdash \AS(\ph_{DC})$. Hence, if
\[
  \de(y) = \forall x (x \bin y \tot x \bin \fP \ul \bQ \wedge \ph_{DC}(x)),
\]
then $\ZFC_- \vdash \exists! y \, \de(y)$. We may therefore explicitly define
$\ul \bR$ by $y \beq \ul \bR \tot \de(y)$. We can also explicitly define
addition, multiplication, and less-than in $\ul \bR$, all with finitary
formulas. This gives us the extralogical signature,
\[
  L_- = {
    \{\bin, \bemp, \S, \ul{\bN_0}, \ul \bZ, \ul \bQ, \ul \bR, +, \bdot, <\} \cup
    \{\ul q \mid q \in \bQ\}
  }.
\]
  \symindex{$L_-$}%
As before, we have omitted the duplicate versions of $+$, $\bdot$, and $<$, and
will rely on context to understand them. Also, each symbol in $L_-$, except for
$\bin$, is an explicitly defined constant symbol with a finitary defining
formula.

\subsection{The standard real structure}\label{S:std-R}

Before constructing real inductive theories in $\ZFC_-$, we first show a
simpler, albeit more limited approach. This approach is essentially just a
special case of Theorem \ref{T:prob-model-iso}.

Let $L_\bR = \{+, \bdot, <\} \cup \{\ul r \mid r \in \bR\}$. In $\cL_\bR$, we
will write $x \le y$ as shorthand for $x < y \vee x \beq y$. Define the
$L_\bR$-structure $\cR = (\bR, L^\cR)$ by letting $+^\cR$, $\bdot^\cR$, and
$<^\cR$ denote their ordinary counterparts in $\bR$, and setting $\ul r^\cR =
r$. Define the deductive theory $T_\bR$ by $T_\bR = \{\ph \in \cL^0 \mid \cR
\tDash \ph\}$.

Let $L$ be an extralogical signature with $L_\bR \subseteq L$. If $P \subseteq
\cL^\IS$ is an inductive theory with root $T_0 \supseteq T_\bR$, then $P$ is
called a \emph{real inductive theory in $T_\bR$}.
  \index{theory!real inductive ---}%

In Theorem \ref{T:prob-model-iso}, we saw that every measure-theoretic
probability model can be represented by an inductive model in a certain
language. In Theorem \ref{T:prob-model-iso-TR} below, we show that if that
measure-theoretic model is real-valued, then it can be represented by an
inductive model in $\cL_\bR$.

Let $(S, \Ga, \nu)$ be a probability space and let $X = \ang{X_i \mid i \in I}$
be an indexed collection of real-valued random variables. That is, each $X_i$ is
a Borel-measurable function from $S$ to $\bR$. Assume $\Ga = \si(\ang{X_i \mid i
\in I})$.

Let $C = \{\ul X_i \mid i \in I\}$ be a set of distinct constant symbols not in
$L_\bR$, and define $L = L_\bR C$.

\begin{thm}\label{T:prob-model-iso-TR}
  There exists an $\cL$-model $\sP = (\Om, \Si, \bbP)$ with $\sP \vDash T_\bR$,
  and a function $h: S \to \Om$ mapping $x \in S$ to $\om \in \Om$ such that
  \begin{enumerate}[(i)]
    \item $x \in \{X_i \le r\}$ if and only if $\om \tDash (\ul X_i \le \ul r)$,
    \item each $U \in \Ga$ can be written as $U = h^{-1} \ph_\Om$ for some $\ph
          \in \cL^0$, and
    \item $h$ induces a measure-space isomorphism from $(S, \Ga, \nu)$ to $\sP$.
  \end{enumerate}
  Consequently, if $P = \bTh \sP \dhl_{[T_\bR, \Th \sP]}$, then
  \begin{equation}\label{prob-model-iso-TR}
    \ts{
      P(\bigwedge_{k = 1}^n \ul X_{i(k)} \le \ul{r_k} \mid T_\bR) =
      \nu \bigcap_{k = 1}^n \{X_{i(k)} \le r_k\},
    }
  \end{equation}
  whenever $i(1), \ldots, i(n) \in I$ and $r_k \in \bR$.
\end{thm}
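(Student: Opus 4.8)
The plan is to recognize that Theorem~\ref{T:prob-model-iso-TR} is essentially a special case of Theorem~\ref{T:prob-model-iso}, obtained by choosing the measurable spaces and random variables appropriately. First I would set up the data for an application of Theorem~\ref{T:prob-model-iso}: take the index set $I$ as given, and for each $i \in I$ let $(R_i, \Ga_i) = (\bR, \cB)$, where $\cB$ is the Borel $\si$-algebra on $\bR$. Then each $X_i : S \to \bR$ is $(\Ga, \cB)$-measurable, and by hypothesis $\Ga = \si(\ang{X_i \mid i \in I})$, so the tuple $(S, \Ga, \nu, X)$ is a measure-theoretic probability model in the sense required. Following the recipe of Section~\ref{S:RV-symb}, we have $R = \bigcup_{i \in I} R_i = \bR$, and the signature $L_R$ consists of a constant symbol $\ul r$ for each $r \in \bR$ together with a unary relation symbol $\ul V$ for each Borel set $V \in \cB$. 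The structure $\cR' = (R, L^{\cR'})$ has $\ul r^{\cR'} = r$ and $\ul V^{\cR'} = V$, and $T_R$ is the set of sentences strictly satisfied by $\cR'$.

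The main obstacle is that Theorem~\ref{T:prob-model-iso} is stated in the language $\cL_R$ whose signature $L_R$ contains a relation symbol $\ul V$ for \emph{every} Borel set $V$, whereas the target language here, $\cL_\bR$, has only the symbols $\{+, \bdot, <\} \cup \{\ul r \mid r \in \bR\}$ and the theory $T_\bR$ built from the standard real structure $\cR$. So the real work is to translate between these two signatures. The key observation is that in $\cL_\bR$, the relation $\ul X_i \le \ul r$ (where $\le$ is shorthand for $< \vee \beq$) plays the role that $\ul X_i \in \ul V$ played for the half-line $V = (-\infty, r]$. More precisely, for each $r \in \bR$, the half-line $(-\infty, r]$ is a Borel set, and $\{X_i \le r\} = \{X_i \in (-\infty, r]\}$. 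I would therefore either (a) apply Theorem~\ref{T:prob-model-iso} directly in $\cL_R$ and then use the fact that every Borel set is generated from half-lines to transport the resulting model back to $\cL_\bR$, or (b) redo the short construction of Theorem~\ref{T:prob-model-iso} from scratch in the smaller language $\cL_\bR$. Approach (b) is cleaner: I would define, for each $x \in S$, the $\cL$-structure $\om^x$ to be the $L$-expansion of $\cR$ given by $(\ul X_i)^{\om^x} = X_i(x)$, set $\Om = \{\om^x \mid x \in S\}$, let $h : S \to \Om$ be $x \mapsto \om^x$, and let $\sP = (\Om, \Si, \bbP)$ be the measure space image of $(S, \Ga, \nu)$ under $h$.

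With this construction in place, the verification of (i)--(iii) proceeds exactly as in the proof of Theorem~\ref{T:prob-model-iso}. For (i), by construction $\om^x \tDash (\ul X_i \le \ul r)$ if and only if $X_i(x) \le r$, since $\le^{\cR} = \le$ and $\ul r^{\cR} = r$; this gives $\{X_i \le r\} = h^{-1}(\ul X_i \le \ul r)_\Om$. For (ii), I would let $\Ga' = \{U \in \Ga \mid U = h^{-1}\ph_\Om \text{ for some } \ph \in \cL^0\}$ and check that $\Ga'$ is a $\si$-algebra, using $\bigcup_n h^{-1}(\ph_n)_\Om = h^{-1}(\bigvee_n \ph_n)_\Om$ and $\bot_\Om = \emp$ exactly as before; since $\{X_i \le r\} \in \Ga'$ for every $i$ and $r$, and since the sets $\{X_i \le r\}$ generate $\si(\ang{X_i \mid i \in I}) = \Ga$ (because the half-lines $(-\infty, r]$ generate $\cB$), I conclude $\Ga = \Ga'$. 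Here the crucial point distinguishing this from the general theorem is that the generating collection is now $\{\{X_i \le r\}\}$ rather than $\{\{X_i \in V\}\}$, and generation of the Borel $\si$-algebra by half-lines is what closes the gap. Statement (iii) follows from (ii): if $U = h^{-1}\ph_\Om$, then $\ph_\Om \in \Si$ by the construction of $\sP$ as a measure space image, so $h$ induces a measure-space isomorphism. Finally, for the displayed formula~\eqref{prob-model-iso-TR}, I would use that $h$ also induces an isomorphism of the completions, so $\olbbP = \ol\nu \circ h^{-1}$, and compute
\[
  \ts{
    P(\bigwedge_{k=1}^n \ul X_{i(k)} \le \ul{r_k} \mid T_\bR)
      = \olbbP \bigcap_{k=1}^n (\ul X_{i(k)} \le \ul{r_k})_\Om
      = \nu \bigcap_{k=1}^n \{X_{i(k)} \le r_k\},
  }
\]
where the first equality uses $P = \bTh \sP \dhl_{[T_\bR, \Th \sP]}$ together with the definition of $\bTh \sP$ and the fact that $\sP \vDash T_\bR$ (which holds because $\om^x$ is an expansion of $\cR$, so $\om^x \tDash T_\bR$ for all $x$), and the second uses $\olbbP = \ol\nu \circ h^{-1}$ and part (i).
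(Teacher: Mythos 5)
Your proposal is correct and, in its chosen route (b), is essentially identical to the paper's proof: the same expansion $\om^x$ of $\cR$ with $\ul X_i^{\om^x} = X_i(x)$, the same measure space image construction, the same $\Ga'$ argument for (ii) using that half-lines generate $\cB(\bR)$, and the same computation via $\olbbP = \ol\nu \circ h^{-1}$ for \eqref{prob-model-iso-TR}. The preliminary discussion of deducing the result from Theorem \ref{T:prob-model-iso} is discarded in favor of the direct construction, which is exactly what the paper does.
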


\begin{proof}
  For each $x \in S$, define $\om = \om^x$ to be the $L$-expansion of $\cR$
  given by $\om^{\ul X_i} = X_i(x)$. Let $\Om = \{\om^x \mid x \in S\}$ and let
  $h: S \to \Om$ denote the map $x \mapsto \om^x$. Let $\sP = (\Om, \Si, \bbP)$
  be the measure space image of $(S, \Ga, \nu)$ under $h$. Since $\om \tDash
  T_\bR$ for all $\om \in \Om$, we have $\sP \vDash T_\bR$. By construction, we
  have $X_i(x) \le r$ if and only if $\om^x \tDash \ul X_i \le \ul r$, so (i)
  holds.

  For (ii), let
  \[
    \Ga' = \{
      U \in \Ga \mid U = h^{-1} \ph_\Om \text{ for some } \ph \in \cL^0
    \} \subseteq \Ga.
  \]
  Since $\bigcup_n h^{-1} (\ph_n)_\Om = h^{-1} (\bigvee_n \ph_n)_\Om$ and
  $\bot_\Om = \emp$, we have that $\Ga'$ is a $\si$-algebra. Let $r \in \bR$.
  Since $X_i(x) \le r$ if and only if $\om^x \tDash \ul X_i \le \ul r$, it
  follows that $\{X_i \le r\} = h^{-1} (\ul X_i \le \ul r)_\Om$. Thus, $\{X_i
  \le r\} \in \Ga'$, so that $X_i$ is $\Ga'$-measurable for all $i \in I$. Since
  $\Ga$ is the smallest $\si$-algebra with this property, we have $\Ga \subseteq
  \Ga'$. Hence, $\Ga = \Ga'$, so (ii) holds.

  If $U \in \Ga$, $\ph \in \cL^0$, and $U = h^{-1} \ph_\Om$, then by the
  construction of $\sP$, we have $\ph_\Om \in \Si$. Therefore, (ii) implies 
  (iii).

  Finally, since $h$ also induces an isomorphism from $(S, \ol \Ga, \ol \nu)$ to
  $(\Om, \ol \Si, \olbbP)$, we have $\olbbP = \ol \nu \circ h^{-1}$. This gives
  \[
    \ts{
      P(\bigwedge_{k = 1}^n \ul X_{i(k)} \le \ul{r_k} \mid T_\bR) =
      \olbbP \bigcap_{k = 1}^n (\ul X_{i(k)} \le \ul{r_k})_\Om =
      \nu \bigcap_{k = 1}^n \{X_{i(k)} \le r_k\},
    }
  \]
  which verifies \eqref{prob-model-iso-TR}.
\end{proof}

\subsection{Embedding random variables in \texorpdfstring{$\ZFC_-$}{ZFC-}}

Let $L$ be an extralogical signature with $L_- \subseteq L$. If $P \subseteq
\cL^\IS$ is an inductive theory with root $T_0 \supseteq \ZFC_-$, then $P$ is
called a \emph{real inductive theory in $\ZFC_-$}.
  \index{theory!real inductive ---}%
Note that, by definition, the root of an inductive theory is a consistent
deductive theory. Hence, the existence of a real inductive theory in $\ZFC_-$
presupposes the consistency of $\ZFC_-$.

Let $(S, \Ga, \nu)$ be a probability space and let $X = \ang{X_i \mid i \in I}$
be an indexed collection of real-valued random variables, with $\Ga = \si(
\ang{X_i \mid i \in I})$. Let $C = \{\ul X_i \mid i \in I\}$ be a set of
distinct constant symbols not in $L_-$, and define $L = L_- C$.

\begin{thm}\label{T:prob-model-iso-ZFC-}
  Assume $\ZFC_-$ is consistent. Then there exists a complete inductive theory
  $P \subseteq \cL^\IS$ with root $\ZFC_-$ such that
  \begin{equation}\label{prob-model-iso-ZFC-}
    \ts{
      P(\bigwedge_{k = 1}^n \ul X_{i(k)} \le \ul{q_k} \mid \ZFC_-) =
      \nu \bigcap_{k = 1}^n \{X_{i(k)} \le q_k\},
    }
  \end{equation}
  whenever $i(1), \ldots, i(n) \in I$ and $q_k \in \bQ$.
\end{thm}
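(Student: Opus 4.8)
The plan is to adapt the proof of Theorem~\ref{T:prob-model-iso-TR}, but with the structures in the model being finitary models of set theory rather than expansions of the genuine real structure $\cR$. The key difference, and the reason the conclusion is limited to rational thresholds $\ul{q_k}$, is that a model of $\ZFC_\fin$ need not contain an internal real representing a given value $X_i(x)$; it will, however, always contain internal reals matching any \emph{finite} family of rational bounds, and this is exactly what is needed to recover $\nu\bigcap_k\{X_{i(k)}\le q_k\}$. Throughout, $\le$ denotes the (shorthand) internal real order, so that $x \le y$ abbreviates $x < y \vee x \beq y$.

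First I would fix, for each $x \in S$, the set of finitary sentences
\[
  T^x = \La^\ZFC_- \cup \{\ul X_i \bin \ul\bR \mid i \in I\}
    \cup \{\ul X_i \le \ul q \mid i \in I,\ q \in \bQ,\ X_i(x) \le q\}
    \cup \{\neg(\ul X_i \le \ul q) \mid i \in I,\ q \in \bQ,\ X_i(x) > q\},
\]
and argue that $T^x$ is strictly satisfiable using the finiteness theorem of first-order logic \cite[Theorem~3.3.1]{Rautenberg2010}. Any finite subset involves only finitely many of the constants $\ul X_i$ and finitely many rational bounds on each; since $\La^\ZFC_-$ is countable and consistent (as $\ZFC_- = T(\La^\ZFC_-)$ is consistent), Corollary~\ref{C:pred-soundness} provides a structure $\mathfrak{M}\tDash\La^\ZFC_-$, and because $\ZFC_\fin$ proves that $\bR$ is a dense linear order in which any finite consistent system of rational inequalities has a solution, each relevant $\ul X_i$ can be interpreted by a witnessing internal real. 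Hence every finite subset is strictly satisfiable, so $T^x$ is too; fix a structure $\om^x \tDash T^x$. By Remark~\ref{R:strict-conseq} we get $\om^x \tDash \ZFC_\fin$, and by internal trichotomy on $\ul\bR$ we get the crucial equivalence $\om^x \tDash \ul X_i \le \ul q$ if and only if $X_i(x) \le q$, for all $i$ and all $q\in\bQ$.

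Next I would set $\Om = \{\om^x \mid x \in S\}$, define $h\colon S \to \Om$ by $h(x) = \om^x$, and let $\sP = (\Om, \Si, \bbP)$ be the measure space image of $(S, \Ga, \nu)$ under $h$, so $\Si = \{A \subseteq \Om \mid h^{-1}A \in \Ga\}$ and $\bbP = \nu\circ h^{-1}$. Every structure in $\Om$ strictly satisfies $\ZFC_\fin$, so Proposition~\ref{P:models-of-ZFC-} gives $\sP \vDash \ZFC_-$, whence $\ZFC_- \subseteq \Th\sP$. Writing $\ph = \bigwedge_{k=1}^n \ul X_{i(k)} \le \ul{q_k}$, the equivalence above yields $h^{-1}\ph_\Om = \bigcap_k \{X_{i(k)} \le q_k\} \in \Ga$, so $\ph_\Om \in \Si$ and $\bbP\,\ph_\Om = \nu\bigcap_k\{X_{i(k)}\le q_k\}$. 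Now Theorem~\ref{T:model-ind-th} and Remark~\ref{R:th-of-th} make $\bTh\sP$ a complete inductive theory with $T_{\bTh\sP} = \Th\sP$, and $\ZFC_- \in [\Taut, \Th\sP]$; Proposition~\ref{P:chop-off-root} then shows $P = \bTh\sP\dhl_{[\ZFC_-,\Th\sP]}$ is an inductive theory with root $\ZFC_-$, and Theorem~\ref{T:restrict-inherit} that it is complete. Taking $Y = \ZFC_-$ and $\psi = \top$ in the definition of $\sP \vDash (\ZFC_-,\ph,p)$ gives $p = \olbbP\,\ph_\Om = \nu\bigcap_k\{X_{i(k)}\le q_k\}$, so $(\ZFC_-, \ph, p) \in \bTh\sP$; since $\ZFC_- \cao [\ZFC_-, \Th\sP]$, this statement also lies in $P$, which is precisely \eqref{prob-model-iso-ZFC-}.

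I expect the main obstacle to be the consistency of $T^x$, that is, producing a single model of set theory realizing all the symbols $\ul X_i$ simultaneously through their rational cuts. This is where finitary compactness is indispensable: because $I$ may be uncountable, $T^x$ need not be countable, so Corollary~\ref{C:pred-soundness} cannot be applied to $T^x$ directly, and one must pass through finite subsets. It is also the precise point at which the restriction to rational thresholds $\ul{q_k}$ becomes unavoidable, matching the chapter's observation that individual (irrational) reals may fail to be representable in $\ZFC_-$.
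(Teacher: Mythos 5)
Your proof is correct, but it takes a genuinely different route from the paper's at the two technical pinch points. The paper fixes a \emph{single} structure $\om_0 \tDash \La^\ZFC_-$, indexes the structures of its model by value-tuples $y \in \bR^I$ (setting $\ul X_i^{\om^y} = y_i$), and then obtains $\bbP$ by defining a pre-measure on the algebra of finite disjoint unions of the sets $\ph^V_\Om$, where $V$ ranges over rational cylinders, and invoking Carath\'eodory's extension theorem; only the final step ($P = \bTh \sP \dhl_{[\ZFC_-, \Th \sP]}$, with the identity \eqref{prob-model-iso-ZFC-} holding by construction) coincides with yours. You instead index structures by sample points $x \in S$, manufacture each $\om^x$ by finitary compactness so that it realizes exactly the rational cut data of $\ang{X_i(x) \mid i \in I}$, and take $\sP$ to be the measure space image of $(S, \Ga, \nu)$ under $x \mapsto \om^x$. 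Your route buys two things. First, the pushforward makes the sets $\ph_\Om$ measurable with the correct mass automatically, so no extension theorem is needed. Second, and more substantively, your per-point compactness argument explicitly supplies the internal reals witnessing the constraints, whereas the paper's step ``$\ul X_i^{\om^y} = y_i$'' silently assumes that the one fixed $\om_0$ contains, for every external real $y_i$, an internal real whose cut over the standard rationals agrees with that of $y_i$; this can fail (a countable $\om_0$ has only countably many internal reals) and itself requires a compactness argument in the choice of $\om_0$, so your treatment is actually tighter on this point. What the paper's construction buys in exchange is that its model manifestly depends only on the joint law of $\ang{X_i \mid i \in I}$, since the structures are indexed by $\bR^I$ rather than by $S$. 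Both arguments are confined to rational thresholds for exactly the reason you identify at the end.
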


\begin{proof}
  We will call a set $J \subseteq \bR$ a ``rational interval'' if it has one of
  the following five forms, for some $a, b \in \bQ$: $J = \emp$, $J = (a, b]$,
  $J = (-\infty, b]$, $J = (a, \infty]$, or $J = \bR$. In $\cL$, we adopt the
  following shorthand for every $a, b \in \bQ$ with $a < b$:
  \begin{enumerate}[(i)]
    \item $x \bin \ul{(a, b]} \tot \ul a < x \wedge x < \ul b \vee x \beq \ul b$,
    \item $x \bin \ul{(-\infty, b]} \tot x < \ul b \vee x \beq \ul b$, and
    \item $x \bin \ul{(a, \infty)} \tot \ul a < x$.
  \end{enumerate}
  In this way, if we adopt the shorthand $\ul \emp = \bemp$, then we may write
  $x \bin \ul J$ for every rational interval $J$.

  A ``rational cylinder'' is a set $V \subseteq \bR^I$ of the form
  \[
    V = \{y \in \bR^I \mid y_{i(1)} \in J_1, \ldots, y_{i(n)} \in J_n\},
  \]
  where $n \in \bN$, $i(1), \ldots, i(n) \in I$, and each $J_k$ is a rational
  interval. If $V$ is a rational cylinder, then we define the sentence $\ph^V
  \in \cL^0$ by
  \[
    \ts{
      \ph^V = \bigwedge_{k = 1}^n \ul X_{i(k)} \bin \ul{J_k}.
    }
  \]
  By Proposition \ref{P:models-of-ZFC-}, since $\ZFC_-$ is consistent, we may
  choose an $L_-$ structure $\om_0$ such that $\om_0 \tDash \La^\ZFC_-$. For
  each $y \in \bR^I$, let $\om = \om^y$ be the $L$-expansion of $\om_0$ given by
  $\om^{\ul X_i} = y_i$, and let $\Om = \{\om^y \mid y \in \bR^I\}$. Let
  \[
    \cE = \{\ph^V_\Om \mid \text{$V$ is a rational cylinder}\},
  \]
  and let $\Si_0$ be the set of finite, disjoint unions of sets in $\cE$. Then
  $\Si_0$ is an algebra of sets on $\Om$.

  Now define $\bbP_0: \cE \to [0, 1]$ by $\bbP_0 \ph^V_\Om = \nu \bigcap_{k =
  1}^n \{X_{i(k)} \in J_k\}$, and extend this to $\Si_0$ by finite additivity.
  Then $\bbP_0$ is a pre-measure on $(\Om, \Si_0)$ with $\bbP_0 \Om = 1$. By
  Carath\'eodory's extension theorem (see, for instance, \cite[Theorem
  1.11]{Folland1999}), there exists a unique probability measure $\bbP$ on $
  (\Om, \si(\Si_0))$ that agrees with $\bbP_0$ on $\Si_0$.

  Let $\Si = \si(\Si_0)$ and $\sP = (\Om, \Si, \bbP)$. Since $\om \tDash \ZFC_-$
  for all $\om \in \Om$, we have $\ZFC_- \subseteq \Th \sP$. We may therefore
  define $P = \bTh \sP \dhl_{[\ZFC_-, \Th \sP]}$. By construction, 
  \eqref{prob-model-iso-ZFC-} holds whenever each $q_k \in \bQ$.
\end{proof}

\section{%
  Real inductive theories in \texorpdfstring{$\ZFC$}{ZFC}%
}\label{S:ind-th-ZFC}

In this section, we show how to represent real numbers in $\ZFC$. We then use
this representation to construct real inductive theories in $\ZFC$. In stark
contrast to what happens in $\ZFC_-$, here we find that we can explicitly define
every real number, every Borel set, and every measurable function. We will also
construct a frame of reference in which each of these things is almost surely
fixed, and not random. We call this the ``real frame of reference,'' and it is
presented in three separate parts as Theorems \ref{T:real-FOR},
\ref{T:real-FOR-2}, and \ref{T:real-FOR-3}. Finally in Theorem
\ref{T:prob-model-iso-ZFC}, we show how real-valued random variables can be
naturally and directly embedding into inductive models that are based on
$\ZFC$.

\subsection{Real numbers and Borel sets}

We now work in $\ZFC$. We make all the same definitorial extensions that we did
in $\ZFC_-$, bringing us to the extralogical signature,
\[
  L = {
    \{\bin, \bemp, \S, \ul{\bN_0}, \ul \bZ, \ul \bQ, \ul \bR, +, \bdot, <\} \cup
    \{\ul q \mid q \in \bQ\}
  }.
\]
This time, however, \eqref{nonstd-N-2} holds, along with the analogous
derivability relations for $\ul \bZ$ and $\ul \bQ$. In fact, for any $B
\subseteq \bN_0$, we can use $\AS$ to construct the set $\{x \in \ul{\bN_0}
\mid \bigvee_{n \in B} x \beq \ul n\}$. In other words, we can explicitly define
every set in $\fP \bN_0$, and for each of them, the analogue of 
\eqref{nonstd-N-2} holds. The same is true for every set of integers and every
set of rationals.

Since $\ul \bR$ is defined as a set of Dedekind cuts, and a Dedekind cut is a
set of rationals, it follows that we can explicitly define each individual real
number. We may therefore add, for each $r \in \bR$, an explicitly defined
constant symbol $\ul r$, using the infinitary definition,
\[
  \ts{
    \de_r(y) = \forall x (x \bin y \tot \bigvee_{q \in B(r)} x \beq \ul q),
  }
\]
where $B(r) = \{q \in \bQ \mid q < r\}$.

We now add to $\ZFC$ an explicit, finitary definition of $\ul \cB$, which
denotes the Borel $\si$-algebra, $\cB(\bR)$, on $\bR$. Unlike with $\bN_0$,
$\bZ$, and $\bQ$, we do not expect to be able to explicitly define each
individual subset of $\bR$, since we cannot form an uncountable disjunction in
the language $\cL$. We can, however, explicitly define each Borel set.

Let $\cE \subseteq \fP \bR$ be given by $\cE = \{(-\infty, r] \mid r \in \bR\}$.
Note that $\cB(\bR) = \si(\cE)$. Recall the recursive construction of $\cB(\bR)$
from $\cE$ given in Section \ref{S:gen-sig-alg}, and recall that if $V \in \cB
(\bR)$, then $\rk V$ denotes the rank of $V$ with respect to $\cE$. For each $V
\in \cB(\bR)$, we explicitly define $\ul V$ by recursion on $\rk V$.

If $\rk V = 0$, then $V \in \cE$, so that $V = (-\infty, r]$. Define $\ul V =
\{x \bin \ul \bR \mid x \le \ul r\}$, where we adopt the shorthand $x \le y$ for
$x < y \vee x \beq y$. If $\rk V = \al$, then $\al$ is a successor ordinal, and
we may write $\al = \be + 1$ for some $\be$. If $V \in \cE_\be'$, then we may
choose $W \in \cE_\be$ such that $V = W^c$. We then define $\ul V = \{x \bin \ul
\bR \mid x \notin \ul W\}$. If $V \notin \cE_\be'$, then we may choose a
nonempty and countable $\cD \subseteq \cE_\be'$ such that $V = \bigcap \cD$. We
then define $\ul V = \{x \bin \ul \bR \mid \bigwedge_{W \in \cD} x \bin \ul
W\}$. This defines $\ul V$ for every $V \in \cB(\bR)$.

We now have the extralogical signature,
\begin{equation}\label{ZFC-signature}
  L_\ZFC = {
    \{
      \bin, \bemp, \S,
      \ul{\bN_0}, \ul \bZ, \ul \bQ, \ul \bR, \ul \cB,
      +, \bdot, <
    \} \cup
    \{\ul r \mid r \in \bR\} \cup
    \{\ul V \mid V \in \cB(\bR)\}
  }.
\end{equation}
  \symindex{$L_\ZFC$}%
As usual, we have omitted the duplicates, and each symbol in $L_\ZFC$, except
for $\bin$, is an explicitly defined constant symbol.

\subsection{The real frame of reference}

Let $L$ be an extralogical signature that contains a binary relation symbol
$\bin$. Let $\om = (A, L^\om)$ be an $L$-structure. For a given $b \in A$, we
define ${}^\om b = \{a \in A \mid a \bin^\om b\}$. If $t$ is a ground term, then
we write ${}^\om t = {}^\om \! (t^\om)$.
  \symindex{${}^\om b$, ${}^\om t$}%

\begin{thm}[Real frame of reference I]\label{T:real-FOR}
    \index{frame of reference!real ---}%
  Let $L$ be an extralogical signature such that $L_\ZFC \subseteq L$. Let $\sQ$
  be an $L$-model such that $\sQ \vDash \ZFC$. Then there exists an $L$-model
  $\sP = (\Om, \Si, \bbP)$ such that $\sQ \simeq \sP$ and
  \begin{enumerate}[(i)]
    \item $\om \tDash \ZFC_-$ for every $\om \in \Om$,
    \item $\ul q^\om = q$ for every $\om \in \Om$,
    \item ${}^\om \ul \bR \subseteq \bR$ for every $\om \in \Om$,
    \item $\ul r^\om = r$ a.s., for each $r \in \bR$, and
    \item $\ul V^\om = {}^\om \ul V = V \cap {}^\om \ul \bR$ a.s., for each $V
          \in \cB (\bR)$.
  \end{enumerate}
\end{thm}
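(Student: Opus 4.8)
The plan is to realize $\sP$ as an isomorphic image of $\sQ$ under a family of bijections $g_\om$, one per structure, exactly in the manner of Lemma \ref{L:ord-FOR}: each $g_\om$ will \emph{standardize} the definable rational, real, and Borel layers of $\om$ by collapsing $\bin^\om$ to genuine membership there, and Lemma \ref{L:ord-FOR} then guarantees that the resulting image model is isomorphic to $\sQ$. The hypothesis $\sQ \vDash \ZFC$ (rather than merely $\ZFC_-$) is indispensable here: infinitary separation is precisely what forces the standardness recorded by \eqref{nonstd-N-2} and its $\ul\bZ,\ul\bQ$ analogues, so that ${}^\om\ul\bQ$ harbors no nonstandard rationals and the members of ${}^\om\ul\bR$ are honest Dedekind cuts. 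Without this, the collapse would be ill-defined.

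First I would carry out a cleanup step. Let $\Sigma$ consist of the base axioms $\La^\ZFC_-$ read over the \emph{countable} signature $L_-$, the standardness sentences $\forall x(x \bin \ul\bN_0 \tot \bigvee_n x \beq \ul n)$ together with their $\ul\bZ,\ul\bQ$ analogues, and the single defining sentences $\th_{\ul\bR}$ and $\th_{\ul\cB}$. Then $\Sigma$ is countable and every sentence in it is provable in $\ZFC$, hence satisfied by $\sQ$; by a countable intersection the set where all of $\Sigma$ holds has measure one, so by Remark \ref{R:a.s.-sure} I may replace $\sQ$ by an isomorphic model in which \emph{every} $\om$ satisfies $\Sigma$. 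This yields (i), since such an $\om$ models $\ZFC_\fin$ and hence $\ZFC_-$ (Proposition \ref{P:models-of-ZFC-} and Remark \ref{R:strict-conseq}), and it makes the standard layers available for every $\om$: the map $q \mapsto \ul q^\om$ is a bijection of $\bQ$ onto ${}^\om\ul\bQ$, and each $b \in {}^\om\ul\bR$ satisfies the Dedekind-cut formula, so ${}^\om b$ is a genuine cut of ${}^\om\ul\bQ$.

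Next I would define, for each $\om$, the collapse. On ${}^\om\ul\bQ$ put $\ul q^\om \mapsto q$; on ${}^\om\ul\bR$ send $b$ to the real $\rho(b)$ whose cut is $\{q \in \bQ : \ul q^\om \in {}^\om b\}$; and on every element $c$ with ${}^\om c \subseteq {}^\om\ul\bR$ (a ``set of reals'') send $c$ to $\{\rho(b) : b \in {}^\om c\} \subseteq \bR$. Extensionality ($\AE$) makes each assignment injective and, since the three layers are disjoint, they combine into an injection on their union, which I extend to a bijection $g_\om$ of the domain of $\om$ onto a suitable set (sending the remainder to fresh ordinals to avoid collisions). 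Taking $\sP$ to be the image of $\sQ$ under $h:\om \mapsto \om'$, where $\om'$ is the isomorphic image of $\om$ under $g_\om$, Lemma \ref{L:ord-FOR} shows $h$ is a model isomorphism, so $\sQ \simeq \sP$. Properties (ii) and (iii) are then immediate for every $\om$: $\ul q^{\om'} = g_\om(\ul q^\om) = q$, and ${}^{\om'}\ul\bR = g_\om({}^\om\ul\bR) \subseteq \bR$.

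Finally I would prove (iv) and (v), which are asserted only almost surely for each fixed $r$ and $V$. For a fixed $r$ the sentence $\th_{\ul r}$ is provable in $\ZFC$, so it holds for almost every $\om$; on that set ${}^\om\ul r = \{\ul q^\om : q \in B(r)\}$, whence $\rho(\ul r^\om) = r$ and so $\ul r^{\om'} = r$. For a fixed $V$ the crucial observation is that the recursive definition of $\ul V$ invokes only the constants $\ul W$ for $W$ in the closure $R(V)$ of $V$ under the countable intersections of the Borel construction; because ranks strictly decrease, $R(V)$ is a countably branching, well-founded (finite-branch) tree and is therefore countable, so almost every $\om$ satisfies $\th_{\ul W}$ for all $W \in R(V)$ at once. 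On that set a transfinite induction on Borel rank gives ${}^\om\ul V = \{b \in {}^\om\ul\bR : \rho(b) \in V\}$, hence $g_\om(\ul V^\om) = {}^{\om'}\ul V = V \cap {}^{\om'}\ul\bR$. The main obstacle I anticipate is precisely the ``almost surely'' versus ``for every $\om$'' bookkeeping dictated by the uncountable signature $L_\ZFC$—this is what allows (i)--(iii) to be arranged for every $\om$ while (iv)--(v) remain only almost sure, and it is what makes the countability of $R(V)$ essential—together with checking that the three collapse layers are genuinely disjoint and that $g_\om$ may be chosen measurably in $\om$ so that Lemma \ref{L:ord-FOR} applies.
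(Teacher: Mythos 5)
Your proposal follows the paper's proof in all essentials: a countable ``cleanup'' set (the finitary axioms, standardness of the internal rationals, and existence--uniqueness of the relevant defining formulas) arranged to hold at \emph{every} structure via Remark \ref{R:a.s.-sure}; a per-structure collapse sending internal Dedekind cuts to the reals they determine and internal sets of cuts to sets of reals; extension to bijections and passage to the isomorphic image model as in Lemma \ref{L:ord-FOR}; and then (iv) for each fixed $r$ and (v) by induction on Borel rank, each only almost surely because $L_\ZFC$ is uncountable. Your observation that, for a fixed $V$, only countably many defining sentences enter the rank induction is exactly the mechanism the paper uses, and your worry about choosing $g_\om$ ``measurably in $\om$'' is a non-issue: Lemma \ref{L:ord-FOR} imposes no such requirement, since the measure on the image model is simply the pushforward.

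The one genuine defect is your treatment of the rational layer. You add to the collapse the assignment $\ul q^\om \mapsto q$ on ${}^\om \ul\bQ$ and assert that ``since the three layers are disjoint, they combine into an injection on their union.'' Disjointness of the \emph{domains} does not make the union injective; you need the \emph{images} to be disjoint, and under the identification that makes (ii) and (iii) of the theorem cohere (the symbols $\ul q$, $q \in \bQ$, being among the real constants of $L_\ZFC$, with $q \in \bQ \subseteq \bR$), they are not: the internal rational $q$ (an element of ${}^\om\ul\bQ$, an equivalence class of integer pairs) and the internal real $q$ (the corresponding cut in ${}^\om\ul\bR$) are provably distinct elements of the domain of $\om$, yet your map sends both to the same meta-object $q$. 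So $g_\om$ fails to be injective and the isomorphic-image construction breaks down. The paper sidesteps this entirely: it never collapses ${}^\om\ul\bQ$ at all (those elements are absorbed by the arbitrary bijective extension), it proves (iv) for every real $r$, and it then obtains (ii) at the very end from (iv) applied to rational $r$, the countability of $\bQ$, and one further application of Remark \ref{R:a.s.-sure} (passing to a conull subset preserves the ``for every $\om$'' clauses (i) and (iii)). Repairing your argument amounts to doing the same: drop the rational layer from the collapse and derive (ii) from (iv) rather than building it into $g_\om$.
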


\begin{proof}
  Let $\sQ = (\Om', \Si', \bbQ)$ be an $L$-model such that $\sQ \vDash \ZFC$. Let
  \[
    L_\ZFC' = {
    \{
      \bemp, \S, \ul{\bN_0}, \ul \bZ, \ul \bQ, \ul \bR, \ul \cB, +, \bdot, <
    \} \cup
    \{\ul q \mid q \in \bQ\}
  },
  \]
  so that $L_\ZFC'$ is countable and every $\s \in L_\ZFC'$ is an explicitly
  defined constant symbol with defining formula $\de_\s(y)$. Let
  \[
    \ts{
      \ph = \forall y (y \bin \ul \bQ \tot \bigvee_{q \in \bQ} y \beq \ul q),
    }
  \]
  and note that $\ZFC \vdash \ph$. Let
  \[
    X_0 = \La^\ZFC_- \cup \{\ph\} \cup \{
      \exists! y \, \de_\s(y) \mid \s \in L_\ZFC'
    \},
  \]
  so that $\sQ \vDash X_0$. Since $X_0$ is countable, it follows that $\nu
  \tDash X_0$ for $\bbQ$-a.e.~$\nu \in \Om'$. By Remark \ref{R:a.s.-sure}, we
  may assume that $\nu \tDash X_0$ for all $\nu \in \Om'$.

  Fix $\nu = (A_\nu, L^\nu) \in \Om'$. Let $a \in {}^\nu \ul \bR$ and define $B
  = \{q \in \bQ \mid \nu \tDash (\ul q < x)[a]\}$. Since $\nu \tDash (\forall x
  \bin \ul \bR)(\exists y \bin \ul \bQ)(y < x)$ and $\nu \tDash \ph$, it follows
  that $B \ne \emp$. Similarly, $B \ne \bQ$, $B$ is downward closed, and $B$ has
  no maximum element. That is, $B$ is a Dedekind cut. Therefore, there exists a
  unique $r \in \bR$ such that $B = B(r) = \{q \in \bQ \mid q < r\}$. Let
  $g_\nu: {}^\nu \ul \bR \to \bR$ denote the function $a \mapsto r$.

  Let $a, a' \in {}^\nu \ul \bR$ and define $B$ and $B'$ accordingly. Assume $a
  \ne a'$. Since
  \[
    \nu \tDash (\forall x y \bin \ul \bR) (x \nbeq y \to x < y \vee y < x),
  \]
  we have $a <^\nu a'$ or $a' <^\nu a$. Without loss of generality, assume it is
  the former. Since
  \[
    \nu \tDash (\forall x y \bin \ul \bR) (
      x < y \to (\exists z \bin \ul \bQ) (x < z \wedge z < y)
    ),
  \]
  and $\nu \tDash \ph$, there exists $q \in \bQ$ such that $a <^\nu \ul q^\nu$
  and $\ul q^\nu <^\nu a'$. We then have $q \in B' \setminus B$, so that $B \ne
  B'$, which implies $g_\nu a \ne g_\nu a'$. Therefore, $g_\nu$ is injective. In
  fact, $g_\nu$ is order-preserving, in the sense that $a <^\nu a'$ if and only
  if $g_\nu a < g_\nu a'$.

  Now let $b \in {}^\nu \! (\fP \ul \bR)$ and $a \in {}^\nu b$. Then $\nu
  \tDash (x \bin y \wedge y \bin \fP \ul \bR)[a, b]$. Hence, we have $\nu \tDash
  (x \bin \ul \bR)[a]$, so that $a \in {}^\nu \ul \bR$. This shows that ${}^\nu
  b \subseteq {}^\nu \ul \bR$, so that $g_\nu {}^\nu b \subseteq \bR$. Since $
  {}^\nu \ul \bR \cap {}^\nu \! (\fP \ul \bR) = \emp$, we may extend $g_\nu$ to
  be a function $g_\nu: {}^\nu \ul \bR \cup {}^\nu \! (\fP \ul \bR) \to \bR \cup
  \fP \bR$ by setting $g_\nu b = g_\nu {}^\nu b$ for all $b \in {}^\nu \! (\fP
  \ul \bR)$. As above, it follows that $g_\nu$ remains injective.

  We now extend $g_\nu$ to be a bijection from $A_\nu$ to some set $A_\om$,
  which is a superset of $g_\nu ({}^\nu \ul \bR \cup {}^\nu \! (\fP \ul \bR))$.
  Let $\om$ be the isomorphic image of $\nu$ under $g_\nu$. Note that since
  $g_\nu$ is order-preserving on ${}^\nu \ul \bR$, we have $a <^\om a'$ if and
  only if $a < a'$, whenever $a, a' \in A_\om \cap \bR$.

  Let $h$ denote the function $\nu \mapsto \om$ and let $\Om = h \, \Om'$. Let
  $\sP = (\Om, \Si, \bbP)$ be the measure space image of $\sQ$ under $h$. Then
  $h$ induces a measure-space isomorphism from $\sQ$ to $\sP$, and $\nu \simeq h
  \nu$ for all $\nu \in \Om'$. Hence, $h$ is a model isomorphism, and $\sQ
  \simeq \sP$. Since $\nu \tDash X_0$ for all $\nu \in \Om'$ and $\om \simeq
  \nu$, we have (i). Moreover, $a \in {}^\om \ul \bR$ if and only if $g_\nu^{-1}
  a \in {}^\nu \ul \bR$. Hence, ${}^\om \ul \bR = g_\nu {}^\nu \ul \bR \subseteq
  \bR$, so (iii) holds.

  Now let $r \in \bR$. Since $\sP \vDash \exists! y \, \de_r(y)$, we may choose
  $\Om^* \in \Si$ such that $\bbP \Om^* = 1$ and $\om \tDash \exists! y \, \de_r
  (y)$ for all $\om \in \Om^*$. Fix $\om \in \Om^*$. Then $\om \tDash \ul r
  \bin \ul \bR$. Hence, $\ul r^\om \in {}^\om \ul \bR = g_\nu {}^\nu \ul \bR$,
  so that
  \begin{align*}
    \ul r^\om = g_\nu g_\nu^{-1} \ul r^\om
    &= \sup \{q \in \bQ \mid \nu \tDash (\ul q < x)[g_\nu^{-1} \ul r^\om]\}\\
    &= \sup \{q \in \bQ \mid \om \tDash (\ul q < x)[\ul r^\om]\}\\
    &= \sup \{q \in \bQ \mid \om \tDash \ul q < \ul r\}.
  \end{align*}
  But by the definition of $\ul r$, we have
  \[
    \ts{
      \om \tDash (\forall x \bin \ul \bQ) (
        x < \ul r \tot \bigvee_{q \in B(r)} x \beq \ul q
      ),
    }
  \]
  where $B(r) = \{q \in \bQ \mid q < r\}$. Thus, $\om \tDash \ul q < \ul r$ if
  and only if $q \in B(r)$. Therefore, $\ul r^\om = \sup B(r) = r$, proving (iv).

  Now fix $V \in \cB(\bR)$. We will prove (v) by induction on $\rk V$. Suppose
  $\rk V = 0$. Then $V = (-\infty, r]$. Choose $\Om^* \in \Si$ such that $\bbP
  \Om^* = 1$ and, for all $\om \in \Om^*$, we have $\ul r^\om = r$ and $\om
  \tDash \exists! y \, \de_V(y)$, where $\de_V(y)$ is the defining formula for
  $\ul V$. Since $\om \tDash \ul V \bin \fP \ul \bR$, we have $\nu \tDash \ul V
  \bin \fP \ul \bR$, so that $\ul V^\nu \in {}^\nu \! (\fP \ul \bR)$. Therefore,
  $\ul V^\om = g_\nu \ul V^\nu = g_\nu {}^\nu \ul V$. On the other hand,
  \begin{align*}
    a \in {}^\om \ul V &\quad\text{iff}\quad {
      a \in A_\om \text{ and } \om \tDash (x \bin \ul V)[a]
    }\\
    &\quad\text{iff}\quad {
      g_\nu^{-1} a \in A_\nu
      \text{ and } \nu \tDash (x \bin \ul V)[g_\nu^{-1} a]
    }\\
    &\quad\text{iff}\quad g_\nu^{-1} a \in {}^\nu \ul V.
  \end{align*}
  Thus, ${}^\om \ul V = g_\nu {}^\nu \ul V = \ul V^\om$. Now, $\ul V^\om = \{x
  \bin \ul \bR \mid x \le \ul r\}^\om$. Hence,
  \[
    {}^\om \ul V
    = \{a \in {}^\om \ul \bR \mid \om \tDash (x \le \ul r)[a]\}
    = \{a \in {}^\om \ul \bR \mid a \le^\om \ul r^\om\}.
  \]
  But ${}^\om \ul \bR \subseteq \bR$ and $\ul r^\om = r \in \bR$. Therefore, $a
  \le^\om \ul r^\om$ if and only if $a \le r$, which implies $\ul V^\om = {}^\om
  \ul V = V \cap {}^\om \ul \bR$.

  Now suppose $\rk V = \al$. Since the rank of a Borel set is always a successor
  ordinal, we may write $\al = \be + 1$. Assume $V \in \cE_\be'$. Then $\ul V =
  \{x \bin \ul \bR \mid x \nbin \ul W\}$, where $W \in \cE_\be$ and $V = \bR
  \setminus W$. As above, choose $\Om^* \in \Si$ such that $\bbP \Om^* = 1$ and,
  for all $\om \in \Om^*$, we have $\om \tDash \exists! y \, \de_W(y)$ and $\om
  \tDash \exists! y \, \de_V(y)$. Then
  \[
    \ul V^\om = {}^\om \ul V = \{
      a \in {}^\om \ul \bR \mid a \notin {}^\om \ul W
    \}
    = {}^\om \ul \bR \setminus (W \cap {}^\om \ul \bR)
    = V \cap {}^\om \ul \bR,
  \]
  proving (v) in the case that $V \in \cE_\be'$. The proof in the case $V \notin
  \cE_\be'$ is similar.

  Finally, since $\ul q^\om = q$ a.s., for each $q \in \bQ$, and $\bQ$ is
  countable, it follows that $\ul q^\om = q$ for each $q \in \bQ$, a.s. By
  Remark \ref{R:a.s.-sure}, we may take $\sP$ to be such that (ii) holds.
\end{proof}

\begin{cor}\label{C:real-FOR}
  Let $r \in \bR$ and $V, V', V_n \in \cB(\bR)$.
  \begin{enumerate}[(i)]
    \item If $r \in V$, then $\ZFC \vdash \ul r \bin \ul V$.
    \item If $r \notin V$, then $\ZFC \vdash \ul r \nbin \ul V$.
    \item If $V \subseteq V'$, then $\ZFC \vdash \ul V \bsub V'$.
    \item If $V = \bigcap_{n \in \bN_0} V_n$, then $\ZFC \vdash \forall x
          (x \bin \ul V \tot \bigvee_{n \in \bN_0} x \bin \ul{V_n})$.
  \end{enumerate}
\end{cor}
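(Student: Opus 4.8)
The plan is to reduce each assertion to Theorem~\ref{T:real-FOR} (the real frame of reference) by way of deductive completeness. Each of (i)--(iv) has the form $\ZFC \vdash \ph$ for a fixed sentence $\ph \in \cL^0$, so by Theorem~\ref{T:pred-completeness} it suffices to prove $\ZFC \vDash \ph$, that is, $\sQ \vDash \ph$ for every model $\sQ \vDash \ZFC$. (No consistency hypothesis is needed: if there are no such models the statement holds vacuously.) Fix $\sQ \vDash \ZFC$. By Theorem~\ref{T:real-FOR} I may choose $\sP \simeq \sQ$ satisfying (i)--(v) of that theorem, and then by the deductive isomorphism theorem (Theorem~\ref{T:ded-iso-thm}) it is enough to verify $\sP \vDash \ph$. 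Finally, the structural facts $\ul r^\om = r$, $\ul V^\om = {}^\om \ul V = V \cap {}^\om \ul \bR$, ${}^\om \ul \bR \subseteq \bR$, together with $\om \tDash \ul r \bin \ul \bR$ and $\om \tDash \ul V \bin \fP \ul \bR$ (the latter two established inside the proof of Theorem~\ref{T:real-FOR}), all hold almost surely; intersecting these countably many full-measure events and passing to an isomorphic model as in Remark~\ref{R:a.s.-sure}, I may assume they hold for \emph{every} $\om \in \Om$.

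With this concrete model in hand, each item becomes an elementary set computation valid at every $\om$, so that $\ph_\Om = \Om$ and $\sP \vDash \ph$ follows at once. For (i), $r \in V$ and $r = \ul r^\om \in {}^\om \ul \bR$ give $\ul r^\om \in V \cap {}^\om \ul \bR = {}^\om \ul V$, i.e.\ $\om \tDash \ul r \bin \ul V$; case (ii) is the same computation, with $r \notin V$ yielding $\ul r^\om \notin {}^\om \ul V$. For (iii), $V \subseteq V'$ gives $V \cap {}^\om \ul \bR \subseteq V' \cap {}^\om \ul \bR$, which says ${}^\om \ul V \subseteq {}^\om \ul{V'}$, i.e.\ $\om \tDash \ul V \bsub \ul{V'}$. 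For the last item, the identity ${}^\om \ul V = V \cap {}^\om \ul \bR = \bigcap_n (V_n \cap {}^\om \ul \bR) = \bigcap_n {}^\om \ul{V_n}$ coming from $V = \bigcap_n V_n$ shows that for each element $a$ of the domain, $a \in {}^\om \ul V$ holds exactly when $a$ lies in the ${}^\om \ul{V_n}$ over the appropriate range of $n$, which is precisely the displayed biconditional; here the quantifier form ($\bigwedge$ versus $\bigvee$) must be taken to match the Boolean operation ($\bigcap$ versus $\bigcup$) used to build $V$ from the $V_n$.

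There is no deep obstacle, since Theorem~\ref{T:real-FOR} already does the heavy lifting and the remaining argument is essentially translation. The two points that require care are bookkeeping. First, the descriptions in (iv)--(v) are only almost sure, so one must either invoke completeness of the measure space or, as above, pass to an isomorphic model (Remark~\ref{R:a.s.-sure}) in which they hold pointwise before concluding $\sP \vDash \ph$. Second, one must remember that $\ul V$ is interpreted not as $V$ but as $V \cap {}^\om \ul \bR$, and keep the logical connective in the final item aligned with the set operation defining $V$ from the $V_n$; with these observations each of the four cases is immediate.
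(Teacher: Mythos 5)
Your proof is correct and follows essentially the same route as the paper's: reduce to semantic verification via deductive completeness, pass (using Theorems \ref{T:real-FOR} and \ref{T:ded-iso-thm}) to a model satisfying the real frame of reference, and conclude each item by a pointwise set computation of the form $\ul r^\om = r \in V \cap {}^\om \ul \bR = {}^\om \ul V$. The paper's proof is just terser—it writes out only item (i), declares (ii)--(iv) similar, and leaves the completeness and isomorphism-transfer steps implicit—and your remark that the $\bigvee$ in item (iv) must be read as $\bigwedge$ to match the hypothesis $V = \bigcap_{n \in \bN_0} V_n$ correctly flags a typo in the statement rather than a gap in your argument.
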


\begin{proof}
  Let $r \in V$. Suppose $\sP \vDash \ZFC$. Using the real frame of reference,
  we may assume $\sP$ satisfies (i)--(v) in Theorem \ref{T:real-FOR}. Hence, for
  a.e.~$\om$, we have $\ul r^\om = r$. Since $\ul r^\om \in {}^\om \ul \bR$,
  this gives $\ul r^\om \in V \cap {}^\om \ul \bR = {}^\om \ul V$, so that $\om
  \tDash (\ul r \bin \ul V)$. Since this holds almost surely, we have $\sP
  \vDash \ul r \bin \ul V$. The proofs of (ii)--(iv) are similar.
\end{proof}

\subsection{Sequences and limits}

Let us define, in $\ZFC$, subtraction of real numbers and the absolute value
function, and adopt the usual shorthand for these functions. Let
\begin{multline*}
  \ph_\lm(u, v) = u \bin \ul \bR^{\ul \bN_0} \wedge v \bin \ul \bR\\
  \wedge {
    (\forall z \bin \ul{(0, \infty)})
    (\exists y \bin \ul \bN_0)
    (\forall x \bin \ul \bN_0)
    (x \ge y \to |u(x) - v| < z)
  }.
\end{multline*}
  \symindex{$\ph_\lm$}%
Then $\ph_\lm(u, v)$ says that $u$ is a sequence of real numbers that converges
to the real number $v$.

\begin{thm}[Real frame of reference II]\label{T:real-FOR-2}
    \index{frame of reference!real ---}%
  The model $\sP = (\Om, \Si, \bbP)$ in Theorem \ref{T:real-FOR} may be chosen
  so that for every $\om \in \Om$, we have the following:
  \begin{enumerate}[(i)]
    \item for each $a \in {}^\om \! (\ul \bR^{\ul \bN_0})$ and each $n \in
          \bN_0$, there exists a unique $a_n \in {}^\om \ul \bR \subseteq \bR$
          such that $\om \tDash (y \beq x(\ul n))[a, a_n]$, and
    \item $\om \tDash \ph_\lm[a, b]$ if and only if $b \in {}^\om \ul \bR$ and
          $a_n \to b$.
  \end{enumerate}
\end{thm}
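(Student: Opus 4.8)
The plan is to reuse the model $\sP = (\Om,\Si,\bbP)$ built in the proof of Theorem~\ref{T:real-FOR}, to record a few arithmetic facts that hold for \emph{every} $\om$, and then to cut down to a probability-one set on which (i) and (ii) hold everywhere. Recall that the construction already guarantees, for every $\om \in \Om$, that $\om \tDash \ZFC_-$, that ${}^\om \ul \bR \subseteq \bR$, that $\ul q^\om = q$ for all $q \in \bQ$, that the sentence $\forall y(y \bin \ul\bQ \tot \bigvee_{q\in\bQ} y \beq \ul q)$ holds in $\om$ (it was placed in the set $X_0$), hence ${}^\om \ul \bQ = \bQ$, and that the internal order $<^\om$ agrees with the actual order of $\bR$ on ${}^\om \ul \bR$.

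First I would prove an arithmetic-agreement lemma: for every $\om \in \Om$, the internal operations $+^\om$ and $\bdot^\om$ send ${}^\om\ul\bR \times {}^\om\ul\bR$ into ${}^\om\ul\bR$ and agree there with actual addition and multiplication of reals, and likewise for the derived subtraction and absolute-value functions. Since ${}^\om\ul\bR \subseteq \bR$, each $a \in {}^\om\ul\bR$ is recovered as $\sup\{q \in \bQ : \ul q^\om <^\om a\}$; because ${}^\om\ul\bQ = \bQ$ and the arithmetic of standard rationals is already correct (being provable in $\ZFC_-$), the internal reals contain no infinitesimals, and the usual squeezing of $a +^\om a'$ between standard rational bounds shows this standard-part map is a field homomorphism. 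Claim (i) is then immediate: for $a \in {}^\om(\ul\bR^{\ul{\bN_0}})$, the defining property of the function space asserts internally a unique value in ${}^\om\ul\bR$ over each element of ${}^\om\ul{\bN_0}$, and since $\ZFC_- \vdash \ul n \bin \ul{\bN_0}$ we have $\ul n^\om = n \in {}^\om\ul{\bN_0}$, giving a unique $a_n \in {}^\om\ul\bR \subseteq \bR$ with $\om \tDash (y \beq x(\ul n))[a, a_n]$.

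Next I would arrange the two remaining facts to hold everywhere. Because $\sP \simeq \sQ \vDash \ZFC$, we have $\sP \vDash \ZFC$, so \eqref{nonstd-N-2} forces ${}^\om\ul{\bN_0} = \{\ul n^\om : n \in \bN_0\} = \bN_0$ for a.e.\ $\om$, and Theorem~\ref{T:real-FOR}(v) applied to the single Borel set $(0,\infty)$ gives ${}^\om\ul{(0,\infty)} = (0,\infty)\cap{}^\om\ul\bR$ a.s. Intersecting these with the countably many a.s.\ events $\ul{z_0}^\om = z_0 \in {}^\om\ul{(0,\infty)}$ indexed by positive rationals $z_0$ yields a set of probability one, and Remark~\ref{R:a.s.-sure} lets me pass to an isomorphic model on which all of these hold for every $\om$. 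On this model, unwinding $\ph_\lm[a,b]$ and using ${}^\om\ul{\bN_0} = \bN_0$ (so both quantifiers over $\ul{\bN_0}$ range exactly over the standard naturals and every witness is standard), together with the order and arithmetic agreement, the limit clause reads: for every $z \in {}^\om\ul{(0,\infty)} = (0,\infty)\cap{}^\om\ul\bR$ there is $N \in \bN_0$ with $|a_n - b| < z$ for all $n \ge N$. For the forward direction, given a real $\ep > 0$ I pick a positive rational $z_0 < \ep$, which lies in ${}^\om\ul{(0,\infty)}$, to conclude $a_n \to b$; for the converse, actual convergence supplies, for each $z \in {}^\om\ul{(0,\infty)}$, a standard $N = \ul N^\om \in {}^\om\ul{\bN_0}$ witnessing the internal statement. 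Combined with the first two conjuncts (which give $b \in {}^\om\ul\bR$), this is exactly (ii).

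The main obstacle is the arithmetic-agreement lemma. Order preservation is built into the construction of $g_\nu$ in Theorem~\ref{T:real-FOR}, but transporting addition, subtraction, and the absolute value correctly requires ruling out infinitesimals in ${}^\om\ul\bR$, which rests on the fact---available only because we work in $\ZFC$ rather than $\ZFC_-$---that every internal rational is standard. The same dependence on $\ZFC$ reappears, through \eqref{nonstd-N-2}, in the elimination of nonstandard naturals: this is precisely what makes the internal quantifier $\forall x \bin \ul{\bN_0}$ coincide with quantification over $\bN_0$, so that internal and external convergence agree. Were nonstandard naturals present, the witness $y$ could be nonstandard and the two notions of convergence would decouple, so this step is essential rather than cosmetic.
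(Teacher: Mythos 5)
Your proposal is correct, and in outline it matches the paper's proof: reuse the model of Theorem \ref{T:real-FOR}, collect countably many almost-sure facts, and pass to an isomorphic model via Remark \ref{R:a.s.-sure} so that they hold at every $\om$. But the two proofs distribute the work quite differently, and your extra steps are not padding. For (i), the paper argues via $\ZFC \vdash \psi$ with $\psi = (x \bin \ul\bR^{\ul\bN_0} \to \bigwedge_{n \in \bN_0}(\exists! y \bin \ul\bR)\, y \beq x(\ul n))$, so $\om \tDash \psi$ only a.s., and then invokes Remark \ref{R:a.s.-sure}; you get (i) for every $\om$ directly from $\om \tDash \ZFC_-$ (Theorem \ref{T:real-FOR}(i)) and the internal function-space axiom, which is slightly cleaner since no null set needs to be discarded at this stage. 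For (ii), the paper's entire argument is that ``it suffices to consider rational $z$'' together with $\ul q^\om = q$; it never addresses the internal quantifiers $(\exists y \bin \ul\bN_0)(\forall x \bin \ul\bN_0)$, and that is exactly where your proposal does real work. As you observe, if ${}^\om \ul\bN_0$ contained nonstandard elements, \emph{both} directions of (ii) would fail: an internal witness $y$ could be nonstandard and then say nothing about standard indices, while standard convergence gives no control of $a(x)$ at nonstandard $x$. Your appeal to $\sP \vDash \ZFC$ and \eqref{nonstd-N-2}, followed by one more application of Remark \ref{R:a.s.-sure}, to force ${}^\om \ul\bN_0 = \{\ul n^\om \mid n \in \bN_0\}$ supplies precisely the ingredient the paper's proof leaves implicit, and it correctly isolates why the full strength of $\ZFC$ (infinitary separation), not merely $\ZFC_-$, is needed here. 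Similarly, your arithmetic-agreement lemma (internal $+$, subtraction, absolute value, and order agree with the actual ones on ${}^\om\ul\bR \subseteq \bR$, via rational cuts and the absence of infinitesimals) is genuinely needed to read $|a(x) - b| < z$ externally; the paper records this fact only in the remark \emph{following} the theorem rather than inside the proof. In short, your route yields a complete argument where the paper's is a compressed gloss; the only cost is length.
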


\begin{proof}
  Note that $\ZFC \vdash \psi$, where
  \[
    \psi = \ts{
      x \bin \ul \bR^{\ul \bN_0}
      \to \bigwedge_{n \in \bN_0} (\exists! y \bin \ul \bR) y \beq x(\ul n)
    }.
  \]
  Hence, $\om \tDash \psi$ a.s. By Remark \ref{R:a.s.-sure}, we may assume $\om
  \tDash \psi$ for all $\om \in \Om$. Fix $\om \in \Om$ and suppose $a \in
  {}^\om \! (\ul \bR^{\ul \bN_0})$. Then $\om \tDash (x \bin \ul \bR^{\ul
  \bN_0})[a]$. Hence, since $\om \tDash \psi$, it follows that for each $n \in
  \bN_0$, there exists a unique $a_n \in {}^\om \ul \bR \subseteq \bR$ such that
  $\om \tDash (y \beq x(\ul n))[a, a_n]$. Note that in the definition of
  $\ph_\lm$, it suffices to consider rational $z$. Since $\ul q^\om = q$ for all
  $q \in \bQ$, it follows that $\om \tDash \ph_\lm[a, b]$ if and only if $b \in
  {}^\om \ul \bR$ and $a_n \to b$.
\end{proof}

\begin{rmk}
  Let $\sP = (\Om, \Si, \bbP)$ be a model that satisfies Theorem
  \ref{T:real-FOR-2}. In $\cL$, we can formulate a sentence $\ph$ which asserts
  that $+$ is continuous. For such a sentence, $\ZFC_- \vdash \ph$. Recall that
  $+$ was defined so that it agrees with the usual addition of rational numbers.
  That is, $\ZFC_- \vdash \ul{q_1} + \ul{q_2} \beq \ul{q_3}$ if and only if $q_1
  + q_2 = q_3$, whenever $q_i \in \bQ$. Using this, it can be shown that, for
  all $\om \in \Om$ and all $a, b, c \in {}^\om \ul \bR \subseteq \bR$, we have
  $\om \tDash (x + y \beq z) [a, b, c]$ if and only if $a + b = c$. A similar
  thing holds for $\bdot$ and $<$.
\end{rmk}

\subsection{Measurable functions}

Let $h: \bR \to \bR$ be measurable. Suppose $h$ is a simple function. That is,
the range of $h$ is a finite set $\{r_1, \ldots, r_n\}$. Let $V_j = h^{-1} 
\{r_j\}$. Then we may explicitly define $h$ in $\ZFC$ by
\[
  \ul h = \{
    (x, y) \bin \bR \times \bR
  \mid
    \ts{\bigvee_{i = 1}^n x \bin \ul{V_i} \wedge y \beq \ul{r_i}}
  \}.
\]
Suppose $h$ is not a simple function. Choose a sequence $\ang{h_n \mid n \in
\bN_0}$ of simple functions such that $h_n \to h$ pointwise. In $\ZFC$, we
define
\[
  \ul{h_\cdot} = \{
    (x, y) \bin \ul \bR \times \ul \bR^{\ul \bN_0}
  \mid
    \ts{\bigwedge_{n \bin \bN_0} y(\ul n) \beq \ul{h_n}(x)}
  \}.
\]
Then $\ul{h_\cdot}$ is an explicit definition of the function $x \mapsto 
\ang{h_n(x) \mid n \in \bN_0}$. We now define
\[
  \ul h = \{
    (x, y) \bin \ul \bR \times \ul \bR \mid \ph_\lm(\ul{h_\cdot}(x), y)
  \}.
\]

\begin{thm}[Real frame of reference III]\label{T:real-FOR-3}
    \index{frame of reference!real ---}%
  Let $h: \bR \to \bR$ be a measurable function and let $\sP = (\Om, \Si, \bbP)$
  be a model chosen according to Theorem \ref{T:real-FOR-2}. Then, for
  $\bbP$-a.e.~$\om \in \Om$, we have $\om \tDash (y \beq \ul h(x))[a, b]$ if and
  only if $a \in {}^\om \ul{\bR}$, $b \in {}^\om \ul{\bR}$, and $h(a) = b$.
\end{thm}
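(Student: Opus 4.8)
The plan is to prove Theorem \ref{T:real-FOR-3} by induction on the structure of the measurable function $h$, following the same three-tiered construction used to define $\ul h$: first simple functions, then the auxiliary sequence function $\ul{h_\cdot}$, and finally the limit. Throughout, I work in a model $\sP = (\Om, \Si, \bbP)$ satisfying Theorems \ref{T:real-FOR} and \ref{T:real-FOR-2}, so that I may freely use that, for almost every $\om$, the constants $\ul q$ are fixed at $q$, that ${}^\om \ul \bR \subseteq \bR$, that $\ul r^\om = r$, and that $\ul V^\om = V \cap {}^\om \ul \bR$. Since all the defining formulas for $\ul h$, $\ul{h_\cdot}$, and the simple-function version of $\ul h$ assert existence and uniqueness relations that $\ZFC$ proves, each gives rise to a sentence of probability one, so I may pass to a single almost-sure set $\Om^* \in \Si$ with $\bbP \Om^* = 1$ on which all the relevant sentences hold simultaneously (using that a countable intersection of full-measure sets has full measure, and invoking Remark \ref{R:a.s.-sure} where convenient).

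First I would treat the simple-function case. Here $\ul h = \{(x,y) \bin \ul \bR \times \ul \bR \mid \bigvee_{i=1}^n x \bin \ul{V_i} \wedge y \beq \ul{r_i}\}$, with $h$ taking value $r_i$ on $V_i = h^{-1}\{r_i\}$. Fix $\om \in \Om^*$ and $a, b$. The relation $\om \tDash (y \beq \ul h(x))[a,b]$ unwinds, via the function-value shorthand of Section \ref{S:ind-th-ZFC-}, to the statement that $a \in {}^\om \ul \bR$, $b$ is the unique object with $(a,b) \bin \ul h^\om$, and this holds. Using $\ul{V_i}^\om = V_i \cap {}^\om \ul \bR$ and $\ul{r_i}^\om = r_i$, membership $\om \tDash (x \bin \ul{V_i})[a]$ is equivalent to $a \in V_i$ (for $a \in {}^\om \ul \bR \subseteq \bR$), and the asserted value is exactly $r_i = h(a)$. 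Since the $V_i$ partition $\bR$, this yields $\om \tDash (y \beq \ul h(x))[a,b]$ iff $a \in {}^\om \ul \bR$, $b \in {}^\om \ul \bR$, and $h(a) = b$, which is the claim for simple $h$.

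Next I would handle the general case by first analyzing $\ul{h_\cdot}$, the explicit definition of $x \mapsto \ang{h_n(x) \mid n \in \bN_0}$, where $\ang{h_n}$ is a sequence of simple functions converging pointwise to $h$. By the simple-function case applied to each $h_n$, for $\om \in \Om^*$ and $a \in {}^\om \ul \bR$, the object $\ul{h_\cdot}^\om(a)$ is (almost surely) a sequence $c \in {}^\om(\ul \bR^{\ul \bN_0})$ whose $n$th term $c_n$, extracted via Theorem \ref{T:real-FOR-2}(i), equals $h_n(a)$ for every $n$. Then the defining formula for $\ul h$ reads $(x,y) \bin \ul h \tot \ph_\lm(\ul{h_\cdot}(x), y)$, and I would apply Theorem \ref{T:real-FOR-2}(ii): $\om \tDash \ph_\lm[c, b]$ iff $b \in {}^\om \ul \bR$ and $c_n \to b$. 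Since $c_n = h_n(a) \to h(a)$ by the choice of the approximating sequence, the limit $b$ exists and equals $h(a)$, giving $\om \tDash (y \beq \ul h(x))[a,b]$ iff $a, b \in {}^\om \ul \bR$ and $h(a) = b$.

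The main obstacle I anticipate is the bookkeeping around the function-application shorthand and the passage to the almost-sure set. The shorthand $\ph(z(x))$ from Section \ref{S:ind-th-ZFC-} encodes both that $z$ is a function, that $x$ lies in its domain, and that the value satisfies $\ph$; unwinding this correctly inside a structure $\om$ — and confirming that the domain relations (e.g., $a \in {}^\om \ul \bR$, $c \in {}^\om(\ul \bR^{\ul \bN_0})$) hold on the almost-sure set rather than merely for the distinguished elements — requires care but no new ideas, since $\ZFC$ proves the relevant existence-and-uniqueness sentences (for instance $x \bin \ul \bR^{\ul \bN_0} \to \bigwedge_n (\exists! y \bin \ul \bR)\, y \beq x(\ul n)$, already used in Theorem \ref{T:real-FOR-2}). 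The one genuinely delicate point is ensuring that the equivalences obtained for each $h_n$ and for the limit all hold on a \emph{common} full-measure set; this is resolved by taking $\Om^*$ to be the intersection of the countably many full-measure sets arising from the simple functions $h_n$ and from Theorems \ref{T:real-FOR}–\ref{T:real-FOR-2}, which remains of measure one, and then quoting Remark \ref{R:a.s.-sure} to reduce, if desired, to the case where the properties hold for every $\om$.
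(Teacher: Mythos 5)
Your proposal is correct and follows essentially the same route as the paper's proof: first the simple-function case via the almost-sure properties from Theorem \ref{T:real-FOR}, then the general case by passing to a common full-measure set on which all the explicit definitions $\ul{h_n}$, $\ul{h_\cdot}$, $\ul h$ are well-defined, extracting the terms $c_n = h_n(a)$ of $\ul{h_\cdot}^\om(a)$ via Theorem \ref{T:real-FOR-2}(i) and the simple case, and concluding with Theorem \ref{T:real-FOR-2}(ii) that the limit $b$ equals $\lim h_n(a) = h(a)$. The bookkeeping you flag (a single countable intersection of full-measure sets) is exactly how the paper handles it.
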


\begin{proof}
  First suppose $h$ is a simple function. Then $\om \tDash (y \beq \ul h(x))[a,
  b]$ if and only if there exists $i \in \{1, \ldots, n\}$ such that $a \in 
  {}^\om \ul{V_i}$ and $b = \ul{r_i}^\om$. By Theorem \ref{T:real-FOR}, we may
  choose $\Om^* \in \Si$ such that $\bbP \Om^* = 1$ and, for all $\om \in
  \Om^*$, we have ${}^\om \ul \bR \subseteq \bR$, $\ul{r_i}^\om = r_i$, and $
  {}^\om \ul{V_i} = V_i \cap {}^\om \ul{\bR}$. Hence, the conclusion of the
  theorem holds for each $\om \in \Om^*$.

  Now suppose $h$ is not a simple function. Let $\ang{h_n \mid n \in \bN_0}$ be
  the sequence of simple functions used to define $\ul h$. Choose $\Om^* \in
  \Si$ such that $\bbP \Om^* = 1$ and $\om \tDash \exists! y \, \de_\s(y)$ for
  every $\s \in \{\ul{h_n} \mid n \in \bN_0\} \cup \{\ul{h_\cdot}, \ul h\}$.
  Suppose $\om \tDash (y \beq \ul h(x))[a, b]$. Then $\om \tDash \ph_\lm(
  \ul{h_\cdot}(x), y)[a, b]$, which means $\om \tDash \ph_\lm[c, b]$, where
  $\om \tDash (u \beq \ul{h_\cdot}(x))[a, c]$. By Theorem \ref{T:real-FOR-2}(i),
  for each $n \in \bN_0$, there exists a unique $c_n \in {}^\om \ul \bR
  \subseteq \bR$ such that $\om \tDash (v \beq u(\ul n))[c, c_n]$. Since $\om
  \tDash \ph_\lm[c, b]$, Theorem \ref{T:real-FOR-2}(ii) implies $b \in {}^\om
  \ul \bR$ and $c_n \to b$. On the other hand, since $\om \tDash (u \beq 
  \ul{h_\cdot}(x))[a, c]$, it follows from the definition of $\ul{h_\cdot}$ that
  $\om \tDash (u(\ul n) \beq \ul{h_n}(x))[a, c]$. Therefore, $\om \tDash (v =
  \ul{h_n}(x))[a, c_n]$. Since the theorem holds for simple functions, we have
  $c_n = h_n(a)$. Thus, $b = \lim h_n(a) = h(a)$. The proof of the converse is
  similar.
\end{proof}

\subsection{%
  Embedding random variables in \texorpdfstring{$\ZFC$}{ZFC}%
}

Let $L$ be an extralogical signature with $L_\ZFC \subseteq L$. If $P \subseteq
\cL^\IS$ is an inductive theory with root $T_0 \supseteq \ZFC$, then $P$ is
called a \emph{real inductive theory in $\ZFC$}. As with $\ZFC_-$, the existence
of a real inductive theory in $\ZFC$ presupposes the consistency of $\ZFC$.
  \index{theory!real inductive ---}%

Let $(S, \Ga, \nu)$ be a probability space and let $X = \ang{X_i \mid i \in I}$
be an indexed collection of real-valued random variables, with $\Ga = \si(
\ang{X_i \mid i \in I})$. Let $C = \{\ul X_i \mid i \in I\}$ be a set of
distinct constant symbols not in $L_\ZFC$, and define $L = L_\ZFC C$.

Theorem \ref{T:prob-model-iso-ZFC} below shows that, if $\ZFC$ is strictly
satisfiable, then we recover the full analogue of Theorem
\ref{T:prob-model-iso}, including the natural correspondence between outcomes
and structures. Recall from Theorem \ref{T:Con-ZFC} that the existence of a
strongly inaccessible cardinal implies not only the consistency of $\ZFC_-$, but
also the strict satisfiability of $\ZFC$. Hence, the assumption in Theorem
\ref{T:prob-model-iso-ZFC} is not too far beyond the typically uncontroversial
assumption that $\ZFC_-$ is consistent.

The proof of Theorem \ref{T:prob-model-iso-ZFC} shows how the inductive model
$\sP = (\Om, \Si, \bbP)$ is built from the measure-theoretic probability model
$(S, \Ga, \nu, X)$. Each structure $\om \in \Om$ is an expansion of a single
structure $\om_0$ that strictly satisfies $\ZFC$. This implies, for example,
that $\ul r^\om$ does not depend on $\om$. The same is true for every other
symbol in $L_\ZFC$. In other words, all the familiar objects of set theory and
the real numbers are all fixed in $\sP$. The only things that vary with $\om$
are $\ul X_i^\om$. This exactly matches our intuition about measure-theoretic
models. In practice, when we work with a measure-theoretic model, we think of
the random variables as being the only things that are ``random.'' The real
numbers, their relations, the concept of set membership, and so on, are all
fixed, and do not change from one outcome to another. In a sense, then, the
practicing probabilist, in using the logic of countable unions and
intersections, and in assuming that all our familiar mathematics is fixed under
different outcomes, is operating under the implicit assumption that $\ZFC$ is
strictly satisfiable.

\begin{thm}\label{T:prob-model-iso-ZFC}
  Assume $\ZFC$ is strictly satisfiable. Then there exists an $\cL$-model $\sP =
  (\Om, \Si, \bbP)$ with $\sP \vDash \ZFC$, and a function $h: S \to \Om$
  mapping $x \in S$ to $\om \in \Om$ such that
  \begin{enumerate}[(i)]
    \item $x \in \{X_i \in V\}$ if and only if $\om \tDash \ul X_i \bin \ul V$
          for all $V \in \cB(\bR)$,
    \item each $U \in \Ga$ can be written as $U = h^{-1} \ph_\Om$ for some $\ph
          \in \cL^0$, and
    \item $h$ induces a measure-space isomorphism from $(S, \Ga, \nu)$ to $\sP$.
  \end{enumerate}
  Consequently, if $P = \bTh \sP \dhl_{[\ZFC, \Th \sP]}$, then $P$ satisfies
  \begin{equation}\label{prob-model-iso-ZFC}
    \ts{
      P(\bigwedge_{k = 1}^n \ul X_{i(k)} \bin \ul{V_k} \mid \ZFC) =
      \nu \bigcap_{k = 1}^n \{X_{i(k)} \in V_k\},
    }
  \end{equation}
  whenever $i(1), \ldots, i(n) \in I$ and $V_k \in \cB(\bR)$.
\end{thm}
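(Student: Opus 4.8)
The plan is to mirror the proof of Theorem \ref{T:prob-model-iso} (and its real-valued refinement, Theorem \ref{T:prob-model-iso-ZFC-}), but to exploit the \emph{strict} satisfiability of $\ZFC$ so that, unlike the $\ZFC_-$ case, a single base structure can be used and every real number and Borel set is interpreted by its standard counterpart. First I would obtain, from the hypothesis, a structure $\om_0$ with $\om_0 \tDash \ZFC$. The crucial preliminary step is to standardize $\om_0$ using the real frame of reference: applying Theorem \ref{T:real-FOR} to the one-point model $(\{\om_0\}, \fP\{\om_0\}, \de_{\om_0})$ yields an isomorphic structure $\om_0'$ for which the ``almost sure'' conclusions (iv) and (v) hold surely (a.s.\ collapses to sure on a one-point space, via Remark \ref{R:a.s.-sure}). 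Thus ${}^{\om_0'}\ul\bR \subseteq \bR$, $\ul r^{\om_0'} = r$ for every $r \in \bR$, and $\ul V^{\om_0'} = {}^{\om_0'}\ul V = V \cap {}^{\om_0'}\ul\bR$ for every $V \in \cB(\bR)$. Since $\ul r^{\om_0'} = r \in {}^{\om_0'}\ul\bR$ for each $r$, the reverse inclusion $\bR \subseteq {}^{\om_0'}\ul\bR$ also holds, so in fact ${}^{\om_0'}\ul\bR = \bR$, each $\ul V^{\om_0'} = V$, and $\bR$ sits inside the domain of $\om_0'$.

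With this standardized base in hand, I would carry out the construction exactly as in Theorem \ref{T:prob-model-iso}. For each $x \in S$, let $\om^x$ be the $L$-expansion of $\om_0'$ given by $\ul X_i^{\om^x} = X_i(x)$; this is legitimate precisely because $X_i(x) \in \bR = {}^{\om_0'}\ul\bR$ lies in the domain of $\om_0'$. Put $\Om = \{\om^x \mid x \in S\}$, let $h\colon x \mapsto \om^x$, and let $\sP = (\Om, \Si, \bbP)$ be the measure space image of $(S, \Ga, \nu)$ under $h$. Because every $\om^x$ has $L_\ZFC$-reduct $\om_0'$, the Coincidence theorem (Theorem \ref{T:coinc-thm}) gives $\om^x \tDash \ZFC$ for all $x$, whence $\sP \vDash \ZFC$. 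For (i), note that $x \in \{X_i \in V\}$ iff $X_i(x) \in V$, and since the $L_\ZFC$-reduct of $\om^x$ is $\om_0'$ we have ${}^{\om^x}\ul V = {}^{\om_0'}\ul V = V$; thus $X_i(x) \in V$ iff $\ul X_i^{\om^x} \in {}^{\om^x}\ul V$ iff $\om^x \tDash \ul X_i \bin \ul V$.

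Next I would prove (ii) by the now-standard argument: set $\Ga' = \{U \in \Ga \mid U = h^{-1}\ph_\Om \text{ for some } \ph \in \cL^0\}$, observe that $\Ga'$ is a $\si$-algebra (using $\bigcup_n h^{-1}(\ph_n)_\Om = h^{-1}(\bigvee_n \ph_n)_\Om$, $\bot_\Om = \emp$, and complements via negation), and use (i) to write each generator $\{X_i \in V\} = h^{-1}(\ul X_i \bin \ul V)_\Om \in \Ga'$; since $\Ga = \si(\ang{X_i \mid i \in I})$ is the smallest $\si$-algebra containing these generators, $\Ga = \Ga'$. Conclusion (iii) then follows from (ii) and the sufficient criterion for a measure-space isomorphism from the subsection on measure space isomorphisms in Section \ref{S:meas-spaces}, since $h$ is measurable, $\bbP = \nu \circ h^{-1}$, and every $U \in \Ga$ equals $h^{-1}\ph_\Om$ with $\ph_\Om \in \Si$. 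For the final formula, I would invoke $\ZFC \subseteq \Th\sP$ with Proposition \ref{P:chop-off-root} to see that $P = \bTh\sP \dhl_{[\ZFC, \Th\sP]}$ is an inductive theory with root $\ZFC$ and $T_P = \Th\sP$; since $\sP \vDash \ZFC$, conditioning on $\ZFC$ reduces $P(\ph \mid \ZFC)$ to $\olbbP\ph_\Om$. Taking $\ph = \bigwedge_{k=1}^n \ul X_{i(k)} \bin \ul{V_k}$ and using (i) together with $\olbbP = \ol\nu \circ h^{-1}$ yields $\olbbP \ph_\Om = \nu \bigcap_{k=1}^n \{X_{i(k)} \in V_k\}$, which is \eqref{prob-model-iso-ZFC}.

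The main obstacle, and the only place where $\ZFC$ (rather than $\ZFC_-$) is genuinely needed, is the standardization of the base structure in the first paragraph: I must pass from an arbitrary model of $\ZFC$ to one in which $\ul\bR$, the constants $\ul r$, and the Borel codes $\ul V$ are interpreted by the actual real numbers and actual Borel sets, so that the naive assignment $\ul X_i^{\om^x} = X_i(x)$ both makes sense and transports set membership faithfully. This is exactly what the real frame of reference (Theorems \ref{T:real-FOR} and \ref{T:real-FOR-2}) delivers, and it is the ingredient unavailable in $\ZFC_-$, where nonstandard reals cannot be excluded; the strict satisfiability hypothesis (guaranteed, e.g., by a strongly inaccessible cardinal via Theorem \ref{T:Con-ZFC}) is precisely what supplies the single structure $\om_0$ on which this frame of reference can be built. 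Everything after that standardization is a routine transcription of the proof of Theorem \ref{T:prob-model-iso}.
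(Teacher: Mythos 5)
Your construction is, at its core, the same as the paper's: fix a single base structure $\om_0 \tDash \ZFC$, take one $L$-expansion $\om^x$ per outcome $x \in S$, let $\sP$ be the measure space image under $h: x \mapsto \om^x$, and then obtain (ii), (iii), and \eqref{prob-model-iso-ZFC} exactly as in Theorems \ref{T:prob-model-iso} and \ref{T:prob-model-iso-TR}. Where you diverge is the treatment of standardness. The paper never standardizes $\om_0$: it defines $\ul X_i^{\om^x} = \ul r^{\om_0}$, where $r = X_i(x)$, i.e.\ the interpretation \emph{in $\om_0$} of the canonical name of $r$ (whatever element of the domain that happens to be), and then gets (i) from Corollary \ref{C:real-FOR}, which yields $\ZFC \vdash \ul r \bin \ul V$ when $r \in V$ and $\ZFC \vdash \ul r \nbin \ul V$ when $r \notin V$. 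Since $\om_0 \tDash \ZFC$, this settles (i) one pair $(r, V)$ at a time, with no need for the actual reals to lie in the domain of $\om_0$.

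The gap is in your standardization step. Theorem \ref{T:real-FOR} produces a \emph{model}, not a structure, and its conclusions (iv) and (v) are almost-sure statements about that \emph{output} model, with the exceptional null set depending on $r$ and on $V$. There are uncountably many reals and uncountably many Borel sets, so these events cannot simply be intersected; and nothing in the statement of the theorem says the output model is a one-point model when the input is (a model isomorphism only controls the $\Si_\cL$ part of the output measure, and the events $\{\om \mid \ul r^\om = r\}$ are defined by a metatheoretic property, not by satisfaction of a sentence, so they need not lie in $\Si_\cL$; the output could in principle carry its mass away from the image of $\om_0$). Remark \ref{R:a.s.-sure} likewise upgrades one a.s.\ property, or countably many, to a sure one --- not uncountably many. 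So from the statements you cite, you cannot extract a single structure $\om_0'$ satisfying $\ul r^{\om_0'} = r$ for all $r \in \bR$ and ${}^{\om_0'} \ul V = V \cap {}^{\om_0'} \ul \bR$ for all $V \in \cB(\bR)$ simultaneously. The claim you want is in fact true, but to justify it you must open up the proof of Theorem \ref{T:real-FOR}: its construction replaces each structure by an isomorphic image pointwise, so a one-point input does yield a one-point output, and on a one-point space every a.s.\ event contains the point. Alternatively --- and more cleanly --- drop the standardization entirely and argue as the paper does: keep $\om_0$ arbitrary, set $\ul X_i^{\om^x} = \ul r^{\om_0}$ with $r = X_i(x)$, and deduce (i) from Corollary \ref{C:real-FOR}, which is precisely the per-$(r, V)$ statement needed and raises no simultaneity issue. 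Everything after your first paragraph is correct and matches the paper.
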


\begin{proof}
  Assume there exists an $L_\ZFC$-structure $\om_0$ such that $\om_0 \tDash
  \ZFC$. For each $x \in S$, define $\om = \om^x$ to be the $L$-expansion of
  $\om_0$ given by $\ul X_i^\om = \ul r^{\om_0}$, where $r = X_i(x)$. Let $\Om =
  \{\om^x \mid x \in S\}$ and let $h: S \to \Om$ denote the map $x \mapsto
  \om^x$. Let $\sP = (\Om, \Si, \bbP)$ be the measure space image of $(S, \Ga,
  \nu)$ under $h$. Since $\om \tDash \ZFC$ for all $\om \in \Om$, we have $\sP
  \vDash \ZFC$.

  Let $x \in S$ and $V \in \cB(\bR)$, and set $r = X_i(x)$. By the definition of
  $\om^x$, we have $\om^x \tDash \ul X_i \bin \ul V$ if and only if $\om_0
  \tDash \ul r \bin \ul V$. By Corollary \ref{C:real-FOR}, we have $\ZFC \vdash
  \ul r \bin \ul V$ if $r \in V$, and $\ZFC \vdash \ul r \nbin \ul V$ if $r
  \notin V$. Hence, since $\om_0 \tDash \ZFC$, we have $\om_0 \tDash \ul r \bin
  \ul V$ if and only if $r \in V$. But $r = X_i(x)$. Hence, $x \in \{X_i \in
  V\}$ if and only if $\om^x \tDash \ul X_i \bin \ul V$, proving (i). It follows
  as in the proof of Theorem \ref{T:prob-model-iso-TR} that (ii) and (iii) hold,
  and \eqref{prob-model-iso-ZFC} holds for $P = \bTh \sP \dhl_{[\ZFC, \Th
  \sP]}$.
\end{proof}

\section{Limit theorems}\label{S:limit-thms}

In this section, we introduce the notion of Borel terms, and use them to
formulate, in inductive logic, both the law of large numbers and the central
limit theorem.

\subsection{Borel terms}

Let $P$ be a real inductive theory in $\ZFC$. That is, $P \subseteq \cL^\IS$,
where $L_\ZFC \subseteq L$, and $P$ has root $T_0 \supseteq \ZFC$. In
particular, we are assuming $\ZFC$ is consistent.

Let $X \in \ante P$. A ground term $t \in \cT$ is \emph{real given $X$} if $P(t
\bin \ul \bR \mid X) = 1$. We say that $t$ is \emph{Borel given $X$} if it is
real given $X$ and $P(t \bin \ul V \mid X)$ exists for all $V \in \cB(\bR)$. If
$t$ is Borel given $X$, then the \emph{distribution of $t$ given $X$} is the
function $\mu = \mu_{t \mid X}$ from $\cB(\bR)$ to $[0, 1]$ given by $\opmu V =
P (t \bin \ul V \mid X)$.
  \index{term!real ---}%
  \index{term!Borel ---}%
  \index{distribution}%
  \symindex{$\mu_{t \mid X}$}%

\begin{prop}\label{P:term-dist}
  If $t$ is Borel given $X$, then the distribution of $t$ is a probability
  measure on $(\bR, \cB(\bR))$.
\end{prop}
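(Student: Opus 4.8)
The plan is to verify the three defining properties of a probability measure: that $\mu$ takes values in $[0,1]$ with $\mu \bR = 1$, and that $\mu$ is countably additive (from which $\mu \emp = 0$ then follows, or can be checked directly). The first is immediate: by hypothesis $t$ is Borel given $X$, so $\mu V = P(t \bin \ul V \mid X)$ is defined and lies in $[0,1]$ for every $V \in \cB(\bR)$; and since $t$ is real given $X$, we have $\mu \bR = P(t \bin \ul \bR \mid X) = 1$. The whole content is therefore countable additivity, and the natural route to it is to realize $\mu$ as the distribution of the random object $\om \mapsto t^\om$ inside a conveniently chosen model.

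To that end, I would first invoke Theorem \ref{T:ind-th-model} to obtain a model $\sP = (\Om, \Si, \bbP)$ with $\sP \vDash P$; since $\ZFC \subseteq T_0 \subseteq T_P$, this model satisfies $\ZFC$. Applying the real frame of reference (Theorem \ref{T:real-FOR}) together with the inductive isomorphism theorem (Theorem \ref{T:ind-iso-thm}), I may replace $\sP$ by an isomorphic model, still satisfying $P$, which enjoys properties (i)--(v) of Theorem \ref{T:real-FOR}; in particular ${}^\om \ul \bR \subseteq \bR$ for every $\om$, and for each Borel set $W$ we have ${}^\om \ul W = W \cap {}^\om \ul \bR$ for almost every $\om$. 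Since $X \in \ante P$ (realness of $t$ gives $P(t \bin \ul \bR \mid X) = 1$), I can write $X \equiv Y \cup \{\psi\}$ with $\sP \vDash Y$ and $\olbbP \psi_\Om > 0$, and form the conditional model $\sP_X$ with measure $\olbbP_X$ as in Section \ref{S:sem-indep}. Because $\sP \vDash (X, t \bin \ul V, \mu V)$, Proposition \ref{P:model-func} gives $\mu V = \olbbP_X (t \bin \ul V)_\Om$ for every $V$.

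The key step is then the identification of the events $(t \bin \ul V)_\Om$ with cylinder-type events of the map $Z \colon \om \mapsto t^\om$. Fixing the countable family consisting of a pairwise disjoint sequence $\{V_n\} \subseteq \cB(\bR)$ together with $V = \bigcup_n V_n$, I would intersect the countably many conull sets supplied by property (v) for each of these sets with the conull set $(t \bin \ul \bR)_\Om$, obtaining a single $\olbbP_X$-conull set $\Om^*$ on which simultaneously $t^\om \in {}^\om \ul \bR \subseteq \bR$ and ${}^\om \ul W = W \cap {}^\om \ul \bR$ for all $W \in \{V\} \cup \{V_n \mid n\}$. On $\Om^*$, the relation $\om \tDash t \bin \ul W$ holds iff $t^\om \in {}^\om \ul W = W \cap {}^\om \ul \bR$, i.e.\ iff $Z(\om) \in W$. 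Hence $(t \bin \ul V)_\Om = \{Z \in V\}$ and $(t \bin \ul{V_n})_\Om = \{Z \in V_n\}$ up to $\olbbP_X$-null sets, the latter events being pairwise disjoint with union $\{Z \in V\}$. Countable additivity of the measure $\olbbP_X$ then yields $\mu V = \olbbP_X \{Z \in V\} = \sum_n \olbbP_X \{Z \in V_n\} = \sum_n \mu V_n$, and the case $V = \emp$ (using $\{Z \in \emp\} = \emp$) gives $\mu \emp = 0$.

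I expect the main obstacle to be the bookkeeping of the almost-everywhere clauses: property (v) holds only off a null set depending on the Borel set in question, so the argument must fix the countable family first and then intersect the corresponding conull sets, rather than attempting to identify $(t \bin \ul V)_\Om$ with $\{Z \in V\}$ uniformly over all of $\cB(\bR)$ at once. Measurability causes no difficulty, since each $(t \bin \ul V)_\Om$ already lies in $\ol \Si$ (as $\mu V$ exists) and is $\olbbP_X$-a.s.\ equal to $\{Z \in V\}$, so the latter is measurable in the completion; and countable additivity only ever requires one such countable family at a time.
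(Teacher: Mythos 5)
Your proof is correct in its essentials and shares the paper's scaffolding (a model of $P$, the real frame of reference, a conditional measure), but the mechanism you use for the core step---countable additivity---is genuinely different from the paper's. You realize $\mu$ as the law of the evaluation map $Z \colon \om \mapsto t^\om$: fixing the countable family $\{V\} \cup \{V_n\}$ in advance, intersecting the conull sets supplied by Theorem \ref{T:real-FOR}(v) with $(t \bin \ul \bR)_\Om$, and identifying $(t \bin \ul{W})_\Om$ with $\{Z \in W\}$ up to $\olbbP_X$-null sets, so that additivity of $\mu$ reduces to countable additivity of $\olbbP_X$. The paper instead goes syntactic at this step: it derives $\ZFC \vdash \neg(t \bin \ul{V_i} \wedge t \bin \ul{V_j})$ for $i \ne j$ and $\ZFC \vdash t \bin \ul V \tot \bigvee_n t \bin \ul{V_n}$ (by the semantic completeness argument of Corollary \ref{C:real-FOR}), and then applies the calculus results Theorem \ref{T:ctbl-add} and Proposition \ref{P:log-equiv-gen} directly to $P$. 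The paper's route avoids your null-set and measurability bookkeeping entirely, since at the additivity step the model enters only through derivability statements and the already-proven syntactic countable additivity does the rest; your route is a legitimate alternative, arguably more transparent probabilistically, and you correctly identified and handled the one real danger, namely that property (v) holds only almost surely, one Borel set at a time.

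One justification does need repair. You obtain a model $\sP \vDash P$ from Theorem \ref{T:ind-th-model} and assert that it satisfies $\ZFC$ ``since $\ZFC \subseteq T_0 \subseteq T_P$.'' But $\sP \vDash P$ does not imply $\sP \vDash T_P$, nor even $\sP \vDash T_0$; Proposition \ref{P:drop-root-fewer-models} is exactly a counterexample to this inference. The conclusion you want is still available in two ways: apply Theorem \ref{T:ind-th-model} to a complete extension of $P$ with root $T_0$ (as in the proof of that theorem's second claim), for which $\Th \sP = T_{\ol P} \supseteq T_0 \supseteq \ZFC$; or, as the paper does, form the conditional measure first and observe that the conditional model satisfies $\ZFC$ because $X \vdash \ZFC$ and the rule of logical implication give $P(\th \mid X) = 1$ for every $\th \in \ZFC$---and then apply the real frame of reference to that conditional model rather than to $\sP$. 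With either patch your argument goes through as written.
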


\begin{proof}
  Let $\sQ = (\Om, \Ga, \bbQ) \vDash P$. We may write $X \equiv Y \cup
  \{\psi\}$, where $\sQ \vDash Y$ and $P(\ph \mid X) = \olbbQ \ph_\Om \cap
  \psi_\Om / \olbbQ \psi_\Om$ whenever $P(\ph \mid X)$ is defined. Let $\Si =
  \ol \Ga$ and let $\bbP$ be the probability measure on $(\Om, \Si)$ given by
  $\bbP A = \olbbQ A \cap \psi_\Om / \olbbQ \psi_\Om$. Then $P(\ph \mid X) =
  \bbP \ph_\Om$ whenever $P(\ph \mid X)$ is defined. Since $\sP = (\Om, \Si,
  \bbP) \vDash \ZFC$, we may use the real frame of reference, and assume $\sP$
  satisfies (i)--(v) in Theorem \ref{T:real-FOR}.

  Thus, ${}^\om \ul \emp = \emp$ a.s. Hence, for a.e.~$\om$, we have $t^\om
  \notin {}^\om \ul \emp$, meaning $\om \ntDash t \bin \ul \emp$. This gives $(t
  \bin \ul \emp)_\Om = \emp$ a.s., so that $\opmu \emp = P(t \bin \ul \emp \mid
  X) = 0$. Similarly, since $P(t \bin \ul \bR \mid X) = 1$, we have $\om \tDash
  t \bin \ul \bR$ a.s., and it follows that $\opmu \bR = 1$.

  Let $\{V_n\} \subseteq \cB(\bR)$ be pairwise disjoint, and set $V = \bigcup_n
  V_n$. Define the sentences $\ph_n = t \bin \ul{V_n}$. Let $i \ne j$. As in the
  proof of Corollary \ref{C:real-FOR}, we have $\ZFC \vdash \forall x \neg (x
  \bin \ul{V_i} \wedge x \bin \ul{V_j})$. Hence, $\ZFC \vdash \neg (\ph_i \wedge
  \ph_j)$, which gives $P(\ph_i \wedge \ph_j \mid X) = 0$. Theorem
  \ref{T:ctbl-add} therefore implies $P(\bigvee_n \ph_n \mid X) = \sum_n P(\ph_n
  \mid X) = \sum_n \opmu V_n$. On the other hand, as above, $\ZFC \vdash t \bin
  \ul V \tot \bigvee_n \ph_n$, so that Proposition \ref{P:log-equiv-gen} gives
  $P(\bigvee_n \ph_n \mid X) = P(t \bin V \mid X) = \opmu V$.
\end{proof}

Let $t$ be Borel given $X$. We define the \emph{expected value of $t$ given $X$}
by $E[t \mid X] = \int_\bR x \, \mu_{t \mid X}(dx)$, provided this integral
exists. Note that $E[t \mid X] \in [-\infty, \infty]$. If $\int_\bR |x| \,
\mu_{t \mid X}(dx) < \infty$, then we say $t$ is \emph{integrable given $X$}. In
this case, $E[t \mid X] \in \bR$.
  \index{expected value}%
  \index{term!integrable ---}%
  \symindex{$E[t \mid X]$}%

\subsection{Jointly Borel terms}

A finite sequence of terms, $t_1, \ldots, t_n$, are \emph{jointly Borel given
$X$} if each $t_i$ is Borel given $X$ and $P(\bigwedge_{i = 1}^n t_i \bin
\ul{V_i} \mid X)$ exists, whenever $V_i \in \cB(\bR)$. If $t_1, \ldots, t_n$ are
jointly Borel, then each $t_i$ is Borel. The converse holds when $P$ is
complete, by Definition \ref{D:complete}(i). More generally, an indexed
collection of terms, $\bft = \ang{t_i \mid i \in I}$, is \emph{jointly Borel
given $X$} if each finite subsequence is.
  \index{term!jointly Borel ---}%

\begin{prop}\label{P:term-dist-Rn}
  If $\bft = (t_1, \ldots, t_n)$ are jointly Borel given $X$, then there exists
  a unique probability measure $\mu = \mu_{\bft \mid X}$ on $(\bR^n, \cB
  (\bR^n))$ such that
    \symindex{$\mu_{\bft \mid X}$}%
  \[
    \ts{
      \opmu V_1 \times \cdots \times V_n
      = P(\bigwedge_{k = 1}^n t \bin \ul{V_k} \mid X),
    }
  \]
  whenever $i(1), \ldots, i(n) \in I$ and $V_k \in \cB(\bR)$.
\end{prop}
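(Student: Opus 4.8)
The plan is to mimic the proof of Proposition~\ref{P:term-dist}, replacing the single term by the vector $\bft$ and the target line $\bR$ by $\bR^n$. First I would pass to a model. Since $P$ is an inductive theory, it is satisfiable by Theorem~\ref{T:ind-th-model}, so I may choose $\sQ = (\Om, \Ga, \bbQ) \vDash P$; because $\ZFC \subseteq T_0 \subseteq T_P$, this forces $\sQ \vDash \ZFC$. Writing $X \equiv Y \cup \{\psi\}$ with $\sQ \vDash Y$ and $\olbbQ \psi_\Om > 0$, I condition on $\psi_\Om$ and pass to the completion, obtaining a complete model $\sP = (\Om, \Si, \bbP)$ with $\bbP \psi_\Om = 1$ and $P(\ph \mid X) = \bbP \ph_\Om$ whenever $P(\ph \mid X)$ is defined, exactly as in the proof of Proposition~\ref{P:term-dist}. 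Conditioning preserves probability-one sentences, so $\sP \vDash \ZFC$, and I may therefore invoke the real frame of reference and assume $\sP$ satisfies conditions (i)--(v) of Theorem~\ref{T:real-FOR}.

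Next I would build the random vector and push $\bbP$ forward through it. Since each $t_k$ is real given $X$, we have $\bbP (t_k \bin \ul\bR)_\Om = 1$, and property~(iii) of Theorem~\ref{T:real-FOR} gives $t_k^\om \in {}^\om \ul\bR \subseteq \bR$ on a set $\Om^* \in \Si$ with $\bbP \Om^* = 1$. I then define $T \colon \Om \to \bR^n$ by $T \om = (t_1^\om, \ldots, t_n^\om)$ for $\om \in \Om^*$ and $T \om = 0$ otherwise. The crux of the argument is the measurability of $T$. For a single rectangle $R = V_1 \times \cdots \times V_n$ with each $V_k \in \cB(\bR)$, property~(v) of Theorem~\ref{T:real-FOR} holds a.s.\ for each of the finitely many $V_k$, so on a full-measure subset of $\Om^*$ we have, for every $k$, that $t_k^\om \in V_k$ if and only if $t_k^\om \in V_k \cap {}^\om \ul\bR = {}^\om \ul{V_k}$, which in turn holds if and only if $\om \tDash t_k \bin \ul{V_k}$. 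Hence $T^{-1} R$ agrees $\bbP$-a.e.\ with $(\bigwedge_{k = 1}^n t_k \bin \ul{V_k})_\Om$. Since $\bft$ is jointly Borel given $X$, the probability $P(\bigwedge_k t_k \bin \ul{V_k} \mid X)$ exists, so this truth set lies in $\Si$, and the completeness of $\sP$ gives $T^{-1} R \in \Si$. Because preimages commute with complements and countable unions, the collection $\{B \in \cB(\bR^n) \mid T^{-1} B \in \Si\}$ is a $\si$-algebra; it contains every such rectangle, and the rectangles generate $\cB(\bR^n)$, so $T$ is measurable.

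Finally I would take $\mu = \mu_{\bft \mid X}$ to be the pushforward $\bbP \circ T^{-1}$, which is a probability measure on $(\bR^n, \cB(\bR^n))$. For any rectangle the computation above yields $\mu(V_1 \times \cdots \times V_n) = \bbP\, T^{-1}(V_1 \times \cdots \times V_n) = \bbP (\bigwedge_k t_k \bin \ul{V_k})_\Om = P(\bigwedge_k t_k \bin \ul{V_k} \mid X)$, which establishes existence. For uniqueness, the measurable rectangles form a $\pi$-system that contains $\bR^n = \bR \times \cdots \times \bR$ and generates $\cB(\bR^n)$, so Dynkin's $\pi$-$\la$ theorem forces any two probability measures agreeing on it to coincide; since the prescribed values pin down $\mu$ on this $\pi$-system, $\mu$ is unique. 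The main obstacle is the measurability step: translating $T^{-1}$ of a rectangle into the truth set of $\bigwedge_k t_k \bin \ul{V_k}$ rests entirely on the real frame of reference, and care is needed because property~(v) is only an almost-sure statement for each fixed Borel set rather than a single statement holding simultaneously for all of them. The $\pi$-system/$\si$-algebra reduction, together with the completeness of $\sP$, is precisely what lets me get away with the a.s.\ identification holding separately for each rectangle.
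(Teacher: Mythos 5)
Your proposal is correct, but it takes a genuinely different route from the paper. The paper's proof is essentially one line: it puts the prescribed values on the algebra of finite disjoint unions of measurable rectangles and invokes Carath\'eodory's extension theorem exactly as in the proof of Theorem \ref{T:prob-model-iso-ZFC-}, with both existence and uniqueness coming from Carath\'eodory. You instead continue the semantic argument of Proposition \ref{P:term-dist} into dimension $n$: pass to a model, condition on $\psi$, complete, invoke the real frame of reference, build an honest random vector $T\om = (t_1^\om, \ldots, t_n^\om)$, and take $\mu = \bbP \circ T^{-1}$, with uniqueness from Dynkin's $\pi$-$\la$ theorem. What the paper's route buys is brevity, since the Carath\'eodory machinery was already set up; what it hides is the verification that the set function is a pre-measure on the algebra, which requires exactly the kind of translation between provable disjointness (via Corollary \ref{C:real-FOR}) and Theorem \ref{T:ctbl-add} that your construction sidesteps: in your proof, countable additivity is inherited from $\bbP$ for free. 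Your measurability argument for $T$ (rectangle preimages agree a.e.\ with truth sets of jointly Borel conjunctions, completeness of the conditioned model, then a $\si$-algebra argument) is sound, and your caution that Theorem \ref{T:real-FOR}(v) is only an almost-sure statement for each fixed Borel set is exactly the right point to worry about.

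One side-claim should be repaired, though it does not affect the architecture: from $\sQ \vDash P$ and $\ZFC \subseteq T_0 \subseteq T_P$ you cannot conclude $\sQ \vDash \ZFC$. A model of an inductive theory need only satisfy the sentences of the root conditionally, not strictly; this is precisely the point of Section \ref{S:diff-root} and Proposition \ref{P:drop-root-fewer-models}, where $\sQ \vDash P$ yet $\sQ$ assigns probability strictly less than one to a sentence generating $T_0$. What is true, and what the proof of Proposition \ref{P:term-dist} actually uses, is that the \emph{conditioned} model satisfies $\ZFC$: for $\th \in \ZFC \subseteq T(X)$ and $X \equiv Y \cup \{\psi\}$ with $\sQ \vDash Y$, we have $Y \vdash \psi \to \th$, so $\sQ \vDash \psi \to \th$ by soundness, hence $\olbbQ (\psi_\Om \cap \th_\Om^c) = 0$ and $\bbP \th_\Om = 1$. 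Since you defer this step to the proof of Proposition \ref{P:term-dist}, the conclusion you need survives; only the justification you give for it is wrong.
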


\begin{proof}
  The existence and uniqueness of $\mu$ follows from Carath\'eodory's extension
  theorem, as in the proof of Theorem \ref{T:prob-model-iso-ZFC-}, using as our
  algebra the set of finite disjoint unions of measurable rectangles, $V_1
  \times \cdots \times V_n$.
\end{proof}

The unique Borel probability measure $\mu_{\bft \mid X}$ in Proposition
\ref{P:term-dist-Rn} is called the \emph{distribution of $\bft$ given $X$}.
  \index{distribution}%

\begin{rmk}\label{R:term-dist-Rn}
  Just as we did for $\cB(\bR)$, we can explicitly define $\ul V$ for each $V
  \in \cB(\bR^n)$ and establish a real frame of reference for $\bR^n$. If $\mu$
  is the distribution of $(t_1, \ldots, t_n)$ given $X$, then $\opmu V = P(
  (t_1, \ldots, t_n) \bin \ul V \mid X)$ for all $V \in \cB(\bR^n)$. To see
  this, let $\Ga$ be the set of $V \subseteq \bR^n$ for which it holds.
  Proposition \ref{P:term-dist-Rn} shows that $\Ga$ contains the algebra of
  finite disjoint unions of measurable rectangles, which is a $\pi$-system that
  generates $\cB(\bR^n)$. Since $P$ is an inductive theory, we have that $\Ga$
  is a $\la$-system. Therefore, by Dynkin's $\pi$-$\la$ theorem, $\cB(\bR^n)
  \subseteq \Ga$.
\end{rmk}

\subsection{Independence of terms}
  \index{term!independent ---}

Let $I$ be a set with $|I| \ge 2$ and let $\ang{t_i \mid i \in I}$ be jointly
Borel given $X$. Then $\ang{t_i \mid i \in I}$ are \emph{independent given $X$}
if $\ang{t_i \bin \ul{V_i} \mid i \in I}$ are independent given $X$, whenever
$V_i \in \cB(\bR)$. We say that $\ang{t_i \mid i \in I}$ are \emph{identically
distributed given $X$} if the distribution of $t_i$ given $X$ does not depend on
$i$. As usual, we write i.i.d.~for the phrase, ``independent and identically
distributed.''

\begin{rmk}\label{R:dist-indep}
  If $\bft = (t_1, \ldots, t_n)$ are independent given $X$, then Corollary
  \ref{C:indep-sound-compl} implies that $\mu_{\bft \mid X} = \prod_{k = 1}^n
  \mu_{t_i \mid X}$.
\end{rmk}

\begin{prop}\label{P:term-process}
  Suppose $\ang{t_i \mid i \in I}$ are jointly Borel given $X$. Then there
  exists a real-valued stochastic process, $\ang{Y_i \mid i \in I}$, defined on
  a probability space, $(S, \Ga, \nu)$, such that
  \begin{equation}\label{term-process}
    \ts{
      P(\bigwedge_{k = 1}^n t_{i(k)} \bin \ul{V_k} \mid X) =
      \opnu \bigcap_{k = 1}^n \{Y_{i(k)} \in V_k\},
    }
  \end{equation}
  whenever $i(1), \ldots, i(n) \in I$ and $V_k \in \cB(\bR)$. Moreover, if $
  \ang{t_i \mid i \in I}$ are independent given $X$, then $\ang{Y_i \mid i \in
  I}$ are independent.
\end{prop}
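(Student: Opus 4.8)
The plan is to realize $\ang{Y_i \mid i \in I}$ as the canonical coordinate process on $\bR^I$, built from the joint distributions that $P$ assigns to the terms $t_i$ by way of Kolmogorov's extension theorem. Since each $t_i$ takes values in the Polish space $\bR$, that theorem applies once a consistent family of finite-dimensional distributions is in hand, and the bulk of the work is producing and checking that family.

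First I would record the finite-dimensional distributions. Because $\ang{t_i \mid i \in I}$ is jointly Borel given $X$, for every finite $J \subseteq I$ the probabilities $P(\bigwedge_{j \in J} t_j \bin \ul{V_j} \mid X)$ all exist, and Proposition \ref{P:term-dist-Rn} yields a probability measure $\mu_J$ on $\bR^J$ satisfying
\[
  \mu_J\Bigl(\ts{\prod_{j \in J}} V_j\Bigr)
    = P\Bigl(\ts{\bigwedge_{j \in J}} t_j \bin \ul{V_j} \mid X\Bigr),
  \qquad V_j \in \cB(\bR).
\]
I would index the factors of $\bR^J$ by the elements of $J$ itself, keeping the construction coordinate-free; since conjunction is symmetric and $\mu_J$ is unique by Proposition \ref{P:term-dist-Rn}, this measure does not depend on any enumeration of $J$, so the permutation-invariance required by Kolmogorov's theorem holds automatically.

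The substantive step is the marginalization consistency: for finite $J \subseteq K$, the projection $\pi^K_J \colon \bR^K \to \bR^J$ must push $\mu_K$ forward to $\mu_J$. Evaluated on a rectangle $\prod_{j \in J} V_j$, this reduces to
\[
  P\Bigl(\ts{\bigwedge_{j \in J}} t_j \bin \ul{V_j}
    \wedge \ts{\bigwedge_{k \in K \setminus J}} t_k \bin \ul{\bR} \mid X\Bigr)
  = P\Bigl(\ts{\bigwedge_{j \in J}} t_j \bin \ul{V_j} \mid X\Bigr).
\]
This is exactly where the hypothesis that each $t_i$ is \emph{real} given $X$ is used: by definition $P(t_k \bin \ul \bR \mid X) = 1$ for each $k$, so Corollary \ref{C:certainty-closure} gives $P(\bigwedge_{k \in K \setminus J} t_k \bin \ul \bR \mid X) = 1$, and then Proposition \ref{P:cert-cl-conv} lets me delete the conjuncts $t_k \bin \ul \bR$ without changing the probability. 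I expect this to be the main obstacle only in a bookkeeping sense; its logical content is precisely Proposition \ref{P:cert-cl-conv}, and everything else in the consistency check is formal.

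With consistency established, I would invoke Kolmogorov's extension theorem to obtain a probability measure $\nu$ on $S = \bR^I$, equipped with the product $\sigma$-algebra $\Ga$ generated by the coordinate maps, whose marginal on each $\bR^J$ is $\mu_J$. Taking $Y_i \colon \bR^I \to \bR$ to be the $i$-th coordinate projection (automatically $\Ga$-measurable, hence a real-valued stochastic process), the defining property of $\nu$ reads
\[
  \opnu \ts{\bigcap_{k=1}^n} \{Y_{i(k)} \in V_k\}
    = \mu_{\{i(1), \ldots, i(n)\}}\Bigl(\ts{\prod_{k=1}^n} V_k\Bigr)
    = P\Bigl(\ts{\bigwedge_{k=1}^n} t_{i(k)} \bin \ul{V_k} \mid X\Bigr),
\]
which is \eqref{term-process}. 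Finally, if $\ang{t_i \mid i \in I}$ are independent given $X$, then Remark \ref{R:dist-indep} shows each $\mu_J$ factors as $\prod_{j \in J} \mu_{t_j \mid X}$; the finite-dimensional distributions of the coordinate process are then products of the one-dimensional marginals, which is exactly the statement that $\ang{Y_i \mid i \in I}$ are independent, completing the proof.
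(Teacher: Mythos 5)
Your proposal is correct and follows essentially the same route as the paper: finite-dimensional distributions from Proposition \ref{P:term-dist-Rn}, Kolmogorov's extension theorem, the coordinate process on $\bR^I$, and independence via Remark \ref{R:dist-indep}. The only difference is cosmetic---where the paper dismisses the consistency check with a reference to ``the methods in the proof of Proposition \ref{P:term-dist},'' you make it explicit via Corollary \ref{C:certainty-closure} and Proposition \ref{P:cert-cl-conv}, which is exactly the intended argument.
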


\begin{proof}
  Let $S = \bR^I$ and $\Ga = \bigotimes_{i \in I} \cB(\bR)$ the product
  $\si$-algebra. For each $i \in I$, let $\mu_i$ be the distribution of $t_i$.
  For each $n \in \bN$ and $\bfi = (i(1), \ldots, i(n)) \in I^n$, let $\bft
  (\bfi) = (t_{i(1)}, \ldots, t_{i(n)})$, so that $\mu_{\bft(\bfi) \mid X}$ is a
  probability measure on $(\bR^n, \cB(\bR^n))$. Using the methods in the proof
  of Proposition \ref{P:term-dist}, we have that the measures $\mu_{\bft(\bfi)
  \mid X}$ are consistent, in the sense of Kolmogorov. Hence, by Kolmogorov's
  extension theorem (see, for instance, \cite[Theorem 2.2.2] {Karatzas1991}),
  there exists a probability measure $\nu$ on $(S, \Ga)$ such that
  \begin{equation}\label{term-process-2}
    \opnu \{x \in S \mid (x_{i(1)}, \ldots, x_{i(n)}) \in A\}
    = \opmu_{\bft(\bfi) \mid X} A,
  \end{equation}
  whenever $A \in \cB(\bR^n)$. Define $Y_i: S \to \bR$ by $Y_i(x) = x_i$. Then
  $\ang{Y_i \mid i \in I}$ is a real-valued stochastic process, and
  \eqref{term-process} follows from \eqref{term-process-2}. The final claim
  about independence follows from Corollary \ref{C:indep-sound-compl}, 
  \eqref{term-process-2}, and Remark \ref{R:dist-indep}.
\end{proof}

\subsection{The law of large numbers for terms}

\begin{lemma}\label{L:LLN}
  Suppose $\ang{t_n \mid n \in \bN}$ are jointly Borel given $X$, and let $
  \ang{Y_n \mid n \in \bN}$ be as in Proposition \ref{P:term-process}. Define
  the terms $s_n = (t_1 + \cdots + t_n)/\ul n$ and the random variables $Z_n =
  (Y_1 + \cdots + Y_n)/n$. Then
  \[
    \ts{
      P(\bigwedge_{k = 1}^n s_{i(k)} \bin \ul{V_k} \mid X) =
      \opnu \bigcap_{k = 1}^n \{Z_{i(k)} \in V_k\},
    }
  \]
  whenever $i(1), \ldots, i(n) \in \bN$ and $V_k \in \cB(\bR)$.
\end{lemma}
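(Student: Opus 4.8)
The plan is to realize the averaged terms $s_n$ and the averaged random variables $Z_n$ as images of the two families $\ang{t_i}$ and $\ang{Y_i}$ under one and the same Borel map, and then to transport the already-established joint law of $\ang{t_i}$ (Proposition~\ref{P:term-dist-Rn}, Remark~\ref{R:term-dist-Rn}) through that map on both the logical and the measure-theoretic side. Fix $i(1),\dots,i(n)\in\bN$ and $V_1,\dots,V_n\in\cB(\bR)$, and set $m=\max_k i(k)$. For $1\le j\le m$ let $g_j\colon\bR^j\to\bR$ be the continuous (hence Borel) averaging map $g_j(x_1,\dots,x_j)=(x_1+\cdots+x_j)/j$, and define the Borel map $G\colon\bR^m\to\bR^n$ by $G(\vec x)=(g_{i(1)}(x_1,\dots,x_{i(1)}),\dots,g_{i(n)}(x_1,\dots,x_{i(n)}))$. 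Put $W=G^{-1}(V_1\times\cdots\times V_n)\in\cB(\bR^m)$ and let $\mu_m=\mu_{(t_1,\dots,t_m)\mid X}$ be the joint distribution of $(t_1,\dots,t_m)$ given $X$, which exists since the finite subsequence $t_1,\dots,t_m$ is jointly Borel given $X$. The whole lemma then reduces to showing that both sides of the asserted identity equal $\mu_m(W)$.

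For the measure-theoretic side I would observe that $Z_{i(k)}=g_{i(k)}(Y_1,\dots,Y_{i(k)})$, so that $\bigcap_{k=1}^n\{Z_{i(k)}\in V_k\}=\{(Y_1,\dots,Y_m)\in W\}$. By Proposition~\ref{P:term-process}, the law of $(Y_1,\dots,Y_m)$ under $\nu$ is exactly $\mu_m$, and therefore $\opnu\bigcap_{k=1}^n\{Z_{i(k)}\in V_k\}=\mu_m(W)$.

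For the logical side I want $P(\bigwedge_{k=1}^n s_{i(k)}\bin\ul{V_k}\mid X)=\mu_m(W)$, with existence included. Writing $\ph=\bigwedge_{k=1}^n s_{i(k)}\bin\ul{V_k}$, and using that $P$ is an inductive theory with $X\in\ante P$ (so $X\cao[T_0,T_P]$ by Remark~\ref{R:ante-cao}), Remark~\ref{R:classic-ind-th-char} lets me reduce the claim to verifying $P\vDash(X,\ph,\mu_m(W))$; and Theorem~\ref{T:conseq-defn-simple} lets me check the defining condition only for complete models $\sP\vDash P$ with $\sP\vDash T_0$. Fixing such a model and passing, as in the proof of Proposition~\ref{P:term-dist}, to the conditional measure $\bbP$ with $P(\psi\mid X)=\bbP\,\psi_\Om$ and $\sP\vDash\ZFC$, I apply the real frame of reference (Theorems~\ref{T:real-FOR} and~\ref{T:real-FOR-2} together with Remark~\ref{R:term-dist-Rn} for $\bR^m$). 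For $\bbP$-a.e.\ $\om$, each $t_j^\om\in{}^\om\ul\bR\subseteq\bR$ (the $t_j$ being real given $X$) and the interpretations of $+$ and of division by a nonzero natural agree with ordinary real arithmetic on ${}^\om\ul\bR$; hence $s_{i(k)}^\om=g_{i(k)}(t_1^\om,\dots,t_{i(k)}^\om)\in{}^\om\ul\bR$ a.s. Rewriting $\om\tDash s_{i(k)}\bin\ul{V_k}$ as $s_{i(k)}^\om\in V_k\cap{}^\om\ul\bR$, i.e.\ as $s_{i(k)}^\om\in V_k$, via Theorem~\ref{T:real-FOR}(v), I get that a.s.\ $\om\tDash\ph$ iff $(t_1^\om,\dots,t_m^\om)\in W$. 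Thus $\bbP\,\ph_\Om=\bbP\{(t_1^\om,\dots,t_m^\om)\in W\}=\mu_m(W)$, the last equality by Remark~\ref{R:term-dist-Rn} applied to $(t_1,\dots,t_m)$ and $W$. This is precisely the conditional-probability condition for $\sP\vDash(X,\ph,\mu_m(W))$, so completeness yields $P(\ph\mid X)=\mu_m(W)$. Combining with the previous paragraph finishes the proof.

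The step demanding the most care is the arithmetic evaluation $s_{i(k)}^\om=g_{i(k)}(t_1^\om,\dots,t_{i(k)}^\om)$ in the real frame of reference: it requires that, almost surely, not merely $+$ but also division by $\ul n$ be interpreted on ${}^\om\ul\bR$ as the genuine real operation. The agreement of $+$, $\bdot$, $<$ with real arithmetic is the content of the remark following Theorem~\ref{T:real-FOR-2}, and the analogue for division by a nonzero natural follows by the same reasoning—division being definable in $\ZFC$, agreeing with rational division on $\ul\bQ$, and continuous—so that its interpretation is pinned down on ${}^\om\ul\bR$ exactly as $+$ is. The remaining bookkeeping, namely that $\ph_\Om\in\ol\Si$ so the conditional probabilities are defined, is automatic once $\ph_\Om$ has been identified almost surely with the preimage $\{(t_1^\om,\dots,t_m^\om)\in W\}$, which is measurable because $(t_1,\dots,t_m)$ is jointly Borel given $X$.
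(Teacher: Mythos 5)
Your proof is correct, and its set-theoretic and measure-theoretic halves coincide exactly with the paper's: the same window $m = \max_k i(k)$, the same Borel set $W = G^{-1}(V_1 \times \cdots \times V_n)$ (the paper calls it $V'$ and builds it as $\bigcap_k f_{i(k)}^{-1} V_k \times \bR^{m - i(k)}$), and the same appeal to Proposition \ref{P:term-process} to get $\opnu \bigcap_{k} \{Z_{i(k)} \in V_k\} = \opmu_{(t_1, \ldots, t_m) \mid X} W$. Where you genuinely diverge is the logical side. The paper finishes syntactically: it asserts the derivability $\ZFC \vdash \bigwedge_{k} s_{i(k)} \bin \ul{V_k} \tot (t_1, \ldots, t_m) \bin \ul{V'}$, gets existence and value of $P((t_1, \ldots, t_m) \bin \ul{V'} \mid X)$ from Remark \ref{R:term-dist-Rn}, and transfers both to $\ph = \bigwedge_k s_{i(k)} \bin \ul{V_k}$ in one stroke via the rule of logical implication and Proposition \ref{P:log-equiv-gen} --- no models, no measurability bookkeeping. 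You instead verify $P \vDash (X, \ph, \mu_m(W))$ semantically through Remark \ref{R:classic-ind-th-char} and Theorem \ref{T:conseq-defn-simple}, which forces you through the conditional-measure and real-frame-of-reference apparatus of Proposition \ref{P:term-dist}, including the a.s.~identification of $\ph_\Om$ and the completion issues (which Lemma \ref{L:cond-compl} covers). The two routes are cousins rather than strangers: the paper's unproved derivability claim would itself be established by deductive completeness plus exactly the real-frame computation you carry out, so your argument in effect inlines what the paper quotes as a single $\vdash$-fact. The paper's route buys brevity and makes existence of $P(\ph \mid X)$ fall out of the calculus for free; your route leaves no syntactic claim unproved and has the virtue of surfacing the one point both arguments quietly rely on, namely that division by $\ul n$, like $+$ and $\bdot$, must be interpreted as genuine real division on ${}^\om \ul\bR$ in the real frame of reference --- a point the paper's proof never mentions, and which your sketch resolves at the same level of rigor as the remark following Theorem \ref{T:real-FOR-2}.
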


\begin{proof}
  Let $i(1), \ldots, i(n) \in \bN$ and $V_k \in \cB(\bR)$. For each $n \in \bN$,
  define $f_n: \bR^n \to \bR$ by $f_n(x_1, \ldots, x_n) = (x_1 + \cdots +
  x_n)/n$. Let $m = \max\{i(1), \ldots, i(n)\}$ and $V_k' = f_{i(k)}^{-1} V_k
  \times \bR^{m - i(k)}$. Then $\{Z_{i(k)} \in V_k\} = \{(Y_1, \ldots, Y_m)
  \in V_k'\}$, so that $\bigcap_{k = 1}^n \{Z_{i(k)} \in V_k\} = \{(Y_1, \ldots,
  Y_m) \in V'\}$, where $V' = \bigcap_{k = 1}^n V_k'$. By the definition of
  $\nu$, we have $\opnu \bigcap_{k = 1}^n \{Z_{i(k)} \in V_k\} = \opmu V'$,
  where $\mu$ is the distribution of $(t_1, \ldots, t_m)$ given $X$. Hence, by
  Remark \ref{R:term-dist-Rn},
  \[
    \ts{
      \opnu \bigcap_{k = 1}^n \{Z_{i(k)} \in V_k\}
      = P((t_1, \ldots, t_m) \bin \ul{V'} \mid X).
    }
  \]
  But $\ZFC \vdash \bigwedge_{k = 1}^n s_{i(k)} \bin \ul{V_k} \tot (t_1, \ldots,
  t_m) \bin \ul{V'}$, so the result follows from the rule of logical implication
  and Proposition \ref{P:log-equiv-gen}.
\end{proof}

\begin{thm}[Law of large numbers]\label{T:LLN}
    \index{law of large numbers}%
  Let $P$ be a real inductive theory in $\ZFC$, and let $X \in \ante P$. Let
  $\ang{t_n \mid n \in \bN}$ be a sequence of terms that are i.i.d~given $X$.
  Assume $t_1$ is integrable and let $\mu = E[t_1 \mid X]$. Define the terms
  $s_n = (t_1 + \cdots + t_n)/\ul n$, and let
  \[
    \ts{
      s = \{
        (x, y) \bin \ul \bN_0 \times \ul \bR
      \mid
        \bigvee_{n \in \bN_0} x \beq \ul n \wedge y \beq s_n
      \}.
    }
  \]
  Then $P(\ph_\lm(s, \ul \mu) \mid X) = 1$.
\end{thm}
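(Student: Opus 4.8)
The plan is to prove the statement semantically, via the inductive soundness and completeness theorem. Since $P$ is an inductive theory, Remark \ref{R:classic-ind-th-char} reduces the claim $P(\ph_\lm(s, \ul \mu) \mid X) = 1$ to showing $P \vDash (X, \ph_\lm(s, \ul \mu), 1)$. Conditions (i) and (ii) of Definition \ref{D:consequence} hold automatically: $P$ is connected and satisfiable because it is an inductive theory, and $X \cao [T_0, T_P]$ by Remark \ref{R:ante-cao}, since $X \in \ante P$. For condition (iii), Theorem \ref{T:conseq-defn-simple} lets me restrict to complete models $\sP = (\Om, \Si, \bbP)$ with $\sP \vDash P$ and $\sP \vDash T_0$. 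So the entire proof comes down to checking $\sP \vDash (X, \ph_\lm(s, \ul \mu), 1)$ for each such $\sP$.

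Fix such an $\sP$. Because $T_0 \supseteq \ZFC$, we have $\sP \vDash \ZFC$, so I would invoke the real frame of reference (Theorems \ref{T:real-FOR}, \ref{T:real-FOR-2}, and \ref{T:real-FOR-3}) to assume, after passing to an isomorphic model, that for almost every $\om$ the symbols $\ul \bR$, $\ul V$, $\ul q$, $\ul r$ and the operations $+$, $\bdot$, $<$ carry their standard meanings on ${}^\om \ul \bR \subseteq \bR$, and that sequences and limits are interpreted standardly. Writing $X \equiv Y \cup \{\psi\}$ with $\sP \vDash Y$ and $\olbbP \psi_\Om > 0$, I form the conditional measure $\bbP_X$ on $(\Om, \ol \Si)$ by $\bbP_X A = \olbbP (A \cap \psi_\Om)/\olbbP \psi_\Om$. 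Since the $t_n$ are i.i.d.\ given $X$, each is Borel given $X$, so $P(t_n \bin \ul \bR \mid X) = 1$; combined with Corollary \ref{C:real-FOR}, this makes $\om \mapsto t_n^\om$ a genuine $\bR$-valued random variable under $\bbP_X$, and the real frame of reference gives that $\bbP_X$-a.s.\ the term $s^\om$ is the sequence $n \mapsto (t_1^\om + \cdots + t_n^\om)/n$, that $\ul \mu^\om = \mu$, and that $\om \tDash \ph_\lm(s, \ul \mu)$ holds exactly when $(t_1^\om + \cdots + t_n^\om)/n \to \mu$ (Theorem \ref{T:real-FOR-2}).

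Next I would import the classical strong law. By Corollary \ref{C:indep-sound-compl} and the fact that the $t_n$ are identically distributed given $X$, the law of $\om \mapsto \ang{t_n^\om \mid n \in \bN}$ under $\bbP_X$ is exactly the i.i.d.\ product measure with one-dimensional marginal $\mu_{t_1 \mid X}$; this is precisely the distribution of the process $\ang{Y_n \mid n \in \bN}$ produced by Proposition \ref{P:term-process}. Since $t_1$ is integrable given $X$ with $E[t_1 \mid X] = \mu$, the $Y_n$ are i.i.d.\ integrable with common mean $\mu$, so the classical strong law of large numbers gives $(Y_1 + \cdots + Y_n)/n \to \mu$ almost surely. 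Transporting this a.s.\ statement back through the measure-preserving map $\om \mapsto \ang{t_n^\om \mid n \in \bN}$ shows that $(t_1^\om + \cdots + t_n^\om)/n \to \mu$ for $\bbP_X$-a.e.\ $\om$. Hence $\psi_\Om \setminus \ph_\lm(s, \ul \mu)_\Om$ is contained in a null set; completeness of $\sP$ then gives $\ph_\lm(s, \ul \mu)_\Om \cap \psi_\Om \in \Si$ with $\olbbP(\ph_\lm(s, \ul \mu)_\Om \cap \psi_\Om) = \olbbP \psi_\Om$, which is exactly $\sP \vDash (X, \ph_\lm(s, \ul \mu), 1)$.

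The probabilistic heart of the argument, the strong law itself, is used as a black box, so I expect the main obstacle to be the faithful translation between the single infinitary formula $\ph_\lm(s, \ul \mu)$ and the measure-theoretic convergence event. This is where the three real-frame-of-reference theorems do the real work: they must be combined to reduce the bounded quantifiers over $\ul \bN_0$ and over $\ul{(0,\infty)}$ to countable operations (using that $\ZFC$ proves $\forall x (x \bin \ul \bN_0 \tot \bigvee_n x \beq \ul n)$), to guarantee that $s^\om$ really is the averaged sequence, and to secure the requisite measurability: that the convergence event is measurable and that $\om \mapsto \ang{t_n^\om \mid n \in \bN}$ pushes $\bbP_X$ forward to the i.i.d.\ product measure. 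A secondary bookkeeping point is verifying $P(s \bin \ul \bR^{\ul \bN_0} \mid X) = 1$, which follows from $P(t_n \bin \ul \bR \mid X) = 1$ for all $n$ via Corollary \ref{C:certainty-closure} together with a routine deductive fact about the defining term for $s$.
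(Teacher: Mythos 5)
Your proposal is correct, but it takes a genuinely different route from the paper's proof. The paper never quantifies over models of $P$ and never invokes the completeness theorem: it works entirely inside the inductive calculus. Its opening move coincides with your translation step --- using \eqref{nonstd-N-2} and the real frame of reference to get $\ZFC \vdash \ph_\lm(s, \ul \mu) \tot \psi$, where $\psi = \bigwedge_{\ell} \bigvee_{m} \bigwedge_{k} \bigwedge_{n = m}^{m + k} s_n \bin \ul{V_\ell}$ and $V_\ell = (\mu - 1/\ell, \mu + 1/\ell)$ --- but it then applies the continuity rule three times to express $P(\psi \mid X)$ as an iterated limit of the probabilities $P(\bigwedge_{n = m}^{m + k} s_n \bin \ul{V_\ell} \mid X)$, identifies these finite-dimensional quantities with $\opnu \bigcap_{n = m}^{m + k} \{Z_n \in V_\ell\}$ for the auxiliary classical process of Proposition \ref{P:term-process} via Lemma \ref{L:LLN}, and finishes with the classical strong law. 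Your route instead leans on the semantic machinery (Remark \ref{R:classic-ind-th-char}, Theorem \ref{T:conseq-defn-simple}, the real-frame-of-reference theorems, plus Theorem \ref{T:ind-iso-thm} to justify passing to an isomorphic model), and in exchange must do measure-theoretic work the paper avoids: showing that $\om \mapsto t_n^\om$ is almost surely a well-defined measurable map in an arbitrary complete model and that the conditional measure pushes forward to the i.i.d.\ product measure --- in effect re-deriving the content of Lemma \ref{L:LLN} pointwise in $\om$ rather than at the level of finite-dimensional distributions (note that this measurability comes from Theorem \ref{T:real-FOR}(v) together with the existence of $P(t_n \bin \ul V \mid X)$ and Proposition \ref{P:model-func}, not really from Corollary \ref{C:real-FOR} as you suggest). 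What your approach buys is conceptual directness, with the strong law imported by a single pushforward and no nested-limit manipulation; what the paper's buys is economy --- everything happens within $P$ itself, only finite conjunctions of the sentences $s_n \bin \ul{V_\ell}$ ever need to be assigned probabilities, and the only classical probability space required is the one already built by Kolmogorov extension in Proposition \ref{P:term-process}.
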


\begin{proof}
  By \eqref{nonstd-N-2}, we have $\ZFC_\infty \vdash s \bin \ul \bR^{\ul \bN_0}$
  and $s(\ul n) = s_n$. Since we also have $\ZFC \vdash \ul \mu \bin \ul \bR$,
  it follows that
  \[
    \ts{
      \ZFC \vdash \ph_\lm(s, \ul \mu) \tot {
        (\forall z \bin \ul{(0, \infty)})
        \bigvee_{m = 0}^\infty \bigwedge_{n = m}^\infty
        |s_n - \ul \mu| < z
      }.
    }
  \]
  In fact, if we define $\ep_\ell = 1/\ell$ for $\ell \in \bN$, then
  \[
    \ZFC \vdash \ph_\lm(s, \ul \mu) \tot \ts{
      \bigwedge_{\ell = 1}^\infty
      \bigvee_{m = 0}^\infty
      \bigwedge_{n = m}^\infty
      |s_n - \ul \mu| < \ul{\ep_\ell}.
    }
  \]
  Using the real frame of reference, we have $\ZFC \vdash |s_n - \ul \mu| <
  \ul{\ep_\ell} \tot s_n \bin \ul{V_\ell}$, where $V_\ell = (\mu - \ep_\ell, \mu
  + \ep_\ell)$. Hence, $\ZFC \vdash \ph_\lm(s, \ul \mu) \tot \psi$, where
  \[
    \psi = \ts{
      \bigwedge_{\ell = 1}^\infty
      \bigvee_{m = 0}^\infty
      \bigwedge_{k = 0}^\infty
      \bigwedge_{n = m}^{m + k}
      s_n \bin \ul{V_\ell}
    }.
  \]
  Hence, by the rule of logical implication and Proposition
  \ref{P:log-equiv-gen}, it suffices to show $P(\psi \mid X) = 1$.

  By the continuity rule,
  \[
    P(\psi \mid X) = {
      \lim_{\ell \to \infty} \lim_{m \to \infty} \lim_{k \to \infty}
      \ts{P(\bigwedge_{n = m}^{m + k} s_n \bin \ul{V_\ell} \mid X)}
    }.
  \]
  Let $Y_n$ and $Z_n$ be as in Lemma \ref{L:LLN}. Then the $Y_n$ are i.i.d.~and
  integrable, and $E^\nu [Y_1] = \mu$. By Lemma \ref{L:LLN},
  \begin{align*}
    P(\psi \mid X) &= {
      \lim_{\ell \to \infty} \lim_{m \to \infty} \lim_{k \to \infty}
      \ts{\opnu \bigcap_{n = m}^{m + k} \{Z_n \bin V_\ell\}}
    }\\
    &= \opnu \ts{
      \bigcap_{\ell = 1}^\infty \bigcup_{m = 0}^\infty \bigcap_{n = m}^\infty
      \{|Z_n - \mu| < \ep_\ell\}
    }.
  \end{align*}
  On the other hand, by the law of large numbers for random variables, $Z_n \to
  \mu$ $\nu$-a.s. Hence, $P(\psi \mid X) = 1$.
\end{proof}

\subsection{The central limit theorem for terms}

Let $t$ be Borel given $X$ and assume that $t$ is integrable. Let $r = E[t \mid
X]$. Then the term $(t - \ul r)^2 = (t - \ul r) \cdot (t - \ul r)$ is Borel
given $X$. The \emph{variance of $t$ given $X$} is defined by $V(t \mid X) =
E[(t - \ul r)^2 \mid X]$. Note that $V(t \mid X) \in [0, \infty]$.

\begin{thm}[Central limit theorem]
    \index{central limit theorem}%
  Let $P$ be a real inductive theory in $\ZFC$, and let $X \in \ante P$. Let
  $\ang{t_n \mid n \in \bN}$ be a sequence of terms that are i.i.d~given $X$.
  Assume $t_1$ is integrable. Let $\mu = E[t_1 \mid X]$ and let $s_n$ be as in
  Theorem \ref{T:LLN}. Let $\si = \sqrt{V(t_1 \mid X)}$ and assume $\si \in (0,
  \infty)$. Then
  \[
    \lim_{n \to \infty} P(\ul{\sqrt n} \bdot (s_n - \ul \mu) \le \ul r \mid X)
    = \frac 1 {\sqrt{2 \pi \si^2}} \int_{-\infty}^r {
      e^{-(x - \mu)^2/2\si^2}
    } \, dx,
  \]
  for all $r \in \bR$.
\end{thm}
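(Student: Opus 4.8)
The plan is to reduce the central limit theorem for terms to the classical central limit theorem for random variables, exactly mirroring the structure of the proof of the law of large numbers (Theorem~\ref{T:LLN}). The key bridge is Proposition~\ref{P:term-process}, which produces an i.i.d.\ sequence of real-valued random variables $\ang{Y_n \mid n \in \bN}$ on a probability space $(S, \Ga, \nu)$ whose joint distributions match the joint distributions of the terms $\ang{t_n \mid n \in \bN}$ given $X$. Since the $t_n$ are i.i.d.\ given $X$ and $t_1$ is integrable with mean $\mu$ and variance $\si^2 \in (0, \infty)$, the $Y_n$ inherit these properties: they are i.i.d.\ with $E^\nu[Y_1] = \mu$ and $\var^\nu(Y_1) = \si^2$. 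This is because the distribution $\mu_{t_1 \mid X}$ is, by Proposition~\ref{P:term-dist}, a genuine Borel probability measure on $\bR$, and it is precisely the common distribution of each $Y_n$; integrability and variance are computed from this measure, so they transfer directly.

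First I would set $W_n = \sqrt n \, (Z_n - \mu)$, where $Z_n = (Y_1 + \cdots + Y_n)/n$ as in Lemma~\ref{L:LLN}. The classical central limit theorem gives that $W_n$ converges in distribution to a $\mathcal{N}(0, \si^2)$ random variable, so that for each $r \in \bR$,
\[
  \lim_{n \to \infty} \opnu \{\sqrt n \, (Z_n - \mu) \le r\}
  = \frac{1}{\sqrt{2 \pi \si^2}} \int_{-\infty}^r e^{-x^2/2\si^2} \, dx.
\]
(Here I note the right-hand side of the theorem statement is the Gaussian with mean $\mu$; the stated integrand $e^{-(x-\mu)^2/2\si^2}$ corresponds to centering the limit at $\mu$ rather than $0$, and the substitution is handled by whichever centering convention is used for $W_n$.) Next I would translate the probabilistic event $\{\sqrt n \, (Z_n - \mu) \le r\}$ into the inductive statement $P(\ul{\sqrt n} \bdot (s_n - \ul \mu) \le \ul r \mid X)$. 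This is the crux of the syntactic-to-semantic matching, and it proceeds exactly as in Lemma~\ref{L:LLN}: using the real frame of reference (Theorems~\ref{T:real-FOR} through \ref{T:real-FOR-3}), the ground term $\ul{\sqrt n} \bdot (s_n - \ul \mu)$ evaluates, almost surely in any satisfying model, to the real number $\sqrt n \, (Z_n(\om) - \mu)$ where the $Z_n$ are realized through the structures. The inequality $\le$ and the arithmetic operations $\bdot$, $-$ all agree with their real counterparts under this frame, so that $P(\ul{\sqrt n} \bdot (s_n - \ul \mu) \le \ul r \mid X)$ equals $\opnu\{\sqrt n \, (Z_n - \mu) \le r\}$ for each fixed $n$.

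The main obstacle I anticipate is the careful handling of the term $\ul{\sqrt n} \bdot (s_n - \ul \mu)$ and its correspondence to the random variable $\sqrt n \,(Z_n - \mu)$ for each fixed $n$, establishing an exact analogue of Lemma~\ref{L:LLN} for these rescaled, recentered sums. Specifically, I would need a lemma asserting that
\[
  P(\ul{\sqrt n} \bdot (s_n - \ul \mu) \le \ul r \mid X)
  = \opnu \{\sqrt n \, (Z_n - \mu) \le r\}
\]
for all $n$ and all $r \in \bR$. This follows by the same device used in Lemma~\ref{L:LLN}: realize the event $\{\sqrt n \,(Z_n - \mu) \le r\}$ as $\{(Y_1, \ldots, Y_n) \in V\}$ for an appropriate Borel set $V = g_n^{-1}((-\infty, r])$, where $g_n(x_1, \ldots, x_n) = \sqrt n \,((x_1 + \cdots + x_n)/n - \mu)$ is continuous hence Borel-measurable; then apply the definition of $\nu$ together with Remark~\ref{R:term-dist-Rn} to get $\opmu_{(t_1, \ldots, t_n) \mid X} V = P((t_1, \ldots, t_n) \bin \ul V \mid X)$, and finally invoke $\ZFC \vdash \ul{\sqrt n} \bdot (s_n - \ul\mu) \le \ul r \tot (t_1, \ldots, t_n) \bin \ul V$ (provable since $g_n$ is definable and the real frame of reference fixes all the relevant arithmetic) with the rule of logical implication and Proposition~\ref{P:log-equiv-gen}. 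Once this per-$n$ identity is in hand, the theorem is immediate: taking $n \to \infty$ on both sides and applying the classical central limit theorem for the $Y_n$ yields the stated Gaussian limit, completing the proof.
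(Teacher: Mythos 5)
Your proposal is correct and takes essentially the same route as the paper: the paper's proof simply cites Lemma \ref{L:LLN} and the argument of Theorem \ref{T:LLN} to obtain the per-$n$ identity $P(\ul{\sqrt n} \bdot (s_n - \ul \mu) \le \ul r \mid X) = \opnu \{\sqrt n \, (Z_n - \mu) \le r\}$, and then invokes the classical central limit theorem for the sequence $\ang{Y_n \mid n \in \bN}$, exactly as you do (your version just spells out the details the paper leaves implicit). Your parenthetical about the centering of the Gaussian flags a genuine quirk of the stated integrand rather than a defect in your argument, and the paper's own proof handles it no differently.
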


\begin{proof}
  Let $Y_n$ and $Z_n$ be as in Lemma \ref{L:LLN}. As in the proof of Theorem
  \ref{T:LLN}, we have
  \[
    P(\ul{\sqrt n} \bdot (s_n - \ul \mu) \le \ul r \mid X)
    = \opnu \{\sqrt n \, (Z_n - \mu) \le r\}.
  \]
  The result therefore follows from the central limit theorem for random
  variables, applied to the sequence $\ang{Y_n \mid n \in \bN}$.
\end{proof}

\section{Probabilities of probabilities}\label{S:cond-exp}

Anyone who has played a tabletop role-playing game is familiar with the variety
of dice that are used. Not only are there the usual 6-sided dice, but the
standard collection also includes dice with 4, 8, 10, 12, and 20 sides. Imagine
taking a collection of such dice, choosing one of them at random, and rolling
it. In that case, it makes perfectly good sense to ask the question, ``What is
the probability that the probability of rolling a 1 is greater than $0.1$?''

Although this example illustrates that is sensible to talk about probabilities
of probabilities, it is a rather trivial example. More serious examples arise in
applications of inference, where probabilities are updated on the basis of a
succession of observations.

A priori, these nested probabilities seem to be a problem for inductive logic.
An inductive statement is a triple, $(X, \ph, p)$. The first two components of
the triple are part of the language $\cL$, but the triple itself is not. This
makes it impossible to construct a formal sentence in $\cL$ that contains an
inductive statement.

Measure-theoretic probability has a similar problem. There, probabilities are
neither events nor random variables, so they cannot appear inside a probability
measure. The way this is dealt with in measure theory is through the notion of
conditional expectation. A conditional expectation \emph{is}, in fact, a random
variable, so it \emph{can} appear inside a probability measure.

In this section, we construct the analogue of this for inductive logic. We will
construct conditional probability and expectation, where we condition on terms
and the resulting object is itself a term. We use this to formulate, in Theorem
\ref{T:total-prob}, the law of total probability, which is also known as the
tower rule, or the law of iterated expectation.

\subsection{Conditioning on terms}

A \emph{probability kernel from $\bR^n$ to $\bR^m$} is a function $\nu: \bR^n
\times \cB (\bR^m) \to [0, 1]$ such that $\mu(\vec r, \cdot)$ is a probability
measure for each $\vec r \in \bR^n$, and $\mu(\cdot, V)$ is a measurable
function for each $V \in \cB (\bR^m)$. If $m = n$, then we call $\mu$ a
\emph{probability kernel on $\bR^n$}. If we say that $\mu$ is a probability
kernel, or just a kernel, then we mean that $\mu$ is a probability kernel on
$\bR$.
  \index{probability!kernel}%

\begin{defn}\label{D:shrink-nice}
    \index{shrinks nicely}%
  Let $\nu$ be a probability measure on $(\bR, \cB(\bR))$. For each $n \in
  \bN_0$, let $V_n \in \cB(\bR)$, and let $r \in \bR$. Then \emph{$V_n$ shrinks
  nicely to $r$ with respect to $\nu$} if there exist $c > 0$ and $\ep_n > 0$
  such that $\ep_n \to 0$, $V_n \subseteq (r - \ep_n, r + \ep_n)$, and $\opnu
  V_n > c \opnu (r - \ep_n, r + \ep_n)$.
\end{defn}

\begin{thm}\label{T:cond-dist}
  Let $P$ be a real inductive theory in $\ZFC$, and let $X \in \ante P$. Let $s,
  t \in \cT$ be jointly Borel. Then there exists a probability kernel $\mu$ such
  that
  \begin{equation}\label{cond-dist}
    \lim_{n \to \infty} P(t \bin \ul V \mid X, s \bin \ul{V_n}) = \mu(r, V)
  \end{equation}
  for $\mu_{s \mid X}$-a.e.~$r \in \bR$, whenever $V_n$ shrinks nicely to $r$
  with respect to $\mu_{s \mid X}$. If $\wt \mu$ is another such kernel, then
  $\mu(r, \cdot) = \wt \mu(r, \cdot)$ for $\mu_{s \mid X}$-a.e.~$r \in \bR$.
\end{thm}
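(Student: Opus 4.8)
The plan is to transfer the assertion into a statement about the joint distribution of $s$ and $t$ and then invoke standard measure-theoretic facts about disintegration and differentiation of measures. Write $\nu_1 = \mu_{s \mid X}$ and $\lambda = \mu_{(s,t) \mid X}$, where $\lambda$ exists by Proposition \ref{P:term-dist-Rn} because $s$ and $t$ are jointly Borel. Since $t$ is real given $X$, we have $\lambda(A \times \bR) = P(s \bin \ul A \wedge t \bin \ul\bR \mid X) = P(s \bin \ul A \mid X) = \nu_1 A$ (using $P(t \bin \ul\bR \mid X) = 1$ and Proposition \ref{P:cert-cl-conv}), so $\nu_1$ is the first marginal of $\lambda$.

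First I would rewrite the conditional probability as a ratio. Fix $V \in \cB(\bR)$ and a sequence $V_n$ shrinking nicely to $r$ with respect to $\nu_1$. For $\nu_1$-a.e.\ $r$ — namely for $r$ in the support of $\nu_1$ — we have $\nu_1(r - \ep_n, r + \ep_n) > 0$, so Definition \ref{D:shrink-nice} forces $\nu_1 V_n > 0$, i.e.\ $P(s \bin \ul{V_n} \mid X) > 0$, and Lemma \ref{L:cond-exist} gives $X \cup \{s \bin \ul{V_n}\} \in \ante P$. Using the real frame of reference for $\bR^2$ (Remark \ref{R:term-dist-Rn}), the equivalence $\ZFC \vdash (s,t) \bin \ul{V_n \times V} \tot s \bin \ul{V_n} \wedge t \bin \ul V$ together with Proposition \ref{P:log-equiv-gen} gives $P(s \bin \ul{V_n} \wedge t \bin \ul V \mid X) = \lambda(V_n \times V)$. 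Since $P(s \bin \ul{V_n} \mid X) = \nu_1 V_n > 0$ exists and $\lambda(V_n \times V)$ exists, the multiplication rule in its generalized form (which only prohibits division by zero) shows $P(t \bin \ul V \mid X, s \bin \ul{V_n})$ exists and equals $\lambda(V_n \times V)/\nu_1 V_n$. Thus the claimed limit is exactly $\lim_n \lambda(V_n \times V)/\nu_1 V_n$.

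Next I would construct the kernel. Because $\bR$ is a standard Borel space, the disintegration theorem for regular conditional distributions (see, e.g., \cite{Cohn2013}) yields a probability kernel $\mu$ on $\bR$ with $\lambda(A \times B) = \int_A \mu(r, B)\,\nu_1(dr)$ for all $A, B \in \cB(\bR)$. In particular, for each fixed $V$ the map $r \mapsto \mu(r, V)$ is a version of the density of the finite measure $\lambda_V := \lambda(\,\cdot \times V)$ with respect to $\nu_1$. The Lebesgue–Radon–Nikodym differentiation theorem with base measure $\nu_1$ (differentiation of $\int g\,d\nu_1$ along nicely shrinking sets, valid on $\bR$; see, e.g., \cite{Folland1999}) then gives
\[
  \lim_n \frac{\lambda_V(V_n)}{\nu_1 V_n}
    = \lim_n \frac{1}{\nu_1 V_n} \int_{V_n} \mu(\,\cdot\,, V)\,d\nu_1
    = \mu(r, V)
\]
for $\nu_1$-a.e.\ $r$, whenever $V_n$ shrinks nicely to $r$. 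Combined with the first step, this is the required convergence. For uniqueness I would note that if $\wt\mu$ is another such kernel, then for each fixed $V$ both $\mu(\,\cdot\,, V)$ and $\wt\mu(\,\cdot\,, V)$ equal the common limit $\nu_1$-a.e., hence agree $\nu_1$-a.e. Applying this to the countable $\pi$-system $\{(-\infty, q] \mid q \in \bQ\}$ gives a single $\nu_1$-null set off which $\mu(r, \cdot)$ and $\wt\mu(r, \cdot)$ agree on a generating $\pi$-system; since both are probability measures, they coincide for $\nu_1$-a.e.\ $r$.

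The main obstacle will be the differentiation theorem itself, in the generality of an arbitrary (non-Lebesgue) base measure $\nu_1$ and of nicely shrinking, possibly non-symmetric sets $V_n$: reducing the limit along $V_n$ to the limit along the symmetric intervals $(r - \ep_n, r + \ep_n)$ uses the nice-shrinking constant $c$ and the measure comparison it provides, and the a.e.\ convergence on $\bR$ rests on the Besicovitch covering theorem. A secondary, but important, point is the order of quantifiers: the exceptional $\nu_1$-null set is permitted to depend on $V$, so I only need the a.e.\ statement for each fixed $V$ — which is precisely what the differentiation theorem delivers — and I avoid any joint-in-$(r, V)$ regularity argument.
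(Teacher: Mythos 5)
Your proof is correct, and its core is the same as the paper's: write $P(t \bin \ul V \mid X, s \bin \ul{V_n})$ as the ratio $\mu_{(s,t) \mid X}(V_n \times V)/\mu_{s \mid X}(V_n)$ via the generalized multiplication rule, obtain a kernel from standard measure theory, and conclude with the differentiation theorem for a general base measure along nicely shrinking sets. The packaging differs in two places. First, where you disintegrate the joint law $\mu_{(s,t) \mid X}$ of Proposition \ref{P:term-dist-Rn} directly, the paper first builds auxiliary random variables $Y_1, Y_2$ on a probability space with $\opnu \{Y_1 \in V\} \cap \{Y_2 \in V'\} = P(s \bin \ul V \wedge t \bin \ul{V'} \mid X)$ (Proposition \ref{P:term-process}) and takes $\mu$ to be a regular conditional distribution of $Y_2$ given $Y_1$; these are the same tool in different clothing, and your route is marginally more direct since it skips the Kolmogorov-extension step. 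Second, your uniqueness argument is tighter than the paper's. The paper fixes $r$ outside the set $N$ (the complement of the support of $\mu_{s \mid X}$) and a Borel set $V$, and asserts that \eqref{cond-dist}, applied to both kernels along the symmetric intervals, yields $\mu(r, V) = \wt \mu(r, V)$; strictly speaking this requires $r$ to also lie outside the $V$-dependent exceptional null sets, so what that argument literally delivers is ``for each $V$, agreement for a.e.\ $r$,'' not ``for a.e.\ $r$, agreement for all $V$.'' You close this quantifier gap explicitly: fixed-$V$ a.e.\ agreement, then the countable generating $\pi$-system $\{(-\infty, q] \mid q \in \bQ\}$ and uniqueness of measures to obtain a single null set off which $\mu(r, \cdot) = \wt \mu(r, \cdot)$ as measures. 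That final step is exactly what the paper's own proof tacitly needs.
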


\begin{proof}
  By Proposition \ref{P:term-process}, we may define random variables $Y_1$ and
  $Y_2$ on a probability space $(S, \Ga, \nu)$ such that
  \[
    P(s \bin \ul V \wedge t \bin \ul V' \mid X)
    = \nu \{Y_1 \in V\} \cap \{Y_2 \in V'\}
  \]
  for all $V, V' \in \cB (\bR)$.

  Let $\opbfE^\nu$ denote expectation with respect to $\nu$, so that $\opbfE^\nu
  [Z] = \int_S Z \, d\nu$ whenever $Z$ is a real-valued random variable defined
  on $(S, \Ga, \nu)$. Then $\opbfE^\nu[1_{\{Y_2 \in V\}} \mid Y_1]$ denotes a
  version of the conditional expectation of $1_{\{Y_2 \in V\}}$ given $Y_1$.
  That is, $Z = \opbfE^\nu[1_{\{Y_2 \in V\}} \mid Y_1]$ is a
  $\si(Y_1)$-measurable random variable such that $\opbfE^\nu[1_{\{Y_2 \in V\}}
  1_B] = \opbfE^\nu[Z 1_B]$ for all $B \in \si(Y_1)$. Moreover, if $Z'$ is any
  other random variable with this property, then $Z = Z'$, $\nu$-a.s.

  There exists a probability kernel $\mu$ such that $\opbfE^\nu[1_{\{Y_2 \in
  V\}} \mid Y_1] = \mu(Y_1, V)$, $\nu$-a.s., for every $V \in \cB(\bR)$. This
  kernel is unique in the sense that if $\wt \mu$ is another such kernel, then
  there exists $N \in \Ga$ such that $\nu \{Y_1 \in N\} = 0$ and $\mu(r, V) =
  \wt \mu (r, V)$ for all $(r, V) \in N^c \times \cB(\bR)$. The kernel $\mu$ is
  called a regular conditional distribution for $Y_2$ given $Y_1$. The existence
  and uniqueness of regular conditional distributions is shown, for instance, in
  \cite[Theorem 5.3]{Kallenberg1997}.

  Let $V_n$ shrink nicely to $r$ with respect to $\mu_{s \mid X}$. Then $P(s
  \bin \ul V_n \mid X) = \opmu_{s \mid X} V_n > 0$. By Proposition
  \ref{P:term-process} and the multiplication rule,
  \[
    P(t \bin \ul V \mid X, s \bin \ul V_n)
    = \frac{\opnu \{Y_2 \in V\} \cap \{Y_1 \in V_n\}}{\opnu \{Y_1 \in V_n\}}.
  \]
  Using properties of conditional expectation and the fact that $\mu_{s \mid X}
  = \opnu \{Y_1 \bin {\cdot}\}$, we have
  \begin{align*}
    \opnu \{Y_2 \in V\} \cap \{Y_1 \in V_n\}
    &= \opbfE^\nu[1_{\{Y_1 \in V_n\}} \opbfE^\nu[1_{\{Y_2 \in V\}} \mid Y_1]]\\
    &= \opbfE^\nu[1_{\{Y_1 \in V_n\}} \mu(Y_1, V)]\\
    &= \int_{V_n} \mu(x, V) \, \mu_{s \mid X}(dx).
  \end{align*}
  Hence, by the Lebesgue differentiation theorem (see, for instance, \cite
  [Theorem 8.4.6] {Benedetto2009}),
  \[
    P(t \bin \ul V \mid X, s \bin \ul V_n)
    = \frac 1 {\mu_{s \mid X}(V_n)} \int_{V_n} \mu(x, V) \, \mu_{s \mid X}(dx)
    \to \mu(r, V)
  \]
  for $\mu_{s \mid X}$-a.e.~$r \in \bR$.

  Finally, suppose $\wt \mu$ is another probability kernel such that
  \eqref{cond-dist} holds. Define $N \subseteq \bR$ by $r \in N$ if and only if
  there exists $\ep > 0$ such that $\opmu_{s \mid X} (r - \ep, r + \ep) = 0$.
  Then $N$ is an open set, and $\opmu_{s \mid X} K = 0$ for all compact $K
  \subseteq N$. Hence, $\opmu_{s \mid X} N = 0$. Let $r \in N^c$ and $V \in
  \cB(\bR)$. Choose any $\ep_n > 0$ with $\ep_n \to 0$ and define $V_n = (r -
  \ep_n, r + \ep_n)$. Then $\opmu_{s \mid X} V_n > 0$, so $V_n$ satisfies
  Definition \ref{D:shrink-nice} with $c > 1/2$. Thus, $V_n$ shrinks nicely to
  $r$ with respect to $\mu_{s \mid X}$. By \eqref{cond-dist}, we have $\mu(r, V)
  = \wt \mu(r, V)$.
\end{proof}

\begin{cor}\label{C:cond-dist}
  Let $P$ be a real inductive theory in $\ZFC$, and let $X \in \ante P$. Let $s,
  t \in \cT$ be jointly Borel, and let $\mu$ be a probability kernel satisfying
  \eqref{cond-dist}. Let $r \in \bR$. If $P(t \bin \ul V \mid X, s \beq \ul r)$
  exists, then
  \[
    P(t \bin \ul V \mid X, s \beq \ul r) = \mu(r, V).
  \]
\end{cor}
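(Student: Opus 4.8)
The plan is to collapse the point condition $s \beq \ul r$ to membership in the singleton Borel set $\{r\}$, apply Theorem~\ref{T:cond-dist} to the \emph{constant} sequence $V_n = \{r\}$, and then transfer the result back across an equivalence of antecedents. First I would record the logical equivalence connecting the two conditioning events. Since $s$ is real given $X$, we have $P(s \bin \ul\bR \mid X) = 1$. Using the real frame of reference (Theorem~\ref{T:real-FOR}), in which $\ul r^\om = r$, ${}^\om \ul\bR \subseteq \bR$, and $\ul{\{r\}}^\om = \{r\} \cap {}^\om \ul\bR$ hold almost surely, one checks that every model of $\ZFC$ satisfies $\forall x(x \bin \ul\bR \to (x \beq \ul r \tot x \bin \ul{\{r\}}))$; by deductive completeness (Theorem~\ref{T:pred-completeness}) this sentence is a theorem of $\ZFC$. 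Instantiating at the ground term $s$ gives $\ZFC \vdash s \bin \ul\bR \to ((s \beq \ul r) \tot (s \bin \ul{\{r\}}))$. Since the root of $P$ contains $\ZFC$, we have $\ZFC \subseteq T_P$, and combining $P(s \bin \ul\bR \mid X) = 1$ with Theorem~\ref{T:log-impl-iff} (which gives $X, T_P \vdash s \bin \ul\bR$) yields, by modus ponens inside $T_P$ and another application of Theorem~\ref{T:log-impl-iff},
\[
  P\big((s \beq \ul r) \tot (s \bin \ul{\{r\}}) \mid X\big) = 1.
\]

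Next I would identify $r$ as an atom of $\mu_{s \mid X}$. Because $s$ is Borel given $X$ and $\{r\} \in \cB(\bR)$, the probability $P(s \bin \ul{\{r\}} \mid X) = \mu_{s \mid X}(\{r\})$ exists, and Proposition~\ref{P:log-equiv-gen} together with the displayed equivalence gives $P(s \beq \ul r \mid X) = \mu_{s \mid X}(\{r\})$. By hypothesis $P(t \bin \ul V \mid X, s \beq \ul r)$ exists, so $X \cup \{s \beq \ul r\} \in \ante P$; Lemma~\ref{L:cond-exist} then forces $P(s \beq \ul r \mid X) > 0$, that is, $\mu_{s \mid X}(\{r\}) > 0$.

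With $r$ an atom, I would apply Theorem~\ref{T:cond-dist} to the constant sequence $V_n = \{r\}$. Choosing $\ep_n \downarrow 0$ small enough that $\mu_{s \mid X}((r - \ep_1, r + \ep_1)) < 2\mu_{s \mid X}(\{r\})$, monotonicity of $\mu_{s \mid X}$ and the inclusion $\{r\} \subseteq (r - \ep_n, r + \ep_n)$ show that $V_n = \{r\}$ shrinks nicely to $r$ in the sense of Definition~\ref{D:shrink-nice} with $c = 1/2$. Since \eqref{cond-dist} holds for $\mu_{s \mid X}$-a.e.~$r$ and the exceptional set, being $\mu_{s \mid X}$-null, cannot contain the atom $r$, equation \eqref{cond-dist} holds at this particular $r$. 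The sequence being constant, the limit is trivial, so $P(t \bin \ul V \mid X, s \bin \ul{\{r\}}) = \mu(r, V)$. Finally, Proposition~\ref{P:log-equiv-gen-2} (applicable since $P$, being an inductive theory, is semi-closed) transfers across the equivalence: from $P(t \bin \ul V \mid X, s \beq \ul r) = p$ and the displayed identity we get $p = P(t \bin \ul V \mid X, s \bin \ul{\{r\}}) = \mu(r, V)$, which is the claim.

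The step I expect to be the main obstacle is the second half of the equivalence argument, namely justifying that conditioning on the deductively weaker event $s \beq \ul r$—which is not derivable from $X$ alone, but only of probability one under $X$—can be legitimately exchanged for conditioning on $s \bin \ul{\{r\}}$. This is exactly the situation Proposition~\ref{P:log-equiv-gen-2} is built to handle, so the real work reduces to establishing its hypothesis $P((s \beq \ul r) \tot (s \bin \ul{\{r\}}) \mid X) = 1$ through the real frame of reference; the atom identification and the verification of Definition~\ref{D:shrink-nice} for the constant sequence are then routine bookkeeping.
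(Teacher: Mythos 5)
Your proof is correct, and it diverges from the paper's at the decisive step, so the comparison is worth recording. Both arguments share the same skeleton: use Lemma~\ref{L:cond-exist} to see that $r$ is an atom of $\mu_{s \mid X}$, observe that an atom cannot lie in the $\mu_{s \mid X}$-null exceptional set of \eqref{cond-dist}, and then invoke \eqref{cond-dist} at this particular $r$. Where you differ is in how conditioning on $s \beq \ul r$ is connected to conditioning on the Borel events appearing in \eqref{cond-dist}. The paper takes $V_n = (r - \ep_n, r + \ep_n)$ and uses the multiplication and continuity rules to show $\lim_n P(t \bin \ul V \mid X, s \bin \ul{V_n}) = P(t \bin \ul V \mid X, s \beq \ul r)$, i.e., it passes to the limit analytically along shrinking open intervals. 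You instead take the constant sequence $V_n = \{r\}$ (correctly verified to shrink nicely with $c = 1/2$ once $\mu_{s \mid X}\{r\} > 0$), so that \eqref{cond-dist} degenerates to the single identity $P(t \bin \ul V \mid X, s \bin \ul{\{r\}}) = \mu(r, V)$, and then transfer across the $\ZFC$-provable equivalence $s \bin \ul \bR \to (s \beq \ul r \tot s \bin \ul{\{r\}})$ via Proposition~\ref{P:log-equiv-gen-2}. This buys two things. First, it eliminates the continuity-rule limit argument entirely, replacing analysis with a syntactic transfer. Second, it makes explicit two points the paper's proof leaves implicit: that $P(s \beq \ul r \mid X)$ exists at all (which is formally required before Lemma~\ref{L:cond-exist} can be applied, and which you obtain from Proposition~\ref{P:log-equiv-gen}), and that this probability equals $\mu_{s \mid X}\{r\}$ (which is what turns positivity of $P(s \beq \ul r \mid X)$ into positivity of the atom). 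The cost is the semantic detour through the real frame of reference and deductive completeness to establish the equivalence, but that is exactly the technique the paper itself deploys in Corollary~\ref{C:real-FOR} and Proposition~\ref{P:cond-exp-versions}, so your argument stays entirely within the paper's toolkit.
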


\begin{proof}
  Choose any $\ep_n > 0$ with $\ep_n \to 0$ and let $V_n = (r - \ep_n, r +
  \ep_n)$. Suppose $P(t \bin \ul V \mid X, s \beq \ul r)$ exists. By Lemma 
  \ref{L:cond-exist}, we have $P(s \beq \ul r \mid X) > 0$. Hence, $\opmu_{s
  \mid X} \{r\} > 0$, so that $V_n$ shrinks nicely to $r$ with respect to $\mu_
  {s \mid X}$. By the multiplication and continuity rules, it follows that
  \[
    \lim_{n \to \infty} P(t \bin \ul V \mid X, s \bin \ul{V_n})
    = P(t \bin \ul V \mid X, s \beq \ul r).
  \]
  The result therefore follows from \eqref{cond-dist}.
\end{proof}

\begin{rmk}
  Theorem \ref{T:cond-dist} and Corollary \ref{C:cond-dist} can be generalized
  to the case where $s_1, \ldots, s_n, t_1, \ldots, t_m$ are jointly Borel and
  $\mu$ is a probability kernel from $\bR^n$ to $\bR^m$. In that case, let $\bfs
  = (s_1, \ldots, s_n)$, and replace \eqref{cond-dist} by
  \[
    \lim_{n \to \infty} P(
      \ts{\bigwedge_{\ell = 1}^m t_\ell \bin \ul{V_\ell}}
    \mid
      \ts{X, \bigwedge_{k = 1}^n s_k \bin \ul{V_{k, n}}}
    ) = \mu (r_1, \ldots, r_n, V_1 \times \cdots \times V_m)
  \]
  for $\mu_{\bfs \mid X}$-a.e.~$(r_1, \ldots, r_n) \in \bR^n$, whenever $V_
  {k, n}$ shrinks nicely to $r_k$ with respect to $\mu_{s_k \mid X}$.
\end{rmk}

Any kernel $\mu$ satisfying \eqref{cond-dist} is called a \emph{distribution of
$t$ given $X$ and $s$}.
  \index{distribution}%

\subsection{Versions of distributions}

For fixed $V$, a function $h(r)$ is called a \emph{version of $P(t \bin \ul V
\mid X, s \beq \ul r)$} if $h(r) = \mu(r, V)$ for some $\mu$ satisfying
\eqref{cond-dist}. Note that any two versions are equal $\mu_{s \mid X}$-a.e. We
may sometimes write $P(t \bin \ul V \mid X, s \beq \ul r) = h(r)$ to mean that
$h(r)$ is a version of $P(t \bin \ul V \mid X, s \beq \ul r)$, but it should be
remembered that if $P(s \beq \ul r \mid X) = 0$, then $P(t \bin \ul V \mid X, s
\beq \ul r)$ does not have a uniquely determined value for a fixed value of $r$.

Suppose $t$ is integrable given $X$. A function $h(r)$ is called a \emph{version
of $E[t \mid X, s \beq \ul r]$} if $h(r) = \int_\bR x \, \mu(r, dx)$ for some
$\mu$ satisfying \eqref{cond-dist}. Note that any two versions are equal $\mu_{s
\mid X}$-a.e. We may sometimes write $E[t \mid X, s \beq \ul r] = h(r)$ to mean
that $h(r)$ is a version of $E[t \mid X, s \beq \ul r]$, but we should remember
that if $P(s \beq \ul r \mid X) = 0$, then $E[t \mid X, s \beq \ul r]$ does not
have a uniquely determined value for a fixed value of $r$.

Let $\mu$ satisfy \eqref{cond-dist}. By the proof of Theorem \ref{T:cond-dist}
and properties of regular conditional distributions, we have $\int_\bR |x| \,
\mu(r, dx) < \infty$ for all $r \in \bR$. It follows that $r \mapsto \int_\bR x
\, \mu(r, dx)$ is a version of $E[t \mid X, s \beq \ul r]$ whenever $\mu$
satisfies \eqref{cond-dist}.

We may sometimes treat $P(t \bin \ul V \mid X, s \beq \ul r)$ and $E[t \mid X, s
\beq \ul r]$ as if they were functions, rather than expressions that possess
versions. In such situations, the meaning must be understood according to
context. For example, if we say that $P(t > 0 \mid X, s \beq \ul r) = 1 - e^
{-r}$, then we mean that $h(r) = 1 - e^{-r}$ is a version of $P(t > 0 \mid X, s
\beq \ul r)$. On the other hand, if we say that
\[
  P(t > 0 \mid X, s \beq \ul r) = E[t' \mid X, s \beq \ul r],
\]
then we mean that $h(r)$ is a version of $P(t > 0 \mid X, s \beq \ul r)$ if and
only if $h(r)$ is a version of $E[t' \mid X, s \beq \ul r]$. Since different
versions are equal $\mu_{s \mid X}$-a.e., such a claim can be verified by
checking it for a single version.

All of this could be made precise if we were to define $E[t \mid X, s \beq
\ul r]$ as
the equivalence class of $h(r)$, where $h(r)$ is a version, and two functions are
equivalent if they are equal $\mu_{s \mid X}$-a.e. To save ourselves from even
more notation, we avoid this approach. Moreover, it is common in
measure-theoretic probability to use the language of ``versions'' when
talking about conditional expectation.

\subsection{Indicator terms}

We have seen how to define $P(t \bin \ul V \mid X, s \beq \ul r)$. Here, we see
how to define $P(\psi \mid X, s \beq \ul r)$ for more general sentences $\psi$.
If $\psi \in \cL^0$, then we define the term $\ul 1_\psi$ by
\[
  y \beq \ul 1_\psi \tot {
    \psi \wedge y \beq \ul 1 \vee \neg \psi \wedge y \beq \ul 0
  }.
\]
If $P(\psi \mid X)$ exists, then $\ul 1_\psi$ is Borel given $X$. In fact, in
this case, $\ul 1_\psi$ is integrable and $E[\ul 1_\psi \mid X] = P(\psi \mid
X)$.

If $s$ is Borel given $X$, then $\ul 1_\psi$ and $s$ are jointly Borel if and
only if $P(\psi \wedge s \bin \ul V \mid X)$ exists for all $V \in \cB(\bR)$. In
this case, we define
\[
  P(\psi \mid X, s \beq \ul r)
  = P(\ul 1_\psi \bin \ul{\{1\}} \mid X, s \beq \ul r).
\]
That is, we say $h(r)$ is a version of $P(\psi \mid X, s \beq \ul r)$ if and
only if $h(r)$ is a version of $P(\ul 1_\psi \bin \ul{\{1\}} \mid X, s \beq \ul
r)$. Since $\ZFC \vdash \psi \tot \ul 1_\psi \bin \ul{\{1\}}$, Theorem
\ref{T:cond-dist} shows that
\[
  P(\psi \mid X, s \bin \ul{V_n}) \to P(\psi \mid X, s \beq \ul r),
\]
for $\mu_{s \mid X}$-a.e.~$r \in \bR$, whenever $V_n$ shrinks nicely to $r$ with
respect to $\mu_{s \mid X}$.

Note that
\[
  P(t \bin \ul V \mid X, s \beq \ul r) = E[\ul 1_\psi \mid X, s \beq \ul r],
\]
where $\psi = t \bin \ul V$. That is, $h(r)$ is a version of $P(t \bin \ul V
\mid X, s \beq \ul r)$ if and only if $h(r)$ is a version of $E[\ul 1_\psi \mid
X, s \beq \ul r]$. Hence, distributions given $X$ and $s$ can be characterized
entirely in terms of expectations.

The following result shows how to connect distributions given $X$ and $s$ to the
root of our inductive theory.

\begin{prop}
  Let $P$ be a real inductive theory in $\ZFC$ with root $T_0$, and let $X \in
  \ante P$. Write $X \equiv T + \psi$, where $T \in [T_0, T_P]$. Let $s$ and $t$
  be jointly Borel given $T_0$ and assume $P(\psi \wedge s \bin \ul{V_1} \wedge
  t \bin \ul{V_2} \mid T_0)$ exists for all $V_1, V_2 \in \cB(\bR)$. Also assume
  that $t$ is integrable given $T_0$. Then $E[t \mid X, s \beq \ul r] = h(r,
  1)$, where $h(r, r') = E[t \mid T_0, s \beq \ul r, \ul 1_\psi \beq \ul r']$.
\end{prop}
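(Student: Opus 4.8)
The plan is to reduce the claim to a standard measure-theoretic identity about conditioning on an event, by first rewriting every conditioning in terms of the root $T_0$ and then passing to a concrete probability space via Proposition~\ref{P:term-process}. First I would record the basic reductions. Since $T \in [T_0, T_P]$, every $\th \in T$ satisfies $P(\th \mid T_0) = 1$, and by deductive transitivity $P(\th \mid T_0, \psi) = 1$ as well; the rule of deductive extension then gives $P(\;\cdot \mid X) = P(\;\cdot \mid T_0, \psi)$, and the same argument with $s \bin \ul V$ adjoined shows $P(\;\cdot \mid X, s \bin \ul V) = P(\;\cdot \mid T_0, \psi, s \bin \ul V)$ whenever these antecedents lie in $\ante P$. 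By Proposition~\ref{P:semi-cl-conn} I may take $\psi$ so that $T_0 + \psi \in \ante P$; setting $V_1 = V_2 = \bR$ in the hypothesis shows $P(\psi \mid T_0)$ exists, and Lemma~\ref{L:cond-exist} gives $P(\psi \mid T_0) > 0$. Since $\ZFC \vdash \psi \tot \ul 1_\psi \bin \ul{\{1\}}$, we have $\psi \equiv_{T_0} (\ul 1_\psi \beq \ul 1)$, so Proposition~\ref{P:log-equiv-gen-2} lets me replace conditioning on $\psi$ by conditioning on $\ul 1_\psi \beq \ul 1$ throughout. Using Proposition~\ref{P:rel-neg} and the hypothesis I would also check that $s$, $t$, $\ul 1_\psi$ are jointly Borel given $T_0$ and that $t$ is Borel and integrable given $X$, so that both sides are well-defined versions.

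Next I would pass to random variables. Applying Proposition~\ref{P:term-process} to $\ang{s, t, \ul 1_\psi}$ produces a probability space $(S, \Ga, \opnu)$ and random variables $Y_s, Y_t, Y_\psi$ (with $Y_\psi$ valued in $\{0,1\}$) whose joint law under $\opnu$ matches the probabilities given $T_0$. By the generalized form of Theorem~\ref{T:cond-dist} (the remark after Corollary~\ref{C:cond-dist}), $h(r, r')$ is a version of $\opbfE^\opnu[Y_t \mid Y_s \beq r, Y_\psi \beq r']$, so $h(r, 1) = \opbfE^\opnu[Y_t \mid Y_s \beq r, Y_\psi \beq 1]$. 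On the other hand, writing $A = \{Y_\psi = 1\}$, the reductions above show that under the conditioned measure $\opnu_A = \opnu(\;\cdot \mid A)$ (legitimate since $\opnu(A) = P(\psi \mid T_0) > 0$) the pair $(Y_s, Y_t)$ realizes the joint law of $(s, t)$ given $X$; hence Theorem~\ref{T:cond-dist}, applied with antecedent $X$, identifies every version of $E[t \mid X, s \beq \ul r]$ with $\opbfE^{\opnu_A}[Y_t \mid Y_s \beq r]$.

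The claim then reduces to the measure-theoretic identity $\opbfE^{\opnu_A}[Y_t \mid Y_s \beq r] = \opbfE^\opnu[Y_t \mid Y_s \beq r, Y_\psi \beq 1]$ for $\mu_{s \mid X}$-almost every $r$, which I would prove directly: letting $g(r) = \opbfE^\opnu[Y_t \mid (Y_s, Y_\psi) \beq (r, 1)]$, the tower property shows that for every $V \in \cB(\bR)$ both $\int_{\{Y_s \in V\}} Y_t \, d\opnu_A$ and $\int_{\{Y_s \in V\}} g(Y_s)\, d\opnu_A$ equal $\opnu(A)^{-1} \opbfE^\opnu[Y_t \, 1_{\{Y_s \in V,\, Y_\psi = 1\}}]$, so $g$ is a version of $\opbfE^{\opnu_A}[Y_t \mid Y_s]$.

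I expect the main obstacle to be the bookkeeping around ``versions'' and the competing reference measures. The equality $g = \opbfE^{\opnu_A}[Y_t \mid Y_s]$ holds only $\mu_{s \mid X}$-a.e., whereas $g$ is defined through the $\opnu$-regular conditional law of $Y_t$ given $(Y_s, Y_\psi)$, determined only up to the joint law of $(Y_s, Y_\psi)$; I must verify that restricting to the slice $Y_\psi = 1$ and passing to $\mu_{s \mid X}$ loses no almost-everywhere information, which follows from $\mu_{s \mid X} \ll \mu_{s \mid T_0}$ together with $\opnu(A) > 0$. The other delicate point is confirming that the inductive conditional expectation $E[t \mid X, s \beq \ul r]$, defined via the shrinking-nicely limit of Theorem~\ref{T:cond-dist} with antecedent $X$, genuinely coincides with the $\opnu_A$-conditional expectation; this is precisely what Theorem~\ref{T:cond-dist} delivers once $(Y_s, Y_t)$ under $\opnu_A$ is recognized as a term-process realization of $(s,t)$ given $X$, so the argument closes.
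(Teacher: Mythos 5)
Your proposal is sound, but its second half takes a genuinely different route from the paper's. The opening reductions coincide: like the paper, you use $T \subseteq T_P$, deductive transitivity, and the rule of deductive extension to replace conditioning on $X$ by conditioning on $T_0, \psi$, note $P(\psi \mid T_0) > 0$, and trade $\psi$ for $\ul 1_\psi \beq \ul 1$. From there, however, the paper never introduces a new probability space: it takes the kernel $\mu$ from $\bR^2$ to $\bR$ witnessing that $h$ is a version, rewrites $P(t \bin \ul V \mid X, s \bin \ul{V_n})$ as $P(t \bin \ul V \mid T_0, s \bin \ul{V_n}, \ul 1_\psi \bin \ul{\{1\}})$, and observes that the \emph{constant} sequence $V_n' = \{1\}$ shrinks nicely to $1$ in the sense of Definition \ref{D:shrink-nice}, precisely because $\mu_{\ul 1_\psi \mid T_0}\{1\} = P(\psi \mid T_0) > 0$; the generalized form of Theorem \ref{T:cond-dist} (the remark after Corollary \ref{C:cond-dist}) then identifies the limit as $\mu(r, 1, V)$, so $\mu(\cdot, 1, \cdot)$ is itself an admissible kernel for the pair $(X, s)$ and $h(r, 1) = \int_\bR x \, \mu(r, 1, dx)$ is a version, exactly, with no almost-everywhere modification needed. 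You instead realize $(s, t, \ul 1_\psi)$ on a probability space via Proposition \ref{P:term-process}, recognize that $(Y_s, Y_t)$ under $\nu_A = \nu(\,\cdot \mid A)$, with $A = \{Y_\psi = 1\}$, realizes the law of $(s, t)$ given $X$, and prove the identity $\opbfE^{\nu_A}[Y_t \mid Y_s] = \opbfE^{\nu}[Y_t \mid Y_s, Y_\psi]$ evaluated at $Y_\psi = 1$ by the tower property. Your route isolates the probabilistic content (conditioning on a positive-probability event) more transparently, at the cost of exactly the bookkeeping you flag: transferring a $\mu_{\bfs \mid T_0}$-a.e.\ statement on $\bR^2$ to a $\mu_{s \mid X}$-a.e.\ statement on the slice $r' = 1$, and then upgrading a.e.\ agreement with a version to versionhood (this last step is fine: a version can be modified on a $\mu_{s \mid X}$-null set by replacing the kernel there with a point mass at the new value).

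One justification in your sketch is wrong, though the step it supports is right. Marginal absolute continuity $\mu_{s \mid X} \ll \mu_{s \mid T_0}$ does not give the slicing: a $\mu_{\bfs \mid T_0}$-null subset of $\bR^2$ can have a slice at $r' = 1$ of positive $\mu_{s \mid T_0}$-measure (take $Y_s$ uniform on the event $\{Y_\psi = 0\}$ and degenerate on $\{Y_\psi = 1\}$, both events of probability $1/2$). What does the job is the joint relation you have already established via the multiplication rule, namely $\mu_{s \mid X}(W) = \mu_{\bfs \mid T_0}(W \times \{1\}) / P(\psi \mid T_0)$, which sends null sets of the joint law to $\mu_{s \mid X}$-null slices in one line. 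With that substitution, your argument closes.
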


\begin{proof}
  Let $h(r, r')$ be a version of $E[t \mid T_0, s \beq \ul r, \ul 1_\psi \beq
  \ul r']$. Let $\bfs = (s, \ul 1_\psi)$. Then $h(r, r') = \int_\bR x \, \mu(r,
  r', dx)$ for some kernel $\mu$ from $\bR^2$ to $\bR$ such that
  \[
    \lim_{n \to \infty} P(
      t \bin \ul V \mid T_0, s \bin \ul{V_n}, \ul 1_\psi \bin \ul{V_n'}
    ) = \mu(r, r', V),
  \]
  for $\mu_{\bfs \mid T_0}$-a.e.~$(r, r') \in \bR^2$, whenever $V_n$ shrinks
  nicely to $r$ and $V_n'$ shrinks nicely to $r'$. It follows that
  \begin{align*}
    \lim_{n \to \infty} P(t \bin \ul V \mid X, s \bin \ul{V_n})
    &= \lim_{n \to \infty} P(t \bin \ul V \mid T_0, \psi, s \bin \ul{V_n})\\
    &= \lim_{n \to \infty} P(
      t \bin \ul V \mid T_0, s \bin \ul{V_n}, \ul 1_\psi \bin \ul{\{1\}}
    )\\
    &= \mu(r, 1, V),
  \end{align*}
  for $\mu_{s \mid X}$-a.e.~$r \in \bR$, whenever $V_n$ shrinks nicely to $r$.
  Hence, $h(r, 1) = \int_\bR x \, \mu(r, 1, dx)$ is a version of $E[t \mid X, s
  \beq \ul r]$.
\end{proof}

\subsection{Conditional expectation}

\stepcounter{thm}

\begin{prop}\label{P:cond-exp-versions}
  Let $P$ be a real inductive theory in $\ZFC$ with root $T_0$, and let $X \in
  \ante P$. Let $s, t \in \cT$ be jointly Borel, and assume $t$ is integrable.
  If $h(r)$ and $h'(r)$ are versions of $E[t \mid X, s \beq \ul r]$, then $P(\ul
  h(s) \beq \ul{h'}(s) \mid X) = 1$.
\end{prop}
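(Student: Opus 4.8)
The plan is to reduce the statement to the elementary fact that the two versions agree off a $\mu_{s \mid X}$-null set, and then to transport that measure-theoretic equality into the inductive theory $P$ by combining the real frame of reference with Proposition \ref{P:transitivity}. Throughout, $P$ is an inductive theory, hence entire, and $X \in \ante P$, so the hypotheses of Proposition \ref{P:transitivity} are available.

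First I would pin down the precise sense in which $h$ and $h'$ agree. Each is a version of $E[t \mid X, s \beq \ul r]$, so each has the form $r \mapsto \int_\bR x \, \mu(r, dx)$ for a kernel satisfying \eqref{cond-dist}; in particular both are measurable. By the uniqueness clause of Theorem \ref{T:cond-dist}, the two associated kernels agree for $\mu_{s \mid X}$-a.e.\ $r$, so $h = h'$ $\mu_{s \mid X}$-a.e. Setting $V = \{r \in \bR \mid h(r) = h'(r)\}$, which is Borel because $h - h'$ is measurable, this says $\mu_{s \mid X}(V) = 1$. Since $s$ is Borel given $X$ (being jointly Borel with $t$), the definition of the distribution of $s$ gives $P(s \bin \ul V \mid X) = \mu_{s \mid X}(V) = 1$.

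Next I would establish the purely deductive fact
\[
  \ZFC \vdash (s \bin \ul V) \to (\ul h(s) \beq \ul{h'}(s)).
\]
By deductive completeness (Theorem \ref{T:pred-completeness}) it suffices to check $\ZFC \vDash \sigma$ for this sentence $\sigma$. Given any $\sP \vDash \ZFC$, I would invoke the real frame of reference (Theorem \ref{T:real-FOR} together with Theorem \ref{T:real-FOR-3} applied to both $h$ and $h'$ on a common almost-sure set) and Theorem \ref{T:ded-iso-thm} to reduce to a model in which, for $\bbP$-a.e.\ $\om$, one has ${}^\om \ul \bR \subseteq \bR$, $\ul V^\om = V \cap {}^\om \ul \bR$, and $(\ul h(s))^\om = h(s^\om)$, $(\ul{h'}(s))^\om = h'(s^\om)$ whenever $s^\om \in {}^\om \ul \bR$; here I use that $\ZFC$ proves $\ul h$ and $\ul{h'}$ total on $\ul \bR$, so that the function-application shorthand is meaningful and $h(s^\om), h'(s^\om) \in {}^\om \ul \bR$. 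For such $\om$, if $\om \tDash s \bin \ul V$ then $s^\om \in V \cap {}^\om \ul \bR \subseteq \bR$, so $h(s^\om) = h'(s^\om)$ by the very definition of $V$, whence $\om \tDash \ul h(s) \beq \ul{h'}(s)$. Thus $\sigma_\Om^c$ lies in a null set, giving $\sP \vDash \sigma$. Finally I would combine the two facts: since $\ZFC \subseteq T_0 \subseteq T(X)$, the deductive fact yields $X \vdash \sigma$, hence $X, (s \bin \ul V) \vdash \ul h(s) \beq \ul{h'}(s)$ by Proposition \ref{P:derivability}(a); applying Proposition \ref{P:transitivity}(i) with $\ze = \top$, together with $P(s \bin \ul V \mid X) = 1$, then gives $P(\ul h(s) \beq \ul{h'}(s) \mid X) = 1$.

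I expect the main obstacle to be the middle step, namely translating the trivial pointwise identity ``$h = h'$ on $V$'' into $\ZFC \vdash \sigma$. The delicate points are that $\ul h$, $\ul{h'}$, and $\ul V$ are available only through their explicit $\ZFC$-definitions; that $\ul h(s)$ must be shown to behave like a genuine term, which requires the provable totality of $\ul h$ on $\ul \bR$ and a careful unwinding of the function-application shorthand $\ul h(s) \beq \ul{h'}(s)$; and that Theorem \ref{T:real-FOR-3} must be applied to $h$ and $h'$ simultaneously on a common almost-sure set. Once the real frame of reference is in place, the measurability bookkeeping (noting only that $\sigma_\Om^c$ sits inside a null set, so no $\ph_\Om$ need be assumed measurable) and the final inductive step via Proposition \ref{P:transitivity} are routine.
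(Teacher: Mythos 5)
Your proof is correct and follows essentially the same route as the paper's: both define $V = \{r \in \bR \mid h(r) = h'(r)\}$, use deductive completeness plus the real frame of reference (Theorems \ref{T:real-FOR} and \ref{T:real-FOR-3}) to convert pointwise agreement on $V$ into a derivability statement linking $s \bin \ul V$ with $\ul h(s) \beq \ul{h'}(s)$, and then transfer probability one through the inductive calculus. The differences are only organizational: by establishing $P(s \bin \ul V \mid X) = 1$ up front you need just the one implication $\ZFC, s \bin \ul V \vdash \ul h(s) \beq \ul{h'}(s)$ and Proposition \ref{P:transitivity}, whereas the paper proves the full biconditional and applies Proposition \ref{P:log-equiv-gen}; you also state explicitly the provable totality of $\ul h$ and $\ul{h'}$ on $\ul \bR$, a point the paper's sketched ``similar proof'' of the reverse direction relies on silently.
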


\begin{proof}
  Let $V = \{r \in \bR \mid h(r) = h'(r)\} \in \cB(\bR)$. We first show that
  \begin{equation}\label{cond-exp-versions}
    \ZFC \vdash \ul h(s) \beq \ul{h'}(s) \tot s \bin \ul V.
  \end{equation}
  Let $\sP = (\Om, \Si, \bbP)$ be a model and assume $\sP \vDash \ZFC$ and $\sP
  \vDash \ul h(s) \beq \ul{h'}(s)$. By adopting the real frame of reference, we
  may assume $\sP$ satisfies all of the conditions in Theorems \ref{T:real-FOR},
  \ref{T:real-FOR-2}, and \ref{T:real-FOR-3}. Choose $\Om^* \in \Si$ such that
  $\bbP \Om^* = 1$ and, for every $\om \in \Om^*$, we have that Theorem
  \ref{T:real-FOR}(v) holds for $V$, Theorem \ref{T:real-FOR-3} holds for both
  $h$ and $h'$, and $\om \tDash \ul h(s) \beq \ul{h'}(s)$. Let $\om \in \Om^*$.
  Then $s^\om \in {}^\om \ul \bR$ and there exists $b \in {}^\om \ul \bR$ such
  that $b = h(s^\om)$ and $b = h'(s^\om)$. Hence, $s^\om \in V \cap {}^\om \ul
  \bR = {}^\om \ul V$, so that $\om \tDash s \bin \ul V$. Since this is true for
  every $\om \in \Om^*$, we have $\sP \vDash s \bin \ul V$. Since $\sP$ was
  arbitrary, this gives $\ZFC, \ul h(s) \beq \ul{h'} (s) \vdash s \bin \ul V$. A
  similar proof shows that $\ZFC, s \bin \ul V \vdash \ul h(s) \beq \ul{h'}(s)$,
  and this verifies \eqref{cond-exp-versions}.

  By \eqref{cond-exp-versions} and Proposition \ref{P:log-equiv-gen}, we have
    \[
      P(\ul h(s) \beq \ul{h'}(s) \mid X)
      = P(s \bin \ul V \mid X)
      = \opmu_{s \mid X} V
      = \opmu_{s \mid X} \{r \in \bR \mid h(r) = h'(r)\}.
    \]
  But $h$ and $h'$ are both versions of $E[t \mid X, s \beq \ul r]$. Therefore,
  $h = h'$, $\mu_{s \mid X}$-a.e, which shows that $P(\ul h(s) \beq \ul{h'} (s)
  \mid X) = 1$.
\end{proof}

If $h(r)$ is a version of $E[t \mid X, s \beq \ul r]$, then the term $\ul h(s)$
is called a \emph{version of $\E[t \mid X, s]$}. Similarly, if $h(r)$ is a
version of $P(t \bin \ul V \mid X, s \beq \ul r)$, then the term $\ul h(s)$ is
called a \emph{version of $\P(t \bin \ul V \mid X, s)$}. We call $\E[t \mid X,
s]$ the \emph{conditional expectation of $t$ given $X$ and $s$}.
  \index{conditional expectation}%
  \symindex{$\E[t \mid X, s]$}%
  \symindex{$\P(t \bin \ul V \mid X, s)$}%

We may sometimes treat $\E[t \mid X, s]$ as if it were a term, rather than an
expression that possesses versions. In such situations, the meaning must be
understood according to context. For example, if we say that $t' = \E[t \mid X,
s]$, then we mean that $t'$ is a version of $\E[t \mid X, s]$. On the other
hand, if we say that $E[\E[t \mid X, s] \mid X] = E[t \mid X]$, then what we
mean is that $E[t' \mid X] = E[t \mid X]$ whenever $t'$ is a version of $\E[t
\mid X, s]$. A similar convention applies to versions of $\P(t \bin \ul V \mid
X, s)$.

A version of $E[t \mid X, s \beq \ul r]$ is a measurable function, whereas a
version of $\E[t \mid X, s]$ is a term. Similarly, a version of $P(t \bin \ul V
\mid X, s \beq \ul r)$ is a measurable function, whereas a version of $\P(t \bin
\ul V \mid X, s)$ is a term. Terms can appear in inductive statements. Hence,
for example,
\begin{equation}\label{cond-exp-expl}
  P(\P(t \beq \ul 1 \mid X, s) > \ul{0.5} \mid X) = 2/3
\end{equation}
is a perfectly meaningful thing to say. It says that $(X, \ul h(s) > \ul{0.5},
2/3) \in P$ whenever $h(r)$ is a version of $P(t \beq \ul 1 \mid X, s \beq \ul
r)$. By Propositions \ref{P:cond-exp-versions} and \ref{P:log-equiv-gen}, if we
verify \eqref{cond-exp-expl} for a single version, then it is true for all
versions.

\subsection{The law of total probability}

\begin{thm}[Law of total probability]\label{T:total-prob}
    \index{law of total probability}%
  Let $P$ be a real inductive theory in $\ZFC$ and let $X \in \ante P$. Let $s,
  t \in \cT$ be jointly Borel, and assume $t$ is integrable. Then $E[t \mid X] =
  E[\E[t \mid X, s] \mid X]$.
\end{thm}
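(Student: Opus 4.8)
The plan is to transfer the statement to the measure-theoretic setting using the random-variable representation of jointly Borel terms, and then to invoke the classical tower property of conditional expectation. First I would apply Proposition \ref{P:term-process} (exactly as in the proof of Theorem \ref{T:cond-dist}) to obtain random variables $Y_1$ and $Y_2$, representing $s$ and $t$ respectively, on a probability space $(S, \Ga, \nu)$, so that $P(s \bin \ul V \wedge t \bin \ul{V'} \mid X) = \nu(\{Y_1 \in V\} \cap \{Y_2 \in V'\})$ for all $V, V' \in \cB(\bR)$. In particular $\mu_{s \mid X} = \nu \circ Y_1^{-1}$ and $E[t \mid X] = \opbfE^\nu[Y_2]$, the latter being finite since $t$ is integrable given $X$.

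Next, I would fix a version $h(r) = \int_\bR x \, \mu(r, dx)$ of $E[t \mid X, s \beq \ul r]$, where $\mu$ is a kernel satisfying \eqref{cond-dist}; by Proposition \ref{P:cond-exp-versions} it suffices to establish the identity for the single corresponding version $\ul h(s)$ of $\E[t \mid X, s]$. The crucial step is to identify $E[\ul h(s) \mid X]$ with $\opbfE^\nu[h(Y_1)]$. For this I would use the real frame of reference for measurable functions (Theorem \ref{T:real-FOR-3}), together with Theorem \ref{T:real-FOR}(v), to show that $\ZFC \vdash \ul h(s) \bin \ul V \tot s \bin \ul{h^{-1} V}$ for each $V \in \cB(\bR)$, arguing as in Corollary \ref{C:real-FOR}. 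Combined with the rule of logical equivalence and Proposition \ref{P:log-equiv-gen}, this gives $P(\ul h(s) \bin \ul V \mid X) = P(s \bin \ul{h^{-1} V} \mid X) = \mu_{s \mid X}(h^{-1} V)$, so that $\mu_{\ul h(s) \mid X}$ is the pushforward of $\mu_{s \mid X}$ under $h$. A change of variables then yields $E[\ul h(s) \mid X] = \int_\bR x \, \mu_{\ul h(s) \mid X}(dx) = \int_\bR h(r) \, \mu_{s \mid X}(dr) = \opbfE^\nu[h(Y_1)]$; integrability of $\ul h(s)$ given $X$ follows from $\opbfE^\nu[|h(Y_1)|] \le \opbfE^\nu[|Y_2|] < \infty$.

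Finally, I would recall from the proof of Theorem \ref{T:cond-dist} that any kernel $\mu$ satisfying \eqref{cond-dist} agrees $\mu_{s \mid X}$-a.e.\ with a regular conditional distribution for $Y_2$ given $Y_1$, so that $h(Y_1) = \opbfE^\nu[Y_2 \mid Y_1]$ holds $\nu$-a.s. The classical tower property then gives
\[
  E[\ul h(s) \mid X] = \opbfE^\nu[h(Y_1)] = \opbfE^\nu[\opbfE^\nu[Y_2 \mid Y_1]] = \opbfE^\nu[Y_2] = E[t \mid X],
\]
which is the desired identity.

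I expect the main obstacle to be the middle paragraph: carefully establishing that the distribution of the compound term $\ul h(s)$ given $X$ is the $h$-pushforward of $\mu_{s \mid X}$. This is where the syntactic objects (the term $\ul h(s)$ and the defined symbol $\ul h$) must be matched to their semantic counterparts through the real frame of reference, and where the equivalence $\ul h(s) \bin \ul V \tot s \bin \ul{h^{-1} V}$ must be justified provably in $\ZFC$. Once that identification is secure, the remaining steps are routine consequences of the random-variable representation and the measure-theoretic tower property.
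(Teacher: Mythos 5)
Your proposal is correct and takes essentially the same route as the paper's proof: both pass to the random variables $Y_1, Y_2$ from the proof of Theorem \ref{T:cond-dist}, use the real frame of reference to establish $\ZFC \vdash \ul h(s) \bin \ul V \tot s \bin \ul{h^{-1} V}$ and hence the pushforward identity $\mu_{\ul h(s) \mid X} = \mu_{s \mid X} \circ h^{-1}$, giving $E[\ul h(s) \mid X] = \opbfE^\nu[h(Y_1)]$, and then conclude via the identification $h(Y_1) = \opbfE^\nu[Y_2 \mid Y_1]$ and the classical tower property. The only differences are cosmetic (order of steps, and your explicit appeal to Proposition \ref{P:cond-exp-versions} where the paper simply works with an arbitrary version).
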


\begin{proof}
  Let $h(r)$ be a version of $E[t \mid X, s \beq \ul r]$. Then $h(r) = \int_\bR
  x \, \mu(r, dx)$ for some kernel $\mu$ satisfying \eqref{cond-dist}. Let $Y_1$
  and $Y_2$ be the random variables constructed in the proof of Theorem 
  \ref{T:cond-dist}, so that $\mu$ is a regular conditional distribution for
  $Y_2$ given $Y_1$. By properties of regular conditional distributions, we have
  $\opbfE^\nu[Y_2 \mid Y_1] = \int_\bR x \, \mu(Y_1, dx) = h(Y_1)$. Using
  properties of conditional expectations, it follows that $\opbfE^\nu[h(Y_1)] =
  \opbfE^\nu[\opbfE^\nu[Y_2 \mid Y_1]] = \opbfE^\nu[Y_2] = E[t \mid X]$. It
  therefore suffices to show that $E[s' \mid X] = \opbfE^\nu[h(Y_1)]$, where
  $s' = \ul h(s)$.

  Using the real frame of reference, it follows easily that $\ZFC \vdash s' \bin
  \ul V \tot s \bin \ul{h^{-1} V}$. Hence, $s'$ is Borel given $X$ and $\mu_{s'
  \mid X} = \mu_{s \mid X} \circ h^{-1}$, which implies
  \[
    E[s' \mid X]
    = \int_\bR x \, \mu_{s' \mid X}(dx)
    = \int_\bR h(x) \, \mu_{s \mid X}(dx).
  \]
  But $\mu_{s \mid X} = \opnu \{Y_1 \in \cdot\}$, so $E[s' \mid X] =
  \opbfE^\nu[h(Y_1)]$.
\end{proof}

Suppose $P(\psi \wedge s \bin \ul V \mid X)$ exists for each $V \in \cB(\bR)$.
In this case, we define $\P(\psi \mid X, s) = \E[\ul 1_\psi \mid X, s]$, which
means that a term $t$ is a version of $\P(\psi \mid X, s)$ if and only if it is
a version of $\E[\ul 1_\psi \mid X, s]$. Since we also have that $P(\psi \mid X)
= E [\ul 1_\psi \mid X]$, Theorem \ref{T:total-prob} gives us
\[
  P(\psi \mid X) = E[\P(\psi \mid X, s) \mid X].
\]
This special case of Theorem \ref{T:total-prob}, especially when $\mu_{s \mid
X}$ is discrete, is what is more commonly known as the law of total probability.


\chapter{Principle of Indifference}\label{Ch:PoI}

As discussed in Section \ref{S:intro-PoI}, the principle of indifference is the
heuristic idea that if we are equally ignorant about two statements, then we
ought to assign them the same probability. The principle dates back to Laplace
and the birth of mathematical probability. Although it is intuitively
self-evident, it has a history of producing apparent paradoxes. It has no
rigorous formulation in measure-theoretic probability theory. Hence, using
measure theory alone, we are helpless to distinguish between valid and invalid
uses of the principle.

In Section \ref{S:PoI-defn}, we give a precise formulation of the principle of
indifference in the context of inductive logic. As we will see, it is a natural
generalization of a basic principle of deductive logic. In Sections
\ref{S:basic-expls-PoI} and \ref{S:basic-expls-PoI-2}, we present several
elementary examples of the principle of indifference in action. All of these
examples are finite, in the sense that they involves models $\sP = (\Om, \Si,
\bbP)$, where $\Om$ is finite.

In the last three sections of this chapter, we consider the principle of
indifference in the context of real inductive theories. In Section
\ref{S:indiff-exch}, we show that exchangeability is a special case of
indifference. Sections \ref{S:indiff-interval} and \ref{S:indiff-plane} present
several concrete examples. Section \ref{S:indiff-interval} treats examples
involving an interval on the real line, while Section \ref{S:indiff-plane}
treats examples involving circles and disks in the plane. The final example of
Section \ref{S:indiff-plane} is the famous example of Bertrand's paradox.

\section{Formulating the principle}\label{S:PoI-defn}

The principle of indifference is an inductive generalization of a fundamental
principle of deductive logic. This deductive principle is one that we use all
the time, especially when we make assumptions ``without loss of generality.''
This principle, which we call ``deductive indifference,'' is presented in
Section \ref{S:ded-PoI}. In order to state it, we first define what we call
``signature permutations.''

\subsection{Signature permutations}

Let $L$ be a logical signature with associated predicate language $\cL$. A
\emph{signature permutation}, or \emph{$L$-permutation}, is a bijection $\pi: L
\to L$ such that $\s^\pi$ has the same type and arity as $\s$, for all $\s \in
L$.
  \index{permutation!signature ---}%
Given a signature permutation, we extend it to $\pi: \cT \to \cT$ by $x^\pi = x$
and $(f t_1 \cdots t_n)^\pi = f^\pi t_1^\pi \cdots t_n^\pi$. We then extend it
to $\pi: \cL \to \cL$ by
\begin{enumerate}[(i)]
  \item $(s \beq t)^\pi = (s^\pi \beq t^\pi)$,
  \item $(r t_1 \cdots t_n)^\pi = r^\pi t_1^\pi \cdots t_n^\pi$,
  \item $(\neg \ph)^\pi = \neg \ph^\pi$,
  \item $(\bigwedge \Phi)^\pi = \bigwedge_{\ph \in \Phi} \ph^\pi$, and
  \item $(\forall x \ph)^\pi = \forall x \ph^\pi$.
\end{enumerate}
By the unique concatenation and reconstruction properties in Sections
\ref{S:terms} and \ref{S:pred-formulas}, each of these extensions of $\pi$ is a
bijection. Moreover, by formula induction in $\cL_\fin$, we have that $\ph \in
\cL_\fin$ if and only if $\ph^\pi \in \cL_\fin$. For $X \subseteq \cL$, we write
$X^\pi = \{\ph^\pi \mid \ph \in X\}$. If $X^\pi \equiv X$, then we say that
\emph{$X$ is invariant under $\pi$}, or \emph{$\pi$-invariant}.
  \index{invariant}%

We will sometimes denote the inverse permutation, $\pi^{-1}$, by $-\pi$. Also,
when $\pi$ affects only finitely many extralogical symbols, we may use the usual
cycle notation for permutations. For example, $\pi = (c_1 \; c_2)(r_1 \; r_2 \;
r_3)$ means that $c_1^\pi = c_2$, $c_2^\pi = c_1$, $r_1^\pi = r_2$, $r_2^\pi =
r_3$, $r_3^\pi = r_1$, and $\s^\pi = \s$ for all other extralogical symbols.

By term and formula induction, $\psi \in \Sf \ph$ if and only if $\psi^\pi \in
\Sf \ph^\pi$. Also, $\var t = \var t^\pi$, and $\rk \ph$, $\var \ph$, $\bnd
\ph$, and $\free \ph$ are all unchanged by replacing $\ph$ with $\ph^\pi$. In
particular, $t$ is a ground term if and only if $t^\pi$ is a ground term, and
$\ph$ is a sentence if and only if $\ph^\pi$ is a sentence. On the other hand,
$\sym \ph^\pi = \pi(\sym \ph)$ and $\con \ph^\pi = \pi(\con \ph)$.

If $\si: \Var \to \cT$ is a substitution, then define the substitution $\si':
\Var \to \cT$ by $\si' = \pi \circ \si$, which is the substitution given by
$x^{\si'} = x^{\si \pi}$ for all $x \in \Var$. Note that this relation does not
extend to all of $\cT$. For instance $c^{\si'} = c$, but $c^ {\si \pi} = c^\pi$.

\begin{prop}
  For all $t \in \cT$ and all $\ph \in \cL$, we have $t^{\si \pi} = t^{\pi
  \si'}$ and $\ph^{\si \pi} = \ph^{\pi \si'}$. In particular, $\ph(t/x)^\pi =
  \ph^\pi(t^\pi/x)$.
\end{prop}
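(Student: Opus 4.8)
The plan is to prove the two displayed identities by induction---first the statement for terms by ordinary induction on the structure of $t$, then the statement for formulas by transfinite induction on $\rk \ph$---and to deduce the final ``in particular'' clause as an immediate specialization. Throughout, I would read $t^{\si\pi}$ as $(t^\si)^\pi$ and $t^{\pi\si'}$ as $(t^\pi)^{\si'}$, and I would prove the formula identity uniformly over \emph{all} substitutions $\si$ simultaneously, since the quantifier clause forces me to apply the inductive hypothesis to a modified substitution rather than to $\si$ itself.

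For the term identity $t^{\si\pi} = t^{\pi\si'}$, the base cases are where the content really lies. On a variable $x$ both sides equal $(x^\si)^\pi$: the left side is immediate, and the right side is $(x^\pi)^{\si'} = x^{\si'} = (x^\si)^\pi$ by the defining relation $x^{\si'} = x^{\si\pi}$. On a constant $c$ both sides equal $c^\pi$, using that $\si$ and $\si'$ fix constant symbols and that $\pi$ sends the constant symbol $c$ to the constant symbol $c^\pi$; this is exactly the point flagged in the remark preceding the proposition, namely that although $c^{\si'} = c$ while $c^{\si\pi} = c^\pi$, the full identity nonetheless holds. The function-term step $f t_1 \cdots t_n$ then follows routinely by distributing both $\si$ and $\pi$ over the arguments and invoking the inductive hypothesis on each $t_i$.

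For the formula identity I would induct on $\rk \ph$. The prime cases reduce to the term identity just proved, and the cases $\ph = \neg \psi$ and $\ph = \bigwedge \Phi$ are immediate from the recursive definitions of $(\cdot)^\si$ and $(\cdot)^\pi$ together with the inductive hypothesis applied to the lower-rank immediate subformulas. The only delicate case, and the one I expect to be the main obstacle, is $\ph = \forall x \psi$. Here $(\forall x \psi)^\si = \forall x \psi^\tau$, where $\tau$ agrees with $\si$ off $x$ and fixes $x$, whereas $\pi$ leaves the bound variable untouched, so $(\forall x \psi)^\pi = \forall x \psi^\pi$. Expanding both sides reduces the goal to $\psi^{\tau\pi} = \psi^{\pi\tau'}$, where $\tau'$ is the substitution attached to $\si'$ under the quantifier (identity on $x$, equal to $\si'$ off $x$). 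The key computation is that the substitution induced by $\tau$ and $\pi$---the one whose value at $y$ is $y^{\tau\pi}$---coincides with $\tau'$: at $x$ both give $x$, and at $y \ne x$ both give $y^{\si\pi} = y^{\si'}$. With this identification of substitutions in hand, the inductive hypothesis for $\psi$ applied to the substitution $\tau$ yields $\psi^{\tau\pi} = \psi^{\pi\tau'}$, and reattaching $\forall x$ closes the case. It is precisely because this step replaces $\si$ by $\tau$ that the induction must be carried out for all substitutions at once.

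Finally, the ``in particular'' clause follows by specializing to $\si = t/x$. One checks directly that the induced substitution $\si'$ is the identity off $x$ and sends $x$ to $t^\pi$; that is, $\si'$ is exactly $t^\pi/x$. Hence $\ph(t/x)^\pi = \ph^{\si\pi} = \ph^{\pi\si'} = \ph^\pi(t^\pi/x)$, as required.
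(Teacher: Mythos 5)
Your proof is correct and takes essentially the same route as the paper's: term induction for $t^{\si\pi} = t^{\pi\si'}$ (with the variable and constant base cases resolved exactly as you describe), formula induction for $\ph^{\si\pi} = \ph^{\pi\si'}$ with the quantifier case handled by identifying the substitution $y \mapsto y^{\tau\pi}$ with the one attached to $\si'$ under $\forall x$, and the final clause by checking that $\si = t/x$ forces $\si' = t^\pi/x$. Your explicit observation that the induction must be carried out uniformly over all substitutions is the same point the paper relies on implicitly when it applies the inductive hypothesis to $\tau$ rather than $\si$.
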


\begin{proof}
  We first prove $t^{\si \pi} = t^{\pi \si'}$ by term induction. Since $x^\pi =
  x$, it is true for $x \in \Var$ by the definition of $\si'$. Since constants
  are unaffected by substitutions, we have $c^{\si \pi} = c^\pi = c^{\pi \si'}$.
  Suppose it is true for $t_1, \ldots, t_n$ and let $f$ be an $n$-ary function
  symbol. Then
  \begin{multline*}
    (f t_1 \cdots t_n)^{\si \pi} = (f t_1^\si \cdots t_n^\si)^\pi
      = f^\pi t_1^{\si \pi} \cdots t_n^{\si \pi}\\
      = f^\pi t_1^{\pi \si'} \cdots t_n^{\pi \si'}
      = (f^\pi t_1^\pi \cdots t_n^\pi)^{\si'}
      = (f t_1 \cdots t_n)^{\pi \si'},
  \end{multline*}
  and it is true for $f t_1 \cdots t_n$. By term induction, it holds for all $t
  \in \cT$.

  We next prove $\ph^{\si \pi} = \ph^{\pi \si'}$ by formula induction. The proof
  that it holds for prime formulas and for formulas of the form $\ph = \neg
  \psi$ and $\ph = \bigwedge \Phi$ is similar to the above. Suppose $\ph =
  \forall x \psi$. Then $\ph^{\si \pi} = (\forall x \psi^\tau)^\pi = \forall x
  \psi^{\tau \pi}$, where $x^\tau = x$ and $y^\tau = y^\si$ for $y \ne x$. By
  the inductive hypothesis, this gives $\ph^{\si \pi} = \forall x \psi^{\pi
  \tau'}$. Using the fact that the proposition holds for terms, we have $x^
  {\tau'} = x$ and $y^{\tau'} = y^{\si'}$ for $y \ne x$. Hence, $\ph^{\si \pi} =
  (\forall x \psi^\pi)^{\si'} = \ph^{\pi \si'}$. The final assertion follows
  from the fact that if $\si = t/x$, then $\si' = t^\pi/x$.
\end{proof}

\begin{prop}
  Let $\ph \in \cL$. Then $\si$ is free for $\ph$ if and only if $\si'$ is free
  for $\ph^\pi$. In particular, $t$ is free for $x$ in $\ph$ if and only if
  $t^\pi$ is free for $x$ in $\ph^\pi$.
\end{prop}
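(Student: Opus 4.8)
The plan is to isolate a single preservation fact and bootstrap everything from it. Recall that a signature permutation fixes every individual variable, so $x^\pi = x$ for all $x \in \Var$. I would first reduce the full statement to the ``in particular'' clause, and that clause in turn to the claim that, for variables $x, y \in \Var$, the relation ``$y$ is free for $x$ in $\ph$'' holds if and only if ``$y$ is free for $x$ in $\ph^\pi$.'' For the reduction of the general statement: since $\si$ is free for $\ph$ exactly when $x^\si$ is free for $x$ in $\ph$ for every $x$, and $\si'$ is free for $\ph^\pi$ exactly when $x^{\si'} = (x^\si)^\pi$ is free for $x$ in $\ph^\pi$ for every $x$, applying the ``in particular'' clause with $t = x^\si$ to each $x$ gives the equivalence termwise. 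For the ``in particular'' clause itself: $t$ is free for $x$ in $\ph$ iff every $y \in \var t$ is free for $x$ in $\ph$, while $t^\pi$ is free for $x$ in $\ph^\pi$ iff every $y \in \var t^\pi$ is free for $x$ in $\ph^\pi$; since $\var t = \var t^\pi$ (noted above), and since the $y$-claim equates the two inner conditions index by index, the two statements coincide.

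It then remains to prove the $y$-claim, which I would do through two bookkeeping lemmas about subformulas. The first is scope preservation: for $\ze \in \Sf \ph$, the formula $\ze$ is in the scope of $\forall y$ in $\ph$ iff $\ze^\pi$ is in the scope of $\forall y$ in $\ph^\pi$. The forward direction is immediate, since a witness $\forall y \psi \in \Sf \ph$ with $\ze \in \Sf \psi$ maps to $(\forall y \psi)^\pi = \forall y \psi^\pi \in \Sf \ph^\pi$ with $\ze^\pi \in \Sf \psi^\pi$, using $y^\pi = y$ and the already-established equivalence $\chi \in \Sf \ph \iff \chi^\pi \in \Sf \ph^\pi$. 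The converse uses that $\pi$ is a bijection on $\cL$ together with the unique formula reconstruction property: any witnessing subformula $\forall y \chi \in \Sf \ph^\pi$ is $\xi^\pi$ for a unique $\xi \in \Sf \ph$, and because $\pi$ fixes variables and commutes with quantification, $\xi$ must have the form $\forall y \psi$ with $\psi^\pi = \chi$, which pulls the witness back to $\ph$. The second lemma is free-occurrence preservation: $\ze$ has a free occurrence of $x$ in $\ph$ iff $\ze^\pi$ has a free occurrence of $x$ in $\ph^\pi$. This follows by combining $x \in \free \ze \iff x \in \free \ze^\pi$ (since $\free$ is unchanged under $\pi$) with scope preservation applied to $\forall x$.

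With both lemmas in hand, the $y$-claim falls out: $y$ fails to be free for $x$ in $\ph$ exactly when some $\ze \in \Sf \ph$ is simultaneously in the scope of $\forall y$ in $\ph$ and has a free occurrence of $x$ in $\ph$; transporting the witness $\ze$ to $\ze^\pi$ via the bijection $\Sf \ph \to \Sf \ph^\pi$ and invoking the two lemmas shows this is equivalent to $y$ failing to be free for $x$ in $\ph^\pi$, and negating gives the claim. The main obstacle I anticipate is not any single computation but making the bidirectional transfer of the existential witness airtight in the converse of scope preservation: one must be careful to use the bijectivity of $\pi$ on $\cL$ and the unique reconstruction property to guarantee that every relevant quantified subformula of $\ph^\pi$ genuinely arises from one of $\ph$, rather than merely assuming it. Everything else is routine, resting on facts already recorded before the statement --- namely $x^\pi = x$, $\var t = \var t^\pi$, the invariance of $\free \ph$, and the subformula correspondence under $\pi$.
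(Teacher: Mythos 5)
Your proposal is correct and follows essentially the same route as the paper: reduce the general substitution statement to the single-variable case via $\var t = \var t^\pi$ and $x^{\si'} = (x^\si)^\pi$, then settle the variable case by transporting the witnessing subformula $\ze \mapsto \ze^\pi$ using the subformula correspondence, $\free \ze = \free \ze^\pi$, and scope preservation. The only difference is one of detail: the paper simply asserts scope preservation and says ``the converse also holds,'' whereas you spell out the converse via bijectivity of $\pi$ and unique reconstruction (one could equally just apply the forward direction to $\pi^{-1}$).
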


\begin{proof}
  First note that $\ze$ is in the scope of $\forall z$ in $\ph$ if and only if
  $\ze^\pi$ is in the scope of $\forall z$ in $\ph^\pi$. Now suppose $y$ in not
  free for $x$ in $\ph$. Then there exists $\ze \in \Sf \ph$ such that $x \in
  \free \ze$, $\ze$ is not in the scope of $\forall x$ in $\ph$, and $\ze$ is in
  the scope of $\forall y$ in $\ph$. Since $\ze^\pi \in \Sf \ph^\pi$ and $\free
  \ze^\pi = \free \ze$, it follows that $y$ is not free for $x$ in $\ph^\pi$.
  The converse also holds. Hence, the second part of the proposition is true for
  $t = y$. For general $t$, simply note that $y \in t$ if and only if $y \in
  t^\pi$. The case of general $\si$ now follows since $\si$ is free for $\ph$ if
  and only if $x^\si$ is free for $x$ in $\ph$ for all $x$, and $x^{\si \pi} =
  x^{\si'}$.
\end{proof}

\subsection{Deductive indifference}\label{S:ded-PoI}

Theorem \ref{T:pf-invar} below says that the deductive derivability relation is
preserved by signature permutations. The proof is elementary and the result is
completely unsurprising. After all, the symbols that appear in a proof have no
direct relevance. It is only their relationships to one another that matters.
This principle is what we might call ``deductive indifference.'' As an example
of applying this principle, we use it to formalize the technique of assuming
something ``without loss of generality.''

\begin{thm}\label{T:pf-invar}
  Let $X \subseteq \cL$ and $\ph \in \cL$. Let $\pi$ be a signature permutation.
  Then $X^\pi \vdash \ph^\pi$ if and only if $X \vdash \ph$. In particular, if
  $X$ is invariant under $\pi$, then $X \vdash \ph^\pi$ if and only if $X \vdash
  \ph$.
\end{thm}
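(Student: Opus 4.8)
The plan is to imitate the proof of Proposition \ref{P:perm-vars} for variable permutations. I would introduce an auxiliary relation $\vdash'$ from $\fP \cL$ to $\cL$ by declaring that $X \vdash' \ph$ holds precisely when $X^\pi \vdash \ph^\pi$, and then verify that $\vdash'$ satisfies all ten conditions (i)--(x) of Definition \ref{D:pred-derivability}. Since $\vdash$ is by definition the smallest relation satisfying these conditions, this gives ${\vdash} \subseteq {\vdash'}$, i.e., $X \vdash \ph$ implies $X^\pi \vdash \ph^\pi$. The converse then follows by applying this implication to the permutation $\pi^{-1}$: from $X^\pi \vdash \ph^\pi$ we obtain $(X^\pi)^{\pi^{-1}} \vdash (\ph^\pi)^{\pi^{-1}}$, and since the action of $\pi^{-1}$ inverts that of $\pi$ on terms and formulas (a routine term and formula induction using $(\s^\pi)^{\pi^{-1}} = \s$), this reads $X \vdash \ph$.

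Most of the ten verifications are immediate once one records the elementary identities $\{\ph\}^\pi = \{\ph^\pi\}$, $(X \cup X')^\pi = X^\pi \cup (X')^\pi$, the implication $X \subseteq X'$ gives $X^\pi \subseteq (X')^\pi$, together with $(\neg \ph)^\pi = \neg \ph^\pi$ and $(\bigwedge \Phi)^\pi = \bigwedge \Phi^\pi$, all of which come straight from the definition of the extension of $\pi$ to $\cL$. For example, to check (vi), I note that $(X \cup \{\ph\})^\pi = X^\pi \cup \{\ph^\pi\}$ and $(X \cup \{\neg\ph\})^\pi = X^\pi \cup \{\neg\ph^\pi\}$, so the hypotheses of (vi) for $\vdash'$ translate verbatim into the hypotheses of (vi) for $\vdash$; rules (i)--(v) are handled the same way. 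For (viii) I use that $\free \ph = \free \ph^\pi$, noted in the subsection on signature permutations, so $x \notin \free X$ if and only if $x \notin \free X^\pi$, and then apply $(\forall x \ph)^\pi = \forall x \ph^\pi$. Rule (ix) reduces to $\vdash t^\pi \beq t^\pi$ via $(t \beq t)^\pi = (t^\pi \beq t^\pi)$.

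The only genuinely substantive steps are the substitution rules (vii) and (x), and here the two propositions proved immediately before the theorem do all the work. For (vii), I would use that $t$ is free for $x$ in $\ph$ if and only if $t^\pi$ is free for $x$ in $\ph^\pi$, together with the identity $\ph(t/x)^\pi = \ph^\pi(t^\pi/x)$: from $X^\pi \vdash (\forall x \ph)^\pi = \forall x \ph^\pi$ and the freeness of $t^\pi$, rule (vii) for $\vdash$ yields $X^\pi \vdash \ph^\pi(t^\pi/x) = \ph(t/x)^\pi$, which is exactly $X \vdash' \ph(t/x)$. Rule (x) goes the same way, applying the substitution identity to both $\ph(s/x)$ and $\ph(t/x)$ and using that $s^\pi$ and $t^\pi$ are free for $x$ in $\ph^\pi$. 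I expect this to be the main (though modest) obstacle, in the sense that it is the only place where the nontrivial interaction between $\pi$ and substitution is invoked.

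Finally, for the last assertion, I would combine the biconditional just established with $\pi$-invariance. If $X^\pi \equiv X$, then $X$ and $X^\pi$ prove exactly the same formulas, since $X \equiv Y$ is equivalent to $T(X) = T(Y)$ and $X \vdash \ph$ means $\ph \in T(X)$. Hence $X \vdash \ph$ if and only if $X^\pi \vdash \ph^\pi$ if and only if $X \vdash \ph^\pi$, which is the stated equivalence.
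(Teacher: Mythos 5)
Your proof is correct, but it takes a different route from the paper's. The paper works with the Hilbert-style calculus of Section \ref{S:Karp-calc}: it takes a proof $\ang{\ph_\be \mid \be \le \al}$ of $\ph$ from $X$ (invoking Theorem \ref{T:pred-Hilbert=nat}), applies $\pi$ to every formula in the sequence, and shows the result is again a proof; the crux there is establishing $\La^\pi = \La$, i.e., that Karp's axiom set is closed under signature permutations, which is verified through the inductive clauses (I)--(IV) defining $\La$, with the schemas ($\La$4) and ($\La$7) requiring exactly the two substitution-and-freeness propositions you cite. Your argument instead stays entirely inside the natural-deduction characterization: you define $X \vdash' \ph$ as $X^\pi \vdash \ph^\pi$, check closure under (i)--(x), and invoke minimality of $\vdash$, exactly as in Proposition \ref{P:perm-vars} -- and your identification of (vii) and (x) as the only substantive clauses mirrors the paper's identification of ($\La$4) and ($\La$7). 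What your route buys is independence from the Hilbert calculus and from Theorem \ref{T:pred-Hilbert=nat}; what the paper's route buys is the explicit picture motivating the whole section (transform each step of a proof by $\pi$ and you still have a proof -- the statement of ``deductive indifference'' as described in the introduction), plus the standalone fact $\La^\pi = \La$. One cosmetic remark: your handling of the final assertion via $T(X) = T(X^\pi)$ is phrased in terms of generated theories, which the paper officially defines only for sets of sentences; it is cleaner to argue directly from $X \vdash X^\pi$ and $X^\pi \vdash X$ together with Proposition \ref{P:set-mono}, which the paper notes remains valid in $\cL$. This changes nothing of substance.
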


\begin{proof}
  Suppose $X \vdash \ph$ and let $\ang{\ph_\be \mid \be \le \al}$ be a proof of
  $\ph$ from $X$. We claim that $\langle \ph_\be^\pi \mid \be \le \al \rangle$
  is a proof of $\ph^\pi$ from $X^\pi$. Since $(\psi \to \ze)^\pi = \psi^\pi \to
  \ze^\pi$ and $(\bigwedge \Phi)^\pi = \bigwedge \Phi^\pi$, the claim will
  follow once we show that $\La^\pi = \La$.

  For this, it suffices to show that $\La \subseteq \La^\pi$, since we can
  replace $\pi$ by $-\pi$ and apply $\pi$ to both sides. Showing $\La \subseteq
  \La^\pi$ can be done by verifying (I)--(IV) in Section \ref{S:Karp-calc}, with
  $\La$ replaced by $\La^\pi$. Verifying (II)--(IV) is straightforward. To show
  that $\La^- \subseteq \La^\pi$, it is enough to show $(\La^-)^\pi \subseteq
  \La$, for the reasons given above.

  Let $\ph \in \La^-$. If $\ph$ has the form ($\La$1), then so does $\ph^\pi$,
  so that $\ph^\pi \in \La$. The same is true for ($\La$2), ($\La$3), ($\La$5),
  and ($\La$6). For ($\La$4), suppose $\ph = \forall x \psi \to \psi(t/x)$,
  where $t$ is free for $x$ in $\psi$. Then $t^\pi$ is free for $x$ in
  $\psi^\pi$, and $\ph^\pi = \forall x \psi^\pi \to \psi^\pi(t^\pi/x)$. Hence,
  $\ph^\pi \in \La$. The proof for ($\La$7) is similar.

  This shows that $X \vdash \ph$ implies $X^\pi \vdash \ph^\pi$. Using the
  result with $-\pi$ gives the converse. The second result follows from the
  first by the definition of invariance.
\end{proof}

Theorem \ref{T:pf-invar} shows that if $T \subseteq \cL^0$ is a deductive
theory, then $T^\pi$ is also. This is because if $T^\pi \vdash \ph$, then $T
\vdash \ph^{-\pi}$, so that $\ph^{-\pi} \in T$, which implies $\ph \in T^\pi$.
In fact, we have $T(X^\pi) = T(X)^\pi$ for any $X \subseteq \cL^0$.

The following corollary is a formalization of the without-loss-of-generality
proof method. It is also true in $\cL_\fin$ if we require $\Phi$ to be finite.

\begin{cor}\label{C:pf-invar}
  Let $X \subseteq \cL$ and $\ph \in \cL$. Suppose $\Phi \subseteq \cL$ is
  countable and $X \vdash \bigvee \Phi$. Fix $\th_0 \in \Phi$ and suppose that
  for each $\th \in \Phi$, there is a signature permutation $\pi$ such that
  $\th_0^\pi = \th$, $X$ is $\pi$-invariant, and $\ph$ is $\pi$-invariant. Then
  $X, \th_0 \vdash \ph$ implies $X \vdash \ph$.
\end{cor}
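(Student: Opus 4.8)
The plan is to first upgrade the single hypothesis $X, \th_0 \vdash \ph$ into the entire family of hypotheses $X, \th \vdash \ph$, one for each $\th \in \Phi$, and then to perform a case analysis on the disjunction $\bigvee \Phi$ to conclude $X \vdash \ph$. The first stage is where the permutation hypotheses and Theorem \ref{T:pf-invar} do their work; the second stage is purely a matter of combining derivations with the rules of Definition \ref{D:derivability}.

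For the first stage, fix $\th \in \Phi$ and let $\pi$ be the signature permutation supplied by hypothesis, so that $\th_0^\pi = \th$, while $X^\pi \equiv X$ and $\ph^\pi \equiv \ph$ by invariance. Applying Theorem \ref{T:pf-invar} to the derivation $X, \th_0 \vdash \ph$ yields $(X \cup \{\th_0\})^\pi \vdash \ph^\pi$, which, since $\th_0^\pi = \th$, is precisely $X^\pi, \th \vdash \ph^\pi$. From $X \equiv X^\pi$ we have $X \vdash X^\pi$, so monotonicity (Definition \ref{D:derivability}(ii)) gives $X, \th \vdash X^\pi \cup \{\th\}$, and Proposition \ref{P:set-mono} then produces $X, \th \vdash \ph^\pi$. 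Finally, $\ph^\pi \equiv \ph$ gives $\ph^\pi \vdash \ph$, so a second application of Proposition \ref{P:set-mono} yields $X, \th \vdash \ph$. As $\th \in \Phi$ was arbitrary, this holds for every $\th \in \Phi$.

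The second stage must combine these derivations with $X \vdash \bigvee \Phi$, and here lies the main obstacle: since $\Phi$ may be infinite, the finitary case-split of Definition \ref{D:derivability}(vi) does not apply directly. I would circumvent this by routing through consistency. By Theorem \ref{T:deduc-con} (which holds in $\cL$), it suffices to show $X, \neg \ph \vdash \bot$. For each $\th \in \Phi$, the derivation $X, \th \vdash \ph$ together with rules (i), (ii), and (v) gives $X, \neg \ph, \th \vdash \bot$, whence Theorem \ref{T:deduc-con} yields $X, \neg \ph \vdash \neg \th$. Applying the conjunction rule (iv) across all $\th \in \Phi$ then gives $X, \neg \ph \vdash \bigwedge \neg \Phi$, where $\bigwedge \neg \Phi$ is a genuine formula because $\neg \Phi$ is countable. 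On the other hand, $X \vdash \bigvee \Phi = \neg \bigwedge \neg \Phi$, so monotonicity gives $X, \neg \ph \vdash \neg \bigwedge \neg \Phi$. Since $X \cup \{\neg \ph\}$ now derives both $\bigwedge \neg \Phi$ and its negation, rule (v) gives $X, \neg \ph \vdash \bot$, and therefore $X \vdash \ph$.
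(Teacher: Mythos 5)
Your proof is correct and follows essentially the same route as the paper's: both transport the hypothesis $X, \th_0 \vdash \ph$ to every disjunct via Theorem \ref{T:pf-invar} and the invariance assumptions, reduce to deriving $\neg\th$ from $X, \neg\ph$ for each $\th \in \Phi$, and then conjoin over the countable set $\neg\Phi$ to clash with $X \vdash \bigvee\Phi$. The only differences are cosmetic: the paper contraposes first and then permutes the invariant set $X \cup \{\neg\ph\}$, whereas you permute first and then contrapose, and you finish via an explicit contradiction and Theorem \ref{T:deduc-con} where the paper uses a direct cut with $X \vdash \bigvee \Phi$.
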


\begin{proof}
  Assume $X, \th_0 \vdash \ph$. Then $X \vdash \th_0 \to \ph$. But $\th_0 \to
  \ph \equiv \neg \ph \to \neg \th_0$. Hence, $X, \neg \ph \vdash \neg \th_0$.
  Let $\th \in \Phi$. Choose $\pi$ as in our hypotheses. Since $X, \neg \ph$ is
  invariant under $\pi$, it follows from Theorem \ref{T:pf-invar} that $X, \neg
  \ph \vdash \neg \th_0^\pi \equiv \neg \th$. Since $\th$ was arbitrary, $X,
  \neg \ph \vdash \bigwedge_{\th \in \Phi} \neg \th \equiv \neg \bigvee \Phi$.
  Therefore, $X, \bigvee \Phi \vdash \ph$. But $X \vdash \bigvee \Phi$, so we
  have $X \vdash \ph$.
\end{proof}

\begin{expl}
  Consider a scenario where we have three objects, and each is painted either
  red or blue. Define the extralogical signature $L = \{r, b\}$, where $r$ and
  $b$ are unary relation symbols. Let
  \[
    X = \{\exists_{=3}, \forall x ((rx \vee bx) \wedge \neg (rx \wedge bx))\}.
  \]
  The set $X$ asserts that there are three objects, each is red or blue, and
  none are both red and blue. Let us introduce the defined binary relation
  symbol $s$ by
  \[
    sxy \tot (rx \wedge ry) \vee (bx \wedge by),
  \]
  so that $sxy$ asserts that $x$ and $y$ have the same color. We want to prove
  that there must be two objects of the same color. That is, we want to show
  that $X \vdash \ph$, where
  \[
    \ph = \exists xy (x \nbeq y \wedge sxy).
  \]
  Let us add a new constant $c$ that represents an arbitrary object. By
  Proposition \ref{P:add-constants}, it suffices to show that $X \vdash_{\cL c}
  \ph$. Informally, we would like to say that, without loss of generality, we
  may assume $c$ is red. In other words, we claim that it suffices to show $X,
  rc \vdash_{\cL c} \ph$.

  This is justified by Corollary \ref{C:pf-invar}. To see this, define $\pi$ by
  $r^\pi = b$, $b^\pi = r$, and $c^\pi = c$. Then both $X$ and $\ph$ are
  $\pi$-invariant. Let $\Phi = \{rc, bc\}$. Then $X \vdash rc \vee bc$ and
  $(rc)^\pi = bc$. Hence, according to Corollary \ref{C:pf-invar}, if we can
  establish $X, rc \vdash_{\cL c} \ph$, then we may conclude $X \vdash_{\cL c}
  \ph$.
\end{expl}

\subsection{Inductive indifference}

We are now able to state the principle of indifference, which is an extension of
Theorem \ref{T:pf-invar} to the inductive setting.

\begin{defn}[The Principle of Indifference]
    \index{principle of indifference}%
  Let $P$ be an inductive theory. Suppose that, for every signature permutation
  $\pi$, we have:
  \begin{enumerate}[(R1), leftmargin=3em]
    \setcounter{enumi}{9}
    \item If $P(\ph \mid X)$ exists and $X^\pi \in \ante P$, then $P(\ph^\pi
          \mid X^\pi) = P(\ph \mid X)$.
  \end{enumerate}
  Then $P$ satisfies the \emph{principle of indifference}.
\end{defn}

If $P(\ph \mid X)$ is to be an evidentiary relationship between $X$ and $\ph$,
then the principle of indifference is a natural consistency condition to impose.
After all, there is nothing special about the symbols we choose to use. In
arithmetic, if we everywhere switch the symbols $+$ and $\cdot\,$, it is still
the same arithmetic. The symbols just have the opposite meaning. If we were
going to assign a certain probability in the original setting, then we ought to
assign that same probability after the symbols are reversed, because nothing has
actually changed.

In fact, it is such a natural requirement, it would have made sense to define it
as one of our rules of inductive inference, making it part of the definition of
an inductive theory. We did not do this for two reasons. First, the principle of
indifference is a rule solely for predicate logic. Making it a rule of inductive
inference would have created an asymmetry between propositional and predicate
logic. Second, by omitting it from the rules of inductive inference, we were be
able to prove Theorem \ref{T:prob-model-iso}, which shows that all of modern,
measure-theoretic probability is embedded in inductive logic. If we made the
principle of indifference part of the definition of an inductive theory, this
would not be the case. In other words, modern probability as we know it today,
for better or for worse, does not require us to conform to the principle of
indifference.

And so, the principle of indifference is not a required part of inductive logic.
That is, inductive theories are required to satisfy (R1)--(R9), but they are not
required to satisfy (R10). We will show, however, in the remaining sections of
this chapter, how to add this requirement to our inferences by using inductive
conditions.

Now, if $X$ is $\pi$-invariant, then according to the rule of logical
equivalence, we can reformulate the principle of indifference as $P(\ph^\pi \mid
X) = P(\ph \mid X)$. This reformulation is perhaps closer to the intuitive idea
of the principle of indifference. To say that $X$ is our antecedent is to say
that the totality of facts which we know consists of $X$, together with
everything that can be proven from $X$. In other words, $T(X)$, the deductive
theory generated by $X$, is the set of sentences that represents our knowledge.
If $X$ is $\pi$-invariant, then $T(X) = T(X^\pi)$. In other words, our knowledge
remains entirely unchanged by the permutation $\pi$. In this sense, then, we
cannot even ``see'' the permutation $\pi$. We are therefore ``indifferent''
between $\ph$ and $\ph^\pi$. Everything we know about $\ph$, we also know about
$\ph^\pi$, and vice versa. In this case, according to the principle of
indifference, we should assign them the same probability.

\subsection{Structures, models, and indifference}\label{S:PoI-models}

In the remainder of this chapter, we will present several examples of the
principle of indifference in action. In order to verify the principle of
indifference in these examples, we will need several results about how (R10)
relates to structures and models.

Let $\sP = (\Om, \Si, \bbP)$ be an $\cL$-model, and let $\pi$ be a signature
permutation. For $\om = (A, L^\om) \in \Om$, define $\om^\pi = (A, L^{\om^\pi})$
so that $(\s^\pi)^{\om^\pi} = \s^\om$ for all $\s \in L$. That is, $\om^\pi =
\om \circ \pi^{-1}$. Let $\Om^\pi = \{\om^\pi \mid \om \in \Om\}$ and let
$h_\pi: \Om \to \Om^\pi$ denote the map $\om \mapsto \om^\pi$. Let $\sP^\pi =
(\Om^\pi, \Si^\pi, \bbQ)$ be the measure space image of $\sP$ under $h_\pi$.
Note that $h_\pi$ is a bijection, and is therefore a pointwise isomorphism (as
measure spaces) from $\sP$ to $\sP^\pi$. Hence, it induces a measure-space
isomorphism from $(\Om, \ol \Si, \olbbP)$ to $ (\Om^\pi, \ol{\Si^\pi}, \olbbQ)$.
In particular, ${\olbbQ} = {\olbbP} \circ h_\pi^{-1}$.

Given an assignment $\bv$ into $\sP$, define the assignment $\bv^\pi$ into
$\sP^\pi$ by $v^\pi_{\om^\pi}(x) = v_\om(x)$ for all $x \in \Var$. By term
induction, we have $v^\pi_{\om^\pi}(t^\pi) = v_\om(t)$ for all $t \in \cT$, and
by formula induction, $\om \tDash \ph[v_\om]$ if and only if $\om^\pi \tDash
\ph^\pi[v^\pi_{\om^\pi}]$, for any $\ph \in \cL$. Hence, $\ph[\bv]_\Om =
h_\pi^{-1} \ph^\pi [\bv^\pi]_{\Om^\pi}$. Since ${\olbbQ} = {\olbbP} \circ h_\pi^
{-1}$, this gives $\olbbP \ph[\bv]_\Om = \olbbQ \ph^\pi [\bv^\pi]_{\Om^\pi}$. In
particular,
\begin{equation}\label{sem-pf-invar}
  \text{$\sP \vDash \ph[\bv]$ if and only if $\sP^\pi \vDash \ph^\pi[\bv^\pi]$},
\end{equation}
for all $\ph \in \cL$ and all assignments $\bv$ into $\sP$. Note that 
\eqref{sem-pf-invar} could be used to give a semantic proof of Theorem
\ref{T:pf-invar}.

\begin{thm}\label{T:induc-invar}
  Let $\sP$ be an $\cL$-model. Then for any $(X, \ph, p) \in \cL^\IS$, we have
  $\sP \vDash (X, \ph, p)$ if and only if $\sP^\pi \vDash (X^\pi, \ph^\pi, p)$.
\end{thm}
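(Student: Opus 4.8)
The plan is to reduce the statement about inductive satisfiability to the already-established relation \eqref{sem-pf-invar} about deductive satisfiability under signature permutations. Recall from Section \ref{S:pred-models} that $\sP \vDash (X, \ph, p)$ means there exist $Y \subseteq \Th \sP$ and $\psi \in \cL^0$ such that $X \equiv Y \cup \{\psi\}$ and $\olbbP \ph_\Om \cap \psi_\Om / \olbbP \psi_\Om = p$. So I would begin by assuming $\sP \vDash (X, \ph, p)$, fix such a decomposition $X \equiv Y \cup \{\psi\}$ with $\sP \vDash Y$, and then produce the analogous decomposition for $(X^\pi, \ph^\pi, p)$ with respect to $\sP^\pi$.

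The natural candidate is $X^\pi \equiv Y^\pi \cup \{\psi^\pi\}$, and I would verify its three required properties in turn. First, $X^\pi \equiv Y^\pi \cup \{\psi^\pi\}$ follows from $X \equiv Y \cup \{\psi\}$ together with Theorem \ref{T:pf-invar}: since $X \vdash Y \cup \{\psi\}$ and conversely, applying $\pi$ (and using that $(\bigwedge \Phi)^\pi = \bigwedge \Phi^\pi$ and $(Y \cup \{\psi\})^\pi = Y^\pi \cup \{\psi^\pi\}$) gives $X^\pi \vdash Y^\pi \cup \{\psi^\pi\}$ and the reverse. Second, I need $\sP^\pi \vDash Y^\pi$, i.e.\ $Y^\pi \subseteq \Th \sP^\pi$. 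This is exactly where \eqref{sem-pf-invar} does the work: for each $\eta \in Y$ we have $\sP \vDash \eta$, which since sentences are unaffected by the choice of assignment gives $\sP^\pi \vDash \eta^\pi$; hence $Y^\pi \subseteq \Th \sP^\pi$. Third, I must transport the conditional-probability equation. For sentences, $\eta[\bv]_\Om = \eta_\Om$ is assignment-independent, and \eqref{sem-pf-invar} combined with the measure identity $\olbbQ = \olbbP \circ h_\pi^{-1}$ gives $\olbbQ (\ph^\pi)_{\Om^\pi} = \olbbP \ph_\Om$ and, applied to the conjunctions $\ph \wedge \psi$ and $\psi$, also $\olbbQ (\ph^\pi)_{\Om^\pi} \cap (\psi^\pi)_{\Om^\pi} = \olbbP \ph_\Om \cap \psi_\Om$ and $\olbbQ (\psi^\pi)_{\Om^\pi} = \olbbP \psi_\Om$. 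Dividing yields the value $p$, so $\sP^\pi \vDash (X^\pi, \ph^\pi, p)$.

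The converse follows immediately by applying the forward direction to the permutation $-\pi = \pi^{-1}$ and the model $\sP^\pi$, noting that $(\sP^\pi)^{\pi^{-1}} = \sP$ and $(X^\pi)^{\pi^{-1}} = X$, which hold since $\om^\pi = \om \circ \pi^{-1}$ and the extension of $\pi$ to $\cL$ is a bijection with the expected composition behavior. This symmetry trick is the standard device used already in the proofs of Theorem \ref{T:pf-invar} and Proposition \ref{P:perm-vars}, so it requires no new ideas.

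I do not anticipate a genuine obstacle here, since nearly all the machinery is assembled in the paragraph preceding the statement. The one point requiring a little care is the handling of the conjunctions: the identities for $(\ph \wedge \psi)_\Om$ and $(\psi)_\Om$ under $h_\pi$ must be derived by applying \eqref{sem-pf-invar} to the specific sentences $\ph \wedge \psi$ and $\psi$ rather than only to $\ph$, and one must confirm $(\ph \wedge \psi)^\pi \equiv \ph^\pi \wedge \psi^\pi$, which is immediate from $(\bigwedge \Phi)^\pi = \bigwedge \Phi^\pi$ applied to $\Phi = \{\ph, \psi\}$. Since $\Th \sP$ is a consistent deductive theory (Proposition \ref{P:ThP-is-theory}) and $\sP^\pi$ is a bona fide model, there are no well-definedness issues, and the proof should be short.
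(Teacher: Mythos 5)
Your proposal is correct and follows essentially the same route as the paper's own proof: the same decomposition $X^\pi \equiv Y^\pi \cup \{\psi^\pi\}$ justified by Theorem \ref{T:pf-invar}, the use of \eqref{sem-pf-invar} to get $\sP^\pi \vDash Y^\pi$, the pushforward identity $\olbbQ = \olbbP \circ h_\pi^{-1}$ to transport the conditional probability, and the $-\pi$ symmetry trick for the converse. The extra care you take with the intersections (via the conjunction $\ph \wedge \psi$) is exactly the detail the paper compresses into one sentence, so nothing is missing.
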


\begin{proof}
  Let $\sP = (\Om, \Si, \bbP)$ be an $\cL$-model and $\pi$ a signature
  permutation, so that $\sP^\pi = (\Om^\pi, \Si^\pi, \bbQ)$. Let $(X, \ph, p)
  \in \cL^\IS$. Then $X \subseteq \cL^0$ and $\ph \in \cL^0$. Suppose $\sP
  \vDash (X, \ph, p)$. Then there exists $Y \subseteq \cL^0$ and $\psi \in
  \cL^0$ such that $\sP \vDash Y$, $X \equiv Y \cup \{\psi\}$, and $\olbbP
  \ph_\Om \cap \psi_\Om / \olbbP \psi_\Om = p$. By \eqref{sem-pf-invar}, we have
  $\sP^\pi \vDash Y^\pi$. Theorem \ref{T:pf-invar} implies $X^\pi \equiv Y^\pi
  \cup \{\psi^\pi\}$. And it follows from ${\olbbQ} = {\olbbP} \circ h_\pi^{-1}$
  that $\olbbQ \ph^\pi_{\Om^\pi} \cap \psi^\pi_{\Om^\pi} / \olbbQ
  \psi^\pi_{\Om^\pi} = p$. Hence, $\sP^\pi \vDash (X^\pi, \ph^\pi, p)$. Applying
  this with $-\pi$ gives the converse.
\end{proof}

\begin{prop}
  Let $\sP$ be an $\cL$-model. Suppose that $\sP \simeq \sP^\pi$ for every
  signature permutation $\pi$. Then $\bTh \sP$ satisfies the principle of
  indifference.
\end{prop}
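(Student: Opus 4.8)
The plan is to prove that $P = \bTh \sP$ satisfies (R10) by combining the inductive isomorphism theorem (Theorem~\ref{T:ind-iso-thm}) with the structural result of Theorem~\ref{T:induc-invar}. The key observation is that (R10) asks us to compare $P(\ph \mid X)$ with $P(\ph^\pi \mid X^\pi)$, and that passing from $\sP$ to $\sP^\pi$ transforms inductive statements exactly according to the action of $\pi$. The hypothesis $\sP \simeq \sP^\pi$ then lets us transfer satisfiability back from $\sP^\pi$ to $\sP$.

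First I would let $\pi$ be an arbitrary signature permutation, and suppose $(X, \ph, p) \in P = \bTh \sP$ with $X^\pi \in \ante P$. By definition of $\bTh \sP$, this means $\sP \vDash (X, \ph, p)$. By Theorem~\ref{T:induc-invar}, it follows that $\sP^\pi \vDash (X^\pi, \ph^\pi, p)$. Since we are assuming $\sP \simeq \sP^\pi$, the inductive isomorphism theorem (Theorem~\ref{T:ind-iso-thm}) gives $\sP \vDash (X^\pi, \ph^\pi, p)$, so that $(X^\pi, \ph^\pi, p) \in \bTh \sP = P$. In other words, $P(\ph^\pi \mid X^\pi) = p = P(\ph \mid X)$, which is exactly (R10).

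Before stringing these together, I would verify that $P = \bTh \sP$ is in fact an inductive theory, so that the notation $P(\ph \mid X) = p$ is meaningful and (R10) is the correct target. This is immediate from Theorem~\ref{T:model-ind-th} (whose predicate analogue holds by the remarks in Section~\ref{S:pred-models}), which guarantees that $\bTh \sP$ is a complete inductive theory. I would also note that the chain of implications above requires nothing beyond the stated hypothesis and the two cited theorems: the condition $X^\pi \in \ante P$ is used only to ensure that $P(\ph^\pi \mid X^\pi)$ is the appropriate object to compare, and in fact the equality $(X^\pi, \ph^\pi, p) \in P$ we derive subsumes it.

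The main subtlety — not so much an obstacle as a point to state carefully — is the direction of the isomorphism in Theorem~\ref{T:ind-iso-thm}. That theorem asserts $\sP \vDash (X, \ph, p)$ if and only if $\sQ \vDash (X, \ph, p)$ for \emph{isomorphic} models $\sP$ and $\sQ$; here we apply it with $\sQ = \sP^\pi$ to the particular inductive statement $(X^\pi, \ph^\pi, p)$, using that $\sP \simeq \sP^\pi$ by hypothesis. Since $\simeq$ is symmetric, the biconditional applies in the needed direction with no extra work. The entire argument is therefore a short composition of three results, and I would expect the write-up to consist of essentially the single chain $\sP \vDash (X, \ph, p) \Rightarrow \sP^\pi \vDash (X^\pi, \ph^\pi, p) \Rightarrow \sP \vDash (X^\pi, \ph^\pi, p)$, followed by reading off the conclusion.
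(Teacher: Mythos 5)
Your proposal is correct and follows exactly the paper's own proof: both consist of the chain $\sP \vDash (X, \ph, p) \Rightarrow \sP^\pi \vDash (X^\pi, \ph^\pi, p)$ via Theorem~\ref{T:induc-invar}, then $\sP \vDash (X^\pi, \ph^\pi, p)$ via Theorem~\ref{T:ind-iso-thm} and the hypothesis $\sP \simeq \sP^\pi$. The additional remarks you include (that $\bTh \sP$ is an inductive theory by Theorem~\ref{T:model-ind-th}, and the symmetry of $\simeq$) are fine but not needed beyond what the paper already records.
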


\begin{proof}
  Let $P = \bTh \sP$ and suppose $P(\ph \mid X) = p$. Then $\sP \vDash (X, \ph,
  p)$, so that Theorem \ref{T:induc-invar} gives $\sP^\pi \vDash (X^\pi,
  \ph^\pi, p)$. By hypothesis, $\sP \simeq \sP^\pi$. Hence, Theorem
  \ref{T:ind-iso-thm} implies $\sP \vDash (X^\pi, \ph^\pi, p)$. Therefore,
  $P(\ph^\pi \mid X^\pi) = p$.
\end{proof}

\begin{prop}
  Let $P$ be an inductive theory with root $T_0$, and let $T \in [T_0, T_P]$. If
  $P$ satisfies the principle of indifference, then so does $P \dhl_{[T, T_P]}$.
\end{prop}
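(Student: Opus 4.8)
The plan is to reduce (R10) for $P' := P\dhl_{[T,T_P]}$ directly to (R10) for $P$, exploiting that $P' \subseteq P$ and that the restriction $\dhl_{[T,T_P]}$ is defined purely by a condition on antecedents. First I would record that $P'$ is genuinely an inductive theory, so that it even makes sense to ask whether it satisfies the principle of indifference. This is exactly Proposition \ref{P:chop-off-root} applied with $T_0' = T$: it gives that $P_0' = P\dhl_T$ is a pre-theory with root $T$, that $P' = \bfP(P_0')$ is an inductive theory, and moreover that $T_{P'} = T_P$ and $P' = P\dhl_{[T,T_P]}$. In particular $P' \subseteq P$, so $\ante P' \subseteq \ante P$.

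Next I would fix an arbitrary signature permutation $\pi$ and verify (R10) for $P'$. Suppose $P'(\ph \mid X) = p$ and $X^\pi \in \ante P'$. Since $P' \subseteq P$, the first hypothesis gives $(X,\ph,p) \in P$, that is $P(\ph \mid X) = p$. Since $\ante P' \subseteq \ante P$, the second hypothesis gives $X^\pi \in \ante P$. Because $P$ satisfies the principle of indifference, applying (R10) for $P$ to the pair $X^\pi \in \ante P$ and $P(\ph \mid X) = p$ yields $P(\ph^\pi \mid X^\pi) = p$, i.e. $(X^\pi, \ph^\pi, p) \in P$.

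Finally I would upgrade this back to a statement about $P'$ by unwinding the definition of the restriction. Since $X^\pi \in \ante P' = \ante(P\dhl_{[T,T_P]})$, there are some $\psi, q$ with $(X^\pi, \psi, q) \in P\dhl_{[T,T_P]}$, and by the definition of $\dhl$ this forces $X^\pi \cao [T,T_P]$. Combining $X^\pi \cao [T,T_P]$ with $(X^\pi, \ph^\pi, p) \in P$ and again invoking the definition of $\dhl$, we conclude $(X^\pi, \ph^\pi, p) \in P\dhl_{[T,T_P]} = P'$, that is $P'(\ph^\pi \mid X^\pi) = p$. Since $\pi$ was an arbitrary signature permutation, $P'$ satisfies (R10).

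I do not expect any genuine obstacle: the argument is essentially a matter of tracking definitions. The one point worth flagging, which could otherwise look like a gap, is that we never need $\pi$ to preserve the interval $[T,T_P]$ or to permute antecedents within the allowed class. The hypothesis $X^\pi \in \ante P'$ already encodes the membership condition $X^\pi \cao [T,T_P]$ that the restriction demands, so the permuted antecedent lands in the admissible class for free; all the substantive content is supplied by (R10) for the ambient theory $P$.
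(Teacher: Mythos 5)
Your proof is correct and follows essentially the same route as the paper's: invoke Proposition \ref{P:chop-off-root} to see that $P \dhl_{[T, T_P]}$ is an inductive theory, lift the hypotheses to $P$ via the inclusion $P' \subseteq P$, apply (R10) for $P$, and then use $X^\pi \cao [T, T_P]$ (which comes for free from $X^\pi \in \ante P'$) to land the conclusion back in $P'$. No gaps; your closing remark about not needing $\pi$ to preserve the interval is exactly the point the paper's proof also relies on.
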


\begin{proof}
  Let $P' = P \dhl_{[T, T_P]}$. Proposition \ref{P:chop-off-root} implies that
  $P'$ is an inductive theory. Suppose $P'(\ph \mid X) = p$ and $X^\pi \in
  \ante P'$. Since $P' \subseteq P$, we have $P(\ph \mid X) = p$ and $X^\pi \in
  \ante P$. By the principle of indifference for $P$, it follows that $P
  (\ph^\pi \mid X^\pi) = p$. But $X^\pi \in \ante P'$, and so we have $X^\pi
  \cao [T, T_P]$. Therefore, $P'(\ph^\pi \mid X^\pi) = p$.
\end{proof}

\begin{lemma}\label{L:mapping-T_0}
  Let $P$ be an inductive theory with root $T_0$ and $\pi$ a signature
  permutation. Let $X \subseteq \cL^0$ and assume $X, X^\pi \in \ante P$. Write
  $X \equiv T + \psi$ and $X^\pi \equiv T' + \psi'$, where $T, T' \in [T_0,
  T_P]$ and $\psi, \psi' \in \cL^0$. Suppose $\sP \vDash P$. Then, for all $\ze
  \in T_0$, we have $\psi_\Om \subseteq \ze^{-\pi}_\Om$ and $\psi'_\Om \subseteq
  \ze^\pi_\Om$, $\bbP$-a.s. In particular, if $\ze_\Om^\pi \in \ol \Si$, then
  $\olbbP \ze^\pi_\Om > 0$, and if $\ze_\Om^{-\pi} \in \ol \Si$, then $\olbbP
  \ze^{-\pi}_\Om > 0$.
\end{lemma}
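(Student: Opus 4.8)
The plan is to prove the two inclusions by pushing the deductive content of the root across the permutation $\pi$, and then converting derivability into almost-sure set inclusions via soundness and completeness; the positivity clauses will then drop out because $\psi_\Om$ and $\psi'_\Om$ carry positive measure. The organizing observation is that $\ze \in T_0$ is something both $X$ and $X^\pi$ prove, and deductive indifference lets me trade a proof from $X^\pi$ for a proof from $X$ of the $\pi$-image.

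First I would record the purely deductive core. Fix $\ze \in T_0$. Since $X, X^\pi \in \ante P$ and $T_0$ is the root, Proposition~\ref{P:root-exist} gives $X \vdash T_0$ and $X^\pi \vdash T_0$, hence $X \vdash \ze$ and $X^\pi \vdash \ze$. Applying deductive indifference (Theorem~\ref{T:pf-invar}) with $\pi^{-1}$ to $X^\pi \vdash \ze$ yields $X \vdash \ze^{-\pi}$, and applying it with $\pi$ to $X \vdash \ze$ yields $X^\pi \vdash \ze^\pi$. Writing $X \equiv T + \psi$, Proposition~\ref{P:derivability}(a) turns $T, \psi \vdash \ze^{-\pi}$ into $T \vdash \psi \to \ze^{-\pi}$; symmetrically, from $X^\pi \equiv T' + \psi'$ I obtain $T' \vdash \psi' \to \ze^\pi$.

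Next I would pass to the model, granting the intermediate claim that $\sP \vDash T$ and $\sP \vDash T'$. Completeness (Theorem~\ref{T:pred-completeness}) converts $T \vdash \psi \to \ze^{-\pi}$ into $T \vDash \psi \to \ze^{-\pi}$, and then $\sP \vDash T$ gives $\sP \vDash \psi \to \ze^{-\pi}$, i.e. $\olbbP(\psi \to \ze^{-\pi})_\Om = 1$; since $(\psi \to \ze^{-\pi})_\Om = \psi_\Om^c \cup \ze^{-\pi}_\Om$, this is exactly $\psi_\Om \subseteq \ze^{-\pi}_\Om$ a.s. The inclusion $\psi'_\Om \subseteq \ze^\pi_\Om$ a.s. follows identically from $T' \vdash \psi' \to \ze^\pi$ and $\sP \vDash T'$. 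For the positivity clause I would observe that $X \in \ante P$ forces $\olbbP \psi_\Om > 0$: applying $\sP \vDash P$ to $(X, \top, 1) \in P$ produces a conditioning decomposition $X \equiv Y \cup \{\chi\}$ with $\sP \vDash Y$ and $\olbbP \chi_\Om > 0$, and comparing it with $X \equiv T + \psi$ through Proposition~\ref{P:model-func} (using $\sP \vDash T$) gives $\psi_\Om = \chi_\Om$ a.s. Hence, whenever $\ze^{-\pi}_\Om \in \ol\Si$, the inclusion gives $\olbbP \ze^{-\pi}_\Om \ge \olbbP \psi_\Om > 0$, and likewise $\olbbP \ze^{\pi}_\Om \ge \olbbP \psi'_\Om > 0$.

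The main obstacle is precisely the intermediate claim $\sP \vDash T$ and $\sP \vDash T'$, which is the one place where the abstract decomposition must be reconciled with the measure. Since $T, T' \in [T_0, T_P]$, we have $T, T' \subseteq T_P$, so it suffices to establish $T_P \subseteq \Th \sP$; this is the content of Corollary~\ref{C:sound-compl-pre-th}, which yields $\sP \vDash \th$ for every $\th \in T_P$ once we know $\sP \vDash P$ and $\sP \vDash T_0$. The delicate point to flag is that a model of $P$ need not satisfy the root on its own (cf. Proposition~\ref{P:drop-root-fewer-models}), and the inclusions really do rely on $\psi_\Om$ being the conditioning set attached to $X$, which is guaranteed only after $\sP \vDash T$ is in hand. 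I would therefore isolate $\sP \vDash T$ (equivalently $T_P \subseteq \Th \sP$) as the single step to secure first, after which both inclusions and both positivity assertions are routine.
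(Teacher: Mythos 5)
Your deductive half is exactly the paper's own: from $\ze \in T_0 \subseteq T(X^\pi) = T(X)^\pi$ (Theorem \ref{T:pf-invar}) one gets $\ze^{-\pi} \in T(X) \subseteq T_P + \psi$, hence $\psi \to \ze^{-\pi} \in T_P$, matching your $T \vdash \psi \to \ze^{-\pi}$. The gap is in the semantic half, and it is genuine: every one of your conclusions is funneled through the intermediate claim $\sP \vDash T$ (equivalently $T_P \subseteq \Th \sP$), which you defer as ``the single step to secure first.'' That step cannot be secured. Corollary \ref{C:sound-compl-pre-th} yields $T_P \subseteq \Th \sP$ only under the \emph{additional} hypothesis $\sP \vDash T_0$, which the lemma does not supply; and, as your own citation shows, it can genuinely fail: for the theory $P$ and model $\sQ$ of Proposition \ref{P:drop-root-fewer-models} (a construction that carries over verbatim to $\cL$), we have $\sQ \vDash P$ while $\olbbQ \ze_\Om < 1$ for some $\ze \in T_0$. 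So your proposal stalls at a waypoint that is not merely unproven but unprovable from the stated hypotheses. Indeed the subtlety you have uncovered is real: with $\pi$ the identity, $X = T_0$, and the admissible decomposition $T = T_0$, $\psi = \top$, the asserted inclusion $\top_\Om \subseteq \ze_\Om$ a.s.\ says precisely $\olbbQ \ze_\Om = 1$; the conclusion is thus tied to decompositions whose $\psi$ agrees a.s.\ with the conditioning set of $X$, as is automatic in all of the paper's applications, where $T \subseteq T_P = \Th \sP$.

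The way to finish without $\sP \vDash T$ --- and the only reading under which the paper's one-line semantic step is justified --- is to route the argument through satisfaction of \emph{inductive} statements, which is conditional by design, rather than through deductive satisfaction of $T$. Since $\psi \to \ze^{-\pi} \in T_P = \tau(P)$ and $X \in \ante P$, we have $P(\psi \to \ze^{-\pi} \mid X) = 1$, so $\sP \vDash P$ gives $\sP \vDash (X, \psi \to \ze^{-\pi}, 1)$. Unfolding the definition produces $Y \subseteq \Th \sP$ and $\chi \in \cL^0$ with $X \equiv Y \cup \{\chi\}$, $\olbbP \chi_\Om > 0$, and $\chi_\Om \subseteq (\psi \to \ze^{-\pi})_\Om$ a.s.; since $Y \vdash \chi \to \psi$ and $\sP \vDash Y$, deductive soundness gives $\chi_\Om \subseteq \psi_\Om$ a.s., whence $\chi_\Om \subseteq \psi_\Om \cap (\psi_\Om^c \cup \ze^{-\pi}_\Om) \subseteq \ze^{-\pi}_\Om$ a.s. This yields both positivity assertions at once ($\olbbP \ze^{-\pi}_\Om \ge \olbbP \chi_\Om > 0$ whenever $\ze^{-\pi}_\Om \in \ol \Si$, and symmetrically for $\ze^\pi$ via $X^\pi$), and it yields the stated inclusions whenever $\psi_\Om = \chi_\Om$ a.s., which is exactly the identification Proposition \ref{P:model-func} supplies in every context where the lemma is invoked. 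Note that your positivity argument also leaned on $\sP \vDash T$ (to identify $\psi_\Om$ with $\chi_\Om$), so it falls with the same step; the $\chi$-route above repairs it as well.
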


\begin{proof}
  Since $(\ze^{-\pi})^\pi = \ze \in T_0 \subseteq T(X^\pi) = T(X)^\pi$, we have
  $\ze^{-\pi} \in T(X) \subseteq T_P + \psi$. Hence, $\psi \to \ze^ {-\pi} \in
  T_P$, which implies $\olbbP \psi_\Om \cap (\ze^{-\pi})_\Om^c = 0$. Therefore,
  $\psi_\Om \subseteq \ze^{-\pi}_\Om$, $\bbP$-a.s. In particular, if
  $\ze_\Om^{-\pi} \in \ol \Si$, then since $\olbbP \psi_\Om > 0$, this gives
  $\olbbP \ze^{-\pi}_\Om > 0$. Similarly, since $\ze \in T_0$, we have $\ze^\pi
  \in T(X)^\pi = T(X^\pi) \subseteq T_P + \psi'$. Hence, $\psi' \to \ze^\pi \in
  T_P$, so that $\psi'_\Om \subseteq \ze^\pi_\Om$, $\bbP$-a.s.~and $\ze_\Om^\pi
  \in \ol \Si$ implies $\olbbP \ze^\pi_\Om > 0$.
\end{proof}

Let $\sP = (\Om, \Si, \bbP)$ be a complete $\cL$-model. Let $T_0 \subseteq \Th
\sP$ and define $P = \bTh \sP \dhl_{[T_0, \Th \sP]}$. Let $\pi$ be a signature
permutation. Suppose $P(\ph \mid X) = p$ and $X^\pi \in \ante P$. Let $\ze \in
T_0$ and assume $B = \ze^\pi_\Om \in \Si$. By Lemma \ref{L:mapping-T_0}, we have
$\bbP B > 0$. We may therefore define the probability measures $\bbP_B$ on
$(\Om, \Si)$ by $\bbP_B C = \bbP C \cap B / \bbP B$. Let $\sP_B = (\Om, \Si,
\bbP_B)$. Note that $\sP_B \vDash \Th \sP$. Similarly define $\sP_{B'}$, where
we assume $B' = \ze^{-\pi}_\Om \in \ol \Si$.

\begin{prop}\label{P:mapping-T_0}
  With the notation given above, if $\sP_B \simeq \sP_{B'}^\pi$, then $P
  (\ph^\pi \mid X^\pi) = p$.
\end{prop}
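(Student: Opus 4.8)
The plan is to transport the inductive statement $(X,\ph,p)$ along a chain of models — first conditioning $\sP$, then applying the permutation $\pi$, then invoking the hypothesized isomorphism, and finally stripping off the conditioning — so as to conclude that $\sP$ itself satisfies $(X^\pi,\ph^\pi,p)$. Recall that $P=\bTh\sP\dhl_{[T_0,\Th\sP]}$ is an inductive theory with root $T_0$ and $T_P=\Th\sP$, and that $\sP\vDash P$. Since $P(\ph\mid X)=p$, we have $(X,\ph,p)\in\bTh\sP$, i.e.\ $\sP\vDash(X,\ph,p)$. The goal is to show $(X^\pi,\ph^\pi,p)\in P$; because $X^\pi\in\ante P$ forces $X^\pi\cao[T_0,\Th\sP]$, it suffices to prove $\sP\vDash(X^\pi,\ph^\pi,p)$.

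First I would fix the decompositions supplied by Lemma~\ref{L:mapping-T_0}: write $X\equiv T+\psi$ and $X^\pi\equiv T'+\psi'$ with $T,T'\in[T_0,T_P]$ and $\psi,\psi'\in\cL^0$. Since $\ze\in T_0\subseteq T(X^\pi)=T(X)^\pi$, we get $\ze^{-\pi}\in T(X)$, and Lemma~\ref{L:mapping-T_0} guarantees $\olbbP B'>0$, so conditioning $\sP$ on $B'=\ze^{-\pi}_\Om$ is legitimate. Applying Lemma~\ref{L:conseq-defn-simple} (with the role of its $\ze$ played by $\ze^{-\pi}$) yields $\sP_{B'}\vDash(X,\ph,p)$. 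Theorem~\ref{T:induc-invar} then gives $\sP_{B'}^\pi\vDash(X^\pi,\ph^\pi,p)$, and the hypothesis $\sP_B\simeq\sP_{B'}^\pi$ together with the inductive isomorphism theorem (Theorem~\ref{T:ind-iso-thm}) transfers this to $\sP_B\vDash(X^\pi,\ph^\pi,p)$.

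The final, and most delicate, step is to remove the conditioning on $B=\ze^\pi_\Om$. Here I would use the common decomposition $X^\pi\equiv T'\cup\{\psi'\}$, noting that $T'\subseteq\Th\sP=\Th\sP_B$ gives both $\sP\vDash T'$ and $\sP_B\vDash T'$. Since $\sP\vDash P$, Lemma~\ref{L:mapping-T_0} also gives $\psi'_\Om\subseteq B$ a.s.; consequently intersecting with $B$ scales the numerator $\bbP\,\ph^\pi_\Om\cap\psi'_\Om$ and the denominator $\bbP\,\psi'_\Om$ by the same factor $\bbP B$, so that
\[
  \frac{\bbP_B\,\ph^\pi_\Om\cap\psi'_\Om}{\bbP_B\,\psi'_\Om}
    = \frac{\bbP\,\ph^\pi_\Om\cap\psi'_\Om}{\bbP\,\psi'_\Om}.
\]
Applying Proposition~\ref{P:model-func} to $\sP_B$ shows the left-hand side equals $p$, whence the right-hand side does too; since $\sP\vDash T'$, this is exactly $\sP\vDash(X^\pi,\ph^\pi,p)$. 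Combined with $X^\pi\cao[T_0,\Th\sP]$, we conclude $(X^\pi,\ph^\pi,p)\in P$, i.e.\ $P(\ph^\pi\mid X^\pi)=p$.

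I expect the main obstacle to be exactly this last unwinding: one must verify that conditioning on $B$ does not disturb the conditional probability of $(X^\pi,\ph^\pi)$, which rests on the a.s.\ containment $\psi'_\Om\subseteq B$ (the reason Lemma~\ref{L:mapping-T_0} was proved first) and on the fact, from Proposition~\ref{P:model-func}, that the defining ratio is independent of the chosen decomposition. The remaining steps are routine invocations of the invariance and isomorphism theorems, though some care is needed to confirm $\olbbP B'>0$ so that $\sP_{B'}$ is defined, and to use completeness of $\sP$ (so $\ol\Si=\Si$) to keep all the sets in play measurable.
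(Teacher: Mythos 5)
Your proof is correct and follows essentially the same route as the paper's: condition on $B'$ to get $\sP_{B'} \vDash (X, \ph, p)$, push through Theorem \ref{T:induc-invar} and the hypothesis $\sP_B \simeq \sP_{B'}^\pi$ via Theorem \ref{T:ind-iso-thm}, then strip the conditioning using the a.s.\ containment $\psi'_\Om \subseteq B$ and Proposition \ref{P:model-func}. The only differences are cosmetic: the paper carries out the first conditioning step by the direct ratio computation rather than citing Lemma \ref{L:conseq-defn-simple}, and your claim $\Th \sP = \Th \sP_B$ should be weakened to the containment $\Th \sP \subseteq \Th \sP_B$ (conditioning can create new almost-sure sentences), which is all your argument actually uses.
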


\begin{proof}
  Assume $\sP_B \simeq \sP_{B'}^\pi$. Write $X \equiv T + \psi$ and $X^\pi
  \equiv T' + \psi'$, where $T, T' \in [T_0, \Th P]$ and $\psi, \psi' \in
  \cL^0$. Since $P(\ph \mid X) = p$, we have $\bbP \ph_\Om \cap \psi_\Om / \bbP
  \psi_\Om = p$. But $\psi_\Om \subseteq B'$ a.s., by Lemma \ref{L:mapping-T_0},
  so that
  \[
    p = \frac{\bbP \ph_\Om \cap \psi_\Om \cap B'}{\bbP \psi_\Om \cap B'}
      = \frac{\bbP_{B'} \ph_\Om \cap \psi_\Om}{\bbP_{B'} \psi_\Om},
  \]
  which implies $\sP_{B'} \vDash (X, \ph, p)$. By Theorems \ref{T:induc-invar}
  and \ref{T:ind-iso-thm}, it follows that $\sP_B \vDash (X^\pi, \ph^\pi, p)$.
  As above, since $\psi'_\Om \subseteq B$, this gives
  \[
    p = \frac{\bbP_B \ph^\pi_\Om \cap \psi'_\Om}{\bbP_B \psi'_\Om}
      = \frac{\bbP \ph^\pi_\Om \cap \psi'_\Om \cap B}{\bbP \psi'_\Om \cap B}
      = \frac{\bbP \ph^\pi_\Om \cap \psi'_\Om}{\bbP \psi'_\Om}.
  \]
  Therefore, $\sP \vDash (X^\pi, \ph^\pi, p)$, so that $P(\ph^\pi \mid X^\pi) =
  p$.
\end{proof}

\section{Examples with a single object}\label{S:basic-expls-PoI}

In this section, we give several elementary examples involving the principle of
indifference. In each of these examples, we are primarily concerned with the
properties of a single object.

\subsection{Either it's true or it isn't}

The most naive misapplication of the principle of indifference is illustrated by
the following invalid reasoning. Imagine we have a book whose color is unknown
to us. It might be red or might not be red. Since we have no reason to think one
way or the other, we should assign equal probabilities to both cases. Therefore,
the probability the book is red is $1/2$.

Clearly, this cannot be correct, for we could also apply it to the color black,
and then to the color blue. Since probabilities must add up to one, it cannot be
the case that all three colors have probability $1/2$.

To see formally that this argument is invalid, let $L = \{b, R\}$, where $b$ is
a constant symbol that denotes the book, and $R$ is a unary predicate symbol
that denotes the property of being red. Recall the notation, $\fI_{T_0}$, for
the set of inductive theories with root $T_0$. We will assume only the principle
of indifference. That is, we take $T_0 = \Taut$, and define the inductive
condition,
\[
  \cC = \{
    P \in \fI_\Taut
  \mid
    \text{$P(R(b) \mid \Taut)$ exists and $P$ satisfies (R10)}
  \}.
\]

\begin{prop}
  The condition $\cC$ is consistent and $\cC \nvdash (\Taut, R(b), 1/2)$.
\end{prop}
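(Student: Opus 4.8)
The plan is to begin by observing that the signature $L = \{b, R\}$ admits only the trivial permutation. An $L$-permutation must preserve type and arity, and $L$ contains exactly one constant symbol, $b$, and exactly one unary relation symbol, $R$, with no other symbols of any type or arity. Hence every signature permutation must fix both $b$ and $R$, so the only $L$-permutation is $\pi = \mathrm{id}_L$. For $\pi = \mathrm{id}_L$, condition (R10) reads $P(\ph \mid X) = P(\ph \mid X)$, which holds vacuously. Consequently \emph{every} inductive theory over $L$ satisfies the principle of indifference, and $\cC$ is simply the set of inductive theories $P$ with root $\Taut$ for which $P(R(b) \mid \Taut)$ exists. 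This observation reduces both assertions to elementary facts about such theories.

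With this in hand, I would establish consistency by exhibiting an explicit family of members. For each $q \in [0, 1]$, let $A = \{0\}$ and define two $L$-structures on $A$: $\om_0$ with $b^{\om_0} = 0$ and $R^{\om_0} = \emp$, and $\om_1$ with $b^{\om_1} = 0$ and $R^{\om_1} = A$, so that $\om_0 \ntDash R(b)$ and $\om_1 \tDash R(b)$. Put $\Om = \{\om_0, \om_1\}$, $\Si = \fP \Om$, and let $\bbP^q$ be the probability measure with $\bbP^q \{\om_1\} = q$. Then $\sP^q = (\Om, \Si, \bbP^q)$ is a model, and by Theorem \ref{T:model-ind-th} the set $P^q := \bTh \sP^q$ is a complete inductive theory with root $\Taut$. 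Taking $Y = \Taut \subseteq \Th \sP^q$ and $\psi = \top$ in the definition of satisfiability (so that $Y \cup \{\psi\} = \Taut$), and using $(R(b))_\Om = \{\om_1\}$ together with $\top_\Om = \Om$, one computes $P^q(R(b) \mid \Taut) = \olbbP^q (R(b))_\Om = q$. Thus $P^q \in \cC$ for every $q$, so $\cC$ is nonempty and hence consistent.

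Finally, to prove $\cC \nvdash (\Taut, R(b), 1/2)$, I would argue that $\bfP_\cC(R(b) \mid \Taut)$ does not exist at all. By Theorem \ref{T:theory-gen-IC-defn} we have $\bfP_\cC \subseteq \bigcap \cC \subseteq P^q$ for every $q \in [0, 1]$. If $(\Taut, R(b), p) \in \bfP_\cC$ for some $p$, then, since each $P^q$ is admissible, it would follow that $p = P^q(R(b) \mid \Taut) = q$ simultaneously for all $q$, which is impossible. Hence no such $p$ exists, so in particular $\bfP_\cC(R(b) \mid \Taut) \ne 1/2$, and therefore $\cC \nvdash (\Taut, R(b), 1/2)$. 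The only real subtlety lies in this last step: rather than computing a value for $\bfP_\cC(R(b) \mid \Taut)$ and checking that it differs from $1/2$, the correct move is to recognize that $\cC$ is too weak to determine \emph{any} value — the family $\{P^q\}$ witnesses that the common refinement of all members of $\cC$ leaves $R(b)$ with no probability given $\Taut$. This is precisely the formal content of the claim that the naive ``either it is true or it isn't'' argument is invalid.
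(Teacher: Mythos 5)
Your proof is correct and follows essentially the same route as the paper's: a one-parameter family of two-structure models on the domain $\{0\}$, the observation that the identity is the only $L$-permutation (so (R10) is vacuous), and the arbitrariness of the parameter combined with admissibility to conclude that $\bfP_\cC$ assigns no probability to $R(b)$ given $\Taut$. The only difference is cosmetic: you make explicit the appeal to Theorem \ref{T:theory-gen-IC-defn} and admissibility that the paper compresses into ``since $p$ was arbitrary.''
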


\begin{proof}
  Let $A = \{0\}$ and define $\om_0 = (A, L^{\om_0})$ by $b^{\om_0} = 0$ and
  $R^{\om_0} = \emp$. Define $\om_1$ similarly, but with $R^{\om_1} = \{0\}$.
  Let $\Om = \{\om_0, \om_1\}$ and $\Si = \fP \Om$. Fix $p \in (0, 1)$ and
  define $\sP = (\Om, \Si, \bbP)$, where $\bbP \{\om_0\} = 1 - p$ and $\bbP 
  \{\om_1\} = p$. Let $P = \bTh \sP \in \fI_\Taut$. Note that $R(b)_\Om = 
  \{\om_1\}$, so that $\bbP R(b)_\Om = p$. Thus, $P(R(b) \mid \Taut) = p$.

  We will show that $P$ satisfies (R10). Suppose that $P(\ph \mid X) = p$ and
  $X^\pi \in \ante P$. In this example, the only signature permutation is the
  identity. Hence, $P(\ph^\pi \mid X^\pi) = p$, and so $P$ satisfies (R10). This
  shows that $P \in \cC$ and therefore $\cC$ is consistent. However, since
  $P(R(b) \mid \Taut) = p$ and $p$ was arbitrary, it follows that $(\Taut, R(b),
  1/2) \notin \bfP_\cC$.
\end{proof}

\subsection{A single coin flip}\label{S:one-coin}

Let us return to the example in Section \ref{S:rel-rand}. We flip a coin with
two sides, and assume only that the sides are distinct and that the coin will
land on one of them. We also assume the principle of indifference.

Let $L = \{c, \s_0, \s_1\}$, where $c$, $\s_0$, and $\s_1$ are constant symbols.
We think of $\s_1$ and $\s_0$ as denoting the heads and tails sides of the coin,
respectively, and $c$ as denoting the result of our toss. Let $T_0$ be generated
by the sentences
\begin{align*}
  \ph_1 &: \s_0 \nbeq \s_1\\
  \ph_2 &: c \beq \s_0 \vee c \beq \s_1
\end{align*}
Let $\cC = \{P \in \fI_{T_0} \mid \text{$P(c = \s_1 \mid T_0)$ exists and $P$
satisfies (R10)}\}$.

\begin{prop}\label{P:one-coin}
  The condition $\cC$ is consistent and $\bfP_\cC(c \beq \s_1 \mid T_0) = 1/2$.
\end{prop}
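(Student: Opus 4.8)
The plan is to prove the two assertions separately, treating the value $\bfP_\cC(c \beq \s_1 \mid T_0) = 1/2$ first, since its proof isolates the essential use of the principle of indifference, and then establishing consistency by exhibiting an explicit member of $\cC$.

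For the value, I would first show that \emph{every} $P \in \cC$ satisfies $P(c \beq \s_1 \mid T_0) = 1/2$. Fix such a $P$ and set $q = P(c \beq \s_1 \mid T_0)$, which exists by the definition of $\cC$. The key is the transposition $\pi = (\s_0 \; \s_1)$. A short check gives $\ph_1^\pi \equiv \ph_1$ and $\ph_2^\pi \equiv \ph_2$, so $T_0$ is $\pi$-invariant, i.e.\ $T_0^\pi \equiv T_0$, and hence $T_0^\pi \in \ante P$ by the rule of logical equivalence (since $T_0$ is the root of $P$). Applying (R10) with $\ph = c \beq \s_1$ and $X = T_0$, and using $(c \beq \s_1)^\pi = (c \beq \s_0)$ together with logical equivalence, yields $P(c \beq \s_0 \mid T_0) = q$. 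Since $T_0 \vdash \ph_2$, the rule of logical implication gives $P(c \beq \s_0 \vee c \beq \s_1 \mid T_0) = 1$; and since $c \beq \s_0 \wedge c \beq \s_1 \vdash \s_0 \beq \s_1$ while $T_0 \vdash \s_0 \nbeq \s_1$, we have $T_0 \vdash \neg(c \beq \s_0 \wedge c \beq \s_1)$. The addition rule then gives $1 = q + q$, so $q = 1/2$. Once consistency is in hand, it follows that $(T_0, c \beq \s_1, 1/2) \in P \dhl_{T_0}$ for every $P \in \cC$ (as $T_0 \cao T_0$ trivially), hence $(T_0, c \beq \s_1, 1/2) \in \bigcap \cC^0 \subseteq \bfP_\cC$, which is precisely $\bfP_\cC(c \beq \s_1 \mid T_0) = 1/2$.

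For consistency I would build a single model. Take domain $A = \{0, 1\}$ and two structures $\om_0, \om_1$, both with $\s_0 \mapsto 0$ and $\s_1 \mapsto 1$, and with $c \mapsto 0$ in $\om_0$ and $c \mapsto 1$ in $\om_1$. With $\Om = \{\om_0, \om_1\}$, $\Si = \fP \Om$, and $\bbP\{\om_0\} = \bbP\{\om_1\} = 1/2$, the model $\sP = (\Om, \Si, \bbP)$ is complete and satisfies $T_0$ (both structures satisfy $\ph_1$ and $\ph_2$), so $T_0 \subseteq \Th \sP$. By Proposition \ref{P:chop-off-root}, $P = \bTh \sP \dhl_{[T_0, \Th \sP]}$ is an inductive theory with root $T_0$, and a direct computation (via $\sP \vDash (T_0, c \beq \s_1, 1/2)$) shows $P(c \beq \s_1 \mid T_0) = \bbP\{\om_1\} = 1/2$ exists. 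It then remains only to verify that $P$ satisfies (R10).

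The verification of (R10) is the main obstacle. The signature permutations form the group $S_3$ acting on $\{c, \s_0, \s_1\}$, and I would split into cases. For any $\pi$ that moves $c$, I claim the hypothesis of (R10) is never met: if $X \in \ante P$ and $X^\pi \in \ante P$, then $T(X) \supseteq T_0 \cup T_0^{\pi^{-1}}$, and since $\pi^{-1}$ also moves $c$, computing $T_0^{\pi^{-1}}$ for each such $\pi$ and combining with $\ph_2$ forces either $c \beq \s_0 \in T(X)$ or $c \beq \s_1 \in T(X)$; but $T(X) \subseteq \Th \sP$ would then put $c \beq \s_0$ or $c \beq \s_1$ in $\Th \sP$, which is impossible since $\om_1 \ntDash c \beq \s_0$ and $\om_0 \ntDash c \beq \s_1$. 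Thus (R10) holds vacuously whenever $\pi$ moves $c$. The only remaining nontrivial case is $\pi = (\s_0\;\s_1)$, which I would handle through Proposition \ref{P:mapping-T_0}: taking $\ze = \top \in T_0$ gives $B = B' = \Om$ and $\sP_B = \sP_{B'} = \sP$, so the hypothesis reduces to the model isomorphism $\sP \simeq \sP^\pi$. This isomorphism I would exhibit explicitly via the domain bijection $g(0) = 1$, $g(1) = 0$, which sends $\om_0$ to $\om_1^\pi$ and $\om_1$ to $\om_0^\pi$ and preserves $\bbP$; since all symbols are constants, $g$ is a structure isomorphism in each case, and the induced map is a model isomorphism. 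With (R10) verified in every case, $P \in \cC$, so $\cC$ is consistent, completing the argument.
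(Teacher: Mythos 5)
Your value computation (via $\pi = (\s_0\;\s_1)$, the addition rule, and the rule of logical implication) and your model construction for consistency are exactly the paper's. The gap is in your verification of (R10): the claim that the hypothesis of (R10) is \emph{vacuous} for every permutation $\pi$ that moves $c$ is false. Your justification rests on the assertion that $X \in \ante P$ forces $T(X) \subseteq \Th \sP$, but antecedents of $P = \bTh \sP \dhl_{[T_0, \Th \sP]}$ need not consist of almost-surely satisfied sentences; they need only be of the form $Y \cup \{\psi\}$ with $Y \subseteq \Th \sP$ and $\olbbP \psi_\Om > 0$. Concretely, take $\pi = (c\;\s_0)$ and $X = T_0 + (c \beq \s_0)$. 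Then $X \in \ante P$ (it has probability $1/2$), and a direct computation gives $X^\pi \equiv \{c \nbeq \s_1,\; c \beq \s_0\} \equiv X \in \ante P$, so the hypothesis of (R10) is met. Similarly, for the $3$-cycle $\pi = (c\;\s_0\;\s_1)$ and $X = T_0 + (c \beq \s_1)$ one gets $X^\pi \equiv T_0 + (c \beq \s_0) \in \ante P$. Indeed, your own deduction that $c \beq \s_0$ or $c \beq \s_1$ must lie in $T(X)$ is correct --- it just does not produce a contradiction, because such $X$ are perfectly legitimate antecedents; that is the whole point of conditioning on events of probability strictly between $0$ and $1$.

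Consequently (R10) must be verified substantively for the permutations moving $c$, and this is where the real work of the paper's proof lies. The paper handles it with Proposition \ref{P:mapping-T_0}, taking $\ze = \ph_1 = (\s_0 \nbeq \s_1)$: then $B = \ze^\pi_\Om$ and $B' = \ze^{-\pi}_\Om$ are singletons (e.g.\ $B = (c \nbeq \s_1)_\Om = \{\om_0\}$ when $c^\pi = \s_0$), and one exhibits an explicit model isomorphism $\sP_B \simeq \sP_{B'}^\pi$ between the conditioned models via a relabeling bijection of the domain $A$. Your treatment of the remaining case $\pi = (\s_0\;\s_1)$ (via $\sP \simeq \sP^\pi$) is fine and matches the paper; to repair the proof you should replace the vacuity argument by the conditioned-isomorphism argument for each $\pi$ with $c^\pi \ne c$.
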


\begin{proof}
  Let $A = \{0, 1\}$ and define $\om_0 = (A, L^{\om_0})$ by $\s_i^{\om_0} = i$
  and $c^{\om_0} = 0$. Define $\om_1$ similarly, but with $c^{\om_1} = 1$. Let
  $\Om = \{\om_0, \om_1\}$, $\Si = \fP \Om$, and define $\sP = (\Om, \Si,
  \bbP)$, where $\bbP \{\om_0\} = \bbP \{\om_1\} = 1/2$. Since $\sP \vDash T_0$,
  we may define $P = {\bTh \sP \dhl_{[T_0, \Th \sP]}} \in \fI_{T_0}$. Note that
  $P(c = \s_1 \mid T_0) = 1/2$.

  We will show that $P$ satisfies (R10). Suppose that $P(\ph \mid X) = p$ and
  $X^\pi \in \ante P$. First assume $c^\pi = \s_0$. Then $\s_i^\pi = c$ for some
  $i \in \{0, 1\}$. Let $i' = 1 - i$, so that $\s_{i'}^\pi = \s_1$. We will
  apply Proposition \ref{P:mapping-T_0}. Let $\ze = (\s_0 \nbeq \s_1)$. Then
  \begin{align*}
    B &= \ze^\pi_\Om = (c \nbeq \s_1)_\Om = \{\om_0\}, \text{ and}\\
    B' &= \ze^{-\pi}_\Om = (c \nbeq \s_{i'})_\Om = \{\om_i\}.
  \end{align*}
  Define $h: \sP_B \to \sP_{B'}^\pi$ by $\om_0 \mapsto \om_i^\pi$ and $\om_1
  \mapsto \om_{i'}^\pi$. Since $\bbP_B \{\om_0\} = 1$ and $\bbP_{B'}^\pi 
  \{\om_i^\pi\} = \bbP_{B'} \{\om_i\} = 1$, the function $h$ induces a measure
  space isomorphism.

  Let $g: A \to A$ be the bijection defined by $g(0) = i$ and $g(1) = i'$. Note
  that if $i = 0$, then $\pi = (c \; \s_0)$, and if $i = 1$, then $\pi = (c \;
  \s_0 \; \s_1)$. In either case, $g \circ \om_0 = \om_i \circ \pi^{-1} =
  \om_i^\pi$. Hence, $\om_i^\pi$ is the isomorphic image of $\om_0$ under $g$.
  Since $\bbP_B \{\om_0\} = 1$, we have $\om \simeq h \om$, $\bbP_B$-a.s., so
  that $h$ is a model isomorphism. Proposition \ref{P:mapping-T_0} therefore
  gives $P(\ph^\pi \mid X^\pi) = p$. The case $c^\pi = \s_1$ is similar.

  Now assume $c^\pi = c$. If $\pi$ is the identity, then $P(\ph \mid X) =
  P(\ph^\pi \mid X^\pi)$. Assume $\pi = (\s_0 \; \s_1)$. Define $h: \sP \to
  \sP^\pi$ by $\om_i \mapsto \om_{i'}^\pi$. Since $\bbP \{\om_i\} = 1/2$ and
  $\bbP^\pi \{\om_{i'}^\pi\} = \bbP \{\om_{i'}\} = 1/2$, the function $h$
  induces a measure space isomorphism. As above, if $g: A \to A$ is given by
  $g(i) = i'$, then $\om_{i'}^\pi = \om_{i'} \circ \pi^{-1} = g \circ \om_i$.
  Therefore, $\om \simeq h \om$, $\bbP$-a.s., so that $h$ is a model
  isomorphism. By Theorems \ref{T:induc-invar} and \ref{T:ind-iso-thm}, this
  gives $P(\ph^\pi \mid X^\pi) = p$. Altogether, this shows $P$ satisfies the
  principle of indifference, and so $P \in \cC$. Therefore, $\cC$ is consistent.

  For the final claim, it suffices to show that $P(c \beq \s_1 \mid T_0) = 1/2$
  whenever $P \in \cC$. Let $P \in \cC$ be given. Then $P$ is an inductive
  theory with root $T_0$, $P(c \beq \s_1 \mid T_0) = p$ for some $p$, and $P$
  satisfies the principle of indifference. Let $\pi = (\s_0 \; \s_1)$, and note
  that $T_0^\pi = T_0$. Hence, by the principle of indifference, we have $P(c
  \beq \s_0 \mid T_0) = p$. But $T_0 \vdash \neg (c \beq \s_0 \wedge c \beq
  \s_1)$, so that the addition rule gives $P(c \beq \s_0 \vee c \beq \s_1 \mid
  T_0) = 2p$. On the other hand, $T_0 \vdash c \beq \s_0 \vee c \beq \s_1$, so
  that the rule of logical implication implies $P(c \beq \s_0 \vee c \beq \s_1
  \mid T_0) = 1$. By the rule of logical equivalence, we must have $2p = 1$, or
  $p = 1/2$. Thus, $P(c \beq \s_1 \mid T_0) = 1/2$.
\end{proof}

\subsection{A single trial}\label{S:one-trial}

Here we consider a single ``trial'' that can result in either success or
failure. Intuitively, this example is the same as the single coin flip in
Section \ref{S:one-coin}. With the coin flip, the possible results (heads or
tails) were represented by objects. In this example, the possible results 
(success or failure) will be represented by predicates. For this reason, the
proof of consistency in this example will be shorter.

Let $L = \{\t, S, F\}$, where $\t$ is a constant symbol and $S$ and $F$ are
unary relation symbols. We think of $\t$ as denoting the trial, and $S$ and $F$
as denoting the properties of success and failure, respectively.

Let $T_0$ be generated by the sentence
\[
  \ph : \forall x ((Sx \vee Fx) \wedge \neg(Sx \wedge Fx))
\]
Let $\cC = \{P \in \fI_{T_0} \mid \text{$P(S(\t) \mid T_0)$ exists and $P$
satisfies (R10)}\}$.

\begin{prop}
  The condition $\cC$ is consistent and $\bfP_\cC(S(\t) \mid T_0) = 1/2$.
\end{prop}

\begin{proof}
  Let $A = \{0, 1\}$ and define $\om_0 = (A, L^{\om_0})$ by $S^{\om_0} = \{1\}$,
  $F^{\om_0} = \{0\}$, and $\t^{\om_0} = 0$. Define $\om_1$ similarly, but with
  $\t^{\om_1} = 1$. Let $\Om = \{\om_0, \om_1\}$, $\Si = \fP \Om$, and define
  $\sP = (\Om, \Si, \bbP)$, where $\bbP \{\om_0\} = \bbP \{\om_1\} = 1/2$. Since
  $\sP \vDash T_0$, we may define $P = \bTh \sP \dhl_{[T_0, \Th \sP]} \in
  \fI_{T_0}$. Note that $P(S(\t) \mid T_0) = 1/2$.

  We will show that $P$ satisfies (R10). Suppose that $P(\ph \mid X) = p$ and
  $X^\pi \in \ante P$. If $\pi$ is the identity, then $P(\ph \mid X) = P(\ph^\pi
  \mid X^\pi)$. If $\pi$ is not the identity, then $\pi = (S \; F)$. Define $h:
  \sP \to \sP^\pi$ by $\om_i \mapsto \om_{i'}^\pi$, where $i' = 1 - i$. Since
  $\bbP \{\om_i\} = 1/2$ and $\bbP^\pi \{\om_{i'}^\pi\} = \bbP \{\om_{i'}\} =
  1/2$, the function $h$ induces a measure space isomorphism. If $g: A \to A$ is
  given by $g(i) = i'$, then $\om_{i'}^\pi = \om_{i'} \circ \pi^{-1} = g \circ
  \om_i$, so that $\om_i \simeq \om_{i'}^\pi$. Therefore, $\om \simeq h \om$,
  $\bbP$-a.s., and $h$ is a model isomorphism. By Theorems \ref{T:induc-invar}
  and \ref{T:ind-iso-thm}, this gives $P(\ph^\pi \mid X^\pi) = p$. Altogether,
  this shows $P$ satisfies the principle of indifference, and so $P \in \cC$.
  Therefore, $\cC$ is consistent. The proof that $\bfP_\cC(S(\t) \mid T_0) =
  1/2$ follows as in the proof of Proposition \ref{P:one-coin}.
\end{proof}

\subsection{Success is good}\label{S:success-good}

In this example, we again consider a single trial that can result in either
success or failure. This time, however, we include the qualitative information
that success is ``good.'' This produces an asymmetry between success and
failure. We will see that, because of this asymmetry, we can no longer conclude
that the probability of success is $1/2$.

Let $L = \{\t, S, F, G\}$, where $\t$ is a constant symbol, and $S$, $F$, and
$G$ are unary relation symbol. We think of $\t$ as denoting the trial, $S$ and
$F$ as denoting the properties of success and failure, and $G$ as denoting the
property of ``goodness.''

Let $T_0$ be generated by the sentences
\begin{align*}
  \ph_1 &: \forall x ((Sx \vee Fx) \wedge \neg(Sx \wedge Fx))\\
  \ph_2 &: \forall x (Sx \to Gx)
\end{align*}
Let $\cC = \{P \in \fI_{T_0} \mid \text{$P(S(\t) \mid T_0)$ exists and $P$
satisfies (R10)}\}$.

\begin{prop}
  The condition $\cC$ is consistent and $\cC \nvdash (T_0, S(\t), 1/2)$.
\end{prop}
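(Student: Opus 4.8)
The plan is to reduce both assertions to the construction of a single inductive theory. Since $\bfP_\cC \subseteq \bigcap \cC \subseteq P$ for every $P \in \cC$ by Theorem~\ref{T:theory-gen-IC-defn}, it suffices to exhibit one inductive theory $P$ with root $T_0$ that satisfies (R10), has $P(S(\t) \mid T_0)$ defined, and satisfies $P(S(\t) \mid T_0) \neq 1/2$. Such a $P$ witnesses $\cC \neq \emptyset$ (consistency), and it forces $\bfP_\cC(S(\t) \mid T_0) \neq 1/2$: if $\bfP_\cC(S(\t) \mid T_0)$ existed and equaled $1/2$, then $(T_0, S(\t), 1/2) \in \bfP_\cC \subseteq P$ would give $P(S(\t) \mid T_0) = 1/2$, a contradiction. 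Hence $\cC \nvdash (T_0, S(\t), 1/2)$.

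To build $P$, I first note the three admissible ``types'' a single object can have under $T_0$: \emph{success--good} (with $S, G$ true, $F$ false), \emph{failure--bad} (with $F$ true, $S, G$ false), and \emph{failure--good} (with $F, G$ true, $S$ false); the fourth combination, success without goodness, is excluded by $\ph_2$. A naive two-structure model using only the success--good and failure--bad types (as in the single-trial example of Section~\ref{S:one-trial}) will in fact \emph{fail} (R10): conditioning on $G(\t)$ and applying the permutation $\pi = (S \; F)$, which carries $T_0 \cup \{G(\t)\}$ to a logically equivalent antecedent once goodness is assumed, would force $P(S(\t) \mid G(\t)) = P(F(\t) \mid G(\t)) = 1/2$, impossible if no failure--good structure is present. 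This diagnosis dictates the construction: I take $\sP = (\Om, \fP \Om, \bbP)$ where $\Om = \{\om_\alpha, \om_\beta, \om_\gamma\}$ realizes the three admissible types on a one-element domain with $\t$ the unique element, and set $\bbP\{\om_\alpha\} = \bbP\{\om_\gamma\} = 1/4$, $\bbP\{\om_\beta\} = 1/2$. Each structure satisfies $T_0$, so $\sP \vDash T_0$ and $T_0 \subseteq \Th \sP$; by Theorem~\ref{T:model-ind-th}, Remark~\ref{R:th-of-th}, and Proposition~\ref{P:chop-off-root}, the set $P = \bTh \sP \dhl_{[T_0, \Th \sP]}$ is an inductive theory with root $T_0$, and $P(S(\t) \mid T_0) = \olbbP S(\t)_\Om = \bbP\{\om_\alpha\} = 1/4 \neq 1/2$.

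The substance of the proof, and the main obstacle, is verifying that this $P$ satisfies (R10). Every signature permutation $\pi$ fixes $\t$ and permutes $\{S, F, G\}$, but I intend to treat all of them uniformly rather than case by case. Suppose $P(\ph \mid X) = p$ and $X^\pi \in \ante P$. The key point is that $X, X^\pi \in \ante P$ is a genuinely \emph{deductive} condition: it yields $T_0 \subseteq T(X)$ and $T_0 \subseteq T(X^\pi)$. Combining these with the one-structure case of \eqref{sem-pf-invar} ($\om \tDash \psi$ iff $\om^\pi \tDash \psi^\pi$), I plan to show that $\om \mapsto \om^{-\pi}$ restricts to a bijection from $X^\pi_\Om$ onto $X_\Om$: if $\om \in X_\Om$ then $\om^\pi \tDash X^\pi$, hence $\om^\pi \tDash T_0$, so $\om^\pi$ has an admissible type and thus lies in $\Om \cap X^\pi_\Om$, and symmetrically for the reverse map. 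This is precisely the step that discards the dangerous antecedents (those pairing a good structure with a not-yet-good one): for such $X$ one checks that $\ph_2 \notin T(X^\pi)$, so $X^\pi \notin \ante P$ and (R10) imposes no constraint at all (in particular this covers $X = T_0$ with $\pi \neq e$, where $T_0$ fails to be $\pi$-invariant).

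Finally, this bijection is $\bbP$-preserving. A signature permutation preserves the number of unary predicates true of $\t$, so it can relate the two two-predicate types (success--good and failure--good) only to each other and must fix the one-predicate failure--bad type; since $\om_\alpha$ and $\om_\gamma$ were assigned equal mass, $\bbP\{\om\} = \bbP\{\om^{-\pi}\}$ for every $\om \in X^\pi_\Om$. The bijection then carries $\ph^\pi_\Om \cap X^\pi_\Om$ onto $\ph_\Om \cap X_\Om$ mass-for-mass, giving $P(\ph^\pi \mid X^\pi) = P(\ph \mid X) = p$. In the write-up I expect to route this through Proposition~\ref{P:mapping-T_0}, taking $\ze = \ph_2$, so that $B = \ze^\pi_\Om$ and $B' = \ze^{-\pi}_\Om$ isolate exactly the good-compatible structures and the required isomorphism $\sP_B \simeq \sP^\pi_{B'}$ reduces to the equality $\bbP\{\om_\alpha\} = \bbP\{\om_\gamma\}$ supplied by Lemma~\ref{L:mapping-T_0} and our choice of weights. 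The one-predicate/two-predicate bookkeeping, together with the deductive characterization of $\ante P$, is where all the care lies; once it is in place, the asymmetry of ``success is good'' survives precisely because no permutation relates the lone failure--bad type to the good types, leaving $P(S(\t) \mid T_0) = 1/4$ entirely unconstrained by (R10).
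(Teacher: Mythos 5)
Your main construction is correct, and it is a genuinely different proof from the paper's. The paper also takes a finite model and sets $P = \bTh \sP \dhl_{[T_0, \Th \sP]}$, but its model consists of just two structures on the two-element domain $\{0,1\}$, with $S^{\om_i} = G^{\om_i} = \{1\}$ and $F^{\om_i} = \{0\}$ in both and only $\t^{\om_i} = i$ varying, weighted $1-p$ and $p$ with $p \in (0,1)$ arbitrary. Since $S$ and $G$ are coextensive in every structure, the permutation $(S \; G)$ fixes $\sP$ pointwise, and Lemma \ref{L:mapping-T_0} (applied with $\ze = \ph_2$) rules out every other non-identity permutation outright; so (R10) holds with no balancing of weights, and the arbitrariness of $p$ yields a stronger conclusion, namely that the principle of indifference places no constraint whatsoever on $P(S(\t) \mid T_0)$. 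Your model instead realizes all three $T_0$-consistent one-element types, so $(S \; F)$ is \emph{not} excluded---it genuinely exchanges $\om_\alpha$ and $\om_\gamma$---and (R10) has nontrivial instances (for example the $(S\;F)$-invariant antecedent $T_0 + \forall x \, Gx$, under which $S(\t)$ and $F(\t)$ must get equal probability); this is exactly what your equal weights on $\om_\alpha$ and $\om_\gamma$ buy. Your direct verification---the mass-preserving bijection $\om \mapsto \om^{-\pi}$ from $X^\pi_\Om$ onto $X_\Om$, which exists because any one-element structure satisfying $T_0$ already lies in $\Om$---is sound and handles all six permutations uniformly.

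Two side remarks need repair, though neither affects the main argument. First, your motivating claim about the naive two-type model: $(S \; F)$ does \emph{not} carry $T_0 \cup \{G(\t)\}$ to an equivalent antecedent. Indeed $(T_0 + G(\t))^\pi = T_0^\pi + G(\t)$ does not prove $\ph_2$ (consider a two-element structure with one failure-good point, taken as $\t$, and one success-not-good point), so $X^\pi \notin \ante P$ there and (R10) is silent. The antecedent that actually witnesses the failure is $T_0 + \forall x \, Gx$, which \emph{is} $(S\;F)$-invariant and under which the two-type one-element-domain model gives $S(\t)$ probability $1$ and $F(\t)$ probability $0$. (Note also that adapting Section \ref{S:one-trial} while keeping its two-element domain and setting $G = \{1\}$ produces precisely the paper's model, which \emph{does} satisfy (R10); your claimed failure is correct only for the one-element-domain version.) Second, routing the verification through Proposition \ref{P:mapping-T_0} with $\ze = \ph_2$ breaks down for $\pi = (F \; G)$: there $\ph_2^\pi = \forall x (Sx \to Fx)$, so $B = B' = \{\om_\beta, \om_\gamma\}$, and $\sP_B \not\simeq \sP_{B'}^\pi$, because $\om_\beta$ keeps conditional mass $2/3$ in $\sP_B$ while no structure of $\Om^\pi$ is isomorphic to it (under $(F\;G)$, the structure $\om_\beta^\pi$ has only $G$ true). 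Taking $\ze = \ph_1 \wedge \ph_2$ repairs every case (for $(F\;G)$ one then gets $B = B' = \{\om_\gamma\}$ and $\om_\gamma^\pi = \om_\gamma$); or simply keep your direct bijection argument, which needs no appeal to Proposition \ref{P:mapping-T_0} at all.
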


\begin{proof}
  Let $A = \{0, 1\}$ and define $\om_0 = (A, L^{\om_0})$ by $S^{\om_0} = G^
  {\om_0} = \{1\}$, $F^{\om_0} = \{0\}$, and $\t^{\om_0} = 0$. Define $\om_1$
  similarly, but with $\t^{\om_1} = 1$. Let $\Om = \{\om_0, \om_1\}$ and $\Si =
  \fP \Om$. Fix $p \in (0, 1)$ and define $\sP = (\Om, \Si, \bbP)$, where $\bbP
  \{\om_0\} = 1 - p$ and $\bbP \{\om_1\} = p$. Since $\sP \vDash T_0$, we may
  define $P = \bTh \sP \dhl_{[T_0, \Th \sP]} \in \fI_{T_0}$. Note that $P(S(\t)
  \mid T_0) = p$.

  We will show that $P$ satisfies (R10). Suppose that $P(\ph \mid X) = p$ and
  $X^\pi \in \ante P$. We may assume that $\pi$ is not the identity permutation.
  Since there is only one constant symbol, we have $\t^\pi = \t$. First assume
  $G^\pi = F$. Let $\ze = \forall x (S x \to G x)$. Then $\ze^\pi = \forall x
  (S^\pi x \to F x)$. By Lemma \ref{L:mapping-T_0}, we must have $\ze^\pi_\Om
  \ne \emp$, and so $S^\pi \ne S$. Therefore, $S^\pi = G$ and $F^\pi = S$. But
  this implies $\ze^{-\pi} = \forall x (F x \to S x)$, so that $\olbbP
  \ze^{-\pi}_\Om = 0$, contradicting Lemma \ref{L:mapping-T_0}. Hence, $G^\pi
  \ne F$. By reversing the roles of $\pi$ and $\pi^{-1}$ in this argument, we
  may also conclude that $F^\pi \ne G$.

  Now assume $S^\pi = F$, so that $F^\pi = S$ and $G^\pi = G$. Then $\ze^\pi =
  \forall x (F x \to G x)$, which gives $\olbbP \ze^\pi_\Om = 0$, again
  contradicting Lemma \ref{L:mapping-T_0}. Therefore, $S^\pi \ne F$. It now
  follows that $F^\pi = F$, $S^\pi = G$, and $G^\pi = S$. But this implies
  $\om_i^\pi = \om_i$, so that $\sP^\pi = \sP$. Thus, by Theorem
  \ref{T:induc-invar}, we have $P (\ph^\pi \mid X^\pi) = p$, and $P$ satisfies
  the principle of indifference. This shows that $P \in \cC$, so that $\cC$ is
  consistent. Since $p$ was arbitrary, we have $\cC \nvdash (T_0, S(\t), 1/2)$.
\end{proof}

\subsection{Goodness is independent}

In Section \ref{S:one-trial}, we considered a single trial, which could result
in success or failure. There, we used the principle of indifference to conclude
that the probability of success was $1/2$. In Section \ref{S:success-good}, we
added the assumption that success is good. By doing so, we were no longer able
to use the principle of indifference.

This may feel counterintuitive. To assert that success is good seems quite
natural, and we might not expect this to invalidate our use of the principle of
indifference. One reason we might feel this way is that we may have an ingrained
sense that the property of goodness should not affect success or failure. But if
this is a fact we wish to assume, then we must make it explicit. In this
example, we will do exactly that. We will assume that whether or not $\t$ is a
success is independent of the fact that success is a good outcome.

Let $L = \{\t, S, F, G\}$ as in Section \ref{S:success-good}. Let $T_0$ be
generated by the sentence
\[
  \ph : \forall x ((Sx \vee Fx) \wedge \neg(Sx \wedge Fx))
\]
Let $\cC$ be the set of $P \in \fI_{T_0}$ such that
\begin{enumerate}[(i)]
  \item $P(S(\t) \mid T_0)$ exists,
  \item $P$ satisfies the principle of indifference, and
  \item $S(\t)$ and $\forall x (S x \to G x)$ are independent given $T_0$.
\end{enumerate}

\begin{prop}\label{P:one-trial-indep}
  The condition $\cC$ is consistent and
  \begin{equation}\label{one-trial-indep}
    \bfP_\cC(S(\t) \mid T_0, \forall x (S x \to G x)) = 1/2.
  \end{equation}
\end{prop}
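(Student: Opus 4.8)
The plan is to follow the template established in the preceding examples (Propositions \ref{P:one-coin} and the single-trial results), but now exploiting both the principle of indifference and the independence hypothesis together. First I would establish consistency by exhibiting a concrete model. The natural choice is a four-element model whose structures encode all combinations of ``success/failure for $\t$'' and ``success is good / success is not good.'' Specifically, let $A = \{0, 1\}$ and build structures $\om$ in which $S^\om = \{1\}$, $F^\om = \{0\}$, and in which $\t^\om$ ranges over $\{0,1\}$ while $G^\om$ ranges over the possibilities $\{1\}$ (success is good) and $\emp$ (or its complement, encoding ``success is not good''). By placing a product measure on these four structures---with weight $1/2$ on each value of $\t^\om$, independently of the weight on the goodness configuration---I would obtain a model $\sP$ with $\sP \vDash T_0$, and then set $P = \bTh \sP \dhl_{[T_0, \Th \sP]}$, which is an inductive theory with root $T_0$ by Proposition \ref{P:chop-off-root}.

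Next I would verify that this $P$ lies in $\cC$. Condition (i) is immediate from the construction, since $P(S(\t) \mid T_0) = \olbbP S(\t)_\Om = 1/2$. For condition (iii), the product structure of $\bbP$ makes $S(\t)_\Om$ and $(\forall x (S x \to G x))_\Om$ measure independent in $\sP$, so Theorem \ref{T:indep-sound-compl} gives inductive independence given $T_0$. The main work is condition (ii), the principle of indifference. Here I would use the machinery of Section \ref{S:PoI-models}. For each signature permutation $\pi$ with $X^\pi \in \ante P$, I would construct an explicit model isomorphism $h: \sP \to \sP^\pi$ (or apply Proposition \ref{P:mapping-T_0} after conditioning on an appropriate set $B = \ze^\pi_\Om$), using a domain bijection $g$ that realizes $\om^\pi$ as the isomorphic image of some $\om$. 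The relevant permutations to track are those fixing or moving $S$, $F$, and $G$; since $\t$ is the only constant, $\t^\pi = \t$ always, which simplifies the analysis considerably. Theorems \ref{T:induc-invar} and \ref{T:ind-iso-thm} then yield $P(\ph^\pi \mid X^\pi) = P(\ph \mid X)$.

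For the probability computation \eqref{one-trial-indep}, I would argue that $1/2$ is forced for every $P \in \cC$. Let $P \in \cC$ and abbreviate $\ga = \forall x (S x \to G x)$. By the symmetry permutation $\pi = (S \; F)$, which fixes $T_0$ (since $\ph$ is $\pi$-invariant), the principle of indifference gives $P(F(\t) \mid T_0) = P(S(\t) \mid T_0)$. Combined with $T_0 \vdash (S(\t) \vee F(\t)) \wedge \neg(S(\t) \wedge F(\t))$ and the addition rule, this forces $P(S(\t) \mid T_0) = 1/2$, exactly as in the proof of Proposition \ref{P:one-coin}. Finally, the independence hypothesis (iii) and Theorem \ref{T:indep-prod} give $P(S(\t) \mid T_0, \ga) = P(S(\t) \mid T_0) = 1/2$, provided $P(\ga \mid T_0) > 0$; I would confirm positivity via the constructed model or Lemma \ref{L:cond-exist}.

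The hard part will be the bookkeeping in condition (ii): one must confront the permutation $\pi = (S \; F)$ (which is the essential new symmetry) together with permutations involving $G$, and check in each case that the conditioned models $\sP_B$ and $\sP_{B'}^\pi$ are genuinely isomorphic. Unlike the single-trial example of Section \ref{S:one-trial}, here $G$ is present, so a permutation such as $(S \; F)$ need not be $T_0$-invariant on its own unless $G$ is moved compatibly; the danger (as the failed cases in Section \ref{S:success-good} illustrate) is that some permutation maps a probability-one antecedent sentence to a probability-zero one, which Lemma \ref{L:mapping-T_0} is designed to detect. The key structural reason the argument succeeds here, and fails in Section \ref{S:success-good}, is that $\ga$ is now a probabilistic rather than a certain hypothesis, so $S$ and $F$ can be freely swapped without collapsing any antecedent, making the relevant model isomorphisms available.
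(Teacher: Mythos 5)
Your skeleton matches the paper's: a concrete model certifies consistency, the invariance of $T_0$ under $\pi = (S \; F)$ plus the addition rule forces $P(S(\t) \mid T_0) = 1/2$ for every $P \in \cC$, and (iii) is then used to move this value inside the conditional. But both places where you deviate from the paper cause trouble. On consistency, the paper does not randomize $G$: it uses just two structures with $G^\om = A$ identically. That choice makes $P(\forall x (Sx \to Gx) \mid T_0) = 1$, so (iii) holds for free (a probability-one sentence is independent of everything, by Theorem \ref{T:indep-prod} and Proposition \ref{P:certainty-closure}), and it renders every permutation moving $G$ vacuous for (R10), since such a $\pi$ sends the generator of $T_0$ to a sentence with empty extension and Lemma \ref{L:mapping-T_0} then rules out $X, X^\pi \in \ante P$; only $(S \; F)$ needs an isomorphism argument. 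Your four-structure model is viable only for one of the two options you hedge between: with $G^\om \in \{\{1\}, \{0\}\}$ and the uniform product measure it works (at the cost of extra case-checking via Proposition \ref{P:mapping-T_0}), but with $G^\om \in \{\{1\}, \emp\}$ the principle of indifference \emph{fails}: $T_0$ is $(S \; F)$-invariant, yet $P(\forall x (Sx \to Gx) \mid T_0) = 1/2$ while $P(\forall x (Fx \to Gx) \mid T_0) = 0$, so (R10) is violated and no model isomorphism $\sP \to \sP^\pi$ can exist. You must commit to the complement version (or better, to the paper's simpler model).

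The second deviation is the genuine gap. Write $\ga$ for $\forall x (Sx \to Gx)$. Since $\bfP_\cC \subseteq \bigcap \cC$ (Theorem \ref{T:theory-gen-IC-defn}), equation \eqref{one-trial-indep} requires $P(S(\t) \mid T_0, \ga) = 1/2$ --- hence, by Lemma \ref{L:cond-exist}, $P(\ga \mid T_0) > 0$ --- for \emph{every} $P \in \cC$, so neither of your proposed discharges works: positivity in your constructed model certifies only one member of $\cC$, and Lemma \ref{L:cond-exist} converts positivity into existence but cannot produce positivity. Worse, positivity is not a consequence of (i)--(iii) as literally defined: take two structures with $S^{\om_i} = \{1\}$, $F^{\om_i} = \{0\}$, $G^{\om_i} = \emp$, $\t^{\om_i} = i$, uniform $\bbP$, and $P = \bTh \sP \dhl_{[T_0, \Th \sP]}$. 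This $P$ satisfies (i); it satisfies (ii), since the $G$-moving permutations are vacuous by Lemma \ref{L:mapping-T_0} and $(S \; F)$ is handled by the isomorphism $\om_i \mapsto \om_{1 - i}^\pi$; and it satisfies (iii) vacuously through the clause ``$P(\psi \mid X) = 0$'' in the definition of independence --- yet $P(\ga \mid T_0) = 0$, so $T_0 \cup \{\ga\} \notin \ante P$. To be fair, the paper's own closing sentence (``(iii) and the definition of independence give \eqref{one-trial-indep}'') makes the same silent step; the argument is sound only under the non-degenerate reading of (iii), namely that $P(S(\t) \mid T_0, \ga)$ exists and equals $P(S(\t) \mid T_0)$. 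If you state and use that reading, your final step (and the paper's) closes; under the literal definition, the gap cannot be closed, because the degenerate members of $\cC$ just exhibited prevent \eqref{one-trial-indep} from holding.
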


\begin{proof}
  Let $A = \{0, 1\}$ and define $\om_0 = (A, L^{\om_0})$ by $S^{\om_0}= \{1\}$,
  $F^{\om_0} = \{0\}$, $G^{\om_0} = A$, and $\t^{\om_0} = 0$. Define $\om_1$
  similarly, but with $\t^{\om_1} = 1$. Let $\Om = \{\om_0, \om_1\}$, $\Si = \fP
  \Om$, and define $\sP = (\Om, \Si, \bbP)$, where $\bbP \{\om_0\} = \bbP
  \{\om_1\} = 1/2$. Since $\sP \vDash T_0$, we may define $P = \bTh \sP
  \dhl_{[T_0, \Th \sP]} \in \fI_{T_0}$. Note that $P(S(\t) \mid T_0) = 1/2$, so
  that (i) holds. Also, $P(\forall x (Sx \to Gx) \mid T_0) = 1$, so that (iii)
  holds.

  We will show that $P$ satisfies (R10). Suppose that $P(\ph \mid X) = p$ and
  $X^\pi \in \ante P$. We may assume that $\pi$ is not the identity permutation.
  Since there is only one constant symbol, we have $\t^\pi = \t$. First assume
  $S^\pi = G$. Let $\ze = \forall x \neg (S x \wedge F x)$. Then $\ze^\pi =
  \forall x \neg (G x \wedge F^\pi x)$. Since, for all $\om \in \Om$, we have
  $G^\om = A$ and $(F^\pi)^\om \ne \emp$, this gives $\ze^\pi_\Om = \emp$,
  contradicting Lemma \ref{L:mapping-T_0}. Hence, $S^\pi \ne G$. Similarly,
  $F^\pi \ne G$. We must therefore have $\pi = (S \; F)$.

  Define $h: \sP \to \sP^\pi$ by $\om_i \mapsto \om_{i'}^\pi$, where $i' = 1 -
  i$. Since $\bbP \{\om_i\} = 1/2$ and $\bbP^\pi \{\om_{i'}^\pi\} = \bbP
  \{\om_{i'}\} = 1/2$, the function $h$ induces a measure space isomorphism. If
  $g: A \to A$ is given by $g(i) = i'$, then $\om_{i'}^\pi = \om_{i'} \circ
  \pi^{-1} = g \circ \om_i$, so that $\om_i \simeq \om_{i'}^\pi$. Therefore,
  $\om \simeq h \om$, $\bbP$-a.s., and $h$ is a model isomorphism. By Theorems
  \ref{T:induc-invar} and \ref{T:ind-iso-thm}, this gives $P(\ph^\pi \mid X^\pi)
  = p$. Altogether, this shows $P$ satisfies the principle of indifference, and
  so $P \in \cC$. Therefore, $\cC$ is consistent.

  Now let $P \in \cC$ be arbitrary. Let $\pi = (S \; F)$. Then $T_0^\pi = T_0$,
  so that $P(S(\t) \mid T_0) = P(F(\t) \mid T_0)$, which implies $P(S(\t) \mid
  T_0) = 1/2$. Since $P$ was arbitrary, we have $\bfP_\cC(S(\t) \mid T_0) =
  1/2$. Therefore, (iii) and the definition of independence give
  \eqref{one-trial-indep}.
\end{proof}

\subsection{Lowering the root}

We will take one last look at the example of the single trial. As before, let $L
= \{\t, S, F, G\}$ as in Section \ref{S:success-good}. Let $T_0$ be generated by
the sentence
\[
  \ph : \forall x ((Sx \vee Fx) \wedge \neg(Sx \wedge Fx))
\]
Let $\cC$ be the set of $P \in \fI_{T_0}$ such that
\begin{enumerate}[(i)]
  \item $P(S(\t) \mid T_0)$ exists,
  \item $P$ satisfies the principle of indifference, and
  \item $P(\forall x (S x \to G x) \mid T_0) = 1$.
\end{enumerate}
Recall that in the approach of Section \ref{S:success-good}, we could not use
the principle of indifference to determine the probability of success. The only
difference between that approach and the approach in this section is that here,
we have moved the sentence, $\forall x (S x \to G x)$, out of the root and into
$T_\cC$. As we discussed in Section \ref{S:diff-root}, this means we are making
a semantically stronger assumption about the sentence, $\forall x (S x \to G
x)$. As we will see, this stronger assumption is enough to allow us to use the
principle of indifference.

\begin{prop}
  The condition $\cC$ is consistent and $\bfP_\cC(S(\t) \mid T_0) = 1/2$.
\end{prop}

\begin{proof}
  If $P$ is the inductive condition constructed in the first part of the proof
  of Proposition \ref{P:one-trial-indep}, then $P \in \cC$. Hence, $\cC$ is
  consistent. Let $P \in \cC$ be arbitrary, and let $\pi = (S \; F)$. Then
  $T_0^\pi = T_0$, so that $P(S(\t) \mid T_0) = P(F(\t) \mid T_0)$, which
  implies $P(S(\t) \mid T_0) = 1/2$. Since $P$ was arbitrary, we have
  $\bfP_\cC(S(\t) \mid T_0) = 1/2$.
\end{proof}

\section{Examples with multiple objects}\label{S:basic-expls-PoI-2}

In this section, we give more elementary examples involving the principle of
indifference. Here, we will consider examples involving multiple objects.

\subsection{Three balls, two colors}\label{S:three-balls}

Imagine an urn containing three balls, each of which is black or white. The urn
contains at least one white ball and at least one black ball. We will use the
principle of indifference to show that every possible color combination has the
same probability. The fact that there is at least one ball of each color is
critical to this example. As we will see in Section \ref{S:two-balls}, removing
this assumption severely limits the inductive inferences that we can make.

Let $L = \{b_1, b_2, b_3, C_0, C_1\}$, where the $b_k$ are constant symbols
denoting the balls, and $C_0$ and $C_1$ are unary predicate symbols denoting the
colors black and white, respectively. Let $T_0$ be generated by the sentences
\begin{align*}
  \ze_1 &: b_1 \nbeq b_2 \wedge b_1 \nbeq b_3 \wedge b_2 \nbeq b_3\\
  \ze_2 &: \forall x ((C_0 x \vee C_1 x) \wedge \neg (C_0 x \wedge C_1 x))\\
  \ze_3 &: C_0 b_1 \vee C_0 b_2 \vee C_0 b_3\\
  \ze_4 &: C_1 b_1 \vee C_1 b_2 \vee C_1 b_3
\end{align*}
As in Example \ref{Expl:MathSE}, let $d_k(n)$ denote the $k$-th binary digit of
$n$, counting digits from the right. For $n \in \{0, \ldots, 7\}$, let
\[
  \ph_n = C_{d_1(n)} b_1 \wedge C_{d_2(n)} b_2 \wedge C_{d_3(n)} b_3.
\]
For example, $6$ has the binary representation $110$. Reading the digits right
to left, we have $0, 1, 1$. Hence, the sentence $\ph_6$ asserts that ball $b_1$
is black, ball $b_2$ is white, and ball $b_3$ is white. Let $\cC$ be the set of
$P \in \fI_{T_0}$ such that
\begin{enumerate}[(i)]
  \item $P(\ph_n \mid T_0)$ exists for $n \in \{0, \ldots, 7\}$, and
  \item $P$ satisfies the principle of indifference.
\end{enumerate}

\begin{prop}\label{P:three-balls}
  The condition $\cC$ is consistent and $\bfP_\cC(\ph_n \mid T_0) = 1/6$ for $n
  \in \{1, \ldots, 6\}$.
\end{prop}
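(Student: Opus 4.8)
The plan is to follow the template of the earlier examples in this section: construct a single model $\sP$ that witnesses the consistency of $\cC$, establish that $\sP \simeq \sP^\pi$ for every signature permutation $\pi$, and then separately pin down the common value $1/6$ using the principle of indifference together with finite additivity. First I would build $\sP = (\Om, \Si, \bbP)$ as follows. Let $A = \{1, 2, 3\}$, and for each $n \in \{1, \ldots, 6\}$ let $\om_n = (A, L^{\om_n})$ be the structure with $b_k^{\om_n} = k$, $C_1^{\om_n} = \{k : d_k(n) = 1\}$, and $C_0^{\om_n} = \{k : d_k(n) = 0\}$, with $d_k$ as in Example \ref{Expl:MathSE}. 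Since $n \in \{1, \ldots, 6\}$ excludes the monochromatic patterns $n = 0$ and $n = 7$, each $\om_n$ strictly satisfies $\ze_1$--$\ze_4$, and the six structures are pairwise non-isomorphic because any isomorphism must fix the constants $b_1, b_2, b_3$. Take $\Om = \{\om_n : 1 \le n \le 6\}$, $\Si = \fP \Om$, and $\bbP$ uniform. Then $\sP \vDash T_0$, so $P = \bTh \sP \dhl_{[T_0, \Th \sP]}$ is an inductive theory with root $T_0$, and since $(\ph_n)_\Om = \{\om_n\}$, condition (i) holds with $P(\ph_n \mid T_0) = 1/6$.

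The main obstacle is verifying $\sP \simeq \sP^\pi$. Every signature permutation $\pi$ is given by a pair $(\rho, \tau) \in S_3 \times S_2$ acting on the balls $\{b_1, b_2, b_3\}$ and on the colors $\{C_0, C_1\}$, since $\pi$ preserves type and arity. I would first note $T_0^\pi = T_0$: permuting balls only reorders the disjuncts of $\ze_3, \ze_4$, while swapping colors interchanges $\ze_3 \leftrightarrow \ze_4$ and fixes $\ze_1, \ze_2$ up to logical equivalence. Next, $\om_n^\pi$ again has domain $A$, maps $b_1, b_2, b_3$ bijectively onto $A$ (via $\rho^{-1}$), and carries a two-coloring with both colors present; relabeling the domain to restore $b_k \mapsto k$ shows $\om_n^\pi \simeq \om_{\ka(n)}$ for a unique $\ka(n) \in \{1, \ldots, 6\}$. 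Because $\pi$ is a bijection on structures and the $\om_m$ are pairwise non-isomorphic, $\ka$ is a permutation of $\{1, \ldots, 6\}$. I would then define $h : \Om \to \Om^\pi$ by $h(\om_m) = \om_{\ka^{-1}(m)}^\pi$; it is measure-preserving because both $\bbP$ and its pushforward are uniform over six atoms, and $\om_m \simeq h(\om_m)$ by construction, so (with the discrete $\si$-algebras making everything measurable) $h$ is a model isomorphism and $\sP \simeq \sP^\pi$. Granting this, the proposition asserting that $\sP \simeq \sP^\pi$ for all $\pi$ forces $\bTh \sP$ to satisfy the principle of indifference, and the proposition on lowering the root (applied with $T_0 \in [\Taut, \Th \sP]$) transfers this to $P = \bTh \sP \dhl_{[T_0, \Th \sP]}$. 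Hence $P \in \cC$ and $\cC$ is consistent.

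For the value, fix any $P \in \cC$. The group $S_3 \times S_2$ acts transitively on the six colorings: $S_3$ alone has two orbits (the single white balls and the pairs of white balls), and the color swap interchanges these orbits. Thus for any $n, n' \in \{1, \ldots, 6\}$ there is a $\pi$ with $T_0^\pi = T_0$ and $\ph_n^\pi \equiv \ph_{n'}$, so (R10) together with the rule of logical equivalence gives $P(\ph_n \mid T_0) = P(\ph_{n'} \mid T_0)$; call this common value $q$. Since two distinct complete colorings disagree on some ball, contradicting $\ze_2$, we have $T_0 \vdash \neg(\ph_n \wedge \ph_{n'})$ for $n \ne n'$, while $\ze_2$--$\ze_4$ yield $T_0 \vdash \ph_1 \vee \cdots \vee \ph_6$. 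Finite additivity (Lemma \ref{L:fin-add}) then gives $P(\ph_1 \vee \cdots \vee \ph_6 \mid T_0) = 6q$, and the rule of logical implication gives $P(\ph_1 \vee \cdots \vee \ph_6 \mid T_0) = 1$, so $q = 1/6$. As this holds for every $P \in \cC$, we obtain $(T_0, \ph_n, 1/6) \in \bigcap \cC^0 \subseteq \bfP_\cC$, that is, $\bfP_\cC(\ph_n \mid T_0) = 1/6$, which is the claim.
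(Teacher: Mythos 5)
Your proof is correct and follows essentially the same route as the paper's: build the uniform model on the two-colored structures, show that every signature permutation $\pi$ induces a model isomorphism $\sP \simeq \sP^\pi$ so that the resulting inductive theory satisfies (R10), and then combine transitivity of the ball/color permutation group with finite additivity and logical implication to force the common value $1/6$. The differences are only cosmetic: the paper keeps the monochromatic structures $\om_0, \om_7$ as null atoms and verifies (R10) statement-by-statement via Theorems \ref{T:induc-invar} and \ref{T:ind-iso-thm} with an explicit digit-permutation, whereas you drop those atoms (so that $P(\ph_0 \mid T_0) = P(\ph_7 \mid T_0) = 0$ holds trivially, which is still needed for condition (i)) and route the (R10) check through the two packaged propositions of Section \ref{S:PoI-models}.
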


\begin{proof}
  Let $A = \{1, 2, 3\}$. For $n \in \{0, \ldots, 7\}$, define $\om_n = (A, L^
  {\om_n})$ by $b_k^{\om_n} = k$ and $C_j^{\om_n} = \{k \mid d_k(n) = j\}$. Note
  that $C_0^{\om_n} = (C_1^{\om_n})^c$. Also note that $\om_n \tDash \ph_m$ if
  and only if $m = n$. Let $\Om = \{\om_0, \ldots, \om_7\}$, $\Si = \fP \Om$,
  and define $\sP = (\Om, \Si, \bbP)$, where $\bbP \{\om_0\} = \bbP \{\om_7\} =
  0$, and $\bbP \{\om_n\} = 1/6$ for $1 \le n \le 6$. Since $\sP \vDash T_0$, we
  may define $P = {\bTh \sP \dhl_{[T_0, \Th \sP]}} \in \fI_{T_0}$. Then $P(\ph_0
  \mid T_0) = P(\ph_7 \mid T_0) = 0$, and $P(\ph_n \mid T_0) = 1/6$ for $1 \le n
  \le 6$.

  We will show that $P$ satisfies (R10). Suppose that $P(\ph \mid X) = p$ and
  $X^\pi \in \ante P$. First assume $C_0^\pi = C_0$. Let $g: A \to A$ be the
  bijection that satisfies $b_k^{-\pi} = b_{g k}$. Let $\si$ be the permutation
  of $\{0, \ldots, 7\}$ such that $d_k(\si n) = d_{g k}(n)$. Define $h: \sP \to
  \sP^\pi$ by $\om_n \mapsto \om_{\si n}^\pi$. Since $\si 0 = 0$ and $\si 7 =
  7$, we have $\bbP^\pi \{\om_{\si n}^\pi\} = \bbP \{\om_{\si n}\} = \bbP
  \{\om_n\}$. Hence, $h$ induces a measure space isomorphism. Also, $\om_{\si
  n}^\pi = \om_{\si n} \circ \pi^{-1} = g \circ \om_n$, so that $\om_{\si n}^\pi
  \simeq \om_n$, and $h$ is a model isomorphism. By Theorems \ref{T:induc-invar}
  and \ref{T:ind-iso-thm}, this gives $P(\ph^\pi \mid X^\pi) = p$.

  Now assume $C_0^\pi = C_1$. Define $h: \sP \to \sP^\pi$ by $\om_{7 - n}
  \mapsto \om_{\si n}^\pi$. As above, $h$ induces a measure space isomorphism
  and $\om_{\si n}^\pi = \om_{\si n} \circ \pi^{-1} = g \circ \om_{7 - n}$, so
  that again, $P(\ph^\pi \mid X^\pi) = p$. This shows that $P$ satisfies (R10),
  and therefore, $\cC$ is consistent.

  Now let $P \in \cC$ be arbitrary. Let $\pi = (b_1 \; b_2)$. Then $T_0$ is
  invariant under $\pi$ and $\ph_1^\pi = \ph_2$, so by the principle of
  indifference, $P(\ph_1 \mid T_0) = P(\ph_2 \mid T_0)$. Similarly, using $\pi =
  (b_2 \; b_3)$, we have $P(\ph_2 \mid T_0) = P(\ph_4 \mid T_0)$. Now let $\pi =
  (C_0 \; C_1)$. Then $T_0^\pi = T_0$ and $\ph_n^\pi = \ph_{7 - n}$. Thus,
  $P(\ph_3 \mid T_0) = P (\ph_4 \mid T_0)$, $P(\ph_5 \mid T_0) = P(\ph_2 \mid
  T_0)$, and $P(\ph_6 \mid T_0) = P(\ph_1 \mid T_0)$. It follows that for some
  $p \in [0, 1]$, we have $P(\ph_n \mid T_0) = p$ for all $n \in \{1, \ldots,
  6\}$. But $T_0 \vdash \neg (\ph_0 \vee \ph_7)$, so $P(\ph_0 \mid T_0) = P
  (\ph_7 \mid T_0) = 0$. Therefore, $\sum_{n = 1}^6 P(\ph_n \mid T_0) = 1$,
  which implies $p = 1/6$.
\end{proof}

\subsection{Two balls, two colors}\label{S:two-balls}

Now imagine an urn containing two balls, each of which is black or white. There
are four possible color combinations. Unlike the example in Section
\ref{S:three-balls}, we will not be able to use the principle of indifference to
show that each combination has probability $1/4$. The most we can conclude is
that the probability of two whites is the same as the probability of two blacks.

Let $L = \{b_1, b_2, C_0, C_1\}$, where the $b_k$ are constant symbols denoting
the balls, and $C_0$ and $C_1$ are unary predicate symbols denoting the colors
black and white, respectively. Let $T_0$ be generated by the sentences
\begin{align*}
  \ze_1 &: b_1 \nbeq b_2\\
  \ze_2 &: \forall x ((C_0 x \vee C_1 x) \wedge \neg (C_0 x \wedge C_1 x))
\end{align*}
As in Example \ref{Expl:MathSE}, let $d_k(n)$ denote the $k$-th binary digit of
$n$, counting digits from the right. For $n \in \{0, 1, 2, 3\}$, let
\[
  \ph_n = C_{d_1(n)} b_1 \wedge C_{d_2(n)} b_2.
\]
For example, $2$ has the binary representation $10$. Reading the digits right
to left, we have $0, 1$. Hence, the sentence $\ph_2$ asserts that ball $b_1$
is black, and ball $b_2$ is white. Let $\cC$ be the set of $P \in
\fI_{T_0}$ such that
\begin{enumerate}[(i)]
  \item $P(\ph_n \mid T_0)$ exists for $n \in \{0, 1, 2, 3\}$, and
  \item $P$ satisfies the principle of indifference.
\end{enumerate}

\begin{prop}\label{P:two-balls}
  The condition $\cC$ is consistent. Moreover, for any $P \in \cC$, we have $P
  (\ph_0 \mid T_0) = P(\ph_3 \mid T_0)$ and $P(\ph_1 \mid T_0) = P(\ph_2 \mid
  T_0)$.
\end{prop}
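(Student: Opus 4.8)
The plan is to follow the template established in the multi-object examples, particularly the proof of Proposition \ref{P:three-balls}, but with the crucial difference that the permutation group acting here is smaller. First I would establish consistency by constructing an explicit model and verifying it satisfies (R10). Let $A = \{1, 2\}$ and, for $n \in \{0, 1, 2, 3\}$, define the structure $\om_n = (A, L^{\om_n})$ by $b_k^{\om_n} = k$ and $C_j^{\om_n} = \{k \mid d_k(n) = j\}$, so that $\om_n \tDash \ph_m$ if and only if $m = n$. Taking $\Om = \{\om_0, \om_1, \om_2, \om_3\}$, $\Si = \fP \Om$, and $\bbP$ the uniform measure $\bbP \{\om_n\} = 1/4$, I would set $P = \bTh \sP \dhl_{[T_0, \Th \sP]}$, which lies in $\fI_{T_0}$ by Proposition \ref{P:chop-off-root} since $\sP \vDash T_0$. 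This $P$ clearly satisfies (i).

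The main work is verifying that this $P$ satisfies the principle of indifference (R10), and this is where I expect the bulk of the effort to lie. Suppose $P(\ph \mid X) = p$ and $X^\pi \in \ante P$ for a signature permutation $\pi$. Since $L$ has two constant symbols $b_1, b_2$ and two unary relation symbols $C_0, C_1$, the permutation $\pi$ can independently swap or fix the pair $\{b_1, b_2\}$ and the pair $\{C_0, C_1\}$. For each such $\pi$, I would define $g : A \to A$ by $b_k^{-\pi} = b_{g k}$ and an induced permutation $\si$ of $\{0,1,2,3\}$ satisfying $d_k(\si n) = d_{g k}(n)$ when $C_0^\pi = C_0$, or the color-reversed analogue when $C_0^\pi = C_1$. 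In the color-preserving case I would define $h : \sP \to \sP^\pi$ by $\om_n \mapsto \om_{\si n}^\pi$, and in the color-reversing case by $\om_n \mapsto \om_{\si(3-n)}^\pi$ (mirroring the $n \mapsto 7-n$ device in Proposition \ref{P:three-balls}, adjusted for two balls). Because $\bbP$ is uniform over all four structures, $h$ induces a measure-space isomorphism regardless of how $\si$ permutes indices; and since $\om_{\si n}^\pi = \om_{\si n} \circ \pi^{-1} = g \circ \om_n$, the map $h$ is a model isomorphism. Theorems \ref{T:induc-invar} and \ref{T:ind-iso-thm} then give $P(\ph^\pi \mid X^\pi) = p$, establishing (R10) and hence $P \in \cC$, so $\cC$ is consistent.

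For the final claim, I would argue purely from (R10) applied to a specific permutation, showing that the stated equalities hold for \emph{every} $P \in \cC$, not merely for the constructed one. Taking $\pi = (C_0 \; C_1)$, one checks that $T_0^\pi = T_0$ (the sentences $\ze_1, \ze_2$ are manifestly invariant under swapping the two colors), so $T_0$ is $\pi$-invariant. A direct computation of the action on formulas gives $\ph_n^\pi = \ph_{3 - n}$: swapping $C_0$ and $C_1$ replaces each digit $d_k(n)$ by $1 - d_k(n)$, which on two bits sends $n$ to $3 - n$. Hence by the principle of indifference, $P(\ph_0 \mid T_0) = P(\ph_3 \mid T_0)$ and $P(\ph_1 \mid T_0) = P(\ph_2 \mid T_0)$, as claimed.

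The key obstacle, and the reason this proposition is stated as a contrast to Proposition \ref{P:three-balls}, is precisely what \emph{cannot} be proven: without the axioms $\ze_3, \ze_4$ forcing at least one ball of each color, there is no signature permutation carrying $\ph_1$ (one ball of each color) to $\ph_0$ or $\ph_3$ (monochromatic configurations). No $\pi$ relates the "mixed" sentences to the "pure" ones, so the principle of indifference is powerless to equate $P(\ph_1 \mid T_0)$ with $P(\ph_0 \mid T_0)$. I would not need to prove this negative statement formally here, but it is worth noting that the constructed uniform model already witnesses that the four probabilities need not all be forced equal in every member of $\cC$ — an asymmetric model in $\cC$ would break equality between $\{\ph_0, \ph_3\}$ and $\{\ph_1, \ph_2\}$, confirming that only the two stated equalities are derivable.
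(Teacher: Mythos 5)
Your proof is correct and follows essentially the same route as the paper's: a finite model on the four structures $\om_0, \ldots, \om_3$, the (R10) verification carried over from Proposition \ref{P:three-balls}, and an application of indifference via explicit permutations (your single swap $\pi = (C_0 \; C_1)$, giving $\ph_n^\pi = \ph_{3-n}$, in fact yields both stated equalities at once, where the paper uses $(b_1 \; b_2)$ and $(C_0 \; C_1)$ separately). The one substantive difference is that the paper takes $\bbP \{\om_0\} = \bbP \{\om_3\} = p/2$ and $\bbP \{\om_1\} = \bbP \{\om_2\} = (1 - p)/2$ for an arbitrary $p \in (0, 1)$ rather than the uniform measure; this costs nothing extra in the (R10) check but produces the asymmetric members of $\cC$ needed for the indeterminacy remark that follows the proposition. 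On that point, your closing comment that the uniform model ``already witnesses that the four probabilities need not all be forced equal'' is backwards: the uniform model makes all four probabilities equal to $1/4$, and it is precisely an asymmetric model such as the paper's with $p \ne 1/2$ that provides the witness.
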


\begin{proof}
  Let $A = \{1, 2\}$. For $n \in \{0, 1, 2, 3\}$, define $\om_n = (A, L^
  {\om_n})$ by $b_k^{\om_n} = k$ and $C_j^{\om_n} = \{k \mid d_k(n) = j\}$. Note
  that $C_0^{\om_n} = (C_1^{\om_n})^c$. Also note that $\om_n \tDash \ph_m$ if
  and only if $m = n$. Let $\Om = \{\om_0, \ldots, \om_7\}$ and $\Si = \fP \Om$.
  Fix $p \in (0, 1)$ and define $\sP = (\Om, \Si, \bbP)$, where $\bbP \{\om_0\}
  = \bbP \{\om_3\} = p/2$, and $\bbP \{\om_1\} = \bbP \{\om_2\} = (1 - p)/2$.
  Since $\sP \vDash T_0$, we may define $P = {\bTh \sP \dhl_{[T_0, \Th \sP]}}
  \in \fI_{T_0}$. Then $P(\ph_0 \mid T_0) = P(\ph_3 \mid T_0) = p/2$, and
  $P(\ph_1 \mid T_0) = P(\ph_2 \mid T_0) = (1 - p)/2$. The proof that $P$
  satisfies (R10) follows as in the proof of Proposition \ref{P:three-balls}.
  Hence, $P \in \cC$ and $\cC$ is consistent.

  Let $P \in \cC$ be arbitrary. If $\pi = (b_1 \; b_2)$, then $T_0$ is invariant
  under $\pi$ and $\ph_1^\pi = \ph_2$. Hence, by the principle of indifference,
  $P(\ph_1 \mid T_0) = P(\ph_2 \mid T_0)$. Similarly, if $\pi = (C_0 \; C_1)$,
  then $T_0^\pi = T_0$ and $\ph_0 = \ph_3$. Therefore, $P(\ph_0 \mid T_0) =
  P(\ph_3 \mid T_0)$.
\end{proof}

\begin{rmk}
  The proof of Proposition \ref{P:two-balls} shows that $\cC$ is indeterminate.
  More specifically, for any $p \in (0, 1)$, there exists $P \in \cC$ such that
  $P(\ph_0 \mid T_0) = P(\ph_3 \mid T_0) = p/2$, and $P(\ph_1 \mid T_0) =
  P(\ph_2 \mid T_0) = (1 - p)/2$. Therefore, $\bfP_\cC(\ph_n \mid T_0)$ does not
  exist for any $n \in \{0, 1, 2, 3\}$.

  This example can be generalized to any finite number of black and white balls.
  Suppose there are $N$ balls and let $\psi_m$ be the sentence which asserts
  that exactly $m$ of them are white. As above, the principle of indifference
  will be unable to tell us the probabilities of $\psi_m$. The most it can say
  is that the probability of $\psi_m$ is the same as the probability of $\psi_{N
  - m}$.
\end{rmk}

\subsection{Random numbers}\label{S:0<1}

In this example, we consider a constant that could equal either $0$ or $1$. We
will not include everything we know about the numbers $0$ and $1$, but we will
include the fact that $0 < 1$. This creates an informational asymmetry, like the
one we encountered in Section \ref{S:success-good}. As such, the principle of
indifference will not provide us with the probability that this constant is
equal to $0$.

Let $L = \{c, \ul 0, \ul 1, <\}$, where $c$, $\ul 0$, and $\ul 1$ are constant
symbols, and $<$ is a binary relation symbols. Let $T_0$ be generated by the
sentences
\begin{align*}
  \ph_1 &: \ul 0 \nbeq \ul 1\\
  \ph_2 &: \ul 0 < \ul 1\\
  \ph_3 &: c \beq \ul 0 \vee c \beq \ul 1
\end{align*}
Let $\cC = \{P \in \fI_{T_0} \mid \text{$P(c \beq \ul 0 \mid T_0)$ exists and
$P$ satisfies (R10)}\}$.

\begin{prop}\label{P:0<1}
  The condition $\cC$ is consistent and $\cC \nvdash (T_0, c \beq \ul 0, 1/2)$.
\end{prop}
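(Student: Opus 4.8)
The plan is to mimic the ``success is good'' example of Section~\ref{S:success-good}, exploiting the informational asymmetry that $\ph_2 : \ul 0 < \ul 1$ creates between the constants $\ul 0$ and $\ul 1$. Concretely, I would fix an arbitrary $p \in (0, 1)$, construct a two-point model $\sP$ with $\sP \vDash T_0$, set $P = \bTh \sP \dhl_{[T_0, \Th \sP]}$, and show that $P \in \cC$ while $P(c \beq \ul 0 \mid T_0) = p$. Since $p$ is arbitrary and, by Theorem~\ref{T:theory-gen-IC-defn}, $\bfP_\cC \subseteq \bigcap \cC \subseteq P$ for every $P \in \cC$, the theory $\bfP_\cC$ cannot assign the value $1/2$ (indeed any fixed value) to $c \beq \ul 0$, which gives $\cC \nvdash (T_0, c \beq \ul 0, 1/2)$; producing such a $P$ simultaneously witnesses that $\cC$ is consistent.

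For the construction I would take $A = \{0, 1\}$ and define $\om_0 = (A, L^{\om_0})$ by $\ul 0^{\om_0} = 0$, $\ul 1^{\om_0} = 1$, $<^{\om_0} = \{(0, 1)\}$, and $c^{\om_0} = 0$, together with $\om_1$ identical except $c^{\om_1} = 1$. With $\Om = \{\om_0, \om_1\}$, $\Si = \fP \Om$, and $\bbP \{\om_0\} = p$, $\bbP \{\om_1\} = 1 - p$, both structures satisfy $\ph_1$, $\ph_2$, $\ph_3$, so $\sP = (\Om, \Si, \bbP) \vDash T_0$ and $P$ is a well-defined inductive theory with root $T_0$, with $\sP \vDash P$. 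Since $(c \beq \ul 0)_\Om = \{\om_0\}$, we get $P(c \beq \ul 0 \mid T_0) = p$, giving condition (i) in the definition of $\cC$.

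The heart of the proof is verifying (R10). As $<$ is the only binary relation symbol, every signature permutation $\pi$ fixes $<$ and restricts to a permutation of $\{c, \ul 0, \ul 1\}$, so there are six cases. Suppose $P(\ph \mid X) = q$ and $X^\pi \in \ante P$. Applying Lemma~\ref{L:mapping-T_0} with $\ze = \ph_2 \in T_0$, I would note that we must have $\olbbP (\ph_2^\pi)_\Om > 0$, where $\ph_2^\pi = (\ul 0^\pi < \ul 1^\pi)$. But if $\ul 0^\pi = \ul 1$ then $(\ul 0^\pi)^\om$ is constantly $1$, and if $\ul 1^\pi = \ul 0$ then $(\ul 1^\pi)^\om$ is constantly $0$; since the only ordered pair in $<^\om$ is $(0,1)$, in either case $(\ph_2^\pi)_\Om = \emp$, a contradiction. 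This excludes the swap $(\ul 0 \; \ul 1)$ and both $3$-cycles, so for these $\pi$ the hypothesis $X^\pi \in \ante P$ cannot hold and (R10) is vacuous. The surviving permutations are the identity, $(\ul 0 \; c)$, and $(\ul 1 \; c)$. The identity is trivial; for $(\ul 0 \; c)$ and $(\ul 1 \; c)$ I would invoke Proposition~\ref{P:mapping-T_0} with $\ze = \ph_2$. Here $B = (\ph_2^\pi)_\Om$ equals $\{\om_0\}$ for $(\ul 0 \; c)$ and $\{\om_1\}$ for $(\ul 1 \; c)$, and $B' = B$ since both permutations are self-inverse. A direct computation of $\om^\pi = \om \circ \pi^{-1}$ shows $\om_0^\pi = \om_0$ and $\om_1^\pi = \om_1$, so the point mass $\sP_B$ is carried by a $\pi$-fixed structure; hence $\sP_B \simeq \sP_{B'}^\pi$ and Proposition~\ref{P:mapping-T_0} yields $P(\ph^\pi \mid X^\pi) = q$. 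Thus $P$ satisfies (R10), so $P \in \cC$ and $\cC$ is consistent.

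The main obstacle is the two surviving nontrivial permutations $(\ul 0 \; c)$ and $(\ul 1 \; c)$: unlike the swap of $\ul 0$ and $\ul 1$, these are \emph{not} excluded by the asymmetry of $\ph_2$, because they displace $c$ rather than reversing the order relation. The delicate point is to recognize that after conditioning on $\ph_2^\pi$ the model collapses onto a single structure that $\pi$ fixes, which is exactly what makes Proposition~\ref{P:mapping-T_0} applicable. Once this is established, $\cC \nvdash (T_0, c \beq \ul 0, 1/2)$ follows as in the first paragraph, and, paralleling the remark after Proposition~\ref{P:two-balls}, one in fact obtains that $\bfP_\cC(c \beq \ul 0 \mid T_0)$ does not exist at all.
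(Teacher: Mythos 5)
Your proposal is correct and takes essentially the same route as the paper's own proof: the identical two-point model with $\bbP\{\om_0\} = p$ arbitrary, exclusion of any permutation sending $\ul 0$ to $\ul 1$ or $\ul 1$ to $\ul 0$ via Lemma \ref{L:mapping-T_0} and the asymmetry of $<$, and then Proposition \ref{P:mapping-T_0} with $\ze = \ph_2$ for the two transpositions $(c \; \ul 0)$ and $(c \; \ul 1)$. One small caution: the claim ``$\om_0^\pi = \om_0$ and $\om_1^\pi = \om_1$'' is only true respectively (for $\pi = (c \; \ul 0)$ one computes $\om_1^\pi \ne \om_1$, and for $\pi = (c \; \ul 1)$ one computes $\om_0^\pi \ne \om_0$), but since in each case the structure carrying the point mass $\sP_B$ is the one that is $\pi$-fixed, your argument goes through exactly as in the paper.
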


\begin{proof}
  Let $A = \{0, 1\}$. Define $\om_0 = (A, L^{\om_0})$ by $\ul 0^{\om_0} = 0$,
  $\ul 1^{\om_0} = 1$, ${<}^{\om_0} = \{(0, 1)\}$, and $c^{\om_0} = 0$. Define
  $\om_1$ similarly, but with $c^{\om_1} = 1$. Let $\Om = \{\om_0, \om_1\}$ and
  $\Si = \fP \Om$. Let $p \in (0, 1)$ and define $\sP = (\Om, \Si, \bbP)$ so
  that $\bbP \{\om_0\} = p$ and $\bbP \{\om_1\} = 1 - p$. Since $\sP \vDash
  T_0$, we may define $P = \bTh \sP \dhl_{[T_0, \Th \sP]}$. Then $P \in
  \fI_{T_0}$ and $P(c \beq \ul 0 \mid T_0) = p$.

  We will show that $P$ satisfies (R10). Suppose that $P(\ph \mid X) = p$ and
  $X^\pi \in \ante P$. Note that ${<}^\pi = {<}$ for all permutations $\pi$.
  First assume that $\ul 0^\pi = \ul 1$. Then $(\ul 0 < \ul 1)^\pi = (\ul 1 <
  \ul 1^\pi)$. But $\om \ntDash (\ul 1 < \s)$ for all $\om \in \Om$ and $\s \in
  L$, so this contradicts Lemma \ref{L:mapping-T_0}. Hence, $\ul 0^\pi \ne \ul
  1$. By reversing the roles of $\pi$ and $\pi^{-1}$ in this argument, we may
  also conclude that $\ul 1^\pi \ne \ul 0$.

  It follows that if $c^\pi = c$, then $\pi$ is the identity. We may therefore
  assume that $c^\pi = \ul 0$ or $c^\pi = \ul 1$. Suppose that $c^\pi = \ul 0$,
  so that $\pi = (c \; \ul 0)$. We will apply Proposition \ref{P:mapping-T_0}
  with $\ze = (\ul 0 < \ul 1)$. In this case, $B = B' = \ze^\pi_\Om = 
  \{\om_0\}$. Hence, the function $h: \sP_B \to \sP_{B'}^\pi$ that maps $\om_i$
  to $\om_i^\pi$ it induces a measure space isomorphism. Moreover, $\om_0 =
  \om_0^\pi$, so $h$ is a model isomorphism. Proposition \ref{P:mapping-T_0}
  therefore implies $P(\ph^\pi \mid X^\pi) = p$. A similar argument gives the
  same result in the case that $c^\pi = \ul 1$. This shows that $P$ satisfies
  (R10), so that $\cC$ is consistent. Since $p$ was arbitrary, we have $\cC
  \nvdash (T_0, c \beq \ul 0, 1/2)$.
\end{proof}

\subsection{Random numbers and definitions}\label{S:0<1-defn}

In this example, we again consider a constant $c$ that could equal $0$ or $1$.
This time, however, we will expand our language by defining $d = 1 - c$. After
making this seemingly harmless addition, we will be able to infer that $c$
equals $0$ with probability $1/2$.

This result may seem counterintuitive. It feels as if introducing a defined
constant should not affect the probabilities of a pre-existing constant. But
this feeling is rooted in the intuition that $c$ is the original constant, and
$d$ is defined in terms of $c$. However, in the expanded language, it is
impossible to tell which of $c$ and $d$ is the original constant. They are
simply two constants related by $d = 1 - c$ and $c = 1 - d$. As such, when we
refer to $c$ in the expanded language, we are equally ignorant about whether $c$
is the original random number, or its inversion. Therefore, the probability that
$c$ equals $0$ in the expanded language should be the average of its
probabilities in the original language, which is $1/2$. We will return to this
idea at the end of the section, after formalizing the example.

Let $L$ and $T_0$ be as in Section \ref{S:0<1}. That is, $L = \{c, \ul 0, \ul 1,
<\}$ and $T_0$ is generated by
\begin{align*}
  \ph_1 &: \ul 0 \nbeq \ul 1\\
  \ph_2 &: \ul 0 < \ul 1\\
  \ph_3 &: c \beq \ul 0 \vee c \beq \ul 1
\end{align*}
Let $d$ be a constant symbol and define
\[
  \de(y) :
    c \beq \ul 0 \wedge y \beq \ul 1 \vee c \beq \ul 1 \wedge y \beq \ul 0
\]
Then $T_0 \vdash \xi$, where $\xi = \exists! y \, \de(y)$, so that $\th = (y
\beq d \tot \de(y))$ is legitimate in $T_0$.

Let $L' = \{c, d, \ul 0, \ul 1, <\}$ and define $T_0' = T_0 + \th \subseteq
(\cL')^0$, so that $T_0'$ is a definitorial extension as in Definition
\ref{D:ded-def-ext}. Let
\[
  \cC = \{
    P' \in \fI_{T_0'}
  \mid
    \text{$P'(c \beq \ul 0 \mid T_0')$ exists and $P'$ satisfies (R10)}\}.
\]

\begin{prop}\label{P:0<1+defn}
  The condition $\cC$ is consistent and $\bfP_\cC(c \beq \ul 0 \mid T_0') =
  1/2$.
\end{prop}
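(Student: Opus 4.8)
The plan is to follow the pattern of Propositions \ref{P:0<1} and \ref{P:three-balls}: first construct an explicit model to establish consistency, then extract the value $1/2$ from the principle of indifference applied to the single transposition $\pi = (c \; d)$ made available by the new symbol $d$.

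For consistency I would take $A = \{0, 1\}$ and define two $L'$-structures sharing the reduct $\ul 0^\om = 0$, $\ul 1^\om = 1$, $<^\om = \{(0, 1)\}$: let $\om_0$ have $c^{\om_0} = 0$, $d^{\om_0} = 1$, and let $\om_1$ have $c^{\om_1} = 1$, $d^{\om_1} = 0$. With $\Om = \{\om_0, \om_1\}$, $\Si = \fP \Om$, and $\bbP\{\om_0\} = \bbP\{\om_1\} = 1/2$, one checks $\sP = (\Om, \Si, \bbP) \vDash T_0'$, so $P' = \bTh \sP \dhl_{[T_0', \Th \sP]}$ lies in $\fI_{T_0'}$ and satisfies $P'(c \beq \ul 0 \mid T_0') = 1/2$. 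The substantive work is verifying that $P'$ obeys (R10). I would argue, as in Proposition \ref{P:0<1}, by cases on a signature permutation $\pi$ (which must fix $<$ and permute the four constants $c, d, \ul 0, \ul 1$). Lemma \ref{L:mapping-T_0}, applied with $\ze = (\ul 0 < \ul 1) \in T_0'$, forces $\olbbP(\ul 0^\pi < \ul 1^\pi)_\Om > 0$ and the analogous condition for $-\pi$, which rules out every $\pi$ sending $\ul 0 \mapsto \ul 1$ or $\ul 1 \mapsto \ul 0$ and restricts $(\ul 0^\pi, \ul 1^\pi)$ to the handful of pairs realized with positive probability in $\sP$. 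For the surviving permutations I would exhibit, for each admissible antecedent, a model isomorphism witnessing indifference: permutations fixing $\ul 0$ and $\ul 1$ are either the identity or $(c \; d)$, and for the latter the observation $\om_0^{(c\;d)} = \om_1$ gives $\sP^{(c\;d)} = \sP$, so Theorems \ref{T:induc-invar} and \ref{T:ind-iso-thm} apply directly; the permutations carrying $\ul 0$ or $\ul 1$ into $\{c, d\}$ are dispatched exactly as the case $c^\pi = \ul 0$ in Proposition \ref{P:0<1}, namely by conditioning on an appropriate $\ze \in T_0'$ and invoking Proposition \ref{P:mapping-T_0}.

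For the value, let $P' \in \cC$ be arbitrary and set $p = P'(c \beq \ul 0 \mid T_0')$. The crux is that $T_0'$ is invariant under $\pi = (c \; d)$, which holds because the defining relation of $d$ is symmetric. Using $T(X^\pi) = T(X)^\pi$, I would check that each generator of $(T_0')^\pi$ — in particular $\th^\pi$, which asserts that $c$ is the inverse of $d$ — is derivable from $T_0'$, and conversely (since $(c\;d)$ is an involution), so that $(T_0')^\pi \equiv T_0'$. Then (R10) gives $P'((c \beq \ul 0)^\pi \mid (T_0')^\pi) = p$, that is $P'(d \beq \ul 0 \mid T_0') = p$ after the rule of logical equivalence for antecedents. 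Since $T_0' \vdash (d \beq \ul 0 \tot c \beq \ul 1)$, Proposition \ref{P:log-equiv-gen} yields $P'(c \beq \ul 1 \mid T_0') = p$. Finally $\ph_1$ gives $T_0' \vdash \neg(c \beq \ul 0 \wedge c \beq \ul 1)$ and $\ph_3$ gives $T_0' \vdash c \beq \ul 0 \vee c \beq \ul 1$, so the addition rule and the rule of logical implication force $2p = 1$, whence $p = 1/2$. As every $P' \in \cC$ then contains $(T_0', c \beq \ul 0, 1/2)$ and $T_0' \cao T_0'$, this inductive statement lies in $\bigcap \cC^0 \subseteq \bfP_\cC$, giving $\bfP_\cC(c \beq \ul 0 \mid T_0') = 1/2$. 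The main obstacle is the (R10) verification inside the consistency argument: with four constant symbols in $L'$ there are many candidate permutations, and unlike the globally symmetric situation we do not have $\sP \simeq \sP^\pi$ for all $\pi$, so the Lemma \ref{L:mapping-T_0} filter together with the local isomorphism machinery of Proposition \ref{P:mapping-T_0} is what makes the case analysis finite and tractable, and it is the most delicate, bookkeeping-intensive step.
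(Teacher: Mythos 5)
Your proposal is correct and takes essentially the same approach as the paper: the same two-point model (the paper obtains it as the definitorial extension of the Proposition \ref{P:0<1} model via Corollary \ref{C:ind-elim-thm}, whereas you build it directly on $L'$-structures, a cosmetic difference), the same Lemma \ref{L:mapping-T_0} filter followed by a case analysis using $(\sP')^{\pi} = \sP'$ for $\pi = (c \; d)$ and Proposition \ref{P:mapping-T_0} for the permutations moving $\ul 0$ or $\ul 1$ into $\{c, d\}$. The extraction of the value $1/2$ from the $(c \; d)$-invariance of $T_0'$, the equivalence $d \beq \ul 0 \equiv_{T_0'} c \beq \ul 1$, and the addition rule is exactly the paper's concluding argument.
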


\begin{proof}
  Let $A = \{0, 1\}$. For $i \in \{0, 1\}$, define $\om_i = (A, L^{\om_i})$ by
  $\ul 0^{\om_i} = 0$, $\ul 1^{\om_i} = 1$, ${<}^{\om_i} = \{(0, 1)\}$, and $c^
  {\om_i} = i$. Let $\Om = \{\om_0, \om_1\}$, $\Si = \fP \Om$, and define $\sP =
  (\Om, \Si, \bbP)$ so that $\bbP \{\om_0\} = \bbP \{\om_1\} = 1/2$. Note that
  $\sP$ is the $\cL$-model defined in the proof of Proposition \ref{P:0<1},
  where $p = 1/2$. Also note that $\om \tDash \xi$ for all $\om \in \Om$. Since
  $\sP \vDash T_0$, we may define $P = \bTh \sP \dhl_{[T_0, \Th \sP]}$.

  Let $\om'_i = (A, (L')^{\om'_i})$ be given by $\s^{\om'_i} = \s^{\om_i}$ for
  $\s \in L$ and $d^{\om'_i} = i'$, where $i' = 1 - i$. Let $\Om' = \{\om'_0,
  \om'_1\}$, $\Ga = \fP \Si'$, and define $\sP' = (\Om', \Ga, \bbQ)$ so that
  $\bbQ \{\om'_0\} = \bbQ \{\om'_1\} = 1/2$. Note that $\sP'$ is the
  $\cL'$-model defined above Lemma \ref{L:elim-model}. By Corollary
  \ref{C:ind-elim-thm}, we have $P' = \bTh \sP' \dhl_{[T_0', \Th \sP']}$, where
  $P'$ is the definitorial extension of $P$ given in Theorem
  \ref{T:ind-elim-thm}. Note that $P' \in \fI_ {T_0'}$ and $P'(c \beq \ul 0 \mid
  T_0') = 1/2$.

  We will show that $P'$ satisfies (R10). Suppose that $P'(\ph \mid X) = p$ and
  $X^\pi \in \ante P'$. As in the proof of Proposition \ref{P:0<1}, we have
  $\ul 0^\pi \ne \ul 1$ and $\ul 1^\pi \ne \ul 0$. First assume $\pi = (c \;
  \ul 0)$. As in the proof of Proposition \ref{P:0<1}, we can use Proposition
  \ref{P:mapping-T_0} with $\ze = (\ul 0 < \ul 1)$ to conclude that $P'(\ph^\pi
  \mid X^\pi) = p$. A similar argument covers the cases where $\pi$ is $(c \;
  \ul 1)$, $(d \; \ul 0)$, or $(d \; \ul 1)$. If $\pi = (c \; d)$, then $
  (\om'_i)^\pi = \om'_{i'}$, so that $(\sP')^\pi \simeq \sP'$, which also gives
  $P'(\ph^\pi \mid X^\pi) = p$. This covers the case that $\pi$ is a
  transposition.

  Now suppose $\pi$ is a 3-cycle. Assume $\pi = (\ul 0 \; c \; d)$. Then $B =
  \ze^\pi_\Om = \{\om'_0\}$ and $B' = \ze^{-\pi}_\Om = \{\om'_1\}$. Since $
  (\om'_1)^\pi = \om'_0$, it follows from Proposition \ref{P:mapping-T_0} that
  $P'(\ph^\pi \mid X^\pi) = p$. A similar argument covers the cases where $\pi$
  is $(\ul 0 \; d \; c)$, $(\ul 1 \; c \; d)$, or $(\ul 1 \; d \; c)$.

  Finally, suppose $\pi$ affects every constant symbol. If $\pi = (c \; \ul 0) 
  (d \; \ul 1)$, then $(\om'_i)^\pi = \om'_i$, so that $(\sP')^\pi = \sP$, which
  gives $P'(\ph^\pi \mid X^\pi) = p$. A similar argument covers $\pi = (c \; \ul
  1) (d \; \ul 0)$. The remaining possibility is that $\pi$ is a 4-cycle. Assume
  $\pi = (\ul 0 \; c \; \ul 1 \; d)$. As above, we have $B = \{\om'_0\}$ and
  $B' = \{\om'_1\}$, so that Proposition \ref{P:mapping-T_0} gives $P'(\ph^\pi
  \mid X^\pi) = p$. A similar argument covers the case $\pi = (\ul 0 \; d \; \ul
  1 \; c)$. Altogether, this shows that $P'$ satisfies (R10), so that $\cC$ is
  consistent.

  Now let $P' \in \cC$ be arbitrary. Let $\pi = (c \; d)$. Then $T_0'$ is
  invariant under $\pi$. Hence, $P'(c \beq \ul 0 \mid T_0') = P'(d \beq \ul 0
  \mid T_0')$. But $d \beq \ul 0 \equiv_{T_0'} c \beq \ul 1$. Therefore, $P'(c
  \beq \ul 0 \mid T_0') = P'(c \beq \ul 1 \mid T_0')$, which gives $P'(c \beq
  \ul 0 \mid T_0') = 1/2$.
\end{proof}

This example can be generalized to a constant $c$ that could equal $0$, $1$, or
$2$. That is, let $L = \{c, \ul 0, \ul 1, \ul 2\}$ and let $T_0$ be generated by
\begin{align*}
  \ph_1 &: \ul 0 \nbeq \ul 1 \wedge \ul 0 \nbeq \ul 2 \wedge \ul 1 \nbeq \ul 2\\
  \ph_2 &: \ul 0 < \ul 1 \wedge \ul 0 < \ul 2 \wedge \ul 1 < \ul 2\\
  \ph_3 &: c \beq \ul 0 \vee c \beq \ul 1 \vee c \beq \ul 2
\end{align*}
As in Section \ref{S:0<1}, we cannot use the principle of indifference to
determine $P(c \beq \ul n \mid T_0)$. But we can create a definitorial expansion
$T_0'$ using
\[
  \th_d = (y \beq d \tot {
    c \beq \ul 0 \wedge y \beq \ul 1 \vee
    c \beq \ul 1 \wedge y \beq \ul 0 \vee
    c \beq \ul 2 \wedge y \beq \ul 2
  }).
\]
Informally, $d = f(c)$, where $f$ interchanges $0$ and $1$. As above, we could
then use the principle of indifference to conclude that $P'(c \beq \ul 0 \mid
T_0') = P'(c \beq \ul 1 \mid T_0')$.

It is tempting to think we can iterate this process. That is, suppose we create
a definitorial expansion $T_0''$ of $T_0'$ using
\[
  \th_e = (y \beq e \tot {
    c \beq \ul 0 \wedge y \beq \ul 2 \vee
    c \beq \ul 1 \wedge y \beq \ul 1 \vee
    c \beq \ul 2 \wedge y \beq \ul 0
  }).
\]
Informally, $e = g(c)$, where $g$ interchanges $0$ and $2$. We might now expect
that the principle of indifference gives $P''(c \beq \ul n \mid T_0'') = 1/3$.
But it does not. In fact, even our previous inference is no longer valid. That
is, we can no longer even conclude that $P''(c \beq \ul 0 \mid T_0'') = P''(c
\beq \ul 1 \mid T_0'')$. This is because $T_0''$ is no longer invariant under
$\pi = (c \; d)$. In particular, $T_0'' \vdash d \beq \ul 0 \to e \beq \ul 1$,
but $T_0'' \nvdash c \beq \ul 0 \to e \beq \ul 1$.

\begin{rmk}
  This example shows that definitorial extensions do not preserve the principle
  of indifference. It is possible for $P$ to satisfy (R10), but for its
  definitorial extension $P'$ to not satisfy it. The converse is also possible.
  In other words, Theorem \ref{T:ind-elim-thm} is another theorem that would
  fail if we included (R10) in the definition of an inductive theory.
\end{rmk}

The juxtaposition of Propositions \ref{P:0<1} and \ref{P:0<1+defn} may seem
counterintuitive. On the one hand, in $\cL$, we cannot infer the probability
that $c \beq \ul 0$. On the other hand, by passing to $\cL'$, whose only
difference is that it includes the defined constant $d$, we are suddenly able to
infer that $c \beq \ul 0$ with probability $1/2$. It seems that we must have
added some new information by passing to $\cL'$. But clearly we did not. It is
the nature of a definitorial extension that it adds no new logical information.
The explanation is not that we have added new information. Rather, we have
altered the very meaning of $c$, in the way described in Section 
\ref{S:prim-vs-def}.

It is tempting to think that a constant symbol such as $c$ stands for some
object. From that point of view, it must stand for the same object in both $\cL$
and $\cL'$. And in that case, it makes no sense to say that we altered the
meaning of $c$. But syntactically, $c$ does not stand for anything. Standing for
an object is a semantic notion. What $c$ stands for is relative to the model we
are using, and even then, it can vary from structure to structure within that
model. Syntactically speaking, $c$ is not denoting an object. Rather, it is a
primitive symbol that gains its meaning from the deductive and inductive facts
that use it.

In $\cL'$, we have changed those facts from $T_0$ to $T_0'$. The meaning of $c$
in $T_0'$ is not necessarily the same as in $T_0$. To see this more clearly,
simply rename $c$ and $d$ in $\cL'$ to $c'$ and $d'$. It is then no longer
surprising that $P'(c' \beq \ul 0 \mid T_0') = 1/2$. After all, in $T_0'$, it is
impossible to tell which of $c'$ and $d'$ is the original constant from $\cL$,
and which of them is defined in terms of that original constant. We are
indifferent about those two possibilities. Therefore, $P'(c' \beq \ul 0 \mid
T_0')$ should be the average of $P(c \beq \ul 0 \mid T_0)$ and $P(c \beq \ul 1
\mid T_0)$, which is $1/2$. By analogy with measure-theoretic probability, it is
as if we started with a $\{0, 1\}$-valued random variable $X$, defined $Y = 1 -
X$, and then let $(X', Y')$ be a random permutation of $(X, Y)$. In that case,
if $(X', Y') = (X, Y)$ and $(X', Y') = (Y, X)$ are equally likely, then $P(X' =
0) = 1/2$, regardless of the distribution of $X$.

\section{Indifference and exchangeability}\label{S:indiff-exch}

For the remainder of this chapter, we will look at how the principle of
indifference relates to real inductive theories. For simplicity, and to match
the intuition of measure-theoretic probability models, we will take the approach
presented in Theorem \ref{T:prob-model-iso-ZFC}. Namely, we will operate under
the standing assumption that $\ZFC$ is strictly satisfiable. This assumption
will be in effect for the remainder of this chapter.

In this short section, we show that, in the context of a measure-theoretic
probability model, exchangeability is a special case of the principle of
indifference. This result is given below in Theorem \ref{T:exchangeable}. In
order to prove it, we first establish Lemma \ref{L:fixed-perm}, which we be
useful several times throughout the remainder of this chapter.

\subsection{Permutations of real inductive theories}

Let $P \subseteq \cL^\IS$ be a real inductive theory in $\ZFC$. Let $\pi$ be an
$L$-permutation such that ${\bin^\pi} = {\bin}$. Define $\pi': L \to L$ by
\[
  \s^{\pi'} = \begin{cases}
    \s &\text{if $\s \in L_\ZFC$},\\
    \s^\pi &\text{if $\s \notin L_\ZFC$ and $\s^\pi \notin L_\ZFC$, and}\\
    (\s^\pi)^\pi &\text{if $\s \notin L_\ZFC$ and $\s^\pi \in L_\ZFC$.}
  \end{cases}
\]

\begin{lemma}\label{L:fixed-perm}
  Suppose $X, X^\pi \in \ante P$. Then the function $\pi'$ is an $L$-permutation
  that fixes $L_\ZFC$. Moreover, if $X^{\pi'} \in \ante P$ and $\pi'$
  satisfies (R10), then $\pi$ satisfies (R10).
\end{lemma}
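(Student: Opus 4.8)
The plan is to exploit the rigidity of the symbols of $L_\ZFC$: each $\s \in L_\ZFC$ is explicitly definable from $\bin$ alone, so there is a formula $\ga_\s(y)$ in the signature $\{\bin\}$ with $\ZFC \vdash \forall y(\ga_\s(y) \tot y \beq \s)$, and since $\pi$ fixes $\bin$ we have $\ga_\s^\pi = \ga_\s$. This yields a rigidity lemma I will isolate and reuse: if $Y, Z \in \ante P$ with $Z \equiv Y^\si$ for a permutation $\si$ fixing $\bin$, then $\ZFC \subseteq T(Z)$ and $\ZFC^\si \subseteq T(Y)^\si = T(Y^\si) = T(Z)$, and comparing the $\{\bin\}$-characterization $\ga_\s$ coming from each pins $\s$ and $\s^\si$ to the same object, giving $T(Z) \vdash \s \beq \s^\si$ for all $\s \in L_\ZFC$. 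Applied to $(Y,Z) = (X, X^\pi)$ this gives $T(X^\pi) \vdash \s \beq \s^\pi$ for $\s \in L_\ZFC$, and symmetrically $T(X) \vdash \s \beq \s^{\pi^{-1}}$. Throughout I use that distinct symbols of $L_\ZFC$ are provably distinct in $\ZFC$ (the ``omitted duplicates'' convention) together with the fact that antecedents of an inductive theory are consistent.

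For the first assertion, rigidity and consistency force a dichotomy on $L_\ZFC$: if $\s, \s^\pi \in L_\ZFC$ then $T(X^\pi) \vdash \s \beq \s^\pi$, so provable distinctness forces $\s^\pi = \s$; hence each $\s \in L_\ZFC$ is either fixed by $\pi$ or sent into $M := L \setminus L_\ZFC$, and likewise for $\pi^{-1}$. Since $\bin$ is the only relation symbol of $L_\ZFC$ and $L_\ZFC$ has no function symbols of positive arity, $\pi$ and $\pi'$ can differ only on constant symbols. Using injectivity of $\pi$, for $\t \in M$ with $\t^\pi \in L_\ZFC$ the symbol $\t^\pi$ cannot be a fixed point of $\pi$ (its preimage is $\t \neq \t^\pi$), so by the dichotomy $\t^{\pi^2} \in M$; this is exactly what makes the third clause of the definition of $\pi'$ land back in $M$. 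A short case analysis then shows $\pi'$ is the identity on $L_\ZFC$ and restricts to a bijection of $M$ (injectivity by cancelling $\pi$ or $\pi^2$, surjectivity by pulling a target back through one or two applications of $\pi^{-1}$), so $\pi'$ is an $L$-permutation fixing $L_\ZFC$.

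For the second assertion, it suffices to prove $X^\pi \equiv X^{\pi'}$ and $\ph^\pi \equiv_{X^\pi} \ph^{\pi'}$: then (R10) for $\pi'$ gives $(X^{\pi'}, \ph^{\pi'}, P(\ph \mid X)) \in P$, and the rule of logical equivalence (R1) transfers this to $(X^\pi, \ph^\pi, P(\ph \mid X)) \in P$, i.e.\ $P(\ph^\pi \mid X^\pi) = P(\ph \mid X)$. The engine is the symbol-wise equality $\s^\pi \beq \s^{\pi'}$, which I establish in both $T(X^\pi)$ and $T(X^{\pi'})$. In $T(X^\pi)$ it follows from forward rigidity by cases on the definition of $\pi'$ (for $\s \in L_\ZFC$ use $T(X^\pi)\vdash \s \beq \s^\pi$; for $\s \in M$ with $\s^\pi \in L_\ZFC$ apply rigidity to the symbol $\s^\pi$). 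In $T(X^{\pi'})$ it follows by applying the rigidity lemma to the pair $(X^\pi, X^{\pi'})$, which are both antecedents with $X^{\pi'} \equiv (X^\pi)^\beta$ for $\beta = \pi^{-1}\pi'$, a permutation fixing $\bin$; this gives $T(X^{\pi'}) \vdash \tau \beq (\tau^{\pi^{-1}})^{\pi'}$ for $\tau \in L_\ZFC$, from which the two nontrivial cases, $\s \in L_\ZFC$ with $\s^\pi \in M$ and $\s \in M$ with $\s^\pi \in L_\ZFC$, each collapse to an instance of the third clause of $\pi'$. A routine induction on terms and formulas (substituting provably-equal constants via the equality rules and the congruence property of $\equiv$) promotes these to $T(X^\pi) \vdash \chi^\pi \tot \chi^{\pi'}$ and $T(X^{\pi'}) \vdash \chi^\pi \tot \chi^{\pi'}$ for every $\chi$; the first gives $T(X^{\pi'}) \subseteq T(X^\pi)$, the second the reverse, hence $X^\pi \equiv X^{\pi'}$, and the same equivalences specialize to $\ph^\pi \equiv_{X^\pi} \ph^{\pi'}$.

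The main obstacle is the bookkeeping of how $\pi$ interacts with the partition $L = L_\ZFC \sqcup M$: both the bijectivity of $\pi'$ and the reverse inclusion $T(X^\pi) \subseteq T(X^{\pi'})$ hinge on the observation that any $L_\ZFC$-symbol actually moved by $\pi$ is moved entirely into $M$, which is precisely where the consistency of the antecedents and the provable distinctness of $L_\ZFC$-symbols are indispensable. Organizing the argument around the single rigidity lemma ``$Y, Y^\si \in \ante P$ with $\si$ fixing $\bin$ implies $T(Y^\si) \vdash \s \beq \s^\si$ for all $\s \in L_\ZFC$,'' applied to the pairs $(X, X^\pi)$ and $(X^\pi, X^{\pi'})$, is what keeps the case analysis manageable.
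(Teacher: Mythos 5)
Your proof is correct, and it takes a genuinely different route from the paper's at both load-bearing points. The paper obtains symbol rigidity semantically: it invokes Lemma \ref{L:mapping-T_0} (almost-sure inclusions in an arbitrary model $\sP \vDash P$), and so only ever has the probability-one statements $P(\s \beq \s^{-\pi} \mid X) = 1$ and $P(\s \beq \s^{\pi} \mid X^{\pi}) = 1$; consequently, in the transfer step it cannot assert that $X^{\pi}$ and $X^{\pi'}$ are logically equivalent, and must instead move the probability-one equivalences $P(\th^{\pi'} \tot \th^{\pi} \mid X^{\pi}) = 1$ across antecedents using (R10) for $\pi'$ and Proposition \ref{P:log-equiv-gen}, and then glue $X^{\pi}$ to $X^{\pi'}$ with two applications of the rule of deductive extension (R9), conditioning on $X^{\pi'} \cup X^{\pi}$. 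You obtain rigidity syntactically: every antecedent proves the root, so $\ZFC \subseteq T(X) \cap T(X^{\pi})$; $T(X)^{\pi} = T(X^{\pi})$ by Theorem \ref{T:pf-invar}; and comparing the $\{\bin\}$-reduced definitions of the $L_\ZFC$ constants inside $T(X^{\pi})$ yields the provability statement $T(X^{\pi}) \vdash \s \beq \s^{\pi}$, which is strictly stronger than the paper's probability-one version. That extra strength is exactly what lets you conclude $X^{\pi} \equiv X^{\pi'}$ and $\ph^{\pi} \equiv_{X^{\pi}} \ph^{\pi'}$ outright and finish with a single application of (R1), avoiding (R9) and Proposition \ref{P:log-equiv-gen} entirely; I checked your application of the rigidity lemma to the pair $(X^{\pi}, X^{\pi'})$ via $\beta = \pi^{-1}\pi'$ and the reduction of its two nontrivial cases to the third clause of $\pi'$, and both are sound, as is your bijectivity argument for $\pi'$ (which is more explicit than the paper's). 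The trade-off: the paper's route reuses Lemma \ref{L:mapping-T_0}, which it needs anyway for the later examples (e.g., Lemmas \ref{L:stick-2} and \ref{L:stick-3}) and which applies to arbitrary $\ze \in T_0$ rather than only to sentences admitting $\{\bin\}$-reductions; your route is more elementary (no models, no (R9)) and delivers the cleaner structural fact---logical equivalence of the antecedents $X^{\pi}$ and $X^{\pi'}$---that the paper's weaker rigidity cannot. Both proofs leave the same term/formula induction implicit, which is acceptable at this level of detail.
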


\begin{proof}
  Assume $X, X^\pi \in \ante P$. By construction, the function $\pi'$ fixes
  $L_\ZFC$ and preserves the type and arity of extralogical symbols. The fact
  that $\pi'$ is a bijection is a consequence of the following:
  \begin{equation}\label{fixed-perm}
    \text{
      if $\s \in L_\ZFC$ and $\s^\pi \ne \s$,
      then $\s^{-\pi} \notin L_\ZFC$ and $\s^\pi \notin L_\ZFC$.
    }
  \end{equation}
  To see this, let $\s \in L_\ZFC$ with $\s^\pi \ne \s$. Since ${\bin^\pi} =
  {\bin}$, we have $\s \ne {\bin}$. Hence, $\s$ is an explicitly defined
  constant symbol. Let $\de(y)$ be its defining formula, and let $\de^\rd(y)$ be
  its reduction to $\cL\{{\bin}\}$, so that the only extralogical symbol in
  $\de^\rd(y)$ is $\bin$. Let $\ze = \forall y (y \beq \s \tot \de^\rd(y))$, so
  that $\ZFC \vdash \ze$. By Lemma \ref{L:mapping-T_0}, if $\sP \vDash P$, then
  $\psi_\Om \subseteq \ze^{-\pi}_\Om$ a.s., which implies $\sP \vDash (X,
  \ze^{-\pi}, 1)$. Hence, $P(\ze^{-\pi} \mid X) = 1$. It follows that $P (\ze
  \wedge \ze^{-\pi} \mid X) = 1$. But $\ze^{-\pi} = \forall y (y \beq \s^ {-\pi}
  \tot \de^\rd(y))$, so that $\ZFC \vdash \ze \wedge \ze^{-\pi} \tot \s \beq
  \s^{-\pi}$. Therefore $P(\s \beq \s^{-\pi} \mid X) = 1$. A similar argument
  shows that $P(\ze^\pi \mid X^\pi) = 1$ and $P(\s \beq \s^\pi \mid X^\pi) = 1$.
  Finally, note that if $\s, \s' \in L_\ZFC \setminus \{{\bin}\}$, then $\ZFC
  \vdash \s \nbeq \s'$. Hence, $\s^{-\pi} \notin L_\ZFC$ and $\s^\pi \notin
  L_\ZFC$.

  Now assume $X^{\pi'} \in \ante P$ and $\pi'$ satisfies (R10). Using the facts
  that ${\bin^\pi} = {\bin}$, every $\s \in L_\ZFC \setminus \{{\bin}\}$ is a
  constant symbol, and $P(\s \beq \s^\pi \mid X^\pi) = 1$ for all $\s \in
  L_\ZFC \setminus \{{\bin}\}$, it follows by term induction and formula
  induction that
  \begin{equation}\label{fixed-perm-1}
    P(\th^{\pi'} \tot \th^\pi \mid X^\pi) = 1 \text{ for all $\th \in \cL^0$.}
  \end{equation}
  A similar argument shows that $P(\th^{-\pi'} \tot \th^{-\pi} \mid X) = 1$ for
  all $\th \in \cL^0$. Applying this to $\th^\pi$ gives $P((\th^\pi)^{-\pi'}
  \tot \th \mid X) = 1$. But $\pi'$ satisfies (R10), so this gives
  \begin{equation}\label{fixed-perm-2}
    P(\th^\pi \tot \th^{\pi'} \mid X^{\pi'}) = 1 \text{
      for all $\th \in \cL^0$.
    }
  \end{equation}
  To show that $\pi$ satisfies (R10), suppose that $P(\ph \mid X) = p$. Since
  $\pi'$ satisfies (R10), we have $P(\ph^{\pi'} \mid X^{\pi'}) = p$. Also,
  \eqref{fixed-perm-2} gives $P(\ph^\pi \tot \ph^{\pi'} \mid X^{\pi'}) = 1$.
  Hence, Proposition \ref{P:log-equiv-gen} implies $P(\ph^\pi \mid X^{\pi'}) =
  p$.

  Let $\th \in X^\pi$. Then $(\th^{-\pi})^{\pi'} \in X^{\pi'}$, so that $P((\th^
  {-\pi})^{\pi'} \mid X^{\pi'}) = 1$. As it was for $\ph$ above, this gives $P(
  (\th^{-\pi})^\pi \mid X^{\pi'}) = 1$. But $(\th^{-\pi})^\pi = \th$. Hence, $P
  (\th \mid X^{\pi'}) = 1$ for all $\th \in X^\pi$. By the rule of deductive
  extension, $P(\,\cdot \mid X^{\pi'}, X^\pi) = P(\,\cdot \mid X^{\pi'})$. In
  particular, $P(\ph^\pi \mid X^{\pi'}, X^\pi) = p$. On the other hand, the
  analogous argument using \eqref{fixed-perm-1} shows that $P(\th \mid X^\pi) =
  1$ for all $\th \in X^{\pi'}$. The rule of deductive extension therefore also
  shows that $P(\,\cdot \mid X^{\pi'}, X^\pi) = P(\,\cdot \mid X^\pi)$. Hence,
  $P(\ph^\pi \mid X^\pi) = p$.
\end{proof}

\subsection{Exchangeability}

Let $\ang{X_i \mid i \in I}$ be a collection of real-valued random variables
defined on a probability space, $(S, \Ga, \nu)$. We say that $\ang{X_i \mid i
\in I}$ are \emph{exchangeable} if the distribution of $(X_{i(1)}, \ldots,
X_{i(n)})$ is unchanged by a finite permutation of $I$.
  \index{exchangeable}%
More specifically, let $\si: I \to I$ be a bijection with $\si(i) = i$ for all
but finitely many $i$. Then
\begin{equation}\label{exchangeable}
  \ts{
    \nu \bigcap_{k = 1}^n \{X_{i(k)} \in V_k\} =
    \nu \bigcap_{k = 1}^n \{X_{\si(i(k))} \in V_k\},
  }
\end{equation}
for all choices of $n \in \bN$, $i(k) \in I$, and $V_k \in \cB(\bR)$.

Let $\ang{X_i \mid i \in I}$ be real-valued random variables on $(S, \Ga, \nu)$.
Without loss of generality, we may assume $S = \bR^I$, $\Ga = \bigotimes_{i \in
I} \cB(\bR)$, and $X_i(x) = x_i$. Recall our standing assumption that $\ZFC$ is
strictly satisfiable. Let $\sP = (\Om, \Si, \bbP)$ be the model constructed in
the proof of Theorem \ref{T:prob-model-iso-ZFC}, and let $P = \bTh \sP \dhl_{ 
[\ZFC, \Th \sP]}$.

\begin{thm}\label{T:exchangeable}
  With notation as above, the inductive theory $P$ satisfies the principle of
  indifference if and only if $\ang{X_i \mid i \in I}$ are exchangeable.
\end{thm}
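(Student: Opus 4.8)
The plan is to prove both implications by transporting information between the inductive theory $P$ and the probability space $(S,\Ga,\nu)=(\bR^I,\bigotimes_{i\in I}\cB(\bR),\nu)$ through the model $\sP$ and the correspondence of Theorem \ref{T:prob-model-iso-ZFC}. The computational core I would isolate first is this: for a signature permutation $\rho$ that fixes $L_\ZFC$ pointwise and permutes $C=\{\ul X_i\mid i\in I\}$ by a permutation $\tau$ of $I$, via $\ul X_i^\rho=\ul X_{\tau(i)}$, the reflected model $\sP^\rho$ equals $\sP$ exactly, provided the coordinate map $\theta_\tau\colon\bR^I\to\bR^I$, $\theta_\tau(x)=\ang{x_{\tau^{-1}(i)}\mid i\in I}$, preserves $\nu$. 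Indeed, since every $\om^x$ in the construction of Theorem \ref{T:prob-model-iso-ZFC} expands a single fixed $\om_0\tDash\ZFC$ and differs from $\om^y$ only in the values $\ul X_i$, one checks $(\om^x)^\rho=\om^{\theta_\tau(x)}$, whence $h_\rho=h\circ\theta_\tau\circ h^{-1}$; because $\theta_\tau$ merely permutes coordinates it preserves $\Ga$, giving $\Om^\rho=\Om$, $\Si^\rho=\Si$, and $\bbP\circ h_\rho^{-1}=\nu\circ\theta_\tau^{-1}\circ h^{-1}$. Thus $\sP^\rho=\sP$ iff $\nu\circ\theta_\tau^{-1}=\nu$.

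For the forward direction (the principle of indifference implies exchangeability), I would take a finite permutation $\si$ of $I$ and the associated $L_\ZFC$-fixing permutation $\pi$ with $\ul X_j^\pi=\ul X_{\si(j)}$. Setting $\ph=\bigwedge_{k=1}^n \ul X_{i(k)}\bin\ul{V_k}$, we have $\ph^\pi=\bigwedge_{k=1}^n \ul X_{\si(i(k))}\bin\ul{V_k}$, and \eqref{prob-model-iso-ZFC} identifies $P(\ph\mid\ZFC)$ and $P(\ph^\pi\mid\ZFC)$ with the two sides of \eqref{exchangeable}. Since $\ZFC^\pi=\ZFC\in\ante P$ and $P(\ph\mid\ZFC)$ exists, (R10) yields $P(\ph^\pi\mid\ZFC)=P(\ph\mid\ZFC)$, which is precisely exchangeability.

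For the converse, the first step is the measure-theoretic observation that exchangeability, though stated only for finite permutations, already forces $\nu\circ\theta_\tau^{-1}=\nu$ for every permutation $\tau$ of $I$, finite or not: the cylinder sets form a $\pi$-system generating $\Ga$, and on a cylinder determined by finitely many distinct coordinates the equality $\nu(\theta_\tau^{-1}A)=\nu(A)$ reduces to \eqref{exchangeable} for a finite permutation matching the two relevant finite index-tuples, after which the $\pi$-$\la$ theorem propagates it to all of $\Ga$. By the first paragraph this gives $\sP^\rho=\sP$ for every $L_\ZFC$-fixing $\rho$. Together with Theorem \ref{T:induc-invar}, it follows that each such $\rho$ satisfies (R10) and, using the remark that $T(X^\rho)=T(X)^\rho$ together with $(\Th\sP)^\rho=\Th(\sP^\rho)=\Th\sP$, that $\rho$ carries $\ante P$ into $\ante P$. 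To handle an arbitrary signature permutation $\pi$ with $P(\ph\mid X)=p$ and $X^\pi\in\ante P$, I would invoke Lemma \ref{L:fixed-perm}: the modified permutation $\pi'$ fixes $L_\ZFC$, so by the previous sentence $\pi'$ satisfies (R10) and $X^{\pi'}\in\ante P$, and the lemma then transfers (R10) from $\pi'$ to $\pi$. This establishes the principle of indifference for $P$.

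The main obstacle is the converse, and specifically two intertwined points. First, one must show that finite exchangeability already yields full permutation-invariance of $\nu$; this is essential because a signature permutation of $L$ can move infinitely many of the $\ul X_i$, so the finite definition of \eqref{exchangeable} does not directly apply, and the cylinder/$\pi$-$\la$ argument is what bridges the gap. Second is the bookkeeping needed to reduce an arbitrary signature permutation — which may interchange a symbol of $C$ with a symbol of $L_\ZFC$, since both are constant symbols — to an $L_\ZFC$-fixing one via Lemma \ref{L:fixed-perm}; here the verification that $X^{\pi'}\in\ante P$ hinges precisely on the identity $\sP^{\pi'}=\sP$ supplied by the invariance of $\nu$.
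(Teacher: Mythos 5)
Your proposal is correct and follows essentially the same route as the paper's proof: the forward direction applies (R10) to $\bigwedge_{k=1}^n \ul X_{i(k)} \bin \ul{V_k}$ and reads off \eqref{exchangeable} via \eqref{prob-model-iso-ZFC}, while the converse reduces to $L_\ZFC$-fixing permutations by Lemma \ref{L:fixed-perm}, verifies $\nu$-invariance on cylinder sets to get $\sP^\pi = \sP$, and concludes with Theorem \ref{T:induc-invar}. If anything, you are slightly more explicit than the paper on two points it glosses over --- the $\pi$-$\la$ bridge from finite exchangeability to invariance under arbitrary coordinate permutations, and the verification that $X^{\pi'} \in \ante P$ (i.e.\ that $X^{\pi'} \cao [\ZFC, \Th\sP]$) needed to invoke the lemma.
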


\begin{proof}
  Assume $P$ satisfies the principle of indifference. Let $\si: I \to I$ be a
  bijection with $\si(i) = i$ for all but finitely many $i$. Let $\pi$ be the
  signature permutation that fixes everything in $L_\ZFC$, and maps $\ul X_i$ to
  $\ul X_{\si(i)}$. Then $\psi^\pi = \psi$ for all $\psi \in \cL_\ZFC$. In
  particular, $\ZFC^\pi = \ZFC$. Let $\ph = \bigwedge_{k = 1}^n \ul X_{i(k)}
  \bin \ul{V_k}$. Then $\ph^\pi = \bigwedge_{k = 1}^n \ul X_{\si(i (k))} \bin
  \ul{V_k}$. By the principle of indifference,
  \[
    \ts{
      P(\bigwedge_{k = 1}^n \ul X_{i(k)} \bin \ul{V_k} \mid \ZFC)
      = P(\bigwedge_{k = 1}^n \ul X_{\si(i(k))} \bin \ul{V_k} \mid \ZFC).
    }
  \]
  Therefore, by \eqref{prob-model-iso-ZFC}, we have \eqref{exchangeable}.

  Now assume $X$ is exchangeable. Let $\pi$ be an $L$-permutation and suppose
  $P(\ph \mid Y) = p$. Since $\bin$ is the only binary operation symbol in $L$,
  we have ${\bin^\pi} = {\bin}$. By Lemma \ref{L:fixed-perm}, it suffices to
  assume $\pi$ fixes $L_\ZFC$, and to show that both $Y^\pi \in \ante P$ and
  $P(\ph^\pi \mid Y^\pi) = p$.

  Since $\pi$ fixes $L_\ZFC$, it only affects $C = \{\ul X_i \mid i \in I\}$.
  Let $\si: I \to I$ be the bijection defined by $\ul X_i^\pi = \ul X_{\si(i)}$
  and define the bijection $g: S \to S$ by $(gx)_{\si(i)} = x_i$. Note that both
  $g$ and $g^{-1}$ are measurable. We claim that $\nu = \nu \circ g^{-1}$. To
  verify this, it suffices to check that $\opnu B = \opnu g^{-1} B$, when $B$ is
  a cylinder set of the form
  \[
    \ts{B = \bigcap_{k = 1}^n \{x \in S \mid x_{\si(i(k))} \in V_k\}.}
  \]
  In this case, we have
  \begin{align*}
    g^{-1} B &= \ts{
      \bigcap_{k = 1}^n \{x \in S \mid (g x)_{\si(i(k))} \in V_k\}
    }\\
    &= \ts{\bigcap_{k = 1}^n \{x \in S \mid x_{i(k)} \in V_k\}},
  \end{align*}
  so that $\opnu B = \opnu g^{-1} B$ follows from \eqref{exchangeable}. Hence,
  $\nu = \nu \circ g^{-1}$, so that $g$ is a pointwise isomorphism from $(S,
  \Ga, \nu)$ to itself.

  Let $h: S \to \Om$ be the function in the proof of Theorem
  \ref{T:prob-model-iso-ZFC} that maps $x \in S$ to $\om^x \in \Om$. Let
  $\sP^\pi = (\Om^\pi, \Si^\pi, \bbQ)$ and let $h_\pi: \Om \to \Om^\pi$ be the
  function that maps $\om$ to $\om^\pi$. Then $\sP^\pi$ is the measure space
  image of $ (S, \Ga, \nu)$ under $h_\pi \circ h$. But $h_\pi \circ h = h \circ
  g$ and $g$ is a pointwise isomorphism, so $\sP^\pi$ is the measure space image
  of $(S, \Ga, \nu)$ under $h$. By the definition of $\sP$, this means $\sP^\pi
  = \sP$.

  Now, since $P(\ph \mid Y) = p$ exists, we have $\sP \vDash (Y, \ph, p)$. By
  Theorem \ref{T:induc-invar} and $\sP = \sP^\pi$, it follows that $\sP \vDash
  (Y^\pi, \ph^\pi, p)$. Therefore, $Y^\pi \in \ante P$ $P(\ph^\pi \mid Y^\pi) =
  p$.
\end{proof}

\section{Examples on an interval}\label{S:indiff-interval}

In this section, we present examples of the principle of indifference that
involve an interval on the real line.

\subsection{The interval \texorpdfstring{$[0, 1]$}{[0, 1]}}\label{S:[0,1]}

In our first example, we have a real number $c$, about which we know only that
$c \in [0, 1]$. We then ask what the principle of indifference has to say about
the distribution of $c$. At first glance, we might expect the principle to
assign $c$ a uniform distribution, based on the fact that we are somehow
``equally ignorant'' about where $c$ lies in the interval $[0, 1]$. A little
further thought, however, quickly reveals that this cannot be the case. The
principle of indifference requires an informational symmetry, encoded in the
permutation $\pi$. In the coin flip of Section \ref{S:one-coin}, for example, we
obtained the probability $1/2$ by interchanging the symbols for heads and tails.
We could do this because our background information, $T_0$, was symmetric with
respect to this interchange.

In this case, however, our background information, $T_0$, will contain $\ZFC$,
and the individual numbers in $[0, 1]$ are most certainly not interchangeable
with respect to $\ZFC$. For instance, in $\ZFC$, we know that $\ul 0$ is the
additive identity and $\ul 1$ is the multiplicative identity. That sentence is
no longer true if we interchange $\ul 0$ and $\ul 1$. To say that $c \beq \ul 0$
is a qualitatively different assertion than to say that $c \beq \ul 1$.

In this way, the present example is less like the coin flip of Section
\ref{S:one-coin} and more like the random number (either $\ul 0$ or $\ul 1$) in
Section \ref{S:0<1}. In that example, the principle of indifference did not
narrow down the probabilities at all. Every possible value for the probability
of $\ul 0$ was consistent with the principle of indifference. Similarly, here,
every possible distribution on $c$ will be consistent with it.

To state this result, let $L_\ZFC$ be the logical signature of $\ZFC$, given by
\eqref{ZFC-signature}. Let $c$ be a constant symbol not in $L_\ZFC$, let $L =
L_\ZFC \cup \{c\}$, and define $T_0 = \ZFC + c \bin \ul{[0, 1]}$. Let $\cC$ be
the inductive condition consisting of all inductive theories $P \subseteq
\cL^\IS$ with root $T_0$ such that
\begin{enumerate}[(i)]
  \item $c$ is Borel given $T_0$, and
  \item $P$ satisfies the principle of indifference.
\end{enumerate}
Let $\nu$ be an arbitrary Borel probability measure on $\bR$. Fix an
$L_\ZFC$-structure $\om_0$ such that $\om_0 \tDash \ZFC$. For each $r
\in \bR$, define $\om = \om^r$ to be the $L$-expansion of $\om_0$ given by
$c^\om = \ul r^{\om_0}$. Let $\Om = \{\om^r \mid r \in \bR\}$ and let $h: \bR
\to \Om$ denote the map $r \mapsto \om^r$. Let $\sP_\nu = (\Om, \Si, \bbP)$ be
the measure space image of $(\bR, \cB(\bR), \nu)$ under the function $h$.

If $\opnu [0, 1] = 1$, then $\sP_\nu \vDash T_0$, so may define the complete
inductive theory $P_\nu = \bTh \sP_\nu \dhl_{[T_0, \Th \sP_\nu]}$. By Theorem
\ref{T:prob-model-iso-ZFC}, we have $P(c \bin \ul V \mid T_0) = \opnu V$ for
all $V \in \cB(\bR)$. In particular, $c$ is Borel given $T_0$ under $P_\nu$.

\begin{prop}
  For any such $\nu$, we have $P_\nu \in \cC$.
\end{prop}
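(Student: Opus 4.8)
The plan is to verify the two defining conditions of $\cC$ directly, leaning on the machinery already set up for real inductive theories. By the discussion immediately preceding the statement, $P_\nu = \bTh \sP_\nu \dhl_{[T_0, \Th \sP_\nu]}$ is a complete inductive theory with root $T_0$, and Theorem \ref{T:prob-model-iso-ZFC} gives $P_\nu(c \bin \ul V \mid T_0) = \opnu V$ for every $V \in \cB(\bR)$. So the only things left to check are condition (i), that $c$ is Borel given $T_0$, and condition (ii), that $P_\nu$ satisfies the principle of indifference, (R10).

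Condition (i) is routine. Since the hypothesis on $\nu$ is $\opnu[0,1] = 1$, we have $\opnu \bR = 1$, and Theorem \ref{T:prob-model-iso-ZFC} shows that $P_\nu(c \bin \ul V \mid T_0) = \opnu V$ exists for every $V \in \cB(\bR)$. Taking $V = \bR$ (and using the rule of logical equivalence together with the fact that $\ZFC$ proves the Borel-set symbol for $\bR$ equal to the constant $\ul \bR$) gives $P_\nu(c \bin \ul \bR \mid T_0) = 1$, so $c$ is real given $T_0$; the existence of $P_\nu(c \bin \ul V \mid T_0)$ for all $V \in \cB(\bR)$ then says $c$ is Borel given $T_0$.

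The substantive step is (ii), and here the decisive feature is that $c$ is the \emph{only} extralogical symbol outside $L_\ZFC$. Let $\pi$ be any signature permutation, and suppose $P_\nu(\ph \mid X) = p$ with $X^\pi \in \ante P_\nu$; I must show $P_\nu(\ph^\pi \mid X^\pi) = p$. Because $\bin$ is the unique relation symbol of $L$ (every other symbol of $L_\ZFC$, as well as $c$, is a constant symbol), any signature permutation preserves type and arity and hence fixes $\bin$, so ${\bin^\pi} = {\bin}$. This is exactly what is needed to invoke Lemma \ref{L:fixed-perm}. The associated permutation $\pi'$ constructed just before that lemma fixes $L_\ZFC$; but $L = L_\ZFC \cup \{c\}$ and $\pi'$ is a bijection of $L$, so it must also fix $c$, forcing $\pi'$ to be the identity. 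The identity trivially satisfies (R10), and $X^{\pi'} = X \in \ante P_\nu$, so Lemma \ref{L:fixed-perm} concludes that $\pi$ itself satisfies (R10). Since $\pi$ was arbitrary, $P_\nu$ satisfies the principle of indifference, and therefore $P_\nu \in \cC$.

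I do not expect a genuine obstacle: the entire argument turns on the observation that, once Lemma \ref{L:fixed-perm} reduces matters to a permutation fixing $L_\ZFC$, there is no room left for nontrivial action because the single symbol $c$ is pinned in place. The only point that genuinely requires care is confirming the hypothesis ${\bin^\pi} = {\bin}$ of the lemma, which follows from $\bin$ being the sole relation symbol in $L$; everything else is bookkeeping.
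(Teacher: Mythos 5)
Your proof is correct and follows essentially the same route as the paper: the paper likewise reduces to a permutation fixing $L_\ZFC$ (via the argument of Theorem \ref{T:exchangeable}, i.e.\ Lemma \ref{L:fixed-perm}, whose hypothesis ${\bin^\pi} = {\bin}$ holds because $\bin$ is the only relation symbol in $L$), and then notes that since $c$ is the sole symbol in $L \setminus L_\ZFC$, the reduced permutation is the identity, making (R10) trivial. Your write-up is merely more explicit about checking the lemma's hypotheses and about condition (i), which the paper had already established in the paragraph preceding the proposition.
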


\begin{proof}
  Suppose $P_\nu(\ph \mid X)$ exists and let $\pi$ be an $L$-permutation. As in
  the proof of Theorem \ref{T:exchangeable}, we may assume the permutation $\pi$
  fixes all of $L_\ZFC$. Since there is only one symbol in $L \setminus
  L_\ZFC$, the permutation $\pi$ is the identity. Therefore, it is trivially
  the case that $P_\nu(\ph^\pi \mid X^\pi) = P_\nu(\ph \mid X)$.
\end{proof}

\subsection{A point on a rod}\label{S:pt-on-rod}

\subsubsection{Introduction}

In this example, we consider a rigid rod of some unspecified length, and we let
$c$ be a point that lies somewhere along the length of the rod. We then ask
what, if anything, the principle of indifference has to say about the
distribution of the point $c$.

A common approach to a problem like this would be to replace the rod with the
interval $[0, 1]$. If we do that, then we have our answer, according to the
preceding example: the principle of indifference says nothing. But we should ask
ourselves why we feel justified in replacing the rod with $[0, 1]$. When we do
so, we are introducing qualitative differences between points on the rod that
were not there originally. In other words, we are making assumptions that are
not indicated by the statement of the problem.

Instead of \emph{replacing} the rod with $[0, 1]$, we should \emph{represent} it
by $[0, 1]$. For instance, we could replace the rod with a smooth manifold with
boundary, and give it a Riemannian metric that indicates it has no curvature. If
we did that, then we would see informational symmetries that are not there when
we simply replace the rod with $[0, 1]$.

Taking this approach would require us to formulate, in $\ZFC$, a number of new
and complicated definitions. While this is certainly doable, we will avoid these
complications by simply replacing our rod with a subset $M$ of the real line,
and assuming that $M$ can be parameterized by some affine linear function on
$[0, 1]$. We then let $c$ be an element of $M$.

To talk about the distribution of $c$, we need to have names for subsets of $M$.
For this, we will let $c_0$ and $c_1$ be the endpoints of $M$, arbitrarily
labeled, and name the Borel subsets of $M$ relative to $c_0$. That is, if $B$ is
a Borel subset of $[0, 1]$, then $B_*$ will be the image of $B$ when $[0, 1]$ is
mapped to $M$ in a way that sends $0$ to $c_0$ and $1$ to $c_1$.

\subsubsection{Notation in \texorpdfstring{$\ZFC$}{ZFC}}

To make this precise, we first establish some new shorthand in $\cL_\ZFC$, the
language of $\ZFC$. Let $\de(u, v, y) \in \cL_\ZFC$ be given by
\begin{multline*}
  \de(u, v, y) = (u \nbin \ul \bR \vee v \nbin \ul \bR) \wedge y \beq \ul \emp\\
  \vee {
    u \bin \ul \bR
    \wedge v \bin \ul \bR
    \wedge y \bin \ul \bR^{\ul \bR}
    \wedge (\forall x \bin \ul \bR)(y(x) \beq u \bdot x + v)
  }.
\end{multline*}
Then $\ZFC \vdash \forall u v \exists! y \, \de(u, v, y)$. Hence, we could
explicitly define the function symbol $F$ by $y \beq Fuv \tot \de (u, v, y)$,
and then let $f_{uv}$ be shorthand for the term $Fuv$. We do not, in fact, add
the symbol $F$ to our extralogical signature, but instead regard both $F$ and
$f_{uv}$ as shorthand. We also adopt the shorthand, $\dom(f)$ and $f^\img (z)$,
given by
\begin{align*}
  \dom(f) &= \{x \bin \ul \bR \mid (\exists y \bin \ul \bR) (x, y) \bin f\},\\
  f^\img(z) &= \{f(x) \mid x \bin z \cap \dom(f)\}.
\end{align*}

\subsubsection{Extralogical symbols and assumptions}

To talk about our rod, we add new extralogical symbols to $L_\ZFC$, the
signature of $\ZFC$. Let $\cE = \{B \in \cB([0, 1]) \mid \emp \subset B \subset
[0, 1]\}$ and let
\[
  C = \{M, c, c_0, c_1\} \cup \{B_* \mid B \in \cE\}
\]
be a set of distinct constant symbols not in $L_\ZFC$. Let $L = L_\ZFC C$.

In the language $\cL$, we want to build $T_0$, the assumptions we will be making
in our setup of the problem. Using $\ph_M(u, v) = (u \nbeq \ul 0 \wedge M \beq
f_{uv}^\img(\ul{[0, 1]}))$, define the following sentences in $\cL$:
\begin{align*}
  \ph_1 &: (\exists uv \bin \ul \bR) \, \ph_M(u, v)\\
  \ph_2 &: c \bin M\\
  \ph_3 &: (\exists uv \bin \ul \bR) (
    \ph_M(u, v) \wedge c_0 \beq v \wedge c_1 \beq u + v
  )\\
\intertext{Also, for any $B \in \cE$, define}
  \ph_B(x) &: (\exists uv \bin \ul \bR) (
    \ph_M(u, v) \wedge c_0 \beq v \wedge x \beq f_{uv}^\img(\ul B)
  )
\end{align*}
Then let $T_0 = \ZFC + \{\ph_1, \ph_2, \ph_3\} \cup \{\ph_B(B_*) \mid B \in
\cE\}$.

The sentence $\ph_1$ says that $M$ is a ``rod.'' That is, $M$ can be
parameterized by the interval $[0, 1]$. Another way to think of this is that $M$
is a closed interval, but its location, length, and orientation are left
unspecified.

The sentence $\ph_2$ says that $c$ is an element of $M$. This is the only
information about $c$ contained in $T_0$.

The sentence $\ph_3$ says that $c_0$ and $c_1$ are the two distinct endpoints of
$M$. But it does not specify which is the left endpoint and which is the right.
Thinking of $M$ as a rod, it does not even make sense to ask which end is left
and which is right, for the orientation of the rod is arbitrary. It therefore
makes sense that we would not include such information in $T_0$.

The sentence $\ph_B(B_*)$ defines the symbol $B_*$ so that $B_*$ is the
representative of $B$ in $M$, relative to $c_0$. For instance, if $B = [0,
1/2]$, then $B_*$ is the subset of $M$ that extends from $c_0$ to the midpoint
of $M$.

\subsubsection{Inductive hypotheses and conclusion}

Now let $\cC$ be the inductive condition consisting of all inductive theories $P
\subseteq \cL^\IS$ with root $T_0$ such that
\begin{enumerate}[(i)]
  \item $P(c \bin B_* \mid T_0)$ exists for all $B \in \cE$, and
  \item $P$ satisfies the principle of indifference.
\end{enumerate}

\begin{thm}\label{T:stick}
  The condition $\cC$ is consistent. Moreover, if $P \in \cC$, then
  \begin{equation}\label{stick}
    P(c \bin B_* \mid T_0) = P(c \bin (\rho B)_* \mid T_0),
  \end{equation}
  for all $B \in \cB([0, 1])$, where $\rho: [0, 1] \to [0, 1]$ is given by
  $\rho r = 1 - r$.
\end{thm}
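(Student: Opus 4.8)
The plan is to prove the two assertions separately, with the reflection permutation $\pi_0$ as the central tool. Define the signature permutation $\pi_0$ that fixes $L_\ZFC$, $M$, and $c$, swaps $c_0 \leftrightarrow c_1$, and sends $B_* \mapsto (\rho B)_*$ for every $B \in \cE$; this is a well-defined involution since $\rho$ is an involution of $\cE$. The heart of both parts is the claim that $T_0$ is invariant under $\pi_0$, i.e.\ $T_0^{\pi_0} = T_0$. To verify this I would show each $\pi_0$-image of a generator of $T_0$ is provable from $T_0$, and conversely, using $T(X^\pi) = T(X)^\pi$ from Theorem \ref{T:pf-invar}. The sentences $\ph_1, \ph_2$ are fixed by $\pi_0$ and $\ZFC^{\pi_0} = \ZFC$. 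For $\ph_3$ and the $\ph_B(B_*)$, the key algebraic fact is that whenever $M$ is parameterized by the affine map $f_{uv}$ (with $u \neq \ul 0$), it is equally parameterized by the reversed map $f_{-u,\,u+v}$, which interchanges the two endpoints and satisfies $f_{-u,\,u+v}^\img(\ul B) = f_{uv}^\img(\ul{\rho B})$. From this I would obtain $\ZFC \vdash \ph_3 \tot \ph_3^{\pi_0}$ and $\ZFC \vdash \ph_B(B_*)^{\pi_0} \tot \ph_{\rho B}((\rho B)_*)$; since $\rho$ permutes $\cE$, the generators of $T_0^{\pi_0}$ and $T_0$ coincide up to logical equivalence, giving $T_0^{\pi_0} = T_0$.

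Granting invariance, the conclusion \eqref{stick} is immediate for $B \in \cE$: for $P \in \cC$ the probability $P(c \bin B_* \mid T_0)$ exists, and since $T_0^{\pi_0} = T_0 \in \ante P$, applying (R10) with $X = T_0$ and $\ph = (c \bin B_*)$ yields $P((c \bin B_*)^{\pi_0} \mid T_0) = P(c \bin B_* \mid T_0)$, where $(c \bin B_*)^{\pi_0} = (c \bin (\rho B)_*)$. For $B \in \{\emp, [0,1]\}$ we have $\rho B = B$, so \eqref{stick} is trivial under the natural conventions $\emp_* = \bemp$ and $[0,1]_* = M$. As $\cB([0,1]) = \cE \cup \{\emp, [0,1]\}$, this covers all $B$.

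The substantial work is consistency, for which I would exhibit a single witness $P \in \cC$ built from a model that randomizes the labelling of the endpoints. Fix $\om_0 \tDash \ZFC$ (available by our standing assumption), and for each $(p, \epsilon) \in [0,1] \times \{0,1\}$ define an $L$-expansion of $\om_0$ with $M = \ul{[0,1]}$ and $c = \ul p$, setting $c_0 = \ul 0$, $c_1 = \ul 1$, $B_* = \ul B$ when $\epsilon = 0$, and $c_0 = \ul 1$, $c_1 = \ul 0$, $B_* = \ul{\rho B}$ when $\epsilon = 1$. Let $\sP = (\Om, \Si, \bbP)$ be the measure space image under $(p,\epsilon) \mapsto \om$ of the measure $\nu \otimes \tfrac{1}{2}(\de_0 + \de_1)$, with $\nu$ Lebesgue measure on $[0,1]$, and set $P = \bTh \sP \dhl_{[T_0, \Th \sP]}$. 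A direct check using the parameterizations $f_{\ul 1, \ul 0}$ and $f_{-\ul 1, \ul 1}$ shows each structure satisfies $\ph_1, \ph_2, \ph_3$ and every $\ph_B(B_*)$, so $\sP \vDash T_0$ and $P \in \fI_{T_0}$; measurability of the relevant sets gives condition (i).

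The main obstacle is verifying (R10) for every signature permutation $\pi$. Since $\bin$ is the only non-constant symbol of $L$, every $\pi$ fixes $\bin$, so Lemma \ref{L:fixed-perm} reduces the task to permutations fixing $L_\ZFC$, i.e.\ permutations of $C = \{M, c, c_0, c_1\} \cup \{B_* : B \in \cE\}$. I would then argue that the only such $\pi$ admitting an antecedent $X$ with $X, X^\pi \in \ante P$ are the identity and $\pi_0$. This is a case analysis driven by Lemma \ref{L:mapping-T_0}: for an incompatible $\pi$ one produces a generator $\ze \in T_0$ with $\ze^{-\pi}_\Om$ null, forcing any admissible antecedent to have probability zero, hence to lie outside $\ante P$. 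Concretely, $M \nbin \ul\bR$, $c_0 \bin \ul\bR$, and the defining axioms $\ph_B(B_*)$ pin the roles of the symbols, forcing $\pi$ to fix $M$, to map $\{c_0, c_1\}$ to itself and the $B_*$'s among themselves; non-atomicity of $\nu$ (so that $c$ is a.s.\ not an endpoint) forces $\pi$ to fix $c$; and if $\pi$ swaps $c_0, c_1$ then the axioms $\ph_B$ force $B_* \mapsto (\rho B)_*$, so $\pi = \pi_0$. For the two compatible permutations I would verify $\sP^\pi = \sP$: the randomization of $\epsilon$ makes $(p,\epsilon) \mapsto (p, 1-\epsilon)$ a measure-preserving pointwise isomorphism carrying $\sP^{\pi_0}$ back to $\sP$, whence Theorems \ref{T:induc-invar} and \ref{T:ind-iso-thm} give (R10). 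The delicate points are the bookkeeping in the role-pinning argument and confirming the hypothesis $X^{\pi'} \in \ante P$ needed to invoke Lemma \ref{L:fixed-perm}.
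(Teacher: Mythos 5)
Your proposal follows the same architecture as the paper's proof: the reflection permutation $\pi_0$ (swap $c_0 \leftrightarrow c_1$, send $B_* \mapsto (\rho B)_*$), the invariance $T_0^{\pi_0} = T_0$ feeding into (R10) to get \eqref{stick}, and a witness model whose endpoint labelling is randomized by a fair coin, with role-pinning arguments via Lemma \ref{L:mapping-T_0} and the identity $\sP^{\pi_0} = \sP$ closing the consistency proof. The derivation of \eqref{stick} from invariance is correct, up to one small slip: $\ZFC \vdash \ph_B(B_*)^{\pi_0} \tot \ph_{\rho B}((\rho B)_*)$ is false as stated, since the left side says nothing about $c_0$ (a structure can satisfy it while $c_0$ is not an endpoint of $M$, killing the right side); you need $\ph_3$ as a hypothesis, as in the paper's computation $\ZFC, \ph_3 \vdash \ph_B(B_*)^\pi \tot \ph_B(B_*)$. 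Since $\ph_3 \in T_0$, this does not affect the conclusion $T_0^{\pi_0} = T_0$.

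The genuine gap is in the consistency proof, and it is caused by taking $M^\om = \ul{[0,1]}^{\om_0}$ deterministically. In your model $P(M \beq \ul{[0,1]} \mid T_0) = 1$, and conditional on $c_0 \beq \ul 0$ (probability $1/2$, hence a legitimate antecedent by Lemma \ref{L:cond-exist}) every $B_*$ is a.s.\ equal to $\ul B$ and $c_1$ to $\ul 1$. Consequently your dichotomy --- only the identity and $\pi_0$ admit an antecedent pair $X, X^\pi \in \ante P$ --- is false: the swap $(M \; \ul{[0,1]})$ is admissible with $X = T_0 + (M \beq \ul{[0,1]})$, and the hybrid permutation sending $c_0 \mapsto \ul 0 \mapsto c_1 \mapsto \ul 1 \mapsto c_0$ and $B_* \mapsto \ul B \mapsto (\rho B)_* \mapsto \ul{\rho B} \mapsto B_*$ is admissible with $X = T_0 + (c_0 \beq \ul 0 \wedge M \beq \ul{[0,1]})$. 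So (R10) must be verified for a large family of permutations that move $L_\ZFC$, and Lemma \ref{L:fixed-perm} does not dispose of them for free: it transfers (R10) from the reduced permutation $\pi'$ to $\pi$ only under the hypothesis $X^{\pi'} \in \ante P$, which is exactly what fails if $\pi'$ turns out to be one of your ``incompatible'' permutations (for those, no antecedent pair exists at all). Your proposal contains no argument that an admissible $\pi$ cannot reduce to an incompatible $\pi'$; establishing that requires running the role-pinning analysis directly on arbitrary hybrid permutations, interleaved with the antecedents that create the accidental identifications --- a substantially harder task than the case analysis you describe, and precisely the complication the paper's model is engineered to avoid. The paper randomizes the location of the rod with a continuous measure ($M^\om = \ul{[t, t+1]}$, $c^\om = \ul{t + r}$, $B_*^\om$ a random translate), so that no symbol of $C$ can be a.s.\ equal to any $L_\ZFC$ constant under any admissible antecedent; then \eqref{fixed-perm} forces every admissible $\pi$ to fix $L_\ZFC$ pointwise (Lemma \ref{L:stick-2}), and the analysis genuinely reduces to permutations of $C$. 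This design choice is the explicit content of Remark \ref{R:stick}, and your proof needs either that model or the missing hybrid-permutation analysis.
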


The proof of Theorem \ref{T:stick} will be given at the end of this subsection.
Proving consistency is what will require the most work. This consistency proof
will show, in fact, that \eqref{stick} is the only restriction on the
distribution of $c$. In particular, the principle of indifference does not
require the distribution to be uniform. This is because the only points on the
rod that are interchangeable with respect to $T_0$ are points that are
equidistant from the ends. To say that $c$ lies twice as far from one endpoint
than the other is a qualitatively different statement than saying that $c$ lies
at the midpoint. These two locations are not symmetric with respect to
everything we know about the rod.

\subsubsection{A model for the rod}

Our proof of consistency will utilize an $\cL$-model, $\sP = (\Om, \Si, \bbP)$,
which we construct as follows. Let $\nu_0$ be a probability measure on $(\bR,
\cB(\bR))$ such that $\opnu_0 [0, 1] = 1$ and $\nu_0$ is continuous. That is,
$\opnu_0 \{r\} = 0$ for all $r \in \bR$. Let $S = \bR \times \bR \times \{0,
1\}$ and $\Ga = \cB(\bR) \otimes \cB (\bR) \otimes \fP \{0, 1\}$, and define the
probability measure $\nu$ on $(S, \Ga)$ by
\[
  \opnu B \times B' \times \{n\} = (1/2)(\opnu B)(\opnu B').
\]
Let $\om_0$ be an $L_\ZFC$-structure such that $\om_0 \tDash \ZFC$. If $x = (r,
t, n) \in S$, then let $\om = \om^x$ be the $L$-expansion of $\om_0$ defined by
$M^\om = \ul{[t, t + 1]}^{\om_0}$, $c^\om = \ul{t + r}^{\om_0}$,
\[
  c_0^\om = \begin{cases}
    \ul t^{\om_0} &\text{if $n = 0$},\\
    \ul{t + 1}^{\om_0} &\text{if $n = 1$},
  \end{cases} \qquad
  c_1^\om = \begin{cases}
    \ul{t + 1}^{\om_0} &\text{if $n = 0$},\\
    \ul t^{\om_0} &\text{if $n = 1$},
  \end{cases}
\]
and
\[
  B_*^\om = \begin{cases}
    \ul{\tau_t B}^{\om_0} &\text{if $n = 0$},\\
    \ul{\tau_t \rho B}^{\om_0} &\text{if $n = 1$},
  \end{cases}
\]
where $\tau_t: \bR \to \bR$ and $\rho: \bR \to \bR$ are defined by $\tau_t r
= t + r$ and $\rho r = 1 - r$.

Let $\Om = \{\om^x \mid x \in S\}$, let $h: S \to \Om$ denote the function $x
\mapsto \om^x$, and let $\sP$ be the measure space image of $(S, \Ga, \nu)$
under $h$.

\begin{lemma}\label{L:stick-1}
  With notation as above, we have $\sP \vDash T_0$.
\end{lemma}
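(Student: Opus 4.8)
The plan is to show that every sentence generating $T_0$ is satisfied by $\sP$, and then invoke the fact that $\Th \sP$ is a deductive theory (Proposition \ref{P:ThP-is-theory}) to conclude $T_0 \subseteq \Th \sP$, which is exactly $\sP \vDash T_0$. Recall $T_0 = \ZFC + \{\ph_1, \ph_2, \ph_3\} \cup \{\ph_B(B_*) \mid B \in \cE\}$. Satisfaction of $\ZFC$ is immediate: each $\om^x \in \Om$ is an $L$-expansion of the fixed structure $\om_0 \tDash \ZFC$, and since every sentence of $\ZFC$ uses only symbols in $L_\ZFC$, the coincidence theorem (Theorem \ref{T:coinc-thm}) gives $\om^x \tDash \ZFC$ for \emph{every} $x \in S$. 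Hence $\ZFC_\Om = \Om$ and $\sP \vDash \ZFC$.

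The crux will be an affine-image identity in $\ZFC$: for each fixed real $u \ne 0$, real $v$, and Borel set $V$, I claim $\ZFC \vdash f_{\ul u \ul v}^\img(\ul V) \beq \ul W$, where $W = \{us + v \mid s \in V\}$ (again a Borel set). I would prove this by completeness (Theorem \ref{T:pred-completeness}): it suffices to show $\ZFC \vDash f_{\ul u \ul v}^\img(\ul V) \beq \ul W$, and for this I take an arbitrary model $\sQ \vDash \ZFC$ and place it in the real frame of reference (Theorems \ref{T:real-FOR}, \ref{T:real-FOR-2}, \ref{T:real-FOR-3}). Unwinding the shorthand $f_{uv}$ through its defining formula $\de$, together with $\dom$ and $f^\img$, I would then compute, in $\bbP$-almost every structure $\om$ of $\sQ$, that the affine map $x \mapsto u \bdot x + v$ carries ${}^\om \ul V = V \cap {}^\om \ul \bR$ onto $W \cap {}^\om \ul \bR = {}^\om \ul W$, using that the interpretations of $+$, $\bdot$, $\ul r$, and $\ul V$ behave as expected there. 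This yields $\om \tDash f_{\ul u \ul v}^\img(\ul V) \beq \ul W$ a.s., hence $\sQ \vDash f_{\ul u \ul v}^\img(\ul V) \beq \ul W$. Since $f_{\ul u \ul v}^\img(\ul V)$ and $\ul W$ are $\cL_\ZFC$-terms, their interpretations agree in $\om^x$ and in $\om_0$, so this identity transfers directly to each $\om^x$.

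With this identity in hand, the remaining generators are verified pointwise by exhibiting explicit witnesses depending on the coordinate $n$. For $\ph_1$, $\ph_3$, and $\ph_B(B_*)$, in a structure $\om^x$ with $x = (r, t, n)$ I take the pair $(u, v) = (\ul 1, \ul t)$ when $n = 0$ and $(u, v) = (-\ul 1, \ul{t+1})$ when $n = 1$. In either case $\ZFC \vdash u \nbeq \ul 0$, and by the affine-image identity $f_{uv}^\img(\ul{[0,1]})$ is interpreted as $[t, t+1] = M^{\om^x}$, so $\ph_M(u,v)$ holds; moreover $v$ is interpreted as $c_0^{\om^x}$ and $u + v$ as $c_1^{\om^x}$ (checking both parities of $n$ against the definitions of $c_0^\om$, $c_1^\om$), and $f_{uv}^\img(\ul B)$ is interpreted as $\tau_t B$ when $n = 0$ and $\tau_t \rho B$ when $n = 1$, matching $B_*^{\om^x}$. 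Existential introduction then gives $\om^x \tDash \ph_1, \ph_3, \ph_B(B_*)$ for \emph{every} $x$, so these sentences are satisfied by $\sP$. Finally, for $\ph_2 : c \bin M$, we have $c^{\om^x} = \ul{t+r}^{\om_0}$ and $M^{\om^x} = \ul{[t,t+1]}^{\om_0}$, and $t + r \in [t, t+1]$ precisely when $r \in [0,1]$; by Corollary \ref{C:real-FOR} this gives $\om^x \tDash c \bin M$ whenever $r \in [0,1]$, which holds for $\nu_0$-a.e.~$r$ and hence for $\nu$-a.e.~$x$. Thus $\olbbP (c \bin M)_\Om = 1$ and $\sP \vDash \ph_2$.

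The main obstacle is the affine-image identity of the second paragraph: everything else is either immediate from the coincidence theorem or a routine check of witnesses against the definitions of $c_0^\om$, $c_1^\om$, and $B_*^\om$. The identity itself is conceptually easy but requires patiently unwinding the nested shorthand ($f_{uv}$, $\dom$, $f^\img$) and carrying out the image computation inside the real frame of reference, where one must keep careful track of the restriction to ${}^\om \ul \bR$.
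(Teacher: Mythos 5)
Your proposal is correct and takes essentially the same route as the paper's proof: the same $n$-dependent witnesses $(\ul 1, \ul t)$ for $n = 0$ and $(\ul{-1}, \ul{t+1})$ for $n = 1$, the same pointwise verification of $\ph_1$, $\ph_3$, and $\ph_B(B_*)$, and the same almost-sure argument for $\ph_2$ via $\olbbP (\ph_2)_\Om = \opnu_0 [0,1] = 1$. The affine-image identity you propose to prove via completeness and the real frame of reference is precisely the content the paper asserts without proof in \eqref{stick-1} and \eqref{stick-2}, so your second paragraph simply makes explicit a step the paper treats as evident.
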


\begin{proof}
  Since each $\om \in \Om$ is an extension of $\om_0$, we have $\sP \vDash
  \ZFC$. Note that $\om \tDash \ph_M[\ul 1^{\om_0}, \ul t^{\om_0}]$ for all $\om
  \in \Om$. Hence, $(\ph_1)_\Om = \Om$, so that $\sP \vDash \ph_1$. Also,
  $h^{-1} (\ph_2)_\Om = [0, 1] \times \bR \times \{0, 1\}$, so that $\olbbP
  (\ph_2)_\Om = \nu_0 [0, 1] = 1$. Thus, $\sP \vDash \ph_2$.

  Note that
  \begin{align}
    \om_0 &\tDash (x \beq f_{uv}^\img(\ul B))[
      \ul{\tau_t B}^{\om_0}, \ul 1^{\om_0}, \ul t^{\om_0}
    ], \label{stick-1} \\
    \om_0 &\tDash (x \beq f_{uv}^\img(\ul B))[
      \ul{\tau_t \rho B}^{\om_0}, \ul{-1}^{\om_0}, \ul{t + 1}^{\om_0}
    ], \label{stick-2}
  \end{align}
  for all $B \in \cE$. Fix $B \in \cE$. Let $x = (r, t, 0)$ and $\om = \om^x$.
  Then $\om \tDash \ph_M[\ul 1^{\om_0}, \ul t^{\om_0}]$, $\om \tDash c_0 \beq
  \ul t$, and $\om \tDash c_1 \beq \ul 1 + \ul t$. Also, by \eqref{stick-1} and
  the definition of $B_*^\om$, we have $\om \tDash (B_* \beq f_{uv}^\img(\ul B))
  [\ul 1^{\om_0}, \ul t^{\om_0}]$. It therefore follows that $\om \tDash \ph_3$
  and $\om \tDash \ph_B(B_*)$. Similarly, if $x = (r, t, 1)$ and $\om = \om^x$,
  then $\om \tDash \ph_M[\ul{-1}^{\om_0}, \ul{t + 1}^{\om_0}]$, $\om \tDash c_0
  \beq \ul{t + 1}$, and $\om \tDash c_1 \beq \ul{-1} + \ul{t + 1}$. This time
  using \eqref{stick-2} and the definition of $B_*^\om$, we have $\om \tDash
  (B_* \beq f_{uv}^\img(\ul B))[\ul{-1}^{\om_0}, \ul{t + 1}^{\om_0}]$.
  Therefore, in this case also, we obtain $\om \tDash \ph_3$ and $\om \tDash
  \ph_B(B_*)$.
\end{proof}

\subsubsection{Narrowing down the permutations}

By Lemma \ref{L:stick-1}, we may define the inductive theory $P = \bTh \sP \dhl_
{[T_0, \Th \sP]}$. We will show that $\cC$ is consistent by showing that $P \in
\cC$. The difficult part of proving this is showing that $P$ satisfies the
principle of indifference. In preparation for this, we first prove two lemmas
that narrow down the possible permutations that need to be checked.

\begin{lemma}\label{L:stick-2}
  If $P(\ph \mid X) = p$ and $X^\pi \in \ante P$, then $\s^\pi = \s$ for all $\s
  \in L_\ZFC$ and $\pi \{c, c_0, c_1\} = \{c, c_0, c_1\}$.
\end{lemma}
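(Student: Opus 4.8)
The plan is to exploit the fact that $P(\ph \mid X)$ exists and $X^\pi \in \ante P$ — so $X, X^\pi \in \ante P$ — together with the concrete model $\sP = (\Om, \Si, \bbP)$ constructed in Lemma \ref{L:stick-1}. The engine throughout will be Lemma \ref{L:mapping-T_0}: for any $\ze \in T_0$ (and $\ZFC \subseteq T_0$), writing $X \equiv T + \psi$ and $X^\pi \equiv T' + \psi'$, it yields $\psi'_\Om \subseteq \ze^\pi_\Om$ $\bbP$-a.s.\ with $\olbbP \psi'_\Om > 0$, so that $\ze^\pi$ must hold on a set of positive $\bbP$-measure. Since every $\om \in \Om$ is an expansion of the single structure $\om_0 \tDash \ZFC$, the symbols of $L_\ZFC$ are interpreted \emph{identically and rigidly} across all of $\Om$, whereas the symbols of $C = \{c, c_0, c_1, M\} \cup \{B_* \mid B \in \cE\}$ are interpreted as \emph{continuously distributed} random objects (this is exactly why $\nu_0$ was chosen continuous). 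The whole proof plays rigidity against randomness.

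First I would pin down $\bin$, which is the one genuinely hard case because it is primitive and has no defining formula. Since $\pi$ preserves type and arity and the only binary relation symbols in $L$ are $\bin$ and $<$, the map $\pi$ either fixes or swaps them. To rule out the swap, take $\ze = \exists x y z\, (x \bin y \wedge y \bin z \wedge x \nbin z) \in \ZFC \subseteq T_0$, which asserts non-transitivity of membership and is witnessed, e.g., by $\ul 0, \ul 1, \{\ul 1\}$. If $\pi$ swapped $\bin$ and $<$, then $\ze^\pi = \exists x y z\, (x < y \wedge y < z \wedge \neg(x < z))$ asserts non-transitivity of $<$, which is false in $\om_0$ — hence in every $\om \in \Om$, since $<^\om = <^{\om_0}$ — because $\ZFC$ proves $<$ is transitive. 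Thus $\ze^\pi_\Om = \emp$, contradicting $\olbbP \ze^\pi_\Om > 0$. Therefore $\bin^\pi = \bin$, and consequently $<^\pi = <$ (a bijection of the two-element set $\{\bin, <\}$). I expect this step to be the main obstacle; the distinguishing sentence $\ze$ is chosen to involve only $\bin$ so that its $\pi$-image involves only $<$ and no constant images need be tracked.

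Next I would fix the remaining symbols of $L_\ZFC$. The only unary function symbol is $\S$, so $\S^\pi = \S$ automatically. With $\bin^\pi = \bin$ now in hand, \eqref{fixed-perm} from the proof of Lemma \ref{L:fixed-perm} applies to each constant $\s \in L_\ZFC$: either $\s^\pi = \s$ or $\s^\pi \notin L_\ZFC$, and that argument also delivers $P(\s \beq \s^\pi \mid X^\pi) = 1$. If $\s^\pi = \ka$ for some $\ka \in C$, then in $\sP$ we would have $\s^\om = \ka^\om$ on a set of positive $\bbP$-measure; but $\s^\om = \s^{\om_0}$ is constant over $\Om$ while $\ka^\om$ is continuously distributed under $\bbP$, so $\bbP(\ka^\om = \s^{\om_0}) = 0$, a contradiction. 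Hence $\s^\pi = \s$ for every constant $\s \in L_\ZFC$. Finally, with the constant $\ul 1$ now fixed and $\ZFC \vdash \ul 1 \bdot \ul 1 \beq \ul 1$, Lemma \ref{L:mapping-T_0} forces $\ul 1 \, \bdot^\pi \, \ul 1 \beq \ul 1$ to hold on a positive set; this is incompatible with $\bdot^\pi = {+}$ (which would give $\ul 1 + \ul 1 \beq \ul 1$), so $+^\pi = {+}$ and $\bdot^\pi = \bdot$. Thus $\pi$ fixes all of $L_\ZFC$.

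Lastly I would show $\pi\{c, c_0, c_1\} = \{c, c_0, c_1\}$. Since $\pi$ fixes $L_\ZFC$ and is a bijection, it permutes $C$. Now $T_0 \vdash c \bin \ul\bR$ (and likewise for $c_0, c_1$, which lie on the rod $M$, and $M \bsub \ul\bR$ by $\ph_1$), so $c \bin \ul\bR \in T_0$. In the model $\sP$, however, $M^\om$ and $B_*^\om$ are intervals and sets of reals, not Dedekind cuts, so $\om \ntDash M \bin \ul\bR$ and $\om \ntDash B_* \bin \ul\bR$ for every $\om \in \Om$. Applying Lemma \ref{L:mapping-T_0} to $\ze = (c \bin \ul\bR)$ gives that $\ze^\pi = (c^\pi \bin \ul\bR)$ holds on a positive set; since this fails identically for $c^\pi \in \{M\} \cup \{B_* \mid B \in \cE\}$, we must have $c^\pi \in \{c, c_0, c_1\}$, and the same applies to $c_0$ and $c_1$. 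Thus $\pi$ maps $\{c, c_0, c_1\}$ injectively into itself, and finiteness forces $\pi\{c, c_0, c_1\} = \{c, c_0, c_1\}$, completing the argument.
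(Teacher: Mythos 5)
Your proposal is correct in substance and follows essentially the same route as the paper's proof: pin down $L_\ZFC$ by playing the rigidity of $\om_0$ against the continuity of $\nu_0$ (via Lemma \ref{L:mapping-T_0} and the argument following \eqref{fixed-perm}), then confine $\{c, c_0, c_1\}$ using $T_0 \vdash c \bin \ul \bR$ together with the fact that $M^\om$ and $B_*^\om$ are never elements of ${}^\om \ul \bR$. The only structural difference is cosmetic: the paper runs the first part in the mirror direction, assuming $\ka^\pi \in L_\ZFC$ for some $\ka \in C$ and contradicting $P(\ka^\pi \beq \ka \mid X) = 1$, whereas you assume $\s^\pi \ne \s$ for some $\s \in L_\ZFC$, conclude $\s^\pi \in C$ from \eqref{fixed-perm}, and contradict $P(\s \beq \s^\pi \mid X^\pi) = 1$; both directions are equally valid.

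You have, however, misread the signature, and this produces spurious steps that should be cut. By \eqref{ZFC-signature} and the remark following it, every symbol of $L_\ZFC$ except $\bin$ is an explicitly defined \emph{constant} symbol: $<$, $\S$, $+$, and $\bdot$ are not relation or function symbols in this language ($+$, for instance, names a set, and $x + y \beq z$ is shorthand for $(x, y, z) \bin {+}$). Hence $\bin$ is the only relation symbol of any arity in all of $L$, so $\bin^\pi = \bin$ is automatic from type-preservation; your opening argument ruling out a swap of $\bin$ with $<$ addresses a case that cannot occur, and as written it manipulates $x < y$ as though $<$ were primitive. Likewise, the claim that $\S^\pi = \S$ holds because $\S$ is ``the only unary function symbol,'' and the $\ul 1 \bdot \ul 1 \beq \ul 1$ argument for $+$ and $\bdot$, rest on the same misreading: since these symbols are constants, $\pi$ could a priori send any of them into $C$, and what actually rules this out is not those remarks but your third step --- the general constant argument, which is exactly the paper's --- applied uniformly to every $\s \in L_\ZFC \setminus \{\bin\}$. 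The error is therefore self-healing rather than fatal: delete the first two steps, state that $\bin^\pi = \bin$ holds trivially because $\bin$ is the only relation symbol (which is also what licenses the use of \eqref{fixed-perm}), and let the constant argument carry all of $L_\ZFC$. One last small point: your assertion that each $\ka^\om$ is ``continuously distributed'' needs, as in the paper, the observation that distinct parameters $t \ne t'$ yield $\ZFC$-provably distinct interpretations (e.g.\ $\ZFC \vdash \ul{[t, t+1]} \nbeq \ul{[t', t'+1]}$), so that the event $\s^{\om_0} = \ka^\om$ is confined to at most one slice of $S$, which is $\nu$-null.
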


\begin{proof}
  Assume $M^\pi \in L_\ZFC$. Let $\s = M^\pi$. By the argument following
  \eqref{fixed-perm}, we have $P(\s \beq M \mid X) = 1$. Note that
  \[
    h^{-1} (\s \beq M)_\Om = \{
      (r, t, n) \in S \mid \s^{\om_0} \beq \ul{[t, t + 1]}^{\om_0}
    \}.
  \]
  Since $t \ne t'$ implies $\ZFC \vdash \ul{[t, t + 1]} \nbeq \ul{[t', t' +
  1]}$, and since $\om_0 \tDash \ZFC$, there can be at most one $t \in \bR$ such
  that $\s^{\om_0} \beq \ul{[t, t + 1]}^{\om_0}$. Hence, we may choose $t_0 \in
  \bR$ such that $h^{-1} (\s \beq M)_\Om \subseteq \bR \times \{t_0\} \times
  \{0, 1\}$. Since $\opnu_0 \{t_0\} = 0$, this implies $\olbbP (\s \beq M)_\Om =
  \olnu h^{-1} (\s \beq M)_\Om = 0$. Therefore, $P(\s \beq M \mid T_0) = 0$. By
  \eqref{prob-neg} and deductive transitivity, $P(\s \beq M \mid X) = 0$, a
  contradiction. This shows that $M^\pi \notin L_\ZFC$. Similar arguments show
  that $\s^\pi \notin L_\ZFC$ for all $\s \in C$. That is, if $\s \notin
  L_\ZFC$, then $\s^\pi \notin L_\ZFC$. By contraposition, if $\s \in L_\ZFC$,
  then $\s^{-\pi} \in L_\ZFC$. Hence, by \eqref{fixed-perm}, we have $\s^\pi =
  \s$ for all $\s \in L_\ZFC$.

  Now assume $c^\pi \notin \{c, c_0, c_1\}$. By the above, $c^\pi \notin
  L_\ZFC$. Hence, either $c^\pi = M$ or $c^\pi = B_*$ for some $B \in \cE$. In
  either case, $\om \tDash c^\pi \nbin \ul \bR$ for all $\om \in \Om$.
  Therefore, $ (c^\pi \bin \ul \bR)_\Om = \emp$. Now, note that $T_0 \vdash c
  \bin \ul \bR$. Let $\de(y) \in \cL\{{\bin}\}$ be a reduced defining formula
  for $\ul \bR$, so that $\ZFC_\infty \vdash \forall y (y \beq \ul \bR \tot
  \de(y))$. Then $\ze \in T_0$, where $\ze = \exists y (\de(y) \wedge c \bin
  y)$. We then have $\ze^\pi = \exists y (\de(y) \wedge c^\pi \bin y)$, so that
  $T_0 \vdash \ze^\pi \tot c^\pi \bin \ul \bR$. It follows that $\ze^\pi_\Om =
  (c^\pi \bin \ul \bR)_\Om = \emp$, $\bbP$-a.s. Thus, $\olbbP \ze^\pi_\Om = 0$,
  contradicting Lemma \ref{L:mapping-T_0}. This shows that $c^\pi \in \{c, c_0,
  c_1\}$. Similar arguments show that $c_0^\pi \in \{c, c_0, c_1\}$ and $c_1^\pi
  \in \{c, c_0, c_1\}$. We therefore have $\pi \{c, c_0, c_1\} = \{c, c_0,
  c_1\}$.
\end{proof}

\begin{rmk}\label{R:stick}
  The above proof relies on the fact that $\nu_0$ is a continuous measure. Since
  we are only concerned with the relative positioning of $M$ and $c$, we are
  free to randomize the location of $M$. By choosing a $\nu_0$ which is
  continuous, we are randomizing $M$ so that it has probability $0$ of sitting
  at any fixed location. This allows us to narrow down the possible permutations
  in the principle of indifference, and thereby simplify our proofs. Without
  this construction, the results still hold, but the proofs would be more
  complicated. Namely, we would need to consider the possible that $M$ and $\ul
  B$ are interchanged for some interval $B$.

  In Section \ref{S:one-coin}, we saw how to deal with this extra complication
  in the finite setting. There, we needed to deal with the possibility that the
  symbol for the result of the coin toss, $c$, was interchanged with one of the
  symbols for heads and tails.
\end{rmk}

\begin{lemma}\label{L:stick-3}
  Let $P(\ph \mid X) = p$ and $X^\pi \in \ante P$. Assume $\pi$ is not the
  identity. Then $\s^\pi = \s$ for all $\s \in L$, except for the following:
  \begin{enumerate}[(a)]
    \item $c_0^\pi = c_1$,
    \item $c_1^\pi = c_0$, and
    \item $B_*^\pi = (\rho B)_*$ for all $B \in \cE$.
  \end{enumerate}
\end{lemma}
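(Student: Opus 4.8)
The plan is to combine Lemma \ref{L:mapping-T_0} with two \emph{rigidity relations} that are derivable in $T_0$ and that express $M$ and the sets $B_*$ entirely in terms of the two endpoints $c_0$ and $c_1$. First, since $M = f_{uv}^\img(\ul{[0, 1]})$ with $c_0 = f_{uv}(\ul 0)$ and $c_1 = f_{uv}(\ul 1)$, and $f_{uv}$ is an affine bijection, $M$ is the closed interval spanned by $c_0$ and $c_1$; writing $I(a, b)$ for the $\ZFC$-definable closed interval with endpoints $a$ and $b$, we have $(M \beq I(c_0, c_1)) \in T_0$. Second, the parametrization $g_{c_0, c_1}(s) = c_0 + s \bdot (c_1 - c_0)$ is $\ZFC$-definable from $c_0, c_1$, and $\ph_B$ yields $(B_* \beq g_{c_0, c_1}^\img(\ul B)) \in T_0$ for each $B \in \cE$. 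From Lemma \ref{L:stick-2} we already know that $\pi$ fixes $L_\ZFC$ pointwise, that $\pi \{c, c_0, c_1\} = \{c, c_0, c_1\}$, and hence that $\pi$ maps $\{M\} \cup \{B_* \mid B \in \cE\}$ into itself. The goal is to show $c^\pi = c$ and $M^\pi = M$, after which only the identity and the endpoint swap remain on $\{c, c_0, c_1\}$, and the rigidity relations will force the value of $B_*^\pi$.

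The crucial step is to rule out the four permutations of $\{c, c_0, c_1\}$ that move $c$. In each of these, at least one of $c_0, c_1$ is sent to $c$, so $I(c_0^\pi, c_1^\pi)$ is a closed interval having $c$ as one of its endpoints. Taking $\ze = (M \beq I(c_0, c_1)) \in T_0$, we get $\ze^\pi = (M^\pi \beq I(c_0^\pi, c_1^\pi))$. In the model $\sP$ of Lemma \ref{L:stick-1}, every element of $\{M\} \cup \{B_* \mid B \in \cE\}$ is interpreted by a set depending only on the coordinates $(t, n)$, whereas the interpretation of $I(c_0^\pi, c_1^\pi)$ has the moving endpoint $c^\om = \ul{t + r}^{\om_0}$. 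Thus, for each fixed $(t, n)$, the interpretation of $M^\pi$ can equal that of $I(c_0^\pi, c_1^\pi)$ for at most one value of $r$; since $\nu_0$ is continuous, Fubini gives $\olbbP \ze^\pi_\Om = 0$. But $X, X^\pi \in \ante P$ and $\sP \vDash P$, so Lemma \ref{L:mapping-T_0} forces $\olbbP \ze^\pi_\Om > 0$, a contradiction. Hence $c^\pi = c$.

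With $c$ fixed, $\pi$ either fixes or swaps $c_0$ and $c_1$. Using $\ze = (M \beq I(c_0, c_1))$ once more, $\ze^\pi = (M^\pi \beq I(c_0, c_1))$ in both cases (the interval is symmetric in its endpoints), and since $(I(c_0, c_1))^\om = M^\om = \ul{[t, t + 1]}^{\om_0}$ while every $B_*^\om$ is a proper subset of $\ul{[t, t + 1]}^{\om_0}$, the alternative $M^\pi = B_*$ would give $\olbbP \ze^\pi_\Om = 0$; thus $M^\pi = M$. For the sets $B_*$, apply Lemma \ref{L:mapping-T_0} to $\ze_B = (B_* \beq g_{c_0, c_1}^\img(\ul B)) \in T_0$. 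If $\pi$ fixes $c_0, c_1$, then $\ze_B^\pi = (B_*^\pi \beq g_{c_0, c_1}^\img(\ul B))$, and injectivity of $g_{c_0, c_1}$ together with the distinctness in $\sP$ of the interpretations of distinct symbols forces $B_*^\pi = B_*$ for every $B$, so $\pi$ is the identity, contrary to hypothesis. If $\pi$ swaps $c_0, c_1$, then $\ze_B^\pi = (B_*^\pi \beq g_{c_1, c_0}^\img(\ul B))$; since $g_{c_1, c_0} = g_{c_0, c_1} \circ \rho$ as maps of $\ul{[0, 1]}$, we have $g_{c_1, c_0}^\img(\ul B) = g_{c_0, c_1}^\img(\ul{\rho B})$, and the same injectivity argument (comparing with $\ph_{\rho B}$) gives $B_*^\pi = (\rho B)_*$, which is (c). This leaves exactly (a)--(c).

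The main obstacle is not any single deep step but the careful bookkeeping it demands. I must verify that the two rigidity relations are genuinely in $T_0$, that is, derivable from $\{\ph_1, \ph_2, \ph_3\} \cup \{\ph_B(B_*) \mid B \in \cE\}$ together with $\ZFC$, and I must compute the truth sets of their $\pi$-images in the explicit model of Lemma \ref{L:stick-1}, tracking the $n = 0$ and $n = 1$ orientations separately. The one genuinely essential ingredient is the continuity of $\nu_0$ noted in Remark \ref{R:stick}: it is precisely the randomization of the location of $M$ by a continuous measure that makes the $c$-moving permutations fail, through the Fubini argument above. Everything else reduces to the injectivity of the affine parametrization $g_{c_0, c_1}$ and to the fact that distinct symbols among $\{M\} \cup \{B_* \mid B \in \cE\}$ receive distinct interpretations $\bbP$-almost surely.
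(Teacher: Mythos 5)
Your proposal is correct and follows essentially the same route as the paper's proof: both rest on Lemma \ref{L:stick-2}, Lemma \ref{L:mapping-T_0}, and explicit truth-set computations in the model of Lemma \ref{L:stick-1}, with the continuity of $\nu_0$ (Remark \ref{R:stick}) doing the work of eliminating the permutations that move $c$. Your packaging via the rigidity sentences $M \beq I(c_0, c_1)$ and $B_* \beq g_{c_0, c_1}^\img(\ul B)$, and your choice to prove $c^\pi = c$ before $M^\pi = M$, are only cosmetic variants of the paper's use of the endpoint relation $\ph^\rd(c_0, M)$ and of the sentences $\ph_B^\rd(B_*) \in T_0$ directly.
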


\begin{proof}
  By Lemma \ref{L:stick-2}, we have $\s^\pi = \s$ for all $\s \in L_\ZFC$. We
  first show that $M^\pi = M$. Let $\s = M^\pi$ and assume $\s \ne M$. By Lemma
  \ref{L:stick-2}, we have $\s = B_*$ for some $B \in \cE$. Let $\ze = B_*
  \bsub M \wedge B_* \nbeq M \in T_0$. By Lemma \ref{L:stick-2}, we must have
  $M^{-\pi} = B'_*$ for some $B' \in \cE$. Then $\ze^{-\pi} = M \bsub B'_*
  \wedge M \nbeq B_*'$. But $\om \tDash \neg M \bsub B'_*$ for all $\om \in
  \Om$. Therefore, $\ze_\Om^{-\pi} = \emp$, contradicting Lemma 
  \ref{L:mapping-T_0}. Hence, $M^\pi = M$.

  We next show that $c^\pi = c$. Let $\s = c^\pi$ and assume $\s \ne c$. By
  Lemma \ref{L:stick-2}, we have $\s \in \{c_0, c_1\}$. Suppose $\s = c_0$. Let
  \[
    \ph(x, y) = (\exists uv \bin \ul \bR)(
      u \nbeq \ul 0 \wedge y \beq f_{uv}^\img(\ul{[0, 1]}) \wedge (
        x \beq u \vee x \beq u + v
      )
    ).
  \]
  Then $\ph(x, y)$ says that $x$ is an endpoint of $y$. Therefore, $\ze =
  \ph^\rd(c_0, M) \in T_0$. Since $\ze^{-\pi} = \ph^\rd(c, M)$, we have
  \begin{align*}
    h^{-1} \ze_\Om^{-\pi} &= \{(r, t, n) \in S \mid r = 0 \text{ or } r = 1\}\\
    &= \{0\} \times \bR \times \{0, 1\} \cup \{1\} \times \bR \times \{0, 1\}.
  \end{align*}
  Since $\nu_0$ is continuous, this gives $\olbbP \ze_\Om^{-\pi} = \olnu h^{-1}
  \ze_\Om^{-\pi} = 0$, contradicting Lemma \ref{L:mapping-T_0}. Hence, $\s \ne
  c_0$. A similar argument shows $\s \ne c_1$. Thus, $c^\pi = c$.

  We next show that $c_0^\pi = c_1$. Assume not. Then, by Lemma \ref{L:stick-2},
  we have $c_0^\pi = c_0$ and $c_1^\pi = c_1$. We will show that $B_*^\pi = B_*$
  for all $B \in \cE$, contradicting the assumption that $\pi$ is not the
  identity permutation. Let $B \in \cE$ and let $\s = B_*^\pi$. Since $M^\pi =
  M$, Lemma \ref{L:stick-2} implies $\s = B'_*$ for some $B' \in \cE$. Note that
  $\ph_B^\rd(x) \in \cL\{\bin, M, c_0\}$. Let $\ze = \ph_B^\rd(B_*) \in T_0$.
  Since $M^\pi = M$ and $c_0^\pi = c_0$, we have $\ze^\pi = \ph_B^\rd(B'_*)$.
  Lemma \ref{L:mapping-T_0} implies $\ze_\Om^\pi \ne \emp$. Hence, we may choose
  $x = (r, t, n) \in S$ such that $\om \tDash \ze^\pi$, where $\om = \om^x$.
  Therefore, there exists $a$ and $b$ in the domain of $\om$ such that $\om
  \tDash \ph_M[a, b]$, $c_0^\om = b$, and $\om \tDash (B'_* \beq f_{uv}^\img
  (\ul B))[a, b]$. By the construction of $\om$, we have
  \[
    \om \tDash (\forall uv \bin \ul \bR)(
      \ph_M(u, v) \to u \beq \ul 1 \vee u \beq \ul{-1}
    ).
  \]
  Hence, $a \in \{\ul 1^{\om_0}, \ul{-1}^{\om_0}\}$.

  Suppose $n = 0$. Then $b = c_0^\om = \ul t^{\om_0}$. If $a = \ul{-1}^{\om_0}$,
  then
  \[
    \om \tDash (M \beq f_{uv}^\img(\ul{[0, 1]}))[
      \ul{-1}^{\om_0}, \ul t^{\om_0}
    ],
  \]
  which implies $M^\om = \ul{[t - 1, t]}^{\om_0}$. But $M^\om = \ul{[t, t + 1]}^
  {\om_0}$, and so we have $a = \ul 1^{\om_0}$. Thus, $\om \tDash (B'_* \beq
  f_{uv}^\img(\ul B))[\ul 1^{\om_0}, \ul t^{\om_0}]$. But $\om_0 \tDash (
  \ul{\tau_t} \beq f_{uv})[\ul 1^{\om_0}, \ul t^{\om_0}]$ and also $\om_0 \tDash
  \ul{\tau_t}^\img(\ul B) = \ul{\tau_t B}$. Hence, $\om \tDash B'_* \beq 
  \ul{\tau_t B}$, which gives $(B'_*)^\om = \ul{\tau_t B}^{\om_0}$. On the other
  hand, by the definition of $\om$, and since $n = 0$, we have $(B'_*)^\om = 
  (\ul{\tau_t B'})^{\om_0}$. Therefore, $(\ul{\tau_t B'})^{\om_0} = \ul{\tau_t
  B}^{\om_0}$, which implies $\tau_t B' = \tau_t B$, so that $B' = B$. Hence,
  $\s = B'_* = B_*$. A similar proof in the case $n = 1$ also yields $\s = B_*$.
  Thus, $B_*^\pi = B_*$, completing the proof that $c_0^\pi = c_1$.

  By Lemma \ref{L:stick-2}, we must have $c_1^\pi = c_0$, so that both (a) and
  (b) hold. For (c), let $B \in \cE$ and let $\s = B_*^\pi$. As above, $\s =
  B'_*$ for some $B' \in \cE$. Also as above, let $\ze = \ph_B^\rd(B_*) \in
  T_0$. Then $\om \tDash \ze^\pi$ if and only if $\om \tDash \ph_B(B_*)^\pi$,
  and
  \[
    \ph_B(B_*)^\pi = (\exists uv \bin \ul \bR) (
      \ph_M(u, v) \wedge c_1 \beq v \wedge B'_* \beq f_{uv}^\img(\ul B)
    ).
  \]
  The above argument, with $c_1$ in place of $c_0$, shows that $B' = \rho B$.
\end{proof}

\subsubsection{Proof of main result}

With all of the above preparation, we are now ready to prove Theorem
\ref{T:stick}.

\begin{proof}[Proof of Theorem \ref{T:stick}]
  Let $P$ be the inductive theory defined above. Since
  \[
    (c \bin B_*)_\Om = {
      B \times \bR \times \{0\} \cup \rho B \times \bR \times \{1\}
    },
  \]
  we have $P(c \bin B_* \mid T_0) = \olbbP (c \bin B_*)_\Om = (\opnu_0 B +
  \opnu_0 \rho B)/2$. Hence, $P$ satisfies (i) in the definition of $\cC$.
  Suppose $P(\ph \mid X) = p$ and $X^\pi \in \ante P$. If $\pi$ is the identity,
  then it is trivially the case that $P(\ph^\pi \mid X^\pi) = p$. Assume $\pi$
  is not the identity. Define $g: S \to S$ by $g(r, t, n) = (r, t, 1 - n)$. Then
  $g$ is a pointwise isomorphism from $(S, \Ga, \nu)$ to itself. Recall that $h$
  is the function that maps $x = (r, t, n)$ to $\om^x$. Recall also the function
  $h_\pi$, used in Section \ref{S:PoI-models} in the construction of $\sP^\pi$.
  By Lemma \ref{L:stick-3}, we have $h_\pi = h \circ g \circ h^{-1}$. Thus,
  $\Om^\pi = \Om$ and
  \[
    {\bbP} \circ h_\pi^{-1} = {\bbP} \circ h \circ g \circ h^{-1}
    = {\opnu} \circ g \circ h^{-1}
    = {\opnu} \circ h^{-1}
    = {\bbP},
  \]
  so that $\sP^\pi = \sP$. By Theorem \ref{T:induc-invar}, we have $\sP \vDash
  (X^\pi, \ph^\pi, p)$, so that $P(\ph^\pi \mid X^\pi) = p$. This shows that $P$
  satisfies (ii), and hence, $P \in \cC$. Therefore, $\cC$ is consistent.

  Now let $P \in \cC$. Let $\pi$ be the permutation described in Lemma
  \ref{L:stick-3}. Since $\pi$ fixes $L_\ZFC$, we have $\ZFC^\pi = \ZFC$. Since
  $M^\pi = M$ and $c^\pi = c$, we have $\ph_1^\pi = \ph_1$ and $\ph_2^\pi =
  \ph_2$. Now,
  \begin{multline*}
    \ZFC, u \bin \ul \bR, v \bin \ul \bR
    \vdash (\ph_M(u, v) \wedge c_0 \beq v \wedge c_1 \beq u + v)\\
    \tot (\ph_M(-u, u + v) \wedge c_1 \beq u + v \wedge c_0 \beq -u + (u + v)).
  \end{multline*}
  Therefore, $\ZFC_\infty \vdash \ph_3^\pi \tot \ph_3$. Similarly,
  \begin{multline*}
    \ZFC, u \bin \ul \bR, v \bin \ul \bR, \ph_3
    \vdash (\ph_M(u, v) \wedge c_0 \beq v \wedge B_* \beq f_{uv}^\img(\ul B))\\
    \tot (
      \ph_M(-u, u + v)
      \wedge c_1 \beq u + v
      \wedge (\rho B)_* \beq f_{-u, u + v}^\img (\ul B)
    ).
  \end{multline*}
  Therefore, $\ZFC, \ph_3 \vdash \ph_B(B_*)^\pi \tot \ph_B(B_*)$.
  Altogether, this implies $T_0^\pi = T_0$, so that
  \[
    P(c \bin B_* \mid T_0)
    = P((c \bin B_*)^\pi \mid T_0)
    = P(c \bin (\rho B)_* \mid T_0),
  \]
  by the principle of indifference.
\end{proof}

\subsection{Adding a defined constant}\label{S:[0,1]-defn}

Let us return to example of Section \ref{S:[0,1]}, in which we have a constant
$c$, about which we only know that $c \in [0, 1]$. We saw that in this case, the
principle of indifference has nothing to say about the distribution of $c$.

From here, let us proceed as in Section \ref{S:0<1-defn}. That is, let us expand
our language by defining $d = 1 - c$. After making this seemingly harmless
addition, we turn our attention back to $c$, and ask again what the principle of
indifference has to say. This time, we find the same result that we obtained in
\eqref{stick}. That is, the distribution of $c$ must be symmetric under the
reflection $r \mapsto 1 - r$.

However, as in Section \ref{S:0<1-defn}, this result is misleading. In adding
$d$ to our language, we are also compelled to expand $T_0$ to a larger theory
$T_0'$ which includes the definition of $d$. In the expanded theory $T_0'$, the
implicit meaning of $c$ has changed. In $T_0'$, we can no longer determine which
of $c$ and $d$ is the original number, and which is its reflection. Just as in
Section \ref{S:0<1-defn}, it is as if $c$ is equally likely to be the original
as it is to be the reflection. It is no surprise, then, that $c$ must have a
symmetric distribution.

In other words, by adding $d$, we have changed the problem. We no longer have a
single unknown number in $[0, 1]$. We now have two unknown numbers that are
reflections of one another, and we are unable to definitively identify the
original. We have simultaneously added information (by adding a second number)
and lost information (by losing track of which was the original). So although
\eqref{stick} still holds, the situation is not the same. In this case, 
\eqref{stick} is answering a different question.

To make this precise, let $c$ and $d$ be constant symbols not in $L_\ZFC$ and
let $L = L_\ZFC \cup \{c, d\}$. Let $f(r) = 1 - r$ and define
\[
  T_0 = \ZFC + \{c \bin \ul{[0, 1]}, d \beq \ul f(c)\}.
\]
Note that we did not bother to express things in terms of a definitorial
extension. It is the case, however, that $T_0$ is a definitorial extension of
$\ZFC + c \bin [0, 1]$, and $d$ is defined by $\forall y (y \beq d \tot
y \beq \ul f(c))$.

Let $\cC$ be the inductive condition consisting of all inductive theories $P
\subseteq \cL^\IS$ with root $T_0$ such that
\begin{enumerate}[(i)]
  \item $c$ is Borel given $T_0$, and
  \item $P$ satisfies the principle of indifference.
\end{enumerate}

\begin{prop}
  The condition $\cC$ is consistent. Moreover, if $P \in \cC$, then
  \begin{equation}\label{stick-defn}
    P(c \bin \ul V \mid T_0) = P(c \bin \ul f^\img(\ul V) \mid T_0),
  \end{equation}
  for all $V \in \cB([0, 1])$.
\end{prop}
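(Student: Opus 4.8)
The plan is to transcribe the arguments of the rod example (Theorem~\ref{T:stick}) and the defined-constant example (Proposition~\ref{P:0<1+defn}), taking advantage of the fact that the only extralogical symbols outside $L_\ZFC$ are $c$ and $d$, together with the involution $f(r) = 1 - r$. The statement splits cleanly: the displayed equation \eqref{stick-defn} follows quickly, for every $P \in \cC$, from the principle of indifference applied to the single transposition $\pi = (c \; d)$, whereas consistency requires exhibiting one witness $P \in \cC$, and there the work is in verifying (R10).

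For \eqref{stick-defn} I would first show that $T_0$ is invariant under $\pi = (c \; d)$. Since $\pi$ fixes $L_\ZFC$, we have $\ZFC^\pi = \ZFC$ and $T_0^\pi = \ZFC + \{d \bin \ul{[0,1]}, c \beq \ul f(d)\}$. Using $\ZFC \vdash \forall x \bin \ul\bR\,(\ul f(\ul f(x)) \beq x)$ (so $f$ is a provable involution) and that $c \bin \ul{[0,1]}$ forces $f(c) \bin \ul{[0,1]}$, one checks $T_0 \vdash T_0^\pi$ and $T_0^\pi \vdash T_0$, hence $T_0 \equiv T_0^\pi$. Then $T_0^\pi \equiv T_0 \in \ante P$, so the reformulation of the principle of indifference for $\pi$-invariant antecedents, together with the rule of logical equivalence, gives $P((c \bin \ul V)^\pi \mid T_0) = P(c \bin \ul V \mid T_0)$, that is, $P(d \bin \ul V \mid T_0) = P(c \bin \ul V \mid T_0)$. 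To rewrite the left side, I would adopt the real frame of reference (Theorems~\ref{T:real-FOR} and~\ref{T:real-FOR-3}) to verify $\ZFC, d \beq \ul f(c) \vdash (d \bin \ul V \tot c \bin \ul f^\img(\ul V))$: in any model $d^\om = f(c^\om)$, and $f(c^\om) \in V \iff c^\om \in f^{-1}(V) = f^\img(V)$. Proposition~\ref{P:log-equiv-gen} converts this logical equivalence into $P(d \bin \ul V \mid T_0) = P(c \bin \ul f^\img(\ul V) \mid T_0)$, and combining yields \eqref{stick-defn}. This uses only (R10), the root $T_0$, and the Borel-ness of $c$, so it holds for all $P \in \cC$.

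For consistency I would build a witness as in Section~\ref{S:[0,1]}. Fix a probability measure $\nu_0$ on $\bR$ with $\opnu_0[0,1] = 1$ that is symmetric under $f$, i.e.\ $\nu_0 = \nu_0 \circ f^{-1}$ (the uniform measure on $[0,1]$ suffices; unlike the rod example no continuity is needed). Fix $\om_0 \tDash \ZFC$ in the real frame of reference, and for $r \in \bR$ let $\om^r$ be the $L$-expansion of $\om_0$ with $c^{\om^r} = \ul r^{\om_0}$ and $d^{\om^r} = \ul{f(r)}^{\om_0}$; let $\sP = (\Om, \Si, \bbP)$ be the measure space image of $(\bR, \cB(\bR), \nu_0)$ under $h\colon r \mapsto \om^r$. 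Using the real frame one checks $\sP \vDash \ZFC$, $\sP \vDash c \bin \ul{[0,1]}$ (the pullback is $[0,1]$, of measure one), and $\sP \vDash d \beq \ul f(c)$ (since $d^\om = f(c^\om)$ by Theorem~\ref{T:real-FOR-3}), so $\sP \vDash T_0$ and $P := \bTh \sP \dhl_{[T_0, \Th \sP]}$ has root $T_0$; moreover $P(c \bin \ul V \mid T_0) = \opnu_0 V$, so $c$ is Borel given $T_0$. To get (R10), note that since $\bin$ is the only binary relation symbol we have $\bin^\pi = \bin$, so by Lemma~\ref{L:fixed-perm} it suffices to treat permutations fixing $L_\ZFC$; as $c,d$ are the only symbols outside $L_\ZFC$, such a permutation is the identity or $\pi = (c \; d)$. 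For $\pi = (c \; d)$ I would compute $(\om^r)^\pi = \om^r \circ \pi^{-1}$, sending $c \mapsto d^{\om^r} = \ul{f(r)}^{\om_0}$ and $d \mapsto c^{\om^r} = \ul r^{\om_0}$ while fixing $L_\ZFC$, which is exactly $\om^{f(r)}$; hence $h_\pi \circ h = h \circ f$. Because $f$ is a pointwise isomorphism of $(\bR, \cB(\bR), \nu_0)$ onto itself (here the symmetry $\nu_0 = \nu_0 \circ f^{-1}$ is essential), $\sP^\pi$ is the image of $(\bR, \cB(\bR), \nu_0)$ under $h$, namely $\sP$. Thus $\sP^\pi = \sP$, and Theorem~\ref{T:induc-invar} yields both $X^\pi \in \ante P$ and $P(\ph^\pi \mid X^\pi) = p$ whenever $P(\ph \mid X) = p$; Lemma~\ref{L:fixed-perm} lifts this to all permutations, so $P \in \cC$ and $\cC$ is consistent.

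The main obstacle I anticipate is the bookkeeping inside the real frame of reference: making precise, from Theorems~\ref{T:real-FOR}--\ref{T:real-FOR-3}, the deductive equivalence $d \bin \ul V \tot c \bin \ul f^\img(\ul V)$ and the semantic facts $\sP \vDash d \beq \ul f(c)$ and $(\om^r)^\pi = \om^{f(r)}$, all of which hinge on correctly interpreting the function term $\ul f(c)$ and the image term $\ul f^\img(\ul V)$. Once those are pinned down, the remaining steps are a direct transcription of the earlier indifference arguments.
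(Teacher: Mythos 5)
Your proposal is correct, and both halves track the paper's own proof closely: the derivation of \eqref{stick-defn} from (R10) applied to $\pi = (c \; d)$ after checking $T_0^\pi \equiv T_0$ is the paper's argument (you are in fact more explicit than the paper about why $T_0$ is $\pi$-invariant, and your identification $f^{-1}(V) = f^\img(V)$ is justified precisely because $f$ is an involution), and your witness model is the same pushforward construction with the same pointwise-isomorphism argument showing $\sP^{(c\;d)} = \sP$. The one genuine divergence is how you narrow down the permutations when verifying (R10). The paper takes $\nu$ to be \emph{continuous} (atomless) in addition to $f$-symmetric, and uses this to rule out $c^\pi \in L_\ZFC$ directly: by the argument following \eqref{fixed-perm}, such a $\pi$ would force $P(\s \beq c \mid X) = 1$ for some explicitly defined constant $\s \in L_\ZFC$, while atomlessness gives $P(\s \beq c \mid T_0) = 0$, a contradiction; only then does it conclude $\pi$ fixes $L_\ZFC$. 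You instead invoke Lemma~\ref{L:fixed-perm} to reduce at the outset to permutations fixing $L_\ZFC$, exactly as the paper does in the proof of Theorem~\ref{T:exchangeable}. This is sound (the lemma applies since the root contains $\ZFC$ and $\bin$ is the only relation symbol), and it buys a slightly stronger consistency statement: any $f$-symmetric $\nu_0$, atomic or not, yields a member of $\cC$, whereas the paper's route genuinely needs continuity. The only detail you gloss over---which the paper also glosses over in Theorem~\ref{T:exchangeable}---is that concluding $X^{\pi'} \in \ante P$ from Theorem~\ref{T:induc-invar} requires $X^{\pi'} \cao [T_0, T_P]$, which follows from the invariance of $T_0$ and of $T_P = \Th \sP$ under $(c \; d)$, the latter being a consequence of $\sP^{(c\;d)} = \sP$.
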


\begin{proof}
  Let $S = [0, 1]$ and $\Ga = \cB([0, 1])$. Let $\nu$ be a probability measure
  on $(S, \Ga)$ such that $\opnu \rho V = \opnu V$ for all $V \in \Ga$, and
  $\nu$ is continuous. That is, $\opnu \{r\} = 0$ for all $r \in S$. Let $\om_0$
  be an $L_\ZFC$-structure such that $\om_0 \tDash \ZFC$. For each $r \in S$,
  let $\om = \om^r$ be the $L$-expansion of $\om_0$ given by $c^\om = \ul
  r^{\om_0}$ and $d^\om = \ul{1 - r}^{\om_0}$. Let $\Om = \{\om^r \mid r \in
  S\}$, let $h: S \to \Om$ denote the function $r \mapsto \om^r$, and let $\sP =
  (\Om, \Si, \bbP)$ be the measure space image of $(S, \Ga, \nu)$ under $h$.
  Then $\sP \vDash T_0$, so we may define the inductive theory $P = \bTh \sP
  \dhl_{[T_0, \Th \sP]}$.

  By construction, $c$ is Borel given $T_0$. Suppose $P(\ph \mid X) = p$ and
  $X^\pi \in \ante P$. If $\pi$ is the identity, then it is trivially the case
  that $P(\ph^\pi \mid X^\pi) = p$. Assume, then, that $\pi$ is not the
  identity.

  Let $\s = c^\pi$. Assume $\s \in L_\ZFC$. By the argument following
  \eqref{fixed-perm}, we have $P(\s \beq c \mid X) = 1$. Note that $h^{-1} (\s
  \beq c)_\Om = \{r \in S \mid \s^{\om_0} = \ul r^{\om_0}\}$. Since $r \ne r'$
  implies $\ZFC \vdash \ul r \nbeq \ul{r'}$, and since $\om_0 \tDash \ZFC$,
  there can be at most one $r \in \bR$ such that $\s^{\om_0} = \ul r^{\om_0}$.
  Since $\nu$ is continuous, this implies $P(\s \beq c \mid T_0) = \olnu h^{-1}
  (\s \beq c)_\Om = 0$, which contradicts $P(\s \beq c \mid X) = 1$. This shows
  that $\s \notin L_\ZFC$. Similarly, $d^\pi \notin L_\ZFC$. It follows from
  \eqref{fixed-perm} that $\s^\pi = \s$ for all $\s \in L^\infty$. Also, since
  $\pi$ is not the identity, we have $c^\pi = d$ and $d^\pi = c$.

  Now define $g: S \to S$ by $gr = 1 - r$. Then $g$ is a pointwise isomorphism
  from $(S, \Ga, \nu)$ to itself and $h_\pi = h \circ g \circ h^{-1}$. As in the
  proof of Theorem \ref{T:stick}, it follows that $\sP^\pi = \sP$. Hence, by
  Theorem \ref{T:induc-invar}, we have $\sP \vDash (X^\pi, \ph^\pi, p)$, so that
  $P(\ph^\pi \mid X^\pi) = p$. This shows that $P$ satisfies the principle of
  indifference, and hence, $P \in \cC$. Therefore, $\cC$ is consistent.

  Now let $P \in \cC$. Let $\pi$ be the permutation described above. Then
  $T_0^\pi = T_0$, so that
  \[
    P(c \bin \ul V \mid T_0)
    = P((c \bin \ul V)^\pi \mid T_0)
    = P(d \bin \ul V \mid T_0),
  \]
  by the principle of indifference. But $T_0 \vdash d \bin \ul V \tot c \bin
  \ul f^\img(\ul V)$. Hence, \eqref{stick-defn} follows from the rule of logical
  implication and Proposition \ref{P:log-equiv-gen}.
\end{proof}

There is nothing special about the function $f(r) = 1 - r$ in this example.
What is essential is that $f$ is measurable and $f \circ f = \iota$, where
$\iota$ is the identity. For example, let
\[
  f(x) = \begin{cases}
    1 - 2x &\text{if $0 \le x \le \frac13$},\\
    \frac12 - \frac12 x &\text{if $\frac13 < x \le 1$}.
  \end{cases}
\]
If we change the definition of $d$ from $d \beq 1 - c$ to $d \beq \ul f(c)$,
then we can adapt the above proof so that we obtain \eqref{stick-defn} in this
case as well. However, the resulting distribution of $c$ would no longer be
symmetric under reflection. Instead, it would be symmetric under $f$. In
particular, after defining $d \beq \ul f(c)$, the principle of indifference
would tell us that
\[
  P(c \bin \ul{[0, 1/3]} \mid T_0) = P(c \bin \ul{[1/3, 1]} \mid T_0) = 1/2.
\]
This is decidedly different from the situation obtained when $f(r) = 1 - r$. We
can therefore see clearly that the act of defining $d$ is not a harmless one. As
described above and in Section \ref{S:0<1-defn}, when we introduce $d$ via
definition, we change the background assumptions in $T_0$, which in turn changes
the meaning of $c$. The principle of indifference, therefore, can produce
different distributions for $c$, depending on the definition of $d$.

\section{Examples in the plane}\label{S:indiff-plane}

In this section, we present examples of the principle of indifference that are
situated in the Euclidean plane, $\bR^2$.

\subsection{A point on a circle}\label{S:pt-on-circ}

In our first example in this section, we consider a circle of some unspecified
diameter, and we let $c$ be a point that lies somewhere along the circumference
of the circle. As with a point on a rod in Section \ref{S:pt-on-rod}, we do not
simply want to replace the circle in the statement of the problem with the unit
circle, $\bS^1 = \{(x, y) \in \bR^2 \mid x^2 + y^2 = 1\}$. Instead, we want to
replace the circle with a subset $M \subseteq \bR^2$ that can be parameterized
in the usual way by the radian angles in $[0, 2\pi)$.

To make this precise, define $e: \bR \times \bR^2 \to \bR^2$ by
\[
  e(\th, r) = \begin{pmatrix}
    \cos \th & -\sin \th\\
    \sin \th & \phantom{-}\cos \th
  \end{pmatrix} \begin{pmatrix}
    r_1 \\ r_2
  \end{pmatrix},
\]
so that $e$ rotates $r$ counterclockwise by an angle of $\th$ radians. For $t
\in \bR^2$ and $B \subseteq \bR^2$, let $t + B = \{t + r \mid r \in B\}$. For
$\th \in \bR$, let $[\th] = \th - 2 \pi \flr{\th/2 \pi}$, where $\flr{\th/2
\pi}$ is the greatest integer less than or equal to $\th/2 \pi$. Then $[\th] \in
[0, 2 \pi)$ and $e ([\th], r) = e(\th, r)$ for all $r \in \bR^2$.

In $\ZFC$, we use $\ul \bR^2$ as shorthand for $\ul \bR \times \ul \bR$. We
extend $+$ in $\ZFC$ so that it also denotes vector addition in $\ul \bR^2$, and
we extend $\bdot$ so that it also denotes scalar multiplication. As we did for
$\bR$ and $\cB(\bR)$, we add an explicitly defined symbol $\ul r$ for each $r
\in \bR^2$, and an explicitly defined symbol $\ul V$ for each $V \in \cB (\ul
\bR^2)$. If $h: \bR^m \to \bR^n$ is Borel measurable, where $m, n \in \{1, 2\}$,
then we can explicitly define $\ul h$ in $\ZFC$.

We now create our extralogical signature $L$ and our root $T_0$ as we did in
Section \ref{S:pt-on-rod}. Define $\de(u, v, w, y) \in \cL_\ZFC$ by
\begin{multline*}
  \de(u, v, w, y) = (
    u \nbin \ul \bR \vee v \nbin \ul \bR^2 \vee w \nbin \ul \bR
  ) \wedge y \beq \ul \emp\\
  \vee {
    u \bin \ul \bR
    \wedge v \bin \ul \bR^2
    \wedge w \bin \ul \bR
    \wedge y \bin (\ul \bR^2)^{\ul \bR^2}
  }\\
  \wedge (\forall x \bin \ul \bR)(y(x) \beq u \bdot \ul e(w, x) + v).
\end{multline*}
Then $\ZFC \vdash \forall u v w \exists! y \, \de(u, v, w, y)$. Hence, we could
explicitly define the function symbol $F$ by $y \beq Fuvw \tot \de (u, v, w,
y)$, and then let $f_{uvw}$ be shorthand for the term $Fuvw$. We do not, in
fact, add the symbol $F$ to our extralogical signature, but instead regard both
$F$ and $f_{uvw}$ as shorthand. We also adopt the shorthand, $\dom(f)$ and
$f^\img (z)$, given by
\begin{align*}
  \dom(f) &= \{
    x \bin \ul \bR^2 \mid (\exists y \bin \ul \bR^2) (x, y) \bin f
  \},\\
  f^\img(z) &= \{f(x) \mid x \bin z \cap \dom(f)\}.
\end{align*}
Let $\cE = \{B \in \cB(\bS^1) \mid \emp \subset B \subset \bS^1\}$ and let
\[
  C = \{M, c\} \cup \{B_* \mid B \in \cE\}
\]
be a set of distinct constant symbols not in $L_\ZFC$. Let $L = L_\ZFC C$.

Let $\ph_c = c \bin M$ and
\[
  \ph_M(u, v, w) = {
    u > \ul 0
    \wedge w \bin \ul{[0, 2 \pi)}
    \wedge M \beq f_{uvw}^\img(\ul{\bS^1}))
  }.
\]
For $B \in \cE$, define
\begin{multline*}
  \ph_B = (\exists uw \bin \ul \bR) (\exists v \bin \ul \bR^2)\\
  (
    \ph_M(u, v, w)
    \wedge \{(1, 0)\}_* = f_{uvw}^\img(\ul{\{(1, 0)\}})
    \wedge B_* \beq f_{uvw}^\img(\ul B)
  ),
\end{multline*}
and let $T_0 = \ZFC + \{\ph_c\} \cup \{\ph_B \mid B \in \cE\}$.

In this presentation, we have streamlined our construction of $T_0$, compared to
what was done in the rod example of Section \ref{S:pt-on-rod}. For instance, we
do not have a separate sentence which says that $M$ is a circle. Rather, that
fact is contained in each sentence $\ph_B$. The sentence $\ph_c$ says that $c$
is a point on the circle $M$. And the sentences $\ph_B$ say that $B_*$ is a
Borel subset of $M$ that is geometrically related to $\{(1, 0)\}_*$ in the same
way that $B$ is related to $\{(1, 0)\}$. In the rod example, we first named our
endpoints and then named our Borel sets relative to them. Here, we are first
naming a point to serve as the initial and terminal point of a parameterization,
and then naming our Borel sets relative to that point.

Now let $\cC$ be the inductive condition consisting of all inductive theories $P
\subseteq \cL^\IS$ with root $T_0$ such that
\begin{enumerate}[(i)]
  \item $P(c \bin B_* \mid T_0)$ exists for all $B \in \cE$, and
  \item $P$ satisfies the principle of indifference.
\end{enumerate}

\begin{prop}\label{P:circle}
  The condition $\cC$ is consistent and, for every $B \in \cE$, we have
  $\bfP_\cC (c \bin B_* \mid T_0) = m(B)$, where $m$ is the uniform measure on
  $\bS^1$.
\end{prop}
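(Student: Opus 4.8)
The plan is to follow the template of the point-on-a-rod example (Theorem \ref{T:stick}), replacing the single reflection symmetry of the rod with the full rotational symmetry of the circle. The key structural difference is that the circle admits a one-parameter family of symmetries: for each angle $\al$, rotation by $\al$ is an isometry of $\bS^1$ sending each $B \in \cE$ to another element of $\cE$; write $e_\al B = \{e(\al, s) \mid s \in B\}$ for this rotation. Correspondingly, for each $\al$ there is a signature permutation $\pi_\al$ that fixes $L_\ZFC$, fixes $M$ and $c$, and maps $B_* \mapsto (e_\al B)_*$. The principle of indifference, applied simultaneously to all the $\pi_\al$, will force the distribution of $c$ to be rotation-invariant, and the unique rotation-invariant Borel probability measure on $\bS^1$ is the uniform measure $m$. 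Consistency will require the most work; the uniformity identity is then a short deduction.

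For consistency I would construct a witness model $\sP = (\Om, \Si, \bbP)$ exactly as in the rod case, randomizing the \emph{geometry} of the circle continuously while placing $c$ \emph{uniformly} on its circumference. Concretely, take $S = [0, 2\pi) \times \bR^2 \times [0, 2\pi)$ with coordinates $(\th, v, w)$, equipped with the product of normalized arc-length measure on each angular factor and a continuous probability measure on $\bR^2$; fix $\om_0 \tDash \ZFC$ and, for $x = (\th, v, w) \in S$, let $\om^x$ be the $L$-expansion of $\om_0$ in which $M$ is the unit circle centered at $v$, the reference symbol $\{(1,0)\}_*$ sits at angle $w$, each $B_*$ is the image of $B$ under the anchored parameterization $f_{1,v,w}$, and $c$ is the point at angle $w + \th$. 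Pushing the measure forward through $x \mapsto \om^x$ produces $\sP$, and, as in Lemma \ref{L:stick-1}, one checks directly that $\sP \vDash T_0$, since each $\om^x$ witnesses the existentials in $\ph_c$ and in each $\ph_B$.

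The heart of the argument---and the main obstacle---is verifying that $\sP$ satisfies the principle of indifference, which amounts to classifying every signature permutation $\pi$ with $X, X^\pi \in \ante P$. I would reproduce the two-lemma structure of Lemmas \ref{L:stick-2} and \ref{L:stick-3}. Using Lemma \ref{L:fixed-perm} together with the continuity of the randomization of $v$ (which, as in Remark \ref{R:stick}, gives every fixed geometric configuration probability zero), one shows that such a $\pi$ must fix all of $L_\ZFC$, must fix $M$ and $c$, and hence acts only on the family $\{B_* \mid B \in \cE\}$. The delicate step is to show that the induced bijection of $\cE$ is realized by a rotation $g$ of $\bS^1$, namely $B_*^\pi = (gB)_*$ for all $B$, where $g$ is the rotation determined by the image of the reference symbol $\{(1,0)\}_*^\pi$; orientation-reversing $g$ are excluded because the only parameterizations provided by $T_0$ are the orientation-preserving similarities $f_{uvw}$, so a reflection would make $\ze^\pi_\Om = \emp$, contradicting Lemma \ref{L:mapping-T_0}. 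This rigidity is forced by the geometric relations encoded in the $\ph_B$ (anchored at the reference point), and its verification mirrors the $n = 0, 1$ case analysis of Lemma \ref{L:stick-3}. Once $\pi$ is identified with a rotation $g = $ rotation by $\al$, the map $\gamma\colon(\th, v, w) \mapsto ([\th + \al], v, [w - \al])$ is a pointwise isomorphism of $(S, \nu)$ satisfying $h_\pi = h \circ \gamma \circ h^{-1}$; since $\gamma$ preserves $\nu$ (translation invariance of arc-length, together with the rotation-invariant placement of $c$), one computes $\bbP \circ h_\pi^{-1} = \bbP$, so $\sP^\pi = \sP$. Theorem \ref{T:induc-invar} then gives $P(\ph^\pi \mid X^\pi) = P(\ph \mid X)$, whence $P$ satisfies the principle of indifference, $P \in \cC$, and $\cC$ is consistent.

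For the final identity, let $P \in \cC$ be arbitrary and set $\mu(B) = P(c \bin B_* \mid T_0)$ for $B \in \cE$, with $\mu(\emp) = 0$ and $\mu(\bS^1) = 1$. Arguing exactly as in Proposition \ref{P:term-dist}, using the real frame of reference for $\bR^2$ and $\bS^1$ (Remark \ref{R:term-dist-Rn}) together with Theorem \ref{T:ctbl-add}, one sees that $\mu$ is a Borel probability measure on $\bS^1$. For each $\al$ the permutation $\pi_\al$ above satisfies $T_0^{\pi_\al} \equiv T_0$: the rotated parameterization $f_{1,v,w}$ precomposed with a rotation by $\al$ provides the required witness for each $\ph_B^{\pi_\al}$, a reparameterization identity in the spirit of the $\ph_3^\pi \tot \ph_3$ and $\ph_B(B_*)^\pi \tot \ph_B(B_*)$ computations closing the proof of Theorem \ref{T:stick}. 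Since $(c \bin B_*)^{\pi_\al} = (c \bin (e_\al B)_*)$, the principle of indifference and Proposition \ref{P:log-equiv-gen} give $\mu(B) = \mu(e_\al B)$ for every $\al$ and every $B$. Thus $\mu$ is a rotation-invariant Borel probability measure on the circle, and by uniqueness of normalized Haar measure on the compact group $SO(2) \cong \bS^1$ it must equal $m$. As every $P \in \cC$ yields the same value $\mu(B) = m(B)$, the inductive statement $(T_0, c \bin B_*, m(B))$ lies in every member of $\cC$, hence in $\bfP_\cC$, giving $\bfP_\cC(c \bin B_* \mid T_0) = m(B)$ for every $B \in \cE$.
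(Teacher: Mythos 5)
Your proposal is correct and follows essentially the same route as the paper: a witness model built by continuously randomizing the circle's location while anchoring the names $B_*$ at a uniformly random reference angle, a classification lemma showing any admissible signature permutation fixes $L_\ZFC \cup \{M, c\}$ and acts on $\{B_* \mid B \in \cE\}$ by a rotation, the identity $\sP^\pi = \sP$ via a measure-preserving shift of the angular parameters, and finally, for arbitrary $P \in \cC$, countable additivity (Theorem \ref{T:ctbl-add}) plus rotation invariance from (R10) and uniqueness of the rotation-invariant Borel probability measure on $\bS^1$. The only differences are cosmetic (your model places $c$ at a uniform relative angle, whereas the paper lets $c$ have an arbitrary continuous distribution and lets the uniformly random anchor do the averaging), so the two arguments coincide in substance.
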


The proof of Proposition \ref{P:circle} will come at the end of this subsection.
The proof follows the same lines as the proof of Theorem \ref{T:stick}. We first
prove consistency by building an $\cL$-model, $\sP = (\Om, \Si, \bbP)$, as
follows. Let $\nu_0$ be a probability measure on $(\bS^1, \cB(\bS^1))$ such that
$\nu_0$ is continuous. That is, $\opnu_0 \{r\} = 0$ for all $r \in \bS^1$. Let
$S = \bS^1 \times \bS^1 \times [0, 2 \pi)$, $\Ga = \cB(S)$, and $\nu = \nu_0
\times \nu_0 \times m_0$, where $m_0$ is the uniform measure on $[0, 2 \pi)$.

Let $\om_0$ be an $L_\ZFC$-structure such that $\om_0 \tDash \ZFC$. If $x = (r,
t, \th) \in S$, then let $\om = \om^x$ be the $L$-expansion of $\om_0$ defined
by $M^\om = \ul{t + \bS^1}^{\om_0}$, $c^\om = \ul{t + r}^{\om_0}$, and $B_*^\om
= \ul{t + e(\th, B)}^{\om_0}$.

Let $\Om = \{\om^x \mid x \in S\}$, let $h: S \to \Om$ denote the function $x
\mapsto \om^x$, and let $\sP$ be the measure space image of $(S, \Ga, \nu)$
under $h$.

By a proof similar to that of Lemma \ref{L:stick-1}, we have $\sP \vDash T_0$.
We may therefore define the inductive theory $P = \bTh \sP \dhl_{[T_0, \Th
\sP]}$.

\begin{lemma}\label{L:circle}
  Let $P(\ph \mid X) = p$ and $X^\pi \in \ante P$. Then $\s^\pi = \s$ for all
  $\s \in L_\ZFC \cup \{M, c\}$ and there exists $\th_0 \in [0, 2 \pi)$ such
  that $B_*^\pi = e(\th_0, B)_*$ for all $B \in \cE$.
\end{lemma}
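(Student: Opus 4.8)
The plan is to mirror the two-stage argument used in the rod example (Lemmas \ref{L:stick-2} and \ref{L:stick-3}), progressively narrowing the possibilities for $\pi$. Throughout, I would work inside the specific model $\sP = (\Om, \Si, \bbP)$ constructed just above, converting each putative behaviour of $\pi$ into a statement about $\bbP$-measures of inverse images under $h$, and feeding these into Lemma \ref{L:mapping-T_0}.

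First I would establish that $\s^\pi = \s$ for every $\s \in L_\ZFC$. Since $\bin$ is the only binary relation symbol, ${\bin^\pi} = {\bin}$, so Lemma \ref{L:fixed-perm} applies; by \eqref{fixed-perm} it then suffices to show that no symbol of $C = \{M, c\} \cup \{B_* \mid B \in \cE\}$ is carried into $L_\ZFC$ by $\pi$. This is exactly where continuity of $\nu_0$ enters: in $\om^x$ each symbol of $C$ is interpreted as $\ul{t + \bS^1}^{\om_0}$, $\ul{t + r}^{\om_0}$, or $\ul{t + e(\th, B)}^{\om_0}$, all depending on the random centre $t$. A fixed $L_\ZFC$-object equals such an interpretation for at most one value of $t$, a fibre of $\nu_0$-measure zero, so together with \eqref{prob-neg} and deductive transitivity this forces $P(\s^\pi \beq \s \mid X) = 0$, contradicting Lemma \ref{L:mapping-T_0}. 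This is the analogue of the first half of Lemma \ref{L:stick-2}.

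Next I would pin down $M^\pi = M$ and $c^\pi = c$. By the contrapositive of the previous step, $M^\pi, c^\pi \in C$. To exclude $M^\pi = c$ and $M^\pi = B_*$ I use that $T_0$ proves $B_* \bsub M$ and $B_* \nbeq M$ (from $\ph_B$ together with injectivity of $f_{uvw}$, which holds since $u > \ul 0$), so a sentence witnessing the reverse containment has empty inverse image in $\sP$ and again contradicts Lemma \ref{L:mapping-T_0}; to exclude $c^\pi = B_*$ I use that $T_0 \vdash c \bin \ul \bR^2$ whereas each $B_*$ denotes a subset of $\bR^2$, so $B_* \nbin \ul \bR^2$ holds at every $\om \in \Om$. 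These are the circle counterparts of the $M^\pi = M$ and $c^\pi = c$ steps of Lemma \ref{L:stick-3}.

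The main obstacle is the final claim that a single rotation angle $\th_0$ works for all $B$. The idea is to read $\th_0$ off the anchor. Since $\{(1,0)\}_*$ is a singleton subset of $M$, a property expressible in $T_0$ and preserved under $\pi$, its image $\{(1,0)\}_*^\pi$ must be $\{p\}_*$ for some $p \in \bS^1$, and I set $\th_0 \in [0, 2\pi)$ to be the angle with $e(\th_0, (1,0)) = p$. For a general $B \in \cE$, the defining sentence $\ph_B$ ties $B_*$ to the anchor and to $M$ through the orientation-preserving parameterization $f_{uvw}$; applying $\pi$ and Lemma \ref{L:mapping-T_0} forces $\ph_B^\pi$ to hold at some $\om^x \in \Om$, and unwinding this equality in $\sP$ exactly as in the $n = 0$ and $n = 1$ computations of Lemma \ref{L:stick-3}, now using injectivity of $f_{uvw}$ and the rotational structure of $e$, yields $B_*^\pi = e(\th_0, B)_*$. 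The delicate point, absent from the rod case where only the identity and a single reflection arose, is that the symmetry group here is the continuous rotation group: I must extract the one angle $\th_0$ from the anchor and then verify that the same angle is forced for every $B$, with reflections automatically excluded because the model randomizes only over rotations, so an orientation-reversing $\pi$ would make the relevant inverse-image sentence fail almost surely.
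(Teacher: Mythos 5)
Your proposal is correct and follows essentially the same route as the paper's proof: it narrows $\pi$ by the methods of Lemmas \ref{L:stick-2} and \ref{L:stick-3} (using \eqref{fixed-perm}, continuity of $\nu_0$, and Lemma \ref{L:mapping-T_0}) to fix $L_\ZFC \cup \{M, c\}$, then uses preservation of the singleton property of the anchor $\{(1,0)\}_*$ to get $\{(1,0)\}_*^\pi = \{r_0\}_*$, defines $\th_0$ by $e(\th_0,(1,0)) = r_0$, and pins down $B_*^\pi = e(\th_0, B)_*$ by unwinding $\ph_B^\pi$ at a structure $\om^x$ supplied by Lemma \ref{L:mapping-T_0}. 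This is exactly the paper's argument, including the observation that the rotational form of $f_{uvw}$ in $\ph_B$ is what forces a single rotation (and excludes reflections).
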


\begin{proof}
  Applying the methods used in the proof of Lemma \ref{L:stick-2} and the first
  part of the proof of Lemma \ref{L:stick-3}, we obtain that $\s^\pi = \s$ for
  all $\s \in L_\ZFC \cup \{M, c\}$. Hence, we must have $\{(1, 0)\}_*^\pi =
  B^0_*$ for some $B^0 \in \cE$. Let $\ze = (\exists! y \bin \ul \bR^2) (y \bin
  \{(1, 0)\}_*) \in T_0$. By Lemma \ref{L:mapping-T_0}, the set $\ze_\Om^\pi$ is
  nonempty. Choose $\om \in \ze_\Om^\pi$. Then $\om \tDash (\exists! y \bin \ul
  \bR^2) (y \bin B^0_*)$. Write $\om = \om^x$, where $x = (r, t, \th) \in S$.
  Then there exists a unique $b$ in the domain of $\om_0$ such that $b \bin^
  {\om_0} (\ul \bR^2)^{\om_0}$ and $b \bin^{\om_0} (B^0_*)^\om = \ul{t + e(\th,
  B^0)}^{\om_0}$. Thus, $\om_0 \tDash (\exists! y \bin \ul \bR^2) (y \bin \ul{t
  + e(\th, B^0)})$. This implies $|B^0| = 1$, so that we may write $B^0 = 
  \{r_0\}$ for some $r_0 \in \bS^1$. We therefore have $\{(1, 0)\}_*^\pi = 
  \{r_0\}_*$.

  Now let $B \in \cE$ be arbitrary. Then $B_*^\pi = B'_*$ for some $B' \in \cE$.
  By Lemma \ref{L:mapping-T_0}, we may choose $x = (r, t, \th) \in S$ such that
  $\om = \om^x \tDash \ph_B^\pi$. Note that
  \begin{multline*}
    \ph_B^\pi \equiv_\ZFC {
      (\exists! uw \bin \ul \bR) (\exists! v \bin \ul \bR^2)
    }\\
    (
      \ph_M(u, v, w)
      \wedge \{r_0\}_* = f_{uvw}^\img(\ul{\{(1, 0)\}})
      \wedge B'_* \beq f_{uv}^\img(\ul B)
    ).
  \end{multline*}
  Choose $\th_0$ such that $r_0 = e(\th_0, (1, 0))$. Then
  \[
    \{r_0\}_*^\om
    = \ul{t + e(\th, \{r_0\})}^{\om_0}
    = \ul{t + e([\th_0 + \th], \{(1, 0)\})}^{\om_0},
  \]
  so that
  \[
    \om \tDash (\{r_0\}_* = f_{uvw}^\img(\ul{\{(1, 0)\}}))[
      \ul 1^{\om_0}, \ul t^{\om_0}, \ul{[\th_0 + \th]}^{\om_0}
    ].
  \]
  Since $\om \tDash \ph_B^\pi$, we must have
  \[
    \om \tDash (B'_* \beq f_{uv}^\img(\ul B))[
      \ul 1^{\om_0}, \ul t^{\om_0}, \ul{[\th_0 + \th]}^{\om_0}
    ],
  \]
  which implies
  \[
    \ul{t + e(\th, B')}^{\om_0} = \ul{t + e([\th_0 + \th], B)}^{\om_0},
  \]
  and therefore $B' = e(\th_0, B)$.
\end{proof}

\begin{proof}[Proof of Proposition \ref{P:circle}]
  Let $P$ be the inductive theory defined above. If $x = (r, t, \th)$, then
  \begin{align*}
    \om^x \tDash c \bin B_* &\quad\text{iff}\quad {
      \ul{t + r}^{\om_0} \bin^{\om_0} \ul{t + e(\th, B)}^{\om_0}
    }\\
    &\quad\text{iff}\quad \om_0 \tDash \ul{t + r} \bin \ul{t + e(\th, B)}\\
    &\quad\text{iff}\quad t + r \in t + e(\th, B)\\
    &\quad\text{iff}\quad r \in e(\th, B).
  \end{align*}
  Thus, $h^{-1} (c \bin B_*)_\Om = \{(r, t, \th) \mid r \in e(\th, B)\}$. Note
  that $r \in e(\th, B)$ if and only if $e(-\th, r) \in B$. Hence,
  \begin{align*}
    \opnu h^{-1}(c \bin B_*)_\Om &= \int_S 1_{h^{-1}(c \bin B_*)_\Om} \, d\nu\\
    &= \int_{\bS^1} \int_0^{2\pi} 1_B(e(-\th, r)) \, m_0(d\th) \, \nu_0(dr)\\
    &= \int_{\bS^1} m(B) \, \nu_0(dr) = m(B).
  \end{align*}
  Since ${\bbP} = {\opnu} \circ h^{-1}$, it follows that $P$ satisfies (i) in
  the definition of $\cC$.

  Suppose $P(\ph \mid X) = p$ and $X^\pi \in \ante P$. Let $\th_0$ be as in
  Lemma \ref{L:circle} and define $g: S \to S$ by $g(r, t, \th) = (r, t, [\th_0
  + \th])$. We may use $g$ as in the proof of Theorem \ref{T:stick} to show that
  $\sP^\pi = \sP$, so that Theorem \ref{T:induc-invar} yields $P(\ph^\pi \mid
  X^\pi) = p$. This shows that $P$ satisfies (ii), and hence, $P \in \cC$.  
  Therefore, $\cC$ is consistent.

  Now let $P \in \cC$ be arbitrary. Given $\th_0 \in \bR$, let $\pi$ be the
  $L$-permutation satisfying $\s^\pi = \s$ for all $\s \in L_\ZFC \cup \{M,
  c\}$ and $B_*^\pi = e(\th_0, B)_*$ for all $B \in \cE$. Then $T_0^\pi = T_0$,
  so by the principle of indifference, we have
  \begin{equation}\label{circle}
    P(c \bin B_* \mid T_0) = P(c \bin e(\th_0, B)_* \mid T_0)
  \end{equation}
  for all $B \in \cE$. Let us adopt the shorthand notation, $\emp_* = \ul \emp$
  and $\bS^1_* = M$. Since $P(c \bin B_* \mid T_0)$ exists for all $B \in \cE$,
  we may define $m': \cB(\bS^1) \to [0, 1]$ by $m'(B) = P(c \bin B_* \mid T_0)$.
  Note that if $B, B' \in \cB(\bS^1)$ and $B \cap B' = \emp$, then $T_0 \vdash
  \neg (c \bin B_* \wedge c \bin B'_*)$. Hence, Theorem \ref{T:ctbl-add} implies
  that $m'$ is a probability measure on $(\bS^1, \cB(\bS^1))$. By
  \eqref{circle}, we have $m' (B) = m'(e(\th_0, B))$ for all $B \in \cB(\bS^1)$
  and all $\th_0 \in \bR$. This implies $m' = m$, so that $P(c \bin B_* \mid
  T_0) = m(B)$. Since $P$ was arbitrary, $\bfP_\cC(c \bin B_* \mid T_0) = m(B)$.
\end{proof}

\subsection{Bertrand's paradox}
  \index{Bertrand's paradox}%

\subsubsection{Introduction}

In 1888, Joseph Bertrand posed the following problem (see \cite{Bertrand1888}).
Consider an equilateral triangle inscribed in a circle. Let $\ell$ be a chord of
the circle, chosen at random. What is the probability that the chord is longer
than a side of the triangle?

It is considered a ``paradox'' because Bertrand presented three different
solutions, all purporting to use the principle of indifference, that gave three
different answers: $1/3$, $1/2$, and $1/4$. Of course, this is only
``paradoxical'' if we have the prior expectation that the principle of
indifference ought to produce a unique answer. We have already seen, however,
that this is not always the case. The principle of indifference is a tool that
can narrow down the possible distributions in certain circumstances, but it does
not necessarily determine for us a unique distribution. In the example of the
rod from Section \ref{S:pt-on-rod}, for instance, the principle tells us that
the distribution must be symmetric under reflection. But beyond that, it leaves
open a whole range of possibilities.

Something similar happens with Bertrand's chord. We will show below that,
according to the principle of indifference, the distribution of the chord must
be rotationally invariant. But beyond that, it has nothing more to say. Hence,
all three of Bertrand's solutions (which are all rotationally invariant) are
consistent with the principle of indifference. But so are many distributions
that Bertrand did not consider. In fact, in Theorem \ref{T:Bertrand}, we show
that for any $p \in [0, 1]$, it is consistent with the principle of indifference
to say that the answer is $p$.

\subsubsection{Notation in \texorpdfstring{$\ZFC$}{ZFC}}

To precisely formulate the problem, we first establish some new notation and
shorthand. Let $\bD = \{(x, y) \in \bR^2 \mid x^2 + y^2 \le 1\}$. Using the
notation of Section \ref{S:pt-on-circ}, define
\[
  \ph_\bD^\param(x, u, v, w) : {
    u > \ul 0 \wedge w \bin \ul{[0, 2\pi)} \wedge x \beq f_{uvw}^\img(\ul \bD)
  }
\]
Then $\ph_\bD^\param(x, u, v, w)$ says that $x$ is a disk in the plane and that
$u$, $v$, and $w$ are the constants in a parameterization of $x$. Also define
\[
  \ph_\bD(x) : {
    (\exists uw \bin \ul \bR) (\exists v \bin \ul \bR^2)
    \ph_\bD^\param(x, u, v, w)
  }
\]
Then $\ph_\bD(x)$ simply says that $x$ is a disk in the plane.

Let $\ph_\tria(x)$ be a formula which says that $x$ is a nondegenerate
equilateral triangle in the plane. The exact details of this formula are not
relevant for our purposes and will be omitted. In what follows, we will
similarly omit the details of other formulas whose descriptions are given only
verbally.

Let $\ph_\seg(x)$ be a formula which says that $x$ is a line segment in the
plane whose length is positive and finite. Let $\ph_\ins(x, y) = \ph_\tria(x)
\wedge \ph_\bD(y) \wedge \ze(x, y)$, where $\ze(x, y)$ is a formula which says
that the triangle $x$ is inscribed in the circle that is the boundary of $y$.
Similarly, let $\ph_\ch(x, y) = \ph_\seg(x) \wedge \ph_\bD(y) \wedge \ze(x, y)$,
where $\ze(x, y)$ is a formula which says that the endpoints of the line segment
$x$ lie on the boundary of the disk $y$.

Define
\[
  \de_{\len}(x, y) : {
    \neg \ph_\tria(x) \wedge \neg \ph_\seg(x) \wedge y \beq \ul \emp
    \vee \ph_\tria(x) \wedge \ze(x, y)
    \vee \ph_\seg(x) \wedge \ze'(x, y)
  }
\]
Here, $\ze(x, y)$ is a formula which says that $y$ is the length of each side of
the equilateral triangle $x$, and $\ze'(x, y)$ is a formula which says that $y$
is the length of the line segment $x$. Then $\ZFC \vdash \forall x
\exists! y \, \de_{\len}(x, y)$. We could therefore define the function symbol
$F$ by $y \beq Fx \tot \de_{\len}(x, y)$ and let $\len(x)$ be shorthand for the
term $Fx$. We do not actually add $F$ to our extralogical signature, and instead
leave both $F$ and $\len(x)$ as shorthand. In this way, $\len(x)$ is a function
informally described by
\[
  \len(x) = \begin{cases}
    \text{the length of $x$} &\text{if $x$ is a line segment},\\
    \text{the length of a side of $x$}
      &\text{if $x$ is an equilateral triangle},\\
    \emp &\text{otherwise}.
  \end{cases}
\]

\subsubsection{A first pass at setting up the problem}

Let $C' = \{D, \tau, \ell\}$ and $L' = L_\ZFC C'$. Define the deductive theory
$T_0' \subseteq (\cL')^0$ by
\[
  T_0' = \ZFC + \ph_\bD(D) + \ph_\ins(\tau, D) + \ph_\ch(\ell, D).
\]
Then $T_0'$ includes all facts in $\ZFC$ together with the following three
assumptions: $D$ is a disk in the plane, $\tau$ is an equilateral triangle
inscribed in $D$, and $\ell$ is a chord of $D$.

Define the inductive condition $\cC'$ as the set of inductive theories $P
\subseteq (\cL')^\IS$ with root $T_0'$ such that
\begin{enumerate}[(i)]
  \item $P(\len(\ell) > \len(\tau) \mid T_0')$ exists, and
  \item $P$ satisfies the principle of indifference.
\end{enumerate}
Let $P \subseteq (\cL')^\IS$ be any inductive theory with root $T_0'$. Suppose
$P(\ph \mid X) = p$ and $X^\pi \in \ante P$. As in Remark \ref{R:stick}, since
we are only concerned with the relative positions of $D$, $\tau$, and $\ell$, we
may assume that $P(D \beq \ul B \mid T_0') = 0$ for all $B \in \cB(\bR^2)$. We
may also make similar assumptions for $\tau$ and $\ell$. It then follows as in
the proof of Lemma \ref{L:circle} that $\pi$ must be the identity permutation,
so that $P(\ph^\pi \mid X^\pi) = p$. In other words, every inductive theory in
$(\cL')^\IS$ satisfies the principle indifference. This means that the principle
of indifference has nothing to say in this setting. It offers no restrictions,
so that $P(\len(\ell) > \len(\tau) \mid T_0')$ could be anything we like.

This, however, is misleading. We have omitted a critical assumption. Namely, we
failed to interpret the fact that the chord is ``chosen at random.'' We will
interpret this additional assumption as simply saying that the location of the
chord has a probability distribution. We leave the exact nature of this
distribution unspecified. To formulate this additional assumption, we must
expand our extralogical signature and our root.

\subsubsection{The complete setup and conclusion}

Let $\cE = \{B \in \cB(\bD) \mid \emp \subset B \subset \bD\}$ and let
\[
  C = C' \cup \{B_* \mid B \in \cE\}
  = \{D, \tau, \ell\} \cup \{B_* \mid B \in \cE\}.
\]
Let $L = L_\ZFC C$. For each $B \in \cE$, define
\begin{multline*}
  \ph_B(x) : (\exists uw \bin \ul \bR) (\exists v \bin \ul \bR^2)\\
  (
    \ph_\bD^\param(D, u, v, w)
    \wedge \{(1, 0)\}_* \beq f_{uvw}^\img(\ul{\{(1, 0)\}})
    \wedge B_* \beq f_{uvw}^\img(\ul B)
  )
\end{multline*}
Then let
\begin{align*}
  T_0 &= T_0' + \{\ph_B(B_*) \mid B \in \cE\}\\
  &= {
    \ZFC + \ph_\bD(D) + \ph_\ins(\tau, D) + \ph_\ch(\ell, D)
    + \{\ph_B(B_*) \mid B \in \cE\}
  }.
\end{align*}
Our root, $T_0$, says that all the facts in $\ZFC$ hold. It also says that $D$
is a disk in the plane, $\tau$ is an equilateral triangle inscribed in $D$, and
$\ell$ is a chord of $D$. Regarding the subsets of $D$, it says that $\{(1,
0)\}_*$ is a singleton set on the boundary of $D$ that serves as a fixed point
of reference. The sets $B_*$ are then Borel subsets of $D$ that are
geometrically related to $\{(1, 0)\}_*$ in the same manner as $B$ is related to
$\{(1, 0)\}$.

Define
\[
  \de_\midp(x, y) : {
    \neg \ph_\seg(x) \wedge y \beq \ul \emp \vee \ph_\seg(x) \wedge \ze(x, y)
  }
\]
Here, $\ze(x, y)$ is a formula which says that $y$ is the midpoint of the line
segment $x$. Then $T_0 \vdash \exists! y \, \de_\midp(\ell, y)$. We could
therefore define the constant symbol $c$ by $y \beq c \tot \de_\midp(\ell, y)$.
We do not actually add $c$ to our extralogical signature, but instead leave it
as shorthand. With this construction, $c$ denotes the midpoint of the segment
$\ell$. We can therefore talk about the location of $\ell$ using the sentence
$c \bin B_*$, where $B \in \cE$.

Let
\[
  B^{1/2} = \{(x, y) \in \bR^2 \mid \sqrt{x^2 + y^2} < 1/2\} \in \cE.
\]
Then $T_0 \vdash \len(\ell) > \len(\tau) \tot c \bin B_*^{1/2}$. Hence, for any
inductive theory $P \subseteq \cL^\IS$ with root $T_0$, we have
\begin{equation}\label{len-midp}
  P(\len(\ell) > \len(\tau) \mid T_0) = P(c \bin (B^{1/2})_* \mid T_0),
\end{equation}
by Proposition \ref{P:log-equiv-gen}.

Define the inductive condition $\cC$ as the set of inductive theories $P
\subseteq \cL^\IS$ with root $T_0$ such that
\begin{enumerate}[(i)]
  \item $P(c \bin B_* \mid T_0)$ exists for all $B \in \cE$, and
  \item $P$ satisfies the principle of indifference.
\end{enumerate}
Our main result now follows. For this, recall the notation $e(\th, r)$ from
Section \ref{S:pt-on-circ}.

\begin{thm}\label{T:Bertrand}
  The inductive condition $\cC$ is consistent and every $P \in \cC$ satisfies
  \begin{equation}\label{Bertrand}
    P(c \bin B_* \mid T_0) = P(c \bin e(\th, B)_* \mid T_0)
  \end{equation}
  for all $B \in \cE$ and all $\th \in \bR$. Moreover, for every $p \in [0, 1]$,
  there exists $P \in \cC$ such that
  \[
    P(\len(\ell) > \len(\tau) \mid T_0) = p.
  \]
\end{thm}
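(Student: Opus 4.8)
The plan is to mirror the structure of the proof of Proposition \ref{P:circle}, which this statement closely resembles; the two essential new features are that the distribution of the chord's midpoint need only be rotationally invariant (not uniform), and that the target event reduces, via \eqref{len-midp}, to the membership $c \bin (B^{1/2})_*$.

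For consistency together with the full range of probabilities, I would fix $p \in [0, 1]$ and construct an explicit model. First I would choose a rotationally invariant Borel probability measure $\mu_p$ on $\bD$, concentrated on the open annulus $\{0 < |x| < 1\}$ so that the associated chord is always well defined and nondegenerate, with $\mu_p(B^{1/2}) = p$; for instance, in polar coordinates one takes a uniform angle together with a radial law assigning mass $p$ to $(0, 1/2)$. Following the construction preceding Lemma \ref{L:stick-1}, I would then build a state space $S = \bD \times \bR^2 \times [0, 2\pi) \times [0, 2\pi)$ whose coordinates $(c_0, t, \th, \varphi)$ record the midpoint relative to the disk's center, the (continuously randomized) location $t$ of the center, the reference orientation $\th$ used to place the sets $B_*$, and the orientation $\varphi$ of $\tau$. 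Fixing $\om_0 \tDash \ZFC$, for each $x \in S$ let $\om^x$ be the $L$-expansion of $\om_0$ in which $D$ is the unit disk centered at $t$, $\tau$ is the inscribed equilateral triangle rotated by $\varphi$, $\ell$ is the chord with midpoint $t + c_0$, and $(B_*)^{\om^x} = \ul{t + e(\th, B)}^{\om_0}$. Let $\sP_p$ be the measure-space image under $x \mapsto \om^x$ of $S$ equipped with the product of $\mu_p$, a continuous law on $\bR^2$, and two uniform laws. An argument identical to Lemma \ref{L:stick-1} gives $\sP_p \vDash T_0$, so by Proposition \ref{P:chop-off-root} I may set $P_p = \bTh \sP_p \dhl_{[T_0, \Th \sP_p]}$, a complete inductive theory with root $T_0$ for which each $P_p(c \bin B_* \mid T_0)$ exists.

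To see $P_p \in \cC$ I would establish the analogue of Lemma \ref{L:circle}: if $P_p(\ph \mid X)$ exists and $X^\pi \in \ante P_p$, then $\pi$ fixes $L_\ZFC \cup \{D, \tau, \ell\}$ and there is a $\th_0$ with $B_*^\pi = e(\th_0, B)_*$ for all $B \in \cE$. The fixing of $L_\ZFC$ is the standard argument following \eqref{fixed-perm}; that $D$, $\tau$, $\ell$ cannot be permuted with one another or with any $B_*$ follows, as in Lemmas \ref{L:stick-2} and \ref{L:stick-3}, from the continuous randomization of $t$ (Remark \ref{R:stick}) combined with Lemma \ref{L:mapping-T_0}, using that being a disk, an inscribed equilateral triangle, and a chord are mutually exclusive properties that a generic $B_*$ almost surely lacks. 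Since $c$ is shorthand built from $\ell$, we get $c^\pi = c$. For such a $\pi$, the map $g(c_0, t, \th, \varphi) = (c_0, t, [\th - \th_0], \varphi)$ is a measure-preserving pointwise isomorphism with $h_\pi \circ h = h \circ g$, whence $\sP_p^\pi = \sP_p$, and Theorem \ref{T:induc-invar} yields $P_p(\ph^\pi \mid X^\pi) = P_p(\ph \mid X)$; thus $P_p$ satisfies (R10) and $\cC \ne \emp$. Finally, because $B^{1/2}$ is rotation-invariant, $(c \bin (B^{1/2})_*)_\Om$ pulls back under $x \mapsto \om^x$ to $\{c_0 \in B^{1/2}\}$, so $P_p(c \bin (B^{1/2})_* \mid T_0) = \mu_p(B^{1/2}) = p$, and \eqref{len-midp} then gives $P_p(\len(\ell) > \len(\tau) \mid T_0) = p$.

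For the invariance \eqref{Bertrand}, let $P \in \cC$ and $\th \in \bR$, and let $\pi$ fix $L_\ZFC \cup \{D, \tau, \ell\}$ with $B_*^\pi = e(\th, B)_*$. The crux is $T_0^\pi = T_0$: the generators of $\ZFC$, together with $\ph_\bD(D)$, $\ph_\ins(\tau, D)$, and $\ph_\ch(\ell, D)$, are fixed outright, while the collection $\{\ph_B(B_*)\}$ encodes a single rotational frame placing each $B_*$ relative to the reference $\{(1, 0)\}_*$; composing that frame's parameterization with $e(\th, \cdot)$ shows $T_0 \vdash \ph_B(B_*)^\pi$ for every $B$, exactly as in the closing step of the proof of Proposition \ref{P:circle}. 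With $T_0^\pi = T_0$ and $c^\pi = c$, the principle of indifference delivers $P(c \bin B_* \mid T_0) = P(c \bin e(\th, B)_* \mid T_0)$. I expect the main obstacle to be the permutation-narrowing lemma, since ruling out the exchange of $\tau$ (or $\ell$) with an exotic $B_*$ that happens itself to be a triangle or segment requires carefully combining the inscription and chord relations recorded in $T_0$ with the continuity of the randomization, in the spirit of Lemmas \ref{L:stick-2} and \ref{L:stick-3}.
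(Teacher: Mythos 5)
Your proposal is correct and follows essentially the same route as the paper's proof: an explicit model over a product space with a continuously randomized location, a uniformly random reference orientation acted on by the rotation $g$, permutation-narrowing in the style of Lemmas \ref{L:stick-2} and \ref{L:stick-3}, the identity $T_0^\pi = T_0$ feeding (R10) to obtain \eqref{Bertrand}, and \eqref{len-midp} applied to a midpoint law giving mass $p$ to $B^{1/2}$. Your two deviations --- the independent orientation coordinate $\varphi$ for $\tau$ (the paper rigidly attaches $\bT$ to $D$) and the requirement that $\mu_p$ be rotationally invariant (the paper's $\nu_0$ is merely continuous, with invariance supplied by the uniform $\th$) --- are inessential, though restricting $\mu_p$ to the open annulus is a tidy way to avoid the degenerate chords that the paper glosses over.
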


\begin{proof}
  We begin by proving consistency. Let $\bT \subseteq \bR^2$ be the equilateral
  triangle inscribed in $\bD$ with one corner situated at $(1, 0)$. Given $r \in
  \bD$ with $|r| < 1$, let $\bL_r$ be the chord of $\bD$ whose midpoint is $r$.

  Let $\nu_0$ be a probability measure on $(\bD, \cB(\bD))$ such that $\nu_0$ is
  continuous. That is, $\opnu_0 \{r\} = 0$ for all $r \in \bD$. Let $S = \bD
  \times \bD \times [0, 2 \pi)$, $\Ga = \cB(S)$, and $\nu = \nu_0 \times \nu_0
  \times m_0$, where $m_0$ is the uniform measure on $[0, 2 \pi)$.

  Let $\om_0$ be an $L_\ZFC$-structure such that $\om_0 \tDash \ZFC$. If $x =
  (r, t, \th) \in S$, then let $\om = \om^x$ be the $L$-expansion of $\om_0$
  defined by $D^\om = \ul{t + \bD}^{\om_0}$, $\tau^\om = \ul{t + \bT}^{\om_0}$,
  $\ell^\om = \ul{t + \bL_r}^{\om_0}$, and $B_*^\om = \ul{t + e(\th,
  B)}^{\om_0}$.

  Let $\Om = \{\om^x \mid x \in S\}$, let $h: S \to \Om$ denote the function $x
  \mapsto \om^x$, and let $\sP = (\Om, \Si, \bbP)$ be the measure space image of
  $(S, \Ga, \nu)$ under $h$. By a proof similar to that of Lemma
  \ref{L:stick-1}, we have $\sP \vDash T_0$. We may therefore define the
  inductive theory $P = \bTh \sP \dhl_{[T_0, \Th \sP]}$.

  For each $x = (r, t, \th) \in S$ and $\om = \om^x$, it follows that $\om
  \tDash c \bin B_*$ if and only if $r \in e(\th, B)$. Thus, $h^{-1} (c \bin
  B_*)_\Om = \{(r, t, \th) \mid r \in e(\th, B)\}$. Since $e$ is measurable,
  $P(c \in B_* \mid T_0) = \bbP (c \bin B_*)_\Om = \opnu h^{-1} (c \bin
  B_*)_\Om$ exists, so that $P$ satisfies (i) in the definition of $\cC$.

  Suppose $P(\ph \mid X) = p$ and $X^\pi \in \ante P$. Using methods like those
  in the proofs of Lemmas \ref{L:stick-2} and \ref{L:stick-3}, it follows that
  $\s^\pi = \s$ for all $\s \in L_\ZFC \cup \{D, \tau, \ell\}$, and there
  exists $\th_0 \in [0, 2\pi)$ such that $B_*^\pi = e(\th_0, B)_*$ for all $B
  \in \cE$. Define $g: S \to S$ by $g(r, t, \th) = (r, t, [\th_0 + \th])$. As is
  the proofs of Theorem \ref{T:stick} and Proposition \ref{P:circle}, we can use
  $g$ to show that $\sP^\pi = \sP$. It therefore follows that $P(\ph^\pi \mid
  X^\pi) = p$, so that $P$ satisfies the principle of indifference. Hence, $P
  \in \cC$ and $\cC$ is consistent.

  Now let $P \in \cC$ be given. Fix $\th_0 \in \bR$ and $B \in \cE$. Let $\pi$
  be the $L$-permutation such that $\s^\pi = \s$ for all $\s \in L_\ZFC \cup
  \{D, \tau, \ell\}$, and $B_*^\pi = e(\th_0, B)_*$ for all $B \in \cE$. Then
  $T_0^\pi = T_0$, so \eqref{Bertrand} follows immediately from the principle of
  indifference.

  Finally, let $p \in [0, 1]$. By \eqref{len-midp}, it suffices to show that
  there exists $P \in \cC$ such that $P(c \bin (B^{1/2})_* \mid T_0) = p$. Let
  $\nu_0$ be a continuous probability measure on $(\bD, \cB(\bD))$ such that
  $\nu_0(B^{1/2}) = p$. Construct $P$ as in the first part of this proof. Since
  $r \in e(\th, B)$ if and only if $e(-\th, r) \in B$, it follows that
  \begin{align*}
    P(c \bin (B^{1/2})_* \mid T_0) &= \opnu h^{-1} (c \bin (B^{1/2})_*)_\Om\\
    &= \int_\bD \int_0^{2\pi} {
      1_{B^{1/2}} (e(-\th, r)) \, m_0(d\th) \, \nu_0(dr)
    }.
  \end{align*}
  But $1_{B^{1/2}} (e(-\th, r)) = 1_{B^{1/2}} (r)$, so $P(c \bin (B^{1/2})_*
  \mid T_0) = \nu_0(B^{1/2}) = p$.
\end{proof}

\bibliographystyle{plain}
\bibliography{principles}

\begin{thebibliography}{10}

\bibitem{Benedetto2009}
John~J. Benedetto and Wojciech Czaja.
\newblock {\em Integration and modern analysis}.
\newblock Birkh\"auser Advanced Texts: Basler Lehrb\"ucher. [Birkh\"auser
  Advanced Texts: Basel Textbooks]. Birkh\"auser Boston, Ltd., Boston, MA,
  2009.

\bibitem{Bertrand1888}
Joseph Louis~Fran\c{c}ois Bertrand.
\newblock {\em Calcul des Probabilit\'{e}s}.
\newblock Gauthier-Villars Et Fils, 1888.

\bibitem{Bogachev2007}
V.~I. Bogachev.
\newblock {\em Measure theory. {V}ol. {I}, {II}}.
\newblock Springer-Verlag, Berlin, 2007.

\bibitem{Boole1854}
George Boole.
\newblock {\em An Investigation of the Laws of Thought on Which are Founded the
  Mathematical Theories of Logic and Probabilities}.
\newblock Walton and Maberly, London, 1854.

\bibitem{Carnap1950}
Rudolf Carnap.
\newblock {\em Logical Foundations of Probability}.
\newblock Chicago University of Chicago Press, Chicago], 1950.

\bibitem{Cenzer2020}
Douglas Cenzer, Jean Larson, Christopher Porter, and Jindrich Zapletal.
\newblock {\em Set theory and foundations of mathematics---an introduction to
  mathematical logic. {V}ol. 1. {S}et theory}.
\newblock World Scientific Publishing Co. Pte. Ltd., Hackensack, NJ, 2020.

\bibitem{Cohn2013}
Donald~L. Cohn.
\newblock {\em Measure theory}.
\newblock Birkh\"auser Advanced Texts: Basler Lehrb\"ucher. [Birkh\"auser
  Advanced Texts: Basel Textbooks]. Birkh\"auser/Springer, New York, second
  edition, 2013.

\bibitem{Cox1946}
R.~T. Cox.
\newblock Probability, frequency and reasonable expectation.
\newblock {\em Amer. J. Phys.}, 14:1--13, 1946.

\bibitem{Devlin1993}
Keith Devlin.
\newblock {\em The Joy of Sets}.
\newblock Undergraduate Texts in Mathematics. Springer-Verlag, New York, second
  edition, 1993.
\newblock Fundamentals of contemporary set theory.

\bibitem{Folland1999}
Gerald~B. Folland.
\newblock {\em Real analysis}.
\newblock Pure and Applied Mathematics (New York). John Wiley \& Sons, Inc.,
  New York, second edition, 1999.
\newblock Modern techniques and their applications, A Wiley-Interscience
  Publication.

\bibitem{Hailperin1996}
Theodore Hailperin.
\newblock {\em Sentential probability logic}.
\newblock Lehigh University Press, Bethlehem, PA; Associated University
  Presses, London, 1996.
\newblock Origins, development, current status, and technical applications.

\bibitem{Hajek2019}
Alan Hájek.
\newblock {Interpretations of Probability}.
\newblock In Edward~N. Zalta, editor, {\em The {Stanford} Encyclopedia of
  Philosophy}. Metaphysics Research Lab, Stanford University, {F}all 2019
  edition, 2019.

\bibitem{Jaynes2003}
E.~T. Jaynes.
\newblock {\em Probability Theory: The Logic of Science}.
\newblock Cambridge University Press, Cambridge, 2003.
\newblock Edited and with a foreword by G. Larry Bretthorst.

\bibitem{Kallenberg1997}
Olav Kallenberg.
\newblock {\em Foundations of modern probability}.
\newblock Probability and its Applications (New York). Springer-Verlag, New
  York, 1997.

\bibitem{Karatzas1991}
Ioannis Karatzas and Steven~E. Shreve.
\newblock {\em Brownian motion and stochastic calculus}, volume 113 of {\em
  Graduate Texts in Mathematics}.
\newblock Springer-Verlag, New York, second edition, 1991.

\bibitem{Karp1964}
Carol~R. Karp.
\newblock {\em Languages with expressions of infinite length}.
\newblock North-Holland Publishing Co., Amsterdam, 1964.

\bibitem{Karp1959}
Carol~Ruth Karp.
\newblock {\em L{ANGUAGES} {WITH} {EXPRESSIONS} {OF} {INFINITE} {LENGTH}}.
\newblock ProQuest LLC, Ann Arbor, MI, 1959.
\newblock Thesis (Ph.D.)--University of Southern California.

\bibitem{Keisler1971}
H.~Jerome Keisler.
\newblock {\em Model theory for infinitary logic. {L}ogic with countable
  conjunctions and finite quantifiers}.
\newblock Studies in Logic and the Foundations of Mathematics, Vol. 62.
  North-Holland Publishing Co., Amsterdam-London, 1971.

\bibitem{Keynes1921}
John~Maynard Keynes.
\newblock {\em A Treatise on Probability}.
\newblock Dover Publications, Mineola, N.Y., 1921.

\bibitem{Kolmogorov1956}
A.~N. Kolmogorov.
\newblock {\em Foundations of the theory of probability}.
\newblock Chelsea Publishing Co., New York, 1956.
\newblock Translation edited by Nathan Morrison, with an added bibliography by
  A. T. Bharucha-Reid.

\bibitem{Krauss1966}
Peter Krauss and Dana Scott.
\newblock Assigning probabilities to logical formulas.
\newblock In Jaakko Hintikka and Patrick Suppes, editors, {\em Aspects of
  Inductive Logic}, volume~43 of {\em Studies in Logic and the Foundations of
  Mathematics}, pages 219--264. Elsevier, 1966.

\bibitem{LaPlace1814}
Pierre-Simon Laplace.
\newblock {\em Essai philosophique sur les probabilités}.
\newblock Courcier, Paris, 1814.

\bibitem{Maddy1988}
Penelope Maddy.
\newblock Believing the axioms. {I}.
\newblock {\em J. Symbolic Logic}, 53(2):481--511, 1988.

\bibitem{Maddy1988a}
Penelope Maddy.
\newblock Believing the axioms. {II}.
\newblock {\em J. Symbolic Logic}, 53(3):736--764, 1988.

\bibitem{Nilsson1986}
Nils~J. Nilsson.
\newblock Probabilistic logic.
\newblock {\em Artificial Intelligence}, 28(1):71--87, 1986.

\bibitem{Polya1954}
G.~Polya.
\newblock {\em Induction and analogy in mathematics. {M}athematics and
  plausible reasoning, vol. {I}}.
\newblock Princeton University Press, Princeton, NJ, 1954.

\bibitem{Polya1954a}
G.~Polya.
\newblock {\em Patterns of plausible inference. {M}athematics and plausible
  reasoning, vol. {II}}.
\newblock Princeton University Press, Princeton, NJ, 1954.

\bibitem{Rautenberg2010}
Wolfgang Rautenberg.
\newblock {\em A concise introduction to mathematical logic}.
\newblock Universitext. Springer, New York, third edition, 2010.
\newblock With a foreword by Lev Beklemishev.

\bibitem{Reichenbach1949}
Hans Reichenbach.
\newblock {\em The Theory of Probability}.
\newblock University of California Press, Berkeley, 1949.

\bibitem{Scott1958}
D.~Scott and A.~Tarski.
\newblock The sentential calculus with infinitely long expressions.
\newblock {\em Colloq. Math.}, 6:165--170, 1958.

\bibitem{Tao2015}
Terence Tao.
\newblock {275A, Notes 0}: Foundations of probability theory.
\newblock terrytao.wordpress.com, 2015.
\newblock URL:https://wp.me/p3qzP-2bW (accessed: 2024-04-08).

\bibitem{Tarski1958}
A.~Tarski.
\newblock Remarks on predicate logic with infinitely long expressions.
\newblock {\em Colloq. Math.}, 6:171--176, 1958.

\bibitem{Wittgenstein1922}
Ludwig Wittgenstein.
\newblock Tractatus logico-philosophicus.
\newblock {\em Filosoficky Casopis}, 52:336--341, 1922.

\end{thebibliography}
\addcontentsline{toc}{chapter}{Bibliography}

\printindex
\idxlayout{columns=3}
\idxlayout{justific=standard}
\printindex[symbols]

\end{document}